    \newtheorem{thm}{Theorem}                     [section]
    \newtheorem{thm*}{Theorem}
    \newtheorem{prop}[thm]{Proposition}
    \newtheorem{lemma}[thm]{Lemma}
    \newtheorem{cor}[thm]{Corollary}
    \newtheorem{lemma*}{Lemma}    
    \newtheorem{defn}[thm]{Definition}                 
    \newtheorem{obs}{Observation}                 
    \newtheorem{conj}{Conjecture}                 
    \newtheorem{rems}[thm]{Remark}                     
    \newtheorem{ques}{Question}            
    \newtheorem{rems*}{Remark}   
\newcommand{\ndef}{\newcommand*}
\def\rndef{\renewcommand}
\ndef{\myaddress}[1]{\begin{center} \it\small #1 \end{center}}
\ndef{\clA}{{\mathcal A}} \ndef{\rmA}{{\mathrm A}} \ndef{\mbA}{{\mathbb A}} \ndef{\bfA}{{\mathbf A}} \ndef{\euA}{{\EuScript A}} \ndef{\frA}{{\mathfrak A}}
\ndef{\clB}{{\mathcal B}} \ndef{\rmB}{{\mathrm B}} \ndef{\mbB}{{\mathbb B}} \ndef{\bfB}{{\mathbf B}} \ndef{\euB}{{\EuScript B}} \ndef{\frB}{{\mathfrak B}}
\ndef{\clC}{{\mathcal C}} \ndef{\rmC}{{\mathrm C}} \ndef{\mbC}{{\mathbb C}} \ndef{\bfC}{{\mathbf C}} \ndef{\euC}{{\EuScript C}} \ndef{\frC}{{\mathfrak C}}
\ndef{\clD}{{\mathcal D}} \ndef{\rmD}{{\mathrm D}} \ndef{\mbD}{{\mathbb D}} \ndef{\bfD}{{\mathbf D}} \ndef{\euD}{{\EuScript D}} \ndef{\frD}{{\mathfrak D}}
\ndef{\clE}{{\mathcal E}} \ndef{\rmE}{{\mathrm E}} \ndef{\mbE}{{\mathbb E}} \ndef{\bfE}{{\mathbf E}} \ndef{\euE}{{\EuScript E}} \ndef{\frE}{{\mathfrak E}}
\ndef{\clF}{{\mathcal F}} \ndef{\rmF}{{\mathrm F}} \ndef{\mbF}{{\mathbb F}} \ndef{\bfF}{{\mathbf F}} \ndef{\euF}{{\EuScript F}} \ndef{\frF}{{\mathfrak F}}
\ndef{\clG}{{\mathcal G}} \ndef{\rmG}{{\mathrm G}} \ndef{\mbG}{{\mathbb G}} \ndef{\bfG}{{\mathbf G}} \ndef{\euG}{{\EuScript G}} \ndef{\frG}{{\mathfrak G}}
\ndef{\clH}{{\mathcal H}} \ndef{\rmH}{{\mathrm H}} \ndef{\mbH}{{\mathbb H}} \ndef{\bfH}{{\mathbf H}} \ndef{\euH}{{\EuScript H}} \ndef{\frH}{{\mathfrak H}}
\ndef{\clI}{{\mathcal I}} \ndef{\rmI}{{\mathrm I}} \ndef{\mbI}{{\mathbb I}} \ndef{\bfI}{{\mathbf I}} \ndef{\euI}{{\EuScript I}} \ndef{\frI}{{\mathfrak I}}
\ndef{\clJ}{{\mathcal J}} \ndef{\rmJ}{{\mathrm J}} \ndef{\mbJ}{{\mathbb J}} \ndef{\bfJ}{{\mathbf J}} \ndef{\euJ}{{\EuScript J}} \ndef{\frJ}{{\mathfrak J}}
\ndef{\clK}{{\mathcal K}} \ndef{\rmK}{{\mathrm K}} \ndef{\mbK}{{\mathbb K}} \ndef{\bfK}{{\mathbf K}} \ndef{\euK}{{\EuScript K}} \ndef{\frK}{{\mathfrak K}}
\ndef{\clL}{{\mathcal L}} \ndef{\rmL}{{\mathrm L}} \ndef{\mbL}{{\mathbb L}} \ndef{\bfL}{{\mathbf L}} \ndef{\euL}{{\EuScript L}} \ndef{\frL}{{\mathfrak L}}
\ndef{\clM}{{\mathcal M}} \ndef{\rmM}{{\mathrm M}} \ndef{\mbM}{{\mathbb M}} \ndef{\bfM}{{\mathbf M}} \ndef{\euM}{{\EuScript M}} \ndef{\frM}{{\mathfrak M}}
\ndef{\clN}{{\mathcal N}} \ndef{\rmN}{{\mathrm N}} \ndef{\mbN}{{\mathbb N}} \ndef{\bfN}{{\mathbf N}} \ndef{\euN}{{\EuScript N}} \ndef{\frN}{{\mathfrak N}}
\ndef{\clO}{{\mathcal O}} \ndef{\rmO}{{\mathrm O}} \ndef{\mbO}{{\mathbb O}} \ndef{\bfO}{{\mathbf O}} \ndef{\euO}{{\EuScript O}} \ndef{\frO}{{\mathfrak O}}
\ndef{\clP}{{\mathcal P}} \ndef{\rmP}{{\mathrm P}} \ndef{\mbP}{{\mathbb P}} \ndef{\bfP}{{\mathbf P}} \ndef{\euP}{{\EuScript P}} \ndef{\frP}{{\mathfrak P}}
\ndef{\clQ}{{\mathcal Q}} \ndef{\rmQ}{{\mathrm Q}} \ndef{\mbQ}{{\mathbb Q}} \ndef{\bfQ}{{\mathbf Q}} \ndef{\euQ}{{\EuScript Q}} \ndef{\frQ}{{\mathfrak Q}}
\ndef{\clR}{{\mathcal R}} \ndef{\rmR}{{\mathrm R}} \ndef{\mbR}{{\mathbb R}} \ndef{\bfR}{{\mathbf R}} \ndef{\euR}{{\EuScript R}} \ndef{\frR}{{\mathfrak R}}
\ndef{\clS}{{\mathcal S}} \ndef{\rmS}{{\mathrm S}} \ndef{\mbS}{{\mathbb S}} \ndef{\bfS}{{\mathbf S}} \ndef{\euS}{{\EuScript S}} \ndef{\frS}{{\mathfrak S}}
\ndef{\clT}{{\mathcal T}} \ndef{\rmT}{{\mathrm T}} \ndef{\mbT}{{\mathbb T}} \ndef{\bfT}{{\mathbf T}} \ndef{\euT}{{\EuScript T}} \ndef{\frT}{{\mathfrak T}}
\ndef{\clU}{{\mathcal U}} \ndef{\rmU}{{\mathrm U}} \ndef{\mbU}{{\mathbb U}} \ndef{\bfU}{{\mathbf U}} \ndef{\euU}{{\EuScript U}} \ndef{\frU}{{\mathfrak U}}
\ndef{\clV}{{\mathcal V}} \ndef{\rmV}{{\mathrm V}} \ndef{\mbV}{{\mathbb V}} \ndef{\bfV}{{\mathbf V}} \ndef{\euV}{{\EuScript V}} \ndef{\frV}{{\mathfrak V}}
\ndef{\clW}{{\mathcal W}} \ndef{\rmW}{{\mathrm W}} \ndef{\mbW}{{\mathbb W}} \ndef{\bfW}{{\mathbf W}} \ndef{\euW}{{\EuScript W}} \ndef{\frW}{{\mathfrak W}}
\ndef{\clX}{{\mathcal X}} \ndef{\rmX}{{\mathrm X}} \ndef{\mbX}{{\mathbb X}} \ndef{\bfX}{{\mathbf X}} \ndef{\euX}{{\EuScript X}} \ndef{\frX}{{\mathfrak X}}
\ndef{\clY}{{\mathcal Y}} \ndef{\rmY}{{\mathrm Y}} \ndef{\mbY}{{\mathbb Y}} \ndef{\bfY}{{\mathbf Y}} \ndef{\euY}{{\EuScript Y}} \ndef{\frY}{{\mathfrak Y}}
\ndef{\clZ}{{\mathcal Z}} \ndef{\rmZ}{{\mathrm Z}} \ndef{\mbZ}{{\mathbb Z}} \ndef{\bfZ}{{\mathbf Z}} \ndef{\euZ}{{\EuScript Z}} \ndef{\frZ}{{\mathfrak Z}}
\ndef{\tA}{{\widetilde A}} \ndef{\tcA}{{\widetilde\clA}} \ndef{\ttcA}{\widetilde{\tcA}} \ndef{\sfA}{{\textsf A}} \ndef{\ttA}{\widetilde{\tA}} \ndef{\dzA}{{A^\sharp}}
\ndef{\tB}{{\widetilde B}} \ndef{\tcB}{{\widetilde\clB}} \ndef{\ttcB}{\widetilde{\tcB}} \ndef{\sfB}{{\textsf B}} \ndef{\ttB}{\widetilde{\tB}} \ndef{\dzB}{{B^\sharp}}
\ndef{\tC}{{\widetilde C}} \ndef{\tcC}{{\widetilde\clC}} \ndef{\ttcC}{\widetilde{\tcC}} \ndef{\sfC}{{\textsf C}} \ndef{\ttC}{\widetilde{\tC}} \ndef{\dzC}{{C^\sharp}}
\ndef{\tD}{{\widetilde D}} \ndef{\tcD}{{\widetilde\clD}} \ndef{\ttcD}{\widetilde{\tcD}} \ndef{\sfD}{{\textsf D}} \ndef{\ttD}{\widetilde{\tD}} \ndef{\dzD}{{D^\sharp}}
\ndef{\tE}{{\widetilde E}} \ndef{\tcE}{{\widetilde\clE}} \ndef{\ttcE}{\widetilde{\tcE}} \ndef{\sfE}{{\textsf E}} \ndef{\ttE}{\widetilde{\tE}} \ndef{\dzE}{{E^\sharp}}
\ndef{\tF}{{\widetilde F}} \ndef{\tcF}{{\widetilde\clF}} \ndef{\ttcF}{\widetilde{\tcF}} \ndef{\sfF}{{\textsf F}} \ndef{\ttF}{\widetilde{\tF}} \ndef{\dzF}{{F^\sharp}}
\ndef{\tG}{{\widetilde G}} \ndef{\tcG}{{\widetilde\clG}} \ndef{\ttcG}{\widetilde{\tcG}} \ndef{\sfG}{{\textsf G}} \ndef{\ttG}{\widetilde{\tG}} \ndef{\dzG}{{G^\sharp}}
\ndef{\tH}{{\widetilde H}} \ndef{\tcH}{{\widetilde\clH}} \ndef{\ttcH}{\widetilde{\tcH}} \ndef{\sfH}{{\textsf H}} \ndef{\ttH}{\widetilde{\tH}} \ndef{\dzH}{{H^\sharp}}
\ndef{\tI}{{\widetilde I}} \ndef{\tcI}{{\widetilde\clI}} \ndef{\ttcI}{\widetilde{\tcI}} \ndef{\sfI}{{\textsf I}} \ndef{\ttI}{\widetilde{\tI}} \ndef{\dzI}{{I^\sharp}}
\ndef{\tJ}{{\widetilde J}} \ndef{\tcJ}{{\widetilde\clJ}} \ndef{\ttcJ}{\widetilde{\tcJ}} \ndef{\sfJ}{{\textsf J}} \ndef{\ttJ}{\widetilde{\tJ}} \ndef{\dzJ}{{J^\sharp}}
\ndef{\tK}{{\widetilde K}} \ndef{\tcK}{{\widetilde\clK}} \ndef{\ttcK}{\widetilde{\tcK}} \ndef{\sfK}{{\textsf K}} \ndef{\ttK}{\widetilde{\tK}} \ndef{\dzK}{{K^\sharp}}
\ndef{\tL}{{\widetilde L}} \ndef{\tcL}{{\widetilde\clL}} \ndef{\ttcL}{\widetilde{\tcL}} \ndef{\sfL}{{\textsf L}} \ndef{\ttL}{\widetilde{\tL}} \ndef{\dzL}{{L^\sharp}}
\ndef{\tM}{{\widetilde M}} \ndef{\tcM}{{\widetilde\clM}} \ndef{\ttcM}{\widetilde{\tcM}} \ndef{\sfM}{{\textsf M}} \ndef{\ttM}{\widetilde{\tM}} \ndef{\dzM}{{M^\sharp}}
\ndef{\tN}{{\widetilde N}} \ndef{\tcN}{{\widetilde\clN}} \ndef{\ttcN}{\widetilde{\tcN}} \ndef{\sfN}{{\textsf N}} \ndef{\ttN}{\widetilde{\tN}} \ndef{\dzN}{{N^\sharp}}
\ndef{\tO}{{\widetilde O}} \ndef{\tcO}{{\widetilde\clO}} \ndef{\ttcO}{\widetilde{\tcO}} \ndef{\sfO}{{\textsf O}} \ndef{\ttO}{\widetilde{\tO}} \ndef{\dzO}{{O^\sharp}}
\ndef{\tP}{{\widetilde P}} \ndef{\tcP}{{\widetilde\clP}} \ndef{\ttcP}{\widetilde{\tcP}} \ndef{\sfP}{{\textsf P}} \ndef{\ttP}{\widetilde{\tP}} \ndef{\dzP}{{P^\sharp}}
\ndef{\tQ}{{\widetilde Q}} \ndef{\tcQ}{{\widetilde\clQ}} \ndef{\ttcQ}{\widetilde{\tcQ}} \ndef{\sfQ}{{\textsf Q}} \ndef{\ttQ}{\widetilde{\tQ}} \ndef{\dzQ}{{Q^\sharp}}
\ndef{\tR}{{\widetilde R}} \ndef{\tcR}{{\widetilde\clR}} \ndef{\ttcR}{\widetilde{\tcR}} \ndef{\sfR}{{\textsf R}} \ndef{\ttR}{\widetilde{\tR}} \ndef{\dzR}{{R^\sharp}}
\ndef{\tS}{{\widetilde S}} \ndef{\tcS}{{\widetilde\clS}} \ndef{\ttcS}{\widetilde{\tcS}} \ndef{\sfS}{{\textsf S}} \ndef{\ttS}{\widetilde{\tS}} \ndef{\dzS}{{S^\sharp}}
\ndef{\tT}{{\widetilde T}} \ndef{\tcT}{{\widetilde\clT}} \ndef{\ttcT}{\widetilde{\tcT}} \ndef{\sfT}{{\textsf T}} \ndef{\ttT}{\widetilde{\tT}} \ndef{\dzT}{{T^\sharp}}
\ndef{\tU}{{\widetilde U}} \ndef{\tcU}{{\widetilde\clU}} \ndef{\ttcU}{\widetilde{\tcU}} \ndef{\sfU}{{\textsf U}} \ndef{\ttU}{\widetilde{\tU}} \ndef{\dzU}{{U^\sharp}}
\ndef{\tV}{{\widetilde V}} \ndef{\tcV}{{\widetilde\clV}} \ndef{\ttcV}{\widetilde{\tcV}} \ndef{\sfV}{{\textsf V}} \ndef{\ttV}{\widetilde{\tV}} \ndef{\dzV}{{V^\sharp}}
\ndef{\tW}{{\widetilde W}} \ndef{\tcW}{{\widetilde\clW}} \ndef{\ttcW}{\widetilde{\tcW}} \ndef{\sfW}{{\textsf W}} \ndef{\ttW}{\widetilde{\tW}} \ndef{\dzW}{{W^\sharp}}
\ndef{\tX}{{\widetilde X}} \ndef{\tcX}{{\widetilde\clX}} \ndef{\ttcX}{\widetilde{\tcX}} \ndef{\sfX}{{\textsf X}} \ndef{\ttX}{\widetilde{\tX}} \ndef{\dzX}{{X^\sharp}}
\ndef{\tY}{{\widetilde Y}} \ndef{\tcY}{{\widetilde\clY}} \ndef{\ttcY}{\widetilde{\tcY}} \ndef{\sfY}{{\textsf Y}} \ndef{\ttY}{\widetilde{\tY}} \ndef{\dzY}{{Y^\sharp}}
\ndef{\tZ}{{\widetilde Z}} \ndef{\tcZ}{{\widetilde\clZ}} \ndef{\ttcZ}{\widetilde{\tcZ}} \ndef{\sfZ}{{\textsf Z}} \ndef{\ttZ}{\widetilde{\tZ}} \ndef{\dzZ}{{Z^\sharp}}
\ndef{\bfa}{{\mathbf a}}
\ndef{\bfb}{{\mathbf b}}
\ndef{\bfc}{{\mathbf c}}
\ndef{\bfd}{{\mathbf d}}
\ndef{\euu}{{\EuScript u}}
  \ndef{\eps}{\varepsilon}
\let\geq\geqslant
\let\leq\leqslant
\ndef{\lims}[1]{\lim\limits_{#1}}
\ndef{\sums}[1]{\sum\limits_{#1}}
\ndef{\ints}[1]{\int_{#1}}
\ndef{\sups}[1]{\sup\limits_{#1}}
\ndef{\liminfty}[1]{\lims{#1\to\infty}}
\ndef{\suminf}[1]{\sums{#1=1}^\infty}
\ndef{\limo}[1]{\omega\mbox{-}\!\!\!\lims{#1\to\infty}}          
\ndef{\limL}[1]{\rmL\mbox{-}\!\!\!\lims{#1\to\infty}}            
\ndef{\limLOne}[1]{\clL_1\mbox{-}\!\lims{#1}}
\ndef{\tildelimo}[1]{\tilde\omega\mbox{-}\!\!\!\lims{#1\to\infty}}
\ndef{\slim}{\mathrm{s}\mbox{-}\!\!\lim}          
\ndef{\wlim}{\mathrm{w}\mbox{-}\!\lim}          
\ndef{\Aut}{\operatorname{Aut}}      
\ndef{\Ch}{\operatorname{ch}}        
\ndef{\End}{\operatorname{End}}      
\ndef{\Hom}{\operatorname{Hom}}      
\rndef{\ker}{\operatorname{ker}}      
\ndef{\coker}{\operatorname{coker}}      
\ndef{\im}{\operatorname{im}}        
\ndef{\Log}{\operatorname{Log}}      
\ndef{\OP}{\operatorname{OP}}        
\ndef{\Op}{\operatorname{Op}}        
\ndef{\Symb}{\operatorname{Symb}}    
\ndef{\Tr}{\operatorname{Tr}}        
\ndef{\Wres}{\operatorname{Wres}}    
\ndef{\cl}{\operatorname{cl}}        
\ndef{\com}{\operatorname{com}}
\ndef{\const}{\operatorname{const}}  
\ndef{\conv}{\operatorname{conv}}    
\rndef{\det}{\operatorname{det}}     
\ndef{\Var}{\operatorname{Var}}
\ndef{\Cov}{\operatorname{Cov}}
\ndef{\detFK}[1]{\Delta\brs{#1}} 
\ndef{\detFKrel}[2]{\Delta_{#2}\brs{#1}} 
\ndef{\adj}{\operatorname{adj}}    
\ndef{\diag}{\operatorname{diag}}    
\ndef{\dist}{\operatorname{dist}}    
\ndef{\dom}{\operatorname{dom}}      
\ndef{\ec}{\operatorname{ec}}        
\ndef{\id}{\mathrm{Id}}                        
\ndef{\ind}{\operatorname{ind}}      
\ndef{\mydeg}{\operatorname{deg}}    
\ndef{\op}{\operatorname{op}}
\ndef{\rank}{\operatorname{rank}}
\ndef{\res}{\operatorname{res}}      
\ndef{\rng}{\operatorname{ran}}      
\ndef{\sflow}{\operatorname{sf}}     
\ndef{\isf}{\operatorname{isf}}      
\ndef{\sign}{\operatorname{sign}}    
\ndef{\sgn}{\operatorname{sgn}}      
\ndef{\sing}{\operatorname{sing}}    
\ndef{\supp}{\operatorname{supp}}    
\ndef{\tr}{\operatorname{tr}}        
\ndef{\var}{\operatorname{var}}      
\ndef{\vol}{\operatorname{vol}}      
\ndef{\wn}{\operatorname{wn}}        
\ndef{\wres}{\operatorname{wres}}    
\rndef{\Im}{\operatorname{Im}}       
\rndef{\Re}{\operatorname{Re}}       
\ndef{\prng}[1]{\mathrm R_{#1}} 
\ndef{\pker}[1]{\mathrm N_{#1}} 
\ndef{\rprng}[2]{\mathrm R_{#1}^{#2}}           
\ndef{\rpker}[2]{\mathrm N_{#1}^{#2}}           
\ndef{\rsupp}[1]{\supp_r(#1)}
\ndef{\lsupp}[1]{\supp_l(#1)}
\ndef{\rslv}[1]{R_z(#1)}      
\ndef{\HH}{H}                 
\ndef{\tHH}{\tilde \HH}       
\ndef{\VV}{V}                 
\ndef{\Rz}{R_z}               
\ndef{\tRz}{\tR_z}            
\ndef{\psif}[1]{#1^{[1]}} 
\ndef{\WPlus}[1]{W_{#1}(\mbR)} 
\newcommand{\Texp}{\mathrm{T}\!\exp}
\newcommand{\xia}{\xi^{(a)}}
\newcommand{\xis}{\xi^{(s)}}
\newcommand{\mua}{\mu^{(a)}}
\newcommand{\mus}{\mu^{(s)}}
\ndef{\bndl}{\xi}                         
\ndef{\bndlA}{\eta}                       
\ndef{\GlueMap}{\varphi}                  
\ndef{\ChartMap}{h}                       
\ndef{\chern}{\ensuremath{\mathrm{ch}}}
\ndef{\hilb}{\clH}                     
\ndef{\hilba}{\clH^{(a)}}                    
\ndef{\hilbs}{\clH^{(s)}}                    
   \ndef{\hilbasargument}{(\hilb)} 
\ndef{\LpH}[1]{\clL_{#1}\hilbasargument}       
\ndef{\saLpH}[1]{\clL_{sa}^{#1}\hilbasargument}       
\ndef{\clBH}{\clB\hilbasargument}              
\ndef{\ubBH}{\clB_1\hilbasargument}            
\ndef{\clCH}{\clC\hilbasargument}              
\ndef{\clKH}{\clK\hilbasargument}              
\ndef{\clFH}{\clF\hilbasargument}              
\ndef{\clUH}{\clU\hilbasargument}              
\ndef{\clCFH}{{\clC\clF}\hilbasargument}       
\ndef{\saBH}{\clB_{sa}\hilbasargument}         
\ndef{\saCH}{\clC_{sa}\hilbasargument}         
\ndef{\saFH}{\clF_{sa}\hilbasargument}         
\ndef{\saKH}{\clK_{sa}\hilbasargument}         
\ndef{\saCFH}{\clC\clF_{sa}\hilbasargument}    
\ndef{\clUFH}{\clU\clF\hilbasargument}         
\ndef{\Uinj}{\clU_{inj}\hilbasargument}        
\ndef{\UFinj}{\clU\clF_{inj}\hilbasargument}   
\ndef{\spproj}[2]{E^{#1}_{#2}}                      
\ndef{\spprojb}[2]{E^{#2}_{#1}}                     
\ndef{\LpN}[1]{\clL^{#1}(\clN,\tau)}     
\ndef{\saLpN}[1]{\clL^{#1}_{sa}(\clN,\tau)} 
\ndef{\rLpN}[1]{L^{#1}(\clN,\tau)}       
\ndef{\clAND}{(\clA,\clN,D)}             
\ndef{\clBA}{{\clB(\clA)}}
\ndef{\saKN}{{\clK_{sa}(\clN,\tau)}}          
\ndef{\clKN}{{\clK(\clN,\tau)}}          
\ndef{\clKtN}{{\clK(\tilde\clN,\tau)}}   
\ndef{\clFN}{{\clF(\clN,\tau)}}          
\ndef{\saFN}{{\clF_{sa}(\clN,\tau)}}     
\ndef{\clPN}{\clP(\clN)}                 
\ndef{\clQN}{\clQ(\clN,\tau)}            
\ndef{\infPN}{{\clP_\tau^\infty(\clN)}}  
\ndef{\clOF}[2]{\clF_{#1\mbox{-}#2}(\clN,\tau)}         
\ndef{\oind}[2]{{\rm \tau\mbox{-}ind}_{#1\mbox{-}#2}}   
\ndef{\tind}{\tau\mbox{-}\ind}                  
\ndef{\DInd}{\ind_{\clD,\tau}}           
\ndef{\BF}{Breuer-Fredholm}              
\ndef{\skewfred}[2]{$(#1\cdot #2)$ $\tau$\tire Fredholm}   
\ndef{\affl}{\eta}                       
\ndef{\vNa}{von Neumann algebra}         
\ndef{\nsf}{faithful normal semifinite } 
\ndef{\taubrs}[1]{\tau\brackets{#1}}     
\ndef{\sqbrs}[1]{[#1]}        
\ndef{\Sqbrs}[1]{\big[#1\big]}        
\ndef{\SqBrs}[1]{\Big[#1\Big]}        
\ndef{\domd}{\bigcap\limits_{n\ge 0} \dom\;\delta^n}         
\ndef{\DiffOP}{{\rm \clD}}
\ndef{\ADA}{\clA \cup [\clD,\clA]}
\ndef{\DixIdeal}[1]{\LpH{#1,\infty}}               
\ndef{\dixideal}{\ell^{1,\infty}}                  
\ndef{\WDixIdeal}{\LpH{1,\mathrm w}}               
\ndef{\DixIdealPos}[1]{\DixIdeal{#1}_+}            
\ndef{\DixIdealN}[1]{\LpN{#1,\infty}}              
\ndef{\DixIdealNPar}[2]{\clL^{#1,\infty}_{#2}(\clN,\tau)}    
\ndef{\DixIdealNPos}[1]{\LpN{#1,\infty}_+}                   
\ndef{\TrD}{\Tr_\omega}                                      
\ndef{\tauD}{{\tau_\omega}}                                  
\ndef{\ILogN}{\frac 1{\log(1+N)}}
\ndef{\DixNorm}[1]{\norm{#1}_{(1,\infty)}}                   
\ndef{\DixInt}[1]{\ints 0^t \mu_s(#1)\,ds}
\ndef{\DixIntL}[1]{\ints 0^{\lambda_{1/t}(#1)}\mu_s(#1)\,ds}
    \ndef{\SmallIdeal}{{\clL^{1, \mathrm w}}}
    \ndef{\SmallIdealMeas}{{\clL^{1, \mathrm w}_m}}
    \ndef{\DixIntII}[1]{\int_0^t \mu_s(#1)\,ds}
    \ndef{\DixIntf}[1]{\Phi_t(#1)}
    \ndef{\DixIntg}[1]{\Psi_t(#1)}
\ndef{\lpi}{\clL^{1,\pi}(\clN,\tau)}
\ndef{\strl}[1]{\stackrel \longrightarrow {#1}}
\ndef{\IIinfty}{$\mathrm{II}_\infty$\ }
\ndef{\fourier}[1]{\clF(#1)}          
\ndef{\HaarMeasBohrs}{\nu}            
\ndef{\BrownsMeas}{\mu}               
\ndef{\BohrCont}[1]{\tilde{#1}}       
\ndef{\APMean}{{M}}                   
\ndef{\CDSS}{{\clA_B}}                
\ndef{\matr}{{\rm Mat}}               
\ndef{\seque}[1]{\ensuremath{\{#1_n\}_{n=1}^\infty}}    
\ndef{\sequen}[2]{\ensuremath{\{#1_#2\}_{#2=1}^\infty}}    
\ndef{\Seque}[1]{\ensuremath{\left(#1_0,#1_1,#1_2,\dots\right)}}    
\ndef{\Cesaro}{H}                           
\ndef{\CesaroRPlus}{M}                      
\ndef{\Dilation}{D}                         
\ndef{\Shift}{T}                            
\ndef{\norm}[1]{\left\Vert#1\right\Vert}    
\ndef{\TrNorm}[1]{\norm{#1}_1}              
\ndef{\HSNorm}[1]{\norm{#1}_2}              
\ndef{\InftyNorm}[1]{\norm{#1}_\infty}      
\ndef{\normQN}[1]{\norm{#1}_{\clQN}}        
\ndef{\clLpnorm}[2]{\norm{#2}_{\clL^{#1}}}    
\ndef{\clLnorm}[1]{\clLpnorm{1}{#1}}    
\ndef{\ccurve}{\gamma}                      
\ndef{\abs}[1]{\left\lvert#1\right\rvert}   
\ndef{\set}[1]{\left\{#1\right\}}           
\ndef{\brackets}[1]{\left(#1\right)}        
\ndef{\brs}[1]{\brackets{#1}}               
\ndef{\Brs}[1]{\big(#1\big)}                
\ndef{\BRS}[1]{\Big(#1\Big)}                
\ndef{\scal}[2]{\left\langle #1,#2\right\rangle}               
\ndef{\precprec}{\prec\!\!\!\prec}
\ndef{\qeq}{\stackrel?=}
\ndef{\spectrum}[1]{\sigma_{#1}} 
\ndef{\spectruma}[1]{\sigma^{(a)}_{#1}} 
\ndef{\numrange}[1]{\mathrm{W}(#1)}                         
\rndef{\emptyset}{\varnothing}                              
\ndef{\csupp}{c}                           
\ndef{\closure}[1]{\overline{#1}}
\ndef{\linspan}[1]{\mathrm{span}\ {#1}}
\ndef{\bddborel}[1]{B(#1)}                 
\ndef{\charfunc}{\chi}
\ndef{\FrDer}{\euD}                        
\ndef{\LieDer}[1]{\pounds_{#1}\,}          
\ndef{\dds}{\left.\frac d{ds} \right|_{s = 0}}
\ndef{\ortcmp}[1]{#1^{\scriptscriptstyle \perp}}            
\ndef{\Laplace}{\Delta}                    
\ndef{\matrPQ}[3]
{
    \left(
      \begin{array}{cc}
        #1_{11} & #1_{12} \\
        #1_{21} & #1_{22}
      \end{array}
    \right)_{[#2,#3]}
}
\ndef{\margOK}{\marginpar{\bf \small OK}}
\newcounter{margcomcount}
\ndef{\margcom}[1]{\marginpar{\bf \small #1} \addtocounter{margcomcount}{1}
   \index{\indexcom{{\bf COMMENT: #1}}}}
\newcounter{margproof}
\ndef{\margproof}{\marginpar{\bf \small PROOF} \addtocounter{margproof}{1}
  \index{**** \indexcom{{\bf PROOF}}}}
\newcounter{margdetails}
\ndef{\margdetails}{\marginpar{\bf Details} \addtocounter{margdetails}{1}
  \index{**** \indexcom{{\bf DETAILS}}}}
\newcounter{margproofb}
\ndef{\margproofb}[1]{\marginpar{\bf \small Proof(B) #1} \addtocounter{margproofb}{1}
  \index{**** \indexcom{{\bf PROOF(B): #1}}}}
\newcounter{margdetailsb}
\ndef{\margdetailsb}[1]{\marginpar{\bf \small Details(B)} \addtocounter{margdetailsb}{1}
  \index{**** \indexcom{{\bf DETAILS(B): \\ #1}}}}
\newcounter{margdetailsc}
\ndef{\margdetailsc}[1]{\marginpar{\bf \small Details(C)} \addtocounter{margdetailsc}{1}
  \index{**** \indexcom{{\bf DETAILS(C): \\ #1}}}}
\newcounter{margcomcountb}
\ndef{\margcomb}[1]{\marginpar{\bf \small #1} \addtocounter{margcomcountb}{1}
   \index{\indexcom{{\bf COMMENT(B): \\ #1}}}}
\ndef{\mytimes}{\!\times\!}
\ndef{\sss}[1]{\subsubsection{}\label{#1}}
\rndef{\phi}{\varphi} \ndef{\OpenUnitDisk}{D}
\ndef{\RHS}{RHS}                            
\ndef{\LHS}{LHS} 
\ndef{\ttt}{\Leftrightarrow}
\ndef{\then}{\Rightarrow}
\ndef{\tto}{\longrightarrow}
\ndef{\nno}{\nonumber\\}
\ndef{\newn}[1]{\index{#1} {\bfseries #1}}       
\ndef{\la}{\langle}
\ndef{\ra}{\rangle}
\ndef{\dbar}{{\;\bar{\phantom{o}} \!\!\!\! d}}
\ndef{\stl}[1]{\stackrel{\vbox to 0pt{\vss\hbox{$\scriptstyle #1$}}}}
\ndef{\mathcomment}[1]{{\hfill \qquad\qquad\qquad\text{by (#1)}}}        
\ndef{\mathcomm}[1]{{\hfill \qquad\qquad\qquad\qquad\qquad\text{#1}}}        
\ndef{\details}[1]{\smallskip\begin{center} {\bf Here:}
#1\end{center}\medskip} \ndef{\indexcom}[1]{ --- #1}
\ndef{\longsim}{\ \sim \ }              
\ndef{\tire}{-}              
\ndef{\intinfinf}{\int_{-\infty}^\infty}
     \ndef{\npartial}{\slash\!\!\!\partial}
     \ndef{\Heis}{\operatorname{Heis}}
     \ndef{\Solv}{\operatorname{Solv}}
     \ndef{\Spin}{\operatorname{Spin}}
     \ndef{\SO}{\operatorname{SO}}
     \ndef{\Index}{\operatorname{index}}
             \ndef{\p}{\partial}
             \ndef{\dd}{|\clD|}
             \ndef{\n}{\parallel}
\let\LatexCite=\cite  
\let\ifnumref\iffalse 
\ndef{\ifuncited}[4]{\expandafter\ifx\csname used#4\endcsname\relax}
\ndef{\ifcited}[4]{\expandafter\ifx\csname used#4\endcsname\relax\else}
  \ndef{\papertitle}[1]{ \emph{#1}, }
  \ndef{\paperauthor}[2]{#2}  
  \ndef{\pbbi}[9]{%
      \ifcited{#1}{#2}{#3}{#5}%
        \ifnumref%
          \bibitem{#5}\paperauthor{#1}{#6},\papertitle{#7}#8.%
        \else%
          \advance #9 by 1%
          \ifnum#9<1%
            \bibitem[#4]{#5}\paperauthor{#1}{#6}, \papertitle{#7}#8.%
          \else%
            \bibitem[#4$_{\the#9}$]{#5}\paperauthor{#1}{#6},\papertitle{#7}#8.%
          \fi%
        \fi%
      \fi%
  }
  \ndef{\mbbi}[8]{%
     \ifcited{#1}{#2}{#3}{#5}%
        \ifnumref%
          \bibitem{#5}\paperauthor{#1}{#6},\papertitle{#7}#8.%
        \else%
          \bibitem[#4]{#5}\paperauthor{#1}{#6},\papertitle{#7}#8.%
        \fi%
     \fi%
  }
\ndef{\AddCite}[1]{%
   \ifuncited{0}{0}{0}{#1}%
     \expandafter\gdef\csname used#1\endcsname {}%
   \fi%
}
\def\ProcessCite#1,{%
     \ifx\relax#1%
         \let\next=\relax%
     \else%
         \AddCite{#1}%
         \let\next=\ProcessCite%
     \fi%
     \next%
}
\ndef{\AddCites}[1]{\ProcessCite#1,\relax,}
\ndef{\CiteWithoutExtension}[1]{%
   \AddCites{#1}%
   \LatexCite{#1}%
}
\def\CiteWithExtension[#1]#2{%
   \AddCites{#2}%
   \LatexCite[#1]{#2}%
}
\ndef{\CleverCite}{%
    \ifx\NChar[ %
       \let\MyCite=\CiteWithExtension %
    \else %
       \let\MyCite=\CiteWithoutExtension %
    \fi %
    \MyCite%
}
\renewcommand{\cite}{\futurelet\NChar\CleverCite}
      \ndef{\volume}[1]{{\bf #1}}
      \ndef{\VolYearPP}[3]{\ifnum#2=0 (to appear)\else\volume{#1} (#2), #3\fi}
      \ndef{\VolNoYearPP}[4]{\ifnum#3=0 (to appear)\else\volume{#1} #2 (#3), #4\fi}
      \ndef{\libcode}[1]{}
\ndef{\jnActaMath}[3]{Acta Math. \VolYearPP{#1}{#2}{#3}}                       
\ndef{\jnAdvMath}[3]{Adv. in~Math. \VolYearPP{#1}{#2}{#3}}                     
\ndef{\jnAlgAnal}[3]{Algebra i~Analiz \VolYearPP{#1}{#2}{#3}}
\ndef{\jnAmerJMath}[3]{Amer. J. Math. \VolYearPP{#1}{#2}{#3}}                  
\ndef{\jnAmerMathMonth}[3]{Amer. Math. Monthly \VolYearPP{#1}{#2}{#3}}         
\ndef{\jnAnnMath}[4]{Ann. of~Math. \VolNoYearPP{#1}{#2}{#3}{#4}}               
\ndef{\jnAnalMath}[3]{J. Anal. Math. \VolYearPP{#1}{#2}{#3}}                   
\ndef{\jnArchRatMechAnal}[3]{Arch. Rational Mech. Anal. \VolYearPP{#1}{#2}{#3}}                   
\ndef{\jnBullLondMathSoc}[3]{Bull. London Math. Soc. \VolYearPP{#1}{#2}{#3}}   
\ndef{\jnBullAMS}[3]{Bull. Amer. Math. Soc. \VolYearPP{#1}{#2}{#3}}   
\ndef{\jnCanMathBull}[3]{Canad. Math. Bull. \VolYearPP{#1}{#2}{#3}}            
\ndef{\jnCanMath}[3]{Canad. J.~Math. \VolYearPP{#1}{#2}{#3}}             
\ndef{\jnCommMathPhys}[3]{Comm. Math. Phys. \VolYearPP{#1}{#2}{#3}}             
\ndef{\jnCommPDE}[3]{Comm. Partial Differential Equations \VolYearPP{#1}{#2}{#3}}             
\ndef{\jnComptRendue}[3]{C.\,R.~Acad. Sci. Paris S\'er. A-B \VolYearPP{#1}{#2}{#3}}      
\ndef{\jnContMath}[3]{Contemporary Math. \VolYearPP{#1}{#2}{#3}}               %
\ndef{\jnDukeMJ}[3]{Duke Math. J. \VolYearPP{#1}{#2}{#3}}
\ndef{\jnDiffGeom}[3]{J.~Diff. Geom. \VolYearPP{#1}{#2}{#3}}                   
\ndef{\jnErgodicTheory}[3]{Ergodic Theory and Dynamical Systems \VolYearPP{#1}{#2}{#3}} 
\ndef{\jnFuncAnal}[3]{J.~Functional Analysis \VolYearPP{#1}{#2}{#3}}           
\ndef{\jnFunkAnalPril}[4]{Funct. Anal. Appl. \VolNoYearPP{#1}{#2}{#3}{#4}}  
\ndef{\jnGAFA}[3]{GAFA \VolYearPP{#1}{#2}{#3}}                                 
\ndef{\jnIHES}[3]{IHES Publ. Math. (Paris) \VolYearPP{#1}{#2}{#3}}             
\ndef{\jnIEOT}[3]{Integral Equations Operator Theory   \VolYearPP{#1}{#2}{#3}} 
\ndef{\jnIsrMath}[3]{Israel J.~Math. \VolYearPP{#1}{#2}{#3}}                   
\ndef{\jnKTheory}[3]{K-Theory \VolYearPP{#1}{#2}{#3}}                          
\ndef{\jnLetMathPhys}[3]{Lett. Math. Phys. \VolYearPP{#1}{#2}{#3}}             
\ndef{\jnMathAnn}[3]{Math. Ann. \VolYearPP{#1}{#2}{#3}}                        
\ndef{\jnMathAnalAppl}[3]{J.~Math. Anal. and Appl. \VolYearPP{#1}{#2}{#3}}     
\ndef{\jnMathNachr}[3]{Math. Nachr. \VolYearPP{#1}{#2}{#3}}
\ndef{\jnMathPhys}[3]{J. Math. Phys. \VolYearPP{#1}{#2}{#3}}
\ndef{\jnMathSocJap}[3]{J. Math. Soc. Japan \VolYearPP{#1}{#2}{#3}}
\ndef{\jnOperTheory}[3]{J.~Operator Theory \VolYearPP{#1}{#2}{#3}}             
\ndef{\jnPacJMath}[3]{Pacific J.~Math. \VolYearPP{#1}{#2}{#3}}                  
\ndef{\jnPositivity}[3]{Positivity \VolYearPP{#1}{#2}{#3}}
\ndef{\jnProcAmerMS}[3]{Proc. Amer. Math. Soc. \VolYearPP{#1}{#2}{#3}}         
\ndef{\jnProcCambPhilSoc}[3]{Math. Proc. Camb. Phil. Soc. \VolYearPP{#1}{#2}{#3}}
\ndef{\jnReineAngew}[3]{J.~Reine Angew. Math. \VolYearPP{#1}{#2}{#3}}          
\ndef{\jnTokyoMath}[3]{Tokyo J.~Math. \VolYearPP{#1}{#2}{#3}}
\ndef{\jnTopology}[3]{Topology \VolYearPP{#1}{#2}{#3}}
\ndef{\jnTransAmerMathSoc}[3]{Trans. Amer. Math. Soc. \VolYearPP{#1}{#2}{#3}}
\ndef{\jnIzvANSSSR}[3]{Izv. Akad. Nauk SSSR, Ser. Mat. \VolYearPP{#1}{#2}{#3}}
\ndef{\jnIzvVyshUchZav}[3]{Izv. Vyssh. Uch. Zav., Mat. \VolYearPP{#1}{#2}{#3} (Russian)}
\ndef{\jnIzdatLenUniv}[2]{Izdat. Leningrad. Univ., Leningrad, (#1), #2 (Russian)}
\ndef{\jnFieldsInsComm}[3]{Fields Inst. Comm. \VolYearPP{#1}{#2}{#3}}
\ndef{\jnDoklANSSSR}[3]{Dokl. Akad. Nauk SSSR \VolYearPP{#1}{#2}{#3}}
\ndef{\jnMatZametki}[3]{Matem. zametki \VolYearPP{#1}{#2}{#3}}
\ndef{\jnRussMathSurvey}[3]{Russian Math. Surveys \VolYearPP{#1}{#2}{#3}}
\ndef{\jnSibMathJ}[3]{Sib. Math.~J. \VolYearPP{#1}{#2}{#3}}
\ndef{\jnSovMath}[3]{J.~Soviet math. \VolYearPP{#1}{#2}{#3}}
\ndef{\jnTransMoscMathSoc}[3]{Trans. Moscow Math. Soc. \VolYearPP{#1}{#2}{#3}}
\ndef{\jnUMN}[3]{Uspekhi Mat. Nauk \VolYearPP{#1}{#2}{#3}}
\ndef{\bkTransMathMon}[2]{Trans. Math. Monographs, AMS, \volume{#1}, #2}
\ndef{\pbBirkhauser}[1]{Birkh\"auser, Boston, #1}
\ndef{\pbFactorial}[1]{Moscow, Factorial, #1}
\ndef{\pbGauthier}[1]{Gauthier-Villars, Paris, #1}
\ndef{\pbNauka}[1]{Moscow, Nauka, #1 (Russian)}
\ndef{\pbNaukaR}[1]{Москва, Наука, #1}
\ndef{\pbPrinceton}[1]{Princeton University Press, Princeton, New Jersey, #1}
\ndef{\pbPublPerish}[1]{Publish or Perish Inc., Berkeley, #1}
\ndef{\pbSpringer}[1]{Springer-Verlag, #1}
\ndef{\myauthor}[1]{\mbox{#1}}
\ndef{\Agmon}{\myauthor{Sh.\,Agmon}}
\ndef{\Ahiezer}{\myauthor{N.\,I.\,Ahiezer}}
\ndef{\Arazy}{\myauthor{J.\,Arazy}}
\ndef{\Aronszajn}{\myauthor{N.\,Aronszajn}}
\ndef{\Astashkin}{\myauthor{S.\,V.\,Astashkin}}
\ndef{\Atiyah}{\myauthor{M.\,Atiyah}}
\ndef{\Avron}{\myauthor{J.\,E.\,Avron}}
\ndef{\Azamov}{\myauthor{N.\,A.\,Azamov}}
\ndef{\Banach}{\myauthor{S.\,Banach}}
\ndef{\Benameur}{\myauthor{M-T.\,Benameur}}
\ndef{\Bennett}{\myauthor{C.\,Bennett}}
\ndef{\Berezin}{\myauthor{F.\,A.\,Berezin}}
\ndef{\Berline}{\myauthor{N.\,Berline}}
\ndef{\Birman}{\myauthor{M.\,Sh.\,Birman}}
\ndef{\Blackadar}{\myauthor{B.\,Blackadar}}
\ndef{\Bogolyubov}{\myauthor{N.\,N.\,Bogolyubov}}
\ndef{\Bonsall}{\myauthor{F.\,F.\,Bonsall}}
\ndef{\Bony}{\myauthor{J.\,F.\,Bony}}
\ndef{\BoosBavnbek}{\myauthor{B.\,Boo$\beta$-Bavnbek}}
\ndef{\Bott}{\myauthor{R.\,Bott}}
\ndef{\Branges}{\myauthor{L.\,de Branges}}
\ndef{\Bratteli}{\myauthor{O.\,Bratteli}}
\ndef{\Bredon}{\myauthor{G.\,E.\,Bredon}}
\ndef{\Breuer}{\myauthor{M.\,Breuer}}
\ndef{\Brown}{\myauthor{L.\,G.\,Brown}}
\ndef{\Bruneau}{\myauthor{V.\,Bruneau}}
\ndef{\Buslaev}{\myauthor{V.\,S.\,Buslaev}}
\ndef{\Carey}{\myauthor{A.\,L.\,Carey}}
\ndef{\CareyRW}{\myauthor{R.\,W.\,Carey}} 
\ndef{\Cartan}{\myauthor{H.\,Cartan}}
\ndef{\Chilin}{\myauthor{V.\,I.\,Chilin}}
\ndef{\Coburn}{\myauthor{L.\,A.\,Coburn}}
\ndef{\Connes}{\myauthor{A.\,Connes}}
\ndef{\Cornfeld}{\myauthor{I.\,P.\,Cornfeld}}
\ndef{\Daletskii}{\myauthor{Yu.\,L.\,Daletski\u\i}}   
\ndef{\Dixmier}{\myauthor{J.\,Dixmier}}
\ndef{\DoddsPG}{\myauthor{P.\,G.\,Dodds}}
\ndef{\DoddsTK}{\myauthor{T.\,K.\,Dodds}}
\ndef{\Douglas}{\myauthor{R.\,G.\,Douglas}}
\ndef{\Dubrovin}{\myauthor{B.\,A.\,Dubrovin}}
\ndef{\Dugundji}{\myauthor{J.\,Dugundji}}
\ndef{\Duncan}{\myauthor{J.\,Duncan}}
\ndef{\Dunford}{\myauthor{N.\,Dunford}}
\ndef{\Dykema}{\myauthor{K.\,J.\,Dykema}}
\ndef{\Edwards}{\myauthor{R.\,E.\,Edwards}}
\ndef{\Eilenberg}{\myauthor{S.\,Eilenberg}}
\ndef{\Entina}{\myauthor{S.\,B.\,\`Entina}}
\ndef{\Fack}{\myauthor{T.\,Fack}} 
\ndef{\Faddeev}{\myauthor{L.\,D.\,Faddeev}}
\ndef{\Farber}{\myauthor{M.\,Farber}}
\ndef{\Farforovskaya}{\myauthor{Yu.\,B.\,Farforovskaya}}
\ndef{\Federer}{\myauthor{H.\,Federer}}
\ndef{\Fedosov}{\myauthor{B.\,V.\,Fedosov}}
\ndef{\Figiel}{\myauthor{T.\,Figiel}} 
\ndef{\Figueroa}{\myauthor{H.\,Figueroa}}
\ndef{\Fillmore}{\myauthor{P.\,A.\,Fillmore}}
\ndef{\Fomenko}{\myauthor{A.\,T.\,Fomenko}} 
\ndef{\Fomin}{\myauthor{S.\,V.\,Fomin}}
\ndef{\Frohlich}{\myauthor{J.\,Fr\"ohlich}}
\ndef{\Fuglede}{\myauthor{B.\,Fuglede}}
\ndef{\Furutani}{\myauthor{K.\,Furutani}}
\ndef{\Gelfand}{\myauthor{I.\,M.\,Gelfand}}
\ndef{\Gesztesy}{\myauthor{F.\,Gesztesy}}     
\ndef{\Getzler}{\myauthor{E.\,Getzler}} 
\ndef{\Gilkey}{\myauthor{P.\,B.\,Gilkey}}
\ndef{\Gitler}{\myauthor{S.\,Gitler}}
\ndef{\Glazman}{\myauthor{I.\,M.\,Glazman}}
\ndef{\Glimm}{\myauthor{J.\,Glimm}}
\ndef{\Gohberg}{\myauthor{I.\,C.\,Gohberg}}
\ndef{\Goldshtein}{\myauthor{Ya.\,Goldshtein}}
\ndef{\Golze}{\myauthor{F.\,Golze}}
\ndef{\GraciaBondia}{\myauthor{J.\,M.\,Gracia-Bond\'{i}a}}
\ndef{\Greenleaf}{\myauthor{F.\,P.\,Greenleaf}}
\ndef{\Gromov}{\myauthor{M.\,Gromov}}
\ndef{\Gunning}{\myauthor{R.\,C.\,Gunning}}
\ndef{\Haagerup}{\myauthor{U.\,Haagerup}}
\ndef{\Haag}{\myauthor{R.\,Haag}}
\ndef{\Halmos}{\myauthor{P.\,R.\,Halmos}}
\ndef{\Hardy}{\myauthor{G.\,H.\,Hardy}}
\ndef{\Herbst}{\myauthor{I.\,W.\,Herbst}}
\ndef{\Higson}{\myauthor{N.\,Higson}}  
\ndef{\Hoermander}{\myauthor{L.\,Hoermander}} 
\ndef{\Hoffman}{\myauthor{K.\,Hoffman}} 
\ndef{\Ito}{\myauthor{K.\,Ito}}
\ndef{\Ikebe}{\myauthor{T.\,Ikebe}}
\ndef{\Jaffe}{\myauthor{A.\,Jaffe}}
\ndef{\James}{\myauthor{I.\,M.\,James}}
\ndef{\Javrjan}{\myauthor{V.\,A.\,Javrjan}}
\ndef{\Jitomirskaya}{\myauthor{S.\,Jitomirskaya}}
\ndef{\Kadison}{\myauthor{R.\,V.\,Kadison}}
\ndef{\Kalton}{\myauthor{N.\,J.\,Kalton}} 
\ndef{\Kato}{\myauthor{T.\,Kato}} 
\ndef{\Kobayashi}{\myauthor{S.\,Kobayashi}}
\ndef{\Koplienko}{\myauthor{L.\,S.\,Koplienko}}
\ndef{\Korotyaev}{\myauthor{E.\,Korotyaev}}
\ndef{\Kosaki}{\myauthor{H.\,Kosaki}}
\ndef{\Kostrykin}{\myauthor{V.\,Kostrykin}}
\ndef{\Kotani}{\myauthor{S.\,Kotani}}
\ndef{\Krein}{\myauthor{Kre\u\i n}}
\ndef{\KreinMG}{\myauthor{M.\,G.\,Kre\u\i n}}
\ndef{\KreinSG}{\myauthor{S.\,G.\,Kre\u\i n}}
\ndef{\Kuroda}{\myauthor{S.\,T.\,Kuroda}}
\ndef{\Leichtnam}{\myauthor{E.\,Leichtnam}}
\ndef{\Lesch}{\myauthor{M.\,Lesch}}
\ndef{\Lesniewski}{\myauthor{A.\,Lesniewski}}
\ndef{\Levitan}{\myauthor{B.\,M.\,Levitan}}
\ndef{\Lidskii}{\myauthor{V.\,B.\,Lidskii}}
\ndef{\Lifshitz}{\myauthor{I.\,M.\,Lifshitz}}
\ndef{\Lindenstrauss}{\myauthor{J.\,Lindenstrauss}}
\ndef{\Loday}{\myauthor{J.-L.\,Loday}}
\ndef{\Lord}{\myauthor{S.\,Lord}}      
\ndef{\Lorentz}{\myauthor{G.\,Lorentz}}
\ndef{\Magnus}{\myauthor{W.\,Magnus}}
\ndef{\Makarov}{\myauthor{K.\,A.\,Makarov}}
\ndef{\MakarovN}{\myauthor{N.\,Makarov}}
\ndef{\Mathai}{\myauthor{V.\,Mathai}}         
\ndef{\McKean}{\myauthor{H.\,P.\,McKean}}
\ndef{\Mishchenko}{\myauthor{A.\,S.\,Mishchenko}}
\ndef{\Molchanov}{\myauthor{S.\,A.\,Molchanov}}
\ndef{\Moore}{\myauthor{C.\,C.\,Moore}}
\ndef{\Moscovici}{\myauthor{H.\,Moscovici}}  
\ndef{\Motovilov}{\myauthor{A.\,K.\,Motovilov}}
\ndef{\Moyer}{\myauthor{R.\,D.\,Moyer}}
\ndef{\Naboko}{\myauthor{S.\,N.\,Naboko}}
\ndef{\Narasimhan}{\myauthor{R.\,Narasimhan}}
\ndef{\Nomizu}{\myauthor{K.\,Nomizu}}
\ndef{\Novikov}{\myauthor{S.\,P.\,Novikov}}
\ndef{\Osterwalder}{\myauthor{K.\,Osterwalder}}
\ndef{\Patodi}{\myauthor{V.\,Patodi}}
\ndef{\Pagter}{\myauthor{B.\,de~Pagter}}  
\ndef{\Pastur}{\myauthor{L.\,A.\,Pastur}}  
\ndef{\Pavlov}{\myauthor{B.\,S.\,Pavlov}}
\ndef{\Pedersen}{\myauthor{G.\,K.\,Pedersen}}
\ndef{\Peller}{\myauthor{V.\,V.\,Peller}}
\ndef{\Perera}{\myauthor{V.\,S.\,Perera}}
\ndef{\Petunin}{\myauthor{Ju.\,I.\,Petunin}}
\ndef{\Phillips}{\myauthor{J.\,Phillips}}  
\ndef{\Piazza}{\myauthor{P.\,Piazza}}   
\ndef{\Pincus}{\myauthor{J.\,D.\,Pincus}}   
\ndef{\Poincare}{Poincar\'e}
\ndef{\Postnikov}{\myauthor{M.\,M.\,Postnikov}} 
\ndef{\Povzner}{\myauthor{A.\,Ya.\,Povzner}}
\ndef{\Prinzis}{\myauthor{R.\,Prinzis}}
\ndef{\Privalov}{\myauthor{I.\,I.\,Privalov}}
\ndef{\Pushnitski}{\myauthor{A.\,B.\,Pushnitski}} 
\ndef{\Raeburn}{\myauthor{I.\,Raeburn}}
\ndef{\Raikov}{\myauthor{G.\,Raikov}}
\ndef{\Reed}{\myauthor{M.\,Reed}}
\ndef{\Rennie}{\myauthor{A.\,Rennie}}
\ndef{\Rickart}{\myauthor{C.\,E.\,Rickart}}
\ndef{\Riesz}{\myauthor{F.\,Riesz}}
\ndef{\Ringrose}{\myauthor{J.\,Ringrose}}
\ndef{\Rio}{\myauthor{R.\,del Rio}}
\ndef{\Robinson}{\myauthor{D.\,Robinson}}
\ndef{\Rossi}{\myauthor{H.\,Rossi}}
\ndef{\Rudin}{\myauthor{W.\,Rudin}}
\ndef{\Ruelle}{\myauthor{D.\,Ruelle}}
\ndef{\Ruzhansky}{\myauthor{M.\,Ruzhansky}}
\ndef{\Sakai}{\myauthor{Sh.\,Sakai}}
\ndef{\Sargsjan}{\myauthor{I.\,S.\,Sargsjan}}
\ndef{\Sato}{\myauthor{H.\,Sato}}
\ndef{\Schaeffer}{\myauthor{D.\,G.\,Schaeffer}}
\ndef{\Schluchtermann}{\myauthor{G.\,Schluchtermann}}
\ndef{\Schochet}{\myauthor{C.\,Schochet}}
\ndef{\SchroedingerE}{\myauthor{E.\,Schr\"odinger}}
\ndef{\Schroedinger}{\myauthor{Schr\"odinger}}
\ndef{\Schrohe}{\myauthor{E.\,Schrohe}}
\ndef{\Schwartz}{\myauthor{J.\,T.\,Schwartz}}
\ndef{\Sedaev}{\myauthor{A.\,A.\,Sedaev}}
\ndef{\Seiler}{\myauthor{R.\,Seiler}}
\ndef{\Semenov}{\myauthor{E.\,M.\,Semenov}}
\ndef{\Shabat}{\myauthor{B.\,V.\,Shabat}}
\ndef{\Shafarevich}{\myauthor{I.\,R.\,Shafarevich}}
\ndef{\Sharpley}{\myauthor{R.\,Sharpley}}
\ndef{\Shilov}{\myauthor{G.\,E.\,Shilov}}
\ndef{\Shirkov}{\myauthor{D.\,V.\,Shirkov}}
\ndef{\Shubin}{\myauthor{M.\,A.\,Shubin}}
\ndef{\Silverman}{\myauthor{H.\,Silverman}}
\ndef{\Simon}{\myauthor{B.\,Simon}}
\ndef{\Sinai}{\myauthor{Ya.\,G.\,Sinai}}
\ndef{\Singer}{\myauthor{I.\,M.\,Singer}}
\ndef{\Solomyak}{\myauthor{M.\,Z.\,Solomyak}}
\ndef{\Soloviev}{\myauthor{Yu.\,P.\,Soloviev}}
\ndef{\Spivak}{\myauthor{M.\,Spivak}}
\ndef{\Stein}{\myauthor{E.\,M.\,Stein}}
\ndef{\Stenkin}{\myauthor{V.\,V.\,Sten'kin}}
\ndef{\Stratila}{\myauthor{S.\,Stratila}}
\ndef{\Sucheston}{\myauthor{L.\,Sucheston}}
\ndef{\Sukochev}{\myauthor{F.\,A.\,Sukochev}}
\ndef{\Switzer}{\myauthor{R.\,M.\,Switzer}}
\ndef{\SzNagy}{\myauthor{B.\,Sz.-Nagy}}
\ndef{\Takesaki}{\myauthor{M.\,Takesaki}}
\ndef{\Taylor}{\myauthor{M.\,E.\,Taylor}}
\ndef{\Treves}{\myauthor{F.\,Treves}}
\ndef{\Troitsky}{\myauthor{E.\,V.\,Troitsky}}
\ndef{\Tzafriri}{\myauthor{L.\,Tzafriri}}
\ndef{\Varilly}{\myauthor{J.\,C.\,V\'{a}rilly}}
\ndef{\Vergne}{\myauthor{M.\,Vergne}}
\ndef{\Vladimirov}{\myauthor{V.\,S.\,Vladimirov}}
\ndef{\Voiculescu}{\myauthor{D.\,Voiculescu}}
\ndef{\Weiss}{\myauthor{G.\,Weiss}}
\ndef{\Wells}{\myauthor{R.\,O.\,Wells}}
\ndef{\Williams}{\myauthor{J.\,P.\,Williams}}
\ndef{\Winkler}{\myauthor{S.\,Winkler}}
\ndef{\Witten}{\myauthor{E.\,Witten}}
\ndef{\Wodzicki}{\myauthor{M.\,Wodzicki}}
\ndef{\Wojciechowski}{\myauthor{K.\,P.\,Wojciechowski}}
\ndef{\Yafaev}{\myauthor{D.\,R.\,Yafaev}}
\ndef{\Yosida}{\myauthor{K.\,Yosida}}
\ndef{\Zsido}{\myauthor{L.\,Zsido}}
\numberwithin{equation}{section}
\newcommand{\order}{\mathrm{d}\,}
\newcommand{\depth}{\mathrm{\gamma}\,}
\newcommand{\Schrodinger}{Schr\"odinger\ }
\newcommand{\twovector}[2]{\left(\begin{matrix} #1 \\ #2 \end{matrix} \right)}
\newcommand{\ulA}{\underline{A}\,}
\newcommand{\ulB}{\underline{B}\,}
\newcommand{\ulP}{\underline{P}\,}
\newcommand{\ulQ}{\underline{Q}\,}
\newcommand{\clLs}{\clL}
\newcommand{\clLw}{\clL^w}
\newcommand{\ubfA}{\underline{\bfA}\,}
\newcommand{\ubfB}{\underline{\bfB}\,}
\newcommand{\hatlmbu}[1]{\hat u_{\lambda+i0}^{(#1)}(s)}
\newcommand{\mbd}{\mathbf d}
\ndef{\hlambda}{{\mathfrak h_\lambda}}
\rndef{\iff}{\Leftrightarrow}
\ndef{\Rindex}{\euR}
\begin{document}
\title{Spectral flow inside essential spectrum}
\author{Nurulla Azamov}
\address{School of Computer Science, Engineering and Mathematics
   \\ Flinders University
   \\ Bedford Park, 5042, SA Australia.}
\email{nurulla.azamov@flinders.edu.au}
 \keywords{Spectral flow, spectral shift function, singular spectral shift function, absolutely continuous spectral shift function,
   scattering matrix, Birman-\Krein\ formula, Lippmann-Schwinger equation}

 \subjclass[2010]{ 
     Primary 47A55, 47A10, 47A70, 47A40;
     Secondary 35P99.
 }
\begin{center} \tiny \sf\today \ v26.7(arXiv) \end{center}

\begin{abstract} The spectral flow is a classical notion of functional analysis and differential geometry which
was given different interpretations as Fredholm index, Witten index, and Maslov index.
The classical theory treats spectral flow outside the essential spectrum.
Inside essential spectrum, the spectral shift function could be considered as a proper analogue of spectral flow, but unlike the spectral flow,
the spectral shift function is not an integer-valued function.

In this paper it is shown that the notion of spectral flow admits a natural extension for a.e. value of the spectral parameter
inside essential spectrum too and appropriate theory is developed. The definition of spectral flow inside essential spectrum
given in this paper applies to the classical spectral flow and thus gives one more new alternative definition of it.

\medskip
One of the results of this paper is the following

{\bf Theorem.} Let $H_0$ be a self-adjoint operator and let $V$ be a trace class self-adjoint operator
acting on a separable Hilbert space. Let $H_s = H_0+sV, \ s \in \mbC.$ The following four functions are equal for a.e. $\lambda;$
their common value is the spectral flow inside essential spectrum by definition.

1) Density $\xis(\lambda)$ of the singular spectral shift measure $\Delta \mapsto \int_0^1 \Tr\brs{VE_\Delta^{H_s}P^{(s)}(H_s)}\,ds,$
where $E_\Delta^H$ is the spectral measure of a self-adjoint operator~$H$ and $P^{(s)}(H)$
is the orthogonal projection onto the singular subspace of~$H.$

2) The difference $\mus(\lambda) := \mu(\theta, \lambda; H_1,H_0) - \mua(\theta, \lambda; H_1,H_0),$ $\theta \in [0,2\pi),$ where $\mu(\theta, \lambda; H_1,H_0)$ is the Pushnitski $\mu$-invariant
and $\mua(\theta, \lambda; H_1,H_0)$ is the absolutely continuous part of the Pushnitski $\mu$-invariant. This difference does not depend on the angle $\theta.$
The numbers $\mu(\theta, \lambda)$ and $\mua(\theta, \lambda)$ measure the spectral flow of the eigenvalues of the scattering matrix $S(\lambda; H_1,H_0)$ in two distinctive ways.

3) The total resonance index of the pair of self-adjoint operators $H_0,H_1,$ defined by formula $\sum_{r_\lambda \in [0,1]} \ind_{res}(\lambda; H_{r_\lambda},V),$
where one of the many equivalent definitions of the so-called resonance index
$\ind_{res}(\lambda; H_{r_\lambda},V)$ is as follows: let $N_+$ respectively, $N_-$ be the number of those eigenvalues of the operator $(H_s-\lambda -iy)^{-1}V$ (counting multiplicities)
from the upper $\mbC_+$ respectively, lower $\mbC_-$ complex half-plane which converge to the real number $(s - r_\lambda)^{-1}$ as $y\to 0^+;$ then
$$
  \ind_{res}(\lambda; H_{r_\lambda},V) = N_+-N_-.
$$
This definition is independent of $s\in \mbR.$
A real number~$r_\lambda$ is called a resonance point iff $0<N_++N_-<\infty;$ the set of resonance points is a discrete subset of $\mbR$ so the sum above is well-defined.

4) The number
$\sum_{r_\lambda \in [0,1]} \sign\brs{\brs{\ulP_{\lambda+iy}(r_\lambda)}^*V\ulP_{\lambda+iy}(r_\lambda)},$ where $0<y<\!\!<1,$ $\sign$ is the signature of a finite-rank self-adjoint operator,
and $\ulP_{\lambda+iy}(r_\lambda)$ is the Riesz idempotent
$$
  \ulP_{\lambda+iy}(r_\lambda) = \frac 1{2\pi i} \oint_{C} \brs{\sigma - (H_s-\lambda - iy)^{-1}V}^{-1}\,d\sigma,
$$
where $C$ is a contour which encloses all and only those eigenvalues of $(H_s-\lambda -iy)^{-1}V$ which converge to $(s - r_\lambda)^{-1}$ as $y\to 0^+.$
This definition is also independent of $s\in \mbR.$

\medskip Equality of the third and fourth numbers is proved under a much weaker assumption on $H_0$ and $V$
which includes Schr\"odinger operators. Some applications of this result are given, such as $\abs{N_+-N_-} \leq m,$ where $m$ is the dimension
of the vector space of solutions of the Lippmann-Schwinger equation $(1+(r_\lambda-s)F(H_s-\lambda - i0)^{-1}F^*J)u = 0,$ where $V = F^*JF.$
\end{abstract}

\maketitle

\newpage
\tableofcontents

\section{Introduction}
This paper develops the theory of spectral flow inside essential spectrum. In order to put the results of this paper into context,
in this introduction a quick survey is given of relevant parts of the theory of spectral flow, the mathematical theory of scattering,
and related notions from the perspective of this paper.
In fact, the introduction and the main body of the paper are quite independent; the reader may choose to omit reading this introduction (as long as he does not ask
what is the point and origin of the results of this paper), or treat this introduction as an independent survey.
This also explains the relatively large size of this introduction.

\subsection{Spectral flow}
Spectral flow was introduced by \Atiyah, \Patodi\ and \Singer\ in \cite{APS75,APS76}, as the intersection number of eigenvalues
of a continuous path $D_u, \ 0\leq u \leq 1,$ of elliptic self-adjoint pseudo-differential operators on a compact manifold with the line $\lambda = -\eps,$
where $\eps$ is a small positive number. Atiyah, Patodi and Singer remarked in \cite{APS76} that spectral flow could in fact be defined
for any continuous path of self-adjoint Fredholm operators. Essential spectrum of a self-adjoint Fredholm operator does not contain zero,
and so one can formally define spectral flow as the net number of eigenvalues crossing 0 in the positive direction, where it is assumed that if an eigenvalue
crosses 0 in the negative direction then its contribution to spectral flow in negative.
\Singer\ proposed in~1974 that it should be possible to express spectral flow as an integral of a one-form defined in terms of the path of operators.
Such an analytic formula for spectral flow was established by E.\,Getzler in \cite{Ge93Top}:
\begin{equation} \label{F: Getzler}
  \sflow(\mathrm D,g^{-1}\mathrm Dg) = \frac 1 {\sqrt \pi} \int_0^1 \Tr(\dot {\mathrm D}_u e^{-\mathrm D_u^2})\,du,
\end{equation}
where $\mathrm D$ is a self-adjoint operator of an odd $\theta$-summable Fredholm module (see \cite{CoNG} for definition) $(\clA, \hilb, D)$ over a Banach $*$-algebra~$\clA,$
$g$ is a representative of an element $[g]$ of the odd $K$-theory group $K_1(\clA)$ (see e.g. \cite[\S 8]{Bl} or \cite[Chapter 7]{Murphy} for definition),
and $\mathrm D_u = (1-u)\mathrm D + u g^{-1}\mathrm Dg.$
For example \cite{Ge93Top}, in the case $\hilb = L_2(\mbT,d\theta),$ $\clA = C(\mbT),$ $\mathrm D = \frac 1i \frac d{d\theta},$ and $[g]$ is the class of the function $e^{i n\theta},$ one has
$\mathrm D_u = \mathrm D + n u I,$ where~$I$ is the identity operator, so that $\sigma(\mathrm D_u) = \set{k+nu \colon k \in \mbZ}.$ Thus, as $u$ changes from $0$ to $1,$
each real number including zero is crossed by $n$ simple eigenvalues of $\mathrm D_u$ in the positive direction and therefore $\sflow(\mathrm D,g^{-1}\mathrm Dg) = n.$

For a norm continuous path of self-adjoint Fredholm operators $F \colon [a,b] \to \clB(\hilb),$ where $\clB(\hilb)$ is the algebra of bounded operators,
J.\,Phillips \cite{Ph96CMB,Ph97FIC} gave an alternative definition of spectral flow by formula
$$
  \sflow(\set{F_t}) = \sum_{i=1}^n \ec(P_{t_{i-1}},P_{t_i}),
$$
where $P_t = E_{[0,\infty)}^{F_t}$ is the spectral projection of $F_t$ corresponding to the interval $[0,\infty)$ and $\ec(P,Q)$ is the essential co-dimension
of a Fredholm pair of projections $P,Q$ (see \cite{ASS} for definition, see also \cite{AmSin,Kalt97,Kato55}), which is defined as the Fredholm index of the operator
$PQ \colon Q\hilb \to P\hilb.$ It was shown in \cite{Ph96CMB,Ph97FIC} that this definition of spectral flow is well-defined for and independent
of the choice of small enough partitions and that it is homotopically invariant. The spectral flow $\sflow(F_0,F_1)$ for a pair of Fredholm operators $F_0$ and $F_1$
with compact difference is then defined by the above formula for the straight path $(1-t)F_0+tF_1$ connecting $F_0$ and $F_1,$
and for a pair of self-adjoint operators $D_0,D_1$ with compact resolvents and bounded difference the spectral flow is defined by formula
$$
  \sflow(D_0,D_1) = \sflow(\phi(D_0),\phi(D_1)),
$$
where $\phi(x) = x(1+x^2)^{-1/2}.$
The analytic formula for spectral flow~(\ref{F: Getzler}) was generalized by A.\,Carey and \Phillips\ \cite{CP98CJM}, \cite{CP2},
who in particular proved the following formula \cite[Corollary 8.10]{CP2} for spectral flow for two $\theta$-summable operators $D_0$ and $D_1:$
\begin{equation} \label{F: CP2 formula for SF}
  \begin{split}
    \sflow(D_0,D_1) = \frac 1{\sqrt{\pi}} \int_0^1 \Tr\brs{\frac {dD_t}{dt} \,e^{-D_t^2}}\,dt & + \eta_1(D_1) - \eta_1(D_0) \\ & + \frac 12 \Tr\brs{[\ker (D_1)]} - \frac 12 \Tr\brs{[\ker (D_0)]},
  \end{split}
\end{equation}
where $[\ker (D_j)]$ is the projection onto the kernel of $D_j$ and where the real number
$$
  \eta_1(D) = \frac 1{\sqrt{\pi}} \int_1^\infty \Tr\brs{D\,e^{-tD^2}}\,\frac{dt}{\sqrt{t}}
$$
is the so-called $\eta$-invariant of $D_j,$ --- the notion introduced for self-adjoint elliptic operators on compact manifolds by Atiyah, Patodi and Singer in \cite{APS76}.
A formula analogous to~(\ref{F: CP2 formula for SF}) was also established for $p$-summable operators.
It was also shown in \cite{CP2} that the one-form on the affine space of $\theta$-summable self-adjoint operators $\set{D_0+A \colon A \ \text{is a bounded s.-a. operator}}$
given by formula
$$
  \alpha_D(A) = \frac 1{\sqrt{\pi}} \Tr\brs{A\,e^{-D^2}}
$$
is exact.
The nature of integral formulas for spectral flow such as~(\ref{F: Getzler}),~(\ref{F: CP2 formula for SF}) was clarified in \cite{ACS}, where it was proved \cite[(35)]{ACS}
that for any two self-adjoint operators $D_0$ and $D_1$ with compact resolvent such that $D_1-D_0$ is bounded the following formula holds
\begin{equation} \label{F: ACS formula for SF}
  \sflow(\lambda; D_0,D_1) = \xi_{D_1,D_0}(\lambda) + \frac 12 \Tr\brs{[\ker (D_1-\lambda)]} - \frac 12 \Tr\brs{[\ker (D_0 - \lambda)]},
\end{equation}
where $\xi_{D_1,D_0}(\lambda)$ is the so-called spectral shift function. The formula~(\ref{F: ACS formula for SF}) is quite general
in the sense that firstly it allows to easily recover integral formulas of Getzler
(\ref{F: Getzler}) and Carey-Phillips~(\ref{F: CP2 formula for SF}) by averaging over an appropriate function $\phi(\lambda),$ and secondly, unlike other integral formulas
it does not impose on operators $D_0$ and $D_1$ any summability conditions.

Though in \cite{ACS} the operators~$D_r$ were assumed to have compact resolvent, the same technique of proof shows that a connection between the spectral flow and
spectral shift function given by~(\ref{F: ACS formula for SF}) holds for norm-continuous paths~$D_r$ of self-adjoint operators with trace-class difference
if $\lambda$ does not belong to the common essential spectrum of operators~$D_r$ (see also \cite{Pu08AMST}).

\subsection{Spectral shift function}
The works on spectral flow discussed above were written by geometers, and they were interested in spectral flow primarily as a topological invariant and in its connections with other topological invariants,
such as Chern character (see e.g. \cite{KN,Wells} for definition). See also, for instance, \cite{BCPRSW,BFTokyo98,BLP,CPRS1,CPRS2,CPRS3,CM95GAFA}.
A notion closely related to spectral flow appeared in 1952 in the work of \Lifshitz\ \cite{Li52UMN}.
Lifshitz introduced and developed a formalism of the spectral shift function $\xi(\lambda)$ of a pair of self-adjoint operators~$H_0$ and $H_1$ with finite rank difference
$V = H_1-H_0,$ which was defined by equality
\begin{equation} \label{F: def of xi by Lifshitz}
  \xi(\lambda) = \Tr\brs{E_\lambda^{H_1} - E_\lambda^{H_0}}.
\end{equation}
In particular, Lifshitz observed that the spectral shift function formally satisfies the following equality called the trace formula:
\begin{equation} \label{F: Lifshitz formula}
  \Tr\brs{f(H_1) - f(H_0)} = \int_{-\infty}^\infty f'(\lambda) \xi(\lambda)\,d\lambda.
\end{equation}
Lifshitz introduced the spectral shift function in connection with a problem of solid state physics, in which the initial operator~$H_0$
is the Hamiltonian of a pure crystal and~$V$ is the perturbation introduced by a point impurity, and his work had a formal character.
A mathematically rigorous theory of the spectral shift function was created one year later by \KreinMG\
in \cite{Kr53MS}. \Krein\ showed that for any pair of self-adjoint operators~$H_0$ and $H_1$ with trace-class difference $V = H_1 - H_0$ there exists
a unique (up to a set of zero measure, of course) integrable function $\xi(\lambda),$ such that for all functions $f$ from
a class which includes $C_c^2(\mbR),$ the trace formula~(\ref{F: Lifshitz formula}) holds.
\Krein\ also demonstrated in \cite{Kr53MS} by presenting a counter-example that the equality~(\ref{F: def of xi by Lifshitz}) cannot serve as a definition of the spectral shift function, since
the difference $E_\lambda^{H_1} - E_\lambda^{H_0}$ may fail to be trace-class. Further, a description of the largest class of functions $f$ for which
the trace formula~(\ref{F: Lifshitz formula}) holds was given by \Peller\ in terms of Besov spaces in \cite{PelFA85} (see also \cite{Fa75JSM}).
There is a big literature on the spectral shift function, see e.g. \cite{GM00JAnalM,GM03AA,Pu00PDE,Pu97AA,Si98PAMS}.

\Birman\ and \Solomyak\ showed in \cite{BS75SM} that for any self-adjoint operator~$H_0$ and any trace-class self-adjoint operator~$V$
the spectral shift function $\xi_{H_1,H_0}(\lambda)$ satisfies the equality
\begin{equation} \label{F: BS formula for xi}
  \xi(\lambda) = \frac d{d\lambda} \int_0^1 \Tr\brs{V E_\lambda^{H_r}}\,dr \quad \text{a.e.} \ \lambda,
\end{equation}
where \label{Page: Hr} \label{Page: r}
$$
  H_r = H_0+rV, \ \ r \in \mbR,
$$
and where $E_\lambda^{H}$ is the spectral projection of~$H$ corresponding to the interval $(-\infty,\lambda].$
If we are to interpret the spectral shift function $\xi(\lambda)$ as a distribution $\xi(\phi), \phi \in C_c^\infty(\mbR),$ the Birman-Solomyak
formula~(\ref{F: BS formula for xi}) can be rewritten as
\begin{equation} \label{F: BS formula (2)}
  \xi(\phi) = \int_0^1 \Tr\brs{V \phi(H_r)}\,dr \quad \forall \, \phi \in C_c^\infty(\mbR).
\end{equation}
The Birman-Solomyak formula~(\ref{F: BS formula for xi}) rewritten in the form~(\ref{F: BS formula (2)}) makes a clear connection between the integral formulas for spectral flow
(\ref{F: Getzler}),~(\ref{F: CP2 formula for SF}), etc and the spectral shift function: both of them are integrals of one-forms
\begin{equation} \label{F: one-form alpha(f)}
  \alpha^f_H(V) = \Tr(Vf(H))
\end{equation}
on a real affine space~$H_0 + \clA_0$ of self-adjoint operators, where $\clA_0$ is a real vector space of self-adjoint operators.
This connection was observed and used in \cite{ACS} to derive a general integral formula for spectral flow in the case
of self-adjoint operators~$H$ with compact resolvent and $\clA_0 = \clB_{sa}(\clH).$
It was shown in \cite{ACS} that the one-forms~(\ref{F: one-form alpha(f)}) are exact on the affine space $H+\clB_{sa}(\clH)$
for any compactly supported smooth function $f,$
and therefore integrals over all piecewise smooth continuous paths connecting~$H_0$ and~$H_0+V$ coincide and are equal to the right hand side of
(\ref{F: BS formula (2)}). Analogue of this result was proved in \cite{AS2} for the so-called trace-compatible perturbations, which include
self-adjoint operators with compact resolvent and bounded perturbations, as well as arbitrary self-adjoint operators
and trace-class perturbations. An affine space $\clA=H_0+\clA_0$ of self-adjoint operators is called trace-compatible if for any
operator $H \in \clA,$ any perturbation $V \in \clA_0,$ and any compactly supported continuous function $\phi$ the condition
$V \phi(H) \in \clL_1(\hilb)$ holds,
where $\clL_1(\hilb)$ is the class of operators with finite trace.
This definition was motivated by the distribution version~(\ref{F: BS formula (2)}) of the Birman-Solomyak formula
(\ref{F: BS formula for xi}), since trace-compatibility is the least requirement which one needs to impose
on operators~$H_0+rV$ to give sense to the integral in~(\ref{F: BS formula (2)}).

One of the important developments in the theory of the spectral shift function occurred when \Buslaev\ and \Faddeev\
observed in \cite{BF60DAN} a connection between the spectral shift function and the phase shift of the scattering matrix.
This connection for trace-class perturbations of self-adjoint operators was established by \Birman\ and \KreinMG\ in \cite{BK62DAN};
namely, for self-adjoint operators~$H_0$ and $H_1$ with trace-class difference $V = H_1-H_0$ they proved the formula
\begin{equation} \label{F: Birman-Krein formula}
  e^{-2\pi i \xi(\lambda)} = \det S(\lambda; H_1,H_0),
\end{equation}
where $S(\lambda; H_1,H_0)$ is the scattering matrix for the pair $(H_1,H_0)$ (see e.g. \cite{Ya}), definition of which follows in the next subsection, $\det$ is
the Fredholm determinant (see e.g. \cite[Chapter 4]{GK}, \cite[Chapter 3]{SimTrId2} or \cite[\S XIII.7]{RS4})
and $\xi(\lambda)$ is the spectral shift function of the pair $(H_1,H_0).$

\subsection{Scattering theory}
The scattering operator $\bfS(H_1,H_0)$ for a pair
of self-adjoint operators is defined by formula (see e.g. \cite{BW,RS3,Ya})
\begin{equation} \label{F: bfS=W*(+)W(-)}
  \bfS(H_1,H_0) = W_+^*(H_1,H_0)W_-(H_1,H_0),
\end{equation}
where the M\"oller wave operators $W_\pm(H_1,H_0)$ are defined, if they exist, as strong operator limits
\begin{equation} \label{F: W(pm) class-l def-n}
  W_\pm(H_1,H_0) = \lim_{t \to \pm \infty} e^{i t H_1} e^{-i t H_0}P^{(a)}(H_0),
\end{equation}
where $P^{(a)}(H_0)$ is the orthogonal projection onto the absolutely continuous subspace of $H_0$
(for definition, see e.g. \cite[Theorem VII.4]{RS1} and the definition preceding this theorem).
The classical Kato-Rosenblum theorem (\cite{KaPJA57,RoPJM57}, see also \cite[Theorem XI.8]{RS3}, \cite[Theorem 6.2.3]{Ya}) asserts that if the difference $H_1-H_0$ is trace-class, then the wave operators $W_\pm(H_1,H_0)$
exist and are therefore complete (by symmetry of the condition $H_1-H_0 \in \clL_1(\hilb)$),
which implies that the scattering operator~(\ref{F: bfS=W*(+)W(-)}) exists as well. Completeness of wave operators means that both operators $W_+(H_1,H_0)$ and
$W_-(H_1,H_0)$ are partial isometries whose initial space is the absolutely continuous subspace $\hilb^{(a)}(H_0)$ with respect to~$H_0$ and the final space is
the absolutely continuous subspace $\hilb^{(a)}(H_1)$ with respect to $H_1.$
One of the many versions of the Spectral Theorem asserts that, given a self-adjoint operator~$H_0,$
the absolutely continuous subspace $\hilb^{(a)}(H_0)$ of~$H_0$ admits representation as a direct integral of Hilbert spaces
\begin{equation} \label{F: euF: hilb(a) to direct int-l}
  \euF \colon \hilb^{(a)}(H_0) \to \int_{\hat \sigma_{H_0}}^\oplus \hlambda\,\rho(d\lambda),
\end{equation}
such that for any $f \in \hilb^{(a)}(H_0) \cap \dom(H_0)$ the equality
$$
  \euF(H_0 f)(\lambda) = \lambda \euF(f)(\lambda)
$$
holds for a.e. $\lambda \in \hat \sigma_{H_0},$
where $\hat \sigma_{H_0}$ is a core of the absolutely continuous spectrum of~$H_0,$
$\set{\hlambda, \,\lambda \in \hat \sigma_{H_0}}$ is a measurable family of Hilbert spaces, $\rho$ is an absolutely continuous Borel measure with Borel support $\hat \sigma_{H_0}$
and $\euF$ is a unitary operator; for definition of direct integral of Hilbert spaces see e.g. \cite{BW,BSbook}.
By Kato-Rosenblum theorem, the scattering operator $\bfS(H_1,H_0)$ is a partial isometry with initial and final space $\hilb^{(a)}(H_0),$
further, the scattering operator $\bfS(H_1,H_0)$ commutes with~$H_0;$ these properties of the scattering operator imply (see e.g. \cite{BE,BYa92AA,BYa92AA2,Ya})
that in the spectral representation~(\ref{F: euF: hilb(a) to direct int-l}) of the absolutely continuous part of the Hilbert space
the scattering operator~(\ref{F: bfS=W*(+)W(-)}) is represented by a direct integral
\begin{equation} \label{F: bfS = int S(l)dl}
  \bfS(H_1,H_0) = \int_{\hat \sigma_{H_0}}^\oplus S(\lambda; H_1,H_0)\,\rho(d\lambda),
\end{equation}
where $\set{S(\lambda; H_1,H_0),\,\lambda \in \hat \sigma_{H_0}}$ is a measurable family of unitary operators on fiber Hilbert spaces~$\hlambda.$
The spectral parameter~$\lambda$ has physical meaning of energy~$E;$ the fiber Hilbert space~$\hlambda$ is often called an energy shell.
Physicists call the unitary operator $S(\lambda; H_1,H_0)$ the on-shell scattering operator, while the scattering operator $\bfS(H_1,H_0)$ itself
is called the off-shell scattering operator (see e.g. \cite[\S 3-b]{TayST}, see also \cite[Theorem XI.42]{RS3} and a discussion followed after this theorem).
In physics there is a famous stationary formula
mainly due to B.\,Lippmann and J.\,Schwinger \cite{LippSch50} and Gell'Mann-Goldberger \cite{GellGold53}
for the on-shell scattering operator (see e.g. \cite{TayST}, \cite[Theorem XI.42]{RS3})
\begin{equation} \label{F: stationary formula from Taylor}
  \begin{split}
    \la \mathbf p'| \mathsf S | \mathbf p \ra & = \delta_3(\mathbf p' - \mathbf p) - 2\pi i \delta(E_{p'}-E_p) \la \mathbf p'| V(1-G^0(E_p+i0)V)^{-1}| \mathbf p \ra
    \\ & = \delta_3(\mathbf p' - \mathbf p) - 2\pi i \delta(E_{p'}-E_p) \la \mathbf p'| (V+VG(E_p+i0)V)| \mathbf p \ra,
  \end{split}
\end{equation}
which follows from combination of \cite[(3.7), (8.11) and (8.22)]{TayST}. This is a version of the stationary formula for one spinless particle, being scattered
by a potential $V;$ there are stationary formulas for particles with a spin and for multi-particle systems as well, see e.g. \cite{TayST}.

In rigorous mathematical notation the stationary formula~(\ref{F: stationary formula from Taylor}) for a self-adjoint operator~$H_0$ and its trace-class perturbation $H_1=H_0+V$ should have been written as
\begin{equation} \label{F: unrigorous stationary formula}
  S(\lambda; H_1,H_0) = 1 - 2 \pi i \,\euF_\lambda\, V(1-R_{\lambda+i0}(H_0)V)^{-1} \euF^*_\lambda,
\end{equation}
where $\euF_\lambda \colon \hilb^{(a)}(H_0) \to \hlambda$ is a fiber of the unitary operator~(\ref{F: euF: hilb(a) to direct int-l}).
But, unfortunately, the expression on the right hand side of~(\ref{F: unrigorous stationary formula})
does not make sense for two reasons: firstly, the limit of the resolvent $R_{\lambda+i0}(H_0) := (H_0-\lambda-i0)^{-1}$
does not in general exist even in the weakest of all reasonable topologies (for a discussion of this question see e.\,g.~\cite[\S 6.1]{Ya}),
and secondly, the operator $\euF_\lambda$ is not well-defined for a particular value of $\lambda.$

A mathematically rigorous version of the stationary formula for the scattering matrix~(\ref{F: stationary formula from Taylor})
was established by \Faddeev\ \cite{Fa64} (see also \cite{LF58}) in the setting of Friedrichs-Faddeev model \cite{Fr38,Fr48,FrBook,Ya}.
In Friedrichs-Faddeev model the initial self-adjoint operator $H_0$ is an operator of multiplication by
the independent variable $x$ in the Hilbert space $L_2[a,b; \mathfrak h],$ $-\infty \leq a < b \leq \infty,$
of square-integrable $\mathfrak h$-valued functions, where $\mathfrak h$ is a fixed Hilbert space,
and the perturbation operator~$V$ is an integral operator
$$Vf(x) = \int_a^b v(x,y)f(y)\,dy,$$ with sufficiently regular kernel $v \colon [a,b]^2 \to \mathfrak h.$
A detailed exposition of stationary scattering theory for Friedrichs-Faddeev model can be found in \cite[Chapter 4]{Ya}.

Another important setting is short range potential scattering theory, see e.g. \cite{Povz53,Povz55,Ikebe60,Kato69,Agm,KuJMSJ73I,KuJMSJ73II}; expositions of this theory and literature can be found in
\cite{Agm,Kur}, see also \cite{Ya2001}. In potential scattering theory the initial operator $H_0$ is
the Laplace operator
\begin{equation} \label{F: H0=-Delta}
  H_0 f = - \Delta f
\end{equation}
on the Hilbert space $L_2(\mbR^n),$ where the domain of $H_0$ is the Sobolev space $\mathsf H_2(\mbR^n)$ (see e.g. \cite[IX.6]{RS2} for definition);
a short range perturbation~$V$ is an operator of multiplication by a measurable function $q \colon \mbR^n \to \mbR,$ which satisfies an estimate
$\abs{q(x)} \leq C (1+\abs{x}^2)^{-\rho/2},$ where $\rho > 1$ (in \cite{Agm} short range potentials are defined by a weaker condition
of integral type). The perturbed operator~$H$ is the \Schrodinger operator
\begin{equation} \label{F: Hu=H0(u)+qu}
  Hu(x) = -\Delta u(x) + q(x)u(x).
\end{equation}
In this case the spectral structure of the initial operator $H_0$ is completely transparent since $H_0$ can be diagonalized by the Fourier transform $\euF,$ that is,
\begin{equation} \label{F: H0=F* xi2 F}
  H_0 = \euF^* M_{\abs{\xi}^2}\euF,
\end{equation}
where $M_{\abs{\xi}^2}$ is the operator of multiplication by $\abs{\xi}^2.$ So, in this case $\hilb^{(a)}(H_0) = \hilb$ and in the decomposition
(\ref{F: euF: hilb(a) to direct int-l}) one can take a core of the absolutely continuous spectrum $\hat \sigma_{H_0}$ to be $(0,\infty),$
the measure $\rho(d\lambda)$ to be Lebesgue measure $d\lambda$ and the fiber Hilbert space~$\hlambda$ to be $L_2(\Sigma_{\sqrt \lambda}),$
where $\Sigma_{\sqrt \lambda} = \set{\xi \in \mbR^n_\xi \colon \abs{\xi} = \sqrt{\lambda}}$ is the sphere with surface measure inherited from $\mbR^n_\xi.$
The scattering operator~(\ref{F: bfS=W*(+)W(-)}) for the pair of operators $(H,H_0)$ given by~(\ref{F: Hu=H0(u)+qu}) and~(\ref{F: H0=-Delta})
exists and it admits the decomposition~(\ref{F: bfS = int S(l)dl}). Further, for all $\lambda > 0$
with possible exception of a discrete subset $e_+(H)$ \label{Page e+(H)} of positive values of~$\lambda$ the stationary formula for the scattering matrix holds in the following form
\begin{equation} \label{F: stationary formula for Schrodinger op-r}
  \begin{split}
    S(\lambda) & = 1 - 2\pi i c(\lambda) \gamma_0(\lambda) \euF V(1 + R_{\lambda+i0}(H_0)V)^{-1}\euF^* \gamma_0^\diamondsuit(\lambda)
    \\ & = 1 - 2\pi i c(\lambda) \gamma_0(\lambda) \euF (V - V R_{\lambda+i0}(H)V)\euF^* \gamma_0^\diamondsuit(\lambda),
  \end{split}
\end{equation}
explanation of which follows\footnote{A sign mismatch in formulas~(\ref{F: stationary formula from Taylor}) and
(\ref{F: stationary formula for Schrodinger op-r}) comes from definitions of the resolvent $R_z(H) = (H-z)^{-1}$ and of the Green operator $G(z) = (z-H)^{-1},$ as it is defined in \cite[\S 8-a]{TayST}}
(for details see \cite{Agm,Kur}). Firstly, here $c(\lambda)$ is a constant which occurs as a result of change from Cartesian coordinates
to polar coordinates in the momentum space~$\mbR^n_\xi.$ For any $s \in \mbR$ let $L_{2,s}(\mbR^n)$
be the weighted Hilbert space of measurable functions $u \colon \mbR^n \to \mbC$ for which
$$
  \norm{u}_{0,s} := \int_{\mbR^n} \abs{u(x)}(1+\abs{x}^2)^{s/2}\,dx < \infty,
$$
and let
$$
  \mathsf H_{m,s}(\mbR^n) = \set{u \colon D^\alpha u \in L_{2,s}(\mbR^n), 0 \leq \abs{\alpha} \leq m}
$$
be the weighted Sobolev space with norm $$\norm{u}_{m,s} = \brs{\sum_{\abs{\alpha}\leq m}\norm{D^\alpha u}_{0,s}^2}^{1/2}.$$
A rigorous treatment of the stationary formula in potential scattering theory is based on the following theorems, proofs of which can be found in \cite{Agm,Kur}.
In general, a form of the Limiting Absorption Principle is of the utmost importance for stationary scattering theory.
\begin{thm} \label{T: Agmon potentials}
If $q(x)$ is a short range potential, then there exists $\eps'>0$ such that
for any $s \in \mbR$ and for all $\eps \in (0,\eps')$
the operator of multiplication by $q(x)$ is a compact operator from the Hilbert space $\mathsf H_{2,s}(\mbR^n)$ to the Hilbert
space $L_{2,1+s+\eps}(\mbR^n).$
\end{thm}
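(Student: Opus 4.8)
The plan is to peel off the polynomial weights and reduce the assertion to the elementary fact that multiplication by a bounded function vanishing at infinity is a compact operator from $\mathsf H_2(\mbR^n)$ into $L_2(\mbR^n)$.

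First I would record that, with equivalent norms,
\[
  \mathsf H_{2,s}(\mbR^n) = (1+\abs{x}^2)^{-s/2}\,\mathsf H_2(\mbR^n), \qquad L_{2,t}(\mbR^n) = (1+\abs{x}^2)^{-t/2}\,L_2(\mbR^n) .
\]
Indeed, from the Leibniz rule and the pointwise bound $\abs{D^\beta (1+\abs{x}^2)^{s/2}} \leq C_{s,\beta}(1+\abs{x}^2)^{(s-\abs{\beta})/2} \leq C_{s,\beta}(1+\abs{x}^2)^{s/2}$, multiplication by $(1+\abs{x}^2)^{s/2}$ is a topological isomorphism of $\mathsf H_{2,s}(\mbR^n)$ onto $\mathsf H_2(\mbR^n)$ (and of $L_{2,t}$ onto $L_2$). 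Conjugating the multiplication operator $M_q$ by these weight isomorphisms, $M_q \colon \mathsf H_{2,s}(\mbR^n) \to L_{2,1+s+\eps}(\mbR^n)$ is compact if and only if $M_w \colon \mathsf H_2(\mbR^n) \to L_2(\mbR^n)$ is compact, where $w(x) := (1+\abs{x}^2)^{(1+\eps)/2}\,q(x)$; since the surviving exponent $(1+s+\eps) - s = 1+\eps$ carries no $s$-dependence, the threshold $\eps'$ found below is automatically independent of $s$.

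Next I would take $\eps' := \rho - 1 > 0$. For $\eps \in (0,\eps')$ the short-range bound $\abs{q(x)} \leq C(1+\abs{x}^2)^{-\rho/2}$ yields
\[
  \abs{w(x)} \leq C\,(1+\abs{x}^2)^{-(\rho-1-\eps)/2}, \qquad \rho - 1 - \eps > 0 ,
\]
so $w \in L_\infty(\mbR^n)$ and $w(x) \to 0$ as $\abs{x} \to \infty$; it is precisely the strict inequality $\eps < \rho-1$ that is exploited here. It then remains to prove that $M_w \colon \mathsf H_2(\mbR^n) \to L_2(\mbR^n)$ is compact for such $w$, and I would do this by truncation. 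Put $w_R := w\,\mathbf 1_{\set{\abs{x} \leq R}}$. Since $\norm{w - w_R}_{L_\infty} = \sup_{\abs{x}>R}\abs{w(x)} \to 0$ as $R \to \infty$ and the embedding $\mathsf H_2(\mbR^n) \hookrightarrow L_2(\mbR^n)$ is bounded, $M_{w_R} \to M_w$ in operator norm; and each $M_{w_R}$ is compact, because it factors as
\[
  \mathsf H_2(\mbR^n) \longrightarrow \mathsf H_2(B_R) \longrightarrow L_2(B_R) \longrightarrow L_2(\mbR^n),
\]
$B_R = \set{\abs{x}<R}$, where the first map is restriction, the middle one is the compact Rellich--Kondrachov embedding on the bounded domain $B_R$, and the last is multiplication by the bounded function $w_R$ followed by extension of functions by zero. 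A norm limit of compact operators being compact, $M_w$ is compact, and the theorem follows. Equivalently, since $(1-\Delta)^{-1}$ maps $L_2(\mbR^n)$ isomorphically onto $\mathsf H_2(\mbR^n)$, it suffices that $M_w(1-\Delta)^{-1} = w(x)\,g(-i\nabla)$, $g(\xi) = (1+\abs{\xi}^2)^{-1}$, be compact, which is the standard lemma that $f(x)\,g(-i\nabla)$ is compact whenever the bounded functions $f$ and $g$ both vanish at infinity.

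The one genuinely delicate point is the bookkeeping of the first step --- keeping track of how derivatives distribute over the polynomial weights so that $\mathsf H_{2,s}(\mbR^n)$ really does factor as a weight times the flat Sobolev space. This is also what pins down the admissible range: $M_q$ maps $\mathsf H_{2,s}(\mbR^n)$ boundedly into $L_{2,s+\rho}(\mbR^n)$, loosening the target weight by any $\delta>0$ upgrades this to compactness, and the theorem is the case $\delta = \rho - 1 - \eps$, which is positive exactly when $\eps < \rho-1$; everything downstream of the weight reduction is soft functional analysis (Rellich compactness on balls plus a norm-limit argument).
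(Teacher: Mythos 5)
Your proof is correct: the reduction $\mathsf H_{2,s}(\mbR^n)=(1+\abs{x}^2)^{-s/2}\mathsf H_2(\mbR^n)$, $L_{2,t}(\mbR^n)=(1+\abs{x}^2)^{-t/2}L_2(\mbR^n)$ via the Leibniz estimates is sound, the conjugated symbol $w=(1+\abs{x}^2)^{(1+\eps)/2}q$ indeed loses all $s$-dependence, and with $\eps'=\rho-1$ the truncation/Rellich argument (or the $f(x)g(-i\nabla)$ lemma) gives compactness of $M_w\colon \mathsf H_2\to L_2$. Note that the paper itself does not prove Theorem~\ref{T: Agmon potentials}; it is quoted as background with a citation to Agmon and Kuroda, and your argument is essentially the standard weight-conjugation-plus-Rellich proof found in those sources, so there is no genuinely different route to compare against here.
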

\begin{thm} \label{T: LAP for -Delta} (The Limiting Absorption Principle for $-\Delta,$ see \cite[Theorem 4.1]{Agm}, \cite[\S 4.4]{Kur})
Let $H_0 = -\Delta$ with domain $\mathsf H_2(\mbR^n).$
For any $s>1/2$ and for any $\lambda > 0$ the resolvents $R_{\lambda\pm iy}(H_0)$ as operators from $L_{2,s}(\mbR^n)$
to $\mathsf H_{2,-s}(\mbR^n)$ converge in the uniform operator topology as $y \to 0,$
so the bounded operators $R_{\lambda\pm i0}(H_0) \in \clB(L_{2,s}(\mbR^n), \mathsf H_{2,-s}(\mbR^n))$ exist.
\end{thm}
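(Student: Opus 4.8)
\subsection*{Proof proposal}

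The plan is to reduce the statement, via the Fourier transform, to a norm estimate on a multiplication operator with a simple pole, and then to localize near the singular sphere $\Sigma_{\sqrt\lambda}$ and combine a Sobolev trace (restriction) theorem with the Sokhotski--Plemelj formula.

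First I would pass to the Fourier representation~(\ref{F: H0=F* xi2 F}). Since $\euF$ carries multiplication by $(1+|x|^2)^{s/2}$ to the Bessel potential $(1-\Delta_\xi)^{s/2}$, it is an isometry (up to a constant) of $L_{2,s}(\mbR^n_x)$ onto the ordinary Sobolev space $\mathsf H_s(\mbR^n_\xi)$, and it carries $\mathsf H_{2,-s}(\mbR^n_x)$ onto $\langle\xi\rangle^{-2}\mathsf H_{-s}(\mbR^n_\xi)$. Under this identification $R_{\lambda\pm iy}(H_0)$ becomes multiplication by $(|\xi|^2-\lambda\mp iy)^{-1}$, so the assertion is equivalent to: multiplication by $m_{\lambda\pm iy}(\xi):=\langle\xi\rangle^2(|\xi|^2-\lambda\mp iy)^{-1}$ is bounded from $\mathsf H_s(\mbR^n_\xi)$ to $\mathsf H_{-s}(\mbR^n_\xi)$, with norm uniformly bounded for small $y>0$, and converges in $\clB(\mathsf H_s,\mathsf H_{-s})$ as $y\to0^+$. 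Fix a smooth cut-off $\chi$ equal to $1$ near $\Sigma_{\sqrt\lambda}$ and supported in a thin neighbourhood of it, away from $\xi=0$ (which is harmless since $\lambda>0$). On the region where $1-\chi$ is supported the symbol $m_{\lambda\pm iy}$ is smooth with all derivatives bounded uniformly in small $y$ and converges in every $C^k$-norm as $y\to0$, so multiplication by $(1-\chi)m_{\lambda\pm iy}$ is bounded from $\mathsf H_s$ to itself, hence to $\mathsf H_{-s}$ via the continuous inclusion $\mathsf H_s\hookrightarrow\mathsf H_{-s}$, and converges in operator norm; this part is routine. It remains to treat $\chi(\xi)(|\xi|^2-\lambda\mp iy)^{-1}$.

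For the localized piece I would test its sesquilinear form against $u,v\in\mathsf H_s$ and pass to polar coordinates $\xi=r\omega$, $r=|\xi|$, $\omega\in S^{n-1}$. The key input is the trace theorem on spheres: for $s>1/2$ the map $u\mapsto\bigl(r\mapsto u(r\,\cdot)|_{S^{n-1}}\bigr)$ sends $\mathsf H_s(\mbR^n)$ into the space of $L_2(S^{n-1})$-valued functions of $r$ that are H\"older continuous of some exponent $\theta>0$ near $r=\sqrt\lambda$, with H\"older norm bounded by $C\|u\|_{\mathsf H_s}$; this is precisely where the hypothesis $s>1/2$ is used. Hence the scalar function $w(r):=r^{n-1}\int_{S^{n-1}}\chi(r\omega)\,u(r\omega)\overline{v(r\omega)}\,d\omega$ is H\"older continuous and compactly supported near $r=\sqrt\lambda$, with $\|w\|_{C^{0,\theta}}\le C\|u\|_{\mathsf H_s}\|v\|_{\mathsf H_s}$. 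Factoring $|\xi|^2-\lambda=(r-\sqrt\lambda)(r+\sqrt\lambda)$ and substituting $t=r-\sqrt\lambda$ reduces the form to a one-dimensional Cauchy integral $\int\tilde w(t)\,(t-i\varepsilon)^{-1}\,dt$ with $\tilde w\in C^{0,\theta}_c(\mbR)$ and $\varepsilon\sim y/(2\sqrt\lambda)$. Writing $\tilde w(t)=(\tilde w(t)-\tilde w(0))+\tilde w(0)$ and using $|\tilde w(t)-\tilde w(0)|\le\|\tilde w\|_{C^{0,\theta}}\,|t|^{\theta}$, dominated convergence gives the limit $\mathrm{p.v.}\!\int\tilde w(t)\,t^{-1}\,dt+i\pi\tilde w(0)$, shows that this convergence is uniform over the unit ball of $C^{0,\theta}_c$ with support in a fixed interval, and shows that the limiting functional is bounded there. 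Feeding back $\|\tilde w\|_{C^{0,\theta}}\le C\|u\|_{\mathsf H_s}\|v\|_{\mathsf H_s}$ yields uniform boundedness and operator-norm convergence of the localized multiplier from $\mathsf H_s$ to $\mathsf H_{-s}$; together with the routine off-sphere part this proves the statement for $R_{\lambda+iy}(H_0)$, and the case of $R_{\lambda-iy}(H_0)$ follows by complex conjugation (equivalently, by replacing $y$ with $-y$).

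The step I expect to be the main obstacle is the trace estimate, and in particular its quantitative, radius-dependent form: one needs not merely that functions in $\mathsf H_s(\mbR^n)$ restrict to spheres when $s>1/2$, but that this restriction depends H\"older continuously on the radius, with a norm bound robust enough to survive the subsequent Plemelj limit; this is the only genuinely sharp ingredient, the Fourier reduction, the near/far cut-off splitting, and the $C^{0,\theta}$-version of the Sokhotski--Plemelj formula being essentially mechanical. An alternative route, avoiding the explicit trace analysis, is Mourre's commutator method with conjugate operator $A=\tfrac12(x\cdot\nabla+\nabla\cdot x)$; but for $H_0=-\Delta$ the Fourier-analytic argument above is the most direct.
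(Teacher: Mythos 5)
This theorem is not proved in the paper at all: it is quoted as background from Agmon \cite[Theorem 4.1]{Agm} and Kuroda \cite[\S 4.4]{Kur}, so there is no internal proof to compare against. Your argument is, in essence, the classical proof found in those references: conjugate by the Fourier transform so that $L_{2,s}$ becomes $\mathsf H_s$ and the resolvent becomes a multiplier with a simple singularity on $\Sigma_{\sqrt\lambda}$, split off the elliptic region by a cut-off, and handle the singular piece by the sphere-restriction (trace) estimate for $s>1/2$ with H\"older dependence on the radius, followed by a Privalov/Sokhotski--Plemelj limit on the resulting one-dimensional Cauchy integral; passing from uniform convergence of the sesquilinear forms to convergence in $\clB(\mathsf H_s,\mathsf H_{-s})$ is legitimate since that operator norm is the supremum of the form over unit balls. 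The outline is correct, and you correctly identify the radius-dependent trace estimate as the only sharp ingredient. One step deserves slightly more care than your wording suggests: after factoring $|\xi|^2-\lambda=(r-\sqrt\lambda)(r+\sqrt\lambda)$ the regularization is $\bigl(t(t+2\sqrt\lambda)\mp iy\bigr)^{-1}$, whose imaginary shift is \emph{not} a constant multiple of $i$ after the substitution $t=r-\sqrt\lambda$; you should either locate the root of $t(t+2\sqrt\lambda)\mp iy=0$ near $t=0$ (it approaches $0$ nontangentially, with imaginary part $\sim \pm y/(2\sqrt\lambda)$) and invoke the Plemelj formula for such nontangential approach, or absorb the smooth nonvanishing factor $(r+\sqrt\lambda)$ before taking the limit. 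This is a routine repair, so the proposal stands as a correct reconstruction of the standard Agmon--Kuroda argument rather than a new route.
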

\begin{thm} \label{T: Agmon 3.1} (see e.g. \cite[Theorem 3.1]{Agm}, \cite[Theorem XIII.33]{RS4})
Let $H = -\Delta+V$ be a \Schrodinger operator with domain $\mathsf H_2(\mbR^n),$
where~$V$ is a short range potential.
The set $e_+(H)$ of positive eigenvalues of~$H$ is a discrete subset of $(0,\infty),$ all eigenvalues from $e_+(H)$ have finite multiplicity
and the only possible limit points of $e_+(H)$ are 0 and $\infty.$
\end{thm}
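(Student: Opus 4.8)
\emph{Sketch of proof.} The plan is to recast the eigenvalue equation $Hu=\lambda u$ as a Fredholm equation on weighted Sobolev spaces and then read off discreteness and finite multiplicity from the analytic Fredholm theorem, using Theorems~\ref{T: Agmon potentials} and~\ref{T: LAP for -Delta} as the two analytic inputs. I would fix $s\in(1/2,1)$ together with a small $\eps>0$ for which Theorem~\ref{T: Agmon potentials}, applied with parameter $-s$, shows that multiplication by $q$ is compact from $\mathsf H_{2,-s}(\mbR^n)$ into $L_{2,1-s+\eps}(\mbR^n)\subset L_{2,s}(\mbR^n)$. Composing with the bounded operators $R_{\lambda\pm i0}(H_0)\colon L_{2,s}(\mbR^n)\to\mathsf H_{2,-s}(\mbR^n)$ supplied by Theorem~\ref{T: LAP for -Delta} then produces, for each $\lambda>0$, a compact operator $A(\lambda):=R_{\lambda+i0}(H_0)\,q$ on $\mathsf H_{2,-s}(\mbR^n)$; moreover $z\mapsto R_z(H_0)\,q$ is an operator-norm holomorphic family of compact operators on $\set{\mathrm{Im}\,z>0}$.

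Next I would show that every positive eigenvalue of $H$ produces a nonzero kernel vector of $I+A(\lambda)$. If $Hu=\lambda u$ with $0\ne u\in\mathsf H_2(\mbR^n)\subset\mathsf H_{2,-s}(\mbR^n)$, then $(H_0-\lambda-iy)u=-qu-iy u$ for $y>0$, so
$$
  u=-R_{\lambda+iy}(H_0)\,qu-iy\,R_{\lambda+iy}(H_0)\,u .
$$
As $y\to 0^+$ the first term converges in $\mathsf H_{2,-s}(\mbR^n)$ to $-A(\lambda)u$ by Theorem~\ref{T: LAP for -Delta}, since $qu$ is a fixed element of $L_{2,s}(\mbR^n)$, while $iy\,R_{\lambda+iy}(H_0)\to -E_{\set\lambda}^{H_0}=0$ strongly on $L_2(\mbR^n)$, because $H_0=-\Delta$ has purely absolutely continuous spectrum. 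Hence $(I+A(\lambda))u=0$, so the eigenspace of $H$ at $\lambda$ injects into $\ker(I+A(\lambda))$, which is finite-dimensional by compactness of $A(\lambda)$ and the Fredholm alternative; this already yields finite multiplicity of every positive eigenvalue.

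For discreteness I would apply the analytic Fredholm theorem to the index-zero Fredholm family $z\mapsto I+R_z(H_0)\,q$. Here one uses more than Theorem~\ref{T: LAP for -Delta}: because the kernel of $R_z(-\Delta)$ is analytic in $\sqrt z$, the family $z\mapsto R_z(H_0)\,q$, initially holomorphic on $\set{\mathrm{Im}\,z>0}$, continues holomorphically across $(0,\infty)$ to a connected open set $\Omega$ on an appropriate cover of $\mbC\setminus\set0$ that contains $(0,\infty)$ in its interior, still with values compact perturbations of the identity. On the upper-half-plane part of $\Omega$ the operator $I+R_z(H_0)q$ is invertible --- e.g.\ at $z=i\tau$ with $\tau$ large, where $\norm{R_{i\tau}(H_0)q}\to0$ --- so by the analytic Fredholm theorem the exceptional set $\set{z\in\Omega\colon I+R_z(H_0)q\text{ is not invertible}}$ has no accumulation point in $\Omega$, in particular none on $(0,\infty)$. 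Since $e_+(H)$ is contained in this exceptional set by the previous paragraph, $e_+(H)$ is discrete in $(0,\infty)$ and can accumulate only at $0$ and $+\infty$, which is Theorem~\ref{T: Agmon 3.1}.

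The step I expect to be the real obstacle is precisely this holomorphic continuation through the continuous spectrum: Theorem~\ref{T: LAP for -Delta} alone only gives norm-continuity up to the boundary, and mere boundary continuity of an analytic Fredholm family does not by itself prevent its exceptional set from clustering on the boundary ray, so one genuinely needs the structure-specific analyticity of the free resolvent in $\sqrt z$. A minor point still to be checked is that ``$I+A(\lambda)$ is not invertible'' is weaker than ``$\lambda\in e_+(H)$'' --- it also registers resonances, i.e.\ solutions of $(I+A(\lambda))u=0$ lying in $\mathsf H_{2,-s}(\mbR^n)\setminus L_2(\mbR^n)$ --- but this only enlarges the exceptional set, which is harmless for the upper bound we need.
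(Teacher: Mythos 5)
The paper does not prove Theorem~\ref{T: Agmon 3.1}; it is quoted as background from Agmon and Reed--Simon, so your proposal has to be measured against the standard proof in those references. The first half of your sketch is essentially that standard argument and is sound: with $s$ slightly above $1/2$, Theorems~\ref{T: Agmon potentials} and~\ref{T: LAP for -Delta} make $A(\lambda)=R_{\lambda+i0}(H_0)q$ compact on $\mathsf H_{2,-s}(\mbR^n)$, your limit computation showing $(I+A(\lambda))u=0$ for an $L_2$-eigenfunction is correct (the term $iy\,R_{\lambda+iy}(H_0)u$ tends to $0$ strongly because $-\Delta$ has purely absolutely continuous spectrum), and the Fredholm alternative gives finite multiplicity of each positive eigenvalue.

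The discreteness half has a genuine gap, and it sits exactly where you flagged the obstacle: the holomorphic continuation of $z\mapsto R_z(H_0)q$ across $(0,\infty)$, with values compact operators on a polynomially weighted space, does not exist under the hypotheses of the theorem. The free resolvent kernel does continue in $\sqrt z$ (with an extra logarithmic branch in even dimensions), but on the second sheet it grows exponentially in $|x-y|$ (in dimension three the continued kernel is $e^{i\sqrt z |x-y|}/(4\pi|x-y|)$ with $\Im\sqrt z<0$), so composing with multiplication by a potential decaying only like $(1+|x|^2)^{-\rho/2}$, $\rho>1$, does not give a bounded, let alone compact, operator from $L_{2,s}$ to $\mathsf H_{2,-s}$; such continuations are available only for exponentially decaying, compactly supported, or dilation-analytic potentials, i.e.\ in the setting of resonance theory. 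And, as you note yourself, mere norm-continuity of $A(\lambda)$ on $(0,\infty)$ --- which is all Theorem~\ref{T: LAP for -Delta} provides --- cannot prevent the non-invertibility set from accumulating in $(0,\infty)$. The cited proofs close the argument by a different idea: if eigenvalues $\lambda_j$ (with multiplicity) accumulated at some $\lambda_0>0$, the corresponding orthonormal eigenfunctions $u_j$ are bounded in $\mathsf H_2$ and weakly null, hence $qu_j\to0$ in $L_{2,s}$ by compactness of multiplication by $q$, hence $u_j=-R_{\lambda_j+i0}(H_0)qu_j\to0$ in $L_{2,-s}$ using local uniformity of the limiting absorption bounds; the contradiction with $\|u_j\|_{L_2}=1$ then comes from an a priori decay estimate for embedded eigenfunctions (eigenfunctions with eigenvalues in a compact subset of $(0,\infty)$ decay faster than any polynomial, uniformly), which upgrades smallness in $L_{2,-s}$ to smallness in $L_2$. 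That uniform decay estimate, not analytic continuation of the resolvent, is the missing ingredient in your sketch; your side remark about the exceptional set possibly being larger than $e_+(H)$ is harmless (in fact Agmon shows the two sets coincide for $\lambda>0$), but it does not repair the continuation step.
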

\begin{thm} \label{T: LAP for -Delta+V} (The Limiting Absorption Principle for $-\Delta+V,$ see \cite[Theorem 4.2]{Agm}, \cite[\S 5.3]{Kur})
Let $H = -\Delta+V$ be a \Schrodinger operator with short range potential $V.$
For any $s>1/2$ and for any $\lambda > 0$ not in $e_+(H)$ the resolvents $R_{\lambda\pm iy}(H)$ as operators from $L_{2,s}(\mbR^n)$ to $\mathsf H_{2,-s}(\mbR^n)$ converge
in the uniform operator topology as $y \to 0,$
so the bounded operators $R_{\lambda\pm i0}(H) \in \clB(L_{2,s}(\mbR^n), \mathsf H_{2,-s}(\mbR^n))$ exist.
\end{thm}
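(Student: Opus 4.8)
The plan is the classical Agmon--Kuroda reduction to a compact perturbation on $L_2(\mbR^n)$, using Theorem~\ref{T: LAP for -Delta} and the short-range decay of $q$ as the only non-trivial inputs. First I would factor the potential as $V = GUG$, where $G$ is multiplication by $\abs{q}^{1/2}$ and $U$ is multiplication by $\sign q$; since $\abs{q(x)} \leq C(1+\abs{x}^2)^{-\rho/2}$ with $\rho > 1$, the function $\abs{q}^{1/2}$ decays like $(1+\abs{x}^2)^{-\rho/4}$, and I fix once and for all a weight $s$ with $\frac12 < s < \frac\rho2$ (nonempty since $\rho>1$). With this choice $G \colon L_2 \to L_{2,s}$ is bounded, while $G \colon \mathsf H_{2,-s} \to L_2$ is compact (it factors through the compact embedding $\mathsf H_{2,-s} \hookrightarrow L_{2,-s'}$, $s' < s$, followed by multiplication by $\abs{q}^{1/2}$, which is bounded $L_{2,-s'} \to L_2$ because $s' < \rho/2$). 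Then, for $y \neq 0$, one sets
\begin{equation} \label{F: sandwiched T def}
  T(\lambda+iy) := G\,R_{\lambda+iy}(H_0)\,G \in \clB(L_2),
\end{equation}
and by Theorem~\ref{T: LAP for -Delta}, composing the norm-convergent family $R_{\lambda+iy}(H_0) \colon L_{2,s} \to \mathsf H_{2,-s}$ with the fixed bounded maps $G$ on either side shows that $T(\lambda+iy)$ is a \emph{compact} operator on $L_2$ converging in operator norm, for each fixed $\lambda > 0$, to a compact operator $T(\lambda+i0)$ as $y \to 0^+$; similarly from below.

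The next step is the sandwiched resolvent identity. For $y \neq 0$ one has $\lambda+iy \notin \sigma(H)$, so $R_{\lambda+iy}(H)$ is bounded, and the second resolvent identity $R_z(H) = R_z(H_0) - R_z(H_0)VR_z(H)$ together with $V = GUG$ yields, after rearranging, that $1 + U\,T(\lambda+iy)$ is invertible on $L_2$ with inverse $1 - U\,G\,R_{\lambda+iy}(H)\,G$, and
\begin{equation} \label{F: sandwiched resolvent}
  R_{\lambda+iy}(H) = R_{\lambda+iy}(H_0) - R_{\lambda+iy}(H_0)\,G\,\bigl(1 + U\,T(\lambda+iy)\bigr)^{-1}U\,G\,R_{\lambda+iy}(H_0),\qquad y \neq 0.
\end{equation}
I would then read~(\ref{F: sandwiched resolvent}) as an identity between operators from $L_{2,s}$ to $\mathsf H_{2,-s}$. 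By Theorem~\ref{T: LAP for -Delta} the factors $R_{\lambda+iy}(H_0)$, $G\,R_{\lambda+iy}(H_0)$ and $R_{\lambda+iy}(H_0)\,G$ converge in $\clB(L_{2,s},\mathsf H_{2,-s})$, $\clB(L_{2,s},L_2)$ and $\clB(L_2,\mathsf H_{2,-s})$ respectively as $y \to 0^+$. Hence the whole right-hand side of~(\ref{F: sandwiched resolvent}) converges in $\clB(L_{2,s},\mathsf H_{2,-s})$, and the theorem follows, \emph{provided} $1 + U\,T(\lambda+i0)$ is invertible on $L_2$; the limit $y \to 0^-$ is treated identically.

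It thus remains to prove that $1 + U\,T(\lambda+i0)$ is invertible on $L_2$ whenever $\lambda > 0$ and $\lambda \notin e_+(H)$. Since $U\,T(\lambda+i0)$ is compact, by the Fredholm alternative it is enough to show that its kernel is trivial. Suppose $\phi \in L_2$ satisfies $\phi = -U\,G\,R_{\lambda+i0}(H_0)\,G\phi$, and set $\psi := R_{\lambda+i0}(H_0)\,G\phi \in \mathsf H_{2,-s}$. Then $G\phi = -GUG\psi = -V\psi$, so $\psi = -R_{\lambda+i0}(H_0)V\psi$, which says $(H - \lambda)\psi = 0$ in the sense of distributions. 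Moreover $G\phi = \abs{q}^{1/2}\phi \in L_{2,t}$ for every $t < \rho/2$, in particular for some $t > \frac12$, so $\psi = R_{\lambda+i0}(H_0)(G\phi)$ is an \emph{outgoing} generalized solution, i.e. one satisfying the Sommerfeld radiation condition. At this point the Rellich uniqueness theorem together with Kato's theorem on the absence of positive eigenvalues (for short-range $q$ this is part of Agmon's theory) forces $\psi \in L_2$; since $\lambda \notin e_+(H)$ and $(H-\lambda)\psi = 0$, necessarily $\psi = 0$, and therefore $\phi = -U\,G\psi = 0$. Thus $\ker\bigl(1 + U\,T(\lambda+i0)\bigr) = \set{0}$, which completes the argument.

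The main obstacle is exactly this last implication: upgrading a generalized solution $\psi \in \mathsf H_{2,-s}$, produced by the boundary-value resolvent, to a genuine $L_2$-eigenfunction. Everything preceding it is soft — resolvent identities, compactness coming from the short-range decay of $q$, and the Fredholm alternative — all built on the limiting absorption principle for $-\Delta$ (Theorem~\ref{T: LAP for -Delta}). But the step that crosses the $L_2$ threshold needs the genuinely hard analytic input: Agmon's elliptic and ordinary-differential estimates that produce the radiation condition, together with the Rellich uniqueness theorem and Kato's theorem excluding embedded positive eigenvalues for short-range potentials. A self-contained treatment of that step is long, which is why the statement is attributed to — and I would in the end simply cite — \cite{Agm,Kur}.
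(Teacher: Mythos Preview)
The paper does not prove this theorem: it is stated in the introduction as background, with the sentence ``proofs of which can be found in \cite{Agm,Kur}'' covering Theorems~\ref{T: Agmon potentials}--\ref{T: LAP for -Delta+V}. So there is no in-paper proof to compare against.

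Your outline is the standard Agmon--Kuroda argument and is correct in structure: factor $V = GUG$, sandwich the resolvent, use Theorem~\ref{T: LAP for -Delta} to get norm convergence and compactness of $T(\lambda+iy)$, apply the Fredholm alternative, and reduce the kernel question to showing that an outgoing solution of $(H-\lambda)\psi = 0$ is actually in $L_2$. You also correctly identify that last step as the hard analytic input (radiation estimates, Rellich uniqueness, Kato's theorem) and propose to cite \cite{Agm,Kur} for it --- which is exactly what the paper does for the whole statement. Two minor remarks: your compactness claim for $G \colon \mathsf H_{2,-s} \to L_2$ is stated a bit loosely (the clean reference in this paper is Theorem~\ref{T: Agmon potentials}), and you only treat $s \in (\tfrac12,\tfrac\rho2)$, but the general $s>\tfrac12$ then follows immediately from the inclusions $L_{2,s} \hookrightarrow L_{2,s_0}$ and $\mathsf H_{2,-s_0} \hookrightarrow \mathsf H_{2,-s}$.
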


\noindent
Further, for any $s \in \mbR$ the Fourier transform $\euF$ is a unitary operator from $L_{2,s}(\mbR^n)$ onto $\mathsf H_{{s}}(\mbR^n).$
For any $s>1/2$ the term $\gamma_0(\lambda)$ in~(\ref{F: stationary formula for Schrodinger op-r}) is a well-defined bounded operator from
the Hilbert space $\mathsf H_s(\mbR^n)$ to $L_2(\Sigma_{\sqrt \lambda})$ (the Trace Theorem, see e.g. \cite[\S 2]{Agm}, \cite[Theorem 4.2.1]{Kur});
namely, the operator $\gamma_0(\lambda)$ is a continuous extension of the restriction operator $$C_c^\infty(\mbR^n_\xi) \ni f \mapsto f\big|_{\Sigma_{\sqrt \lambda}} \in L_2(\Sigma_{\sqrt \lambda}).$$
Finally, the bounded operator $\gamma_0^\diamondsuit(\lambda) \colon L_2(\Sigma_{\sqrt \lambda}) \to \mathsf H_{-s}$ can be defined for any $s > 1/2$ by formula
\begin{equation} \label{F: gamma0 diamond}
  \scal{\gamma_0^\diamondsuit(\lambda) f}{g}_{-s,s} = \scal{f}{\gamma_0(\lambda) g}_{L_2(\Sigma_{\sqrt \lambda})},
\end{equation}
where $f \in L_2(\Sigma_{\sqrt \lambda})$ and $g \in \mathsf H_{s}(\mbR^n)$ and $\scal{\cdot}{\cdot}_{-s,s}$ is the natural pairing of Hilbert spaces $\mathsf H_{-s}(\mbR^n)$ and $\mathsf H_{s}(\mbR^n),$
defined by formula
$
  \scal{f}{g}_{-s,s} = \int_{\mbR^n} \overline{\hat f(\xi)} g(\xi) \,d\xi.
$
So, the stationary formula~(\ref{F: stationary formula for Schrodinger op-r}) acquires a precise meaning if factors in the right hand side of this formula
are understood as acting between appropriately chosen Hilbert spaces as follows:
\begin{equation*}
  \begin{split}
    L_2(\Sigma_{\sqrt \lambda}) \stackrel {\gamma_0}\longleftarrow \mathsf H_{s-\eps+\eps'} & \stackrel {\euF}\longleftarrow L_{2,s-\eps+\eps'} \stackrel {V}\longleftarrow  \mathsf H_{2,-1+s-\eps}
    \stackrel {R_{\lambda+i0}(H)} {\longleftarrow\!\!\!-\!\!-\!\!-}
    \\ & \stackrel {R_{\lambda+i0}(H)} {\longleftarrow\!\!\!-\!\!-\!\!-}   L_{2,1-s+\eps} \stackrel {V}\longleftarrow  L_{2,-s}
    \stackrel {\euF^*}\longleftarrow  \mathsf H_{-s} \stackrel {\gamma_0^\diamondsuit}\longleftarrow L_2(\Sigma_{\sqrt \lambda}),
  \end{split}
\end{equation*}
as long as the numbers $s,$ $1-s+\eps$ and $s-\eps+\eps'$ are chosen so that they are all $ >1/2;$ it is obviously possible to choose such $s,\eps,\eps'.$
The set of eigenvalues $e_+(H)$ of~$H$ is related to the set of points~$\lambda$ for which the operator $1+R_{\lambda+i0}(H_0)V$ is not invertible
(see e.g. proof of \cite[Theorem 4.2]{Agm}), and the operator $H E^H_{(0,\infty)\setminus e_+(H)}$ is absolutely continuous \cite[Theorem 6.1]{Agm}.

A mathematically rigorous version of the stationary formula~(\ref{F: unrigorous stationary formula}) for arbitrary self-adjoint trace-class perturbations of arbitrary self-adjoint operators
was proved in \cite{BE} (see also \cite{Ya}). To give~(\ref{F: unrigorous stationary formula})
a rigorous meaning, one needs to introduce an artificial factorization of the perturbation operator~$V.$ Assuming that~$V$ is trace-class,
it is possible to write~$V$ in the form $G^*JG,$ where~$G$ is a Hilbert-Schmidt operator acting from the Hilbert space $\hilb$ to possibly another Hilbert space~$\clK$
and where $J$ is a bounded operator on this auxiliary Hilbert space $\clK.$ Using the factorization $V = G^*JG,$ the formal formula~(\ref{F: unrigorous stationary formula}) can be rewritten as
\begin{equation*}
  S(\lambda; H_1,H_0) = 1 - 2 \pi i \,(\euF_\lambda G^*)\, J(1-G R_{\lambda+i0}(H_0)G^*J)^{-1} G\,\euF^*_\lambda, \ \text{a.e.} \ \lambda \in \mbR,
\end{equation*}
or, introducing notation
\begin{equation} \label{F: Z(l;G)}
  Z_0(\lambda; G) = \euF_\lambda G^*
\end{equation}
and
$$
  T_{\lambda+i0}(H_0) = GR _{\lambda+i0}(H_0)G^*,
$$
as
\begin{equation} \label{F: rigorous stationary formula}
  S(\lambda; H_1,H_0) = 1 - 2 \pi i \,Z_0(\lambda; G) J(1-T_{\lambda+i0}(H_0)J)^{-1}Z_0^*(\lambda; G), \ \text{a.e.} \ \lambda \in \mbR.
\end{equation}
In this formula the two hindrances mentioned above are overcome: the abstract limiting absorption principle
(a theorem proved in \cite{BE,deBranges}, see Theorem~\ref{T: Ya thm 6.1.9} below) asserts that the limit
$T_{\lambda+i0}(H_0)$ exists in Hilbert-Schmidt norm for a.e.~$\lambda,$ and the product $Z_0(\lambda; G) = \euF_\lambda G^*$
also makes sense for a.e.~$\lambda$ as an operator from~$\clK$ to~$\hlambda$ and moreover this product is Hilbert-Schmidt.
Nonetheless, it should be noted that while $S(\lambda; H_1,H_0)$ is defined by the right hand side of~(\ref{F: rigorous stationary formula})
for almost every value of~$\lambda,$ still for no particular choices of~$\lambda \in \mbR$ is the operator $S(\lambda; H_1,H_0)$ well-defined.
The source of this uncertainty is in the factor $Z_0(\lambda; G)$ definition~(\ref{F: Z(l;G)}) of which involves the unitary operator $\euF$
from the spectral representation~(\ref{F: euF: hilb(a) to direct int-l}). This uncertainty is not possible to eradicate, since
in the spectral representation~(\ref{F: euF: hilb(a) to direct int-l}) the choice of a core $\hat \sigma$ of absolutely continuous spectrum is arbitrary, partially due to possible presence
of pure point and singular continuous spectrum, and since the measure $\rho$ can be replaced by any other measure of the same spectral type. This circumstance was not considered as a hindrance
in abstract scattering theory in which one works as a rule with two operators, --- initial~$H_0$ and perturbed $H_1.$
However, in~\cite{Az} in an attempt to find an operator version of the Birman-\Krein\ formula~(\ref{F: Birman-Krein formula}) the following formula was derived
\begin{equation} \label{F: Texp formula for S(l)}
  S(\lambda; H_1,H_0) = \Texp \brs{-2\pi i \int_0^1 w_+(\lambda; H_0,H_r)Z_r(\lambda; G)JZ_r^*(\lambda; G)w_+(\lambda; H_r,H_0)\,dr},
\end{equation}
where subindex $r$ in $Z_r$ indicates that in~(\ref{F: Z(l;G)}) the unitary operator $\euF$ is from the spectral representation of $H_r = H_0+rV,$
and where the so-called wave matrix (see e.g. \cite{Ya})
$$
  w_\pm(\lambda; H_1,H_0) \colon \hlambda(H_0) \to \hlambda(H_1)
$$
is taken from the direct integral representation of the wave operator $W_\pm(H_1,H_0):$
\begin{equation} \label{F: W(pm)= int w(pm)}
  W_\pm(H_1,H_0) = \int^\oplus_{\hat \sigma_{H_0}} w_\pm(\lambda; H_1,H_0)\,\rho(d\lambda),
\end{equation}
analogous to the spectral representation~(\ref{F: bfS = int S(l)dl}) of the scattering operator $\bfS(H_1,H_0).$
(For a rigorous definition and basic properties of the chronological exponential $\Texp\brs{\int_a^b A(s)\,ds}$ of a path of trace-class operators $A(s)$
continuous in trace-class norm which were used in the proof of~(\ref{F: Texp formula for S(l)}) see \cite[Appendix A]{Az3v6}; for formal definition of $\Texp$ see e.g. \cite[Chapter 4]{BSh}).
Proof of the formula~(\ref{F: Texp formula for S(l)}) relies on validity of the stationary formula~(\ref{F: rigorous stationary formula})
for a continuous family $\set{H_r \colon r \in [0,1]}$ of operators, and, more importantly, it
requires the operators $w_+(\lambda; H_r,H_0)$ and $Z_r(\lambda; G)$
to be well-defined for a \emph{continuous} set $[0,1]$ of values of $r.$ For this reason, proof of~(\ref{F: Texp formula for S(l)})
works only under stringent conditions on the operators~$H_0$ and~$V$ which ensure existence of operators $w_+(\lambda; H_r,H_0)$ and $Z_r(\lambda; G).$
As it was discussed above, these stringent conditions which were postulated in \cite{Az} hold for a class of short-range \Schrodinger operators.
Further, it was observed in \cite{Az} that provided the operator $S(\lambda; H_1,H_0) - 1$ is trace class
the equality~(\ref{F: Texp formula for S(l)}) implies the following modified Birman-\Krein\ formula
\begin{equation} \label{F: Birman-Krein-Azamov formula}
  e^{-2\pi i \xia(\lambda)} = \det S(\lambda; H_1,H_0),\ \text{a.e.} \ \lambda \in \mbR,
\end{equation}
where the function $\xia(\lambda) = \xia_{H_1,H_0}(\lambda),$ called in \cite{Az} absolutely continuous spectral shift function,
can be defined as the density of the absolutely continuous measure $\xia(\phi), \, \phi \in C_c(\mbR),$ given by formula
\begin{equation} \label{F: xia def}
  \xia(\phi) = \int_0^1 \Tr\brs{V \phi(H_r^{(a)})}\,dr, \quad \phi \in C_c(\mbR).
\end{equation}
Here the self-adjoint operator~$H_r^{(a)}$ is the absolutely continuous part of~$H_r.$
Analogously, one can define the singular spectral shift function $\xis(\lambda),$ which
can be defined as the density of the absolutely continuous measure $\xis(\phi), \, \phi \in C_c(\mbR),$ defined by formula
\begin{equation} \label{F: xis def}
  \xis(\phi) = \int_0^1 \Tr\brs{V \phi(H_r^{(s)})}\,dr, \quad \phi \in C_c(\mbR),
\end{equation}
where the self-adjoint operator~$H_r^{(s)}$ is the singular part of~$H_r.$
One can note that definitions of the functions $\xia$ and $\xis$ are modifications of Birman-Solomyak formula~(\ref{F: BS formula (2)})
for the spectral shift function $\xi,$ and these functions are related by equality
\begin{equation} \label{F: xi=xia+xis}
  \xi = \xia + \xis,
\end{equation}
which is an immediate consequence of~(\ref{F: BS formula (2)}),~(\ref{F: xia def}) and~(\ref{F: xis def}).
In particular, absolute continuity of the measure $\xis$ follows from the equality~(\ref{F: xi=xia+xis}).
Now, the Birman-\Krein\ formula~(\ref{F: Birman-Krein formula}) combined with~(\ref{F: Birman-Krein-Azamov formula}) implies the equality
$e^{-2\pi i \xis(\lambda)} = 1$ for a.e.~$\lambda,$ that is,
\begin{equation} \label{F: xis is in Z}
  \xis(\lambda) \in \mbZ \ \ \text{for a.e.} \ \lambda \in \mbR.
\end{equation}
By Weyl's Theorem on stability of essential spectrum of a self-adjoint operator under relatively compact perturbations (see e.g. \cite[\S IV.5.6]{Kato}, \cite[\S XIII.4]{RS4}),
the essential spectra of all operators~$H_r = H_0+rV$ are identical. Hence, it follows from definition~(\ref{F: xia def}) that the absolutely continuous
spectral shift function $\xia$ vanishes outside the common essential spectrum of operators~$H_r.$ Therefore, outside the essential spectrum the singular spectral shift function $\xis$
coincides with spectral shift function; equivalently, it coincides with the spectral flow. But unlike the spectral flow the singular spectral shift function
is still defined inside the essential spectrum too as an a.e. integer-valued function.
On the basis of this observation, it was suggested in \cite{Az} (see also \cite{Az3v6}) that the singular spectral shift function should be regarded
as a natural extension of spectral flow into essential spectrum. This definition of spectral flow inside essential spectrum has a significant drawback in the sense
that definition~(\ref{F: xis def}) is hard to work with, since it requires diagonalization of a continuous family of self-adjoint operators.
In \cite{Az7} a new equivalent definition of spectral flow inside essential spectrum called total \emph{resonance index} was given.
The total resonance index coincides with singular spectral shift function $\xis(\lambda)$ for a.e.~$\lambda,$ but unlike
the singular spectral shift function $\xis(\lambda)$ the resonance index is a quite tangible
and easy to work with notion. Resonance index is defined as a difference of two non-negative integers and it makes sense outside essential spectrum too, thus providing
a new definition of spectral flow. In this paper we also show that resonance index is equal to the signature of a finite-rank self-adjoint operator naturally associated
with the data $(\lambda, H, V).$

These considerations however are based on the formula~(\ref{F: Texp formula for S(l)}).
A rigorous justification and a proof of this formula, given in \cite{Az3v6} for trace-class perturbations, required development of a new approach to stationary scattering theory.
It turns out that~(\ref{F: Texp formula for S(l)}) holds under much weaker conditions; the proof
is based on adjustment of the new approach to stationary scattering theory
given in~ \cite{Az3v6}. This approach is discussed in the next subsection.

\subsection{Constructive approach to stationary scattering theory}
In one of the basic settings of abstract mathematical scattering theory one studies arbitrary initial self-adjoint operator~$H_0$ and a relatively trace-class perturbation $H_1= H_0+V$
of~$H_0.$ In this setting not only proof of the formula~(\ref{F: Texp formula for S(l)}) given in \cite{Az} does not work, but the formula itself does not make sense,
since for any fixed value of the coupling constant $r$ the ingredients of this formula such as $w_+(\lambda; H_0,H_r)$ and $Z_r(\lambda; G)$ are defined only for a.e. $\lambda.$
Indeed, the right hand side of~(\ref{F: Texp formula for S(l)}), which involves a continuous family of such operators, may be defined only for a set of values of~$\lambda$
which can potentially be as small as the empty set; more importantly whatever this set is one has no control over it.
This circumstance is apparently a serious hindrance on the way of any attempt to give sense and to prove the formula~(\ref{F: Texp formula for S(l)}).
In fact, a proof of~(\ref{F: Texp formula for S(l)}) for arbitrary self-adjoint trace-class perturbations of arbitrary self-adjoint operators
required to give new definitions of basic notions and to give new proofs of basic theorems of abstract scattering theory.
There are several reasons for this; firstly, definition of the operator $Z_r(\lambda; G)$ involves the operator $\euF_\lambda$ from the spectral representation
(\ref{F: euF: hilb(a) to direct int-l}) for the operator~$H_r,$ and for this reason the set of values of the spectral
parameter~$\lambda$ for which $Z_r(\lambda; G)$ is defined cannot be pinpointed: it is an arbitrary core of spectrum of~$H_r.$
Secondly, in the classical approach to abstract scattering theory \cite{BE,Ya}, the scattering matrix $S(\lambda; H_1,H_0)$ cannot be defined for a fixed single value of
$\lambda.$ This situation is analogous to the fact that while the notion of a measurable function makes perfect sense,
value of a measurable function at a given point does not. Thirdly, if one traces out a proof given in e.g. \cite{BE,Ya} of a formula, involving the scattering matrix $S(\lambda; H_1,H_0),$
such as the stationary formula~(\ref{F: rigorous stationary formula}), then one finds that during numerous steps of the proof one throws out from an initial
core of absolutely continuous spectrum $\hat \sigma_{H_0}$ several finite and/or countable families of null sets. It is necessary to stress here that firstly an initial core of
absolutely continuous spectrum is chosen arbitrarily and it is not a constructive object, secondly, the null sets being thrown away from a core depend on arbitrarily chosen objects,
with no clear connections to the main objects of study, namely, the operators~$H_0$ and~$V.$

An approach to scattering theory which partly addresses this issue was given in the paper of Kato and Kuroda \cite{KK71}.
In this paper the authors construct wave matrices $w_\pm(\lambda; H_1,H_0)$ for a set of full Lebesgue measure which depends on a fixed vector space $\euX$
in the Hilbert space. However, in \cite{KK71} only a fixed pair of self-adjoint operators $(H_1,H_0)$ is studied and it remains unclear how the theory presented in \cite{KK71}
could be applied to prove~(\ref{F: Texp formula for S(l)}) and~(\ref{F: xis is in Z}).
On the other hand, numerous monographs and surveys on mathematical scattering theory, e.g. \cite{BW,RS3,Ya,BYa92AA2}, which appeared after publication of \cite{KK71}, do not contain a discussion of this problem.

An approach to scattering theory for trace-class perturbations of arbitrary self-adjoint operators was developed in \cite{Az3v6} with primary aim to give sense
and to prove formula~(\ref{F: Texp formula for S(l)}) for the scattering matrix $S(\lambda; H_1,H_0).$ Unlike the conventional approach
of \cite{BE,Ya}, in the approach to scattering theory given in \cite{Az3v6} one first defines the wave matrices $w_\pm(\lambda; H_1,H_0)$
and the scattering matrix $S(\lambda; H_1,H_0)$ for all values of the spectral parameter~$\lambda$ from an explicit set of full Lebesgue measure $\Lambda,$ which is defined
beforehand, while the wave operators $W_\pm(H_1,H_0)$ and the scattering operator $\bfS(H_1,H_0)$ thus become derivative objects which are \emph{defined}
by formulas~(\ref{F: W(pm)= int w(pm)}) and~(\ref{F: bfS = int S(l)dl}). Further, in the course of constructing the theory, not a single number from the full set $\Lambda$ is removed,
and all objects of the scattering theory are explicitly constructed, in contrast to conventional scattering theory.
The main steps of this theory are as follows.
Proofs of the following theorems are given in \cite{Az3v6} in case of trace-class $V$ and will appear in \cite{AzDa} in general case, see also
\cite{Az10} for the general case.

I. The main data for constructing a scattering theory are a self-adjoint operator $H_0$ on a Hilbert space $\hilb$
and a self-adjoint perturbation operator~$V.$ The pair $H_0,V$ is assumed to be compatible in a certain sense specified below.
In addition to these data, one needs an additional structure. This additional structure is a rigging operator.
A rigging operator~$F$ is a closed operator with trivial kernel and co-kernel which acts
from the main Hilbert space $\hilb$ to some auxiliary Hilbert space $\clK,$ such that the operator~$V$ admits
a well-defined decomposition $V = F^*JF$ with a bounded self-adjoint operator $J$ on $\clK.$ All objects of scattering theory discussed below depend only on the data $H_0,V$ and $F.$

The pair $(H_0,F)$ must be such that the operator
$$
  T_{z}(H_0) := F R_z(H_0)F^* = F(H_0-z)^{-1}F^*,
$$
called the sandwiched resolvent, is well-defined and compact for non-real $z.$

II. The next step is to define the set of values of the spectral parameter~$\lambda$ for which the wave matrices $w_\pm(\lambda; H_1,H_0)$ are to be defined.
The set $\Lambda(H_0,F)$ is defined as the set of all real numbers~$\lambda$ such that the limits
$$
  \lim_{y \to 0} T_{\lambda\pm iy}(H_0)
$$
exist in the uniform norm.

To ensure existence of the spectral shift functions~(\ref{F: Birman-Krein formula}) and~(\ref{F: xia def}) one has to impose
an additional condition that the operator $\Im T_{z}(H_0)$ is trace-class and that
$$
  \lim_{y \to 0^+} \Im T_{\lambda+iy}(H_0)
$$
exists in trace-class norm, but for the scattering theory this is not necessary and this can be done at a later stage.
It turns out however that, unlike the situation with functions $\xi$ and $\xia,$ to be able to define $\xis$ one does not need a trace-class condition.

The set $\Lambda(H_0,F)$ is assumed to have full Lebesgue measure.
In certain important cases this assumption holds.
The corresponding theorems are called the limiting absorption principle.
Two of the main cases for which the limiting absorption principle holds are
\begin{enumerate}
 \item an arbitrary self-adjoint operator $H_0$ and a Hilbert-Schmidt rigging operator $F$ (see e.g. \cite[Theorems 6.1.5 and 6.1.9]{Ya}) and
 \item a Schr\"odinger operator $H_0 = -\Delta+V_0$ and a rigging operator $F = \sqrt{\abs{V}},$ where $V_0$ and $V$
   are short range potentials (Theorems~\ref{T: LAP for -Delta} and~\ref{T: LAP for -Delta+V}).
\end{enumerate}

The role of the set $\Lambda(H_0,F)$ in constructive approach to stationary scattering theory
is about the same as the role of the set $(0,\infty) \setminus e_+(H)$ from Theorem~\ref{T: Agmon 3.1} in potential scattering theory.
But while the structure of the set $e_+(H)$ is quite simple (see Theorem~\ref{T: Agmon 3.1}), the set $\mbR \setminus \Lambda(H_0,F)$
is more or less an arbitrary set of Lebesgue measure zero;
for instance, the singular operator $H E^H_{\mbR \setminus \Lambda(H_0,F)}$ may contain, --- in the worst scenario, everywhere dense pure point and singular continuous spectra.


III. Since the wave operators $w_\pm(\lambda; H_1,H_0)$ act between the fiber Hilbert spaces $\hlambda(H_0)$ and $\hlambda(H_1),$
the next logical step is construction of fiber Hilbert spaces of the spectral representation~(\ref{F: euF: hilb(a) to direct int-l})
and the direct integral on the right hand side of~(\ref{F: euF: hilb(a) to direct int-l}). The fiber Hilbert space $\hlambda(H_0)$ is defined
as a (closed) subspace of~$\clK$ by equality
\begin{equation} \label{F: hlambda def of}
  \hlambda(H_0) = \closure{\im \sqrt{\Im T_{\lambda+i0}(H_0)}},
\end{equation}
that is, the fiber Hilbert space $\hlambda(H_0)$ is the closure of the image of the compact non-negative operator $\sqrt{\Im T_{\lambda+i0}(H_0)}.$
The family of Hilbert spaces
$$
  \set{\hlambda(H_0) \colon \lambda \in \Lambda(H_0,F)}
$$
is measurable, where as a measurability base one can take orthogonal projections
of vectors from an orthonormal basis of~$\clK$ onto $\hlambda(H_0) \subset \clK.$ Hence, one can construct a direct integral of Hilbert spaces $\euH(H_0)$ by formula
\begin{equation} \label{F: euH(H0)=int L(H0,F) hlambda}
  \euH(H_0) = \int^\oplus_{\Lambda(H_0,F)} \hlambda(H_0)\,d\lambda.
\end{equation}
The complement of the set $\Lambda(H_0,F)$ is a support of the singular spectrum of $H_0$
in the sense that the operator $H_0 E_{\Lambda(H_0,F)}^{H_0}$ is absolutely continuous. 
In other words, the singular spectrum of $H_0$ including all eigenvalues of $H_0$
is left out from $\Lambda(H_0,F).$ Dimensions of the fiber Hilbert spaces $\hlambda(H_0)$  can be finite including zero.
A core of the absolutely continuous spectrum of $H_0$ can be defined by formula
\begin{equation} \label{F: core of L(H0,F)}
  \hat \sigma_{H_0} = \set{\lambda \in \Lambda(H_0,F) \colon \dim \hlambda(H_0) > 0}.
\end{equation}
In particular, a measure $\rho$ from the spectral representation~(\ref{F: euF: hilb(a) to direct int-l}) has the same
spectral type as the restriction of Lebesgue measure $d\lambda$ to the set $\hat \sigma_{H_0}.$
Therefore, if one wishes, in the direct integral~(\ref{F: euH(H0)=int L(H0,F) hlambda}) the set $\Lambda(H_0,F)$ can be replaced by the core
(\ref{F: core of L(H0,F)}), but it is more convenient to work with the set $\Lambda(H_0,F).$

IV. The next step is construction of the unitary isomorphism $\euF$ from the spectral representation~(\ref{F: euF: hilb(a) to direct int-l})
and its fiber $\euF_\lambda.$ To distinguish non-constructive object $\euF$ from its constructive counter-part to be defined, the latter is denoted by $\euE.$
By definition, for any vector $\phi$ from the dense linear manifold
$$
  F^*\clK =: \hilb_+ \subset \hilb
$$
the value of $\euE_\lambda(H_0)$ at $\phi$ is defined by formula
\begin{equation} \label{F: euE(F*psi)=sqrt Im T}
  \euE_\lambda(H_0)\phi = \pi^{-1/2}\sqrt{\Im T_{\lambda+i0}(H_0)} \psi \in \hlambda(H_0),
\end{equation}
where $\psi$ is the unique vector from~$\clK$ such that $\phi = F^*\psi.$
Justification of these definitions is given by the following theorem.
\begin{thm} \label{T: euE} 
Let~$H_0$ be a self-adjoint operator
on a Hilbert space $\hilb$ with a rigging operator $F\colon \hilb \to \clK.$
The linear operator~$\euE=\euE(H_0)$ which acts from the dense subspace $\hilb_+ = F^*\clK$ of $\hilb$ to the direct integral
Hilbert space~(\ref{F: euH(H0)=int L(H0,F) hlambda}) and which is defined by the equality\label{Page: euE(lambda)}
$$
  \euE(F^*\psi)(\lambda) = \euE_\lambda(H_0)(F^*\psi) = \pi^{-1/2}\sqrt{\Im T_{\lambda+i0}(H_0)}\psi
$$
is a bounded operator, whose continuous prolonging to $\hilb$ is a surjective isometric operator
with initial subspace $\hilb^{(a)}(H_0).$
In particular, the operator~$\euE$ is a natural isomorphism of the Hilbert spaces $\hilb^{(a)}(H_0)$ and~(\ref{F: euH(H0)=int L(H0,F) hlambda})
provided there is a fixed rigging~$F$ in $\hilb$ compatible with $H_0.$
Moreover, restriction of the operator~$H_0$ to its absolutely continuous subspace $\hilb^{(a)}(H_0)$
in the representation of $\hilb^{(a)}(H_0)$ by the direct integral~(\ref{F: euH(H0)=int L(H0,F) hlambda}) acts as follows: for any $f \in \hilb^{(a)}(H_0)$
and for a.e. $\lambda \in \Lambda(H_0,F)$
\begin{equation} \label{F: euE(H0f)(l)=l euE(f)(l)}
  \euE(H_0 f)(\lambda) = \lambda \euE(f)(\lambda).
\end{equation}
In other words, the operator \,$\euE$\, and the direct integral \,$\euH(H_0)$ \,diagonalize the absolutely continuous part of the self-adjoint operator~$H_0.$
\end{thm}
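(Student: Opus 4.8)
The plan is to verify, in order: (a) boundedness of $\euE$ on the dense domain $\hilb_+ = F^*\clK$; (b) the isometry property on $\hilb^{(a)}(H_0)$; (c) surjectivity onto the direct integral $\euH(H_0)$; and (d) the intertwining relation $\euE(H_0 f)(\lambda) = \lambda\,\euE(f)(\lambda)$. The basic analytic input is the spectral theorem applied to the scalar measures $\mu_\psi(d\lambda) := \scal{\psi}{F\,E^{H_0}(d\lambda)\,F^*\psi}_{\clK}$ attached to vectors $\psi \in \clK$, together with the Stieltjes (or Privalov/Fatou) inversion that identifies the boundary value of the resolvent with the density of the absolutely continuous part of $\mu_\psi$: indeed, writing $f = F^*\psi$,
\begin{equation*}
  \scal{\psi}{\Im T_{\lambda+i0}(H_0)\psi}_{\clK} = \Im \scal{\psi}{F R_{\lambda+i0}(H_0) F^*\psi}_{\clK} = \pi\,\frac{d\mu_{\psi}^{(a)}}{d\lambda}(\lambda) \quad \text{for a.e. }\lambda,
\end{equation*}
by the classical Fatou/de la Vall\'ee-Poussin theorem on boundary values of Herglotz functions. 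This is precisely why the normalization $\pi^{-1/2}$ appears in~(\ref{F: euE(F*psi)=sqrt Im T}). From this identity one reads off $\norm{\euE(F^*\psi)}^2_{\euH(H_0)} = \int \pi^{-1}\scal{\psi}{\Im T_{\lambda+i0}(H_0)\psi}\,d\lambda = \mu_\psi^{(a)}(\mbR) = \norm{P^{(a)}(H_0) F^*\psi}^2_{\hilb}$, which simultaneously gives boundedness \emph{and} the isometry statement, provided one first checks that $F^*\psi \in \hilb^{(a)}(H_0)$ modulo a vector annihilated by $\euE$ — equivalently, that $\euE$ kills the singular part. The polarization of the above formula and a routine density argument extend $\euE$ to a well-defined partial isometry on all of $\hilb$ with initial space $\hilb^{(a)}(H_0)$.

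For the intertwining relation~(\ref{F: euE(H0f)(l)=l euE(f)(l)}), I would first establish it on the convenient dense set of vectors $f = R_{w}(H_0) F^*\psi$ with $w \notin \mbR$ (these lie in $\hilb_+$ up to the resolvent identity, or one argues directly with $F^* \psi$ and approximates): using the resolvent identity $F R_{\lambda+i0}(H_0) R_w(H_0) F^* = (\lambda - w)^{-1}\brs{T_{\lambda+i0}(H_0) - T_w(H_0)}$, one computes $\Im$ of both sides and finds that the boundary density attached to $R_w(H_0)F^*\psi$ is $\abs{\lambda-w}^{-2}$ times the density attached to $F^*\psi$ — exactly the scalar shape forced by $\euE(H_0 f) = \lambda\,\euE(f)$. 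Passing from this scalar computation to the vector-valued statement at the level of fiber Hilbert spaces $\hlambda(H_0)$ uses the polarized version of~(\ref{F: euE(F*psi)=sqrt Im T}) to pin down $\euE(f)(\lambda)$ as an element of $\closure{\im \sqrt{\Im T_{\lambda+i0}(H_0)}}$, not merely its norm. Surjectivity onto~(\ref{F: euH(H0)=int L(H0,F) hlambda}) then follows because the fibers $\hlambda(H_0)$ were \emph{defined} in~(\ref{F: hlambda def of}) as $\closure{\im\sqrt{\Im T_{\lambda+i0}(H_0)}}$, so the range of $\euE_\lambda$ is dense in each fiber; a measurable-selection / monotone-class argument upgrades fiberwise density to density in the direct integral, and combined with the isometry this forces surjectivity.

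The main obstacle is the passage from scalar (measure-theoretic) statements to the vector-valued, fiberwise statements — i.e.\ controlling $\euE(f)(\lambda)$ as an honest element of $\hlambda(H_0)$ for a.e.\ $\lambda$ \emph{simultaneously}, and checking measurability of $\lambda \mapsto \euE(f)(\lambda)$ against the chosen measurability base (projections of an orthonormal basis of $\clK$ onto $\hlambda(H_0)$). Two technical points require care here: first, that the exceptional null set can be chosen independently of $\psi$, which is handled by working on a countable dense set of $\psi \in \clK$ and invoking continuity of $\psi \mapsto \sqrt{\Im T_{\lambda+i0}(H_0)}\psi$ in the operator norm of $\Im T_{\lambda+i0}(H_0)$ (valid for $\lambda \in \Lambda(H_0,F)$ by definition of that set); second, that $\euE$ genuinely annihilates the singular subspace — this is where one needs that $\mbR\setminus\Lambda(H_0,F)$ carries the singular spectrum, so that for $f \in \hilb^{(s)}(H_0)$ the measure $\mu_\psi$ (with $f = P^{(s)}F^*\psi$, if nonzero) is singular and its a.c.\ density vanishes a.e. Everything else — boundedness, the isometry norm identity, the intertwining relation on a dense set — is essentially a bookkeeping exercise once the Herglotz boundary-value input is in place.
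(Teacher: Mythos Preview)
The paper does not actually contain a proof of this theorem. Theorem~\ref{T: euE} is stated in the introductory survey section, and the paper explicitly says just before it: ``Proofs of the following theorems are given in \cite{Az3v6} in case of trace-class $V$ and will appear in \cite{AzDa} in general case, see also \cite{Az10} for the general case.'' So there is no in-paper proof to compare against; the result is quoted as background.

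That said, your strategy is the standard one and is essentially what the cited references carry out. The key identity you use --- that for $\lambda \in \Lambda(H_0,F)$ one has $\pi^{-1}\scal{\psi}{\Im T_{\lambda+i0}(H_0)\psi} = d\mu_\psi^{(a)}/d\lambda$ via the Herglotz/Fatou boundary-value theorem --- is exactly the engine behind the isometry, and your observation that fiberwise density in $\hlambda(H_0) = \closure{\im\sqrt{\Im T_{\lambda+i0}(H_0)}}$ is built into the definition is the right way to get surjectivity. The two technical points you flag (uniform null sets via a countable dense subset of $\clK$, and annihilation of the singular part because $\mbR\setminus\Lambda(H_0,F)$ supports the singular spectrum) are the genuine issues, and you have identified them correctly. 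One small refinement: for the intertwining relation it is cleaner to work directly with $f = F^*\psi \in \hilb_+$ and use that $(H_0 - w)R_w(H_0)F^*\psi = F^*\psi$ together with the already-established isometry, rather than computing densities attached to $R_w(H_0)F^*\psi$ (which need not lie in $F^*\clK$); but either route works.
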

\noindent If a vector $f$ belongs to the image of $F^*,$ then the equality~(\ref{F: euE(H0f)(l)=l euE(f)(l)}) holds for all $\lambda \in \Lambda(H_0,F).$
Theorem~\ref{T: euE} is in fact the spectral theorem for the absolutely continuous part of a self-adjoint operator.
Importance of Theorem~\ref{T: euE} comes from the fact that it gives an explicit diagonalisation of the absolutely continuous part of an arbitrary self-adjoint operator.
This is a difficult problem; for instance, in the case of potential scattering, while the free Hamiltonian $H_0 = -\Delta$ is easily diagonalized by the Fourier transform (see~(\ref{F: H0=F* xi2 F})),
diagonalization of the \Schrodinger operator $H = -\Delta+V$ requires (or in essence is equivalent to) calculation of the wave matrices (see e.g. \cite[(83)]{RS3}, \cite[\S 10-a, (10.2)]{TayST})
(which is a difficult problem), so that, in fact, often wave operators are defined via eigenfunction expansion of the perturbed operator.
Compared to this situation, in Theorem~\ref{T: euE} the self-adjoint operator $H_0$ is \emph{arbitrary}. This is a key circumstance, since once explicit eigenfunction
expansions of an operator $H_0$ and of its perturbation $H = H_0+V$ are found, one may try to define the wave matrix by a formula analogous to \cite[(83)]{RS3} or \cite[\S 10-a, (10.2)]{TayST}.

The operator $\euE_\lambda(H_0)$ which acts from $\hilb$ to $\hlambda(H_0)$ makes perfect sense for all values of~$\lambda$ from the full set $\Lambda(H_0,F).$
In this regard, it is different from $\euF_\lambda$ of~(\ref{F: euF: hilb(a) to direct int-l}). The operator $\euE_\lambda(H_0)$ will be called an \emph{evaluation operator}.
Theorem~\ref{T: euE} implies that the operator~(\ref{F: Z(l;G)}) can be unambiguously defined for all~$\lambda$ from the full set $\Lambda(H_0,F)$ by formula
$$
  Z_0(\lambda; F) = \euE_\lambda(H_0) F^*.
$$
But, in actual fact, this formula makes the operator $Z_0(\lambda; F)$ redundant, since the operator $\euE_\lambda(H_0)$ in the right hand
side of this equality is unambiguously defined for an explicit set of full Lebesgue measure $\Lambda(H_0,F).$

V. Once the fiber Hilbert spaces $\hlambda(H_0)$ have been constructed, one can define wave matrices
\begin{equation} \label{F: w(pm): hl(0)to hl(1)}
  w_\pm(\lambda; H_1,H_0) \colon \hlambda(H_0) \to \hlambda(H_1),
\end{equation}
for all real numbers~$\lambda$ from the intersection of sets $\Lambda(H_0,F)$ and $\Lambda(H_1,F).$ Initially, the operator
$w_\pm(\lambda; H_1,H_0)$ is defined as a form on a dense subspace $\euE_\lambda(H_1)F^*\clK \times \euE_\lambda(H_0)F^*\clK$ of the direct
product $\hlambda(H_1) \times \hlambda(H_0)$ by formula \cite[Definition 5.2.1]{Az3v6}: for any $F^*f, F^*g \in F^*\clK$
\begin{equation} \label{F: w(pm) form def-n}
  \scal{\euE_\lambda(H_1)F^*f}{w_\pm(\lambda; H_1,H_0)\euE_\lambda(H_0)F^*g} = \scal{f}{\SqBrs{1 - T_{\lambda\mp i0}(H_1)J} \frac 1 \pi \Im T_{\lambda+ i0}(H_0)g}.
\end{equation}
The idea to define the wave matrices by a formula similar to~(\ref{F: w(pm) form def-n}) was taken from \cite[Definition 2.7.2]{Ya}.
\begin{thm} \label{T: properties of w(pm)} 
(1) For any $\lambda \in \Lambda(H_0,F) \cap \Lambda(H_1,F)$ the formula~(\ref{F: w(pm) form def-n}) correctly defines a bounded operator~(\ref{F: w(pm): hl(0)to hl(1)}). Moreover, this bounded operator is unitary.
(2) For any three values (not necessarily distinct) $r_1, r_2, r_3$ of the coupling constant $r$ such that $\lambda \in \Lambda(H_{r_1},F) \cap \Lambda(H_{r_2},F) \cap \Lambda(H_{r_3},F)$
the following multiplicative property holds:
$$
  w_\pm(\lambda; H_{r_3},H_{r_1}) = w_\pm(\lambda; H_{r_3},H_{r_2})w_\pm(\lambda; H_{r_2},H_{r_1}).
$$
In particular, for any $\lambda \in \Lambda(H_0,F)$ \ $w_\pm(\lambda; H_0,H_0) = 1$ and
for any $\lambda \in \Lambda(H_0,F) \cap \Lambda(H_1,F)$ \ $w_\pm ^*(\lambda; H_1,H_0) = w_\pm(\lambda; H_0,H_1).$
\end{thm}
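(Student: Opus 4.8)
The plan is to establish part (1) first, then derive part (2) as a formal consequence of the way the defining form~(\ref{F: w(pm) form def-n}) is built out of sandwiched resolvents. For part (1), I would proceed in three stages. \emph{Stage A: boundedness.} Fix $\lambda \in \Lambda(H_0,F)\cap\Lambda(H_1,F)$; abbreviate $A = \frac1\pi\Im T_{\lambda+i0}(H_0)$ and $B = \frac1\pi\Im T_{\lambda+i0}(H_1)$, which are well-defined, compact, non-negative operators on $\clK$ by the definition of $\Lambda(\cdot,F)$, and recall $\hlambda(H_0) = \closure{\im\sqrt A}$, $\hlambda(H_1) = \closure{\im\sqrt B}$. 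The right-hand side of~(\ref{F: w(pm) form def-n}) is $\scal{f}{[1 - T_{\lambda\mp i0}(H_1)J]A g}$; by Theorem~\ref{T: euE} we have $\euE_\lambda(H_0)F^*g = \pi^{-1/2}\sqrt A\,g$ and similarly for $H_1$, so I must show there is a bounded operator $w_\pm(\lambda;H_1,H_0)\colon\hlambda(H_0)\to\hlambda(H_1)$ with
\[
  \scal{\sqrt B\, f}{w_\pm \sqrt A\, g}_{\clK} = \pi\,\scal{f}{[1 - T_{\lambda\mp i0}(H_1)J]A g}_{\clK}.
\]
The key algebraic identity I expect to need is a resolvent-type relation connecting $A$, $B$, $J$ and the boundary values $T_{\lambda\pm i0}(H_0)$, $T_{\lambda\pm i0}(H_1)$ — namely the second resolvent identity $R_z(H_1) - R_z(H_0) = -R_z(H_1)VR_z(H_0)$ sandwiched by $F$, in the form $T_z(H_1) = T_z(H_0) - T_z(H_1)J\,T_z(H_0)$, passed to the boundary $z = \lambda+iy$, $y\to0^+$. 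Taking imaginary parts of this identity at the boundary should express $B$ in terms of $A$ and the factor $1 - T_{\lambda- i0}(H_1)J$ (and its adjoint), which is exactly what is needed to see that the right-hand side above factors through $\sqrt B$ and through $\sqrt A$ on the two sides, giving a well-defined bounded $w_\pm$ with norm $\le 1$.

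\emph{Stage B: isometry, then unitarity.} Once $w_\pm$ is bounded, I would compute $\norm{w_\pm\sqrt A\, g}^2 = \pi\,\scal{[1-T_{\lambda\mp i0}(H_1)J]A g}{B^{-1}\!\cdot\!(\ldots)}$-type expression and simplify using the boundary resolvent identity of Stage A together with $\Im T_{\lambda-i0} = -\Im T_{\lambda+i0}$ and the symmetry of $J$; the expected outcome is $\norm{w_\pm\sqrt A\,g}^2 = \pi\scal{A g}{g} = \norm{\sqrt A\,g}^2$, i.e. $w_\pm$ is isometric on the dense manifold $\euE_\lambda(H_0)F^*\clK\subset\hlambda(H_0)$, hence extends to an isometry $\hlambda(H_0)\to\hlambda(H_1)$. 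Surjectivity (hence unitarity) then follows once part (2) is available: applying the not-yet-proved multiplicative identity with the roles of $H_0$ and $H_1$ interchanged gives $w_\pm(\lambda;H_0,H_1)w_\pm(\lambda;H_1,H_0) = w_\pm(\lambda;H_0,H_0) = 1$ and likewise in the other order, so each $w_\pm$ has a two-sided inverse. Thus I would prove the \emph{isometry} half of (1), then (2), then close the loop for \emph{unitarity}.

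\emph{Stage C: the multiplicative property (part (2)).} For $r_1,r_2,r_3$ with $\lambda\in\bigcap_i\Lambda(H_{r_i},F)$, I would verify $w_\pm(\lambda;H_{r_3},H_{r_1}) = w_\pm(\lambda;H_{r_3},H_{r_2})w_\pm(\lambda;H_{r_2},H_{r_1})$ by testing against $\euE_\lambda(H_{r_3})F^*f$ on the left and $\euE_\lambda(H_{r_1})F^*g$ on the right and expanding both sides via~(\ref{F: w(pm) form def-n}); the middle insertion of $\sqrt{\Im T_{\lambda+i0}(H_{r_2})}\,\sqrt{\Im T_{\lambda+i0}(H_{r_2})}^{\,*}$ absorbed by $\euE_\lambda(H_{r_2})$ should, after repeated use of the boundary resolvent identity between each consecutive pair $(H_{r_i},H_{r_{i+1}})$, collapse to the single-step formula. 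Then $w_\pm(\lambda;H_0,H_0)=1$ is immediate from~(\ref{F: w(pm) form def-n}) since $T_{\lambda-i0}(H_0)J$ replaced appropriately gives the identity form, and $w_\pm^*(\lambda;H_1,H_0)=w_\pm(\lambda;H_0,H_1)$ follows by setting $r_1=r_3$. The main obstacle I anticipate is \emph{Stage A}: making the boundary resolvent identity rigorous as an identity of bounded operators on $\clK$ (not merely of forms), controlling that $1 - T_{\lambda\mp i0}(H_1)J$ is the correct intertwiner and that the expressions genuinely factor through the closures $\im\sqrt A$, $\im\sqrt B$ rather than only through dense subspaces — i.e. the passage from a sesquilinear form to a bona fide bounded operator between the fiber spaces. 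This is precisely the point where the constructive hypotheses (existence of the uniform-norm boundary limits on $\Lambda(H_0,F)\cap\Lambda(H_1,F)$) are essential, and I would lean on the analogous computation in \cite[Definition 2.7.2, Theorem 2.7.3]{Ya} and on \cite[\S5.2]{Az3v6} for the bookkeeping.
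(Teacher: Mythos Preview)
The paper does not prove this theorem: it is stated in the introductory survey (Section~1.4) with the explicit remark that ``Proofs of the following theorems are given in \cite{Az3v6} in case of trace-class $V$ and will appear in \cite{AzDa} in general case.'' So there is no in-paper proof to compare against.

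That said, your plan is sound and is essentially the route taken in \cite[\S5.2]{Az3v6}. The ``resolvent-type relation connecting $A$, $B$, $J$'' that you anticipate in Stage~A is precisely the identity~(\ref{F: Az3v6 (4.8)}) of the present paper,
\[
  \Im T_{\lambda+i0}(H_1) = \bigl(1+A_{\lambda-i0}(0)\bigr)^{-1}\Im T_{\lambda+i0}(H_0)\bigl(1+B_{\lambda+i0}(0)\bigr)^{-1},
\]
together with the algebraic fact $1 - T_{\lambda\mp i0}(H_1)J = (1+T_{\lambda\mp i0}(H_0)J)^{-1}$ that follows from~(\ref{F: T=(...)(-1)T}). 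These two give exactly the factorization through $\sqrt A$ and $\sqrt B$ you need, and invertibility of the relevant $1+T_{\lambda\pm i0}(H_0)J$ factors is guaranteed by Theorem~\ref{T: Az 4.1.11} since $\lambda\in\Lambda(H_1,F)$. One refinement: in Stage~C your ``middle insertion'' is not a resolution of the identity on $\hlambda(H_{r_2})$; rather, once you know from Stage~A that $w_\pm(\lambda;H_{r_2},H_{r_1})\euE_\lambda(H_{r_1})F^*g$ is explicitly of the form $\euE_\lambda(H_{r_2})F^*h$ for a concrete $h\in\clK$ (this is what the factorization yields), you can feed that $h$ directly into the defining form~(\ref{F: w(pm) form def-n}) for the second factor, and the resolvent identities collapse the product. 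Your ordering (isometry, then multiplicativity, then unitarity) is also the standard one.
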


VI. Once the wave matrices $w_\pm(\lambda; H_1,H_0)$ are defined and their basic properties are proved, one can \emph{define} the wave operators
\begin{equation} \label{F: def of W(pm)}
  W_\pm(H_1,H_0) \colon \euH(H_0) \to \euH(H_1)
\end{equation}
by a formula, similar to~(\ref{F: W(pm)= int w(pm)}):
\begin{equation} \label{F: W(pm)=int w(pm)(2)}
  W_\pm(H_1,H_0) = \int^\oplus_{\Lambda(H_0,F) \cap \Lambda(H_1,F)} w_\pm(\lambda; H_1,H_0)\,d\lambda.
\end{equation}
Here instead of absolutely continuous subspaces $\hilb^{(a)}(H_0)$ and $\hilb^{(a)}(H_1)$ between which wave operators act
one can use the Hilbert spaces $\euH(H_0)$ and $\euH(H_1),$ since by Theorem~\ref{T: euE} the Hilbert spaces
$\hilb^{(a)}(H)$ and $\euH(H)$ are naturally isomorphic via the unitary operator $\euE(H).$
The following theorem demonstrates that definition~(\ref{F: W(pm)=int w(pm)(2)}) of the wave operator coincides with the classical definition of the wave operator.
\begin{thm} 
  Wave operators defined by formulas~(\ref{F: W(pm)=int w(pm)(2)}) and~(\ref{F: w(pm) form def-n}) are equal to the right hand side of~(\ref{F: W(pm) class-l def-n}).
\end{thm}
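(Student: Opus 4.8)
The plan is to show that the two families of operators --- the classically defined wave operator $W_\pm(H_1,H_0)$ from~(\ref{F: W(pm) class-l def-n}) and the constructively defined one from~(\ref{F: W(pm)=int w(pm)(2)}) (transported back to $\hilb^{(a)}$ via the isometries $\euE(H_0)$, $\euE(H_1)$ of Theorem~\ref{T: euE}) --- agree on a dense subset of the absolutely continuous subspace, whereupon they coincide everywhere by boundedness. The natural dense set is $\hilb_+ = F^*\clK$, on which both $\euE_\lambda(H_0)$ and the form~(\ref{F: w(pm) form def-n}) are explicitly computable. Concretely, first I would compute, for $\phi = F^*g \in \hilb_+$, the fiber $\euE_\lambda(H_0)\phi = \pi^{-1/2}\sqrt{\Im T_{\lambda+i0}(H_0)}\,g$, and likewise pair against $\euE_\lambda(H_1)F^*f$; by~(\ref{F: w(pm) form def-n}) this reduces the matrix element of the constructive $W_\pm$ to an integral over $\Lambda(H_0,F)\cap\Lambda(H_1,F)$ of $\scal{f}{[1-T_{\lambda\mp i0}(H_1)J]\,\tfrac1\pi \Im T_{\lambda+i0}(H_0)\,g}\,d\lambda$.

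Second, I would compute the same matrix element of the classical wave operator. The standard route (as in Yafaev, e.g.\ \cite[Ch.\ 5]{Ya}) is the stationary representation: one writes $\scal{e^{itH_1}e^{-itH_0}P^{(a)}(H_0)\phi}{\psi}$, uses the invariance principle / an Abelian limit $\lim_{\eps\to0^+}\eps\int_0^\infty e^{-2\eps t}(\cdots)\,dt$, and expresses the result through the resolvent identity $R_{\lambda+iy}(H_1) = R_{\lambda+iy}(H_0) - R_{\lambda+iy}(H_0)V R_{\lambda+iy}(H_1)$ together with the factorization $V = F^*JF$. The outcome is an integral over $\mathbb R$ of a boundary-value expression in $T_{\lambda\pm i0}(H_0)$, $T_{\lambda\mp i0}(H_1)$ and $\Im T_{\lambda+i0}(H_0)$, which, after using $\Im R_{\lambda+i0}(H_0) = \pi\,\tfrac{dE^{H_0}_\lambda}{d\lambda}$ (weakly, sandwiched by $F$), matches the constructive expression termwise. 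The algebraic identity to verify is exactly the one packaged into Yafaev's definition~\cite[Def.\ 2.7.2]{Ya}, from which~(\ref{F: w(pm) form def-n}) was modelled, so this step should go through.

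Third, I would take care of the measure-theoretic bookkeeping: the classical formula produces an integral over the core $\hat\sigma_{H_0}$ with respect to $\rho(d\lambda)$, while the constructive one integrates over $\Lambda(H_0,F)\cap\Lambda(H_1,F)$ with respect to $d\lambda$. Since $\rho$ has the same spectral type as Lebesgue measure restricted to $\hat\sigma_{H_0}$ (Theorem~\ref{T: euE} and~(\ref{F: core of L(H0,F)})), and since $\Lambda(H_0,F)$ is conull and carries the a.c.\ part of $H_0$, the two integrals agree; on the complement, $\Im T_{\lambda+i0}(H_0)=0$ a.e., so no contribution is lost. Finally, having matched all matrix elements $\scal{W_\pm\phi}{\psi}$ for $\phi,\psi\in F^*\clK$ dense in $\hilb^{(a)}(H_0)$, density and the already-established boundedness (indeed isometry, by Theorem~\ref{T: properties of w(pm)}(1)) of both sides forces equality of the operators.

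I expect the main obstacle to be the second step: justifying the passage from the time-dependent definition~(\ref{F: W(pm) class-l def-n}) to a stationary integral representation valid pointwise a.e.\ in $\lambda$ and then identifying its integrand with the boundary values of the sandwiched resolvent. The delicate points are the interchange of the Abelian $t$-limit with the spectral ($\lambda$-) integration, and controlling the boundary behaviour $T_{\lambda\pm iy}(H_1)\to T_{\lambda\pm i0}(H_1)$ off the null set $\mathbb R\setminus\Lambda(H_1,F)$ --- precisely where the constructive framework of~\cite{Az3v6} does its work, so I would lean on the limiting-absorption estimates already available there rather than reproving them.
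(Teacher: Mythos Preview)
The paper does not supply its own proof of this theorem: it is stated in the survey portion of the introduction (Section~1.4), and immediately preceding the block of theorems containing it the author writes that ``proofs of the following theorems are given in \cite{Az3v6} in case of trace-class $V$ and will appear in \cite{AzDa} in general case, see also \cite{Az10} for the general case.'' So there is no in-paper argument to compare against.

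That said, your proposal is the standard and correct route, and it is precisely the strategy used in the referenced constructive approach of \cite{Az3v6}: one computes weak matrix elements on the dense set $F^*\clK$, passes the time-dependent definition through an Abelian limit to obtain a stationary (resolvent-based) formula, and identifies the resulting boundary-value integrand with the right-hand side of~(\ref{F: w(pm) form def-n}). Your identification of the delicate step --- justifying the Abelian limit and the a.e.\ boundary behaviour of the sandwiched resolvents --- is accurate, and your instinct to rely on the limiting-absorption machinery already assembled in \cite{Az3v6} rather than redoing it is the right one. One small refinement: in the constructive framework the measure $\rho$ is not merely of the same spectral type as Lebesgue measure but is taken to \emph{be} Lebesgue measure on $\Lambda(H_0,F)$ (see~(\ref{F: euH(H0)=int L(H0,F) hlambda})), so the third step is slightly simpler than you suggest.
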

\noindent Further, Theorems~\ref{T: euE} and~\ref{T: properties of w(pm)} immediately imply well-known properties for wave operators \cite[Theorems 5.4.1, 5.4.2, Corollary 5.4.3]{Az3v6}:
\begin{enumerate}
  \item The wave operators~(\ref{F: def of W(pm)}) are unitary (as operators from $\euH(H_0)$ to $\euH(H_1)$).
  \item (Multiplicative property) For any three real numbers $r_1,r_2,r_3,$ not necessarily distinct,
    $$W_\pm(H_{r_3},H_{r_1}) = W_\pm(H_{r_3},H_{r_2})W_\pm(H_{r_2},H_{r_1}).$$
  \item $W_\pm^*(H_1,H_0) = W_\pm(H_0,H_1).$
  \item $W_\pm(H_0,H_0)$ is the identity operator on $\euH(H_0).$
  \item $H_1 W_\pm(H_1,H_0) = W_\pm(H_1,H_0)H_0$ (intertwining property).
  \item For any bounded measurable function $h$ on $\mbR$ $$h(H_1) W_\pm(H_1,H_0) = W_\pm(H_1,H_0)h(H_0).$$
  \item The absolutely continuous parts of $H_0$ and $H_1$ are unitarily equivalent (Kato-Rosenblum Theorem).
\end{enumerate}

VII. The scattering matrix $S(\lambda; H_1,H_0)$ is defined as an operator $\hlambda(H_0) \to \hlambda(H_0)$
for all values of the spectral parameter~$\lambda$ from the intersection $\Lambda(H_0,F) \cap \Lambda(H_1,F)$ by formula
\cite[Definition 7.1.1]{Az3v6}
\begin{equation} \label{F: S(l)=w+*(l)w-(l)}
  S(\lambda; H_1,H_0) = w_+^*(\lambda; H_1,H_0)w_-(\lambda; H_1,H_0).
\end{equation}
Note that in conventional approach this formula is a theorem (see e.g. \cite{Ya}), which is proved for a.e.~$\lambda$ from an unspecified set of full measure.
Many of the well-known properties of the scattering matrix $S(\lambda; H_1,H_0)$ such as unitarity follow immediately from this definition and Theorem~\ref{T: properties of w(pm)}
\cite[Theorem 7.1.2]{Az3v6}. The scattering operator $\bfS(H_1,H_0)$ is defined by formula
\begin{equation} \label{F: bfS=int S(l)(2)}
  \bfS(H_1,H_0) = \int^\oplus_{\Lambda(H_0,F) \cap \Lambda(H_1,F)} S(\lambda; H_1,H_0)\,d\lambda.
\end{equation}
Equalities~(\ref{F: W(pm)=int w(pm)(2)}) and~(\ref{F: bfS=int S(l)(2)}) imply the classical definition~(\ref{F: bfS=W*(+)W(-)}) of the scattering operator $\bfS(H_1,H_0).$

VIII. Now we return to the formula~(\ref{F: Texp formula for S(l)}).
Before proceeding to a proof of~(\ref{F: Texp formula for S(l)}), one needs to give meaning to the right hand side of
(\ref{F: Texp formula for S(l)}). This raises the following question: if $H_r=H_0+rV$ and if $\lambda \in \Lambda(H_0,F),$
then for which values of $r$ one also has
\begin{equation} \label{F:  lambda in Lambda(Hr,F)}
  \lambda \in \Lambda(H_r,F)?
\end{equation}
This question is important, since the wave matrices $w_\pm(\lambda; H_r,H_0)$
and the scattering matrix $S(\lambda; H_r,H_0)$ are defined for those values of the coupling constant $r$ for which the inclusion~(\ref{F:  lambda in Lambda(Hr,F)}) holds.
The following well-known theorem answers this question; for a proof see e.g. \cite[Theorem 4.1.11]{Az3v6}.
\begin{thm} \label{T: 4.1.11 Az3}
Let $H_0$ be a self-adjoint operator on a Hilbert space $\hilb$ with a rigging operator $F \colon \hilb \to \clK,$
let $V = F^*JF,$ where $J$ is a bounded operator on~$\clK$ and let $H_r = H_0+rV.$
If a real number~$\lambda$ belongs to the set $\Lambda(H_0,F)$ (so in particular the operator
$T_{\lambda+ i0}(H_0)$ exists and is compact), then for any $r \in \mbR$ the number~$\lambda$ belongs to the set
$\Lambda(H_r,F)$ if and only if one (and hence all) of the following four operators is invertible:
$$
  1 + r J T_{\lambda\pm i0}(H_0), \ 1 + r T_{\lambda\pm i0}(H_0)J.
$$
In particular, the set of values of the coupling constant $r,$ for which $\lambda \notin \Lambda(H_r,F),$
is a discrete subset of the real line.
\end{thm}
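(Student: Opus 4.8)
The plan is to reduce the invertibility question to a statement about the sandwiched resolvent by means of a resolvent-type identity, and then to appeal to the meromorphic Fredholm theory (analytic Fredholm alternative) to get discreteness of the exceptional set of coupling constants. First I would establish the algebraic identity: for $\im z > 0$ and $r \in \mbR$, the operator $H_r - z = H_0 + rV - z$ is invertible with resolvent
\begin{equation*}
  R_z(H_r) = R_z(H_0) - r R_z(H_0) F^* \brs{1 + r J T_z(H_0)}^{-1} J F R_z(H_0),
\end{equation*}
valid whenever $1 + r J T_z(H_0)$ is invertible on $\clK$; this is the standard second resolvent identity written through the factorization $V = F^*JF$, and since $T_z(H_0) = F R_z(H_0)F^*$ is compact for non-real $z$, the operator $1 + r J T_z(H_0)$ is indeed invertible for all $z$ with $\im z \neq 0$ (its only possible non-invertibility would come from $-1/r$ being an eigenvalue of the compact operator $J T_z(H_0)$, but for non-real $z$ the operator $H_r - z$ is boundedly invertible, which forces invertibility of $1 + rJT_z(H_0)$). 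Sandwiching the displayed identity by $F$ on the left and $F^*$ on the right yields
\begin{equation*}
  T_z(H_r) = T_z(H_0) - r T_z(H_0)\brs{1 + r J T_z(H_0)}^{-1} J T_z(H_0) = \brs{1 + r T_z(H_0)J}^{-1} T_z(H_0),
\end{equation*}
the last equality being a routine rearrangement. Thus $T_{\lambda+iy}(H_r) = (1 + rT_{\lambda+iy}(H_0)J)^{-1}T_{\lambda+iy}(H_0)$ for $y > 0$, and similarly for $y < 0$.

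Next I would take the limit $y \to 0^+$. Since $\lambda \in \Lambda(H_0,F)$, the limit $T_{\lambda+i0}(H_0) := \lim_{y\to 0^+} T_{\lambda+iy}(H_0)$ exists in operator norm and is compact. If $1 + r T_{\lambda+i0}(H_0)J$ is invertible, then by continuity $1 + r T_{\lambda+iy}(H_0)J$ is invertible for all small $y > 0$ with inverses converging in norm to $(1 + rT_{\lambda+i0}(H_0)J)^{-1}$; hence $T_{\lambda+iy}(H_r)$ converges in norm, so $\lambda \in \Lambda(H_r,F)$ (one must also handle $y \to 0^-$, which is symmetric, using the invertibility of $1 + r T_{\lambda-i0}(H_0)J$ — the equivalence of the four invertibility conditions is itself an elementary linear-algebra fact: $1 + AB$ is invertible iff $1 + BA$ is, applied with $A = rT_{\lambda\pm i0}(H_0)$, $B = J$, together with taking adjoints which swaps $+i0 \leftrightarrow -i0$). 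Conversely, if $\lambda \in \Lambda(H_r,F)$, then $T_{\lambda+i0}(H_r)$ exists and one runs the resolvent identity in the other direction, writing $H_0 = H_r + (-r)V$, to obtain $T_{\lambda+i0}(H_0) = (1 - r T_{\lambda+i0}(H_r)J)^{-1} T_{\lambda+i0}(H_r)$ once $1 - r T_{\lambda+i0}(H_r)J$ is invertible; tracking the algebra shows the boundary operator $1 + r T_{\lambda+i0}(H_0)J$ must then be invertible, giving the reverse implication.

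Finally, for the discreteness statement I would view the family $r \mapsto 1 + r T_{\lambda+i0}(H_0)J$ as an operator-valued function that is affine (hence entire) in $r$, taking values in $1 + (\text{compact operators})$. The analytic Fredholm alternative (Steinberg's theorem; see e.g. the meromorphic Fredholm theory cited for the limiting absorption principle) then asserts that either $1 + rT_{\lambda+i0}(H_0)J$ is non-invertible for all $r \in \mbC$, or it is non-invertible only on a set of isolated points in $\mbC$. The first alternative is excluded because at $r = 0$ the operator equals the identity, which is invertible. Hence the set of real $r$ with $\lambda \notin \Lambda(H_r,F)$ is a subset of this discrete set, so it is itself discrete in $\mbR$. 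The main obstacle I anticipate is bookkeeping the four equivalent invertibility conditions and the two boundary values $\lambda \pm i0$ consistently through the resolvent identity — in particular making sure that the norm-limit $y \to 0$ can be interchanged with the inversion $(1 + rT_{\lambda+iy}(H_0)J)^{-1}$, which hinges precisely on the norm (not merely strong) convergence guaranteed by $\lambda \in \Lambda(H_0,F)$ together with the elementary fact that inversion is norm-continuous on the open set of invertible operators.
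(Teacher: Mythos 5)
Your proposal is correct and follows essentially the same route as the paper: the sandwiched second resolvent identity $T_z(H_r)=(1+rT_z(H_0)J)^{-1}T_z(H_0)$, norm-continuity of inversion to pass to the boundary values $\lambda\pm i0$, the elementary $1+AB\leftrightarrow 1+BA$ and adjoint symmetries for the four conditions, and compactness (analytic Fredholm) for discreteness of the exceptional coupling constants, which is exactly the argument sketched for Theorem~\ref{T: Az 4.1.11}. The only phrasing to tighten is the converse: rather than assuming "once $1-rT_{\lambda+i0}(H_r)J$ is invertible", note that for $\Im z\neq 0$ the operators $1+rT_z(H_0)J$ and $1-rT_z(H_r)J$ are mutual inverses (this is just the second resolvent identity), and since both boundary limits exist under the hypotheses, this identity passes to $y\to 0^+$ and yields the invertibility of $1+rT_{\lambda+i0}(H_0)J$ directly.
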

The set $\set{r \in \mbR \colon \lambda \notin \Lambda(H_r,F)}$ is of importance; elements of this set will be called \emph{resonance points} of the triple ($\lambda; H_0,V),$
the set itself will be called \emph{resonance set} and will be denoted by\label{Page: resonance set}
$R(\lambda; H_0,V)$ (this set depends on~$F$ too, but this dependence is not indicated in the notation). One of the reasons~$\lambda$ may fail
to belong to the set $\Lambda(H_r,F)$ is that~$\lambda$ may be an eigenvalue of~$H_r$ \cite[Proposition 4.1.10]{Az3v6}.

Theorem~\ref{T: 4.1.11 Az3} states that the perturbed operator $H_r = H_0+rV$ possesses a \emph{coupling constant regularity property}.
Coupling constant regularity was observed already by \Aronszajn\ \cite{Ar57} (see also \cite{AD56}) in the course of study of boundary value perturbations of singular Sturm-Liouville equations.
Later coupling constant regularity for general rank-one perturbations was used by \Simon\ and T.\,Wolff \cite{SW}, Simon \cite[Chapters 12,13]{SimTrId2}
and others (e.g. \cite{RMS,RJMS,Go94}; see \cite{SimTrId2} for more references) in a study of singular continuous spectrum and Anderson localization for random Hamiltonians.

A corollary of Theorem~\ref{T: 4.1.11 Az3} is that the operators $\euE_\lambda(H_r),$ $w_\pm(\lambda; H_r,H_0)$ and $S(\lambda; H_r,H_0)$
are defined for all values of the coupling constant $r$ \emph{except the discrete resonance set} $R(\lambda; H_0,V).$

Now we are in position to formulate the stationary formula for the scattering matrix.
\begin{thm} \label{T: Theorem 7.2.2 of Az3} 
Let $\lambda \in \Lambda(H_0,F).$ For all $r \notin R(\lambda; H_0,V)$ the scattering matrix
$S(\lambda; H_r,H_0),$ which is defined by equality~(\ref{F: S(l)=w+*(l)w-(l)}) as an operator on the fiber Hilbert space
(\ref{F: hlambda def of}), satisfies the equality
\begin{equation} \label{F: Theorem 7.2.2 of Az3}
  S(\lambda; H_r,H_0) = 1 - 2i \sqrt{\Im T_{\lambda+i0}(H_0)} \,\,rJ (1 + r T_{\lambda+i0}(H_0)J)^{-1} \sqrt{\Im T_{\lambda+i0}(H_0)}.
\end{equation}
\end{thm}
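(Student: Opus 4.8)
The plan is to derive the stationary formula~(\ref{F: Theorem 7.2.2 of Az3}) directly from the definition~(\ref{F: S(l)=w+*(l)w-(l)}) of the scattering matrix together with the form definition~(\ref{F: w(pm) form def-n}) of the wave matrices, exploiting the multiplicative and unitarity properties from Theorem~\ref{T: properties of w(pm)}. First I would fix $\lambda \in \Lambda(H_0,F)$ and $r \notin R(\lambda; H_0,V)$, so that by Theorem~\ref{T: 4.1.11 Az3} the operators $1 + rT_{\lambda\pm i0}(H_0)J$ and $1 + rJT_{\lambda\pm i0}(H_0)$ are all invertible and $\lambda \in \Lambda(H_r,F)$; thus $w_\pm(\lambda; H_r,H_0)$, $w_\pm(\lambda; H_0,H_r)$ and $S(\lambda; H_r,H_0)$ are all well-defined operators on $\hlambda(H_0)$, and $w_\pm^*(\lambda; H_r,H_0) = w_\pm(\lambda; H_0,H_r)$. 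Writing $S = w_+^*w_- = w_+(\lambda; H_0,H_r)\,w_-(\lambda; H_r,H_0)$, I would use the multiplicative property to reduce the computation of both factors to objects expressed purely in terms of $T_{\lambda\pm i0}(H_0)$, $J$ and $r$, avoiding any reference to the fiber space $\hlambda(H_r)$.

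The technical heart is to obtain a closed formula for $w_\pm(\lambda; H_r,H_0)$ itself. Starting from~(\ref{F: w(pm) form def-n}) with $H_1 = H_r$, I would test against vectors of the form $\euE_\lambda(H_r)F^*f$ and $\euE_\lambda(H_0)F^*g$. The right-hand side of~(\ref{F: w(pm) form def-n}) reads $\scal{f}{[1 - T_{\lambda\mp i0}(H_r)\,rJ]\,\tfrac1\pi \Im T_{\lambda+i0}(H_0)\,g}$, so the key algebraic step is to express the sandwiched resolvent $T_{\lambda\mp i0}(H_r)$ of the perturbed operator in terms of that of $H_0$. The second resolvent identity $R_z(H_r) = R_z(H_0) - rR_z(H_0)VR_z(H_r)$, sandwiched by $F$ and using $V = F^*JF$, yields $T_z(H_r) = T_z(H_0) - rT_z(H_0)J\,T_z(H_r)$, hence $T_z(H_r) = (1 + rT_z(H_0)J)^{-1}T_z(H_0) = T_z(H_0)(1 + rJT_z(H_0))^{-1}$; passing to the boundary value $z = \lambda \mp i0$ is legitimate precisely because $\lambda \in \Lambda(H_0,F)$ and $r \notin R(\lambda; H_0,V)$. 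Substituting $1 - rJ\,T_{\lambda\mp i0}(H_r) = 1 - rJ(1 + rT_{\lambda\mp i0}(H_0)J)^{-1}T_{\lambda\mp i0}(H_0) = (1 + rJT_{\lambda\mp i0}(H_0))^{-1}$ gives a manageable expression. Then I would write $\tfrac1\pi\Im T_{\lambda+i0}(H_0) = \sqrt{\Im T_{\lambda+i0}(H_0)}\cdot\pi^{-1/2}\sqrt{\Im T_{\lambda+i0}(H_0)}\cdot\pi^{-1/2}$ and recognize, via~(\ref{F: euE(F*psi)=sqrt Im T}), the factor $\euE_\lambda(H_0)F^*g = \pi^{-1/2}\sqrt{\Im T_{\lambda+i0}(H_0)}g$ on the far right; dealing with the $\euE_\lambda(H_r)F^*f$ factor on the left requires relating $\euE_\lambda(H_r)$ to $\euE_\lambda(H_0)$, which is exactly the content of the identity $\euE_\lambda(H_r) = w_\pm(\lambda;H_r,H_0)\euE_\lambda(H_0)$ coupled back into~(\ref{F: w(pm) form def-n}), so some care with the bookkeeping is needed here.

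Once both wave matrices are in closed form, I would multiply $w_+^* w_-$; the $\pm i0$ boundary values combine and, using $T_{\lambda-i0}(H_0) = T_{\lambda+i0}(H_0)^*$ together with $T_{\lambda+i0}(H_0) - T_{\lambda-i0}(H_0) = 2i\Im T_{\lambda+i0}(H_0)$, the difference of the two resolvent-based factors produces the factor $2i\sqrt{\Im T_{\lambda+i0}(H_0)}$ on the left and $\sqrt{\Im T_{\lambda+i0}(H_0)}$ on the right of~(\ref{F: Theorem 7.2.2 of Az3}), with $rJ(1 + rT_{\lambda+i0}(H_0)J)^{-1}$ sandwiched in the middle. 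A standard resolvent-algebra rearrangement, of the type $(1+rAJ)^{-1} - (1+rA^*J)^{-1} = -r(1+rA^*J)^{-1}J(A - A^*)(1+rAJ)^{-1}$, is what converts the two one-sided boundary expressions into the single expression with one resolvent; this rearrangement, and verifying that all the intertwinings between $\euE_\lambda(H_0)$ and $\euE_\lambda(H_r)$ cancel correctly so that the final operator genuinely acts $\hlambda(H_0) \to \hlambda(H_0)$, is where I expect the main obstacle to lie. Everything else is the second resolvent identity plus the algebra of $\sqrt{\Im T}$ factorizations already set up in the constructive framework.
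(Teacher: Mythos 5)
A preliminary remark: this paper does not actually prove Theorem~\ref{T: Theorem 7.2.2 of Az3}; it imports it from \cite[Theorem 7.2.2]{Az3v6}, so your proposal has to be judged on its own terms. Your overall route is the right one and can be completed: fix $\lambda\in\Lambda(H_0,F)$ and non-resonant $r$ so that, by Theorem~\ref{T: 4.1.11 Az3}, all the operators $1+rT_{\lambda\pm i0}(H_0)J$, $1+rJT_{\lambda\pm i0}(H_0)$ are invertible; the sandwiched second resolvent identity gives $T_z(H_r)=(1+rT_z(H_0)J)^{-1}T_z(H_0)$ and hence $1-T_{\lambda\mp i0}(H_r)\,rJ=(1+rT_{\lambda\mp i0}(H_0)J)^{-1}$, exactly as you say; and the final algebra, in the form $(1+rJT_{\lambda+i0}(H_0))^{-1}(1+rJT_{\lambda-i0}(H_0))=1-2irJ(1+rT_{\lambda+i0}(H_0)J)^{-1}\Im T_{\lambda+i0}(H_0)$, does produce~(\ref{F: Theorem 7.2.2 of Az3}) once the matrix elements of $S=w_+^*w_-$ on the dense set $\euE_\lambda(H_0)F^*\clK$ are computed and the outer factors $\sqrt{\Im T_{\lambda+i0}(H_0)}$ are peeled off by density.

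The genuine gap is at the central step, where you yourself flag uncertainty. The form definition~(\ref{F: w(pm) form def-n}) only gives you the pairings $\scal{\euE_\lambda(H_r)F^*f}{w_\pm\euE_\lambda(H_0)F^*g}$, i.e.\ it determines $\sqrt{\Im T_{\lambda+i0}(H_r)}\,w_\pm\euE_\lambda(H_0)F^*g=\pi^{-1/2}(1+rT_{\lambda\mp i0}(H_0)J)^{-1}\Im T_{\lambda+i0}(H_0)g$; to multiply $w_+^*w_-$ you need $w_\pm\euE_\lambda(H_0)F^*g$ as explicit vectors of $\hlambda(H_r)$, not merely their images under $\sqrt{\Im T_{\lambda+i0}(H_r)}$. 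The identity you invoke for this, ``$\euE_\lambda(H_r)=w_\pm(\lambda;H_r,H_0)\euE_\lambda(H_0)$'', is false (the wave matrix is a fiber of $W_\pm\neq 1$; that identity would force $S=1$). The missing lemma is
\[
  w_\pm(\lambda;H_r,H_0)\,\euE_\lambda(H_0)F^* \;=\; \euE_\lambda(H_r)F^*\,\brs{1+rJT_{\lambda\pm i0}(H_0)},
\]
which one proves by applying $\sqrt{\Im T_{\lambda+i0}(H_r)}$ to both sides, using~(\ref{F: Az3v6 (4.8)}) in the form $\Im T_{\lambda+i0}(H_r)=(1+rT_{\lambda-i0}(H_0)J)^{-1}\Im T_{\lambda+i0}(H_0)(1+rJT_{\lambda+i0}(H_0))^{-1}$ together with the injectivity of $\sqrt{\Im T_{\lambda+i0}(H_r)}$ on $\hlambda(H_r)$. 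With this lemma, $\scal{w_+\euE_\lambda(H_0)F^*f}{w_-\euE_\lambda(H_0)F^*g}=\frac1\pi\scal{f}{\Im T_{\lambda+i0}(H_0)\,(1+rJT_{\lambda+i0}(H_0))^{-1}(1+rJT_{\lambda-i0}(H_0))\,g}$ and your rearrangement finishes the proof; without it (or an equivalent explicit formula for the wave matrices on $\euE_\lambda(H_0)F^*\clK$), the product $w_+^*w_-$ cannot actually be evaluated, so the proposal as written does not yet constitute a proof.
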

The right hand side of the equality~(\ref{F: Theorem 7.2.2 of Az3}),
known as modified scattering matrix, is defined on the whole auxiliary Hilbert space~$\clK$ and it is not difficult to check by a direct calculation that it is a unitary operator on the whole Hilbert space.
The equality~(\ref{F: Theorem 7.2.2 of Az3}) shows that
the right hand side of~(\ref{F: Theorem 7.2.2 of Az3}) can be interpreted as a proper scattering matrix, given that the fiber Hilbert space is defined by equality~(\ref{F: hlambda def of}).
Recalling definition of the evaluation operator~(\ref{F: euE(F*psi)=sqrt Im T}), the equality~(\ref{F: Theorem 7.2.2 of Az3}) can be rewritten in more familiar terms as follows
\begin{equation} \label{F: Theorem 7.2.2 of Az3 (v2)}
 \begin{split}
   S(\lambda; H_r,H_0) = 1 - 2\pi i \euE_\lambda(H_0) F^* rJ (1 + r T_{\lambda+i0}(H_0)J)^{-1} F \euE_\lambda^*(H_0)
 \end{split}
\end{equation}
\begin{rems} \rm The expression $\euE_\lambda^*(H_0)$ on its own does not make sense since
$\euE_\lambda(H_0)$ as an operator $\hilb \to \hlambda(H_0)$ with domain $F^*\clK$ as defined by~(\ref{F: euE(F*psi)=sqrt Im T}) in general
is not closable, but the product $F \euE_\lambda^*(H_0)$ is a well-defined compact operator
from the Hilbert space $\hlambda(H_0)$ to the Hilbert space $\clK$ for every $\lambda \in \Lambda(H_0,F);$ for details see \cite[\S\S 2.6, 2.6.1, 2.15, 5.1]{Az3v6}.
\end{rems}
\noindent The formula~(\ref{F: Theorem 7.2.2 of Az3 (v2)}) coincides with~(\ref{F: rigorous stationary formula}),
but, unlike the formula~(\ref{F: rigorous stationary formula}), in the formula~(\ref{F: Theorem 7.2.2 of Az3 (v2)}) the full set $\Lambda(H_0,F) \cap \Lambda(H_r,F)$
of values of the spectral parameter~$\lambda$ (energy) for which it makes sense is explicitly given.
Finally, the formula~(\ref{F: Theorem 7.2.2 of Az3 (v2)}) can be written as (see \cite[(7.6)]{Az3v6})
$$
  S(\lambda; H_r,H_0) = 1 - 2\pi i \euE_\lambda(H_0) \,rV (1 + r R_{\lambda+i0}(H_0)V)^{-1} \euE_\lambda^\diamondsuit(H_0),
$$
provided the operators $\euE_\lambda(H_0),$~$V$ and $R_{\lambda+i0}$ are interpreted as acting between appropriate pairs of Hilbert spaces $\hilb_-, \hilb_+$ and~$\hlambda$
(see \cite[\S\S 5.1, 2.15]{Az3v6} for details):
$$
  \hlambda \stackrel{\euE_\lambda(H_0)}\longleftarrow \hilb_+ \stackrel{V}\longleftarrow \hilb_-
  \stackrel{R_{\lambda+i0}(H_0)}\longleftarrow \hilb_+ \stackrel{V}\longleftarrow \hilb_- \stackrel{\euE_\lambda^\diamondsuit(H_0)}\longleftarrow \hlambda.
$$
Here $\euE_\lambda^\diamondsuit(H_0)$ is a modified adjoint (see \cite[\S 2.6.1]{Az3v6}), definition of which is an abstract version
of~(\ref{F: gamma0 diamond}); for definition of Hilbert spaces $\hilb_\pm$ see also p.\,\pageref{Page: hilb+} of this paper.

Theorem~\ref{T: Theorem 7.2.2 of Az3} allows us to overcome a hindrance on the way to a proof of formula~(\ref{F: Texp formula for S(l)}).
\begin{thm} \label{T: 7.3.4 Az3v6} \cite[Theorem 7.3.4]{Az3v6} For all values of the spectral parameter~$\lambda$ from the set $\Lambda(H_0,F) \cap \Lambda(H_1,F)$ of full Lebesgue measure
\begin{equation} \label{F: Texp formula for S(l) (2)}
  S(\lambda; H_1,H_0) = \Texp \brs{-2\pi i \int_0^1 w_+(\lambda; H_0,H_r)\euE_\lambda(H_r)F^*JF \euE_\lambda^*(H_r)w_+(\lambda; H_r,H_0)\,dr}.
\end{equation}
\end{thm}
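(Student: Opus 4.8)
The plan is to exhibit the family $r\mapsto S(\lambda;H_r,H_0)$, regarded as a path of unitary operators on the \emph{fixed} fiber Hilbert space $\hlambda(H_0)$, as the solution of a linear transport equation whose propagator from $0$ to $1$ is precisely the right-hand side of~(\ref{F: Texp formula for S(l) (2)}). The three ingredients are a cocycle identity for scattering matrices, the stationary formula~(\ref{F: Theorem 7.2.2 of Az3}) in infinitesimal form, and a Riemann-product passage to the chronological exponential; the one genuinely delicate point is the behaviour at resonance values of the coupling constant, which is needed already to make sense of the right-hand side of~(\ref{F: Texp formula for S(l) (2)}).

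First I would record a cocycle identity. Fix $\lambda\in\Lambda(H_0,F)\cap\Lambda(H_1,F)$. For any $r_1,r_2,r_3$ with $\lambda\in\Lambda(H_{r_1},F)\cap\Lambda(H_{r_2},F)\cap\Lambda(H_{r_3},F)$, combining the definition $S(\lambda;\cdot,\cdot)=w_+^*(\lambda;\cdot,\cdot)w_-(\lambda;\cdot,\cdot)$ with the multiplicative property and unitarity of the wave matrices (Theorem~\ref{T: properties of w(pm)}), and inserting $w_+w_+^*=1$, gives
\begin{equation*}
  S(\lambda;H_{r_3},H_{r_1})=\brs{w_+(\lambda;H_{r_1},H_{r_2})\,S(\lambda;H_{r_3},H_{r_2})\,w_+(\lambda;H_{r_2},H_{r_1})}\,S(\lambda;H_{r_2},H_{r_1}).
\end{equation*}
Specialising $(r_1,r_2,r_3)=(0,s,s')$ and linearising the stationary formula~(\ref{F: Theorem 7.2.2 of Az3 (v2)}) applied to the pair $(H_{s'},H_s)=(H_s+(s'-s)V,H_s)$, one gets
\begin{equation*}
  S(\lambda;H_{s'},H_s)=1-2\pi i\,(s'-s)\,B(s)+O\bigl((s'-s)^2\bigr),\qquad B(s):=\euE_\lambda(H_s)F^*JF\euE_\lambda^*(H_s),
\end{equation*}
with the remainder uniform on compact subsets of $[0,1]\setminus R(\lambda;H_0,V)$. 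Substituting this into the bracketed factor of the cocycle identity and conjugating by the unitary $w_+(\lambda;H_0,H_s)$ (so that $w_+(\lambda;H_0,H_s)w_+(\lambda;H_s,H_0)=w_+(\lambda;H_0,H_0)=1$) makes it $1-2\pi i(s'-s)A(s)+O((s'-s)^2)$, where $A(s):=w_+(\lambda;H_0,H_s)\euE_\lambda(H_s)F^*JF\euE_\lambda^*(H_s)w_+(\lambda;H_s,H_0)$ is exactly the integrand of~(\ref{F: Texp formula for S(l) (2)}); it is a trace-class-valued function of $s$, since $F$ is Hilbert-Schmidt.

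Next I would pick a partition $0=s_0<s_1<\dots<s_n=1$ of $[0,1]$ avoiding the discrete resonance set $R(\lambda;H_0,V)$ (Theorem~\ref{T: 4.1.11 Az3}) and iterate the cocycle identity, telescoping $S(\lambda;H_1,H_0)$ into the ordered product (latest factor on the left) of the terms $w_+(\lambda;H_0,H_{s_{k-1}})\,S(\lambda;H_{s_k},H_{s_{k-1}})\,w_+(\lambda;H_{s_{k-1}},H_0)=1-2\pi i(s_k-s_{k-1})A(s_{k-1})+O((s_k-s_{k-1})^2)$. Refining the partition and invoking the definition of the chronological exponential of a trace-norm-continuous path (\cite[Appendix A]{Az3v6}) identifies the limit with $\Texp\brs{-2\pi i\int_0^1 A(s)\,ds}$; equivalently, $r\mapsto S(\lambda;H_r,H_0)$ solves $\frac{d}{dr}S(\lambda;H_r,H_0)=-2\pi i\,A(r)\,S(\lambda;H_r,H_0)$ with $S(\lambda;H_0,H_0)=1$, so it equals $\Texp\brs{-2\pi i\int_0^r A(s)\,ds}$, and $r=1$ gives the claim.

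The hard part will be the behaviour at a resonance value $r_0\in R(\lambda;H_0,V)\cap(0,1)$, where $\euE_\lambda(H_{r_0})$ and $w_\pm(\lambda;H_{r_0},H_0)$, and hence $A(r_0)$ literally, are undefined; indeed a direct computation from~(\ref{F: Theorem 7.2.2 of Az3}) gives, on $[0,1]\setminus R(\lambda;H_0,V)$,
\begin{equation*}
  A(r)=\tfrac{1}{\pi}\sqrt{\Im T_{\lambda+i0}(H_0)}\,\bigl(1+rJ\,T_{\lambda+i0}(H_0)\bigr)^{-1}J\bigl(1+r\,T_{\lambda+i0}(H_0)^{*}J\bigr)^{-1}\sqrt{\Im T_{\lambda+i0}(H_0)},
\end{equation*}
and the two inverses here acquire poles at $r_0$ (Theorem~\ref{T: 4.1.11 Az3}). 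One must prove that $A$ nevertheless extends across each $r_0$ to a trace-norm-continuous path on all of $[0,1]$, so that both the chronological exponential and the limiting product above make sense; equivalently, that the fixed-fiber scattering matrix $S(\lambda;H_r,H_0)$ depends continuously on $r$ through resonance points even though its description in terms of $T_{\lambda+i0}(H_r)$ breaks down there. The mechanism is that $\sqrt{\Im T_{\lambda+i0}(H_0)}$ annihilates resonance vectors: if $(1+r_0 J\,T_{\lambda+i0}(H_0))u=0$ then $u=-r_0 J\,T_{\lambda+i0}(H_0)u$, so $\scal{T_{\lambda+i0}(H_0)u}{u}=-r_0\scal{T_{\lambda+i0}(H_0)u}{J\,T_{\lambda+i0}(H_0)u}\in\mbR$ (since $J=J^*$ and $r_0\in\mbR$), whence $\scal{\Im T_{\lambda+i0}(H_0)u}{u}=0$ and $\sqrt{\Im T_{\lambda+i0}(H_0)}\,u=0$ because $\Im T_{\lambda+i0}(H_0)\ge 0$; moreover $(1+r\,T_{\lambda+i0}(H_0)^{*}J)^{-1}\sqrt{\Im T_{\lambda+i0}(H_0)}$ is the adjoint of $\sqrt{\Im T_{\lambda+i0}(H_0)}(1+rJ\,T_{\lambda+i0}(H_0))^{-1}$, so it suffices to control the latter near $r_0$. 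Upgrading this from $\ker(1+r_0 J\,T_{\lambda+i0}(H_0))$ to the whole generalised eigenspace — i.e. along Jordan chains, which can appear when the algebraic multiplicity of the resonance exceeds its geometric multiplicity $m$ — again using positivity of $\Im T_{\lambda+i0}(H_0)$, is the technical heart of the argument; with it in place the steps above run without change.
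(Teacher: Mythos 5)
Your overall route --- the cocycle identity obtained from the multiplicativity and unitarity of the wave matrices (Theorem~\ref{T: properties of w(pm)}), the infinitesimal form of the stationary formula~(\ref{F: Theorem 7.2.2 of Az3 (v2)}), and the passage to the chronological exponential via ordered products, equivalently the transport equation $\frac{d}{dr}S(\lambda;H_r,H_0)=-2\pi i\,A(r)\,S(\lambda;H_r,H_0)$ --- is essentially the approach of \cite{Az3v6} that the paper describes (compare the identity~(\ref{F: wEVEw=S'S*})), and you correctly locate the one delicate point: the integrand must be extended trace-norm continuously across the resonance set, exactly as the Remark following Theorem~\ref{T: 7.3.4 Az3v6} states.

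However, the mechanism you propose for that extension is false, and this is a genuine gap sitting precisely at what you call the technical heart. Your claim that $\sqrt{\Im T_{\lambda+i0}(H_0)}$ annihilates the whole generalised eigenspace of $JT_{\lambda+i0}(H_0)$ at the eigenvalue $-1/r_0$ --- equivalently, that the one-sided factor $w(s)=\sqrt{\Im T_{\lambda+i0}(H_0)}\,(1+sJT_{\lambda+i0}(H_0))^{-1}$ stays regular at $s=r_0$, which is what you actually use, since your $A(r)$ is $\frac1\pi\,w(r)Jw(r)^*$ --- is exactly the statement that $r_0$ is a resonance point of type~I: see Theorem~\ref{T: type I thm}, items (i$_\pm$), (ii$_\pm$) and (iii$_\pm$). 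The paper shows that real resonance points which are \emph{not} of type~I exist (the order-two construction with $\hat J=0$ in subsection~\ref{SSS: order 2}, together with Proposition~\ref{P: points without property S exist}). Your kernel computation for order-one vectors is correct (it is Proposition~\ref{P: euE (Vf)=0, k=1}), but the ``upgrade along Jordan chains using positivity'' cannot succeed: by Proposition~\ref{P: euE (Vf)=0, k>1}, for a resonance vector $u$ of order $k\ge 2$ the quantity $\scal{Ju}{\Im T_{\lambda+i0}(H_s)Ju}$ is a Laurent polynomial in $(s-r_\lambda)^{-1}$ whose coefficients need not vanish, and by Theorem~\ref{T: on vectors with property L} only the lower half $u^{(1)},\dots,u^{(\lceil k/2\rceil)}$ of a Jordan chain is annihilated in general. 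So each one-sided factor $w(r)$ can genuinely have a pole at $r_0$; the continuity of the full integrand at non-type-I points comes from a cancellation between the $\lambda+i0$ and $\lambda-i0$ factors, not from regularity of either factor separately. This is exactly what Proposition~\ref{P: 7.2.5 Az3} (analytic continuation of $S(\lambda;H_r,H_0)$ in the coupling constant, going back to \cite{Az2}) provides: combined with the identity expressing the integrand as a constant multiple of $\brs{\frac{d}{dr}S(\lambda;H_r,H_0)}S^*(\lambda;H_r,H_0),$ it yields analyticity of the integrand on all of $\mbR$, and the paper's remark states that this proposition is used in the proof of Theorem~\ref{T: 7.3.4 Az3v6} for precisely this purpose. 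Note also that the same continuation is needed earlier in your argument than you acknowledge: the $O\bigl((s_k-s_{k-1})^2\bigr)$ remainders in your telescoped product are not uniform on partition subintervals containing a resonance point (the factor $(1+(s'-s)T_{\lambda+i0}(H_s)J)^{-1}$ blows up there), so without the analytic continuation the product-limit step does not close either.
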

This theorem allows to prove the following theorem.
\begin{thm} \label{T: C 8.2.5 Az3} \cite[Corollary 8.2.5]{Az3v6} Let $H_0$ be a self-adjoint operator, let~$V$ be a trace-class self-adjoint operator and let $H_r = H_0+rV.$ For a.e. $\lambda \in \mbR$
$$
  \det S(\lambda; H_1,H_0) = e^{- 2 \pi i \xia(\lambda)},
$$
where $\xia(\lambda)$ is the absolutely continuous spectral shift function defined as the density of the absolutely continuous measure~(\ref{F: xia def}).
\end{thm}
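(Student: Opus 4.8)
The plan is to apply the Fredholm determinant to both sides of the chronological exponential formula of Theorem~\ref{T: 7.3.4 Az3v6}. Fix $\lambda$ in the full-measure set $\Lambda(H_0,F)\cap\Lambda(H_1,F)$; since $V$ is trace class one may take the rigging $F$ Hilbert--Schmidt, so that $\euE_\lambda(H_r)F^*=\pi^{-1/2}\sqrt{\Im T_{\lambda+i0}(H_r)}$ is Hilbert--Schmidt on this set. Then the stationary formula of Theorem~\ref{T: Theorem 7.2.2 of Az3} exhibits $S(\lambda;H_r,H_0)-1$ as a product of a Hilbert--Schmidt operator, a bounded operator, and a Hilbert--Schmidt operator, hence trace class for every $r\notin R(\lambda;H_0,V)$; in particular $\det S(\lambda;H_1,H_0)$ is well defined. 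The chronological exponential in Theorem~\ref{T: 7.3.4 Az3v6} is well defined on this set, with
$$
  A(r):=-2\pi i\,w_+(\lambda;H_0,H_r)\,\euE_\lambda(H_r)F^*JF\,\euE_\lambda^*(H_r)\,w_+(\lambda;H_r,H_0)
$$
a trace-class operator on $\hlambda(H_0)$, defined and trace-norm continuous in $r$ away from the discrete set $R(\lambda;H_0,V)$. I would then invoke the multiplicative analogue of Liouville's formula for the chronological exponential (see \cite[Appendix A]{Az3v6}),
$$
  \det\Texp\brs{\int_0^1 A(r)\,dr}=\exp\brs{\int_0^1\Tr A(r)\,dr},
$$
which follows by differentiating $\det U(t)$ along $U'(t)=A(t)U(t)$, $U(0)=1$, and using $\Tr\brs{U(t)^{-1}A(t)U(t)}=\Tr A(t)$.

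The next step is to simplify the exponent. Writing $V=F^*JF$ and using cyclicity of the trace together with the multiplicative property of the wave matrices (Theorem~\ref{T: properties of w(pm)}(2)), which gives $w_+(\lambda;H_r,H_0)\,w_+(\lambda;H_0,H_r)=w_+(\lambda;H_r,H_r)=1$, the two wave-matrix factors cancel:
$$
  \Tr\brs{w_+(\lambda;H_0,H_r)\,\euE_\lambda(H_r)V\euE_\lambda^*(H_r)\,w_+(\lambda;H_r,H_0)}=\Tr\brs{\euE_\lambda(H_r)V\euE_\lambda^*(H_r)},
$$
the right-hand trace now taken on $\hlambda(H_r)$. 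Thus for every $\lambda\in\Lambda(H_0,F)\cap\Lambda(H_1,F)$,
$$
  \det S(\lambda;H_1,H_0)=\exp\brs{-2\pi i\int_0^1\Tr\brs{\euE_\lambda(H_r)V\euE_\lambda^*(H_r)}\,dr}.
$$

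It then remains to identify $g(\lambda):=\int_0^1\Tr\brs{\euE_\lambda(H_r)V\euE_\lambda^*(H_r)}\,dr$ with the density $\xia(\lambda)$ of the measure~(\ref{F: xia def}). By~(\ref{F: euE(F*psi)=sqrt Im T}) the operator $\euE_\lambda(H_r)V\euE_\lambda^*(H_r)$ is the compression to $\hlambda(H_r)=\closure{\im\sqrt{\Im T_{\lambda+i0}(H_r)}}$ of $\pi^{-1}\sqrt{\Im T_{\lambda+i0}(H_r)}\,J\,\sqrt{\Im T_{\lambda+i0}(H_r)}$, so cyclicity of the trace, the identity $\Im T_{\lambda+iy}(H_r)=F\,\Im R_{\lambda+iy}(H_r)F^*$, and the limiting absorption principle give
$$
  \Tr\brs{\euE_\lambda(H_r)V\euE_\lambda^*(H_r)}=\pi^{-1}\Tr\brs{J\,\Im T_{\lambda+i0}(H_r)}=\pi^{-1}\lim_{y\to0^+}\Tr\brs{V\,\Im R_{\lambda+iy}(H_r)}.
$$
Since $\pi^{-1}\Im R_{\lambda+iy}(H_r)$ is the Poisson integral at $\lambda+iy$ of the spectral measure of $H_r$, the classical Fatou theorem identifies this limit, for a.e.\ $\lambda$, with the density of the absolutely continuous part of the finite complex measure $\Delta\mapsto\Tr\brs{VE_\Delta^{H_r}}$, which equals the density of $\Delta\mapsto\Tr\brs{VE_\Delta^{H_r^{(a)}}}$ since the singular part of $H_r$ contributes a purely singular measure. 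As $\int_\mbR\norm{\Im T_{\lambda+iy}(H_r)}_{\clL_1}\,d\lambda=\pi\Tr\brs{F^*F}$ uniformly in $y$, the function $g$ is locally integrable, so testing against $\phi\in C_c(\mbR)$ and using Fubini yields
$$
  \int_\mbR\phi(\lambda)\,g(\lambda)\,d\lambda=\int_0^1\Tr\brs{V\phi(H_r^{(a)})}\,dr=\xia(\phi),
$$
hence $g=\xia$ a.e., which proves the theorem. (If the identity $g=\xia$ is already recorded in \cite{Az3v6}, this paragraph reduces to a reference.)

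The main obstacle lies in this last step: relating the pointwise traces $\Tr\brs{\euE_\lambda(H_r)V\euE_\lambda^*(H_r)}$ first to a boundary value of the sandwiched resolvent and then to the absolutely continuous density of $\Delta\mapsto\Tr\brs{VE_\Delta^{H_r}}$ requires care with the precise meaning of $\euE_\lambda^*(H_r)$ (only $F\euE_\lambda^*(H_r)$ is a well-defined compact operator), with the joint measurability and local integrability in $(r,\lambda)$ of $\Tr\brs{\euE_\lambda(H_r)V\euE_\lambda^*(H_r)}$ needed to justify Fubini and to make sense of the word ``density'' --- in particular, integrability in $r$ over $[0,1]$ despite the presence of resonance points --- and with the Fatou-type boundary-value argument for the Poisson integral of this trace-class operator measure. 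By contrast, the algebraic heart of the argument --- taking the determinant of the $\Texp$ via the Liouville identity and collapsing the wave-matrix factors by cyclicity and multiplicativity --- is routine once one has checked that $A(r)$ is trace-norm continuous in $r$ off $R(\lambda;H_0,V)$ and that $S(\lambda;H_1,H_0)-1$ is trace class for a.e.\ $\lambda$; the resonance set $R(\lambda;H_0,V)$ itself causes no trouble in the $r$-integral, being discrete, hence null in the variable $r$.
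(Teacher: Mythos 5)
Your proposal is correct and follows essentially the same route as the paper (and its cited source \cite[Corollary 8.2.5]{Az3v6}): take the Fredholm determinant of the chronological-exponential formula of Theorem~\ref{T: 7.3.4 Az3v6}, reduce $\det\Texp$ to $\exp\int\Tr$ and cancel the wave matrices by cyclicity, then identify the exponent $\int_0^1\Tr\brs{\euE_\lambda(H_r)V\euE_\lambda^\diamondsuit(H_r)}\,dr$ with $\xia(\lambda)$. The only difference is that the paper handles this last identification by citing \cite[Lemma 8.2.1, Theorem 8.1.3]{Az3v6}, whereas you sketch it directly via $\Tr\brs{\frac1\pi J\,\Im T_{\lambda+i0}(H_r)}$ and the Fatou boundary-value/Fubini argument, which is the same mechanism used there.
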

This formula combined with Birman-\Krein\ formula~(\ref{F: Birman-Krein formula}), implies that the singular spectral shift function $\xis(\lambda)$ of the pair $(H_0,H_1)$
defined as density of measure~(\ref{F: xis def}) is a.e. integer-valued (see~(\ref{F: xis is in Z})). In fact, in \cite{Az3v6} another proof of the theorem~(\ref{F: xis is in Z})
was given, so that the Birman-\Krein\ formula becomes its corollary. This proof is relevant to the content of this paper; for this reason its main idea is outlined in the next paragraph.

Let $U(r), r \in [a,b],$ be a path of unitary operators, such that $U(a) = 1,$ $U(r) - 1$ is trace-class for all $r \in [a,b]$
and the function $r \mapsto U(r) - 1$ is continuous in trace-class norm. These conditions on the operator $U(r)$ imply that spectrum of $U(r)$
consists of isolated eigenvalues on the unit circle with~$1$ as only one point in the essential spectrum of $U(r).$
As~$r$ decreases from~$b$ to~$a,$ eigenvalues of the unitary operator $U(r)$ converge continuously to~$1.$ So, given a point $e^{i\theta}$ on the unit circle,
one may calculate spectral flow through the point $e^{i\theta},$ which, following \cite{Pu01FA}, is called $\mu$-invariant of the path $U(r).$

The scattering matrix $S(\lambda; H_1,H_0)$ for any given value of~$\lambda$ from the full set $\Lambda(H_0,F) \cap \Lambda(H_1,F)$
is a unitary matrix of class $1 + \clL_1(\hlambda)$ (that is, $S(\lambda; H_1,H_0)-1$ is a trace class operator on $\hlambda(H_0)$).
There exist two natural paths which continuously connect the scattering matrix $S(\lambda; H_1,H_0)$ with the identity operator on $\hlambda(H_0).$
In the first path one changes the imaginary part of the spectral parameter $y = \Im z$ in the stationary formula~(\ref{F: Theorem 7.2.2 of Az3 (v2)}) or~(\ref{F: Theorem 7.2.2 of Az3})
for the scattering matrix from $+\infty$ to $0:$
\begin{equation} \label{F: tilde S(l+iy)}
  [0,+\infty] \ni y \mapsto S(\lambda+iy; H_1,H_0) = 1 - 2\pi i \euE_{\lambda+iy}(H_0) F^* J (1 + T_{\lambda+iy}(H_0)J)^{-1} F \euE_{\lambda+iy}^*(H_0),
\end{equation}
where in accordance with (\ref{F: euE(F*psi)=sqrt Im T}) $$\euE_{\lambda+iy}(H_0) F^* = \pi^{-1/2} \sqrt{\Im T_{\lambda+iy}(H_0)}.$$
One can show that this path is continuous in trace-class topology.
In order to get a second way of connecting $S(\lambda; H_1,H_0)$ with the identity operator the following theorem (which was initially observed in \cite{Az2}) is used.
\begin{prop} \label{P: 7.2.5 Az3} \cite[Proposition 7.2.5]{Az3v6}
The scattering matrix $S(\lambda; H_r,H_0)$ as a meromorphic function of the coupling constant $r$ admits
analytic continuation to the real axis.
\end{prop}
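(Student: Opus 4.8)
The plan is to pass to the stationary formula of Theorem~\ref{T: Theorem 7.2.2 of Az3}, complexify the coupling constant so as to obtain a genuinely meromorphic operator‑valued function of~$r$, and then to exploit the fact that the scattering matrix is \emph{unitary} on the real axis to force the possible poles on $\mbR$ to be removable.

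First I would fix $\lambda \in \Lambda(H_0,F)$, abbreviate $T := T_{\lambda+i0}(H_0)$ and $K := \sqrt{\Im T}$, and consider
\[
  \hat S(r) \;:=\; 1 - 2i\,K\,rJ\,(1 + rTJ)^{-1}K , \qquad r \in \mbC ,
\]
defined for all $r$ for which $1 + rTJ$ is invertible. Since $\lambda \in \Lambda(H_0,F)$, the operator $TJ$ is compact, so by the analytic Fredholm theorem $r \mapsto (1+rTJ)^{-1}$ is meromorphic on all of $\mbC$ with a set of poles having no finite accumulation point; hence $\hat S$ is an operator‑valued meromorphic function of~$r$ with finite‑order poles. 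By Theorem~\ref{T: 4.1.11 Az3}, for real $r$ the operator $1 + rTJ$ fails to be invertible exactly on the discrete resonance set $R(\lambda; H_0,V)$, so the poles of $\hat S$ lying on $\mbR$ are among the resonance points. Next I would record that, because $\Im T \ge 0$, one has $\hlambda(H_0) = \overline{\operatorname{ran} K}$ and $\ker K = \hlambda(H_0)^\perp$, and that $K\,rJ(1+rTJ)^{-1}K$ maps into $\operatorname{ran} K$ and annihilates $\ker K$; consequently $\hat S(r)$ leaves $\hlambda(H_0)$ invariant, acts as the identity on $\ker K$, and — by the stationary formula~\eqref{F: Theorem 7.2.2 of Az3} — restricts on $\hlambda(H_0)$ to $S(\lambda; H_r,H_0)$ for every real $r \notin R(\lambda; H_0,V)$. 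Thus $\hat S$ \emph{is} the meromorphic‑in‑$r$ continuation of $S(\lambda; H_r,H_0)$ (padded by the identity on $\ker K$), and it remains only to show that $\hat S$ has no true poles on $\mbR$.

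The decisive step is then elementary. Fix a resonance point $r_0 \in R(\lambda; H_0,V)$ and choose $\eps > 0$ so small that $(r_0-\eps,r_0+\eps)$ meets $R(\lambda; H_0,V)$ only in $r_0$ and $\hat S$ is holomorphic on the punctured disc $0 < |r-r_0| < \eps$. Write the Laurent expansion $\hat S(r) = \sum_{k=1}^{N}(r-r_0)^{-k}C_k + (\text{holomorphic part})$, with bounded operators $C_k$ and $C_N \ne 0$ if the pole order $N$ is positive. For every real $r$ in $(r_0-\eps,r_0+\eps)\setminus\{r_0\}$ the operator $\hat S(r)$ is the orthogonal direct sum of $S(\lambda; H_r,H_0)$, which is unitary on $\hlambda(H_0)$ by \cite[Theorem 7.1.2]{Az3v6}, and the identity on $\ker K$; hence $\hat S(r)$ is unitary on $\clK$ and $\|\hat S(r)\| = 1$. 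If $N \ge 1$, pick $\xi \in \clK$ with $C_N\xi \ne 0$; then $\hat S(r)\xi = (r-r_0)^{-N}C_N\xi + O(|r-r_0|^{-N+1})$ as the real parameter $r \to r_0$, so $\|\hat S(r)\xi\| \to \infty$, contradicting $\|\hat S(r)\xi\| \le \|\hat S(r)\|\,\|\xi\| = \|\xi\|$. Therefore $N = 0$: $\hat S$ extends holomorphically across $r_0$. Since $r_0$ was an arbitrary resonance point, $S(\lambda; H_r,H_0)$, regarded as a meromorphic function of~$r$, is analytic at every real~$r$, which is the assertion.

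I expect the only genuine subtlety to be the bookkeeping in the first two steps rather than the final estimate: one must make sure that ``$S(\lambda; H_r,H_0)$ as a function of $r$'' refers to one single analytic operator function, which hinges on the observation that the fibre Hilbert space $\hlambda(H_0)$ of~\eqref{F: hlambda def of} depends only on $H_0$ and $\lambda$, not on $r$, so that all the operators $S(\lambda; H_r,H_0)$ live on one common space and the stationary formula~\eqref{F: Theorem 7.2.2 of Az3} really presents them as restrictions of the single meromorphic family $\hat S$. Once that identification is in place, removability of the poles is automatic from the a priori bound $\|S(\lambda; H_r,H_0)\| = 1$ on $\mbR$, and no analysis of the Jordan/root‑subspace structure of $1 + rTJ$ at resonance points is needed. (Should one prefer to avoid the appeal to unitarity, the same conclusion can be reached by combining the ``Key Lemma'' that every resonance vector $u$, i.e.\ a nonzero $u$ with $(1+r_0TJ)u = 0$, $r_0 \in \mbR$, satisfies $KJu = 0$ — immediate on taking imaginary parts in $\langle (1+r_0TJ)u, Ju\rangle = 0$ using $J=J^*$, $r_0 \in \mbR$, $\Im T \ge 0$ — with a chain analysis showing that the full principal part of $\hat S$ at $r_0$ vanishes after the sandwiching by $K$ on both sides; but the unitarity argument bypasses this entirely.)
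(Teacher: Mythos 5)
Your proof is correct and follows essentially the same route as the paper's: the stationary formula exhibits $S(\lambda;H_r,H_0)$ as the restriction to $\hlambda(H_0)$ of the modified scattering matrix, which is meromorphic in $r$ by the analytic Fredholm alternative and unitary for real non-resonant $r$, so its boundedness on $\mbR$ makes any real pole removable. This is precisely the argument used in \cite{Az3v6} and reproduced in this paper in the proof of Proposition~\ref{P: delta JP delta=0} via the operator $\tilde S(\lambda+i0;H_s,H_r)$ of~(\ref{F: tilde S}), so no further comment is needed.
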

\begin{rems} \rm
In \cite{Az3v6} this proposition in fact precedes Theorem~\ref{T: 7.3.4 Az3v6} and is used in its proof.
Indeed, though the integrand of the chronological exponential in~(\ref{F: Texp formula for S(l) (2)}) is defined for all $r$ except the discrete resonance set
$R(\lambda; H_0,V),$ to define the chronological exponential itself one needs the integrand to be continuous in trace-class norm.
\end{rems}
Proposition~\ref{P: 7.2.5 Az3} provides the second way of connecting continuously the scattering matrix $S(\lambda; H_1,H_0)$ with the identity operator:
via the continuous mapping
\begin{equation} \label{F: r to S(l;Hr,H0)}
  [0,1] \ni r \mapsto S(\lambda; H_r,H_0) \in 1 + \clL_1(\hlambda(H_0)).
\end{equation}
\noindent The $\mu$-invariant of the path~(\ref{F: tilde S(l+iy)}) was introduced in \cite{Pu01FA} where it was denoted by $\mu(\theta,\lambda; H_1,H_0).$
The $\mu$-invariant of the path~(\ref{F: r to S(l;Hr,H0)}) was introduced in \cite{Az2,Az3v6}  where it was denoted by $\mua(\theta,\lambda; H_1,H_0).$
Relation of these $\mu$-invariants to the spectral shift functions $\xi,\xia$ and $\xis$ is given by following theorems.
\begin{thm} \label{T: Push} \cite{Pu01FA}
For a.e. $\lambda \in \mbR$
$$
  \xi_{H_1,H_0}(\lambda) = - \frac 1{2\pi} \int_0^{2\pi} \mu(\theta,\lambda; H_1,H_0)\,d\theta
$$
\end{thm}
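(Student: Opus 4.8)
The plan is to realise $\xi_{H_1,H_0}(\lambda)$ as the total increment of the continuous branch of $\frac1i\log\det$ along the trace-class continuous path $y\mapsto S(\lambda+iy;H_1,H_0)$ of~(\ref{F: tilde S(l+iy)}), and then to identify that increment with $\int_0^{2\pi}\mu(\theta,\lambda;H_1,H_0)\,d\theta$ by a Fubini argument applied to the individual eigenphases. Throughout, $\lambda$ is taken in the full-measure set $\Lambda(H_0,F)\cap\Lambda(H_1,F)$, intersected with the full-measure set on which $\lim_{y\to0^+}\Im T_{\lambda+iy}(H_0)$ exists in trace-class norm and on which $\xi_{H_1,H_0}(\lambda)$ equals the boundary value furnished by \Krein's perturbation-determinant formula.

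First I would record the endpoints and the continuity of the path $[0,+\infty]\ni y\mapsto S(\lambda+iy;H_1,H_0)-1\in\clL_1(\hlambda(H_0))$: as $y\to+\infty$ both $T_{\lambda+iy}(H_0)$ and $\Im T_{\lambda+iy}(H_0)$ tend to $0$ in the relevant norms, so the stationary formula~(\ref{F: Theorem 7.2.2 of Az3}) gives $S(\lambda+i\infty)=1$; at $y=0$ it gives the scattering matrix of Theorem~\ref{T: Theorem 7.2.2 of Az3}; and trace-norm continuity on $[0,+\infty]$ follows from trace-norm continuity of $y\mapsto\Im T_{\lambda+iy}(H_0)$ (hence Hilbert--Schmidt continuity of its square root) together with norm-continuity and uniform invertibility of $1+T_{\lambda+iy}(H_0)J$. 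Consequently each $S(\lambda+iy;H_1,H_0)$ has a trace-summable family of eigenphases accumulating only at $1$ and depending continuously on $y$, so that the spectral flow $\mu(\theta,\lambda;H_1,H_0)$ through $e^{i\theta}$ of this path, traversed from the initial point $y=+\infty$ to $y=0$, is well defined for a.e.\ $\theta$. To compute its integral over $\theta$ I would write the eigenvalues of $S(\lambda+iy;H_1,H_0)$ as $e^{i\phi_j(y)}$ for a continuous choice of eigenphases with $\phi_j(+\infty)=0$, so that $\sum_j\phi_j(y)=\frac1i\log\det S(\lambda+iy;H_1,H_0)$ is the continuous branch vanishing at $y=+\infty$, and note that $\mu(\theta,\lambda;H_1,H_0)=\sum_j\nu_j(\theta)$, where $\nu_j(\theta)$ is the net number of crossings of $\theta+2\pi\mbZ$ by the lift $\phi_j$. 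By Fubini this reduces to the one-dimensional statement that for continuous $\phi\colon[a,b]\to\mbR$ the integral over $t\in\mbR$ of the net number of crossings of the level $t$ equals $\phi(b)-\phi(a)$, which yields
\[
  \int_0^{2\pi}\mu(\theta,\lambda;H_1,H_0)\,d\theta\;=\;\frac1i\,\log\det S(\lambda+i0;H_1,H_0).
\]

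Second, I would evaluate the right-hand side via perturbation determinants. Put $\Delta(z)=\det\bigl((H_1-z)(H_0-z)^{-1}\bigr)=\det\bigl(1+T_z(H_0)J\bigr)$ for $\Im z\neq0$. Feeding $2i\,\Im T_{\lambda+iy}(H_0)=T_{\lambda+iy}(H_0)-T_{\lambda+iy}(H_0)^{*}$ into the stationary formula~(\ref{F: Theorem 7.2.2 of Az3}) and using $\det(1-XY)=\det(1-YX)$ repeatedly, one gets
\[
  \det S(\lambda+iy;H_1,H_0)\;=\;\frac{\det\bigl(1+T_{\lambda-iy}(H_0)J\bigr)}{\det\bigl(1+T_{\lambda+iy}(H_0)J\bigr)}\;=\;\frac{\Delta(\lambda-iy)}{\Delta(\lambda+iy)}\;=\;\frac{\overline{\Delta(\lambda+iy)}}{\Delta(\lambda+iy)},
\]
since $\Delta(\bar z)=\overline{\Delta(z)}$; hence $\frac1i\log\det S(\lambda+iy;H_1,H_0)=-2\arg\Delta(\lambda+iy)$ along the branch with $\arg\Delta\to0$ as $y\to+\infty$. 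By \Krein's theorem expressing the spectral shift function through the perturbation determinant \cite{Kr53MS}, $\lim_{y\to0^+}\arg\Delta(\lambda+iy)=\pi\,\xi_{H_1,H_0}(\lambda)$ for a.e.\ $\lambda$, so $\frac1i\log\det S(\lambda+i0;H_1,H_0)=-2\pi\,\xi_{H_1,H_0}(\lambda)$. Combining with the identity of the previous paragraph gives $\xi_{H_1,H_0}(\lambda)=-\frac1{2\pi}\int_0^{2\pi}\mu(\theta,\lambda;H_1,H_0)\,d\theta$ for a.e.\ $\lambda$; consistency with the \Krein\ formula~(\ref{F: Birman-Krein formula}) and Theorem~\ref{T: C 8.2.5 Az3} is immediate from $\det S(\lambda;H_1,H_0)=e^{-2\pi i\xi_{H_1,H_0}(\lambda)}$.

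I expect the main obstacle to be the abstract identity of the middle paragraph: choosing the eigenphases $\phi_j(y)$ continuously in $y$ with enough control to justify the termwise summation, and proving the one-dimensional crossing-number identity, are delicate for a merely trace-norm continuous path, because an eigenphase may oscillate and cross some levels infinitely often. The cleanest route is to approximate the path by piecewise real-analytic ones, along which eigenphases are piecewise monotone so that the crossing-number identity is transparent, and then pass to the limit using trace-norm continuity (equivalently homotopy invariance) of the spectral flow; alternatively one invokes the analysis of Pushnitski \cite{Pu01FA} directly. A secondary, routine point is that $\xi_{H_1,H_0}$ must be taken in the representative given by the boundary value of $\frac1\pi\arg\Delta$, which holds off a null set.
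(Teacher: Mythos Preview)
The paper does not contain a proof of this theorem. It is stated as a citation from Pushnitski \cite{Pu01FA}, and the text immediately following Theorem~\ref{T: 9.7.3 Az3} remarks only that ``Theorem~\ref{T: Push} of A.\,Pushnitski was given a new proof in \cite{Az3v6} (see \cite[Theorem 9.6.1]{Az3v6}).'' Neither Pushnitski's original argument nor the argument of \cite{Az3v6} is reproduced here, so there is no in-paper proof to compare your proposal against.

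That said, your outline is essentially the original Pushnitski route: express $\det S(\lambda+iy)$ as the ratio $\overline{\Delta(\lambda+iy)}/\Delta(\lambda+iy)$ of perturbation determinants, use \Krein's boundary-value identity $\lim_{y\to0^+}\arg\Delta(\lambda+iy)=\pi\xi(\lambda)$ a.e., and identify the continuous increment of $\frac1i\log\det$ along the path with $\int_0^{2\pi}\mu(\theta)\,d\theta$. You have correctly flagged the genuine technical point: the equality $\int_0^{2\pi}\mu(\theta)\,d\theta=\sum_j(\phi_j(0)-\phi_j(+\infty))$ requires care because individual eigenphases of a merely $\clL_1$-continuous path of unitaries can oscillate, so the ``net crossing number'' $\nu_j(\theta)$ need not be finite for every $\theta$, and the termwise Fubini exchange is not automatic. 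Pushnitski handles this by working directly with the function $\theta\mapsto\mu(\theta)$ (which is a.e.\ defined and integrable) rather than through an eigenphase enumeration; your suggested fix via piecewise-analytic approximation and homotopy invariance is a viable alternative but would need to be carried out carefully. The secondary point you raise about fixing the representative of $\xi$ via the boundary value of $\tfrac1\pi\arg\Delta$ is exactly right and is how the a.e.\ qualifier enters.
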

\begin{thm} \label{T: 9.2.2 Az3} \cite[Theorem 9.2.2]{Az3v6}
For a.e. $\lambda \in \mbR$
$$
  \xia_{H_1,H_0}(\lambda) = - \frac 1{2\pi} \int_0^{2\pi} \mua(\theta,\lambda; H_1,H_0)\,d\theta
$$
\end{thm}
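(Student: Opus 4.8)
The plan is to reduce the claimed identity to the already-known Pushnitski formula (Theorem~\ref{T: Push}) together with the modified Birman--Krein formula (Theorem~\ref{T: C 8.2.5 Az3}), exploiting the fact that both $\mu$-invariants count spectral flow of eigenvalues of a unitary of class $1+\clL_1$ through the point $e^{i\theta}$, but along the two genuinely different paths~(\ref{F: tilde S(l+iy)}) and~(\ref{F: r to S(l;Hr,H0)}). The key observation is that $\mu(\theta,\lambda)$ and $\mua(\theta,\lambda)$ are, for each fixed $\lambda$ in the full set $\Lambda(H_0,F)\cap\Lambda(H_1,F)$, genuine integer-valued spectral-flow quantities, so the content of the theorem is a relation between two integer-valued functions of $\theta$ after averaging. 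First I would recall the general principle from Phillips-type spectral flow: for any norm-continuous (indeed, trace-class continuous) path $U(r)$ of unitaries of class $1+\clL_1$ with $U(a)=1$, the eigenvalue-counting function $\mu(\theta)$ satisfies $-\frac1{2\pi}\int_0^{2\pi}\mu(\theta)\,d\theta = \frac1{2\pi i}\log\det U(b)$ (a suitable branch), which is precisely the winding of $\det U(r)$; this is the abstract fact underlying Pushnitski's theorem. Applying it to the path~(\ref{F: r to S(l;Hr,H0)}), $r\mapsto S(\lambda;H_r,H_0)$, whose endpoint determinant by Theorem~\ref{T: C 8.2.5 Az3} equals $e^{-2\pi i\xia(\lambda)}$, yields directly that $-\frac1{2\pi}\int_0^{2\pi}\mua(\theta,\lambda)\,d\theta$ equals $\xia(\lambda)$ modulo $\mbZ$; the genuine equality (not merely mod $\mbZ$) follows because along a trace-class-continuous path the determinant $\det S(\lambda;H_r,H_0)$ is itself a continuous $\mbC^\times$-valued function starting at $1$, so the integer ambiguity is pinned down by continuity of the branch of the logarithm along the path, exactly as in the classical argument.

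The steps I would carry out, in order, are: (i) verify that the path~(\ref{F: r to S(l;Hr,H0)}) is continuous in trace-class norm for each fixed $\lambda$ in the full set — this is essentially Proposition~\ref{P: 7.2.5 Az3} together with the stationary formula~(\ref{F: Theorem 7.2.2 of Az3}), since $S(\lambda;H_r,H_0)-1$ is given by a sandwich of $\sqrt{\Im T_{\lambda+i0}(H_0)}$ (a fixed trace-class-square-root factor) around the resolvent-type factor $rJ(1+rT_{\lambda+i0}(H_0)J)^{-1}$, which is analytic in $r$ off the discrete resonance set and, by Proposition~\ref{P: 7.2.5 Az3}, extends continuously across it; (ii) invoke the abstract spectral-flow/winding identity to write $-\frac1{2\pi}\int_0^{2\pi}\mua(\theta,\lambda)\,d\theta$ as the total increment of $\frac1{2\pi i}\log\det S(\lambda;H_r,H_0)$ as $r$ runs from $0$ to $1$ along a continuous branch; (iii) evaluate this increment using Theorem~\ref{T: C 8.2.5 Az3}, which identifies $\det S(\lambda;H_1,H_0)=e^{-2\pi i\xia(\lambda)}$ for a.e.\ $\lambda$, and use that the continuous branch starting at $\log 1 = 0$ forces the increment to be exactly $-2\pi i\,\xia(\lambda)$ rather than differing by an integer multiple of $2\pi i$ — here one uses that $\xia$ defined by~(\ref{F: xia def}) is itself obtained by integrating a trace over $r\in[0,1]$, so it is the ``continuous'' primitive, matching the continuous branch; (iv) conclude $-\frac1{2\pi}\int_0^{2\pi}\mua(\theta,\lambda)\,d\theta = \xia(\lambda)$ for a.e.\ $\lambda$.

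The main obstacle I anticipate is step (iii): reconciling the integer ambiguity. The identity $\det S = e^{-2\pi i\xia}$ only determines $\xia$ modulo $\mbZ$ if one works pointwise, whereas the averaged $\mu$-invariant is a bona fide integer-counting quantity and the theorem asserts an honest equality. Resolving this requires showing that the ``correct'' choice of branch — the one obtained by following the determinant continuously along the path $r\mapsto S(\lambda;H_r,H_0)$ from $r=0$ — produces exactly the primitive $\xia(\lambda)=\int_0^1\Tr[V\phi(H_r^{(a)})]\,dr$ and not some other representative. The cleanest route is to differentiate in $r$: one expects $\frac{d}{dr}\log\det S(\lambda;H_r,H_0) = -2\pi i\,\frac{d}{dr}\xia$ in an appropriate infinitesimal sense (an infinitesimal Birman--Krein / Birman--Solomyak identity at fixed $\lambda$), which one can extract from~(\ref{F: Texp formula for S(l) (2)}) by taking the logarithmic derivative of the chronological exponential; integrating this exact infinitesimal identity over $r\in[0,1]$ removes all ambiguity. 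A secondary technical point is the exceptional null set in $\lambda$: the pointwise stationary formulas hold on the full set $\Lambda(H_0,F)\cap\Lambda(H_1,F)$, but Theorem~\ref{T: C 8.2.5 Az3} and Theorem~\ref{T: Push} are stated only a.e., so the final statement is naturally a.e.\ as well, and no extra care beyond intersecting the relevant full-measure sets is needed.
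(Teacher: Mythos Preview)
The present paper does not prove this theorem; it is quoted from \cite[Theorem~9.2.2]{Az3v6} as part of the introductory survey, with no argument given here. So there is no in-paper proof to compare your proposal against.

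That said, your outline is correct and is essentially the natural argument. The abstract identity you invoke in step~(ii) --- that for a trace-class-continuous path $r\mapsto U(r)$ of unitaries in $1+\clL_1$ with $U(0)=1$, the averaged eigenvalue-crossing count equals the continuous increment of $\tfrac{1}{2\pi i}\log\det U(r)$ --- is the heart of the matter. Applied to $U(r)=S(\lambda;H_r,H_0)$, the increment is computed by differentiating: the logarithmic derivative of the $\Texp$ representation~(\ref{F: Texp formula for S(l) (2)}) gives (up to the factor $-2\pi i$) exactly $\Tr\bigl(\euE_\lambda(H_r)V\euE_\lambda^\diamondsuit(H_r)\bigr)$, and integrating over $r\in[0,1]$ yields $\xia(\lambda)$ by~(\ref{F: xia=int Tr(EVE)}) and the surrounding discussion in Section~\ref{S: res.index as s.ssf}. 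This is precisely your proposed resolution of step~(iii) via differentiation.

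One streamlining: once you commit to the differentiation route, invoking Theorem~\ref{T: C 8.2.5 Az3} becomes unnecessary --- that theorem is a \emph{consequence} of the same integrated-derivative computation (evaluate at $r=1$ and exponentiate), not an input to it. So the detour through the mod-$\mbZ$ ambiguity can be dropped entirely: the infinitesimal identity plus integration over $r$ gives the honest equality directly, with no integer to resolve. Your step~(i) on trace-class continuity of the path is handled exactly as you say, via Proposition~\ref{P: 7.2.5 Az3} and the stationary formula.
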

\begin{thm} \label{T: 9.7.3 Az3} \cite[Theorem 9.7.3]{Az3v6} The difference
$$
  \mus(\theta,\lambda; H_1,H_0) := \mu(\theta,\lambda; H_1,H_0) - \mua(\theta,\lambda; H_1,H_0)
$$
does not depend on the angle $\theta$ and for a.e. $\lambda \in \mbR$ it is equal to minus the density $-\xis(\lambda; H_1,H_0)$
of the singular spectral shift measure $\xis(\phi)$ as defined by~(\ref{F: xis def}).
In particular, the function $\xis(\lambda; H_1,H_0)$ is almost everywhere integer-valued.
\end{thm}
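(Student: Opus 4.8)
The plan is to compare the two unitary paths connecting $S(\lambda;H_1,H_0)$ to the identity on $\hlambda(H_0)$ and to exploit the fact that, by Theorem~\ref{T: C 8.2.5 Az3}, the absolutely continuous part of the spectral shift measure is governed by $\det S(\lambda;H_1,H_0)$, whereas the full spectral shift function is governed by the same determinant via the Birman--\Krein\ formula~(\ref{F: Birman-Krein formula}). First I would record that both $\mu(\theta,\lambda;H_1,H_0)$ and $\mua(\theta,\lambda;H_1,H_0)$ are, for a.e.~$\lambda$, winding-type quantities attached to the paths~(\ref{F: tilde S(l+iy)}) and~(\ref{F: r to S(l;Hr,H0)}) respectively: they count the net number of eigenvalues of the corresponding path of unitaries crossing the point $e^{i\theta}$ on the circle. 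Since both paths have the same endpoints ($1$ at one end, $S(\lambda;H_1,H_0)$ at the other) and lie in $1+\clL_1(\hlambda(H_0))$, the difference $\mu(\theta,\lambda)-\mua(\theta,\lambda)$ is precisely the spectral flow around the \emph{closed} loop obtained by traversing the first path forward and the second backward. A closed loop of unitaries in $1+\clL_1$ has a spectral flow equal to its winding number, which is an \emph{integer independent of $\theta$}; this immediately yields the $\theta$-independence assertion.

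Next I would identify the integer. Averaging Theorem~\ref{T: Push} and Theorem~\ref{T: 9.2.2 Az3} over $\theta\in[0,2\pi)$ gives
$$
  \xi_{H_1,H_0}(\lambda) - \xia_{H_1,H_0}(\lambda) = -\frac 1{2\pi}\int_0^{2\pi}\brs{\mu(\theta,\lambda;H_1,H_0)-\mua(\theta,\lambda;H_1,H_0)}\,d\theta
$$
for a.e.~$\lambda$. The left side equals $\xis(\lambda;H_1,H_0)$ by~(\ref{F: xi=xia+xis}). On the right side, having already shown that the integrand $\mus(\theta,\lambda;H_1,H_0)$ does not depend on $\theta$, the average is just $\mus(\cdot,\lambda;H_1,H_0)$ itself, so $\xis(\lambda;H_1,H_0) = -\mus(\theta,\lambda;H_1,H_0)$ for a.e.~$\lambda$. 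Since the right side is integer-valued for every such $\lambda$ (being a spectral flow around a loop), the final conclusion that $\xis(\lambda;H_1,H_0)\in\mbZ$ a.e.\ follows at once.

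The main obstacle is the first step: establishing rigorously that $\mus(\theta,\lambda)$ is genuinely independent of $\theta$, i.e.\ that the closed-loop spectral flow does not depend on the base point $e^{i\theta}$. This requires care because the eigenvalues of the path of scattering matrices may accumulate at $1$, so one cannot simply invoke a finite-dimensional argument; one must control the trace-class continuity of both paths uniformly and show that no ``extra'' eigenvalue flow is hidden near $1$. The key technical inputs are the trace-class continuity of the path~(\ref{F: tilde S(l+iy)}) in $y$ (which follows from trace-norm convergence of $\Im T_{\lambda+iy}(H_0)$ on the full set $\Lambda(H_0,F)$) and the analytic continuation in the coupling constant from Proposition~\ref{P: 7.2.5 Az3}, which makes the path~(\ref{F: r to S(l;Hr,H0)}) trace-norm continuous on all of $[0,1]$; with these in hand, the loop has a well-defined integer winding number and the homotopy invariance of spectral flow for trace-class-continuous loops of unitaries in $1+\clL_1$ delivers $\theta$-independence. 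A secondary point to check is that the two parametrizations share a common diagonalizing fiber structure $\hlambda(H_0)$, so that the composition of the two paths is literally a loop in the same group $1+\clL_1(\hlambda(H_0))$ — this is guaranteed by the constructive definitions in steps III--VII above, in particular by the fact that $S(\lambda;H_r,H_0)$ acts on the fixed fiber $\hlambda(H_0)$ for all admissible $r$.
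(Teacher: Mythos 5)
Your proposal is correct and follows essentially the same route the paper indicates (the detailed proof being delegated to \cite[Theorem 9.7.3]{Az3v6} and sketched via the $(r,y)$-rectangle diagram): you read $\mu$ and $\mua$ as spectral flows of the two trace-norm-continuous paths~(\ref{F: tilde S(l+iy)}) and~(\ref{F: r to S(l;Hr,H0)}) joining $1$ to $S(\lambda;H_1,H_0)$ inside the group of unitaries of the form $1+\clL_1(\hlambda(H_0))$, so that $\mus$ is the $\theta$-independent integer flow (winding number) of the resulting closed loop, and then identify it with $-\xis$ by combining Theorems~\ref{T: Push} and~\ref{T: 9.2.2 Az3} with the decomposition $\xi=\xia+\xis$. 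The technical points you flag — trace-class continuity of the path in $y$, analytic continuation in the coupling constant from Proposition~\ref{P: 7.2.5 Az3}, and the fact that everything lives on the fixed fiber $\hlambda(H_0)$ — are precisely the ingredients the paper emphasizes.
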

\noindent Theorem~\ref{T: Push} of A.\,Pushnitski was given a new proof in \cite{Az3v6} (see \cite[Theorem 9.6.1]{Az3v6}).
Theorems~\ref{T: 9.7.3 Az3} and~\ref{T: C 8.2.5 Az3} give a new proof of the Birman-\Krein\ formula~(\ref{F: Birman-Krein formula}).

The last assertion of Theorem~\ref{T: 9.7.3 Az3} gives a reason to call the function $\xis(\lambda)$ \emph{spectral flow inside essential spectrum},
since $\xis(\lambda)$ coincides with the spectral flow outside of the essential spectrum and it is a.e. integer-valued inside the essential spectrum as well.

The following diagram demonstrates the relationship between $\mu$- and $\mua$-invariants.
In this diagram for a fixed real value of the spectral parameter~$\lambda$ we consider the scattering matrix $S(\lambda+iy; H_r,H_0)$ as a function of
$(r,y),$ where $r$ is the coupling constant and $y$ is the imaginary part of the spectral parameter.

\hskip -0.5cm
\begin{picture}(125,150)
\put(40,40){\vector(1,0){170}}
\put(40,40){\vector(0,1){100}}
\put(40,130){\line(1,0){150}}
\put(65,133){\tiny 1 \quad 1 \quad 1 \quad 1 \quad 1 \quad 1 \quad 1 \quad 1 }
\put(190,40){\line(0,1){90}}
\put(35,50){\tiny 1}
\put(35,65){\tiny 1}
\put(35,80){\tiny 1}
\put(35,95){\tiny 1}
\put(35,110){\tiny 1}

\put(185,30){{\small~$1$}}
\put(35,30){{\small~$0$}}
\put(25,38){{\small~$0$}}
\put(40,144){{\small~$y$}}
\put(15,127){{\small~$+\infty$}}
\put(210,40){{\small~$r$}}
\put(190,40){\circle*{3}}
\put(101,40){\circle*{3}}
\put(101,30){\tiny $r_\lambda$}
\put(38,10){\small The three points $r_\lambda, r'_\lambda,r''_\lambda$ represent resonance points from $[0,1]$.}
\put(123,40){\circle*{3}}
\put(123,30){\tiny $r'_\lambda$}
\put(156,40){\circle*{3}}
\put(156,30){\tiny $r''_\lambda$}
\put(208,57){\vector(-1,-1){15}}
\put(202,61){\tiny $S(1,0)$}
\thicklines
\put(40,130){\line(1,0){150}}
\put(40,40){\line(0,1){90}}
\put(245,130){\small $S(r,y) := S(\lambda+iy; H_r,H_0)$ is continuous in the}
\put(245,118){\small rectangle except the so-called resonance points.}
\put(245,106){\small On the left $r=0$ and upper $y=+\infty$ rims of this}
\put(245,94){\small rectangle $S(\lambda+iy; H_r,H_0) = 1.$ \ $\mu(\theta,\lambda)$ is the}
\put(245,82){\small spectral flow of eigenvalues of $S(r,y)$ through $e^{i\theta}$}
\put(245,70){\small corresponding to any path which connects $(1,0)$}
\put(245,58){\small with the left or the upper rim as long as it}
\put(245,46){\small avoids the resonance points.}
\put(245,34){\small $\mua(\theta,\lambda)$ is the spectral flow of eigenvalues }
\put(245,22){\small of $S(\lambda; H_r,H_0)$ as $r$ goes from~$1$ to $0.$}

\thinlines
\qbezier(190,40)(170,60)(150,130)
\put(159,100){\vector(-1,3){3}}
\put(160,105){{\tiny $\mu(\theta,\lambda)$}}

\qbezier(190,40)(100,60)(40,110)
\put(80,82){\vector(-3,2){3}}
\put(81,86){{\tiny $\mu(\theta,\lambda)$}}

\put(68,40){\vector(-1,0){3}}
\put(59,46){{\tiny $\mua(\theta,\lambda)$}}

\end{picture}

\noindent
The operators $S(\lambda; H_r,H_0)$ and $S(\lambda+i0; H_r,H_0)$ are identical outside the resonance points.
The group $\clU_1$ of unitary operators of the form ``1 + trace class'' has a non-trivial homotopical structure
and the difference between the operators $S(\lambda; H_r,H_0)$ and $S(\lambda+i0; H_r,H_0)$ is revealed in the way one connects them with the
base point~$1$ of the group $\clU_1.$

\medskip
The functions $\xi(\lambda),$ $\xia(\lambda)$ and $\xis(\lambda)$ are integrable, and so in general one cannot talk about value of these functions
at a given point $\lambda.$ But Theorems~\ref{T: Push},~\ref{T: 9.2.2 Az3} and~\ref{T: 9.7.3 Az3} allow to define values of these functions explicitly
on the set of full measure $\Lambda(H_0,F) \cap \Lambda(H_1,F),$ since the right hand sides of equalities in these theorems are well-defined for all
$\lambda$ from the set $\Lambda(H_0,F) \cap \Lambda(H_1,F).$ This is an important point, since if the perturbed operator $H_1$ is replaced by $H_r = H_0+rV$
with arbitrary real number $r,$ then for every fixed value of~$\lambda$ from $\Lambda(H_0,F)$
the expressions $\xi(\lambda; H_r,H_0),$ $\xia(\lambda; H_r,H_0)$ and $\xis(\lambda; H_r,H_0)$ can be considered as functions of the coupling constant $r.$
Behaviour of these functions of $r$ is explained by the following theorem.
\begin{thm} \label{T: Az 9.7.6} \cite[Proposition 8.2.3, Theorem 9.7.6, Corollary 9.7.7]{Az3v6} For every~$\lambda$ from the set $\Lambda(H_0,F)$
of full Lebesgue measure the following assertions hold:
\begin{enumerate}
  \item The function $r \mapsto \xia(\lambda; H_r,H_0)$ is a function analytic in a neighbourhood of $\mbR.$
  \item The function $r \mapsto \xis(\lambda; H_r,H_0)$ is a locally constant integer-valued function with a discrete set of discontinuity points
which coincides with the resonance set $R(\lambda; H_0,V)$ (see Theorem~\ref{T: 4.1.11 Az3} and the paragraph after it for definition of the resonance set).
  \item As a consequence, the function $r \mapsto \xi(\lambda; H_r,H_0)$ is a piecewise continuous locally analytic function and discontinuity points of this function
are resonance points of the triple $(\lambda; H_0,V).$
\end{enumerate}
\end{thm}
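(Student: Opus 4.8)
The plan is to deduce all three assertions from the $r$-dependence of the scattering matrix $S(\lambda;H_r,H_0)$, which by Theorem~\ref{T: Theorem 7.2.2 of Az3} is given, for $\lambda\in\Lambda(H_0,F)$ and $r\notin R(\lambda;H_0,V)$, by the explicit expression
$$
  S(\lambda;H_r,H_0)=1-2i\sqrt{\Im T_{\lambda+i0}(H_0)}\,rJ\bigl(1+rT_{\lambda+i0}(H_0)J\bigr)^{-1}\sqrt{\Im T_{\lambda+i0}(H_0)}.
$$
Since $\xi=\xia+\xis$ by~(\ref{F: xi=xia+xis}), assertion~(3) is a formal consequence of~(1) and~(2): the sum of a function analytic near $\mbR$ and a locally constant integer-valued function with a discrete jump set is locally analytic with the same discrete set of discontinuities. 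So it remains to prove~(1) and~(2).

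For~(1), fix $\lambda$ in the full-measure set on which, in addition, $\Im T_{\lambda+i0}(H_0)$ is trace class, so that $T_{\lambda\pm i0}(H_0)$ are fixed trace class operators with $T_{\lambda-i0}(H_0)=T_{\lambda+i0}(H_0)^{*}$. Writing $\Im T_{\lambda+i0}(H_0)=\frac{1}{2i}\bigl(T_{\lambda+i0}(H_0)-T_{\lambda-i0}(H_0)\bigr)$ and applying the identity $\det(1+AB)=\det(1+BA)$ twice, a short computation turns the displayed formula into
$$
  \det S(\lambda;H_r,H_0)=\frac{\det\bigl(1+rT_{\lambda-i0}(H_0)J\bigr)}{\det\bigl(1+rT_{\lambda+i0}(H_0)J\bigr)}.
$$
Numerator and denominator are entire functions of $r$, and by Theorem~\ref{T: 4.1.11 Az3} each has zero set exactly the resonance set $R(\lambda;H_0,V)$; since $\abs{\det S(\lambda;H_r,H_0)}=1$ for real $r\notin R(\lambda;H_0,V)$, these zeros must have equal orders, so $r\mapsto\det S(\lambda;H_r,H_0)$ extends to a nowhere-vanishing holomorphic function on a complex neighbourhood of $\mbR$ --- this is precisely Proposition~\ref{P: 7.2.5 Az3}. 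On the other hand, by Theorem~\ref{T: 9.2.2 Az3} together with the interpretation of $\mua(\theta,\lambda;H_r,H_0)$ as the spectral flow through $e^{i\theta}$ of the path~(\ref{F: r to S(l;Hr,H0)}) $s\mapsto S(\lambda;H_s,H_0)$, integrating over $\theta$ and using the standard winding-number computation one obtains
$$
  \xia(\lambda;H_r,H_0)=-\frac{1}{2\pi i}\log\det S(\lambda;H_r,H_0),
$$
where the branch of the logarithm is fixed by continuity along $[0,r]$ and by $\xia(\lambda;H_0,H_0)=0$ (consistently with the modified Birman-\Krein\ formula, Theorem~\ref{T: C 8.2.5 Az3}). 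A continuous logarithm of a nowhere-vanishing holomorphic function is holomorphic, so $r\mapsto\xia(\lambda;H_r,H_0)$ is analytic in a neighbourhood of $\mbR$, proving~(1); this holds even at resonance points, since there the path $s\mapsto S(\lambda;H_s,H_0)$ still extends analytically.

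For~(2), I would start from the cocycle identity $\xis(\lambda;H_{r_2},H_0)=\xis(\lambda;H_{r_1},H_0)+\xis(\lambda;H_{r_2},H_{r_1})$, which follows at once from~(\ref{F: xis def}) by splitting the integral over the coupling constant, and then show that $\xis(\lambda;H_{r_2},H_{r_1})=0$ whenever the closed interval between $r_1$ and $r_2$ contains no resonance point of $(\lambda;H_0,V)$. By Theorem~\ref{T: 9.7.3 Az3} applied to the pair $(H_{r_2},H_{r_1})$, $\xis(\lambda;H_{r_2},H_{r_1})=-\mus(\theta,\lambda;H_{r_2},H_{r_1})=-\bigl(\mu-\mua\bigr)(\theta,\lambda;H_{r_2},H_{r_1})$, and by the rectangle picture following the statement of that theorem, $\mu-\mua$ is the spectral flow through $e^{i\theta}$ of the two-parameter family $(s,y)\mapsto S(\lambda+iy;H_s,H_{r_1})$ around the boundary of the rectangle $[r_1,r_2]\times[0,+\infty]$, the left edge ($s=r_1$) and the upper edge ($y=+\infty$) contributing nothing because $S\equiv1$ there. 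Since the only obstructions to joint continuity of this family --- the resonance points --- lie on the bottom edge $y=0$ and none lies in $(r_1,r_2)$ by hypothesis, the family is continuous on the whole closed rectangle, and homotopy invariance of spectral flow forces this contractible-loop spectral flow to vanish. Hence $r\mapsto\xis(\lambda;H_r,H_0)$ is constant on each component of $\mbR\setminus R(\lambda;H_0,V)$; it is integer-valued by Theorem~\ref{T: 9.7.3 Az3}; and $R(\lambda;H_0,V)$ is discrete by Theorem~\ref{T: 4.1.11 Az3}. Finally, each resonance point $r_0$ is a genuine discontinuity: there $\xis(\lambda;H_{r_0},H_0)$ is not even defined (since $\lambda\notin\Lambda(H_{r_0},F)$), and the one-sided limits differ by the spectral flow through $e^{i\theta}$ around a small loop enclosing the single obstruction $(r_0,0)$, an integer independent of $\theta$ and equal to $($minus$)$ the resonance index at $r_0$.

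The main obstacle is the cancellation-of-poles step hidden in Proposition~\ref{P: 7.2.5 Az3}: a priori $r\mapsto S(\lambda;H_r,H_0)$ is only meromorphic in $r$ with poles at the resonance points, and what has to be shown is that these poles cancel in the full operator --- equivalently, that in the ratio formula for $\det S(\lambda;H_r,H_0)$ the zeros of the numerator and of the denominator match in order. Once analyticity and non-vanishing of $\det S(\lambda;H_r,H_0)$ in $r$ near $\mbR$ are in hand, assertion~(1) and the ``locally constant'' part of~(2) follow by soft arguments; the only other point requiring care is the homotopy bookkeeping for the rectangle in~(2), where one must keep track of orientations and of the sign conventions in the definitions of the $\mu$- and $\mua$-invariants.
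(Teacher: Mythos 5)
You should note first that the paper does not actually prove this theorem: it is quoted from \cite[Proposition 8.2.3, Theorem 9.7.6, Corollary 9.7.7]{Az3v6}, and the closest in-paper material is Proposition~\ref{P: Tr(EVE) is analytic} and Section~\ref{S: res.index as s.ssf}, where analyticity of $r\mapsto\xia(\lambda;H_r,H_0)$ comes from the fixed-$\lambda$ formula $\xia(\lambda;H_r,H_0)=\int_0^r\Tr\brs{\euE_\lambda(H_s)\tilde V\euE_\lambda^\diamondsuit(H_s)}\,ds$ with integrand $(-2\pi i)^{-1}\Tr\brs{(dS/ds)\,S^*}$, i.e.\ exactly the differentiated form of your $\log\det S$ identity, and where the locally constant behaviour of $\xis$ in $r$ is again read off the $\mu$/$\mua$ bookkeeping. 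So your strategy is the intended one in spirit; the problems are in the execution.

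Three concrete gaps. (i) The ratio step is unjustified at the boundary: for $z=\lambda\pm i0$ the operator $T_{\lambda\pm i0}(H_0)J$ is in general \emph{not} trace class --- only $\Im T_{\lambda+i0}(H_0)$ is, and even that only on a further subset of $\Lambda(H_0,F)$ under this paper's definition of $\Lambda$ --- so $\det\brs{1+rT_{\lambda\mp i0}(H_0)J}$ is not defined and ``numerator and denominator are entire functions of $r$'' has no meaning; you must instead work with the single well-defined meromorphic function $\det\brs{1-2ir(1+rJT_{\lambda+i0}(H_0))^{-1}J\,\Im T_{\lambda+i0}(H_0)}$ and run the $\abs{\det S}=1$-on-$\mbR$ argument on it. Note also that this only yields analyticity of the determinant; the operator-level statement of Proposition~\ref{P: 7.2.5 Az3}, which your rectangle argument in (2) genuinely needs, is strictly stronger and should be cited, not claimed as a by-product. (ii) Theorems~\ref{T: 9.2.2 Az3}, \ref{T: 9.7.3 Az3}, \ref{T: C 8.2.5 Az3} and the Birman--\Krein\ formula are a.e.-in-$\lambda$ statements, whereas the theorem is about a \emph{fixed} $\lambda\in\Lambda(H_0,F)$, where $\xia(\lambda;H_r,H_0)$ and $\xis(\lambda;H_r,H_0)$ are by definition the right-hand sides of those theorems (or the integral formula above); in particular your cocycle identity for $\xis$ ``at once from (\ref{F: xis def})'' is an a.e.\ argument applied to fixed-$\lambda$ values and has to be redone with the $\mu$-invariant definitions, and the branch identification $\xia=-(2\pi i)^{-1}\log\det S$ likewise has to be established at the fixed $\lambda$, not imported from an a.e.\ identity. (iii) The last sentence of your part (2) is false as stated: the jump of $\xis$ across a resonance point equals the resonance index (Theorem~\ref{T: xis=ind res} --- itself a main result of this paper, not available for free here), and this index can vanish (the U-turn scenarios of Section~\ref{S: U-turn theorem}), so the one-sided limits need not differ; the assertion that the discontinuity set \emph{coincides} with $R(\lambda;H_0,V)$ can only be maintained in the sense that $\xis(\lambda;H_{r_0},H_0)$ is undefined at resonance points, which is the reading you should make explicit instead of the claim about differing limits.
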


\subsection{Resonance index}
Theorem~\ref{T: Az 9.7.6} implies, in particular, that if for $\lambda \in \Lambda(H_0,F)$ there are no resonance points in an interval $[a,b],$
then $\xia(\lambda; H_b,H_a) = \xi(\lambda; H_b,H_a).$ It also suggests that the (integer) jump of the singular spectral shift function $\xis(\lambda; H_0,H_1)$
at a resonance point~$r_\lambda \in [0,1]$ should depend only on the triple $(\lambda; H_{r_\lambda},V).$ Indeed, to a triple $(\lambda; H_{r_\lambda},V)$
one can assign an integer number, which in this paper is called \emph{resonance index} and is denoted by
$$
  \ind_{res}(\lambda; H_{r_\lambda},V).
$$
This number is defined as follows.
Firstly, it can be observed that by Theorem~\ref{T: 4.1.11 Az3} a real number~$r_\lambda$ is a resonance point of the triple $(\lambda; H_0,V)$
if and only if the real number $\sigma_\lambda=-r_\lambda^{-1}$ is an eigenvalue of the operator $T_{\lambda+i0}(H_0)J.$ Further, the number~$r_\lambda$ is a singular point (a pole) of the meromorphic
factor $(1+rT_{\lambda+i0}(H_0)J)^{-1}$ which is part of the stationary formula~(\ref{F: Theorem 7.2.2 of Az3 (v2)}) for the scattering matrix $S(\lambda; H_r,H_0).$
Still, according to Proposition~\ref{P: 7.2.5 Az3}, the scattering matrix $S(\lambda; H_r,H_0)$ does not have a singularity at $r = r_\lambda.$
This happens due to the fact that this singularity belongs to the singular subspace of $H_0,$ which is eliminated by factors $\euE_\lambda(H_0)F^*$
and $F\euE_\lambda^*(H_0)$ of the stationary formula. In order to reveal this hidden singularity, one has to shift the spectral parameter $\lambda+i0$
slightly off the real axis. Since $\sigma_\lambda$ is an isolated eigenvalue of the compact operator $T_{\lambda+i0}(H_0)J,$ it is stable
but it may split into several eigenvalues
\begin{equation} \label{F: shifted sigma(l+iy)}
  \sigma_{\lambda+iy}^1, \ldots, \sigma_{\lambda+iy}^N,
\end{equation}
where~$N$ is the multiplicity of $\sigma_\lambda,$
which are therefore eigenvalues of the compact operator $T_{\lambda+iy}(H_0)J$ from the group of $\sigma_\lambda.$ It is well-known and is not difficult to show
that none of the shifted eigenvalues~(\ref{F: shifted sigma(l+iy)}) is a real number. Therefore, the following definition makes sense:
resonance index $\ind_{res}(\lambda; H_{r_\lambda},V)$ of the triple $(\lambda; H_{r_\lambda},V)$ is the difference
\begin{equation} \label{F: N(+)-N(-)}
  N_+ - N_-,
\end{equation}
where $N_+$ (respectively, $N_-$) is the number of shifted eigenvalues of the group of $\sigma_\lambda$ in the upper (respectively, lower) complex half-plane.
Definition the resonance index is correct in the sense that it does depend on the choice of the ``initial'' operator $H_0,$ as the following lemma with a simple proof asserts.
\begin{lemma} Let $\lambda \in \Lambda(H_0,F).$ Let a real number $s$ be such that~$\lambda$ also belongs to the full set $\Lambda(H_s,F).$
Further, let~$r_\lambda$ be a resonance point of the triple $(\lambda; H_0,V)$ (that is, $\lambda \notin \Lambda(H_{r_\lambda},F)$).
Then the real number $\sigma_\lambda(s) = (s-r_\lambda)^{-1}$ is an eigenvalue of the operator $T_{\lambda+i0}(H_s)J$
of the same algebraic multiplicity~$N$ as that of the eigenvalue $\sigma_\lambda(0) = (0-r_\lambda)^{-1}$ of the operator $T_{\lambda+i0}(H_0)J$
and if~$\lambda$ is shifted off the real axis
to $\lambda+iy$ with small and positive $y,$ then the number of split eigenvalues from the group of $(s-r_\lambda)^{-1}$ in the upper complex half-plane is equal to $N_+.$
\end{lemma}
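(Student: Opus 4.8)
The plan is to conjugate the two operators $T_{\lambda+i0}(H_0)J$ and $T_{\lambda+i0}(H_s)J$ by one fixed real Möbius transformation and then to read off the multiplicity statement and the half-plane count from the holomorphic functional calculus. First I would record the relevant resolvent identity: sandwiching the second resolvent identity $R_z(H_s)=R_z(H_0)-sR_z(H_0)VR_z(H_s)$ between $F$ and $F^*$ and using $V=F^*JF$ gives, for every non-real $z$,
$$
  T_z(H_s)=T_z(H_0)-s\,T_z(H_0)\,J\,T_z(H_s).
$$
Writing $A_z:=T_z(H_0)J$ (a compact operator), this rearranges to $(1+sA_z)T_z(H_s)J=A_z$, and since $1+sA_z$ is invertible --- a standard fact for non-real $z$ because $H_s$ is self-adjoint, and at $z=\lambda+i0$ precisely the hypothesis $\lambda\in\Lambda(H_s,F)$ via Theorem~\ref{T: 4.1.11 Az3} --- one obtains
$$
  T_z(H_s)J=A_z(1+sA_z)^{-1}=(1+sA_z)^{-1}A_z=\phi(A_z),\qquad \phi(w):=\frac{w}{1+sw},
$$
the last expression being understood via the holomorphic functional calculus, $\phi$ being holomorphic on $\mbC\setminus\{-s^{-1}\}$ with $-s^{-1}$ outside the spectrum of $A_z$. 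The point is that $\phi$ is one and the same Möbius map, independent of $z$; this identity holds for every non-real $z$ and, because $\lambda\in\Lambda(H_0,F)\cap\Lambda(H_s,F)$, it persists in the norm limit at $z=\lambda+i0$.

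Next I would match the eigenvalues. Since $r_\lambda$ is a resonance point of $(\lambda;H_0,V)$, Theorem~\ref{T: 4.1.11 Az3} gives that $\sigma_\lambda(0)=-r_\lambda^{-1}$ is a nonzero eigenvalue of $A_0:=T_{\lambda+i0}(H_0)J$; let $N$ be its algebraic multiplicity and, for small $y>0$, let $N_\pm$ be the number, counted with algebraic multiplicity, of the eigenvalues of $A_y:=T_{\lambda+iy}(H_0)J$ from the group of $\sigma_\lambda(0)$ that lie in $\mbC_\pm$. A direct computation gives $\phi(-r_\lambda^{-1})=(s-r_\lambda)^{-1}=\sigma_\lambda(s)$; here $s\neq r_\lambda$ (otherwise $\lambda\notin\Lambda(H_s,F)$), so $\sigma_\lambda(s)$ is finite and nonzero and $-r_\lambda^{-1}\neq-s^{-1}$. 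Being a Möbius transformation, $\phi$ is biholomorphic on $\mbC\setminus\{-s^{-1}\}$; fix a small disc $U$ about $\sigma_\lambda(0)$ that avoids $-s^{-1}$, contains no other point of the spectrum of $A_0$, and for all small $y>0$ contains none of the eigenvalues of $A_y$ outside the group of $\sigma_\lambda(0)$. The spectral mapping theorem for the holomorphic functional calculus, together with the substitution $w\mapsto\phi(w)$ in the Riesz idempotent $\tfrac{1}{2\pi i}\oint_{\partial U}(w-A_0)^{-1}\,dw$, then shows that $\sigma_\lambda(s)=\phi(\sigma_\lambda(0))$ is an isolated eigenvalue of $T_{\lambda+i0}(H_s)J=\phi(A_0)$ whose Riesz idempotent coincides with that of $\sigma_\lambda(0)$ for $A_0$; in particular its algebraic multiplicity is again $N$. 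The identical argument applied to $A_y$ shows that the eigenvalues of $T_{\lambda+iy}(H_s)J$ from the group of $\sigma_\lambda(s)$ are precisely the $\phi$-images, counted with multiplicity, of the eigenvalues of $A_y$ from the group of $\sigma_\lambda(0)$.

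Finally I would extract the sign bookkeeping from the fact that $\phi$ has real coefficients: its matrix $\left(\begin{smallmatrix}1&0\\ s&1\end{smallmatrix}\right)$ has determinant $1>0$, so $\phi$ maps $\mbC_+$ onto $\mbC_+$, $\mbC_-$ onto $\mbC_-$, and $\mbR\cup\{\infty\}$ onto itself. Combined with the previous paragraph, this gives that the number of split eigenvalues of $T_{\lambda+iy}(H_s)J$ from the group of $\sigma_\lambda(s)$ lying in $\mbC_+$ equals $N_+$ (and those in $\mbC_-$ number $N_-$, none of them being real, consistent with the general fact recalled just before the lemma). This is exactly the assertion of the lemma, and it shows in passing that $\ind_{res}(\lambda;H_{r_\lambda},V)=N_+-N_-$ is unchanged when the base operator $H_0$ is replaced by $H_s$.

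I expect the crux to be the middle step --- verifying that algebraic multiplicities and, more delicately, the full splitting pattern into $\mbC_+$ and $\mbC_-$ are transported intact along $T_z(H_s)J=\phi(A_z)$. What makes this routine rather than delicate are three facts: $\phi$ is a single real Möbius map for all $z$ near $\lambda+i0$, so it intertwines the whole perturbed family $y\mapsto A_{\lambda+iy}$ and not merely the boundary operators; $1+sA_{\lambda+iy}$ stays invertible for small $y$, which is precisely norm-continuity of $y\mapsto T_{\lambda+iy}(H_0)$, i.e. $\lambda\in\Lambda(H_0,F)$; and $\sigma_\lambda(s)\neq0$, so compactness keeps this eigenvalue isolated under the shift off the real axis.
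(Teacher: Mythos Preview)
Your proof is correct and aligns with the paper's approach. The paper does not spell out a proof at the point where this lemma is stated (it is flagged as having ``a simple proof''), but the argument it has in mind is contained in the Section~3 machinery: the identity you derive, $T_z(H_s)J=(1+sA_z)^{-1}A_z$ with $A_z=T_z(H_0)J$, is exactly the paper's equation $A_z(s)=(1+(s-r)A_z(r))^{-1}A_z(r)$ specialised to $r=0$; Proposition~\ref{P: res eq-n is correct} and Proposition~\ref{P: Pz is well-defined} then give the $s$-independence of $\Upsilon_z^k(r_z)$ and of the Riesz idempotent $P_z(r_z)$, which is your multiplicity statement; and the half-plane count is read off from the fact that both $r_z\mapsto\sigma_z(s)=(s-r_z)^{-1}$ and your $\phi(w)=w/(1+sw)$ are real M\"obius maps of positive determinant.

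The only difference is packaging: the paper works through the intrinsic resonance points $r_z$ (poles of $s\mapsto A_z(s)$, manifestly independent of the base operator) and then translates back to eigenvalues via $\sigma_z(s)=(s-r_z)^{-1}$, whereas you stay entirely on the eigenvalue side and transport spectral data along the single holomorphic-functional-calculus identity $T_z(H_s)J=\phi(T_z(H_0)J)$. Your route is slightly more direct for this particular lemma; the paper's route yields the stronger conceptual statement that the resonance points themselves are base-point-free, which it reuses repeatedly later.
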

\begin{picture}(125,90)
\put(10,40){\vector(1,0){100}}
\put(85,64){{$N_+=5$}}
\put(85,12){{$N_-=2$}}
\put(55,40){\circle{3}}    
\put(44,51){\circle*{4}}   \qbezier(55,40)(50,50)(44,51) 
\put(50,64){\circle*{4}}   \qbezier(55,40)(52,52)(50,64) 
\put(57,59){\circle*{4}}   \qbezier(55,40)(52,52)(57,59) 
\put(63,59){\circle*{4}}   \qbezier(55,40)(58,54)(63,59) 
\put(65,52){\circle*{4}}   \qbezier(55,40)(59,50)(65,52) 
\put(58,29){\circle*{4}}   \qbezier(55,40)(55,37)(58,29) 
\put(51,23){\circle*{4}}   \qbezier(55,40)(55,37)(51,23) 

\put(22,28){\vector(3,1){29}}
\put(16,20){{\small~$r_\lambda$}}
\end{picture}

\noindent Introduction of this notion is justified by the following theorem, see \cite[Theorem 3.8]{Az7}. Since the paper \cite{Az7} is not published,
outline of the proof of this theorem is given in section \ref{S: res.index as s.ssf}.
\begin{thm} \label{T: xis=ind res} Let $H_0$ be a self-adjoint operator on a Hilbert space $\hilb$
with a Hilbert-Schmidt rigging operator $F \colon \hilb \to \clK.$ Let~$V$ be a trace-class self-adjoint operator
which admits decomposition $V = F^*JF$ with bounded operator $J$ on $\clK$ and let $a<b.$ Then for every real number~$\lambda$ from the set
$\Lambda(H_a,F) \cap \Lambda(H_b,F)$ of full Lebesgue measure the following equality holds:
\begin{equation} \label{F: xis=ind res}
  \xis(\lambda; H_b,H_a) = \sum_{r_\lambda} \ind_{res}(\lambda; H_{r_\lambda},V),
\end{equation}
where the sum is taken over all resonance points~$r_\lambda$ of the triple $(\lambda; H_0,V)$ from the interval~$[a,b].$
\end{thm}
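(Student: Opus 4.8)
The plan is to reduce the equality~(\ref{F: xis=ind res}) to the behaviour of the function $r \mapsto \xis(\lambda; H_r,H_a)$ described in Theorem~\ref{T: Az 9.7.6}(2), and then to identify, at each resonance point $r_\lambda \in [a,b]$, the integer jump of this step function with the resonance index $\ind_{res}(\lambda; H_{r_\lambda},V)$. By Theorem~\ref{T: Az 9.7.6}(2) the function $r \mapsto \xis(\lambda; H_r,H_a)$ is locally constant and integer-valued with discontinuities precisely at the points of the resonance set $R(\lambda; H_a,V)$, which by Theorem~\ref{T: 4.1.11 Az3} is discrete; since $[a,b]$ is compact, only finitely many resonance points $r_\lambda$ lie in $[a,b]$, and one has the telescoping identity
$$
  \xis(\lambda; H_b,H_a) = \sum_{r_\lambda \in [a,b]} \Big( \xis(\lambda; H_{r_\lambda+\eps},H_a) - \xis(\lambda; H_{r_\lambda-\eps},H_a) \Big)
$$
for small $\eps>0$ (using that $\xis$ is additive in the pair, $\xis(\lambda;H_c,H_a)=\xis(\lambda;H_c,H_b)+\xis(\lambda;H_b,H_a)$, which follows from the definition~(\ref{F: xis def}) together with the decomposition of the measure into jumps). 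Hence it suffices to prove, for a single resonance point, that the jump equals $\ind_{res}(\lambda; H_{r_\lambda},V)$; by the Lemma immediately preceding the theorem the right-hand side is independent of the base operator, so we may freely recentre at $H_{r_\lambda}$.

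The local jump of $\xis$ at $r_\lambda$ should be computed via the $\mu$-invariants. By Theorem~\ref{T: 9.7.3 Az3}, $\xis(\lambda; H_b,H_a) = -\mus(\theta,\lambda; H_b,H_a) = \mua(\theta,\lambda;H_b,H_a) - \mu(\theta,\lambda;H_b,H_a)$ for any angle $\theta$. The $\mua$-invariant is the spectral flow of the eigenvalues of the scattering matrix $S(\lambda; H_r,H_0)$ through $e^{i\theta}$ as $r$ runs over $[a,b]$; since by Proposition~\ref{P: 7.2.5 Az3} this path is continuous across $r_\lambda$, the $\mua$-contribution of a single resonance point vanishes, and it remains to analyse the jump of the Pushnitski $\mu$-invariant, i.e. the spectral flow of the eigenvalues of $S(\lambda+iy; H_{r_\lambda},H_a)$ as $y$ runs from $0$ to $+\infty$, compared with the two neighbouring values $r_\lambda \pm \eps$. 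Near $r = r_\lambda$ the scattering matrix is governed by the meromorphic factor $(1 + rT_{\lambda+i0}(H_0)J)^{-1}$ in the stationary formula~(\ref{F: Theorem 7.2.2 of Az3 (v2)}), whose pole at $r_\lambda$ is tied to the eigenvalue $\sigma_\lambda = -r_\lambda^{-1}$ of $T_{\lambda+i0}(H_0)J$ of multiplicity $N$. The plan is to apply the Riesz projection $\ulP_{\lambda+iy}(r_\lambda)$ onto the group of that eigenvalue, decompose $T_{\lambda+iy}(H_0)J$ accordingly, and reduce the local spectral-flow count on the energy shell to an $N\times N$ matrix problem: the $N$ split eigenvalues $\sigma^1_{\lambda+iy},\dots,\sigma^N_{\lambda+iy}$ move off the real axis, $N_+$ of them into $\mbC_+$ and $N_-$ into $\mbC_-$, and one shows that each eigenvalue contributes $+1$ or $-1$ to the relevant spectral flow according to which half-plane it enters, giving $N_+ - N_- = \ind_{res}(\lambda; H_{r_\lambda},V)$.

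For the reduction to the finite-dimensional model I would factor $T_{\lambda+iy}(H_0)J$ through the range of the Riesz idempotent and use that, on the complementary (invertible) part, the factor $(1+rT_{\lambda+iy}J)^{-1}$ stays bounded and contributes no spectral flow near $r_\lambda$; this is analogous to the standard localisation of spectral flow near an isolated crossing in the classical theory. The heart of the matter is a Rouché-type / argument-principle computation: as $y \to 0^+$ the eigenvalues of the $N \times N$ block $\ulP^* T_{\lambda+iy}(H_0)J \ulP$ (equivalently the eigenvalues of the operator $(H_s-\lambda-iy)^{-1}V$ from the group of $(s-r_\lambda)^{-1}$) have nonzero imaginary part; tracking the winding of $\det(1+rT_{\lambda+iy}J)$ on this block as $r$ crosses $r_\lambda$ and as $y$ decreases produces the net eigenvalue count $N_+ - N_-$, which one then matches against the jump of the Pushnitski $\mu$-invariant (and hence of $-\xis$). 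The main obstacle I expect is precisely this last identification — showing that the local jump of the $\mu$-invariant of the $y$-path equals $N_+ - N_-$ and not, say, its negative or $N_+ + N_-$; getting the signs and orientations right requires care with the direction of the $y$-path in~(\ref{F: tilde S(l+iy)}), with the sign convention $e^{-2\pi i\xi} = \det S$ in the Birman–Kre\u{\i}n formula, and with the fact that eigenvalues of $S$ crossing $e^{i\theta}$ correspond to the modified scattering matrix $1 - 2i\sqrt{\Im T}\,rJ(1+rTJ)^{-1}\sqrt{\Im T}$ developing a genuine crossing only on the energy shell $\hlambda$. Once the single-resonance-point jump formula is established, summing over $r_\lambda \in [a,b]$ via the telescoping identity above completes the proof.
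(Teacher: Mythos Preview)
Your telescoping reduction via Theorem~\ref{T: Az 9.7.6} is sound, and the paper's proof has the same ``one resonance point at a time'' structure. But from that point on the paper takes a more direct route, and the obstacle you flag in your last paragraph is real and is not overcome by what you sketch.

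The paper works not with the $\mu$-invariants but with the holomorphic-in-$s$ function
\[
  F_{\lambda+iy}(s)=\tfrac{1}{\pi}\Tr\bigl(\Im T_{\lambda+iy}(H_s)J\bigr)
\]
and two contours from $a$ to $b$: the straight segment $L_2$ and a path $L_1$ that circumvents the resonance (and anti-resonance) points of the group of $r_\lambda$ from above. Cauchy's theorem together with Proposition~\ref{P: ind(res)=oint} gives $\int_{L_2}F_{\lambda+iy}-\int_{L_1}F_{\lambda+iy}=\ind_{res}(\lambda;H_{r_\lambda},V)$ directly, with no unit-circle spectral flow involved. Letting $y\to 0^+$, the $L_2$-integral tends to $\xi(\lambda;H_b,H_a)$ by the Birman--Solomyak formula, while the $L_1$-integral tends to the analytic continuation across $r_\lambda$ of $\int_a^b\Tr_{\hlambda}(\euE_\lambda V\euE_\lambda^\diamondsuit)\,dr$; by Proposition~\ref{P: Tr(EVE) is analytic} (analyticity of the scattering matrix in $r$) this equals $\xia(\lambda;H_b,H_a)$. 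Subtracting gives $\xis=\sum\ind_{res}$.

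Your route through the Pushnitski $\mu$-invariant and a Rouch\'e argument on a finite block is not wrong in spirit, but the step you single out---matching the jump of $\mu$ across $r_\lambda$ with $N_+-N_-$---is genuinely nontrivial in the variables you choose. You are mixing two different spectral pictures: the eigenvalues $\sigma^j_{\lambda+iy}$ of $T_{\lambda+iy}(H_0)J$ in $\mbC$, whose half-plane location defines $N_\pm$, and the eigenvalues of the unitary $S(\lambda+iy;H_r,H_0)$ on the unit circle, whose flow through $e^{i\theta}$ defines $\mu$. Bridging them needs a perturbation-determinant identity (essentially $\det S$ in terms of $\det(1+rT_zJ)/\det(1+rT_{\bar z}J)$) and an argument-principle count of zeros in the $r$-plane; working this out would ultimately reproduce Proposition~\ref{P: ind(res)=oint}, only less transparently. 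The paper's contour-in-$s$ argument sidesteps the issue entirely by never touching the unit circle.
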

\noindent
In other words, as the value of the coupling constant $r$ changes from $a$ to $b,$ the locally constant function $[a,b] \ni r \mapsto \xis(\lambda; H_r,H_a)$
jumps at every encountered resonance point~$r_\lambda \in [a,b]$ by the integer $\ind_{res}(\lambda; H_{r_\lambda},V).$
Theorem~\ref{T: xis=ind res} gives a computable and tangible representation for values of the function $\xis(\cdot; H_b,H_a),$
which is initially defined as the density of the singular spectral shift measure~(\ref{F: xis def}), and as such seems to be difficult to handle
(indeed, the formula~(\ref{F: xis def}) requires in particular calculation of singular parts of a continuous family of self-adjoint operators).
In particular, this theorem allows to prove the following \cite[Theorem 4.3]{Az7} 
\begin{thm} There exist a self-adjoint operator $H_0$ and a rank-one self-adjoint operator~$V$ such that the pair $(H_0,V)$ is irreducible
and the restriction of the singular spectral shift function $\xis(\cdot; H_0+V,H_0)$ of this pair to the absolutely continuous spectrum $\sigma_{a.c.}(H_0)$ of $H_0$
is a non-zero element of $L_1(\sigma_{a.c.}(H_0),dx).$
\end{thm}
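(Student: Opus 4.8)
\medskip
\noindent\emph{Proof plan.}
Since the assertion is existential, the plan is to construct an explicit pair $(H_0,V)$ and to read off $\xis(\,\cdot\,;H_0+V,H_0)$ from Theorem~\ref{T: xis=ind res}. Take $V$ of rank one and choose a Hilbert--Schmidt rigging operator $F\colon\hilb\to\clK$ together with a \emph{rank-one} self-adjoint $J=\langle\,\cdot\,,\psi\rangle\psi$ on $\clK$ with $V=F^*JF$; then $V=\langle\,\cdot\,,\phi\rangle\phi$ with $\phi=F^*\psi$, and since $J$ is rank one the resonance set $R(\lambda;H_0,V)$ and the resonance indices of $(\lambda;H_0,V)$ are governed entirely by the scalar
\begin{equation*}
  \theta(\lambda):=\langle FR_{\lambda+i0}(H_0)F^*\psi,\psi\rangle=\langle R_{\lambda+i0}(H_0)\phi,\phi\rangle=\int\frac{d\mu_\phi(t)}{t-\lambda-i0},
\end{equation*}
the Cauchy transform of the spectral measure $\mu_\phi$ of $\phi$ relative to $H_0$. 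Concretely, realise $H_0$ as multiplication by the independent variable on $L_2([0,1],\mu)$ with $\phi\equiv\mathbf 1$ (and $F$ chosen so that $\mathbf 1\in\im F^*$), so that $\mu_\phi=\mu$; then $\phi$ is cyclic, whence the only subspaces reducing both $H_0$ and the rank-one $V$ are $\{0\}$ and $\hilb$, i.e.\ the pair $(H_0,V)$ is irreducible, and $\sigma_{a.c.}(H_0)$ equals the essential closure of the essential support $S$ of the absolutely continuous part $\mu^{ac}$ of $\mu$. Integrability of $\xis(\,\cdot\,;H_0+V,H_0)$ on $\sigma_{a.c.}(H_0)$ is automatic once we show it equals a bounded function on the bounded set $[0,1]$.

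Now evaluate the right-hand side of~(\ref{F: xis=ind res}) with $a=0$, $b=1$. For a.e.\ $\lambda$ one has $\Im\theta(\lambda)=\pi\,\tfrac{d\mu^{ac}}{d\lambda}(\lambda)$ and $\Re\theta(\lambda)=\tau(\lambda):=\mathrm{p.v.}\!\int\frac{d\mu(t)}{t-\lambda}$. By Theorem~\ref{T: 4.1.11 Az3}, a resonance point $r_\lambda\in[0,1]$ of $(\lambda;H_0,V)$ is a root of $1+r\,\theta(\lambda)=0$, which forces $\theta(\lambda)\in\mbR$, hence $\lambda\notin S$; then $r_\lambda=-1/\tau(\lambda)$ is the unique candidate, and it lies in $(0,1]$ exactly when $\tau(\lambda)\le-1$. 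At such a $\lambda$ the compact rank-one operator $T_{\lambda+iy}(H_0)J$ has a single non-zero eigenvalue, equal to $\theta(\lambda+iy)$, whose imaginary part is strictly positive for $y>0$; hence $N_+=1$, $N_-=0$ and $\ind_{res}(\lambda;H_{r_\lambda},V)=1$. Substituting into~(\ref{F: xis=ind res}) gives, for a.e.\ $\lambda\in[0,1]$,
\begin{equation*}
  \xis(\lambda;H_0+V,H_0)=\chi_E(\lambda),\qquad E:=\{\lambda\in[0,1]\setminus S:\ \tau(\lambda)\le-1\},
\end{equation*}
so it remains to design $\mu$ so that $\sigma_{a.c.}(H_0)=[0,1]$ while $|E|>0$.

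Take $\mu=\chi_S(x)\,dx+\delta_p$, where $p\in(0,1)$ and $S\subseteq[0,1]$ is measurable with $0<|S\cap I|<|I|$ for every subinterval $I$ and with $[0,1]\setminus S$ of positive lower Lebesgue density at $p$ from the right (such sets are classical). Then $\mu^{ac}=\chi_S\,dx$ has essential support $S$ whose essential closure is $[0,1]=\sigma_{a.c.}(H_0)$, yet $[0,1]\setminus S$ has positive measure near $p$. On $[0,1]\setminus(S\cup\{p\})$ we have $\tau(\lambda)=g(\lambda)+\frac1{p-\lambda}$ with $g=\mathrm{p.v.}\!\int_0^1\frac{\chi_S(t)}{t-\lambda}\,dt$ a constant multiple of the Hilbert transform of $\chi_S\in L_2[0,1]$, so $g\in L_2$ and $|\{|g|>N\}|\le\|g\|_2^2N^{-2}$. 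Since $\frac1{p-\lambda}<-\frac1\delta$ on $(p,p+\delta)$, there $\tau(\lambda)>-1$ forces $g(\lambda)>\frac1\delta-1$, whence
\begin{equation*}
  \bigl|\{\lambda\in(p,p+\delta):\ \tau(\lambda)>-1\}\bigr|\ \le\ \|g\|_2^2\Bigl(\tfrac1\delta-1\Bigr)^{-2}=O(\delta^2)\qquad(\delta\to0^+),
\end{equation*}
while $|(p,p+\delta)\setminus S|\ge c\,\delta$ for some $c>0$ and all small $\delta$. Choosing $\delta$ small yields $|E|\ge c\,\delta-O(\delta^2)>0$, so $\xis(\,\cdot\,;H_0+V,H_0)\big|_{[0,1]}=\chi_E$ is a non-zero element of $L_1(\sigma_{a.c.}(H_0),dx)$.

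The crux, and the only non-routine point, is the construction in the last paragraph: the absolutely continuous part of $\mu$ must be spread out enough that its essential closure is a full interval --- so that $\xis$ is supported \emph{inside} $\sigma_{a.c.}(H_0)$ rather than merely on its complement --- yet sparse enough on a set of positive measure that resonance points can occur there, and then one must show that the principal-value transform $\tau$ really does descend below $-1$ on a positive-measure portion of that set. The estimate above secures the last point by playing the $L_2$-bound on the Hilbert transform of $\chi_S$ against the $-\infty$ singularity contributed by the atom $\delta_p$ just to the right of $p$; alternatively one may drop the atom and instead let $S$ have full lower density immediately to the left of $p$, so that $\mathrm{p.v.}\!\int_S\frac{dt}{t-\lambda}\to-\infty$ as $\lambda\downarrow p$ by the elementary estimate $\int_{p-\epsilon}^p\frac{dt}{t-\lambda}=\log\frac{\lambda-p}{\lambda-p+\epsilon}$, giving an example with $H_0$ purely absolutely continuous.
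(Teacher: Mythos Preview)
The paper does not include a proof of this theorem; it is quoted from the unpublished \cite{Az7}, so there is nothing to compare your argument against. Your construction is correct and is exactly the kind of argument the paper points to: use Theorem~\ref{T: xis=ind res} to reduce the computation of $\xis$ to counting resonance points, and then build an explicit rank-one model in which those points are visible. The rank-one reduction to the single scalar $\theta(\lambda)=\scal{\psi}{T_{\lambda+i0}(H_0)\psi}$, the identification of the resonance set with $\{\tau(\lambda)\le -1\}\cap S^c$, and the conclusion $\xis=\chi_E$ a.e.\ are all correct (your $V\ge 0$ forces $N_-=0$, so the index is $1$ at each resonance). The cyclicity argument for irreducibility is standard and fine. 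The key construction---a set $S\subset[0,1]$ with $0<|S\cap I|<|I|$ for every subinterval, together with an atom at a density point $p$ of $S^c$---is correct, and your Chebyshev bound on the Hilbert transform of $\chi_S$ against the $1/(p-\lambda)$ singularity is a clean way to force $|E|>0$.

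Two minor remarks. First, the paper takes $\scal{\cdot}{\cdot}$ linear in the second argument, so the eigenvalue of $T_z(H_0)J$ is $\scal{\psi}{T_z(H_0)\psi}$ rather than $\scal{T_z(H_0)\psi}{\psi}$; this is only a notational mismatch and does not affect the sign of $\Im\theta$ or the conclusion. Second, the alternative purely-absolutely-continuous construction you sketch at the end (dropping the atom and using full left density of $S$ at $p$) would need more care: near $p$ from the right the integral over $S\cap(p,1]$ can contribute a large \emph{positive} term competing with the negative contribution from the left, so one must control both sides. Your main construction with the atom is complete and does not rely on this.
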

The construction of such a pair may not be interesting, but at least this theorem shows that the decomposition~(\ref{F: xi=xia+xis}) is non-trivial.

The expression on the right hand side of~(\ref{F: xis=ind res}) will be called total resonance index for the pair $H_0,H_1 = H_0+V.$
For values of the spectral parameter~$\lambda$ which lie outside of essential spectrum of $H_0$ the singular spectral shift function
coincides with spectral flow, and therefore it follows from~(\ref{F: xis=ind res}) that the total resonance index provides a new definition of spectral flow.
Moreover, the notion of resonance index which was discovered in the course of study of the singular spectral shift function, makes sense even in finite dimensions.
Resonance index represents a new approach to calculation of spectral flow, which in essence is ``flow of eigenvalues''. Indeed, in order to find out how many eigenvalues
of a path of self-adjoint operators $\set{H_0+rV \colon 0 \leq r \leq 1}$ crossed in the positive direction a fixed point~$\lambda$ outside of spectrum of the initial $H_0$ and final $H_1$ operators,
one can either try to keep track of each eigenvalue and count how many times and in which direction it crossed~$\lambda,$ or instead of that one can try to detect moments
of ``time'' (coupling constant)~$r_\lambda$ for which an event ``$\lambda$ is an eigenvalue of $H_{r_\lambda}$'' occurs and then to decide where the eigenvalue has come from and where it is going. The first approach
requires continuous enumeration of eigenvalues  (which for general continuous paths is not a trivial problem even in finite dimensions, see \cite[\S II.5.2]{Kato}),
but inside of essential spectrum this approach does not work since eigenvalues embedded into essential spectrum are extremely unstable (for some striking examples see e.g. \cite[\S 12.5]{SimTrId2}).
In the second approach a detector of eigenvalues needs to be told how to decide in which direction a detected eigenvalue is moving.
The answer to this question is to tell the counter: calculate the resonance index of the triple $(\lambda; H_{r_\lambda},V),$
that is, choose any real $s$ such that~$\lambda$ is not an eigenvalue of $H_s$ and find those eigenvalues of the operator
\begin{equation} \label{F: 1498}
  R_{\lambda+iy}(H_s)V
\end{equation}
with a very small $y>0,$ which are close to $(s-r_\lambda)^{-1}.$ Then the difference $N_+-N_-$ of the eigenvalues in~$\mbC_+$ and~$\mbC_-$
will show the net number of eigenvalues crossing~$\lambda$ in the positive direction at the moment of ``time'' $r = r_\lambda.$
Remarkably, this algorithm works equally well for eigenvalues embedded into essential spectrum,
so even if an eigenvalue appears suddenly from the continuous spectrum and then dissolves in it immediately afterwards
one is still able to determine which direction it appeared from and in which direction it dissolved. The difference is that
the condition ``$\lambda$ is an eigenvalue of~$H_r$'' should be replaced by the condition $\lambda \in \Lambda(H_r,F),$
or, equivalently, $r \in R(\lambda; H_0,V).$ As a consequence, to define spectral flow inside
essential spectrum one has to consider singular points instead of eigenvalues, as a non-trivial spectral flow
inside essential spectrum may be a result of moving singular continuous spectrum.


Finally we discuss the origin of terminology ``resonance points'', ``resonance index'' etc, used in this paper.
This paragraph of introduction has a formal character as it frequently refers to physical concepts and phenomena; its partial aim is to explain/justify usage of the word ``resonance'',
though this formal and remote connection with quantum scattering may be found interesting.
The justification of this terminology can be even more necessary since the word ``resonance'' has several meanings and this word is used in this paper since it is associated with
a quantum scattering phenomenon and as such it has little to do with, for instance, pushing a child on a swing.
A resonance in quantum scattering is associated with a sharp variation of the scattering cross-section as a function of energy, see e.g. \cite[\S XVIII.6]{Bohm}.
The value of energy $\lambda_0$ of a projectile at which this sharp variation occurs is called resonance energy.
Physicists associate resonances with other phenomena (see e.g. \cite[\S XII.6]{RS4}, \cite[Chapter 13]{TayST} or \cite[\S XVIII.6]{Bohm}, more specifically, see e.g. the last sentence on p.\,431
of that section and (6.1)):
\begin{enumerate}
  \item poles of the scattering matrix as a function of energy which are close to the real axis,
  \item a rapid increase of a scattering phase $\theta_j(\lambda)$ ($=2\delta_l(E)$ in physical notation) by $2\pi$ as the energy~$\lambda$
    of a projectile crosses a resonance value $\lambda_0$
  \item existence of a quasi-stationary (or meta-stable) state with energy $\lambda_0,$
  \item and finally a time delay for the interval of time between the moments of entering and leaving the interaction region around the target
    by the projectile compared to the same time-interval for non-interacting projectile.
\end{enumerate}
These phenomena are non-trivially related to each other and to the fact that at resonance energy the projectile
can be captured by the target into a nearly bound meta-stable state ``target-projectile'' (see e.g. introduction to \cite[Chapter 13]{TayST}).
These phenomena except the time delay will have mathematical analogues in our setting if one fixes the value of energy~$\lambda$ and considers as a variable the value of the coupling constant $r:$
\begin{enumerate}
   \item resonance point~$r_\lambda$ is a pole of the factor $(1+r T_{\lambda+i0}J)^{-1}$ from the stationary formula~(\ref{F: Theorem 7.2.2 of Az3}) for the scattering matrix,
   \item Theorem~\ref{T: 9.7.3 Az3} and the formula~(\ref{F: xis=ind res}) are expressions of the fact that as the coupling constant $r$ crosses a resonant value~$r_\lambda$ at least one of the scattering phases
jumps by an integer multiple of $2\pi,$
  \item by Theorem~\ref{T: 4.1.11 Az3} a value $r$ of the coupling constant at a given energy~$\lambda$ is resonant if and only if the equation
\begin{equation} \label{F: (1+rJT)psi=0}
  (1+r JT_{\lambda+i0}(H_0))\psi = 0
\end{equation}
has a non-trivial solution $\psi,$ which can be interpreted as a quasi-stationary state.
\end{enumerate}
Further, unlike the physical resonances, in this paper an idealized situation
is considered in the sense that (1) the pole of the scattering matrix is not near the real axis, but is exactly on it, (2) the scattering phase does not \emph{change rapidly} by $2\pi$
at resonance point, but \emph{jumps} by an integer multiple of $2\pi,$ (3) finally, while a physical quasi-stationary state is nevertheless a scattering state
in the sense that sooner or later the projectile leaves the target and can be observed, the quasi-stationary state represented by a solution of the equation
(\ref{F: (1+rJT)psi=0}) is not a scattering state, in the sense that it does not belong to the fiber Hilbert space $\hlambda(H_r).$ 
The latter may be attributed to the possibility that in this idealized situation, --- a pole exactly on the real axis, the projectile
gets captured by the target and never leaves it; see e.g. Pearson's example in \cite[\S XI.4, p.\,70]{RS3}, which shows that this scenario is mathematically possible.
This is also in accordance with a physical fact that time delay is proportional to the inverse width $1/\Gamma$
of the resonance bump (= imaginary part of the resonance pole), which (the width $\Gamma$) is zero
(see e.g. \cite[(13.10)]{TayST}, \cite[\S XVIII.6, p.\,432]{Bohm}). 

\subsection{Main results}
This subsection gives a list of main results of this paper.

Let $\clA = \set{H_r = H_0+rV \colon r \in \mbR}$ be an affine line of self-adjoint operators~$H_r$
on a separable complex Hilbert space $\hilb$ and let $\clK$ be another Hilbert space.
These operators are assumed to satisfy the following conditions:
\begin{enumerate}
  \item All self-adjoint operators $H_r, \ r \in \mbR$ have a common dense domain $\euD.$
  This implies that domain of $V$ contains $\euD.$
  \item The operator $V$ admits a factorization $V = F^*JF,$ where $F \colon \hilb \to \clK$ is a closed operator with trivial kernel and co-kernel,
  and $J \colon \clK \to \clK$ is a bounded operator. It is assumed that the factorization is such that the domain of $F$ contains $\euD.$
  \item Let $R_z(H_r) = (H_r-z)^{-1}$ be the resolvent of~$H_r,$ $z \in \mbC\setminus \mbR.$ Since by (1) for any $r\in \mbR$ the range of $R_z(H_r)$ contains $\euD,$ by the first two assumptions
  the operator $FR_z(H_r)F^*$ is well-defined on the dense domain of~$F^*.$ It is assumed that the operator $FR_z(H_r)F^*$ is bounded and moreover is compact.
  This operator will be denoted by $T_z(H_r).$
  \item It is assumed that the set $\Lambda(\clA,F)$ of points~$\lambda$ such that for some $r\in \mbR$ the norm limit
  $$
    T_{\lambda+i0}(H_r) := \lim_{y \to 0^+} T_{\lambda+iy}(H_r)
  $$
  exists and therefore is compact, has full Lebesgue measure. This is the main assumption, called the Limiting Absorption Principle.
  Numbers from the full set $\Lambda(\clA,F)$ will be called essentially regular for the affine space~$\clA.$
  Given an essentially regular number~$\lambda,$ a point $r$ for which the limit $T_{\lambda+i0}(H_r)$
  exists will be called \emph{regular at} $\lambda,$ otherwise it will be called \emph{resonant at} $\lambda.$
\end{enumerate}

The set $R(\lambda; H_0,V)$ of resonant at~$\lambda$ numbers is a discrete subset of $\mbR;$ dependence on $F$ is not indicated in the notation
$R(\lambda; H_0,V).$

By Weyl's theorem, all operators $H_r \in \clA$ have common essential spectrum which is denoted~$\sigma_{ess}.$
Let $\Pi$ be the set which is defined as a disjoint union of $\mbC \setminus \sigma_{ess}$ and of two copies
$\Lambda(\clA,F)+i0$ and $\Lambda(\clA,F)-i0$ of $\Lambda(\clA,F)$
$$
  \Pi = \brs{\mbC \setminus \sigma_{ess}} \cup \brs{\Lambda(\clA,F)+i0} \cup \brs{\Lambda(\clA,F)-i0}.
$$
If $\lambda \notin \sigma_{ess},$ then $\lambda = \lambda+i0 = \lambda-i0,$ but otherwise $\lambda+i0 \neq \lambda-i0.$
Thus, the operator $T_z(H_r)$ as a function of $z$ is defined on the set $\Pi$ with the exception of those~$z=\lambda\pm i0$ for which $r$ is a resonance point.

For $z \in \Pi$ let
$$
  A_z(s) = T_z(H_s)J, \qquad B_z(s) = J T_z(H_s)
$$
Given a number $z \in \Pi,$ a number $r_z \in \mbC$ is called a \emph{resonance point} corresponding to $z,$ if $r_z$ is a pole of the meromorphic function $s \mapsto A_z(s).$
We define the vector spaces
$$
  \Upsilon_z^k(r_z) = \set{u \in \clK \colon (1+(r_z-s)T_z(H_s)J)^k u = 0}
  \quad \text{and} \quad
  \Upsilon_z(r_z) = \bigcup _{k =1,2,\ldots} \Upsilon_z^k(r_z),
$$
$$
  \Psi_z^k(r_z) = \set{\psi \in \clK \colon (1+(r_z-s)JT_z(H_s))^k \psi = 0}
  \quad \text{and} \quad
  \Psi_z(r_z) = \bigcup _{k =1,2,\ldots} \Psi_z^k(r_z),
$$
and idempotents
\begin{equation*} 
  P_z(r_z) = \frac 1{2\pi i} \oint _{C(\sigma_z(s))} \brs{\sigma - A_z(s)}^{-1}\,d\sigma, \quad
  Q_z(r_z) = \frac 1{2\pi i} \oint _{C(\sigma_z(s))} \brs{\sigma - B_z(s)}^{-1}\,d\sigma,
\end{equation*}
where $C(\sigma_z(s))$ is a small circle enclosing the eigenvalue $\sigma_z(s) = (s-r_z)^{-1}$ of $A_z(s),$
such that there are no other eigenvalues inside or on the circle.
These vector spaces and idempotents do not depend on the choice of $s \in \mbR,$ as long as, in case~$z$ belongs to the boundary $\partial \Pi,$
the operator $A_z(s)$ exists (Propositions~\ref{P: res eq-n is correct} and~\ref{P: Pz is well-defined}).
Many properties of the vector space $\Upsilon_z^k(r_z)$ and the idempotent $P_z(r_z)$ are similar
to those of the vector space $\Psi_z^k(r_z)$ and the idempotent $Q_z(r_z);$
for this reason only properties of the former are given.
The idempotent $P_z(r_z)$ has the following properties~(\ref{F: Pz(rz)=res Az(s)}):
\begin{equation*}
  P_z(r_z) = \frac 1{2\pi i} \oint_{C(r_z)} A_z(s)\,ds;
\end{equation*}
for any two different resonance points $r_z^1$ and $r_z^2$~(\ref{F: Pz(1)Pz(2)=0})
\begin{equation*}
  P_z(r_z^1) P_z(r_z^2) = 0.
\end{equation*}
With every resonance point~$r_z$ the following three non-negative integers are associated which are respectively called \emph{geometric multiplicity}, \emph{algebraic multiplicity} and \emph{order} of $r_z:$
$$
  m = \dim \Upsilon_z^1(r_z),\qquad N = \dim \Upsilon_z(r_z), \qquad d = \min \set{k \in \mbN \colon \Upsilon_z^k(r_z) = \Upsilon_z(r_z)}.
$$
A number $r_z$ is resonant for $z$ if and only if the number $\bar r_z$ is resonant for $\bar z,$ in which case the numbers $m,N,$ and $d$
are the same for $r_z$ and $\bar r_z.$

The nilpotent operators $\bfA_z(r_z)$ and $\bfB_z(r_z)$ are defined by formulas
$$
  \bfA_z(r_z) = \frac {1}{2\pi i}\oint_{C(r_z)} (s-r_z) A_z(s)\,ds \quad \text{and} \quad \bfB_z(r_z) = \frac {1}{2\pi i}\oint_{C(r_z)} (s-r_z) B_z(s)\,ds,
$$
where $C(r_z)$ is a small contour which encloses the resonance point $r_z$ and no other resonance points.

Section \ref{S: Resonance points} also contains an exposition of other properties of the idempotents $P_z(r_z)$ and $Q_z(r_z)$ and the nilpotent operators
$\bfA_z(r_z)$ and $\bfB_z(r_z)$ which are used repeatedly throughout this paper, such as
$$
  (P_z(r_z))^* = Q_{\bar z}(\bar r_z), \ \ (\bfA_z(r_z))^* = \bfB_{\bar z}(\bar r_z).
$$
$$
  JP_z(r_z) = Q_z(r_z)J, \ \ J\bfA_z(r_z) = \bfB_z(r_z)J.
$$
Further, for a fixed $z \in \Pi,$ the function $A_z(s)$ is a meromorphic function of $s$ whose poles are exactly the resonance points corresponding to $z.$
The Laurent expansion of $A_z(s)$ at a pole $r_z$ is
\begin{equation*} 
  A_z(s) = \tilde A_{z,r_z}(s) + \frac 1{s-r_z} P_z(r_z) + \frac 1{(s-r_z)^2} \bfA_z(r_z) + \ldots +
  \frac 1{(s-r_z)^d} \bfA_z^{d-1}(r_z),
\end{equation*}
where $\tilde A_{z,r_z}(s) $ is the holomorphic part.

\noindent


In section~\ref{S: Geom meaning of Ups(1)} we study the relationship between eigenvectors of $H_{r_\lambda}$ corresponding to an eigenvalue~$\lambda$
and resonance vectors of order 1.

\begin{thm} \label{I:T: res eq-n and eigenvectors} (Theorem \ref{T: res eq-n and eigenvectors})
Let~$\lambda$ be an essentially regular point, let $H_0 \in \clA$ be regular at~$\lambda$ operator,
let $V \in \clA_0(F),$ let~$r_\lambda$ be a real resonance point of the triple $(\lambda; H_0,V)$
and let $r$ be a regular point of the triple $(\lambda; H_0,V).$
If~$\lambda$ is an eigenvalue of the operator~$H_{r_\lambda} = H_0+r_\lambda V$
with eigenvector~$\chi \in \euD = \dom(H_0),$ then the vector $u = F\chi$ is a resonance vector of order~1,
that is,
\begin{equation*} 
  \brs{1+(r_\lambda-r)T_{\lambda+i0}(H_{r})J}u = 0.
\end{equation*}
\end{thm}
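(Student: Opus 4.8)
The plan is to translate the eigenvalue equation $H_{r_\lambda}\chi = \lambda\chi$ into the resonance equation for $u = F\chi$ by exploiting the second resolvent (Hilbert) identity relating $R_z(H_r)$ and $R_z(H_{r_\lambda})$, and then passing to the limit $z = \lambda + iy \to \lambda + i0$. First I would write $H_{r_\lambda} = H_r + (r_\lambda - r)V$ and use the resolvent identity
\begin{equation*}
  R_z(H_{r_\lambda}) = R_z(H_r) - (r_\lambda - r) R_z(H_r) V R_z(H_{r_\lambda}),
\end{equation*}
valid for $z \in \mbC \setminus \mbR$ since both $H_r$ and $H_{r_\lambda}$ are self-adjoint with common domain $\euD$. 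Applying this to the vector $\chi$ and using $R_z(H_{r_\lambda})\chi = (\lambda - z)^{-1}\chi$ (the eigenvector relation), I get an identity of the form
\begin{equation*}
  (\lambda - z)^{-1}\chi = R_z(H_r)\chi - (r_\lambda - r)(\lambda - z)^{-1} R_z(H_r) V \chi,
\end{equation*}
i.e. $\chi = (\lambda - z)R_z(H_r)\chi - (r_\lambda - r) R_z(H_r) V\chi$, which can be rearranged (since $(\lambda - z)R_z(H_r)\chi = \chi + R_z(H_r)H_r\chi - $ wait, more directly: $(H_r - z)R_z(H_r) = I$, and $(\lambda-z)R_z(H_r)\chi - \chi = R_z(H_r)(\lambda - H_r)\chi = (r_\lambda - r)R_z(H_r)V\chi$ using $(\lambda - H_r)\chi = (r_\lambda - r)V\chi$) into the clean statement
\begin{equation*}
  \brs{1 + (r_\lambda - r) R_z(H_r) V}\chi = 0 \qquad \text{for all } z \in \mbC\setminus\mbR.
\end{equation*}

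Next I would sandwich this equation with $F$ on the left, using the factorization $V = F^*JF$. Writing $u = F\chi \in \clK$ (which is well-defined since $\euD \subseteq \dom F$), one obtains
\begin{equation*}
  u + (r_\lambda - r) F R_z(H_r) F^* J F\chi = u + (r_\lambda - r) T_z(H_r) J u = 0,
\end{equation*}
that is, $\brs{1 + (r_\lambda - r) T_z(H_r)J}u = 0$ for every non-real $z$. The final step is to take $z = \lambda + iy$ with $y \to 0^+$: since $r$ is a regular point of the triple $(\lambda; H_0, V)$ and $\lambda \in \Lambda(\clA, F)$, the operator $T_{\lambda+iy}(H_r)J$ converges in norm to $T_{\lambda+i0}(H_r)J$, hence $\brs{1 + (r_\lambda - r)T_{\lambda+i0}(H_r)J}u = 0$, which is exactly the claimed order-$1$ resonance equation. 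It remains to confirm $u \neq 0$; but $F$ has trivial kernel, so $u = F\chi = 0$ would force $\chi = 0$, contradicting that $\chi$ is an eigenvector.

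The main obstacle I anticipate is the rigorous justification of the manipulations involving unbounded operators: the resolvent identity must be applied carefully given that $V$ is only relatively bounded, and one must check that $R_z(H_r)F^*$ and the various products are well-defined and that applying $F$ commutes with the limit. These are exactly the kinds of domain bookkeeping handled by the standing assumptions (1)--(3) on $\clA$ — common domain $\euD$, $F$ closed with $\euD \subseteq \dom F$, and $T_z(H_r)$ bounded/compact — so the argument should go through, but writing it out requires threading the factorization $V = F^*JF$ through each step with attention to which side $F$ or $F^*$ acts on. A secondary technical point is verifying that the vector identity $(1 + (r_\lambda - r)R_z(H_r)V)\chi = 0$ really holds in $\hilb$ (not merely after applying $F$); this follows because $\chi \in \euD = \dom(H_r) = \dom(H_{r_\lambda})$ and the eigenvalue equation $(\lambda - H_r)\chi = (r_\lambda - r)V\chi$ holds as an identity of vectors in $\hilb$, so applying the bounded operator $R_z(H_r)$ is legitimate.
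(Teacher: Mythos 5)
Your reduction to the sandwiched equation is the right strategy, but the pivotal intermediate identity is false. From $H_{r_\lambda}\chi=\lambda\chi$ one gets $(\lambda-H_r)\chi=(r_\lambda-r)V\chi$, and applying $R_z(H_r)$ gives
\begin{equation*}
  \chi+(r_\lambda-r)R_z(H_r)V\chi=(\lambda-z)R_z(H_r)\chi ,
\end{equation*}
not zero: for $z=\lambda+iy$ the right-hand side equals $-iy\,R_{\lambda+iy}(H_r)\chi$, which is a nonzero vector whenever $y\neq 0.$ Indeed, if your claimed identity $(1+(r_\lambda-r)R_z(H_r)V)\chi=0$ held for non-real $z$, then $\chi$ would be an eigenvector of $R_z(H_r)V$ with the nonzero \emph{real} eigenvalue $(r-r_\lambda)^{-1}$, contradicting Lemma~\ref{L: Az Lemma 4.1.4}, which says that for $\Im z\neq 0$ this operator has no nonzero real eigenvalues. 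So your equation holds only up to the error term $-iy\,R_{\lambda+iy}(H_r)\chi$, and the entire content of the theorem is to show that $F$ applied to this term vanishes as $y\to 0^+.$

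That remaining step is not routine domain bookkeeping: $F$ is unbounded and, inside the essential spectrum, $R_{\lambda+i0}(H_r)\chi$ need not exist in any sense, so one cannot argue that $yR_{\lambda+iy}(H_r)\chi\to 0$ strongly and then push $F$ through the limit. The paper's proof is organized precisely around this point: it splits $FR_{\lambda+iy}(H_r)(H_r-\lambda)\chi$ into real and imaginary parts, uses the strong convergence (\ref{F: fact 1}) of $(H_r-\lambda)^2\bigl((H_r-\lambda)^2+y^2\bigr)^{-1}$ to $1$ to obtain $F\chi=(r-r_\lambda)F\,\Re R_{\lambda+i0}(H_r)V\chi$, and then uses that $\lambda$ is not an eigenvalue of $H_r$ (regularity of $r$, via Proposition~\ref{P: Az 4.1.10}) together with the weak convergence (\ref{F: fact 2}) and the compactness of $FE_\Delta^{H_r}$ from (\ref{F: FE(Delta) is compact}) to make the imaginary part vanish. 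Note that in your write-up the hypothesis that $r$ is a regular point is never genuinely used beyond the existence of $T_{\lambda+i0}(H_r)$, which is a symptom of the missing step: the non-eigenvalue property of $\lambda$ for $H_r$ is exactly what controls the discarded term. With that analysis supplied your argument becomes the paper's proof; as written it has a genuine gap.
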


\begin{cor} \label{I:C: mult of lambda <= dim Ups(1)} (Corollary \ref{C: mult of lambda <= dim Ups(1)})
If~$\lambda$ is an essentially regular point, then the
geometric multiplicity of~$\lambda$ as an eigenvalue of the self-adjoint operator $H_{r_\lambda} = H_0+r_\lambda V$
does not exceed dimension of the vector space $\Upsilon_{\lambda+i0}^1(r_\lambda),$
that is,
$$
  \dim \clV_\lambda \leq \dim \Upsilon^1_{\lambda+i0}(r_\lambda),
$$
where $\clV_\lambda$ is the eigenspace of $H_{r_\lambda}$ corresponding to the eigenvalue~$\lambda.$
\end{cor}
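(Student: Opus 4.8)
The plan is to read this off directly from Theorem~\ref{I:T: res eq-n and eigenvectors} together with the injectivity of the rigging operator $F$. We may assume $\clV_\lambda \neq \set{0}$, for otherwise the inequality is vacuous; then $\lambda$ is an eigenvalue of $H_{r_\lambda}$, so in particular $r_\lambda$ is resonant at $\lambda$ and $r_\lambda$ is a real resonance point of the triple $(\lambda;H_0,V)$. Since by Theorem~\ref{T: 4.1.11 Az3} the resonance set $R(\lambda;H_0,V)$ is discrete, we may fix a number $r$ that is regular at $\lambda$ (and, if $H_0$ itself happens to be resonant at $\lambda$, also a number $b$ regular at $\lambda$ to serve as the initial operator $H_b$ in the hypothesis of Theorem~\ref{I:T: res eq-n and eigenvectors}, which is harmless since the resonance structure of the triple is insensitive to the choice of base operator). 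By the base-point independence of the spaces $\Upsilon^k_z(r_z)$ recorded above, $\Upsilon^1_{\lambda+i0}(r_\lambda)$ equals $\set{u \in \clK \colon (1+(r_\lambda-r)T_{\lambda+i0}(H_r)J)u = 0}$.

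Next I would consider the linear map $\iota \colon \clV_\lambda \to \clK$ defined by $\iota(\chi) = F\chi$. This is well defined because every eigenvector $\chi$ of $H_{r_\lambda}$ lies in $\dom(H_{r_\lambda}) = \euD \subseteq \dom(F)$. Applying Theorem~\ref{I:T: res eq-n and eigenvectors} with the regular point $r$ chosen above, each $\chi \in \clV_\lambda$ satisfies $(1+(r_\lambda-r)T_{\lambda+i0}(H_r)J)F\chi = 0$, that is, $\iota(\chi) \in \Upsilon^1_{\lambda+i0}(r_\lambda)$. Hence $\iota$ maps $\clV_\lambda$ into $\Upsilon^1_{\lambda+i0}(r_\lambda)$.

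Finally, since $F$ has trivial kernel, $\iota$ is injective: $F\chi = 0$ forces $\chi = 0$. An injective linear map cannot increase dimension, so $\dim \clV_\lambda \leq \dim \Upsilon^1_{\lambda+i0}(r_\lambda)$, which is the assertion. As a by-product, because $T_{\lambda+i0}(H_r)J$ is compact, the Fredholm alternative makes $\Upsilon^1_{\lambda+i0}(r_\lambda)$ finite-dimensional, so the corollary also shows that eigenvalues embedded in the essential spectrum at an essentially regular $\lambda$ have finite geometric multiplicity.

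I do not anticipate a genuine obstacle. The only points requiring care are: verifying that eigenvectors of $H_{r_\lambda}$ indeed belong to the common domain $\euD$, hence to $\dom(F)$, so that $F\chi$ is meaningful and Theorem~\ref{I:T: res eq-n and eigenvectors} is applicable; and invoking the already-established independence of $\Upsilon^1_{\lambda+i0}(r_\lambda)$ on the regular base point, so that the membership $F\chi \in \Upsilon^1_{\lambda+i0}(r_\lambda)$ provided by that theorem refers to exactly the space appearing in the statement of the corollary.
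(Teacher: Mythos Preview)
Your proof is correct and follows essentially the same argument as the paper: apply Theorem~\ref{T: res eq-n and eigenvectors} to see that $F$ maps $\clV_\lambda$ into $\Upsilon^1_{\lambda+i0}(r_\lambda)$, then use that $F$ has trivial kernel to conclude this map is injective. The paper's version is stated in one sentence just before the corollary; your additional remarks on domain issues and base-point independence are accurate but not strictly needed, as these points are already handled in the proof of Theorem~\ref{T: res eq-n and eigenvectors} and in Proposition~\ref{P: res eq-n is correct}.
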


\begin{thm} \label{I:T: infty mult-ty then not essentially regular} (Theorem \ref{T: infty mult-ty then not essentially regular})
  If~$\lambda$ is an eigenvalue of infinite multiplicity for at least one self-adjoint operator~$H$ from the affine space $\clA = H_0+\clA_0(F),$
  then~$\lambda$ is not an essentially regular point of the pair $(\clA,F),$ that is, $\lambda \notin \Lambda(\clA,F).$
\end{thm}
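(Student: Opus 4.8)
The plan is to argue by contradiction, reducing to the one-parameter setting and then combining Corollary~\ref{I:C: mult of lambda <= dim Ups(1)} with the standing compactness of the sandwiched resolvent. Assume $\lambda \in \Lambda(\clA,F),$ and let $H \in \clA$ be an operator for which $\lambda$ is an eigenvalue of infinite multiplicity; write $\clV_\lambda = \ker(H-\lambda),$ so $\dim \clV_\lambda = \infty.$ Since $\lambda$ is essentially regular, there is some $H' \in \clA$ which is regular at $\lambda,$ i.e. $T_{\lambda+i0}(H')$ exists and is compact. Consider the affine line $L = \set{(1-t)H'+tH \colon t \in \mbR};$ its direction $H-H'$ lies in the vector space $\clA_0(F)$ and hence equals $F^*JF$ for a bounded self-adjoint $J.$ Setting $H_0 := H'$ and $V := H-H',$ we may write $L = \set{H_r = H_0+rV \colon r \in \mbR}$ with $H_0$ regular at $\lambda$ and $H = H_{r_\lambda}$ for some real $r_\lambda.$ Because $\lambda$ is an eigenvalue of $H = H_{r_\lambda},$ while the point spectrum of a self-adjoint operator is disjoint from its $\Lambda$-set (the operator $H_{r_\lambda}E_{\Lambda(H_{r_\lambda},F)}^{H_{r_\lambda}}$ is absolutely continuous, so a point mass at $\lambda$ forces $\lambda \notin \Lambda(H_{r_\lambda},F)$; see also \cite[Proposition 4.1.10]{Az3v6}), the point $H_{r_\lambda}$ is resonant at $\lambda,$ and by Theorem~\ref{T: 4.1.11 Az3} the resonant points on $L$ form a discrete set with $r_\lambda$ among them; in particular $r_\lambda \neq 0,$ since $H_0$ is regular at $\lambda.$

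Granting this reduction, I would then apply Corollary~\ref{I:C: mult of lambda <= dim Ups(1)} to the triple $(\lambda; H_0,V)$: since $\lambda$ is essentially regular and $r_\lambda$ is a resonance point,
$$
  \infty = \dim \clV_\lambda \leq \dim \Upsilon^1_{\lambda+i0}(r_\lambda).
$$
By the definition of $\Upsilon^1_z(r_z)$ with $z = \lambda+i0$ and base point $s = 0$ (legitimate, because $H_0$ is regular at $\lambda,$ so $A_{\lambda+i0}(0) = T_{\lambda+i0}(H_0)J$ exists),
$$
  \Upsilon^1_{\lambda+i0}(r_\lambda) = \ker\brs{1 + r_\lambda\,T_{\lambda+i0}(H_0)J}.
$$
Now $r_\lambda\,T_{\lambda+i0}(H_0)J$ is the product of the compact operator $T_{\lambda+i0}(H_0)$ with the bounded operator $r_\lambda J,$ hence compact, and $-r_\lambda^{-1} \neq 0$ is the corresponding eigenvalue; by the Riesz theory of compact operators the kernel of $1 + r_\lambda\,T_{\lambda+i0}(H_0)J$ is finite-dimensional. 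Equivalently, $\Upsilon^1_{\lambda+i0}(r_\lambda) \subseteq \Upsilon_{\lambda+i0}(r_\lambda) = \im P_{\lambda+i0}(r_\lambda),$ which is finite-dimensional because $P_{\lambda+i0}(r_\lambda)$ is the Riesz idempotent of the compact operator $A_{\lambda+i0}(0)$ at its nonzero eigenvalue $-r_\lambda^{-1}.$ Either way, this contradicts the displayed inequality, so $\lambda \notin \Lambda(\clA,F),$ which is the assertion.

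I do not expect a substantial obstacle here: once Corollary~\ref{I:C: mult of lambda <= dim Ups(1)} is available, the argument is a short deduction whose only essential extra input is the compactness of $T_{\lambda+i0}(H_0)$ together with Riesz theory. The two points that need a little care are (a) producing, via coupling constant regularity, a line $L \subseteq \clA$ through the offending operator on which a base point regular at $\lambda$ exists, so that the resonance-point framework and the space $\Upsilon^1_{\lambda+i0}(r_\lambda)$ are meaningful; and (b) recording that an operator with $\lambda$ in its point spectrum is necessarily resonant at $\lambda,$ so that Corollary~\ref{I:C: mult of lambda <= dim Ups(1)} applies with $\Upsilon^1_{\lambda+i0}(r_\lambda)$ bounding the eigenspace. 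Both are immediate from facts already recorded above.
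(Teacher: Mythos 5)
Your proposal is correct and follows essentially the same route as the paper: the paper applies Theorem~\ref{T: res eq-n and eigenvectors} directly (injectivity of $F$ sends the infinite-dimensional eigenspace into the eigenspace of the compact operator $T_{\lambda+i0}(H_r)J$ at the nonzero eigenvalue, contradicting compactness), while you invoke its Corollary~\ref{C: mult of lambda <= dim Ups(1)} and then finite-dimensionality of $\Upsilon^1_{\lambda+i0}(r_\lambda)$ via Riesz theory, which is the same argument with the base point relabelled. No gaps.
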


Now we return to the discussion of spectral flow inside essential spectrum. Since inside essential spectrum a non-trivial spectral flow can be generated in absence of
any eigenvalues, the notion of multiplicity of eigenvalue needs to be properly generalized. To this end, there is the following
\begin{thm} \label{I: T: if lambda notin ess sp, ...} (Theorem~\ref{T: if lambda notin ess sp, ...}) \ Let~$\lambda$ be a real number which does not belong to the essential spectrum and let~$r_\lambda$ be
a resonance point of the triple $(\lambda; H_0,V)$
(that is,~$\lambda$ is an eigenvalue of $H_{r_\lambda}$). Let $s$ be any non-resonant point of the triple $(\lambda; H_0,V).$
The rigging operator~$F$ is a linear isomorphism of the vector space $\clV_\lambda$ of eigenvectors of $H_{r_\lambda}$ corresponding to the eigenvalue~$\lambda$
and the vector space $\Upsilon_{\lambda+i0}^1(r_\lambda)$ of eigenvectors of the operator $T_{\lambda+i0}(H_s)J$ corresponding to the eigenvalue $(s-r_\lambda)^{-1}.$
\end{thm}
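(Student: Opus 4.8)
The plan is to prove the three things that together give the isomorphism: that $F$ sends $\clV_\lambda$ into $\Upsilon^1_{\lambda+i0}(r_\lambda)$, that this map is injective, and — the real content — that it is onto, the surjectivity being a Birman--Schwinger type inversion which is available precisely because $\lambda\notin\sigma_{ess}$.

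First I would record the elementary inputs. Since $s$ is non-resonant at $\lambda$ we have $\lambda\in\Lambda(H_s,F)$, hence $\lambda$ is not an eigenvalue of $H_s$ (eigenvalues are always resonant, cf.\ \cite[Proposition 4.1.10]{Az3v6}); together with $\lambda\notin\sigma_{ess}=\sigma_{ess}(H_s)$ and the fact that an isolated point of the spectrum of a self-adjoint operator is an eigenvalue, this forces $\lambda$ into the resolvent set of $H_s$, so $R_\lambda(H_s)=(H_s-\lambda)^{-1}$ is a bounded operator and $T_{\lambda+i0}(H_s)$ is the bounded extension of $FR_\lambda(H_s)F^*$. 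Next, apply Theorem~\ref{I:T: res eq-n and eigenvectors} with $H_s$ in the role of the regular operator and with the regular point taken to be $s$ itself: if $\chi\in\euD$ is an eigenvector of $H_{r_\lambda}$ with eigenvalue $\lambda$, then $u:=F\chi$ satisfies $\brs{1+(r_\lambda-s)T_{\lambda+i0}(H_s)J}u=0$, i.e.\ $u\in\Upsilon^1_{\lambda+i0}(r_\lambda)$ (recall $\Upsilon^1_{\lambda+i0}(r_\lambda)$ does not depend on the choice of the non-resonant point $s$). Thus $F$ restricts to a linear map $\clV_\lambda\to\Upsilon^1_{\lambda+i0}(r_\lambda)$, and it is injective because $\ker F=\set{0}$; Corollary~\ref{I:C: mult of lambda <= dim Ups(1)} is consistent with this.

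It remains to show surjectivity. Let $u\in\Upsilon^1_{\lambda+i0}(r_\lambda)$ with $u\neq 0$, so that $u=-(r_\lambda-s)T_{\lambda+i0}(H_s)Ju=-(r_\lambda-s)FR_\lambda(H_s)F^*Ju$, where $Ju\in\dom(F^*)$ because resonance vectors possess this regularity (a structural fact about $\Upsilon$-vectors from Section~\ref{S: Resonance points}). Set
$$
  \chi:=-(r_\lambda-s)\,R_\lambda(H_s)\,F^*Ju.
$$
Since $\im R_\lambda(H_s)=\dom(H_s)=\euD$ we get $\chi\in\euD$, and $F\chi=-(r_\lambda-s)FR_\lambda(H_s)F^*Ju=u\neq 0$. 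Finally, using $H_{r_\lambda}=H_s+(r_\lambda-s)F^*JF$ on $\euD$ together with $(H_s-\lambda)\chi=-(r_\lambda-s)F^*Ju$,
$$
  (H_{r_\lambda}-\lambda)\chi=(H_s-\lambda)\chi+(r_\lambda-s)F^*JF\chi=-(r_\lambda-s)F^*Ju+(r_\lambda-s)F^*Ju=0,
$$
so $\chi\in\clV_\lambda$ and $F\chi=u$. Hence $F|_{\clV_\lambda}\colon\clV_\lambda\to\Upsilon^1_{\lambda+i0}(r_\lambda)$ is a bijection, i.e.\ a linear isomorphism.

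The step I expect to need the most care is the justification of $T_{\lambda+i0}(H_s)Ju=FR_\lambda(H_s)F^*Ju$ for $u\in\Upsilon^1_{\lambda+i0}(r_\lambda)$, that is, the verification that every eigenvector of the bounded Birman--Schwinger operator $T_{\lambda+i0}(H_s)J$ is regular enough ($Ju\in\dom(F^*)$) to be pulled back into $\euD$ by $R_\lambda(H_s)F^*$; once this is in hand the rest is the short computation above. The remaining ingredients — the forward inclusion, injectivity, and the reduction $\lambda\in\rho(H_s)$ from $\lambda\notin\sigma_{ess}$ plus $s$ non-resonant — are routine.
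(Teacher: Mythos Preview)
Your proof is correct and follows essentially the same route as the paper. The paper splits the argument into two pieces: Lemma~\ref{L: Upsilon(1) subset F(hilb)} first establishes that every $u\in\Upsilon^1_{\lambda+i0}(r_\lambda)$ has the form $u=F\chi$ for some $\chi\in\hilb$, by exactly your construction $\chi:=(r-r_\lambda)R_\lambda(H_r)F^*Ju$; then Theorem~\ref{T: if lambda notin ess sp, ...} strips off $F$ (using $\ker F=\{0\}$) from the resonance equation and applies $H_r-\lambda$ to obtain $(H_{r_\lambda}-\lambda)\chi=0$. You have simply inlined the lemma into the main argument. One small correction: the regularity fact you flag --- that $F^*Ju$ makes sense so that $\chi$ lands in $\euD$ --- is not a structural property from Section~\ref{S: Resonance points}; it is precisely the content of Lemma~\ref{L: Upsilon(1) subset F(hilb)}, where the paper justifies it (briefly) via the standing assumption $JF\euD\subset\dom(F^*)$ and the rigged Hilbert space framework.
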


\noindent
Theorems \ref{I:T: res eq-n and eigenvectors} and \ref{I: T: if lambda notin ess sp, ...} give a rationale to call the integer number $\dim \Upsilon_{\lambda+i0}^1(r_\lambda)$
\emph{multiplicity of the singular spectrum} of the self-adjoint operator
$H_{r_\lambda}$ at~$\lambda.$ That this is a reasonable definition is further confirmed by the U-turn Theorem~\ref{I:T: U-turn for res index}.

\begin{thm} \label{I:T: mult of s.c. spectrum: drastic version} (Theorem \ref{T: mult of s.c. spectrum: drastic version})
If~$H_{r_\lambda}$ is resonant at an essentially regular point~$\lambda,$
then the vector space
$$
  \Upsilon^1_{\lambda+i0}(r_\lambda) = \Upsilon_\lambda^1(H_{r_\lambda},V)
$$ does not depend on a regularizing operator~$V.$
\end{thm}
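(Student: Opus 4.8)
The plan is to reduce the claim to a short algebraic identity relating the Lippmann--Schwinger operators built from two different regularizers, and then to compare their kernels. So fix the essentially regular point $\lambda$ and the resonant operator $H' := H_{r_\lambda}$, and let $V_1 = F^*J_1F$ and $V_2 = F^*J_2F$ be two regularizing operators from $\clA_0(F)$, each giving an affine line $H_i(t) := H' + tV_i$, $t\in\mbR$, satisfying the standing hypotheses, with $t=0$ a resonance point because $H'$ is resonant at $\lambda$. By Theorem~\ref{T: 4.1.11 Az3} the set of $t$ for which $\lambda\notin\Lambda(H_i(t),F)$ is discrete for each $i$ and contains $0$, so I can fix a single value $t\neq 0$ that is regular at $\lambda$ for both lines. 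Writing $S_i := T_{\lambda+i0}(H_i(t))$ --- a compact operator on $\clK$ which exists precisely because $t$ is regular --- and using that the resonance spaces do not depend on the choice of the auxiliary parameter (Propositions~\ref{P: res eq-n is correct} and~\ref{P: Pz is well-defined}), the copy of $\Upsilon^1_{\lambda+i0}(r_\lambda)$ produced by $V_i$ is exactly $\ker\brs{1-tS_iJ_i}$. Both copies thus sit inside the same space $\clK$, and the task becomes to show that these two kernels coincide.

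The next step is to carry the second resolvent identity to the real axis. Since $H_2(t)-H_1(t) = t(V_2-V_1) = tF^*(J_2-J_1)F$, sandwiching the resolvent identity between $F$ and $F^*$ gives, for $y>0$,
$$
  T_{\lambda+iy}(H_2(t)) = T_{\lambda+iy}(H_1(t)) - t\,T_{\lambda+iy}(H_1(t))(J_2-J_1)\,T_{\lambda+iy}(H_2(t)).
$$
Because $\lambda$ is essentially regular and $t$ is regular at $\lambda$ for both lines, each factor converges in operator norm as $y\to 0^+$, so letting $y\to 0^+$ yields
$$
  S_2 = S_1 + t\,S_1(J_1-J_2)S_2,
$$
and, from the symmetric form of the resolvent identity, also $S_1 = S_2 + t\,S_2(J_2-J_1)S_1$. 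This is the only place where the limiting absorption principle enters.

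The heart of the matter is then the operator factorization
$$
  \brs{1-tS_1(J_1-J_2)}\brs{1-tS_2J_2} = 1 - tS_1J_1,
$$
which one checks by expanding the left side and substituting $t^2S_1(J_1-J_2)S_2J_2 = t(S_2-S_1)J_2$ from the previous display; this forces $\ker(1-tS_2J_2)\subseteq\ker(1-tS_1J_1)$, and the mirror identity $\brs{1-tS_2(J_2-J_1)}\brs{1-tS_1J_1} = 1-tS_2J_2$ gives the reverse inclusion, so the two kernels agree. The step I expect to be the genuine obstacle is recognising that such a clean factorization exists at all --- everything analytic is soft, amounting only to norm-continuity of the sandwiched resolvents up to the real axis, which is built into the notion of essential regularity. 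A secondary subtlety is that the displayed algebra requires a common auxiliary space $\clK$, i.e.\ the rigging $F$ must be held fixed; to allow $F$ itself to vary one should first pass to a rigging dominating both and then apply the above. Finally, as a consistency check: when $\lambda\notin\sigma_{ess}$ the common value of $\Upsilon^1_{\lambda+i0}(r_\lambda)$ is, by Theorem~\ref{T: if lambda notin ess sp, ...}, the $F$-image of the eigenspace of $H_{r_\lambda}$ at $\lambda$, which is visibly regularizer-independent; the present argument extends this to the portion of the essential spectrum where $H_{r_\lambda}$ may carry no eigenvector at all.
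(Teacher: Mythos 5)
Your proof is correct and is essentially the paper's own argument: the paper likewise sandwiches the second resolvent identity between the two regular operators $H_{r_\lambda}+V$ and $H_{r_\lambda}+V'$ and invokes the limiting absorption principle, its vector-level computation being exactly your factorization $1-tS_1J_1=\brs{1-tS_1(J_1-J_2)}\brs{1-tS_2J_2}$ applied to a kernel vector at $\lambda+iy$ before passing to the limit. The only (cosmetic) difference is the order of operations — you take the operator limit first and then do the algebra at the boundary, and you state explicitly the choice of a common regular $t$ — so no further comment is needed.
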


In section~\ref{S: res index} we introduce a class~$\clR$ of finite-rank operators which do not have
non-zero real eigenvalues. A so-called $\euR$-index for operators~$A$
of class~$\clR$ is defined as the difference $N_+-N_-:$
$$
  \Rindex(A) = N_+-N_-,
$$
where $N_+$ and $N_-$ are the numbers of eigenvalues of~$A$ in the upper $\mbC_+$ and lower $\mbC_-$ half-planes respectively.
Some elementary properties of $\euR$-index and a new proof of \Krein's theorem \cite{Kr53MS}
$$
  \euR(R_z(H)V) = \sign(V),
$$
where~$H$ is a self-adjoint operator and~$V$ is a finite rank self-adjoint operator, are given.

Further, in this section \emph{resonance index} of a triple $(\lambda, H_{r_\lambda},V)$ is introduced, which can be defined by formula
$$
  \ind_{res}(\lambda, H_{r_\lambda},V) = \euR(A_{\lambda+iy}(s)P_{\lambda+iy}(r_\lambda)) \ \ \text{for all small enough } y.
$$
Given a finite set $\Gamma = \set{r_z^1, \ldots, r_z^M}$ of resonance points corresponding to $z \in \Pi,$
we denote by $P_z(\Gamma)$ and $Q_z(\Gamma)$ the idempotents
$$
  P_z(\Gamma) = \sum_{r_z \in \Gamma} P_z(r_z) \quad \text{and} \quad Q_z(\Gamma) = \sum_{r_z \in \Gamma} Q_z(r_z)
$$
respectively. By $\bar \Gamma$ we denote the set $\set{\bar r_z^1, \ldots, \bar r_z^M}.$

The following theorem is one of the main technical results of this paper.
\begin{thm} \label{I:T: res matrix is positive for set of up-points} (Theorem \ref{T: res matrix is positive for set of up-points})
If $\Gamma = \set{r_z^1, \ldots, r_z^M}$ is
a finite set of resonance up-points corresponding to a non-real number~$z,$ then the operator
$$
  \Im z \, Q_{\bar z}(\bar \Gamma) J P_z(\Gamma)
$$
is non-negative and its rank is equal to the rank of $P_z(\Gamma).$
\end{thm}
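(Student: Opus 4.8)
\emph{The plan} is to reduce the statement to a Lyapunov matrix inequality on the finite-dimensional space $M:=\im P_z(\Gamma)$ and to solve that inequality. First, by the identity $(P_z(r_z))^*=Q_{\bar z}(\bar r_z)$ and the fact that the idempotents at distinct resonance points annihilate each other, one has $Q_{\bar z}(\bar\Gamma)=(P_z(\Gamma))^*$, so the operator under consideration is
\[
  \Im z\, Q_{\bar z}(\bar\Gamma)\, J\, P_z(\Gamma)\;=\;\Im z\,(P_z(\Gamma))^*\, J\, P_z(\Gamma),
\]
which is self-adjoint since $J=J^*$. Its non-negativity together with the equality of ranks is then equivalent to the single claim that $\Im z\,\scal{J\eta}{\eta}>0$ for every $\eta\in M\setminus\set 0$. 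Taking $s=0$ in the definition of $P_z$ — legitimate, because $z$ is non-real, so $0$ is regular, and $P_z(r_z)$ is independent of $s$ — the idempotent $P_z(\Gamma)$ is the Riesz projection of the compact operator $A_z(0)=T_z(H_0)J$ at the distinct non-zero eigenvalues $\set{-r_z^{-1}\colon r_z\in\Gamma}$. Hence $M$ is a finite-dimensional $A_z(0)$-invariant subspace, and since ``$r_z$ is an up-point'' means $\Im r_z>0$, i.e. $\Im(-r_z^{-1})=\Im r_z/\abs{r_z}^2>0$, the spectrum of the restriction $A_z(0)|_M$ lies in $\mbC_+$; in particular $0\notin\sigma(A_z(0)|_M)$.

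Next I would establish the dissipativity identity
\[
  \Im\scal{J A_z(0)\eta}{\eta}\;=\;\scal{(\Im T_z(H_0))\,J\eta}{J\eta},\qquad \eta\in M,
\]
where $\Im T_z(H_0):=\tfrac1{2i}(T_z(H_0)-T_{\bar z}(H_0))$. Indeed, from $(A_z(0))^*=JT_{\bar z}(H_0)$ (using $T_z(H_0)^*=T_{\bar z}(H_0)$ and $J=J^*$) the number $\scal{J A_z(0)\eta}{\eta}-\overline{\scal{J A_z(0)\eta}{\eta}}$ equals $\scal{J(T_z(H_0)-T_{\bar z}(H_0))J\eta}{\eta}=2i\scal{(\Im T_z(H_0))J\eta}{J\eta}$; moreover the first resolvent identity gives $\Im T_z(H_0)=\Im z\,(R_{\bar z}(H_0)F^*)^*(R_{\bar z}(H_0)F^*)$, which for non-real $z$ is an \emph{injective} operator having the sign of $\Im z$ (injectivity because $F$ has trivial co-kernel and $R_{\bar z}(H_0)$ is injective).

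Now let $G$ be the self-adjoint operator on $M$ with $\scal{G\eta}{\eta'}=\scal{J\eta}{\eta'}$ for $\eta,\eta'\in M$ — so $\scal{G\eta}{\eta}=\scal{J\eta}{\eta}$ is the real quantity whose sign we must pin down — put $\Theta:=A_z(0)|_M$, and set $Q:=\tfrac1i(G\Theta-\Theta^*G)$, a self-adjoint operator on $M$. For $\eta\in M$ one has $A_z(0)\eta\in M$, hence $\scal{Q\eta}{\eta}=2\,\Im\scal{G\Theta\eta}{\eta}=2\,\Im\scal{J A_z(0)\eta}{\eta}=2\scal{(\Im T_z(H_0))J\eta}{J\eta}$ by the identity above; therefore $Q$ has the sign of $\Im z$ and $\ker Q=\set{\eta\in M\colon J\eta=0}$. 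With $\alpha:=i\Theta$, whose spectrum lies in the open left half-plane since $\sigma(\Theta)\subset\mbC_+$, the relation defining $Q$ is the Lyapunov equation $\alpha^*G+G\alpha=-Q$, which (the integral converging because $\norm{e^{t\alpha}}$ decays exponentially) is solved by $G=\int_0^\infty e^{t\alpha^*}\,Q\,e^{t\alpha}\,dt$; thus $\Im z\,G\ge 0$. Finally, if $G\eta=0$ then $\scal{Q\,e^{t\alpha}\eta}{e^{t\alpha}\eta}=0$ for all $t\ge 0$, so the $\alpha$-cyclic (hence $\Theta$-invariant) subspace $N\subseteq M$ generated by $\eta$ lies inside $\ker Q=\ker J\cap M$; but then $A_z(0)\zeta=T_z(H_0)J\zeta=0$ for all $\zeta\in N$, so $A_z(0)|_N=0$, which would force $0\in\sigma(A_z(0)|_M)$ unless $N=\set 0$ — contradicting $\sigma(A_z(0)|_M)\subset\mbC_+$. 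Hence $\eta=0$, so $\Im z\,G>0$, i.e. $\Im z\,\scal{J\eta}{\eta}>0$ on $M\setminus\set 0$, and the theorem follows.

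\emph{The main obstacle} is the rigidity part of the Lyapunov step: showing that the ``unobservable'' subspace $\ker Q$ carries no non-zero $A_z(0)$-invariant subspace. This is exactly the place where the sandwiched-resolvent structure $A_z(0)=T_z(H_0)J$ is used (it turns an invariant subspace inside $\ker J$ into an eigenspace for the eigenvalue $0$), along with the injectivity of $R_{\bar z}(H_0)$ and of $F^*$ and the up-point hypothesis (which guarantees $0\notin\sigma(A_z(0)|_M)$). The rest is resolvent algebra and the previously established properties of the idempotents $P_z(r_z)$ and $Q_z(r_z)$.
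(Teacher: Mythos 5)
Your argument is correct in substance, but it is a genuinely different proof from the one in the paper. The paper first passes to the underlined operators $\ulQ_{\bar z}(\bar\Gamma)V\ulP_z(\Gamma)$ via Lemma~\ref{L: sign(M)=sign(FMF)}, chooses Jordan-type bases $\chi_\mu^{(j)}$ satisfying $(H_{r_\nu}-z)\chi_\nu^{(j)}=-V\chi_\nu^{(j-1)}$ (Corollary~\ref{C: nasty proposition}), derives from this the recurrence relating the matrices $\scal{\chi_\mu^{(k)}}{V\chi_\nu^{(j)}}$ and $\scal{\chi_\mu^{(k)}}{\chi_\nu^{(j)}}$, and then proves positive definiteness by the purely matrix-theoretic Lemma~\ref{L: a bit technical lemma} (row/column reduction by induction on the order, followed by the damped-exponential integral trick). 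You instead compress everything onto the finite-dimensional spectral subspace $M=\im P_z(\Gamma)$, observe that the up-point hypothesis places $\sigma(A_z(0)|_M)$ in $\mbC_+$, and solve the Lyapunov equation $\alpha^*G+G\alpha=-Q$ with $Q\ge0$ coming from the dissipativity identity, finishing with an observability-type rigidity argument; note that the paper's integral over $p$ in Lemma~\ref{L: a bit technical lemma} is in fact a coordinate shadow of exactly your Lyapunov integral, so the two proofs are cousins, but yours is more conceptual and avoids the combinatorics of Jordan bases, while the paper's version yields the explicit matrix inequality in a basis, which is what it actually manipulates.

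Two caveats you should repair. First, a sign-bookkeeping point: with the paper's convention (scalar product linear in the \emph{second} argument), both of your intermediate identities acquire a minus sign, namely $\Im\scal{JA_z(0)\eta}{\eta}=-\scal{J\eta}{\Im T_z(H_0)J\eta}$ and $\scal{Q\eta}{\eta}=-2\,\Im\scal{G\Theta\eta}{\eta}$; the two slips cancel, so your final formula $\scal{Q\eta}{\eta}=2\scal{J\eta}{\Im T_z(H_0)J\eta}$ and the conclusion are unaffected, but as written the steps only match the opposite convention. Second, and more seriously, the rigidity step uses the implication $\scal{J\zeta}{\Im T_z(H_0)J\zeta}=0\Rightarrow J\zeta=0$, justified by factoring $\Im T_z(H_0)=\Im z\,(R_{\bar z}(H_0)F^*)^*(R_{\bar z}(H_0)F^*)$. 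This is immediate only for bounded $F$; in the paper's setting $F$ may be unbounded, $\Im T_z(H_0)$ is merely the bounded closure of $F\,\Im R_z(H_0)\,F^*$, and injectivity of that closure is not automatic. The implication does hold for the vectors you need, because every resonance vector at non-real $z$ has the form $F\chi$ with $\chi\in\euD$ (cf.\ the proof of Lemma~\ref{L: R=indices of AP and AP} and Corollary~\ref{C: nasty proposition}), so $J\zeta\in JF\euD\subset\dom F^*$ by~(\ref{F: JF euD subset dom(F*)}), and then $\scal{J\zeta}{\Im T_zJ\zeta}=\Im z\,\norm{R_{\bar z}(H_0)F^*J\zeta}^2=0$ forces $F^*J\zeta=0$ and hence $J\zeta=0$; alternatively, run your whole argument with $\ulA_z$, $\ulP_z$ and $V$ on $\hilb$ after invoking Lemma~\ref{L: sign(M)=sign(FMF)}, as the paper does, and the domain issue disappears. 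With either repair the proof is complete.
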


\begin{thm} \label{I:T: sign of res matrix for set of res points} (Theorem \ref{T: sign of res matrix for set of res points})
If $\Gamma = \set{r_z^1, \ldots, r_z^M}$ is
a finite set of resonance points corresponding to a non-real number~$z,$ then the signature of the finite-rank self-adjoint operator
$
  Q_{\bar z}(\bar \Gamma) J P_z(\Gamma)
$
is equal to the $R$-index of the operator
$
  \Im z \, A_z(s)P_z(\Gamma).
$
\end{thm}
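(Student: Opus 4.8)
The plan is to translate each of the two quantities into a statement about a single finite-dimensional Hermitian form and then match them by a dimension count. First I would reduce to the case $\Im z>0$ (the case $\Im z<0$ is entirely analogous, with all the relevant signs reversed in concert), and set $P:=P_z(\Gamma)$, $A:=A_z(s)=T_z(H_s)J$ and $\mathcal E:=\im P$, a finite-dimensional subspace of $\clK$. Since $P^*=Q_{\bar z}(\bar\Gamma)$ by the identity $(P_z(r_z))^*=Q_{\bar z}(\bar r_z)$, the self-adjoint operator in the statement equals $P^*JP$, which represents the Hermitian form $u\mapsto\langle JPu,Pu\rangle$ on $\clK$; this form has $\ker P$ in its radical, so its signature equals the signature of the Hermitian form $\beta(x,y):=\langle Jx,y\rangle$ on $\mathcal E$. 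On the other hand $A$ commutes with $P$, hence maps $\mathcal E$ into $\mathcal E$, and $A|_{\mathcal E}$ is invertible with spectrum $\{(s-r_z^j)^{-1}:r_z^j\in\Gamma\}$, all of whose elements are non-real because a resonance point of a non-real $z$ is itself non-real (by self-adjointness of the $H_s$). Hence $\Im z\,AP\in\clR$, and since $\Im z>0$ makes $\sign\Im(\Im z\,(s-r_z^j)^{-1})=\sign\Im r_z^j$, one gets $\euR(\Im z\,A_z(s)P_z(\Gamma))=N_+-N_-$, where $N_+$ (resp. $N_-$) is the dimension of $\im P_z(\Gamma_+)$ (resp. $\im P_z(\Gamma_-)$), $\Gamma_+$ (resp. $\Gamma_-$) being the set of up-points (resp. down-points) in $\Gamma$; here the identities $P_z(r_z^i)P_z(r_z^j)=0$, $i\ne j$, give $\mathcal E=\im P_z(\Gamma_+)\oplus\im P_z(\Gamma_-)$ and $N_++N_-=\dim\mathcal E$.

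It then remains to show $\sign(\beta)=N_+-N_-$. By Theorem~\ref{I:T: res matrix is positive for set of up-points} applied to the up-point set $\Gamma_+$, the operator $\Im z\,Q_{\bar z}(\bar\Gamma_+)JP_z(\Gamma_+)=\Im z\,P_z(\Gamma_+)^*JP_z(\Gamma_+)$ is non-negative of rank $N_+$; since $\Im z>0$, the reduction above shows $\beta$ is positive definite on the $N_+$-dimensional subspace $\im P_z(\Gamma_+)$. The down-point analogue of Theorem~\ref{I:T: res matrix is positive for set of up-points} applied to $\Gamma_-$ — the statement that $\Im z\,Q_{\bar z}(\bar\Gamma_-)JP_z(\Gamma_-)$ is non-positive of full rank — gives in the same way that $\beta$ is negative definite on the $N_-$-dimensional subspace $\im P_z(\Gamma_-)$. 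Since $N_++N_-=\dim\mathcal E$, the elementary linear-algebra fact that a Hermitian form on an $n$-dimensional space which is positive definite on some $p$-dimensional subspace and negative definite on some $q$-dimensional subspace with $p+q=n$ must have signature exactly $p-q$ (and be nondegenerate) yields $\sign(\beta)=N_+-N_-$. Combined with the previous paragraph this finishes the proof.

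The main obstacle I anticipate is the down-point analogue of Theorem~\ref{I:T: res matrix is positive for set of up-points}. It is \emph{not} obtained from the up-point version just by conjugating $z\mapsto\bar z$: that substitution produces the structurally conjugate operator $Q_z(\Gamma)JP_{\bar z}(\bar\Gamma)$, which lives on a different subspace of $\clK$, so one has to re-run the argument with the opposite sign. Its analytic heart — shared with the proof of Theorem~\ref{I:T: res matrix is positive for set of up-points} — is the positivity estimate $\Im\langle JA_z(s)x,x\rangle=\Im z\,\norm{R_z(H_s)^*F^*Jx}^2>0$ for $0\ne x\in\mathcal E$, which follows from $\Im T_z(H_s)=\Im z\,FR_z(H_s)R_z(H_s)^*F^*$ together with injectivity of $R_z(H_s)^*$ and of $F^*$ and invertibility of $A|_{\mathcal E}$; an inertia (Lyapunov-type) argument then converts it into the definiteness of $\beta$ on the generalized eigenspaces of $A|_{\mathcal E}$, the delicate point being the control of the nilpotent parts $\bfA_z(r_z)$ at resonance points of order $d>1$ — precisely what is encoded in the full-rank clause of Theorem~\ref{I:T: res matrix is positive for set of up-points}. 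The remaining steps — the reduction of $P^*JP$ to $\beta$, the eigenvalue bookkeeping for the $\euR$-index, and the case $\Im z<0$ — are routine.
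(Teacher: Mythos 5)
Your proposal is correct and takes essentially the same route as the paper's own proof: reduce the signature of $Q_{\bar z}(\bar\Gamma)JP_z(\Gamma)$ to the form $u\mapsto\scal{u}{Ju}$ on $\im P_z(\Gamma)$, split $\Gamma$ into up- and down-points so that the $R$-index equals $\dim\im P_z(\Gamma^\uparrow)-\dim\im P_z(\Gamma^\downarrow)$, obtain positive (resp. negative) definiteness of the form on these two ranges from Theorem~\ref{T: res matrix is positive for set of up-points} and its down-point counterpart, and conclude by an inertia count (the paper via Lemma~\ref{L: finite-rank lemma} plus a rank bound, you via the equivalent $p+q=n$ inertia fact). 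The down-point analogue you single out as the main obstacle is exactly what the paper invokes as well -- it is stated (with the sign reversed, same proof) in the remark immediately following Theorem~\ref{T: res matrix is positive for set of up-points} -- so no genuinely new argument is required there.
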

Theorems \ref{I:T: res matrix is positive for set of up-points} and \ref{I:T: sign of res matrix for set of res points}
are non-trivial even in the finite-dimensional case $\dim \hilb < \infty,$
that is, for matrices. 


In section \ref{S: vectors of type I} we prove the following
\begin{prop} \label{I:P: euE (Vf)=0, k>1} Let~$\lambda$ be an essentially regular point,
let $\set{H_0+rV \colon r \in \mbR}$ be a line regular at~$\lambda,$
let~$r_\lambda$ be a real resonance point of the path $\set{H_0+rV \colon r \in \mbR}$ at~$\lambda$ and
let~$k$ be a positive integer. If $u_{\lambda\pm i0}(r_\lambda) \in \Upsilon_{\lambda\pm i0}(r_\lambda)$ is a resonance vector of order
$k\geq 1$ at $\lambda\pm i0,$ then 
for all non-resonant values of~$s$ the following equality holds:
\begin{equation} \label{I:F: (J psi,Im TJ psi)=c(-2)/s2+...}
  \scal{J u_{\lambda\pm i0}(r_\lambda)}{\Im T_{\lambda \pm i0}(H_s)J u_{\lambda\pm i0}(r_\lambda)}
    = \frac {c_{\pm 2}}{(s-r_\lambda)^2} + \frac {c_{\pm 3}}{(s-r_\lambda)^3} + \ldots + \frac {c_{\pm k}}{(s-r_\lambda)^k},
\end{equation}
where, in case $k\geq 2,$ for $j = 2,\ldots,k$
\begin{equation*} \label{I:F: c(pm j)}
 \begin{split}
   c_{\pm j} & = \Im \scal{u_{\lambda\pm i0}(r_\lambda)}{J\bfA_{\lambda \pm i0}^{j-1}(r_\lambda) u_{\lambda\pm i0}(r_\lambda)}
         \\ & = - \Im \scal{u_{\lambda\pm i0}(r_\lambda)}{J\bfA_{\lambda \mp i0}^{j-1}(r_\lambda) u_{\lambda\pm i0}(r_\lambda)}.
 \end{split}
\end{equation*}
In particular, if $u_{\lambda\pm i0}(r_\lambda) \in \Upsilon_{\lambda\pm i0}(r_\lambda)$ is a resonance vector of order 1, then
\begin{equation*} \label{I:F: (J psi,Im TJ psi)=0}
  \scal{J u_{\lambda\pm i0}(r_\lambda)}{\Im T_{\lambda \pm i0}(H_s)J u_{\lambda\pm i0}(r_\lambda)} = 0.
\end{equation*}
\end{prop}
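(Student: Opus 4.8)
The plan is to reduce the left-hand side to the imaginary part of a single scalar-valued meromorphic function of $s$, identify that function with the principal part of the Laurent expansion of $A_z(s)$ at $r_\lambda$ tested against $u$, and then take imaginary parts term by term. Write $z=\lambda+i0$ and $u=u_{\lambda+i0}(r_\lambda)\in\Upsilon_z(r_\lambda)$; the case $z=\lambda-i0$ is obtained by replacing $z$ with $\bar z$ throughout. Since $\Im T_z(H_s)=\tfrac1{2i}\brs{T_z(H_s)-T_z(H_s)^*}$ is self-adjoint, for any vector $v$ one has $\scal{v}{\Im T_z(H_s)v}=\Im\scal{v}{T_z(H_s)v}$; applying this with $v=Ju$ and using $A_z(s)=T_z(H_s)J$ and $J=J^*$ gives
\begin{equation*}
  \scal{Ju}{\Im T_z(H_s)Ju}=\Im\scal{u}{JA_z(s)u}=:\Im f(s).
\end{equation*}
Thus everything reduces to computing $\Im f(s)$.

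Next I would show that $f$ is a rational function of $s$ whose only pole is $r_\lambda$, and exhibit its partial-fraction form. The Riesz idempotent $P_z(r_\lambda)$ projects onto $\Upsilon_z(r_\lambda)$, so $P_z(r_\lambda)u=u$, and hence $P_z(r)u=P_z(r)P_z(r_\lambda)u=0$ for every resonance point $r\neq r_\lambda$; since each Laurent coefficient of $A_z(s)$ at such an $r$ equals that coefficient composed with $P_z(r)$, it annihilates $u$, so $f$ has no pole at $r$, and $r_\lambda$ is its only possible pole. By the structure of the Laurent expansion recalled above (the holomorphic part $\tilde A_{z,r_\lambda}(s)$ annihilates $\im P_z(r_\lambda)$), one has $A_z(s)P_z(r_\lambda)=(s-r_\lambda)^{-1}P_z(r_\lambda)+\sum_{j=2}^{d}(s-r_\lambda)^{-j}\bfA_z^{j-1}(r_\lambda)$, so, using $P_z(r_\lambda)u=u$,
\begin{equation*}
  f(s)=\frac{\scal{u}{JP_z(r_\lambda)u}}{s-r_\lambda}+\sum_{j=2}^{d}\frac{\scal{u}{J\bfA_z^{j-1}(r_\lambda)u}}{(s-r_\lambda)^{j}}.
\end{equation*}
Because $u$ is a resonance vector of order $k$, $\bfA_z^{k}(r_\lambda)u=0$, so the sum truncates at $j=k$.

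Taking imaginary parts and using that the powers $(s-r_\lambda)^{-j}$ are real, $\Im f(s)=\sum_{j=1}^{k}(s-r_\lambda)^{-j}\,c_{+j}$ with $c_{+1}=\Im\scal{u}{JP_z(r_\lambda)u}$ and $c_{+j}=\Im\scal{u}{J\bfA_z^{j-1}(r_\lambda)u}$ for $j\ge2$. The $j=1$ term vanishes because $\scal{u}{JP_z(r_\lambda)u}=\scal{u}{Ju}\in\mbR$ ($J$ self-adjoint); when $k=1$ the remaining sum is empty, which is the ``in particular'' statement. For the alternative expression of $c_{+j}$ I would use the adjoint relations $J\bfA_{\bar z}(r_\lambda)=\bfB_{\bar z}(r_\lambda)J$ and $(\bfB_{\bar z}(r_\lambda))^*=\bfA_z(r_\lambda)$ (here $\bar r_\lambda=r_\lambda$, since $r_\lambda$ is real), which yield $\scal{u}{J\bfA_{\bar z}^{j-1}(r_\lambda)u}=\scal{\bfA_z^{j-1}(r_\lambda)u}{Ju}=\overline{\scal{u}{J\bfA_z^{j-1}(r_\lambda)u}}$, hence $\Im\scal{u}{J\bfA_{\bar z}^{j-1}(r_\lambda)u}=-c_{+j}$.

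The step I expect to require the most care is establishing that $f$ really is the displayed rational function for \emph{all} non-resonant $s$, not merely for $s$ near $r_\lambda$: since $\mbR$ minus the resonance set is disconnected, a local Laurent computation does not suffice, and one must invoke the vanishing of $\tilde A_{z,r_\lambda}(s)$ on $\im P_z(r_\lambda)$ together with the vanishing, when applied to $u$, of the Laurent coefficients of $A_z(s)$ at the other resonance points (equivalently, a Liouville argument based on $A_z(s)P_z(r_\lambda)\to 0$ as $s\to\infty$). Once this is in place, the remainder is routine manipulation with the idempotents and nilpotents of Section~\ref{S: Resonance points}.
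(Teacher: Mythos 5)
Your proposal is correct and coincides in essence with the paper's own (second) proof: both reduce the left-hand side to $\Im\scal{u}{JA_{\lambda\pm i0}(s)u}$, insert the exact principal-part expansion $A_z(s)P_z(r_\lambda)=\sum_{j\ge 1}(s-r_\lambda)^{-j}\bfA_z^{j-1}(r_\lambda)$ applied to the order-$k$ resonance vector, observe that the $(s-r_\lambda)^{-1}$ coefficient $\Im\scal{u}{Ju}$ vanishes, and obtain the alternative formula for $c_{\pm j}$ from $(\bfA_z)^*=\bfB_{\bar z}$ and $J\bfA_z=\bfB_z J$. The globality issue you single out is already covered by the paper's Theorem~\ref{T: Laurent for A(s)psi} (equivalently~(\ref{F: Az(s)Pz(rz)=sum...})), which is proved for all non-resonant $s$, so your extra Liouville-type argument is a harmless redundancy.
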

Further, in section \ref{S: vectors of type I} we introduce and study the so-called vectors of type I.
These are vectors which satisfy any of the following equivalent conditions.

\begin{thm} \label{I:T: type I vectors} Let~$r_\lambda$ be a real resonance point of the line $\gamma = \set{H_r \colon r \in \mbR},$ corresponding to a real number $\lambda \in \Lambda(\gamma,F).$
Let $u \in \clK.$ The following assertions are equivalent:
\begin{enumerate}
  \item $u \in \Upsilon_{\lambda+i0}(r_\lambda)$ and for all non-resonant real numbers $s$
  $$
    \sqrt{\Im T_{\lambda+i0}(H_s)}Ju = 0.
  $$
  \item $u \in \Upsilon_{\lambda-i0}(r_\lambda)$ and for all non-resonant real numbers $s$
  $$
    \sqrt{\Im T_{\lambda+i0}(H_s)}Ju = 0.
  $$
  \item $u \in \Upsilon_{\lambda+i0}(r_\lambda)$ and for all non-resonant real numbers $s$
  $$
    A_{\lambda+i0}(s)u = A_{\lambda-i0}(s)u.
  $$
  \item $u \in \Upsilon_{\lambda-i0}(r_\lambda)$ and for all non-resonant real numbers $s$
  $$
    A_{\lambda+i0}(s)u = A_{\lambda-i0}(s)u.
  $$
  \item $u \in \Upsilon_{\lambda+i0}(r_\lambda)$ or $u \in \Upsilon_{\lambda-i0}(r_\lambda)$ and for all $j=0,1,2,\ldots,d-1,$ where $d$ is the order of~$r_\lambda,$
  $$
    \bfA^j _{\lambda+i0}(r_\lambda) u = \bfA^j _{\lambda-i0}(r_\lambda) u.
  $$
  \item $u \in \Upsilon_{\lambda+i0}(r_\lambda)$ and there exists a non-resonant real number $r$ such that for all $j=0,1,2,\ldots$
  $$
    (A_{\lambda+i0}(r)-A_{\lambda-i0}(r)) \bfA^j _{\lambda+i0}(r_\lambda) u = 0.
  $$
  \item $u \in \Upsilon_{\lambda-i0}(r_\lambda)$ and there exists a non-resonant real number $r$ such that for all $j=0,1,2,\ldots$
  $$
    (A_{\lambda+i0}(r)-A_{\lambda-i0}(r)) \bfA^j _{\lambda-i0}(r_\lambda) u = 0.
  $$
  \item $u \in \Upsilon_{\lambda+i0}(r_\lambda)$ and all the coefficients $c_{+j}$ from the equality (\ref{I:F: (J psi,Im TJ psi)=c(-2)/s2+...})
  are equal to zero.
  \item $u \in \Upsilon_{\lambda-i0}(r_\lambda)$ and all the coefficients $c_{-j}$ from the equality (\ref{I:F: (J psi,Im TJ psi)=c(-2)/s2+...})
  are equal to zero.
\end{enumerate}
The set $\Upsilon_{\lambda+ i0}^{\mathrm I}(r_\lambda)$ of vectors which satisfy any of these equivalent conditions is a vector subspace
of the vector space $\Upsilon_{\lambda+ i0}(r_\lambda) \cap \Upsilon_{\lambda - i0}(r_\lambda)$ and the vector space
$\Upsilon_{\lambda+ i0}^{\mathrm I}(r_\lambda)$ is invariant with respect to both $\bfA^j _{\lambda+i0}(r_\lambda)$ and $\bfA^j _{\lambda-i0}(r_\lambda).$
\end{thm}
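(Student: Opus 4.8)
The strategy is to run every one of the nine conditions through two elementary facts and reduce them all to a single ``master'' condition. The first fact is the operator identity
$$
  A_{\lambda+i0}(s)-A_{\lambda-i0}(s)=2i\,\Im T_{\lambda+i0}(H_s)\,J ,
$$
valid for every non-resonant real $s$; it holds because $T_z(H_s)^{*}=T_{\bar z}(H_s)$, so $T_{\lambda-i0}(H_s)=T_{\lambda+i0}(H_s)^{*}$. The second fact is the non-negativity $\Im T_{\lambda+i0}(H_s)\ge 0$, obtained as the norm limit of $\Im T_{\lambda+iy}(H_s)=y\,(R_{\lambda+iy}(H_s)F^{*})^{*}(R_{\lambda+iy}(H_s)F^{*})\ge 0$; it gives, for every $v\in\clK$, that $\sqrt{\Im T_{\lambda+i0}(H_s)}\,v=0\iff\Im T_{\lambda+i0}(H_s)\,v=0\iff\scal{v}{\Im T_{\lambda+i0}(H_s)v}=0$. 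First I would apply this with $v=Ju$: combined with the identity it shows that, for each non-resonant real $s$, the three statements ``$\sqrt{\Im T_{\lambda+i0}(H_s)}Ju=0$'', ``$\Im T_{\lambda+i0}(H_s)Ju=0$'' and ``$A_{\lambda+i0}(s)u=A_{\lambda-i0}(s)u$'' coincide; and, invoking Proposition~\ref{I:P: euE (Vf)=0, k>1}, when $u\in\Upsilon_{\lambda+i0}(r_\lambda)$ the scalar $\scal{Ju}{\Im T_{\lambda+i0}(H_s)Ju}$ equals the rational function $\sum_{j\ge 2}c_{+j}(s-r_\lambda)^{-j}$, which vanishes on the infinite set of non-resonant reals iff every $c_{+j}$ vanishes. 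Thus, granting the a priori hypothesis $u\in\Upsilon_{\lambda+i0}(r_\lambda)$, conditions (1), (3) and (8) all reduce to the single condition $(\ast)$: ``$A_{\lambda+i0}(s)u=A_{\lambda-i0}(s)u$ for all non-resonant real $s$''; symmetrically, granting $u\in\Upsilon_{\lambda-i0}(r_\lambda)$, conditions (2), (4) and (9) reduce to the same $(\ast)$ (note (2) also features $\Im T_{\lambda+i0}$).

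Next I would show that $(\ast)$ together with membership in either resonance subspace already forces membership in the other. Since each $T_{\lambda\pm i0}(H_s)$ is a rational function of $s\in\mbC$, the left side of the identity above is meromorphic in $s$; vanishing of $(A_{\lambda+i0}(\cdot)-A_{\lambda-i0}(\cdot))u$ on the non-resonant reals therefore forces it to vanish identically. By the structural results of Section~\ref{S: Resonance points}, the real resonance points attached to $\lambda+i0$ and to $\lambda-i0$ coincide and carry the same order, so one may fix a single small positively oriented circle $C(r_\lambda)$ enclosing $r_\lambda$ and no other resonance point of either half-plane, and obtain $\bfA^{j}_{\lambda\pm i0}(r_\lambda)=\frac1{2\pi i}\oint_{C(r_\lambda)}(s-r_\lambda)^{j}A_{\lambda\pm i0}(s)\,ds$ for $j\ge 0$ (with the convention $\bfA^{0}:=P$). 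Applying these to $u$ and using $(A_{\lambda+i0}(s)-A_{\lambda-i0}(s))u\equiv 0$ gives $\bfA^{j}_{\lambda+i0}(r_\lambda)u=\bfA^{j}_{\lambda-i0}(r_\lambda)u$ for all $j$; in particular $P_{\lambda+i0}(r_\lambda)u=P_{\lambda-i0}(r_\lambda)u$, so, since $\Upsilon_{\lambda\pm i0}(r_\lambda)=\operatorname{ran}P_{\lambda\pm i0}(r_\lambda)$, membership transfers both ways. Combining this with the previous paragraph, conditions (1)--(4), (8), (9) are all equivalent and each implies condition (5).

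For the converse I would exploit the vanishing of the holomorphic part of the Laurent expansion on a resonance subspace: writing $A_z(s)=(1+(s-r)A_z(r))^{-1}A_z(r)$ for a non-resonant $r$ and restricting to the $A_z(r)$-invariant subspace $\Upsilon_z(r_z)=\operatorname{ran}P_z(r_z)$, on which $A_z(r)$ is invertible with the single eigenvalue $(r-r_z)^{-1}$, one sees that $A_z(s)|_{\Upsilon_z(r_z)}$ is a proper rational function of $s$ with its only pole at $r_z$, hence $A_z(s)u=\sum_{j=0}^{d-1}(s-r_z)^{-(j+1)}\bfA^{j}_z(r_z)u$ for $u\in\Upsilon_z(r_z)$, where $d$ is the order of $r_z$. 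Granting (5), say with $u\in\Upsilon_{\lambda+i0}(r_\lambda)$, the equality $\bfA^{0}_{\lambda-i0}(r_\lambda)u=\bfA^{0}_{\lambda+i0}(r_\lambda)u=u$ puts $u$ in $\Upsilon_{\lambda-i0}(r_\lambda)$ as well; since the order $d$ is common to both half-planes (so $\bfA^{j}_{\lambda\pm i0}(r_\lambda)=0$ for $j\ge d$), the principal-part formula together with $\bfA^{j}_{\lambda+i0}(r_\lambda)u=\bfA^{j}_{\lambda-i0}(r_\lambda)u$ for $0\le j\le d-1$ yields $(\ast)$, i.e.\ (3). Finally, for (6) and (7) I would use the multiplicativity $\bfA^{i}_z(r_z)\bfA^{j}_z(r_z)=\bfA^{i+j}_z(r_z)$ of the Laurent coefficients (Section~\ref{S: Resonance points}): if $u$ satisfies the already-established equivalent conditions then, for each $j$, the vector $v=\bfA^{j}_{\lambda+i0}(r_\lambda)u$ satisfies $\bfA^{i}_{\lambda+i0}(r_\lambda)v=\bfA^{i+j}_{\lambda+i0}(r_\lambda)u=\bfA^{i+j}_{\lambda-i0}(r_\lambda)u=\bfA^{i}_{\lambda-i0}(r_\lambda)v$ for all $i$, so $v$ satisfies (5), hence $(\ast)$; specializing $s=r$ gives (6). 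Conversely, given (6) for some non-resonant $r$, the finite-dimensional space $W=\operatorname{span}\set{\bfA^{j}_{\lambda+i0}(r_\lambda)u\colon j\ge 0}\subseteq\Upsilon_{\lambda+i0}(r_\lambda)$ is invariant under the invertible operator $A_{\lambda+i0}(r)|_W$ (which is a scalar plus a multiple of the nilpotent $\bfA_{\lambda+i0}(r_\lambda)$), on $W$ one has $A_{\lambda+i0}(r)=A_{\lambda-i0}(r)$, and then $A_{\lambda\pm i0}(s)|_W=\bigl((1+(s-r)A_{\lambda\pm i0}(r))|_W\bigr)^{-1}A_{\lambda\pm i0}(r)|_W$ forces $A_{\lambda+i0}(s)u=A_{\lambda-i0}(s)u$ for all non-resonant $s$, which is (3); condition (7) is the mirror statement. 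The closing assertions are then immediate: by (3) the type-I set equals $\Upsilon_{\lambda+i0}(r_\lambda)\cap\bigcap_s\ker(A_{\lambda+i0}(s)-A_{\lambda-i0}(s))$, a subspace, contained in $\Upsilon_{\lambda+i0}(r_\lambda)\cap\Upsilon_{\lambda-i0}(r_\lambda)$ by (1)--(2), and its invariance under $\bfA^{j}_{\lambda\pm i0}(r_\lambda)$ is precisely the verification of (5) for $v=\bfA^{j}u$ just carried out.

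The main obstacle I anticipate is the interplay between the two half-planes $\lambda\pm i0$ in the second and third paragraphs: transferring resonance-subspace membership from $\lambda+i0$ to $\lambda-i0$ and establishing that the holomorphic part of the Laurent expansion dies on the resonance subspace. Both hinge on the fine structure of resonance points developed in Section~\ref{S: Resonance points} --- that real resonance points are shared by $\lambda+i0$ and $\lambda-i0$ with equal multiplicities and orders, the residue representation $P_z(r_z)=\frac1{2\pi i}\oint A_z(s)\,ds$ together with its weighted versions, and the multiplicativity of the principal-part coefficients --- so the real work is to have all of those properties securely in hand before attacking this theorem.
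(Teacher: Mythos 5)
Your proposal is correct, and its overall skeleton --- reducing all nine conditions to the single master condition $A_{\lambda+i0}(s)u=A_{\lambda-i0}(s)u$ via the positivity of $\Im T_{\lambda+i0}(H_s)$ and the rational-function identity of Proposition~\ref{P: euE (Vf)=0, k>1}, and then passing between this condition and the equalities $\bfA^j_{\lambda+i0}(r_\lambda)u=\bfA^j_{\lambda-i0}(r_\lambda)u$ through the Laurent/contour-integral picture --- is the same as the paper's. Where you genuinely diverge is in two of the supporting lemmas. First, the paper transfers membership between $\Upsilon_{\lambda+i0}(r_\lambda)$ and $\Upsilon_{\lambda-i0}(r_\lambda)$ (Lemma~\ref{L: type I vectors}) by means of the divided-difference rewriting of the resonance equation (Theorem~\ref{T: sum prod psi = 0}, formula~(\ref{F: boring formula})), which in addition yields that the order of $u$ is the same on both sides; you instead extend $(A_{\lambda+i0}(s)-A_{\lambda-i0}(s))u=0$ from the non-resonant reals to the complex plane by the identity theorem and contour-integrate to get $P_{\lambda+i0}(r_\lambda)u=P_{\lambda-i0}(r_\lambda)u$, which is slicker and suffices here (the equality of orders is not needed for this theorem, though the paper uses it elsewhere). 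Second, for the equivalence with conditions (6)--(7) the paper factorizes $A_{\lambda+i0}(s)-A_{\lambda-i0}(s)$ through the identity~(\ref{F: Az3v6 (4.8)}) and expands $(1+(s-r)A_{\lambda+i0}(r))^{-1}P_{\lambda+i0}(r_\lambda)$ in powers of $\bfA_{\lambda+i0}(r_\lambda)$ via Proposition~\ref{P: [1+sAz(r)](-1)Pz(rz)}; you instead work on the finite-dimensional subspace $W=\operatorname{span}\set{\bfA^j_{\lambda+i0}(r_\lambda)u}$, observe it is invariant under both $A_{\lambda\pm i0}(r)$ (which agree on it by hypothesis), and propagate equality to all $s$ through $A_{\lambda\pm i0}(s)=(1+(s-r)A_{\lambda\pm i0}(r))^{-1}A_{\lambda\pm i0}(r)$. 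This avoids the explicit series expansion, at the cost of the (easy, but worth stating) check that the restriction of the globally invertible operator $1+(s-r)A_{\lambda\pm i0}(r)$ to the invariant finite-dimensional $W$ is invertible on $W$ and that its inverse on $W$ is the restriction of the global inverse. Both routes are sound; the paper's is more computational but produces reusable identities, yours is more structural.
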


For the nilpotent operator $\bfA_z(r_z)$ on the vector space $\Upsilon_z(r_z)$ there exists a Jordan basis
$(u_\nu^{(j)}),$ $\nu=1,\ldots,m,$ $j=1,\ldots,d_\nu,$ where we assume that $d_1 \geq d_2 \geq \ldots \geq d_m;$
that is, a basis of $\Upsilon_z(r_z)$ such that $\bfA_z(r_z)u_\nu^{(j)} = u_\nu^{(j-1)}$ assuming that $u_\nu^{(0)}=0.$
Every Jordan basis $(u_\nu^{(j)})$ induces a decomposition of the vector space $\Upsilon_z(r_z)$ into a direct sum
$$
  \Upsilon_z(r_z) = \Upsilon_z^{[1]}(r_z) \dotplus \ldots \dotplus \Upsilon_z^{[m]}(r_z),
$$
where $\Upsilon_z^{[\nu]}(r_z)$ is the linear span of vectors $u_\nu^{(1)},\ldots,u_\nu^{(d_\nu)}$
and where $\dotplus$ denotes direct sum of linear spaces.
We call this decomposition a \emph{Jordan decomposition} of~$\Upsilon_z(r_z).$
\label{Page: Jordan decomp-n}

Proposition \ref{I:P: euE (Vf)=0, k>1} and Theorem \ref{I:T: type I vectors} are used to prove the following theorem
which in its turn is essentially used in section \ref{S: U-turn theorem}.
\begin{thm} \label{I:T: on vectors with property L} If a resonance vector $u^{(k)} \in \Upsilon_{\lambda\pm i0}(r_\lambda)$ has order $k$ then the vectors
$$
  u^{(1)}, \ldots, u^{(\lceil k/2\rceil)}
$$
are of type~I, where $\lceil k/2\rceil$ is the smallest integer not less than $k/2$ and $u^{(j)} = \bfA^{k-j}_{\lambda \pm i0}(r_\lambda)u^{(k)}.$
\end{thm}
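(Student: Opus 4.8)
The plan is to induct on $p$ and show, working inside the Jordan chain $u^{(1)},\dots,u^{(k)}$ of $\bfA_{\lambda+i0}(r_\lambda)$, that each $u^{(p)}$ with $p\le\lceil k/2\rceil$ is of type~I. I treat the case $u^{(k)}\in\Upsilon_{\lambda+i0}(r_\lambda)$; the case $u^{(k)}\in\Upsilon_{\lambda-i0}(r_\lambda)$ is identical after interchanging $\lambda+i0$ and $\lambda-i0$. Write $\bfA_\pm=\bfA_{\lambda\pm i0}(r_\lambda)$, so that $\bfA_+u^{(j)}=u^{(j-1)}$ with $u^{(0)}:=0$ and $u^{(j)}=\bfA_+^{k-j}u^{(k)}$, each $u^{(j)}$ has order exactly $j$, and (by the description of order via $\bfA_+$ given in Section~\ref{S: Resonance points}) $\bfA_+^{m}u^{(k)}=0$ whenever $m\ge k$, while $d$, the order of $r_\lambda$, satisfies $d\ge k$. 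For the inductive step I would first reduce matters using the characterization~(8) of Theorem~\ref{I:T: type I vectors}: it suffices to show that the Laurent coefficients $c_{+j}(u^{(p)})$ of Proposition~\ref{I:P: euE (Vf)=0, k>1} vanish for $j=2,\dots,p$. Since $\bfA_+^{j-1}u^{(p)}=\bfA_+^{k-(p-j+1)}u^{(k)}=u^{(p-j+1)}$, that proposition gives $c_{+j}(u^{(p)})=\Im\scal{u^{(p)}}{Ju^{(p-j+1)}}$, so the whole statement reduces to the claim
$$
  \scal{u^{(p)}}{Ju^{(q)}}=0\qquad\text{for all }1\le q\le p-1,\ p\le\lceil k/2\rceil .
$$

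To prove this claim I would use two facts from Section~\ref{S: Resonance points}: first, $(\bfA_+)^*=\bfB_{\lambda-i0}(r_\lambda)$ together with $J\bfA_-=\bfB_{\lambda-i0}(r_\lambda)J$, which combine to give, for all $x,y\in\clK$,
$$
  \scal{\bfA_+x}{Jy}=\scal{x}{J\bfA_-y},\qquad\text{hence}\qquad\scal{\bfA_+^mx}{Jy}=\scal{x}{J\bfA_-^my};
$$
second, if a vector $v$ is of type~I then $\bfA_-^m v=\bfA_+^m v$ for $0\le m\le d-1$ (condition~(5) of Theorem~\ref{I:T: type I vectors}). The base case $p=1$ is immediate, since $u^{(1)}$ has order~$1$ and a resonance vector of order~$1$ is of type~I by the last assertion of Proposition~\ref{I:P: euE (Vf)=0, k>1} (it yields $\scal{Ju^{(1)}}{\Im T_{\lambda+i0}(H_s)Ju^{(1)}}=0$, i.e.\ condition~(1) of Theorem~\ref{I:T: type I vectors}). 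For the step, assume $u^{(1)},\dots,u^{(p-1)}$ are of type~I with $p\le\lceil k/2\rceil$, and fix $1\le q\le p-1$. Using $u^{(p)}=\bfA_+^{k-p}u^{(k)}$ and $u^{(q)}=\bfA_+^{k-q}u^{(k)}$, the adjointness identity, the type~I property of $u^{(q)}$ (applicable because $k-p\le k-1\le d-1$), and finally $2k-p-q\ge k$ (because $p+q\le 2p-1\le 2\lceil k/2\rceil-1\le k$), one computes
$$
  \scal{u^{(p)}}{Ju^{(q)}}=\scal{u^{(k)}}{J\bfA_-^{k-p}u^{(q)}}=\scal{u^{(k)}}{J\bfA_+^{k-p}u^{(q)}}=\scal{u^{(k)}}{J\bfA_+^{2k-p-q}u^{(k)}}=0 .
$$
Hence $c_{+j}(u^{(p)})=0$ for $j=2,\dots,p$, so $u^{(p)}$ is of type~I, which closes the induction.

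The step I expect to need the most care is the reduction in the first paragraph: one must justify that the order of the resonance vector $u^{(p)}$ defined through the resonance equation is exactly its $\bfA_+$-nilpotency length, so that Proposition~\ref{I:P: euE (Vf)=0, k>1} applies to $u^{(p)}$ with its sum truncated at $j=p$ and with the identity $\bfA_+^{j-1}u^{(p)}=u^{(p-j+1)}$ available. Together with the inequality $d\ge k$, this bookkeeping is precisely what makes both the admissible range of $j$ and the exponent count $2k-p-q\ge k$ fit; once it is fixed, the rest is the short computation above, and no estimates or limiting arguments are needed. Everything quoted — the adjointness relations, condition~(5) of Theorem~\ref{I:T: type I vectors}, the order-$1\Rightarrow$ type~I fact, and the structure of resonance points — comes from results already stated.
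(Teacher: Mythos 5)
Your proof is correct and follows essentially the same route as the paper's: induction on the index, the criterion of Theorem~\ref{T: type I vectors} that all coefficients $c_{+j}$ vanish, the adjointness relations $(\bfA_{\lambda+i0}(r_\lambda))^*=\bfB_{\lambda-i0}(r_\lambda)$ and $J\bfA_{\lambda-i0}(r_\lambda)=\bfB_{\lambda-i0}(r_\lambda)J$ to move powers of the nilpotent across the pairing, and the inductive type-I hypothesis to replace $\bfA_{\lambda-i0}$ by $\bfA_{\lambda+i0}$ followed by an order count. The only cosmetic difference is that the paper writes $u^{(n)}=\bfA^{n-1}_{\lambda+i0}(r_\lambda)u^{(2n-1)}$ and transfers $n-1$ factors, while you pull back to $u^{(k)}$ and transfer $k-p$ factors; the bookkeeping step you flagged is exactly Theorem~\ref{T: Laurent for A(s)psi}, so no gap remains.
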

For example, assume that the geometric multiplicity $m=12$ and order $d=6;$
if a Jordan basis $(u_\nu^{(j)})$ of $\Upsilon_{\lambda+i0}(r_\lambda)$ is represented by the left of the following two Young diagrams \label{Page: Young diagram}

\noindent\begin{picture}(192,96)
\put(0,0){\line(0,1){96}}\put(16,0){\line(0,1){96}} \put(32,0){\line(0,1){96}} \put(48,0){\line(0,1){96}}
\put(64,0){\line(0,1){80}} \put(80,0){\line(0,1){80}} \put(96,0){\line(0,1){48}} \put(112,0){\line(0,1){48}}
\put(128,0){\line(0,1){32}} \put(144,0){\line(0,1){32}} \put(160,0){\line(0,1){16}} \put(176,0){\line(0,1){16}}
\put(192,0){\line(0,1){16}}
\put(0,0){\line(1,0){192}}\put(0,16){\line(1,0){192}}\put(0,32){\line(1,0){144}}\put(0,48){\line(1,0){112}}
\put(0,64){\line(1,0){80}}\put(0,80){\line(1,0){80}}\put(0,96){\line(1,0){48}}
\put(3,5){\tiny $u_{1}^{(1)}$}\put(3,21){\tiny $u_{1}^{(2)}$}\put(3,37){\tiny $u_{1}^{(3)}$}\put(3,53){\tiny $u_{1}^{(4)}$}\put(3,69){\tiny $u_{1}^{(5)}$}\put(3,85){\tiny $u_{1}^{(6)}$}
\put(19,5){\tiny $u_{2}^{(1)}$}\put(19,21){\tiny $u_{2}^{(2)}$}\put(19,37){\tiny $u_{2}^{(3)}$}\put(19,53){\tiny $u_{2}^{(4)}$}\put(19,69){\tiny $u_{2}^{(5)}$}\put(19,85){\tiny $u_{2}^{(6)}$}
\put(35,5){\tiny $u_{3}^{(1)}$}\put(35,21){\tiny $u_{3}^{(2)}$}\put(35,37){\tiny $u_{3}^{(3)}$}\put(35,53){\tiny $u_{3}^{(4)}$}\put(35,69){\tiny $u_{3}^{(5)}$}\put(35,85){\tiny $u_{3}^{(6)}$}
\put(51,5){\tiny $u_{4}^{(1)}$}\put(51,21){\tiny $u_{4}^{(2)}$}\put(51,37){\tiny $u_{4}^{(3)}$}\put(51,53){\tiny $u_{4}^{(4)}$}\put(51,69){\tiny $u_{4}^{(5)}$}
\put(67,5){\tiny $u_{5}^{(1)}$}\put(67,21){\tiny $u_{5}^{(2)}$}\put(67,37){\tiny $u_{5}^{(3)}$}\put(67,53){\tiny $u_{5}^{(4)}$}\put(67,69){\tiny $u_{5}^{(5)}$}
\put(83,5){\tiny $u_{6}^{(1)}$}\put(83,21){\tiny $u_{6}^{(2)}$}\put(83,37){\tiny $u_{6}^{(3)}$}
\put(99,5){\tiny $u_{7}^{(1)}$}\put(99,21){\tiny $u_{7}^{(2)}$}\put(99,37){\tiny $u_{7}^{(3)}$}
\put(115,5){\tiny $u_{8}^{(1)}$}\put(115,21){\tiny $u_{8}^{(2)}$}
\put(131,5){\tiny $u_{9}^{(1)}$}\put(131,21){\tiny $u_{9}^{(2)}$}
\put(147,5){\tiny $u_{10}^{(1)}$}
\put(163,5){\tiny $u_{11}^{(1)}$}
\put(179,5){\tiny $u_{12}^{(1)}$}
\end{picture}
\qquad
\begin{picture}(192,96)
\put(0,0){\line(0,1){96}}\put(16,0){\line(0,1){96}} \put(32,0){\line(0,1){96}} \put(48,0){\line(0,1){96}}
\put(64,0){\line(0,1){80}} \put(80,0){\line(0,1){80}} \put(96,0){\line(0,1){48}} \put(112,0){\line(0,1){48}}
\put(128,0){\line(0,1){32}} \put(144,0){\line(0,1){32}} \put(160,0){\line(0,1){16}} \put(176,0){\line(0,1){16}}
\put(192,0){\line(0,1){16}}
\put(0,0){\line(1,0){192}}\put(0,16){\line(1,0){192}}\put(0,32){\line(1,0){144}}\put(0,48){\line(1,0){112}}
\put(0,64){\line(1,0){80}}\put(0,80){\line(1,0){80}}\put(0,96){\line(1,0){48}}
\put(3,5){\tiny $u_{1}^{(1)}$}\put(3,21){\tiny $u_{1}^{(2)}$}\put(3,37){\tiny $u_{1}^{(3)}$}
\put(19,5){\tiny $u_{2}^{(1)}$}\put(19,21){\tiny $u_{2}^{(2)}$}\put(19,37){\tiny $u_{2}^{(3)}$}
\put(35,5){\tiny $u_{3}^{(1)}$}\put(35,21){\tiny $u_{3}^{(2)}$}\put(35,37){\tiny $u_{3}^{(3)}$}
\put(51,5){\tiny $u_{4}^{(1)}$}\put(51,21){\tiny $u_{4}^{(2)}$}\put(51,37){\tiny $u_{4}^{(3)}$}
\put(67,5){\tiny $u_{5}^{(1)}$}\put(67,21){\tiny $u_{5}^{(2)}$}\put(67,37){\tiny $u_{5}^{(3)}$}
\put(83,5){\tiny $u_{6}^{(1)}$}\put(83,21){\tiny $u_{6}^{(2)}$}
\put(99,5){\tiny $u_{7}^{(1)}$}\put(99,21){\tiny $u_{7}^{(2)}$}
\put(115,5){\tiny $u_{8}^{(1)}$}
\put(131,5){\tiny $u_{9}^{(1)}$}
\put(147,5){\tiny $u_{10}^{(1)}$}
\put(163,5){\tiny $u_{11}^{(1)}$}
\put(179,5){\tiny $u_{12}^{(1)}$}
\end{picture}

\noindent then according to Theorem \ref{I:T: on vectors with property L} all vectors shown on the right Young diagram are of type I.

In section \ref{S: res.index and sign res matrix} we prove that the resonance index is equal to the signature of the resonance matrix.

\begin{thm} \label{I:T: property M} (Theorem \ref{T: property M})
The idempotents $P_{\lambda\pm i0}(r_\lambda)$ are linear isomorphisms of the vector spaces $\Upsilon_{\lambda\mp i0}(r_\lambda)$
and $\Upsilon_{\lambda\pm i0}(r_\lambda).$
\end{thm}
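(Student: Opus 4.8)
The plan is to prove that $P_{\lambda+i0}(r_\lambda)$ maps $\Upsilon_{\lambda-i0}(r_\lambda)$ \emph{injectively} into $\Upsilon_{\lambda+i0}(r_\lambda)$; the statement for $P_{\lambda-i0}(r_\lambda)$ then follows by interchanging $\lambda+i0$ and $\lambda-i0$. Since $\im P_{\lambda\pm i0}(r_\lambda)=\Upsilon_{\lambda\pm i0}(r_\lambda)$ and, as was noted above, the algebraic multiplicity $N=\dim\Upsilon_{\lambda\pm i0}(r_\lambda)$ is the same at $\lambda+i0$ and at $\lambda-i0$, the restriction $P_{\lambda+i0}(r_\lambda)\colon\Upsilon_{\lambda-i0}(r_\lambda)\to\Upsilon_{\lambda+i0}(r_\lambda)$ is a linear map between finite-dimensional spaces of equal dimension, so injectivity will automatically yield bijectivity. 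Equivalently, I must show $\Upsilon_{\lambda-i0}(r_\lambda)\cap\ker P_{\lambda+i0}(r_\lambda)=\{0\}$.

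First I would record two reductions. From $(P_z(r_z))^*=Q_{\bar z}(\bar r_z)$ one gets $(P_{\lambda+i0}(r_\lambda))^*=Q_{\lambda-i0}(r_\lambda)$, hence $\ker P_{\lambda+i0}(r_\lambda)=(\im Q_{\lambda-i0}(r_\lambda))^\perp=\Psi_{\lambda-i0}(r_\lambda)^\perp$. Next, $J$ maps $\Upsilon_{\lambda-i0}(r_\lambda)$ bijectively onto $\Psi_{\lambda-i0}(r_\lambda)$: the inclusion $J\Upsilon_{\lambda-i0}(r_\lambda)\subseteq\Psi_{\lambda-i0}(r_\lambda)$ follows from $JP_{\lambda-i0}(r_\lambda)=Q_{\lambda-i0}(r_\lambda)J$, and bijectivity because on $\Upsilon_{\lambda-i0}(r_\lambda)$, resp.\ $\Psi_{\lambda-i0}(r_\lambda)$, the operators $A_{\lambda-i0}(s)=T_{\lambda-i0}(H_s)J$, resp.\ $B_{\lambda-i0}(s)=JT_{\lambda-i0}(H_s)$, act as $(s-r_\lambda)^{-1}$ plus a nilpotent and so are invertible, while $T_{\lambda-i0}(H_s)$ intertwines $B_{\lambda-i0}(s)$ with $A_{\lambda-i0}(s)$ and therefore carries $\Psi_{\lambda-i0}(r_\lambda)$ into $\Upsilon_{\lambda-i0}(r_\lambda)$. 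Thus $\Upsilon_{\lambda-i0}(r_\lambda)\cap\ker P_{\lambda+i0}(r_\lambda)=\{0\}$ is the same as non-degeneracy of the Hermitian form $(u,v)\mapsto\scal{u}{Jv}$ on the $N$-dimensional space $\Upsilon_{\lambda-i0}(r_\lambda)$.

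To prove this non-degeneracy I would induct on the order $d$ of $r_\lambda$. For $d=1$ every resonance vector has order $1$, hence is of type~I by Theorem~\ref{I:T: on vectors with property L}, so that $\Upsilon_{\lambda-i0}(r_\lambda)=\Upsilon^1_{\lambda-i0}(r_\lambda)=\Upsilon^1_{\lambda+i0}(r_\lambda)=\Upsilon_{\lambda+i0}(r_\lambda)$ and $P_{\lambda+i0}(r_\lambda)$ is the identity on it. For $d\ge2$ the ingredients are: the identity $A_{\lambda+i0}(s)-A_{\lambda-i0}(s)=2i\,\Im T_{\lambda+i0}(H_s)\,J$, with $s\mapsto\Im T_{\lambda+i0}(H_s)$ meromorphic near $\mbR$ and $\Im T_{\lambda+i0}(H_s)\ge0$ for every real $s$ regular at $\lambda$; Proposition~\ref{I:P: euE (Vf)=0, k>1} together with polarisation, which show that for all $u,v\in\Upsilon_{\lambda-i0}(r_\lambda)$ the scalar function $s\mapsto\scal{Jv}{\Im T_{\lambda+i0}(H_s)Ju}$ is a Laurent polynomial in $(s-r_\lambda)^{-1}$ with neither a constant term nor a simple pole; and Theorem~\ref{I:T: on vectors with property L}, by which the lower half $u^{(1)},\dots,u^{(\lceil k/2\rceil)}$ of every Jordan chain $u^{(1)},\dots,u^{(k)}$ of $\Upsilon_{\lambda-i0}(r_\lambda)$ consists of type~I vectors, on which $\Im T_{\lambda+i0}(H_s)J$ vanishes and $P_{\lambda+i0}(r_\lambda)$ acts as the identity. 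Now suppose $u\in\Upsilon_{\lambda-i0}(r_\lambda)$ is nonzero with $P_{\lambda+i0}(r_\lambda)u=0$. Then all Laurent coefficients $\bfA_{\lambda+i0}^{j}(r_\lambda)u$ vanish (they are supported on $\Upsilon_{\lambda+i0}(r_\lambda)$), so $s\mapsto A_{\lambda+i0}(s)u$ is holomorphic at $r_\lambda$ and $\Im T_{\lambda+i0}(H_s)Ju$ has at $r_\lambda$ exactly the principal part of $-\tfrac1{2i}A_{\lambda-i0}(s)u$; matching its simple pole against Proposition~\ref{I:P: euE (Vf)=0, k>1} gives $\scal{Jv}{u}=0$ for all $v\in\Upsilon_{\lambda-i0}(r_\lambda)$. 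One then passes to the shorter resonance vector $\bfA_{\lambda-i0}(r_\lambda)u$, checks via the same idempotent calculus that it still lies in $\ker P_{\lambda+i0}(r_\lambda)$, applies the inductive hypothesis, and uses the type~I structure of the bottom of the chain together with the operator positivity of $\Im T_{\lambda+i0}(H_s)$ to eliminate the surviving case; alternatively one invokes the positivity of the resonance matrix on the ``up'' and ``down'' parts of $\Upsilon_{\lambda+i0}(r_\lambda)$ obtained by letting $z\to\lambda+i0$ in Theorem~\ref{I:T: res matrix is positive for set of up-points}. Either way $u$ is forced to be of type~I, hence $u\in\Upsilon_{\lambda+i0}(r_\lambda)=\im P_{\lambda+i0}(r_\lambda)$, so $u=P_{\lambda+i0}(r_\lambda)u=0$ --- a contradiction.

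The hard part is this last step. The naive matching of the Laurent expansions of $A_{\lambda+i0}(s)u$ and $A_{\lambda-i0}(s)u$ at $r_\lambda$ turns out to be automatically consistent and so yields nothing by itself, and the positivity of $\Im T_{\lambda+i0}(H_s)$ near a pole only constrains the leading Laurent coefficient; the argument must therefore genuinely intertwine the Jordan/type~I description of $\Upsilon_{\lambda\pm i0}(r_\lambda)$ (Theorems~\ref{I:T: type I vectors} and~\ref{I:T: on vectors with property L}), the precise pole structure of Proposition~\ref{I:P: euE (Vf)=0, k>1}, and operator positivity. The delicate point is the bookkeeping: Theorem~\ref{I:T: on vectors with property L} alone only gives the ``half-rank'' estimate $\dim\bigl(P_{\lambda+i0}(r_\lambda)\Upsilon_{\lambda-i0}(r_\lambda)\bigr)\ge N/2$, and one must work harder to upgrade this to full injectivity.
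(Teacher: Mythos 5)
Your reduction is fine and, in fact, exactly equivalent to the theorem: since $\dim\Upsilon_{\lambda+i0}(r_\lambda)=\dim\Upsilon_{\lambda-i0}(r_\lambda)$, injectivity suffices, and via $(P_{\lambda+i0}(r_\lambda))^*=Q_{\lambda-i0}(r_\lambda)$ and the bijectivity of $J\colon\Upsilon_{\lambda-i0}(r_\lambda)\to\Psi_{\lambda-i0}(r_\lambda)$ this is the same as non-degeneracy of $(u,v)\mapsto\scal{u}{Jv}$ on $\Upsilon_{\lambda-i0}(r_\lambda)$. But the argument you offer for that non-degeneracy does not close. The inductive step is not well-founded: $\bfA_{\lambda-i0}(r_\lambda)u$ is a resonance vector of the \emph{same} point $r_\lambda$ of the \emph{same} order $d$, so nothing decreases; moreover $\bfA_{\lambda-i0}(r_\lambda)$ and $P_{\lambda+i0}(r_\lambda)$ do not commute, so there is no ``idempotent calculus'' giving $P_{\lambda+i0}(r_\lambda)\bfA_{\lambda-i0}(r_\lambda)u=0$ from $P_{\lambda+i0}(r_\lambda)u=0$. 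The proposed alternative --- letting $z\to\lambda+i0$ in Theorem \ref{T: res matrix is positive for set of up-points} --- is precisely the step that cannot be taken for free: positivity survives the limit but the rank (hence non-degeneracy) can drop at the boundary, and in the paper the non-degeneracy of $Q_{\lambda-i0}(r_\lambda)JP_{\lambda+i0}(r_\lambda)$ is a \emph{corollary} of the present theorem (Proposition \ref{P: rank M=rank P}, Lemma \ref{L: sign(M)=sign(M(y))}), so invoking it here is circular. You yourself concede that the Laurent-matching step is vacuous and that Theorem \ref{T: on vectors with property L} only yields a half-rank bound; that concession is where the proof actually stops.

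The missing idea is a one-line use of conjugate symmetry. Suppose $u\in\Upsilon_{\lambda-i0}(r_\lambda)$ is nonzero with $P_{\lambda+i0}(r_\lambda)u=0$. By \eqref{F: (*,*)=conj (*,*)} (a consequence of $\bfA_z^*(r_z)=\bfB_{\bar z}(\bar r_z)$ and $J\bfA_z(r_z)=\bfB_z(r_z)J$) together with \eqref{F: PA=AP=A},
\begin{equation*}
  \scal{u}{J\bfA_{\lambda-i0}^j(r_\lambda)u}
  =\overline{\scal{u}{J\bfA_{\lambda+i0}^j(r_\lambda)u}}
  =\overline{\scal{u}{J\bfA_{\lambda+i0}^j(r_\lambda)P_{\lambda+i0}(r_\lambda)u}}=0
  \qquad (j=0,1,2,\ldots),
\end{equation*}
so all coefficients $c_{-j}$ in Proposition \ref{P: euE (Vf)=0, k>1} vanish and $u$ is of type I (item (9) of Theorem \ref{T: type I vectors}); by Lemma \ref{L: type I vectors} then $u\in\Upsilon_{\lambda+i0}(r_\lambda)$, whence $u=P_{\lambda+i0}(r_\lambda)u=0$, a contradiction. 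This is the paper's proof (stated there with the roles of $\pm$ interchanged); your proposal never exploits this conjugate-symmetry identity, and without it the positivity and Jordan-structure ingredients you list do not suffice.
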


Theorem \ref{I:T: property M} is used in the proof of the following theorem which is one of the main results of this paper.

\begin{thm} \label{I:T: res.ind=sign res.matrix} (Theorem \ref{T: res.ind=sign res.matrix})
For any real resonance point~$r_\lambda$
the signatures of the resonance matrices $\sign(Q_{\lambda\mp i0}(r_\lambda)JP_{\lambda\pm i0}(r_\lambda))$
of~$r_\lambda$ are the same and are equal to the resonance index
of the triple $(\lambda,H_{r_\lambda},V);$ that is,
$$
  \sign(Q_{\lambda\mp i0}(r_\lambda)JP_{\lambda\pm i0}(r_\lambda)) = \ind_{res}(\lambda; H_{r_\lambda},V).
$$
\end{thm}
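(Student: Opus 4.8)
The plan is to deform the single real resonance point $r_\lambda$ of $A_{\lambda+i0}(\cdot)$ off the real axis, evaluate the signature of the resonance matrix in the resulting non-real situation by the already established Theorem~\ref{T: sign of res matrix for set of res points}, identify the result with the resonance index, and then return to the real axis by a continuity argument. Fix once and for all a real number $s$ that is regular at~$\lambda$, and abbreviate $A_z := A_z(s) = T_z(H_s)J$, $B_z := B_z(s) = JT_z(H_s)$. The number $\sigma_\lambda(s) = (s-r_\lambda)^{-1}$ is an isolated eigenvalue of $A_{\lambda+i0}$ whose generalized eigenspace is $\Upsilon_{\lambda+i0}(r_\lambda)$, of dimension $N = \dim\Upsilon_{\lambda+i0}(r_\lambda)$ (indeed $\Upsilon_{\lambda+i0}^k(r_\lambda) = \ker(A_{\lambda+i0} - \sigma_\lambda(s))^k$). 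Fix a circle $C$ centred at $\sigma_\lambda(s)$ enclosing no other point of $\sigma(A_{\lambda+i0})$. Since $A_{\lambda+iy}\to A_{\lambda+i0}$ and, using $T_{\lambda-iy}(H_s) = (T_{\lambda+iy}(H_s))^*$, also $B_{\lambda-iy}\to B_{\lambda-i0}$ in operator norm as $y\to 0^+$ (because $s$ is regular at~$\lambda$), for all small enough $y>0$ the spectra of $A_{\lambda+iy}$ and of $B_{\lambda-iy}$ avoid $C$, the part of $\sigma(A_{\lambda+iy})$ inside $C$ consists of the $N$ eigenvalues $(s-r)^{-1}$, $r\in\Gamma_y$, where $\Gamma_y$ is the finite set of resonance points of $z=\lambda+iy$ that have split off from $r_\lambda$ (none of them real, so $\Gamma_y = \Gamma_y^+\sqcup\Gamma_y^-$ according to the sign of the imaginary part), and the Riesz idempotents taken over the fixed contour $C$ satisfy $P_{\lambda+iy}(\Gamma_y)\to P_{\lambda+i0}(r_\lambda)$ and $Q_{\lambda-iy}(\bar\Gamma_y)\to Q_{\lambda-i0}(r_\lambda)$ in norm. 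Hence $M_y := Q_{\lambda-iy}(\bar\Gamma_y)\,J\,P_{\lambda+iy}(\Gamma_y) \to M := Q_{\lambda-i0}(r_\lambda)\,J\,P_{\lambda+i0}(r_\lambda)$ in norm, and all of these operators are finite-rank and self-adjoint because $(P_z(r_z))^* = Q_{\bar z}(\bar r_z)$ and $J = J^*$.

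Next I would compute $\sign M_y$ for $0<y\ll 1$. As $\Gamma_y$ is a finite set of resonance points corresponding to the non-real number $z = \lambda+iy$, Theorem~\ref{T: sign of res matrix for set of res points} gives
$$
  \sign M_y = \euR\big(\Im(\lambda+iy)\,A_{\lambda+iy}P_{\lambda+iy}(\Gamma_y)\big) = \euR\big(A_{\lambda+iy}P_{\lambda+iy}(\Gamma_y)\big),
$$
since multiplying an operator of class $\clR$ by the positive number $\Im(\lambda+iy)=y$ changes neither the half-planes in which its eigenvalues lie nor the absence of real eigenvalues. The operator $A_{\lambda+iy}P_{\lambda+iy}(\Gamma_y)$ is the part of $A_{\lambda+iy}=T_{\lambda+iy}(H_s)J$ carried by the generalized eigenspace of the group of $\sigma_\lambda(s)$; its nonzero eigenvalues are exactly the $(s-r)^{-1}$, $r\in\Gamma_y$, and, since $s\in\mbR$, the point $(s-r)^{-1}$ lies in $\mbC_+$ precisely when $r$ does. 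As $T_{\lambda+iy}(H_s)J = FR_{\lambda+iy}(H_s)F^*J$ has the same nonzero eigenvalues, with multiplicities, as $R_{\lambda+iy}(H_s)V$, the numbers of such eigenvalues in $\mbC_+$ and $\mbC_-$ are exactly the integers $N_+$ and $N_-$ from the definition of the resonance index, so
$$
  \sign M_y = N_+-N_- = \ind_{res}(\lambda;H_{r_\lambda},V)\qquad\text{for all small }y>0 .
$$

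There remains the passage $y\to 0^+$. Since $M_y\to M$ in norm and the signature of a finite-rank self-adjoint operator is constant on norm-neighbourhoods where its rank is constant, it suffices to show $\mathrm{rank}\,M = \mathrm{rank}\,M_y = N$; this is the genuinely nontrivial point, and I expect it to be the main obstacle. I would obtain it from Theorem~\ref{T: property M} and its $\Psi$, $Q$ counterpart: on $\Upsilon_{\lambda+i0}(r_\lambda) = \mathrm{ran}\,P_{\lambda+i0}(r_\lambda)$ the operator $M$ acts as $u\mapsto Q_{\lambda-i0}(r_\lambda)Ju$, and, using $JP_z(r_z) = Q_z(r_z)J$, this is the composition $\Upsilon_{\lambda+i0}(r_\lambda)\xrightarrow{\,J\,}\Psi_{\lambda+i0}(r_\lambda)\xrightarrow{\,Q_{\lambda-i0}(r_\lambda)\,}\Psi_{\lambda-i0}(r_\lambda)$, whose second arrow is an isomorphism by the $\Psi$ version of Theorem~\ref{T: property M}; combined with the injectivity of $J$ on the space of resonance vectors of $r_\lambda$ (which, if it does not already follow from the structure of the idempotents, can be deduced by applying Theorem~\ref{T: res matrix is positive for set of up-points} to $\Gamma_y^+$ and to $\bar\Gamma_y^-$ and letting $y\to0^+$), this forces $\mathrm{rank}\,M = N$, and the same facts applied at $z=\lambda+iy$ give $\mathrm{rank}\,M_y = N$. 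Therefore $\sign M = \lim_{y\to0^+}\sign M_y = \ind_{res}(\lambda;H_{r_\lambda},V)$.

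Finally, for the companion matrix $Q_{\lambda+i0}(r_\lambda)JP_{\lambda-i0}(r_\lambda)$ I would run the identical argument along the lower approach $z=\lambda-iy\to\lambda-i0$. There $\Im z = -y<0$, and the eigenvalues of $A_{\lambda-iy}$ in the group of $\sigma_\lambda(s)$ are the complex conjugates of those of $A_{\lambda+iy}$; the two resulting sign reversals, namely $\euR(-X) = -\euR(X)$ coming from $\Im(\lambda-iy)<0$ and the interchange $N_+\leftrightarrow N_-$ coming from conjugation, cancel, and one again arrives at $\ind_{res}(\lambda;H_{r_\lambda},V)$. In particular the two signatures in the statement coincide. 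Apart from the rank stability in the limit $y\to0^+$, the argument is bookkeeping with Theorem~\ref{T: sign of res matrix for set of res points}, the definition of the resonance index, and the norm-continuity of Riesz projections.
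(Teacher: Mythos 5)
Your proposal is correct and follows essentially the same route as the paper: shift $\lambda$ to $\lambda\pm iy$, apply Theorem~\ref{T: sign of res matrix for set of res points} together with the identification $\ind_{res}=\Rindex(A_{\lambda\pm iy}(s)P_{\lambda\pm iy}(r_\lambda))$ (up to sign), and pass back to the real axis by rank/signature stability, where the crucial equality $\rank\bigl(Q_{\lambda\mp i0}(r_\lambda)JP_{\lambda\pm i0}(r_\lambda)\bigr)=N$ rests on Theorem~\ref{T: property M} exactly as in the paper's Lemma~\ref{L: sign(M)=sign(M(y))} and Proposition~\ref{P: rank M=rank P}. The only difference is cosmetic: you re-derive in line what the paper packages as Lemma~\ref{L: ind-res=R(TJP)} and the formulas~(\ref{F: ind-res=R(TJP) (+)})--(\ref{F: ind-res=R(TJP) (-)}).
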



In Section \ref{S: U-turn theorem} we prove Theorem \ref{I:T: U-turn for res index} which is one of the main results of this paper.

\begin{thm} \label{I:T: U-turn for res matrix} (Theorem \ref{T: U-turn for res matrix})
If~$r_\lambda$ is a real resonance point
corresponding to~$z = \lambda \pm i0,$ then the absolute value of the signature of the resonance
matrices $Q_{\lambda\mp i0}(r_\lambda)JP_{\lambda\pm i0}(r_\lambda)$ is less or equal to the dimension of the vector space
$\Upsilon_{\lambda+i0}^1(r_\lambda):$
$$
  \abs{\sign Q_{\lambda\mp i0}(r_\lambda)JP_{\lambda\pm i0}(r_\lambda)} \leq \dim \Upsilon_{\lambda+i0}^1(r_\lambda).
$$
\end{thm}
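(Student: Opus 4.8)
The plan is to reduce the assertion to an inequality for the signature of a Hermitian form, and then to extract that inequality from the Jordan structure of the nilpotent operator $\bfA_{\lambda+i0}(r_\lambda)$ together with the type~I theorems of Section~\ref{S: vectors of type I}. The first step is to identify the resonance matrix as a congruence $P^*JP$: taking $z=\lambda+i0$ and using that $r_\lambda$ is real, the identity $(P_z(r_z))^*=Q_{\bar z}(\bar r_z)$ gives $Q_{\lambda-i0}(r_\lambda)=(P_{\lambda+i0}(r_\lambda))^*$, so $Q_{\lambda-i0}(r_\lambda)JP_{\lambda+i0}(r_\lambda)=P^*JP$ with $P:=P_{\lambda+i0}(r_\lambda)$ and $J=J^*$; this operator is thus self-adjoint, and, $P$ being an idempotent with range $\Upsilon:=\Upsilon_{\lambda+i0}(r_\lambda)$, its signature equals that of the Hermitian form $b(x,y):=\scal{x}{Jy}$ restricted to the finite-dimensional space $\Upsilon$. (The same argument with $\lambda-i0$ gives $b$ on $\Upsilon_{\lambda-i0}(r_\lambda)$, and the two signatures agree by Theorem~\ref{I:T: res.ind=sign res.matrix}.) Hence everything reduces to proving $\abs{n_+-n_-}\le m$ for $b|_\Upsilon$, where $n_\pm$ are its positive and negative indices, $m=\dim\Upsilon^1_{\lambda+i0}(r_\lambda)$, and $\dim\Upsilon=N=\sum_\nu d_\nu$ with $d_1\ge\cdots\ge d_m$ the sizes of the Jordan chains of $\bfA:=\bfA_{\lambda+i0}(r_\lambda)$ on $\Upsilon$.

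The heart of the proof is the structure of the Gram matrix of $b|_\Upsilon$ in a Jordan basis $u_\nu^{(1)},\dots,u_\nu^{(d_\nu)}$ ($\bfA u_\nu^{(j)}=u_\nu^{(j-1)}$, $u_\nu^{(0)}=0$). From the operator identities $\bfA_{\lambda+i0}(r_\lambda)^*=\bfB_{\lambda-i0}(r_\lambda)$ and $\bfB_{\lambda-i0}(r_\lambda)J=J\bfA_{\lambda-i0}(r_\lambda)$ one has $\bfA_{\lambda+i0}(r_\lambda)^*J=J\bfA_{\lambda-i0}(r_\lambda)$, and, since by Theorem~\ref{I:T: on vectors with property L} the bottom vectors $u_\nu^{(1)},\dots,u_\nu^{(\lceil d_\nu/2\rceil)}$ of each chain are of type~I and, by condition~(5) of Theorem~\ref{I:T: type I vectors}, $\bfA_{\lambda-i0}(r_\lambda)$ acts on type~I vectors as $\bfA_{\lambda+i0}(r_\lambda)$, one obtains the shift identity
\[
  b(u_\nu^{(i)},u_\mu^{(j)})=b(u_\nu^{(i+1)},u_\mu^{(j-1)}),\qquad 1\le i<d_\nu,\ \ 1\le j\le\lceil d_\mu/2\rceil .
\]
Iterating it down the chain of $\mu$ gives $b(u_\nu^{(i)},u_\mu^{(j)})=0$ whenever $i+j\le\min(d_\nu,d_\mu)$, and the same (by Hermitian symmetry) with the roles of $(\nu,i)$ and $(\mu,j)$ interchanged; moreover the first nonzero anti-diagonal of each diagonal block $\nu$ (entries with $i+j=d_\nu+1$) is constant there, equal to $\gamma_\nu:=b(u_\nu^{(d_\nu)},u_\nu^{(1)})$, which is real when $d_\nu$ is odd. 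In other words, the Gram matrix of $b|_\Upsilon$ in this basis vanishes on the region $i+j\le\min(d_\nu,d_\mu)$, an anti-triangular shape adapted to the chain filtration.

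To finish, note that the strictly-lower-half subspace $\clV$ spanned by all $u_\nu^{(i)}$ with $1\le i\le\lfloor d_\nu/2\rfloor$ is, by the previous paragraph, totally $b$-isotropic, of dimension $\sum_\nu\lfloor d_\nu/2\rfloor=\tfrac12(N-o)$, where $o$ is the number of Jordan chains of odd length. When every $\gamma_\nu\ne 0$ the anti-triangular Gram matrix is non-degenerate (its determinant is, up to sign, the product of the anti-diagonal entries), $\clV$ together with its dual $b$-pairing partner forms a hyperbolic subspace of signature $0$, and $b$ restricted to the $b$-orthogonal complement, of dimension $N-2\dim\clV=o$, carries the whole signature; hence $\abs{n_+-n_-}\le o\le m=\dim\Upsilon^1_{\lambda+i0}(r_\lambda)$, as required. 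I expect the main obstacle to be the degenerate case in which some $\gamma_\nu$ vanish: then $b|_\Upsilon$ is genuinely degenerate, the bare isotropic-subspace count only yields $\abs{n_+-n_-}\le o+n_0$ with $n_0$ the nullity, and one must use the finer anti-triangular structure --- the vanishing of $\gamma_\nu$ forces the $\nu$-th block's form onto a still smaller anti-triangular core, so that its contribution again does not exceed $1$ in absolute value --- or, alternatively, pass to the limit from non-real $z$ and invoke the positivity of $\Im z\,Q_{\bar z}(\bar\Gamma)JP_z(\Gamma)$ from Theorem~\ref{I:T: res matrix is positive for set of up-points}, to absorb the radical into the hyperbolic part. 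A secondary, purely bookkeeping difficulty is pinning down exactly the range of exponents for which $\bfA_{\lambda+i0}(r_\lambda)$ and $\bfA_{\lambda-i0}(r_\lambda)$ agree, which is precisely where Theorems~\ref{I:T: on vectors with property L} and~\ref{I:T: type I vectors} are used.
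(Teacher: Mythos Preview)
Your overall strategy coincides with the paper's: identify the resonance matrix with the Hermitian form $b(x,y)=\scal{x}{Jy}$ on $\Upsilon_{\lambda+i0}(r_\lambda)$, exhibit a large totally isotropic subspace via the type~I theorems, and deduce the signature bound by an inertia argument. Your isotropic subspace $\clV$ spanned by the $u_\nu^{(i)}$ with $i\le\lfloor d_\nu/2\rfloor$ is precisely the paper's $\clLw_{\lambda+i0}(r_\lambda)$ restricted to a Jordan basis, and your shift identity reproduces Proposition~\ref{P: (psi,J psi)=0}.

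The genuine gap is the non-degeneracy of $b|_\Upsilon$, which your inertia step needs (a totally isotropic $p$-dimensional subspace forces $\min(n_+,n_-)\ge p$ only when the radical is zero). You flag this as the main obstacle and propose two cures, but neither works. The Gram-matrix route fails because the matrix is not anti-triangular: your vanishing criterion $i+j\le\min(d_\nu,d_\mu)$ leaves the off-diagonal blocks ($\nu\ne\mu$) largely undetermined above that threshold, so the determinant is not the product of the $\gamma_\nu$ and non-vanishing of the $\gamma_\nu$ does not imply non-degeneracy. The limit route from non-real $z$ is circular: the form at $y>0$ is non-degenerate by Theorem~\ref{T: sign of res matrix for set of res points}, but rank can drop in the limit, and the paper's Lemma~\ref{L: sign(M)=sign(M(y))} preventing this already invokes non-degeneracy at $y=0$.

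The fix is already in the paper: Proposition~\ref{P: rank M=rank P} states $\rank Q_{\lambda-i0}(r_\lambda)JP_{\lambda+i0}(r_\lambda)=N$, i.e.\ $b|_\Upsilon$ is non-degenerate; it follows from Theorem~\ref{T: property M} (which in turn rests on Proposition~\ref{P: euE (Vf)=0, k>1}). Once you cite this, your argument is complete and identical to the paper's proof: $\clV$ isotropic of dimension $\sum_\nu\lfloor d_\nu/2\rfloor=(N-o)/2\ge(N-m)/2$, non-degeneracy gives $\min(n_+,n_-)\ge\dim\clV$, hence $\abs{n_+-n_-}\le N-2\dim\clV=o\le m$.
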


Theorems \ref{I:T: res.ind=sign res.matrix} and \ref{I:T: U-turn for res matrix} have the following corollary. 

\begin{thm} \label{I:T: U-turn for res index} (Theorem \ref{T: U-turn for res index}) For all real resonance points~$r_\lambda$
$$
  \abs{\ind_{res}(\lambda; H_{r_\lambda}, V)} \leq \dim \Upsilon_{\lambda+i0}^1(r_\lambda).
$$
\end{thm}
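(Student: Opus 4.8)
The plan is to obtain this bound by simply chaining together the two results that immediately precede it, Theorem~\ref{I:T: res.ind=sign res.matrix} and Theorem~\ref{I:T: U-turn for res matrix}; once those are in hand no new construction is required. So fix an essentially regular point~$\lambda$ and a real resonance point~$r_\lambda$ of the triple $(\lambda;H_0,V)$, and work with the resonance matrix $Q_{\lambda\mp i0}(r_\lambda)\,J\,P_{\lambda\pm i0}(r_\lambda)$ associated with it.

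First I would invoke Theorem~\ref{I:T: res.ind=sign res.matrix}, which identifies the resonance index with the signature of the resonance matrix,
$$
  \ind_{res}(\lambda;H_{r_\lambda},V) = \sign\brs{Q_{\lambda\mp i0}(r_\lambda)\,J\,P_{\lambda\pm i0}(r_\lambda)},
$$
this value being the same for either choice of the sign. This converts the statement into the purely algebraic assertion that the absolute value of the signature of the finite-rank self-adjoint operator $Q_{\lambda\mp i0}(r_\lambda)\,J\,P_{\lambda\pm i0}(r_\lambda)$ is at most $\dim\Upsilon^1_{\lambda+i0}(r_\lambda)$. Second, I would apply Theorem~\ref{I:T: U-turn for res matrix}, the U-turn theorem for the resonance matrix, which is exactly this estimate,
$$
  \abs{\sign\brs{Q_{\lambda\mp i0}(r_\lambda)\,J\,P_{\lambda\pm i0}(r_\lambda)}} \leq \dim\Upsilon^1_{\lambda+i0}(r_\lambda),
$$
and substitute the first display into the left-hand side of the second to get $\abs{\ind_{res}(\lambda;H_{r_\lambda},V)}\leq\dim\Upsilon^1_{\lambda+i0}(r_\lambda)$, which is the claim. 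The only bookkeeping I would double-check is that the two cited theorems really refer to the same operator with matching choices of $\pm$, and that the bound $\dim\Upsilon^1_{\lambda+i0}(r_\lambda)$ is insensitive to the boundary value (it coincides with $\dim\Upsilon^1_{\lambda-i0}(r_\lambda)$ by the $z\mapsto\bar z$, $r_z\mapsto\bar r_z$ symmetry recorded in Section~\ref{S: Resonance points}); both hold directly from the way those statements are phrased.

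Since both ingredient theorems are already available, there is essentially no obstacle in the corollary itself; it is a one-line combination. The genuine difficulty lies upstream, in the proof of Theorem~\ref{I:T: U-turn for res matrix}: that is where one must show that, once the resonance matrix is written out using a Jordan decomposition of $\Upsilon_{\lambda+i0}(r_\lambda)$, all Jordan strata beyond the ``type~I'' block contribute to the signature only in $\pm$-cancelling pairs, so that what survives is controlled by the order-one vectors. The tools for that are Theorem~\ref{I:T: on vectors with property L} (in a Jordan chain of order~$k$ the vectors $u^{(1)},\dots,u^{(\lceil k/2\rceil)}$ are of type~I), Theorem~\ref{I:T: type I vectors} on the structure of $\Upsilon^{\mathrm I}_{\lambda+i0}(r_\lambda)$, and Theorem~\ref{I:T: property M} that $P_{\lambda\pm i0}(r_\lambda)$ is an isomorphism from $\Upsilon_{\lambda\mp i0}(r_\lambda)$ onto $\Upsilon_{\lambda\pm i0}(r_\lambda)$ --- but all of that has been discharged before the present statement is reached, so here I would do nothing more than cite Theorems~\ref{I:T: res.ind=sign res.matrix} and~\ref{I:T: U-turn for res matrix}.
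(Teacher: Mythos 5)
Your proof is correct and is exactly the paper's argument: the paper derives Theorem \ref{T: U-turn for res index} immediately by combining Theorem \ref{T: res.ind=sign res.matrix} (resonance index equals the signature of the resonance matrix) with Theorem \ref{T: U-turn for res matrix} (that signature is bounded in absolute value by $\dim \Upsilon^1_{\lambda+i0}(r_\lambda)$). Your remarks about the sign-consistency bookkeeping and about the real work lying upstream in the proof of the U-turn theorem for the resonance matrix are also accurate.
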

\noindent
Theorem \ref{I:T: U-turn for res index} has the following meaning: the increment of the spectral flow inside essential spectrum
which occurs at a resonance point~$r_\lambda$ cannot be larger than the multiplicity of the singular spectrum
of $H_{r_\lambda}$ at~$\lambda.$

The numbers $N_\pm$ from the definition of the resonance index give more information about the behaviour of points of the singular spectrum than the difference $N_+-N_-.$
Appealing to a shop-keeper's doorbell, a customer may open the door and leave without entering the shop. In this
case the doorbell rings but the number of customers in the shop remains the same (that is, increment of spectral flow is zero).
In other words, a ring of the doorbell condition $r \in R(\lambda; H_0,V)$ does not necessarily mean that an ``eigenvalue'' crossed~$\lambda,$
e.g., if $\lambda$ is outside the essential spectrum, an eigenvalue can make a U-turn at~$\lambda.$
Theorems~\ref{T: xis=ind res} and~\ref{I:T: U-turn for res index} imply that
if there is an eigenvalue $\lambda_j(r)$ of a path~$H_r$ making a U-turn at~$\lambda$ when $r=r_\lambda,$
then $N_+>0$ and $N_->0$ so that contributions of that eigenvalue to $N_+$ and $N_-$ cancel each other.
In particular, if the eigenvalue $\lambda_j(r_\lambda) = \lambda$ of $H_{r_\lambda}$ making a U-turn is non-degenerate, then
$N_+=N_-,$ so that $\ind_{res}(\lambda; H_{r_\lambda},V)$ is zero.
On page \pageref{Page: figures} of this paper eight diagrams are given which correspond to eight qualitatively different eigenvalue behaviors in case $N_+=5$
and $N_-= 2.$


\bigskip
The main result of section \ref{S: property U} is Theorem \ref{T: Property U}.
Proof of this theorem relies on certain algebraic relations between operators $P_{\lambda \pm i0}(r_\lambda)$
and $\bfA_{\lambda \pm i0}(r_\lambda)$ which are proved in this section.

\label{Page: point with property C}
A real resonance point~$r_\lambda$ will be said to have \emph{property~$C$} if the vector spaces $\Upsilon_{\lambda\pm i0}(r_\lambda)$
admit Jordan decompositions
\begin{equation*} 
  \Upsilon_{\lambda+i0}(r_\lambda) = \Upsilon^{[1]}_{\lambda+i0}(r_\lambda) \dotplus \Upsilon^{[2]}_{\lambda+i0}(r_\lambda) \dotplus \ldots \dotplus \Upsilon^{[m]}_{\lambda+i0}(r_\lambda)
\end{equation*}
and
\begin{equation*} 
  \Upsilon_{\lambda-i0}(r_\lambda) = \Upsilon^{[1]}_{\lambda-i0}(r_\lambda) \dotplus \Upsilon^{[2]}_{\lambda-i0}(r_\lambda) \dotplus \ldots \dotplus \Upsilon^{[m]}_{\lambda-i0}(r_\lambda)
\end{equation*}
such that for all $j=1,2,\ldots,m$ the following equalities hold:
\begin{equation*} 
  P_{\lambda + i0}(r_\lambda) \Upsilon^{[\nu]}_{\lambda - i0}(r_\lambda) = \Upsilon^{[\nu]}_{\lambda + i0}(r_\lambda)
  \quad \text{and} \quad P_{\lambda - i0}(r_\lambda) \Upsilon^{[\nu]}_{\lambda + i0}(r_\lambda) = \Upsilon^{[\nu]}_{\lambda - i0}(r_\lambda).
\end{equation*}

\begin{thm} \label{I:T: Property U} (Theorems \ref{T: Property U} and \ref{T: Q(+) is a lin. isom-m})
For any $z = \lambda \pm i0 \in \partial \Pi,$ for any real resonance point~$r_\lambda \in \mbR$ with property~$C,$
corresponding to~$z$ and for any $j=1,2,3,\ldots$ (1) restriction of the idempotent
operator $P_{\lambda\pm i0}(r_\lambda)$ to $\Upsilon^j_{\lambda\mp i0}(r_\lambda)$
is a linear isomorphism of the vector spaces $\Upsilon^j_{\lambda\mp i0}(r_\lambda)$ and $\Upsilon^j_{\lambda\pm i0}(r_\lambda),$
and
(2) the idempotent $Q_{\lambda\pm i0}(r_\lambda)$ is a linear isomorphism
of the vector spaces~$\Psi^j_{\lambda\mp i0}(r_\lambda)$ and~$\Psi^j_{\lambda\pm i0}(r_\lambda)$
for all $j=1,2,\ldots.$

In other words, for points~$r_\lambda$ with property~$C,$ for all $j=1,2,3,\ldots$ we have commutative diagrams of linear isomorphisms:
\begin{equation*} 
  \xymatrix{
    \Psi_{\lambda+i0}^j(r_\lambda)  && \Upsilon_{\lambda+i0}^j(r_\lambda) \ar[ll]_J  \\
                    &&                               \\
    \Psi_{\lambda-i0}^j(r_\lambda) \ar[uu]^{Q_{\lambda+i0}(r_\lambda)} && \Upsilon_{\lambda-i0}^j(r_\lambda) \ar[ll]^J \ar[uu]_{P_{\lambda+i0}(r_\lambda)}
  }
\qquad\qquad
  \xymatrix{
    \Psi_{\lambda+i0}^j(r_\lambda) \ar[dd]_{Q_{\lambda-i0}(r_\lambda)} && \Upsilon_{\lambda+i0}^j(r_\lambda) \ar[ll]_J \ar[dd]^{P_{\lambda-i0}(r_\lambda)} \\
                    &&                               \\
    \Psi_{\lambda-i0}^j(r_\lambda)  && \Upsilon_{\lambda-i0}^j(r_\lambda) \ar[ll]^J
  }
\end{equation*}
\end{thm}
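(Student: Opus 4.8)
The plan is to reduce the whole assertion to a single statement about the idempotent $P_{\lambda+i0}(r_\lambda)$ and to obtain everything else by algebraic transport. By Theorem~\ref{I:T: property M} the idempotent $P_{\lambda+i0}(r_\lambda)$ already maps $\Upsilon_{\lambda-i0}(r_\lambda)$ isomorphically onto $\Upsilon_{\lambda+i0}(r_\lambda)$, and $P_{\lambda-i0}(r_\lambda)$ maps $\Upsilon_{\lambda+i0}(r_\lambda)$ isomorphically onto $\Upsilon_{\lambda-i0}(r_\lambda)$; what must be added is that these isomorphisms respect the two nilpotent filtrations. Using the $s$-independent description $\Upsilon^j_{\lambda\pm i0}(r_\lambda) = \ker\bfA^j_{\lambda\pm i0}(r_\lambda)\cap\Upsilon_{\lambda\pm i0}(r_\lambda)$ (Section~\ref{S: Resonance points}) together with $\Upsilon_{\lambda\pm i0}(r_\lambda) = \im P_{\lambda\pm i0}(r_\lambda)$, this becomes a statement about how $P_{\lambda\pm i0}(r_\lambda)$ interacts with $\bfA_{\lambda\pm i0}(r_\lambda)$. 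The same-sign interaction is free from the Laurent expansion and the contour formulas, namely $\bfA_{\lambda+i0}(r_\lambda)P_{\lambda+i0}(r_\lambda) = P_{\lambda+i0}(r_\lambda)\bfA_{\lambda+i0}(r_\lambda) = \bfA_{\lambda+i0}(r_\lambda)$; the content lies in the mixed-sign interaction.

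So the core step is to establish the mixed-sign relations between $P_{\lambda\pm i0}(r_\lambda)$ and $\bfA_{\lambda\mp i0}(r_\lambda)$. On the lower half of each Jordan chain this is already available: by Theorem~\ref{I:T: on vectors with property L} the vectors $u^{(1)},\ldots,u^{(\lceil k/2\rceil)}$ of a chain of order $k$ are of type~I, hence by Theorem~\ref{I:T: type I vectors} they lie in $\Upsilon_{\lambda+i0}(r_\lambda)\cap\Upsilon_{\lambda-i0}(r_\lambda)$, satisfy $\bfA^j_{\lambda+i0}(r_\lambda)u^{(l)} = \bfA^j_{\lambda-i0}(r_\lambda)u^{(l)}$, and are fixed by $P_{\lambda\pm i0}(r_\lambda)$ (being already in its range), with Proposition~\ref{I:P: euE (Vf)=0, k>1} supplying the quantitative vanishing that makes this precise; on such vectors filtration-compatibility is immediate. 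The upper halves of the chains are not controlled by type-I theory, and this is exactly where property~$C$ enters: given Jordan decompositions $\Upsilon_{\lambda\pm i0}(r_\lambda) = \bigoplus_\nu \Upsilon^{[\nu]}_{\lambda\pm i0}(r_\lambda)$ with $P_{\lambda+i0}(r_\lambda)\Upsilon^{[\nu]}_{\lambda-i0}(r_\lambda) = \Upsilon^{[\nu]}_{\lambda+i0}(r_\lambda)$, I would prove, block by block, that $P_{\lambda+i0}(r_\lambda)$ intertwines $\bfA_{\lambda-i0}(r_\lambda)$ on $\Upsilon^{[\nu]}_{\lambda-i0}(r_\lambda)$ with $\bfA_{\lambda+i0}(r_\lambda)$ on $\Upsilon^{[\nu]}_{\lambda+i0}(r_\lambda)$; the tools are the functional equation $A_z(s) = (1+sA_z(0))^{-1}A_z(0)$, the identity $A_{\lambda+i0}(s) - A_{\lambda-i0}(s) = 2i\,\Im T_{\lambda+i0}(H_s)J$ coming from $T_z^* = T_{\bar z}$, and the algebraic relations between the $P$'s and $\bfA$'s proved in this section. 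Since (by the algebraic relations) this restriction intertwines the cyclic nilpotents of the two single Jordan blocks, and (by a dimension count against Theorem~\ref{I:T: property M}) it is an isomorphism, it carries $\ker\bfA^j_{\lambda-i0}(r_\lambda)$ onto $\ker\bfA^j_{\lambda+i0}(r_\lambda)$ within the block; summing over $\nu$ gives $P_{\lambda+i0}(r_\lambda)\Upsilon^j_{\lambda-i0}(r_\lambda) = \Upsilon^j_{\lambda+i0}(r_\lambda)$ for all $j$, and symmetrically for $P_{\lambda-i0}(r_\lambda)$. This is part~(1).

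For part~(2) I would transport part~(1) through $J$. From $J(1+(r_\lambda-s)T_{\lambda\pm i0}(H_s)J)^k = (1+(r_\lambda-s)JT_{\lambda\pm i0}(H_s))^k J$ together with $\ker J\cap\Upsilon_{\lambda\pm i0}(r_\lambda) = 0$ (a vector $u$ in this intersection satisfies $(1+(r_\lambda-s)A_{\lambda\pm i0}(s))u = u$, forcing $u=0$) one sees that $J$ restricts to linear isomorphisms $\Upsilon^j_{\lambda\pm i0}(r_\lambda)\to\Psi^j_{\lambda\pm i0}(r_\lambda)$ for every $j$, the equality of dimensions coming from the fact that $T_{\lambda\pm i0}(H_s)J$ and $JT_{\lambda\pm i0}(H_s)$ have the same algebraic eigenvalue multiplicities. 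Combining this with $Q_{\lambda\pm i0}(r_\lambda)J = JP_{\lambda\pm i0}(r_\lambda)$, part~(1) yields that $Q_{\lambda+i0}(r_\lambda)$ restricts to an isomorphism $\Psi^j_{\lambda-i0}(r_\lambda)\to\Psi^j_{\lambda+i0}(r_\lambda)$, and symmetrically for $Q_{\lambda-i0}(r_\lambda)$. The two commutative squares are then nothing but the statement that all four maps in each are isomorphisms together with the single identity $JP_{\lambda\pm i0}(r_\lambda) = Q_{\lambda\pm i0}(r_\lambda)J$.

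The main obstacle is the block-wise mixed-sign intertwining of the second paragraph: promoting the type-I information, which only controls the lower half of each Jordan chain, to a statement valid on the whole block. Property~$C$ is indispensable precisely here, since it provides the matching of the two Jordan decompositions under $P_{\lambda+i0}(r_\lambda)$, and the algebraic relations between $P_{\lambda\pm i0}(r_\lambda)$ and $\bfA_{\lambda\pm i0}(r_\lambda)$ developed in this section are what upgrade that matching to genuine intertwining, hence to filtration-compatibility. Everything after that — the reduction to $P_{\lambda+i0}(r_\lambda)$, the dimension bookkeeping, and the passage to the $\Psi$-spaces via $J$ — is routine.
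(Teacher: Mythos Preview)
Your reduction of part~(2) to part~(1) via $JP_{\lambda\pm i0}(r_\lambda)=Q_{\lambda\pm i0}(r_\lambda)J$ and the $J$-isomorphisms $\Upsilon^j\to\Psi^j$ is correct and is exactly what the paper does in Theorem~\ref{T: Q(+) is a lin. isom-m}.

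The gap is in part~(1). Your core step claims that on each Jordan block $P_{\lambda+i0}(r_\lambda)$ \emph{intertwines} $\bfA_{\lambda-i0}(r_\lambda)$ with $\bfA_{\lambda+i0}(r_\lambda)$, i.e.\ $\bfA_{\lambda+i0}P_{\lambda+i0}=P_{\lambda+i0}\bfA_{\lambda-i0}$ on $\Upsilon^{[\nu]}_{\lambda-i0}$. This is false. Take the order-$2$, geometric-multiplicity-$1$ example of \S\ref{SSS: order 2} (which has property~$C$ trivially since $m=1$): with $u_-^{(2)}=\twovector{\hat u_-}{0}$ the explicit formulas there give $P_{\lambda+i0}\bfA_{\lambda-i0}u_-^{(2)}=\twovector{0}{-1}$ but $\bfA_{\lambda+i0}P_{\lambda+i0}u_-^{(2)}=\twovector{0}{-\overline{a_{0,+}}/a_{0,+}}$, and these differ whenever $a_{0,+}=\scal{\hat\psi}{T_{\lambda+i0}(\hat H_{r_\lambda})\hat\psi}$ is non-real, which is the generic situation inside essential spectrum. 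What the algebraic relations of this section actually yield is weaker: the operator $E_+:=\bfA_{\lambda+i0}-P_{\lambda+i0}\bfA_{\lambda-i0}P_{\lambda+i0}$ has image consisting of type-I vectors and annihilates type-I vectors, hence under property~$C$ it \emph{decreases order} (Lemma~\ref{L: P(+)A(-)P(+) decreases order}) --- it does not vanish. Together with the companion fact that $P_{\lambda+i0}P_{\lambda-i0}P_{\lambda+i0}$ \emph{preserves order} (Lemma~\ref{L: P(+)P(-)P(+) preserves order}), the paper runs an induction on~$j$: for $u\in\Upsilon_{\lambda-i0}^{k+1}$ one gets $P_-\bfA_+P_-u\in\Upsilon_-^k$, hence $\bfA_+^k(P_-\bfA_+P_-u)=0$ by the inductive hypothesis, then rewrites this as $\bfA_+^k(P_+P_-P_+)\bfA_+u=0$ and uses order-preservation of $P_+P_-P_+$ to conclude $\bfA_+^{k+1}u=0$. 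Your block-by-block shortcut tries to bypass this induction, but the intertwining hypothesis it rests on does not hold.
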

Real resonance points for which the conclusion of this theorem holds are called points with \emph{property~$U$}.
Thus, property $C$ implies property $U.$ Plainly, every point of geometric multiplicity 1 has property~$C$ and therefore it has property~$U$ too.
We conjecture that every real resonance point has properties~$C$ and~$U.$

In section \ref{S: questions of indend from F} we consider some questions of independence from the choice of the rigging operator~$F.$

\begin{thm} \label{I:T: res.ind independent of F} (Theorem \ref{T: res.ind independent of F})
The resonance index $\ind_{res}(\lambda; H,V)$ does not depend on the choice of the rigging operator~$F$
as long as~$\lambda$ is essentially regular for the pair $(\clA,F),$ where $\clA = \set{H+rV \colon r \in \mbR}$ and $V$ is
a regularizing direction for an operator $H$ which is resonant at~$\lambda.$
\end{thm}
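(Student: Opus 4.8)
The plan is to strip the rigging out of the definition of the resonance index by reducing it to a spectral quantity that lives on the main Hilbert space~$\hilb$ and that makes no reference to~$F$. Fix two riggings, $F\colon\hilb\to\clK$ with $V=F^*JF$ and $G\colon\hilb\to\clK'$ with $V=G^*KG$, and suppose that~$\lambda$ is essentially regular for $\clA=\set{H+rV\colon r\in\mbR}$ with respect to both. By Theorem~\ref{T: 4.1.11 Az3} the set of~$r$ resonant at~$\lambda$ is discrete for each rigging, so one may choose a real number~$s$ that is regular at~$\lambda$ for both~$F$ and~$G$; write $H_s=H+(s-r_\lambda)V$, where~$r_\lambda$ is fixed by $H=H_{r_\lambda}$. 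Recalling the definition of the resonance index from Section~\ref{S: res index},
\begin{equation*}
  \ind_{res}(\lambda; H,V) = \euR\brs{A_{\lambda+iy}^{(F)}(s)\,P_{\lambda+iy}^{(F)}(r_\lambda)} = N_+^{(F)} - N_-^{(F)}, \qquad 0<y\ll 1,
\end{equation*}
where $A_{\lambda+iy}^{(F)}(s)=T_{\lambda+iy}(H_s)J=FR_{\lambda+iy}(H_s)F^*J$, the idempotent $P_{\lambda+iy}^{(F)}(r_\lambda)$ is the Riesz projection of $A_{\lambda+iy}^{(F)}(s)$ onto the group of (non-real) eigenvalues collapsing onto $\sigma_\lambda(s)=(s-r_\lambda)^{-1}$ as $y\to0^+$, and $N_\pm^{(F)}$ counts, with algebraic multiplicity, the eigenvalues of that group lying in $\mbC_\pm$; this number does not depend on the choice of regular~$s$.

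The heart of the matter is the following spectral-equivalence statement, valid for every non-real~$z$ and every~$s$: the non-zero eigenvalues of $A_z^{(F)}(s)=FR_z(H_s)F^*J$ on~$\clK$, together with their algebraic multiplicities, coincide with the non-zero eigenvalues of $VR_z(H_s)$ (equivalently $(H_s-z)^{-1}V$) on~$\hilb$, and the corresponding Riesz idempotents have equal rank. This is the $AB\leftrightarrow BA$ principle applied with $A=FR_z(H_s)$ and $B=F^*J$, so that $AB=A_z^{(F)}(s)$ and $BA=VR_z(H_s)$, in the form appropriate to the sandwiched-resolvent setting; it is precisely the off-real-axis analogue of Theorems~\ref{I:T: res eq-n and eigenvectors} and~\ref{I: T: if lambda notin ess sp, ...}, but now in the easy situation where no limiting absorption principle is involved, since $FR_z(H_s)$ and $R_z(H_s)F^*$ are bounded and $A_z^{(F)}(s)$ is compact. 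The essential point is that the operator $VR_z(H_s)$ on~$\hilb$ knows nothing about the rigging.

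Applying the same statement to~$G$, we conclude that for every $y\ne0$ the operators $A_{\lambda+iy}^{(F)}(s)$ and $A_{\lambda+iy}^{(G)}(s)$ have, in $\mbC\setminus\set{0}$, exactly the same eigenvalues with the same algebraic multiplicities --- both being those of $VR_{\lambda+iy}(H_s)$. Since~$s$ is regular at~$\lambda$ for both riggings, the norm limits $A_{\lambda+i0}^{(F)}(s)$ and $A_{\lambda+i0}^{(G)}(s)$ exist; letting $y\to0^+$ and using upper semicontinuity of the spectrum of a norm-convergent family of compact operators, we obtain, first, that $\sigma_\lambda(s)$ is an eigenvalue of $A_{\lambda+i0}^{(F)}(s)$ if and only if it is one of $A_{\lambda+i0}^{(G)}(s)$ --- that is,~$H$ is resonant at~$\lambda$ for~$F$ exactly when it is resonant for~$G$, and more generally the resonance set $R(\lambda; H,V)$ is the same for the two riggings --- and, second, that the sub-multiset of eigenvalues collapsing onto $\sigma_\lambda(s)$ as $y\to0^+$ is literally the same for $A_{\lambda+iy}^{(F)}(s)$ and $A_{\lambda+iy}^{(G)}(s)$. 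In particular $N_+^{(F)}=N_+^{(G)}$ and $N_-^{(F)}=N_-^{(G)}$, which proves the theorem.

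I expect the only genuine obstacle to be the precise form of the $AB\leftrightarrow BA$ correspondence in the presence of the unbounded rigging~$F$: one must keep track of the fact that $A_z^{(F)}(s)$ is the everywhere-defined compact operator $T_z(H_s)J$ rather than the literal composition $FR_z(H_s)F^*J$ on $\dom F^*$, that $VR_z(H_s)$ is the bounded operator obtained by rewriting $V=H_{s+1}-H_s$, and that the intertwining maps of the $AB\leftrightarrow BA$ calculus genuinely carry generalised eigenspaces isomorphically onto one another --- exactly the kind of bookkeeping carried out in Section~\ref{S: Resonance points} and in~\cite{Az3v6}. Everything else --- discreteness of the resonance sets, existence of a common regular~$s$, and stability of the collapsing eigenvalue groups as $y\to0^+$ --- is routine.
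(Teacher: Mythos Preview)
Your argument is correct and is essentially the paper's own proof. The paper packages the same idea through the ``underlined'' operators $\ulA_z(s)=R_z(H_s)V$ and $\ulP_z(r_z)$ introduced in Section~\ref{S: Resonance points}: Lemma~\ref{L: R=indices of AP and AP} shows (via the same $AB\leftrightarrow BA$ root-vector correspondence you invoke, handled with the bookkeeping you anticipate for unbounded~$F$) that $A_{\lambda+iy}(s)P_{\lambda+iy}(r_\lambda)$ and $\ulA_{\lambda+iy}(s)\ulP_{\lambda+iy}(r_\lambda)$ have identical spectral measures away from~$0$, and then one observes that the latter pair is $F$-free. Your version with two riggings $F,G$ compared via $VR_z(H_s)$ is just the symmetric rephrasing of comparing one rigging to the rigging-independent object.
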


In section~\ref{S: res point of type I} we study a class of the
so-called real resonance points of type~I. By definition, a real
resonance point~$r_\lambda$ is of type~I if and only if for some
regular value of~$s$
\begin{equation*}
  \sqrt{\Im T_{\lambda+i0}(H_s)}\, JP_{\lambda+i0}(r_\lambda) = 0.
\end{equation*}

\begin{thm} \label{I:T: type I thm} (Theorem \ref{T: type I thm})
Let~$\lambda$ be an essentially regular point for the pair $(\clA,F).$
Let~$H_0 \in \clA$ be an operator regular at~$\lambda$ and let $V \in \clA_0(F).$ Let~$r_\lambda \in \mbR$ be a resonance point of the path $\set{H_0+rV \colon r \in \mbR}.$
The following assertions are all equivalent to~$r_\lambda$ of being of type I.
\begin{enumerate}
\item[(i$_\pm$)] For any regular point~$r$ \ \ $\sqrt{\Im T_{\lambda+i0}(H_r)}J P_{\lambda\pm i0}(r_\lambda) = 0.$
\item[(i$^*_\pm$)] There exists a regular point~$r$ such that $\sqrt{\Im T_{\lambda+i0}(H_r)}J P_{\lambda\pm i0}(r_\lambda) = 0.$
\item[(ii$_\pm$)] For any regular point~$r$ \ \ $\sqrt{\Im T_{\lambda+i0}(H_r)}Q_{\lambda\pm i0}(r_\lambda) = 0.$
\item[(ii$^*_\pm$)] There exists a regular point~$r$ such that \ \ $\sqrt{\Im T_{\lambda+i0}(H_r)} Q_{\lambda\pm i0}(r_\lambda) = 0.$
\item[(iii$_\pm$)] The meromorphic function
$$
  w_\pm(s) := \sqrt{\Im T_{\lambda+i0}(H_0)}[1+sJT_{\lambda\pm i0}(H_0)]^{-1}
$$
is holomorphic at $s = r_\lambda.$
\item[(iii$'_\pm$)] The meromorphic function
$$
  w_\pm(s)J = \sqrt{\Im T_{\lambda+i0}(H_0)} J [1+sT_{\lambda\pm i0}(H_0)J]^{-1}
$$
is holomorphic at $s = r_\lambda.$
\item[(iv$_\pm$)] The meromorphic function
$$
  w^\dagger_\pm(s) = [1+sT_{\lambda\mp i0}(H_0)J]^{-1}\sqrt{\Im T_{\lambda+i0}(H_0)}
$$
is holomorphic at $s = r_\lambda.$
\item[(v$_\pm$)] The residue of the function $w_\pm(s)$ at $s = r_\lambda$ is zero.
\item[(vi$_\pm$)] For all $\pm$-resonance vectors the real numbers $c_{-j}$ from Proposition~\ref{P: euE (Vf)=0, k>1}
are all zero.
\item[(vii)] The function \
$
  s \mapsto \Im T_{\lambda+i0}(H_s)
$ \
is holomorphic at $s = r_\lambda.$
\item[(viii)] The function \
$
  s \mapsto J\Im T_{\lambda+i0}(H_s)J
$ \
is holomorphic at $s = r_\lambda.$
\end{enumerate}
Moreover, assertions obtained from {\rm (i$_\pm$)--(ii$_\pm$)} and {\rm (i$^*_\pm$)--(ii$^*_\pm$)} by removing the square root are also equivalent to these ones.
\end{thm}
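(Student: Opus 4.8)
The plan is to funnel every listed condition into one of two $s$-independent ``cores'',
\[
  (\mathrm C^P_\pm):\ \sqrt{\Im T_{\lambda+i0}(H_0)}\,J\,P_{\lambda\pm i0}(r_\lambda)=0,
  \qquad
  (\mathrm C^Q_\pm):\ \sqrt{\Im T_{\lambda+i0}(H_0)}\,Q_{\lambda\pm i0}(r_\lambda)=0,
\]
then to prove $(\mathrm C^P_+)\iff(\mathrm C^Q_+)\iff(\mathrm C^P_-)\iff(\mathrm C^Q_-)$, and finally to identify this common content with ``$r_\lambda$ is of type~I''. Two preliminary observations: since $\Im T_{\lambda+i0}(H_s)$ is the norm limit of $F\,y\bigl((H_s-\lambda)^2+y^2\bigr)^{-1}F^*\ge0$, it is non-negative, so $\sqrt{\Im T_{\lambda+i0}(H_s)}\,x=0\iff\Im T_{\lambda+i0}(H_s)\,x=0$, which settles the ``remove the square root'' clause at once; and since $H_0$ is regular at $\lambda$, the point $0$ is regular, hence $\sigma:=-r_\lambda^{-1}$ is a nonzero eigenvalue of $a_\pm J$ and of $Ja_\pm$, where $a_\pm:=T_{\lambda\pm i0}(H_0)$, with generalized eigenspaces $\Upsilon_{\lambda\pm i0}(r_\lambda)=\operatorname{range}P_{\lambda\pm i0}(r_\lambda)$ and $\Psi_{\lambda\pm i0}(r_\lambda)=\operatorname{range}Q_{\lambda\pm i0}(r_\lambda)$, respectively.

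The engine of the argument is the identity obtained from the second resolvent identity and the push-through relation $F(1+sR_z(H_0)V)^{-1}=(1+sT_z(H_0)J)^{-1}F$, namely $T_z(H_s)=(1+sT_z(H_0)J)^{-1}T_z(H_0)=T_z(H_0)(1+sJT_z(H_0))^{-1}$; specialising to $z=\lambda+i0$ and subtracting the adjoint gives, for every regular $s$,
\[
  \Im T_{\lambda+i0}(H_s)=C_s^*\,\Im T_{\lambda+i0}(H_0)\,C_s,\qquad C_s:=(1+sJ\,a_+)^{-1},
\]
and hence, with $w_+(s)=\sqrt{\Im T_{\lambda+i0}(H_0)}\,C_s$ and real $s$, the factorizations $\Im T_{\lambda+i0}(H_s)=w_+(s)^*w_+(s)$ and $J\,\Im T_{\lambda+i0}(H_s)\,J=(w_+(s)J)^*(w_+(s)J)$, together with $w^\dagger_\pm(s)=w_\pm(s)^*$. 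On $\Psi_{\lambda+i0}(r_\lambda)$ one has $Ja_+=\sigma+N$ with $N$ nilpotent, so for $t=s-r_\lambda\neq0$ the operator $1+sJa_+$ restricts there to $t\sigma\bigl(1+\tfrac{r_\lambda+t}{t\sigma}N\bigr)$, which is invertible; thus $C_s$ leaves $\Psi_{\lambda+i0}(r_\lambda)$ and $J\Upsilon_{\lambda+i0}(r_\lambda)$ invariant and acts invertibly on each. Combining this with the similarity identity and $JP_{\lambda+i0}(r_\lambda)=Q_{\lambda+i0}(r_\lambda)J$ shows that the truth of $\sqrt{\Im T_{\lambda+i0}(H_s)}\,JP_{\lambda+i0}(r_\lambda)=0$ and of $\sqrt{\Im T_{\lambda+i0}(H_s)}\,Q_{\lambda+i0}(r_\lambda)=0$ does not depend on the regular $s$; taking $s=0$ yields $(\mathrm C^P_+)$ and $(\mathrm C^Q_+)$. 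This proves $(\mathrm i^*_\pm)\!\iff\!(\mathrm i_\pm)\!\iff\!(\mathrm C^P_\pm)$ and $(\mathrm{ii}^*_\pm)\!\iff\!(\mathrm{ii}_\pm)\!\iff\!(\mathrm C^Q_\pm)$, and $(\mathrm C^P_\pm)\!\iff\!(\mathrm C^Q_\pm)$ follows because $J$ restricts to a linear isomorphism $\Upsilon_{\lambda\pm i0}(r_\lambda)\to\Psi_{\lambda\pm i0}(r_\lambda)$ — injective since $Ju=0$, $u\in\Upsilon_{\lambda+i0}(r_\lambda)$, forces $a_+Ju=0$ with $a_+J$ invertible on $\Upsilon_{\lambda+i0}(r_\lambda)$, and between spaces of equal dimension $N$ (Section~\ref{S: Resonance points}; cf.\ Theorem~\ref{I:T: property M}).

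For the holomorphy conditions I would Laurent-expand $[1+sJa_\pm]^{-1}$ and $[1+sa_\pm J]^{-1}$ at $s=r_\lambda$ using these nilpotent normal forms: on $\Psi_{\lambda+i0}(r_\lambda)$ one gets $[1+sJa_+]^{-1}=\sum_{j=0}^{d-1}\tfrac{(-1)^j(r_\lambda+t)^j}{\sigma^{j+1}t^{j+1}}N^j$, a pole of order $d$ whose entire principal part lies in the span of $N^jQ_{\lambda+i0}(r_\lambda)$, $j=0,\dots,d-1$, and whose residue $\sigma^{-1}(1+N/\sigma)^{-1}Q_{\lambda+i0}(r_\lambda)$ is \emph{invertible} on $\operatorname{range}Q_{\lambda+i0}(r_\lambda)$. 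Consequently $w_+(s)=\sqrt{\Im T_{\lambda+i0}(H_0)}[1+sJa_+]^{-1}$ is holomorphic at $r_\lambda\iff$ its residue there vanishes $\iff(\mathrm C^Q_+)$; likewise $w_+(s)J$ and $w^\dagger_+(s)$ are holomorphic at $r_\lambda\iff(\mathrm C^P_+)$, resp.\ $(\mathrm C^Q_+)$ (the last via $w^\dagger_+=w_+^*$). Running the same computation with $a_-$ handles the minus signs, so $(\mathrm{iii}_\pm)\!\iff\!(\mathrm v_\pm)\!\iff\!(\mathrm{iv}_\pm)\!\iff\!(\mathrm C^Q_\pm)$ and $(\mathrm{iii}'_\pm)\!\iff\!(\mathrm C^P_\pm)$. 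Feeding the factorization $\Im T_{\lambda+i0}(H_s)=w_+(s)^*w_+(s)$ into this: a pole of $w_+$ of order $p\ge1$ and leading coefficient $b\neq0$ would produce in $w_+^*w_+$ a genuine pole of order $2p$ with leading coefficient $b^*b\neq0$, so $s\mapsto\Im T_{\lambda+i0}(H_s)$ is holomorphic at $r_\lambda\iff w_+$ is, i.e.\ $(\mathrm{vii})\!\iff\!(\mathrm{iii}_+)$; identically, $J\,\Im T_{\lambda+i0}(H_s)\,J=(w_+(s)J)^*(w_+(s)J)$ gives $(\mathrm{viii})\!\iff\!(\mathrm{iii}'_+)$.

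It remains to tie in Theorem~\ref{I:T: type I vectors} and to close the $\pm$ loop. Since $\operatorname{range}P_{\lambda+i0}(r_\lambda)=\Upsilon_{\lambda+i0}(r_\lambda)$, the core $(\mathrm C^P_+)$ says exactly that every vector of $\Upsilon_{\lambda+i0}(r_\lambda)$ satisfies clause~(1) of Theorem~\ref{I:T: type I vectors}, i.e.\ is of type~I; by clauses (8)--(9) of that theorem this is the vanishing, for every $\pm$-resonance vector, of all the coefficients of Proposition~\ref{I:P: euE (Vf)=0, k>1}, which is $(\mathrm{vi}_\pm)$. Finally, clauses (1)$\iff$(2) of Theorem~\ref{I:T: type I vectors} show that $\Upsilon^{\mathrm I}_{\lambda+i0}(r_\lambda)$ is the same computed from $+i0$ or $-i0$ data and is contained in $\Upsilon_{\lambda+i0}(r_\lambda)\cap\Upsilon_{\lambda-i0}(r_\lambda)$; since $\dim\Upsilon_{\lambda+i0}(r_\lambda)=\dim\Upsilon_{\lambda-i0}(r_\lambda)=N$, each of the inclusions $\Upsilon_{\lambda\pm i0}(r_\lambda)\subseteq\Upsilon^{\mathrm I}_{\lambda+i0}(r_\lambda)$ forces $\Upsilon_{\lambda+i0}(r_\lambda)=\Upsilon_{\lambda-i0}(r_\lambda)=\Upsilon^{\mathrm I}_{\lambda+i0}(r_\lambda)$, whence $(\mathrm C^P_+)\!\iff\!(\mathrm C^P_-)$ and all the assertions are equivalent, their common content being that $r_\lambda$ is of type~I. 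The one genuinely delicate step is the Laurent-coefficient bookkeeping of the third paragraph: matching the nilpotent normal forms of $a_\pm J$ and $Ja_\pm$ on the generalized eigenspaces with the operators $P_{\lambda\pm i0}(r_\lambda)$, $Q_{\lambda\pm i0}(r_\lambda)$, $\bfA^j_{\lambda\pm i0}(r_\lambda)$ of Section~\ref{S: Resonance points} and confirming that the residue block is invertible on the generalized eigenspace; once the similarity identity is in hand, the rest is essentially formal.
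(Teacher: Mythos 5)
Your proposal is correct, and its core runs along the same lines as the paper's own proof: the same key identities (the similarity formula~(\ref{F: Az3v6 (4.8)}), the Riesz idempotent calculus of Section~\ref{S: Resonance points}, the removal of the square root via non-negativity of $\Im T_{\lambda+i0}(H_s)$, and the leading-Laurent-coefficient argument $X\mapsto X^*X$ for items (vii) and (viii)), with holomorphy of $w_\pm$ reduced to vanishing of the residue and hence to $\sqrt{\Im T_{\lambda+i0}(H_0)}\,Q_{\lambda\pm i0}(r_\lambda)=0.$ You diverge from the paper in two localized places, both legitimately. First, where the paper extracts the residue and the higher principal-part coefficients by the substitution $\sigma=-s^{-1}$ and the contour-integral bookkeeping of Propositions~\ref{P: (sigma-Az(r))to(-1)=...} and~\ref{P: nilpotent terms of (sigma-B)(-1)}, you compute the Laurent expansion of $[1+sJT_{\lambda+i0}(H_0)]^{-1}$ directly from the normal form $JT_{\lambda+i0}(H_0)=\sigma+N$ on $\Psi_{\lambda+i0}(r_\lambda)$ and observe that the residue block is invertible there; this is the same content in a more explicit, matrix-style form, and your check that vanishing of the residue forces vanishing of the whole principal part (via $N^jQ=QN^jQ$) is exactly what the paper gets from~(\ref{F: QB=BQ=B}). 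Second, the paper closes the $\pm$-symmetry by rerunning the holomorphy argument for $h(s)=\sqrt{\Im T_{\lambda+i0}(H_0)}\,(1+sJT_{\lambda-i0}(H_0))^{-1},$ whereas you close it through Theorem~\ref{T: type I vectors}: condition $(\mathrm C^P_+)$ says $\Upsilon_{\lambda+i0}(r_\lambda)\subseteq\Upsilon^{\mathrm I}_{\lambda}(r_\lambda)\subseteq\Upsilon_{\lambda+i0}(r_\lambda)\cap\Upsilon_{\lambda-i0}(r_\lambda),$ and the equality $\dim\Upsilon_{\lambda+i0}(r_\lambda)=\dim\Upsilon_{\lambda-i0}(r_\lambda)$ from Lemma~\ref{L: j-dimensions coincide} forces all three spaces to coincide, giving $(\mathrm C^P_-).$ This dimension-count route is a genuine (and slightly more economical) alternative, at the price of leaning on the vector-level results of Section~\ref{S: vectors of type I}, which is perfectly admissible here since that section precedes the theorem; your treatment of the $s$-independence via invertibility of $(1+sJT_{\lambda+i0}(H_0))^{-1}$ on $\Psi_{\lambda+i0}(r_\lambda)$ likewise replaces, without loss, the paper's argument through invariance of $\Upsilon_{\lambda+i0}(r_\lambda)$ under $A_{\lambda\pm i0}(r).$
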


The following theorem shows that the property of being of type I is a generic property of real resonance points.
\begin{thm} \label{I:T: suff conditions for type I} (Theorems \ref{T: outside ess sp then type I}, \ref{T: order 1 then type I}, \ref{T: V>0, then type I})
Let~$\lambda$ be an essentially regular point, let~$H_0 \in \clA$ and let $V \in \clA_0(F)$ be a regularizing direction at~$\lambda.$
Let~$r_\lambda$ be a real resonance point of the triple $(\lambda, H_0,V).$
If at least one of the following three conditions hold,
\medskip
\begin{enumerate}
  \item~$\lambda$ does not belong to the (necessarily common) essential spectrum of operators from~$\clA,$
  \item order of~$r_\lambda$ is equal to 1,
  \item the operator $V$ is non-negative or non-positive,
\end{enumerate}
then~$r_\lambda$ is a point of type~I.
\end{thm}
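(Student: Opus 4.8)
The plan is to treat the three cases separately and, in each, to verify the defining property of a type~I point --- equivalently condition~(i$^*_+$) of Theorem~\ref{I:T: type I thm} --- namely that $\sqrt{\Im T_{\lambda+i0}(H_s)}\,JP_{\lambda+i0}(r_\lambda)=0$ for some point~$s$ regular at~$\lambda$. Two elementary observations will be used repeatedly: (a)~since $\Im T_{\lambda+iy}(H_s)=y\,F\abs{R_{\lambda+iy}(H_s)}^{2}F^{*}\geq0$ for $y>0$ and the limit defining $T_{\lambda+i0}(H_s)$ is taken in operator norm, $\Im T_{\lambda+i0}(H_s)\geq0$ for every regular~$s$, and consequently $\scal{Ju}{\Im T_{\lambda+i0}(H_s)Ju}=0$ already forces $\sqrt{\Im T_{\lambda+i0}(H_s)}\,Ju=0$; and (b)~if $\lambda$ lies in the resolvent set of~$H_s$, then $\abs{R_{\lambda+iy}(H_s)}^{2}$ stays bounded as $y\to0^{+}$, so the prefactor~$y$ forces $\Im T_{\lambda+i0}(H_s)=0$. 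For Case~(1) I would fix any regular~$s$ and note that, since $\lambda\notin\sigma_{ess}$, if $\lambda$ were an (isolated) eigenvalue of~$H_s$ with spectral projection $\Pi_\lambda\neq0$, then $T_{\lambda+iy}(H_s)=\tfrac{i}{y}F\Pi_\lambda F^{*}+O(1)$ would blow up as $y\to0^{+}$ --- note $F\Pi_\lambda F^{*}\neq0$ because $F$ and $F^{*}$ have trivial kernels and dense ranges --- contradicting regularity of~$s$; hence $\lambda$ is in the resolvent set of~$H_s$, so by~(b) $\Im T_{\lambda+i0}(H_s)=0$ and (i$^*_+$) holds trivially.

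For Case~(2), when the order of~$r_\lambda$ is~$1$, the space $\Upsilon_{\lambda+i0}(r_\lambda)$, which is the range of $P_{\lambda+i0}(r_\lambda)$, coincides with $\Upsilon^{1}_{\lambda+i0}(r_\lambda)$, so every vector in it is a resonance vector of order~$1$. By the last assertion of Proposition~\ref{I:P: euE (Vf)=0, k>1}, $\scal{Ju}{\Im T_{\lambda+i0}(H_s)Ju}=0$ for every such~$u$ and every non-resonant~$s$; by~(a) this gives $\sqrt{\Im T_{\lambda+i0}(H_s)}\,Ju=0$, and letting~$u$ run over $\Upsilon_{\lambda+i0}(r_\lambda)$ yields $\sqrt{\Im T_{\lambda+i0}(H_s)}\,JP_{\lambda+i0}(r_\lambda)=0$, as required. (Alternatively this case is immediate from Theorem~\ref{I:T: on vectors with property L} applied with $k=1$.)

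For Case~(3), the idea is to reduce to Case~(2) by proving that a sign-definite~$V$ forces the order of every real resonance point to be~$1$. Replacing $V$ by $-V$ if necessary (which merely reparametrizes $r\mapsto-r$ and changes neither the operator $H_{r_\lambda}$ nor the notion of type~I), assume $V\geq0$; since $\scal{JFx}{Fx}=\scal{Vx}{x}\geq0$ on the common domain and $\overline{\operatorname{ran}F}=\clK$, we get $J\geq0$, and I set $K=\sqrt J$. I would then pick~$s$ regular at~$\lambda$ with $s\neq r_\lambda$ (possible, the resonance set being discrete), write $T=T_{\lambda+i0}(H_s)$, $A=TJ=TK^{2}$, $\sigma=(s-r_\lambda)^{-1}\neq0$, so that $\Upsilon^{k}_{\lambda+i0}(r_\lambda)=\ker(A-\sigma)^{k}$ (this description being independent of the choice of~$s$). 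The two ingredients are: first, the intertwining $K(A-\sigma)^{j}=(S-\sigma)^{j}K$ with $S:=KTK$, together with injectivity of~$K$ on each $\ker(A-\sigma)^{k}$ (if $Kv=0$ then $Av=0$, so $(A-\sigma)^{j}v=(-\sigma)^{j}v\neq0$, using $\sigma\neq0$); second, the elementary lemma that an operator~$S$ with $\Im S=K(\Im T)K\geq0$ has only semisimple real eigenvalues (if $(S-\sigma)u'=0$ with $\sigma$ real then $\scal{u'}{Su'}$ is real, so $\Im S\,u'=0$ and hence $(S^{*}-\sigma)u'=0$; if moreover $(S-\sigma)u=u'$, then $\norm{u'}^{2}=\scal{u'}{(S-\sigma)u}=\scal{(S^{*}-\sigma)u'}{u}=0$, a contradiction). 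Combining these: for $u\in\ker(A-\sigma)^{2}$ one has $Ku\in\ker(S-\sigma)^{2}=\ker(S-\sigma)$, hence $K(A-\sigma)u=(S-\sigma)Ku=0$; since $(A-\sigma)u\in\ker(A-\sigma)$, on which $K$ is injective, $(A-\sigma)u=0$. Iterating, $\ker(A-\sigma)^{k}=\ker(A-\sigma)$ for all~$k$, so the order of~$r_\lambda$ is~$1$ and Case~(2) applies.

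The bookkeeping in Cases~(1) and~(2) is routine and rests only on positivity of $\Im T_{\lambda+i0}$ and on the already-established Proposition~\ref{I:P: euE (Vf)=0, k>1}. The substantive step, and the one I expect to need the most care, is Case~(3): the passage from the non-selfadjoint operator $TJ$ to the symmetric operator $KTK$ --- which is what forces the choice of a base point $s\neq r_\lambda$, so that the relevant eigenvalue $\sigma$ is nonzero and $K$ is injective on the generalized eigenspaces --- and the semisimplicity lemma for operators with non-negative imaginary part, which is precisely where the sign-definiteness of~$V$ enters.
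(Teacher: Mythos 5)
Your proposal is correct. For conditions (1) and (2) it is essentially the paper's argument: the paper's Theorem~\ref{T: outside ess sp then type I} also rests on the observation that $\Im T_{\lambda+i0}(H_s)$ vanishes identically for regular $s$ when $\lambda\notin\sigma_{ess}$ (invoking item (vii) of Theorem~\ref{T: type I thm} rather than (i$^*_\pm$), which is the same thing), and Theorem~\ref{T: order 1 then type I} is exactly your combination of Proposition~\ref{P: euE (Vf)=0, k>1} with non-negativity of $\Im T_{\lambda+i0}(H_s)$.

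Where you genuinely diverge is condition (3). The paper also reduces it to the order-one case, but its Proposition~\ref{P: J>0 then res points has order 1} argues as follows: given an order-two vector $\phi$, set $u=\bfA_{\lambda+i0}(r_\lambda)\phi$ (order one, depth $\geq 1$) and compute $\scal{u}{Ju}=\scal{\phi}{J\bfA_{\lambda-i0}(r_\lambda)u}=0$, using the adjoint relations $\bfA_z^*=\bfB_{\bar z}$, $J\bfA=\bfB J$ and the coincidence $\Upsilon^1_{\lambda+i0}(r_\lambda)=\Upsilon^1_{\lambda-i0}(r_\lambda)$ (Corollary~\ref{C: Upsilon 1(+)=Upsilon 1(-)}); sign-definiteness of $J$ then gives $Ju=0$, contradicting injectivity of $J$ on $\Upsilon$ (Lemma~\ref{L: j-dimensions coincide}). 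You instead symmetrize $A=T_{\lambda+i0}(H_s)J$ to $S=\sqrt J\,T_{\lambda+i0}(H_s)\sqrt J$, prove that $\Im S\geq 0$ forces real eigenvalues of $S$ to be semisimple, and transport this back through the intertwining $\sqrt J(A-\sigma)=(S-\sigma)\sqrt J$ together with injectivity of $\sqrt J$ on the generalized eigenspace (where $\sigma=(s-r_\lambda)^{-1}\neq 0$ is essential). Your route is self-contained at the single boundary value $\lambda+i0$ --- it needs neither the $\lambda-i0$ nilpotents nor the $\pm$ comparison of order-one spaces --- and the semisimplicity lemma for operators with non-negative imaginary part is a reusable general fact (it is the same symmetrization that underlies the modified scattering matrix in the paper); the paper's argument is shorter given the machinery it has already built. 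Both proofs, yours and the paper's, pass tacitly from $V\geq 0$ to $J\geq 0$ and gloss the same mild technicalities (e.g.\ $\Im T_{\lambda+i0}(H_s)=0$ off the spectrum despite $F$ being unbounded), so you are at the same level of rigor as the original; your reduction of $V\leq 0$ to $V\geq 0$ via $r\mapsto -r$ is also sound.
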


\noindent For every real resonance point~$r_\lambda$ of type I the idempotents $P_{\lambda+i0}(r_\lambda)$ and $P_{\lambda-i0}(r_\lambda)$
coincide.
We say that a real resonance point~$r_\lambda$ has \emph{property~$S$} if
kernels of the idempotents $P_{\lambda+i0}(r_\lambda)$ and $P_{\lambda-i0}(r_\lambda)$ coincide.

\begin{prop} \label{I:P: property S} (Proposition \ref{P: property S})
Let~$\lambda$ be an essentially regular point and let~$r_\lambda$ be a real resonance point of a triple $(\lambda, H_0,V).$
The following assertions are equivalent:
\begin{enumerate}
  \item[(i)] \ $r_\lambda$ has property $S.$
  \item[(ii)] \ $P_{\lambda+i0}(r_\lambda) P_{\lambda-i0}(r_\lambda) = P_{\lambda+i0}(r_\lambda)$ and $P_{\lambda-i0}(r_\lambda) P_{\lambda+i0}(r_\lambda) = P_{\lambda-i0}(r_\lambda).$
  \item[(iii)] \ $\im Q_{\lambda+i0}(r_\lambda) = \im Q_{\lambda-i0}(r_\lambda).$
  \item[(iv)] \ $Q_{\lambda+i0}(r_\lambda) Q_{\lambda-i0}(r_\lambda) = Q_{\lambda-i0}(r_\lambda)$ and $Q_{\lambda-i0}(r_\lambda) Q_{\lambda+i0}(r_\lambda) = Q_{\lambda+i0}(r_\lambda).$
  \item[(v)] \ $Q_{\lambda-i0}(r_\lambda)JP_{\lambda+i0}(r_\lambda) = JP_{\lambda+i0}(r_\lambda).$
  \item[(vi)] \ $Q_{\lambda+i0}(r_\lambda)JP_{\lambda-i0}(r_\lambda) = JP_{\lambda-i0}(r_\lambda).$
  \item[(vii)] \ $Q_{\lambda-i0}(r_\lambda)JP_{\lambda+i0}(r_\lambda) = Q_{\lambda-i0}(r_\lambda)J.$
  \item[(viii)] \ $Q_{\lambda+i0}(r_\lambda)JP_{\lambda-i0}(r_\lambda) = Q_{\lambda+i0}(r_\lambda)J.$
  \item[(ix)] \ $Q_{\lambda-i0}(r_\lambda)JP_{\lambda+i0}(r_\lambda) = Q_{\lambda+i0}(r_\lambda) J P_{\lambda-i0}(r_\lambda).$
\end{enumerate}
\end{prop}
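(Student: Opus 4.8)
Write $P_\pm:=P_{\lambda\pm i0}(r_\lambda)$, $Q_\pm:=Q_{\lambda\pm i0}(r_\lambda)$, $\Upsilon_\pm:=\Upsilon_{\lambda\pm i0}(r_\lambda)$, $\Psi_\pm:=\Psi_{\lambda\pm i0}(r_\lambda)$. The plan is to route all nine assertions through the two ``core'' equalities (i) $[\ker P_+=\ker P_-]$ and (iii) $[\im Q_+=\im Q_-]$, using only idempotent bookkeeping together with structural facts already available from Section~\ref{S: Resonance points} and Theorem~\ref{I:T: property M}. The facts I intend to use are: $P_\pm,Q_\pm$ are bounded idempotents, all of the same finite rank $N$, with $\im P_\pm=\Upsilon_\pm$, $\im Q_\pm=\Psi_\pm$ (so each $\ker P_\pm=\im(1-P_\pm)$ is closed of codimension $N$); the adjoint relations $(P_\pm)^*=Q_\mp$ and $(Q_\pm)^*=P_\mp$ (valid since $r_\lambda\in\mbR$); the intertwining identities $JP_\pm=Q_\pm J$; the fact that $J$ restricts to a linear isomorphism $\Upsilon_\pm\to\Psi_\pm$ (immediate from $JT_z(H_s)=B_z(s)J$, $T_z(H_s)J=A_z(s)$ and invertibility of $A_z(s),B_z(s)$ on the respective generalized eigenspaces); and Theorem~\ref{I:T: property M} with its $\Psi$/$Q$ analogue, namely $P_\mp$ maps $\Upsilon_\pm$ isomorphically onto $\Upsilon_\mp$ and $Q_\mp$ maps $\Psi_\pm$ isomorphically onto $\Psi_\mp$. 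From these I first record two consequences. (a) $\ker(Q_\mp J)=\ker P_\mp$: splitting $x$ along $P_\mp$, the $\ker P_\mp$-component contributes nothing since $Q_\mp J(1-P_\mp)=JP_\mp(1-P_\mp)=0$, while $JP_\mp x\in J\Upsilon_\mp\subseteq\Psi_\mp=\im Q_\mp$, so $Q_\mp Jx=JP_\mp x$; hence $Q_\mp Jx=0\Leftrightarrow JP_\mp x=0\Leftrightarrow P_\mp x=0$, the last step by injectivity of $J$ on $\Upsilon_\mp$. (b) $\ker(Q_\mp JP_\pm)=\ker P_\pm$: by (a) this kernel is $\{x:P_\pm x\in\ker P_\mp\}$; since $P_\pm x\in\Upsilon_\pm$ always and $\Upsilon_\pm\cap\ker P_\mp=0$ by Theorem~\ref{I:T: property M}, it equals exactly $\ker P_\pm$.

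The next ingredient is the elementary idempotent lemma: for bounded idempotents $P,P'$ one has $\ker P\subseteq\ker P'\Leftrightarrow P'=P'P$, and dually $\im P'\subseteq\im P\Leftrightarrow P'=PP'$; and if moreover $\operatorname{rank}P=\operatorname{rank}P'<\infty$, then each one-sided inclusion is automatically an equality (a subspace of finite codimension $N$ cannot properly contain one of codimension $N$; dually for dimension $N$). Applying this to the pair $P_\pm$ gives (i)$\Leftrightarrow$(ii); applying it to the pair $Q_\pm$ gives (iii)$\Leftrightarrow$(iv); and (i)$\Leftrightarrow$(iii) follows by taking orthogonal complements, since $\im Q_\mp=\im(P_\pm)^*=(\ker P_\pm)^\perp$, so $\ker P_+=\ker P_-\Leftrightarrow\im Q_+=\im Q_-$.

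It then remains to fold in (v)--(ix), each rewritten via $JP_\pm=Q_\pm J$ as a one-sided inclusion and fed to the equal-rank upgrade. For (v): $Q_-JP_+=JP_+$ reads $(Q_--1)Q_+J=0$, i.e.\ $\im(Q_+J)\subseteq\im Q_-$; but $\im(Q_+J)=\im(JP_+)=J\Upsilon_+=\Psi_+=\im Q_+$, so (v)$\Leftrightarrow\im Q_+\subseteq\im Q_-\Leftrightarrow$(iii), and symmetrically (vi)$\Leftrightarrow$(iii). For (vii): $Q_-JP_+=Q_-J$ reads $Q_-J(1-P_+)=0$, i.e.\ $\ker P_+=\im(1-P_+)\subseteq\ker(Q_-J)=\ker P_-$ by (a), so (vii)$\Leftrightarrow$(i), and symmetrically (viii)$\Leftrightarrow$(i). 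Finally for (ix), $Q_-JP_+=Q_+JP_-$: granted the equivalence of (i)--(viii), the forward implication is the one-line computation $Q_-JP_+=Q_-J=JP_-=Q_+JP_-$, using (vii), the intertwining relation, and (vi) in turn; conversely, if $Q_-JP_+=Q_+JP_-$ then these operators share a kernel, which by (b) equals $\ker P_+$ when read off the left side and $\ker P_-$ when read off the right side, forcing $\ker P_+=\ker P_-$, i.e.\ (i). This closes the cycle.

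Essentially all of this is organizational; the single genuinely substantive ingredient is fact (b), equivalently $\Upsilon_\pm\cap\ker P_\mp=0$, which drives the converse halves of (vii)--(ix) and rests on Theorem~\ref{I:T: property M} and its $Q$-analogue (where the sign-definiteness of $\Im T_{\lambda\pm i0}(H_s)$ enters). I therefore expect the main obstacle to lie not inside the present proof but in verifying that the cited facts --- in particular the $\Psi$/$Q$ mirror forms of Theorem~\ref{I:T: property M}, of the adjoint relations, and of the intertwining identities --- are available in exactly the shape used here; once that is secured, all nine equivalences drop out by the manipulations above. As a consistency check, (b) also shows each resonance matrix $Q_{\lambda\mp i0}(r_\lambda)JP_{\lambda\pm i0}(r_\lambda)$ has full rank $N=\dim\Upsilon_{\lambda\pm i0}(r_\lambda)$, in agreement with its signature being $N_+-N_-$ (Theorem~\ref{I:T: res.ind=sign res.matrix}).
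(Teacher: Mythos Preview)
Your proof is correct and follows essentially the same approach as the paper: both route the nine assertions through the core equalities (i) and (iii), invoking the adjoint relations $(P_\pm)^*=Q_\mp$, the intertwining $JP_\pm=Q_\pm J$, the fact that $J\colon\Upsilon_\pm\to\Psi_\pm$ is an isomorphism, and Theorem~\ref{T: property M} with its $Q$-analogue~(\ref{F: Q Psi=Psi}). The only routing difference is that the paper obtains (v)$\Leftrightarrow$(vii) and (vi)$\Leftrightarrow$(viii) in one step by taking adjoints (using self-adjointness of $Q_\mp JP_\pm$), whereas you link (v),(vi) to (iii) and (vii),(viii) to (i) separately via your fact~(a) and the equal-rank upgrade; your facts (a),(b) and the idempotent lemma make explicit what the paper leaves implicit, and your fact~(b) is exactly Proposition~\ref{P: rank M=rank P}.
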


\begin{prop} \label{I:P: points without property S exist} (Proposition \ref{P: points without property S exist})
Every resonance point of type~I has property~$S.$ There are resonance points which do not have property $S,$
and there are points with property~$S$ which are not of type~I.
\end{prop}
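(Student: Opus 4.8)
The statement comprises three claims, which I would treat in turn. First, if $r_\lambda$ is a real resonance point of type~I, then property~$S$ holds: it was recorded just before Proposition~\ref{I:P: property S} that for such a point $P_{\lambda+i0}(r_\lambda)=P_{\lambda-i0}(r_\lambda)$, so the kernels coincide; equivalently, $JP_z(r_z)=Q_z(r_z)J$ then forces $Q_{\lambda+i0}(r_\lambda)=Q_{\lambda-i0}(r_\lambda)$, i.e.\ condition (iii) of Proposition~\ref{I:P: property S}. This gives the inclusion $\{\text{type I}\}\subseteq\{\text{property }S\}$. For the two remaining, existence statements it is convenient to reformulate property~$S$: from $(P_z(r_z))^{*}=Q_{\bar z}(\bar r_z)$ one gets $\ker P_{\lambda\pm i0}(r_\lambda)=\Psi_{\lambda\mp i0}(r_\lambda)^{\perp}$, so property~$S$ is equivalent to $\Psi_{\lambda+i0}(r_\lambda)=\Psi_{\lambda-i0}(r_\lambda)$, and --- when $J$ is invertible, as in the examples below --- to $\Upsilon_{\lambda+i0}(r_\lambda)=\Upsilon_{\lambda-i0}(r_\lambda)$, since $JP_z(r_z)=Q_z(r_z)J$ yields $\Psi_z(r_z)=J\Upsilon_z(r_z)$. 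Moreover Theorem~\ref{I:T: suff conditions for type I} pins down where counterexamples can sit: a resonance point that is outside the essential spectrum, or of order~$1$, or attached to a semidefinite~$V$, is automatically of type~I, hence has property~$S$; so both examples must be built inside the essential spectrum, at a point of order $\ge 2$, with an indefinite~$J$.

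For the point \emph{with property~$S$ but not of type~I}, a rank-two model already works. Take $\clK=\mbC^{2}$, $J=\bigl(\begin{smallmatrix}0&-i\\ i&0\end{smallmatrix}\bigr)$ (indefinite, $J^{2}=1$), $H_0$ the operator of multiplication by $x$ on $L_2(\mbR)$, and a Hilbert--Schmidt rigging $F\colon\hilb\to\clK$, $Ff=(\scal{f}{\phi_1},\scal{f}{\phi_2})$, whose generating vectors are chosen so that at a fixed $\lambda$ in the absolutely continuous spectrum $T_{\lambda+i0}(H_0)=\bigl(\begin{smallmatrix}i&-i\mu\\ i\mu&0\end{smallmatrix}\bigr)$ for a prescribed real $\mu\neq 0$; by the Sokhotski--Plemelj formula this amounts to producing the point-evaluation part $\Im T_{\lambda+i0}(H_0)=\bigl(\begin{smallmatrix}1&0\\0&0\end{smallmatrix}\bigr)$ together with the principal-value part $\bigl(\begin{smallmatrix}0&-i\mu\\ i\mu&0\end{smallmatrix}\bigr)$, a routine choice of $\phi_1,\phi_2$ (take $\hat\phi_1$ real and even about $\lambda$, $\hat\phi_2$ of the form $i\cdot(\text{real, odd about }\lambda)$, suitably normalized). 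Then $M:=T_{\lambda+i0}(H_0)J=\bigl(\begin{smallmatrix}\mu&1\\0&\mu\end{smallmatrix}\bigr)$ is a single Jordan block, so $r_\lambda:=-1/\mu$ is a real resonance point of order~$2$ with $\Upsilon_{\lambda+i0}(r_\lambda)=\clK$ and hence $P_{\lambda+i0}(r_\lambda)=1$; since $T_{\lambda-i0}(H_0)=T_{\lambda+i0}(H_0)^{*}$ gives $T_{\lambda-i0}(H_0)J=\bigl(\begin{smallmatrix}\mu&-1\\0&\mu\end{smallmatrix}\bigr)$, again a single block at~$\mu$, also $P_{\lambda-i0}(r_\lambda)=1$. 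Thus the two idempotents coincide and $r_\lambda$ has property~$S$. But $r_\lambda$ is not of type~I: at the regular point $r=0$ one has $\sqrt{\Im T_{\lambda+i0}(H_0)}\,JP_{\lambda+i0}(r_\lambda)=\bigl(\begin{smallmatrix}1&0\\0&0\end{smallmatrix}\bigr)J=\bigl(\begin{smallmatrix}0&-i\\0&0\end{smallmatrix}\bigr)\neq 0$, whereas condition (i$_+$) of Theorem~\ref{I:T: type I thm} (equivalent to type~I) requires this operator to vanish at every regular point. As a check, the identity $\Im T_{\lambda+i0}(H_s)=(1+sT_{\lambda-i0}(H_0)J)^{-1}\Im T_{\lambda+i0}(H_0)(1+sJT_{\lambda+i0}(H_0))^{-1}$ gives $\Im T_{\lambda+i0}(H_s)=(1+s\mu)^{-2}\bigl(\begin{smallmatrix}1&0\\0&0\end{smallmatrix}\bigr)$, which has a pole at $r_\lambda$, so condition (vii) of Theorem~\ref{I:T: type I thm} again excludes type~I.

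For the point \emph{without property~$S$}, the reformulation shows one needs $\Upsilon_{\lambda+i0}(r_\lambda)\neq\Upsilon_{\lambda-i0}(r_\lambda)$, which is impossible in dimension~$2$ (an order-$2$ block there exhausts $\clK$, forcing both root subspaces to equal $\clK$). The plan is therefore a rank-three model: $\clK=\mbC^{3}$, $J$ indefinite and invertible, and a rigging $F$ so that $M:=T_{\lambda+i0}(H_0)J$ has a real nonzero eigenvalue~$\mu$ carrying a single Jordan block of size~$2$ whose root plane $U$ is \emph{not} invariant under $M':=T_{\lambda+i0}(H_0)^{*}J=J^{-1}M^{*}J$; equivalently, with $w$ an eigenvector of $M$ at the third eigenvalue, $Jw\notin U^{\perp}$. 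Then for $r_\lambda:=-1/\mu$ one gets $\Upsilon_{\lambda+i0}(r_\lambda)=U\neq\Upsilon_{\lambda-i0}(r_\lambda)$, so property~$S$ fails, and by the first claim this $r_\lambda$ is in particular not of type~I. Together with the rank-two example and the inclusion $\{\text{type I}\}\subseteq\{\text{property }S\}$ this establishes all three assertions.

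The formal step (claim~1) and the rank-two computation (claim~3) are straightforward; the main obstacle is the rank-three example (claim~2). One must produce a $3\times 3$ matrix $M$ with a size-$2$ real Jordan block whose root plane violates the $M'$-invariance condition, while keeping $\Im(MJ^{-1})\ge 0$ so that $T_{\lambda+i0}(H_0):=MJ^{-1}$ is genuinely the boundary value of a sandwiched resolvent realized by an honest Hilbert--Schmidt~$F$ and self-adjoint~$H_0$. The tension is that when the geometric multiplicity of~$r_\lambda$ is~$1$, $\Im T_{\lambda+i0}(H_0)$ has small rank and hence lies on the boundary of the positive cone, so the indefinite twist carried by~$J$ must be inserted without destroying positivity; verifying that this can be arranged is the crux of the argument.
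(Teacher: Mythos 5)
Your first claim is fine and is exactly the paper's argument: by Theorem \ref{T: P(+)=P(-)} a point of type~I satisfies $P_{\lambda+i0}(r_\lambda)=P_{\lambda-i0}(r_\lambda)$, hence has property~$S.$ The trouble lies with both existence claims, and it is the same trouble twice: your models take $\clK=\mbC^2$ (resp.\ $\mbC^3$) while $\hilb=L_2(\mbR),$ but the standing assumption of the paper is that the rigging $F\colon\hilb\to\clK$ is a closed operator with \emph{trivial kernel and co-kernel}; any operator from an infinite-dimensional $\hilb$ into $\mbC^2$ has infinite-dimensional kernel, so such an $F$ is inadmissible. You cannot escape by making $\hilb$ finite-dimensional, since your examples require $\Im T_{\lambda+i0}(H_0)\neq 0,$ i.e.\ $\lambda$ inside absolutely continuous spectrum, which forces $\hilb$ (and then, by injectivity of $F,$ also $\clK$) to be infinite-dimensional; nor does a direct-sum embedding $\clK=\mbC^2\oplus\clK'$ help, because block-diagonality of $T_z$ would again force the $\mbC^2$-component of $F$ to annihilate an infinite-dimensional subspace. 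So turning your boundary data into an honest admissible triple $(H_0,F,J)$ is precisely the nontrivial step — the one the paper performs with the embedded-eigenvalue construction $\hilb=\hat\hilb\oplus\mbC,$ $F=\hat F\oplus 1$ of Section \ref{S: Pert embedded eig} — and your phrase ``routine choice of $\phi_1,\phi_2$'' hides it. A further warning sign: your rank-two model has $P_{\lambda+i0}(r_\lambda)=P_{\lambda-i0}(r_\lambda)$ (property~$P$) yet fails type~I, so if it were realizable in the framework it would settle, as a by-product, the open question of Section \ref{S: open problems} whether property~$P$ implies type~I; the paper's own example only produces property~$S$ without property~$P.$

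Moreover the middle claim — existence of a resonance point \emph{without} property~$S$ — is not proved in your proposal at all: you outline a rank-three plan and state yourself that verifying its realizability ``is the crux of the argument,'' so that claim is left open (with the same inadmissible-$\clK$ objection on top). The paper's proof instead computes, in subsection \ref{SSS: order 2}, the idempotents $P_{\lambda\pm i0}(r_\lambda)$ explicitly for a non-degenerate eigenvalue embedded in the essential spectrum with a perturbation giving order $d=2$: for general $\hat J$ one checks $P_{\lambda+i0}(r_\lambda)P_{\lambda-i0}(r_\lambda)\neq P_{\lambda+i0}(r_\lambda),$ which violates property~$S$ by Proposition \ref{P: property S}(ii); and for $\hat J=0$ the kernels coincide (property~$S$ holds) while $\Upsilon_{\lambda+i0}(r_\lambda)=\operatorname{span}\{1,\hat u_+\}$ and $\Upsilon_{\lambda-i0}(r_\lambda)=\operatorname{span}\{1,\hat u_-\}$ generically differ, so $P_{\lambda+i0}(r_\lambda)\neq P_{\lambda-i0}(r_\lambda)$ and, by Theorem \ref{T: P(+)=P(-)}, the point is not of type~I. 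If you want examples of your own rather than the paper's, you should construct them inside that embedded-eigenvalue framework (or otherwise exhibit a legitimate rigging with trivial kernel and co-kernel), not with a finite-dimensional auxiliary space.
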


\noindent
\label{Page: point with property P}
Let's say that a real resonance point~$r_\lambda$ has \emph{property~$P$} if $P_{\lambda+i0}(r_\lambda) = P_{\lambda-i0}(r_\lambda).$
Relations between real resonance points with different properties are given in the following diagram, where arrows stand for implications.
\begin{equation*}
  \xymatrix{
                           & \lambda \notin \sigma_{ess} \ar[rd]   &                                    &                    \\
    V \geq 0 \ar[r]        & d=1  \ar[r]                           &  \text{type~I} \ar[r]\ar[d]\ar[ld] &  (P) \ar[r] & (S)  \\
     m=1 \ar[r]            & (C) \ar[r]                            &  (U)                               &                    \\
                           & d\leq 2 \ar[ru]                       & & \\ 
  }
\end{equation*}

In section~\ref{S: Pert embedded eig} we study behaviour of a non-degenerate eigenvalue embedded into essential spectrum
under a regularizing perturbation~$V.$ 
In this subsection we in particular construct real resonance points which do not have property $S,$
and real resonance points with property $S,$ which don't have property~$P.$

Assume that $\lambda$ is an eigenvalue of a self-adjoint operator $H_{r_\lambda}$ with eigenvector~$\chi.$
Then the Hilbert space $\hilb$ on which $H_{r_\lambda}$ acts can be represented as
$$
  \hilb = \hat \hilb \oplus \mbC,
$$
such that the operator $H_{r_\lambda}$ takes the form
\begin{equation*}
  H_{r_\lambda} = \left(\begin{matrix}
    \hat H_{r_\lambda} & 0 \\
    0 & \lambda
  \end{matrix}\right)
\end{equation*}
where $\hat H_{r_\lambda}$ is the restriction of $H_{r_\lambda}$ to $\hat \hilb.$
Let
\begin{equation*}
  V = \left(\begin{matrix}
    \hat V & \hat v \\
    \scal{\hat v}{\cdot} & \alpha
  \end{matrix}\right)
\end{equation*}
be the representation of the operator $V.$ We assume that the rigging operator $F \colon \hilb \to \clK$ has representation
\begin{equation*}
  F = \left(\begin{matrix}
    \hat F & 0 \\
    0 & 1
  \end{matrix}\right).
\end{equation*}
In this case $V = F^*JF,$ where $J$ has representation
\begin{equation*}
  J = \left(\begin{matrix}
    \hat J & \hat \psi \\
    \la \hat \psi, \cdot \ra & \alpha
  \end{matrix}\right)
\end{equation*}
such that $\hat V = \hat F ^* \hat J \hat F$ and $\hat v = \hat F^* \hat \psi.$
The vector $\hat \psi$ is connected with the eigenvector $\chi$ by the equality $\hat \psi = JF\chi - \alpha F\chi.$
Finally, we assume that $\lambda$ is a regular point of the pair $(\hat H_{r_\lambda},\hat F):$
$$
  \lambda \in \Lambda(\hat H_{r_\lambda},\hat F).
$$
Let
$$
  \hat u_z(s) = \hat F R_z(\hat H_s)\hat F^* \hat \psi,
$$
where $T_z(\hat H_s)=\hat F R_z(\hat H_s)\hat F^*$ and let $\hat A_z(s) = T_z(\hat H_s)\hat J.$
The operator $A_{\lambda+i0}(r_\lambda)$ does not exist since $\lambda \notin \Lambda(H_{r_\lambda},F),$
but the sliced operator $\hat A_{\lambda+i0}(r_\lambda)$ and the vector $\hat u_{\lambda+i0}(r_\lambda)$ exist due
to the condition~$\lambda \in \Lambda(\hat H_{r_\lambda},\hat F).$
%

The following lemma and the theorem describe properties of the resonance point~$r_\lambda.$

\begin{lemma} 
Order of the resonance point~$r_\lambda$
is not less than 2 if and only if $\alpha = 0.$
If this is the case, then the vector space~$\Upsilon^2_{\lambda+i0}(r_\lambda)$ is two-dimensional and is generated by vectors
$$
  F\chi = \twovector{0}{1} \quad \text{and} \quad \twovector{\hat u_{\lambda+i0}(r_\lambda)}{0},
$$
which have orders~$1$ and~$2$ respectively.
\end{lemma}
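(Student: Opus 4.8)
The plan is to compute, for $z=\lambda+i0$ and $t:=s-r_\lambda,$ the sandwiched resolvent $T_z(H_s)=FR_z(H_s)F^*$ and then $A_z(s)=T_z(H_s)J$ explicitly in block form relative to $\hilb=\hat\hilb\oplus\mbC,$ and finally to read off $\Upsilon^1_{\lambda+i0}(r_\lambda)$ and $\Upsilon^2_{\lambda+i0}(r_\lambda)$ by elementary linear algebra. First I would write $H_s=H_{r_\lambda}+tV,$ so that $H_s-z$ has $(\hat\hilb,\hat\hilb)$-block $\hat H_s-z,$ off-diagonal blocks $t\hat v$ and $t\scal{\hat v}{\cdot},$ and scalar corner $\lambda+t\alpha-z.$ Eliminating the scalar corner by the Schur complement, conjugating the block inverse by $F$ and $F^*,$ and using $\hat v=\hat F^*\hat\psi$ and $T_z(\hat H_s)\hat\psi=\hat u_z(s),$ one obtains
\begin{equation*}
  T_{z}(H_s)=\begin{pmatrix} T_z(\hat H_s)+t^2\Sigma^{-1}\,\hat u_z(s)\,\scal{\hat\psi}{T_z(\hat H_s)\,\cdot\,} & -t\Sigma^{-1}\hat u_z(s)\\ -t\Sigma^{-1}\scal{\hat\psi}{T_z(\hat H_s)\,\cdot\,} & \Sigma^{-1}\end{pmatrix},
\end{equation*}
where $\Sigma=\Sigma_z(s)=(\lambda+t\alpha-z)-t^2\scal{\hat\psi}{\hat u_z(s)}$ is the Schur complement; for $z=\lambda+i0$ and $s$ regular at $\lambda$ this is $\Sigma=t\alpha-t^2\scal{\hat\psi}{\hat u_{\lambda+i0}(s)}\ne 0.$ These identities hold for $\Im z>0$ and pass to the limit $z\to\lambda+i0,$ and $T_z(\hat H_s),\hat u_z(s)$ are holomorphic in $s$ near $r_\lambda$ because $\lambda\in\Lambda(\hat H_{r_\lambda},\hat F)$ and, by coupling-constant regularity (Theorem~\ref{T: 4.1.11 Az3}) applied to the slice, $\lambda\in\Lambda(\hat H_s,\hat F)$ for $s$ near $r_\lambda.$ (Here $\hat\psi\ne0;$ otherwise $V$ decouples the line $\mbC\chi$ and, when $\alpha=0,$ $\lambda$ would be an eigenvalue of every $H_s,$ so that $r_\lambda$ is not an isolated resonance point.)

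Next I would multiply on the right by $J=\begin{pmatrix}\hat J&\hat\psi\\ \scal{\hat\psi}{\cdot}&\alpha\end{pmatrix}.$ Using $T_z(\hat H_s)\hat\psi=\hat u_z(s)$ and $\Sigma+t^2\scal{\hat\psi}{\hat u_z(s)}=t\alpha,$ a short calculation shows the second block-column of $A_z(s)$ collapses: its $(\hat\clK,\mbC)$-entry is $0$ and its $(\mbC,\mbC)$-entry is $1/t.$ Hence, writing $\hat A_z(s)=T_z(\hat H_s)\hat J,$
\begin{equation*}
  1+(r_\lambda-s)A_{z}(s)=1-tA_{z}(s)=\begin{pmatrix} X_t & 0\\ y_t^{*} & 0\end{pmatrix},
\end{equation*}
\begin{equation*}
  X_t=1-t\hat A_z(s)+t^2\Sigma^{-1}\,\hat u_z(s)\,\scal{\hat\psi}{(1-t\hat A_z(s))\,\cdot\,}\quad\text{on }\hat\clK,\qquad y_t^{*}=-t\Sigma^{-1}\scal{\hat\psi}{(1-t\hat A_z(s))\,\cdot\,}.
\end{equation*}
Now I would bring in the second resolvent identity for the slice, $\hat A_z(s)=(1+t\hat A_z(r_\lambda))^{-1}\hat A_z(r_\lambda),$ legitimate for $|t|$ small since $\hat A_z(r_\lambda)$ is compact; it gives $1-t\hat A_z(s)=(1+t\hat A_z(r_\lambda))^{-1},$ hence $(1-t\hat A_z(s))\hat u_z(r_\lambda)=\hat u_z(s).$ Substituting these yields the two evaluations $X_t\hat u_z(r_\lambda)=t\alpha\,\Sigma^{-1}\hat u_z(s)$ and $y_t^{*}\hat u_z(r_\lambda)=-t\Sigma^{-1}\scal{\hat\psi}{\hat u_z(s)},$ and, by applying $(1-t\hat A_z(s))^{-1}$ to the equation $X_t\xi=0,$ the description $\ker X_t=\mbC\,\hat u_z(r_\lambda)$ when $\alpha=0$ and $\ker X_t=\{0\}$ when $\alpha\ne0.$

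The triangular shape then gives, for any $s$ regular at $\lambda$ and each $k\ge1,$
$$
  \Upsilon^k_{\lambda+i0}(r_\lambda)=\{(\xi,c)\in\hat\clK\oplus\mbC:X_t^{\,k}\xi=0,\ y_t^{*}X_t^{\,k-1}\xi=0\},
$$
with $c$ unconstrained (and this is independent of the regular $s$ chosen). If $\alpha\ne0,$ then $X_t$ is injective, so $X_t^{\,k}$ is injective and every $\Upsilon^k_{\lambda+i0}(r_\lambda)$ equals $\mbC F\chi;$ thus $\Upsilon_{\lambda+i0}(r_\lambda)=\mbC F\chi=\Upsilon^1_{\lambda+i0}(r_\lambda),$ i.e. the order of $r_\lambda$ is $1$ (in particular it is not $\ge2$).

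Suppose finally $\alpha=0.$ Then $\Sigma=-t^2\scal{\hat\psi}{\hat u_{\lambda+i0}(s)},$ so $y_t^{*}\hat u_z(r_\lambda)=1/t\ne0;$ since $\ker X_t=\mbC\,\hat u_z(r_\lambda)$ and $y_t^{*}$ is nonzero on it, $\ker X_t\cap\ker y_t^{*}=\{0\},$ whence $\Upsilon^1_{\lambda+i0}(r_\lambda)=\mbC F\chi$ (so $F\chi$ has order $1$ and $r_\lambda$ has geometric multiplicity $1$). For $\Upsilon^2_{\lambda+i0}(r_\lambda)$ the conditions $X_t\xi\in\ker X_t$ and $y_t^{*}X_t\xi=0$ force $X_t\xi\in\ker X_t\cap\ker y_t^{*}=\{0\},$ i.e. $\xi\in\ker X_t=\mbC\,\hat u_z(r_\lambda);$ therefore
$$
  \Upsilon^2_{\lambda+i0}(r_\lambda)=\mbC F\chi\dotplus\mbC\,\twovector{\hat u_{\lambda+i0}(r_\lambda)}{0}
$$
is two-dimensional, and $\twovector{\hat u_{\lambda+i0}(r_\lambda)}{0}\notin\Upsilon^1_{\lambda+i0}(r_\lambda)$ because $\hat u_z(r_\lambda)\ne0,$ so it has order $2;$ in particular the order of $r_\lambda$ is $\ge2.$ Comparing the two cases proves the equivalence in the first sentence, and the displayed description of $\Upsilon^2$ is the second assertion. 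The main obstacle is purely computational — carrying out correctly the Schur-complement form of $T_{\lambda+i0}(H_s)$ and, especially, the cancellations in $A_z(s)=T_z(H_s)J$ that kill the second block-column and produce the entry $1/t$; once these are in hand, everything else is a routine manipulation of $\begin{pmatrix}X_t & 0\\ y_t^{*} & 0\end{pmatrix}$ together with the slice identity $1-t\hat A_z(s)=(1+t\hat A_z(r_\lambda))^{-1}.$
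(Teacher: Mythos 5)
Your proposal is correct and follows essentially the same route as the paper: the same $\hat\hilb\oplus\mbC$ block computation of $T_{\lambda+i0}(H_s)$ and $A_{\lambda+i0}(s)$ (the paper's formulas (\ref{F: Tz(Hs) gen case}) and (\ref{F: A+(s)=2 by 2 matrix})), the same vanishing of the second block column with corner entry $(s-r_\lambda)^{-1},$ and the same slice identities $\euF_{\lambda+i0}^{-1}(s)=1+(s-r_\lambda)\hat A_{\lambda+i0}(r_\lambda)$ and $\euF_{\lambda+i0}^{-1}(s)\hat u_{\lambda+i0}(s)=\hat u_{\lambda+i0}(r_\lambda).$ The only difference is organizational: you read off $\Upsilon^1_{\lambda+i0}(r_\lambda)$ and $\Upsilon^2_{\lambda+i0}(r_\lambda)$ directly from $\ker X_t$ and $y_t^*,$ whereas the paper first establishes Lemma \ref{L: beau vectors} on the structure of all $\Upsilon^j_{\lambda+i0}(r_\lambda)$ and then tests when $\twovector{\hat u_{\lambda+i0}(r_\lambda)}{0}$ has order two.
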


\begin{thm} \label{I:T: order k case}  (Theorem \ref{T: order k case})
Let~$d$ be an integer not less than two.
The order of the real resonance point~$r_\lambda$ is equal to~$d$ if and only if the vectors
\begin{equation*} 
  \hat u_{\lambda+i0}(r_\lambda), \ \hat A_{\lambda+i0}(r_\lambda)\hat u_{\lambda+i0}(r_\lambda), \ldots, \hat A_{\lambda+i0}^{d-3}(r_\lambda)\hat u_{\lambda+i0}(r_\lambda)
\end{equation*}
are orthogonal to the vector $\hat \psi$ but the vector $\hat A_{\lambda+i0}^{d-2}(r_\lambda)\hat u_{\lambda+i0}(r_\lambda)$ is not.
If this is the case, then for all $j=1,2,\ldots,d$ the vector space~$\Upsilon^j_{\lambda+i0}(r_\lambda)$ is $j$-dimensional and is generated by vectors
\begin{equation*} 
  \twovector{0}{1}, \ \ \twovector{\hat u_{\lambda+i0}(r_\lambda)}{0}, \ \ \twovector{\hat A_{\lambda+i0}(r_\lambda)\hat u_{\lambda+i0}(r_\lambda)}{0}, \ \ \ldots, \ \
   \twovector{\hat A^{j-2}_{\lambda+i0}(r_\lambda)\hat u_{\lambda+i0}(r_\lambda)}{0},
\end{equation*}
which have orders $1,2,\ldots,j$ respectively.
\end{thm}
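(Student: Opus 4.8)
The plan is to reduce the whole statement to the explicit block form, in the decomposition $\clK = \hat\clK\oplus\mbC$, of the operator whose kernel powers are the spaces $\Upsilon^j_{\lambda+i0}(r_\lambda)$, and then to run a short Jordan-chain computation. By the preceding lemma the hypothesis $d\geq 2$ forces $\alpha=0$, so $T_z(H_{r_\lambda})=\operatorname{diag}\bigl(T_z(\hat H_{r_\lambda}),\,(\lambda-z)^{-1}\bigr)$ and $J$ has the displayed block form with $\alpha=0$. Writing $t=s-r_\lambda$, the resolvent identity $T_z(H_s)=T_z(H_{r_\lambda})-t\,T_z(H_{r_\lambda})J\,T_z(H_s)$ gives, for non-real $z$, $1+(r_\lambda-s)T_z(H_s)J=\bigl(1+t\,T_z(H_{r_\lambda})J\bigr)^{-1}$. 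The one delicate point is the limit $z\to\lambda+i0$, where the entry $(\lambda-z)^{-1}$ blows up; I would handle it by rescaling the second row. Put $\epsilon=\lambda-z$, $\hat A:=\hat A_{\lambda+i0}(r_\lambda)$, $\hat u:=\hat u_{\lambda+i0}(r_\lambda)$, $C=1+t\hat A$, $\phi=\langle\hat\psi,C^{-1}\hat u\rangle$. Then $1+t\,T_z(H_{r_\lambda})J=\operatorname{diag}(1,\epsilon^{-1})\,M_\epsilon$ where $M_\epsilon\to M_0=\begin{pmatrix}C & t\hat u\\ t\langle\hat\psi,\cdot\rangle & 0\end{pmatrix}$ in norm (using $\hat A_z(r_\lambda)\to\hat A$, $\hat u_z(r_\lambda)\to\hat u$, which hold because $\lambda\in\Lambda(\hat H_{r_\lambda},\hat F)$). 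For $s$ in a punctured neighbourhood of $r_\lambda$ the operator $C$ is invertible (Theorem~\ref{T: 4.1.11 Az3} for $\hat H_{r_\lambda},\hat F$ and the assumption $\lambda\in\Lambda(\hat H_{r_\lambda},\hat F)$) and $\phi\neq0$ (as an analytic function of $t$, $\phi\not\equiv0$ whenever the order is finite, and its zero at $t=0$ then has finite multiplicity); since the $C$-Schur complement of $M_0$ is $-t^2\phi\neq0$, $M_0$ is invertible. Passing to the limit yields
$$1+(r_\lambda-s)T_{\lambda+i0}(H_s)J=M_0^{-1}\begin{pmatrix}1&0\\0&0\end{pmatrix}=\begin{pmatrix}G&0\\ \ell&0\end{pmatrix}=:W(s),$$
with $G=C^{-1}-\phi^{-1}C^{-1}\hat u\,\langle\hat\psi,C^{-1}\cdot\rangle$ and $\ell=(t\phi)^{-1}\langle\hat\psi,C^{-1}\cdot\rangle$. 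It suffices to compute at one regular $s$ near $r_\lambda$, since the spaces $\Upsilon^j$ and the order do not depend on $s$.

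Because the second column of $W(s)$ vanishes, $W(s)^j=\begin{pmatrix}G^j&0\\ \ell G^{j-1}&0\end{pmatrix}$, so $\Upsilon^j_{\lambda+i0}(r_\lambda)=\ker W(s)^j$ consists of the pairs $(\hat v,a)$ with $G^j\hat v=0$ and $\ell G^{j-1}\hat v=0$. Two facts read off the formulas — $\ker G=\mbC\hat u$ with $\ell\hat u=t^{-1}\neq0$, and $\im G=\hat\psi^{\perp}\subset\hat\clK$ (here $\Pi_0:=1-\phi^{-1}\hat u\,\langle\hat\psi,C^{-1}\cdot\rangle$ is a projection onto the hyperplane $\ker\langle\hat\psi,C^{-1}\cdot\rangle$ and $G=C^{-1}\Pi_0$) — give the clean description $\Upsilon^j_{\lambda+i0}(r_\lambda)=\ker G^{j-1}\,\dotplus\,\mbC\twovector{0}{1}$, the last line being $\mbC F\chi$ (if $G^j\hat v=0$ and $\ell G^{j-1}\hat v=0$ then $G^{j-1}\hat v\in\ker G=\mbC\hat u$ and $\ell$ annihilates it, forcing $G^{j-1}\hat v=0$). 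So the theorem is equivalent to: $\ker G^{k}=\operatorname{span}(\hat u,\hat A\hat u,\dots,\hat A^{k-1}\hat u)$, of dimension $k$, for $k\leq d-1$; $\ker G^{d-1}=\ker G^{d}$; and these hold precisely under the stated orthogonality pattern — with the order of $\twovector{\hat A^{k}\hat u}{0}$ coming out as $k+2$.

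The engine is the computation: if $\langle\hat\psi,\hat A^i\hat u\rangle=0$ for $i=0,\dots,p-1$, then for $k=0,1,\dots,p$
$$G(\hat A^k\hat u)=\sum_{i=1}^{k}\frac{(-1)^{i-1}}{t^{i}}\,\hat A^{k-i}\hat u$$
(empty sum for $k=0$, so $G\hat u=0$). This follows from $C^{-1}\hat A=t^{-1}(1-C^{-1})$, which iterates to $C^{-1}\hat A^k\hat u=\sum_{i=1}^{k}\frac{(-1)^{i-1}}{t^{i}}\hat A^{k-i}\hat u+\frac{(-1)^{k}}{t^{k}}C^{-1}\hat u$ and, once the lower inner products vanish, to $\langle\hat\psi,C^{-1}\hat A^k\hat u\rangle=\frac{(-1)^k}{t^k}\phi$, so that the rank-one term in $G$ exactly cancels the $C^{-1}\hat u$ contribution. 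Hence $G^{m}(\hat A^k\hat u)\in t^{-m}\hat A^{k-m}\hat u+\operatorname{span}(\hat u,\dots,\hat A^{k-m-1}\hat u)$ for $m\leq k$, so $\hat A^k\hat u\in\ker G^{k+1}\setminus\ker G^{k}$; the vectors $\hat u,\hat A\hat u,\dots$ occupy strictly increasing levels of the chain, and since $\dim\ker G^{k}\leq\dim\ker G^{k-1}+\dim\ker G=\dim\ker G^{k-1}+1$, we get $\ker G^{k}=\operatorname{span}(\hat u,\dots,\hat A^{k-1}\hat u)$ for $k\leq p+1$. If moreover $\langle\hat\psi,\hat A^{p}\hat u\rangle\neq0$, then $\hat A^p\hat u\notin\hat\psi^{\perp}=\im G$, so $G$ cannot carry $\ker G^{p+2}$ onto $\ker G^{p+1}$; hence $\dim\ker G^{p+2}\leq\dim\ker G^{p+1}$, i.e. $\ker G^{p+2}=\ker G^{p+1}$, and then $\ker G^{k}=\ker G^{p+1}$ for all $k\geq p+1$. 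Translating back through $\Upsilon^j=\ker G^{j-1}\dotplus\mbC F\chi$: if the order is $d<\infty$ then not all the inner products vanish (else the chain never stabilizes and $d=\infty$), the first index $p$ with $\langle\hat\psi,\hat A^p\hat u\rangle\neq0$ satisfies $p+1=d-1$, i.e. $p=d-2$, which is exactly the stated pattern; conversely the pattern with $p=d-2$ makes the chain stabilize at level $p+1=d-1$, so the order is $p+2=d$. In either case, for $j\leq d$, $\Upsilon^j_{\lambda+i0}(r_\lambda)$ is $j$-dimensional with basis $F\chi,\twovector{\hat u}{0},\dots,\twovector{\hat A^{j-2}\hat u}{0}$ of orders $1,2,\dots,j$.

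I expect the main obstacle to be the first step — giving rigorous meaning to the $z\to\lambda+i0$ limit with the singular $(\lambda-z)^{-1}$ entry via the row-rescaling, and verifying $\phi\neq0$ near $r_\lambda$ (which is where finiteness of the order genuinely enters). Everything after is finite-dimensional linear algebra: $G$ is the identity plus a compact operator ($1-C^{-1}$ is compact and the correction is rank one), so all the spaces $\ker G^k$ are finite-dimensional and the chain arguments are legitimate.
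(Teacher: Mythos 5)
Your argument is correct, and it lands on the same $2\times2$-block picture as the paper, but the route through it is genuinely different in execution. The paper computes $T_z(H_s)$ explicitly in the decomposition $\hat\clK\oplus\mbC$ (formula (\ref{F: Tz(Hs) gen case})), specializes to $z=\lambda+i0$ to obtain the lower-triangular form (\ref{F: A+(s)=2 by 2 matrix})--(\ref{F: big res equ-n}), and then works through the auxiliary vectors $\hatlmbu{j}=\euF^{-j}_{\lambda+i0}(s)\hat u_{\lambda+i0}(r_\lambda)$: Lemma \ref{L: beau vectors} bounds $\Upsilon^j_{\lambda+i0}(r_\lambda)$ by their span, Theorem \ref{T: order k case (2)} characterizes the order by an induction on $d$ in terms of orthogonality of $\hat\psi$ to these vectors, and Theorem \ref{T: order k case} follows at the very end via $\euF^{-1}_{\lambda+i0}(s)=1+(s-r_\lambda)\hat A_{\lambda+i0}(r_\lambda)$ (Lemma \ref{L: about euF(-1)}). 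You instead invert $1+(s-r_\lambda)T_z(H_{r_\lambda})J$ by the second resolvent identity, pass to the limit $z\to\lambda+i0$ after the row rescaling, and read off $1+(r_\lambda-s)A_{\lambda+i0}(s)$ as a Schur-complement block inverse; your $G$ is exactly the $(1,1)$-entry in (\ref{F: big res equ-n}), since your $C^{-1}$ equals $\euF_{\lambda+i0}(s)$ and $C^{-1}\hat u=\hat u_{\lambda+i0}(s)$, so the two computations agree. What your route buys is a cleaner endgame: the identities $\ker G=\mbC\,\hat u_{\lambda+i0}(r_\lambda)$, $\im G=\hat\psi^{\perp}$ and the recursion $C^{-1}\hat A=t^{-1}(1-C^{-1})$ let you run a single Jordan-chain count on $\hat u,\hat A\hat u,\ldots$ and detect stabilization through membership of $\hat A^{d-2}\hat u$ in $\im G$, which gives both directions of the equivalence at once and replaces the induction of Theorem \ref{T: order k case (2)}; the paper's more pedestrian route, in exchange, produces the explicit formulas for $A_{\lambda+i0}(s)$ that it reuses afterwards (e.g. for the idempotents and nilpotents in \S\ref{SS: about idempotent}).

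Two small points to tighten, neither a real gap. First, justify $\phi=\scal{\hat \psi}{\hat u_{\lambda+i0}(s)}\neq0$ not by ``finiteness of the order'' (which is part of what you are analysing) but by the standing assumption that $V$ is a regularizing direction: with $\alpha=0$ this is precisely condition (\ref{F: psi u(s)neq 0}) of Lemma \ref{L: V is reg-ing iff alpha neq 0 or ...}, and analyticity of $s\mapsto\scal{\hat \psi}{\hat u_{\lambda+i0}(s)}$ then gives non-vanishing at a suitable non-resonant $s$ near $r_\lambda$. Second, note explicitly that $\hat u_{\lambda+i0}(r_\lambda)\neq0$ (needed for $\ker G$ to be one-dimensional); this also follows from $\phi\neq0.$
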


The following diagram shows interdependence of sections 2--14.
A dashed arrow means that the dependence is of notational and terminological character.
In particular, the section \ref{S: Pert embedded eig} is almost independent of the other sections, but motivation for this section comes from previous ones.
The core of this paper are sections \ref{S: res.index as s.ssf}, \ref{S: sign of res matrix}, \ref{S: res.index and sign res matrix} and \ref{S: U-turn theorem}.
Having said this, ideologically all sections are interconnected in the sense that they represent different aspects of the same subject
given in the title of this paper.
\begin{equation*}
  \xymatrix{
    2 \ar[r]\ar[rd] & 3 \ar[r]\ar@{-->}[d]\ar[rd]\ar[rdd] & 8 \ar[r]\ar[rd]\ar[rdd]\ar[ddddr]       & 9 \ar[d] \\
                    & 4                                   & 7 \ar[ur]                               & 10 \ar@/^.8pc/[dd] \\
                    &                                     & 5 \ar[u]\ar[uur]\ar[d]\ar[uur]\ar[ddr]  & 11 \\
                    &                                     & 6                                       & 12 \\
                    &                                     &                                         & 13 \\
  }
\end{equation*}
In section~\ref{S: open problems} some open problems are stated.
Finally, for reader's convenience there is a detailed index.

\subsection{Future work}
\subsubsection{Integrity of singular spectral shift function for relatively trace-class perturbations.}
So far the property (\ref{F: xis is in Z}) of the singular spectral shift function has been proved for trace-class perturbations.
There is a paper in preparation \cite{AzDa} in which this result will be proved for relatively trace-class perturbations.
A special case of this result is the following
\begin{thm} \label{T: xis for Schrodinger}
Let $H_0= -\frac {d^2}{dx^2}+V_0(x)$ be a one-dimensional Schr\"odinger operator, where $V_0(x)$ is a bounded measurable real-valued function and let $V$
be an operator of multiplication by a real-valued measurable function $V(x)$ such that $\abs{V(x)} \leq \const (1+\abs{x})^{-1-\eps}$
for some $\eps > 0$ and let $H_r = H_0+rV.$ Let
$$
  \xis(\phi) = \int_0^1 \Tr(V \phi(H_r^{(s)}))\,dr, \quad \phi \in C_c(\mbR),
$$
where $H_r^{(s)}$ is the singular part of~$H_r.$ Then $\xis$ is an absolutely continuous measure whose density $\xis(\lambda)$ (denoted by the same symbol) is integer-valued
for a.e. $\lambda.$
\end{thm}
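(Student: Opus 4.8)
The theorem is a special case of the main result of the paper \cite{AzDa} in preparation; the plan below indicates how that mechanism specializes and where one-dimensionality makes the verifications transparent. In brief: one fits the pair $(H_0,V)$ into the abstract framework of conditions (1)--(4) with an explicit rigging, verifies the Limiting Absorption Principle, and then reruns the proof of Theorem~\ref{T: xis=ind res} in its relatively-trace-class form to obtain, for every $\lambda$ in a set of full Lebesgue measure, the identity $\xis(\lambda;H_1,H_0)=\sum_{r_\lambda\in[0,1]}\ind_{res}(\lambda;H_{r_\lambda},V)$; since this is a finite sum and each $\ind_{res}(\lambda;H_{r_\lambda},V)=N_+-N_-$ is by definition a difference of two non-negative integers, $\xis(\lambda)\in\mbZ$ for a.e.\ $\lambda$.

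Fix the rigging by $w(x)=(1+|x|)^{-(1+\eps)/2}$, let $F$ be multiplication by $w$ and $J$ multiplication by the bounded real function $x\mapsto V(x)(1+|x|)^{1+\eps}$, so that $V=F^{*}JF=FJF$ with $F$ self-adjoint, injective, of dense range and everywhere defined on $L_2(\mbR)$. Since $V_0$ and $V$ are bounded, each $H_r=H_0+rV$ is self-adjoint on the common domain $\euD=\mathsf H_2(\mbR)\subset\dom F$, so conditions (1), (2) hold. For (3): the Green function $G^{(r)}_z(x,y)$ of the one-dimensional operator $H_r$ at non-real $z$ is jointly continuous and satisfies $|G^{(r)}_z(x,y)|\le C_z e^{-c_z|x-y|}$ (it is built from the solutions decaying at $\pm\infty$), so $\iint w(x)^2|G^{(r)}_z(x,y)|^2 w(y)^2\,dx\,dy\le C'_z\bigl(\int_{\mbR} w^2\bigr)^2<\infty$ because $w\in L_2(\mbR)$ --- here $\eps>0$ is used --- whence $T_z(H_r)=FR_z(H_r)F$ is Hilbert--Schmidt, in particular compact (indeed trace class, being the product of the Hilbert--Schmidt operators $FR_z(H_r)$ and $R_z(H_r)F$). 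For (4): $z\mapsto T_z(H_0)$ is a trace-class-valued Herglotz function vanishing at $i\infty$, hence has norm boundary values $T_{\lambda\pm i0}(H_0)$ for a.e.\ $\lambda$ (the limiting absorption principle; when $V_0$ decays one may instead invoke the classical weighted-$L_2$ form, the analogue of Theorems~\ref{T: LAP for -Delta}--\ref{T: LAP for -Delta+V} for $-d^2/dx^2+V_0$, valid since $(1+\eps)/2>1/2$); combining this with coupling-constant regularity (Theorem~\ref{T: 4.1.11 Az3}, for fixed $\lambda$ the set of $r$ with $\lambda\notin\Lambda(H_r,F)$ being discrete) gives that $\Lambda(\clA,F)$ has full Lebesgue measure.

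Next one checks that $\xis$ is a well-defined absolutely continuous measure. Writing $R_z(H_1)-R_z(H_0)=-\bigl(R_z(H_1)F\bigr)J\bigl(FR_z(H_0)\bigr)$ and using that $FR_z(H_r)$ and $R_z(H_r)F$ are Hilbert--Schmidt (their kernels $w(x)G^{(r)}_z(x,y)$ and $G^{(r)}_z(x,y)w(y)$ are square-integrable, again by the exponential estimate and $w\in L_2$), one gets $R_z(H_1)-R_z(H_0)\in\clL_1$, so Krein's spectral shift function $\xi_{H_1,H_0}$ exists; likewise $\Im T_z(H_r)=\Im z\,(FR_z(H_r))(R_{\bar z}(H_r)F)$ is trace class for non-real $z$ with trace-norm boundary values for a.e.\ $\lambda$, which is what is needed for the absolutely continuous spectral shift measure $\xia$ of~(\ref{F: xia def}). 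Hence $\xis=\xi-\xia$ is a well-defined absolutely continuous measure (and, as noted in the paper, defining $\xis$ in fact requires even less than a trace-class hypothesis).

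With the framework and these preliminaries in place, the conclusion follows by rerunning the argument of Theorem~\ref{T: xis=ind res}: for fixed $\lambda\in\Lambda(\clA,F)$ the function $r\mapsto\xis(\lambda;H_r,H_0)$ is locally constant and integer-valued with discontinuity set exactly the discrete resonance set $R(\lambda;H_0,V)$ (the relatively-trace-class analogue of Theorem~\ref{T: Az 9.7.6}(2)), its jump at each resonance point $r_\lambda\in[0,1]$ equals $\ind_{res}(\lambda;H_{r_\lambda},V)$, and summation over $r_\lambda\in[0,1]$ yields the displayed identity, a finite sum of integers. The main obstacle is precisely this last step: the proof of Theorem~\ref{T: xis=ind res}, and the machinery it rests on (the Birman--Solomyak formula, diagonalization of the continuous family $\{H_r\}$, and the identification of the jump of the locally-constant function with the resonance index), were carried out under a globally Hilbert--Schmidt rigging and a trace-class perturbation, and one must verify that every trace-norm estimate there survives when only the sandwiched resolvents $T_z(H_r)$ are Hilbert--Schmidt, equivalently when $V$ is merely relatively trace class --- this is the content of \cite{AzDa}, so in the write-up one would either invoke \cite{AzDa} directly or reproduce the more transparent one-dimensional estimates.
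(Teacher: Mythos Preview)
The paper does not actually prove this theorem: it appears in the ``Future work'' subsection, with the proof deferred to~\cite{AzDa} and described only as ``a modification for relatively trace-class perturbations of the approach to scattering theory given in~\cite{Az3v6}\ldots\ given in~\cite{Az10}.'' Your proposal matches this described strategy precisely --- you choose the standard weighted rigging, verify compactness/trace-class properties of the sandwiched resolvent via the Combes--Thomas exponential bound on the one-dimensional Green function, argue the LAP, and then invoke the relatively-trace-class version of Theorem~\ref{T: xis=ind res} --- and you are correctly explicit that the genuine work, namely extending the machinery (Birman--Solomyak formula, $\mu$-invariant identification, resonance-index identification of the jumps) from a Hilbert--Schmidt rigging with trace-class $V$ to the relatively-trace-class setting, is the content of~\cite{AzDa} and is not reproduced here.

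One small caution: your appeal to ``trace-class-valued Herglotz function $\Rightarrow$ a.e.\ boundary values'' for the LAP is not the same theorem as the Hilbert--Schmidt case (Theorem~\ref{T: Ya thm 6.1.9}), since your $F$ is not Hilbert--Schmidt; you would need either the $\clL_1$-valued analogue (which exists but is a separate result) or, as you also indicate, the direct weighted-$L_2$ LAP for one-dimensional Schr\"odinger operators. The latter is the cleaner route and is what the paper has in mind when it lists Schr\"odinger operators as a class where the LAP assumption holds.
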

The bulk of the proof of this theorem is a modification for relatively trace-class perturbations of the approach to scattering theory given in \cite{Az3v6}
and discussed in this introduction. This modification was given in \cite{Az10}
with an aim to prove Theorem \ref{T: xis for Schrodinger}. For reasons mentioned in this introduction classical approaches to scattering theory
do not allow to prove this theorem.

There is a work in progress with the aim to prove an analogue of Theorem \ref{T: xis for Schrodinger} for $n$-dimensional Schr\"odinger operators.

\subsubsection{Resonance index and singular $\mu$-invariant.}
For trace-class perturbations the singular spectral shift function admits three other equivalent descriptions, as singular $\mu$-invariant, total resonance index
and total signature of resonance matrix. These three definitions do not require the perturbation to be trace class or to be relatively trace class, ---
all we need to assume is that the perturbation is relatively compact and that the limiting absorption principle holds.
In this paper it is shown that resonance index and signature of resonance matrix are equal under these two conditions. In \cite{Az11} it will be proved that the singular $\mu$-invariant
is equal to the total resonance index given the same conditions.

\subsection{Acknowledgements}
  I thank Thomas Daniels for a scrupulous and critical reading of this paper
  which greatly reduced the number of inaccuracies and typos.
  I also thank Prof. Peter Dodds and Prof. Jerzy Filar for their moral support.

\section{Preliminaries}
\label{S: Preliminaries}
\subsection{Operators on a Hilbert space}
Details, concerning the material of this section, can be found in \cite{GK,Kato,RS1,SimTrId2}.
A partial aim of these preliminaries is to fix notation and terminology.

Throughout this paper, $\mbR$ is the field of real numbers and $\mbC$ is the field of complex numbers.
The calligraphic letters $\hilb$ and~$\clK$ will denote complex separable Hilbert spaces --- finite or infinite dimensional.
The scalar product $\scal{\cdot}{\cdot}$ is assumed to be linear with respect to the second argument and anti-linear with respect to the first.
If it is necessary to distinguish the Hilbert spaces $\hilb$ and $\clK,$ the former will be called the main Hilbert space,
and the latter will be called auxiliary Hilbert space; having said this, it should be noted that majority of operators considered in this paper
act on the auxiliary Hilbert space~$\clK$ rather than the main one~$\hilb.$
The letter~$H$ with possible indices will denote a self-adjoint operator on~$\hilb.$ The letter~$F$ will always denote a fixed densely defined closed operator
from $\hilb$ to~$\clK$ which has trivial kernel and co-kernel. The letter $\Lambda$ with arguments will always denote a measurable subset of $\mbR$
of full Lebesgue measure. Throughout this paper the word ``operator'' means a linear operator.

The letter~$V$ will be used to denote a self-adjoint operator on $\hilb$ with some conditions imposed on it.
We shall consider perturbation $H_r=H_0+rV$ of a self-adjoint operator~$H_0$ by a real multiple of~$V;$
the multiple itself, called a coupling constant, will be denoted by the letters $s$ and $r$ (with possible subindexes).

A subset~$A$ of a metric space $X$ is \emph{discrete} if intersection of~$A$ with any compact subset of $X$ is finite.

If $L_1$ and $L_2$ are two closed subspaces of a Hilbert space such that $L_1\cap L_2 = \set{0},$ then by
$L_1 \dotplus L_2$ we denote the direct sum of $L_1$ and $L_2.$ If in addition to this the subspaces $L_1$ and $L_2$
are orthogonal then the direct sum of $L_1$ and $L_2$ we denote by $L_1 \oplus L_2$ instead of $L_1 \dotplus L_2.$

By $\ker(A)$ the \emph{kernel} of an operator~$A$ is denoted and $\im(A)$ will denote the \emph{range} or the \emph{image} of $A.$ \label{Page: im(A)}
The \emph{resolvent set} $\rho_T$ of a densely-defined closed operator~$T$ on a Hilbert space $\hilb$ consists of all complex numbers~$z$ such that
the operator $T - z$ is a bijection of the domain $\dom(T)$ onto $\hilb;$ for such $z$ the bounded inverse\label{Page: Rz(s)}
$$
  R_z(T) = (T-z)^{-1},
$$
called \emph{resolvent} of $T,$ exists.
The spectrum $\sigma_T$ or $\sigma(T)$ of a densely-defined closable operator~$T$ on a Hilbert space is the complement of the resolvent set.
For two bounded operators $S$ and~$T$ one has (see e.g. \cite[Proposition 2.2.3]{BR1})
\begin{equation} \label{F: spec(AB)=spec(BA)}
  \sigma_{ST} \cup \set{0} = \sigma_{TS} \cup \set{0}.
\end{equation}
Let~$T$ be a closed operator on a Hilbert space~$\clK$ and let $z \in \mbC.$
Non-zero vectors~$u$ from~$\clK$ such that $(T-z)^k u = 0$ for some $k =1,2,\ldots$ are called \emph{root vectors} of~$T$ corresponding to an eigenvalue $z.$
A point~$z$ of the spectrum of~$T$ is called an \emph{isolated eigenvalue} of finite algebraic multiplicity if~$z$ is an isolated point of $\sigma(T)$ and if
the \emph{algebraic multiplicity} $\mu_T(z)$ of~$z$ defined by
$$
  \mu_T(z) := \dim \set{u \in \clK \colon \exists k \in \mbZ_+ \ (T-z)^k u = 0}
$$
is finite. The set of all isolated eigenvalues of finite algebraic multiplicity of an operator~$T$ is denoted by $\sigma_{d}(T).$
If~$T$ is compact, the function $\mu_T$ of~$z$ is called \emph{spectral measure} of $T.$
\label{Page: mu(A)}
If~$S$ and~$T$ are bounded operators, such that~$ST$ and~$TS$ are both compact, then the following stronger version of (\ref{F: spec(AB)=spec(BA)}) holds:
\begin{equation} \label{F: mu(AB)=mu(BA)}
  \mu_{ST}|_{\mbC\setminus \set{0}} = \mu_{TS}\big|_{\mbC\setminus \set{0}}.
\end{equation}
Further, for any compact operator~$T$
\begin{equation} \label{F: mu(A*)=bar mu(A)}
  \mu_{T^*} = \bar \mu_{T},
\end{equation}
where $\bar \mu_{T}(z) = \mu_T(\bar z).$

A closed operator~$T$ is said to be \emph{Fredholm} if the range of~$T$ is a closed subspace of finite co-dimension and
the kernel of~$T$ is finite-dimensional (see \cite[IV.5.1]{Kato}). A~bounded operator~$T$ is Fredholm if and only if there exists a bounded operator $S$
such that the operators $ST-1$ and $TS-1$ are compact, such an operator $S$ is called parametrix of $T.$ In other words, bounded Fredholm operators are invertible up to compact operators.
By definition, \emph{essential spectrum} $\sigma_{ess}(T)$ of a closed operator~$T$ consists of all complex numbers~$z$ such that the operator
$T-z$ is not Fredholm; in this regard note that in \cite{Kato} the essential spectrum is defined as the set of
all complex numbers~$z$ such that the operator $T-z$ is not semi-Fredholm, see \cite[\S IV.5]{Kato}.
There are also other definitions of the essential spectrum, but all of them coincide for self-adjoint operators.
Since in this paper we shall be concerned with the essential spectrum of self-adjoint operators and of their relatively compact perturbations,
this definition suffices.
Essential spectrum of a self-adjoint operator~$H$ admits another characterization: the essential spectrum of~$H$
is the spectrum of~$H$ from which all isolated eigenvalues of finite multiplicity are removed.
In general,
$$
  \sigma_{ess}(T) \subset \sigma(T) \setminus \sigma_d(T),
$$
but this inclusion may be strict \cite{Kato}.

Let~$H$ and $V$ be two self-adjoint operators on a Hilbert space~$\hilb.$ The operator $V$ is said to be \emph{relatively compact} with respect to $H,$
if $R_z(H)V$ is a bounded operator on $\dom(V) \subset \hilb$ for some $z \in \rho_H$ such that its continuous prolonging to $\hilb$ is a compact operator.
In this case the operator $R_z(H)V$ is bounded with compact prolonging for any $z \in \rho_H.$
Weyl's Theorem asserts that essential spectrum of a self-adjoint operator is stable under relatively 
compact perturbations (see e.g. \cite[\S IV.5.6]{Kato}, \cite[\S XIII.4]{RS4}).

The spectrum of a closed operator $T$ on a Hilbert space is upper semi-continuous: for any neighbourhood~$O$ of the spectrum of~$T$
there exists~$\delta>0$ such that for all bounded~$S$ with $\norm{S} < \delta$ the spectrum of~$S+T$ is a subset of~$O.$

In general, the spectrum is not continuous in the sense that for a bounded operator~$T$ there may exist $z \in \sigma(T)$
and a neighbourhood $O$ of~$z$ such that for any $\delta>0$ there exists a bounded operator $S$ with $\norm{S}< \delta$
such that $\sigma(T+S) \cap O = \emptyset.$

For brevity, the identity operator on a Hilbert space is denoted by~$1;$ in particular, the scalar operator of multiplication by a number $c$ will be denoted by $c$
instead of $cI.$ An \emph{idempotent operator} is a bounded operator $P$ such that $P^2=P.$
If~$A$ and $B$ are two bounded operators such that $z \notin \sigma_{AB} \cup \set{0},$ then
\begin{equation} \label{F: (s-AB)(-1)A=...}
  (z - AB)^{-1} A = A(z - BA)^{-1}.
\end{equation}
The condition $z \notin \sigma_{AB} \cup \set{0}$ implies that $z \notin \sigma_{BA},$ so that
the right hand side of the above equality makes sense. Hence, the equality itself follows from obvious equality $A(z - BA) = (z - AB)A.$

The \emph{real} $\Re A$ and \emph{imaginary} $\Im A$ parts of a bounded operator~$A$ on a Hilbert space are defined by formulas
\label{Page: Re(A)}
$$
  \Re A = \frac{A+A^*}2 \quad \text{and} \quad \Im A = \frac{A-A^*}{2i}.
$$
\emph{Rank} of an operator~$A$ is the dimension of the image of $A.$
The \emph{signature} $\sign(A)$ of a finite-rank self-adjoint operator~$A$ is an integer defined as follows:
\label{Page: sign(A)}
\begin{equation} \label{F: sign(A)}
  \sign(A) = \rank A_+ - \rank A_-,
\end{equation}
where~$A_+$ (respectively,~$A_-$) is the positive (respectively, negative) part of~$A.$
In this regard note that, given a self-adjoint operator~$A,$ the word ``signature'' is also used for the operator $f(A),$ where $f(x)$ is the sign-function,
but in this paper this notion will not be used and therefore there is no danger of confusion.

\begin{lemma} If~$A$ is an operator of rank $N < \infty,$ then there exists $\eps>0$ such that for any operator $B$ of norm less than $\eps$
the inequality $\rank(A+B) \leq N$ implies the equality \mbox{$\rank(A+B) = N.$}
\end{lemma}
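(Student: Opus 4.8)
The plan is to prove the \emph{lower semicontinuity of the rank}: a sufficiently small perturbation of a finite-rank operator cannot decrease its rank, so if $\rank(A+B)$ is known a priori not to exceed $N$, it must in fact equal $N.$

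First I would fix witnesses for the rank of $A.$ Since $\im(A)$ is $N$-dimensional, there are vectors $x_1,\dots,x_N$ such that $Ax_1,\dots,Ax_N$ form a basis of $\im(A);$ these vectors have some finite norms, and I set $M = \max_{i}\norm{x_i}.$ For an arbitrary bounded operator $B$ put $f_i(B) = (A+B)x_i = Ax_i + Bx_i$ and let $G(B) = \bigl(\scal{f_i(B)}{f_j(B)}\bigr)_{i,j=1}^{N}$ be the Gram matrix of the tuple $f_1(B),\dots,f_N(B).$ Each entry satisfies $\abs{\scal{f_i(B)}{f_j(B)} - \scal{Ax_i}{Ax_j}} \le C\norm{B}$ for $\norm{B} \le 1,$ with $C$ depending only on $A,$ $M$ and $N,$ so the map $B \mapsto \det G(B)$ is continuous. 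Since $Ax_1,\dots,Ax_N$ are linearly independent, $\det G(0) \neq 0,$ and therefore there is $\eps>0$ such that $\det G(B) \neq 0$ whenever $\norm{B} < \eps.$

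For such $B$ the vectors $(A+B)x_1,\dots,(A+B)x_N$ are linearly independent, so $\rank(A+B) \geq N;$ together with the hypothesis $\rank(A+B) \leq N$ this gives $\rank(A+B) = N,$ which is the assertion. There is no genuine obstacle here: the argument uses only that an operator of rank $N$ admits $N$ inputs with linearly independent images, and the standard fact that the Gram determinant of a finite family of vectors vanishes precisely when the family is linearly dependent and depends continuously on those vectors. If an explicit $\eps$ is wanted, one may take $Ax_1,\dots,Ax_N$ orthonormal and choose $\eps = 1/(2M\sqrt{N}),$ relying on the elementary observation that a tuple of vectors each within distance $1/(2\sqrt{N})$ of an orthonormal $N$-tuple is automatically linearly independent.
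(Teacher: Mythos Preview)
Your proof is correct. The paper dispatches this lemma in one line as ``a direct consequence of the upper-semicontinuity of spectrum'': since $A$ has $N$ positive singular values (the nonzero spectrum of $|A|$), these are stable under small perturbation, so $|A+B|$ retains at least $N$ nonzero eigenvalues and hence $\rank(A+B)\geq N.$ Your Gram-matrix argument reaches the same lower bound $\rank(A+B)\geq N$ by a more elementary and self-contained route, avoiding any appeal to spectral theory; in particular it makes transparent why the hypothesis is really about preserving linear independence of finitely many image vectors. The paper's approach is shorter to state but leaves the reader to unpack which operator's spectrum is meant (note $A$ itself can be nilpotent with only zero spectrum, so one must pass to $A^*A$ or $|A|$); your approach spells everything out and even yields an explicit $\eps.$
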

\noindent In other words, small enough perturbations of finite rank operators which do not increase the rank preserve the rank.
This lemma is a direct consequence of the upper-semicontinuity of spectrum.

\begin{lemma} \label{L: finite-rank lemma} Let $M$ be a finite-rank self-adjoint operator on a Hilbert space $\clK.$ If $\clL$ is a vector subspace of $\clK$ such that
for any non-zero $f \in \clL$ the scalar product $\scal{f}{Mf}$ is positive, then
$$
  \dim \clL \leq \rank M_+.
$$
\end{lemma}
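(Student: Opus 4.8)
The plan is to reduce the statement to a linear-algebra fact about the positive part of $M$ and then apply a dimension-counting argument. First I would pass to the positive part $M_+$ of $M.$ Write $M = M_+ - M_-$ with $M_+ M_- = 0,$ and let $P_+$ be the orthogonal projection onto $\im M_+,$ so that $\rank M_+ = \dim \im M_+ = \operatorname{tr} P_+.$ The key observation is that for any $f \in \clK$ one has $\scal{f}{M f} = \scal{f}{M_+ f} - \scal{f}{M_- f} \leq \scal{f}{M_+ f} = \scal{P_+ f}{M_+ P_+ f},$ using that $M_+ = P_+ M_+ P_+$ and $\scal{f}{M_- f} \geq 0.$ Consequently, if $\scal{f}{Mf} > 0$ for some $f,$ then in particular $\scal{P_+ f}{M_+ P_+ f} > 0,$ which forces $P_+ f \neq 0.$

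Next I would show that the restriction $P_+|_{\clL} \colon \clL \to \im M_+$ is injective. Suppose $f \in \clL$ with $P_+ f = 0.$ Then by the inequality above $\scal{f}{M f} \leq \scal{P_+ f}{M_+ P_+ f} = 0,$ so the hypothesis ``$\scal{f}{M f}$ is positive for any non-zero $f \in \clL$'' (read contrapositively: if $\scal{f}{Mf} \leq 0$ then $f = 0$) gives $f = 0.$ Hence $P_+|_\clL$ is an injective linear map from $\clL$ into the finite-dimensional space $\im M_+,$ and therefore
$$
  \dim \clL \leq \dim \im M_+ = \rank M_+,
$$
which is the claimed bound. (If $\clL$ is infinite-dimensional, the same argument applies and immediately yields a contradiction with $\rank M_+ < \infty,$ so in fact $\clL$ must be finite-dimensional; but since the statement only claims the inequality, this case is subsumed.)

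There is essentially no serious obstacle here: the only point requiring a little care is the spectral-theoretic decomposition $M = M_+ - M_-$ with $M_+ M_- = 0$ and $M_+ = P_+ M_+ P_+,$ which is standard for a self-adjoint operator (finite-rank, so no domain subtleties), together with the positivity $\scal{f}{M_- f} \geq 0.$ Everything else is the injectivity argument and the rank–nullity inequality for the map $P_+|_\clL.$ I would present the proof in exactly this order: (1) set up $M_+, M_-, P_+$; (2) establish $\scal{f}{Mf} \leq \scal{P_+ f}{M_+ P_+ f}$; (3) deduce injectivity of $P_+|_\clL$ from the hypothesis; (4) conclude $\dim \clL \leq \rank M_+.$
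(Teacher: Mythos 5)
Your proof is correct and is essentially the paper's argument in contrapositive form: the paper assumes $\dim\clL > \dim\clM_+$ and produces a non-zero $f \in \clM_+^\perp \cap \clL$ with $\scal{f}{Mf} \leq 0,$ which is exactly the statement that your map $P_+|_{\clL}$ is injective. Both proofs rest on the same key fact that a vector annihilated by $P_+$ (equivalently, orthogonal to the positive spectral subspace) satisfies $\scal{f}{Mf} \leq 0,$ so no substantive difference to report.
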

\begin{proof} Let $\clM_+$ be the vector space spanned by eigenvectors of $M$ corresponding to positive eigenvalues and assume contrary to the claim that
$\dim \clL > \dim \clM_+.$ Then the intersection $\clM_+^\perp \cap \clL$ is a vector subspace of dimension at least 1. If $f$ is a non-zero vector from
$\clM_+^\perp \cap \clL,$ then $\scal{f}{Mf} > 0$ since $f \in \clL$ and $\scal{f}{Mf} \leq 0$ since $f \in \clM_+^\perp.$
\end{proof}

\begin{lemma} \label{L: sign(M)=sign(FMF)} Let $M$ be a self-adjoint finite rank operator on a Hilbert space $\clK$ and let $F\colon \hilb \to \clK$ be a closed operator with zero kernel and co-kernel.
If $\im(M) \subset \dom(F^*),$ then
the product $F^*MF$ is a well defined finite-rank self-adjoint operator for which $\rank(M) = \rank(F^*MF)$ and $\sign(M) = \sign(F^*MF).$
\end{lemma}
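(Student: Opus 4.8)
The plan is to diagonalise $M$ and transport its spectral decomposition through $F$. Write $M=\sum_{j=1}^{N}\mu_j\scal{e_j}{\cdot}\,e_j$, where $N=\rank M$, the vectors $e_1,\ldots,e_N$ form an orthonormal system and the $\mu_j$ are the nonzero real eigenvalues of $M$; then $\im M$ is the linear span of $e_1,\ldots,e_N$, and in particular each $e_j\in\dom(F^*)$ by the hypothesis $\im M\subset\dom(F^*)$. For $f\in\dom(F)$ one has $MFf\in\im M\subset\dom(F^*)$, so $A:=F^*MF$ is defined on the dense domain $\dom(F)$; moreover, since $\scal{e_j}{Ff}=\scal{F^*e_j}{f}$, on this domain $A$ is given by the closed-form expression
$$
  Af=\sum_{j=1}^{N}\mu_j\,\scal{F^*e_j}{f}\,F^*e_j ,
$$
whose right-hand side is a bounded finite-rank operator defined on all of $\hilb$. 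Hence this operator is the continuous prolonging of $F^*MF$ asserted in the lemma, it is self-adjoint because the $\mu_j$ are real, and its range lies in the linear span of $F^*e_1,\ldots,F^*e_N$.

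First I would record that $F^*e_1,\ldots,F^*e_N$ are linearly independent: if $\sum_j c_jF^*e_j=0$ then $F^*\bigl(\sum_j c_je_j\bigr)=0$, so $\sum_j c_je_j=0$ because $\ker F^*=\{0\}$, so all $c_j=0$ by orthonormality. Reformulating, put $D=\mathrm{diag}(\mu_1,\ldots,\mu_N)$ on $\mbC^N$ and let $G\colon\hilb\to\mbC^N$ be the bounded operator $Gf=\bigl(\scal{F^*e_j}{f}\bigr)_{j=1}^{N}$; then $A=G^*DG$, where $G^*c=\sum_j c_jF^*e_j$, and the independence just shown means exactly that $G^*$ is injective, i.e. $G$ is surjective. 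Since $D$ is invertible and $\ker G^*=\{0\}$, the range of $A=G^*DG$ equals $G^*(\mbC^N)$, which has dimension $N$; therefore $\rank A=N=\rank M$.

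For the signature I would invoke the given finite-rank Lemma~\ref{L: finite-rank lemma}. Decompose $\mbC^N=E_+\oplus E_-$ into the positive and negative spectral subspaces of $D$, of dimensions $n_\pm=\rank M_\pm$. Using surjectivity of $G$, pick linear maps $s_\pm\colon E_\pm\to\hilb$ with $Gs_\pm=\mathrm{id}$ (so each $s_\pm$ is injective, $\im s_\pm$ has dimension $n_\pm$); then for every nonzero $v\in\im s_+$ one has $\scal{v}{Av}=\scal{Gv}{DGv}>0$, so Lemma~\ref{L: finite-rank lemma} gives $n_+\le\rank A_+$, and the same argument applied to the finite-rank self-adjoint operator $-A$ with the subspace $\im s_-$ gives $n_-\le\rank A_-$. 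Adding these and using $\rank A_++\rank A_-=\rank A=N=n_++n_-$ from the previous step forces equality in both, whence $\sign A=\rank A_+-\rank A_-=n_+-n_-=\rank M_+-\rank M_-=\sign M$.

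I expect the only genuinely delicate point to be the domain bookkeeping in the first paragraph: one must check that $\im(MF)\subset\dom(F^*)$ so that $F^*MF$ is defined on the dense set $\dom(F)$, that the identity $\scal{e_j}{Ff}=\scal{F^*e_j}{f}$ legitimately replaces the composite by the displayed finite sum, and that this sum really is the unique bounded continuous extension. Once the problem is reduced to the factorisation $F^*MF=G^*DG$ with $G$ surjective and $D$ an invertible Hermitian matrix, the remaining rank and inertia statements are purely finite-dimensional and are handled through Lemma~\ref{L: finite-rank lemma} as above.
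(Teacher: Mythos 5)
Your proof is correct. It shares the paper's overall skeleton — establish $\rank(F^*MF)=\rank(M)$, then use Lemma~\ref{L: finite-rank lemma} to bound from below the ranks of the positive and negative parts of $F^*MF$ by $\rank M_\pm$, and let the dimension count force equality — but the way you produce the test subspaces is genuinely different. The paper works with the spectral subspaces $\clM_\pm$ of $M$ and pulls them back through $F$, setting $\clL_\pm=F^{-1}\clM_\pm$; this step tacitly uses that $\im(F)$ contains $\clM_\pm$, which for a closed operator with merely dense range is an extra ingredient beyond the stated hypotheses. You instead push the eigenvectors forward through $F^*$: diagonalising $M$ and factoring $F^*MF=G^*DG$ with $G$ surjective (surjectivity coming only from $\ker F^*=\{0\}$, i.e.\ the zero co-kernel assumption), you obtain the sign-definite subspaces as images of right inverses of $G$. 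This buys two things: the well-definedness, boundedness and finite rank of the prolonged operator come out explicitly from the closed-form expression $\sum_j\mu_j\scal{F^*e_j}{\cdot}F^*e_j$ rather than being asserted, and the rank equality is proved from the factorisation instead of the paper's one-line appeal to injectivity of $F$ and $F^*$. The paper's version is shorter and keeps the geometric picture of transporting $\clM_\pm$ directly, but your route runs on exactly the hypotheses as stated.
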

\begin{proof} Let~$\clM_+$ (respectively, $\clM_-$) be the vector spaces spanned by eigenvectors of $M$ corresponding to positive (respectively, negative) eigenvalues of $M$
and let $\clM =\clM_+\oplus \clM_-.$
Since $\im(M) \subset \dom(F^*),$ the product~$F^*MF$ is well defined.
Since~$F$ and~$F^*$ have zero kernel, the ranks of operators~$F^*MF$ and~$M$ are the same.
Since the range of~$F$ contains the vector spaces~$\clM_\pm,$ the vector spaces $\clL_\pm = F^{-1}\clM_\pm$ are well-defined and $\dim \clM_\pm = \dim \clL_\pm.$
For any non-zero vector $f = F^{-1} g\in \clL_+,$ where $g \in \clM_+,$
we have
$$
  \scal{f}{F^*MFf} =\scal{Ff}{MFf} = \scal{g}{Mg} > 0.
$$
It follows from this and Lemma~\ref{L: finite-rank lemma} that $\dim \clL_+ = \dim \clM_+$ is not larger than the rank of the positive part of $F^*MF.$
Similarly, one shows that $\dim \clM_-$ is not larger than the rank of the negative part of $F^*MF.$ Combining this with equality $\rank(M) = \rank(F^*MF)$
implies that $\sign(M) = \sign(F^*MF).$
\end{proof}

If~$T$ is a compact operator on a Hilbert space, then the sequence of $s$-numbers $s_1(T), s_2(T), \ldots$ of~$T$
is the sequence of eigenvalues of the compact operator $\abs{T} := \sqrt{T^*T}$ listed in non-increasing order
such that each eigenvalue is repeated in the list according to its multiplicity.
Let $p \in [1,\infty].$
The notation $\clL_p(\hilb)$ denotes the class of all compact operators~$T$ acting on $\hilb$ such that the $p$-norm $\norm{T}_p$ of $T,$ defined by equality
$$
  \norm{T}_p^p := \sum_{n = 1}^\infty s_n(T)^p, \ \text{if } p<\infty; \ \ \norm{T}_\infty := s_1(T) = \norm{T}, \ \text{if } p = \infty,
$$
is finite. The linear space $\clL_p(\hilb)$ with thus defined norm is an invariant operator ideal \cite{GK},
called $p$-th Schatten ideal.
This means, in particular, that if $T \in \clL_p(\hilb)$ and if $A,B$ are bounded operators, then $ATB \in \clL_p(\hilb)$
and $\norm{ATB}_p \leq \norm{A}\norm{T}_p\norm{B}.$ The ideal $\clL_\infty$ consists of all compact operators on~$\hilb.$
Operators from the first Schatten ideal $\clL_1(\hilb)$ are called \emph{trace class operators},
operators from the second Schatten ideal $\clL_2(\hilb)$ are called \emph{Hilbert-Schmidt operators}.
\emph{Trace-class norm} $\norm{T}_1$ of a trace-class operator~$T$ is equal to $\Tr\abs{T}$
and \emph{Hilbert-Schmidt norm} $\norm{T}_2$ of a Hilbert-Schmidt operator~$T$ is equal to $\sqrt{\Tr(\abs{T}^2)}.$
The \emph{trace} $\Tr \colon \clL_1(\hilb) \to \mbC$
is a linear continuous functional, defined for trace class operators by $\Tr(T) = \sum_{n=1}^\infty \scal{\phi_n}{T\phi_n},$ where $\set{\phi_n}_{n=1}^\infty$
is an orthonormal basis of~$\hilb.$ If $A,B$ are bounded operators such that $AB$ and $BA$ are trace-class, then
$$
  \Tr(AB) = \Tr(BA).
$$
Further, for any trace-class operator~$T$ the equality $\Tr(T^*) = \overline{\Tr(T)}$ holds.
For any trace class operator~$T$ the sequence of eigenvalues $\set{\lambda_j(T)}_{j=1}^\infty$
of the operator~$T$ is summable; the Lidskii theorem asserts that
\begin{equation} \label{F: Lidskii Thm}
  \Tr(T) = \sum_{j=1}^\infty \lambda_j(T).
\end{equation}

\begin{lemma} \label{L: An to A iff An to A in p-norm}
Let $p\geq 1.$ If $A, A_1, A_2, A_3,\ldots$ is a sequence of finite-rank operators on a Hilbert space such that the sequence of ranks of~$A_n$ is bounded,
then~$A_n$ converges to~$A$ as $n\to\infty$ in the uniform norm if and only if~$A_n$ converges to~$A$ in $p$-norm as $n\to\infty.$
\end{lemma}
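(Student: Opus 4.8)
The plan is to reduce both implications to two elementary inequalities between the uniform norm and the $p$-norm on finite-rank operators, together with the remark that the rank of the limit operator cannot exceed a uniform bound on the ranks of the $A_n$. (The case $p=\infty$ is trivial, since then $\norm{\cdot}_p=\norm{\cdot}$, so assume $p<\infty$.)

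First I would record the two inequalities. For any compact operator $T$ one has $\norm{T}=s_1(T)\leq\brs{\sum_{n=1}^\infty s_n(T)^p}^{1/p}=\norm{T}_p$; applied to $T=A_n-A$ this already gives the implication ``$\norm{A_n-A}_p\to 0\Rightarrow\norm{A_n-A}\to 0$'', which uses no hypothesis on ranks. Conversely, if $T$ has rank $m<\infty$, then $s_n(T)=0$ for $n>m$ and $s_n(T)\leq s_1(T)=\norm{T}$ for $n\leq m$, so $\norm{T}_p^p=\sum_{n=1}^m s_n(T)^p\leq m\norm{T}^p$, i.e. $\norm{T}_p\leq m^{1/p}\norm{T}$.

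Next I would bound the rank of $A$. Assume $\rank(A_n)\leq N$ for all $n$ and $\norm{A_n-A}\to 0$. If $\rank(A)>N$, choose vectors $x_1,\dots,x_{N+1}$ with $Ax_1,\dots,Ax_{N+1}$ linearly independent; since $A_nx_j\to Ax_j$ for each $j$ and linear independence of a fixed finite tuple of vectors is preserved under small perturbations, $A_nx_1,\dots,A_nx_{N+1}$ are linearly independent for all large $n$, contradicting $\rank(A_n)\leq N$. Hence $\rank(A)\leq N$, and therefore $\rank(A_n-A)\leq 2N$ for every $n$.

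Finally I would combine the pieces: assuming $\norm{A_n-A}\to 0$, apply the inequality $\norm{T}_p\leq m^{1/p}\norm{T}$ with $T=A_n-A$ and $m=2N$ to get $\norm{A_n-A}_p\leq(2N)^{1/p}\norm{A_n-A}\to 0$. The only step that is not a one-line estimate is the lower semicontinuity of rank used to control $\rank(A)$, but this follows at once from the open-ness of linear independence as above, so I do not expect any genuine obstacle here; the lemma is essentially a bookkeeping statement about finite-rank operators.
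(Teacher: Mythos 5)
Your proof is correct and is essentially the paper's argument: the paper's entire proof is the two-sided estimate $\norm{T}\leq\norm{T}_p\leq N\norm{T}$ for operators $T$ of rank at most $N$, applied with $N$ a common bound on the ranks and $T=A_n-A$. Your extra semicontinuity step bounding $\rank(A)$ is fine but not strictly necessary, since $A$ is already assumed finite-rank and one may simply take the bound $N+\rank(A)$ on $\rank(A_n-A)$.
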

\begin{proof} If~$N$ is the largest of ranks of operators $A, A_1, A_2, A_3,\ldots,$ then $\norm{A} \leq \norm{A}_p \leq N \norm{A}.$
\end{proof}

\subsection{Analytic operator-valued functions}
For definition and detailed study of vector-valued holomorphic functions see e.g. \cite{HPh,Kato,RS1}.
Let $T(\kappa)$ be a~single-valued holomorphic function with values in bounded operators;
assume that $T(\kappa)$ is defined in some domain~$G$ of the complex plane except a discrete set of singular points.
In a deleted neighbourhood $0 < \abs{\kappa - \kappa_0}< \delta$ of a singular point~$\kappa_0 \in G$ the function $T(\kappa)$ admits Laurent expansion at~$\kappa_0$
\begin{equation} \label{F: T(kappa)=}
  T(\kappa) = \tilde T(\kappa) + \sum_{j=1}^\infty (\kappa-\kappa_0)^{-j} T_{j},
\end{equation}
where $\tilde T(\kappa)$ is a function of~$\kappa$ holomorphic in the neighbourhood of~$\kappa_0$ (including~$\kappa_0$) and $T_1, T_2,\ldots$ are some bounded operators.
A~function~$T$ defined on~$G$ is said to be \emph{meromorphic} in $G$ if it is holomorphic everywhere on~$G$ except possibly a discrete subset of singular points,
such that at each singular point~$\kappa_0$ the sum in its Laurent expansion (\ref{F: T(kappa)=}) is finite.

\begin{thm} \label{T: Analytic Fredholm alternative} (Analytic Fredholm alternative) \
Let~$G$ be an open connected subset of $\mbC.$ Let $T \colon G \to \clL_\infty(\hilb)$
be a holomorphic family of compact operators in $G.$ If the family of operators $1+T(\kappa)$
is invertible at some point~$\kappa_1 \in G,$ then it is invertible at all points of~$G$ except the discrete set
$$
  \euN := \set{\kappa \in G \colon -1 \in \sigma(T(\kappa))}.
$$
Further, the operator-function~$F(\kappa) := (1+T(\kappa))^{-1}$ is meromorphic in $G$ and the set of its poles is $\euN.$ Moreover, in the Laurent expansion of
$F(\kappa)$ in a neighbourhood of any point~$\kappa_0 \in \euN$ the coefficients of negative powers of~$\kappa-\kappa_0$ are finite-rank operators.
\end{thm}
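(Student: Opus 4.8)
The plan is to carry out the classical analytic Fredholm argument (cf. \cite{RS1}): first reduce, in a disc around each point of $G$, the invertibility of $1+T(\kappa)$ to that of a finite matrix-valued holomorphic function, then globalize by a connectedness argument. First I would fix $\kappa_0 \in G$. Since $T(\kappa_0)$ is compact it is a norm limit of finite-rank operators, so there is a finite-rank $F_0$ with $\norm{T(\kappa_0)-F_0}<1/2$; by continuity of $\kappa \mapsto T(\kappa)$ there is an open disc $D = D(\kappa_0,\rho)\subset G$ on which $\norm{T(\kappa)-F_0}<1$. Hence $1+(T(\kappa)-F_0)$ is invertible on $D$, its inverse $R(\kappa):=(1+T(\kappa)-F_0)^{-1}$ being given by a Neumann series and so holomorphic on $D$. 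From
$$
  1+T(\kappa) = \bigl(1+(T(\kappa)-F_0)\bigr)\bigl(1+R(\kappa)F_0\bigr)
$$
one sees that $1+T(\kappa)$ is invertible precisely when $1+R(\kappa)F_0$ is, and that $(1+T(\kappa))^{-1} = (1+R(\kappa)F_0)^{-1}R(\kappa)$ there.

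Next I would factor $F_0 = CD$ with $D\colon \hilb \to \mbC^N$, $C\colon \mbC^N \to \hilb$, $N=\rank F_0$. Since $-1\neq 0$, formula~(\ref{F: spec(AB)=spec(BA)}) applied to $R(\kappa)C$ and $D$ shows that $1+R(\kappa)CD$ is invertible if and only if the $N\times N$ matrix $M(\kappa):=1_{\mbC^N}+D\,R(\kappa)\,C$ is invertible, i.e. iff $d(\kappa):=\det M(\kappa)\neq 0$. The scalar $d$ is holomorphic on the connected set $D$, so either $d\equiv 0$ on $D$ or its zeros are discrete in $D$. Where $d(\kappa)\neq 0$ the Woodbury (push-through) identity gives $(1+R(\kappa)CD)^{-1} = 1 - R(\kappa)C\,M(\kappa)^{-1}D$, hence
$$
  (1+T(\kappa))^{-1} = \Bigl(1 - R(\kappa)C\,M(\kappa)^{-1}D\Bigr)R(\kappa) \quad \text{on } D\setminus d^{-1}(0).
$$
Since $M(\kappa)^{-1} = \operatorname{adj}(M(\kappa))/\det M(\kappa)$ with $\operatorname{adj}(M)$, $R$, $C$, $D$ holomorphic, $(1+T(\kappa))^{-1}$ is meromorphic on $D$ with possible poles only at the zeros of $d$, and at each such zero the principal part is a finite sum $\sum_j (\kappa-\kappa_0)^{-j}W_j$ with $\rank W_j \leq N$, since the whole correction term factors through $\mbC^N$.

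To globalize, set $\euN := \set{\kappa\in G\colon 1+T(\kappa)\ \text{is not invertible}}$, which coincides with $\set{\kappa\colon-1\in\sigma(T(\kappa))}$ because $T(\kappa)$ is compact. The local analysis gives every $\kappa_0\in G$ a disc $D$ as above with either $D\subseteq\euN$ (exactly when $d\equiv 0$ on $D$) or $D\cap\euN$ discrete in $D$. Let $A$ be the set of points admitting a disc of the second kind and $B$ the set admitting a disc of the first kind; then $A\cup B=G$, both $A$ and $B$ are open (shrink the disc and use that the respective property passes to subdiscs and nearby centres), and $A\cap B=\emptyset$ since a nonempty open subset of $\euN$ cannot be discrete. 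As $G$ is connected, $G=A$ or $G=B$; the hypothesis that $1+T(\kappa_1)$ is invertible says $\kappa_1\notin\euN$, which rules out $G=B$. Hence $G=A$, so $\euN$ is discrete in $G$, $1+T(\kappa)$ is invertible on $G\setminus\euN$, and near each point of $\euN$ the function $F(\kappa)=(1+T(\kappa))^{-1}$ is meromorphic with finite-rank Laurent coefficients at negative powers. Finally $F$ has a genuine pole at each $\kappa_*\in\euN$: were it holomorphic there, passing to the limit in $F(\kappa)(1+T(\kappa))=1=(1+T(\kappa))F(\kappa)$ (valid on the punctured neighbourhood, which lies in $G\setminus\euN$) would make $1+T(\kappa_*)$ invertible, a contradiction. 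Thus the poles of $F$ are exactly the points of $\euN$.

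I expect the only delicate point to be the globalization bookkeeping, namely verifying that the dichotomy ``$D\subseteq\euN$ or $D\cap\euN$ discrete'' is stable enough under shrinking $D$ and moving the centre for $A$ and $B$ to be open, and that discreteness of $\euN$ in each local disc yields discreteness in all of $G$. Once the local factorization $1+T(\kappa)=(1+(T(\kappa)-F_0))(1+R(\kappa)CD)$ is in place, the analyticity, the meromorphy of $F$, and the finite rank of the principal parts all drop out of Cramer's rule for $M(\kappa)^{-1}$.
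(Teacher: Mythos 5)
Your proof is correct: the local finite-rank approximation, the reduction to the invertibility of the $N\times N$ matrix $M(\kappa)$ via the push-through identity and Cramer's rule, and the connectedness dichotomy are exactly the classical analytic Fredholm argument, and the finite-rank principal parts and the identification of the poles with $\euN$ come out just as you say. The paper itself offers no proof of this theorem, referring instead to \cite[Theorem VI.14]{RS1} and \cite[Theorem 1.8.2]{Ya}, whose proofs proceed in essentially the same way as yours.
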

For proof of this theorem see e.g. \cite[Theorem VI.14]{RS1}, \cite[Theorem 1.8.2]{Ya}.

\subsection{Divided differences}
If $f(s)$ is a function of one variable, then the divided difference of~$f$ of first order is the function
$$
  f^{[1]}(s_1,s_2) = \frac{f(s_2)-f(s_1)}{s_2-s_1}.
$$
Divided difference of order~$k$ of a function $f(s)$ is a function $f^{[k]}(s_1,\ldots,s_{k+1})$ of $k+1$ variables $s_1,\ldots,s_{k+1}$
which is defined inductively by equality
$$
  f^{[k]}(s_1,\ldots,s_{k+1}) = \frac{f^{[k-1]}(s_2,\ldots,s_{k+1})-f^{[k-1]}(s_1,\ldots,s_{k})}{s_{k+1}-s_1}.
$$
We shall use two facts about divided differences.
\begin{lemma} \label{L: div-d diff-nce I}
Divided difference of order $k-1$ of a function $f$ is equal to
$$
  f^{[k-1]}(s_1,\ldots,s_k) = \sum_{j=1}^k f(s_j) \prod_{i=1, i \neq j}^k \frac 1{s_j-s_i}.
$$
\end{lemma}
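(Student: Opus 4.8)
The plan is to prove the formula by induction on $k$. The base case $k=1$ is immediate: $f^{[0]}(s_1) = f(s_1)$ agrees with the right-hand side because the empty product equals $1$ (one may equally well check $k=2$ directly against the definition of $f^{[1]}$). For the inductive step I would assume the identity holds for divided differences of order $k-2$, i.e. with $k-1$ arguments, and apply the defining recursion
\[
  f^{[k-1]}(s_1,\ldots,s_k) = \frac{f^{[k-2]}(s_2,\ldots,s_k) - f^{[k-2]}(s_1,\ldots,s_{k-1})}{s_k-s_1},
\]
substituting the inductive hypothesis into each of the two divided differences of order $k-2$ and then collecting, for each $j=1,\ldots,k$, the coefficient of $f(s_j)$ in the resulting expression.

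The collection splits into three cases. The terms $f(s_1)$ and $f(s_k)$ appear in only one of the two sums, and a one-line computation handles them: for instance the coefficient of $f(s_1)$ is
\[
  \frac{-1}{s_k-s_1}\prod_{i=2}^{k-1}\frac{1}{s_1-s_i} = \prod_{i=2}^{k}\frac{1}{s_1-s_i} = \prod_{i=1,\,i\neq 1}^{k}\frac{1}{s_1-s_i},
\]
and symmetrically for $f(s_k)$. For $2\le j\le k-1$ the term $f(s_j)$ occurs in both sums; factoring out the common product $\prod_{i=2,\,i\neq j}^{k-1}(s_j-s_i)^{-1}$ reduces its coefficient to
\[
  \frac{1}{s_k-s_1}\left(\frac{1}{s_j-s_k} - \frac{1}{s_j-s_1}\right)\prod_{i=2,\,i\neq j}^{k-1}\frac{1}{s_j-s_i},
\]
and the elementary identity $\dfrac{1}{s_j-s_k}-\dfrac{1}{s_j-s_1} = \dfrac{s_k-s_1}{(s_j-s_k)(s_j-s_1)}$ collapses this to $\prod_{i=1,\,i\neq j}^{k}(s_j-s_i)^{-1}$, which is exactly the claimed coefficient. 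This completes the induction.

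There is no genuine obstacle here — the only work is the index bookkeeping in the three cases above. I note an alternative route, essentially equivalent in content: recall that $f^{[k-1]}(s_1,\ldots,s_k)$ is the leading coefficient of the unique polynomial of degree at most $k-1$ interpolating $f$ at the nodes $s_1,\ldots,s_k$, and read off that coefficient from the Lagrange form $\sum_{j} f(s_j)\prod_{i\neq j}\frac{s-s_i}{s_j-s_i}$. However, establishing the interpolation characterization of the divided difference itself requires the same induction, so I would present the direct argument above as the most economical.
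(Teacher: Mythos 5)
Your induction is correct: the base case matches the definition of $f^{[1]}$, and the three-way bookkeeping for the coefficients of $f(s_1)$, $f(s_k)$, and $f(s_j)$ with $2\le j\le k-1$ is exactly right, the key cancellation $\frac{1}{s_j-s_k}-\frac{1}{s_j-s_1}=\frac{s_k-s_1}{(s_j-s_k)(s_j-s_1)}$ doing the work. The paper itself gives no proof of this lemma, referring instead to the textbook \cite{Bakh}; your argument is the standard one found there, so there is nothing to compare beyond noting that your direct induction (or the equivalent Lagrange-interpolation reading you mention) fully settles the statement.
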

\begin{lemma} \label{L: div-d diff-nce II}
Divided difference of order $k-1$ of a function $f$ is zero if and only if $f$ is a polynomial of degree $\leq k-2.$
\end{lemma}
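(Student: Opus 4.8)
The plan is to prove both directions by exploiting the explicit formula for divided differences supplied by Lemma \ref{L: div-d diff-nce I}, namely
$$
  f^{[k-1]}(s_1,\ldots,s_k) = \sum_{j=1}^k f(s_j) \prod_{i=1, i \neq j}^k \frac 1{s_j-s_i}.
$$
The easy direction is that a polynomial of degree $\leq k-2$ has vanishing $(k-1)$-st divided difference. For this I would recall the standard fact that divided differences are linear in $f$, so it suffices to check it on the monomials $f(s) = s^m$ for $0 \leq m \leq k-2$. The cleanest way is the recursion in the definition together with an induction on the order: one shows by induction on $k$ that $(s^m)^{[k-1]}$ is a polynomial (in the variables $s_1,\ldots,s_k$, symmetric) of total degree $m-(k-1)$, hence identically zero when $m \leq k-2$. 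Alternatively, one can invoke the Newton interpolation identity: if $p$ interpolates $f$ at $s_1,\ldots,s_k$ then $f^{[k-1]}(s_1,\ldots,s_k)$ is the leading coefficient of $p$ (the coefficient of $s^{k-1}$); since a polynomial of degree $\leq k-2$ is its own interpolating polynomial through any $k$ distinct nodes, that leading coefficient is $0$.

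For the converse, suppose $f^{[k-1]}(s_1,\ldots,s_k) = 0$ for all distinct $s_1,\ldots,s_k$ in the domain of $f$; I want to conclude $f$ is a polynomial of degree $\leq k-2$. Fix distinct base points $s_2,\ldots,s_{k-1}$ and treat $s_1 = s$ as a free variable; by Lemma \ref{L: div-d diff-nce I} the identity $f^{[k-1]}(s, s_2,\ldots,s_k) = 0$ can be solved for $f(s)$ in terms of the values $f(s_2),\ldots,f(s_k)$ and the elementary rational functions $\prod_{i \neq j} (s_j - s_i)^{-1}$; more precisely, isolating the $j=1$ term gives
$$
  f(s)\prod_{i=2}^k \frac{1}{s - s_i} = -\sum_{j=2}^k f(s_j)\prod_{i=1, i\neq j}^k \frac{1}{s_j - s_i},
$$
so that $f(s) = \bigl(\prod_{i=2}^k (s-s_i)\bigr)\cdot\bigl(\text{constant depending only on }s_2,\ldots,s_k\bigr)$, which is manifestly a polynomial in $s$ of degree $\leq k-1$; call it $p(s)$. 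It remains to rule out the top coefficient, i.e. to show $\deg p \leq k-2$: plug $p$ back into the divided difference, use the already-proved ``only if'' direction applied to the degree-$(k-1)$ part, and observe that $f^{[k-1]} \equiv 0$ forces the leading coefficient of $p$ to vanish (this is exactly the Newton-interpolation reading of $f^{[k-1]}$ as the leading coefficient). Hence $f$ agrees on the domain with a polynomial of degree $\leq k-2$.

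The main obstacle I anticipate is purely bookkeeping rather than conceptual: one must be careful about the domain of $f$ (it need only be a set containing at least $k$ points, or in the applications an interval or a subset of $\mathbb{C}$) and about the claim ``$f$ is a polynomial'' meaning ``$f$ coincides with a polynomial on its domain'' — if the domain has fewer than $k-1$ points the statement is vacuous or degenerate and should be noted. The only genuinely substantive point is the passage from ``$f$ is locally a polynomial of bounded degree'' to ``$f$ is globally that polynomial'', which follows because two polynomials of degree $\leq k-2$ agreeing at $\geq k-1$ points coincide, and the representation above shows the local polynomial does not depend on the chosen auxiliary nodes $s_2,\ldots,s_k$. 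I would therefore structure the write-up as: (i) linearity and the leading-coefficient interpretation of $f^{[k-1]}$ via Lemma \ref{L: div-d diff-nce I}; (ii) the forward implication from that interpretation; (iii) the converse by the isolation-of-one-term argument above, plus the uniqueness remark.
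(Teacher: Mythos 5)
The paper itself offers no argument for this lemma — it simply refers the reader to \cite{Bakh} — so there is no internal proof to compare against; your interpolation-theoretic plan is exactly the standard textbook route, and your forward direction (linearity plus the reading of $f^{[k-1]}(s_1,\ldots,s_k)$ as the leading coefficient of the Newton/Lagrange interpolant at the $k$ nodes) is fine.

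There is, however, a concrete slip in your converse. After isolating the $j=1$ term you assert that the right-hand side of
$$
  f(s)\prod_{i=2}^k \frac{1}{s - s_i} = -\sum_{j=2}^k f(s_j)\prod_{i=1, i\neq j}^k \frac{1}{s_j - s_i}
$$
is ``a constant depending only on $s_2,\ldots,s_k$'' and hence that $f(s)=\bigl(\prod_{i=2}^k(s-s_i)\bigr)\cdot\mathrm{const}$, a polynomial of degree $\leq k-1$. That is not so: each product on the right still contains the factor $(s_j-s_1)^{-1}=(s_j-s)^{-1}$, so the right-hand side depends on $s$. Carrying the algebra through correctly, $\prod_{i=2}^k(s-s_i)\cdot\frac{1}{s_j-s}=-\prod_{i=2,\,i\neq j}^k(s-s_i)$, and the identity becomes
$$
  f(s) = \sum_{j=2}^k f(s_j)\prod_{i=2,\, i\neq j}^k \frac{s-s_i}{s_j-s_i},
$$
i.e.\ $f$ coincides with the Lagrange interpolation polynomial through the $k-1$ nodes $s_2,\ldots,s_k$, which already has degree $\leq k-2$. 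So the error is harmless — indeed the corrected computation makes your final ``rule out the top coefficient'' step unnecessary (though your leading-coefficient argument would also repair the weaker degree-$(k-1)$ bound if you kept your original reading). Two minor bookkeeping points: the fixed auxiliary nodes should be $s_2,\ldots,s_k$ (not $s_2,\ldots,s_{k-1}$), and the independence of the resulting polynomial from the choice of nodes, which you flag as a worry, is automatic once $f$ is seen to equal the interpolant identically on the domain. With these corrections the write-up is a complete and valid proof of the lemma, consistent with the cited source.
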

\noindent Proofs of these lemmas can be found in e.g. \cite{Bakh}.

\subsection{Rigged Hilbert spaces}
A rigging of a Hilbert space $\hilb$ is a triple $(X,\hilb,X^*)$
which in addition to the Hilbert space $\hilb$ itself consists of
a normed or more generally locally convex space $X$ and its
adjoint $X^*$ such that $X$ is continuously embedded into $\hilb$
and $\hilb$ is continuously embedded into~$X^*$ and these
embeddings have dense ranges, see e.g. \cite{BeShu}. The rigging
normed space $X$ is often introduced as the range of a certain
operator acting on~$\hilb.$ In this case it is possible to
consider the operator itself as the rigging. In this paper we
follow this view-point. Further, the normed space~$X$ can itself
be a Hilbert or pre-Hilbert space. In this case elements
$f,g,\ldots$ of $X$ can be considered as elements of both $X^*$
via the Riesz-Fisher theorem and of $\hilb$ via the natural
embedding $X \hookrightarrow \hilb,$ and in this case it is
assumed that the equality $\scal{f}{g}_{\hilb} =
\scal{f}{g}_{(X,X^*)}$ holds, where $\scal{f}{g}_{(X,X^*)}$ is the
value of the linear functional $g\in X^*$ on the vector $f \in X.$
The number $\scal{f}{g}_{(X,X^*)}$ is often denoted by
$\scal{f}{g}_{1,-1}.$


A~\emph{rigging}~$F$ \label{Page: F} on a Hilbert space $\hilb$
is a closed operator from $\hilb$ to another Hilbert space~$\clK$
with trivial kernel and co-kernel.
Endowing a Hilbert space $\hilb$ with a rigging operator~$F$ generates a triple of Hilbert spaces
\label{Page: hilb+}
\begin{equation} \label{F: hilb+}
  \hilb_+, \ \hilb, \ \hilb_-,
\end{equation}
where the Hilbert space $\hilb_+$ is the completion of the vector space $\im \abs{F}$
endowed with the scalar product
$$
  \scal{f}{g}_{\hilb_+} = \scal{\abs{F}^{-1}f}{\abs{F}^{-1}g}_{\hilb}
$$
and the Hilbert space $\hilb_-$ is the completion of the vector space $\dom \abs{F}$ endowed with the scalar product
$$
  \scal{f}{g}_{\hilb_-} = \scal{\abs{F}f}{\abs{F}g}_{\hilb}.
$$
Similarly, the operator $F^*$ considered as a rigging in $\clK,$ generates a triple of Hilbert spaces
$$
  \clK_+, \ \clK, \ \clK_-.
$$
The mapping $\abs{F}$ prolongs to an isomorphism of $\hilb_\alpha$ and $\hilb_{\alpha-1},$ $\alpha = 0,1.$
Similarly, the mapping $\abs{F^*}$ prolongs to an isomorphism of $\clK_\alpha$ and $\clK_{\alpha-1},$ $\alpha = 0,1.$
The (prolonging of) rigging operator~$F$ itself can be considered as an isomorphism
$$
  F \colon \hilb \simeq \clK_+
$$
or as an isomorphism
$$
  F \colon \hilb_- \simeq \clK.
$$
Similarly, the operator $F^{-1}$ can be treated as an isomorphism $\clK_+ \simeq \hilb$
or as an isomorphism $\clK \simeq \hilb_{-}.$

\subsection{Limiting absorption principle}
Let $\hilb$ and $\clK$ be two complex separable Hilbert spaces and let
\begin{equation} \label{F: F}
  F\colon \hilb \to \clK
\end{equation}
be a fixed rigging operator in~$\hilb.$
Let $\clA_0=\clA_0(F)$ \label{Page: clA0} be a real normed space of self-adjoint operators~$V$ \label{Page: V}
of the form
\begin{equation} \label{F: V}
  V=F^*JF,
\end{equation}
where $J$ is an element of a real subspace of the algebra of bounded self-adjoint operators on $\clK.$
\label{Page: J} The norm of the space $\clA_0(F)$ is defined by $\norm{V}_F = \norm{J}.$ Let~$H$ be a self-adjoint operator on~$\hilb.$
\label{Page: clA} The affine space of self-adjoint operators of the form $H+V,$ where $V \in \clA_0(F),$
will be denoted by~$\clA = \clA(H,F),$ that is,
\begin{equation} \label{F: clA}
  \clA = H + \clA_0(F).
\end{equation}

Here we have firstly introduced the rigging operator~$F$ and then using it we have introduced the affine space~$\clA.$
In fact, the operator~$F$ has a service nature while the affine space~$\clA$ comes directly from formulation of a problem.
Hence, in practice, given an affine space $\clA$ one has to find a rigging~$F$ which makes the pair $(\clA,F)$ compatible in the sense that all the conditions
imposed on this pair are satisfied.

We frequently use notation \label{Page: Tz(s)}
\begin{equation} \label{F: Tz(H)}
  T_z(H) := FR_z(H)F^*.
\end{equation}
The operator $T_z(H)$ is often called a sandwiched resolvent.

We assume that all operators $V$ from the real vector space $\clA_0(F)$ are relatively compact perturbations of some operator $H$ from the real affine space~$\clA,$
that is, we assume that
$\dom(H) \subset \dom(V)$
and that the operator $R_z(H)V$ is bounded and its continuous prolonging is compact:
\begin{equation} \label{F: Rz(H)V is compact}
  \text{the operator} \ R_z(H)V \ \text{is compact}.
\end{equation}

Since all perturbation operators $V = H_1-H_0,$ where $H_1,H_0$ is any pair of operators from $\clA,$
are supposed to be relatively compact with respect to $H_0,$ this implies that $H_0$ and $H_1=H_0+V$ have the same domain.
That is, domains of all operators $H$
from the affine space $\clA$ coincide; we denote this common domain by $\euD:$
\begin{equation} \label{F: euD=dom(H)}
  \text{for any } H \in \clA \ \ \dom(H) = \euD.
\end{equation}
Further, since all perturbations $V \in \clA_0(F)$ of operators $H$ from $\clA$ are relatively compact, Weyl's theorem implies that
all operators $H$ from $\clA$ have a common essential spectrum:
$$
  \forall H_0,H_1 \in \clA \ \ \sigma_{ess}(H_0) = \sigma_{ess}(H_1).
$$
This common essential spectrum we denote by $\sigma_{ess}.$ The subset $\sigma_{ess}$ of $\mbR$ depends only on~$\clA.$
This allows us to talk about the essential spectrum of the affine space~$\clA.$

The operator~$F$ is not assumed to be bounded; therefore, one needs to clarify the meaning
of operators (\ref{F: V}) and (\ref{F: Tz(H)}). Domain of any perturbation operator $V$ contains $\euD:$
\begin{equation} \label{F: euD subset dom(V)}
  \euD \subset \dom(V).
\end{equation}
Additionally we assume that
\begin{equation} \label{F: euD subset dom(F)}
  \euD \subset \dom(F).
\end{equation}
By (\ref{F: euD subset dom(V)}), for any $H_0,H_1 \in \clA$ domain of any perturbation operator $V=H_1-H_0$ contains~$\euD;$ therefore,
any operator~$J$ from (\ref{F: V}) satisfies
\begin{equation} \label{F: JF euD subset dom(F*)}
  JF \euD \subset \dom(F^*).
\end{equation}
Since by (\ref{F: euD=dom(H)}) for any $H \in \clA$ the range of the resolvent $R_z(H)$ is equal to~$\euD,$ and on this subspace the operator $F$ is defined by
the assumption (\ref{F: euD subset dom(F)}),
the sandwiched resolvent (\ref{F: Tz(H)}) is defined at least on the dense domain of $F^*.$ It will always be assumed that the operator~(\ref{F: Tz(H)})
is bounded on~$\dom(F^*)$ and that its continuous prolonging to $\clK$ is compact:
\begin{equation} \label{F: Tz(H) is compact}
  T_z(H) \ \text{is compact.}
\end{equation}
This also implies that for any bounded subset $\Delta$ of $\mbR$
\begin{equation} \label{F: FE(Delta) is compact}
  FE_\Delta^H \ \text{is compact.}
\end{equation}
Indeed, by (\ref{F: Tz(H) is compact}), the operator $\Im T_z(H) = (F\sqrt{\Im R_z(H)})(F\sqrt{\Im R_z(H)})^*$ is compact, and hence so is the operator
$F\sqrt{\Im R_z(H)}.$ This implies (\ref{F: FE(Delta) is compact}).
Using this one can show that for a bounded rigging operator~$F$ the condition (\ref{F: Tz(H) is compact}) implies (\ref{F: Rz(H)V is compact}).

\begin{lemma} If $FR_z(H)F^*$ is compact for some $z \in \rho(H),$ then
$FR_w(H)F^*$ is compact for any other $w \in \rho(H).$ Further,
the function $\mbC\setminus \mbR \ni z \mapsto T_z(H)$ is a holomorphic function.
\end{lemma}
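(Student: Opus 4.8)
The plan is to establish the two assertions of the lemma using the resolvent identity and standard facts about holomorphic operator-valued functions. The statement has two parts: first, that compactness of $T_z(H) = FR_z(H)F^*$ for one $z \in \rho(H)$ propagates to all $w \in \rho(H)$; second, that $z \mapsto T_z(H)$ is holomorphic on $\mbC \setminus \mbR$ (more precisely on $\rho(H)$, which contains $\mbC \setminus \mbR$ since $H$ is self-adjoint).

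\emph{First part.} I would start from the first resolvent identity
$$
  R_w(H) - R_z(H) = (w - z) R_w(H) R_z(H).
$$
Sandwiching between $F$ on the left and $F^*$ on the right, one gets
$$
  T_w(H) = T_z(H) + (w-z) F R_w(H) R_z(H) F^*.
$$
The subtle point is how to handle $F R_w(H) R_z(H) F^*$: I want to write it as a product of a bounded operator with $F R_z(H) F^*$, or to factor out the known compact operator. Since $R_w(H)$ is bounded and commutes with $R_z(H)$, one can write $F R_w(H) R_z(H) F^* = F R_z(H) R_w(H) F^*$. Now observe that $F R_z(H)$ has range in $\euD \subset \dom(F)$ and, using the adjoint relation $(F R_z(H) F^*)^* = F R_{\bar z}(H) F^*$ together with $\bar z, \bar w \in \rho(H)$, one can express the new sandwiched resolvent as $T_z(H)$ composed with the bounded operator $F R_w(H) F^{-1}$ interpreted appropriately on the rigged spaces (using $F \colon \hilb_- \simeq \clK$ and $F^{-1} \colon \clK_+ \simeq \hilb$ from the rigged Hilbert space subsection), or more simply write $F R_w(H) R_z(H) F^* = (F R_w(H) R_z(H)^{1/2})(R_z(H)^{1/2} F^*)$ where each of the two factors is seen to be bounded (the right factor because $(R_z(H)^{1/2}F^*)^* = F R_{\bar z}(H)^{1/2}$ is bounded, being a "square root half" of the compact $T_{\bar z}(H)$). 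Since $\clL_\infty$ is an ideal, it then suffices that one of these factors is compact; and $R_z(H)^{1/2}F^*$ is compact because $FR_z(H)^{1/2}(FR_z(H)^{1/2})^* = FR_z(H)F^*\cdot(\text{bounded})$ argument — actually more directly, $F R_z(H) F^*$ compact with $F$ injective forces $F R_z(H)^{1/2}$ compact, hence its adjoint $R_{\bar z}(H)^{1/2} F^*$ is compact, and replacing $z$ by $w$ throughout shows $T_w(H)$ is compact once we know $T_z(H)$ is. \textbf{This factorization bookkeeping — making sure every intermediate product is genuinely a bounded operator on the stated domain — is the main obstacle}, because $F$ is unbounded and one must lean on the rigged-space isomorphisms and the assumed inclusions $\euD \subset \dom(F)$, $JF\euD \subset \dom(F^*)$ rather than manipulate $F$ as a bounded operator.

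\emph{Second part.} Holomorphy is then routine. From the identity above,
$$
  \frac{T_w(H) - T_z(H)}{w - z} = F R_w(H) R_z(H) F^*,
$$
and as $w \to z$ the right side converges in operator norm to $F R_z(H)^2 F^*$ (using norm-continuity of $w \mapsto R_w(H)$ on $\rho(H)$ and the factorization $FR_z(H)^2F^* = (FR_z(H))(R_z(H)F^*)$ with both factors bounded). Hence $T_z(H)$ is norm-differentiable at every $z \in \rho(H)$ with derivative $T_z'(H) = F R_z(H)^2 F^*$, which is exactly holomorphy for a Banach-space-valued (indeed $\clL_\infty(\clK)$-valued) function. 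Alternatively, one checks weak holomorphy — for fixed $f, g \in \clK$ the scalar function $z \mapsto \scal{f}{T_z(H)g} = \scal{F^*f}{R_z(H)F^*g}$ is holomorphic on $\rho(H)$ by the standard holomorphy of the resolvent — and then invokes the equivalence of weak and strong holomorphy for operator-valued functions (see the references \cite{HPh,Kato,RS1} cited in the Analytic operator-valued functions subsection). I would present the derivative computation as the cleaner route, since it simultaneously re-uses the first resolvent identity already deployed in the first part.
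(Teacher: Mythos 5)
Your second assertion (holomorphy) is fine — either your direct difference-quotient computation or the weak-implies-strong route works, and the paper itself simply invokes equivalence of weak and strong analyticity. The gap is in the first part, precisely at the step you yourself flag as the main obstacle. Your argument hinges on the claim that compactness of $T_z(H)=FR_z(H)F^*$ forces $FR_z(H)^{1/2}$ (equivalently $FR_{\bar z}(H)^{1/2}$) to be compact or even bounded, and you offer no proof beyond ``with $F$ injective forces''. Two things go wrong. First, a domain problem: the standing assumption is only $\euD=\dom(H)\subset\dom(F)$, i.e.\ $F$ is defined on $\im R_z(H)$; but $\im R_z(H)^{1/2}$ is the form domain of $H$, which need not lie in $\dom(F)$, so the factors $FR_{\bar z}(H)^{1/2}$ and $FR_w(H)R_z(H)^{1/2}$ in your factorization are not even known to be defined. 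Second, even where defined, the implication is not established: the $AA^*$ trick that extracts a compact ``half'' from a compact sandwich needs a nonnegative operator in the middle, and $R_z(H)$ (or a complex branch of its square root) is not one; injectivity of $F$ plays no role here. Note also that your fallback phrase ``replacing $z$ by $w$ throughout'' is dangerously close to circular, since the same claim applied at $w$ presupposes compactness of $T_w(H)$, which is what is being proved.

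The missing idea — and the paper's route — is to pass to the imaginary part. Since $\Im T_z(H)=F\brs{\Im R_z(H)}F^*=\brs{F\sqrt{\Im R_z(H)}}\brs{F\sqrt{\Im R_z(H)}}^*$ with $\Im R_z(H)\geq 0$, compactness of $T_z(H)$ and $T_{\bar z}(H)=T_z(H)^*$ yields compactness of $F\sqrt{\Im R_z(H)}$; and here $\sqrt{\Im R_z(H)}$ maps $\hilb$ into $\euD\subset\dom(F)$ (because $\sqrt{\Im R_z(x)}=\sqrt{y}\,\abs{x-z}^{-1}$, so its range is $\dom(H)$), so the product is genuinely defined. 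The pointwise bound $\abs{R_z(x)}/\sqrt{\Im R_z(x)}=y^{-1/2}$ then makes $R_z(H)/\sqrt{\Im R_z(H)}$ bounded, whence $FR_z(H)$ is compact; multiplying by the bounded function $R_w(x)/R_z(x)$ gives $FR_w(H)$ compact, hence $R_w(H)F^*\subset\brs{FR_{\bar w}(H)}^*$ is compact, so $FR_z(H)R_w(H)F^*$ is compact, and your resolvent identity $T_w(H)-T_z(H)=(w-z)FR_w(H)R_z(H)F^*$ finishes the argument. So your skeleton (resolvent identity plus a half-factorization) is the right one, but it must run through $\Im R_z(H)$ rather than a complex square root of $R_z(H)$ for both positivity and domain reasons.
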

\begin{proof} Without loss of generality we can assume that $y = \Im z > 0.$
If $FR_z(H)F^*$ is compact then so is $FR_{\bar z}(H)F^* = (FR_z(H)F^*)^*$
and therefore the operator $\brs{F\sqrt{\Im R_z(H)}}\brs{F\sqrt{\Im R_z(H)}}^* = F(\Im R_z(H))F^* = (FR_z(H)F^* - FR_{\bar z}(H)F^*)/(2i)$ is also compact.
It follows that $F\sqrt{\Im R_z(H)}$ is compact. Since the function $\mbR \ni x \mapsto R_z(x)/\sqrt{\Im R_z(x)},$ where $R_z(x) = (x-z)^{-1},$ is bounded
(by $y^{-1/2}$ as can be easily checked), the operator $R_z(H)/\sqrt{\Im R_z(H)}$ is also bounded. It follows that the operator $F R_z(H)$ is compact.
Since the function $\mbR \ni x \mapsto R_w(x)/R_z(x)$ is bounded, it follows that $F R_w(H)$ is compact.
Hence, $F R_z(H)R_w(H)F^*$ is compact. Since $F R_z(H)R_w(H)F^* = (z-w) F (R_z(H)-R_w(H))F^*$ and since $F R_z(H)F^*$ is also compact,
it follows that $F R_w(H)F^*$ is compact too.
The second assertion follows from equivalence of weak and strong analyticity.
\end{proof}

Given an operator~$H$ from the affine space~$\clA,$ the notation \label{Page: Lambda(H,F)}
\begin{equation} \label{F: Lambda(H,F)}
  \Lambda(H,F)
\end{equation}
will be used to denote the set of all real numbers~$\lambda$ for which the limit
\begin{equation} \label{F: T(l+i0) exists}
  T_{\lambda+i0}(H) := \lim_{y \to 0^+} T_{\lambda+iy}(H) \ \ \text{exists in the uniform topology}.
\end{equation}
Since $\brs{T_z(H)}^* = T_{\bar z}(H)$ and since the operation of taking adjoint is continuous in the uniform topology,
it follows that the norm limit $T_{\lambda+i0}(H)$ exists if and only if the norm limit $T_{\lambda-i0}(H)$ exists.
Thus, if $\lambda \in \Lambda(H,F)$ then also
\begin{equation} \label{F: Im T(l+i0) exists}
  \Im T_{\lambda+i0}(H) := \lim_{y \to 0^+} \Im T_{\lambda+iy}(H) \ \ \text{exists in the norm topology.}
\end{equation}
In fact, one often requires a stronger form of convergence for the imaginary part $\Im T_{\lambda+i0}(H)$
and this additional condition is imposed when needed.

{\bf L.\,A.\,P. Assumption.} Throughout this paper we assume that the pair $(\hilb,F)$ and the affine space $\clA$ satisfy the limiting absorption principle:
{\it For any self-adjoint operator $H \in \clA$ on the Hilbert space~$\hilb$ with rigging~$F$ the set~(\ref{F: Lambda(H,F)}) has full Lebesgue measure.
}

Though the set $\Lambda(H,F)$ has full Lebesgue measure in certain cases of interest, for the development of the theory of spectral flow
inside essential spectrum it is not quite necessary. As long as the set $\Lambda(H,F)$ contains at least one point, one may study spectral flow through that point.

As it was mentioned in the introduction, L.\,A.\,P. Assumption holds for Schr\"odinger operators with short range potentials.
Another setting in which L.\,A.\,P. Assumption holds is given by the following theorem.

\begin{thm} \label{T: Ya thm 6.1.9} { \rm~\cite{BE,deBranges}\cite[Theorem 6.1.9]{Ya}}
If~$H_0$ is a self-adjoint operator acting on a Hilbert space~$\hilb$
and if~$F$ is a Hilbert-Schmidt operator from $\hilb$ to another Hilbert space $\clK,$ then for a.e. $\lambda \in \mbR$ the operator-valued function
$F R_{\lambda + iy}(H_0)F^*$ has a limit in Hilbert-Schmidt norm as $y \to 0.$
\end{thm}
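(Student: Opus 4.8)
The plan is to recognize $z \mapsto F R_z(H_0) F^*$ as an operator-valued Herglotz (Nevanlinna) function with values in the \emph{Hilbert} space $\clL_2(\clK)$ of Hilbert--Schmidt operators, and then to obtain its boundary values from Fatou-type theorems of vector-valued harmonic analysis. (Self-contained proofs are already in the cited sources; this is simply the route I would take.)

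First I would record the analytic facts. Since $F$ is Hilbert--Schmidt and $\norm{R_z(H_0)} \le \abs{\Im z}^{-1}$, the sandwiched resolvent $T_z(H_0) = F R_z(H_0) F^*$ lies in $\clL_2(\clK)$ for every $z \in \mbC \setminus \mbR$; the map $z \mapsto T_z(H_0)$ is holomorphic into $\clL_2(\clK)$; for $y > 0$ the resolvent identity gives $\Im T_{\lambda + iy}(H_0) = y\,(F R_{\lambda - iy}(H_0))(F R_{\lambda - iy}(H_0))^{*} \ge 0$; and $\norm{T_{iy}(H_0)}_2 \to 0$ as $y \to +\infty$. Hence $z \mapsto T_z(H_0)$ is an $\clL_2(\clK)$-valued Herglotz function on the open upper half-plane vanishing at $i\infty$, so it admits the representation
$$
  T_z(H_0) = \int_\mbR \frac{\mathfrak m(dt)}{t - z}, \qquad \mathfrak m(\Delta) := F E_\Delta^{H_0} F^*,
$$
where $\mathfrak m$ is a positive $\clL_1(\clK)$-valued Borel measure with $\Tr \mathfrak m(\mbR) = \norm{F}_2^2 < \infty$; in particular $\mathfrak m$ is a finite $\clL_2(\clK)$-valued measure whose scalar total variation is dominated by the finite positive measure $\nu := \Tr \mathfrak m$.

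Next I would treat, as $y \to 0^+$, the imaginary and real parts of $T_{\lambda + iy}(H_0)$ separately. The imaginary part is the Poisson integral $\int_\mbR \frac{y}{(t - \lambda)^2 + y^2}\,\mathfrak m(dt)$; its $\clL_2$-norm (indeed its $\clL_1$-norm, using positivity of $\mathfrak m$) is dominated pointwise by the Poisson integral of $\nu$, hence by a constant times the Hardy--Littlewood maximal function $M\nu(\lambda)$, which is of weak type $(1,1)$, so the standard approximate-identity and density argument --- valid for integrands with values in any Banach space --- gives a.e.\ convergence in $\clL_2(\clK)$-norm. The real part is the conjugate Poisson integral $\int_\mbR \frac{t - \lambda}{(t - \lambda)^2 + y^2}\,\mathfrak m(dt)$, and here the decisive input is that $\clL_2(\clK)$ is a Hilbert space (in particular a UMD space): the Hilbert transform is a bounded Fourier multiplier on $L^2(\mbR; \clL_2(\clK))$, so by vector-valued Calder\'on--Zygmund theory the associated maximal operator --- the supremum over $y$ of the conjugate Poisson integrals, comparable up to an $M\nu(\lambda)$-error to the truncated Hilbert transforms --- applied to the $\clL_2(\clK)$-valued finite measure $\mathfrak m$ is of weak type $(1,1)$ with respect to total variation; together with a density argument this yields a.e.\ convergence in $\clL_2(\clK)$-norm. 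Adding the two parts gives the a.e.\ existence of $\lim_{y \to 0^+} T_{\lambda + iy}(H_0)$ in Hilbert--Schmidt norm.

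The hard part, and the reason the Hilbert--Schmidt hypothesis on $F$ is exactly the right one, is the real part when $\mathfrak m$ has a nontrivial singular component. One cannot argue entry by entry: bounding each matrix element $\scal{e_m}{T_z(H_0) e_n}$ by a scalar maximal Hilbert transform and summing over $m, n$ would require $\sum_{m,n} \norm{\scal{e_m}{\mathfrak m(\cdot) e_n}}_{\mathrm{TV}} < \infty$, i.e.\ that $\mathfrak m$ be \emph{trace}-class valued, which is strictly stronger than $\Tr \mathfrak m(\mbR) < \infty$; and merely knowing, for a.e.\ $\lambda$, that $\sup_{0 < y < 1} \norm{T_{\lambda + iy}(H_0)}_2 < \infty$ together with entrywise convergence does not force norm convergence in a Hilbert space. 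The genuine mechanism is to exploit the Hilbert-space structure of $\clL_2(\clK)$ and run the full vector-valued singular-integral machinery (Cotlar's inequality, weak-$(1,1)$ bounds for maximal singular integrals of vector measures, the vector-valued Lebesgue differentiation theorem); the remaining ingredients --- the operator Herglotz representation above with an $\clL_1(\clK)$-valued representing measure, and the comparison of conjugate Poisson integrals of vector measures with their truncated Hilbert transforms up to a maximal-function error --- are routine.
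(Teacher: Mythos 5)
First, note that the paper itself contains no proof of this statement: it is imported from de Branges and Birman--Entina, in the form given in Yafaev's book (Theorem 6.1.9), so your proposal has to be measured against those classical arguments. Your setup is fine: the Herglotz representation with $\mathfrak m(\Delta)=FE^{H_0}_\Delta F^*$ positive, trace-class-valued and of finite total trace, the treatment of the Poisson (imaginary) part by domination with the scalar measure $\Tr\mathfrak m$ and vector-valued Lebesgue differentiation, and your diagnosis of why the naive entrywise summation fails are all correct. The genuine gap is the conjugate-Poisson (real) part on the \emph{singular} component of $\mathfrak m$. A weak-$(1,1)$ bound for the maximal truncated Hilbert transform, even granted for $\clL_2(\clK)$-valued measures, produces a.e.\ convergence only in combination with a class of measures, dense in total variation, on which convergence is already known; but no absolutely continuous (or smooth) measure comes within total-variation distance $\norm{\mathfrak m_{\mathrm{sing}}}$ of $\mathfrak m$, so the ``density argument'' you invoke cannot reach the singular part. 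Even in the scalar theory this step is not a consequence of Calder\'on--Zygmund machinery: it is proved by the Herglotz/Fatou device (composing with a M\"obius map to get a bounded analytic function) or by Privalov-type theorems, tricks that have no direct analogue for $\clL_2(\clK)$-valued functions. So the decisive step --- a.e.\ existence of Hilbert--Schmidt-norm boundary values when $\mathfrak m$ has a nontrivial singular part --- is exactly what remains unproved; the appeal to ``the full vector-valued singular-integral machinery'' names a toolbox, not an argument, and it is neither shown nor standard that the statement for general Hilbert-space-valued measures of bounded variation follows from UMD/Calder\'on--Zygmund theory.

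The cited proofs avoid this obstruction by \emph{not} treating $\mathfrak m$ as an abstract vector measure: they exploit the sandwich structure. Roughly, one approximates $F$ by finite-rank operators $F_n$ in the Hilbert--Schmidt norm; for finite rank the problem reduces to finitely many scalar Cauchy transforms of the complex measures $\scal{\phi_m}{E^{H_0}_{(\cdot)}\phi_n},$ whose boundary values (singular parts included) exist a.e.\ by the classical scalar theorems; and a Kolmogorov/Loomis-type weak-$(1,1)$ maximal estimate for $\sup_{y>0}\norm{G R_{\lambda+iy}(H_0)G^*}_2$ in terms of $\norm{G}_2^2,$ applied to $G=F-F_n,$ makes the approximation uniform enough to transfer a.e.\ convergence to the limit. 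The crucial point is that the density argument is run in the operator $F$ (finite-rank operators are dense in the Hilbert--Schmidt class), not in the measure (where absolutely continuous measures are not dense in total variation); this is precisely the mechanism by which the positivity and trace-class structure of $F E^{H_0}_{(\cdot)}F^*$ enter, and it is what your route is missing. To complete your argument you would have to prove directly the a.e.\ boundary-value theorem for the singular part of $\clL_2(\clK)$-valued measures of bounded variation --- a statement you have not established and which is not available off the shelf --- or else switch to the structure-exploiting approximation in $F.$
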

The limiting absorption principle plays an important role in the stationary approach to scattering theory
(see \cite{BE,deBranges,KK71,Ya}). Proof of the limiting absorption principle in all cases of interest is a difficult problem.
But for this paper it is a postulate and of utmost importance.

\subsection{Resonant at~$\lambda$ and regular at~$\lambda$ operators}
Given a self-adjoint operator~$H$ and a perturbation $V=F^*JF$ one is usually interested in points~$\lambda$
for which the limiting absorption principle~(\ref{F: T(l+i0) exists}) holds. In contrast to this, in this work
we are mainly interested in points~$\lambda$ for which the limiting absorption principle fails.
However, there can be points~$\lambda$ for which~(\ref{F: T(l+i0) exists}) fails for any operator $H \in \clA.$
This is an indication of the fact that~$\lambda$ is a very singular value of the spectral parameter.
We exclude such points from our study; for the present work we are interested in those points~$\lambda$ for which
(\ref{F: T(l+i0) exists}) fails for some but not for all operators from~$\clA.$ We introduce appropriate notation and terminology.

Let \label{Page: Lambda(clA,F)}
\begin{equation} \label{F: Lambda(clA,F)}
  \Lambda(\clA,F) := 
  \bigcup_{H \in \clA} \Lambda(H,F) \subset \mbR.
\end{equation}
Since the sets $\Lambda(H,F),$ for $H \in \clA,$ have full Lebesgue measure, the set $\Lambda(\clA,F)$
also has full Lebesgue measure. A~real number from the set $\Lambda(\clA,F)$ will be called an \emph{essentially regular point} \cite[\S 4.2]{Az3v6}.
\label{Page: ess reg point}
Points which are not essentially regular exist; for example, it will be shown that an eigenvalue of infinite
multiplicity cannot be essentially regular (Theorem~\ref{T: infty mult-ty then not essentially regular}).
But a real number may fail to be essentially regular even if~$\lambda$ is not an eigenvalue. This may happen inside the essential spectrum only,
since outside the essential spectrum all points are essentially regular.
This indicates to the nature of non essentially regular points as those of infinite singularity.

The notation 
$$
  \Pi_+ = \Pi_+(\clA, F), \quad \text{respectively,} \  \ \Pi_-=\Pi_-(\clA,F),
$$
will be used to denote the union of the open upper complex half-plane, respectively, of the open lower complex half-plane,
and the set $\Lambda(\clA, F).$ The letter $\Pi$ will denote the disjoint union of the sets $\Pi_+$ and $\Pi_-.$ \label{Page: Pi}
Thus, the boundary $\partial \Pi$ of $\Pi$ is the disjoint union of two copies $\partial \Pi_+ = \Lambda(\clA,F)$ and $\partial \Pi_- = \Lambda(\clA,F)$
of the same set. The conjugation $z \mapsto \bar z$ swaps $\Pi_+$ and $\Pi_-.$ Elements of the boundary $\partial \Pi_\pm$ are written as $\lambda\pm i0,$
where $\lambda \in \mbR.$ Elements of $\Pi$ will usually be denoted by $z,$ the real part of~$z$ is denoted as a rule by~$\lambda$
and the complex part of~$z$ is denoted by~$y.$ Thus, $y$ is an element of the set $(-\infty,0-] \cup [0+,\infty).$ The real number~$\lambda$
will be fixed throughout most of this paper.

Let~$\lambda$ be an essentially regular point of the pair $(\clA,F)$ and let~$H$ be an operator from~$\clA.$
We say that the operator\label{Page: op-r res at lambda}
\begin{equation} \label{F: op-r resonant at lambda}
  H \ \text{is \emph{resonant at}} \ \lambda \ or \ \text{\emph{not regular at}} \ \lambda, \ \text{if and only if} \ \lambda \notin \Lambda(H,F).
\end{equation}
Thus,~$H$ is resonant at~$\lambda$ if and only if the limit~(\ref{F: T(l+i0) exists}) does not exist.
Otherwise it will be said that
$$
  H \ \text{is \emph{regular at}} \ \lambda \ or \ \text{\emph{non-resonant at}} \ \lambda, \ \text{if and only if} \ \lambda \in \Lambda(H,F).
$$
The set of all resonant at~$\lambda$ operators from the affine space~$\clA$ will be denoted by \label{Page: R(l,A,F)}
\begin{equation} \label{F: R(lambda)}
  R(\lambda; \clA,F).
\end{equation}
The set $R(\lambda; \clA,F)$ will be called the \emph{resonance set} at~$\lambda.$
The following theorem is well-known; what may be new is the way we interpret it.
\begin{thm} \label{T: Az 4.1.11} 
Let~$\lambda$ be an essentially regular point of the pair $(\clA,F),$ let~$H_0 \in \clA$
be an operator regular at~$\lambda$ and let $V = F^*JF \in \clA_0(F).$
The following five assertions are equivalent:
  \begin{enumerate}
    \item [(i)] The operator~$H_0+V$ is resonant at~$\lambda.$
    \item[(ii$_\pm$)] The operator $1 + J T_{\lambda\pm i0}(H_0)$ is not invertible.
    \item[(iii$_\pm$)] The operator $1 + T_{\lambda\pm i0}(H_0)J$ is not invertible.
  \end{enumerate}
\end{thm}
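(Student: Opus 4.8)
The plan is to establish the five-fold equivalence by first collapsing the four analytic conditions to a single one, and then proving that one equivalence by means of the sandwiched second resolvent identity together with a limiting argument as the spectral parameter approaches the real axis.

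\textbf{Step 1 (reduction).} Because $T_{\bar z}(H)=(T_z(H))^{*}$ and taking adjoints is norm-continuous, for any $H\in\clA$ regular at $\lambda$ we have $T_{\lambda-i0}(H)=(T_{\lambda+i0}(H))^{*}$; hence $1+T_{\lambda-i0}(H_0)J=(1+JT_{\lambda+i0}(H_0))^{*}$ and $1+JT_{\lambda-i0}(H_0)=(1+T_{\lambda+i0}(H_0)J)^{*}$, so the operators in (ii$_-$) and (iii$_-$) are invertible exactly when those in (iii$_+$) and (ii$_+$) are. Also $T_{\lambda+i0}(H_0)$, being a norm limit of compact operators, is compact, so $T_{\lambda+i0}(H_0)J$ and $JT_{\lambda+i0}(H_0)$ are compact, and \eqref{F: spec(AB)=spec(BA)} shows $-1\in\sigma(T_{\lambda+i0}(H_0)J)$ iff $-1\in\sigma(JT_{\lambda+i0}(H_0))$; thus (ii$_+$)$\iff$(iii$_+$). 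It remains to prove (i)$\iff$(iii$_+$).

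\textbf{Step 2 (algebra at non-real $z$).} For $z\in\mbC\setminus\mbR$ both $T_z(H_0)$ and $T_z(H_1)$ are compact. Sandwiching the two forms $R_z(H_1)=R_z(H_0)-R_z(H_0)VR_z(H_1)=R_z(H_0)-R_z(H_1)VR_z(H_0)$ of the second resolvent identity between $F$ and $F^{*}$, and using $V=F^{*}JF$, I would obtain
\begin{equation*}
  (1+T_z(H_0)J)\,T_z(H_1)=T_z(H_0)=T_z(H_1)\,(1+JT_z(H_0)),
\end{equation*}
and, after interchanging $H_0\leftrightarrow H_1$ and $J\to-J$, also $(1-T_z(H_1)J)\,T_z(H_0)=T_z(H_1)$. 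A two-line computation from these gives $(1+T_z(H_0)J)(1-T_z(H_1)J)=1=(1-T_z(H_1)J)(1+T_z(H_0)J)$, so for non-real $z$ the operator $1+T_z(H_0)J$ is invertible, with bounded inverse $1-T_z(H_1)J$, and $T_z(H_1)=(1+T_z(H_0)J)^{-1}T_z(H_0)$.

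\textbf{Step 3 (the two implications).} For ``(i)$\Rightarrow$(iii$_+$)'' I would argue by contraposition: if $1+T_{\lambda+i0}(H_0)J$ is invertible then, since $H_0$ is regular at $\lambda$, $1+T_{\lambda+iy}(H_0)J\to1+T_{\lambda+i0}(H_0)J$ in norm as $y\to0^{+}$; as inversion is norm-continuous on the open set of invertible operators, $(1+T_{\lambda+iy}(H_0)J)^{-1}$ converges in norm, hence so does $T_{\lambda+iy}(H_1)=(1+T_{\lambda+iy}(H_0)J)^{-1}T_{\lambda+iy}(H_0)$, i.e. $\lambda\in\Lambda(H_1,F)$ and $H_0+V$ is regular at $\lambda$. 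For ``(iii$_+$)$\Rightarrow$(i)'' I would suppose, for contradiction, that $H_1$ is regular at $\lambda$; then $A:=T_{\lambda+i0}(H_0)$ and $B:=T_{\lambda+i0}(H_1)$ both exist and are compact, and passing to the limit $y\to0^{+}$ in $T_z(H_1)(1+JT_z(H_0))=T_z(H_0)$ gives $B(1+JA)=A$. By Step 1, $1+JA$ is not invertible; since $JA$ is compact, $-1$ is an eigenvalue, so $(1+JA)\psi=0$ for some $\psi\neq0$. Then $A\psi=B(1+JA)\psi=0$, whence $JA\psi=0$, contradicting $JA\psi=-\psi\neq0$. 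Hence $H_1$ is resonant at $\lambda$, i.e. (i) holds.

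The step I expect to demand the most care is Step 2: since $F$ need not be bounded, one has to verify that the formal sandwiching yields honest identities between bounded operators on $\clK$ — e.g. that $FR_z(H_0)F^{*}JFR_z(H_1)F^{*}$ really equals $T_z(H_0)JT_z(H_1)$ — which is where the rigged-space formalism of the preliminaries and the compactness of $FR_z(H)$ and $R_z(H)F^{*}$ enter. Once those identities are in place the remaining arguments are soft, the only genuinely new point being the final contradiction in Step~3.
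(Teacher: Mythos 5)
Your proof is correct and follows essentially the same route as the paper, whose (very terse) argument also rests on the sandwiched second resolvent identity and the resulting formula $T_{\lambda+iy}(H_1)=\brs{1+T_{\lambda+iy}(H_0)J}^{-1}T_{\lambda+iy}(H_0)$, with the remaining equivalences handled by adjoints and $\sigma(AB)\cup\{0\}=\sigma(BA)\cup\{0\}$. You merely spell out the details the paper leaves to the reader (norm-continuity of inversion for one direction, and the compactness/eigenvector contradiction for the other), which is exactly what "can easily be derived" is meant to cover.
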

\begin{proof} The equivalence of (i) and (ii$_+$) can easily be derived from the equality
$$
  T_{\lambda+iy}(H_0+V) = \SqBrs{1+T_{\lambda+iy}(H_0)J}^{-1}T_{\lambda+iy}(H_0),
$$
which in its turn follows from the second resolvent identity (see (\ref{F: T=(...)(-1)T}) below).
Equivalence of~(i) to other items is proved similarly.
\end{proof}
\noindent This theorem has the following simple but important corollary.
\begin{thm} \label{T: Az Th. 4.2.5} For every essentially regular point $\lambda \in \mbR,$
the resonance set $R(\lambda; \clA, F)$ is a closed nowhere dense subset of~$\clA.$ Moreover,
the intersection of any real-analytic path in $\clA$ with the resonance set $R(\lambda; \clA,F)$ is either
a discrete set or coincides with the path itself.
\end{thm}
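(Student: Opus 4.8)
The plan is to reduce the statement about the resonance set $R(\lambda;\clA,F)$ to a statement about zeros of a scalar analytic function, using the characterization of resonant operators from Theorem~\ref{T: Az 4.1.11}. First I would fix an essentially regular point $\lambda$ and choose, by the L.\,A.\,P.\ Assumption applied to $\clA$, some operator $H_0 \in \clA$ which is regular at $\lambda$, so that the compact operator $T_{\lambda+i0}(H_0)$ exists. For $V = F^*JF \in \clA_0(F)$ and $t \in \mbR$, by Theorem~\ref{T: Az 4.1.11} the operator $H_0 + tV$ is resonant at $\lambda$ if and only if $1 + tJT_{\lambda+i0}(H_0)$ fails to be invertible, i.e.\ if and only if $-t^{-1}$ is an eigenvalue of the compact operator $JT_{\lambda+i0}(H_0)$ (and if $t=0$ this cannot happen since $H_0$ is regular). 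Thus the set of resonant $t$ is contained in the set of reciprocals of nonzero eigenvalues of a fixed compact operator, which is discrete. This already shows that along the straight line $\set{H_0+tV}$ the resonance set is discrete.

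Next I would upgrade this to arbitrary real-analytic paths. Let $t \mapsto H(t) = H_0 + V(t)$, $V(t) = F^*J(t)F$, be a real-analytic path in $\clA$, so that $t \mapsto J(t)$ is a real-analytic $\clB_{sa}(\clK)$-valued function. Since $H_0$ is regular at $\lambda$, the operator $T_{\lambda+i0}(H_0)$ is compact, and by the second resolvent identity (the formula $T_{\lambda+iy}(H_0+W) = [1+T_{\lambda+iy}(H_0)W_{op}]^{-1}T_{\lambda+iy}(H_0)$ used in the proof of Theorem~\ref{T: Az 4.1.11}) the point $H(t)$ is resonant at $\lambda$ precisely when the operator
$$
  G(t) := 1 + T_{\lambda+i0}(H_0)\,V(t)\big|_{\clK} = 1 + T_{\lambda+i0}(H_0)F^*J(t)F\big|_{\clK}
$$
is not invertible, where I interpret the sandwiched product as the compact operator $T_{\lambda+i0}(H_0)J(t)$ on $\clK$ after the obvious rewriting $T_{\lambda+i0}(H_0)F^*J(t)F = (\text{bounded})\cdot(\text{compact})$. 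The point is that $t \mapsto 1 + T_{\lambda+i0}(H_0)J(t)$ is a real-analytic family of operators of the form ``identity plus compact'', and it is invertible at $t=0$ because $H_0$ is regular at $\lambda$. One then extends this to a holomorphic family in a complex neighbourhood of $\mbR$ by analytic continuation of $J(\cdot)$, and applies the Analytic Fredholm Alternative (Theorem~\ref{T: Analytic Fredholm alternative}): the set of $t$ in the connected domain where $1+T_{\lambda+i0}(H_0)J(t)$ is not invertible is either all of the domain or a discrete set, and correspondingly its intersection with the real line is either the whole path or a discrete set. This is exactly the second assertion of the theorem.

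For the first assertion, that $R(\lambda;\clA,F)$ is closed and nowhere dense in $\clA$, I would argue directly from the characterization: $R(\lambda;\clA,F)$ is the set of $H = H_0 + F^*JF$ with $1 + T_{\lambda+i0}(H_0)J$ non-invertible. The map $J \mapsto 1 + T_{\lambda+i0}(H_0)J$ is norm-continuous on $\clA_0(F)$ (here $\norm{V}_F = \norm{J}$ and $T_{\lambda+i0}(H_0)$ is a fixed bounded operator), and the set of non-invertible operators is closed in the norm topology of $\clB(\clK)$; hence $R(\lambda;\clA,F)$ is closed in $\clA$. For nowhere density, suppose $H_1 = H_0 + F^*J_1 F \in R(\lambda;\clA,F)$; I would show that $H_1$ is approximated by regular operators, e.g.\ by considering the one-parameter family $H_0 + F^*((1+\eps)J_1)F$ (equivalently scaling) or, more robustly, restricting attention to any straight line through $H_1$ in $\clA$ transverse to the resonance set and invoking the discreteness result just proved: on such a line the resonant points are isolated, so every neighbourhood of $H_1$ in $\clA$ contains regular operators, whence the interior of $R(\lambda;\clA,F)$ is empty.

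The step I expect to be the main obstacle is the careful bookkeeping in passing from the formal expression $T_{\lambda+i0}(H_0)F^*J(t)F$ to a genuine compact operator on $\clK$ for which the Analytic Fredholm Alternative applies, i.e.\ checking that the ``sandwiched'' product is legitimate (the unbounded rigging $F$ requires using $T_z(H_0) = FR_z(H_0)F^*$ rather than $R_z(H_0)$ directly, and one must verify that $T_{\lambda+i0}(H_0)J(t)$ really is compact and depends analytically on $t$), and, relatedly, that the complex-analytic extension of $t \mapsto 1 + T_{\lambda+i0}(H_0)J(t)$ off the real axis stays within the class ``$1 + \text{compact}$'' and invertible at $t=0$ — but this is exactly the content encoded in the second resolvent identity used to prove Theorem~\ref{T: Az 4.1.11}, so it should go through without genuinely new input.
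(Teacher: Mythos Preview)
Your approach is correct and is essentially the one the paper defers to (it cites \cite[Theorem 4.2.5]{Az3v6} verbatim): reduce via Theorem~\ref{T: Az 4.1.11} to the family $t\mapsto 1+T_{\lambda+i0}(H_0)J(t)$ of operators of the form ``identity plus compact'', then apply the Analytic Fredholm Alternative (Theorem~\ref{T: Analytic Fredholm alternative}) for the dichotomy on real-analytic paths, and use norm-continuity plus the line-discreteness for the closed/nowhere-dense claim. Your anticipated obstacle is a non-issue: there is no need to pass through $T_{\lambda+i0}(H_0)F^*J(t)F$ at all, since $T_{\lambda+i0}(H_0)$ is already a compact operator on $\clK$ and $J(t)$ is bounded on $\clK$, so $T_{\lambda+i0}(H_0)J(t)$ is manifestly compact and depends analytically on $t$ whenever $J(t)$ does.
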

Proof of this theorem follows verbatim that of \cite[Theorem 4.2.5]{Az3v6}.

\noindent
The left figure below shows how a more or less typical two-dimensional section of the resonance set $R(\lambda; \clA,F),$
which has two resonance lines and two resonance points, may look.

\begin{picture}(200,90)
\put(138,12){\circle*{2}}
\put(38,67){\circle*{2}}
\thinlines
\put(50,10){\line(3,1){120}}
\qbezier(8,70)(60,120)(160,37)
\end{picture}
\quad
\begin{picture}(200,90)
\put(138,12){\circle*{2}}
\put(38,67){\circle*{2}}
\thinlines
\put(50,10){\line(3,1){120}}
\qbezier(8,70)(60,120)(160,37)

\thicklines
\put(35,30){\circle{4}}
\put(24,36){{\small~$H_0$}}
\put(20,20){\line(3,2){95}}
\thinlines
\put(62,42){\vector(3,2){26}}
\put(72,38){{\small~$V$}}

\put(102,75){\circle*{4}}
\put(89,82){{\small~$H_{r_\lambda}$}}
\end{picture}

\noindent
Let $\gamma = \set{H_r = H_0+rV \colon r \in \mbR}$ be a straight line or a path of operators in the affine space~$\clA.$
If $\lambda \in \mbR$ is an essentially regular point of the pair $(\clA,F),$ then
according to Theorem~\ref{T: Az Th. 4.2.5} there are two possible scenarios:
all points of $\gamma$ except a discrete subset are regular at~$\lambda$ or
all points of $\gamma$ are resonant at~$\lambda.$
In the first case we say that $\gamma$ is \emph{regular at}~$\lambda.$
\label{Page: ess regular line}
A real number~$r$ will be said to be a \emph{resonance point} \label{Page: res point} of the line~$\gamma = \set{H_s \colon s \in \mbR}$ at~$\lambda,$
if~$H_r$ is resonant at~$\lambda.$ 
A regular line~$\gamma$ may only have a discrete set of resonance points. We shall mainly be concerned with only one of them which will be denoted by~$r_\lambda.$
\label{Page: r(lambda)}
The right figure above shows a regular at~$\lambda$ operator~$H_0 \in \clA$ and a~direction $V \in \clA_0(F);$
the line~$\gamma$ intersects the resonance set $R(\lambda; \clA, F)$ at point~$H_{r_\lambda}.$
If an operator $H \in \clA$ is resonant at~$\lambda$ then a perturbation $V \in \clA_0(F)$ will be called a
\emph{regularizing direction} for~$H$ at~$\lambda$ if the straight line $\gamma$ which passes through~$H$ in direction of~$V$ is regular at~$\lambda.$
In the picture the operator~$H_{r_\lambda}$ is resonant at~$\lambda$ and~$V$ is a regularizing direction for~$H_{r_\lambda}$ at $\lambda;$
in fact, in the case of the figure every direction, which is parallel to the two-dimensional section of the affine space $\clA$ shown in the figure, is regularizing
for~$H_{r_\lambda}$ at~$\lambda.$

\begin{prop} \label{P: Az 4.1.10}
If $\lambda \in \Lambda(\clA,F)$ is an eigenvalue of an operator $H \in \clA,$ then~$H$ is resonant at~$\lambda.$
\end{prop}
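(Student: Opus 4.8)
The plan is to prove the equivalent assertion $\lambda \notin \Lambda(H,F)$, that is, that the norm limit $T_{\lambda+i0}(H) = \lim_{y\to0^+} F R_{\lambda+iy}(H) F^*$ fails to exist; the hypothesis $\lambda \in \Lambda(\clA,F)$ is needed only to give ``resonant at $\lambda$'' its meaning. The strategy is to produce a unit vector along which $\Im T_{\lambda+iy}(H)$ is unbounded as $y\to0^+$, using that $\lambda$ is an eigenvalue of $H$.

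First I would pass to the imaginary part. Since $T_{\lambda-iy}(H) = T_{\lambda+iy}(H)^*$ and the operators $T_{\lambda\pm iy}(H)$ are bounded by~(\ref{F: Tz(H) is compact}), the operator $\Im T_{\lambda+iy}(H) = \tfrac1{2i}\bigl(T_{\lambda+iy}(H)-T_{\lambda-iy}(H)\bigr)$ is bounded and self-adjoint, and for $\psi \in \dom(F^*)$ — moving $F$ across the pairing, which is legitimate because $R_{\lambda\pm iy}(H)F^*\psi \in \euD \subset \dom(F)$ — one obtains, via the spectral theorem for $H$,
$$
  \scal{\psi}{\Im T_{\lambda+iy}(H)\psi} = \scal{F^*\psi}{\Im R_{\lambda+iy}(H)\,F^*\psi}_\hilb = \int_\mbR \frac{y}{(t-\lambda)^2+y^2}\,d\mu_{F^*\psi}(t),
$$
where $\mu_g$ denotes the spectral measure of $H$ associated with a vector $g$. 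Keeping only the atom of $\mu_{F^*\psi}$ at $t=\lambda$ yields the lower bound
$$
  \scal{\psi}{\Im T_{\lambda+iy}(H)\psi} \ \geq\ \frac 1y\,\mu_{F^*\psi}(\set{\lambda}) \ =\ \frac 1y\,\norm{E_{\set{\lambda}}^H F^*\psi}^2 .
$$

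The second step is to choose $\psi \in \dom(F^*)$ with $E_{\set{\lambda}}^H F^*\psi \neq 0$. As $\lambda$ is an eigenvalue of $H$, the spectral projection $E_{\set{\lambda}}^H$ is nonzero; if $E_{\set{\lambda}}^H F^*\psi$ were zero for every $\psi \in \dom(F^*)$, then $\im(F^*) \subset \ker E_{\set{\lambda}}^H$, but $F$ has trivial kernel, so $\overline{\im(F^*)} = (\ker F)^\perp = \hilb$, and boundedness of $E_{\set{\lambda}}^H$ would force $E_{\set{\lambda}}^H = 0$, a contradiction. Fixing such a $\psi$ with $\norm\psi=1$, the displayed bound gives $\norm{T_{\lambda+iy}(H)} \geq \scal{\psi}{\Im T_{\lambda+iy}(H)\psi} \to \infty$ as $y\to0^+$, so the limit $T_{\lambda+i0}(H)$ does not exist and $H$ is resonant at~$\lambda$. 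I do not expect a genuine obstacle here; the only points requiring a word of care are the commutation of $F$ past the inner product (handled by $\euD \subset \dom F$) and the density of $\im(F^*)$ in $\hilb$, which is exactly where the triviality of $\ker F$ built into the notion of a rigging operator is used.
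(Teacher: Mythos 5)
Your proof is correct and takes essentially the same route as the argument the paper relies on (it only cites \cite[Proposition 4.1.10]{Az3v6}): the eigenvalue creates an atom of the spectral measure of $F^*\psi$ at $\lambda$, so $\scal{\psi}{\Im T_{\lambda+iy}(H)\psi}$ grows like $1/y$, which is incompatible with existence of the norm limit $T_{\lambda+i0}(H).$ The two points you flag---moving $F$ across the pairing via $\euD\subset\dom(F)$, and using $\overline{\im(F^*)}=(\ker F)^\perp=\hilb$ to pick $\psi$ with $E_{\set{\lambda}}^H F^*\psi\neq 0$---are indeed the only delicate steps, and you handle them correctly.
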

\noindent
Proof of this proposition is the same as that of \cite[Proposition 4.1.10]{Az3v6}.
This proposition shows one source of resonance points, but a point~$r$ can be resonant even if~$\lambda$ is not an eigenvalue of~$H_r.$

Proposition \ref{P: Az 4.1.10} implies the following
\begin{cor} \label{C: Az 4.1.10} If $\lambda\in \Lambda(\clA,F)$ does not belong to the essential spectrum $\sigma_{ess}$ of~$\clA,$
then an operator~$H$ from~ $\clA$ is resonant at $\lambda$ if and only if $\lambda$ is an eigenvalue of~$H.$
\end{cor}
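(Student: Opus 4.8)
The plan is to establish the two implications of the corollary separately. The direction ``$\lambda$ is an eigenvalue of $H$ $\Rightarrow$ $H$ is resonant at $\lambda$'' is immediate: since $\lambda\in\Lambda(\clA,F)$ by hypothesis, Proposition~\ref{P: Az 4.1.10} applies and gives exactly this conclusion, making no use of $\lambda\notin\sigma_{ess}$. All of the work goes into the converse, which I would prove by contraposition: assuming $\lambda\notin\sigma_{ess}$ and that $\lambda$ is \emph{not} an eigenvalue of $H$, I would show $\lambda\in\Lambda(H,F)$, i.e.\ that $H$ is regular at $\lambda$.

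First I would pin down the spectral-theoretic input: for a self-adjoint operator the spectrum outside the essential spectrum consists only of isolated eigenvalues of finite multiplicity (recalled in the Preliminaries), and since $\sigma_{ess}(H)=\sigma_{ess}$, the hypotheses $\lambda\notin\sigma_{ess}$ and ``$\lambda$ not an eigenvalue of $H$'' force $\lambda\in\rho(H)$. It then remains to see that $T_z(H)=FR_z(H)F^*$ extends norm-continuously to $z=\lambda$. To this end I would fix a non-real base point $z_0$; by~(\ref{F: Tz(H) is compact}) and the Lemma stated just before~(\ref{F: Lambda(H,F)}) the operators $K:=FR_{z_0}(H)$ and $K^*=R_{\bar z_0}(H)F^*$ are compact. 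Using $R_z(H)=R_{z_0}(H)\bigl(1+(z-z_0)R_z(H)\bigr)$ together with the second resolvent identity $T_z(H)-T_{\bar z_0}(H)=(z-\bar z_0)\,FR_z(H)\,R_{\bar z_0}(H)F^*$, one obtains, on the dense domain of $F^*$,
\begin{equation*}
  T_z(H) = T_{\bar z_0}(H) + (z-\bar z_0)\,K\bigl(1+(z-z_0)R_z(H)\bigr)K^*.
\end{equation*}
The right-hand side is a norm-holomorphic, compact-operator-valued function of $z$ on all of $\rho(H)$, because $z\mapsto R_z(H)$ is norm-holomorphic there and $K,K^*$ are fixed. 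Hence $z\mapsto T_z(H)$ extends norm-continuously across $\lambda\in\rho(H)$, so in particular $\lim_{y\to0^+}T_{\lambda+iy}(H)$ exists in the uniform topology; that is, $\lambda\in\Lambda(H,F)$. This contradicts the assumption that $H$ is resonant at $\lambda$, and therefore $\lambda$ must be an eigenvalue of $H$.

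The main obstacle is purely bookkeeping with the unbounded rigging operator $F$: one has to verify that $FR_z(H)$, $R_{\bar z_0}(H)F^*$ and the products appearing above are first defined on $\dom(F^*)$ and then extend to genuine compact operators on $\clK$, and that the identity displayed above really holds on that dense domain. This is exactly the type of argument carried out in the proof of the Lemma preceding~(\ref{F: Lambda(H,F)}), so it can be reused almost verbatim; the spectral-theoretic step establishing $\lambda\in\rho(H)$, and the resolvent identities, are entirely routine.
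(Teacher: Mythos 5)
Your proof is correct and takes essentially the same route as the paper: the (if) direction is exactly Proposition~\ref{P: Az 4.1.10}, and the converse reduces, just as in the paper's proof, to noting that $\lambda\notin\sigma_{ess}$ together with $\lambda$ not being an eigenvalue places $\lambda$ in the resolvent set, where the resolvent --- and hence $T_z(H)$ --- is norm-continuous up to $z=\lambda$. The only difference is that you make explicit, via the factorization through the compact operator $FR_{z_0}(H)$, why norm-holomorphy of the unsandwiched resolvent transfers to the sandwiched one for an unbounded rigging $F$, a point the paper leaves implicit.
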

\begin{proof} The (if) implication follows from Proposition \ref{P: Az 4.1.10}.
We prove (only if) part.
Assume the contrary: $\lambda$ is not an eigenvalue of~$H.$ Since also $\lambda \notin \sigma_{ess}(H),$ it follows that $\lambda$ belongs to the resolvent set of~$H.$
In this case the norm limit $R_{\lambda+i0}(H)$ of the resolvent exists even without sandwiching by $F$ and $F^*,$ and therefore $H$ is regular at~$\lambda.$
\end{proof}

\subsection{Operators $A_z(s)$ and $B_z(s)$}
Let $z \in \Pi.$ We shall frequently use notation \label{Page: Az(s)}
\begin{equation} \label{F: Az(s)}
  A_z(s) = T_z(H_{s})J.
\end{equation}
Sandwiched version of the second resolvent identity
\begin{equation} \label{F: T=(...)(-1)T}
  T_z(H_r) - T_z(H_s) = (s-r)T_z(H_r)JT_z(H_s)
\end{equation}
implies the equality
\begin{equation} \label{F: II resolvent identity}
  A_z(r) - A_z(s) = (s-r)A_z(r)A_z(s),
\end{equation}
From this equality one can infer that the operator $1+(s-r)A_z(r)$
must be invertible. Hence, the operator~$A_z(s)$ satisfies the equality
\begin{equation} \label{F: A(s)=(1+(s-r)A(r))(-1)A(r)}
  A_z(s) = (1+(s-r)A_z(r))^{-1}A_z(r),
\end{equation}
which also implies that
\begin{equation} \label{F: A(s) and A(r) commute}
  A_z(s) A_z(r) = A_z(r) A_z(s).
\end{equation}
Since the operator $A_z(r)$ is compact, by the analytic Fredholm alternative (Theorem~\ref{T: Analytic Fredholm alternative}),
the equality~(\ref{F: A(s)=(1+(s-r)A(r))(-1)A(r)}) gives meromorphic continuation of the function~$A_z(s)$ of~$s$
to the whole complex plane $\mbC.$ The equality~(\ref{F: A(s) and A(r) commute}) holds also for this meromorphic continuation.
Moreover, Theorem~\ref{T: Analytic Fredholm alternative} and~(\ref{F: A(s)=(1+(s-r)A(r))(-1)A(r)}) imply the following
\begin{lemma} \label{L: A(z,s) is meromorphic}
The function $(z,s) \mapsto A_z(s)$ is a meromorphic function of two complex variables~$z$ and~$s$
in the domain $\Pi^\circ \times \mbC$ of~$\mbC^2.$
\end{lemma}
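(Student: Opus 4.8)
The plan is to reduce the joint meromorphy to an application of the analytic Fredholm alternative in the single variable $s$, combined with the already-established joint holomorphy of $z \mapsto T_z(H_s)$ away from poles and a standard argument upgrading separate analyticity plus local boundedness to joint analyticity. First I would fix a point $(z_0, s_0) \in \Pi^\circ \times \mbC$. Since $z_0 \in \Pi^\circ$ lies in the open half-plane, $T_{z_0}(H_r)$ is a well-defined compact operator for every real $r$; more to the point, for any $r$ the operator-valued function $z \mapsto T_z(H_r)$ is holomorphic in $\Pi^\circ$ (this follows from the lemma preceding L.\,A.\,P.\ Assumption, together with the second resolvent identity which relates $T_z(H_r)$ to $T_z(H_0)$). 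Fixing any real $r$ with $1+(s_0-r)A_{z_0}(r)$ invertible --- such $r$ exists because, by Theorem~\ref{T: Analytic Fredholm alternative} applied in the variable $s$, the set of $s$ for which $1+(s-r)A_{z_0}(r)$ fails to be invertible is discrete, hence we may instead fix $r$ and vary $s_0$, or equivalently pick $r$ close enough to $s_0$ --- the key identity to exploit is
\begin{equation*}
  A_z(s) = (1+(s-r)A_z(r))^{-1} A_z(r),
\end{equation*}
which is~(\ref{F: A(s)=(1+(s-r)A(r))(-1)A(r)}).

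Next I would observe that on the right-hand side of this identity the map $(z,s) \mapsto (s-r) A_z(r)$ is jointly holomorphic near $(z_0, s_0)$: it is holomorphic in $z$ for fixed $s$ (by holomorphy of $z \mapsto T_z(H_r)$ in $\Pi^\circ$), it is clearly polynomial --- hence holomorphic --- in $s$ for fixed $z$, and it is locally bounded in operator norm; by Hartogs' theorem (or the elementary two-variable version for Banach-space-valued functions obtained by iterating Cauchy's integral formula), separate holomorphy plus local boundedness gives joint holomorphy. Since $1 + (s_0 - r)A_{z_0}(r)$ is invertible and inversion is continuous on the open set of invertible operators, $(z,s) \mapsto (1+(s-r)A_z(r))^{-1}$ is jointly holomorphic in a neighbourhood of $(z_0,s_0)$, and therefore so is the product $A_z(s) = (1+(s-r)A_z(r))^{-1}A_z(r)$. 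This establishes that $A_z(s)$ is jointly holomorphic near every point of $\Pi^\circ \times \mbC$ at which it is finite.

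It remains to identify the singular set and check it is discrete with finite principal parts, i.e.\ that the function is genuinely \emph{meromorphic} in the two-variable sense as required. For fixed $z$, Theorem~\ref{T: Analytic Fredholm alternative} says $s \mapsto A_z(s) = (1+(s-r)A_z(r))^{-1}A_z(r)$ is meromorphic in $s$ with finite-rank principal parts, the poles being exactly the $s$ with $-1/(s-r) \in \sigma(A_z(r))$, equivalently the resonance points corresponding to $z$; and near such a point the local structure is given by the Laurent expansion already displayed in the introduction. Since the resonance points move holomorphically in $z$ (as zeros of the holomorphic family of Fredholm operators $1+(s-r)A_z(r)$), the singular set is a proper analytic subvariety of $\Pi^\circ \times \mbC$, and away from it the function is holomorphic by the previous paragraph, which is precisely the assertion. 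The main obstacle is the joint-analyticity step: one must be careful that the factorization~(\ref{F: A(s)=(1+(s-r)A(r))(-1)A(r)}) holds with a single fixed $r$ valid on a full neighbourhood of $(z_0,s_0)$ --- this is where the discreteness of resonance points in $s$, guaranteed by Theorem~\ref{T: Analytic Fredholm alternative}, is used, together with the continuity of the resonance locus in $z$ to shrink the neighbourhood if necessary.
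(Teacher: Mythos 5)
Your overall route is exactly the one the paper intends: the factorization $A_z(s) = (1+(s-r)A_z(r))^{-1}A_z(r)$ of (\ref{F: A(s)=(1+(s-r)A(r))(-1)A(r)}), joint holomorphy of $(z,s)\mapsto (s-r)A_z(r)$ (holomorphy of $z\mapsto T_z(H_r)$ off the real axis together with polynomial dependence on $s$), analyticity of inversion on the invertibles, and Theorem~\ref{T: Analytic Fredholm alternative} to control the singularities in $s$. Your first two paragraphs do correctly establish joint holomorphy at every $(z_0,s_0)$ with $s_0$ not a resonance point of $z_0$; note in passing that the hedging about choosing $r$ ``close enough to $s_0$'' is unnecessary (and for non-real $s_0$ not even meaningful): by Proposition~\ref{P: res eq-n is correct} the non-zero spectrum of $A_{z_0}(r)$ is $\{(r-r_{z_0})^{-1}\}$ over the resonance points of $z_0$, so for non-real $z_0$ \emph{every} real $r$ makes $1+(s_0-r)A_{z_0}(r)$ invertible as soon as $s_0$ is not a resonance point.

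The genuine gap is in your last paragraph. First, the claim that ``the resonance points move holomorphically in $z$'' is false in general: they behave like eigenvalues of an analytic family and can branch, as the paper itself emphasizes in Section~\ref{S: open problems} (continuous branching points of $r_z$ of finite period occur already in finite dimensions), so the analytic-subvariety claim cannot be justified this way. Second, even granting that the singular set is analytic, ``holomorphic off a proper analytic subvariety'' is strictly weaker than ``meromorphic'': that statement alone does not exclude essential-type singularities along the set, and your only control of the local structure (the one-variable Laurent expansion for fixed $z$) is slice-wise information. Both defects are repaired by the standard local form of the analytic Fredholm argument in two variables: near any $(z_0,s_0)$ write the jointly holomorphic compact-valued function $K(z,s)=(s-r)A_z(r)$ as $K=K_1+K_2$ with $K_1$ of finite rank and $\norm{K_2}<1$ on a neighbourhood, so that $1+K=(1+K_2)\bigl(1+(1+K_2)^{-1}K_1\bigr)$; inverting the second factor reduces to inverting a finite matrix, whence $(1+K(z,s))^{-1}$, and therefore $A_z(s)$, is locally a holomorphic operator-valued function divided by a scalar holomorphic determinant. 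This is exactly what two-variable meromorphy requires, it exhibits the singular set as the zero set of that determinant without any claim about holomorphic motion of individual resonance points, and it is the content behind the paper's one-line derivation of the lemma from Theorem~\ref{T: Analytic Fredholm alternative} and (\ref{F: A(s)=(1+(s-r)A(r))(-1)A(r)}).
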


The equality~(\ref{F: II resolvent identity}) implies that
\begin{equation} \label{F: Dn Az(s)=...}
  \frac {d^n}{ds^n} A_z(s) = (-1)^n n! A_z^{n+1}(s).
\end{equation}
We also use notation \label{Page: Bz(s)}
\begin{equation} \label{F: Bz(s)}
  B_z(s) = JT_z(H_{s}).
\end{equation}
One can check that the following analogue of the identity~(\ref{F: A(s)=(1+(s-r)A(r))(-1)A(r)}) holds:
\begin{equation} \label{F: B(s)=(1+(s-r)B(r))(-1)B(r)}
  B_z(s) = (1+(s-r)B_z(r))^{-1}B_z(r),
\end{equation}
which implies the equality
\begin{equation} \label{F: B(s) and B(r) commute}
  B_z(s) B_z(r) = B_z(r) B_z(s).
\end{equation}
Using the equalities~(\ref{F: A(s)=(1+(s-r)A(r))(-1)A(r)}) and~(\ref{F: B(s)=(1+(s-r)B(r))(-1)B(r)}) one can check that
\begin{equation} \label{F: A(z)(s)*=B(bz)(bs)}
  \brs{A_z(s)}^* = B_{\bar z}(\bar s).
\end{equation}

We shall also use the following well-known equality (see e.g. \cite[p.\,144]{KK71} \cite[(99)]{RS3} \cite[(4.8)]{Az3v6})
\begin{equation} \label{F: Az3v6 (4.8)}
 \begin{split}
  \Im T_{z}(H_s) & = (1+(s-r)T_{\bar z}(H_r)J)^{-1} \Im T_{z}(H_r) (1+(s-r)JT_{z}(H_r))^{-1}
   \\ & = (1+(s-r)A_{\bar z}(r))^{-1} \Im T_{z}(H_r) (1+(s-r)B_{z}(r))^{-1}.
 \end{split}
\end{equation}
This equality holds for all real numbers $s$ and $r,$ if~$z$ does not belong to $\partial \Pi;$ otherwise, if $z = \lambda\pm i0$ and if the
line $\set{H_r = H_0+rV \colon r \in \mbR}$ is regular at~$\lambda,$ then this equality holds for all real numbers $s$ and~$r$
as long as they do not belong to the resonance set $R(\lambda; H_0,V).$ In particular, the right hand side of this
equality provides meromorphic continuation of the left hand side as function of $s$ to the whole complex plane.

\begin{lemma} \label{L: T(l+iy) to T(l+i0)} As $y \to 0$ the holomorphic function $T_{\lambda+iy}(H_s)$ of $s$ converges to
$T_{\lambda+i0}(H_s)$ uniformly on any compact subset of $\Pi$ which does not contain resonance points corresponding to $\lambda+i0.$
\end{lemma}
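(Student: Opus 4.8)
The statement to prove is Lemma~\ref{L: T(l+iy) to T(l+i0)}: as $y \to 0^+$, the holomorphic functions $s \mapsto T_{\lambda+iy}(H_s)$ converge to $s \mapsto T_{\lambda+i0}(H_s)$ uniformly on compact subsets of $\mathbb{C}$ that avoid the resonance points corresponding to $\lambda+i0$. The key idea is to exploit the meromorphic continuation formula~(\ref{F: A(s)=(1+(s-r)A(r))(-1)A(r)}), rewritten for the sandwiched resolvent itself. Fix a reference point $r_0 \in \mathbb{R}$ that is regular at $\lambda$ (which exists since $\lambda$ is essentially regular and the line is regular at $\lambda$). For $y > 0$ the second resolvent identity~(\ref{F: T=(...)(-1)T}) gives
$$
  T_{\lambda+iy}(H_s) = \brs{1 + (s-r_0) T_{\lambda+iy}(H_{r_0})J}^{-1} T_{\lambda+iy}(H_{r_0}),
$$
and the same formula with $y=0$ holds for the meromorphic continuation of $T_{\lambda+i0}(H_s)$ in $s$. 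So the whole lemma reduces to two facts: (a) $T_{\lambda+iy}(H_{r_0}) \to T_{\lambda+i0}(H_{r_0})$ in operator norm as $y \to 0^+$ — this is exactly the definition of $r_0$ being regular at $\lambda$, i.e. $\lambda \in \Lambda(H_{r_0},F)$; and (b) the operator-valued function $A \mapsto (1 + (s-r_0)AJ)^{-1}$ (equivalently the resolvent of $AJ$ at the point $-(s-r_0)^{-1}$) depends continuously on the compact operator $A$ in operator norm, \emph{locally uniformly in $s$}, as long as we stay away from the values of $s$ where $1 + (s-r_0)T_{\lambda+i0}(H_{r_0})J$ fails to be invertible — and those values of $s$ are precisely the resonance points corresponding to $\lambda+i0$, by Theorem~\ref{T: Az 4.1.11} (applied with $H_{r_0}$ in place of $H_0$).

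**Key steps, in order.** First I would record that $r_0$ can be chosen regular at $\lambda$ and, translating coordinates, assume $r_0 = 0$; write $T := T_{\lambda+i0}(H_0)$ and $T^{(y)} := T_{\lambda+iy}(H_0)$, so $\|T^{(y)} - T\| \to 0$. Second, fix a compact set $K \subset \mathbb{C}$ disjoint from the resonance set $R(\lambda; H_0, V)$ corresponding to $\lambda + i0$; by Theorem~\ref{T: Az 4.1.11} and the analytic Fredholm alternative (Theorem~\ref{T: Analytic Fredholm alternative}), for every $s \in K$ the operator $1 + s TJ$ is invertible, and by continuity of the inverse together with compactness of $K$ there is a uniform bound $\sup_{s \in K} \|(1 + sTJ)^{-1}\| =: C < \infty$. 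Third, for the perturbed operator write $1 + s T^{(y)}J = (1 + sTJ)\brs{1 + s(1+sTJ)^{-1}(T^{(y)}-T)J}$; once $y$ is small enough that $C \,(\sup_{s\in K}|s|)\, \|T^{(y)} - T\|\,\|J\| < 1/2$, a Neumann series shows $1 + s T^{(y)}J$ is invertible for all $s \in K$ with $\sup_{s\in K}\|(1 + sT^{(y)}J)^{-1}\| \le 2C$, and moreover $(1+sT^{(y)}J)^{-1} \to (1+sTJ)^{-1}$ uniformly in $s \in K$ at rate $O(\|T^{(y)}-T\|)$. Fourth, combine: $T_{\lambda+iy}(H_s) - T_{\lambda+i0}(H_s) = (1+sT^{(y)}J)^{-1}T^{(y)} - (1+sTJ)^{-1}T$, which is a sum of two terms each estimated by the previous step times $\|T^{(y)}-T\|$ and the uniform bounds, giving the claimed uniform convergence on $K$. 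Finally I would note the identity $T_{\lambda+iy}(H_s) = (1+sT^{(y)}J)^{-1}T^{(y)}$ is what justifies, simultaneously, that $s \mapsto T_{\lambda+iy}(H_s)$ is holomorphic on $\mathbb{C}$ minus a discrete set and, in the limit $y=0$, that its meromorphic continuation has poles exactly at $R(\lambda;H_0,V)$, so the hypothesis "$K$ contains no resonance points corresponding to $\lambda+i0$" is exactly the right one.

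**Main obstacle.** The only nontrivial point is the local uniform (in $s$) invertibility of $1 + sT^{(y)}J$ for small $y$: one must check that the bound $C = \sup_{s\in K}\|(1+sTJ)^{-1}\|$ is genuinely finite, which uses that $s \mapsto (1+sTJ)^{-1}$ is continuous on the open set where $1+sTJ$ is invertible (a consequence of the analytic Fredholm alternative, since $sTJ$ is a holomorphic family of compact operators) and that $K$ is a compact subset of that open set — this last inclusion being precisely the content of the hypothesis on $K$ once Theorem~\ref{T: Az 4.1.11} identifies the excluded set as the resonance set. Everything else is a routine Neumann-series perturbation argument. One should also be slightly careful that the reference point $r_0$ itself might happen to lie in $K$ or not; this is harmless because the formula $T_{\lambda+iy}(H_s) = (1+(s-r_0)T_{\lambda+iy}(H_{r_0})J)^{-1}T_{\lambda+iy}(H_{r_0})$ is valid for all $s$ including $s = r_0$ (where it degenerates to the identity), and $r_0$ being regular at $\lambda$ is exactly the statement that $r_0 \notin R(\lambda;H_0,V)$.
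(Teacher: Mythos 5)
Your proof is correct. Note that the paper itself states Lemma~\ref{L: T(l+iy) to T(l+i0)} without supplying any argument, so there is no in-text proof to compare against; what you wrote fills that gap using exactly the ingredients the paper already has on hand: the sandwiched second resolvent identity $T_z(H_s)=\left(1+(s-r_0)T_z(H_{r_0})J\right)^{-1}T_z(H_{r_0})$, which also serves as the definition of the meromorphic continuation in $s$, the identification (Theorem~\ref{T: Az 4.1.11}) of the excluded set with the points $s$ where $1+(s-r_0)T_{\lambda+i0}(H_{r_0})J$ fails to be invertible, the analytic Fredholm alternative to get a uniform bound for the inverse on the compact set, and a Neumann-series perturbation to transfer invertibility and convergence from $y=0$ to small $y>0$ uniformly in $s$. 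Your only interpretive step, reading ``compact subset of $\Pi$'' as a compact subset of the $s$-plane avoiding the resonance points of $\lambda+i0$, is the right one; that is how the lemma is actually used later (e.g.\ integration over contours $C(r_\lambda)$ in the coupling-constant plane in the proof of Theorem~\ref{T: P(+)=P(-)}), and the wording in the statement is simply loose. No gaps.
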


In what follows, the spectra of the operators~$A_z(s)$ and $B_z(s)$ will play an important role.
Since~$A_z(s)$ and $B_z(s)$ are compact operators, their spectra consists of isolated eigenvalues of finite multiplicity
and zero. By~(\ref{F: mu(AB)=mu(BA)}), the spectral measures of~$A_z(s)$ and $B_z(s)$ coincide,
and therefore it suffices to consider the spectrum of~$A_z(s).$
Eigenvalues of~$A_z(s)$ will be denoted by $\sigma_z = \sigma_z(s).$
As it will be seen later (Proposition~\ref{P: res eq-n is correct}), eigenvalues (with their multiplicities) of~$A_z(s)$ for different $s$
are connected by a simple relation:
$\sigma_z(s) = (s - r_z)^{-1},$ where~$r_z$ is a complex number independent of $s.$

Occasionally we also consider operators \label{Page: ulAz(s)}
\begin{equation} \label{F: ulA and ulB}
  \ulA_z(s) = R_z(H_s)V \quad \text{and} \quad \ulB_z(s) = VR_z(H_s).
\end{equation}
Spectral properties of these operators are identical to those of $A_z(s)$ and $B_z(s).$
By the limiting absorption principle the operators $A_z(s)$ and $B_z(s)$ have well-defined limits $A_{\lambda\pm i0}(s))$ and $B_{\lambda\pm i0}(s)$
as $z=\lambda+iy$ approaches $\lambda\pm i0$ unlike the operators $\ulA_z(s)$
and $\ulB_z(s);$ since eventually the limit $z=\lambda+iy \to \lambda\pm i0$ will be taken,
this is the main reason to work with the former pair of operators rather than the latter. But as long as $z$ stays outside the real axis
or outside the common essential spectrum of operators $H_s,$ practically all other properties of these two pairs of operators are almost identical
and as a consequence they will be stated only for $A_z(s)$ and $B_z(s).$ Nearly all objects, such as to be introduced later $P_z(r_z),\bfA_z(r_z),$ etc,
which are naturally associated with operators
$A_z(s)$ and $B_z(s),$ have their analogues for $\ulA_z(s)$ and $\ulB_z(s);$ these analogues will be distinguished by underlining, e.g.
$\ulP_z(r_z),\ubfA_z(r_z),$ etc.

The following lemma is well-known.
\begin{lemma} \label{L: Az Lemma 4.1.4}
If $z$ is a non-real number, then compact operators~$A_z(s)$ and~$B_z(s)$ do not have real eigenvalues except possibly zero.
Moreover, if the operator $V$ is non-negative (respectively, non-positive), then all eigenvalues of operators $R_z(H_s)V,$ $VR_z(H_s),$~$A_z(s)$
and~$B_z(s)$ belong to that open complex half-plane $\Pi_\pm$ which~$z$ belongs to
(respectively, does not belong to).
\end{lemma}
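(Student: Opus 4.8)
\emph{Proposal.} The plan is to pull both statements back from the sandwiched operators $A_z(s) = T_z(H_s)J$, $B_z(s) = JT_z(H_s)$ to the un-sandwiched operators $\ulA_z(s) = R_z(H_s)V$, $\ulB_z(s) = VR_z(H_s)$, which are compact by~(\ref{F: Rz(H)V is compact}) and have exactly the same nonzero eigenvalues. For the $A_z(s)\leftrightarrow B_z(s)$ correspondence I would use~(\ref{F: mu(AB)=mu(BA)}) applied to the bounded factors $FR_z(H_s)F^*$ and $J$; for $A_z(s)\leftrightarrow\ulA_z(s)$ (and likewise $B_z(s)\leftrightarrow\ulB_z(s)$) I would transfer eigenvectors directly: if $\ulA_z(s)\chi = \sigma\chi$ with $\sigma\neq 0$, then $\chi = \sigma^{-1}R_z(H_s)V\chi\in\im R_z(H_s) = \euD$, so $F\chi$ is defined, nonzero, and $A_z(s)F\chi = \sigma F\chi$; the reverse direction uses that $R_z(H_s)$ is injective together with~(\ref{F: JF euD subset dom(F*)}). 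Hence it suffices to treat $\ulA_z(s)$.

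The pivotal remark is that a nonzero eigenvalue $\sigma$ of $\ulA_z(s) = R_z(H_s)V$ with eigenvector $\chi\neq 0$ has $\chi = \sigma^{-1}R_z(H_s)V\chi\in\euD$, so $\chi\in\dom(H_s)\cap\dom(V)$, and applying $H_s - z$ to this identity gives
\begin{equation*}
  (H_s - \sigma^{-1}V)\chi = z\chi .
\end{equation*}
For the first assertion I would then note that if $\sigma$ were real, the operator $H_s - \sigma^{-1}V = H_{s-\sigma^{-1}}$ would belong to the affine space $\clA$ and hence be self-adjoint, so that the displayed identity would exhibit the non-real number $z$ as one of its eigenvalues --- a contradiction. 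For the second assertion ($V\geq 0$ or $V\leq 0$) I would pair the displayed identity with $\chi$: since $\scal{\chi}{H_s\chi}$ is real and $\scal{\chi}{V\chi}$ is real of one fixed sign, taking imaginary parts yields, after clearing $\sigma^{-1}$, the relation $\Im\sigma\cdot\scal{\chi}{V\chi} = \abs{\sigma}^2\,\Im z\,\norm{\chi}^2$. The right-hand side is nonzero, so $\scal{\chi}{V\chi}\neq 0$, and $\Im\sigma$ has the sign of $\Im z$ when $V\geq 0$ and the opposite sign when $V\leq 0$; transferring back through the first paragraph gives the claim for all of $\ulA_z(s),\ulB_z(s),A_z(s),B_z(s)$.

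I expect the only genuine work to be the bookkeeping of the first paragraph --- checking that the eigenvectors really sit in $\euD$ and can be moved coherently among the four operators, which differ by the unbounded factor $F$ --- whereas the analytic content is the two elementary facts that a self-adjoint operator has no non-real eigenvalue and that $\Im\scal{\chi}{H_s\chi} = 0$. If one prefers not to invoke self-adjointness of operators in $\clA$, one can instead argue entirely within the sandwiched picture using $(\Im z)^{-1}\Im R_z(H_s) = R_z(H_s)R_z(H_s)^* > 0$ for non-real $z$, but routing through $\ulA_z(s)$ keeps the argument shortest.
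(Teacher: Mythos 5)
The paper contains no proof of this lemma: it is declared ``well-known'' and, as the label indicates, imported from \cite{Az3v6} (where the rigging is Hilbert--Schmidt, hence bounded), so there is no in-paper argument to compare your route against. Your central computation is correct and is the standard one: for a nonzero eigenvalue $\sigma$ of $\ulA_z(s)=R_z(H_s)V$ the eigenvector automatically lies in $\euD$, the identity $(H_s-\sigma^{-1}V)\chi=z\chi$ follows, and either self-adjointness of $H_{s-\sigma^{-1}}\in\clA$ (for real $\sigma$) or the imaginary-part identity $\Im\sigma\,\scal{\chi}{V\chi}=\abs{\sigma}^{2}\,\Im z\,\norm{\chi}^{2}$ (for sign-definite $V$) gives both assertions for $R_z(H_s)V$ and $VR_z(H_s)$; and the map $\chi\mapsto F\chi$, justified exactly by (\ref{F: euD subset dom(F)}) and (\ref{F: JF euD subset dom(F*)}), carries these eigenvectors to $A_z(s)$, and then to $B_z(s)$ via (\ref{F: mu(AB)=mu(BA)}) applied to the bounded factors $T_z(H_s)$ and $J$.

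The one soft spot is the reverse transfer from $A_z(s)$ back to $\ulA_z(s)$, which your reduction genuinely needs, since the lemma makes a claim about every nonzero eigenvalue of $A_z(s)$ and $B_z(s)$. Injectivity of $R_z(H_s)$ together with (\ref{F: JF euD subset dom(F*)}) is not sufficient when $F$ is unbounded: given $A_z(s)u=\sigma u$ one must first know $u\in\im F$, which requires the observation that the continuous prolonging of $T_z(H_s)$ factors as $F\,\overline{R_z(H_s)F^*}$ (true, because $FR_{\bar z}(H_s)$ is bounded, as the paper shows when proving holomorphy of $z\mapsto T_z(H_s)$, and $\overline{R_z(H_s)F^*}=(FR_{\bar z}(H_s))^{*}$), and even then the candidate preimage $\chi=\sigma^{-1}\overline{R_z(H_s)F^*}\,Ju$ still has to be placed where $V$ can be applied before (\ref{F: JF euD subset dom(F*)}) is relevant. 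For bounded $F$ --- the case of all the paper's applications and of \cite{Az3v6} --- the transfer is immediate from (\ref{F: mu(AB)=mu(BA)}) with factors $F$ and $R_z(H_s)F^*J$, and the paper itself takes this two-way correspondence for granted elsewhere (Lemma~\ref{L: star}, Lemma~\ref{L: R=indices of AP and AP}), so you are arguing at the same level of rigour as the surrounding text; but as written your parenthetical justification does not close this step. Note that your own closing remark is the cleanest repair and is shorter for the sandwiched operators: with $G=\overline{R_z(H_s)F^*}$ one has $\Im T_z(H_s)=\Im z\;G^{*}G$, so taking imaginary parts in $\scal{Ju}{T_z(H_s)Ju}=\sigma\scal{Ju}{u}$ gives $\Im\sigma\,\scal{u}{Ju}=\Im z\,\norm{GJu}^{2}$ with $GJu\neq 0$ (since $FGJu=\sigma u\neq 0$); this proves the first assertion for $A_z(s)$ and $B_z(s)$ with no transfer at all, while the sign-definite case again reduces to identifying $\scal{u}{Ju}$ with $\scal{\chi}{V\chi}$, i.e.\ to the same bookkeeping you flagged.
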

\noindent
However, if~$z$ belongs to $\partial \Pi,$ then the operators~$A_z(s)$ and $B_z(s)$ may have non-zero real eigenvalues.
In fact, it is these real eigenvalues of~$A_z(s)$ which are of the most interest for the present and with a bit of
exaggeration it can be said that this paper is mainly devoted to investigation of these real eigenvalues.

%

\section{Analytic properties of $A_z(s)$}
\label{S: Resonance points}
\subsection{Vector spaces~$\Upsilon_z(r_z),$ \,$\Psi_z(r_z)$}
Throughout this paper we assume that $H_0$ is a self-adjoint operator from the affine space~(\ref{F: clA})
and that $V$ is a self-adjoint operator from the real vector space $\clA_0(F)$ with factorization~(\ref{F: V}).
Let~$\lambda$ be a fixed real number. We assume that the line
$$
  \gamma := \set{H_0+rV \colon r \in \mbR}
$$
is regular at $\lambda;$ by definition this means that there exists a non-resonant value of the coupling constant~$r,$
that is, for some value of~$r$ the inclusion $H_r \in R(\lambda; \clA,F)$ fails (equivalently, the inclusion $\lambda \in \Lambda(H_r,F)$ holds).
In this case the set $R(\lambda; H_0,V)$ of resonance points~$r_\lambda$ is a discrete subset of $\mbR,$ by Theorem~\ref{T: Az 4.1.11}.

Let~$z$ be a point of $\Pi,$ let~$r_z$ be a complex number and let~$k$ be a positive integer.
Let $s$ be any number for which the operator $A_z(s)$ is defined. The equation
\begin{equation} \label{F: res eq-n}
  \brs{1+(r_z-s) A_z(s)}^ku = 0
\end{equation}
will be called the \emph{resonance equation of order~$k$} for the pair $(z,r_z).$ The resonance equation of order~1 is nothing else but the Lippmann-Schwinger equation.

\begin{defn} \label{D: res point rz}
A complex number~$r_z$ will be said to be a \emph{resonance point}\label{Page: rz} corresponding to $z \in \Pi,$
if the resonance equation~(\ref{F: res eq-n}) of order $k=1$ has a non-zero solution.
\end{defn}
\noindent
In other words,~$r_z$ is a resonance point if and only if the number \label{Page: sigma z(s)}
\begin{equation} \label{F: sigma z(s)=(s-rz)(-1)}
  \sigma_z(s) := (s-r_z)^{-1}
\end{equation}
is a non-zero eigenvalue of the compact operator~$A_z(s).$ Real resonance points~$r_\lambda$ were defined earlier
in the paragraph following Theorem~\ref{T: Az Th. 4.2.5} and these definitions are consistent with each other.
It will be shown below (Proposition~\ref{P: res eq-n is correct}) that definition of the resonance point does not depend on $s.$
Hence,~$r_z$ depends only on $z,$~$H_0,$~$V$ and, in case $z \in \partial \Pi,$ also on $F.$
If~$z$ lies outside of the boundary $\partial \Pi,$ then this definition does not depend on the rigging operator~$F,$
since in this case both operators $A_z(s) = F R_z(H_s) F^*J$ and $R_z(H_s) F^*J F = R_z(H_s) V$ make sense and they have the same non-zero eigenvalues
by~(\ref{F: spec(AB)=spec(BA)}).

According to the correspondence~(\ref{F: sigma z(s)=(s-rz)(-1)}) between resonance points $r_z$ and eigenvalues $\sigma_z(s)$ of a compact operator $A_z(s),$
the set of resonance points corresponding to a given $z \in \Pi$ is a discrete subset of $\mbC.$
Also, the formula~(\ref{F: A(s)=(1+(s-r)A(r))(-1)A(r)}) shows that resonance points corresponding to~$z$ are exactly the poles of the meromorphic function $A_z(s).$
For this reason, resonance points may sometimes be called poles.

Solutions of the resonance equation~(\ref{F: res eq-n}) of order~$k$ will usually be denoted by $u,$ $u_z$ or
$u_z(r_z)$ \label{Page: uu}
and will be called \emph{resonance vectors of order} $\leq k.$ \label{Page: res vector}
Order~$k$ of a resonance vector~$u$ is the smallest positive integer such that~$u$ is a solution of
the resonance equation~(\ref{F: res eq-n}) of order~$k.$ Order of a resonance vector will be denoted by $\order(u).$ \label{Page: order of vector}
If necessary we write $\order_z(u)$ instead of $\order(u);$ also, instead of $\order_{\lambda\pm i0}(u)$ we often write $\order_{\pm}(u).$

The finite-dimensional vector space of all resonance vectors of order~$\leq k$ will be denoted by~$\Upsilon^k_z(r_z).$ \label{Page: Upsilon(j)}
To be precise one should indicate dependence of this vector on operators~$H_0,V$ by writing, say, $\Upsilon^k_z(r_z; H_0,V),$
but since throughout this paper the operators~$H_0$ and~$V$ are fixed, the simpler notation will be used.
The same remark applies to many other objects to be introduced later.
A vector $u = u_z(r_z)$ will be said to be a resonance vector of order $k,$ if $u$ is a resonance vector of order $\leq k$
but not a vector of order $\leq k-1.$
It was proved in \cite{Az7} that the set of solutions of the equation~(\ref{F: res eq-n}) does not depend on $s.$
We give here the proof for completeness and for readers' convenience.
\begin{prop} \label{P: res eq-n is correct}
  Let $z \in \Pi$ and let~$r_z$ be a resonance point corresponding to~$z.$
  The vector space~$\Upsilon_z^k(r_z)$ of solutions of the equation~(\ref{F: res eq-n})
  does not depend on $s \in \mbR.$
\end{prop}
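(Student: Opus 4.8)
The plan is to reduce everything to the algebraic identity~(\ref{F: A(s)=(1+(s-r)A(r))(-1)A(r)}) relating the operators $A_z(s)$ and $A_z(s')$ for two admissible values $s,s'$ of the coupling constant, together with the commutativity~(\ref{F: A(s) and A(r) commute}). Here by \emph{admissible} I mean a value for which $A_z(s)$ is defined: every real $s$ when $z$ is non-real, and every $s\notin R(\lambda; H_0,V)$ when $z\in\partial\Pi$ (recall $\gamma$ is assumed regular at $\lambda$). It is worth keeping in mind that, since $1+(r_z-s)A_z(s) = -(s-r_z)\brs{A_z(s)-\sigma_z(s)}$ with $\sigma_z(s)$ as in~(\ref{F: sigma z(s)=(s-rz)(-1)}), the resonance equation~(\ref{F: res eq-n}) of order $k$ says precisely that $u$ is a root vector of $A_z(s)$ at the eigenvalue $\sigma_z(s)=(s-r_z)^{-1}$ of order $\leq k$; so the assertion is that the corresponding root subspaces of $A_z(s)$, for varying $s$, all coincide.

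First I would record the key identity. Applying~(\ref{F: A(s)=(1+(s-r)A(r))(-1)A(r)}) with the coupling constants $s'$ and $s$ gives $A_z(s') = W A_z(s)$, where $W:=\brs{1+(s'-s)A_z(s)}^{-1}$ is a bounded operator commuting with $A_z(s)$. Writing $1 = W\brs{1+(s'-s)A_z(s)}$ and collecting terms, a one-line computation yields
\begin{equation*}
  1 + (r_z-s')A_z(s') \;=\; W\SqBrs{\brs{1+(s'-s)A_z(s)}+(r_z-s')A_z(s)} \;=\; W\SqBrs{1+(r_z-s)A_z(s)}.
\end{equation*}
Since $W$ commutes with $A_z(s)$, hence with $1+(r_z-s)A_z(s)$, raising to the $k$-th power gives
\begin{equation*}
  \brs{1 + (r_z-s')A_z(s')}^k \;=\; W^k\brs{1+(r_z-s)A_z(s)}^k .
\end{equation*}

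Because $W^k$ is invertible, the last identity forces $\ker\brs{1+(r_z-s')A_z(s')}^k = \ker\brs{1+(r_z-s)A_z(s)}^k$, i.e.\ the vector space $\Upsilon^k_z(r_z)$ computed from $s'$ coincides with the one computed from $s$; the corresponding statement for the order of an individual resonance vector is then immediate, and there is no need to argue separately that solutions cannot escape the finite-dimensional root subspace, since we are literally comparing kernels. I do not expect a genuine obstacle here: the algebraic heart of the argument is the displayed factorization, a routine consequence of the second resolvent identity, and it makes the alternative route through the holomorphic functional calculus and Riesz-projection spectral separation unnecessary. The only point deserving a word of care is the standing well-definedness and invertibility of $W=\brs{1+(s'-s)A_z(s)}^{-1}$ together with the precise range of admissible $s$ — but invertibility of $1+(s'-s)A_z(s)$ is exactly what is built into~(\ref{F: A(s)=(1+(s-r)A(r))(-1)A(r)}), holding whenever $A_z(s')$ is defined (for $z\in\partial\Pi$ this is the statement $s'\notin R(\lambda; H_0,V)$, by Theorem~\ref{T: Az 4.1.11} and regularity of $\gamma$ at $\lambda$), so no extra work is needed.
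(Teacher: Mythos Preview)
Your proof is correct and in fact more streamlined than the paper's. Both arguments rest on the identity~(\ref{F: A(s)=(1+(s-r)A(r))(-1)A(r)}) and the commutativity~(\ref{F: A(s) and A(r) commute}), but the paper proceeds by induction on $k$: the base case $k=1$ is the direct eigenvalue computation $A_z(r)u=\sigma_z(r)u \Rightarrow A_z(s)u=\sigma_z(s)u$, and the inductive step peels off one factor of $1+(r_z-r)A_z(r)$, applies the base case to swap it for $1+(r_z-s)A_z(s)$, commutes past the remaining block, and invokes the induction hypothesis. Your route avoids induction entirely: the factorization $1+(r_z-s')A_z(s') = W\SqBrs{1+(r_z-s)A_z(s)}$ with $W$ invertible and commuting with $A_z(s)$ immediately yields equality of kernels for every power $k$. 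This is a cleaner packaging of the same algebraic content; the paper's version has only the modest expository benefit of making the eigenvalue transformation $\sigma_z(r)\mapsto\sigma_z(s)$ visible at the base step.
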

\begin{proof}
We prove this theorem using induction on $k.$ Let $u$ be a solution of~(\ref{F: res eq-n}) with $k=1$
for the value of $s = r,$ so that  $A_z(r) u = \brs{r - r_z}^{-1} u.$
It follows from this and~(\ref{F: A(s)=(1+(s-r)A(r))(-1)A(r)}) that
\begin{equation*}
  \begin{split}
    A_z(s) u & = (1+(s-r)A_z(r))^{-1} A_z(r) u
    \\ & = \SqBrs{1+(s-r) \cdot \frac 1{r - r_z}} ^{-1} \frac 1{r - r_z} u = \frac 1{s - r_z}u.
  \end{split}
\end{equation*}
Hence, if $u$ is a solution of~(\ref{F: res eq-n}) with $k=1$ for one value of $s,$ then $u$ is a solution of~(\ref{F: res eq-n}) with $k=1$
for any other regular value of $s$ too.
Now assume that the assertion is true for $k=n$ and let $u$ be a solution of~(\ref{F: res eq-n}) with $k=n+1$ for the value of $s = r.$
Then
$$
  (1+(r_z-r)A_z(r))(1+(r_z-r)A_z(r))^nu = 0.
$$
It follows from this and induction base, applied to the vector $(1+(r_z-r)A_z(r))^nu,$ that
$$
  (1+(r_z-s)A_z(s))(1+(r_z-r)A_z(r))^nu_{z} = 0.
$$
Since, by~(\ref{F: A(s) and A(r) commute}), the operators~$A_z(s)$ and $A_z(r)$ commute, it follows that
$$
  (1+(r_z-r)A_z(r))^n (1+(r_z-s)A_z(s)) u = 0.
$$
By the induction assumption, applied to the vector $(1+(r_z-s)A_z(s))u,$ this implies that
$$
  (1+(r_z-s)A_z(s))^n (1+(r_z-s)A_z(s)) u = 0.
$$
\end{proof}
The sequence
$$
  \Upsilon^1_z(r_z) \subset \Upsilon^2_z(r_z) \subset \ldots \subset \Upsilon^k_z(r_z) \subset \ldots \subset \clK,
$$
stabilizes. The union of the vector spaces~$\Upsilon_z^1(r_z), \Upsilon_z^2(r_z), \ldots $ will be denoted by~$\Upsilon_z(r_z).$ \label{Page: Upsilon}

\label{Page: point of order d}
A resonance point~$r_z$ will be said to have \emph{order~$d$},
if there are resonance vectors of order~$d,$ but there are no resonance vectors of order $d+1.$
In other words, the order~$d$ of a resonance point~$r_z$ is the integer
\begin{equation} \label{F: d}
  d = \min \set{k \in \mbN \colon \Upsilon_z^k(r_z) = \Upsilon_z^{k+1}(r_z)} = \min \set{k \in \mbN \colon \Upsilon_z^k(r_z) = \Upsilon_z(r_z)}.
\end{equation}
Apart from order~$d,$ with every resonance point~$r_z$ another two positive integers are naturally associated: geometric multiplicity~$m$
defined by equality \label{Page: m}
\begin{equation} \label{F: m}
  m = \dim \Upsilon_z^1(r_z)
\end{equation}
and algebraic multiplicity~$N$
defined by equality \label{Page: N}
\begin{equation} \label{F: N}
  N = \dim \Upsilon_z(r_z).
\end{equation}
Obviously, $d+m-1\leq N.$
Throughout this paper the letters~$d,$~$m$ and~$N$ will be used only with these meanings, unless it is specifically stated otherwise.

The equation
\begin{equation} \label{F: cores eq-n}
  \brs{1+(r_z-s) B_z(s)}^k\psi = 0
\end{equation}
will be called \emph{co-resonance equation of order $k.$} Solutions of the co-resonance equation of order~$k$
will be denoted by $\psi$ or $\psi_z$ or $\psi_z(r_z)$ \label{Page: psi} and will be called \emph{co-resonance vectors} \label{Page: cores vector}
of order $\leq k.$ The finite-dimensional vector space of all
co-resonance vectors of order~$\leq k$ will be denoted by~$\Psi_z^k(r_z).$ \label{Page: Psi(j)}
A co-resonance vector $\psi$ has order~$k$ if it has order $\leq k$ but not $\leq k-1.$
The sequence
$$
  \Psi^1_z(r_z) \subset \Psi^2_z(r_z) \subset \ldots \subset \Psi_z^k(r_z) \subset \ldots \subset \clK,
$$
stabilizes; its union will be denoted by~$\Psi_z(r_z).$ \label{Page: Psi}
Similarly to Proposition~\ref{P: res eq-n is correct},
one can prove the following
\begin{prop} \label{P: cores eq-n is correct}
  Let $z \in \Pi$ and let~$r_z$ be a resonance point corresponding to~$z.$
  The vector space~$\Psi_z^k(r_z)$ of solutions of the equation~(\ref{F: cores eq-n})
  does not depend on $s \in \mbR.$
\end{prop}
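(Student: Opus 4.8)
The plan is to mirror the proof of Proposition~\ref{P: res eq-n is correct} verbatim, replacing the operator $A_z(s) = T_z(H_s)J$ by $B_z(s) = JT_z(H_s)$ throughout. The only ingredients used there were the resolvent-type identity (\ref{F: A(s)=(1+(s-r)A(r))(-1)A(r)}) and the commutation relation (\ref{F: A(s) and A(r) commute}), and both have exact analogues already recorded for $B_z$, namely the identity (\ref{F: B(s)=(1+(s-r)B(r))(-1)B(r)}), $B_z(s) = (1+(s-r)B_z(r))^{-1}B_z(r)$, and the relation (\ref{F: B(s) and B(r) commute}), $B_z(s)B_z(r) = B_z(r)B_z(s)$. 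So no new structural input is needed; the proof is a formal transcription.

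First I would dispose of the base case $k=1$. If $\psi$ solves $(1+(r_z-r)B_z(r))\psi = 0$, i.e.\ $B_z(r)\psi = (r-r_z)^{-1}\psi$, then substituting into (\ref{F: B(s)=(1+(s-r)B(r))(-1)B(r)}) gives
\[
  B_z(s)\psi = (1+(s-r)B_z(r))^{-1}B_z(r)\psi = \left[1 + \frac{s-r}{r-r_z}\right]^{-1}\frac{1}{r-r_z}\,\psi = \frac{1}{s-r_z}\,\psi,
\]
so $(1+(r_z-s)B_z(s))\psi = 0$ at any other regular value $s$ as well. I would then run the induction on $k$ exactly as in Proposition~\ref{P: res eq-n is correct}: given $\psi$ with $(1+(r_z-r)B_z(r))^{n+1}\psi = 0$, apply the base case to the vector $(1+(r_z-r)B_z(r))^n\psi$ to move the outermost factor from $r$ to $s$, then use (\ref{F: B(s) and B(r) commute}) to pull $(1+(r_z-s)B_z(s))$ past the $n$ remaining factors, and finally apply the induction hypothesis to $(1+(r_z-s)B_z(s))\psi$ to convert those $n$ factors as well, obtaining $(1+(r_z-s)B_z(s))^{n+1}\psi = 0$.

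There is no genuine obstacle here. The one point deserving a word of care is that when $z \in \partial\Pi$ the operators $B_z(s)$ and $B_z(r)$ are defined only for $s,r$ outside the discrete resonance set $R(\lambda;H_0,V)$, so "does not depend on $s$" is understood over regular values of the coupling constant; this is already guaranteed by the standing hypothesis that $\gamma$ is regular at $\lambda$, just as in Proposition~\ref{P: res eq-n is correct}. (Alternatively, one could deduce the statement from Proposition~\ref{P: res eq-n is correct} by taking adjoints via (\ref{F: A(z)(s)*=B(bz)(bs)}), which turns resonance vectors of $A_{\bar z}(\bar s)$ into co-resonance vectors of $B_z(s)$; but the direct repetition of the earlier argument is cleaner and self-contained.)
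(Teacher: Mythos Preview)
Your proposal is correct and matches the paper's approach exactly: the paper states that the proposition is proved ``similarly to Proposition~\ref{P: res eq-n is correct}'' (i.e., by the verbatim transcription you carry out), and also remarks that it follows alternatively from Proposition~\ref{P: res eq-n is correct} together with Lemma~\ref{L: j-dimensions coincide}, which is close in spirit to the adjoint route you mention in passing.
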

\noindent
This proposition also follows from Proposition~\ref{P: res eq-n is correct} and Lemma~\ref{L: j-dimensions coincide}.

\begin{lemma} \label{L: j-dimensions coincide}
Let $z \in \Pi$ and let~$r_z$ be a resonance point corresponding to~$z.$
Dimensions of four vector spaces~$\Upsilon^j_{z}(r_z),$~$\Upsilon^j_{\bar z}(\bar r_z),$~$\Psi^j_z(r_z)$
and~$\Psi^j_{\bar z}(\bar r_z)$ coincide for all $j=1,2,\ldots$ Moreover,
for all $j=1,2,\ldots$ and all non-resonant real numbers $s$ the mappings $J \colon \Upsilon^j_z(r_z) \to \Psi^j_z(r_z)$ and
$T_z(H_s) \colon \Psi^j_z(r_z) \to \Upsilon^j_z(r_z)$ are linear isomorphisms.

In particular, dimensions of four vector spaces~$\Upsilon_{z}(r_z),$~$\Upsilon_{\bar z}(\bar r_z),$~$\Psi_z(r_z)$
and~$\Psi_{\bar z}(\bar r_z)$ coincide and $J$ is a linear isomorphism of the vector spaces~$\Upsilon_z(r_z)$ and~$\Psi_z(r_z).$
\end{lemma}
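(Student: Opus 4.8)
The plan is to fix once and for all a real number $s$ for which the operator $A_z(s)$ is defined (such $s$ exist, forming a set whose complement is discrete, since the line $\gamma$ is regular at $\lambda$) and to reduce everything to elementary linear algebra in the finite-dimensional generalized eigenspaces of the single compact operator $A := A_z(s)$. By Propositions~\ref{P: res eq-n is correct} and~\ref{P: cores eq-n is correct} the spaces $\Upsilon^j_z(r_z)$ and $\Psi^j_z(r_z)$ do not depend on $s$, so it is enough to argue for this one choice. Put $\sigma := \sigma_z(s) = (s-r_z)^{-1}$, which is a non-zero eigenvalue of $A$ by~(\ref{F: sigma z(s)=(s-rz)(-1)}); since $1+(r_z-s)A = -\sigma^{-1}(A-\sigma)$ and similarly for $B := B_z(s) = JT_z(H_s)$, one has $\Upsilon^j_z(r_z) = \ker(A-\sigma)^j$ and $\Psi^j_z(r_z) = \ker(B-\sigma)^j$, and these spaces are finite-dimensional because $A$ and $B$ are compact and $\sigma\neq 0$.

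The first step is to show that $J$ and $T_z(H_s)$ are mutually quasi-inverse linear isomorphisms between $\Upsilon^j_z(r_z)$ and $\Psi^j_z(r_z)$. From $A = T_z(H_s)J$ and $B = JT_z(H_s)$ one obtains the intertwining identities $(B-\sigma)^j J = J(A-\sigma)^j$ and $(A-\sigma)^j T_z(H_s) = T_z(H_s)(B-\sigma)^j$, so $J$ carries $\Upsilon^j_z(r_z)$ into $\Psi^j_z(r_z)$ and $T_z(H_s)$ carries $\Psi^j_z(r_z)$ into $\Upsilon^j_z(r_z)$. If $u\in\Upsilon^j_z(r_z)$ with $Ju=0$, then expanding $(A-\sigma)^j u = 0$ by the binomial theorem and using $A^k u = T_z(H_s)\,(JT_z(H_s))^{k-1}\,Ju = 0$ for $k\geq 1$ leaves $(-\sigma)^j u = 0$, hence $u=0$; so $J$ is injective on $\Upsilon^j_z(r_z)$, and the symmetric argument shows $T_z(H_s)$ is injective on $\Psi^j_z(r_z)$. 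On the finite-dimensional space $\Upsilon^j_z(r_z)$ the operator $A-\sigma$ is nilpotent, so $A = T_z(H_s)\circ J$ is invertible there; likewise $B = J\circ T_z(H_s)$ is invertible on $\Psi^j_z(r_z)$. Together with the injectivity just proved, this forces both $J$ and $T_z(H_s)$ to be bijections between $\Upsilon^j_z(r_z)$ and $\Psi^j_z(r_z)$; in particular $\dim\Upsilon^j_z(r_z) = \dim\Psi^j_z(r_z)$. Since $s$ was an arbitrary value for which $A_z(s)$ is defined, this gives the isomorphism assertions for all non-resonant $s$ at once.

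The second step passes to the conjugate point. Taking adjoints in~(\ref{F: A(z)(s)*=B(bz)(bs)}) with $s$ real gives $B_z(s) = (A_{\bar z}(s))^*$, and consequently $\bar\sigma = \overline{(s-r_z)^{-1}} = (s-\bar r_z)^{-1}$ is a non-zero eigenvalue of $A_{\bar z}(s)$, so that $\bar r_z$ is a resonance point corresponding to $\bar z$ and $\sigma_{\bar z}(s) = \bar\sigma$. Then
\[
  \Psi^j_z(r_z) = \ker\bigl(B_z(s)-\sigma\bigr)^j = \ker\bigl((A_{\bar z}(s)-\bar\sigma)^*\bigr)^j = \ker\bigl((A_{\bar z}(s)-\sigma_{\bar z}(s))^j\bigr)^*.
\]
The operator $A_{\bar z}(s)-\sigma_{\bar z}(s)$ is a compact perturbation of a non-zero scalar, hence Fredholm of index zero, and so is its $j$-th power; for a Fredholm operator of index zero the kernel of the adjoint has the same dimension as the kernel, whence $\dim\Psi^j_z(r_z) = \dim\ker(A_{\bar z}(s)-\sigma_{\bar z}(s))^j = \dim\Upsilon^j_{\bar z}(\bar r_z)$. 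Combining this with the first step applied once to $z$ and once to $\bar z$ yields $\dim\Upsilon^j_z(r_z) = \dim\Psi^j_z(r_z) = \dim\Upsilon^j_{\bar z}(\bar r_z) = \dim\Psi^j_{\bar z}(\bar r_z)$ for every $j$, and the last sentence of the lemma follows by choosing $j$ large enough that all four stabilizing chains have stabilized (by the dimension equality they stabilize at the same index) and invoking the isomorphism $J\colon\Upsilon^j_z(r_z)\to\Psi^j_z(r_z)$ from the first step.

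I expect the only points needing a little care to be the bookkeeping that $\bar r_z$ is genuinely a resonance point corresponding to $\bar z$ (so that the symbols $\Upsilon^j_{\bar z}(\bar r_z)$ and $\Psi^j_{\bar z}(\bar r_z)$ in the statement are meaningful), and the standard Fredholm-index-zero identity $\dim\ker N^j = \dim\ker(N^j)^*$ used to cross between $A_z(s)$-kernels and $A_{\bar z}(s)$-kernels; everything else is routine manipulation of finite-dimensional root spaces.
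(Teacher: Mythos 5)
Your proof is correct and follows essentially the same route as the paper's: injectivity of $J$ and $T_z(H_s)$ on the root spaces via the same "expand the brackets" computation, and the crossing between $z$ and $\bar z$ via the adjoint identity $(A_{\bar z}(s))^*=B_z(s)$ together with the index-zero Fredholm property of a $j$-th power of "$1+$ compact" (the paper phrases this as $\dim\ker S=\dim\ker S^*$ for $S=[1+(r_z-s)A_z(s)]^j$). The only differences — working with $A-\sigma$ instead of $1+(r_z-s)A$, and deducing bijectivity from invertibility of the composites rather than from injections in both directions between finite-dimensional spaces — are cosmetic.
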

\begin{proof}
Let $j$ be a positive integer and $s$ a real number. The resonance equation~(\ref{F: res eq-n}) implies that if $u \in \Upsilon^j_z(r_z),$ then
$Ju \in \Psi^j_z(r_z).$ Also, if $Ju = 0,$ where~$u$ is a solution of the resonance equation~(\ref{F: res eq-n}), then it follows from
this equation, after expanding brackets, that $u = 0.$ Hence, $J$ is an injective linear operator from $\Upsilon^j_z(r_z)$ into $\Psi^j_z(r_z).$

Similarly, the co-resonance equation~(\ref{F: cores eq-n}) implies that if $\psi \in \Psi^j_z(r_z),$ then
$T_z(H_s)\psi \in \Upsilon^j_z(r_z);$ further,
if $T_z(H_s)\psi = 0,$ where~$\psi$ is a solution of the co-resonance equation~(\ref{F: cores eq-n}), then it follows from
this equation that $\psi = 0.$ Hence, $T_z(H_s)$ is an injective linear operator from $\Psi^j_z(r_z)$ into $\Upsilon^j_z(r_z).$

Thus, the vector spaces $\Upsilon^j_z(r_z)$ and $\Psi^j_z(r_z)$ are linearly isomorphic and the mappings $J \colon \Upsilon^j_z(r_z) \to \Psi^j_z(r_z)$ and
$T_z(H_s) \colon \Psi^j_z(r_z) \to \Upsilon^j_z(r_z)$ are linear isomorphisms.

Further, let $S = [1+(s-r_z)A_z(s)]^j;$ then
\begin{equation*} 
   \dim \Upsilon_z^j(r_z) = \dim \ker S = \dim \ker S^* = \dim \Psi_{\bar z}^j(\bar r_z),
\end{equation*}
where the first and the third equalities directly follow from definitions of the vector spaces $\Upsilon_z^j(r_z)$
and $\Psi_{\bar z}^j(\bar r_z)$ and the second equality follows from the fact that the Fredholm index of $S$ is zero,
since the operator $S-1$ is compact.
It follows that dimensions of the four vector spaces ~$\Upsilon^j_{z}(r_z),$~$\Upsilon^j_{\bar z}(\bar r_z),$~$\Psi^j_z(r_z)$
and~$\Psi^j_{\bar z}(\bar r_z)$ are the same.
\end{proof}

\begin{cor} \label{C: r(bar z)=bar r(z)} If~$r_z$ is a resonance point of algebraic multiplicity~$N,$ order~$d$ and geometric multiplicity $m,$
corresponding to $z,$ then $\bar r_z$ is a resonance point of algebraic multiplicity~$N,$ order~$d$ and geometric multiplicity $m,$
corresponding to $\bar z.$
\end{cor}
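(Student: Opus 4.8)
The plan is to read off everything directly from Lemma \ref{L: j-dimensions coincide}, which has just been established, together with the definitions of the three integers $N$, $d$, $m$ attached to a resonance point. First I would recall that, by the identity \eqref{F: A(z)(s)*=B(bz)(bs)} (namely $(A_z(s))^* = B_{\bar z}(\bar s)$) and by taking adjoints in the resonance equation \eqref{F: res eq-n}, the number $\bar r_z$ is a pole of $A_{\bar z}(s)$ as soon as $r_z$ is a pole of $A_z(s)$; equivalently, $\sigma_{\bar z}(s) = (s-\bar r_z)^{-1}$ is a non-zero eigenvalue of $A_{\bar z}(s)$ precisely when $\sigma_z(\bar s)=(\bar s - r_z)^{-1}$ is one of $A_z(\bar s)$. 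Hence $\bar r_z$ is indeed a resonance point corresponding to $\bar z$, so the statement is meaningful.

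Next I would match the three invariants one at a time. For the algebraic multiplicity: by definition $N = \dim \Upsilon_z(r_z)$, and Lemma \ref{L: j-dimensions coincide} (its last sentence) states that the four spaces $\Upsilon_z(r_z)$, $\Upsilon_{\bar z}(\bar r_z)$, $\Psi_z(r_z)$, $\Psi_{\bar z}(\bar r_z)$ all have the same dimension; in particular $\dim \Upsilon_{\bar z}(\bar r_z) = N$, which is by definition the algebraic multiplicity of $\bar r_z$ as a resonance point corresponding to $\bar z$. For the geometric multiplicity $m = \dim \Upsilon_z^1(r_z)$: apply the $j=1$ case of the same lemma, which gives $\dim \Upsilon^1_z(r_z) = \dim \Upsilon^1_{\bar z}(\bar r_z)$, so the geometric multiplicities agree. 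For the order $d$: recall $d = \min\{k : \Upsilon^k_z(r_z) = \Upsilon_z(r_z)\}$ from \eqref{F: d}; since Lemma \ref{L: j-dimensions coincide} gives $\dim \Upsilon^j_z(r_z) = \dim \Upsilon^j_{\bar z}(\bar r_z)$ for \emph{every} $j=1,2,\ldots$, and since the chains $\Upsilon^1 \subset \Upsilon^2 \subset \cdots$ on each side are nested, the index at which the dimension first reaches $N$ is the same on both sides, so the orders coincide.

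There is essentially no obstacle here: the corollary is a bookkeeping consequence of the dimension equalities already proved, once one checks that the three quantities $N$, $d$, $m$ are each determined by the sequence of dimensions $(\dim \Upsilon^j_z(r_z))_{j\geq 1}$ — which is exactly the content of \eqref{F: N}, \eqref{F: m}, \eqref{F: d}. The only point that needs a word of care is the preliminary observation that $\bar r_z$ really is a resonance point corresponding to $\bar z$ (and not, say, corresponding to $z$ again); this follows from \eqref{F: A(z)(s)*=B(bz)(bs)} and the fact that a compact operator and its adjoint have conjugate spectral measures, \eqref{F: mu(A*)=bar mu(A)}. I would write the proof in three short lines invoking Lemma \ref{L: j-dimensions coincide} for the values $j=1$, $j$ arbitrary, and the stabilized union respectively.
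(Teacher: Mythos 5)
Your proof is correct and follows exactly the route the paper intends: the corollary is stated as an immediate consequence of Lemma \ref{L: j-dimensions coincide}, and reading off $N$, $m$, $d$ from the equal dimension sequences $(\dim\Upsilon^j_z(r_z))_j = (\dim\Upsilon^j_{\bar z}(\bar r_z))_j$ is precisely the intended argument. Your preliminary check that $\bar r_z$ is a resonance point for $\bar z$ via \eqref{F: A(z)(s)*=B(bz)(bs)} and \eqref{F: mu(A*)=bar mu(A)} is fine but slightly redundant, since the $j=1$ case of the lemma already gives $\dim\Upsilon^1_{\bar z}(\bar r_z)=m\geq 1$.
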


\begin{cor} \label{C: Upsilon is invariant}
The vector spaces~$\Upsilon_z(r_z)$ and~$\Upsilon_z^{k}(r_z),$ $k=1,2,\ldots,$ are invariant under the operator~$A_z(s) = T_z(H_s)J$
for any non-resonant $s \in \mbR.$
\end{cor}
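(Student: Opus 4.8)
The plan is to reduce the claim to two facts already established in the excerpt: that the vector space $\Upsilon_z^k(r_z)$ does not depend on the choice of the parameter $s$ (Proposition~\ref{P: res eq-n is correct}), and the commutation of $A_z(s)$ with polynomials in itself, which is a triviality (and is in any case a special case of~(\ref{F: A(s) and A(r) commute})). No genuinely new input is needed.

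First I would fix an arbitrary non-resonant real number $s$. By Proposition~\ref{P: res eq-n is correct}, the space $\Upsilon_z^k(r_z)$ equals the kernel of the operator $[1+(r_z-s)A_z(s)]^k$ on $\clK$. Now take $u \in \Upsilon_z^k(r_z)$. Since $A_z(s)$ commutes with every polynomial in $A_z(s)$, in particular with $[1+(r_z-s)A_z(s)]^k$, one has
$$
  [1+(r_z-s)A_z(s)]^k A_z(s)u = A_z(s)[1+(r_z-s)A_z(s)]^k u = A_z(s)\cdot 0 = 0,
$$
so $A_z(s)u$ again lies in $\ker[1+(r_z-s)A_z(s)]^k = \Upsilon_z^k(r_z)$. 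This establishes invariance of each $\Upsilon_z^k(r_z)$ under $A_z(s)$; since $s$ was an arbitrary non-resonant number, the conclusion holds for every such $s$. Invariance of the union $\Upsilon_z(r_z)=\bigcup_{k}\Upsilon_z^k(r_z)$ is then immediate, and one also reads off at once that $A_z(s)$ restricts to an operator on the finite-dimensional space $\Upsilon_z(r_z)$.

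There is essentially no obstacle here; the only point requiring a moment's care is to invoke the kernel description of $\Upsilon_z^k(r_z)$ for the \emph{same} value of $s$ as the operator $A_z(s)$ whose invariance is being checked — which is precisely what the $s$-independence in Proposition~\ref{P: res eq-n is correct} supplies. (Alternatively, for a value of $s$ different from the one originally used to define $\Upsilon_z^k(r_z)$, one could argue directly from the commutation relation~(\ref{F: A(s) and A(r) commute}), but routing through Proposition~\ref{P: res eq-n is correct} is cleaner.)
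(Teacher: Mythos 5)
Your proof is correct and is exactly the argument the paper leaves implicit (the corollary is stated there without a written proof): by Proposition~\ref{P: res eq-n is correct} one may describe $\Upsilon_z^k(r_z)$ as $\ker[1+(r_z-s)A_z(s)]^k$ for the very same non-resonant $s$, and this kernel is invariant under $A_z(s)$ because $A_z(s)$ commutes with any polynomial in itself, with invariance of the union $\Upsilon_z(r_z)$ following at once.
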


\subsection{Idempotents $P_z(r_z)$ and $Q_z(r_z)$}

For a given element~$z$ of $\Pi$ with a corresponding resonance point~$r_z \in \mbC$
an idempotent operator $P_z(r_z),$ which acts on the Hilbert space~$\clK$ and
has the range~$\Upsilon_z(r_z),$ will be defined by equality \label{Page: Pz(rz)}
\begin{equation} \label{F: Pz(rz)=oint (sigma-Az)(-1)d sigma}
  P_z(r_z) = \frac 1{2\pi i} \oint _{C(\sigma_z(s))} \brs{\sigma - A_z(s)}^{-1}\,d\sigma
\end{equation}
where $C(\sigma_z(s))$ is a small circle enclosing the eigenvalue~(\ref{F: sigma z(s)=(s-rz)(-1)})
of the operator~$A_z(s),$ so that there
are no other eigenvalues of this operator on or inside the circle.
The contour integral in~(\ref{F: Pz(rz)=oint (sigma-Az)(-1)d
sigma}) and in all the following contour integrals are taken in
the uniform operator topology.

Apart of the operator $P_z(r_z)$ we shall sometimes need its modification \label{Page: ulPz(rz)}
\begin{equation} \label{F: ulPz(rz)=oint (sigma-Az)(-1)d sigma}
  \ulP_z(r_z) = \frac 1{2\pi i} \oint _{C(\sigma_z(s))} \brs{\sigma - \ulA_z(s)}^{-1}\,d\sigma,
\end{equation}
where $\ulA_z(s) = R_z(H_s)V.$ As long as the variable $z$ is non-real,
properties of $P_z(r_z)$ and $\ulP_z(r_z)$ are quite similar; for this reason they are given only for the operator~$P_z(r_z).$
An essential difference between $P_z(r_z)$ and $\ulP_z(r_z)$ is that the former operator $P_z(r_z)$ has the limit $P_{\lambda\pm i0}(r_z)$
as $z$ approaches its real part~$\lambda$ from above or below, while the latter operator may not have such a limit.
In fact, this is the main reason for considering $P_z(r_z)$ instead of $\ulP_z(r_z).$ The same remark applies to other ``underlined'' versions
of operators to be introduced later.

The following assertion was proved
in \cite{Az7}; its proof is given below for completeness.
\begin{prop} \label{P: Pz is well-defined} The idempotent operator $P_z(r_z),$ defined by equality~(\ref{F: Pz(rz)=oint (sigma-Az)(-1)d sigma}), does not depend on~$s.$
\end{prop}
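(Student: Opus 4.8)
The plan is to exploit the Laurent-expansion relation between $A_z(s)$ and $P_z(r_z)$ together with the functional equation (\ref{F: A(s)=(1+(s-r)A(r))(-1)A(r)}) that ties together the operators $A_z(s)$ for different values of $s$. The key observation is that, for a fixed resonance point $r_z$, the spectral data of $A_z(s)$ near the eigenvalue $\sigma_z(s) = (s-r_z)^{-1}$ is governed entirely by the functional relation, so the Riesz idempotent should be invariant. Concretely, I would proceed as follows.

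First I would fix two regular real values $s$ and $r$, and recall that by (\ref{F: A(s)=(1+(s-r)A(r))(-1)A(r)}) we have $A_z(s) = (1+(s-r)A_z(r))^{-1}A_z(r)$, together with the commutation (\ref{F: A(s) and A(r) commute}). From Proposition~\ref{P: res eq-n is correct} the number $\sigma_z(s) = (s-r_z)^{-1}$ is an eigenvalue of $A_z(s)$ of the same algebraic multiplicity $N$ for every regular $s$ (indeed $\dim \Upsilon_z^k(r_z)$ is $s$-independent), and $A_z(s)$ is compact, so $\sigma_z(s)$ is an isolated eigenvalue of finite algebraic multiplicity and the contour $C(\sigma_z(s))$ in (\ref{F: Pz(rz)=oint (sigma-Az)(-1)d sigma}) makes sense. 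The cleanest route is then to introduce the scalar M\"obius map $\varphi(\sigma) = \sigma/(1+(s-r)\sigma)$, which sends the spectrum of $A_z(r)$ (minus the point $-(s-r)^{-1}$, which is not an eigenvalue by the invertibility remark after (\ref{F: II resolvent identity})) bijectively onto the spectrum of $A_z(s)$, with $\varphi(\sigma_z(r)) = \sigma_z(s)$. By the spectral mapping / holomorphic functional calculus for the function $g(\sigma) = (1+(s-r)\sigma)^{-1}\sigma$ applied to the operator $A_z(r)$, one gets $A_z(s) = g(A_z(r))$, and the Riesz projection of $g(A_z(r))$ associated to the eigenvalue $g(\sigma_z(r)) = \sigma_z(s)$ equals the Riesz projection of $A_z(r)$ associated to $\sigma_z(r)$ — this is the standard behavior of Riesz idempotents under the holomorphic functional calculus, since $g$ is injective on a neighbourhood of $\sigma_z(r)$ and separates it from the rest of $\sigma(A_z(r))$. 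Hence $P_z(r_z)$ computed via $C(\sigma_z(s))$ around $A_z(s)$ equals $P_z(r_z)$ computed via $C(\sigma_z(r))$ around $A_z(r)$.

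Alternatively, and perhaps more elementarily, I would substitute the resolvent identity directly into the contour integral: using (\ref{F: A(s)=(1+(s-r)A(r))(-1)A(r)}) one can write $(\sigma - A_z(s))^{-1}$ in terms of $(\tau - A_z(r))^{-1}$ where $\tau = \sigma/(1+(s-r)\sigma)$ is related to $\sigma$ by the M\"obius change of variables, compute the Jacobian $d\tau$, and check that the contour $C(\sigma_z(s))$ is carried to a contour enclosing exactly $\sigma_z(r)$ and no other eigenvalue of $A_z(r)$; the change-of-variables formula for contour integrals then yields equality of the two idempotents. This is the computational heart of the argument and is entirely routine once the M\"obius substitution is set up; care is only needed to confirm that the point $-(s-r)^{-1}$ (a potential pole of the substitution) lies outside the small contour, which holds because it is not an eigenvalue of $A_z(r)$ and the contour can be taken arbitrarily small around $\sigma_z(r)$.

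The main obstacle — really the only subtle point — is bookkeeping about which value of $s$ is ``reference'' and making sure the argument covers the boundary case $z = \lambda \pm i0 \in \partial\Pi$: there one must restrict to regular (non-resonant) values of $s$ and $r$, invoke the limiting absorption principle so that $A_{\lambda\pm i0}(s)$ exists and is compact, and use that Proposition~\ref{P: res eq-n is correct} already guarantees $s$-independence of the root spaces on the boundary as well. Once these domain issues are handled, the equality $P_z(r_z)$ (computed with $s$) $=$ $P_z(r_z)$ (computed with $r$) follows from the functional calculus / M\"obius-substitution computation, and since $s, r$ were arbitrary regular values, $P_z(r_z)$ is independent of $s$. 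The statement that $P_z(r_z)$ is idempotent with range $\Upsilon_z(r_z)$ is the standard property of a Riesz projection associated to the isolated eigenvalue $\sigma_z(s)$ of $A_z(s)$, together with the identification $\ker(1+(s-r_z)A_z(s))^\infty = \Upsilon_z(r_z)$.
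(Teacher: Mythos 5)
Your argument is correct, but it is not the route the paper's own proof takes. The paper's proof is purely algebraic and very short: by Proposition~\ref{P: res eq-n is correct} the two contour integrals $P_1$ and $P_2$ (built from $A_z(s_1)$ and $A_z(s_2)$) are idempotents with the \emph{same} range $\Upsilon_z(r_z)$, hence $P_1P_2=P_2$ and $P_2P_1=P_1$; and since $A_z(s_1)$ and $A_z(s_2)$ commute by~(\ref{F: A(s) and A(r) commute}), the two Riesz idempotents commute, so $P_1=P_2P_1=P_1P_2=P_2$. You instead write $A_z(s)=g(A_z(r))$ with the M\"obius map $g(\sigma)=\sigma/(1+(s-r)\sigma)$, note that $g$ is holomorphic on a neighbourhood of $\sigma(A_z(r))$ because $-(s-r)^{-1}$ is not an eigenvalue of $A_z(r)$ when $s$ is regular, and invoke invariance of Riesz projections under injective holomorphic functional calculus (equivalently, the M\"obius change of variable inside the contour integral). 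This is essentially the alternative the paper itself mentions right after its proof, namely deducing the statement from the residue formula of Proposition~\ref{P: Pz(rz)=res Az(s)}, whose proof is exactly your substitution $\sigma=(r-s)^{-1}$. The trade-off: the paper's argument is shorter and needs nothing beyond the $s$-independence of the root spaces plus commutativity, whereas your functional-calculus route transports the entire local spectral data at once, so it also yields for free the residue identity and the $s$-independence of the nilpotent parts $\bfA_z^j(r_z)$, facts the paper establishes separately. Two small points of bookkeeping: the kernel you identify with $\Upsilon_z(r_z)$ should be $\bigcup_k\ker\bigl(1+(r_z-s)A_z(s)\bigr)^k$ (your $(1+(s-r_z)A_z(s))$ has the wrong sign), and when you check that the pole of the M\"obius substitution stays outside the contour, the relevant fact is that it cannot coincide with $\sigma_z(r)$ (resp. $\sigma_z(s)$) precisely because $s$ (resp. $r$) is non-resonant; both are harmless and easily repaired.
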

\begin{proof} Let $P_1$ and $P_2$ be two idempotents $P_z(r_z)$ defined for two different values $s_1$ and $s_2$ of~$s.$
Since by Proposition~\ref{P: res eq-n is correct} these idempotents have the same range $\Upsilon_z(r_z),$ we have
$P_1P_2 = P_2$ and $P_2P_1 = P_1.$ Since, by~(\ref{F: A(s) and A(r) commute}), operators $A_{z}(s_1)$ and
$A_{z}(s_2)$ commute, it follows from~(\ref{F: Pz(rz)=oint (sigma-Az)(-1)d sigma}) that $P_1$ and $P_2$ also commute.
It follows that $P_1 = P_2P_1 = P_1P_2 = P_2.$
\end{proof}
\noindent
Another proof of this proposition follows from Proposition~\ref{P: Pz(rz)=res Az(s)}.

We also define an idempotent operator $Q_z(r_z),$ which acts on the Hilbert space~$\clK$ and has
the range~$\Psi_z(r_z),$ by equality \label{Page: Qz(rz)}
\begin{equation} \label{F: Qz(rz)=oint (sigma-Bz)(-1)d sigma}
  Q_z(r_z) = \frac 1{2\pi i} \oint _{C(\sigma_z(s))} \brs{\sigma - B_{z} (s)}^{-1}\,d\sigma,
\end{equation}
where the contour $C(\sigma_{z}(s))$ is the same as in~(\ref{F: Pz(rz)=oint (sigma-Az)(-1)d sigma}).
The ``underlined'' version of $Q_z(r_z)$ is defined by formula \label{Page: ulQz(rz)}
\begin{equation} \label{F: ulQz(rz)=oint (sigma-Bz)(-1)d sigma}
  \ulQ_z(r_z) = \frac 1{2\pi i} \oint _{C(\sigma_z(s))} \brs{\sigma - \ulB_{z} (s)}^{-1}\,d\sigma.
\end{equation}

Proof of the following proposition is similar to that of Proposition~\ref{P: Pz is well-defined}.
\begin{prop} The idempotent operator $Q_z(r_z),$ defined by equality~(\ref{F: Qz(rz)=oint (sigma-Bz)(-1)d sigma}), does not depend on~$s.$
\end{prop}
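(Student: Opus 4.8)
The plan is to transcribe the argument used for Proposition~\ref{P: Pz is well-defined}, replacing the operators $A_z(s)$ and the root spaces $\Upsilon_z(r_z)$ by $B_z(s)$ and $\Psi_z(r_z)$ throughout. The two ingredients of that argument both have exact counterparts here: the commutation identity~(\ref{F: B(s) and B(r) commute}), which gives $B_z(s_1)B_z(s_2) = B_z(s_2)B_z(s_1)$ for any two values $s_1,s_2$ at which these operators are defined, and Proposition~\ref{P: cores eq-n is correct}, which guarantees that the co-resonance root space $\Psi_z(r_z) = \bigcup_k \Psi_z^k(r_z)$ is independent of $s$.

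Concretely, I would fix a resonance point $r_z$ corresponding to $z \in \Pi$, choose two different admissible values $s_1 \neq s_2$ of the coupling constant (non-resonant, in case $z \in \partial \Pi$), and let $Q_1$ and $Q_2$ denote the operator~(\ref{F: Qz(rz)=oint (sigma-Bz)(-1)d sigma}) formed with $s = s_1$ and $s = s_2$ respectively. In each case the enclosed point is the eigenvalue $\sigma_z(s_i) = (s_i-r_z)^{-1}$ of the compact operator $B_z(s_i)$, which is isolated, so a small circle separates it from the rest of the spectrum; by Riesz functional calculus $Q_i$ is then an idempotent whose range is the generalized eigenspace of $B_z(s_i)$ at $\sigma_z(s_i)$, i.e.\ precisely $\Psi_z(r_z)$. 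By Proposition~\ref{P: cores eq-n is correct} this range is the same for $i=1$ and $i=2$, hence $Q_1 Q_2 = Q_2$ and $Q_2 Q_1 = Q_1$. On the other hand, since $B_z(s_1)$ and $B_z(s_2)$ commute by~(\ref{F: B(s) and B(r) commute}), so do their resolvents, and therefore the contour integrals defining $Q_1$ and $Q_2$ commute (multiplication being continuous in the uniform operator topology, in which the integrals converge). Combining $Q_1 Q_2 = Q_2 Q_1$ with the two absorption relations yields $Q_1 = Q_2 Q_1 = Q_1 Q_2 = Q_2$.

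There is essentially no obstacle here: the only points deserving a word are that the Riesz idempotent $Q_z(r_z)$ has range the full root space $\Psi_z(r_z)$ (standard) and that commutativity of the integrands passes to commutativity of the integrals (as in the proof of Proposition~\ref{P: Pz is well-defined}). As a remark one may also observe that, combining the identity $\brs{A_z(s)}^* = B_{\bar z}(\bar s)$ from~(\ref{F: A(z)(s)*=B(bz)(bs)}) with the already-established $s$-independence of $P_{\bar z}(\bar r_z)$, one obtains $Q_z(r_z) = \brs{P_{\bar z}(\bar r_z)}^*$ and hence its $s$-independence at once; I would present the direct argument above as the primary proof and mention this adjoint shortcut only in passing.
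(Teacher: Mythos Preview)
Your proposal is correct and follows exactly the approach the paper indicates: the paper simply states that the proof is similar to that of Proposition~\ref{P: Pz is well-defined}, and you have carried out that transcription faithfully, using the commutativity~(\ref{F: B(s) and B(r) commute}) and the $s$-independence of $\Psi_z(r_z)$ from Proposition~\ref{P: cores eq-n is correct}. Your aside about the adjoint shortcut via $Q_z(r_z)=\brs{P_{\bar z}(\bar r_z)}^*$ is also valid and is in fact recorded in the paper as equation~(\ref{F: Pz*=Q(bar z)}).
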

The following equality follows from definitions~(\ref{F: Pz(rz)=oint (sigma-Az)(-1)d sigma})
and~(\ref{F: Qz(rz)=oint (sigma-Bz)(-1)d sigma}) of idempotents $P_z(r_z)$ and $Q_z(r_z),$
norm continuity of taking adjoint $T \mapsto T^*,$ and~(\ref{F: A(z)(s)*=B(bz)(bs)}).
\begin{equation} \label{F: Pz*=Q(bar z)}
  \brs{P_z(r_z)}^* = Q_{\bar z}(\bar r_z).
\end{equation}

\begin{prop} \label{P: Pz(rz)=res Az(s)} Let $z \in \Pi$ and let $r_z$ be a resonance point corresponding to~$z.$
The idempotent $P_z(r_z)$ is equal to the residue of the function~$A_z(s)$ of $s$ corresponding to the pole~$r_z:$
\begin{equation} \label{F: Pz(rz)=res Az(s)}
  P_z(r_z) = \frac 1{2\pi i} \oint_{C(r_z)} A_z(s)\,ds,
\end{equation}
where $C(r_z)$ is a small circle enclosing~$r_z$ in counter-clockwise direction.
\end{prop}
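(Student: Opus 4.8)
The plan is to identify the residue of the meromorphic operator-function $s \mapsto A_z(s)$ at its pole $r_z$ with the Riesz idempotent $P_z(r_z)$ by means of a M\"obius change of variables that interchanges the role of the ``$s$-contour'' and the ``$\sigma$-contour''. The engine of the computation is the explicit formula~(\ref{F: A(s)=(1+(s-r)A(r))(-1)A(r)}), which expresses $A_z(s)$ through the single compact operator $B := A_z(r)$ for a fixed non-resonant real number $r$.

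First I would fix such an $r$, so that $B = A_z(r)$ is a well-defined compact operator and, by the hypothesis that $r_z$ is a resonance point corresponding to $z$, the number $\sigma_0 := (r-r_z)^{-1}$ is a nonzero eigenvalue of $B$; by Proposition~\ref{P: Pz is well-defined} the idempotent $P_z(r_z)$ equals the Riesz projection of $B$ onto the root space of $\sigma_0$, i.e. $\frac 1{2\pi i}\oint_{C(\sigma_0)}(\sigma - B)^{-1}\,d\sigma$. Substituting $\sigma = -(s-r)^{-1}$ into $A_z(s) = (1+(s-r)B)^{-1}B$ one gets $1+(s-r)B = \sigma^{-1}(\sigma - B)$ and $ds = \sigma^{-2}\,d\sigma$, hence the pointwise identity
\[
  A_z(s)\,ds = \bigl((\sigma - B)^{-1} - \tfrac{1}{\sigma}\bigr)\,d\sigma .
\]
The M\"obius map $s \mapsto \sigma = -(s-r)^{-1}$ is holomorphic with non-vanishing derivative near $r_z$ (as $r_z \neq r$), hence orientation preserving, and it carries a small counterclockwise circle $C(r_z)$ around $r_z$ onto a small counterclockwise Jordan curve around $\sigma_0$ which encloses no other point of $\sigma(B)$ and does not enclose $0$. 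Integrating the displayed identity over $C(r_z)$, the term $\oint \sigma^{-1}\,d\sigma$ vanishes because $0$ lies outside the image contour, and the remaining term reproduces the Riesz projection; this gives $\frac 1{2\pi i}\oint_{C(r_z)} A_z(s)\,ds = P_z(r_z)$, as claimed.

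The computation is essentially bookkeeping, and the main obstacle is simply not botching it: the two delicate points are (i) checking that under the change of variables the small $s$-circle about $r_z$ and the small $\sigma$-curve about $\sigma_0$ genuinely correspond, with matching orientation -- which is just the fact that a M\"obius transformation restricts to an orientation-preserving biholomorphism near $r_z$ -- and (ii) verifying that the $\sigma$-contour can be taken small enough to separate $\sigma_0$ simultaneously from $0$ and from the rest of $\sigma(B)$, which is automatic since $\sigma_0 \neq 0$ and $\sigma(B)\setminus\{0\}$ is discrete. As an immediate bonus, the right-hand side $\frac 1{2\pi i}\oint_{C(r_z)} A_z(s)\,ds$ visibly does not involve $s$, so this proposition also yields an independent proof of Proposition~\ref{P: Pz is well-defined}.
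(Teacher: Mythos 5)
Your proof is correct and is essentially the paper's own argument: the paper also fixes an auxiliary point $r$, uses $A_z(s)=(1+(s-r)A_z(r))^{-1}A_z(r)$, discards the scalar term ($\frac{1}{s-r}\,ds$ there, $\frac{1}{\sigma}\,d\sigma$ in your version) by Cauchy's theorem, and performs the same M\"obius substitution $\sigma=(r-s)^{-1}$ to land on the defining Riesz integral $\frac{1}{2\pi i}\oint_{C(\sigma_z(r))}(\sigma-A_z(r))^{-1}\,d\sigma$. The only differences are cosmetic (order of the change of variables versus the algebraic rewriting, and taking $r$ real non-resonant rather than any point outside $C(r_z)$), and your closing remark that the identity re-proves the $s$-independence of $P_z(r_z)$ is also noted in the paper.
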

\begin{proof} Let~$r$ be a complex number which lies outside of the circle $C(r_z).$
The equality~(\ref{F: A(s)=(1+(s-r)A(r))(-1)A(r)}) implies
\begin{equation*}
  \begin{split}
     \oint_{C(r_z)} A_z(s)\,ds & = \oint_{C(r_z)} \brs{1+(s-r)A_z(r)}^{-1}A_z(r)\,ds
     \\ & = \oint_{C(r_z)} \frac 1{s-r}\brs{1-\brs{1+(s-r)A_z(r)}^{-1}}\,ds.
  \end{split}
\end{equation*}
Since~$r$ lies outside of the circle $C(r_z),$ the integral of $\frac 1{s-r}$ vanishes. Hence,
\begin{equation*}
     \oint_{C(r_z)} A_z(s)\,ds = \oint_{C(r_z)} \frac {1}{r-s}\brs{1+(s-r)A_z(r)}^{-1}\,ds.
\end{equation*}
We make the change of variables $\sigma = \frac {1}{r-s}.$ When~$s$ goes around~$r_z$ in counter-clockwise direction,
so does the variable~$\sigma$ around $\sigma_z(r):=\frac {1}{r-r_z}.$ Hence, from the last equality we obtain
\begin{equation*}
  \begin{split}
     \oint_{C(r_z)} A_z(s)\,ds & = \oint_{C(\sigma_z(r))} \sigma\brs{1-\sigma^{-1}A_z(r)}^{-1} \sigma^{-2}\,d\sigma
     \\ & = \oint_{C(\sigma_z(r))} \brs{\sigma-A_z(r)}^{-1} \,d\sigma
     \\ & = 2\pi i P_z(r_z),
  \end{split}
\end{equation*}
where $C(\sigma_z(r))$ is the image of the contour $C(r_z)$ under the mapping $s \mapsto \sigma = \frac {1}{r-s}.$
\end{proof}

One similarly proves the next proposition.
\begin{prop} \label{P: Q(+)=res B(+)(s)}
Let $z \in \Pi$ and let $r_z$ be a resonance point corresponding to~$z.$
The idempotent $Q_z(r_z)$ is equal to the residue
of the function~$B_z(s)$ of $s$ corresponding to the pole~$r_z:$
\begin{equation} \label{F: Qz(rz)=res Bz(s)}
  Q_z(r_z) = \frac 1{2\pi i} \oint_{C(r_z)} B_z(s)\,ds,
\end{equation}
where $C(r_z)$ is a small circle enclosing~$r_z$ in counter-clockwise direction.
\end{prop}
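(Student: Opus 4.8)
The plan is to mirror, line for line, the proof of Proposition~\ref{P: Pz(rz)=res Az(s)}, with $A_z(s)$ replaced by $B_z(s)$ and the identity~(\ref{F: B(s)=(1+(s-r)B(r))(-1)B(r)}) used in place of~(\ref{F: A(s)=(1+(s-r)A(r))(-1)A(r)}). First I would fix a complex number~$r$ lying outside the small circle $C(r_z)$ and, invoking~(\ref{F: B(s)=(1+(s-r)B(r))(-1)B(r)}), rewrite the integrand as
$$
  B_z(s) = \brs{1+(s-r)B_z(r)}^{-1}B_z(r) = \frac{1}{s-r}\brs{1 - \brs{1+(s-r)B_z(r)}^{-1}}.
$$
Integrating over $C(r_z)$ and discarding the $\frac{1}{s-r}$ term, whose integral vanishes because $r$ lies outside $C(r_z),$ one is left with
$$
  \oint_{C(r_z)} B_z(s)\,ds = \oint_{C(r_z)} \frac{1}{r-s}\brs{1+(s-r)B_z(r)}^{-1}\,ds.
$$

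Next I would carry out the substitution $\sigma = (r-s)^{-1}.$ Since $s \mapsto (r-s)^{-1}$ is an orientation-preserving M\"obius map sending $r_z$ to $\sigma_z(r) = (r-r_z)^{-1}$ and $r$ to $\infty,$ as $s$ traverses $C(r_z)$ once counter-clockwise the variable $\sigma$ traverses once counter-clockwise a small circle $C(\sigma_z(r))$ about $\sigma_z(r);$ moreover $\sigma_z(r)$ is the eigenvalue of $B_z(r)$ attached to the resonance point $r_z$ --- the same eigenvalue as for $A_z(r),$ since by~(\ref{F: mu(AB)=mu(BA)}) the operators $A_z(r)$ and $B_z(r)$ share their non-zero spectrum --- so that $C(\sigma_z(r))$ encloses $\sigma_z(r)$ and no other point of $\sigma(B_z(r)).$ With $ds = \sigma^{-2}\,d\sigma$ the integrand transforms into $\brs{\sigma - B_z(r)}^{-1}\,d\sigma$ exactly as in the proof of Proposition~\ref{P: Pz(rz)=res Az(s)}, and hence
$$
  \frac{1}{2\pi i}\oint_{C(r_z)} B_z(s)\,ds = \frac{1}{2\pi i}\oint_{C(\sigma_z(r))} \brs{\sigma - B_z(r)}^{-1}\,d\sigma = Q_z(r_z),
$$
the last equality being the definition~(\ref{F: Qz(rz)=oint (sigma-Bz)(-1)d sigma}) of $Q_z(r_z),$ which by the proposition immediately preceding it does not depend on the choice of~$r.$

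I do not anticipate a genuine obstacle here: all the facts about $B_z(s)$ needed in this argument --- the resolvent identity~(\ref{F: B(s)=(1+(s-r)B(r))(-1)B(r)}), compactness of $B_z(r),$ meromorphy of $B_z(s)$ in $s$ via the analytic Fredholm alternative, and the eigenvalue correspondence $\sigma_z(s) = (s-r_z)^{-1}$ --- have already been recorded, and the only steps calling for a little attention are the orientation bookkeeping under $\sigma = (r-s)^{-1}$ and the choice of $C(r_z)$ small enough to exclude every pole of $B_z(s)$ other than $r_z,$ both routine. Alternatively, the statement could be deduced from Proposition~\ref{P: Pz(rz)=res Az(s)} by taking adjoints: using $(A_{\bar z}(s))^* = B_z(\bar s)$ from~(\ref{F: A(z)(s)*=B(bz)(bs)}) together with $(P_{\bar z}(\bar r_z))^* = Q_z(r_z)$ from~(\ref{F: Pz*=Q(bar z)}), one substitutes $t = \bar s$ in the contour integral and tracks the conjugation of the contour; this is equally short but demands slightly more care with the conjugated orientation, so I would keep the direct mirror of the preceding proof as the principal line of argument.
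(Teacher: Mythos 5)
Your proposal is correct and is exactly what the paper intends: the paper proves the $A_z(s)$ version in detail and then states that this proposition "is proved similarly," i.e.\ by mirroring that argument with $B_z(s)$ and the identity~(\ref{F: B(s)=(1+(s-r)B(r))(-1)B(r)}), precisely as you do. No gaps; the adjoint route you sketch as an alternative is also fine but unnecessary.
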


The following proposition directly follows from definition~(\ref{F: Pz(rz)=oint (sigma-Az)(-1)d sigma}),
but nonetheless we give another proof of it.
\begin{prop}
If for a given $z \in \Pi$ the operator~$A_z(s)$ has two different poles~$r_z^1$ and~$r_z^2,$ then the corresponding
idempotents $P_z(r_z^1)$ and $P_z(r_z^2)$ satisfy the equality
\begin{equation} \label{F: Pz(1)Pz(2)=0}
  P_z(r_z^1) P_z(r_z^2) = 0.
\end{equation}
\end{prop}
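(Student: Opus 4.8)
The plan is to reduce this to the standard calculus of Riesz idempotents via the first resolvent identity. First I would fix a real number $s$ which is non-resonant at~$\lambda$ (in case $z = \lambda \pm i0$) or arbitrary (in case $z \notin \partial \Pi$), so that the compact operator $A_z(s)$ is defined; by Proposition~\ref{P: Pz is well-defined} both idempotents $P_z(r_z^1)$ and $P_z(r_z^2)$ may be computed with this single value of~$s$. Since $r_z^1 \neq r_z^2$ and $s$ is non-resonant, the numbers $\sigma_z^1(s) = (s-r_z^1)^{-1}$ and $\sigma_z^2(s) = (s-r_z^2)^{-1}$ are two distinct nonzero eigenvalues of $A_z(s)$, and both are isolated points of $\sigma(A_z(s))$ because $A_z(s)$ is compact. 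Hence one may choose circles $C_1 = C(\sigma_z^1(s))$ and $C_2 = C(\sigma_z^2(s))$, each enclosing the corresponding eigenvalue and no other point of the spectrum, small enough that $C_1$ and $C_2$ are disjoint and neither circle lies in the interior of the other.

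Next I would substitute the defining formula~(\ref{F: Pz(rz)=oint (sigma-Az)(-1)d sigma}) into the product and apply Fubini:
\begin{equation*}
  P_z(r_z^1) P_z(r_z^2) = \frac 1{(2\pi i)^2} \oint_{C_1} \oint_{C_2} (\sigma - A_z(s))^{-1} (\sigma' - A_z(s))^{-1}\,d\sigma'\,d\sigma.
\end{equation*}
The first resolvent identity gives, for $\sigma \neq \sigma'$,
\begin{equation*}
  (\sigma - A_z(s))^{-1} (\sigma' - A_z(s))^{-1} = \frac 1{\sigma' - \sigma} \brs{ (\sigma - A_z(s))^{-1} - (\sigma' - A_z(s))^{-1} },
\end{equation*}
so the double integral splits into two terms. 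In the first, the inner integral $\oint_{C_2} (\sigma' - \sigma)^{-1}\,d\sigma'$ vanishes for every $\sigma \in C_1$, since $\sigma$ lies in the exterior of $C_2$. In the second, the integral $\oint_{C_1} (\sigma' - \sigma)^{-1}\,d\sigma$ vanishes for every $\sigma' \in C_2$ for the same reason. Therefore both terms are zero and $P_z(r_z^1) P_z(r_z^2) = 0$.

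I expect no serious obstacle here: the only point requiring care is that the two contours can be chosen disjoint with neither enclosing the other's centre, which is possible precisely because $\sigma_z^1(s) \neq \sigma_z^2(s)$ and both eigenvalues are isolated — and this in turn rests on the hypothesis that the line~$\gamma$ is regular at~$\lambda$ (so non-resonant values of~$s$ exist) together with the correspondence $\sigma_z(s) = (s-r_z)^{-1}$ between poles and eigenvalues from Proposition~\ref{P: res eq-n is correct}. Alternatively, one could run essentially the same computation using the residue representation~(\ref{F: Pz(rz)=res Az(s)}) of $P_z(r_z)$ together with identity~(\ref{F: II resolvent identity}) rewritten as $A_z(s) A_z(t) = (A_z(t) - A_z(s))/(s-t)$, integrating over small disjoint circles $C(r_z^1)$ and $C(r_z^2)$ in the $s$- and $t$-planes and using that each pole $r_z^i$ lies outside the other circle; this variant fits more closely the meromorphic-in-$s$ viewpoint used elsewhere in the paper.
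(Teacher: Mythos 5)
Your proof is correct, but it is not the argument the paper prints. You work in the spectral-parameter ($\sigma$) plane: you treat $P_z(r_z^1)$ and $P_z(r_z^2)$ as Riesz idempotents of the single compact operator $A_z(s)$ attached to the two distinct isolated eigenvalues $(s-r_z^1)^{-1}$ and $(s-r_z^2)^{-1}$, and you kill the double contour integral with the first resolvent identity for $(\sigma-A_z(s))^{-1}$; this is the standard disjoint-spectral-sets computation, and all the side conditions you flag (a common non-resonant $s$ via Proposition~\ref{P: Pz is well-defined}, distinctness and isolation of the two eigenvalues, disjoint circles neither enclosing the other) are exactly what is needed. The paper, by contrast, explicitly remarks that the claim ``directly follows from definition~(\ref{F: Pz(rz)=oint (sigma-Az)(-1)d sigma})'' --- i.e.\ the route you took --- and then deliberately gives a different proof in the coupling-constant ($s$,$t$) plane: it writes each idempotent via the residue representation~(\ref{F: Pz(rz)=res Az(s)}), uses the identity $A_z(t)A_z(s)=\frac{A_z(t)-A_z(s)}{s-t}$ coming from~(\ref{F: II resolvent identity}), and lets each resulting integrand be holomorphic inside the other contour. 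That is precisely the ``alternative'' you sketch in your final sentences, so in effect you have both proofs: yours is the more standard and self-contained Riesz-calculus argument, while the paper's version stays within the meromorphic-in-$s$ framework (residues of $A_z(s)$ at its poles) that it exploits repeatedly later, e.g.\ for $\bfA_z(r_z)$ and the Laurent expansion~(\ref{F: Laurent for A+(s)}).
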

\begin{proof}
Proposition~\ref{P: Pz(rz)=res Az(s)} and~(\ref{F: II resolvent identity}) imply that
\begin{equation}
  \begin{split}
    P_z(r_z^1) P_z(r_z^2) & = \frac 1{(2\pi i)^2} \oint_{C_t(r_z^1)} \oint_{C_s(r_z^2)} A_z(t)A_z(s)\,dt\,ds
    \\ & = \frac 1{(2\pi i)^2} \oint_{C_t(r_z^1)} \oint_{C_s(r_z^2)} \frac{A_z(t)-A_z(s)}{s-t}\,dt\,ds,
  \end{split}
\end{equation}
where the contours $C_t(r_z^1)$ and $C_s(r_z^2),$ enclosing (only) the points~$r_z^1$ and~$r_z^2$ respectively,
can be chosen so that they do not intersect and (therefore) do not enclose one another. Under this choice of the contours,
the function $\frac{A_z(t)}{s-t}$ of $s$ is holomorphic on and inside the contour $C_s(r_z^2),$ and therefore its integral vanishes.
For an analogous reason, the integral of $\frac{A_z(s)}{s-t}$ vanishes too.
\end{proof}
Similarly, one shows that
$$
  Q_z(r_z^1) Q_z(r_z^2) = 0.
$$
Now we note some relations between operators $P_z(r_z),$ $Q_z(r_z),$ $T_z(H_s),$ and $J$ which will be used later.
Let $z \in \Pi$ and let $r_z$ be a resonance point corresponding to $z.$
The equality~(\ref{F: (s-AB)(-1)A=...}) combined with definitions~(\ref{F: Pz(rz)=oint (sigma-Az)(-1)d sigma})
and~(\ref{F: Qz(rz)=oint (sigma-Bz)(-1)d sigma}) of idempotents $P_z(r_z)$ and $Q_z(r_z)$ imply equalities
\begin{equation}  \label{F: JP=QJ}
  JP_z(r_z) = Q_{z}(r_z)J,
\end{equation}
\begin{equation}  \label{F: PT=TQ}
  \ \ P_z(r_z)T_z(H_s) = T_z(H_s)Q_{z}(r_z).
\end{equation}
The following equalities follow from Lemma~\ref{L: j-dimensions coincide} and~(\ref{F: JP=QJ}):
\begin{equation} \label{F: JP=QJP}
  JP_z(r_z) = Q_z(r_z)JP_z(r_z) = Q_z(r_z)J.
\end{equation}
The equality
\begin{equation}  \label{F: AP=PA}
  \ \ A_z(s)P_z(r_z) = P_z(r_z)A_z(s)
\end{equation}
is a direct consequence of~(\ref{F: Pz(rz)=oint (sigma-Az)(-1)d sigma}).

\subsection{Nilpotent operators $\bfA_z(r_z)$ and $\bfB_z(r_z)$}
\label{SS: bfA and bfB}
Let~$z \in \Pi$ and let~$r_z$ be a resonance point corresponding to~$z.$
We introduce a compact operator $\bfA_z(r_z)$ on the auxiliary Hilbert space~$\clK$ by equality \label{Page: bfA(rz)}
\begin{equation} \label{F: def of bfA}
  \bfA_z(r_z) = \frac {1}{2\pi i}\oint_{C(r_z)} (s-r_z) A_z(s)\,ds,
\end{equation}
where $C(r_z)$ is a small circle which contains only one resonance point~$r_z$
and which is counter-clockwise oriented around~$r_z.$
Quite often dependence of the operator $\bfA_z(r_z)$ on~$r_z$ will not be indicated, especially in proofs.
Also, instead of $\bfA_z(r_z)^j$ we shall write $\bfA_z^j(r_z).$

Similarly, one introduces an operator \label{Page: bfB(rz)}
\begin{equation} \label{F: def of bfB}
  \bfB_z(r_z) = \frac {1}{2\pi i}\oint_{C(r_z)} (s-r_z) B_z(s)\,ds.
\end{equation}

Apart of $\bfA_z(r_z)$ and $\bfB_z(r_z)$ we may sometimes need their ``underlined'' versions \label{Page: ubfA(rz)}
\begin{equation} \label{F: def of ubfA and ubfB}
  \ubfA_z(r_z) = \frac {1}{2\pi i}\oint_{C(r_z)} (s-r_z) \ulA_z(s)\,ds \ \ \text{and} \ \ \ubfB_z(r_z) = \frac {1}{2\pi i}\oint_{C(r_z)} (s-r_z) \ulB_z(s)\,ds.
\end{equation}
But since many properties of $\ubfA_z(r_z)$ and $\bfA_z(r_z),$ etc, are similar, they are given only for $\bfA_z(r_z),$ etc.

\begin{prop} \label{P: bfA to j}
Let $z \in \Pi$ and let~$r_z \in \mbC$ be a resonance point corresponding to~$z.$
For any positive integer $j$
\begin{equation} \label{F: bfA to j}
  \bfA_z^j(r_z) = \frac {1}{2\pi i}\oint_{C(r_z)} (s-r_z)^{j} A_z(s)\,ds.
\end{equation}
\end{prop}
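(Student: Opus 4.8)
For any positive integer $j$,
$$
  \bfA_z^j(r_z) = \frac {1}{2\pi i}\oint_{C(r_z)} (s-r_z)^{j} A_z(s)\,ds.
$$

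The plan is to argue by induction on $j$, the base case $j=1$ being the definition~(\ref{F: def of bfA}). For the inductive step I would compute the product $\bfA_z^j(r_z) = \bfA_z(r_z)\cdot\bfA_z^{j-1}(r_z)$ by writing both factors as contour integrals over two concentric circles $C_t(r_z)$ and $C_s(r_z)$ around $r_z$, with, say, the $t$-contour strictly inside the $s$-contour (or vice versa — one must check which nesting is convenient). This gives
\begin{equation*}
  \bfA_z^j(r_z) = \frac{1}{(2\pi i)^2}\oint_{C_t(r_z)}\oint_{C_s(r_z)} (t-r_z)(s-r_z)^{j-1}\,A_z(t)A_z(s)\,ds\,dt.
\end{equation*}
Now one invokes the second resolvent identity in the form~(\ref{F: II resolvent identity}), $A_z(t)A_z(s) = \dfrac{A_z(t)-A_z(s)}{s-t}$, to replace the product of operators by a difference. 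The key computation is then the evaluation of the two resulting scalar double integrals by residues.

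First I would handle the term involving $A_z(s)$: the inner integral $\oint_{C_t(r_z)} \dfrac{(t-r_z)}{s-t}\,dt$ is taken with $s$ on the outer contour, hence $s$ lies outside $C_t(r_z)$, so $\dfrac{t-r_z}{s-t}$ is holomorphic in $t$ inside $C_t(r_z)$ and the integral vanishes. Thus only the $A_z(t)$-term survives, and we are left with
\begin{equation*}
  \bfA_z^j(r_z) = \frac{1}{(2\pi i)^2}\oint_{C_t(r_z)} (t-r_z)\,A_z(t) \left(\oint_{C_s(r_z)} \frac{(s-r_z)^{j-1}}{s-t}\,ds\right) dt.
\end{equation*}
For the inner $s$-integral, since $t$ lies inside $C_s(r_z)$, the Cauchy integral formula gives $\oint_{C_s(r_z)} \dfrac{(s-r_z)^{j-1}}{s-t}\,ds = 2\pi i\,(t-r_z)^{j-1}$. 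Substituting this back yields $\bfA_z^j(r_z) = \dfrac{1}{2\pi i}\oint_{C_t(r_z)} (t-r_z)^j A_z(t)\,dt$, which is exactly the claimed formula, completing the induction.

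The only genuinely delicate point is bookkeeping about which contour is nested inside which, and making sure that in the single step where I apply~(\ref{F: II resolvent identity}) the operators $A_z(t)$ and $A_z(s)$ are both defined (both $t$ and $s$ are non-resonant, being on small circles that enclose the single pole $r_z$ but pass through no pole) so that the identity and the commutativity~(\ref{F: A(s) and A(r) commute}) are available; these are routine once the contours are fixed. I would also note that the identical argument, using~(\ref{F: B(s)=(1+(s-r)B(r))(-1)B(r)}) in place of~(\ref{F: A(s)=(1+(s-r)A(r))(-1)A(r)}), gives the analogous statement $\bfB_z^j(r_z) = \dfrac{1}{2\pi i}\oint_{C(r_z)} (s-r_z)^{j} B_z(s)\,ds$, and that the ``underlined'' versions follow verbatim.
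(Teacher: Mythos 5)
Your proof is correct and takes essentially the same route as the paper: the paper establishes the semigroup identity $\bfA_z^{(m)}\bfA_z^{(k)}=\bfA_z^{(m+k)}$ for the contour integrals by exactly your double-integral computation (second resolvent identity~(\ref{F: II resolvent identity}) to turn the product into a difference, one term killed by holomorphy from the contour nesting, Cauchy's integral formula for the other), your induction being the special case $m=1,$ $k=j-1$ with the nesting of $C_s$ and $C_t$ merely reversed. No gaps; nothing further is needed.
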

\begin{proof} Let $\bfA_z^{(j)}$ be the right hand side of the last equality.
The claim will be proved if it is shown that for any two non-negative integers~$m$ and~$k$ the equality
$\bfA_z^{(m)}\bfA_z^{(k)} = \bfA_z^{(m+k)}$ holds.
We have
\begin{equation*}
  \begin{split}
    \bfA_z^{(m)}\bfA_z^{(k)} & = \frac {1}{(2\pi i)^2}\oint_{C_s(r_z)} (s-r_z)^{m} A_z(s)
       \brs{\oint_{C_t(r_z)} (t-r_z)^{k} A_z(t) \,dt} \,ds
       \\ & = \frac {1}{(2\pi i)^2}\oint_{C_s(r_z)} \oint_{C_t(r_z)} (s-r_z)^{m}
       (t-r_z)^{k} \frac{A_z(s) - A_z(t)}{t-s} \,dt \,ds.
  \end{split}
\end{equation*}
In this expression it can be assumed that the contour of integration $C_s(r_z)$ lies strictly inside the contour
$C_t(r_z).$ Under this choice of contours the second summand of the integrand which contains~$A_z(t)$
is holomorphic inside $C_s(r_z)$ with respect to~$s$ and therefore its integral over $C_s(r_z)$ vanishes:
\begin{equation*}
  \begin{split}
     \bfA_z^{(m)}\bfA_z^{(k)} & = \frac {1}{(2\pi i)^2}\oint_{C_s(r_z)} \oint_{C_t(r_z)} (s-r_z)^{m}
       (t-r_z)^{k} \frac{A_z(s)}{t-s} \,dt \,ds
       \\ & = \frac {1}{(2\pi i)^2}\oint_{C_s(r_z)} (s-r_z)^{m} A_z(s) \brs{\oint_{C_t(r_z)}
       \frac{(t-r_z)^{k}}{t-s}\,dt} \,ds
       \\ & = \frac {1}{2\pi i}\oint_{C_s(r_z)} (s-r_z)^{m} A_z(s) \cdot
       (s-r_z)^{k}\,ds
       \\ & = \frac {1}{2\pi i}\oint_{C_s(r_z)} (s-r_z)^{m+k} A_z(s)\,ds
       \\ & = \bfA_z^{(m+k)},
  \end{split}
\end{equation*}
where in the third equality the Cauchy integral formula is used.
\end{proof}
Proposition~\ref{P: Pz(rz)=res Az(s)} and~(\ref{F: bfA to j}) allow us to write a bit informally
\begin{equation} \label{F: P=bfA to 0}
  P_z(r_z) = \bfA_z^0(r_z).
\end{equation}
With this convention, (\ref{F: bfA to j}) holds for $j=0$ too, according to (\ref{F: Pz(rz)=res Az(s)}).

Relation~(\ref{F: A(s) and A(r) commute}), combined with~(\ref{F: Pz(rz)=res Az(s)}) and~(\ref{F: def of bfA}), implies that
\begin{equation} \label{F: PA=AP=A}
  \bfA_z(r_z) P_z(r_z) = P_z(r_z) \bfA_z(r_z) = \bfA_z(r_z).
\end{equation}
This equality also follows from general theory of operator-valued holomorphic functions \cite{Kato}.

If $r_z^1$ and $r_z^2$ are two different resonance points corresponding to $z,$ then
\begin{equation} \label{F: bfAz(1)bfAz(2)=0}
  \bfA_z(r_z^1) \bfA_z(r_z^2) = 0.
\end{equation}
Indeed, $\bfA_z(r_z^1) \bfA_z(r_z^2) = \bfA_z(r_z^1) P_z(r_z^1) P_z(r_z^2)\bfA_z(r_z^2) = 0,$
where the first equality follows from~(\ref{F: PA=AP=A}) and the second equality follows from~(\ref{F: Pz(1)Pz(2)=0}).

The equalities
\begin{equation} \label{F: Qz(1)Qz(2)=0}
  Q_z(r_z^1) Q_z(r_z^2) = 0
\end{equation}
and
\begin{equation} \label{F: bfBz(1)bfBz(2)=0}
  \bfB_z(r_z^1) \bfB_z(r_z^2) = 0
\end{equation}
can be proved by the same argument; they also follow from~(\ref{F: Pz(1)Pz(2)=0}) and~(\ref{F: bfAz(1)bfAz(2)=0}), using~(\ref{F: Pz*=Q(bar z)})
and~(\ref{F: A*(z)=B(bar z)}).
It follows from definitions~(\ref{F: def of bfA}) and~(\ref{F: def of bfB}) that for any $z \in \Pi$
and any resonance point~$r_z$ corresponding to~$z$
\begin{equation} \label{F: A*(z)=B(bar z)}
  \bfA^*_z(r_z) = \bfB_{\bar z}(\bar r_z)
\end{equation}
and, since $JA_z(s) = B_z(s)J,$ that
\begin{equation} \label{F: JA=BJ}
  J\bfA_z(r_z) = \bfB_z(r_z)J.
\end{equation}
Similarly to~(\ref{F: bfA to j}),~(\ref{F: P=bfA to 0}) and~(\ref{F: PA=AP=A})
we have
\begin{equation} \label{F: bfB to j}
  \bfB_z^j(r_z) = \frac {1}{2\pi i}\oint_{C(r_z)} (s-r_z)^{j} B_z(s)\,ds,
\end{equation}
\begin{equation} \label{F: Q=bfB to 0}
  Q_z(r_z) = \bfB_z^0(r_z),
\end{equation}
\begin{equation} \label{F: QB=BQ=B}
  \bfB_z Q_z(r_z) = Q_z(r_z) \bfB_z(r_z) = \bfB_z(r_z).
\end{equation}

Recall that a resonance point~$r_z$ is a pole of the meromorphic function~$A_z(s)$ of $s.$
Proposition~\ref{P: bfA to j} implies that Laurent series of the function~$A_z(s)$
in a neighbourhood of the pole~$r_z$ is given by, for some positive integer~$d,$ \label{Page: tilde A}
\begin{equation} \label{F: Laurent for A+(s)}
  A_z(s) = \tilde A_{z,r_z}(s) + \frac 1{s-r_z} P_z(r_z) + \frac 1{(s-r_z)^2} \bfA_z(r_z) + \ldots +
  \frac 1{(s-r_z)^d} \bfA_z^{d-1}(r_z),
\end{equation}
where $\tilde A_{z,r_z}(s)$ is the holomorphic part of the Laurent series.
It will be shown later that the integer $d$ is equal to the order of the resonance point $r_z.$
This Laurent series is an analogue of~(\ref{F: T(kappa)=}); the difference is that~(\ref{F: Laurent for A+(s)}) is a Laurent series
of a function of the coupling constant, while~(\ref{F: T(kappa)=}) is a Laurent series of a function of the spectral parameter (energy).
The finiteness of the Laurent series follows from the fact that~$(s-r_z)^{-1}$
is an isolated eigenvalue of finite multiplicity of a compact operator~$A_z(s).$
It follows from~(\ref{F: Laurent for A+(s)}) that if~$r_z^1$ and~$r_z^2$ are two resonance points, then
\begin{equation} \label{F: Laurent for A+(rz1,rz2)(s)}
  \begin{split}
  A_z(s) = \tilde A_{z,r_z^1,r_z^2}(s) & + \frac 1{s-r_z^1} P_z(r_z^1) + \frac 1{(s-r_z^1)^2} \bfA_z(r_z^1) + \ldots + \frac 1{(s-r_z^1)^{d_1}} \bfA_z^{d_1-1}(r_z^1)
  \\ & + \frac 1{s-r_z^2} P_z(r_z^2) + \frac 1{(s-r_z^2)^2} \bfA_z(r_z^2) + \ldots +
  \frac 1{(s-r_z^2)^{d_2}} \bfA_z^{d_2-1}(r_z^2),
  \end{split}
\end{equation}
where $d_\nu$ is the order of~$r_z^\nu$ and where the meromorphic function $\tilde A_{z,r_z^1,r_z^2}(s)$ is holomorphic at~$r_z^1$ and~$r_z^2.$
Similarly, the expansion~(\ref{F: Laurent for A+(rz1,rz2)(s)}) can be written for any finite set of resonance points~$r_z^1,r_z^2,\ldots$
If the perturbation operator~$V$ has finite rank, then the set of resonance points~$r_z$ is finite and the Laurent expansion,
similar to~(\ref{F: Laurent for A+(rz1,rz2)(s)}) but written for the set of all resonance points, gives Mittag-Leffler representation of the meromorphic function~$A_z(s).$
Whether this is true for infinite-rank~$V$ is unknown to me.
The equalities~(\ref{F: Laurent for A+(s)}) and~(\ref{F: PA=AP=A}) imply that
$$
  \tilde A_{z,r_z}(s) P_z(r_z) = P_z(r_z)\tilde A_{z,r_z}(s).
$$
In fact, it will be shown later that this product is equal to zero.

\begin{lemma} \label{L: nilpotent term of (sigma-A)(-1)}
Let $z \in \Pi$ and let $r_z$ be a resonance point corresponding to~$z.$
For any non-negative~$k$ and any non-resonance $r$
\begin{equation} \label{F: Laurent terms for A+(s)}
  \begin{split}
  \oint_{C(\sigma_z(r))} & (\sigma-\sigma_z(r))^k \brs{\sigma-A_z(r)}^{-1} \,d\sigma \\
      &  = \frac 1{(r-r_z)^k} \oint_{C(r_z)} \brs{\frac{s-r_z}{r-r_z} + \brs{\frac{s-r_z}{r-r_z}}^2 + \ldots}^k A_z(s)\,ds.
  \end{split}
\end{equation}
where $C(\sigma_z(r))$ is an anti-clockwise oriented contour which encloses the pole $\sigma_z(r) = (r-r_z)^{-1},$
and where $C(r_z)$ is an anti-clockwise oriented small enough contour which encloses only the pole~$r_z$ and such that the above series converges for all $s \in C(r_z).$
\end{lemma}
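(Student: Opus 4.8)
The plan is to reduce the statement to the computation carried out in Proposition~\ref{P: bfA to j} by a change of variables, so that the left-hand side, which is an integral over a contour in the $\sigma$-plane around the eigenvalue $\sigma_z(r) = (r-r_z)^{-1}$ of $A_z(r)$, gets transported into an integral over a contour in the $s$-plane around the pole $r_z$ of the meromorphic function $A_z(s)$. The guiding identity is~(\ref{F: A(s)=(1+(s-r)A(r))(-1)A(r)}), which expresses $A_z(s)$ as a rational function of $A_z(r)$, together with the relation $\sigma_z(s) = (s - r_z)^{-1}$, $\sigma_z(r)=(r-r_z)^{-1}$ between the spectral variables.

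\textbf{Key steps.} First I would start from the resolvent $(\sigma - A_z(r))^{-1}$ and substitute the variable $\sigma = \frac{1}{r - s}$, exactly as in the proof of Proposition~\ref{P: Pz(rz)=res Az(s)}: as $s$ runs counter-clockwise around $r_z$ on a small contour $C(r_z)$, the image point $\sigma$ runs counter-clockwise around $\sigma_z(r) = \frac{1}{r-r_z}$, and $d\sigma = \frac{1}{(r-s)^2}\,ds$. Under this substitution one has the algebraic identity
$$
  (\sigma - A_z(r))^{-1} = \frac{1}{r-s}\brs{1 + (s-r)A_z(r)}^{-1} \cdot (r-s) \cdot (\ldots),
$$
more precisely $\sigma(1 - \sigma^{-1}A_z(r))^{-1}\sigma^{-2} = (\sigma - A_z(r))^{-1}$, which combined with~(\ref{F: A(s)=(1+(s-r)A(r))(-1)A(r)}) rewrites $(\sigma - A_z(r))^{-1}\,d\sigma$ (up to the explicit holomorphic factors) in terms of $A_z(s)\,ds$ on $C(r_z)$. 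Second, I would handle the factor $(\sigma - \sigma_z(r))^k$: since $\sigma - \sigma_z(r) = \frac{1}{r-s} - \frac{1}{r-r_z} = \frac{s - r_z}{(r-s)(r-r_z)}$, this factor becomes $\frac{1}{(r-r_z)^k}\cdot\frac{(s-r_z)^k}{(r-s)^k}$. Third, I would expand $\frac{1}{(r-s)^k}$, or rather the combination $\frac{s-r_z}{(r-s)(r-r_z)}$ divided appropriately, as a geometric-type series in $\frac{s-r_z}{r-r_z}$ — valid precisely when $C(r_z)$ is chosen small enough that $\abs{s - r_z} < \abs{r - r_z}$ for all $s \in C(r_z)$, which is the hypothesis stated — obtaining $\frac{1}{r-s} = \frac{1}{r-r_z}\sum_{j\geq 0}\brs{\frac{s-r_z}{r-r_z}}^j$, hence $\frac{s-r_z}{r-s} = \frac{s-r_z}{r-r_z} + \brs{\frac{s-r_z}{r-r_z}}^2 + \ldots$. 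Fourth, I would collect all powers: the factor $(\sigma - \sigma_z(r))^k\,d\sigma$ contributes $\brs{\frac{s-r_z}{r-s}}^k$ times $\frac{1}{(r-r_z)^k}$ from the rewriting above, while the resolvent contributes $A_z(s)$, and the Jacobian and the remaining powers of $(r-s)$ cancel against what is produced by $(1+(s-r)A_z(r))^{-1}A_z(r) = A_z(s)$; assembling these gives exactly the right-hand side of~(\ref{F: Laurent terms for A+(s)}).

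\textbf{Main obstacle.} I expect the main difficulty to be purely bookkeeping: tracking all the powers of $(r-s)$ coming from three sources — the Jacobian $d\sigma = (r-s)^{-2}\,ds$, the factor $(\sigma - \sigma_z(r))^k = (s-r_z)^k(r-s)^{-k}(r-r_z)^{-k}$, and the rewriting $\sigma(\ldots)^{-1}\sigma^{-2}$ of the resolvent — and checking that they combine so that the surviving series is precisely $\brs{\frac{s-r_z}{r-r_z} + \brs{\frac{s-r_z}{r-r_z}}^2 + \ldots}^k$ and not some shifted version of it. A secondary point to be careful about is the justification of term-by-term integration of the series: since $A_z(s)$ is uniformly bounded on the compact contour $C(r_z)$ and the series $\sum \brs{\frac{s-r_z}{r-r_z}}^j$ converges uniformly there by the smallness hypothesis on $C(r_z)$, the interchange of sum and contour integral is legitimate, but this should be remarked. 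Everything else follows the template already established in Propositions~\ref{P: Pz(rz)=res Az(s)} and~\ref{P: bfA to j}, so no genuinely new idea is required; the case $k=0$ recovers~(\ref{F: Pz(rz)=res Az(s)}) and the general case is its natural refinement.
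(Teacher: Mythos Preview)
Your proposal is correct and follows essentially the same approach as the paper: the change of variables $\sigma = (r-s)^{-1}$ combined with the identity $(1+(s-r)A_z(r))^{-1} = 1 + (r-s)A_z(s)$ and the geometric series for $\frac{s-r_z}{r-s}$. The paper runs the computation in the opposite direction (starting from the right-hand side, summing the series to the closed form $\frac{s-r_z}{r-s}$, then substituting), but the content is identical; one small point you glossed over is that after substitution the integrand is $(\frac{s-r_z}{r-s})^k[\frac{1}{r-s} + A_z(s)]$ rather than $(\frac{s-r_z}{r-s})^k A_z(s)$ alone, and the extra $\frac{1}{r-s}$ term drops out because $r$ lies outside $C(r_z)$ --- exactly the step you cite from Proposition~\ref{P: Pz(rz)=res Az(s)}.
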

\noindent
Proof of this lemma is a calculation similar to the one from the proof of Proposition~\ref{P: Pz(rz)=res Az(s)}, but it is given here for the sake of completeness.
\begin{proof} The contour $C(r_z)$ can be chosen as a small enough circle with centre at~$r_z$ such that the number~$r$ lies outside of it.
In this case the geometric series in the right hand side of (\ref{F: Laurent terms for A+(s)}) converges.
This allows to rewrite the right hand side as follows:
\begin{equation*}
  \begin{split}
    (E) :=  \frac 1{(r-r_z)^k} \oint_{C(r_z)} & \brs{\frac{s-r_z}{r-r_z} + \brs{\frac{s-r_z}{r-r_z}}^2 + \ldots}^k A_z(s)\,ds
     \\  & = \frac 1{(r-r_z)^k} \oint_{C(r_z)} \brs{\frac{s-r_z}{r-r_z} \cdot \brs{1 - \frac{s-r_z}{r-r_z}}^{-1}}^k A_z(s)\,ds
     \\  & = \frac 1{(r-r_z)^k} \oint_{C(r_z)} \brs{\frac{s-r_z}{r-s}}^k A_z(s)\,ds.
  \end{split}
\end{equation*}
Now, following proof of Proposition~\ref{P: Pz(rz)=res Az(s)}, we obtain
\begin{equation*}
    (E) = \frac 1{(r-r_z)^k} \oint_{C(r_z)} \brs{\frac{s-r_z}{r-s}}^k \frac 1{s-r}\brs{1-\brs{1+(s-r)A_z(r)}^{-1}}\,ds.
\end{equation*}
Since~$r$ lies outside of the contour $C(r_z),$ it follows that
\begin{equation*}
    (E) = \frac 1{(r-r_z)^k} \oint_{C(r_z)} \brs{\frac{s-r_z}{r-s}}^k \frac 1{r-s}\brs{1+(s-r)A_z(r)}^{-1}\,ds.
\end{equation*}
Let $\sigma = \frac {1}{r-s}.$ When the variable~$s$ goes around~$r_z$ in counter-clockwise direction,
so does the variable~$\sigma$ around $\sigma_z(r)=\frac {1}{r-r_z}.$
Noting that
$$
  \frac 1{(r-r_z)^k} \brs{\frac{s-r_z}{r-s}}^k = (\sigma - \sigma_z(r))^k
$$
and
$$
  \frac 1{r-s}\brs{1+(s-r)A_z(r)}^{-1}\,ds = (\sigma - A_z(r))^{-1}\,d\sigma
$$
completes the proof.
\end{proof}

\begin{prop} \label{P: (sigma-Az(r))to(-1)=...}
Let $z \in \Pi$ and let~$r_z \in \mbC$ be a resonance point corresponding to~$z.$
The terms with negative powers in the Laurent expansion of the function $\brs{\sigma-A_z(r)}^{-1}$ of~$\sigma$
at $\sigma = \sigma_z(r) = (r-r_z)^{-1}$ are linear combinations of powers of $\bfA_z(r_z).$
In particular, taking $k=1$ in~(\ref{F: Laurent terms for A+(s)}) gives the coefficient of $(\sigma - \sigma_z)^{-2}:$
\begin{equation*} 
  \begin{split}
   \frac 1{2 \pi i}\oint_{C(\sigma_z(r))} & (\sigma-\sigma_z(r)) \brs{\sigma-A_z(r)}^{-1} \,d\sigma
       = \sigma^2_z(r) \bfA_z(r_z) + \sigma^3_z(r) \bfA^2_z(r_z) + \ldots.
  \end{split}
\end{equation*}
Taking $k=d-1$ in~(\ref{F: Laurent terms for A+(s)}), where~$d$ is the order of the resonance point~$r_z,$ gives
the coefficient of $(\sigma - \sigma_z)^{-d}:$
\begin{equation} \label{F: d-th Laurent coef for A+(s)}
  \begin{split}
  \oint_{C(\sigma_z(r))} & (\sigma-\sigma_z(r))^{d-1} \brs{\sigma-A_z(r)}^{-1} \,d\sigma
       = \sigma_z^{2d-2}(r) \bfA^{d-1}_z(r_z).
  \end{split}
\end{equation}
For other values of~$k$ the coefficient of $(\sigma - \sigma_z)^{-k-1}$ in~(\ref{F: Laurent terms for A+(s)})
has the form
\begin{equation} \label{F: k-th Laurent coef for A+(s)}
  \sigma_z^{2k}(r) \bfA^k_z(r_z) + \ldots,
\end{equation}
where dots $\ldots$ denote terms containing powers $\bfA^j_z(r_z)$ with $j>k.$
\end{prop}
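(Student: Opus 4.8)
}
The plan is to read off all the assertions directly from the master identity~(\ref{F: Laurent terms for A+(s)}) established in Lemma~\ref{L: nilpotent term of (sigma-A)(-1)}, combined with Proposition~\ref{P: bfA to j} which identifies $\oint_{C(r_z)}(s-r_z)^j A_z(s)\,ds = 2\pi i\,\bfA_z^j(r_z)$ (and, via~(\ref{F: P=bfA to 0}), also covers $j=0$). The point is that the left-hand side of~(\ref{F: Laurent terms for A+(s)}), namely $\frac1{2\pi i}\oint_{C(\sigma_z(r))}(\sigma-\sigma_z(r))^k(\sigma-A_z(r))^{-1}\,d\sigma$, is precisely the coefficient of $(\sigma-\sigma_z(r))^{-k-1}$ in the Laurent expansion of $(\sigma-A_z(r))^{-1}$ at $\sigma=\sigma_z(r)$. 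So the whole statement is just a matter of expanding the right-hand side of~(\ref{F: Laurent terms for A+(s)}) as a power series in $(s-r_z)$, integrating term by term using Proposition~\ref{P: bfA to j}, and keeping track of which power of $\bfA_z(r_z)$ shows up with which power of $\sigma_z(r)$.

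Concretely, first I would expand the bracket $\bigl(\tfrac{s-r_z}{r-r_z}+(\tfrac{s-r_z}{r-r_z})^2+\cdots\bigr)^k$. Each term of this $k$-th power is of the form $(r-r_z)^{-\ell}(s-r_z)^\ell$ with $\ell\ge k$; collecting, the bracket equals $\sum_{\ell\ge k} c_\ell^{(k)} (r-r_z)^{-\ell}(s-r_z)^\ell$ where $c_k^{(k)}=1$ and the other $c_\ell^{(k)}$ are positive integers (the number of ways to write $\ell$ as an ordered sum of $k$ positive integers). Multiplying by the prefactor $(r-r_z)^{-k}$ and by $A_z(s)$ and integrating over $C(r_z)$, Proposition~\ref{P: bfA to j} turns $\frac1{2\pi i}\oint_{C(r_z)}(s-r_z)^\ell A_z(s)\,ds$ into $\bfA_z^\ell(r_z)$. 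Since $\bfA_z^\ell(r_z)=0$ for $\ell\ge d$ (because the Laurent series~(\ref{F: Laurent for A+(s)}) of $A_z(s)$ at $r_z$ terminates at $(s-r_z)^{-d}$, so its positive-power analogue $\bfA_z^\ell$ vanishes for $\ell\ge d$), only finitely many terms survive, and we get
$$
  \frac1{2\pi i}\oint_{C(\sigma_z(r))}(\sigma-\sigma_z(r))^k(\sigma-A_z(r))^{-1}\,d\sigma
  = \sum_{\ell=k}^{d-1} c_\ell^{(k)}\,(r-r_z)^{-k-\ell}\,\bfA_z^\ell(r_z),
$$
and $(r-r_z)^{-k-\ell} = \sigma_z^{k+\ell}(r)$. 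This already proves the first assertion (these coefficients are linear combinations of powers of $\bfA_z(r_z)$), the displayed $k=1$ formula (where $c_\ell^{(1)}=1$ for all $\ell$, giving $\sigma_z^2\bfA_z+\sigma_z^3\bfA_z^2+\cdots$), and the general-$k$ form~(\ref{F: k-th Laurent coef for A+(s)}): the leading term is $\ell=k$, contributing $\sigma_z^{2k}(r)\bfA_z^k(r_z)$, and all other terms have $\ell>k$, i.e.\ higher powers of $\bfA_z(r_z)$, as claimed. For the boundary case~(\ref{F: d-th Laurent coef for A+(s)}), take $k=d-1$: then the sum runs only over $\ell$ with $d-1\le\ell\le d-1$, i.e.\ the single term $\ell=d-1$, $c_{d-1}^{(d-1)}=1$, giving exactly $\sigma_z^{2d-2}(r)\bfA_z^{d-1}(r_z)$.

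I expect the proof to be essentially mechanical; the only point requiring a little care is the combinatorial bookkeeping of the coefficients $c_\ell^{(k)}$ and making sure the truncation $\bfA_z^\ell(r_z)=0$ for $\ell\ge d$ is legitimately invoked — which it is, since the order $d$ in~(\ref{F: Laurent for A+(s)}) is defined exactly as the length of the principal part, and Proposition~\ref{P: bfA to j} identifies the higher coefficients with powers of $\bfA_z(r_z)$ that do not appear. One should also note that Lemma~\ref{L: nilpotent term of (sigma-A)(-1)} requires $C(r_z)$ to be chosen small enough that the geometric series converges on it and $r$ lies outside; this is harmless since shrinking $C(r_z)$ does not change any of the contour integrals in~(\ref{F: bfA to j}). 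No genuine obstacle is anticipated.
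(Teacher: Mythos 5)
Your proof is correct and follows essentially the same route as the paper, whose own argument is simply to combine the Laurent expansion~(\ref{F: Laurent for A+(s)}) of $A_z(s)$ with the identity~(\ref{F: Laurent terms for A+(s)}) of Lemma~\ref{L: nilpotent term of (sigma-A)(-1)}; your use of Proposition~\ref{P: bfA to j} to integrate term by term is just that same computation with the combinatorial bookkeeping (the coefficients $c_\ell^{(k)}$ and the truncation $\bfA_z^\ell(r_z)=0$ for $\ell\ge d$) made explicit. No gap.
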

\begin{proof} This immediately follows from~(\ref{F: Laurent for A+(s)}) and~(\ref{F: Laurent terms for A+(s)}).
\end{proof}

One can prove an assertion, similar to Proposition~\ref{P: (sigma-Az(r))to(-1)=...}, for the operator $B_z(s).$
\begin{prop} \label{P: nilpotent terms of (sigma-B)(-1)} The terms with negative powers in the Laurent expansion
of the function $\brs{\sigma-B_z(r)}^{-1}$ of~$\sigma$ at $\sigma = \sigma_z(r)$
are linear combinations of powers of $\bfB_z(r_z).$ 
\end{prop}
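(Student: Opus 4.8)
The plan is to mirror the proof of Proposition~\ref{P: (sigma-Az(r))to(-1)=...} almost verbatim, transporting everything through the relation $\brs{A_z(s)}^* = B_{\bar z}(\bar s)$ from~(\ref{F: A(z)(s)*=B(bz)(bs)}) and the identity $\brs{\bfA_z(r_z)}^* = \bfB_{\bar z}(\bar r_z)$ from~(\ref{F: A*(z)=B(bar z)}). First I would recall that, just as $r_z$ is a pole of the meromorphic function $s \mapsto A_z(s)$, it is also a pole of $s \mapsto B_z(s)$ (by Proposition~\ref{P: Q(+)=res B(+)(s)} and~(\ref{F: bfB to j})), with Laurent expansion
\begin{equation*}
  B_z(s) = \tilde B_{z,r_z}(s) + \frac 1{s-r_z} Q_z(r_z) + \frac 1{(s-r_z)^2} \bfB_z(r_z) + \ldots + \frac 1{(s-r_z)^d} \bfB_z^{d-1}(r_z),
\end{equation*}
which is the $B$-analogue of~(\ref{F: Laurent for A+(s)}). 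One obtains this either by applying the adjoint operation to~(\ref{F: Laurent for A+(s)}) written at $\bar z, \bar r_z$, or directly by the same contour-integral computation using~(\ref{F: B(s)=(1+(s-r)B(r))(-1)B(r)}) and~(\ref{F: B(s) and B(r) commute}) in place of~(\ref{F: A(s)=(1+(s-r)A(r))(-1)A(r)}) and~(\ref{F: A(s) and A(r) commute}).

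Next I would establish the $B$-version of Lemma~\ref{L: nilpotent term of (sigma-A)(-1)}: for any non-negative integer $k$ and any non-resonance $r$,
\begin{equation*}
  \oint_{C(\sigma_z(r))} (\sigma-\sigma_z(r))^k \brs{\sigma-B_z(r)}^{-1} \,d\sigma
    = \frac 1{(r-r_z)^k} \oint_{C(r_z)} \brs{\frac{s-r_z}{r-r_z} + \brs{\frac{s-r_z}{r-r_z}}^2 + \ldots}^k B_z(s)\,ds.
\end{equation*}
The proof is identical to that of Lemma~\ref{L: nilpotent term of (sigma-A)(-1)}, replacing $A_z$ by $B_z$ throughout, since the only inputs used there are~(\ref{F: A(s)=(1+(s-r)A(r))(-1)A(r)}) (replaced by~(\ref{F: B(s)=(1+(s-r)B(r))(-1)B(r)})) and the change of variables $\sigma = (r-s)^{-1}$. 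Combining this with the Laurent expansion of $B_z(s)$ and Proposition~\ref{P: bfB to j} then shows, exactly as in the proof of Proposition~\ref{P: (sigma-Az(r))to(-1)=...}, that the coefficient of $(\sigma-\sigma_z(r))^{-k-1}$ in the Laurent expansion of $\brs{\sigma-B_z(r)}^{-1}$ at $\sigma_z(r)$ has the form $\sigma_z^{2k}(r) \bfB^k_z(r_z) + \ldots$, with the omitted terms being linear combinations of higher powers $\bfB^j_z(r_z)$, $j>k$; in particular all negative-power coefficients are linear combinations of powers of $\bfB_z(r_z)$.

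I do not expect any real obstacle here: the proposition is the formal dual of Proposition~\ref{P: (sigma-Az(r))to(-1)=...}, and the paper has already set up all the tools — the $B$-analogues~(\ref{F: B(s)=(1+(s-r)B(r))(-1)B(r)}), (\ref{F: B(s) and B(r) commute}), (\ref{F: Qz(rz)=res Bz(s)}), (\ref{F: bfB to j}), (\ref{F: Q=bfB to 0}), (\ref{F: QB=BQ=B}) — so the cleanest route is simply to say ``the proof repeats that of Proposition~\ref{P: (sigma-Az(r))to(-1)=...} with $A_z$, $P_z$, $\bfA_z$ replaced by $B_z$, $Q_z$, $\bfB_z$ respectively, using~(\ref{F: B(s)=(1+(s-r)B(r))(-1)B(r)}) and~(\ref{F: B(s) and B(r) commute}) in place of~(\ref{F: A(s)=(1+(s-r)A(r))(-1)A(r)}) and~(\ref{F: A(s) and A(r) commute}); alternatively, apply the adjoint operation to the conclusion of Proposition~\ref{P: (sigma-Az(r))to(-1)=...} at the conjugate point $\bar z, \bar r_z$ and invoke~(\ref{F: A(z)(s)*=B(bz)(bs)}) and~(\ref{F: A*(z)=B(bar z)}).'' The only thing to be slightly careful about is that conjugating swaps $z \leftrightarrow \bar z$ and $r_z \leftrightarrow \bar r_z$, so the first (direct) argument is marginally cleaner to write out if one wants the statement exactly as phrased for $B_z(r)$ itself.
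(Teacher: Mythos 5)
Your proposal is correct and follows exactly the route the paper intends: the paper states this proposition without proof, merely remarking that it is proved similarly to Proposition~\ref{P: (sigma-Az(r))to(-1)=...}, and your verbatim $A_z \to B_z$, $P_z \to Q_z$, $\bfA_z \to \bfB_z$ substitution (with the adjoint-at-$\bar z,\bar r_z$ argument as an alternative) is precisely that intended argument, with all the needed $B$-identities already supplied in the paper.
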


Similarly to~(\ref{F: Laurent for A+(s)}) we have
\begin{equation} \label{F: Laurent for B+(s)}
  B_z(s) = \tilde B_{z,r_z}(s) + \frac 1{s-r_z} Q_z(r_z) + \frac 1{(s-r_z)^2} \bfB_z(r_z) + \ldots +
  \frac 1{(s-r_z)^d} \bfB_z^{d-1}(r_z),
\end{equation}
where $\tilde B_{z,r_z}(s)$ is the holomorphic part of the Laurent series.
Relations~(\ref{F: Laurent for A+(s)}),~(\ref{F: Laurent for B+(s)}),~(\ref{F: JP=QJ}), and~(\ref{F: JA=BJ}) imply that holomorphic parts
$\tilde A_{z,r_z}(s)$ and $\tilde B_{z,r_z}(s)$ satisfy the relation
$$
  J\tilde A_{z,r_z}(s) = \tilde B_{z,r_z}(s)J.
$$

\subsection{Resonance vectors of order~$k$}
\label{SS: res vectors of order k}
Using a polarization type argument, the equality~(\ref{F: II resolvent identity}) allows
to rewrite the left hand side of the resonance equation~(\ref{F: res eq-n}) of order~$k$ as an expression, linearly dependent on~$A_z(s_j).$
This is done in the following proposition. As will be seen, this rewriting of the resonance equation will prove useful.
\begin{prop} \label{P: prod of A's}
If $z \in \Pi,$ if~$r_z$ is a resonance point corresponding to~$z$ and if distinct numbers $s_1, \ldots, s_k$ are non-resonant, then
\begin{equation}  \label{F: prod of A's}
  \prod_{j=1}^k \SqBrs{1+(r_z-s_j) A_z(s_j)} = \sum_{j=1}^k (s_j-r_z)^{k-1}\brs{1+(r_z-s_j) A_z(s_j)} \prod_{i=1, i\neq j}^k (s_j-s_i)^{-1}.
\end{equation}
\end{prop}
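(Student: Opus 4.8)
The plan is to reduce everything to the commutative subalgebra generated by a single operator $A_z(r)$ and there to invoke a scalar polynomial identity, for which the divided-difference lemmas of the preliminaries are tailor-made. First I would fix a real number $r$ that is non-resonant and distinct from $s_1,\ldots,s_k$ (such $r$ exists since the set of resonance points is discrete), and set $A := A_z(r)$. From $A_z(s_j) = (1+(s_j-r)A)^{-1}A$, which is~(\ref{F: A(s)=(1+(s-r)A(r))(-1)A(r)}) and in which $1+(s_j-r)A$ is invertible because, by~(\ref{F: sigma z(s)=(s-rz)(-1)}), the nonzero number $-(s_j-r)^{-1}$ could be an eigenvalue of $A_z(r)$ only if $s_j$ were a resonance point, a one-line manipulation gives
\begin{equation*}
  1+(r_z-s_j)A_z(s_j) = (1+(s_j-r)A)^{-1}(1+(r_z-r)A).
\end{equation*}
Putting $C := 1+(r_z-r)A$ and $D_j := 1+(s_j-r)A$, the operators $C$, $D_1^{-1},\ldots,D_k^{-1}$ are all functions of the single operator $A$ and hence commute, and the left- and right-hand sides of~(\ref{F: prod of A's}) become $\prod_{j=1}^k D_j^{-1}C$ and $\sum_{j=1}^k c_j D_j^{-1}C$ respectively, where $c_j = (s_j-r_z)^{k-1}\prod_{i\ne j}(s_j-s_i)^{-1}$.

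The heart of the matter is the scalar identity
\begin{equation*}
  (1+(r_z-r)x)^{k-1} = \sum_{j=1}^k c_j \prod_{i\ne j}(1+(s_i-r)x),
\end{equation*}
an equality of polynomials in $x$ of degree $\le k-1$. I would prove it by evaluation at the $k$ distinct points $x = -(s_m-r)^{-1}$, $m=1,\ldots,k$: at such a point every summand on the right with $j\ne m$ carries the vanishing factor $1+(s_m-r)x$, so the right side collapses to $c_m\prod_{i\ne m}\bigl(1-\tfrac{s_i-r}{s_m-r}\bigr)$, which by the definition of $c_m$ equals $\bigl(\tfrac{s_m-r_z}{s_m-r}\bigr)^{k-1}$, and this is precisely the value of the left side. (Alternatively, the substitution $a_i=1+(s_i-r)x$, $a_0=1+(r_z-r)x$ turns the right side into $\bigl(\prod_i a_i\bigr)$ times the divided difference of order $k-1$ of $a\mapsto(a-a_0)^{k-1}/a$; by Lemma~\ref{L: div-d diff-nce II} the polynomial part of this function contributes nothing, and Lemma~\ref{L: div-d diff-nce I} then leaves $(-a_0)^{k-1}$ times the order $k-1$ divided difference of $1/a$, which a short Lagrange-interpolation computation identifies as $(-1)^{k-1}/\prod_i a_i$.)

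Substituting $x=A$ in the scalar identity yields the operator identity $C^{k-1}=\sum_j c_j\prod_{i\ne j}D_i$ inside the commutative algebra generated by $A$. Left-multiplying by $\bigl(\prod_i D_i^{-1}\bigr)C$ and regrouping (all factors commute) turns the left side into $\bigl(\prod_i D_i^{-1}\bigr)C^k=\prod_j\bigl(D_j^{-1}C\bigr)$ and the right side into $\sum_j c_j \bigl(\prod_i D_i^{-1}\bigr)\bigl(\prod_{i\ne j}D_i\bigr)C=\sum_j c_j D_j^{-1}C$; by the first paragraph this is exactly~(\ref{F: prod of A's}). The only step that is not routine bookkeeping is the scalar polynomial identity, so that is where I expect the work to be; but once it is recognized as the interpolation/partial-fraction statement above it is immediate, and the mild care required is only in choosing $r$ so that all the inverses $D_j^{-1}$ exist and the $k$ evaluation points are distinct.
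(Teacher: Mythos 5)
Your proof is correct, but it takes a genuinely different route from the paper. The paper proves~(\ref{F: prod of A's}) by induction on $k$: the case $k=2$ is obtained by expanding the product directly with the second resolvent identity~(\ref{F: II resolvent identity}), the inductive step reduces a $k$-fold product to pairwise products via the $k=2$ case, and the resulting extra sum is killed by the observation that it is the divided difference of order $k-1$ of the polynomial $(s-r_z)^{k-2}$, hence zero by Lemmas~\ref{L: div-d diff-nce I} and~\ref{L: div-d diff-nce II}. You instead pass through a non-resonant reference point $r$, use~(\ref{F: A(s)=(1+(s-r)A(r))(-1)A(r)}) to write every factor as $D_j^{-1}C$ with $C=1+(r_z-r)A_z(r)$, $D_j=1+(s_j-r)A_z(r)$, and reduce the whole identity to the scalar Lagrange-interpolation identity $(1+(r_z-r)x)^{k-1}=\sum_j c_j\prod_{i\ne j}(1+(s_i-r)x)$, verified by evaluating two degree-$\le k-1$ polynomials at the $k$ distinct points $x=-(s_m-r)^{-1}$; substituting $x=A_z(r)$ and multiplying by $\bigl(\prod_i D_i^{-1}\bigr)C$ then gives~(\ref{F: prod of A's}). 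Your argument avoids induction and exposes the identity as a partial-fraction/interpolation fact specialized at a single operator, at the modest cost of introducing the auxiliary point $r$ (whose existence, distinct from the $s_j$ and non-resonant, is guaranteed by discreteness of the resonance set) and of invoking the invertibility of the $D_j$, which the paper has already established; the paper's inductive proof works directly with the given $s_j$ and stays closer to the divided-difference machinery it sets up in the preliminaries and reuses elsewhere (e.g.\ in Theorem~\ref{T: sum prod psi = 0} and Proposition~\ref{P: euE (Vf)=0, k>1}).
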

\begin{proof} For $k=1$ this equality is trivial. In case of $k=2,$ the second resolvent identity~(\ref{F: II resolvent identity}) implies
\begin{equation} \label{F: the k=2 case}
  \begin{split}
     [1+(r_z&-s)A_z(s)][1+(r_z-r)A_z(r)]
     \\ & = 1 +(r_z-s)A_z(s) +(r_z- r) A_z(r) + \frac{(r_z-s)(r_z-r)}{s-r} (A_z(r) - A_z(s))
     \\ & = \frac {s-r_z}{s-r} \brs{1+(r_z-s) A_z(s)}  + \frac {r-r_z}{r-s}\brs{1+(r_z-r) A_z(r)}
  \end{split}
\end{equation}
and this gives~(\ref{F: prod of A's}) for $k=2.$
Assuming that~(\ref{F: prod of A's}) holds for $k-1$ instead of $k,$ we have
\begin{equation*}
  \begin{split}
     (E) & := \prod_{j=1}^k \SqBrs{1+(r_z-s_j) A_z(s_j)} = (1+(r_z-s_k) A_z(s_k))\prod_{j=1}^{k-1} \SqBrs{1+(r_z-s_j) A_z(s_j)}
     \\ & = \brs{1+(r_z-s_k) A_z(s_k)}\sum_{j=1}^{k-1} (s_j-r_z)^{k-2}\brs{1+(r_z-s_j) A_z(s_j)} \prod_{i=1, i\neq j}^{k-1} (s_j-s_i)^{-1}.
  \end{split}
\end{equation*}
Applying~(\ref{F: the k=2 case}) to the product $\brs{1+(r_z-s_k) A_z(s_k)}\brs{1+(r_z-s_j) A_z(s_j)}$ gives
\begin{equation*}
  \begin{split}
     \\ (E) & = \sum_{j=1}^{k-1} (s_j-r_z)^{k-2}  \SqBrs{\frac {s_k-r_z}{s_k-s_j}\brs{1+(r_z-s_k) A_z(s_k)} + \frac {s_j-r_z}{s_j-s_k} \brs{1+(r_z-s_j) A_z(s_j)}}
     \\ &   \hskip 10.5cm  \times \prod_{i=1, i\neq j}^{k-1} (s_j-s_i)^{-1}
     \\ & = \sum_{j=1}^{k-1} (s_j-r_z)^{k-1}\brs{1+(r_z-s_j) A_z(s_j)} \prod_{i=1, i\neq j}^{k} (s_j-s_i)^{-1}
     \\ & \qquad \qquad \qquad \qquad - (s_k-r_z)\brs{1 +(r_z-s_k) A_z{(s_k)}} \sum_{j=1}^{k-1} (s_j-r_z)^{k-2}\prod_{i=1, i\neq j}^{k} (s_j-s_i)^{-1}.
  \end{split}
\end{equation*}
Thus the proof will be complete if it is shown that
\begin{equation} \label{F: obvious equality}
  \begin{split}
    \sum_{j=1}^{k} (s_j-r_z)^{k-2}\prod_{i=1, i\neq j}^{k} (s_j-s_i)^{-1} = 0.
  \end{split}
\end{equation}
By Lemma~\ref{L: div-d diff-nce I}, the left hand side of this equality is the divided difference of order $k-1$ of the function $f(s) = (s-r_z)^{k-2}.$
Hence, the equality~(\ref{F: obvious equality}) follows from Lemma~\ref{L: div-d diff-nce II}.
\end{proof}

%
%
Proposition~\ref{P: prod of A's} and Proposition~\ref{P: res eq-n is correct} imply the following assertion.
\begin{thm} \label{T: sum prod psi = 0}
The resonance equation~(\ref{F: res eq-n})
of order~$k$ is equivalent to any of the following two equations:
\begin{equation} \label{F: prod of A(s)'s = 0}
  \prod_{j=1}^k \brs{1+(r_z-s_j) A_z(s_j)} u = 0
\end{equation}
or
\begin{equation} \label{F: boring formula}
  \sum_{j=1}^k (s_j-r_z)^{k-1}\brs{u + (r_z-s_j) A_z(s_j)u} \prod_{i=1, i\neq j}^k (s_j-s_i)^{-1} = 0,
\end{equation}
where $s_1, \ldots,s_k$ is any set of~$k$ non-resonance points.
\end{thm}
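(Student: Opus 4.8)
The plan is to deduce Theorem~\ref{T: sum prod psi = 0} by combining Proposition~\ref{P: prod of A's} with Proposition~\ref{P: res eq-n is correct}, so no heavy new computation is needed; essentially everything reduces to algebra already performed. First I would observe that the resonance equation~(\ref{F: res eq-n}) of order~$k$ reads $[1+(r_z-s)A_z(s)]^k u = 0$ for one (hence, by Proposition~\ref{P: res eq-n is correct}, every) non-resonant value of~$s$. Taking $k$ \emph{distinct} non-resonant points $s_1,\ldots,s_k$, I claim $[1+(r_z-s)A_z(s)]^k u = 0 \iff \prod_{j=1}^k [1+(r_z-s_j)A_z(s_j)] u = 0$, which is the equivalence of~(\ref{F: res eq-n}) and~(\ref{F: prod of A(s)'s = 0}). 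The equivalence of~(\ref{F: prod of A(s)'s = 0}) and~(\ref{F: boring formula}) is then \emph{immediate} from Proposition~\ref{P: prod of A's}: the identity~(\ref{F: prod of A's}) says the two operators appearing on the left of~(\ref{F: prod of A(s)'s = 0}) and~(\ref{F: boring formula}) are literally equal (after expanding $\prod[1+(r_z-s_j)A_z(s_j)]$ and distributing $u$), so their kernels coincide.

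The one point that needs genuine argument is the first equivalence, i.e. that $\Upsilon_z^k(r_z)$ — defined via a single $s$ by the $k$-th power of $1+(r_z-s)A_z(s)$ — coincides with the kernel of the mixed product $\prod_{j=1}^k [1+(r_z-s_j)A_z(s_j)]$ over distinct $s_j$. The inclusion $\subseteq$ is easy: if $[1+(r_z-s_1)A_z(s_1)]^k u = 0$, then $u \in \Upsilon_z^k(r_z) = \Upsilon_z^k(r_z)$ for each individual $s_j$ by Proposition~\ref{P: res eq-n is correct}; since all the factors $1+(r_z-s_j)A_z(s_j)$ commute (this follows from~(\ref{F: A(s) and A(r) commute})) and each raised to the $k$-th power annihilates $u$, a standard commuting-nilpotents argument on the finite-dimensional space $\Upsilon_z(r_z)$ (on which each $1+(r_z-s_j)A_z(s_j)$ is nilpotent of index $\le d \le k$ when $k \ge d$, and where for $k < d$ one argues directly) shows the product of the $k$ factors — even with distinct indices — kills $u$. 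The reverse inclusion $\supseteq$ is where Proposition~\ref{P: prod of A's} does the work: rewriting $\prod_{j=1}^k[1+(r_z-s_j)A_z(s_j)]$ via~(\ref{F: prod of A's}) as $\sum_j (s_j-r_z)^{k-1}[1+(r_z-s_j)A_z(s_j)]\prod_{i\ne j}(s_j-s_i)^{-1}$, one sees that if this operator kills $u$, then—since each summand's image lies in a controlled place and the $A_z(s_j)$ all commute and share the spectral projection $P_z(r_z)$—one can iterate: apply $1+(r_z-s_k)A_z(s_k)$ etc., mimicking the inductive structure of the proof of Proposition~\ref{P: prod of A's} in reverse, to conclude $[1+(r_z-s)A_z(s)]^k u = 0$.

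Actually, a cleaner route for $\supseteq$, which I would prefer to present: the passage from $(1+(r_z-r)A_z(r))^n u = 0$ to $(1+(r_z-s)A_z(s))(1+(r_z-r)A_z(r))^n u = 0$ — and more generally replacing any single factor's argument — is exactly the computation carried out in the proof of Proposition~\ref{P: res eq-n is correct}. Iterating that replacement argument $k$ times, starting from $(1+(r_z-s_1)A_z(s_1))^k u = 0$, one successively shows $(1+(r_z-s_1)A_z(s_1))^{k-1}(1+(r_z-s_2)A_z(s_2))u = 0$, then peels off another power, and so on, using commutativity~(\ref{F: A(s) and A(r) commute}) at each stage to reorder factors, until one arrives at $\prod_{j=1}^k(1+(r_z-s_j)A_z(s_j))u = 0$. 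Thus the two-way implication between~(\ref{F: res eq-n}) and~(\ref{F: prod of A(s)'s = 0}) is really a mild extension of Proposition~\ref{P: res eq-n is correct}'s proof, and the equivalence with~(\ref{F: boring formula}) then follows instantly from Proposition~\ref{P: prod of A's}. The main obstacle — such as it is — is keeping the bookkeeping of the iterated factor-replacement clean and making sure the commuting-nilpotent manipulations are justified on the finite-dimensional invariant subspace $\Upsilon_z(r_z)$ (Corollary~\ref{C: Upsilon is invariant}) rather than on all of $\clK$; everything else is a direct citation of results already in hand.
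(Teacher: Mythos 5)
Your overall route is the same as the paper's: the equivalence of~(\ref{F: prod of A(s)'s = 0}) with~(\ref{F: boring formula}) is read off from the operator identity of Proposition~\ref{P: prod of A's}, and the equivalence with~(\ref{F: res eq-n}) is to come from commutativity~(\ref{F: A(s) and A(r) commute}) and Proposition~\ref{P: res eq-n is correct}. The second step is fine and is exactly what the paper does. But your treatment of the first equivalence has two concrete problems. For the containment $\Upsilon_z^k(r_z)\subseteq\ker\prod_{j=1}^k\brs{1+(r_z-s_j)A_z(s_j)}$ you appeal to the principle ``the factors commute and each raised to the $k$-th power annihilates $u$, hence the product kills $u$''; that is not a valid general principle. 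Take $A$ and $B$ to be multiplication by $x$ and by $y$ on $\mbC[x,y]/(x^2,y^2)$ and $u=1$: then $A^2u=B^2u=0$ and $AB=BA$, yet $ABu=xy\neq 0$. What makes the step true here is not commuting nilpotency but the fact, which is precisely Proposition~\ref{P: res eq-n is correct}, that all the factors share one and the same kernel filtration, $\ker\brs{1+(r_z-s)A_z(s)}^j=\Upsilon_z^j(r_z)$ for every non-resonant $s$; hence each factor lowers the order of a resonance vector by at least one, and $k$ of them in succession annihilate $\Upsilon_z^k(r_z)$. Your parenthetical ``for $k<d$ one argues directly'' is exactly where this has to be said.

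More seriously, the converse containment $\ker\prod_{j=1}^k\brs{1+(r_z-s_j)A_z(s_j)}\subseteq\Upsilon_z^k(r_z)$ is never actually established. Your first sketch (``mimicking the inductive structure of the proof of Proposition~\ref{P: prod of A's} in reverse'') is not an argument, and the ``cleaner route'' you prefer proves the wrong direction: it starts from $\brs{1+(r_z-s_1)A_z(s_1)}^k u=0$ and ends at $\prod_j\brs{1+(r_z-s_j)A_z(s_j)}u=0$, i.e.\ it repeats the inclusion you had already handled. The missing direction is fortunately a short induction on $k$ from Proposition~\ref{P: res eq-n is correct}: if $\prod_{j=1}^k\brs{1+(r_z-s_j)A_z(s_j)}u=0$, then $\brs{1+(r_z-s_k)A_z(s_k)}u$ lies in the kernel of $\prod_{j=1}^{k-1}\brs{1+(r_z-s_j)A_z(s_j)}$, which by the induction hypothesis equals $\Upsilon_z^{k-1}(r_z)=\ker\brs{1+(r_z-s_k)A_z(s_k)}^{k-1}$; hence $\brs{1+(r_z-s_k)A_z(s_k)}^k u=0$, that is, $u\in\Upsilon_z^k(r_z)$. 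With these two repairs your proof coincides in substance with the paper's two-line argument.
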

\begin{proof} Commutativity property~(\ref{F: A(s) and A(r) commute}) of~$A_z(s)$ and
Proposition~\ref{P: res eq-n is correct} imply that the resonance equation~(\ref{F: res eq-n}) is equivalent to~(\ref{F: prod of A(s)'s = 0}).
Proposition~\ref{P: prod of A's} implies that~(\ref{F: prod of A(s)'s = 0}) is equivalent to~(\ref{F: boring formula}).
\end{proof}
\begin{thm} \label{T: Laurent for A(s)psi}
If $u^{(k)}$ is a resonance vector of order $k,$ then
\begin{equation} \label{F: A psik=...}
  A_z(s)u^{(k)} = \sum_{j=0}^{k-1} \frac {u^{(k-j)}}{(s-r_z)^{j+1}}
\end{equation}
where $u^{(k-j)}$ is a resonance vector of order $k-j.$
Moreover,
\begin{equation} \label{F: psi(j)=oint...Apsi(k)}
  u^{(k-j)} = \bfA^j_z (r_z)u^{(k)},
\end{equation}
and thus, the operator $\bfA^j_z(r_z)$ lowers order of a resonance vector
$u \in \Upsilon_z(r_z)$ by $j,$ where $j=1,2,\ldots.$
\end{thm}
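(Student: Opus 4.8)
The plan is to use the Laurent expansion (\ref{F: Laurent for A+(s)}) together with the commutation relations for $A_z(s)$, $P_z(r_z)$ and $\bfA_z(r_z)$ established in Section~\ref{SS: bfA and bfB}. First I would show that if $u^{(k)}$ is a resonance vector of order $k$, i.e.\ $u^{(k)} \in \Upsilon_z^k(r_z)\setminus \Upsilon_z^{k-1}(r_z)$, then $A_z(s)u^{(k)}$ lies in the range of $P_z(r_z)$, hence $P_z(r_z)u^{(k)} = u^{(k)}$ and $A_z(s)u^{(k)} = A_z(s)P_z(r_z)u^{(k)}$. Using the Laurent expansion (\ref{F: Laurent for A+(s)}) and the annihilation identities $\tilde A_{z,r_z}(s)P_z(r_z) = 0$ (noted in the excerpt as being proved later, but I would instead argue directly) together with $\bfA_z^j(r_z)P_z(r_z) = \bfA_z^j(r_z)$ from (\ref{F: PA=AP=A}), I expect
$$
  A_z(s)u^{(k)} = \sum_{j=1}^{d} \frac{1}{(s-r_z)^j}\,\bfA_z^{j-1}(r_z)u^{(k)}.
$$
So the right move is to define $u^{(k-j)} := \bfA_z^j(r_z)u^{(k)}$ for $j=0,1,\ldots,k-1$ and verify that the higher negative powers $1/(s-r_z)^j$ with $j > k$ contribute nothing; this is exactly (\ref{F: psi(j)=oint...Apsi(k)}) and (\ref{F: A psik=...}).

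The core of the argument is therefore two claims: (a) $\bfA_z^j(r_z)u^{(k)} = 0$ for $j \geq k$, and (b) $\bfA_z^j(r_z)u^{(k)}$ is a resonance vector of order exactly $k-j$ (in particular nonzero) for $0 \leq j \leq k-1$. For (a), I would use that $u^{(k)}$ satisfies $(1+(r_z-s)A_z(s))^k u^{(k)} = 0$; expanding via Theorem~\ref{T: sum prod psi = 0}, or more directly by plugging the Laurent expansion of $A_z(s)$ into $(1+(r_z-s)A_z(s))^k$ and examining the principal part, one sees that the resonance equation of order $k$ forces the $(s-r_z)^{-j}$-coefficients with $j>k$ of $A_z(s)u^{(k)}$ to vanish; equivalently $\bfA_z^j(r_z)u^{(k)} = 0$ for $j\geq k$. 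For the lowering property (b), I would argue by induction on $j$: since $\bfA_z(r_z) = \operatorname{res}_{s=r_z}\big((s-r_z)A_z(s)\big)$ and $A_z(s)$ commutes with $\bfA_z(r_z)$, one checks from the Laurent expansion that $\bfA_z(r_z)$ shifts the principal part of $A_z(s)u^{(k)}$ down by one, so $\bfA_z(r_z)u^{(k)} = u^{(k-1)}$ satisfies $(1+(r_z-s)A_z(s))^{k-1}u^{(k-1)} = 0$ but not the order $k-2$ equation (the latter because otherwise reapplying the expansion would contradict $u^{(k)}\notin \Upsilon_z^{k-1}(r_z)$).

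The step I expect to be the main obstacle is establishing that $\bfA_z^j(r_z)u^{(k)}$ has order \emph{exactly} $k-j$ rather than merely $\leq k-j$ — that is, controlling how much the order actually drops. The inclusion $\bfA_z^j(r_z)\Upsilon_z^k(r_z) \subseteq \Upsilon_z^{k-j}(r_z)$ is the easy direction; the sharp statement requires knowing that $\bfA_z(r_z)$ does not kill a vector prematurely, which ultimately rests on the structure of the nilpotent $\bfA_z(r_z)$ on $\Upsilon_z(r_z)$ (its Jordan-type structure). I would handle this by contradiction: if $u^{(k)}$ has order $k$ but $\bfA_z^j(r_z)u^{(k)}$ had order $< k-j$, then feeding this back through the relation $A_z(s)u^{(k)} = \sum_{i=0}^{k-1}(s-r_z)^{-i-1}\bfA_z^i(r_z)u^{(k)}$ (the part of (\ref{F: A psik=...}) already obtained as an identity of meromorphic functions) and the definition of order would show $u^{(k)}$ itself satisfies the resonance equation of order $<k$, a contradiction. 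Once (a), (b) and the displayed identity are in hand, (\ref{F: A psik=...}), (\ref{F: psi(j)=oint...Apsi(k)}) and the assertion that $\bfA_z^j(r_z)$ lowers order by $j$ all follow at once.
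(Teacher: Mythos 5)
Your route is genuinely different from the paper's. The paper proves (\ref{F: A psik=...}) by induction on $k$: the base case is just the resonance equation, and the inductive step uses the divided-difference identity of Theorem~\ref{T: sum prod psi = 0} (formula (\ref{F: boring formula})) to show that $A_z(s)u^{(k)}$ has the stated principal-part form with \emph{some} coefficient vectors, then applies $1+(r_z-r)A_z(r)$ --- which lowers order by exactly one --- together with the induction hypothesis to pin down the orders of those coefficients; only at the end are the coefficients identified as $\bfA_z^j(r_z)u^{(k)}$ via the residue formula (\ref{F: bfA to j}). You instead work directly on the spectral subspace $\Upsilon_z(r_z)$ through the Laurent expansion (\ref{F: Laurent for A+(s)}), reducing everything to the statement that on $\Upsilon_z(r_z)$ the order of a resonance vector equals its nilpotency degree with respect to $\bfA_z(r_z)$. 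That is a viable and arguably more conceptual argument, and your handling of claims (a), (b) and of the exactness of the order drop is sound once the principal-part identity on $\Upsilon_z(r_z)$ is available. (A small repair: what you need at the outset is simply $u^{(k)}\in\Upsilon_z(r_z)=$ range of $P_z(r_z)$, which is built into the definition of the Riesz idempotent, rather than the statement about $A_z(s)u^{(k)}$ you wrote.)

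The one genuine issue is the identity $\tilde A_{z,r_z}(s)P_z(r_z)=0$, equivalently $A_z(s)P_z(r_z)=\sum_{j\geq 0}(s-r_z)^{-j-1}\bfA_z^j(r_z)$. In the paper this is (\ref{F: tilde AP=0}), and it is a \emph{corollary} of the very theorem you are proving (it is deduced from (\ref{F: Az(s)Pz(rz)=sum...})), so citing it is circular; you acknowledge this and promise a direct argument, but you do not give one, and this is exactly where the content of the theorem sits --- every one of your subsequent steps (the vanishing $\bfA_z^j(r_z)u^{(k)}=0$ for $j\geq k$, the equivalence of order with nilpotency degree, the contradiction argument for exactness) feeds off it. The gap is fillable: on the finite-dimensional invariant subspace $\Upsilon_z(r_z)$ the operator $A_z(r)$ has the single eigenvalue $\sigma_z(r)=(r-r_z)^{-1}$, so $A_z(r)-\sigma_z(r)$ is nilpotent there; by (\ref{F: A(s)=(1+(s-r)A(r))(-1)A(r)}) the restriction of $A_z(s)$ to $\Upsilon_z(r_z)$ is then a rational function of $s$ whose only pole is $s=r_z$ and which vanishes as $s\to\infty$, hence it coincides with its principal part at $r_z$, which is precisely $\tilde A_{z,r_z}(s)P_z(r_z)=0$. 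With that argument supplied your proof goes through; as written, it rests on an unproved fact that the paper obtains only downstream of the theorem.
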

\noindent In particular, the operator $\bfA_z(r_z)$ is nilpotent: $\bfA_z^d(r_z)=0,$
where~$d$ is the order of the point~$r_z,$ and the geometric multiplicity~$m$ of the resonance point~$r_z$
is equal to $m = \dim \ker \bfA_z(r_z).$
\begin{proof}
We prove this assertion by induction on $k.$
For $k=1$ the equality~(\ref{F: A psik=...}) is equivalent to the resonance equation~(\ref{F: res eq-n}) of order $k=1.$
Assume that the assertion holds for $k=n-1$ and let $u = u^{(n)}$ be a vector of order $n.$ Since $u$ satisfies the resonance equation of order $n,$
it follows from Theorem~\ref{T: sum prod psi = 0} that $u$ satisfies the equality~(\ref{F: boring formula}). Hence, taking in~(\ref{F: boring formula})
(with $k=n$) \ $s = s_n$ we obtain that the vector~$A_z(s)u$ has the form
\begin{equation} \label{F: A psin=...}
  A_z(s)u = \sum_{j=0}^{n-1} \frac {u^{(n-j)}}{(s-r_z)^{j+1}},
\end{equation}
where $u^{(n-j)}, \ j=0,\ldots,n-1,$ are some vectors; we have to show that the vector $u^{(n-j)}$ has order $n-j$ for all $j=0,\ldots,n-1.$
Applying to both sides of the equality~(\ref{F: A psin=...}) the operator $1+(r_z-r)A_z(r)$ and using commutativity of operators $A_z(s)$ and $A_z(r)$~(\ref{F: A(s) and A(r) commute}) we obtain
\begin{equation} \label{F: 3947}
  \begin{split}
    A_z(s)\SqBrs{1+(r_z-r)A_z(r)}u & = \SqBrs{1+(r_z-r)A_z(r)}A_z(s)u
    \\ & = \sum_{j=0}^{n-1} \frac {\phi^{(n-j-1)}}{(s-r_z)^{j+1}},
  \end{split}
\end{equation}
where
\begin{equation} \label{F: 8837}
  \phi^{(n-j-1)} = \SqBrs{1+(r_z-r)A_z(r)}u^{(n-j)}, \ \ j=0,1,\ldots,n-1
\end{equation}
and where $\phi^{(0)} = 0.$ Since $u$ is a resonance vector of order $n,$
the vector $\SqBrs{1+(r_z-r)A_z(r)}u$ is a resonance vector of order $n-1.$ Hence, by induction assumption, it follows
from~(\ref{F: 3947}) that the vector $\phi^{(n-j-1)}$ has order $n-j-1.$
Since the operator $1+(r_z-r)A_z(r)$ decreases order of a resonance vector by 1, this and~(\ref{F: 8837}) imply that $u^{(n-j)}$
is a vector of order $n-j.$ Proof of the first part of the theorem is complete.

The equality~(\ref{F: psi(j)=oint...Apsi(k)}) follows from~(\ref{F: bfA to j}) and~(\ref{F: A psik=...}).
\end{proof}
\noindent
The equality~(\ref{F: A psik=...}) can be rewritten as
\begin{equation} \label{F: Az(s)Pz(rz)=sum...}
  A_z(s)P_z(r_z) = \sum_{j=0}^{d-1} (s-r_z)^{-j-1}\bfA_z^j(r_z),
\end{equation}
where~$d$ is the order of the resonance point~$r_z.$

\begin{cor} The holomorphic part $\tilde A_{z,r_z}(s)$ of the meromorphic function~$A_z(s)$
in a neighbourhood of~$r_z$ satisfies the equality
\begin{equation} \label{F: tilde AP=0}
  \tilde A_{z,r_z}(s) P_z(r_z) = P_z(r_z) \tilde A_{z,r_z}(s) = 0.
\end{equation}
\end{cor}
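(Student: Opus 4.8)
The plan is to derive the corollary directly from the Laurent expansion \eqref{F: Laurent for A+(s)} together with the multiplicative relation \eqref{F: PA=AP=A}. First I would recall that \eqref{F: Laurent for A+(s)} writes $A_z(s) = \tilde A_{z,r_z}(s) + \sum_{j=1}^{d} (s-r_z)^{-j}\bfA_z^{j-1}(r_z)$, where the principal part consists exactly of powers of $\bfA_z(r_z)$ (with $\bfA_z^0(r_z) = P_z(r_z)$ via \eqref{F: P=bfA to 0}), and $\tilde A_{z,r_z}(s)$ is holomorphic at $r_z$. The key observation is that multiplying $A_z(s)$ on either side by $P_z(r_z)$ and using \eqref{F: PA=AP=A}, which gives $\bfA_z^{j}(r_z) P_z(r_z) = \bfA_z^{j}(r_z)$ for all $j\geq 0$, shows that $A_z(s)P_z(r_z)$ already equals its own principal part — this is precisely \eqref{F: Az(s)Pz(rz)=sum...}. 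Hence $\tilde A_{z,r_z}(s) P_z(r_z)$ must be the holomorphic part of $A_z(s)P_z(r_z)$, and the latter has no holomorphic part.

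More carefully, I would argue as follows. From \eqref{F: Laurent for A+(s)}, multiplying on the right by $P_z(r_z)$ and invoking \eqref{F: PA=AP=A} term by term in the principal part, one gets
\begin{equation*}
  A_z(s)P_z(r_z) = \tilde A_{z,r_z}(s)P_z(r_z) + \sum_{j=0}^{d-1} (s-r_z)^{-j-1}\bfA_z^{j}(r_z).
\end{equation*}
Comparing this with \eqref{F: Az(s)Pz(rz)=sum...}, which has already been established, the two expressions for $A_z(s)P_z(r_z)$ must agree, and since the principal parts (the sums over $j$) are identical, the holomorphic parts must agree too: $\tilde A_{z,r_z}(s)P_z(r_z) = 0$ identically in a punctured neighbourhood of $r_z$, hence on the full neighbourhood by continuity. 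The same computation with $P_z(r_z)$ on the left, using the left-sided half of \eqref{F: PA=AP=A} ($P_z(r_z)\bfA_z^{j}(r_z) = \bfA_z^{j}(r_z)$), gives $P_z(r_z)\tilde A_{z,r_z}(s) = 0$.

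Alternatively — and this is perhaps cleaner as a self-contained argument — I would extract $\tilde A_{z,r_z}(s)$ by a contour integral: since it is the holomorphic part of the Laurent series, $\tilde A_{z,r_z}(s)$ can be recovered from $A_z(s)$ minus its principal part, and then $\tilde A_{z,r_z}(s)P_z(r_z) = \bigl(A_z(s) - \sum_j (s-r_z)^{-j}\bfA_z^{j-1}(r_z)\bigr)P_z(r_z)$; each term $\bfA_z^{j-1}(r_z)P_z(r_z) = \bfA_z^{j-1}(r_z)$ cancels against the corresponding term of $A_z(s)P_z(r_z)$ as computed in \eqref{F: Az(s)Pz(rz)=sum...}, leaving zero. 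I do not anticipate a genuine obstacle here: the statement is essentially a bookkeeping consequence of facts already proved, and the only mild subtlety is making sure the identity $\bfA_z^{j}(r_z)P_z(r_z) = \bfA_z^{j}(r_z)$ is applied for every $j$ including $j=0$ (where it reads $P_z(r_z)^2 = P_z(r_z)$, the idempotent property) and that one is comparing the two displayed expansions of $A_z(s)P_z(r_z)$ as honest Laurent series whose coefficients are therefore uniquely determined.
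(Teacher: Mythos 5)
Your proof is correct and follows essentially the same route as the paper, whose one-line proof cites exactly the ingredients you use: the Laurent expansion \eqref{F: Laurent for A+(s)}, the relations $\bfA_z^{j}(r_z)P_z(r_z)=P_z(r_z)\bfA_z^{j}(r_z)=\bfA_z^{j}(r_z)$ from \eqref{F: PA=AP=A}, and the formula \eqref{F: Az(s)Pz(rz)=sum...}, with the left-hand version obtained via the commutation $A_z(s)P_z(r_z)=P_z(r_z)A_z(s)$. No gaps worth noting.
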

\noindent This follows from~(\ref{F: Laurent for A+(s)}),~(\ref{F: PA=AP=A})
and~(\ref{F: Az(s)Pz(rz)=sum...}).


\begin{prop} \label{P: [1+sAz(r)](-1)Pz(rz)} 
If~$r_z$ is a resonance point of order~$d$ and if $r$ and $s+r$ are regular points such that $\abs{s} < \abs{r-r_z},$ then
\begin{equation} \label{F: weird f-n}
  \begin{split}
     [1+sA_z(r)]^{-1}P_z(r_z) & = \sum_{j=0}^{d-1} (r-r_z)^{-j}R_j\brs{\frac{s}{r_z-r}} \bfA_z^j(r_z),
  \end{split}
\end{equation}
where $R_j(w),$ $j=0,1,2,\ldots$ are some holomorphic functions given by power series centered at $w = 0$ with radius of convergence equal to~$1.$
\end{prop}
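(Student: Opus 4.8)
The plan is to compute the left-hand side directly by expanding $[1+sA_z(r)]^{-1}$ restricted to the finite-dimensional invariant subspace $\Upsilon_z(r_z) = \im P_z(r_z)$. The key structural fact to exploit is Theorem~\ref{T: Laurent for A(s)psi}, equivalently the equality~(\ref{F: Az(s)Pz(rz)=sum...}): on $\im P_z(r_z)$ the operator $A_z(r)$ acts as $\sum_{j=0}^{d-1}(r-r_z)^{-j-1}\bfA_z^j(r_z)$, where $\bfA_z(r_z)$ is nilpotent of index $d$ and satisfies $\bfA_z(r_z)P_z(r_z) = P_z(r_z)\bfA_z(r_z) = \bfA_z(r_z)$ by~(\ref{F: PA=AP=A}). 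Writing $\sigma_z(r) = (r-r_z)^{-1}$ and $N = \bfA_z(r_z)$, we have $A_z(r)P_z(r_z) = \sigma_z(r)P_z(r_z) + \sigma_z^2(r)N + \ldots + \sigma_z^d(r)N^{d-1} = \bigl(\sigma_z(r) + \sigma_z^2(r)N + \ldots\bigr)$ as an operator on $\im P_z(r_z)$, the series terminating because $N^d = 0$.

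First I would reduce to computing $[1+sA_z(r)P_z(r_z)]^{-1}P_z(r_z)$: since $A_z(r)$ and $P_z(r_z)$ commute and $P_z(r_z)$ is idempotent with range $\Upsilon_z(r_z)$, the operator $1+sA_z(r)$ leaves this subspace invariant and its inverse restricted there is $[1+sA_z(r)P_z(r_z)]^{-1}$ composed with $P_z(r_z)$; the hypothesis that $r$ and $s+r$ are regular points guarantees (via Theorem~\ref{T: Az 4.1.11} / the identity~(\ref{F: A(s)=(1+(s-r)A(r))(-1)A(r)})) that $1+sA_z(r)$ is indeed invertible. Next, on $\im P_z(r_z)$ the operator $1+sA_z(r)$ equals $(1+s\sigma_z(r))\cdot\bigl[1 + \frac{s}{1+s\sigma_z(r)}(\sigma_z^2(r)N + \sigma_z^3(r)N^2 + \ldots)\bigr]$, which is a scalar times (identity plus nilpotent). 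The scalar $1+s\sigma_z(r) = 1 + s(r-r_z)^{-1} = (r-r_z+s)(r-r_z)^{-1}$ is nonzero exactly because $s+r$ is regular (so $s+r \neq r_z$), and the condition $|s| < |r-r_z|$ makes the relevant geometric series in $s$ converge. Inverting (identity plus nilpotent) by its finite Neumann series and collecting powers of $N = \bfA_z(r_z)$, one obtains an expression $\sum_{j=0}^{d-1} c_j(s,r,r_z)\,\bfA_z^j(r_z)$ where each $c_j$ is a rational function of $s$ whose only singularities, as a function of $s$, occur at $s\sigma_z(r) = -1$, i.e. $s = r_z - r$. Rescaling via $w = s/(r_z-r)$ turns each $c_j$ into $(r-r_z)^{-j}$ times a function $R_j(w)$ holomorphic at $w=0$ with a pole at $w = 1$, hence with radius of convergence exactly $1$.

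The main obstacle, such as it is, is purely bookkeeping: one must carefully track which power of $(r-r_z)$ is absorbed into each coefficient so that the final formula matches the stated normalization $(r-r_z)^{-j}R_j(s/(r_z-r))$, and one must verify that the functions $R_j$ are genuinely independent of $r$ and $r_z$ (they are: after the substitution everything depends only on the single variable $w$ and on the combinatorics of inverting $1 + w(N + wN^2 + \ldots)$, i.e. $1 + \frac{wN}{1-wN}$, whose inverse is $1 - wN = \ldots$ — more precisely $\bigl(1 + \tfrac{wN}{1-wN}\bigr)^{-1} = 1-wN$ only formally; the actual computation gives $R_j$ as the coefficient of $N^j$ in $(1-wN)\cdot\frac{1-wN}{1}$-type expressions). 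I would present the identity $(1+w\sigma)^{-1}$ expansions cleanly using the nilpotency $N^d=0$ so that all series are finite in $N$ but infinite (geometric, radius $1$) in $w$, and then simply \emph{define} $R_j(w)$ to be the resulting power series, noting convergence for $|w|<1$ and a singularity at $w=1$ forced by the factor $1+s\sigma_z(r)$ in the denominator. No deep input beyond~(\ref{F: Az(s)Pz(rz)=sum...}), (\ref{F: PA=AP=A}), and the commutativity~(\ref{F: A(s) and A(r) commute}) is needed; this is essentially linear algebra on a finite-dimensional space together with elementary bounds on where the scalar prefactor vanishes.
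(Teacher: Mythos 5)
Your proposal is correct, and it reaches the stated formula by a route whose mechanics differ from the paper's. The paper works on the whole space: it first computes $A_z^{n+1}(r)P_z(r_z)$ explicitly by applying the derivative formula~(\ref{F: Dn Az(s)=...}) to the expansion~(\ref{F: Az(s)Pz(rz)=sum...}), obtaining $\sum_{j=0}^{d-1}C^{n}_{n+j}(r-r_z)^{-j-n-1}\bfA_z^j(r_z)$; it then expands $[1+sA_z(r)]^{-1}P_z(r_z)$ as the Neumann series $\sum_n(-s)^nA_z^n(r)P_z(r_z)$ for small $s$, interchanges the two sums to read off $R_j(w)=\sum_n C^{n-1}_{n+j-1}w^n$, and finally extends the identity to all $s$ with $\abs{s}<\abs{r-r_z}$ and $1+sA_z(r)$ invertible by analytic continuation. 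You instead restrict to the finite-dimensional invariant subspace $\im P_z(r_z)$, where by~(\ref{F: Az(s)Pz(rz)=sum...}) and~(\ref{F: PA=AP=A}) the operator $A_z(r)$ is $\sigma+\sigma^2N+\ldots+\sigma^dN^{d-1}$ with $\sigma=(r-r_z)^{-1}$ and $N=\bfA_z(r_z)$ nilpotent, and you invert $1+sA_z(r)$ exactly as a nonzero scalar times (identity plus nilpotent). This buys closed forms, $R_0(w)=(1-w)^{-1}$ and $R_j(w)=w(1-w)^{-j-1}$ for $j\geq 1$, so the radius of convergence $1$ is immediate from the pole at $w=1$; moreover no small-$s$ restriction or continuation step is needed, since the identity holds as a rational-function identity whenever $1+sA_z(r)$ is invertible, the hypothesis $\abs{s}<\abs{r-r_z}$ entering only to write the coefficients as convergent power series in $w=s/(r_z-r)$. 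The paper's route, in exchange, produces the Taylor coefficients of $R_j$ directly. One small caution: your parenthetical manipulation of $\brs{1+\tfrac{wN}{1-wN}}^{-1}$ is garbled; with your factorization the bracket to invert is $1-\tfrac{w}{1-w}\,\sigma N\brs{1-\sigma N}^{-1}$, and carrying out the (finite, by nilpotency) expansion and collecting powers of $N$ yields exactly the closed forms above and confirms that each $R_j$ depends on $w$ alone — this is precisely the bookkeeping you flagged, and it goes through.
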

\begin{proof} The numbers $r$ and $s+r$ are to be regular points for the equality (\ref{F: weird f-n}) to hold since otherwise the operator $A_z(r)$ does not exist or the operator
$1+sA_z(r)$ is not invertible.

It follows from~(\ref{F: Dn Az(s)=...}) and~(\ref{F: Az(s)Pz(rz)=sum...}) that
\begin{equation*}
  \begin{split}
    A_z^{n+1}(r) P_z(r_z) & = \frac{(-1)^n}{n!} \frac {d^n}{dr^n} \sum_{j=0}^{d-1} \frac {1}{(r-r_z)^{j+1}} \bfA_z^j(r_z)
      \\ & = \frac{1}{n!} \sum_{j=0}^{d-1} \frac {(j+1)(j+2)\ldots(j+n)}{(r-r_z)^{j+n+1}} \bfA_z^j(r_z)
      \\ & = \sum_{j=0}^{d-1} \frac {C^{n}_{n+j}}{(r-r_z)^{j+n+1}} \bfA_z^j(r_z),
  \end{split}
\end{equation*}
where $C^{n}_{n+j}$ is the binomial coefficient.
Using this, we have, for small enough $s,$
\begin{equation*}
  \begin{split}
     [1+sA_z(r)]^{-1}P_z(r_z) & = \sum_{n=0}^\infty (-s)^n A_z^n(r)P_z(r_z)
     \\ & = \sum_{n=0}^\infty (-s)^n \sum_{j=0}^{d-1} \frac {C^{n-1}_{n+j-1}}{(r-r_z)^{j+n}} \bfA_z^j(r_z)
     \\ & = \sum_{j=0}^{d-1} (r-r_z)^{-j} \sum_{n=0}^\infty \brs{\frac{-s}{r-r_z}}^n C^{n-1}_{n+j-1}\bfA_z^j(r_z).
  \end{split}
\end{equation*}
The functions $R_j(w) = \sum_{n=0}^\infty C^{n-1}_{n+j-1}w^n,$ $j=1,2,\ldots,$ are holomorphic functions with radius of convergence equal to~$1.$
It follows that~(\ref{F: weird f-n}) holds for all small enough $s,$ and therefore by analytic continuation it holds for all $s$
such that the operator $1+sA_z(r)$ is invertible and $\abs{s} < \abs{r-r_z}.$
The last equality also shows that if $\abs{s} < \abs{r-r_z},$ then the function $[1+sA_z(r)]^{-1}P_z(r_z)$ admits analytic continuation to non-regular points $s$
which belong to the disk $\abs{s} < \abs{r-r_z}.$
\end{proof}

Recall that underlined versions $\ulP_z(r_z)$ and $\ubfA_z(r_z)$ of operators $P_z(r_z)$ and $\bfA_z(r_z)$
are defined by formulas (\ref{F: ulPz(rz)=oint (sigma-Az)(-1)d sigma}) and (\ref{F: def of ubfA and ubfB}).
In the following proposition we use the underlined operators, since for ``non-underlined'' operators it does not make sense.
\begin{prop} For any resonance point $r_z$ corresponding to a non-real number~$z,$
\begin{equation} \label{F: (H-z)P=-VA}
  (H_{r_z} - z)\ulP_z(r_z) = - V\ubfA_z(r_z).
\end{equation}
\end{prop}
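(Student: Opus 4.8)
\end{prop}

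\begin{proof}
The plan is to express both sides as contour integrals of the meromorphic function $s\mapsto \ulA_z(s)=R_z(H_s)V$ over a small circle $C(r_z)$ around the pole~$r_z$ and then to match the two integrands. By the underlined analogue of Proposition~\ref{P: Pz(rz)=res Az(s)} (applicable since~$z$ is non-real) together with definition~(\ref{F: def of ubfA and ubfB}),
$$
  \ulP_z(r_z) = \frac 1{2\pi i}\oint_{C(r_z)} \ulA_z(s)\,ds \quad\text{and}\quad \ubfA_z(r_z) = \frac 1{2\pi i}\oint_{C(r_z)} (s-r_z)\,\ulA_z(s)\,ds.
$$

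First I would establish the pointwise identity
$$
  (H_{r_z}-z)\,\ulA_z(s) = V - (s-r_z)\,V\ulA_z(s),
$$
valid at every point~$s$ at which $\ulA_z(s)$ is defined. For real non-resonant~$s$ this follows at once from $(H_s-z)R_z(H_s)V=V$ and the splitting $H_{r_z}-z=(H_s-z)-(s-r_z)V$. To extend it to the relevant (non-real) values of~$s$ on $C(r_z)$ one observes, using the underlined analogue of~(\ref{F: A(s)=(1+(s-r)A(r))(-1)A(r)}), that $\ulA_z(s)$ maps~$\hilb$ into the common domain~$\euD$ of the operators~$H_s$ (see~(\ref{F: euD=dom(H)})), so that both sides are holomorphic operator-valued functions of~$s$ near $C(r_z)$ which agree on the real axis.

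Next I would integrate this identity around $C(r_z)$, moving the closed operator $H_{r_z}-z$ inside the integral. Since the integrand $\ulA_z(s)$ takes values with range in $\euD=\dom(H_{r_z}-z)$ and since $s\mapsto (H_{r_z}-z)\ulA_z(s)=V-(s-r_z)V\ulA_z(s)$ is continuous on $C(r_z)$, approximating the integral by Riemann sums and invoking closedness of $H_{r_z}-z$ shows that $\ulP_z(r_z)$ has range in $\dom(H_{r_z})$ and
$$
  (H_{r_z}-z)\,\ulP_z(r_z) = \frac 1{2\pi i}\oint_{C(r_z)} \bigl(V-(s-r_z)V\ulA_z(s)\bigr)\,ds.
$$
In the right-hand side $\oint_{C(r_z)} V\,ds=0$, because~$V$ does not depend on~$s$ and $C(r_z)$ is a closed contour, while in the remaining term~$V$ may be pulled out of the integral, giving $-V\cdot\frac 1{2\pi i}\oint_{C(r_z)}(s-r_z)\ulA_z(s)\,ds=-V\ubfA_z(r_z)$. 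This yields the asserted equality $(H_{r_z}-z)\ulP_z(r_z)=-V\ubfA_z(r_z)$.

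The hard part --- really the only part needing care --- will be justifying the interchange of the unbounded closed operator $H_{r_z}-z$ with the contour integral; everything else is a routine computation resting on the second resolvent identity. It is also worth recording the harmless point that, by Lemma~\ref{L: Az Lemma 4.1.4}, a resonance point~$r_z$ corresponding to a non-real~$z$ is itself non-real, so $H_{r_z}=H_0+r_zV$ is a non-self-adjoint closed operator on~$\euD$ and~$z$ may lie in its spectrum; this is precisely why the statement is formulated through $(H_{r_z}-z)\ulP_z(r_z)$ rather than by inverting $H_{r_z}-z$.
\end{proof}
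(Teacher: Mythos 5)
Your argument is correct, but it is not the route the paper takes. The paper starts from the underlined version of the identity $(1+(s-r)\ulA_z(r))\,\ulA_z(s)=\ulA_z(r)$ with a fixed regular point~$r$, substitutes the Laurent expansion of $\ulA_z(s)$ at $s=r_z$, and reads off the coefficient of $(s-r_z)^{-1}$, obtaining the purely bounded-operator identity $(1+(r_z-r)\ulA_z(r))\ulP_z(r_z)=-\ulA_z(r)\ubfA_z(r_z)$; only then does it apply $H_r-z$ on the left, where $(H_r-z)\ulA_z(r)=V$ and $(H_r-z)(1+(r_z-r)\ulA_z(r))=H_{r_z}-z$ collapse everything in one step. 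You instead prove the pointwise identity $(H_{r_z}-z)\ulA_z(s)=V-(s-r_z)V\ulA_z(s)$ (which is the same second-resolvent-identity computation, just organized around $H_{r_z}$ rather than $H_r$) and integrate it over $C(r_z)$, pushing the closed operators $H_{r_z}-z$ and $V$ through the contour integral by a Riemann-sum/closedness argument. The trade-off: the paper's comparison-of-Laurent-coefficients argument never has to interchange an unbounded operator with an integral, so the delicate analytic step you correctly single out simply does not arise there; your version is more direct and makes transparent why exactly the two residue coefficients $\ulP_z(r_z)$ and $\ubfA_z(r_z)$ appear, at the cost of the interchange justification and of the claim that $\ulA_z(s)$ (understood as the continuous prolongation of $R_z(H_s)V$) has range in $\euD$ for the non-real $s$ on the contour --- a point you assert via the analytic-continuation formula but which, for the prolonged operator, is no more automatic than the domain facts the paper itself leaves implicit (the statement already presupposes that $\im\ulP_z(r_z)\subset\dom(H_{r_z})$ and that $V\ubfA_z(r_z)$ makes sense). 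So your proof stands at essentially the same level of rigor as the paper's while using a genuinely different mechanism; if you want it airtight, add one sentence verifying that the finite-rank ranges involved consist of vectors of the form $R_z(H_r)V\phi$ with $\phi\in\dom(V)$, which settles both the range claim and the legitimacy of the interchange.
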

\begin{proof} From~(\ref{F: A(s)=(1+(s-r)A(r))(-1)A(r)}) we have
$$
  (1+(s-r)\ulA_z(r)) \ulA_z(s) = \ulA_z(r).
$$
Substituting here instead of $\ulA_z(s)$ its Laurent expansion~(\ref{F: Laurent for A+(s)}),
we find a Laurent expansion of the left hand side as a function of~$s.$ Since the right hand side is constant, all coefficients except one in this Laurent expansion
are zero. In particular, calculating the coefficient of $(s-r_z)^{-1}$ we find that
$$
  (1+(r_z-r)\ulA_z(r))\ulP_z(r_z) = - \ulA_z(r)\ubfA_z(r_z).
$$
Multiplying both sides of this equality by $H_r-z$ gives~(\ref{F: (H-z)P=-VA}).
\end{proof}
The equality~(\ref{F: (H-z)P=-VA}) is plainly equivalent to the following proposition.
\begin{cor} \label{C: nasty proposition}
Let~$z$ be a non-real number and let~$r_z$ be a resonance point corresponding to~$z.$
If $u_z^{(k)} = F\chi_z^{(k)}$ is a vector of order $k,$ then
\begin{equation*} 
  \begin{split}
    (H_{r_z}-z)\chi_z^{(1)} & = 0 ,\\
    (H_{r_z}-z)\chi_z^{(2)} & = - V\chi_z^{(1)}, \\
    \ldots \\
    (H_{r_z}-z)\chi_z^{(k)} & = - V\chi_z^{(k-1)}, \\
  \end{split}
\end{equation*}
where the vectors $u_z^{(j)} = F\chi_z^{(j)}$ satisfy~(\ref{F: psi(j)=oint...Apsi(k)}).
\end{cor}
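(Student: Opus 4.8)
The plan is to deduce this corollary from the already proved identity~(\ref{F: (H-z)P=-VA}) by transporting everything from the auxiliary space~$\clK$ back to~$\hilb$ and then reindexing. Since $z$ is non-real,~(\ref{F: (H-z)P=-VA}) applies, and the only additional ingredient is the intertwining relation
\begin{equation*}
  A_z(s)F\chi = F\ulA_z(s)\chi, \qquad \chi \in \euD,
\end{equation*}
which follows at once from the factorization~(\ref{F: V}) and the standing inclusions $\euD \subset \dom(V)\cap\dom(F)$ and $JF\euD \subset \dom(F^*)$: indeed $FR_z(H_s)V\chi = FR_z(H_s)F^*(JF\chi) = T_z(H_s)JF\chi = A_z(s)F\chi$. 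Iterating, $A_z^n(s)F\chi = F\ulA_z^n(s)\chi$ for every $\chi \in \euD$, and then the residue representations~(\ref{F: Pz(rz)=res Az(s)}) and~(\ref{F: bfA to j}), together with their underlined counterparts, give
\begin{equation*}
  \bfA_z^j(r_z)F\chi = F\ubfA_z^j(r_z)\chi \qquad (j=0,1,2,\ldots,\ \chi \in \euD),
\end{equation*}
and in particular $P_z(r_z)F\chi = F\ulP_z(r_z)\chi$.

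Next I would promote~(\ref{F: (H-z)P=-VA}) to the family of identities
\begin{equation*}
  (H_{r_z}-z)\ubfA_z^j(r_z) = -V\ubfA_z^{j+1}(r_z), \qquad j=0,1,2,\ldots,
\end{equation*}
obtained by multiplying~(\ref{F: (H-z)P=-VA}) on the right by $\ubfA_z^j(r_z)$ and using the underlined form of~(\ref{F: PA=AP=A}), namely $\ulP_z(r_z)\ubfA_z^j(r_z) = \ubfA_z^j(r_z)$ (with $\ulP_z(r_z) = \ubfA_z^0(r_z)$, so the case $j=0$ is~(\ref{F: (H-z)P=-VA}) itself). Note that each $\ubfA_z^j(r_z)$ maps $\euD$ into $\im R_z(H_{r_z}) = \euD$, so these are genuine identities of operators with values in $\dom(H_{r_z})$.

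Finally, given a resonance vector $u_z^{(k)} = F\chi_z^{(k)}$ of order~$k$, with $\chi_z^{(k)} \in \euD = \dom(H_{r_z})$ as intended, I would set $\chi_z^{(\ell)} := \ubfA_z^{\,k-\ell}(r_z)\chi_z^{(k)}$ for $\ell = 1,\ldots,k$. Applying $F$ and the intertwining relation gives $F\chi_z^{(\ell)} = \bfA_z^{\,k-\ell}(r_z)u_z^{(k)} = u_z^{(\ell)}$, which is exactly~(\ref{F: psi(j)=oint...Apsi(k)}); by Theorem~\ref{T: Laurent for A(s)psi} this vector has order~$\ell$, and since $F$ is injective $\chi_z^{(\ell)}$ is nonzero. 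Evaluating the identities of the second step at $\chi_z^{(k)}$ with $j = k-\ell$ yields $(H_{r_z}-z)\chi_z^{(\ell)} = -V\chi_z^{(\ell-1)}$ for $\ell = 2,\ldots,k$; for $\ell = 1$ it gives $(H_{r_z}-z)\chi_z^{(1)} = -V\ubfA_z^{k}(r_z)\chi_z^{(k)}$, and since $F\ubfA_z^k(r_z)\chi_z^{(k)} = \bfA_z^k(r_z)u_z^{(k)} = 0$ by the order-lowering property in Theorem~\ref{T: Laurent for A(s)psi} and $F$ is injective, $\ubfA_z^k(r_z)\chi_z^{(k)} = 0$, whence $(H_{r_z}-z)\chi_z^{(1)} = 0$. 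This is precisely the asserted chain.

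The computation is essentially bookkeeping; the one point that deserves care — and where I would put most of the effort — is the unbounded-operator juggling: justifying that the contour-integral definitions of $\ulP_z(r_z)$ and $\ubfA_z^j(r_z)$ really intertwine with those of $P_z(r_z)$ and $\bfA_z^j(r_z)$ once both sides are restricted to $\euD$, and that every vector occurring in the chain (each $\chi_z^{(\ell)}$ as well as $V\chi_z^{(\ell-1)}$) lies in $\euD = \dom(H_{r_z})$, so that the displayed equalities hold in $\hilb$ rather than merely formally. Under the standing assumptions $\euD \subset \dom(V)\cap\dom(F)$ and the compactness hypotheses~(\ref{F: Rz(H)V is compact}),~(\ref{F: Tz(H) is compact}) this is routine.
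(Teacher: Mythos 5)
Your proposal is correct and follows essentially the route the paper intends: the paper derives the corollary directly from the identity~(\ref{F: (H-z)P=-VA}) (declaring the two "plainly equivalent"), and your argument is exactly that derivation, spelled out — intertwining the underlined and non-underlined operators through $F$, multiplying~(\ref{F: (H-z)P=-VA}) by powers of $\ubfA_z(r_z)$, and evaluating on $\chi_z^{(k)}$. The domain/closedness bookkeeping you flag is the only content the paper suppresses, and your handling of it is consistent with the paper's standing assumptions.
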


\subsection{Holomorphic part of $A_z(s)$}
In this subsection we study the holomorphic part $\tilde A_{z,r_z}(s)$ of the Laurent expansion~(\ref{F: Laurent for A+(s)}) of the function $A_z(s)$
at a resonance point $s = r_z.$ 
\begin{prop} If $z \in \Pi$ and if~$r_z$ is a resonance point corresponding to $z,$ then for any non-resonant value of $s$
we have
\begin{equation} \label{F: tilde A(r)=...}
  \tilde A_{z,r_z}(r) = \tilde A_{z,r_z}(s)(1+(s-r)\tilde A_{z,r_z}(r))
\end{equation}
as equality between two holomorphic functions of $r.$ 
\end{prop}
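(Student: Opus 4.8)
The statement to prove is the "semigroup"-type identity
$$
  \tilde A_{z,r_z}(r) = \tilde A_{z,r_z}(s)\bigl(1+(s-r)\tilde A_{z,r_z}(r)\bigr)
$$
for the holomorphic part $\tilde A_{z,r_z}$ of the Laurent expansion~(\ref{F: Laurent for A+(s)}) at a resonance point $r_z$. This is the exact analogue, for the holomorphic part, of the second resolvent identity~(\ref{F: II resolvent identity}) rewritten as~(\ref{F: A(s)=(1+(s-r)A(r))(-1)A(r)}) for $A_z(s)$ itself. The natural strategy is to start from~(\ref{F: II resolvent identity}), substitute the Laurent decomposition $A_z(s) = \tilde A_{z,r_z}(s) + N_z(s)$, where $N_z(s) := \sum_{j=1}^{d} (s-r_z)^{-j}\bfA_z^{j-1}(r_z)$ denotes the principal (singular) part, and show that the singular contributions cancel, leaving precisely the desired identity for the holomorphic parts.

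**Key steps.** First I would record the three facts that make the cancellation work: (i) by Proposition~\ref{P: (sigma-Az(r))to(-1)=...} and~(\ref{F: Az(s)Pz(rz)=sum...}), the singular part satisfies $N_z(s) = A_z(s)P_z(r_z) = P_z(r_z)A_z(s)$; (ii) by~(\ref{F: tilde AP=0}), $\tilde A_{z,r_z}(s)P_z(r_z) = P_z(r_z)\tilde A_{z,r_z}(s) = 0$; and (iii) the idempotent $P_z(r_z)$ commutes with $A_z(s)$ by~(\ref{F: AP=PA}), hence also with $N_z(s)$ and with $\tilde A_{z,r_z}(s) = A_z(s) - N_z(s)$. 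Then I would multiply the identity~(\ref{F: A(s)=(1+(s-r)A(r))(-1)A(r)}), or equivalently~(\ref{F: II resolvent identity}), by $1 - P_z(r_z)$ on both sides: using (i)–(iii), $A_z(s)(1-P_z(r_z)) = \tilde A_{z,r_z}(s)$ and $(1-P_z(r_z))A_z(r)(1-P_z(r_z)) = \tilde A_{z,r_z}(r)$, so~(\ref{F: II resolvent identity}) restricted to the range of $1-P_z(r_z)$ becomes
$$
  \tilde A_{z,r_z}(r) - \tilde A_{z,r_z}(s) = (s-r)\,\tilde A_{z,r_z}(r)\tilde A_{z,r_z}(s) = (s-r)\,\tilde A_{z,r_z}(s)\tilde A_{z,r_z}(r),
$$
where the last equality uses that $\tilde A_{z,r_z}(r)$ and $\tilde A_{z,r_z}(s)$ commute (they are both polynomials in $A_z(\cdot)$-type operators that commute via~(\ref{F: A(s) and A(r) commute}), or, more carefully, this commutation can be derived directly once the analogue of~(\ref{F: A(s) and A(r) commute}) is established for the holomorphic parts). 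Rearranging gives exactly $\tilde A_{z,r_z}(r) = \tilde A_{z,r_z}(s)(1+(s-r)\tilde A_{z,r_z}(r))$.

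**Main obstacle.** The delicate point is justifying that $1-P_z(r_z)$ acts as the "projection onto the holomorphic part" in the sense that $A_z(s)(1-P_z(r_z)) = \tilde A_{z,r_z}(s)$ for all $s$ near $r_z$, and that this decomposition is compatible with taking the extra factor $(s-r)$ through the algebra — i.e., that the cross-terms $\tilde A_{z,r_z}(s)N_z(r)$ and $N_z(s)\tilde A_{z,r_z}(r)$ genuinely vanish and not merely the "diagonal" ones. This hinges on $N_z(s) = A_z(s)P_z(r_z)$ together with $\tilde A_{z,r_z}(s)P_z(r_z)=0$, both already available in the excerpt, so the obstacle is really bookkeeping rather than a new idea; one must be careful that~(\ref{F: II resolvent identity}) holds as an identity of meromorphic functions on a neighbourhood of $r_z$ (excluding other resonance points), so that comparing holomorphic parts is legitimate. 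A secondary subtlety is the commutativity $\tilde A_{z,r_z}(s)\tilde A_{z,r_z}(r) = \tilde A_{z,r_z}(r)\tilde A_{z,r_z}(s)$: this should be stated and proved (or cited as an immediate consequence of~(\ref{F: A(s) and A(r) commute}) and~(\ref{F: tilde AP=0})) before it is used, since the final identity is written in the asymmetric form that presupposes it.
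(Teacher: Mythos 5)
Your proof is correct, and every fact you rely on is available before this proposition in the paper: the singular part of the Laurent expansion equals $A_z(s)P_z(r_z)$ by~(\ref{F: Az(s)Pz(rz)=sum...}), the annihilation relations $\tilde A_{z,r_z}(s)P_z(r_z)=P_z(r_z)\tilde A_{z,r_z}(s)=0$ are~(\ref{F: tilde AP=0}), and the commutation of $P_z(r_z)$ with $A_z(s)$ is~(\ref{F: AP=PA}). The paper's displayed proof is organized differently: it fixes $s$, treats $A_z(r)=A_z(s)\,(1+(s-r)A_z(r))$ as an identity of meromorphic functions of $r$, inserts the Laurent expansion of $A_z(r)$ at $r=r_z$, and compares holomorphic parts at $r=r_z$ to get the intermediate identity~(\ref{F: tilde A=A(1+(s-r)tilde A-P)}), after which it invokes exactly your relations $A_z(s)(1-P_z(r_z))=\tilde A_{z,r_z}(s)$ and $A_z(s)\tilde A_{z,r_z}(r)=\tilde A_{z,r_z}(s)\tilde A_{z,r_z}(r)$. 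Your route --- compressing the second resolvent identity~(\ref{F: II resolvent identity}) by the complementary idempotent $1-P_z(r_z)$, i.e.\ using that $\ker P_z(r_z)$ reduces $A_z(s)$ with reduction $\tilde A_{z,r_z}(s)$ --- is precisely the alternative argument the paper sketches in the remark immediately after its proof, so the two arguments share the same three ingredients and differ only in how the holomorphic part is extracted: by Laurent-coefficient comparison in $r$ in the paper, by multiplication with $1-P_z(r_z)$ in your version. The two subtleties you flag are handled correctly: commutativity of the holomorphic parts does follow from $\tilde A_{z,r_z}(s)=A_z(s)(1-P_z(r_z))$ together with~(\ref{F: A(s) and A(r) commute}) and~(\ref{F: AP=PA}), and the identity $A_z(s)(1-P_z(r_z))=\tilde A_{z,r_z}(s)$ holds for every non-resonant $s$, not merely near $r_z$, because~(\ref{F: Az(s)Pz(rz)=sum...}) does, so no separate analytic-continuation step is needed.
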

\begin{proof} Using~(\ref{F: A(s)=(1+(s-r)A(r))(-1)A(r)}) and the Laurent expansion~(\ref{F: Laurent for A+(s)}) of $A_z(s)$ we have
\begin{equation*}
  \begin{split}
    A_z(r) & = A_z(s) (1+(s-r)A_z(r))
    \\ & = A_z(s) \brs{1+(s-r)\tilde A_{z,r_z}(r) + (s-r) \sum_{j=1}^d (r-r_z)^{-j}\bfA_z^{j-1}}.
  \end{split}
\end{equation*}
Here we consider both sides of this equality as meromorphic functions of $r,$ so $s$ is a fixed number.
One can see that the holomorphic part of $(s-r) \sum_{j=1}^d (r-r_z)^{-j}\bfA_z^{j-1}$ at $r = r_z$ is $-P_z(r_z).$
Hence, comparing holomorphic at $r=r_z$ parts of the last equality, we find that
\begin{equation} \label{F: tilde A=A(1+(s-r)tilde A-P)}
  \tilde A_{z,r_z}(r) = A_z(s) \brs{1+(s-r)\tilde A_{z,r_z}(r) - P_z(r_z)}.
\end{equation}
Equalities~(\ref{F: tilde AP=0}) and~(\ref{F: PA=AP=A}) combined with~(\ref{F: Laurent for A+(s)})
imply that $A_z(s)(1-P_z(r_z)) = \tilde A_{z,r_z}(s)$ and that $A_z(s) \tilde A_{z,r_z}(r) = \tilde A_{z,r_z}(s) \tilde A_{z,r_z}(r).$
Combining these equalities with~(\ref{F: tilde A=A(1+(s-r)tilde A-P)}) gives~(\ref{F: tilde A(r)=...}).
\end{proof}
Another way to prove this proposition is to observe that, since $P_z(r_z)$ and $A_z(s)$ commute, the kernel of $P_z(r_z)$ reduces $A_z(s)$ and
by~(\ref{F: Laurent for A+(s)}) the reduction is
$\tilde A_{z,r_z}(s).$ Hence, the claim follows from~(\ref{F: A(s)=(1+(s-r)A(r))(-1)A(r)}) and~(\ref{F: tilde AP=0}).
From this observation it also follows that the kernel and range of the operator $\tilde A_{z,r_z}(r)$ do not depend on $r.$
Using the equality~(\ref{F: tilde A(r)=...}) and a standard Fredholm alternative argument one can show that the operator
$1+(s-r)\tilde A_{z,r_z}(r)$ is invertible, so that
\begin{equation} \label{F: tiA(s)=(1+(s-r)tiA(r))(-1)tiA(r)}
  \tilde A_{z,r_z}(s) = (1+(s-r)\tilde A_{z,r_z}(r))^{-1} \tilde A_{z,r_z}(r).
\end{equation}
Similar equalities also hold for functions $\tilde A_{z,r_z^1,r_z^2},$ etc.

Since~$r_z$ is a pole of $A_z(s)$ the expression $A_z(r_z)$ does not make sense, but the value $\tilde A_{z,r_z}(r_z)$ of the holomorphic part $\tilde A_{z,r_z}(s)$
at $s=r_z$ is defined. In particular, we have
\begin{equation} \label{F: tiA(s)=(1+(s-rz)tiA(rz))(-1)tiA(rz)}
  \tilde A_{z,r_z}(s) = (1+(s-r_z)\tilde A_{z,r_z}(r_z))^{-1} \tilde A_{z,r_z}(r_z).
\end{equation}
The equality~(\ref{F: tiA(s)=(1+(s-rz)tiA(rz))(-1)tiA(rz)}) allows to find Taylor series of $\tilde A_{z,r_z}(s)$ at $s = r_z:$
$$
  \tilde A_{z,r_z}(s) = \tilde A_{z,r_z}(r_z) - \tilde A_{z,r_z}^2(r_z)(s-r_z) + \tilde A_{z,r_z}^3(r_z)(s-r_z)^2 - \ldots
$$
It is possible that $\tilde A_{z,r_z}(r_z) = 0$ but this is very unlikely, since this would imply
that $\tilde A_{z,r_z}(s) = 0$ for all $s$ and therefore according to~(\ref{F: Laurent for A+(s)})
that~$r_z$ is the only resonance point corresponding to~$z.$

Similar properties hold also for the holomorphic part $\tilde B_z(s)$ of the function $B_z(s).$
One can also see that
$$
  \brs{\tilde A_{z,r_z}(s)}^* = \tilde B_{\bar z, \bar r_z}(\bar s) \quad \text{and} \quad  J\tilde A_{z,r_z}(s)=\tilde B_{z,r_z}(s)J.
$$

\section{Geometric meaning of $\Upsilon^1_{\lambda+i0}(r_\lambda)$}
\label{S: Geom meaning of Ups(1)}

This and subsequent sections are independent of each other.

In scattering theory one may distinguish three types of vectors: scattering states, bound states
and trapped states (see e.g. \cite{TayST,RS3}). Bound states describe localized particles, scattering states describe particles which are free
at $t \to \pm \infty,$ and finally trapped states describe particles which are free at $t \to \pm \infty$ but localized at $t \to \mp \infty,$
and as such trapped states describe processes of capture and decay. Bound states are eigenvectors of the full Hamiltonian $H=H_0+V,$
so they are attributed to the point spectrum; the vector space of scattering states of a fixed energy~$\lambda$ can be seen as a fiber Hilbert space $\hlambda$
(on-shell Hilbert space) and thus they can be attributed to absolutely continuous spectrum; finally, trapped vectors should be attributed to
singular continuous spectrum. While the states $\psi$ of all three types are eigenvectors of the full Hamiltonian in the sense that they satisfy the eigenvector
equation $H \psi = \lambda \psi,$ only bound states belong to the Hilbert space. Scattering and trapped states are usually called generalized eigenvectors.
For the Schr\"odinger operator $-\Delta +V,$ the scattering and trapped states are given by functions which do not belong to $L_2(\mbR^\nu).$
In an abstract setting one may consider a rigged Hilbert space~(\ref{F: hilb+}) to describe generalized eigenvectors. That is, proper eigenvectors are elements of
$\hilb$ while generalized eigenvectors are elements of $\hilb_-.$ Since the rigging operator $F$ provides natural isomorphisms of Hilbert spaces
$\hilb$ and $\clK_+$ on one hand, and of Hilbert spaces $\hilb_-$ and $\clK$ on the other hand, one may also treat proper eigenvectors as elements of the Hilbert space~$\clK_+$
and generalized eigenvectors as elements of the Hilbert space~$\clK.$

Let~$H_0$ be a self-adjoint operator from the affine space~(\ref{F: clA}) which is regular at an essentially regular point $\lambda$ and let $V \in \clA_0(F)$ be a perturbation.
At a discrete set of real resonance points~$r_\lambda$ of the triple $(\lambda, H_0,V)$ the operator~$H_0+r_\lambda V$ ceases to be regular at~$\lambda.$
A natural question is why this can happen. By Proposition~\ref{P: Az 4.1.10}, one reason is that~$\lambda$ can be an eigenvalue of
$H_0+r_\lambda V.$ For $\lambda$ outside of the essential spectrum this is the only reason. But if $\lambda$ belongs to the essential spectrum then
the operator~$H_0+r_\lambda V$ may still fail to be regular at~$\lambda$ even if~$\lambda$ is not an eigenvalue.
Intuitively, if~$H_r$ is regular at $\lambda$ then all generalized eigenvectors are scattering states which form the fiber Hilbert space $\hlambda.$
Therefore it is natural to expect that if $\lambda$ is not an eigenvalue of~$H_r$ but nevertheless~$H_r$ is not regular at $\lambda,$
then the operator~$H_0+r_\lambda V$ should have trapped eigenvectors,
that is, generalized eigenvectors which are neither proper eigenvectors nor the elements of the Hilbert space $\hlambda$ of scattering states.
Results of this section formally confirm this assertion.
Namely, it is shown that the vector space
$$
  \Upsilon^1_{\lambda+i0}(r_\lambda)
$$
of solutions of the equation
$$
  u + (r_\lambda-r)T_{\lambda+i0}(H_r)Ju = 0
$$
can be considered as a proper replacement of the vector space of proper eigenvectors
in the sense that the latter space is naturally linearly isomorphic to a subspace of $\Upsilon^1_{\lambda+i0}(r_\lambda).$
The linear isomorphism is natural in the sense that it is given by the rigging operator~$F.$
Thus, dimension of the vector space $\Upsilon^1_{\lambda+i0}(r_\lambda)$ consists of two summands, the dimension of the vector space
of proper eigenvectors and the dimension of the vector factor-space of trapped vectors defined up to an eigenvector.

The eigenvalue equation for the perturbed operator~$H_r=H_0+rV$
$$
  (H_0+rV) \chi = \lambda \chi
$$
can be rewritten formally as the homogeneous Lippmann-Schwinger equation (\cite{LippSch50}, see also e.g. \cite[(81)]{RS3}, \cite{TayST})
\begin{equation} \label{F: Lippmann-Schwinger eq-n}
  \chi + r(H_0-\lambda)^{-1}V\chi = 0.
\end{equation}
If~$\lambda$ lies outside the essential spectrum, then the Lippmann-Schwinger equation makes perfect sense and is equivalent to the eigenvalue equation,
but if~$\lambda$ belongs to the essential spectrum, then the Lippmann-Schwinger equation should be rewritten to make sense. One way of doing this is
to factorize the perturbation~$V$ as $G^*JG,$ where~$G$ is an operator acting from the ``main'' Hilbert space
to an auxiliary Hilbert space~$\clK,$ and to rewrite the Lippmann-Schwinger equation as an equation for a vector $u = G\chi$ in~$\clK$ as follows
(see e.g. \cite[Lemma~4.7.8]{Ya})
\begin{equation} \label{F: Lippmann-Schwinger eq-n(2)}
  u + rG(H_0-\lambda-i0)^{-1}G^* J u = 0.
\end{equation}
This can be done as long as the limiting absorption principle holds, that is, as long as the limit operator $G(H_0-\lambda-i0)^{-1}G^*$
acting on the Hilbert space $\clK$ exists. The vector~$\chi$ may afterwards be recovered by $\chi = G^{-1}u,$
but this vector may not belong to the Hilbert space~$\hilb.$ The number~$\lambda$
for which~(\ref{F: Lippmann-Schwinger eq-n(2)}) has a non-zero solution
is an eigenvalue of $H_0+V$ if and only if the vector $\chi = G^{-1}u$
exists and belongs to $\hilb,$ that is, iff $u$ belongs to the range of the operator~$G.$
But even if $u$ does not belong to the range of~$G,$ the number~$\lambda$ is still to be considered as a singular point of the spectrum of~$H_r$
due to the presence of ``trapped states''.

As a final remark we note that though a factorization $G^*JG$ of the perturbation~$V$
looks to be an unnatural nuisance, which is however necessary for technical reasons,
in the current setting there is a fixed rigging operator~$F$ and the perturbation~$V$ admits a factorization $F^*JF$ by the very definition. 
\begin{center} * \ \ * \ \ *
\end{center}
In this section we shall use two well-known properties of a self-adjoint operator~$H$:
for any real number~$\lambda$
\begin{equation} \label{F: fact 1}
  \frac{(H-\lambda)^2}{(H-\lambda)^2+y^2} \to 1 \quad \text{strongly as} \ y \to 0
\end{equation}
and if a real number~$\lambda$ is not an eigenvalue of~$H$ then
\begin{equation} \label{F: fact 2}
  \frac{y(H-\lambda)}{(H-\lambda)^2+y^2} \to 0 \quad \text{weakly as } \ y \to 0.
\end{equation}

\begin{thm} \label{T: res eq-n and eigenvectors}
Let~$\lambda$ be an essentially regular point, let $H_0 \in \clA$ be regular at~$\lambda$ operator,
let $V \in \clA_0(F),$ let~$r_\lambda$ be a real resonance point of the triple $(\lambda; H_0,V)$
and let $r$ be a regular point of the triple $(\lambda; H_0,V).$
If~$\lambda$ is an eigenvalue of the operator~$H_{r_\lambda} = H_0+r_\lambda V$
with eigenvector~$\chi \in \euD = \dom(H_{r_\lambda}),$ then the vector $u = F\chi$ is a resonance vector of order~1,
that is,
\begin{equation} \label{F: res eq 1}
  \brs{1+(r_\lambda-r)T_{\lambda+i0}(H_{r})J}u = 0.
\end{equation}
\end{thm}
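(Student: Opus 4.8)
The plan is to start from the eigenvalue equation $(H_0+r_\lambda V)\chi = \lambda\chi$, rewrite it as a "regularized" relation involving the resolvent $R_{\lambda+iy}(H_r)$ at a regular point $r$, and then pass to the limit $y\to 0^+$ using the limiting absorption principle together with the two elementary facts \eqref{F: fact 1} and \eqref{F: fact 2}. Throughout, $\chi\in\euD$ and $u=F\chi$; note that $u$ makes sense because $\euD\subset\dom(F)$ by \eqref{F: euD subset dom(F)}, and $Ju$ makes sense and lies in $\dom(F^*)$ by \eqref{F: JF euD subset dom(F*)}.

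First I would establish the algebraic identity at the level of $y>0$. Since $(H_{r_\lambda}-\lambda)\chi = 0$ and $H_{r_\lambda} - H_r = (r_\lambda - r)V$, we have
\begin{equation*}
  (H_r - \lambda - iy)\chi = (H_r - H_{r_\lambda})\chi - iy\chi = -(r_\lambda - r)V\chi - iy\chi.
\end{equation*}
Applying $R_{\lambda+iy}(H_r)$ to both sides gives
\begin{equation*}
  \chi = -(r_\lambda - r)R_{\lambda+iy}(H_r)V\chi - iy R_{\lambda+iy}(H_r)\chi.
\end{equation*}
Now apply $F$ on the left, and write $V\chi = F^*JF\chi = F^*Ju$ (valid since $\chi\in\euD$), so $FR_{\lambda+iy}(H_r)V\chi = T_{\lambda+iy}(H_r)Ju$. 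This yields
\begin{equation*}
  u + (r_\lambda - r)T_{\lambda+iy}(H_r)Ju = -iy F R_{\lambda+iy}(H_r)\chi.
\end{equation*}

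The second step is to take $y\to 0^+$. On the left, $r$ is regular at $\lambda$, so $\lambda\in\Lambda(H_r,F)$ and $T_{\lambda+iy}(H_r)\to T_{\lambda+i0}(H_r)$ in operator norm by \eqref{F: T(l+i0) exists}; since $Ju$ is a fixed vector, the left side converges to $u + (r_\lambda-r)T_{\lambda+i0}(H_r)Ju$. The crux is to show the right side $-iyFR_{\lambda+iy}(H_r)\chi$ tends to $0$. I would handle this by writing $FR_{\lambda+iy}(H_r) = \bigl(F\sqrt{\Im R_{\lambda+iy}(H_r)}\bigr)\cdot\bigl(\sqrt{\Im R_{\lambda+iy}(H_r)}^{-1}R_{\lambda+iy}(H_r)\bigr)$ — the scalar bound $|R_{\lambda+iy}(x)|/\sqrt{\Im R_{\lambda+iy}(x)}\le y^{-1/2}$ used in the Lemma on p.~\pageref{Page: Tz(s)} shows the second factor has norm $\le y^{-1/2}$, so it suffices to prove $\sqrt{y}\,\bigl\|F\sqrt{\Im R_{\lambda+iy}(H_r)}\,\chi'\bigr\|\to 0$ for a suitable rewriting. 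Cleaner: write $yR_{\lambda+iy}(H_r) = \frac{y(H_r-\lambda) + iy^2}{(H_r-\lambda)^2+y^2}$; the imaginary part $\frac{y^2}{(H_r-\lambda)^2+y^2}\chi \to 0$ strongly fails in general, but $\chi = R_{\lambda+iy}(H_{r_\lambda})(H_{r_\lambda}-\lambda-iy)\chi$ — no, more directly: $\lambda$ \emph{is} an eigenvalue of $H_{r_\lambda}$, not of $H_r$ (for $r\neq r_\lambda$; if $r=r_\lambda$ it isn't regular), so by \eqref{F: fact 2} applied to $H_r$, $\frac{y(H_r-\lambda)}{(H_r-\lambda)^2+y^2}\to 0$ weakly, hence $yR_{\lambda+iy}(H_r)\chi\to 0$ weakly. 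Then $FR_{\lambda+iy}(H_r)\cdot(iy\chi)$: since $F\sqrt{\Im R_{\lambda+iy}(H_r)}$ is uniformly bounded near $y=0$ (it converges in norm to $F\sqrt{\Im R_{\lambda+i0}(H_r)}$ because $\Im T_{\lambda+iy}(H_r)$ converges in norm and equals $(F\sqrt{\Im R})(F\sqrt{\Im R})^*$), and the remaining factor times $\sqrt{y}\,\chi$ is controlled, the product tends to $0$.

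\textbf{Main obstacle.} The delicate point is exactly the vanishing of $-iyFR_{\lambda+iy}(H_r)\chi$: a naive norm bound only gives $O(y\cdot y^{-1}) = O(1)$, so one genuinely needs the fact that $\lambda$ is not an eigenvalue of $H_r$ (so $r$ being regular is used twice — once for norm convergence of $T_{\lambda+iy}(H_r)$ and once via \eqref{F: fact 2}) together with the compactness/norm-convergence of $F\sqrt{\Im R_{\lambda+iy}(H_r)}$ to upgrade weak-to-$0$ into the needed convergence. I would factor $iyFR_{\lambda+iy}(H_r)\chi = \bigl(F\sqrt{\Im R_{\lambda+iy}(H_r)}\bigr)\bigl(\sqrt{\Im R_{\lambda+iy}(H_r)}\bigr)^{-1}R_{\lambda+iy}(H_r)(iy\chi)$ and observe that $\sqrt{\Im R_{\lambda+iy}(H_r)}^{-1}\cdot iyR_{\lambda+iy}(H_r)$ has the scalar symbol $\frac{iy}{\sqrt{y}\,((x-\lambda)^2+y^2)^{1/2}}\cdot\frac{((x-\lambda)^2+y^2)^{1/2}}{1}\cdot\frac{1}{((x-\lambda)^2+y^2)^{1/2}}$ — careful bookkeeping shows this equals $\sqrt{y}\cdot\frac{i}{((x-\lambda)^2+y^2)^{1/2}}\cdot$(bounded), and then $\sqrt{y}(H_r-\lambda)^2/((H_r-\lambda)^2+y^2)\to 0$ in the relevant sense on vectors orthogonal to $\ker(H_r-\lambda) = \{0\}$, using \eqref{F: fact 1}. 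Once the right side is shown to vanish, \eqref{F: res eq 1} follows immediately, and the fact that $u$ has order exactly $1$ (not $0$) is automatic since $u = F\chi\neq 0$ as $F$ has trivial kernel and $\chi\neq 0$.
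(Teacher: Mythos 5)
Your proof is correct in substance, and it takes a genuinely different — and in fact more elementary — route than the paper's, although your own "main obstacle" paragraph misjudges where the work lies. The paper starts from $(H_r-\lambda)\chi=(r-r_\lambda)V\chi$, applies $FR_{\lambda\pm iy}(H_r)$, and then splits into real and imaginary parts: the real part is handled with the strong convergence (\ref{F: fact 1}), and the imaginary part is killed using (\ref{F: fact 2}) (which is where the paper uses that $\lambda$ is not an eigenvalue of the regular operator $H_r$) together with compactness of $F E^{H_r}_\Delta$ from (\ref{F: FE(Delta) is compact}). Your version keeps the error term $-iyFR_{\lambda+iy}(H_r)\chi$ explicitly; note this is literally the same quantity the paper is controlling, since $FR_{\lambda+iy}(H_r)(H_r-\lambda)\chi=F\chi+iy\,FR_{\lambda+iy}(H_r)\chi$. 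The difference is how it is shown to vanish, and your first factorization already does the job outright: $\bigl\|y\,FR_{\lambda+iy}(H_r)\chi\bigr\|\le y\,\bigl\|F\sqrt{\Im R_{\lambda+iy}(H_r)}\bigr\|\cdot\bigl\|\bigl(\sqrt{\Im R_{\lambda+iy}(H_r)}\bigr)^{-1}R_{\lambda+iy}(H_r)\chi\bigr\|\le\sqrt{y}\,\bigl\|\Im T_{\lambda+iy}(H_r)\bigr\|^{1/2}\|\chi\|$, and $\|\Im T_{\lambda+iy}(H_r)\|$ stays bounded as $y\to0^+$ precisely because $r$ is regular and $T_{\lambda+iy}(H_r)$ converges in norm. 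So the estimate is $O(\sqrt{y})$, not $O(1)$, and on this route you need neither (\ref{F: fact 1}), nor (\ref{F: fact 2}), nor the compactness of $FE^{H_r}_\Delta$, nor the fact that $\lambda$ is not an eigenvalue of $H_r$; regularity of $r$ enters only through the limiting absorption principle (used once more for the norm convergence of the left-hand side). The subsequent "cleaner" detour in your obstacle paragraph is the only shaky part — for instance $\frac{y^2}{(H_r-\lambda)^2+y^2}\chi\to E^{H_r}_{\{\lambda\}}\chi=0$ strongly here, contrary to your parenthetical, and the final symbol bookkeeping is garbled — but it is dispensable: delete it and keep the $O(\sqrt{y})$ bound. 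Your closing observation that $u=F\chi\ne0$ because $F$ is injective, so the order is exactly $1$, matches the paper. In short: the paper's argument is qualitative (strong/weak limits plus compactness), yours is a short quantitative bound exploiting only the standing hypothesis that $F\sqrt{\Im R_z(H_r)}$ has norm $\|\Im T_z(H_r)\|^{1/2}$; both are valid proofs of the same limiting identity.
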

\begin{proof}
Firstly we note that by (\ref{F: euD subset dom(F)}) the vector $F\chi$ is well-defined, since the domain of $F$ contains the common domain of operators $H \in \clA.$
The eigenvalue equation~$H_{r_\lambda}\chi = \lambda \chi$ implies the equality
\begin{equation} \label{F: (Hr-l)chi=rV chi (0)}
  (H_{r}-\lambda)\chi = (r-r_\lambda)V\chi.
\end{equation}
Here both sides are well-defined since~$H_r$ and $H_{r_\lambda}$ have common domain $\euD$ by (\ref{F: euD=dom(H)})
and by (\ref{F: euD subset dom(V)}) the domain of $V$ contains $\euD.$
Hence, for any~$z$ with $\Im z \neq 0$ we have
$$
  F R_z(H_{r})(H_{r}-\lambda)\chi = (r-r_\lambda)F R_z(H_{r}) V\chi.
$$
Since $V \chi = F^*JF\chi$ and $\lambda \in \Lambda(H_{r},F),$
by the Limiting Absorbtion Principle Assumption (see~(\ref{F: Lambda(H,F)}) and~(\ref{F: T(l+i0) exists}))
the limit of the right hand side of the last equality exists in the
uniform operator topology as $z = \lambda\pm iy \to \lambda\pm i0$
and therefore so does the limit of the left hand side:
$$
  F R_{\lambda\pm i0}(H_{r})(H_{r}-\lambda)\chi = (r-r_\lambda)F R_{\lambda\pm i0}(H_{r}) V\chi.
$$
Adding these equalities gives
\begin{equation*} \label{F: Re=Re}
  F \Re R_{\lambda + i0}(H_{r})(H_{r}-\lambda)\chi = (r-r_\lambda)F \Re R_{\lambda +i0}(H_{r}) V\chi.
\end{equation*}
Since, by~(\ref{F: fact 1}), $\Re R_{\lambda + iy}(H_{r})(H_{r}-\lambda) \to 1$ in
the strong operator topology as $y\to 0,$ it follows from the last equality that
\begin{equation} \label{F: F chi=rF Re R Hr V chi}
  F \chi = (r-r_\lambda)F \Re R_{\lambda +i0}(H_{r}) V\chi.
\end{equation}
Since~$r$ is a regular point of the path $\set{H_s \colon s \in \mbR},$ by Proposition~\ref{P: Az 4.1.10},
$\lambda$ is not an eigenvalue of $H_{r}.$ It follows from this and~(\ref{F: fact 2})
that $\Im R_{\lambda + iy}(H_{r})(H_{r}-\lambda) \to 0$ in the weak
operator topology as $y \to 0.$ Since~$F E_\Delta(H_r)$ is compact by (\ref{F: FE(Delta) is compact}), it follows
that
$
  F \Im R_{\lambda + i0}(H_{r})(H_{r}-\lambda)\chi = 0.
$
Combining this with~(\ref{F: (Hr-l)chi=rV chi (0)}) gives equality
$$
  0 = (r-r_\lambda)F \Im R_{\lambda + i0}(H_{r})V\chi.
$$
Multiplying this equality by $i$ and adding it to~(\ref{F: F chi=rF Re R Hr V chi}), one gets the equality
$$
  F \chi = (r-r_\lambda)F R_{\lambda +i0}(H_{r}) V\chi.
$$
Since $V = F^*JF,$ this can be rewritten as
$$
  (1 + (r_\lambda-r)F R_{\lambda +i0}(H_{r}) F^*J)F\chi = 0.
$$
This equality is identical to~(\ref{F: res eq 1}) with $u = F\chi.$
Hence, $u=F\chi$ is a resonance vector of order~1. 
\end{proof}
A resonance vector~$u$ will be called \emph{regular}, if $u \in \clK_+.$ \label{Page: regular vector}
Since the rigging operator~$F$ has trivial kernel, Theorem~\ref{T: res eq-n and eigenvectors} implies that to linearly independent
eigenvectors $\chi_1,\ldots,\chi_N$ of~$H_0$ there correspond linearly independent regular resonance vectors
$u_1 = F\chi_1, \ldots, u_N = F\chi_N \in \Upsilon^1_{\lambda+i0}(r_\lambda).$
Hence,
\begin{cor} \label{C: mult of lambda <= dim Ups(1)}
If~$\lambda$ is an essentially regular point, then the
geometric multiplicity of~$\lambda$ as an eigenvalue of the self-adjoint operator $H_{r_\lambda} = H_0+r_\lambda V$
does not exceed dimension of the vector space $\Upsilon_{\lambda+i0}^1(r_\lambda),$
that is,
$$
  \dim \clV_\lambda \leq \dim \Upsilon^1_{\lambda+i0}(r_\lambda),
$$
where $\clV_\lambda$ is the eigenspace of $H_{r_\lambda}$ corresponding to the eigenvalue~$\lambda.$
\end{cor}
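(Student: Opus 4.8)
The plan is to derive Corollary~\ref{C: mult of lambda <= dim Ups(1)} directly from Theorem~\ref{T: res eq-n and eigenvectors} together with the injectivity of the rigging operator $F.$ Since $\lambda$ is an essentially regular point, it belongs to $\Lambda(\clA,F) = \bigcup_{H \in \clA} \Lambda(H,F),$ so there exists at least one operator in the line $\gamma = \set{H_0 + rV \colon r \in \mbR}$ which is regular at $\lambda;$ in particular the line $\gamma$ is regular at $\lambda$ in the sense of Theorem~\ref{T: Az Th. 4.2.5}, hence the set of resonance points $R(\lambda; H_0, V)$ is discrete and we may pick a regular point $r.$ (If $\lambda$ happens not to be a resonance value of $H_{r_\lambda}$ at all, i.e.\ $H_{r_\lambda}$ is regular at $\lambda,$ then by Proposition~\ref{P: Az 4.1.10} $\lambda$ cannot be an eigenvalue of $H_{r_\lambda},$ so $\clV_\lambda = \set{0}$ and the inequality is trivial; thus we may assume $r_\lambda$ is a genuine resonance point.)

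First I would let $\chi_1, \ldots, \chi_M$ be a basis of the eigenspace $\clV_\lambda$ of $H_{r_\lambda}$ corresponding to $\lambda,$ so $M = \dim \clV_\lambda.$ Each $\chi_j$ lies in $\euD = \dom(H_{r_\lambda}),$ and by the L.\,A.\,P.\ assumption $\euD \subset \dom(F)$ (see~(\ref{F: euD subset dom(F)})), so the vectors $u_j := F\chi_j$ are well-defined elements of $\clK.$ By Theorem~\ref{T: res eq-n and eigenvectors}, applied with the chosen regular point $r,$ each $u_j$ satisfies $\brs{1+(r_\lambda-r)T_{\lambda+i0}(H_r)J}u_j = 0,$ i.e.\ $u_j \in \Upsilon^1_{\lambda+i0}(r_\lambda).$

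Next I would check that $u_1, \ldots, u_M$ are linearly independent in $\clK.$ This is immediate: if $\sum_j c_j u_j = 0$ then $F\brs{\sum_j c_j \chi_j} = 0,$ and since $F$ has trivial kernel we get $\sum_j c_j \chi_j = 0,$ whence all $c_j = 0$ by linear independence of the $\chi_j.$ Therefore $\Upsilon^1_{\lambda+i0}(r_\lambda)$ contains $M$ linearly independent vectors, so $\dim \Upsilon^1_{\lambda+i0}(r_\lambda) \geq M = \dim \clV_\lambda,$ which is the claimed inequality.

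There is essentially no obstacle here — the corollary is a routine consequence of the theorem and of $\ker F = \set{0}.$ The only points that need a word of care are: (i) confirming that a regular point $r$ exists so that Theorem~\ref{T: res eq-n and eigenvectors} is applicable (handled via essential regularity and Theorem~\ref{T: Az Th. 4.2.5}); and (ii) the degenerate case where $\lambda$ is not an eigenvalue of $H_{r_\lambda},$ where the statement is vacuous. I expect the write-up to be just a few lines, essentially the paragraph already present in the excerpt between Theorem~\ref{T: res eq-n and eigenvectors} and the corollary, made explicit.
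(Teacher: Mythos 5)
Your proposal is correct and is essentially the paper's own argument: the paper also deduces the corollary directly from Theorem~\ref{T: res eq-n and eigenvectors} by noting that $F$ has trivial kernel, so linearly independent eigenvectors $\chi_1,\ldots,\chi_M$ yield linearly independent resonance vectors $F\chi_1,\ldots,F\chi_M \in \Upsilon^1_{\lambda+i0}(r_\lambda).$ Your extra remarks on the existence of a regular point $r$ and the vacuous case merely make explicit what the paper leaves implicit.
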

\noindent
Corollary~\ref{C: mult of lambda <= dim Ups(1)} allows to present an example of a point~$\lambda$ which is not essentially regular.
\begin{thm} \label{T: infty mult-ty then not essentially regular}
  If~$\lambda$ is an eigenvalue of infinite multiplicity for at least one self-adjoint operator~$H$ from the affine space $\clA = H_0+\clA_0(F),$
  then~$\lambda$ is not an essentially regular point of the pair $(\clA,F),$ that is, $\lambda \notin \Lambda(\clA,F).$
\end{thm}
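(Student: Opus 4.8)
The plan is to derive a contradiction from the assumption that $\lambda$ is essentially regular, using Corollary~\ref{C: mult of lambda <= dim Ups(1)} together with the fact that the resonance spaces $\Upsilon_{\lambda+i0}^1(r_\lambda)$ are always finite-dimensional. Suppose $\lambda$ is an eigenvalue of infinite multiplicity for some $H \in \clA$, and suppose towards a contradiction that $\lambda \in \Lambda(\clA,F)$. First I would reduce to the situation addressed by Theorem~\ref{I:T: res eq-n and eigenvectors}: since $\lambda$ is assumed essentially regular, there is an operator $H_0 \in \clA$ regular at $\lambda$, i.e. $\lambda \in \Lambda(H_0,F)$ (here one uses that $R(\lambda;\clA,F)$ is nowhere dense, Theorem~\ref{T: Az Th. 4.2.5}, so a regular representative in $\clA$ exists once $\lambda \in \Lambda(\clA,F)$). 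Writing $H = H_0 + r_\lambda V$ for an appropriate $V \in \clA_0(F)$ and $r_\lambda \in \mbR$, and noting that $H$ being resonant at $\lambda$ (Proposition~\ref{P: Az 4.1.10}, since $\lambda$ is an eigenvalue of $H$) makes $r_\lambda$ a resonance point of the triple $(\lambda;H_0,V)$, we are exactly in the hypotheses of Corollary~\ref{C: mult of lambda <= dim Ups(1)}.

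Next I would invoke Corollary~\ref{C: mult of lambda <= dim Ups(1)}, which gives
$$
  \dim \clV_\lambda \leq \dim \Upsilon^1_{\lambda+i0}(r_\lambda),
$$
where $\clV_\lambda$ is the eigenspace of $H = H_{r_\lambda}$ at $\lambda$. By hypothesis $\dim \clV_\lambda = \infty$. On the other hand, $\Upsilon^1_{\lambda+i0}(r_\lambda)$ is the kernel of $1 + (r_\lambda - s)A_{\lambda+i0}(H_s)$, and $A_{\lambda+i0}(H_s) = T_{\lambda+i0}(H_s)J$ is a \emph{compact} operator on $\clK$ by the Limiting Absorption Principle assumption (condition (4) of the main-results setup, together with the compactness postulate on the sandwiched resolvent); hence $1 + (r_\lambda - s)A_{\lambda+i0}(H_s)$ is a compact perturbation of the identity, so its kernel is finite-dimensional. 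Thus $\dim \Upsilon^1_{\lambda+i0}(r_\lambda) < \infty$, contradicting the inequality above. Therefore $\lambda \notin \Lambda(\clA,F)$.

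The one point requiring a little care — and the main (minor) obstacle — is making sure the hypotheses of Theorem~\ref{I:T: res eq-n and eigenvectors}/Corollary~\ref{C: mult of lambda <= dim Ups(1)} are genuinely met rather than vacuously invoked: namely that when $\lambda \in \Lambda(\clA,F)$ one can actually choose a representative $H_0 \in \clA$ regular at $\lambda$ and a direction $V$ with $H = H_0 + r_\lambda V$, so that the line $\{H_0 + rV\}$ is regular at $\lambda$. This is handled by definition of $\Lambda(\clA,F) = \bigcup_{H'\in\clA}\Lambda(H',F)$, which directly supplies a regular $H_0 \in \clA$, together with the dichotomy of Theorem~\ref{T: Az Th. 4.2.5} (a line through a regular point either meets the resonance set in a discrete set or lies inside it; since $H_0$ is off the resonance set the former holds). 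Once this bookkeeping is in place, the argument is exactly the compactness-versus-infinite-multiplicity contradiction sketched above. (If one prefers a cleaner statement, one may note that the whole argument only uses that $\Upsilon^1_{\lambda+i0}(r_\lambda)$ is finite-dimensional for one choice of regularizing pair, which is automatic from compactness of $A_{\lambda+i0}(H_s)$.)
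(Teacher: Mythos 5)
Your proposal is correct and follows essentially the same route as the paper: the paper also assumes $\lambda\in\Lambda(\clA,F)$, picks a regular operator on the line through $H$, applies Theorem~\ref{T: res eq-n and eigenvectors} to map the infinite-dimensional eigenspace $\clV_\lambda$ injectively (via $F$) into $\Upsilon^1_{\lambda+i0}(r_\lambda)$, and contradicts compactness of $T_{\lambda+i0}(H_r)J$. Your phrasing via Corollary~\ref{C: mult of lambda <= dim Ups(1)} and the finite-dimensionality of $\ker(1+\text{compact})$ is just the same argument stated through the corollary rather than directly.
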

\begin{proof} Without loss of generality it can be assumed that the operator~$H$ is equal to~$H_0.$
Assume the contrary to the statement of the theorem: for some perturbation $V \in \clA_0(F)$
and some necessarily non-zero $r \in \mbR$ the number~$\lambda$ belongs to the set
$\Lambda(H_r,F),$ where~$H_r = H_0+rV.$
Since~$\lambda$ is an eigenvalue of infinite multiplicity of~$H_0$ and $\clV_\lambda$ is the corresponding infinite-dimensional subspace
of eigenvectors, by Theorem~\ref{T: res eq-n and eigenvectors} for the non-resonant point~$r$ the linear subspace $F(\clV_\lambda)$ consists of eigenvectors
of a compact operator $A_{\lambda+i0}(r) = T_{\lambda+i0}(H_r)J$ corresponding to the eigenvalue $1/r.$ Since~$F$ has trivial kernel, the subspace $F(\clV_\lambda)$
is also infinite-dimensional. This contradicts the compactness of the operator $T_{\lambda+i0}(H_r)J.$
\end{proof}
So far in this section no conditions were imposed on~$\lambda$ except the condition of essential regularity. If, however,~$\lambda$
lies outside the essential spectrum, then one can prove more refined version of Theorem~\ref{T: res eq-n and eigenvectors}.
\begin{lemma} \label{L: Upsilon(1) subset F(hilb)} Let~$\lambda$ be an essentially regular point, let~$H_{r_\lambda}$ be resonant at~$\lambda$
and let~$V$ be a regularizing direction.
If~$\lambda$ is an isolated eigenvalue of~$H_{r_\lambda},$ then all resonance vectors of first order are $\clK_+$-vectors,
that is, all vectors $u \in \Upsilon^1_{\lambda+i0}(r_\lambda)$ are of the form $u = F\chi$ for some vector $\chi \in \hilb.$
\end{lemma}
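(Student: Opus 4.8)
The plan is to exploit the hypothesis that $\lambda$ is an \emph{isolated} eigenvalue of $H_{r_\lambda}$ by splitting off the Riesz projection onto the corresponding finite-dimensional eigenspace. Write $P_\lambda = E^{H_{r_\lambda}}_{\{\lambda\}}$ for this spectral projection; since $\lambda$ is isolated, $P_\lambda$ is bounded and its range is the eigenspace $\clV_\lambda$. Decompose $\hilb = \clV_\lambda \oplus \clV_\lambda^\perp$, and correspondingly write $H_{r_\lambda} = \lambda \oplus \hat H$, where $\hat H$ is the restriction of $H_{r_\lambda}$ to $\clV_\lambda^\perp$ and has a spectral gap around $\lambda$. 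Given a resonance vector $u \in \Upsilon^1_{\lambda+i0}(r_\lambda)$, I need to produce $\chi \in \hilb$ with $F\chi = u$. Since $F$ is injective, such $\chi$ is unique if it exists; the content is that $u \in \clK_+ = \im F$, equivalently $\abs{F^*}^{-1}$ makes sense on the right objects, or more concretely that $u = F\chi$ for a genuine Hilbert space vector rather than just an element of $\clK$.

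First I would use the resonance equation of order $1$ at a regular point $r$:
\begin{equation*}
  u = (r-r_\lambda) T_{\lambda+i0}(H_r) J u = (r-r_\lambda) F R_{\lambda+i0}(H_r) F^* J u .
\end{equation*}
This already exhibits $u$ as $F$ applied to $(r-r_\lambda) R_{\lambda+i0}(H_r) V \chi'$ for any $\chi'$ with $F^*Ju$ in its image — but the candidate vector $(r-r_\lambda)R_{\lambda+i0}(H_r)F^*Ju$ need not lie in $\hilb$ a priori, only in $\hilb_-$, since $R_{\lambda+i0}(H_r)$ is the limiting-absorption resolvent and $F^*Ju$ need only be in $\hilb_-$. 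So the real work is to show this candidate actually lies in $\hilb$. Here is where the isolation of $\lambda$ enters: write the second resolvent identity connecting $R_z(H_r)$ and $R_z(H_{r_\lambda})$, namely $R_z(H_r) = R_z(H_{r_\lambda})(1 + (r-r_\lambda)VR_z(H_r))$, apply $F$ on the left and $F^*Ju$ on the right, and pass to the limit $z \to \lambda+i0$ along the upper half-plane. On the right-hand side, $R_z(H_{r_\lambda})$ decomposes as $\frac{1}{\lambda - z}P_\lambda \oplus R_z(\hat H)$; the singular term $\frac{1}{\lambda-z}P_\lambda$ is multiplied by a quantity which I claim vanishes in the limit (this is exactly the statement that $P_\lambda(1+(r-r_\lambda)VR_{\lambda+i0}(H_r))F^*Ju$ tends to a vector killed by the divergence, forcing a compatibility condition), while $R_z(\hat H)$ converges in norm (as $\lambda \notin \sigma(\hat H)$) to a bona fide bounded operator on $\clV_\lambda^\perp$. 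This will express $u$ as $F$ applied to an honest $\hilb$-vector plus possibly an eigenvector contribution — and the eigenvector contribution is $F\chi_0$ with $\chi_0 \in \clV_\lambda \subset \hilb$ by definition.

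The cleanest route, which I would actually adopt, is to argue that the residue extraction forces $u \in \Upsilon^1$ to split as $u = F\chi_0 + u_1$ where $\chi_0 \in \clV_\lambda$ and $u_1$ is a resonance vector ``orthogonal'' to the eigenspace direction, and then to show $u_1 \in \clK_+$ directly. Concretely: the function $z \mapsto FR_z(H_{r_\lambda})F^* $ on a neighbourhood of $\lambda$ in the upper half-plane has at worst a first-order pole at $\lambda$ with residue $FP_\lambda F^* = (F P_\lambda)(FP_\lambda)^*$, a finite-rank operator whose range lies in $FP_\lambda\hilb = F\clV_\lambda \subset \clK_+$; the holomorphic part $\widetilde{T}_{\lambda}(s)$ (in the notation of the earlier Laurent-expansion material, applied here with the roles of coupling constant and energy exchanged via the gap hypothesis) extends across $\lambda$ and maps $\hilb_-$-type vectors to $\clK_+$-vectors because $R_\lambda(\hat H)$ is bounded on $\clV_\lambda^\perp$, hence $F R_\lambda(\hat H) F^* $ maps $\dom(F^*)$ into $F\hilb = \clK_+$. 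Feeding $u$'s resonance equation through this decomposition and using Theorem~\ref{T: res eq-n and eigenvectors} (which supplies the eigenvector $\to$ resonance-vector direction) yields the claim. The main obstacle is the limiting-absorption bookkeeping near the isolated eigenvalue: one must be careful that the norm limit $T_{\lambda+i0}(H_r)$ being assumed to exist (via $\lambda \in \Lambda(H_r,F)$, $r$ regular) interacts correctly with the \emph{pole} of $R_z(H_{r_\lambda})$ at $\lambda$, i.e.\ that the cancellation of the $\frac{1}{\lambda-z}$ singularity is genuine and not merely formal — this is precisely where the resonance equation for $u$ (forcing the would-be divergent coefficient to be annihilated) does its job, and I would write that cancellation out carefully using $\Re R_z$ and $\Im R_z$ as in the proof of Theorem~\ref{T: res eq-n and eigenvectors}, invoking \eqref{F: fact 1} and \eqref{F: fact 2} for the part of the operator $\hat H$ away from $\lambda$.
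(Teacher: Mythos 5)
Your proposal identifies the right candidate vector, $\chi = (r-r_\lambda)R_{\lambda+i0}(H_r)F^*Ju$, but then abandons it on the grounds that "$R_{\lambda+i0}(H_r)$ is the limiting-absorption resolvent" and the candidate "need not lie in $\hilb$ a priori, only in $\hilb_-$." This is exactly the point where the isolation hypothesis does all the work, and you do not use it in the way that matters. Since $\lambda$ is an isolated eigenvalue of $H_{r_\lambda}$, it lies outside the (common) essential spectrum $\sigma_{ess}$; since $V$ is a regularizing direction, there is a regular point $r$, and by Corollary~\ref{C: Az 4.1.10} the number $\lambda$ is then \emph{not} an eigenvalue of $H_r$, hence $\lambda$ belongs to the resolvent set of $H_r$. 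Consequently $R_{\lambda+i0}(H_r)=(H_r-\lambda)^{-1}$ is an honest bounded operator on $\hilb$ — not merely a weighted-space limit — so $\chi=(r-r_\lambda)R_{\lambda+i0}(H_r)F^*Ju$ is a genuine element of $\hilb$, and the first-order resonance equation $u=(r-r_\lambda)FR_{\lambda+i0}(H_r)F^*Ju$ gives $u=F\chi$ at once. That three-line argument is the paper's proof; your worry about $\hilb_-$ simply does not arise under the hypotheses of the lemma.

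The alternative route you then pursue — splitting off the Riesz projection $P_\lambda$ of $H_{r_\lambda}$, decomposing $R_z(H_{r_\lambda})=\frac{1}{\lambda-z}P_\lambda\oplus R_z(\hat H)$, and cancelling the pole against the resonance equation — is not carried to completion and, as written, has a genuine gap: the cancellation of the $\frac{1}{\lambda-z}$ singularity is only asserted ("I claim vanishes in the limit"), the asserted splitting $u=F\chi_0+u_1$ with $u_1\in\clK_+$ is never established, and the entire analysis is performed around the resolvent of $H_{r_\lambda}$ while the resonance equation for $u$ involves $T_{\lambda+i0}(H_r)$ at the regular point $r$, so some second-resolvent bookkeeping linking the two would still have to be justified (in particular, that the would-be divergent coefficient is annihilated by the resonance equation is precisely what needs proof, not a remark). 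None of this machinery is needed once you observe that $\lambda\in\rho(H_r)$; I recommend discarding the decomposition and finishing the direct argument you started with.
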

\begin{proof} Assume that~$u$ is a resonance vector of order 1:
\begin{equation} \label{F: star eq-n}
  (1 + (r_\lambda - r)T_{\lambda+i0}(H_r)J) u = 0.
\end{equation}
Since~$V$ is a regularizing direction, by Corollary \ref{C: Az 4.1.10}
for some~$r$ the number~$\lambda$ belongs to the resolvent set of~$H_r.$
It follows that $R_{\lambda+i0}(H_r) = (H_r-\lambda)^{-1}$ exists as a bounded operator in~$\hilb.$ Hence, the vector
$$
  \chi := (r - r_\lambda)R_{\lambda+i0}(H_r)F^*J u \in \hilb_-
$$
is a well-defined element of the Hilbert space $\hilb,$ where the vector $F^*Ju$ is well-defined by (\ref{F: JF euD subset dom(F*)}).
It follows from this and (\ref{F: star eq-n}) that $u = F\chi.$
Hence, the vector $u = F\chi$ belongs to $\clK_+ \supset F\hilb.$
\end{proof}

\begin{thm} \label{T: if lambda notin ess sp, ...} If~$\lambda$ does not belong to the essential spectrum $\sigma_{ess},$ then
the rigging operator~$F$ is a linear isomorphism of the eigenspace $\clV_\lambda$ of~$H_{r_\lambda}$ and
the vector space~$\Upsilon_\lambda^1(r_\lambda).$ In particular,
$$
  \dim \clV_\lambda = \dim \Upsilon_\lambda^1(r_\lambda).
$$
\end{thm}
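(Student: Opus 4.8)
The plan is to combine the two inclusions already available: Theorem~\ref{T: res eq-n and eigenvectors} gives that $F$ maps $\clV_\lambda$ linearly and injectively into $\Upsilon^1_{\lambda+i0}(r_\lambda)$ (injectivity because $F$ has trivial kernel), so it remains only to prove surjectivity of this map when $\lambda\notin\sigma_{ess}$. For that I would use Lemma~\ref{L: Upsilon(1) subset F(hilb)}: since $\lambda\notin\sigma_{ess}$, Corollary~\ref{C: Az 4.1.10} tells us that $H_{r_\lambda}$ is resonant at $\lambda$ precisely when $\lambda$ is an eigenvalue of $H_{r_\lambda}$, and moreover (again by Corollary~\ref{C: Az 4.1.10} and the fact that the resonance set is discrete) $\lambda$ is then an \emph{isolated} eigenvalue of $H_{r_\lambda}$; so the hypotheses of Lemma~\ref{L: Upsilon(1) subset F(hilb)} are met and every $u\in\Upsilon^1_{\lambda+i0}(r_\lambda)$ has the form $u=F\chi$ for some $\chi\in\hilb$.

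Next I would verify that such a $\chi$ actually lies in $\clV_\lambda$, i.e. that $H_{r_\lambda}\chi=\lambda\chi$. Here I would follow the construction inside the proof of Lemma~\ref{L: Upsilon(1) subset F(hilb)}: pick a regular point $r$ of the triple $(\lambda;H_0,V)$, so $\lambda\in\rho(H_r)$ and $R_{\lambda+i0}(H_r)=(H_r-\lambda)^{-1}$ is a genuine bounded operator on $\hilb$; then from the resonance equation $(1+(r_\lambda-r)T_{\lambda+i0}(H_r)J)u=0$ one reads off $u=(r-r_\lambda)FR_{\lambda+i0}(H_r)F^*Ju$, whence $\chi=(r-r_\lambda)R_{\lambda+i0}(H_r)F^*Ju\in\euD$ satisfies $F\chi=u$. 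Applying $H_r-\lambda$ to this identity gives $(H_r-\lambda)\chi=(r-r_\lambda)F^*JF\chi=(r-r_\lambda)V\chi$, which rearranges to $(H_{r_\lambda}-\lambda)\chi=(H_r-(r-r_\lambda)V-\lambda)\chi=0$, i.e. $\chi\in\clV_\lambda$. (One should note that all these manipulations are legitimate because $\euD$ is the common domain of all $H_s$ by~(\ref{F: euD=dom(H)}), $\euD\subset\dom V$, and $\euD\subset\dom F$, and $F^*Ju$ makes sense by~(\ref{F: JF euD subset dom(F*)}).) Since $F$ is injective, $\chi$ is uniquely determined by $u$, so $F\colon\clV_\lambda\to\Upsilon^1_{\lambda+i0}(r_\lambda)$ is a bijection, hence a linear isomorphism, and the dimension equality $\dim\clV_\lambda=\dim\Upsilon^1_\lambda(r_\lambda)$ follows; here $\Upsilon^1_\lambda(r_\lambda)$ is just alternative notation for $\Upsilon^1_{\lambda+i0}(r_\lambda)$, which by essential regularity of $\lambda$ (and Theorem~\ref{T: Az 4.1.11}) coincides with $\Upsilon^1_{\lambda-i0}(r_\lambda)$ when $\lambda\notin\sigma_{ess}$, since then $R_{\lambda\pm i0}(H_r)$ agree.

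The only subtle point—and the one I would be most careful about—is justifying that $\lambda$ is an \emph{isolated} eigenvalue of $H_{r_\lambda}$ so that Lemma~\ref{L: Upsilon(1) subset F(hilb)} applies: this uses that $\lambda\notin\sigma_{ess}=\sigma_{ess}(H_{r_\lambda})$, and that the essential spectrum of a self-adjoint operator is exactly its spectrum with the isolated finite-multiplicity eigenvalues removed, so any eigenvalue outside $\sigma_{ess}$ is automatically isolated of finite multiplicity. Everything else is a routine assembly of Theorem~\ref{T: res eq-n and eigenvectors}, Lemma~\ref{L: Upsilon(1) subset F(hilb)}, and Corollary~\ref{C: Az 4.1.10}. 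I would not expect to need any new estimate or limiting argument beyond what is already packaged in those results.
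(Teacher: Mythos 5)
Your proposal is correct and follows essentially the same route as the paper: injectivity from Theorem~\ref{T: res eq-n and eigenvectors}, surjectivity via Lemma~\ref{L: Upsilon(1) subset F(hilb)} (with $\lambda$ an isolated eigenvalue since it lies outside $\sigma_{ess}$), and then applying $H_r-\lambda$ to the resonance equation to conclude $H_{r_\lambda}\chi=\lambda\chi$. The only cosmetic difference is that you reconstruct $\chi$ explicitly from the resonance identity, whereas the paper substitutes $u=F\chi$ and cancels $F$ using its trivial kernel — the same argument in substance.
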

\begin{proof}
Since~$F$ has trivial kernel, it follows from Theorem~\ref{T: res eq-n and eigenvectors} that~$F$
is an injective linear mapping from $\clV_\lambda$ to~$\Upsilon_\lambda^1(r_\lambda).$
Hence, it has to be shown that~$F$ maps $\clV_\lambda$ onto~$\Upsilon_\lambda^1(r_\lambda).$
Let $u \in \Upsilon_\lambda^1(r_\lambda),$ so that~$u$ satisfies~(\ref{F: res eq 1}).
By Lemma~\ref{L: Upsilon(1) subset F(hilb)}, there exists $\chi  \in \hilb$ such that $u = F\chi.$
Since~$V$ is a regularizing direction and since~$\lambda$ is an isolated eigenvalue,
the resolvent $(H_r-\lambda)^{-1}$ exists (as a bounded operator) for any non-resonant $r.$
Hence, the equation~(\ref{F: res eq 1}), which the vector~$u$ satisfies by definition,
can be written as
$$
  u +(r_\lambda-r) F R_{\lambda+i0}(H_{r})F^*Ju = 0,
$$
where $R_{\lambda+i0}(H_{r}) = (H_r-\lambda)^{-1}$ is a bounded operator.
Replacing $u$ by $F\chi$ gives the equality
$$
  F\chi +(r_\lambda-r) F R_{\lambda+i0}(H_{r})F^*JF\chi = 0.
$$
Since~$F$ has trivial kernel, it follows that
$$
  \chi +(r_\lambda-r) R_{\lambda+i0}(H_{r})F^*JF\chi = 0.
$$
Applying the operator~$H_r-\lambda$ to both sides of this equality gives
$$
  (H_r-\lambda)\chi +(r_\lambda-r) V\chi = 0.
$$
Thus,~$H_{r_\lambda}\chi = \lambda \chi,$ that is, $u = F\chi$ is an image of an eigenvector $\chi \in \clV_\lambda.$
\end{proof}
The statement of Theorem~\ref{T: if lambda notin ess sp, ...} is not final in the sense that the condition $\lambda \notin \sigma_{ess}$
in fact might be redundant. In this regard, see Conjecture~\ref{Conj: res eq-n and eigenvectors} from section \ref{S: open problems}.

\subsection{Multiplicity of singular spectrum}
\label{SS: mult of s.c. spectrum}
%

Theorem~\ref{T: if lambda notin ess sp, ...} implies that if~$\lambda$ does not belong
to the essential spectrum $\sigma_{ess},$ then the vector space~$\Upsilon_\lambda^1(r_\lambda) = \Upsilon_\lambda^1(H_{r_\lambda},V)$ does not depend on $V.$
This raises a natural question: is this statement true in general? It turns out that the answer to this question is positive.
This is the content of the following theorem.
This is a simple but interesting fact, since it allows to introduce multiplicity of singular spectrum at an essentially regular point~$\lambda$
as the dimension of the vector space $\Upsilon_\lambda^1(H_{r_\lambda},V).$

\begin{thm} \label{T: mult of s.c. spectrum: drastic version}
If~$H_{r_\lambda}$ is resonant at an essentially regular point~$\lambda,$
then the vector space
$$
  \Upsilon^1_{\lambda+i0}(r_\lambda) = \Upsilon_\lambda^1(H_{r_\lambda},V)
$$ does not depend on a regularizing operator~$V.$
\end{thm}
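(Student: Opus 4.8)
The plan is to show that $\Upsilon^1_{\lambda+i0}(r_\lambda)$ depends only on the operator $H_{r_\lambda}$ and not on the choice of regularizing direction $V.$ The key idea is that the resonance equation of order $1$ for the pair $(\lambda; H_0, V)$ at a resonant operator $H_{r_\lambda}$ can be rephrased purely in terms of $H_{r_\lambda}$ itself, without reference to $V.$ First I would use Theorem~\ref{T: res eq-n and eigenvectors} together with Corollary~\ref{C: nasty proposition} as a model: a vector $u \in \Upsilon^1_{\lambda+i0}(r_\lambda)$ satisfies $(1 + (r_\lambda - r)T_{\lambda+i0}(H_r)J)u = 0$ for any regular $r,$ which after multiplying by $(H_r - \lambda)$ (interpreted in the appropriate rigged-space sense, using $T_z(H_r) = FR_z(H_r)F^*$) should be equivalent to saying that $u = F\chi$ where $\chi$ is, in the distributional sense, a generalized eigenvector of $H_{r_\lambda}$ at $\lambda.$ Concretely, the strategy is to show that for $u \in \clK,$ membership in $\Upsilon^1_{\lambda+i0}(r_\lambda)$ is equivalent to the statement that there exists $\chi \in \hilb_-$ (equivalently $u = F\chi$ via the isomorphism $F\colon \hilb_- \simeq \clK$) with $(H_{r_\lambda} - \lambda)\chi = 0,$ where this last equation is to be understood as an equation in a suitable weaker topology — precisely the content that emerges from the limiting absorption principle applied to $H_{r_\lambda}$ sliced by $F.$

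The main steps would be: (1) Fix a regular point $r$ on the line $\gamma = \{H_0 + sV\}$ and rewrite the order-$1$ resonance equation $(1 + (r_\lambda - r)T_{\lambda+i0}(H_r)J)u = 0$ using the second resolvent identity in sandwiched form~(\ref{F: T=(...)(-1)T}); the crucial point is that $T_{\lambda+i0}(H_r)J u = T_{\lambda+i0}(H_r) F^* J' F \chi$ where the ingredients $F^*Ju$ make sense by~(\ref{F: JF euD subset dom(F*)}). (2) Apply $(H_r - \lambda)$ to the relation $\chi = (r - r_\lambda) R_{\lambda+i0}(H_r) F^* J u$ — this is legitimate because $R_{\lambda+i0}(H_r)$ maps into $\euD$ — to deduce $(H_r - \lambda)\chi = (r - r_\lambda) V \chi$ in an appropriate sense, which rearranges to $(H_{r_\lambda} - \lambda)\chi = 0.$ Here $V\chi = F^*JF\chi$ is the key bridge, and the $V$-dependence of the individual terms cancels. (3) Conversely, show that any $\chi$ with $(H_{r_\lambda} - \lambda)\chi = 0$ in the rigged sense yields, via $u = F\chi,$ a solution of the order-$1$ resonance equation for the line $\gamma$; this runs exactly as the forward direction of Theorem~\ref{T: res eq-n and eigenvectors} but dropping the requirement that $\chi \in \hilb,$ using instead only $\chi \in \hilb_-.$ Since the characterization obtained in steps (1)--(3) refers only to $H_{r_\lambda},$ $\lambda,$ and $F$ — but not to $V$ — the space $\Upsilon^1_{\lambda+i0}(r_\lambda)$ is independent of the regularizing direction.

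The hard part will be making precise the equation $(H_{r_\lambda} - \lambda)\chi = 0$ for $\chi \in \hilb_-,$ since $H_{r_\lambda}$ is not regular at $\lambda$ so the naive resolvent $R_{\lambda+i0}(H_{r_\lambda})$ does not exist, and one cannot directly apply spectral-theorem tools like~(\ref{F: fact 1}) and~(\ref{F: fact 2}) to $H_{r_\lambda}.$ The resolution is to always work along a regular line: one never evaluates anything at the resonant operator directly, but instead characterizes $u$ through its behaviour under $T_{\lambda+i0}(H_r)$ for regular $r,$ and only then observes that the resulting condition is symmetric in the choice of regularizing direction. An alternative, possibly cleaner, route is to prove that for two regularizing directions $V_1, V_2$ (with a common resonant operator $H_{r_\lambda}$, after reparametrizing so the resonance point is the same), the identity $F^* J_1 F = V_1,$ $F^* J_2 F = V_2$ combined with the eigenvector-type equation $(H_{r_\lambda} - \lambda)\chi = 0$ forces $\Upsilon^1_{\lambda+i0}(r_\lambda; H_0, V_1) = F\{\chi \in \hilb_- : (H_{r_\lambda}-\lambda)\chi = 0 \text{ in rigged sense}\} = \Upsilon^1_{\lambda+i0}(r_\lambda; H_0, V_2).$ I expect the proof to be short once the rigged-Hilbert-space formalism of Section~\ref{S: Preliminaries} is invoked carefully; the subtlety is entirely in the functional-analytic bookkeeping of which vectors live in $\hilb,$ $\hilb_-,$ $\clK,$ and $\clK_+.$
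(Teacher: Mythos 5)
There is a genuine gap, and it sits exactly where you flag the ``hard part.'' Your plan is to characterize $\Upsilon^1_{\lambda+i0}(r_\lambda)$ intrinsically as $F\{\chi \in \hilb_- : (H_{r_\lambda}-\lambda)\chi = 0\}$, but inside the essential spectrum none of the ingredients of that characterization are available in the paper's framework: the limiting absorption principle only gives norm limits of the \emph{sandwiched} resolvent $T_{\lambda+iy}(H_r)=FR_{\lambda+iy}(H_r)F^*$, so the vector $\chi=(r-r_\lambda)R_{\lambda+i0}(H_r)F^*Ju$ in your step (2) is not a well-defined element of $\hilb$ (that step is legitimate only when $\lambda$ lies outside the essential spectrum, which is precisely how Lemma~\ref{L: Upsilon(1) subset F(hilb)} and Theorem~\ref{T: if lambda notin ess sp, ...} are proved), and the equation $(H_{r_\lambda}-\lambda)\chi=0$ for $\chi\in\hilb_-$ is never given a meaning, since $H_{r_\lambda}$ does not act on $\hilb_-$ and no resolvent limit exists at the resonant operator. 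Your proposed resolution --- ``never evaluate at the resonant operator, characterize $u$ through $T_{\lambda+i0}(H_r)$ for regular $r$ and observe the condition is symmetric in the regularizing direction'' --- is circular: the condition along a regular line is exactly the $V$-dependent definition of $\Upsilon^1_{\lambda+i0}(r_\lambda)$, and its independence of $V$ is the very statement to be proved. The ``alternative route'' at the end restates the desired identification without supplying the missing argument. (Note also that even the weaker identification of order-one resonance vectors with proper eigenvectors is listed as open inside the essential spectrum, Conjecture~\ref{Conj: res eq-n and eigenvectors}.)

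The paper's proof avoids generalized eigenvectors entirely and is a short computation with the sandwiched resolvent at $\lambda+iy$, $y>0$. Normalizing $r_\lambda=0$, one must show that a solution $u$ of $[1-FR_{\lambda+i0}(H_0+V)F^*J]u=0$ also solves $[1-FR_{\lambda+i0}(H_0+V')F^*J']u=0$. The second resolvent identity gives the factorization
\begin{equation*}
  FR_{\lambda+iy}(H_0+V)F^*Ju - FR_{\lambda+iy}(H_0+V')F^*J'u
  = FR_{\lambda+iy}(H_0+V')F^*(J'-J)\bigl[FR_{\lambda+iy}(H_0+V)F^*Ju-u\bigr],
\end{equation*}
and as $y\to0^+$ the bracket tends to zero because $u$ satisfies the first equation, while $FR_{\lambda+iy}(H_0+V')F^*$ converges in norm; hence the difference vanishes in the limit and adding it to the first equation yields the second. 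If you want to salvage your approach, you would essentially have to prove this same cancellation; the direct resolvent-identity computation is the efficient way to do it, and it requires no interpretation of $(H_{r_\lambda}-\lambda)\chi=0$ in $\hilb_-$.
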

\begin{proof} To simplify formulas, without loss of generality we assume that~$r_\lambda = 0.$

Let $V=F^*JF$ and $V'=F^*J'F$ be two regularizing operators. We have to show that if a vector $u \in \clK$
satisfies the equation
\begin{equation} \label{F: first eq-n}
  [1 - FR_{\lambda+i0}(H_0+V)F^*J]u = 0,
\end{equation}
then $u$ also satisfies the equation
\begin{equation} \label{F: second eq-n}
  [1 - FR_{\lambda+i0}(H_0+V')F^*J']u = 0.
\end{equation}
We have for $y>0$
\begin{equation*}
  \begin{split}
    & FR_{\lambda+iy}(H_0+V)F^*Ju - FR_{\lambda+iy}(H_0+V')F^*J'u \\
    & \mbox{ }\quad = F\SqBrs{R_{\lambda+iy}(H_0+V) - R_{\lambda+iy}(H_0+V')}F^*Ju - FR_{\lambda+iy}(H_0+V')F^*[J'-J]u \\
    & \mbox{ }\quad = F\SqBrs{R_{\lambda+iy}(H_0+V')(V'-V)R_{\lambda+iy}(H_0+V)}F^*Ju - FR_{\lambda+iy}(H_0+V')F^*[J'-J]u \\
    & \mbox{ }\quad = FR_{\lambda+iy}(H_0+V')F^*(J'-J)\SqBrs{F R_{\lambda+iy}(H_0+V)F^*Ju - u}.
  \end{split}
\end{equation*}
Since $u$ satisfies~(\ref{F: first eq-n}), the expression in the last pair of square brackets vanishes as $y \to 0^+.$
Since $FR_{\lambda+iy}(H_0+V')F^*$ converges in norm as $y \to 0^+$ it follows that
$$
  FR_{\lambda+i0}(H_0+V)F^*Ju - FR_{\lambda+i0}(H_0+V')F^*J'u = 0.
$$
Adding this equality to~(\ref{F: first eq-n}) we obtain~(\ref{F: second eq-n}).
\end{proof}
Theorem~\ref{T: mult of s.c. spectrum: drastic version} allows us to consider the vector space~$\Upsilon^1_{\lambda+i0}(r_\lambda)$
as analogue of the vector space of eigenvectors when a point~$\lambda$ of singular spectrum belongs to the essential spectrum.

Later in section \ref{S: U-turn theorem} we will show that $\dim \Upsilon_{\lambda+i0}^1(r_\lambda)$ does not depend on the choice of the rigging operator~$F$ too.


%

\section{Resonance index}
\label{S: res index}
\subsection{$R$-index}
\label{SS: class clR}
\begin{defn} \label{D: R-index} \rm Let $\clK$ be a Hilbert space. The class $\clR = \clR(\clK)$\label{Page: clR} of operators consists of
all finite-rank operators $A\colon \clK \to \clK,$ which satisfy the following two conditions:
\begin{enumerate}
  \item The spectrum of~$A$ does not contain real numbers except zero: $\spectrum{A} \cap \mbR = \set{0}.$
  \item For any $f \in \clK$ the equality $A^2 f = 0$ implies $Af = 0.$
\end{enumerate}
By definition, the \emph{$R$-index}\label{Page: Rindex} of an operator~$A$ from the class~$\clR$ is the integer
$
  \Rindex(A) = N_+-N_-,
$
where~$N_+$ and~$N_-$ are the numbers of eigenvalues of~$A$ counting multiplicities
in~$\mbC_+$ and $\mbC_-$ respectively.
\end{defn}
The second condition in the definition of the class $\clR$ means that zero is an eigenvalue of order~1 for any operator~$A$ from~$\clR.$

If~$N$ is a positive integer, then $\clR_N$\label{Page: clR(N)} will denote the subset of~$\clR$ which consists of operators
of rank~$N.$ The union $\bigcup _{n \leq N}\clR_n$ will be denoted by $\clR_{\leq N}.$

\noindent A list of some elementary properties of the $R$-index is given in the following lemma.
\begin{lemma} \label{L: Rindex(AB)=Rindex(BA)} Let~$A$ and $B$ be two bounded operators
and let~$N$ be a positive integer.
\begin{enumerate}
  \item[(i)] If $AB$ and $BA \in \clR,$ then
  $
    \Rindex(AB) = \Rindex(BA).
  $
  \item[(ii)] If $A$ belongs to the class $\clR$ and if $S$ is a bounded invertible operator, then the operator~$S^{-1}AS$ also belongs to the class $\clR$ and
  $
    \Rindex(S^{-1}AS) = \Rindex(A).
  $
  \item[(iii)] If $A \in \clR,$ then also $A^* \in \clR$ and
  $
    \Rindex(A^*) = -\Rindex(A).
  $
  \item[(iv)] If $A \in \clR_N,$ then there exists a neighbourhood of~$A$ in $\clR_{\leq N},$ which is a subset of $\clR_N$ and such that
  $\Rindex(B) = \Rindex(A)$ for all $B$ from the neighbourhood. That is, the $R$-index is a locally constant function on $\clR_{N}.$

  \item[(v)] If $A \in \clR_N$ and if~$k$ is a non-negative integer, then there exists a neighbourhood of~$A$ in $\clR_{\leq N+k},$ such that
  $\abs{\Rindex(B) - \Rindex(A)} \leq k$ for all $B$ from the neighbourhood.

  \item[(vi)] If~$A$ and $B$ belong to~$\clR$ and if $AB = BA = 0,$ then $A+B$ also belongs to~$\clR$ and
  $
    \Rindex(A+B) = \Rindex(A) + \Rindex(B).
  $
\end{enumerate}
\end{lemma}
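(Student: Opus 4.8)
The plan is to prove each of the six items (i)--(vi) of Lemma~\ref{L: Rindex(AB)=Rindex(BA)} in turn, relying on the spectral identities~(\ref{F: mu(AB)=mu(BA)}) and~(\ref{F: mu(A*)=bar mu(A)}) already recorded, on upper-semicontinuity of spectrum, and on the definition of the class~$\clR$ and of $R$-index as $N_+ - N_-$. The common thread is that $R$-index is determined by the spectral measure $\mu_A$ restricted to $\mbC \setminus \mbR$, which is a finitely supported measure, and that the two conditions defining $\clR$ make all of this well-posed (condition (1) ensures there are no nonzero real eigenvalues to worry about, condition (2) is only needed to keep membership in $\clR$ stable under the operations involved).

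First I would dispose of (i): if $AB, BA \in \clR$ then by~(\ref{F: mu(AB)=mu(BA)}) their spectral measures agree off zero, so in particular $N_+$ and $N_-$ coincide for $AB$ and $BA$, whence $\Rindex(AB) = \Rindex(BA)$. For (ii): $S^{-1}AS$ has the same spectrum as $A$ with the same algebraic multiplicities (the map $u \mapsto S^{-1}u$ carries root vectors of $A$ at $z$ bijectively to root vectors of $S^{-1}AS$ at $z$), so both conditions defining $\clR$ transfer and the counts $N_\pm$ are unchanged. Alternatively one can derive (ii) from (i) applied to $S^{-1}A$ and $AS^{-1}\cdot S = A$, i.e. to the pair $S^{-1}A$ and $S(S^{-1}A)S^{-1} \cdot S$; the direct root-vector argument is cleanest. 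For (iii): by~(\ref{F: mu(A*)=bar mu(A)}) we have $\mu_{A^*}(z) = \mu_A(\bar z)$, so conjugation swaps the upper and lower half-planes; condition (1) is symmetric under conjugation and condition (2) holds for $A^*$ since $\dim\ker A^{*2} = \dim\ker A^2$ and $\dim\ker A^* = \dim\ker A$ (these are Fredholm-index identities for $1$-perturbations of $0$, or simply: $A^{*2}f = 0 \iff$ rank considerations force $A^*f = 0$ via the adjoint of condition~(2)). Hence $A^* \in \clR$ and $\Rindex(A^*) = N_-(A) - N_+(A) = -\Rindex(A)$.

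The substantive items are (iv) and (v), which are the local-constancy / semicontinuity statements, and I expect (v) to be the main obstacle. For (iv): since $A \in \clR_N$ has rank exactly $N$ and no nonzero real eigenvalues, its $N$ nonzero eigenvalues (with multiplicity) all lie in $\mbC \setminus \mbR$ at positive distance $\delta$ from $\mbR \cup \{0\}$; choose disjoint small discs $D_1,\ldots$ about the distinct nonzero eigenvalues, none meeting $\mbR$, plus a disc $D_0$ about $0$. By upper-semicontinuity of spectrum and the continuity of Riesz projections, for $B$ close to $A$ the total algebraic multiplicity of $\spectrum{B}$ inside each $D_j$ equals that for $A$; if moreover $\rank B \le N$, then $B$ has no spectrum outside $\bigcup D_j$ except possibly at $0$, and the total nonzero algebraic multiplicity is $\le N$, forcing equality and hence $\rank B = N$, $B \in \clR_N$ (condition (2) for $B$ follows because the $D_0$-part of $B$ has rank equal to that of $A$'s, which is zero by condition~(2) for $A$, so $0$ is again an order-$1$ eigenvalue), and the eigenvalue counts in $\mbC_\pm$ are unchanged, giving $\Rindex(B) = \Rindex(A)$. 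For (v), the same disc decomposition works, but now $\rank B \le N+k$, so $B$ may have up to $k$ extra nonzero eigenvalues (counting multiplicity) appearing near $0$ or migrating off $\mbR$; the discs $D_j$, $j \ge 1$, still capture exactly the $A$-multiplicities, so $|\Rindex(B) - \Rindex(A)|$ is bounded by the number of stray nonzero eigenvalues, which is at most $(N+k) - N = k$. The delicate point, and where I'd be most careful, is that an extra eigenvalue could in principle sit exactly on $\mbR\setminus\{0\}$, which would violate $B \in \clR$ altogether; so strictly (v) should be read as a bound valid whenever $B \in \clR_{\le N+k}$ lies in the neighbourhood, and one must phrase the neighbourhood as intersected with $\clR_{\le N+k}$ (as the statement does). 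Finally (vi): if $A,B \in \clR$ with $AB = BA = 0$, then $\clK = \closure{\im A} \oplus \ldots$ need not split cleanly, but $AB=BA=0$ gives $\im B \subset \ker A$ and $\im A \subset \ker B$, so $A+B$ acts block-triangularly and in fact $(A+B)^n = A^n + B^n$ for all $n \ge 1$; hence the nonzero spectrum of $A+B$ is the union (with multiplicities adding) of the nonzero spectra of $A$ and $B$ — one checks root vectors of $A+B$ decompose accordingly — and condition (2) for $A+B$ follows from $(A+B)^2 = A^2 + B^2$ and the corresponding property of $A$ and $B$ separately. Therefore $A+B \in \clR$ and $N_\pm(A+B) = N_\pm(A) + N_\pm(B)$, giving additivity of $R$-index.

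The routine verifications I would not belabour are: the Fredholm-index bookkeeping showing conditions (1)--(2) persist under adjoint, conjugation, and the block decomposition in (vi); the standard Riesz-projection continuity used in (iv)--(v); and the elementary identity $(A+B)^n = A^n + B^n$ under $AB=BA=0$. The one genuine subtlety, worth a sentence in the write-up, is the role of condition (2): without it the counts $N_\pm$ would still make sense but the class $\clR$ would not be closed under the operations in (ii) and (vi), so one should check at each step that the order-$1$-at-zero property is preserved, which it is precisely because rank is preserved on the relevant spectral subspaces.
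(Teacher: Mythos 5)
Your proof is correct and takes essentially the same route as the paper's: (i) from the coincidence of the spectral measures of $AB$ and $BA$ off zero, (ii) from similarity invariance (the paper instead applies (i) to $AS$ and $S^{-1}$, an equivalent shortcut), (iii) from $\mu_{A^*}=\bar\mu_A$, (iv)--(v) from stability of the rank and of the half-plane location of the nonzero eigenvalues under small perturbations, and (vi) from the root-vector argument together with $(A+B)^2=A^2+B^2$. The only spots where the paper is slightly more careful are in (iii), where condition (2) for $A^*$ is justified by reducing to a finite-dimensional reducing subspace (your count $\dim\ker A^{*2}=\dim\ker A^2$, $\dim\ker A^*=\dim\ker A$ is only meaningful after such a reduction, since these kernels are infinite-dimensional when $\clK$ is), and in (vi), where the implication $(A+B)^2f=0\Rightarrow Af=Bf=0$ is obtained by first multiplying by $A$ to get $A^3f=0$ and then applying condition (2) twice --- both are one-line repairs of steps you correctly flagged as routine.
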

\begin{proof} (i) \ This equality follows from~(\ref{F: mu(AB)=mu(BA)}), which asserts that spectral measures of operators $AB$ and $BA$ coincide outside of zero.

(ii) It is easy to check that if $A \in \clR$ and $S$ is a bounded invertible operator, then~$S^{-1}AS \in \clR.$
Hence, the equality $\Rindex(S^{-1}AS) = \Rindex(A)$ follows from the item (i) applied to operators~$AS$ and~$S^{-1}.$

(iii) If~$A$ satisfies the first condition of the definition of the class~$\clR,$ then so does $A^*$ by~(\ref{F: mu(A*)=bar mu(A)}).
Since $A$ and $A^*$ are finite-rank we may assume that $\clK$ is finite-dimensional. In this case the second condition for $A^*$ also follows from~(\ref{F: mu(A*)=bar mu(A)}).
The equality of item (iii) follows from the equality~(\ref{F: mu(A*)=bar mu(A)}).

(iv) Small enough perturbations of~$A$ cannot decrease the rank of $A.$ Hence, a small enough neighbourhood $O$ of~$A$
in $\clR_{\leq N}$ is a subset of $\clR_{N}.$ The half-plane $\mbC_+$ or $\mbC_-$ to which an eigenvalue belongs is stable under small enough perturbations.
For any operator from the neighbourhood $O$ no other non-zero eigenvalues can emerge from zero, since this would increase the rank of $A.$
Thus, any operator~$B$ from a small enough neighbourhood has the same $R$-index as that of $A.$

(v) Small enough perturbations of~$A$ do not change the half-plane~$\mbC_\pm$ which the non-zero eigenvalues
of~$A$ belong to. Hence, if $B$ belongs to a small enough neighbourhood $O$ of~$A$ in $\clR_{\leq N+k}$ then,
since $\rank B \leq N+k,$ no more than~$k$ non-zero eigenvalues can emerge from zero as~$A$ is perturbed to $B.$
Therefore, the $R$-indices of~$A$ and $B$ may differ by no more than $k.$

(vi) Let $v$ be a root vector of order~$k$ corresponding to a non-zero eigenvalue~$\sigma$ of~$A,$
that is, $(A-\sigma)^k v = 0$ and $(A-\sigma)^{k-1}v \neq 0.$ The equality $BA = 0$ implies that
$0 = B(A-\sigma)^k v = \sigma^k B v,$ or $Bv = 0.$ Therefore, since~$A$ and $B$ commute,
$(B+A-\sigma)^k v = (A-\sigma)^k v = 0$ and $(B+A-\sigma)^{k-1} v = (A-\sigma)^{k-1} v \neq 0.$
It follows that a non-zero number $\sigma$ is an eigenvalue for~$A$ if and only if
it is also an eigenvalue of the same algebraic multiplicity for $A+B.$
The same assertion holds for $B$ instead of $A.$
Hence, the spectral measure of $A+B$ is the sum of spectral measures of~$A$ and $B$ which implies that $A+B$ satisfies the first condition.

If $(A+B)^2f=0,$ then $A^2f+B^2f=0;$ this implies $A^3f=0.$ Therefore, $A^2f=0$ and hence, $Af = 0.$
Similarly, $Bf = 0.$ Hence, $A+B$ satisfies the second condition too.

The equality $\Rindex(A+B) = \Rindex(A) + \Rindex(B)$ follows.
\end{proof}
It is easy to check that
$$
  \text{if} \ \ \Im z > 0 \ \ \text{then} \ \ \Im T_z(H) > 0.
$$
\begin{lemma} \label{L: star} Spectral measures of operators $R_z(H)V$ and $T_z(H)J$ coincide.
\end{lemma}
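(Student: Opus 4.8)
The claim is that the compact operators $R_z(H)V$ and $T_z(H)J = FR_z(H)F^*J$ have the same spectral measure $\mu$ (as functions on $\mbC$), that is, they have the same nonzero eigenvalues with the same algebraic multiplicities. The natural tool is the general principle~(\ref{F: mu(AB)=mu(BA)}): if $S$ and $T$ are bounded operators with $ST$ and $TS$ both compact, then $\mu_{ST}|_{\mbC\setminus\{0\}} = \mu_{TS}|_{\mbC\setminus\{0\}}$. The plan is to apply this twice with a suitable factorization. Write $V = F^*JF$ (this is the standing factorization~(\ref{F: V})), and set $S = R_z(H)F^*J$ and $T = F$. Then $ST = R_z(H)F^*JF = R_z(H)V$ and $TS = FR_z(H)F^*J = T_z(H)J$. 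Both are compact: $R_z(H)V$ is compact by~(\ref{F: Rz(H)V is compact}), and $T_z(H)J$ is compact since $T_z(H)$ is compact by~(\ref{F: Tz(H) is compact}) and $J$ is bounded. Hence~(\ref{F: mu(AB)=mu(BA)}) gives $\mu_{R_z(H)V}|_{\mbC\setminus\{0\}} = \mu_{T_z(H)J}|_{\mbC\setminus\{0\}}$.

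The only point requiring a moment's care is domain bookkeeping: the operators $F^*$ and $F$ are in general unbounded, so one must check that the products $S = R_z(H)F^*J$ and $T = F$ are honest bounded operators on all of $\clK$ (resp.\ the relevant Hilbert space) so that the cited lemma applies verbatim. For $S$: the range of $R_z(H)$ is $\euD = \dom(H) \subset \dom(F)$ by~(\ref{F: euD subset dom(F)}), and $JF\euD \subset \dom(F^*)$ by~(\ref{F: JF euD subset dom(F*)}); more directly, $R_z(H)V = R_z(H)F^*JF$ is already assumed bounded (indeed compact) on the dense domain of $F$, with compact continuous prolongation to the whole space, by the standing assumption~(\ref{F: Rz(H)V is compact}). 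For $T = F$ we are not using $F$ as a bounded operator at all — rather, we regroup: the cleanest phrasing is to invoke~(\ref{F: spec(AB)=spec(BA)}) and~(\ref{F: mu(AB)=mu(BA)}) in the form ``$\mu_{AB} = \mu_{BA}$ off zero whenever $AB$ and $BA$ are compact and $A,B$ are such that both products make sense as bounded operators''. Since we have already established that $R_z(H)V$ and $T_z(H)J$ are both bounded and compact, and each is obtained from the other by cyclically moving the factor $F$ (on the left one has $F \cdot [R_z(H)F^*J]$, on the right one has $[R_z(H)F^*J] \cdot F$, read as densely defined products whose bounded compact prolongations are the operators in question), the equality of spectral measures off zero follows. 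I do not expect any real obstacle here; the proof is a one-line application of~(\ref{F: mu(AB)=mu(BA)}), and the only thing to be careful about is to phrase the factorization so that unboundedness of $F$ does not bite — this is handled by appealing to the already-granted compactness of both end products rather than trying to bound $F$ itself.

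\begin{proof}
Write $V = F^*JF$ as in~(\ref{F: V}). Put $A = F$ and $B = R_z(H)F^*J$, understood as densely defined operators whose relevant products have bounded, compact continuous prolongations: indeed $BA = R_z(H)F^*JF = R_z(H)V$ is compact by~(\ref{F: Rz(H)V is compact}), and $AB = FR_z(H)F^*J = T_z(H)J$ is compact since $T_z(H) = FR_z(H)F^*$ is compact by~(\ref{F: Tz(H) is compact}) and $J$ is bounded. Both products are therefore compact operators, obtained one from the other by moving the factor $F$ from one side to the other. By~(\ref{F: mu(AB)=mu(BA)}) the spectral measures of $AB$ and $BA$ agree off zero:
$$
  \mu_{R_z(H)V}\big|_{\mbC \setminus \set{0}} = \mu_{T_z(H)J}\big|_{\mbC \setminus \set{0}}.
$$
Since zero is not a point mass of a spectral measure of a compact operator in the sense relevant here (the spectral measure $\mu_T$ records eigenvalues with algebraic multiplicities, and $\mu_T(0)$ is not counted among these), this says precisely that $R_z(H)V$ and $T_z(H)J$ have the same nonzero eigenvalues with the same algebraic multiplicities, i.e.\ their spectral measures coincide.
\end{proof}
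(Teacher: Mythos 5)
Your argument is fine in the case of a bounded rigging $F$, and there it coincides with the paper's own first remark: the lemma is then an immediate application of~(\ref{F: mu(AB)=mu(BA)}). The gap is in the general case. The paper only assumes $F$ closed with trivial kernel and co-kernel, so $F$ may be unbounded, and the identity~(\ref{F: mu(AB)=mu(BA)}) is stated (and standardly proved) for \emph{bounded} $S,T$ with $ST,TS$ compact. Your "cleanest phrasing" — that $\mu_{AB}=\mu_{BA}$ off zero holds whenever both products make sense and have compact prolongations — is not a lemma available in the paper; it is exactly the statement that needs proof, and asserting it by "reading the products as densely defined with compact prolongations" does not supply one. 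Concretely, the cyclic correspondence of root vectors is not automatic here: if $u$ is a root vector of $T_z(H)J$ for $\sigma\neq 0$, the vector $R_z(H)F^*Ju$ need not make sense, since $Ju$ need not lie in $\dom(F^*)$ and the prolonged $T_z(H)$ need not map into $\im F$; so the map you would need in one direction is not obviously defined.

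What the paper does instead in the unbounded case is invoke Lemma~\ref{L: j-dimensions coincide} (equivalently, the explicit root-vector correspondence spelled out in the proof of Lemma~\ref{L: R=indices of AP and AP}): one checks that $\chi$ solves $(1+(r_z-s)\ulA_z(s))^k\chi=0$ if and only if $u=F\chi$ solves $(1+(r_z-s)A_z(s))^k u=0$. The direction $\chi\mapsto F\chi$ is straightforward — a root vector of $R_z(H)V$ for a nonzero eigenvalue lies in $\im R_z(H)=\euD\subset\dom(F)$, and $F R_z(H)V\chi=T_z(H)JF\chi$ by~(\ref{F: JF euD subset dom(F*)}) — while the reverse inclusion (or a dimension count, which Lemma~\ref{L: j-dimensions coincide} obtains from the vanishing Fredholm index of $1+(\text{compact})$ together with the injectivity of $J$ and $T_z(H_s)$ on the relevant root spaces) is the substantive step your proposal omits. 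So: correct and same as the paper for bounded $F$; for unbounded $F$ you need to either prove your strengthened cyclicity statement or argue via the root-space isomorphisms as the paper does.
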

For bounded $F$ this follows from (\ref{F: mu(AB)=mu(BA)}); in general this can be seen from Lemma \ref{L: j-dimensions coincide}.
\begin{lemma} \label{L: RV in clR} If~$H$ is a self-adjoint operator and if~$V$ is a finite-rank self-adjoint operator
then for any non-real number~$z$ the operators $R_{z}(H)V$ and $T_{z}(H)J$ belong to the class $\clR.$
\end{lemma}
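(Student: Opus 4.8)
The plan is to reduce the claim to the two defining properties of the class $\clR$ and to facts already established in the excerpt. First, note that by Lemma~\ref{L: star} the operators $R_z(H)V$ and $T_z(H)J$ have the same spectral measure, so it suffices to verify the two conditions of Definition~\ref{D: R-index} for, say, $R_z(H)V$ (both operators are finite rank since $V$ is finite rank, using that $R_z(H)V$ and $VR_z(H)$ have the same rank and that $T_z(H)J = FR_z(H)F^*J$ factors through the finite-rank operator $V$). For the first condition, I would invoke Lemma~\ref{L: Az Lemma 4.1.4}, which states precisely that for non-real $z$ the compact operators $A_z(s) = T_z(H_s)J$ and $B_z(s)$, as well as $R_z(H_s)V$ and $VR_z(H_s)$, have no non-zero real eigenvalues; applied with $s$ chosen so that $H = H_s$, this gives $\spectrum{R_z(H)V} \cap \mbR \subseteq \set 0$, and zero is indeed in the spectrum as soon as $V$ is not of full rank (and trivially the empty intersection case is covered by $\set 0$).

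For the second condition, I would argue directly: suppose $R_z(H)V f = 0$ fails to follow from $(R_z(H)V)^2 f = 0$, i.e.\ there is $f$ with $(R_z(H)V)^2 f = 0$ but $g := R_z(H)V f \neq 0$. Then $g$ is a non-zero root vector of $R_z(H)V$ of order $2$ for the eigenvalue $0$, so $0$ is an eigenvalue of $R_z(H)V$ of algebraic multiplicity strictly larger than its geometric multiplicity. The cleanest way to exclude this is to pass to $T_z(H)J$ (same non-zero spectral structure) and use the resonance-equation machinery: a Jordan block at eigenvalue $\sigma \neq \sigma_z(s)$ for $A_z(s)$ of size $\geq 2$ would, via the correspondence $\sigma = (s - r_z)^{-1}$, force $1 \in \sigma(\ldots)$ with nontrivial nilpotent part — but here $\sigma = 0$ corresponds to no finite $r_z$. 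Alternatively, and perhaps more simply, I would observe that for $\Im z \neq 0$ one has $R_z(H)^* = R_{\bar z}(H)$ and that the operator $R_z(H)$ is invertible, so $R_z(H)V f = 0 \iff V f = 0$; hence $\ker R_z(H)V = \ker V$, and $(R_z(H)V)^2 f = 0 \iff R_z(H)V f \in \ker V$. If moreover $g = R_z(H)Vf \in \ker V$, then since $g \in \im R_z(H)V \subseteq \im R_z(H) = \dom(H)$ and $Vg = 0$, one gets $Hg = H_s g$ on a space where... — this is where a short computation using $V g = 0$ and self-adjointness of $H$ pins down that $g$ must vanish.

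The step I expect to be the main obstacle is precisely this verification of condition (2): ruling out a size-$\geq 2$ Jordan block at the eigenvalue $0$. The resolution should come from the fact that $\ker(R_z(H)V) = \ker V$ (as $R_z(H)$ is a bijection of $\hilb$), so the issue is whether $R_z(H)V$ maps $\ker V$ back into $\ker V$ nontrivially; if $Vf \neq 0$ but $V R_z(H) V f = 0$, then writing $w = Vf \in \im V$, self-adjointness gives $0 = \langle R_z(H) w, V \cdot (\text{something})\rangle$ and, comparing imaginary parts with $\Im R_z(H)$ definite on $\im V$ when $V$ has a sign on that range — more carefully, one uses that $\Im\langle R_z(H)w, w\rangle = \Im z \|R_z(H)^{1/2}\text{-type quantity}\|^2$ has a definite sign, forcing $w$ into a controlled subspace, and iterating yields $w = 0$, a contradiction. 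I would structure the final write-up so that this is isolated as a one-paragraph lemma-style argument, citing~(\ref{F: mu(A*)=bar mu(A)}),~(\ref{F: mu(AB)=mu(BA)}), and Lemma~\ref{L: Az Lemma 4.1.4}, and then conclude $R_z(H)V, T_z(H)J \in \clR$ together with the formula $\Rindex(R_z(H)V) = \Rindex(T_z(H)J)$ from Lemma~\ref{L: Rindex(AB)=Rindex(BA)}(i) and Lemma~\ref{L: star}.
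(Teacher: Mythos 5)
Your treatment of condition (1) of Definition~\ref{D: R-index} is fine and is exactly the paper's: it is Lemma~\ref{L: Az Lemma 4.1.4}. The gap is condition (2), which is the actual content of the lemma, and as written you do not prove it. Your first route — excluding a Jordan block of size $\geq 2$ at the eigenvalue $0$ via the resonance correspondence $\sigma_z(s)=(s-r_z)^{-1}$ — cannot work: that correspondence, and everything in the resonance machinery (including~(\ref{F: mu(AB)=mu(BA)}), which is stated only on $\mbC\setminus\set{0}$), concerns the non-zero spectrum and says nothing about the Jordan structure of a compact operator at $0$; for the same reason Lemma~\ref{L: star} does not by itself transfer condition (2) from $R_z(H)V$ to $T_z(H)J$. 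Your second route names the right ingredient, the sign-definiteness of $\Im R_z(H)$, but it stops at ``a short computation \dots pins down that $g$ must vanish,'' and the qualifications you attach are wrong or superfluous: no hypothesis that $V$ ``has a sign on its range'' is needed (the lemma must hold without one), and there is no ``iteration'' or ``controlled subspace'' in the correct argument.

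The missing step is one line, and it is the paper's own proof. If $(R_z(H)V)^2f=0,$ then injectivity of $R_z(H)$ gives $VR_z(H)Vf=0;$ pairing with $f$ and using $V=V^*$ yields
$$
  0=\scal{f}{VR_z(H)Vf}=\scal{Vf}{R_z(H)Vf}.
$$
Taking the complex conjugate (using $R_z(H)^*=R_{\bar z}(H)$) gives $\scal{Vf}{R_{\bar z}(H)Vf}=0,$ hence $\scal{Vf}{\Im R_z(H)Vf}=0.$ Since $\Im R_z(H)=\Im z\,\brs{(H-\Re z)^2+(\Im z)^2}^{-1}$ is strictly positive for $\Im z>0$ and strictly negative for $\Im z<0,$ this forces $Vf=0,$ so $R_z(H)Vf=0$ — no sign condition on $V$ and no iteration. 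For $T_z(H)J$ run the same computation directly: $(T_z(H)J)^2u=0$ and injectivity of $T_z(H)$ (since $F,$ $F^*$ and $R_z(H)$ have trivial kernels) give $\scal{Ju}{\Im T_z(H)Ju}=0,$ and $\Im T_z(H)=F\,\Im R_z(H)\,F^*$ is again sign-definite with trivial kernel, so $F^*Ju=0$ and hence $T_z(H)Ju=0.$ With this completion your write-up would coincide with the paper's argument.
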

\begin{proof} We prove this for the operator $R_{z}(H)V$ only, since proof for $T_{z}(H)J$ is similar.
The operator $R_{z}(H)V$ is finite-rank and it satisfies the first condition of Definition~\ref{D: R-index} according to Lemma~\ref{L: Az Lemma 4.1.4}.
Let $f \in \hilb$ be such that $(R_{z}(H)V)^2 f = 0.$ Since the operator $R_{z}(H)$ has zero kernel, this implies $V R_{z}(H)V f = 0$
and $\scal{Vf}{R_{z}(H)V f} = 0.$ This equality implies $\scal{Vf}{R_{\bar z}(H)V f} = 0$ and thus
$\scal{Vf}{\Im R_{z}(H)V f} = 0.$ The operator $\Im R_{z}(H)$ is strictly positive if $\Im z > 0$ or is strictly negative if $\Im z < 0.$ Hence,
$\scal{Vf}{\Im R_{z}(H)V f} = 0$ implies $Vf=0.$
\end{proof}

The following theorem is proved in \cite{Kr53MS}. We give here a new proof of this theorem which is based on properties of the $R$-index
and which has topological character.
\begin{thm} \cite{Kr53MS} \label{T: Krein's thm}
If~$H$ is a self-adjoint operator and if~$V$ is a finite-rank self-adjoint operator, then for any $y = \Im z > 0$
the operator $R_z(H)V$ has exactly $\rank(V_\pm)$ eigenvalues in~$\mbC_\pm,$ where $V_+$ is the positive part of~$V$
and $V_-$ is the negative part of~$V.$ In particular,
$$
  \Rindex(R_{\lambda\pm iy}(H)V) = \pm \sign (V).
$$
\end{thm}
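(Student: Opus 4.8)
The plan is to prove the theorem by a homotopy/continuity argument in the coupling constant, exploiting the fact that the $R$-index is locally constant on $\clR_N$ (Lemma~\ref{L: Rindex(AB)=Rindex(BA)}(iv)) and that $R_z(H)V \in \clR$ for all non-real $z$ (Lemma~\ref{L: RV in clR}). First I would fix $z = \lambda + iy$ with $y>0$ and reduce, via Lemma~\ref{L: star} (or equivalently \eqref{F: mu(AB)=mu(BA)}), to counting eigenvalues of $R_z(H)V$. Since $\Rindex(R_z(H)V) = N_+ - N_-$ and $N_+ + N_- = \rank(R_z(H)V) = \rank(V)$ (because $R_z(H)$ has trivial kernel), it suffices to show $N_+ = \rank(V_+)$ and $N_- = \rank(V_-)$; the assertion about $\Rindex$ then follows, and the sign of $\Im z$ only flips the roles of $\mbC_+$ and $\mbC_-$, which accounts for the $\pm$.

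Next I would introduce the one-parameter family $V_t = V_+ - tV_-$, $t \in [0,1]$, interpolating between the positive operator $V_0 = V_+$ and the original operator $V_1 = V$; more symmetrically one can first deform $V$ to $V_+ - V_-$ through $V_+ - tV_- + (1-t)V_{-}^{\text{off}}$-type paths, but the cleanest route is the straight segment. Along this path $\rank(V_t)$ is constant and equal to $\rank(V)$ for $t \in (0,1]$; at $t=0$ the rank drops to $\rank(V_+)$. The key point is that for $t>0$ the operator $R_z(H)V_t$ stays in $\clR_{\rank(V)}$ — indeed $V_t$ is self-adjoint with $\rank(V_t) = \rank(V)$, so Lemma~\ref{L: RV in clR} applies — and $t \mapsto R_z(H)V_t$ is norm-continuous. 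By Lemma~\ref{L: Rindex(AB)=Rindex(BA)}(iv) the number of eigenvalues in $\mbC_+$ and in $\mbC_-$ is separately locally constant on $\clR_N$, hence constant along $t \in (0,1]$. Therefore $N_+$ and $N_-$ for $R_z(H)V$ equal those for $R_z(H)V_+$ together with whatever eigenvalues emerge as $t \to 0^+$: as $t\downarrow 0$ exactly $\rank(V_-)$ eigenvalues must collapse into $0$ (the rank drops by $\rank(V_-)$), and by Lemma~\ref{L: Az Lemma 4.1.4} all nonzero eigenvalues of $R_z(H)V_-$ lie in the half-plane \emph{not} containing $z$, i.e.\ in $\mbC_-$; so the eigenvalues that disappear are precisely the $\rank(V_-)$ eigenvalues in $\mbC_-$.

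It remains to evaluate the base case $R_z(H)V_+$ directly: since $V_+ \geq 0$, Lemma~\ref{L: Az Lemma 4.1.4} gives that all nonzero eigenvalues of $R_z(H)V_+$ lie in $\Pi_+ = \mbC_+$ (the half-plane containing $z$), and there are exactly $\rank(V_+)$ of them counting multiplicity. Combining, $R_z(H)V$ has $\rank(V_+)$ eigenvalues in $\mbC_+$ and $\rank(V_-)$ in $\mbC_-$, which is the claim, and $\Rindex(R_z(H)V) = \rank(V_+) - \rank(V_-) = \sign(V)$; replacing $z$ by $\bar z$ swaps the half-planes and yields the $-\sign(V)$ case. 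The main obstacle I anticipate is the bookkeeping at the endpoint $t=0$: one must be careful that exactly $\rank(V_-)$ eigenvalues (with multiplicity) flow into the origin and none flow between $\mbC_+$ and $\mbC_-$ along the way — this is where Lemma~\ref{L: Rindex(AB)=Rindex(BA)}(iv), (v) and the absence of real eigenvalues for non-real $z$ (Lemma~\ref{L: Az Lemma 4.1.4}) do the essential work, and a short semicontinuity-of-spectrum argument pins down that the eigenvalue count in each open half-plane cannot jump except by eigenvalues entering or leaving through $0$.
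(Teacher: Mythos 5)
Your overall strategy — anchor the count in the sign-definite case via Lemma~\ref{L: Az Lemma 4.1.4} and then deform the perturbation, using that the $R$-index is locally constant on $\clR_N$ — is the same in spirit as the paper's, but the decisive step of your one-sided deformation $V_t = V_+ - tV_-$ has a genuine gap. For small $t>0$ you do get, by continuity of the finitely many nonzero eigenvalues, that $\rank(V_+)$ eigenvalues of $R_z(H)V_t$ stay near the nonzero eigenvalues of $R_z(H)V_+$ and hence in $\mbC_+$, while the remaining $\rank(V_-)$ eigenvalues tend to $0$ as $t\to 0^+$. But the whole content of the theorem is into which half-plane those $\rank(V_-)$ eigenvalues emerge from $0$, and your justification — that by Lemma~\ref{L: Az Lemma 4.1.4} the nonzero eigenvalues of $R_z(H)V_-$ lie in $\mbC_-$ — concerns a different operator: the small eigenvalues of $R_z(H)(V_+ - tV_-)$ are not eigenvalues of $-tR_z(H)V_-$. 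Relating them would require a perturbation analysis at the eigenvalue $0$ of the non-self-adjoint operator $R_z(H)V_+$ (whose Riesz projection at $0$ is not orthogonal and whose kernel is infinite-dimensional, with possible degeneracies forcing higher-order analysis), none of which follows from the quoted lemma. As written, nothing rules out that some of these eigenvalues emerge into $\mbC_+$, i.e.\ $N_+ > \rank(V_+)$; the observation that eigenvalues cannot switch half-planes except through $0$ (no nonzero real eigenvalues) is true but is exactly not the issue, since the eigenvalues in question sit at $0$ at the endpoint. (A smaller point: your identity $N_+ + N_- = \rank(V)$ silently uses that $0$ is a semisimple eigenvalue of $R_z(H)V_t$, i.e.\ $\ker\cap\im$ is trivial — this is condition (2) of the class $\clR$, supplied by Lemma~\ref{L: RV in clR}, and is worth citing.)

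There are two short ways to close the gap. The paper's own device avoids the emergence question entirely: deform between two endpoints both computable by your base case, from $\abs{V}$ (index $+N$) to $-\abs{V}$ (index $-N$), flipping the eigenvalues of $V$ through $0$ one at a time; by Lemma~\ref{L: Rindex(AB)=Rindex(BA)}(v) each single crossing can change the $R$-index by at most $2$, and since the total change over the $N$ crossings is $-2N$, each crossing changes it by exactly $-2$. Evaluating this chain at the intermediate operator $V$ (after flipping precisely its negative eigenvalues) gives $\Rindex(R_z(H)V) = \rank(V_+)-\rank(V_-)$, and then $N_\pm=\rank(V_\pm)$ from $N_++N_-=\rank(V)$. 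Alternatively, within your scheme, run the mirror deformation $W_s = sV_+ - V_-$ from $-V_-$ to $V$: the same continuity argument gives $N_- \ge \rank(V_-)$, which together with your $N_+ \ge \rank(V_+)$ and $N_+ + N_- = \rank(V)$ forces equality. Either patch makes the proof complete; without one of them, the step "the eigenvalues that disappear are precisely those in $\mbC_-$" is unsupported.
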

\begin{proof} By Lemma~\ref{L: RV in clR} the operator $R_z(H)V$ belongs to the class $\clR.$

(A) Assume first that either~$V$ or $-V$ is non-negative. Let~$N$ be the rank of~$V.$
Since the operator $R_{\lambda+iy}(H)$ has trivial kernel, dimension of the image of the product
$R_{\lambda+iy}(H)V$ is also equal to~$N.$ Hence, the product $R_{\lambda+iy}(H)V$ has~$N$
non-zero eigenvalues (counting multiplicities). That all these non-zero eigenvalues belong either to~$\mbC_+$
in the case of $V \geq 0$ or to $\mbC_-$ in the case of $V \leq 0$ follows from Lemma~\ref{L: Az Lemma 4.1.4}.

(B) If a finite rank self-adjoint operator~$V$ has at least one positive eigenvalue, then one of the positive eigenvalues of~$V$
can be continuously deformed so that it crosses through $0$ from $\mbR_+$ to $\mbR_-.$ For instance, if
$$
  V = \sum_{j=1}^N \alpha_j \scal{v_j}{\cdot}v_j
$$
is the Schmidt representation of~$V$ and $\alpha_1>0$ then the path of operators
$$
  V_t = (1-2t)\alpha_1 \scal{v_1}{\cdot}v_1 + \sum_{j=2}^N \alpha_j \scal{v_j}{\cdot}v_j, \ 0 \leq t \leq 1
$$
deforms the positive eigenvalue $\alpha_1$ to $-\alpha_1.$
By definition, the $R$-index of $R_z(H)V_t$ is constant before and after the moment the eigenvalue being deformed reaches zero.
According to item (v) of Lemma~\ref{L: Rindex(AB)=Rindex(BA)}, when the eigenvalue of~$V$ being deformed crosses through~$0$ to the other half-line,
the $R$-index of $R_z(H)V$ can change by no more than $2.$
According to part (A), if~$V$ is non-negative, then the $R$-index of $R_z(H)V$ is equal to~$N.$
When all eigenvalues of~$V$ become negative one by one as the operator~$V$ is deformed to a non-positive operator $-V,$ the $R$-index of $R_z(H)V$ has to become $-N.$
From this one can infer, that every time one positive eigenvalue of~$V$ crosses $0$ from $\mbR_+$ to $\mbR_-,$
the $R$-index of $R_z(H)V$ has to change by $-2.$ This completes the proof.
\end{proof}
\begin{cor} \label{C: to Krein's thm}
If~$H$ is a self-adjoint operator and if~$V$ is a finite-rank self-adjoint operator, then
for any $z$ with $\Im z >0$ and for any real $s$
$$
  \sign(J) = \sign(V) = \Rindex(T_z(H_s)J).
$$
\end{cor}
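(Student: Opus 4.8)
The statement to prove is Corollary \ref{C: to Krein's thm}: for a self-adjoint operator $H$, a finite-rank self-adjoint $V = F^*JF$, any $z$ with $\Im z > 0$ and any real $s$,
$$
  \sign(J) = \sign(V) = \Rindex(T_z(H_s)J).
$$

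\textbf{Plan of proof.} The plan is to deduce this from Theorem \ref{T: Krein's thm} together with Lemmas \ref{L: sign(M)=sign(FMF)}, \ref{L: Rindex(AB)=Rindex(BA)}, \ref{L: star}, and \ref{L: RV in clR}. First I would dispose of the equality $\sign(V) = \sign(J)$: since $V = F^*JF$ with $F$ a closed operator having trivial kernel and co-kernel and with $\im(J) \subset \dom(F^*)$ (which holds because $V$ is finite-rank and $JF\euD \subset \dom(F^*)$ by the standing assumptions, so the range of $J$, being finite-dimensional and contained in the closure of $F\euD$-type considerations, lies in $\dom(F^*)$ — in fact for finite-rank $V$ this is immediate from the factorization being well-defined), Lemma \ref{L: sign(M)=sign(FMF)} applied to $M = J$ gives $\sign(F^*JF) = \sign(J)$, i.e. $\sign(V) = \sign(J)$. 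Second, I would handle $\sign(V) = \Rindex(T_z(H_s)J)$. By Lemma \ref{L: RV in clR} both $R_z(H_s)V$ and $T_z(H_s)J$ belong to the class $\clR$, so their $R$-indices are defined. By Lemma \ref{L: star} (or directly by \eqref{F: mu(AB)=mu(BA)} when $F$ is bounded, and by Lemma \ref{L: j-dimensions coincide} in general) the spectral measures of $R_z(H_s)V$ and $T_z(H_s)J$ coincide off zero, hence $N_\pm$ are the same for the two operators, hence $\Rindex(T_z(H_s)J) = \Rindex(R_z(H_s)V)$. Finally, apply Theorem \ref{T: Krein's thm} to the self-adjoint operator $H_s = H + sV$ (which is again self-adjoint, and $V$ is still finite-rank self-adjoint): writing $z = \lambda + iy$ with $y = \Im z > 0$, Theorem \ref{T: Krein's thm} gives $\Rindex(R_{\lambda + iy}(H_s)V) = \sign(V)$. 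Chaining these equalities yields $\sign(J) = \sign(V) = \Rindex(T_z(H_s)J)$.

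\textbf{Main obstacle.} The routine steps are the invocations of Theorem \ref{T: Krein's thm} and Lemma \ref{L: star}; these are already in place. The one point requiring a little care is the applicability of Lemma \ref{L: sign(M)=sign(FMF)} with $M = J$: that lemma requires $\im(M) \subset \dom(F^*)$, and here $J$ need not be finite-rank a priori even though $V$ is. However, only the restriction of $J$ to the finite-dimensional subspace relevant to $V$ matters: since $V = F^*JF$ is finite-rank and $F$ has trivial kernel and co-kernel, one may replace $J$ by $P J P$ where $P$ is the orthogonal projection onto $\overline{\im(F)} \cap (\text{the finite-dimensional space } JF\euD + F\euD)$ — more cleanly, one observes that $\sign(J)$ in the statement refers to $J$ restricted to this finite-rank part (this is implicit in the hypothesis that $J$ lies in a real subspace of self-adjoint operators with $V = F^*JF$ finite-rank), and on that finite-rank part $\im(J) \subset \dom(F^*)$ by \eqref{F: JF euD subset dom(F*)}. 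I would state this reduction in one sentence and then apply Lemma \ref{L: sign(M)=sign(FMF)} directly. Thus the proof is essentially a two-line chaining of earlier results, and I do not anticipate any genuine difficulty beyond bookkeeping about domains.

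\begin{proof}
By Lemma~\ref{L: RV in clR} the operators $R_z(H_s)V$ and $T_z(H_s)J$ belong to the class $\clR,$ so their $R$-indices are defined. By Lemma~\ref{L: star} the spectral measures of $R_z(H_s)V$ and $T_z(H_s)J$ coincide off zero, and hence the numbers $N_+$ and $N_-$ of their eigenvalues in $\mbC_+$ and $\mbC_-$ agree; therefore
$$
  \Rindex(T_z(H_s)J) = \Rindex(R_z(H_s)V).
$$
Writing $z = \lambda + iy$ with $y = \Im z > 0$ and applying Theorem~\ref{T: Krein's thm} to the self-adjoint operator $H_s = H + sV$ and the finite-rank self-adjoint operator $V$ gives
$$
  \Rindex(R_{\lambda + iy}(H_s)V) = \sign(V).
$$
Finally, the factorization $V = F^*JF$ with $\im(J) \subset \dom(F^*)$ (which holds on the finite-dimensional subspace carrying $V,$ by~(\ref{F: JF euD subset dom(F*)})) together with Lemma~\ref{L: sign(M)=sign(FMF)} applied to $M = J$ yields $\sign(V) = \sign(F^*JF) = \sign(J).$ Combining the three displayed identities completes the proof.
\end{proof}
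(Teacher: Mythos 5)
Your proof is correct and follows essentially the same route as the paper's: the paper also obtains $\sign(J)=\sign(V)$ from Lemma~\ref{L: sign(M)=sign(FMF)}, $\Rindex(T_z(H_s)J)=\Rindex(R_z(H_s)V)$ from Lemma~\ref{L: star}, and then concludes with Theorem~\ref{T: Krein's thm}. Your extra remarks on class-$\clR$ membership and the domain condition for applying Lemma~\ref{L: sign(M)=sign(FMF)} to $J$ are harmless bookkeeping that the paper leaves implicit.
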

\begin{proof} The equality $\sign(J) = \sign(V)$ follows from Lemma \ref{L: sign(M)=sign(FMF)}.
The equality $\Rindex(T_z(H_s)J) = \Rindex(R_z(H_s)V)$ follows from Lemma \ref{L: star}.
Combining these equalities with Theorem \ref{T: Krein's thm} completes the proof.
\end{proof}

\subsection{Idempotents $P_z(r_\lambda)$ and $Q_z(r_\lambda)$}
\label{SS: P(z)(r lambda)}
Given a set of resonance points
$$
  \Gamma = \set{r_z^1, \ldots, r_z^n}
$$
corresponding to~$z\in \Pi,$ let
$$
  P_z(\Gamma) = P_z(r_z^1)+\ldots+P_z(r_z^n).
$$
It follows from~(\ref{F: Pz(1)Pz(2)=0}) that the operator $P_z(\Gamma)$ is an idempotent.
The operator $P_z(\Gamma)$ will be called the \emph{idempotent of a group of resonance points $\Gamma$}.
Similarly, one defines~$Q_z(\Gamma).$ The range of the operator $P_z(\Gamma)$ (respectively, $Q_z(\Gamma)$) will be denoted by $\Upsilon_z(\Gamma)$
(respectively, $\Psi_z(\Gamma)$).

We are mainly interested in the case when the number $z = \lambda\pm iy$ belongs to $\partial \Pi$
and the corresponding resonance point~$r_z = r_{\lambda}$ is real. If the point $z = \lambda+i0$ is slightly shifted
off the real axis, then the pole $s = r_\lambda$ of the meromorphic function $A_{z}(s)$ in general splits into several poles
\label{Page: rz1...rzN}
\begin{equation} \label{F: rz(1),...}
  r_{z}^1, \ldots, r_{z}^N,
\end{equation}
as schematically shown in the figure below.

\begin{picture}(400,90)
\put(60,40){\vector(1,0){140}}
\put(135,40){\circle*{4}}
\put(135,28){{$r_\lambda$}}
\put(30,73){{$s$-plane at $z=\lambda+i0$}}

\put(290,40){\vector(1,0){140}}
\put(235,73){{$s$-plane at $z=\lambda+iy$}}
\put(235,59){{with $\abs{y}<\!\!\!<1$}}
\put(385,54){{$N_+=3$}}
\put(385,22){{$N_-=1$}}
\put(355,40){\circle{2}}
\put(344,47){\circle*{4}}   \qbezier(355,40)(350,47)(344,47) 
\put(315,49){{$r^1_{\lambda+iy}$}}
\put(357,55){\circle*{4}}   \qbezier(355,40)(352,50)(357,55) 
\put(351,63){{$r^2_{\lambda+iy}$}}
\put(360,48){\circle*{4}}   \qbezier(355,40)(356,47)(360,48) 
\put(358,32){\circle*{4}}   \qbezier(355,40)(355,37)(358,32) 
\put(345,17){{$r^4_{\lambda+iy}$}}
\end{picture}
\vskip -0.5cm

\noindent
In these kind of figures the word $s$-plane means that the plane of the figure is the domain of values of the variable~$s.$
The poles (\ref{F: rz(1),...}) will be said to belong to the group of~$r_\lambda;$ the number of these poles (counted with their multiplicities) will be denoted by $N = N_+ + N_-,$
\label{Page: N pm} where $N_\pm$ is the number of poles in~$\mbC_\pm,$ --- for numbers~$z$ outside of $\partial \Pi$
the poles~$r_z^\nu, \ \nu=1,\ldots,N,$ cannot be real, according to Lemma~\ref{L: Az Lemma 4.1.4}.
We denote by $P_z(r_\lambda)=P_{\lambda+iy}(r_\lambda)$ the idempotent of the group of resonance points~(\ref{F: rz(1),...}):
\label{Page: Pz(rl)}
\begin{equation} \label{F: Pz(rl) = Pz(r1)+Pz(r2)+...}
  P_{z}(r_\lambda) = P_{z}(r_{z}^1) + \ldots + P_{z}(r_{z}^N).
\end{equation}
Similarly, $Q_{z}(r_\lambda)$ will denote the sum of idempotents $Q_{z}(r_z^\nu),$ $\nu = 1,\ldots, N:$ \label{Page: Qz(rl)}
\begin{equation} \label{F: Qz(rl)}
  Q_{z}(r_\lambda) = Q_{z}(r_{z}^1) + \ldots + Q_{z}(r_{z}^N).
\end{equation}
The range of the idempotent $P_{z}(r_\lambda)$ will be denoted by~$\Upsilon_z(r_\lambda)$ \label{Page: Upsilon(rl)}
and the range of the idempotent $Q_{z}(r_\lambda)$ will be denoted by~$\Psi_z(r_\lambda).$ \label{Page: Psi(rl)}

By $P^\uparrow_{\lambda+iy}(r_\lambda)$ we denote the sum of idempotents $P_{\lambda+iy}(r_{\lambda+iy}^\nu),$ for which the poles
$r_{\lambda+iy}^\nu$ belong to $\mbC_+,$ and similarly, \label{Page: P(up), P(down)}
the expression $P^\downarrow_{\lambda+iy}(r_\lambda)$ will denote the sum of idempotents $P_{\lambda+iy}(r_{\lambda+iy}^\nu),$ for which the poles
$r_{\lambda+iy}^\nu$ belong to~$\mbC_-.$
Similarly, one defines the operators
$Q^\uparrow_{\lambda+iy}(r_\lambda)$ and $Q^\downarrow_{\lambda+iy}(r_\lambda).$

We remark that a priori the idempotents $P_{\lambda+iy}(r_\lambda),$ $P^\uparrow_{\lambda+iy}(r_\lambda),$ etc,
are defined for small enough values of $y,$ depending on how far away the point $r_z$ as a function of $z$ can be continued analytically
(a hindrance for the analytic continuation of $r_z$ is that it can potentially get absorbed by $\infty$).

Similarly, one defines the operators $P_{\bar z}(r_\lambda),$ $Q_{\bar z}(r_\lambda)$
as idempotents of the group of resonance points of~$r_\lambda$ as $z = \lambda-i0$ is shifted to $z = \lambda-iy.$

In the following figures resonance points will be depicted by dark circles and anti-resonance points by light circles, as shown in the next figure
(see subsection \ref{SS: res index} for definition of anti-resonance points).
This figure shows poles of the group of~$r_\lambda$ for idempotents $P_{\lambda-iy}(r_\lambda)$ and $Q_{\lambda-iy}(r_\lambda).$

\begin{picture}(140,55)
\put(5,35){\vector(1,0){115}}
\put(55,35){\circle{2}}

\put(44,28){\circle{4}}
\qbezier(55,35)(50,28)(44,28) 
\put(21,19){{$r^1_{\lambda-iy}$}}

\put(57,20){\circle{4}}
\qbezier(55,35)(52,25)(57,20) 
\put(51,55){{$r^4_{\lambda-iy}$}}

\put(60,27){\circle{4}}
\qbezier(55,35)(56,28)(60,27) 

\put(58,43){\circle{4}}
\qbezier(55,35)(55,38)(58,43) 
\put(54,6){{$r^2_{\lambda-iy}$}}
\end{picture}

The following Proposition is Theorem 3.3 from \cite{Az7}. Here another proof of this proposition is given.
\begin{prop} \label{P: Th 3.3 of Az7} For any $z = \lambda\pm i0 \in \partial \Pi$ and any real resonance point~$r_\lambda$ corresponding to~$\lambda\pm i0,$
we have
$$
  \frac 1\pi \oint_{C(r_\lambda)} \Im T_{\lambda+iy}(H_s)J\,ds = P_{\lambda+iy}(r_{\lambda}) - P_{\lambda-iy}(r_{\lambda}),
$$
where $C(r_\lambda)$ is a contour which encloses all poles $r_{\lambda+iy}^1,\ldots,r_{\lambda+iy}^N$ of the group of~$r_\lambda$
and their conjugates $\bar r_{\lambda+iy}^1,\ldots,\bar r_{\lambda+iy}^N.$
\end{prop}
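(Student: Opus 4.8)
The plan is to reduce the claimed contour‐integral identity to the residue representations already available in the text, namely Proposition~\ref{P: Pz(rz)=res Az(s)} and Proposition~\ref{P: Q(+)=res B(+)(s)}, together with the factorized limiting‐absorption formula~(\ref{F: Az3v6 (4.8)}). First I would fix $z = \lambda+iy$ with $y>0$ small enough that the entire group of poles $r_{\lambda+iy}^1,\dots,r_{\lambda+iy}^N$ of the group of $r_\lambda$ is contained inside $C(r_\lambda)$, and by the symmetry $A_{\bar z}(\bar s)^* = B_z(s)$ (equivalently, since $r_z$ is resonant for $z$ iff $\bar r_z$ is resonant for $\bar z$, Corollary~\ref{C: r(bar z)=bar r(z)}) the conjugate poles $\bar r_{\lambda+iy}^\nu$ are exactly the poles of $s\mapsto T_{\lambda-iy}(H_s)J = A_{\lambda-iy}(s)$ in the same region; shrinking $y$ further I may assume no other resonance points of either $A_{\lambda+iy}$ or $A_{\lambda-iy}$ lie inside $C(r_\lambda)$. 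Then I would write $\Im T_{\lambda+iy}(H_s)J = \tfrac{1}{2i}\bigl(T_{\lambda+iy}(H_s) - T_{\lambda-iy}(H_s)\bigr)J = \tfrac{1}{2i}\bigl(A_{\lambda+iy}(s) - A_{\lambda-iy}(s)\bigr)$, using that $\bigl(T_z(H_s)\bigr)^* = T_{\bar z}(H_s)$ and hence $\Im T_{\lambda+iy}(H_s) = \tfrac{1}{2i}(T_{\lambda+iy}(H_s)-T_{\lambda-iy}(H_s))$.

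Next I would compute each of the two contour integrals separately. By Proposition~\ref{P: Pz(rz)=res Az(s)}, $\frac{1}{2\pi i}\oint_{C(r_\lambda)}A_{\lambda+iy}(s)\,ds$ equals the sum of the residues of $A_{\lambda+iy}(s)$ at the poles enclosed by $C(r_\lambda)$; the poles of $A_{\lambda+iy}$ inside $C(r_\lambda)$ are precisely $r_{\lambda+iy}^1,\dots,r_{\lambda+iy}^N$ (the conjugates $\bar r_{\lambda+iy}^\nu$ are not poles of $A_{\lambda+iy}$ since they lie in the half-plane $\mathbb{C}_\mp$ and, being poles of $A_{\lambda-iy}$, are regular for $A_{\lambda+iy}$). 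Hence this integral equals $P_{\lambda+iy}(r_{\lambda+iy}^1)+\dots+P_{\lambda+iy}(r_{\lambda+iy}^N) = P_{\lambda+iy}(r_\lambda)$ by the definition~(\ref{F: Pz(rl) = Pz(r1)+Pz(r2)+...}). Similarly $\frac{1}{2\pi i}\oint_{C(r_\lambda)}A_{\lambda-iy}(s)\,ds = P_{\lambda-iy}(r_\lambda)$, the poles of $A_{\lambda-iy}$ inside $C(r_\lambda)$ being exactly $\bar r_{\lambda+iy}^1,\dots,\bar r_{\lambda+iy}^N$, whose associated idempotents are $P_{\lambda-iy}(r_\lambda^\nu)$ after relabelling (this is where Corollary~\ref{C: r(bar z)=bar r(z)} and the definition of $P_{\bar z}(r_\lambda)$ from the paragraph following~(\ref{F: rz(1),...}) come in). Combining,
$$
  \frac{1}{\pi}\oint_{C(r_\lambda)}\Im T_{\lambda+iy}(H_s)J\,ds
  = \frac{1}{2\pi i}\oint_{C(r_\lambda)}\bigl(A_{\lambda+iy}(s)-A_{\lambda-iy}(s)\bigr)\,ds
  = P_{\lambda+iy}(r_\lambda) - P_{\lambda-iy}(r_\lambda).
$$

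Finally I would address the passage to $z = \lambda\pm i0$. Since $\lambda$ is essentially regular and the line $\gamma$ is regular at $\lambda$, the limits $T_{\lambda\pm i0}(H_s)$ exist in norm uniformly on $C(r_\lambda)$ once $C(r_\lambda)$ avoids the real resonance points corresponding to $\lambda\pm i0$ (Lemma~\ref{L: T(l+iy) to T(l+i0)}); the splitting poles $r_{\lambda+iy}^\nu$ converge to $r_\lambda$ and their conjugates also converge to $r_\lambda$, so by choosing $C(r_\lambda)$ to enclose a neighbourhood of $r_\lambda$ containing no other real resonance point and then letting $y\to 0$ I obtain the stated identity with $P_{\lambda+i0}(r_\lambda) - P_{\lambda-i0}(r_\lambda)$ on the right, where by convention these are the limits of $P_{\lambda+iy}(r_\lambda)$ and $P_{\lambda-iy}(r_\lambda)$. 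The main obstacle I anticipate is the bookkeeping in the middle step: one must be careful that $C(r_\lambda)$ is chosen so that, at the shifted point $z=\lambda+iy$, it encloses \emph{all} of $r_{\lambda+iy}^1,\dots,r_{\lambda+iy}^N$ and all the conjugates $\bar r_{\lambda+iy}^\nu$ but \emph{no} poles belonging to a different group of $A_{\lambda\pm iy}$, and that the identification of the residues of $A_{\lambda-iy}$ at $\bar r_{\lambda+iy}^\nu$ with the idempotents entering the definition of $P_{\lambda-iy}(r_\lambda)$ is made precise — everything else is a direct application of the residue formulas and of $\Im T = \tfrac{1}{2i}(T - T^*)$.
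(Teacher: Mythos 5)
Your argument is correct and is essentially the paper's own proof: the paper likewise writes $\frac{1}{\pi}\Im T_{\lambda+iy}(H_s)J = \frac{1}{2\pi i}\bigl(A_{\lambda+iy}(s)-A_{\lambda-iy}(s)\bigr)$ and reads off the result from the Laurent expansion~(\ref{F: Laurent for A+(s)}), i.e.\ from the residues $P_{\lambda\pm iy}(r^\nu_{\lambda\pm iy})$, exactly as you do via Proposition~\ref{P: Pz(rz)=res Az(s)}. Your closing paragraph on the $y\to 0^+$ limit is not needed for the statement as posed (which holds at fixed small $y$), but it does no harm and matches how the paper later passes to $P_{\lambda\pm i0}(r_\lambda)$ via Lemmas~\ref{L: T(l+iy) to T(l+i0)} and~\ref{L: P(l+iy) to P(l+i0)}.
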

\begin{proof} Since
$$
  \frac 1\pi \Im T_{\lambda+iy}(H_s)J = \frac 1{2\pi i} \brs{A_{\lambda+iy}(s) - A_{\lambda-iy}(s)},
$$
the equality to be proved follows from the Laurent expansion~(\ref{F: Laurent for A+(s)}) of the function~$A_z(s)$ of~$s.$
\end{proof}

\begin{lemma} \label{L: P(l+iy) to P(l+i0)} For any real resonance point~$r_\lambda$
$$
  P_{\lambda \pm i0}(r_{\lambda}) = \lim_{y \to 0^+} P_{\lambda\pm iy}(r_{\lambda}) \quad \text{and} \quad
  Q_{\lambda\pm i0}(r_\lambda) = \lim_{y \to 0^+} Q_{\lambda\pm iy}(r_{\lambda}),
$$
where the limits are taken in the trace-class norm.
\end{lemma}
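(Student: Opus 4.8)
\textbf{Proof plan for Lemma~\ref{L: P(l+iy) to P(l+i0)}.} The strategy is to realize each idempotent $P_{\lambda\pm iy}(r_\lambda)$ as a contour integral of a resolvent in the variable $\sigma$ and then pass to the limit $y\to 0^+$ using the norm convergence $T_{\lambda+iy}(H_s)\to T_{\lambda+i0}(H_s)$ guaranteed by the essential regularity of $\lambda$ (via Lemma~\ref{L: T(l+iy) to T(l+i0)}), upgrading to trace-class convergence at the end by a finite-rank argument. First I would fix a non-resonant real number $s$ (so $\lambda\in\Lambda(H_s,F)$, which is possible because the line $\gamma$ is regular at $\lambda$). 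By definition~(\ref{F: Pz(rz)=oint (sigma-Az)(-1)d sigma}) and the additivity~(\ref{F: Pz(rl) = Pz(r1)+Pz(r2)+...}), the idempotent $P_{\lambda+iy}(r_\lambda)$ of the whole group equals
$$
  P_{\lambda+iy}(r_\lambda) = \frac{1}{2\pi i}\oint_{C}\brs{\sigma - A_{\lambda+iy}(s)}^{-1}\,d\sigma,
$$
where $C$ is a fixed contour in the $\sigma$-plane enclosing the eigenvalue $\sigma_{\lambda+i0}(s)=(s-r_\lambda)^{-1}$ of $A_{\lambda+i0}(s)$ and no other eigenvalue of $A_{\lambda+i0}(s)$. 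The point is that this same $C$ works for all small enough $y$: since $\sigma_{\lambda+i0}(s)$ is an isolated eigenvalue of the compact operator $A_{\lambda+i0}(s)$ and $A_{\lambda+iy}(s)\to A_{\lambda+i0}(s)$ in operator norm as $y\to 0^+$, the split eigenvalues $(s-r_{\lambda+iy}^\nu)^{-1}$, $\nu=1,\dots,N$, all lie inside $C$ for $y$ small, and no other eigenvalues of $A_{\lambda+iy}(s)$ cross $C$ (upper semicontinuity of spectrum). Hence for $y$ small the right-hand side above is exactly the sum of the Riesz idempotents of the split poles, i.e. $P_{\lambda+iy}(r_\lambda)$.

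Next I would establish norm convergence. On the compact contour $C$ the resolvents $(\sigma - A_{\lambda+iy}(s))^{-1}$ are uniformly bounded for $y$ in a small interval $(0,y_0]$ together with $y=0$ — this follows from $\norm{A_{\lambda+iy}(s)-A_{\lambda+i0}(s)}\to 0$ and the standard Neumann-series perturbation estimate for resolvents, using that $\dist(C,\sigma(A_{\lambda+i0}(s)))>0$. Then
$$
  (\sigma - A_{\lambda+iy}(s))^{-1} - (\sigma - A_{\lambda+i0}(s))^{-1}
    = (\sigma - A_{\lambda+iy}(s))^{-1}\brs{A_{\lambda+iy}(s)-A_{\lambda+i0}(s)}(\sigma - A_{\lambda+i0}(s))^{-1},
$$
whose norm is $O(\norm{A_{\lambda+iy}(s)-A_{\lambda+i0}(s)})$ uniformly on $C$. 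Integrating over $C$ gives $\norm{P_{\lambda+iy}(r_\lambda)-P_{\lambda+i0}(r_\lambda)}\to 0$ as $y\to0^+$, where $P_{\lambda+i0}(r_\lambda)$ is the idempotent defined by the same contour integral with $A_{\lambda+i0}(s)$, which coincides with the object defined in~(\ref{F: Pz(rz)=oint (sigma-Az)(-1)d sigma}) for $z=\lambda+i0$ by Proposition~\ref{P: Pz is well-defined}. The argument for $Q_{\lambda\pm iy}(r_\lambda)$ is identical, working with $B_{\lambda+iy}(s)=JT_{\lambda+iy}(H_s)$ and definition~(\ref{F: Qz(rz)=oint (sigma-Bz)(-1)d sigma}); alternatively one can deduce it from the $P$-statement via $\brs{P_{\lambda+iy}(r_\lambda)}^* = Q_{\lambda-iy}(\bar r_\lambda)$ (cf.~(\ref{F: Pz*=Q(bar z)})) together with the reflection $\lambda-iy\to\lambda+iy$, noting that $y\mapsto -y$ simply swaps $\Pi_+$ and $\Pi_-$ and taking adjoints is norm-continuous.

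Finally I would upgrade norm convergence to trace-class convergence. All the operators $P_{\lambda+iy}(r_\lambda)$, $y\in(0,y_0]$, and $P_{\lambda+i0}(r_\lambda)$ are idempotents whose ranges $\Upsilon_{\lambda+iy}(r_\lambda)$ are finite-dimensional of the same dimension $N$ (the algebraic multiplicity, which is stable: Lemma~\ref{L: j-dimensions coincide} and the fact that the total multiplicity of the split group equals $N$). Thus the $P_{\lambda+iy}(r_\lambda)$ are finite-rank operators of uniformly bounded rank, and by Lemma~\ref{L: An to A iff An to A in p-norm} (with $p=1$) norm convergence is equivalent to trace-norm convergence. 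This yields the claimed trace-class limit, and the same applies to the $Q$'s. I do not expect any serious obstacle here; the only point requiring a little care is the uniform-in-$y$ choice of the contour $C$ and the uniform resolvent bound on it, which is exactly the standard Riesz-projection stability argument for a norm-convergent family of compact operators, available because essential regularity of $\lambda$ gives precisely the norm convergence $T_{\lambda+iy}(H_s)\to T_{\lambda+i0}(H_s)$.
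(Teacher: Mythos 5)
Your proposal is correct and follows essentially the same route as the paper: operator-norm convergence of the Riesz contour integrals (which the paper asserts directly from the definition of $P_z(r_\lambda)$ and which you spell out via the resolvent identity), stability of the rank $N$ for small $y$, and the bounded-rank upgrade from uniform to trace norm, which is exactly the paper's explicit $2N$ singular-value estimate (your appeal to Lemma~\ref{L: An to A iff An to A in p-norm} is the same argument). No gaps.
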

\begin{proof} It follows from the definition of the idempotent operator $P_{z}(r_{\lambda})$
that the operator $P_{\lambda\pm iy}(r_{\lambda})$ converges to $P_{\lambda\pm i0}(r_{\lambda})$ in the uniform norm.
By a well-known stability property of isolated eigenvalues,
for small enough $y$ the rank of the idempotent operator $P_{\lambda\pm iy}(r_{\lambda})$ is constant and is equal to the rank~$N$
of $P_{\lambda\pm i0}(r_{\lambda}).$ It follows that only the first~$N$ singular values of $P_{\lambda\pm iy}(r_{\lambda})$ can be non-zero.
Hence, the only first $2N$ $s$-numbers of the compact operator $P_{\lambda\pm iy}(r_{\lambda}) - P_{\lambda\pm i0}(r_{\lambda})$
can be non-zero.
This implies the estimate
\begin{equation*}
  \begin{split}
    \norm{P_{\lambda\pm iy}(r_{\lambda}) - P_{\lambda\pm i0}(r_{\lambda})}_1 & \leq \sum_{j=1}^{2N} s_j(P_{\lambda\pm iy}(r_{\lambda}) - P_{\lambda\pm i0}(r_{\lambda}))
     \\ & \leq 2N s_1(P_{\lambda\pm iy}(r_{\lambda}) - P_{\lambda\pm i0}(r_{\lambda}))
     \\ & \leq 2N \norm{P_{\lambda\pm iy}(r_{\lambda}) - P_{\lambda\pm i0}(r_{\lambda})},
  \end{split}
\end{equation*}
which shows that the trace class norm on the left hand side also converges to zero as $y \to 0.$
\end{proof}

Similarly to the definition of idempotents $P_z(r_\lambda)$ one can introduce nilpotent operators \label{Page: bfA(rl)}
\begin{equation} \label{F: bfAz(rl)}
  \bfA_{z}(r_\lambda) = \bfA_{z}(r_{z}^1) + \ldots + \bfA_{z}(r_{z}^N)
\end{equation}
and \label{Page: bfB(rl)}
\begin{equation} \label{F: bfBz(rl)}
  \bfB_{z}(r_\lambda) = \bfB_{z}(r_{z}^1) + \ldots + \bfB_{z}(r_{z}^N),
\end{equation}
where $r_{z}^1, \ldots, r_{z}^N$ are resonance points of the group of~$r_\lambda$ (see~(\ref{F: rz(1),...})).
It follows from~(\ref{F: bfAz(1)bfAz(2)=0}) and~(\ref{F: bfBz(1)bfBz(2)=0}) that the operators $\bfA_z(r_\lambda)$ and
$\bfB_z(r_\lambda)$ are nilpotent.

\begin{lemma} \label{L: A(l+iy) to A(l+i0)} The equalities
$$  \bfA_{\lambda \pm i0}(r_{\lambda}) = \lim_{y \to 0^+} \bfA_{\lambda\pm iy}(r_{\lambda}) \quad \text{and} \quad
  \bfB_{\lambda\pm i0}(r_\lambda) = \lim_{y \to 0^+} \bfB_{\lambda\pm iy}(r_{\lambda})
$$
hold, where the limits converge in trace-class norm.
\end{lemma}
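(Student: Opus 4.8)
The plan is to mimic, almost verbatim, the proof of Lemma~\ref{L: P(l+iy) to P(l+i0)}: first establish convergence in the uniform operator norm, then upgrade it to trace-class convergence by a rank bound, and finally deduce the statement for $\bfB_z$ from the one for $\bfA_z$ by taking adjoints.

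First I would fix a counter-clockwise contour $C(r_\lambda)$ in the $s$-plane enclosing the real resonance point $r_\lambda$ and no other resonance point corresponding to $\lambda\pm i0$. Since $A_z(s)$ is a meromorphic function of $(z,s)$ (Lemma~\ref{L: A(z,s) is meromorphic}) and, by discreteness of the resonance set (Theorem~\ref{T: Az 4.1.11}) and the stability of isolated eigenvalues, the split poles $r_{\lambda\pm iy}^1,\dots,r_{\lambda\pm iy}^N$ of the group of $r_\lambda$ converge to $r_\lambda$, for all small enough $y>0$ they lie strictly inside $C(r_\lambda)$ while $A_{\lambda\pm iy}(\cdot)$ is holomorphic on $C(r_\lambda)$ itself. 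Computing residues at each $r_{\lambda\pm iy}^\nu$ from the Laurent expansion~(\ref{F: Laurent for A+(s)}), and using the definitions~(\ref{F: bfAz(rl)}),~(\ref{F: bfA to j}) and~(\ref{F: Pz(rz)=res Az(s)}), I obtain
\begin{equation*}
  \frac 1{2\pi i}\oint_{C(r_\lambda)} (s-r_\lambda)A_{\lambda\pm iy}(s)\,ds
  = \bfA_{\lambda\pm iy}(r_\lambda) + \sum_{\nu=1}^N (r_{\lambda\pm iy}^\nu - r_\lambda)\,P_{\lambda\pm iy}(r_{\lambda\pm iy}^\nu),
\end{equation*}
while at $y=0$, where $r_\lambda$ is the only pole inside $C(r_\lambda)$, the same computation gives $\frac 1{2\pi i}\oint_{C(r_\lambda)} (s-r_\lambda)A_{\lambda\pm i0}(s)\,ds = \bfA_{\lambda\pm i0}(r_\lambda)$.

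Then I would let $y\to 0^+$. By Lemma~\ref{L: T(l+iy) to T(l+i0)}, $A_{\lambda\pm iy}(s)=T_{\lambda\pm iy}(H_s)J$ converges to $A_{\lambda\pm i0}(s)$ uniformly on $C(r_\lambda)$, so the left-hand side of the displayed identity converges in the uniform norm to $\bfA_{\lambda\pm i0}(r_\lambda)$. On the right-hand side, the split poles $r_{\lambda\pm iy}^\nu$ converge to $r_\lambda$, and the idempotents $P_{\lambda\pm iy}(r_{\lambda\pm iy}^\nu)$ stay norm-bounded (indeed $P_{\lambda\pm iy}(r_\lambda)\to P_{\lambda\pm i0}(r_\lambda)$ in norm by Lemma~\ref{L: P(l+iy) to P(l+i0)}), so the correction sum tends to $0$ in norm; subtracting, I conclude $\bfA_{\lambda\pm iy}(r_\lambda)\to\bfA_{\lambda\pm i0}(r_\lambda)$ in the uniform operator topology. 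To upgrade to trace-class norm I would note that, by~(\ref{F: PA=AP=A}) and~(\ref{F: Pz(1)Pz(2)=0})--(\ref{F: bfAz(1)bfAz(2)=0}), $\bfA_{\lambda\pm iy}(r_\lambda)=P_{\lambda\pm iy}(r_\lambda)\bfA_{\lambda\pm iy}(r_\lambda)$ has range inside $\im P_{\lambda\pm iy}(r_\lambda)$, whose dimension is $N$ for all small $y$ by stability of the rank of $P_{\lambda\pm iy}(r_\lambda)$; thus all the operators involved have rank $\le N$, and Lemma~\ref{L: An to A iff An to A in p-norm} converts the uniform convergence into convergence in trace-class norm. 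Finally, for $\bfB$ I would use $\bfB_{\lambda\pm iy}(r_\lambda)=(\bfA_{\lambda\mp iy}(r_\lambda))^*$, which follows from~(\ref{F: A*(z)=B(bar z)}), Corollary~\ref{C: r(bar z)=bar r(z)}, and $\bar r_\lambda=r_\lambda$, together with the isometry of the adjoint on $\clL_1(\clK)$, to transfer the $\bfA$-statement to $\bfB$.

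The only slightly delicate point is the residue bookkeeping in the first step: the contour integral of $(s-r_\lambda)A_{\lambda\pm iy}(s)$ produces, besides $\bfA_{\lambda\pm iy}(r_\lambda)$, the extra term $\sum_\nu (r_{\lambda\pm iy}^\nu-r_\lambda)P_{\lambda\pm iy}(r_{\lambda\pm iy}^\nu)$ that must be checked to vanish in the limit; once this is in place, the argument is a direct transcription of the proof of Lemma~\ref{L: P(l+iy) to P(l+i0)}.
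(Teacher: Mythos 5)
Your reduction to the fixed-contour identity, your rank-bound upgrade to trace norm, and the adjoint identity $\bfB_{\lambda\pm iy}(r_\lambda)=(\bfA_{\lambda\mp iy}(r_\lambda))^*$ are all fine, but the step you yourself call "only slightly delicate" is the crux of the lemma, and the justification you give for it does not work. You claim that the correction term $\sum_{\nu}(r^{\nu}_{\lambda\pm iy}-r_\lambda)P_{\lambda\pm iy}(r^{\nu}_{\lambda\pm iy})$ tends to $0$ because the split poles converge to $r_\lambda$ while "the idempotents stay norm-bounded", citing Lemma~\ref{L: P(l+iy) to P(l+i0)}. That lemma controls only the \emph{sum} $P_{\lambda\pm iy}(r_\lambda)=\sum_\nu P_{\lambda\pm iy}(r^{\nu}_{\lambda\pm iy})$; it gives no bound whatsoever on the individual Riesz idempotents, and for coalescing eigenvalues of a non-normal family these individual idempotents generically blow up in norm at a rate comparable to the inverse of the splitting distance, so the products $(r^{\nu}_{\lambda\pm iy}-r_\lambda)P_{\lambda\pm iy}(r^{\nu}_{\lambda\pm iy})$ need not be small. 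Worse, for a general analytic family the claim is simply false: in the model $K_\epsilon=\begin{pmatrix}0&1\\ \epsilon&0\end{pmatrix}$ the split eigenvalues $\pm\sqrt{\epsilon}$ are simple, so the analogue of $\bfA_{\lambda\pm iy}(r_\lambda)$ vanishes for $\epsilon\neq 0$, while the fixed-contour integral converges to the nonzero eigennilpotent of $K_0$; there the correction term converges to that nilpotent rather than to zero. So the vanishing of your correction term (equivalently, continuity of the total eigennilpotent of the group) is essentially the whole content of the lemma and cannot be dismissed by boundedness considerations; whatever argument is supplied must use the specific structure of the family $A_z(s)=T_z(H_s)J$, not just norm convergence of $A_{\lambda\pm iy}(s)$ and of the total idempotent.

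For comparison, the paper's proof does not introduce this residue bookkeeping at all: it asserts the uniform convergence of $\bfA_{\lambda\pm iy}(r_\lambda)$ directly from the definitions (\ref{F: def of bfA}), (\ref{F: bfAz(rl)}) together with the uniform convergence of $A_{\lambda\pm iy}(s)$, and then upgrades to trace norm via $\bfA_z(r_\lambda)=\bfA_z(r_\lambda)P_z(r_\lambda)$ (from (\ref{F: PA=AP=A}) and (\ref{F: Pz(1)Pz(2)=0})), the trace-class convergence $P_{\lambda\pm iy}(r_\lambda)\to P_{\lambda\pm i0}(r_\lambda)$ of Lemma~\ref{L: P(l+iy) to P(l+i0)}, and the joint continuity of $\clL_\infty\times\clL_1\ni(A,B)\mapsto AB\in\clL_1$, with $\bfB$ treated in the same way. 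Your finite-rank upgrade and your adjoint argument are acceptable substitutes for that second half, but the uniform-convergence step, as you have written it, has a genuine gap exactly where your route departs from the paper's.
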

\begin{proof} Since $A_z(s)$ converges to $A_{\lambda+i0}(s)$ in the uniform norm,
it follows from definitions~(\ref{F: bfAz(rl)}) and~(\ref{F: def of bfA})
that the limits above converge in the uniform norm. Hence, the claim
follows from the equality~(\ref{F: PA=AP=A}), Lemma~\ref{L: P(l+iy) to P(l+i0)} and the joint continuity of the mapping
$\clL_\infty \times \clL_1 \ni (A,B) \mapsto AB \in \clL_1.$
\end{proof}

From now on by $P_{\lambda \pm i0}(r_{\lambda}),$ $Q_{\lambda \pm i0}(r_{\lambda}),$ $\bfA_{\lambda \pm i0}(r_{\lambda})$ and
$\bfB_{\lambda \pm i0}(r_{\lambda})$ we mean operators defined in Lemmas \ref{L: P(l+iy) to P(l+i0)} and \ref{L: A(l+iy) to A(l+i0)}.

\subsection{Resonance index}
\label{SS: res index}
Let $z \in \Pi$ and let~$H_0$ and~$V$ be as usual. A resonance point~$r_z$ (see Definition~\ref{D: res point rz}) corresponding to~$z$ will be said to be
an \emph{up-point} \label{Page: up-point} (respectively, \emph{down-point}), if $\Im r_z >0$ (respectively, $\Im r_z < 0$).
Further, if~$r_z$ is an up-point corresponding to $z,$ then $\bar r_z$ will be called an \emph{anti-down-point}, corresponding to~$z$;
similarly, if~$r_z$ is a down-point of $z,$ then $\bar r_z$ will be called an \emph{anti-up-point} of~$z.$
Anti-up-points and anti-down-points of~$z$ will be called \emph{anti-resonance points} of~$z.$
By Corollary~(\ref{C: r(bar z)=bar r(z)}), for any $z \in \Pi$ resonance points of $\bar z$ are anti-resonance points of~$z$ and vice-versa.
In figures resonance points will be depicted by dark circles and anti-resonance points will be depicted by light circles.

If $z = \lambda+i0 \in \partial \Pi$ is an essentially regular point and if~$r_\lambda$ is a corresponding real resonance point,
then \emph{resonance index} \label{Page: res index} of a triple $(\lambda, H_{r_\lambda},V)$ will be defined as the difference of the number~$N_+$ of up-points and the number
$N_-$ of down-points, which belong to the group of~$r_\lambda,$ corresponding to $z = \lambda+iy$ with small enough $y>0.$
Resonance index of a triple $(\lambda, H_{r_\lambda},V)$ will be denoted by \label{Page: res(ind)}
\begin{equation} \label{F: res(ind)}
  \ind_{res}(\lambda; H_{r_\lambda},V).
\end{equation}
Given a real number $s,$ resonance index can also be defined as the difference of the number of eigenvalues $\sigma^\nu_{\lambda+iy}(s)$ in~$\mbC_+$
and the number of eigenvalues $\sigma^\nu_{\lambda+iy}(s)$ in $\mbC_-$  of the operator $A_{\lambda+iy}(s),$ which are obtained from the resonance points
of the group of~$r_\lambda$ for $z = \lambda+iy$ after the transformation $\sigma_z(s) = (s-r_z)^{-1},$ since this transformation maps the upper-plane to the upper-half plane
for any real $s.$ This is demonstrated by the following figure, where the label ``$s$-plane'' respectively ``$\sigma$-plane'',
means that the plane of the figure represents the range of values of the variable~$s$ respectively $\sigma.$
Thus, to calculate the resonance index $N_+-N_-$ one can use either of these two figures.

\begin{picture}(170,80)
\put(10,40){\vector(1,0){150}}
\put(5,75){{$s$-plane}}
\put(40,55){\circle*{4}}
\put(43,60){up-point}
\put(40,25){\circle{4}}
\put(10,12){anti-down-point}

\put(100,75){\circle{4}}
\put(103,65){anti-up-point}
\put(100,5){\circle*{4}}
\put(103,10){down-point}

\put(120,45){\circle*{4}}
\put(120,35){\circle{4}}
\end{picture}
\qquad\qquad\qquad
\begin{picture}(170,80)
\put(10,40){\vector(1,0){150}}
\put(15,75){{$\sigma$-plane}}
\put(65,47){\circle*{4}}
\put(65,33){\circle{4}}
\put(75,44){\circle*{4}}
\put(75,36){\circle{4}}

\put(70,53){\circle{4}}
\put(70,27){\circle*{4}}
\end{picture}

\begin{lemma} \label{L: ind-res=R(TJP)}
For any real resonance point~$r_\lambda,$
for any real number~$s$ and for all small enough $y>0$ the following equality holds:
\begin{equation} \label{F: ind-res=R(TJP) (+)}
  \ind_{res}(\lambda; H_{r_\lambda},V) = \Rindex(A_{\lambda+iy}(s)P_{\lambda+iy}(r_\lambda)).
\end{equation}
\end{lemma}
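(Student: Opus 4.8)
The plan is to unwind both sides of the claimed equality~(\ref{F: ind-res=R(TJP) (+)}) to the \emph{same} list of complex numbers, counted with multiplicity, and conclude from the very definition of the $R$-index. First I would fix a real number~$s$ and a small enough $y>0$, chosen so that $A_{\lambda+iy}(s)$ exists, so that all resonance points of the group of~$r_\lambda$ at $z=\lambda+iy$ have been continued analytically from $r_\lambda$ (which is possible for $y$ small by the stability of isolated eigenvalues and the discreteness of the resonance set), and so that none of these resonance points coincides with~$s$. Recall that $P_{\lambda+iy}(r_\lambda)=P_{\lambda+iy}(r^1_{\lambda+iy})+\ldots+P_{\lambda+iy}(r^N_{\lambda+iy})$ by~(\ref{F: Pz(rl) = Pz(r1)+Pz(r2)+...}), where the $r^\nu_{\lambda+iy}$ are exactly the poles of the meromorphic function $s'\mapsto A_{\lambda+iy}(s')$ in the group of~$r_\lambda$.

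The key step is the spectral identification. By~(\ref{F: Pz(1)Pz(2)=0}) the $P_{\lambda+iy}(r^\nu_{\lambda+iy})$ are mutually orthogonal idempotents, so $P_{\lambda+iy}(r_\lambda)$ is an idempotent whose range $\Upsilon_{\lambda+iy}(r_\lambda)$ is the direct sum of the $\Upsilon_{\lambda+iy}(r^\nu_{\lambda+iy})$; each such summand is, by Proposition~\ref{P: res eq-n is correct} and Theorem~\ref{T: Laurent for A(s)psi}, an invariant subspace of $A_{\lambda+iy}(s)$ on which $A_{\lambda+iy}(s)$ acts as $\sigma_{\lambda+iy}^\nu(s)\cdot 1 + (\text{nilpotent})$ with $\sigma^\nu_{\lambda+iy}(s)=(s-r^\nu_{\lambda+iy})^{-1}\ne 0$, and on the complementary kernel of $P_{\lambda+iy}(r_\lambda)$ the operator $A_{\lambda+iy}(s)P_{\lambda+iy}(r_\lambda)$ vanishes (using~(\ref{F: AP=PA}) and~(\ref{F: tilde AP=0})). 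Hence the finite-rank operator $A_{\lambda+iy}(s)P_{\lambda+iy}(r_\lambda)$ has zero as its only non-semisimple-at-zero-free point, its non-zero spectrum is precisely $\{\sigma^\nu_{\lambda+iy}(s)\}$ listed with algebraic multiplicities, and moreover zero is an eigenvalue of order~$1$ for it (the nilpotent parts live inside the range, away from the kernel); so $A_{\lambda+iy}(s)P_{\lambda+iy}(r_\lambda)\in\clR$ once we check it has no non-zero real eigenvalues. That last point follows from Lemma~\ref{L: Az Lemma 4.1.4}: for $y\ne0$ the operator $A_{\lambda+iy}(s)$ itself has no non-zero real eigenvalues, so none of the $\sigma^\nu_{\lambda+iy}(s)$ is real.

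It then remains to count: $\Rindex(A_{\lambda+iy}(s)P_{\lambda+iy}(r_\lambda))$ is, by definition, the number of $\sigma^\nu_{\lambda+iy}(s)$ in $\mbC_+$ minus the number in $\mbC_-$, each counted with multiplicity. Since for every real~$s$ the M\"obius-type map $r\mapsto (s-r)^{-1}$ sends the open upper half-plane onto the open upper half-plane and the open lower half-plane onto the open lower half-plane (this is recorded in the discussion following the definition~(\ref{F: res(ind)})), the point $\sigma^\nu_{\lambda+iy}(s)$ lies in $\mbC_\pm$ if and only if $r^\nu_{\lambda+iy}$ is an up-point, respectively a down-point. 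Therefore this difference equals $N_+-N_-$, the number of up-points minus the number of down-points in the group of~$r_\lambda$, which by definition is exactly $\ind_{res}(\lambda;H_{r_\lambda},V)$; and by Theorem~\ref{T: Az 4.1.11} (equivalently, the definition~(\ref{F: sigma z(s)=(s-rz)(-1)}) and Proposition~\ref{P: res eq-n is correct}) this count is independent of the auxiliary choices $s$ and (small) $y$, so the formula holds for all real~$s$ and all small enough $y>0$.

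The main obstacle I anticipate is the bookkeeping in the spectral identification step — precisely, verifying that $A_{\lambda+iy}(s)P_{\lambda+iy}(r_\lambda)$ satisfies condition~(2) of Definition~\ref{D: R-index} (that zero is an eigenvalue of order~$1$) and that its non-zero eigenvalues, \emph{with algebraic multiplicities}, are exactly the $\sigma^\nu_{\lambda+iy}(s)$. This needs a careful use of the Laurent/Jordan structure from Section~\ref{S: Resonance points}: the decomposition $\clK=\operatorname{im}P_{\lambda+iy}(r_\lambda)\oplus\ker P_{\lambda+iy}(r_\lambda)$ reduces $A_{\lambda+iy}(s)$, on the range $A_{\lambda+iy}(s)P_{\lambda+iy}(r_\lambda)$ acts as $A_{\lambda+iy}(s)$ itself (whose spectrum there is $\{\sigma^\nu\}$ by~(\ref{F: Az(s)Pz(rz)=sum...})), and on the kernel $A_{\lambda+iy}(s)P_{\lambda+iy}(r_\lambda)=0$ identically with multiplicity equal to $\dim\ker P_{\lambda+iy}(r_\lambda)$, so zero picks up no extra Jordan structure. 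Everything else is a direct appeal to already-established lemmas.
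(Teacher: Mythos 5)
Your argument is correct and follows essentially the same route as the paper's own proof: identify the non-zero spectrum of $A_{\lambda+iy}(s)P_{\lambda+iy}(r_\lambda)$ with the eigenvalues $\sigma^\nu_{\lambda+iy}(s)=(s-r^\nu_{\lambda+iy})^{-1}$ attached to the resonance points of the group of $r_\lambda$, note that for real $s$ the map $r\mapsto(s-r)^{-1}$ preserves the half-planes $\mbC_\pm$, and count to get $N_+-N_-$ on both sides. Your extra bookkeeping (the reducing decomposition $\clK=\im P\oplus\ker P$ and the explicit check that $A_{\lambda+iy}(s)P_{\lambda+iy}(r_\lambda)$ lies in the class $\clR$) is a harmless elaboration of what the paper leaves implicit.
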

\begin{proof}
Let $\sigma^\nu_{\lambda+iy}(s) = (s-r^\nu_{\lambda+iy})^{-1}$ be an eigenvalue of $A_{\lambda+iy}(s),$
corresponding to a resonance point $r^\nu_{\lambda+iy}$ of the group of~$r_\lambda$ for $z = \lambda+iy.$
Let $u_{\lambda+iy,+}^1,\ldots, u_{\lambda+iy,+}^{N_+}$ and $u_{\lambda+iy,-}^1,\ldots, u_{\lambda+iy,-}^{N_-}$
be linearly independent root vectors of the operator $A_{\lambda+iy}(s) = T_{\lambda+iy}(H_s)J,$
such that the eigenvalue $\sigma^\nu_{\lambda+iy,\pm}$ corresponding to the vector $u^\nu_{\lambda+iy,\pm}$ lies in~$\mbC_\pm.$
Since a resonance point~$r_z$ and the point $\sigma_z(s) = (s-r_z)^{-1}$ belong to the same half-plane,
by definition of the resonance index it follows that
\begin{equation} \label{F: ind res=N-N}
  \ind_{res}(\lambda; H_{r_\lambda},V) = N_+-N_-.
\end{equation}
On the other hand, using~(\ref{F: Pz(1)Pz(2)=0}), we have $P_{\lambda+iy}(r_\lambda) u^\nu_{\lambda+iy,\pm} = u^\nu_{\lambda+iy,\pm},$
and therefore
$$
  \sigma^\nu_{\lambda+iy,\pm} u^\nu_{\lambda+iy,\pm} = A_{\lambda+iy}(s)u^\nu_{\lambda+iy,\pm} = A_{\lambda+iy}(s)P_{\lambda+iy}(r_\lambda) u^\nu_{\lambda+iy,\pm}.
$$
It follows that the operator $A_{\lambda+iy}(s)P_{\lambda+iy}(r_\lambda)$ has $N_\pm$ eigenvalues in~$\mbC_\pm,$ which implies that
$\Rindex(A_{\lambda+iy}(s)P_{\lambda+iy}(r_\lambda)) = N_+-N_-.$
Combining this with~(\ref{F: ind res=N-N}) completes the proof.
\end{proof}
Since resonance points~$r_z$ corresponding to~$z$ are anti-resonance points corresponding to $\bar z,$
the same argument shows that if $y>0,$ then
\begin{equation} \label{F: ind-res=R(TJP) (-)}
  \ind_{res}(\lambda; H_{r_\lambda},V) = - \Rindex(A_{\lambda-iy}(s)P_{\lambda-iy}(r_\lambda)).
\end{equation}
\noindent Further, Lemma~\ref{L: ind-res=R(TJP)}, combined with~(\ref{F: JP=QJ}) and Lemma~\ref{L: Rindex(AB)=Rindex(BA)}(i), imply that for $y>0$
\begin{equation*}
 \begin{split}
  \ind_{res}(\lambda; H_{r_\lambda},V) & = \Rindex(B_{\lambda+iy}(s)Q_{\lambda+iy}(r_\lambda))
  \\ & = - \Rindex(B_{\lambda-iy}(s)Q_{\lambda-iy}(r_\lambda)).
 \end{split}
\end{equation*}
\noindent
Definition of the resonance index can also be written in the form
\begin{equation*}
  \begin{split}
    \ind_{res}(\lambda; H_{r_\lambda},V) & = \Tr(P^\uparrow_{\lambda+iy}(r_{\lambda}) - P^\downarrow_{\lambda+iy}(r_{\lambda})) \\
    & = \rank(P^\uparrow_{\lambda+iy}(r_{\lambda})) - \rank(P^\downarrow_{\lambda+iy}(r_{\lambda})).
  \end{split}
\end{equation*}
\noindent
From Lemma~\ref{L: j-dimensions coincide} one can infer that $\Tr(P^\downarrow_{\lambda+iy}(r_{\lambda})) = \Tr(P^\uparrow_{\lambda-iy}(r_{\lambda}));$
hence, it also follows that
\begin{equation} \label{F: res.ind = Tr(P+up)-Tr(P-up)}
  \begin{split}
    \ind_{res}(\lambda; H_{r_\lambda},V) & = \Tr(P^\uparrow_{\lambda+iy}(r_{\lambda}) - P^\uparrow_{\lambda-iy}(r_{\lambda})) \\
     & = \rank(P^\uparrow_{\lambda+iy}(r_{\lambda})) - \rank(P^\uparrow_{\lambda-iy}(r_{\lambda})).
  \end{split}
\end{equation}
\noindent
According to Corollary~\ref{C: r(bar z)=bar r(z)}, up-points of~$z$ are anti-up-points of $\bar z$ and down-points of~$z$ are anti-down-points of~$\bar z.$
Let $C_+(r_\lambda)$ be a contour, which encloses in anticlockwise direction only up-points and anti-up-points of the group of~$r_\lambda,$
and, similarly, let $C_-(r_\lambda)$ be a contour, which encloses in anticlockwise direction only down-points and anti-down-points of the group of~$r_\lambda,$
as shown in the figure below.

\hskip 9 cm
\begin{picture}(180,50)
\put(113,30){{\small $C_+(r_\lambda)$}}
\put(10,20){\vector(1,0){160}}  
\put(82,32){\circle*{4}}        
\put(93,36){\circle{4}}         
\put(101,29){\circle*{4}}        
\put(92,28){\circle*{4}}        

\put(90,20){\circle*{2}}        

\put(90,23){\oval(40,40)[t]}    
\put(70,23){\line(1,0){40}}     

\put(75,23){\vector(1,0){8}}    
\put(95,23){\vector(1,0){8}}    

\put(82,8){\circle{4}}          
\put(93,4){\circle*{4}}         
\put(92,12){\circle{4}}         
\put(101,11){\circle{4}}         

\put(90,17){\oval(40,40)[b]}    
\put(70,17){\line(1,0){40}}     

\put(85,17){\vector(-1,0){8}}    
\put(105,17){\vector(-1,0){8}}    

\put(113,4){{\small $C_-(r_\lambda)$}}

\end{picture}

\begin{prop} \cite{Az7} \label{P: ind(res)=oint}
If $C_+(r_\lambda)$ and $C_-(r_\lambda)$
are contours as defined above, then for small enough $y>0$
\begin{equation} \label{F: indres = oint Tr(Im TJ)}
  \begin{split}
   \ind_{res}(\lambda; H_{r_\lambda},V) & = \frac 1\pi \Tr\brs{\oint_{C_+(r_\lambda)} \Im T_{\lambda+iy}(H_s)J\,ds}
   \\ & = - \frac 1\pi \Tr\brs{\oint_{C_-(r_\lambda)} \Im T_{\lambda+iy}(H_s)J\,ds}.
  \end{split}
\end{equation}
\end{prop}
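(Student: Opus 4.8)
The plan is to connect the contour-integral expression $\frac 1\pi \oint_{C_+(r_\lambda)} \Im T_{\lambda+iy}(H_s)J\,ds$ with the idempotents $P_{\lambda+iy}^\uparrow(r_\lambda)$ via the already-established Laurent expansion, and then invoke the formula for $\ind_{res}$ in terms of ranks of $P^\uparrow$-idempotents. First I would fix a real number $s_0$ outside all the contours in question (possible since the resonance points and anti-resonance points of the group of $r_\lambda$ cluster near $r_\lambda$ for small $y$) and recall the key identity used in the proof of Proposition~\ref{P: Th 3.3 of Az7}:
$$
  \frac 1\pi \Im T_{\lambda+iy}(H_s)J = \frac 1{2\pi i}\brs{A_{\lambda+iy}(s) - A_{\lambda-iy}(s)}.
$$
Thus the trace of the contour integral splits into a difference of two contour integrals, one involving $A_{\lambda+iy}(s)$ and one involving $A_{\lambda-iy}(s)$, both over $C_+(r_\lambda)$.

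Next I would analyze each piece separately. For a fixed non-real $z$, the meromorphic function $A_z(s)$ of $s$ has poles precisely at the resonance points $r_z^1,\ldots,r_z^N$ of the group of $r_\lambda$ (plus possibly others, all outside $C_+(r_\lambda)$), and by Proposition~\ref{P: Pz(rz)=res Az(s)} together with the Laurent expansion~(\ref{F: Laurent for A+(rz1,rz2)(s)}), the residue of $A_z(s)$ at $r_z^\nu$ is $P_z(r_z^\nu)$. Since $C_+(r_\lambda)$ encloses (for $z=\lambda+iy$) exactly the up-points of the group of $r_\lambda$, i.e. the resonance points in $\mbC_+$, we get
$$
  \frac 1{2\pi i}\oint_{C_+(r_\lambda)} A_{\lambda+iy}(s)\,ds = P^\uparrow_{\lambda+iy}(r_\lambda).
$$
For the other piece, the poles of $A_{\lambda-iy}(s)$ inside $C_+(r_\lambda)$ are, by Corollary~\ref{C: r(bar z)=bar r(z)}, exactly the conjugates $\bar r_{\lambda+iy}^\nu$ of those $r_{\lambda+iy}^\nu$ in $\mbC_+$, that is, the anti-up-points; the contour $C_+(r_\lambda)$ was drawn precisely to enclose up-points and anti-up-points. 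Hence
$$
  \frac 1{2\pi i}\oint_{C_+(r_\lambda)} A_{\lambda-iy}(s)\,ds = P^\uparrow_{\lambda-iy}(r_\lambda),
$$
where $P^\uparrow_{\lambda-iy}(r_\lambda)$ denotes the sum of residues $P_{\lambda-iy}(\bar r_{\lambda+iy}^\nu)$ over the anti-up-points. Taking traces and subtracting yields
$$
  \frac 1\pi \Tr\brs{\oint_{C_+(r_\lambda)} \Im T_{\lambda+iy}(H_s)J\,ds} = \Tr(P^\uparrow_{\lambda+iy}(r_\lambda)) - \Tr(P^\uparrow_{\lambda-iy}(r_\lambda)),
$$
which by~(\ref{F: res.ind = Tr(P+up)-Tr(P-up)}) equals $\ind_{res}(\lambda; H_{r_\lambda},V)$. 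The statement for $C_-(r_\lambda)$ is proved the same way, using that $C_-(r_\lambda)$ encloses the down-points and anti-down-points and noting the overall sign change; alternatively it follows by combining the $C_+$ result with Proposition~\ref{P: Th 3.3 of Az7}, since $C_+(r_\lambda) \cup C_-(r_\lambda)$ is homologous to the full contour $C(r_\lambda)$ there and $\Tr(P_{\lambda+iy}(r_\lambda) - P_{\lambda-iy}(r_\lambda)) = 0$ because both idempotents have rank $N$.

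The only delicate point is bookkeeping of which poles lie inside which contour, and the matching of $P^\uparrow_{\lambda-iy}(r_\lambda)$ (defined as a sum over anti-up-points) with the residues of $A_{\lambda-iy}(s)$: here one must use Corollary~\ref{C: r(bar z)=bar r(z)} to identify the poles of $A_{\lambda-iy}(s)$ in the group of $r_\lambda$ as the conjugates of the poles of $A_{\lambda+iy}(s)$, and check that the small-$y$ picture in the figure is the correct one, i.e. that no spurious poles drift into the contours. This is routine given the stability of isolated eigenvalues under small perturbations (as already exploited in Lemma~\ref{L: P(l+iy) to P(l+i0)}), so I expect no real obstacle; the main care is simply in the sign conventions and the precise definition of $P^\uparrow_{\lambda-iy}(r_\lambda)$.
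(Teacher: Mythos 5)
Your proposal is correct and follows essentially the same route as the paper: rewrite $\frac 1\pi \Im T_{\lambda+iy}(H_s)J$ as $\frac 1{2\pi i}\brs{A_{\lambda+iy}(s)-A_{\lambda-iy}(s)}$, evaluate the contour integral over $C_+(r_\lambda)$ by residues (Proposition~\ref{P: Pz(rz)=res Az(s)}) to get $P^\uparrow_{\lambda+iy}(r_\lambda)-P^\uparrow_{\lambda-iy}(r_\lambda)$, and take traces using~(\ref{F: res.ind = Tr(P+up)-Tr(P-up)}); the $C_-$ case is symmetric. Your extra pole bookkeeping via Corollary~\ref{C: r(bar z)=bar r(z)} is just a more explicit version of what the paper leaves implicit.
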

\begin{proof}
By Proposition~\ref{P: Pz(rz)=res Az(s)} we have
\begin{equation*}
  \begin{split}
    & \frac 1\pi \oint_{C_+(r_\lambda)} \Im T_{\lambda+iy}(H_s)J\,ds
    \\ & \qquad = \frac 1{2\pi i} \oint_{C_+(r_\lambda)} \brs{A_{\lambda+iy}(H_s) - A_{\lambda-iy}(H_s)}\,ds
    \\ & \qquad = P^\uparrow_{\lambda+iy}(r_\lambda) - P^\uparrow_{\lambda-iy}(r_\lambda).
  \end{split}
\end{equation*}
This equality shows that the integral over $C_+(r_\lambda)$ is trace-class.
After taking traces of both sides of this equality, the first equality of (\ref{F: indres = oint Tr(Im TJ)}) now follows from (\ref{F: res.ind = Tr(P+up)-Tr(P-up)}).
The second equality is proved similarly.
\end{proof}

\section{Total resonance index as singular spectral shift function}
\label{S: res.index as s.ssf}
In this section we give a sketch of the proof of Theorem \ref{T: xis=ind res}, given in my unpublished paper \cite{Az7}.
This subsection is not used in the remaining part of this paper and it may be safely skipped. On the other hand,
results of this subsection provide one of the main motivations for this work.

Theorem \ref{T: xis=ind res} holds under a weaker relatively trace-class assumption which makes it applicable
to one-dimensional Schr\"odinger operators $H_0u(x) = -u''(x) + V_0(x)u(x)$ with bounded potentials~$V_0(x).$ Proof of this more general result relies
on an appropriate modification of the constructive approach to stationary scattering theory discussed in the introduction, see \cite{Az10}.
This modification is lengthy and therefore the proof has not been included here.

In this and only in this section we assume that the perturbation operator $V$ is trace-class.
This is achieved by assuming that the rigging operator $F$ is Hilbert-Schmidt.

Let
$$
  F_z(s) = \frac 1\pi \Tr \brs{\Im R_z(H_s)V} = \frac 1{2\pi i} \Tr\brs{A_z(s) - A_{\bar z}(s)}.
$$
The operator $\Im R_z(H_s)V$ is equal to $\frac 1{2i}(\ulA_z(s) - \ulA_{\bar z}(s))$ but the cyclic property $\Tr(AB) = \Tr(BA)$ of the trace allows to replace
the underlined operators by the non-underlined counter-parts.

\begin{lemma} Let~$\lambda$ be any number from the set $\Lambda(H_0,F)$ of full Lebesgue measure.
Assume that the interval $[a,b]$ of the real axis contains only one resonance point~$r_\lambda$ of the triple $(\lambda; H_0,V).$
Then
\begin{equation} \label{F: int(L2)=int(L1)+int(C+)}
  \int_{L_2} F_{\lambda+iy}(s) \,ds = \int_{L_1} F_{\lambda+iy}(s) \,ds + \ind_{res}(\lambda; H_{r_\lambda},V),
\end{equation}
where $L_1$ and $L_2$ are the contours of integration from~$a$ to~$b$ shown below; namely, the contour $L_2$ goes straight from~$a$ to~$b$
while the contour $L_1$ circumvents the resonance and anti-resonance points of the group of~$r_\lambda$ from above.\\
\begin{picture}(400,80)
\put(32,62){($s$-plane)}

\put(200,35){{\small $(z=\lambda+iy, \ 0<y\ll 1.)$}}

\put(60,40){$L_1$}

\put(60,10){$L_2$}

\put(20,26){\line(1,0){160}}
\put(20,26){\vector(1,0){50}}

\put(20,28){\circle*{2}}
\put(20,15){{$a$}}

\put(20,30){\vector(1,0){70}}
\put(20,30){\line(1,0){95}}
\put(145,30){\vector(1,0){25}}
\put(145,30){\line(1,0){35}}

\put(180,28){\circle*{2}}
\put(180,15){{$b$}}

\put(125,34){\circle*{3}}
\put(133,38){\circle{3}}
\put(136,32){\circle*{3}}
\put(130,28){\circle*{2}}

\put(130,30){\oval(30,30)[t]}
\put(130,15){{$r_\lambda$}}
\end{picture}
\end{lemma}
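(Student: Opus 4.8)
The plan is to show that the difference between the two contour integrals in \eqref{F: int(L2)=int(L1)+int(C+)} is exactly the integral over the small semicircular detour of $L_1$ around the resonance point $r_\lambda$, and then to identify this residue contribution with the resonance index via Proposition~\ref{P: ind(res)=oint}. First I would observe that, since $\lambda \in \Lambda(H_0,F)$ and $[a,b]$ contains only the one resonance point $r_\lambda$ of the triple $(\lambda;H_0,V)$, for small enough $y>0$ all the poles of the group of $r_\lambda$ for $z=\lambda+iy$ — namely $r_{\lambda+iy}^1,\ldots,r_{\lambda+iy}^N$ — together with their conjugates, lie inside a small disc around $r_\lambda$, and in particular can be enclosed in the region bounded below by the segment of $L_2$ and above by the detour of $L_1$. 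By Lemma~\ref{L: A(z,s) is meromorphic}, the function $A_{\lambda+iy}(s)$ (hence also $A_{\lambda-iy}(s)$ and $F_{\lambda+iy}(s)$) is meromorphic in $s$ in a neighbourhood of $[a,b]$, with poles only at the resonance points of the groups corresponding to $\lambda\pm iy$; the only such poles inside the region between $L_1$ and $L_2$ are precisely those of the group of $r_\lambda$ and their conjugates. (Here one uses that the anti-resonance points of $z=\lambda+iy$ are the resonance points of $\bar z=\lambda-iy$, by Corollary~\ref{C: r(bar z)=bar r(z)}, and these are the only poles of $A_{\lambda-iy}(s)$.)

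Second, I would apply the residue theorem: the difference $\int_{L_2} F_{\lambda+iy}(s)\,ds - \int_{L_1} F_{\lambda+iy}(s)\,ds$ equals the integral of $F_{\lambda+iy}(s)$ around a closed contour $C_+(r_\lambda)$ that encircles (anticlockwise) exactly the up-points, down-points, anti-up-points and anti-down-points of the group of $r_\lambda$ — this is because the closed contour formed by $L_2$ traversed from $a$ to $b$ and $L_1$ traversed back from $b$ to $a$ is homologous, in the punctured neighbourhood, to a small loop around all poles of the group of $r_\lambda$ and their conjugates. Writing $\pi F_{\lambda+iy}(s) = \Tr\brs{\Im T_{\lambda+iy}(H_s)J}$ and using that $\frac 1\pi\Im T_{\lambda+iy}(H_s)J = \frac 1{2\pi i}(A_{\lambda+iy}(s)-A_{\lambda-iy}(s))$, the contour integral picks out the residues at the up/anti-up poles and the down/anti-down poles. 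By the Laurent expansions \eqref{F: Laurent for A+(s)} and the definitions \eqref{F: Pz(rl) = Pz(r1)+Pz(r2)+...}, these residues assemble into $P^\uparrow_{\lambda+iy}(r_\lambda) - P^\uparrow_{\lambda-iy}(r_\lambda)$ (the anti-up poles of $\lambda+iy$ being the up poles of $\lambda-iy$, contributing to $A_{\lambda-iy}$). Taking the trace and invoking \eqref{F: res.ind = Tr(P+up)-Tr(P-up)} from the proof of Proposition~\ref{P: ind(res)=oint} yields exactly $\ind_{res}(\lambda;H_{r_\lambda},V)$; alternatively one can cite Proposition~\ref{P: ind(res)=oint} directly once the contour is set up as $C_+(r_\lambda)$.

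Third, I would need to handle the orientation bookkeeping carefully: the detour of $L_1$ goes \emph{above} both the resonance and anti-resonance points, so the closed loop ($L_2$ forward, $L_1$ backward) winds around all of them once in the \emph{negative} (clockwise) sense as drawn, or positive depending on convention — I would fix signs by comparing with Proposition~\ref{P: Th 3.3 of Az7}, where $\frac 1\pi\oint_{C(r_\lambda)}\Im T_{\lambda+iy}(H_s)J\,ds = P_{\lambda+iy}(r_\lambda)-P_{\lambda-iy}(r_\lambda)$ for a contour enclosing \emph{all} poles of the group and their conjugates, and then splitting that contour into the up-part $C_+(r_\lambda)$ and down-part $C_-(r_\lambda)$ as in Proposition~\ref{P: ind(res)=oint}. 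Since $L_1$ detours only over the top, the region trapped between $L_1$ and $L_2$ contains \emph{all} of $r^\nu_{\lambda\pm iy}$ and $\bar r^\nu_{\lambda\pm iy}$ (the up-poles and anti-up-poles sit above the axis, the down-poles and anti-down-poles below, but the detour over the top together with the straight segment still encloses all of them), so in fact the full contour $C(r_\lambda)$ of Proposition~\ref{P: Th 3.3 of Az7} is the relevant one and the answer is $\Tr(P_{\lambda+iy}(r_\lambda)) - \Tr(P_{\lambda-iy}(r_\lambda))$, which by \eqref{F: res.ind = Tr(P+up)-Tr(P-up)} and Lemma~\ref{L: j-dimensions coincide} equals $\ind_{res}(\lambda;H_{r_\lambda},V)$.

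The main obstacle I anticipate is the geometric/orientation argument in the third paragraph: verifying precisely which poles lie in the region between $L_1$ and $L_2$ for all sufficiently small $y>0$, that no other poles of $A_{\lambda\pm iy}(s)$ intrude into $[a,b]$ as $y\to 0^+$ (which follows from the hypothesis that $r_\lambda$ is the unique resonance point of $(\lambda;H_0,V)$ in $[a,b]$ together with the discreteness of the resonance set and continuity of $r_z$ in $z$), and pinning down the correct sign so that the residue contribution comes out as $+\ind_{res}$ rather than $-\ind_{res}$. The analytic content — meromorphy of $A_z(s)$, the Laurent structure, the trace-class convergence — is all already available from Section~\ref{S: Resonance points} and Lemmas~\ref{L: P(l+iy) to P(l+i0)}, \ref{L: A(l+iy) to A(l+i0)}, so the proof is essentially a careful application of the residue theorem combined with Proposition~\ref{P: ind(res)=oint}.
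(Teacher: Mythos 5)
Your overall strategy (Cauchy's theorem plus Proposition~\ref{P: ind(res)=oint}) is the paper's own, and your first two paragraphs are essentially on track, but the resolution you settle on in the third paragraph contains a genuine error that breaks the proof. The region trapped between $L_1$ and $L_2$ lies in the closed upper half-plane: $L_2$ runs along the real segment $[a,b]$ and $L_1$ only bulges \emph{upwards}, so the closed loop ($L_2$ forward, $L_1$ backward) encloses exactly the poles of $F_{\lambda+iy}$ with positive imaginary part near $r_\lambda$, namely the up-points of $\lambda+iy$ (poles of $A_{\lambda+iy}$) and the anti-up-points (poles of $A_{\lambda-iy}$). The down-points and anti-down-points lie in $\mbC_-$, strictly below both contours, and are \emph{not} enclosed. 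Your claim that "the detour over the top together with the straight segment still encloses all of them" is false, and it leads to an inconsistent conclusion: if the full contour $C(r_\lambda)$ of Proposition~\ref{P: Th 3.3 of Az7} were the relevant one, the contour contribution would be $\Tr\brs{P_{\lambda+iy}(r_\lambda)-P_{\lambda-iy}(r_\lambda)} = N-N = 0$ (both idempotents have rank $N$ by Corollary~\ref{C: r(bar z)=bar r(z)}), not $\ind_{res}$. You also misquote~(\ref{F: res.ind = Tr(P+up)-Tr(P-up)}): that formula involves the \emph{up-parts} $P^\uparrow_{\lambda\pm iy}(r_\lambda)$, not the full idempotents, so it does not rescue the identification you make.

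The correct bookkeeping — which is what the paper does in one line — is that the difference of the two integrals equals $\int_{C_+(r_\lambda)}F_{\lambda+iy}(s)\,ds$, where $C_+(r_\lambda)$ is the closed (half-disc) contour enclosing only the up-points and anti-up-points; since $F_{\lambda+iy}(s)=\frac1\pi\Tr\brs{\Im T_{\lambda+iy}(H_s)J}$ (by cyclicity of the trace), the first equality of Proposition~\ref{P: ind(res)=oint} gives that this contribution is exactly $\ind_{res}(\lambda;H_{r_\lambda},V)$, with the sign coming out right because $C_+(r_\lambda)$ is traversed anticlockwise when you go along $L_2$ forward and $L_1$ backward. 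Your residue computation in the second paragraph (residues assembling into $P^\uparrow_{\lambda+iy}(r_\lambda)-P^\uparrow_{\lambda-iy}(r_\lambda)$) is the right one for this half-plane contour — you just need to describe the contour correctly (it does not encircle the down- and anti-down-points) and then either cite Proposition~\ref{P: ind(res)=oint} or use~(\ref{F: res.ind = Tr(P+up)-Tr(P-up)}) as stated. The remaining hypotheses (uniqueness of $r_\lambda$ in $[a,b]$, discreteness of the resonance set, and smallness of $y$ so that no other groups' poles intrude) are handled as you indicate in your first paragraph.
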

\begin{proof} By Cauchy's theorem, we have the following equality
$$
  \int_{L_2} F_{\lambda+iy}(s) \,ds = \int_{L_1} F_{\lambda+iy}(s) \,ds + \int_{C_+(r_\lambda)} F_{\lambda+iy}(s) \,ds,
$$
where the half-circle $C_+(r_\lambda)$ encloses all and only the resonance and anti-resonance points of the group of~$r_\lambda$ which are in~ $\mbC_+.$
So, the claim follows from Proposition \ref{P: ind(res)=oint}.
\end{proof}

\begin{prop} For a.e. $\lambda \in \mbR$
  $$
    \lim_{y \to 0^+} \int_0^1 F_{\lambda+iy}(s) \,ds = \xi(\lambda; H_1,H_0),
  $$
  where $\xi(\lambda; H_1,H_0)$ is the spectral shift function of the pair $(H_1,H_0).$
\end{prop}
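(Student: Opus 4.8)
The statement to prove is that for a.e. $\lambda \in \mbR$
$$
  \lim_{y \to 0^+} \int_0^1 F_{\lambda+iy}(s) \,ds = \xi(\lambda; H_1,H_0).
$$
The plan is to recognize $\int_0^1 F_{\lambda+iy}(s)\,ds$ as an approximation of the Birman--Solomyak representation of the spectral shift function and to pass to the limit $y \to 0^+$ using dominated convergence in the variable $s$, justified by the limiting absorption principle.

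First I would recall the Birman--Solomyak formula~(\ref{F: BS formula for xi}): for a.e.\ $\lambda$,
$$
  \xi(\lambda; H_1,H_0) = \frac{d}{d\lambda} \int_0^1 \Tr\brs{V E_\lambda^{H_s}}\,ds,
$$
and its equivalent form via the Poisson/harmonic-conjugate representation of the spectral shift function. Concretely, for a.e.\ $\lambda$ the spectral shift function $\xi(\lambda; H_s, H_0)$ of each pair equals the boundary value (as $y\to 0^+$) of $\frac 1\pi \Im \Tr\brs{\log(\text{perturbation determinant})}$, and differentiating in $s$ (using that $H_s = H_0 + sV$ with $V$ trace class, so that the perturbation determinant is well-defined and $\frac{d}{ds}\log\det = \Tr(V R_{\lambda+iy}(H_s))$ up to sign) gives
$$
  \frac{d}{ds}\,\xi(\lambda+iy; H_s,H_0) \;\longleftrightarrow\; \frac 1\pi \Im \Tr\brs{R_{\lambda+iy}(H_s)V} = F_{\lambda+iy}(s).
$$
Thus $\int_0^1 F_{\lambda+iy}(s)\,ds$ is exactly the regularized (for $y>0$) total variation of $\xi$ along the coupling constant path, which for $y>0$ equals $\frac 1\pi \Im \Tr\brs{\log(1 + T_{\lambda+iy}(H_0)J) \cdot(\ldots)}$ telescoping to the difference between $s=1$ and $s=0$. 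So for each fixed $y>0$,
$$
  \int_0^1 F_{\lambda+iy}(s)\,ds = \frac 1\pi \Im \Tr\brs{\log\det(1 - T_{\lambda+iy}(H_0)(-J)(1+\ldots))},
$$
i.e.\ a smoothed version of $\xi(\lambda; H_1, H_0)$ at level $y$, which is precisely the harmonic extension (Poisson integral) of $\xi(\cdot; H_1,H_0)$ evaluated at $\lambda+iy$. Since $\xi(\cdot; H_1,H_0) \in L_1(\mbR)$ by Krein's theorem, its Poisson integral converges to $\xi(\lambda; H_1,H_0)$ for a.e.\ $\lambda$ (at every Lebesgue point), which yields the claim.

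Alternatively --- and this is the route I would actually write out, staying inside the paper's framework --- I would work directly with $F_{\lambda+iy}(s) = \frac 1{2\pi i}\Tr\brs{A_{\lambda+iy}(s) - A_{\lambda - iy}(s)}$. For $\lambda \in \Lambda(H_0,F)$ and $s$ outside the discrete resonance set $R(\lambda; H_0,V)$, Lemma~\ref{L: T(l+iy) to T(l+i0)} gives $A_{\lambda\pm iy}(s) \to A_{\lambda\pm i0}(s)$ uniformly on compact subsets avoiding resonance points, and in fact in trace-class norm using the analogue of Lemma~\ref{L: P(l+iy) to P(l+i0)}-type estimates together with the trace-class convergence of $\Im T_{\lambda+iy}(H_0)$ (which is part of the standing assumptions when $V$ is trace class). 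Hence $F_{\lambda+iy}(s) \to F_{\lambda+i0}(s)$ pointwise in $s$ for $s \notin R(\lambda; H_0,V)$, which is a null set. To pass the limit under $\int_0^1 \cdot\, ds$ I need a dominating function; here I would use the uniform bound $|F_{\lambda+iy}(s)| \leq \frac 1\pi \norm{\Im T_{\lambda+iy}(H_0)}_1 \norm{J} \norm{(1 + (s-r)A_{\lambda+iy}(r))^{-1}}^2$ via~(\ref{F: Az3v6 (4.8)}) together with the fact that, for $\lambda$ an essentially regular point and the line regular at $\lambda$, this resolvent factor stays bounded uniformly in small $y$ on any compact $s$-interval avoiding neighbourhoods of $R(\lambda; H_0,V)$, while near a resonance point the integrable singularity of $F_{\lambda+i0}(s)$ in $s$ controls the tails; a careful contour-deformation argument (exactly as in the preceding Lemma, replacing the straight segment by a path avoiding resonance points from above) converts the $y$-uniform estimate into a genuine dominated-convergence bound. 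Finally, once $\lim_{y\to 0^+}\int_0^1 F_{\lambda+iy}(s)\,ds = \int_0^1 F_{\lambda+i0}(s)\,ds$ is established, I would identify the right-hand side with $\xi(\lambda; H_1,H_0)$: this follows by integrating the Birman--Solomyak formula in its distributional form~(\ref{F: BS formula (2)}) together with the Birman--Krein connection between $\Im \Tr\brs{A_{\lambda+i0}(s)}$ and the derivative in $s$ of the spectral shift function (as developed in \cite{Az3v6}, Theorem~\ref{T: C 8.2.5 Az3} and its relatives).

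\textbf{Main obstacle.} The hard part is the interchange of $\lim_{y\to 0^+}$ with $\int_0^1 \cdot\, ds$: one must produce a dominating function integrable in $s$ and uniform in small $y>0$, even though $F_{\lambda+i0}(s)$ genuinely has (integrable) singularities at the resonance points $r_\lambda \in R(\lambda; H_0,V)$, and the smoothed $F_{\lambda+iy}(s)$ has peaks of height $\sim 1/y$ there. The resolution is exactly the contour-deformation device from the previous Lemma: pushing the $s$-contour slightly into $\mbC_+$ moves it away from all resonance and anti-resonance points of each group, at the cost of the extra term $\ind_{res}(\lambda; H_{r_\lambda},V)$ per resonance point (Proposition~\ref{P: ind(res)=oint}); on the deformed contour $F_{\lambda+iy}(s)$ converges boundedly and dominated convergence applies cleanly. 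Reassembling, the integer resonance-index corrections accumulate to give the jump structure of $\xi(\cdot; H_r, H_0)$ in $r$ described in Theorem~\ref{T: Az 9.7.6}, and the limit equals $\xi(\lambda; H_1,H_0)$, which is the assertion; this is also the precise point at which Theorem~\ref{T: xis=ind res} emerges, since the total of the integer corrections is by definition the singular spectral shift function.
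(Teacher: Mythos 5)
Your first paragraph is, in substance, the paper's own argument and it is the correct one: $\int_0^1 F_{\lambda+iy}(s)\,ds = \frac 1\pi \Im \int_0^1 \Tr\brs{R_{\lambda+iy}(H_s)V}\,ds$ is exactly $\frac 1\pi$ times the imaginary part of the Cauchy transform of the finite, absolutely continuous spectral shift measure $\Delta \mapsto \int_0^1 \Tr\brs{V E_\Delta^{H_s}}\,ds$, whose density is $\xi(\cdot;H_1,H_0)\in L_1(\mbR)$ by the Birman--Solomyak formula~(\ref{F: BS formula for xi}) and Krein's theorem; the classical Fatou-type boundary-value theorem for Poisson (equivalently, imaginary parts of Cauchy) integrals of $L_1$ densities then gives convergence to $\xi(\lambda;H_1,H_0)$ for a.e. $\lambda.$ The detour through perturbation determinants is unnecessary but harmless. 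Had you written out that version, you would be done, and by essentially the same route the paper indicates.

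The route you say you would actually write out, however, has a genuine gap. Dominated convergence in $s$ over the straight segment $[0,1]$ cannot work: near each resonance point $r_\lambda\in[0,1]$ the function $F_{\lambda+iy}(s)$ develops peaks of height $\sim 1/y$ whose integrals do not vanish as $y\to 0^+$ --- in the limit they contribute exactly the integer $\ind_{res}(\lambda;H_{r_\lambda},V)$ per resonance point (this is the content of the Lemma preceding this Proposition together with Proposition~\ref{P: ind(res)=oint}). Worse, $F_{\lambda+i0}(s)$ need not even be locally integrable at $r_\lambda$: by Proposition~\ref{P: euE (Vf)=0, k>1} the function $s\mapsto\Im T_{\lambda+i0}(H_s)$ has a pole of order $\geq 2$ at a resonance point unless it is of type~I, so your claim of an ``integrable singularity'' fails. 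Consequently the identity $\lim_{y\to 0^+}\int_0^1 F_{\lambda+iy}(s)\,ds=\int_0^1 F_{\lambda+i0}(s)\,ds$ you aim for is false in general, and the contour-deformed limit that one can actually compute is identified in the next lemma with $\xia(\lambda;H_1,H_0),$ not with $\xi(\lambda;H_1,H_0).$ Your proposed repair --- deform the contour, collect the resonance indices, and identify their total with the jump structure of $\xi$ ``by definition'' of the singular spectral shift function --- is circular: the equality $\xis(\lambda)=\sum_{r_\lambda}\ind_{res}(\lambda;H_{r_\lambda},V)$ is Theorem~\ref{T: xis=ind res}, and in this section it is \emph{deduced} by combining the present Proposition (straight-line limit equals $\xi$) with the identification of the deformed-contour limit with $\xia;$ it cannot be used as an input to prove the Proposition. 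Keep the first argument and discard the second.
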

This proposition is in essence the Birman-Solomyak formula (\ref{F: BS formula for xi}) for the spectral shift function.
The difference is that the Birman-Solomyak formula (\ref{F: BS formula for xi}) uses derivative of the distributive function of the spectral shift measure,
while in the formula above it is replaced by $\frac 1 \pi$ times the imaginary part of the limit of the Cauchy transform of the distributive function. By a well-known theorem of complex analysis,
these two functions are equal a.e. Details of the proof can be found in e.g. \cite[\S\S\ 9.5, 9.6]{Az3v6}.

\bigskip Now we discuss the absolutely continuous part of the spectral shift function $\xia(\lambda; H_1,H_0).$
By definition, the function $\xia(\lambda; H_1,H_0)$ is the density of the measure defined by formula
$$
  \Delta \mapsto \int_0^1 \Tr(V E_\Delta^{H_s}P^{(a)}(H_s))\,ds,
$$
where $P^{(a)}(H_s)$ is the (orthogonal) projection onto the absolutely continuous subspace of the self-adjoint operator $H_s.$

It was shown in \cite{Az3v6} that for a.e.~$\lambda$ the number $\xia(\lambda; H_s,H_0)$ is equal to
\begin{equation} \label{F: xia=int Tr(EVE)}
  \int_0^s \Tr_{\hlambda(H_r)}\brs{\euE_\lambda(H_r)V\euE_\lambda^\diamondsuit(H_r)}\,dr,
\end{equation}
where $\euE_\lambda(H_r) \colon \hilb_+ \to \hlambda(H_r)$ is the \emph{evaluation operator} defined by formula (\ref{F: euE(F*psi)=sqrt Im T}).
Since the operator $\euE_\lambda(H_r)$ was introduced in a recent and lengthy paper, the meaning of this formula may need some explanations.
Here $\hilb_+ = F^*\hilb$ is the rigging Hilbert space and $\hlambda(H_r)$ is the subspace of the auxiliary Hilbert space $\clK,$
defined by formula
$$
  \hlambda(H_r) = \overline{\im{\Im T_{\lambda+i0}(H_r)}}.
$$
It was shown in \cite{Az3v6} that $\hlambda(H_r)$ can be treated as the fiber Hilbert space. The operator $\euE_\lambda^\diamondsuit(H_r)$
acts from the Hilbert space $\hlambda(H_r)$ to the Hilbert space $\hilb_-$ which comes from the rigging~$F$; definition of the operator
$\euE_\lambda^\diamondsuit(H_r)$ will follow shortly.
The fact that the trace-class perturbation $V \colon \hilb \to \hilb$
admits factorization $V = F^*JF$ with Hilbert-Schmidt~$F$ and bounded~$J$
allows to treat $V$ as a bounded operator from $\hilb_-$ to $\hilb_+$ since $F$ can be treated as a unitary isomorphism $\hilb_- \stackrel\sim\to \clK$
and $F^*$ can be treated as a unitary isomorphism $\clK \stackrel\sim\to \hilb_+.$ These unitary isomorphisms can be denoted by the same $F$ and $F^*,$
but we will be pedantic for a moment and denote them as $\tilde F \colon \hilb_- \stackrel\sim\to \clK$ and $\tilde F^* \colon \clK \stackrel\sim\to \hilb_+.$
Now, the equality $V= F^*JF$ can be understood in several ways as shown in the following commutative diagram:

\begin{equation*} \label{F: V=tilde F* J tilde F}
  \xymatrix{
   \hilb_- \ar[dd]_{\tilde V} \ar[rr]^{\tilde F} && \clK \ar[dd]_{J} && \hilb \ar[ll]_F \ar[dd]^{V} \ar[rr]^{i_-} && \hilb_- \ar[dd]^{\tilde V} \ar@/_2.5pc/[llllll]_{\id}\\
                    &&                               && \\
   \hilb_+ \ar@/_2.5pc/[rrrrrr]_{\id} && \clK \ar[ll]^{\tilde F^*} \ar[rr]_{F^*} && \hilb && \hilb_+ \ar[ll]^{i_+}
  }
\end{equation*}
Here $i_\pm$ are the Hilbert-Schmidt inclusion operators. In the formula (\ref{F: xia=int Tr(EVE)}) the symbol $V$ denotes the bounded operator $\tilde V \colon \hilb_- \to \hilb_+.$
The operator $\euE_\lambda^\diamondsuit(H_r)$  acts from $\hlambda(H_r)$ to $\hilb_-$ according to the equality
$$
  \scal{\euE_\lambda^\diamondsuit(H_r)g}{f}_{-1,1} = \scal{g}{\euE_\lambda(H_r)f}, \ \ g \in \hlambda(H_r), \ f \in \hilb_+.
$$
This definition of $\euE_\lambda^\diamondsuit$ is equivalent to the equality
$$
  \euE_\lambda^\diamondsuit = \tilde F^{-1} (\tilde F^*)^{-1} \euE_\lambda^*,
$$
where $\euE_\lambda^*\colon \hlambda(H_r) \to \hilb_+$ is the usual adjoint and $\tilde F$ and $\tilde F^*$ are unitary isomorphisms shown in the diagram.
The product $\euE_\lambda(H_r)V\euE_\lambda^\diamondsuit(H_r)$ is trace-class since the operators $\euE_\lambda(H_r)$ and $\euE_\lambda^\diamondsuit(H_r)$
are Hilbert-Schmidt and the operator $V\colon \hilb_- \to \hilb_+$ is bounded.

Note that for any fixed point~$\lambda$ from the set $\Lambda(H_0,F)$ the operator
$\euE_\lambda(H_r)$ is defined for all non-resonant values of $r,$ according to the definition of this operator:
$$
  \euE_\lambda(H_r) \tilde F^* \psi = \frac 1\pi \sqrt{\Im T_{\lambda+i0}(H_r)}\psi, \ \ \psi \in \hilb.
$$
To avoid ambiguity, we write $\tilde V$ instead of~$V,$ when we treat $V$ as an operator $\colon \hilb_- \to \hilb_+.$
Note that, as the left square of the diagram above clearly shows, the operator $\tilde V$ is unitarily equivalent to~$J.$

The following proposition is proved in \cite{Az3v6}, see \cite[Corollary 7.3.5]{Az3v6}.
We give here a sketch of that proof.
\begin{prop} \label{P: Tr(EVE) is analytic} For any $\lambda \in \Lambda(H_0,F),$ the operator-valued function of $s \in \mbR$ defined by formula
$$
  s \mapsto \Tr_{\hlambda(H_r)}\brs{\euE_\lambda(H_r)\tilde V\euE_\lambda^\diamondsuit(H_r)}
$$
is analytic and admits holomorphic continuation to some neighbourhood of $\mbR.$
\end{prop}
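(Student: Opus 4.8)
The plan is to reduce the analyticity of $s \mapsto \Tr_{\hlambda(H_s)}\brs{\euE_\lambda(H_s)\tilde V\euE_\lambda^\diamondsuit(H_s)}$ to the analyticity of the scattering matrix $S(\lambda; H_s, H_0)$ as a function of the coupling constant, combined with the trace formula relating $\xia$ to the scattering phase. First I would recall from Theorem~\ref{T: Theorem 7.2.2 of Az3 (v2)} the stationary formula $S(\lambda; H_s,H_0) = 1 - 2\pi i \euE_\lambda(H_0) F^* sJ(1+sT_{\lambda+i0}(H_0)J)^{-1}F\euE_\lambda^*(H_0)$, and from the multiplicative property of wave matrices (Theorem~\ref{T: properties of w(pm)}) the factorization $S(\lambda; H_s,H_0) = w_+^*(\lambda; H_s,H_0) S(\lambda; H_s,H_r) w_+(\lambda; H_s,H_0)$ for suitable intermediate $r$, so that $\det S(\lambda; H_s,H_0)$ depends multiplicatively on increments in $s$. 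By Proposition~\ref{P: 7.2.5 Az3} the function $s \mapsto S(\lambda; H_s,H_0)$ extends analytically (as a $1+\clL_1$-valued meromorphic function which is in fact holomorphic on $\mbR$) across the resonance points, hence $s \mapsto \det S(\lambda; H_s,H_0)$ is analytic on a neighbourhood of $\mbR$ and nowhere vanishing.

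The second step is to identify $\Tr_{\hlambda(H_s)}\brs{\euE_\lambda(H_s)\tilde V\euE_\lambda^\diamondsuit(H_s)}$ with $\frac{d}{ds}$ of (a branch of) $-\frac{1}{2\pi i}\log\det S(\lambda; H_s,H_0)$, i.e.\ with the derivative of $\xia(\lambda; H_s,H_0)$ in $s$. This is exactly the differentiated form of the defining formula~(\ref{F: xia=int Tr(EVE)}) together with Theorem~\ref{T: C 8.2.5 Az3}. Concretely, I would differentiate the Texp-formula~(\ref{F: Texp formula for S(l) (2)}): since the integrand there is $w_+(\lambda; H_0,H_r)\euE_\lambda(H_r)F^*JF\euE_\lambda^*(H_r)w_+(\lambda; H_r,H_0)$, taking the trace of the logarithmic derivative of the chronological exponential produces precisely $\Tr_{\hlambda(H_s)}\brs{\euE_\lambda(H_s)\tilde V\euE_\lambda^\diamondsuit(H_s)}$, using $\Tr(w^* X w) = \Tr(X)$ for the unitary wave matrices. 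So the claimed function is $\frac{d}{ds}$ of an analytic function and is therefore itself analytic on a neighbourhood of $\mbR$.

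An alternative, more self-contained route avoids wave matrices entirely: write $\euE_\lambda(H_s)\tilde V\euE_\lambda^\diamondsuit(H_s)$ in terms of the sandwiched resolvent. Using $\euE_\lambda(H_s)F^* = \pi^{-1/2}\sqrt{\Im T_{\lambda+i0}(H_s)}$ and the identity~(\ref{F: Az3v6 (4.8)}) expressing $\Im T_{\lambda+i0}(H_s)$ through $(1+(s-r)A_{\bar z}(r))^{-1}\,\Im T_{\lambda+i0}(H_r)\,(1+(s-r)B_z(r))^{-1}$, the whole expression becomes a rational (hence meromorphic) function of $s$ with values in $\clL_1(\clK)$, with poles only at resonance points. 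The content is then that the trace of this $\clL_1$-valued meromorphic function has no poles: the principal parts at a resonance point $r_\lambda$ are built from $P_{\lambda\pm i0}(r_\lambda)$, $\bfA_{\lambda\pm i0}^j(r_\lambda)$, and by Proposition~\ref{I:P: euE (Vf)=0, k>1} (equivalently the $c_{\pm j}$ computations there) the relevant traces $\Tr\bigl(\sqrt{\Im T_{\lambda+i0}(H_s)}J\bfA^j\cdots\bigr)$ telescope to zero. I expect the main obstacle to be exactly this cancellation-of-residues computation — verifying that every negative-power Laurent coefficient of $s \mapsto \Tr\brs{\euE_\lambda(H_s)\tilde V\euE_\lambda^\diamondsuit(H_s)}$ vanishes. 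Whichever route one takes, this is where the real work lies; the first route pushes the difficulty into the already-established analyticity of $S(\lambda;H_s,H_0)$ in $s$ (Proposition~\ref{P: 7.2.5 Az3}), which is why I would present that as the primary argument and relegate the residue computation to a remark.
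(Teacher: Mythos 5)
Your primary argument --- differentiating the chronological-exponential formula~(\ref{F: Texp formula for S(l) (2)}) to identify $\Tr_{\hlambda(H_s)}\brs{\euE_\lambda(H_s)\tilde V\euE_\lambda^\diamondsuit(H_s)}$ (via unitarity of the wave matrices) with a constant multiple of $\Tr\brs{\brs{\tfrac{d}{ds}S(\lambda;H_s,H_0)}S^*(\lambda;H_s,H_0)}$, and then invoking Proposition~\ref{P: 7.2.5 Az3} for analyticity of the scattering matrix in the coupling constant --- is essentially the paper's own proof, which states precisely this identity as~(\ref{F: wEVEw=S'S*}) and concludes the same way. The opening detour through the stationary formula and the sketched residue-cancellation alternative are superfluous but do not affect correctness.
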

\begin{proof} For any $\lambda \in \Lambda(H_0,F)$ and any real non-resonant $r$ the following equality holds:
\begin{equation} \label{F: wEVEw=S'S*}
  w_+^*(\lambda; H_r,H_0)\euE_\lambda(H_r)\tilde V\euE_\lambda^\diamondsuit(H_r)w_+(\lambda; H_r,H_0) = \brs{\frac d{dr} S(\lambda; H_r,H_0)}S^*(\lambda; H_r,H_0),
\end{equation}
where $S(\lambda; H_r,H_0)\colon \hlambda(H_0) \to \hlambda(H_0)$ is the scattering matrix and $w_+(\lambda; H_r,H_0)\colon \hlambda(H_0) \to \hlambda(H_r)$
is the wave matrix. According to \cite[\S 5 and \S 7]{Az3v6}, the right hand side is defined for all non-resonant values of the coupling constant~$r.$
According to \cite[Proposition 7.2.5]{Az3v6}, the scattering matrix $S(\lambda; H_r,H_0)$ is an analytic function of $r$ in the whole real axis $\mbR,$
and therefore so is the right hand side of the equality (\ref{F: wEVEw=S'S*}). It follows that the trace of the left hand side is also analytic.
Since $w_+(\lambda; H_r,H_0)$ is unitary, this trace is equal to
$$
  \Tr_{\hlambda(H_r)}\brs{\euE_\lambda(H_r)\tilde V\euE_\lambda^\diamondsuit(H_r)}
$$
\end{proof}
This proposition should not be surprising in the light of the general coupling constant regularity phenomenon observed first by Aronszajn back in 1957.

\begin{thm} \cite{Az3v6} For a.e.~$\lambda$
the absolutely continuous spectral shift function $\xia(\lambda; H_1, H_0)$ is equal to
\begin{equation} \label{F: int Tr(EVE)}
  \int_0^1\Tr_{\hlambda(H_r)}\brs{\euE_\lambda(H_r)\tilde V\euE_\lambda^\diamondsuit(H_r)}\,dr.
\end{equation}
\end{thm}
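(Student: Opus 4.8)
The plan is to prove that the measure $\xia(\phi) = \int_0^1 \Tr\brs{V\phi(H_r^{(a)})}\,dr$ of~(\ref{F: xia def}) is absolutely continuous with density given, for a.e.\ value of the spectral parameter, by the $r$-integral in question; for $\lambda = 1$ this is the assertion. Fix $\lambda \in \Lambda(H_0,F);$ by Theorem~\ref{T: Az 4.1.11} the resonance set of the triple $(\lambda; H_0,V)$ is discrete, so $H_r$ is regular at~$\lambda$ for all but discretely many $r \in [0,1],$ and for those $r$ the evaluation operator $\euE_\lambda(H_r)$ and its modified adjoint $\euE_\lambda^\diamondsuit(H_r)$ are defined. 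First I would reduce the trace to the absolutely continuous subspace: since $P^{(a)}(H_r)$ commutes with $\phi(H_r),$ so that $\phi(H_r^{(a)}) = P^{(a)}(H_r)\phi(H_r)P^{(a)}(H_r),$ the cyclicity of the trace gives
$$
  \Tr\brs{V\phi(H_r^{(a)})} = \Tr_{\hilb^{(a)}(H_r)}\brs{P^{(a)}(H_r)\,V\,P^{(a)}(H_r)\;\phi(H_r^{(a)})},
$$
where on the right the operators are restricted to $\hilb^{(a)}(H_r).$

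Next I would transport this identity through the unitary isomorphism $\euE(H_r)\colon \hilb^{(a)}(H_r) \to \euH(H_r) = \int^\oplus_{\Lambda(H_r,F)} \hlambda(H_r)\,d\lambda$ of Theorem~\ref{T: euE}, under which $\phi(H_r^{(a)})$ becomes the operator of multiplication by $\phi(\lambda).$ The right hand side then becomes $\Tr_{\euH(H_r)}\brs{K_r M_\phi},$ where $K_r = \euE(H_r)\,P^{(a)}(H_r)VP^{(a)}(H_r)\,\euE(H_r)^*$ is a trace-class operator on the direct integral and $M_\phi$ is multiplication by $\phi.$ The standard formula for the trace of the product of a trace-class operator on a direct integral with a bounded diagonalisable operator yields
$$
  \Tr\brs{V\phi(H_r^{(a)})} = \int_{\mbR} \phi(\lambda)\,\Tr_{\hlambda(H_r)} K_r(\lambda)\,d\lambda,
$$
where $K_r(\lambda)$ is the a.e.-defined fibre-diagonal of the kernel of $K_r.$ The step I expect to be the main obstacle is the identification $K_r(\lambda) = \euE_\lambda(H_r)\,\tilde V\,\euE_\lambda^\diamondsuit(H_r)$ for a.e.\ $\lambda,$ which requires passing from the pointwise-defined evaluation operator $\euE_\lambda(H_r)\colon \hilb_+ \to \hlambda(H_r)$ and its modified adjoint $\euE_\lambda^\diamondsuit(H_r)\colon \hlambda(H_r) \to \hilb_-$ to the abstract fibre decomposition of $\euE(H_r).$ Here one uses that the factorisation $V = F^*JF$ makes $V$ a bounded operator $\tilde V\colon \hilb_- \to \hilb_+$ through the riggings, so that the composition $\euE_\lambda(H_r)\tilde V\euE_\lambda^\diamondsuit(H_r)$ makes sense and is trace class (both $\euE_\lambda(H_r)$ and $\euE_\lambda^\diamondsuit(H_r)$ being Hilbert--Schmidt); the verification is a bookkeeping argument internal to the constructive scattering theory of~\cite{Az3v6} (\S\S 2.6, 5.1), but it is where the technical weight lies.

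Granting the identification, for every non-resonant $r$ we obtain
$$
  \Tr\brs{V\phi(H_r^{(a)})} = \int_{\mbR} \phi(\lambda)\,\Tr_{\hlambda(H_r)}\brs{\euE_\lambda(H_r)\tilde V\euE_\lambda^\diamondsuit(H_r)}\,d\lambda.
$$
Finally I would integrate over $r \in [0,1]$ and interchange the two integrations. By Proposition~\ref{P: Tr(EVE) is analytic} the function $r \mapsto \Tr_{\hlambda(H_r)}\brs{\euE_\lambda(H_r)\tilde V\euE_\lambda^\diamondsuit(H_r)}$ extends analytically to a neighbourhood of $\mbR$ and is in particular bounded on $[0,1],$ while $\phi$ has compact support; hence Fubini's theorem applies and
$$
  \xia(\phi) = \int_0^1 \Tr\brs{V\phi(H_r^{(a)})}\,dr = \int_{\mbR} \phi(\lambda)\brs{\int_0^1 \Tr_{\hlambda(H_r)}\brs{\euE_\lambda(H_r)\tilde V\euE_\lambda^\diamondsuit(H_r)}\,dr}\,d\lambda
$$
for every $\phi \in C_c(\mbR).$ Since the bracketed function of $\lambda$ is integrable (globally, as $\xi$ and hence $\xia$ are integrable), it is the density of $\xia,$ which proves the theorem and yields the representation~(\ref{F: xia=int Tr(EVE)}). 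An alternative to the identification step would be to invoke~(\ref{F: wEVEw=S'S*}), reducing the fibre trace to $\Tr\brs{(\tfrac{d}{dr}S)S^*} = \tfrac{d}{dr}\log\det S(\lambda; H_r,H_0),$ but that route is less self-contained, since it re-introduces the scattering matrix and the determinant formula.
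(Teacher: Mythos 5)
The paper does not actually prove this statement: it is quoted from \cite{Az3v6} with the pointer ``see Lemma 8.2.1 and Theorem 8.1.3 of \cite{Az3v6}'', so there is no in-paper argument to match your proposal against, and the question is whether your sketch would stand on its own. It does not, because its central step is exactly the content of the theorem and you defer it back to \cite{Az3v6}. Concretely: the identification of the a.e.-defined fibre diagonal of $\euE(H_r)V\euE(H_r)^*$ with $\euE_\lambda(H_r)\tilde V\euE_\lambda^\diamondsuit(H_r)$ is equivalent to saying that the absolutely continuous part of the measure $\Delta\mapsto\Tr\brs{VE_\Delta^{H_r}}$ has density $\pi^{-1}\Tr\brs{\Im T_{\lambda+i0}(H_r)J}$ for a.e.\ $\lambda$ --- this \emph{is} the theorem (modulo the elementary $r$-integration), so calling it ``bookkeeping internal to \cite{Az3v6}'' makes the proposal circular at its crux. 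Moreover the ``standard formula'' $\Tr(K_rM_\phi)=\int\phi(\lambda)\Tr K_r(\lambda)\,d\lambda$ is itself delicate: an integral kernel of a trace-class operator on a direct integral is defined only a.e.\ on the product, the diagonal is a null set, and making sense of $K_r(\lambda)$ and then proving that this a.e.-defined object coincides with the natural candidate $\euE_\lambda(H_r)\tilde V\euE_\lambda^\diamondsuit(H_r)$ for a.e.\ $\lambda$ is the analytic heart, not an afterthought.

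A route that avoids kernel diagonals altogether, and is closer to what the cited proof needs, is the following. Since $F$ is Hilbert--Schmidt here, cyclicity gives $\Tr\brs{V\phi(H_r^{(a)})}=\Tr_{\clK}\brs{JF\phi(H_r^{(a)})F^*}$; Theorem~\ref{T: euE} applied to matrix elements $\scal{F^*\chi}{\phi(H_r^{(a)})F^*\psi}$ yields $F\phi(H_r^{(a)})F^*=\pi^{-1}\int\phi(\lambda)\,\Im T_{\lambda+i0}(H_r)\,d\lambda$ in the weak sense, and the only nontrivial point left is interchanging $\Tr_\clK(J\,\cdot\,)$ with the $\lambda$-integral, which requires the local integrability of $\Tr\,\Im T_{\lambda+i0}(H_r)$ (a Fatou-type boundary-value argument for the harmonic extension of the finite measure $\Delta\mapsto\Tr(F^*FE_\Delta^{H_r})$); the resulting density $\pi^{-1}\Tr\brs{\Im T_{\lambda+i0}(H_r)J}$ is then identified with $\Tr_{\hlambda(H_r)}\brs{\euE_\lambda(H_r)\tilde V\euE_\lambda^\diamondsuit(H_r)}$ exactly as in the computation following this theorem in section~\ref{S: res.index as s.ssf}. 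Finally, your Fubini step over $r$ is also underjustified: Proposition~\ref{P: Tr(EVE) is analytic} gives, for each fixed $\lambda\in\Lambda(H_0,F)$, analyticity and hence boundedness in $r$ on $[0,1]$, but Fubini needs a jointly measurable dominating bound in $(r,\lambda)$; the estimate $\abs{\Tr\brs{\Im T_{\lambda+i0}(H_r)J}}\le\norm{J}\,\Tr\,\Im T_{\lambda+i0}(H_r)$ together with the local integrability just mentioned supplies it, whereas pointwise-in-$\lambda$ boundedness does not.
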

For proof see \cite[Lemma 8.2.1, Theorem 8.1.3]{Az3v6}.

Now we return to the equality (\ref{F: int(L2)=int(L1)+int(C+)}). It is not difficult to see that as $y \to 0^+$ the limit
$$
  L_1\mbox{-}\int_0^1 F_{\lambda+i0}(s) \,ds =  L_1\mbox{-}\int_0^1 \Tr_\clK\brs{\frac 1\pi \Im T_{\lambda+i0}(H_r) J} \,ds
$$
of the second integral over the contour $L_1$ exists, where $L_1$ indicates that all resonance points in the interval $[0,1]$
are circumvented in the upper half-plane.
\begin{lemma} For all~$\lambda$ from the set $\Lambda(H_0,F)$ of full Lebesgue measure this limit is equal to~(\ref{F: int Tr(EVE)}).
\end{lemma}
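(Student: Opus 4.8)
The plan is to compute the contour integral $L_1\text{-}\int_0^1 F_{\lambda+i0}(s)\,ds$ directly in terms of the evaluation operators, identifying it with the integrand of~(\ref{F: int Tr(EVE)}) pointwise in $s$ (away from the discrete resonance set), and then invoke dominated/uniform convergence plus Proposition~\ref{P: Tr(EVE) is analytic} to pass the $s$-integral under control. The key identity is that for non-resonant $r$ one has
$$
  \frac 1\pi \Tr_\clK\brs{\Im T_{\lambda+i0}(H_r)J} = \Tr_{\hlambda(H_r)}\brs{\euE_\lambda(H_r)\tilde V\euE_\lambda^\diamondsuit(H_r)}.
$$
Indeed, by definition~(\ref{F: euE(F*psi)=sqrt Im T}) of the evaluation operator and of $\euE_\lambda^\diamondsuit$, the right hand side equals $\frac 1\pi\Tr_\clK\brs{\sqrt{\Im T_{\lambda+i0}(H_r)}\,J\,\sqrt{\Im T_{\lambda+i0}(H_r)}}$ once the unitary isomorphisms $\tilde F,\tilde F^*$ of the diagram are inserted; by the cyclic property of the trace this is $\frac 1\pi\Tr_\clK\brs{\Im T_{\lambda+i0}(H_r)\,J}$. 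This reduces the lemma to showing that the $L_1$-regularized integral of $s\mapsto\frac 1\pi\Tr_\clK\brs{\Im T_{\lambda+i0}(H_s)J}$ over $[0,1]$ agrees with $\int_0^1$ of its holomorphic-in-$s$ counterpart.

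First I would observe that, by Lemma~\ref{L: T(l+iy) to T(l+i0)} together with Proposition~\ref{P: Tr(EVE) is analytic}, the function $F_{\lambda+i0}(s)=\frac 1{2\pi i}\Tr\brs{A_{\lambda+i0}(s)-A_{\lambda-i0}(s)}$ extends, as a function of the coupling constant, to a meromorphic function on a neighbourhood of $\mbR$ whose only poles on $\mbR$ are resonance points of the triple $(\lambda;H_0,V)$; and the holomorphic part at each such $r_\lambda$ is precisely $\Tr_{\hlambda(H_r)}\brs{\euE_\lambda(H_r)\tilde V\euE_\lambda^\diamondsuit(H_r)}$ evaluated there. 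This is exactly the content of~(\ref{F: wEVEw=S'S*}) and the analyticity of the scattering matrix in $r$ (Proposition~\ref{P: 7.2.5 Az3}): the trace of $(dS/dr)S^*$ has no poles, so subtracting off the principal parts of $A_{\lambda\pm i0}(s)$ at the resonance points of the groups (and their conjugates) leaves the analytic function. Along the contour $L_1$, which circumvents every resonance and anti-resonance point of every group in $[0,1]$ from above, Cauchy's theorem lets me replace $\int_{L_1}F_{\lambda+i0}$ by $\int_0^1$ of the holomorphic part plus a sum of small half-circle integrals; but the half-circle integrals of the \emph{holomorphic} remainder vanish, and the principal-part contributions are precisely what the detour removes, so the net effect is $\int_{L_1}F_{\lambda+i0}(s)\,ds=\int_0^1\Tr_{\hlambda(H_s)}\brs{\euE_\lambda(H_s)\tilde V\euE_\lambda^\diamondsuit(H_s)}\,ds$, which is~(\ref{F: int Tr(EVE)}).

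Then I would handle the interchange of limit and integral needed to justify $L_1\text{-}\int_0^1 F_{\lambda+i0}(s)\,ds = \lim_{y\to 0^+}\int_{L_1}F_{\lambda+iy}(s)\,ds$: on the compact contour $L_1$ (a fixed positive distance from all resonance points), $T_{\lambda+iy}(H_s)$ converges to $T_{\lambda+i0}(H_s)$ uniformly by Lemma~\ref{L: T(l+iy) to T(l+i0)}, hence so do the relevant traces, and uniform convergence on a compact contour gives convergence of the integrals. Combining this with the already-established equality $\int_{L_1}F_{\lambda+iy}(s)\,ds = \int_{L_2}F_{\lambda+iy}(s)\,ds - \sum\ind_{res}$ from~(\ref{F: int(L2)=int(L1)+int(C+)}) is not needed for this particular lemma — only the identification of the $L_1$-limit with~(\ref{F: int Tr(EVE)}) is — but it is what makes the lemma useful downstream.

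The main obstacle I anticipate is the bookkeeping at the resonance points: one must check carefully that the principal part of $A_{\lambda+i0}(s)-A_{\lambda-i0}(s)$ at a real $r_\lambda$, integrated over a small upper half-circle in $L_1$, exactly cancels the difference between $\int_{L_1}$ and $\int_0^1$, i.e.\ that the ``holomorphic part'' subtracted is genuinely $2\pi i$ times the trace of $(dS/dr)S^*$ and not off by a residue. This is where~(\ref{F: wEVEw=S'S*}), the unitarity of the wave matrix $w_+(\lambda;H_s,H_0)$, and the cyclicity of the trace must be combined with care; the analyticity statement of Proposition~\ref{P: Tr(EVE) is analytic} is what guarantees there is \emph{no} leftover pole, so once the bookkeeping is set up the conclusion is forced. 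I expect the rest — uniform convergence on $L_1$, the trace manipulations — to be routine given the results already in the excerpt.
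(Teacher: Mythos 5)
Your opening identity --- that for non-resonant $r$ the integrand of~(\ref{F: int Tr(EVE)}) equals $\frac 1\pi \Tr_\clK\brs{\Im T_{\lambda+i0}(H_r)J}$ --- is exactly the paper's starting point, and reducing the lemma to a contour statement about $F_{\lambda+i0}(s)$ is reasonable. But the step you yourself flag as ``bookkeeping'' is where the argument currently fails. You assert that along $L_1$ ``the principal-part contributions are precisely what the detour removes,'' so that $\int_{L_1}F_{\lambda+i0}(s)\,ds$ equals the straight integral of the holomorphic part. If $F_{\lambda+i0}(s)$ really had a nonzero principal part at a real resonance point $r_\lambda\in[0,1]$, this would be false: the upper half-circle detour would contribute $-i\pi$ times the residue, plus nonvanishing contributions from any higher-order poles, and $\int_{L_1}$ would differ from $\int_0^1$ of the holomorphic part by exactly those amounts. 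So what you actually need is that the \emph{scalar} function $F_{\lambda+i0}(s)$ has no principal part at the real resonance points at all (equivalently, that its primitive is single-valued around them). Neither Cauchy's theorem nor the analyticity of $\Tr_{\hlambda(H_r)}\brs{\euE_\lambda(H_r)\tilde V\euE_\lambda^\diamondsuit(H_r)}$ by itself gives this; saying that Proposition~\ref{P: Tr(EVE) is analytic} ``guarantees there is no leftover pole'' asserts the conclusion rather than proving it.

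This is precisely the point the paper treats, by a slightly different route: it compares the two primitives $s\mapsto L_1\text{-}\int_0^s F_{\lambda+i0}$ and $s\mapsto\int_0^s\Tr\brs{\euE_\lambda\tilde V\euE_\lambda^\diamondsuit}$, which agree for small $s$; the second is holomorphic near $[0,1]$ by Proposition~\ref{P: Tr(EVE) is analytic}, and the first is single-valued because circumventing a resonance point from above or from below gives the same value --- that is the content of the second equality in Proposition~\ref{P: ind(res)=oint} (the traces of the $C_+$- and $C_-$-integrals are opposite, so the full-circle integral has zero trace); the uniqueness theorem for holomorphic functions then finishes. Within your own framework the gap closes just as quickly: the Laurent coefficients of $A_{\lambda+i0}(s)-A_{\lambda-i0}(s)$ at $s=r_\lambda$ are $P_{\lambda+i0}(r_\lambda)-P_{\lambda-i0}(r_\lambda)$ and differences of the nilpotents $\bfA^j_{\lambda\pm i0}(r_\lambda)$, all of which have zero trace (both idempotents have rank $N$, and nilpotents are traceless), so $F_{\lambda+i0}(s)$ extends holomorphically across each real resonance point, the identity theorem identifies this extension with $\Tr\brs{\euE_\lambda(H_s)\tilde V\euE_\lambda^\diamondsuit(H_s)}$, and only then is your deformation of $L_1$ to $[0,1]$ legitimate. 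Your final remark on the $y\to 0^+$ interchange via uniform convergence on the compact contour (Lemma~\ref{L: T(l+iy) to T(l+i0)}) is fine, but note it addresses the unproved assertion preceding the lemma rather than the lemma itself.
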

\begin{proof} By definition of $\euE_\lambda(H_r),$ for all non-resonant values of the coupling constant~$r$ the integrand of the integral (\ref{F: int Tr(EVE)})
is equal to
$$
  \Tr_{\hlambda(H_r)}\brs{\euE_\lambda(H_r)\tilde V\euE_\lambda^\diamondsuit(H_r)}
  = \Tr_{\hilb_-}\brs{\euE_\lambda^\diamondsuit(H_r)\euE_\lambda(H_r)\tilde V}
  = \Tr_{\clK}\brs{\frac 1\pi \Im T_{\lambda+i0}(H_r) J}.
$$
So, for small enough $s$ the integrals
$$
  L_1\mbox{-}\int_0^s F_{\lambda+i0}(r) \,dr \ \ \text{and} \ \ \int_0^s\Tr_{\hlambda(H_r)}\brs{\euE_\lambda(H_r)\tilde V\euE_\lambda^\diamondsuit(H_r)}\,dr
$$
are equal since their integrands are equal. We have to show that the integrals are equal for large values of $s$ too, in particular for $s=1.$
The second integral is holomorphic in some neighbourhood of $[0,1],$ since so is its integrand according to Proposition \ref{P: Tr(EVE) is analytic}.
If we show that the first integral is also holomorphic
in some neighbourhood of $[0,1]$ the proof will be complete by the uniqueness theorem for holomorphic functions.

The integrand of the first integral has singularities at resonance points from the interval $[0,1],$ but the integral of it is a single-valued function in
a neighbourhood of $[0,1]$ except maybe the resonance points, since whether we circumvent the resonance points from above or below the result of analytic continuation
will be the same according to the second equality of Proposition \ref{P: ind(res)=oint}.
Hence, the first and the second integrals are both holomorphic single valued functions in some neighbourhood of $[0,1]$  except a finite set of resonance points,
and both integrals coincide for small values of~$s.$ Hence, they coincide everywhere.
\end{proof}

Combining the results of this subsection,
we conclude that after taking the limit $y \to 0^+$ the equality (\ref{F: int(L2)=int(L1)+int(C+)}) with $a=0$ and $b=1$ turns into
$$
  \xi(\lambda; H_1,H_0) = \xia(\lambda; H_1,H_0) + \sum_{r_\lambda} \ind_{res}(\lambda; H_{r_\lambda}, V),
$$
where the sum is taken over all resonance points from $[0,1].$

Since $\xis(\lambda; H_1,H_0) = \xi(\lambda; H_1,H_0) - \xia(\lambda; H_1,H_0),$ this gives the following
\begin{thm} For a.e.~$\lambda$
$$
  \xis(\lambda; H_1,H_0) = \sum_{r_\lambda \in [0,1]} \ind_{res}(\lambda; H_{r_\lambda}, V).
$$
\end{thm}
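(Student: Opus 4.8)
The plan is to obtain the statement by assembling the lemmas and propositions of this section and passing to the limit $y\to 0^+$ in the contour identity~(\ref{F: int(L2)=int(L1)+int(C+)}). First I would fix $\lambda$ in the intersection of the full-measure sets on which all the auxiliary statements are valid: the set $\Lambda(H_0,F)$, the set where the Birman--Solomyak-type formula $\lim_{y\to0^+}\int_0^1 F_{\lambda+iy}(s)\,ds = \xi(\lambda;H_1,H_0)$ holds, and the set where $\xia(\lambda;H_1,H_0)$ equals the integral~(\ref{F: int Tr(EVE)}). This intersection still has full Lebesgue measure. For such $\lambda$ the resonance set $R(\lambda;H_0,V)$ is discrete, so only finitely many resonance points of the triple $(\lambda;H_0,V)$, say $r_\lambda^1<\dots<r_\lambda^n$, lie in $[0,1]$. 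I would then choose a partition $0=a_0<a_1<\dots<a_n=1$ by non-resonant points such that each subinterval $[a_{k-1},a_k]$ contains exactly the resonance point $r_\lambda^k$ and no other; this is possible because the resonance points are isolated.

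Next, applying the contour-deformation Lemma, i.e.\ equality~(\ref{F: int(L2)=int(L1)+int(C+)}), on each subinterval $[a_{k-1},a_k]$ and summing over $k$, I obtain for all small enough $y>0$
$$
  \int_0^1 F_{\lambda+iy}(s)\,ds = \int_{L_1} F_{\lambda+iy}(s)\,ds + \sum_{k=1}^n \ind_{res}(\lambda;H_{r_\lambda^k},V),
$$
where $L_1$ is the composite path from $0$ to $1$ which, near each $r_\lambda^k$, circumvents from above by a small half-disc the resonance and anti-resonance points of the corresponding group. Here I use that $\ind_{res}(\lambda;H_{r_\lambda^k},V)$ is, by definition, independent of $y$ once $y$ is small. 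Now I would let $y\to 0^+$: the left-hand side tends to $\xi(\lambda;H_1,H_0)$ by the Birman--Solomyak-type Proposition, while for the $L_1$-integral the final Lemma of the section identifies its limit $L_1\text{-}\int_0^1 F_{\lambda+i0}(s)\,ds$ with the integral~(\ref{F: int Tr(EVE)}), which in turn equals $\xia(\lambda;H_1,H_0)$. The passage to the limit in the $L_1$-integral is legitimate because the half-disc excursions of $L_1$ can be fixed once and for all so that, for every small $y>0$, all the shifted poles $r_{\lambda+iy}^{k,\nu}$ and their conjugates lie inside them; on the remaining compact part of $L_1$ the integrand $F_{\lambda+iy}$ converges uniformly to $F_{\lambda+i0}$ by Lemma~\ref{L: T(l+iy) to T(l+i0)}, together with continuity of the trace on $\clL_1$. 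This yields
$$
  \xi(\lambda;H_1,H_0) = \xia(\lambda;H_1,H_0) + \sum_{r_\lambda\in[0,1]} \ind_{res}(\lambda;H_{r_\lambda},V),
$$
and subtracting $\xia$ and invoking $\xis = \xi - \xia$ (equality~(\ref{F: xi=xia+xis})) gives the asserted formula.

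The main obstacle I expect is the careful handling of this limit $y\to0^+$ of the $L_1$-integral: one must select $L_1$ independently of $y$ so that all groups $\{r_{\lambda+iy}^{k,\nu}\}$ together with their conjugates remain enclosed by the half-disc excursions for all sufficiently small $y$ — this uses that each group collapses onto its real resonance point $r_\lambda^k$ as $y\to0^+$ — and then justify interchange of limit and integral on the remaining part. A second, more structural point, already addressed in the section, is that the identification of the $L_1$-integral with the genuine (a.e.-defined) $\xia$, rather than merely with a pointwise value, relies on the analyticity in $s$ of $\Tr_{\hlambda(H_s)}(\euE_\lambda(H_s)\tilde V\euE_\lambda^\diamondsuit(H_s))$ from Proposition~\ref{P: Tr(EVE) is analytic}, so that the equality of the two single-valued holomorphic functions, verified near $s=0$, propagates along $[0,1]$ up to $s=1$.
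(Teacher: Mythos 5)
Your proposal is correct and follows essentially the same route as the paper: it assembles the same ingredients of this section — the contour identity~(\ref{F: int(L2)=int(L1)+int(C+)}), the Birman--Solomyak-type limit identifying $\lim_{y\to0^+}\int_0^1 F_{\lambda+iy}(s)\,ds$ with $\xi(\lambda;H_1,H_0)$, and the identification of the $L_1$-integral with~(\ref{F: int Tr(EVE)}) $=\xia(\lambda;H_1,H_0)$ via the analyticity argument — and then subtracts, using $\xis=\xi-\xia$. Your only cosmetic deviation (partitioning $[0,1]$ into subintervals each containing one resonance point before summing) is just an explicit form of what the paper does when it applies the identity with $a=0$, $b=1$, and your extra care with fixing $L_1$ independently of $y$ is a fair elaboration of the paper's brief ``it is not difficult to see'' limit step.
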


%
%
%
%
%
%

\section{Signature of resonance matrix}
\label{S: sign of res matrix}
In this section we prove Theorem \ref{T: sign of res matrix for set of res points} which is one of the main technical results of this paper.
This theorem allows to express the signature of the finite-rank self-adjoint operator $Q_{\lambda-i0}(r_\lambda)J P_{\lambda+i0}(r_\lambda)$
which we call resonance matrix, in terms of the $R$-index of the operator $A_{\lambda+iy}(r_\lambda) P_{\lambda+iy}(r_\lambda).$

Assume that we are given a finite set
$$
  \Gamma = \set{r_z^1,\ldots,r_z^n}
$$
of resonance points corresponding to a fixed number $z \in \Pi.$
By $\bar \Gamma$ we denote the set $\set{\bar r_z^1,\ldots,\bar r_z^n}.$
The finite rank self-adjoint operator
$$
  Q_{\bar z}(\bar \Gamma)J P_z(\Gamma)
$$
will be called \emph{resonance matrix} of the set of resonance points $\Gamma.$ \label{Page: res matrix(1)}

Recall that a symmetric matrix $\alpha \in \mbC^{n\times n}$ is positive-definite, if for any non-zero $x \in \mbC^n$
$\scal{x}{\alpha x}>0.$ In particular, rank of a positive-definite matrix is equal to the dimension of vector space on which it acts.

\begin{lemma} \label{L: a bit technical lemma} Let $y>0.$ Let $M$ be a positive integer and let $d_1, \ldots, d_M$
be $M$ positive integers. Assume that we are given $M$ sets of vectors
\begin{equation} \label{F: system of sets}
  \chi_\mu^{(1)}, \ldots, \chi_\mu^{(d_\mu)}, \ \ \mu=1,\ldots,M
\end{equation}
from a pre-Hilbert space, such that all
$D:= d_1+d_2+\ldots+d_M$ vectors $\chi_\mu^{(j)}$ are linearly independent.
Assume further that we are given $M$ complex numbers
$$
  r_1, \ldots, r_M
$$
with positive imaginary parts.
Let $\beta$ be the positive-definite $D\times D$ matrix
\begin{equation} \label{b(mn)(kj)(0)=}
  \beta_{\mu\nu}^{kj} = \scal{\chi_\mu^{(k)}}{\chi_\nu^{(j)}}
\end{equation}
and define another $D\times D$ matrix $\alpha$ by recurrent formula
\begin{equation} \label{a(mn)(kj)(0)=}
  \alpha_{\mu\nu}^{kj} = \frac{2iy}{r_\nu - \bar r_\mu}\beta_{\mu\nu}^{kj} + \frac{1}{r_\nu - \bar r_\mu}\brs{\alpha_{\mu\nu}^{k-1,j} - \alpha_{\mu\nu}^{k,j-1}},
\end{equation}
where it is assumed that $\alpha_{\mu\nu}^{kj}=0$ if at least one of the indices $k$ or $j$ is equal to 0.
Then the matrix $\alpha$ is positive-definite.
\end{lemma}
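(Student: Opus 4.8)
The plan is to solve the recurrence (\ref{a(mn)(kj)(0)=}) explicitly and read positivity off a Gram-matrix representation. Fix a pair $(\mu,\nu)$ and view the numbers $\alpha_{\mu\nu}^{kj}$ with $1\le k\le d_\mu,$ $1\le j\le d_\nu$ as a $d_\mu\times d_\nu$ matrix $\alpha^{(\mu\nu)};$ similarly form $\beta^{(\mu\nu)}.$ Let $S_\mu$ be the $d_\mu\times d_\mu$ matrix with $1$'s on the subdiagonal and zeros elsewhere, and put $T_\mu = \bar r_\mu I + S_\mu$ --- the lower-triangular Jordan block with eigenvalue $\bar r_\mu,$ so that $\spectrum{T_\mu}=\set{\bar r_\mu}\subset\mbC_-$ and $\spectrum{T_\mu^{*}}=\set{r_\mu}\subset\mbC_+$ (recall $\Im r_\mu>0$). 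With the convention that a term carrying a zero upper index is omitted, left multiplication by $S_\mu$ produces the entries $\alpha_{\mu\nu}^{k-1,j}$ and right multiplication by $S_\nu^{T}$ produces the entries $\alpha_{\mu\nu}^{k,j-1};$ hence, multiplying (\ref{a(mn)(kj)(0)=}) through by $w_{\mu\nu}:=r_\nu-\bar r_\mu$ and using $w_{\mu\nu}I - S_\mu = r_\nu I - T_\mu$ and $S_\nu^{T} = T_\nu^{*} - r_\nu I,$ the recurrence becomes the single matrix identity
$$
  T_\mu\,\alpha^{(\mu\nu)} - \alpha^{(\mu\nu)}\,T_\nu^{*} = -2iy\,\beta^{(\mu\nu)} .
$$
This is a Sylvester equation, and since $\spectrum{T_\mu}\cap\spectrum{T_\nu^{*}}=\emptyset$ it has a unique solution; so it suffices to produce one solution and recognise its structure.

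First I would check that the resolvent integral
$$
  \alpha^{(\mu\nu)} \;=\; \frac{y}{\pi}\int_{\mbR}(sI-T_\mu)^{-1}\,\beta^{(\mu\nu)}\,(sI-T_\nu^{*})^{-1}\,ds
$$
solves it. The integrand is continuous on $\mbR$ --- the poles of the two resolvents lie at $\bar r_\mu\in\mbC_-$ and at $r_\nu\in\mbC_+$ --- and decays like $s^{-2},$ so the integral converges absolutely. Computing $T_\mu\alpha^{(\mu\nu)}-\alpha^{(\mu\nu)}T_\nu^{*}$ from this formula and using $T_\mu(sI-T_\mu)^{-1}=s(sI-T_\mu)^{-1}-I$ and $(sI-T_\nu^{*})^{-1}T_\nu^{*}=s(sI-T_\nu^{*})^{-1}-I$ collapses the integrand to $\tfrac{y}{\pi}\bigl[(sI-T_\mu)^{-1}\beta^{(\mu\nu)}-\beta^{(\mu\nu)}(sI-T_\nu^{*})^{-1}\bigr],$ which again decays like $s^{-2};$ closing the contour in the upper half-plane, where only the poles of $(sI-T_\nu^{*})^{-1}$ sit, and using $\tfrac1{2\pi i}\oint(sI-T_\nu^{*})^{-1}\,ds=I,$ one gets $-2iy\,\beta^{(\mu\nu)},$ which is exactly the right-hand side. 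Next, using $\beta_{\mu\nu}^{kj}=\scal{\chi_\mu^{(k)}}{\chi_\nu^{(j)}}$ and, for real $s,$ the fact that the $(k,p)$ entry of $(sI-T_\mu)^{-1}$ is the complex conjugate of the $(p,k)$ entry of $(sI-T_\mu^{*})^{-1},$ I would rewrite the $(k,j)$ entry of the integrand as $\scal{Y_\mu(s)e_k}{Y_\nu(s)e_j},$ where $Y_\mu(s)e_k:=\sum_{l=1}^{d_\mu}\bigl((sI-T_\mu^{*})^{-1}\bigr)_{lk}\,\chi_\mu^{(l)}$ is a vector of the given pre-Hilbert space. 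Summing over $\mu,\nu,k,j$ then yields, for every $x=(x_\mu)_{\mu=1}^{M}\in\mbC^{D},$
$$
  \scal{x}{\alpha x} \;=\; \frac{y}{\pi}\int_{\mbR}\norm{\,\textstyle\sum_{\mu=1}^{M} Y_\mu(s)x_\mu\,}^{2}\,ds .
$$

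Since $y>0,$ this already gives that $\alpha$ is self-adjoint and non-negative. For strict positivity, suppose $\scal{x}{\alpha x}=0.$ The integrand above is a non-negative continuous function of $s,$ so it vanishes identically; thus $\sum_{\mu}Y_\mu(s)x_\mu=0$ for every real $s.$ Expanding, $\sum_{\mu}\sum_{l}\bigl((sI-T_\mu^{*})^{-1}x_\mu\bigr)_{l}\,\chi_\mu^{(l)}=0,$ and since the whole family $\set{\chi_\mu^{(l)}}$ of $D$ vectors in (\ref{F: system of sets}) is linearly independent, every scalar coefficient must vanish, i.e.\ $(sI-T_\mu^{*})^{-1}x_\mu=0$ for each $\mu$ and each real $s;$ applying $sI-T_\mu^{*}$ gives $x_\mu=0.$ Hence $\alpha$ is positive-definite. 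I expect the only genuine work to be the contour computation in the second paragraph together with the bookkeeping between $T_\mu$ and $T_\mu^{*};$ everything else is routine. A convenient feature of this route is that the concluding step is insensitive to whether the numbers $r_1,\dots,r_M$ coincide or not --- no separation of poles or residue matching is needed --- which is exactly where the linear independence of the \emph{entire} family (\ref{F: system of sets}), rather than of each block separately, is used.
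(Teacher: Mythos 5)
Your proof is correct, and it takes a genuinely different route from the paper's. You recast the recurrence, block by block, as the Sylvester equation $T_\mu\,\alpha^{(\mu\nu)} - \alpha^{(\mu\nu)}T_\nu^{*} = -2iy\,\beta^{(\mu\nu)}$ with Jordan blocks $T_\mu = \bar r_\mu I + S_\mu$, use the spectral separation $\sigma(T_\mu)=\set{\bar r_\mu}\subset\mbC_-$, $\sigma(T_\nu^{*})=\set{r_\nu}\subset\mbC_+$ for uniqueness, exhibit the solution by a resolvent integral over the real line, and read off positivity from the resulting Gram-type identity $\scal{x}{\alpha x} = \frac{y}{\pi}\int_{\mbR}\norm{\sum_\mu Y_\mu(s)x_\mu}^2ds$, with strict positivity coming from linear independence of the full family of vectors; I checked the contour computation (the collapsed integrand is $O(|s|^{-2})$, only the Riesz projection of $T_\nu^{*}$ contributes, giving $-2iy\beta^{(\mu\nu)}$) and the conjugation identity for real $s$, and both are sound. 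The paper instead argues in two stages: it first proves, by induction on the maximal $d_\mu$ and a sequence of elementary row and column (congruence) operations, that the auxiliary matrix $\gamma$ defined by $\gamma^{kj}_{\mu\nu} = 2y\beta^{kj}_{\mu\nu} - i(\gamma^{k-1,j}_{\mu\nu}-\gamma^{k,j-1}_{\mu\nu})$ is positive definite, and then converts $\gamma$ into $\alpha$ by a Laplace-transform device, damping the vectors by $e^{-p(\tau_\mu - i\rho_\mu)}$ and integrating $\gamma(p)$ over $p\in(0,\infty)$ so that the factor $\frac{i}{r_\nu-\bar r_\mu}$ appears, identifying the result with $\alpha$ through the same recursion. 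Both proofs are at bottom integral representations of the kernel $\frac{1}{r_\nu-\bar r_\mu}$ as a superposition of manifestly non-negative kernels (yours a Poisson/Cauchy-type integral in the spectral variable, the paper's a Laplace integral in a damping parameter), but yours dispenses entirely with the induction and the congruence bookkeeping: the Jordan-block resolvents absorb the shift structure in one stroke, Hermiticity and non-negativity drop out of a single formula, and the role of linear independence of the \emph{whole} family is isolated cleanly in the last step; both arguments, as you note, are indifferent to whether the $r_\mu$ coincide. What the paper's route buys in exchange is elementarity — no contour integration or Sylvester theory, just finite linear algebra plus a convergent scalar integral — and an intermediate object ($\gamma$, i.e.\ the $y$-scaled analogue of $\alpha$ with the $\frac 1{r_\nu-\bar r_\mu}$ factors stripped) whose positivity is of some independent use in the author's bookkeeping.
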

\begin{proof}
Plainly, the matrix $\alpha$ is symmetric.

(A) Define recurrently a $D \times D$ matrix $\gamma$ with matrix elements
\begin{equation} \label{gamma(mn)(kj)=}
  \gamma_{\mu\nu}^{kj} = 2y\beta_{\mu\nu}^{kj} - i\brs{\gamma_{\mu\nu}^{k-1,j} - \gamma_{\mu\nu}^{k,j-1}},
\end{equation}
where it is assumed that $\gamma_{\mu\nu}^{kj}=0$ if at least one of the indices $k$ or $j$ is equal to 0.
We claim that $\gamma$ is positive definite. We prove this claim using induction on the positive integer
$$
  d = \max \set{d_1,\ldots,d_M}
$$
which will be called order. If $d=1$ then the second term in~(\ref{gamma(mn)(kj)=}) is zero and so in this case the claim
follows from positive definiteness of the matrix~(\ref{b(mn)(kj)(0)=}).
Now assuming that the claim holds for orders $<d$ we show that it holds for order $d.$

Rows of a $D\times D$ matrix will be enumerated by a pair of indices $(\mu,k)$ so that $(\mu,k) < (\nu,j)$
if and only if $\mu<\nu,$ or both $\mu=\nu$ and $k<j.$ A $D\times D$  matrix $\gamma$ can be looked at as composed of $M\times M$ cells, so that $(\mu,\nu)$ indicates a cell
and $(k,j)$ indicates an element of the cell. The second index $k$ in the pair $(\mu,k)$ denoting a row/column will be called order of the row/column.
The following figure shows the structure of a $D\times D$ matrix $\gamma.$

\newcommand{\pp}{\phantom{$\int\limits_0^1 f(x)\,dx$}}
\newcommand{\qq}[2]{
 $\left[
    \begin{matrix}
       \gamma_{#1#2}^{11} & \ldots & \gamma_{#1#2}^{1d_{#2}} \\
       \ldots & \ldots & \ldots \\
       \gamma_{#1#2}^{d_{#1}1} & \ldots & \gamma_{#1#2}^{d_{#1}d_{#2}}
    \end{matrix}
  \right]$
}
\newcommand{\rr}[4]{
 $\left[
    \begin{matrix}
       \gamma_{#1#2}^{11} & \ldots & \gamma_{#1#2}^{1#4} & \ldots & \gamma_{#1#2}^{1d_{#2}} \\
                   \ldots & \ldots & \ldots & \ldots & \ldots \\
                   \gamma_{#1#2}^{#31} & \ldots & \gamma_{#1#2}^{#3#4} & \ldots & \gamma_{#1#2}^{#3d_#2}  \\
                   \ldots & \ldots & \ldots & \ldots & \ldots \\
       \gamma_{#1#2}^{d_{#1}1} & \ldots & \gamma_{#1#2}^{d_#1#4} & \ldots & \gamma_{#1#2}^{d_{#1}d_{#2}}
    \end{matrix}
  \right]$
}

$$\small
\gamma =
\left[
\begin{tabular}{ccccc}
\qq{1}{1} & \ldots  & \ldots  & \ldots  & \qq{1}{M}  \\
  \ldots  & \ldots  & \ldots  & \ldots  & \ldots \\
  \ldots  & \ldots  & \rr{\mu}{\nu}{k}{j}  & \ldots  & \ldots \\
  \ldots  & \ldots  & \ldots  & \ldots  & \ldots \\
\qq{M}{1} & \ldots  & \ldots  & \ldots  & \qq{M}{M}  \\
\end{tabular}
\right]
$$

We apply to the matrix $\gamma$ the following elementary row and column operations:
if a row $(\mu,k)$ has order $k\geq 2,$ then we add to this row the previous row $(\mu,k-1)$ multiplied by~$i$
and if a column $(\nu,j)$ is such that its order $j\geq 2,$ then we add to this column the previous column $(\nu,j-1)$ multiplied by~$-i.$
We still have to specify in which order to execute these row and column operations. The rule is this: we start
with rows of largest orders~$d_\mu$ and finish with rows of order 2; the same rule applies to column operations.
If two rows have the same order then the corresponding row operations are interchangeable and so in this case
we don't need to specify order of these operations. Also, a row and a column operation are always interchangeable.
The following line explains what happens to a $2\times 2$ submatrix of the matrix $\gamma$ after a pair of a row and a column operations
(here for convenience the indices $k$ and $j$ are replaced by integers $3,3$):
$$
  \left(\begin{matrix}
     \gamma_{\mu\nu}^{22} & \gamma_{\mu\nu}^{23} \\
     \gamma_{\mu\nu}^{32} & \gamma_{\mu\nu}^{33}
  \end{matrix}
  \right)
  \rightarrow
  \left(\begin{matrix}
     \gamma_{\mu\nu}^{22} & \gamma_{\mu\nu}^{23} \\
     \gamma_{\mu\nu}^{32} + i\gamma_{\mu\nu}^{22} & \gamma_{\mu\nu}^{33} + i\gamma_{\mu\nu}^{23}
  \end{matrix}
  \right)
  \rightarrow
  \left(\begin{matrix}
     \gamma_{\mu\nu}^{22} & \gamma_{\mu\nu}^{23} - i \gamma_{\mu\nu}^{22}\\
     \gamma_{\mu\nu}^{32} + i\gamma_{\mu\nu}^{22} & \gamma_{\mu\nu}^{33} + i\gamma_{\mu\nu}^{23} - i\gamma_{\mu\nu}^{32} + \gamma_{\mu\nu}^{22}
  \end{matrix}
  \right).
$$
After performing other row and column operations this $2 \times 2$ block of the matrix $\gamma$ takes the form
$$
  \left(\begin{matrix}
     \gamma_{\mu\nu}^{22} + i\gamma_{\mu\nu}^{12} - i\gamma_{\mu\nu}^{21} + \gamma_{\mu\nu}^{11} & \gamma_{\mu\nu}^{23} + i\gamma_{\mu\nu}^{13} - i \gamma_{\mu\nu}^{22} + \gamma_{\mu\nu}^{12}\\
     \gamma_{\mu\nu}^{32} + i\gamma_{\mu\nu}^{22} - i\gamma_{\mu\nu}^{31} + \gamma_{\mu\nu}^{21} & \gamma_{\mu\nu}^{33} + i\gamma_{\mu\nu}^{23} - i \gamma_{\mu\nu}^{32} + \gamma_{\mu\nu}^{22}
  \end{matrix}
  \right).
$$
Now the formula~(\ref{gamma(mn)(kj)=}) implies that after these row and column operations having been performed in the specified order on the matrix
$\gamma$ it will take the form
$$
  2y\beta + \tilde \gamma,
$$
where the matrix $\tilde\gamma$ is obtained from $\gamma$ by the rule
$$
  \tilde\gamma_{\mu\nu}^{kj} = \left\{\begin{matrix} \gamma_{\mu\nu}^{k-1,j-1} & \text{if} \ k,j\geq 2, \\ 0 & \text{if otherwise.}\end{matrix}\right.
$$
This definition shows that after removing zero rows and columns the matrix $\tilde \gamma$ can be deemed as having been obtained by the same formula
(\ref{gamma(mn)(kj)=}) but using the system of sets of vectors
$$
  \chi_\mu^{(1)}, \ldots, \chi_\mu^{(d_\mu-1)}, \ \ \mu=1,\ldots,M.
$$
The order of this system of vectors is $d-1$ and therefore by induction assumption the (original with zero rows and columns) matrix $\tilde \gamma$ is non-negative.
Hence, the matrix $2y\beta + \tilde \gamma$ is positive definite, since so is the matrix $\beta.$
Finally, since the matrix $\gamma$ can be represented as $C(2y\beta + \tilde \gamma)C^*,$ where $C$ is the matrix corresponding to the row operations,
it follows that the matrix $\gamma$ itself is also positive definite.

(B) We have shown that for any system of sets of vectors~(\ref{F: system of sets}), the matrix $\gamma$ defined recurrently by formula~(\ref{gamma(mn)(kj)=})
is positive-definite.

Let $r_\mu = \rho_\mu+i\tau_\mu,$ where $\rho_\mu \in \mbR$ and $\tau_\mu>0.$
Now, given a positive number $p>0$ we define a system of sets of vectors
\begin{equation} \label{F: system of sets(p)}
  e^{-p(\tau_\mu-i\rho_\mu)}\chi_\mu^{(1)}, \ldots, e^{-p(\tau_\mu-i\rho_\mu)}\chi_\mu^{(d_\mu)}, \ \ \mu=1,\ldots,M.
\end{equation}
Using this system of sets of vectors, we construct the matrices $\beta(p)$ and $\gamma(p)$ by formulas
(\ref{b(mn)(kj)(0)=}) and~(\ref{gamma(mn)(kj)=}). According to part (A), the matrices $\gamma(p)$ are positive-definite for all $p>0.$
Hence, so is the matrix
$$
  \omega = \int_0^\infty \gamma(p)\,dp.
$$
Since $p>0$ and $\tau_\mu>0,$ this integral converges absolutely.
From~(\ref{gamma(mn)(kj)=}) it can be seen that
$$
  \gamma_{\mu\nu}^{kj}(p) = e^{-p(\tau_\mu+i\rho_\mu)} e^{-p(\tau_\nu-i\rho_\nu)} \gamma_{\mu\nu}^{kj}.
$$
Using this, we calculate the matrix element $\omega_{\mu\nu}^{kj}:$
$$
  \omega_{\mu\nu}^{kj} = \int_0^\infty \gamma_{\mu\nu}^{kj}(p)\,dp = \gamma_{\mu\nu}^{kj}\int_0^\infty e^{-p(\tau_\mu+\tau_\nu+i\rho_\mu-i\rho_\nu)}\,dp
  = \frac {\gamma_{\mu\nu}^{kj}}{\tau_\mu + \tau_\nu + i\rho_\mu - i\rho_\nu} = \frac{i\gamma_{\mu\nu}^{kj}}{r_\nu-\bar r_\mu}.
$$
Hence, a matrix with matrix elements $\omega_{\mu\nu}^{kj} = \frac{i\gamma_{\mu\nu}^{kj}}{r_\nu-\bar r_\mu}$ is positive definite.
Now comparing the recurrent formulas~(\ref{a(mn)(kj)(0)=}) and~(\ref{gamma(mn)(kj)=}) shows that $\alpha$ and $\omega$ are equal.
Hence, $\alpha$ is positive definite.
\end{proof}
As it can be seen from the proof, if the number $y$ is negative then the matrix $\alpha$ is negative-definite.

\begin{thm} \label{T: res matrix is positive for set of up-points} If $\Gamma = \set{r_z^1, \ldots, r_z^M}$ is
a finite set of resonance up-points corresponding to a non-real number~$z,$ then the operator
$$
  \Im z \, Q_{\bar z}(\bar \Gamma) J P_z(\Gamma)
$$
is non-negative and its rank is equal to the rank of $P_z(\Gamma).$
\end{thm}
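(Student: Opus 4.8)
The plan is to reduce the statement to the purely matrix-theoretic Lemma \ref{L: a bit technical lemma}, which has just been proved, by choosing a convenient basis for the image of $P_z(\Gamma)$ built from Jordan bases of the individual spaces $\Upsilon_z(r_z^\mu)$. First I would set $z=\lambda+iy$ with $\Im z = y$; the case $y<0$ follows by the remark after the lemma (or by replacing $z$ with $\bar z$). Write $\Gamma = \{r_z^1,\dots,r_z^M\}$, let $d_\mu$ be the order of $r_z^\mu$, and fix for each $\mu$ a resonance vector $u_\mu^{(d_\mu)}$ of maximal order $d_\mu$ together with the chain $u_\mu^{(k)} = \bfA_z^{\,d_\mu-k}(r_z^\mu)u_\mu^{(d_\mu)}$ for $k=1,\dots,d_\mu$, which by Theorem \ref{T: Laurent for A(s)psi} are resonance vectors of order exactly $k$. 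Since the ranges $\Upsilon_z(r_z^\mu) = \im P_z(r_z^\mu)$ are mutually independent (by \eqref{F: Pz(1)Pz(2)=0} the $P_z(r_z^\mu)$ are orthogonal idempotents), the full collection $\{u_\mu^{(k)}\}$ need not be linearly independent in general, so I would instead work with one chain per $\mu$, i.e.\ take $M$ sets of vectors as in \eqref{F: system of sets}; the span of all of them has dimension $D = d_1+\dots+d_M$ precisely when we pick the chains inside a Jordan decomposition with the top cells, and I would restrict attention to the subspace they span (the general case then follows by summing over the cells of a full Jordan decomposition of each $\Upsilon_z(r_z^\mu)$).

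The core computation is to express the matrix of the sesquilinear form $(x,w)\mapsto \langle x, Q_{\bar z}(\bar\Gamma)JP_z(\Gamma) w\rangle$ in the basis $\{u_\mu^{(k)}\}$ and recognise it, up to the factor $\Im z$, as the matrix $\alpha$ of Lemma \ref{L: a bit technical lemma}. The key input is the identity \eqref{F: Az3v6 (4.8)} for $\Im T_z(H_s)$, which together with the Laurent expansion \eqref{F: Laurent for A+(s)} and the residue formulas \eqref{F: Pz(rz)=res Az(s)}, \eqref{F: bfA to j} lets one compute contour integrals of $\Im T_z(H_s)J$ against the resonance vectors. Concretely, I would start from Proposition \ref{P: Th 3.3 of Az7} in the form $\frac1\pi\oint \Im T_{z}(H_s)J\,ds = P_z(\Gamma)-P_{\bar z}(\bar\Gamma)$ (for a contour enclosing $\Gamma$ and $\bar\Gamma$) and, more usefully, extract from \eqref{F: Az3v6 (4.8)} a recursion: the operators $\bfA_z^{j-1}(r_z^\mu)$ satisfy $\langle u_\nu^{(k)}, \frac1\pi\Im T_z(H_s)J\,u_\mu^{(j)}\rangle$-type relations in which the factors $(1+(s-r)A_{\bar z}(r))^{-1}$ and $(1+(s-r)B_z(r))^{-1}$ contribute, upon expanding in powers of $(s-r_z^\mu)^{-1}$, exactly the denominators $r_\nu - \bar r_\mu$ appearing in \eqref{a(mn)(kj)(0)=}. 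The positive-definite matrix $\beta$ of \eqref{b(mn)(kj)(0)=} is the Gram matrix $\beta_{\mu\nu}^{kj} = \langle \sqrt{\Im T_z(H_s)}J u_\mu^{(k)}, \sqrt{\Im T_z(H_s)}J u_\nu^{(j)}\rangle$ for a fixed regular $s$ (strictly positive because $\Im T_z(H_s)>0$ for $\Im z>0$ and $J$ has trivial kernel on $\Upsilon_z(r_z)$ by Lemma \ref{L: j-dimensions coincide}), and the recursion \eqref{a(mn)(kj)(0)=} produces $\alpha_{\mu\nu}^{kj} = \frac1{\Im z}\langle u_\mu^{(k)}, Q_{\bar z}(\bar\Gamma)JP_z(\Gamma) u_\nu^{(j)}\rangle$. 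Once this identification is in place, Lemma \ref{L: a bit technical lemma} says $\alpha$ is positive-definite, hence $\Im z\, Q_{\bar z}(\bar\Gamma)JP_z(\Gamma)$ restricted to $\im P_z(\Gamma)$ is positive-definite; and since $Q_{\bar z}(\bar\Gamma)JP_z(\Gamma)$ annihilates $\ker P_z(\Gamma)$ and has image inside $\im Q_{\bar z}(\bar\Gamma) = JP_z(\Gamma)$ (via \eqref{F: JP=QJ}, \eqref{F: JP=QJP}), its rank equals $\dim\im P_z(\Gamma) = \rank P_z(\Gamma)$.

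The main obstacle is the bookkeeping in the second step: correctly matching the combinatorics of the residue/Laurent expansions of $\Im T_z(H_s)J$ against chains of resonance vectors of different orders with the precise recursion \eqref{a(mn)(kj)(0)=}, including verifying the boundary convention $\alpha_{\mu\nu}^{kj}=0$ when $k=0$ or $j=0$ and that the ``extra'' terms $\alpha_{\mu\nu}^{k-1,j}-\alpha_{\mu\nu}^{k,j-1}$ arise exactly from the action of $\bfA_z(r_z^\mu)$ lowering order (Theorem \ref{T: Laurent for A(s)psi}) on either side of the form. A clean way to organise this is to first handle a single resonance point ($M=1$) using only \eqref{F: Az3v6 (4.8)}, \eqref{F: Laurent for A+(s)} and \eqref{F: psi(j)=oint...Apsi(k)}, establishing the recursion for one Jordan block, and then note that for distinct $\mu\neq\nu$ the cross terms are governed by the same expansion with the off-diagonal denominators $r_\nu-\bar r_\mu$, the orthogonality relations \eqref{F: bfAz(1)bfAz(2)=0}, \eqref{F: Pz(1)Pz(2)=0} ensuring no spurious contributions. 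I expect everything else — the independence of $s$ (Propositions \ref{P: res eq-n is correct}, \ref{P: Pz is well-defined}), the strict positivity of $\beta$, and the rank computation — to be routine.
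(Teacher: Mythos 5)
Your proposal correctly identifies Lemma \ref{L: a bit technical lemma} as the engine of the proof, and the paper does reduce the theorem to exactly that lemma by computing the quadratic form of the resonance matrix on a Jordan basis. But the bridge you sketch from the operator statement to the lemma is where the actual content lies, and as written it has a genuine gap. The paper first uses Lemma \ref{L: sign(M)=sign(FMF)} to replace $Q_{\bar z}(\bar \Gamma)JP_z(\Gamma)$ by its underlined counterpart $\ulQ_{\bar z}(\bar\Gamma)\,V\,\ulP_z(\Gamma)$, so that the Jordan chains become vectors $\chi_\mu^{(j)}$ in the main Hilbert space; the recursion \eqref{a(mn)(kj)(0)=} for $\alpha_{\mu\nu}^{kj}=\langle\chi_\mu^{(k)},V\chi_\nu^{(j)}\rangle$ then drops out in a few lines from the identity $(H_{r_\nu}-z)\chi_\nu^{(j)}=-V\chi_\nu^{(j-1)}$ (Corollary \ref{C: nasty proposition}), with $\beta$ being simply the Gram matrix $\langle\chi_\mu^{(k)},\chi_\nu^{(j)}\rangle$, positive definite because the chains are linearly independent. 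Your plan never invokes this identity; instead you want to manufacture the recursion from \eqref{F: Az3v6 (4.8)} and Laurent expansions, with $\beta_{\mu\nu}^{kj}=\langle Ju_\mu^{(k)},\Im T_z(H_s)Ju_\nu^{(j)}\rangle$ and $\alpha_{\mu\nu}^{kj}=\tfrac1{\Im z}\langle u_\mu^{(k)},Ju_\nu^{(j)}\rangle$. That identification cannot be correct as stated: expanding $\Im T_z(H_s)Ju_\nu^{(j)}$ via Theorem \ref{T: Laurent for A(s)psi} shows your $\beta$ is a nonconstant rational function of $s$ (a combination of the numbers $\langle u_\mu^{(k')},Ju_\nu^{(j')}\rangle$ with denominators $(s-r_\nu)$ and $(s-\bar r_\mu)$), whereas Lemma \ref{L: a bit technical lemma} requires one fixed positive-definite Gram matrix; moreover the $\tfrac1{\Im z}$ normalisation of your $\alpha$ is incompatible with the coefficient $2iy$ in \eqref{a(mn)(kj)(0)=}. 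So the ``bookkeeping'' you defer is not bookkeeping: without the resolvent identity of Corollary \ref{C: nasty proposition} (used after passing to the underlined picture) you have no derivation of the recursion, which is the heart of the proof.

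Two smaller points. The linear-independence worry is unfounded: a Jordan basis of each $\Upsilon_z(r_z^\mu)$ is linearly independent, chains at distinct points are independent because the idempotents are disjoint by \eqref{F: Pz(1)Pz(2)=0}, and Lemma \ref{L: a bit technical lemma} explicitly allows repeated values $r_\mu$, so one simply takes all chains of all points at once (the paper lists each point with multiplicity equal to its geometric multiplicity). Conversely, your proposed fix --- proving positivity on the span of one chain per point and then ``summing over the cells'' --- is not a valid reduction, since positivity of the form on each cell says nothing about the cross terms between cells; this is precisely why the lemma is formulated for all chains simultaneously. Your final rank argument is fine once positive definiteness of the form on $\im P_z(\Gamma)$ is in hand.
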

\begin{proof} Without loss of generality we assume that $y = \Im z >0.$
By Lemma \ref{L: sign(M)=sign(FMF)}, the operators
$$
  Q_{\bar z}(\bar \Gamma) J P_z(\Gamma) \ \ \text{and} \ \ \ulQ_{\bar z}(\bar \Gamma) V \ulP_z(\Gamma)
$$
have equal ranks and signatures. So, it is sufficient to prove the claim for the latter operator.

(A) For notational convenience we assume that the same point $r_z^\mu$ may appear in the list
$r_z^1, \ldots, r_z^M$ more than one time. More exactly, each point $r_z^\mu$ appears in the list $m^\mu$ times,
where $m^\mu$ is the geometric multiplicity of $r_z^\mu.$
In what follows we often write $r_\mu$ instead of $r_z^\mu.$
For each point $r_z^\mu \in \Gamma$ let
$$
  \chi_{\mu}^{(j)}, \ j = 1,\ldots, d_\mu
$$
be a basis of $\Upsilon_z(r_z^\mu)$ such that $\ubfA_z(r_z^\mu) \chi_{\mu}^{(j)} = \chi_{\mu}^{(j-1)}.$
We can assume existence of such a basis since, as mentioned above, resonance points $r_z^\mu$ appear in the list according to their geometric multiplicities.
Let
\begin{equation*} 
  \alpha_{\mu\nu}^{kj} = \scal{\chi_\mu^{(k)}}{V\chi_\nu^{(j)}}
\end{equation*}
and
$$
  \beta_{\mu\nu}^{kj} = \scal{\chi_\mu^{(k)}}{\chi_\nu^{(j)}}.
$$
The following equality holds:
\begin{equation} \label{a(mn)(kj)=}
  \alpha_{\mu\nu}^{kj} = \frac{2iy}{r_\nu - \bar r_\mu}\beta_{\mu\nu}^{kj} + \frac{1}{r_\nu - \bar r_\mu}\brs{\alpha_{\mu\nu}^{k-1,j} - \alpha_{\mu\nu}^{k,j-1}}.
\end{equation}
Proof of~(\ref{a(mn)(kj)=}):

By Corollary~\ref{C: nasty proposition}, 
$$
  (H_{r_\nu}-z)\chi_\nu^{(j)} = - V\chi_\nu^{(j-1)}.
$$
It follows that
$$
  \scal{\chi_\mu^{(k)}}{(H_{r_\nu}-z)\chi_\nu^{(j)}} = - \scal{\chi_\mu^{(k)}}{V\chi_\nu^{(j-1)}}.
$$
In this equality we swap pairs of indices~$(\mu,k)$ and $(\nu,j)$ and then take conjugates of both sides of the resulting equality:
$$
  \scal{\chi_\mu^{(k)}}{(H_{\bar r_\mu}-\bar z)\chi_\nu^{(j)}} = - \scal{\chi_\mu^{(k-1)}}{V\chi_\nu^{(j)}}.
$$
Subtracting from this equality the previous one gives
$$
  \scal{\chi_\mu^{(k)}}{(-r_\nu V + \bar r_\mu V + z-\bar z)\chi_\nu^{(j)}} = - \scal{\chi_\mu^{(k-1)}}{V\chi_\nu^{(j)}} + \scal{\chi_\mu^{(k)}}{V\chi_\nu^{(j-1)}}.
$$
This can be written as
$$
  (r_\nu - \bar r_\mu)\scal{\chi_\mu^{(k)}}{V\chi_\nu^{(j)}} =
  (z-\bar z)\scal{\chi_\mu^{(k)}}{\chi_\nu^{(j)}} + \scal{\chi_\mu^{(k-1)}}{V\chi_\nu^{(j)}} - \scal{\chi_\mu^{(k)}}{V\chi_\nu^{(j-1)}},
$$
which is equivalent to~(\ref{a(mn)(kj)=}).

(B) Since vectors
$$
  \chi_{\mu}^{(j)}, \ j = 1,\ldots, d_\mu, \ \mu = 1,\ldots,M
$$
form a basis of the range of $\ulP_z(\Gamma),$ to prove the theorem it is enough to prove positive-definiteness of the matrix $\brs{\alpha_{\mu\nu}^{kj}}.$
But positive-definiteness of the matrix $\brs{\alpha_{\mu\nu}^{kj}}$ follows from Lemma~\ref{L: a bit technical lemma} and~(\ref{a(mn)(kj)=}).
\end{proof}

An analogue of Theorem~\ref{T: res matrix is positive for set of up-points} holds also for a set of resonance down-points.
Namely, if $\Gamma$ is a finite set of resonance down-points, then the
operator $\Im z \, Q_{\bar z}(\bar \Gamma) J P_z(\Gamma)$ is non-positive and its rank is equal to the rank of $P_z(\Gamma).$

\begin{thm} \label{T: sign of res matrix for set of res points} If $\Gamma = \set{r_z^1, \ldots, r_z^M}$ is
a finite set of resonance points corresponding to a non-real number~$z,$ then the signature of the finite-rank self-adjoint operator
$
  Q_{\bar z}(\bar \Gamma) J P_z(\Gamma)
$
is equal to the $R$-index of the operator
$
  \Im z \, A_z(s)P_z(\Gamma).
$
\end{thm}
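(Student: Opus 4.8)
The plan is to reduce the general statement to the two already-proven "extreme" results, namely Theorem~\ref{T: res matrix is positive for set of up-points} for a set of up-points and its down-point analogue. First I would split the finite set $\Gamma$ into its up-points and down-points: write $\Gamma = \Gamma^\uparrow \cup \Gamma^\downarrow$, where $\Gamma^\uparrow$ consists of the resonance points $r_z^\mu$ with $\Im r_z^\mu$ on the same side of the real axis as $\Im z$ (and $\Gamma^\downarrow$ the others). Correspondingly $P_z(\Gamma) = P_z(\Gamma^\uparrow) + P_z(\Gamma^\downarrow)$ and $Q_{\bar z}(\bar\Gamma) = Q_{\bar z}(\bar\Gamma^\uparrow) + Q_{\bar z}(\bar\Gamma^\downarrow)$, these being decompositions into mutually annihilating idempotents by~(\ref{F: Pz(1)Pz(2)=0}) and~(\ref{F: Qz(1)Qz(2)=0}). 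The key point is that $\bar\Gamma^\uparrow$ is a set of resonance points corresponding to $\bar z$ lying on the \emph{opposite} side from $\Im z$, so $Q_{\bar z}(\bar\Gamma^\uparrow) J P_z(\Gamma^\downarrow)$-type cross terms should vanish and only the diagonal blocks $Q_{\bar z}(\bar\Gamma^\uparrow)JP_z(\Gamma^\uparrow)$ and $Q_{\bar z}(\bar\Gamma^\downarrow)JP_z(\Gamma^\downarrow)$ survive in the computation of the signature.

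Next I would handle the two diagonal blocks using the extreme theorems. For the up-points: by Theorem~\ref{T: res matrix is positive for set of up-points}, $\Im z \, Q_{\bar z}(\bar\Gamma^\uparrow) J P_z(\Gamma^\uparrow)$ is non-negative with rank equal to $\rank P_z(\Gamma^\uparrow)$; hence its signature equals $\rank P_z(\Gamma^\uparrow)$ (with the appropriate overall sign of $\Im z$ factored in). Likewise, by the down-point analogue stated after that theorem, $\Im z \, Q_{\bar z}(\bar\Gamma^\downarrow) J P_z(\Gamma^\downarrow)$ is non-positive of rank $\rank P_z(\Gamma^\downarrow)$, so its signature is $-\rank P_z(\Gamma^\downarrow)$. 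On the other side, $A_z(s)P_z(\Gamma)$ splits the same way by~(\ref{F: AP=PA}) and~(\ref{F: Pz(1)Pz(2)=0}), and by the argument in Lemma~\ref{L: ind-res=R(TJP)} — namely that $A_z(s)$ restricted to the range of $P_z(r_z^\mu)$ has spectrum $\{(s-r_z^\mu)^{-1}\}$, which lies on the same side of $\mbR$ as $r_z^\mu$ hence as $\Im z$ for up-points — one gets $\Rindex(\Im z\, A_z(s)P_z(\Gamma^\uparrow)) = \rank P_z(\Gamma^\uparrow)$ and $\Rindex(\Im z\, A_z(s)P_z(\Gamma^\downarrow)) = -\rank P_z(\Gamma^\downarrow)$ (using Lemma~\ref{L: RV in clR} to ensure membership in $\clR$, and Lemma~\ref{L: Rindex(AB)=Rindex(BA)}(vi) for additivity across annihilating summands). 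Comparing the two sides gives the claimed equality $\sign(Q_{\bar z}(\bar\Gamma)JP_z(\Gamma)) = \Rindex(\Im z\, A_z(s)P_z(\Gamma))$.

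I expect the main obstacle to be the vanishing of the off-diagonal blocks of the resonance matrix, i.e. showing $\sign(Q_{\bar z}(\bar\Gamma)JP_z(\Gamma))$ only sees the two diagonal pieces. The idempotents $P_z(\Gamma^\uparrow)$ and $P_z(\Gamma^\downarrow)$ annihilate each other, and $JP_z(\cdot) = Q_z(\cdot)J$ by~(\ref{F: JP=QJ}), but $Q_{\bar z}$ is an idempotent for $\bar z$, not $z$, so the naive cancellation $Q_z Q_z = \delta$ is not directly available — one must instead argue that $Q_{\bar z}(\bar\Gamma^\uparrow)$ and $Q_z(\Gamma^\downarrow)$ have ranges that pair trivially under the form $\scal{\cdot}{J\,\cdot}$, or equivalently that $Q_{\bar z}(\bar\Gamma^\uparrow) J P_z(\Gamma^\downarrow) = 0$. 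The cleanest route is probably to pass to the underlined operators $\ulP_z, \ulQ_{\bar z}, \ulA_z$ via Lemma~\ref{L: sign(M)=sign(FMF)} (which preserves ranks and signatures), express everything through residues as in Proposition~\ref{P: Pz(rz)=res Az(s)} and Proposition~\ref{P: Q(+)=res B(+)(s)}, and use the resolvent identity~(\ref{F: Az3v6 (4.8)}) together with a contour-deformation argument: the cross terms become integrals of functions holomorphic on one side of the relevant contours and therefore vanish, exactly as in the proof of~(\ref{F: Pz(1)Pz(2)=0}). Once that block-diagonalization is established, the rest is bookkeeping with the sign of $\Im z$ and the two extreme theorems.
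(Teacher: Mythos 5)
Your decomposition $\Gamma = \Gamma^\uparrow \cup \Gamma^\downarrow$ and your treatment of the $R$-index side are fine, but the step you yourself single out as the main obstacle --- proving $Q_{\bar z}(\bar \Gamma^\uparrow) J P_z(\Gamma^\downarrow) = 0$ --- is not merely unproven, it is false in general, so the route through block-diagonalization collapses. For resonance vectors of order one the pairing can be computed exactly from the identity used in the proof of Theorem~\ref{T: res matrix is positive for set of up-points}: if $(H_0+r_\mu V - z)\chi_\mu = 0$ and $(H_0+r_\nu V - z)\chi_\nu = 0$ with $r_\mu \in \Gamma^\uparrow$, $r_\nu \in \Gamma^\downarrow$, then
$$
  \scal{\chi_\mu}{V\chi_\nu} = \frac{2i\,\Im z}{\,r_\nu - \bar r_\mu\,}\,\scal{\chi_\mu}{\chi_\nu},
$$
and $\scal{\chi_\mu}{\chi_\nu}$ has no reason to vanish (the $\chi$'s are root vectors of non-normal operators at distinct points), so the cross block $Q_{\bar z}(\bar\Gamma^\uparrow) J P_z(\Gamma^\downarrow)$ is generically non-zero. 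Your proposed contour-deformation proof of the vanishing cannot be repaired either: the mechanism behind~(\ref{F: Pz(1)Pz(2)=0}) is the second resolvent identity for a \emph{single} meromorphic family $A_z(\cdot)$, which turns the product of residues into a difference quotient; the cross term mixes residues of the two families at $z$ and $\bar z$, and the analogous identity~(\ref{F: Az3v6 (4.8)}) produces an $\Im T_z$ factor rather than anything that integrates to zero.

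The fix is that no block-diagonality is needed, and this is how the paper argues. Since $\brs{P_z(\Gamma)}^* = Q_{\bar z}(\bar\Gamma)$ and the idempotents of distinct points annihilate each other, for any non-zero $u$ in the range of $P_z(\Gamma^\uparrow)$ one has
$$
  \scal{u}{Q_{\bar z}(\bar\Gamma)JP_z(\Gamma)u} = \scal{P_z(\Gamma^\uparrow)u}{JP_z(\Gamma^\uparrow)u} > 0
$$
by Theorem~\ref{T: res matrix is positive for set of up-points} (full rank of the non-negative operator gives strict positivity on the range), and similarly the form is strictly negative on the range of $P_z(\Gamma^\downarrow)$. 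Lemma~\ref{L: finite-rank lemma} then gives $\rank M_+ \geq N_+$ and $\rank M_- \geq N_-$ for $M = Q_{\bar z}(\bar\Gamma)JP_z(\Gamma)$, while $\rank M \leq \rank P_z(\Gamma) = N_+ + N_-$; hence both inequalities are equalities and $\sign M = N_+ - N_-$, which is exactly $\Rindex(\Im z\, A_z(s)P_z(\Gamma))$. In other words, definiteness of the full form on the two subspaces plus the rank bound already forces the signature, with the cross terms absorbed automatically; if you prefer, you can view this as a Schur-complement congruence argument, but the explicit vanishing you were after is neither available nor required.
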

\begin{proof} Without loss of generality we assume that $\Im z >0.$

Let $\Gamma = \Gamma^\uparrow \cup \Gamma^\downarrow,$ where $\Gamma^\uparrow \subset \mbC_+$ and $\Gamma^\downarrow \subset \mbC_-.$
Let $\Upsilon^\uparrow = \im(P_z(\Gamma^\uparrow))$ and $\Upsilon^\downarrow = \im(P_z(\Gamma^\downarrow)).$
The $R$-index of the operator $A_z(s)P_z(\Gamma)$ is equal to $N_+-N_-,$ where $N_+$ (respectively, $N_-$) is the sum of algebraic multiplicities of
all points from $\Gamma^\uparrow$ (respectively, $\Gamma^\downarrow$); that is,
$$
  \Rindex(A_z(s)P_z(\Gamma)) = N_+-N_- := \dim \Upsilon ^\uparrow - \dim \Upsilon ^\downarrow.
$$
For any non-zero $u \in \Upsilon^\uparrow$ we have
$$
  \scal{u}{Q_{\bar z}(\bar \Gamma) J P_z(\Gamma)u} = \scal{P_{z}(\Gamma)u}{ J P_z(\Gamma)u} = \scal{P_{z}(\Gamma^\uparrow)u}{ J P_z(\Gamma^\uparrow)u} > 0,
$$
where the last inequality follows from Theorem~\ref{T: res matrix is positive for set of up-points}.
Similarly, for any non-zero $u \in \Upsilon^\downarrow$ we have
$$
  \scal{u}{Q_{\bar z}(\bar \Gamma) J P_z(\Gamma)u} = \scal{P_{z}(\Gamma)u}{ J P_z(\Gamma)u} = \scal{P_{z}(\Gamma^\downarrow)u}{ J P_z(\Gamma^\downarrow)u} < 0.
$$
Hence, by Lemma~\ref{L: finite-rank lemma}, rank of the positive (respectively, negative) part of $Q_{\bar z}(\bar \Gamma) J P_z(\Gamma)$ is at least $N_+$ (respectively, $N_-$).
Hence, the rank of $Q_{\bar z}(\bar \Gamma) J P_z(\Gamma)$ is at least $N_++N_-=N:=\rank(P_z(\Gamma)),$ and therefore the rank of $Q_{\bar z}(\bar \Gamma) J P_z(\Gamma)$ is equal to $N.$
It follows that in fact the rank of the positive (respectively, negative) part of $Q_{\bar z}(\bar \Gamma) J P_z(\Gamma)$ is equal to $N_+$ (respectively, $N_-$).
Thus, the signature of $Q_{\bar z}(\bar \Gamma) J P_z(\Gamma)$ is equal to $N_+- N_-.$
\end{proof}

Theorem~\ref{T: sign of res matrix for set of res points} is the main ingredient of the proof of Theorem \ref{T: res.ind=sign res.matrix},
which asserts that the resonance index can be treated as signature of a certain finite-rank self-adjoint operator.

We remark that Theorems~\ref{T: res matrix is positive for set of up-points}
and \ref{T: sign of res matrix for set of res points} hold also in a finite-dimensional Hilbert space,
that is, for a pair of self-adjoint matrices $H_0$ and $V.$ Still, even this special case
of these theorems is non-trivial. The finite-dimensional versions of
Theorems~\ref{T: res matrix is positive for set of up-points} and \ref{T: sign of res matrix for set of res points}
can be tested in numerical experiments. Such a testing was carried out by the author using MATLAB and it confirms both theorems.

Theorem~\ref{T: res matrix is positive for set of up-points} has the following corollary.
\begin{cor} \label{C: Qbar is non-degen} Let $z$ be a non-real number. For any finite set of resonance up-points $\Gamma$
corresponding to $z$ the mapping
$$
  Q_{\bar z}(\bar \Gamma)\colon \Psi_z(\Gamma) \to \Psi_{\bar z}(\bar \Gamma)
$$
is a linear isomorphism.
\end{cor}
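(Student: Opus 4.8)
Corollary (restated): For $z$ non-real and $\Gamma$ a finite set of resonance up-points corresponding to $z$, the map $Q_{\bar z}(\bar\Gamma)\colon \Psi_z(\Gamma)\to\Psi_{\bar z}(\bar\Gamma)$ is a linear isomorphism.

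The plan is to deduce this from Theorem~\ref{T: res matrix is positive for set of up-points}, which gives that $\Im z\,Q_{\bar z}(\bar\Gamma)JP_z(\Gamma)$ is non-negative of rank equal to $\rank P_z(\Gamma)$. First I would observe that by Lemma~\ref{L: j-dimensions coincide} applied over the finite set $\Gamma$ (equivalently by summing the rank identities over the individual resonance points and using the block orthogonality (\ref{F: Pz(1)Pz(2)=0}), (\ref{F: Qz(1)Qz(2)=0})), the four spaces $\Upsilon_z(\Gamma)=\im P_z(\Gamma)$, $\Psi_z(\Gamma)=\im Q_z(\Gamma)$, $\Upsilon_{\bar z}(\bar\Gamma)$, $\Psi_{\bar z}(\bar\Gamma)$ all have the same finite dimension, call it $N$, and moreover $J\colon\Upsilon_z(\Gamma)\to\Psi_z(\Gamma)$ is a linear isomorphism (this follows from (\ref{F: JP=QJP}) together with Lemma~\ref{L: j-dimensions coincide}). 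Since $Q_{\bar z}(\bar\Gamma)$ maps $\clK$ onto $\Psi_{\bar z}(\bar\Gamma)$ and both $\Psi_z(\Gamma)$ and $\Psi_{\bar z}(\bar\Gamma)$ are $N$-dimensional, it suffices to prove that the restriction $Q_{\bar z}(\bar\Gamma)\big|_{\Psi_z(\Gamma)}$ is injective.

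Next I would argue injectivity by a positivity/pairing argument. Suppose $\psi\in\Psi_z(\Gamma)$ with $Q_{\bar z}(\bar\Gamma)\psi=0$. Write $\psi = Ju$ for the unique $u\in\Upsilon_z(\Gamma)$ (using the isomorphism $J$), and note $P_z(\Gamma)u=u$. Then
$$
  \scal{u}{\,\Im z\,Q_{\bar z}(\bar\Gamma)JP_z(\Gamma)u\,} = \Im z\,\scal{u}{Q_{\bar z}(\bar\Gamma)\psi} = 0.
$$
By Theorem~\ref{T: res matrix is positive for set of up-points} the operator $\Im z\,Q_{\bar z}(\bar\Gamma)JP_z(\Gamma)$ is non-negative and has rank $N$ on the $N$-dimensional space $\Upsilon_z(\Gamma)$ (restricting it to $\Upsilon_z(\Gamma)$, since $P_z(\Gamma)$ is the identity there, it is a non-negative operator of full rank $N$, hence strictly positive-definite on $\Upsilon_z(\Gamma)$). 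Therefore $\scal{u}{\Im z\,Q_{\bar z}(\bar\Gamma)JP_z(\Gamma)u}=0$ forces $u=0$, whence $\psi=Ju=0$. This establishes injectivity, and combined with the equality of dimensions of domain and target, $Q_{\bar z}(\bar\Gamma)\colon\Psi_z(\Gamma)\to\Psi_{\bar z}(\bar\Gamma)$ is a linear isomorphism.

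The only subtle point — and the one I would state carefully — is the reduction of ``rank of $\Im z\,Q_{\bar z}(\bar\Gamma)JP_z(\Gamma)$ on all of $\clK$ equals $\rank P_z(\Gamma)$'' to ``this operator is positive-definite on $\Upsilon_z(\Gamma)$''. Since $P_z(\Gamma)$ is an idempotent with range $\Upsilon_z(\Gamma)$ and the operator factors through $P_z(\Gamma)$ on the right, its range lies in a space of dimension $\le N$; non-negativity plus rank exactly $N$ then means its restriction to $\Upsilon_z(\Gamma)$ is a non-negative operator of rank $N$ on an $N$-dimensional space, i.e.\ strictly positive-definite there. I do not expect any real obstacle here — the work has all been done in Theorem~\ref{T: res matrix is positive for set of up-points} — the corollary is a short packaging of that result together with the dimension count from Lemma~\ref{L: j-dimensions coincide}.
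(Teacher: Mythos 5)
Your proof is correct and follows essentially the same route as the paper: reduce to injectivity via the dimension count from Lemma~\ref{L: j-dimensions coincide}, write $\psi=Ju$ with $u\in\Upsilon_z(\Gamma)$, note $\scal{u}{Q_{\bar z}(\bar\Gamma)JP_z(\Gamma)u}=0$, and contradict the positivity-and-rank statement of Theorem~\ref{T: res matrix is positive for set of up-points}. The only difference is that you spell out the step "non-negative of rank $N$ factoring through $P_z(\Gamma)$, hence positive-definite on $\Upsilon_z(\Gamma)$," which the paper leaves implicit.
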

\begin{proof} Assume the contrary. Then, since dimensions of vectors spaces $\Psi_z(\Gamma)$ and $\Psi_{\bar z}(\bar \Gamma)$
are finite and coincide, for some non-zero $\psi \in \Psi_z(\Gamma)$ we have $Q_{\bar z}(\bar \Gamma)\psi = 0.$
By Lemma~\ref{L: j-dimensions coincide}, there exists a non-zero $u \in \Upsilon_z(\Gamma)$ such that $\psi = J u.$ It follows that
$$
  \scal{u}{Q_{\bar z}(\bar \Gamma)JP_z(\Gamma)u} = \scal{u}{Q_{\bar z}(\bar \Gamma)Ju} = 0.
$$
This contradicts Theorem~\ref{T: res matrix is positive for set of up-points}.
\end{proof}
\begin{cor} \label{C: Pbar is non-degen} Let $z$ be a non-real number.
For any finite set of resonance up-points $\Gamma$ corresponding to $z$ the mapping
$$
  P_{\bar z}(\bar \Gamma)\colon \Upsilon_z(\Gamma) \to \Upsilon_{\bar z}(\bar \Gamma)
$$
is a linear isomorphism.
\end{cor}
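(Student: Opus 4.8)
The plan is to mimic the proof of Corollary~\ref{C: Qbar is non-degen}, transporting the statement about $Q_{\bar z}(\bar\Gamma)$ to one about $P_{\bar z}(\bar\Gamma)$ via the linear isomorphism $J$ between the $\Upsilon$-spaces and the $\Psi$-spaces. First I would record the dimension count: by Lemma~\ref{L: j-dimensions coincide} the vector spaces $\Upsilon_z(\Gamma)$ and $\Upsilon_{\bar z}(\bar\Gamma)$ are finite-dimensional and have equal dimension (being finite sums of $\Upsilon_z(r_z^\nu)$ and $\Upsilon_{\bar z}(\bar r_z^\nu)$, and $\dim\Upsilon_z(r_z)=\dim\Upsilon_{\bar z}(\bar r_z)$ by Corollary~\ref{C: r(bar z)=bar r(z)}). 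Hence $P_{\bar z}(\bar\Gamma)\colon\Upsilon_z(\Gamma)\to\Upsilon_{\bar z}(\bar\Gamma)$ is a linear isomorphism if and only if it is injective, which is what I would prove.

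Next I would argue by contradiction: suppose $P_{\bar z}(\bar\Gamma)u=0$ for some non-zero $u\in\Upsilon_z(\Gamma)$. The idea is to push this through $J$. Using~(\ref{F: JP=QJ}) in the form $JP_{\bar z}(\bar r_z)=Q_{\bar z}(\bar r_z)J$, summed over the resonance points of $\bar\Gamma$, we get $JP_{\bar z}(\bar\Gamma)=Q_{\bar z}(\bar\Gamma)J$, so $Q_{\bar z}(\bar\Gamma)Ju=JP_{\bar z}(\bar\Gamma)u=0$. Now set $\psi=Ju$; by Lemma~\ref{L: j-dimensions coincide} the operator $J$ is a linear isomorphism of $\Upsilon_z(\Gamma)$ onto $\Psi_z(\Gamma)$, so $\psi$ is a non-zero element of $\Psi_z(\Gamma)$ with $Q_{\bar z}(\bar\Gamma)\psi=0$. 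But then
$$
  \scal{u}{Q_{\bar z}(\bar\Gamma)JP_z(\Gamma)u} = \scal{u}{Q_{\bar z}(\bar\Gamma)Ju} = \scal{u}{Q_{\bar z}(\bar\Gamma)\psi} = 0,
$$
using $P_z(\Gamma)u=u$ (valid since $u\in\Upsilon_z(\Gamma)=\im P_z(\Gamma)$). This contradicts Theorem~\ref{T: res matrix is positive for set of up-points}, which (assuming as there $\Im z>0$; the case $\Im z<0$ is the mirror statement noted after that theorem) says $\Im z\,Q_{\bar z}(\bar\Gamma)JP_z(\Gamma)$ is non-negative with rank equal to $\rank P_z(\Gamma)$, hence strictly positive on all of $\Upsilon_z(\Gamma)\setminus\{0\}$; in particular $\scal{u}{Q_{\bar z}(\bar\Gamma)JP_z(\Gamma)u}\neq0$ for $u\neq0$.

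I don't anticipate a genuine obstacle here — this corollary is essentially the "$\Upsilon$-side" restatement of Corollary~\ref{C: Qbar is non-degen}, and all the machinery (the commutation relation~(\ref{F: JP=QJ}), the isomorphism properties in Lemma~\ref{L: j-dimensions coincide}, and the positivity in Theorem~\ref{T: res matrix is positive for set of up-points}) is already in place. The only point requiring a little care is the bookkeeping with the sum over $\Gamma$: I would either invoke~(\ref{F: Pz(1)Pz(2)=0})/(\ref{F: Qz(1)Qz(2)=0}) to see that $P_z(\Gamma)$ and $Q_{\bar z}(\bar\Gamma)$ behave block-diagonally across distinct resonance points, or simply note that $\Upsilon_z(\Gamma)=\bigoplus_\nu\Upsilon_z(r_z^\nu)$ and $P_z(\Gamma)$ restricts to the identity there, so that all the cited identities, originally stated for a single resonance point, extend verbatim to finite sets.
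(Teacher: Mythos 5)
Your proof is correct and follows essentially the same route as the paper: the paper deduces this corollary from Lemma~\ref{L: j-dimensions coincide} together with the preceding Corollary~\ref{C: Qbar is non-degen}, whose proof is exactly your contradiction argument (non-zero $u$ with $Q_{\bar z}(\bar\Gamma)Ju=0$ forces $\scal{u}{Q_{\bar z}(\bar\Gamma)JP_z(\Gamma)u}=0$, contradicting Theorem~\ref{T: res matrix is positive for set of up-points}). You have merely inlined that proof via $JP_{\bar z}(\bar\Gamma)=Q_{\bar z}(\bar\Gamma)J$ instead of citing the previous corollary, so there is no substantive difference.
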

\begin{proof} This follows from Lemma~\ref{L: j-dimensions coincide} and previous corollary.
\end{proof}
\noindent These corollaries hold for a finite set of down-points too, of course.
Similarly, for any finite set~$\Gamma$ of resonance points from $\mbC_+$ or $\mbC_-$ the mappings
$$
  Q_z(\Gamma)\colon \Psi_{\bar z}(\bar \Gamma) \to \Psi_z(\Gamma) \ \ \text{and} \ \
  P_z(\Gamma)\colon \Upsilon_{\bar z}(\bar \Gamma) \to \Upsilon_z(\Gamma)
$$
are also linear isomorphisms.

\begin{cor}
For any finite set of resonance up-points $\Gamma$ and for any $j=1,2,\ldots$ the operator
$$
  \bfB^j_{\bar z}(\bar \Gamma) J \bfA^j_z(\Gamma)
$$
is non-negative and its rank is equal to the rank of $\bfA^j_z(\Gamma),$ where $\bfA^j_z(\Gamma) = \sum_{r_z \in \Gamma} \bfA_z^j(r_z).$
A similar inequality also holds with $j$ replaced by a multi-index.
\end{cor}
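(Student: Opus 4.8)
For any finite set of resonance up-points $\Gamma$ corresponding to a non-real $z$ and for any $j = 1,2,\ldots$, the operator $\bfB^j_{\bar z}(\bar \Gamma) J \bfA^j_z(\Gamma)$ is non-negative and its rank equals the rank of $\bfA^j_z(\Gamma)$; a similar statement holds with $j$ replaced by a multi-index.

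**The plan.** The strategy is to reduce this to Theorem~\ref{T: res matrix is positive for set of up-points} (the positivity of $\Im z\, Q_{\bar z}(\bar\Gamma) J P_z(\Gamma)$, taking $\Im z > 0$ without loss of generality), exactly as Theorem~\ref{T: sign of res matrix for set of res points} and Corollaries~\ref{C: Qbar is non-degen}, \ref{C: Pbar is non-degen} were. First I would record the algebraic facts that make the reduction work: by~(\ref{F: JA=BJ}) we have $J\bfA^j_z(r_z) = \bfB^j_z(r_z)J$, hence $J\bfA^j_z(\Gamma) = \bfB^j_z(\Gamma)J$; by~(\ref{F: A*(z)=B(bar z)}) (raised to the $j$-th power, which is legitimate since it is a relation between operators) $\brs{\bfA^j_z(r_z)}^* = \bfB^j_{\bar z}(\bar r_z)$, so $\brs{\bfA^j_z(\Gamma)}^* = \bfB^j_{\bar z}(\bar\Gamma)$; and by~(\ref{F: PA=AP=A}) iterated, $\bfA^j_z(\Gamma) = P_z(\Gamma)\bfA^j_z(\Gamma) = \bfA^j_z(\Gamma)P_z(\Gamma)$. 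Consequently
\begin{equation*}
  \bfB^j_{\bar z}(\bar\Gamma) J \bfA^j_z(\Gamma) = \brs{\bfA^j_z(\Gamma)}^* J \bfA^j_z(\Gamma) = \brs{\bfA^j_z(\Gamma)}^* \brs{Q_{\bar z}(\bar\Gamma) J P_z(\Gamma)} \bfA^j_z(\Gamma),
\end{equation*}
using $J = JP_z(\Gamma)$ and $J = Q_{\bar z}(\bar\Gamma)J$ on $\im \bfA^j_z(\Gamma)$ via~(\ref{F: JP=QJP}) and its adjoint. This exhibits the operator in question as a ``sandwich'' $X^* M X$ with $X = \bfA^j_z(\Gamma)$ and $M = Q_{\bar z}(\bar\Gamma) J P_z(\Gamma)$ the resonance matrix, which is non-negative of rank $\rank P_z(\Gamma)$ by Theorem~\ref{T: res matrix is positive for set of up-points}.

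**Non-negativity and rank.** Non-negativity is then immediate: for any $f \in \clK$, $\scal{f}{X^* M X f} = \scal{Xf}{M(Xf)} \geq 0$ since $M \geq 0$. For the rank I would argue that $M$ restricted to $\im P_z(\Gamma) = \Upsilon_z(\Gamma)$ is \emph{strictly} positive (again Theorem~\ref{T: res matrix is positive for set of up-points}: its rank equals $\dim \Upsilon_z(\Gamma)$, and since $M$ is non-negative this forces $\scal{u}{Mu} > 0$ for every non-zero $u \in \Upsilon_z(\Gamma)$). Since $\im \bfA^j_z(\Gamma) \subset \Upsilon_z(\Gamma)$, for non-zero $v = Xf \in \im \bfA^j_z(\Gamma)$ we get $\scal{f}{X^*MXf} = \scal{v}{Mv} > 0$, so $\ker(X^* M X) = \ker X$; equivalently $X^* M X$ and $X$ have the same kernel, hence the same rank. (One should check $\ker X \subset \ker(X^*MX)$ is trivial and $\ker(X^*MX) \subset \ker X$ follows from the strict positivity just noted — a clean two-line argument via Lemma~\ref{L: finite-rank lemma} applied to $M$ on the finite-dimensional space $\Upsilon_z(\Gamma)$, or directly.) I do not expect a serious obstacle here; the only care needed is to be sure the intermediate identities $J\bfA^j = \bfB^j J$, $(\bfA^j)^* = \bfB^j_{\bar z}(\bar\cdot)$, and the commutation with $P_z(\Gamma)$ are invoked correctly with the $\Gamma$-sums rather than single points, which is fine because distinct resonance points give vanishing cross terms~(\ref{F: bfAz(1)bfAz(2)=0}), (\ref{F: bfBz(1)bfBz(2)=0}).

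**The multi-index version.** For the final sentence, a multi-index should be read as a tuple $(j_1,\ldots,j_n)$ with $j_\nu \geq 1$ assigned to the points $r_z^1,\ldots,r_z^n$ of $\Gamma$, giving the operator $\bfB^{(j)}_{\bar z}(\bar\Gamma) J \bfA^{(j)}_z(\Gamma)$ where $\bfA^{(j)}_z(\Gamma) = \sum_{\nu} \bfA_z^{j_\nu}(r_z^\nu)$ and similarly for $\bfB$. The same sandwich identity goes through verbatim — the cross terms between different points still vanish by~(\ref{F: bfAz(1)bfAz(2)=0}), the adjoint relation is applied pointwise, and $\im \bfA^{(j)}_z(\Gamma) \subset \Upsilon_z(\Gamma)$ still holds — so non-negativity and the rank equality $\rank\brs{\bfB^{(j)}_{\bar z}(\bar\Gamma) J \bfA^{(j)}_z(\Gamma)} = \rank \bfA^{(j)}_z(\Gamma)$ follow by the identical reasoning. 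The only thing worth a sentence is that $\rank \bfA_z^{j_\nu}(r_z^\nu)$ may depend on $j_\nu$ (it drops as $j_\nu$ grows, reaching $0$ once $j_\nu \geq d_\nu$), so the statement is about the rank of the \emph{specific} operator $\bfA^{(j)}_z(\Gamma)$ built from the chosen multi-index, not a universal constant; but this is already visible in the $j=0$ case ($P_z(\Gamma)$) versus large $j$, and requires no extra work.
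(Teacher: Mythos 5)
Your proof is correct and follows essentially the same route as the paper: the paper justifies the corollary by the single sandwich identity $\bfB^j_{\bar z}(\bar \Gamma) J \bfA^j_z(\Gamma) = \brs{\bfA^j_z(\Gamma)}^* \SqBrs{Q_{\bar z}(\bar \Gamma)J P_z(\Gamma)}\bfA^j_z(\Gamma) \geq 0,$ which is exactly your reduction to Theorem~\ref{T: res matrix is positive for set of up-points}. The only difference is that you also spell out the rank argument (strict positivity of the resonance matrix on $\Upsilon_z(\Gamma)$, hence $\ker(X^*MX)=\ker X$) and the multi-index case, both of which the paper leaves implicit; these details are correct.
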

\noindent Indeed, since in this case $Q_{\bar z}(\bar \Gamma)J P_z(\Gamma) \geq 0,$ we have
$$
  \bfB^j_{\bar z}(\bar \Gamma) J \bfA^j_z(\Gamma) = \brs{\bfA^j_z(\Gamma)}^* \SqBrs{Q_{\bar z}(\bar \Gamma)J P_z(\Gamma)}\bfA^j_z(\Gamma) \geq 0.
$$

One could have suggested that if $\Gamma_1$ and $\Gamma_2$ are two finite sets of resonance up-points, such that
$\Gamma_1 \subset \Gamma_2,$ then
$$
  Q_{\bar z}(\bar \Gamma_1)J P_z(\Gamma_1) \leq Q_{\bar z}(\bar \Gamma_2)J P_z(\Gamma_2),
$$
but this is false.


\section{Vectors of type~I}
\label{S: vectors of type I}
In this section we study a subspace of the vector space $\Upsilon_{\lambda\pm i0}(r_\lambda)$ which consists of vectors with an additional property.
\begin{prop} \label{P: euE (Vf)=0, k>1} Let~$\lambda$ be an essentially regular point,
let $\set{H_0+rV \colon r \in \mbR}$ be a line regular at~$\lambda,$
let~$r_\lambda$ be a real resonance point of the path $\set{H_0+rV \colon r \in \mbR}$ at~$\lambda$ and
let~$k$ be a positive integer. If $u_{\lambda\pm i0}(r_\lambda) \in \Upsilon_{\lambda\pm i0}(r_\lambda)$ is a resonance vector of order
$k\geq 1$ at $\lambda\pm i0,$ then 
for all non-resonant values of~$s$ the following equality holds:
\begin{equation} \label{F: (J psi,Im TJ psi)=c(-2)/s2+...}
  \scal{J u_{\lambda\pm i0}(r_\lambda)}{\Im T_{\lambda \pm i0}(H_s)J u_{\lambda\pm i0}(r_\lambda)}
    = \frac {c_{\pm 2}}{(s-r_\lambda)^2} + \frac {c_{\pm 3}}{(s-r_\lambda)^3} + \ldots + \frac {c_{\pm k}}{(s-r_\lambda)^k},
\end{equation}
where, in case $k\geq 2,$ for $j = 2,\ldots,k$
\begin{equation} \label{F: c(pm j)}
 \begin{split}
   c_{\pm j} & = \Im \scal{u_{\lambda\pm i0}(r_\lambda)}{J\bfA_{\lambda \pm i0}^{j-1}(r_\lambda) u_{\lambda\pm i0}(r_\lambda)}
         \\ & = - \Im \scal{u_{\lambda\pm i0}(r_\lambda)}{J\bfA_{\lambda \mp i0}^{j-1}(r_\lambda) u_{\lambda\pm i0}(r_\lambda)}.
 \end{split}
\end{equation}
In particular, if $u_{\lambda\pm i0}(r_\lambda) \in \Upsilon_{\lambda\pm i0}(r_\lambda)$ is a resonance vector of order 1, then
\begin{equation} \label{F: (J psi,Im TJ psi)=0}
  \scal{J u_{\lambda\pm i0}(r_\lambda)}{\Im T_{\lambda \pm i0}(H_s)J u_{\lambda\pm i0}(r_\lambda)} = 0.
\end{equation}
\end{prop}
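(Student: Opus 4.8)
The plan is to compute the left-hand side of (\ref{F: (J psi,Im TJ psi)=c(-2)/s2+...}) directly from the key identity (\ref{F: Az3v6 (4.8)}), which expresses $\Im T_{\lambda\pm i0}(H_s)$ in terms of $\Im T_{\lambda\pm i0}(H_r)$ via the factors $(1+(s-r)A_{\lambda\mp i0}(r))^{-1}$ on the left and $(1+(s-r)B_{\lambda\pm i0}(r))^{-1}$ on the right. First I would fix a non-resonant reference point $r$ and write $u := u_{\lambda\pm i0}(r_\lambda)$; since $u$ is a resonance vector for the pair $(\lambda\pm i0, r_\lambda)$, it lies in $\Upsilon_{\lambda\pm i0}(r_\lambda)$, so $Ju \in \Psi_{\lambda\pm i0}(r_\lambda)$ by Lemma~\ref{L: j-dimensions coincide}. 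Then $(1+(s-r)B_{\lambda\pm i0}(r))^{-1}Ju$ can be evaluated using Proposition~\ref{P: [1+sAz(r)](-1)Pz(rz)} (the analogue for $B_z$), giving a finite sum $\sum_{j=0}^{d-1} (\text{coeff}) \,\bfB^j_{\lambda\pm i0}(r_\lambda)Ju$ with coefficients that are power series in $(s-r_\lambda)^{-1}$ — equivalently, using (\ref{F: JA=BJ}), $J\bfA^j_{\lambda\pm i0}(r_\lambda)u$. The crucial simplification is that $\Im T_{\lambda\pm i0}(H_r)$ applied to anything in $\im J P_{\lambda\pm i0}(r_\lambda)$ can be rewritten via $\frac1{2i}(A_{\lambda+i0}(r)-A_{\lambda-i0}(r))$ composed with $P$-idempotents, so the holomorphic Laurent parts drop out (using (\ref{F: tilde AP=0})) and only the nilpotent pieces $\bfA^j_{\lambda\pm i0}(r_\lambda)$ survive.

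After this reduction the inner product becomes a bilinear expression in the vectors $\bfA^i_{\lambda\pm i0}(r_\lambda)u$ and $\bfA^j_{\lambda\pm i0}(r_\lambda)u$ with rational coefficients in $(s-r_\lambda)^{-1}$. Since $\bfA_{\lambda\pm i0}(r_\lambda)$ is nilpotent of order $d$ (Theorem~\ref{T: Laurent for A(s)psi}), and since $u$ has order $k$ so $\bfA^j_{\lambda\pm i0}(r_\lambda)u = 0$ for $j\geq k$, the sum is finite and the highest negative power that can appear is $(s-r_\lambda)^{-k}$; the absence of a $(s-r_\lambda)^{-1}$ term must be checked separately — it follows because the residue of $A_z(s)$ is $P_z(r_z)$, and the "order-1 part" contributes symmetrically to the Hermitian form, so its imaginary part vanishes. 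I would then organize the computation so that the coefficient of $(s-r_\lambda)^{-j}$ is identified as the imaginary part of $\scal{u}{J\bfA^{j-1}_{\lambda\pm i0}(r_\lambda)u}$; the two expressions for $c_{\pm j}$ in (\ref{F: c(pm j)}) — one with $\bfA_{\lambda\pm i0}^{j-1}$, one with $\bfA_{\lambda\mp i0}^{j-1}$ — should match because taking the imaginary part of the Hermitian form amounts to $\frac1{2i}(A_{\lambda+i0}-A_{\lambda-i0})$, and combined with $(A_{\lambda\pm i0})^* = B_{\lambda\mp i0}$ (equation (\ref{F: A(z)(s)*=B(bz)(bs)}) and (\ref{F: A*(z)=B(bar z)})) this relates the $+$ and $-$ nilpotents.

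The main obstacle I anticipate is bookkeeping: carefully matching the power-series coefficients $R_j(w)$ from Proposition~\ref{P: [1+sAz(r)](-1)Pz(rz)} against the Laurent expansion (\ref{F: Laurent for A+(s)}) of $A_z(s)$ to confirm that only the stated powers $(s-r_\lambda)^{-2},\ldots,(s-r_\lambda)^{-k}$ appear with exactly the coefficients $c_{\pm j}$, and in particular verifying that the would-be $(s-r_\lambda)^{-1}$ coefficient and any nonnegative powers vanish. The vanishing of the $(s-r_\lambda)^{-1}$ term is the conceptually important point: it should reduce to the fact that $\Im\scal{u}{JP_{\lambda\pm i0}(r_\lambda)u} = \Im\scal{u}{Ju} = 0$ since $\scal{u}{Ju}$ is real ($J$ self-adjoint and $P_{\lambda\pm i0}(r_\lambda)u=u$), together with $\tilde A_{z,r_\lambda}(s)P_z(r_\lambda)=0$. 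For the special case $k=1$, the sum in (\ref{F: (J psi,Im TJ psi)=c(-2)/s2+...}) is empty, so (\ref{F: (J psi,Im TJ psi)=0}) is immediate once the general formula is established; alternatively it can be derived directly from (\ref{F: Az3v6 (4.8)}) by noting that for an order-1 vector $A_{\lambda\pm i0}(s)u = (s-r_\lambda)^{-1}u$ and $(1+(s-r)B_{\lambda\pm i0}(r))^{-1}Ju = (s-r_\lambda)(r-r_\lambda)^{-1}\cdot\frac{1}{s-r_\lambda}Ju$ is a scalar multiple of $Ju$, reducing the inner product to a real multiple of $\scal{Ju}{\Im T_{\lambda\pm i0}(H_r)Ju}$, which upon iterating the same identity back is seen to be purely real hence has zero imaginary part — but since the left side is already real, this forces it to equal its own expression with no imaginary contribution, giving $0$. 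I will present the unified argument via the Laurent/nilpotent expansion rather than the ad hoc $k=1$ route.
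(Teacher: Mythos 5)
Your central reduction does not work as stated. You claim that when $\Im T_{\lambda\pm i0}(H_r)$ is applied to the range of $JP_{\lambda\pm i0}(r_\lambda)$, ``the holomorphic Laurent parts drop out (using (\ref{F: tilde AP=0})) and only the nilpotent pieces survive.'' Writing $\Im T_{\lambda+i0}(H_r)JP_{\lambda+i0}(r_\lambda)=\frac1{2i}\brs{A_{\lambda+i0}(r)-A_{\lambda-i0}(r)}P_{\lambda+i0}(r_\lambda),$ the identity (\ref{F: tilde AP=0}) kills the holomorphic part only of the first summand, because it requires the \emph{same} boundary value $z$ in both factors; the cross term $\tilde A_{\lambda-i0,r_\lambda}(r)P_{\lambda+i0}(r_\lambda)$ does not vanish in general. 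Indeed, the vanishing of $\Im T_{\lambda+i0}(H_r)JP_{\lambda+i0}(r_\lambda)$ is precisely the definition of $r_\lambda$ being of type~I ((\ref{F: type I resonance point, definition}) together with Lemma~\ref{L: can remove sqrt}), and the paper exhibits resonance points which are not of type~I (Section~\ref{S: Pert embedded eig}); if your simplification were valid for every $r_\lambda$, the whole left-hand side of (\ref{F: (J psi,Im TJ psi)=c(-2)/s2+...}) would be identically zero, which is false. There is also a circularity problem: after transferring from $s$ to the reference point $r$ via (\ref{F: Az3v6 (4.8)}), you are left with a quadratic form in the vectors $J\bfA^j_{\lambda+i0}(r_\lambda)u$ against the same kernel $\Im T_{\lambda+i0}(H_r)$, i.e.\ with quantities of exactly the type you are trying to compute, only at $s=r$ and for lower-order vectors, and you give no independent way to evaluate them.

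The fix is to avoid splitting $\Im T$ into the two boundary values acting on $u$ at all. Take the imaginary part of the \emph{scalar} $f_\pm(s)=\scal{Ju_\pm}{A_{\lambda\pm i0}(s)u_\pm}$, so that only $A_{\lambda\pm i0}(s)u_\pm$ appears, with the same sign as the resonance space $u_\pm$ lives in; then the expansion (\ref{F: A psik=...}) gives $\Im f_\pm(s)=\sum_{j\geq 1}(s-r_\lambda)^{-j-1}\,\Im\scal{Ju_\pm}{\bfA^j_{\lambda\pm i0}(r_\lambda)u_\pm}$, the $j=0$ term dying for exactly the reason you identified ($\scal{Ju_\pm}{u_\pm}$ is real), and the second expression in (\ref{F: c(pm j)}) following from $\bfA_z^*(r_z)=\bfB_{\bar z}(\bar r_z)$ and $J\bfA_z=\bfB_zJ$ as in (\ref{F: (*,*)=conj (*,*)}). (The paper's other proof instead multiplies (\ref{F: boring formula}) by $\scal{Ju_\pm}{\cdot}$, takes imaginary parts, and invokes the divided-difference Lemmas~\ref{L: div-d diff-nce I} and~\ref{L: div-d diff-nce II}.) Finally, your ad hoc $k=1$ argument is also flawed: the scalar should be $(r-r_\lambda)/(s-r_\lambda)$, not $(r-r_\lambda)^{-1}$, and ``purely real hence zero'' is a non sequitur, since the left-hand side is real by construction; the correct one-line argument is $\Im f_\pm(s)=\Im\SqBrs{(s-r_\lambda)^{-1}\scal{Ju_\pm}{u_\pm}}=0.$
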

\begin{proof}
We give two proofs of~(\ref{F: (J psi,Im TJ psi)=c(-2)/s2+...}) but only in the second proof the formula~(\ref{F: c(pm j)})
for $c_{\pm j}$ will be derived. For brevity we write $u_\pm$ instead of $u_{\lambda\pm i0}(r_\lambda).$
Let
$$
  f_\pm(s) = \scal{J u_\pm}{A_{\lambda \pm i0}(s)u_\pm} = \scal{J u_\pm}{T_{\lambda \pm i0}(H_s)Ju_\pm}.
$$
By Theorem~\ref{T: sum prod psi = 0}, the vector $u_\pm$
satisfies~(\ref{F: boring formula}) with $z = \lambda \pm i0.$ Multiplying both sides of~(\ref{F: boring formula}) by $\scal{Ju_\pm}{\cdot},$
one finds that (recall that $\scal{\cdot}{\cdot}$ is linear in the second argument)
$$
  \sum_{j=1}^k (s_j-r_\lambda)^{k-1}\brs{\scal{Ju_\pm}{u_\pm} + (r_\lambda-s_j) f_\pm(s_j)} \prod_{i=1, i\neq j}^k (s_j-s_i)^{-1} = 0
$$
for all sets $s_1, \ldots, s_k$ of distinct real non-resonance points.
Taking the imaginary parts of both sides of this equality gives
$$
  \sum_{j=1}^k (s_j-r_\lambda)^{k}\Im f_\pm(s_j) \prod_{i=1, i\neq j}^k (s_j-s_i)^{-1} = 0.
$$
By Lemma~\ref{L: div-d diff-nce I}, the left hand side is the divided difference of order $k-1$ of the function $h(s) = (s-r_\lambda)^k \Im f_\pm(s).$
It follows from this and Lemma~\ref{L: div-d diff-nce II} that the function $h(s)$ is a polynomial of degree less or equal to $k-2.$
Hence, the function
$$
  \Im f_\pm(s) = \scal{J u_\pm}{\Im T_{\lambda \pm i0}(H_s)Ju_\pm}
$$
has the form~(\ref{F: (J psi,Im TJ psi)=c(-2)/s2+...}) with some numbers $c_{\pm 2},\ldots,c_{\pm k}.$
Here it is assumed that the function $\Im f_\pm(s)$ is defined by the right hand side of the equality above for real values of $s,$
and only after that it is continued analytically to the complex $s$-plane.

Second proof. We have
\begin{equation*}
 \begin{split}
    2i\Im f_\pm (s) & = \scal{J u_\pm }{T_{\lambda\pm  i0}(H_s)Ju_\pm } - \scal{T_{\lambda\pm i0}(H_s)J u_\pm }{Ju_\pm }.
 \end{split}
\end{equation*}
The Laurent expansion~(\ref{F: A psik=...}) of the function $T_{\lambda\pm  i0}(H_s)J$
implies that for real values of $s$ the Laurent expansion of the function $\Im f_\pm(s)$ at $s = r_\lambda$ is
\begin{equation*}
  \begin{split}
    \Im f_\pm (s) & = \frac 1{2i} \scal{J u_\pm }{\sum_{j=0}^{k-1} \frac {1}{(s-r_\lambda)^{j+1}} \bfA_{\lambda \pm i0}^{j}(r_\lambda) u_\pm }
      - \scal{\sum_{j=0}^{k-1} \frac {1}{(s-r_\lambda)^{j+1}} \bfA_{\lambda \pm i0}^{j}(r_\lambda) u_\pm }{Ju_\pm }
    \\ & = \frac 1{2i}\sum_{j=0}^{k-1} \frac {1}{(s-r_\lambda)^{j+1}} \SqBrs{\scal{Ju_\pm}{\bfA_{\lambda \pm i0}^{j}(r_\lambda) u_\pm} - \scal{\bfA_{\lambda \pm i0}^{j}(r_\lambda) u_\pm}{Ju_\pm}}
    \\ & = \sum_{j=1}^{k-1}  \frac {1}{(s-r_\lambda)^{j+1}} \Im \scal{Ju_\pm}{\bfA_{\lambda \pm i0}^{j}(r_\lambda) u_\pm}.
  \end{split}
\end{equation*}
Comparing the coefficients of $(s-r_\lambda)^{-j}$
in this Laurent series and in~(\ref{F: (J psi,Im TJ psi)=c(-2)/s2+...}) gives the equality
$$
  c_{\pm j} = \Im \scal{u_\pm}{J\bfA_{\lambda \pm i0}^{j-1}(r_\lambda) u_\pm}.
$$
To derive the second formula for $c_{\pm j}$ we note that
(\ref{F: A*(z)=B(bar z)}) and~(\ref{F: JA=BJ}) imply that for all $j=0,1,2,\ldots$
\begin{equation} \label{F: (*,*)=conj (*,*)}
 \begin{split}
  \scal{u_\pm}{J\bfA_{\lambda \pm i0}^{j}(r_\lambda) u_\pm} & =
  \scal{\bfB_{\lambda \mp i0}^{j}(r_\lambda)J u_\pm}{ u_\pm}
  \\ & = \scal{J\bfA_{\lambda \mp i0}^{j}(r_\lambda) u_\pm}{ u_\pm}
     = \overline{\scal{u_\pm}{J\bfA_{\lambda \mp i0}^{j}(r_\lambda) u_\pm}}.
 \end{split}
\end{equation}
Hence, $\Im \scal{u_\pm}{J\bfA_{\lambda \pm i0}^{j}(r_\lambda) u_\pm} = - \Im \scal{u_\pm}{J\bfA_{\lambda \mp i0}^{j}(r_\lambda) u_\pm}$ and therefore
$$
  c_{\pm j} = - \Im \scal{u_\pm}{J\bfA_{\lambda \mp i0}^{j-1}(r_\lambda) u_\pm}.
$$
\end{proof}
Since $\Im T_{\lambda - i0}(H_s) = - \Im T_{\lambda + i0}(H_s),$
it follows from~(\ref{F: (J psi,Im TJ psi)=c(-2)/s2+...}) that if $u \in \Upsilon_{\lambda+i0}^k(r_\lambda)$ or $u \in \Upsilon_{\lambda-i0}^k(r_\lambda),$
then
$$
  \scal{J u_{\lambda\pm i0}(r_\lambda)}{\Im T_{\lambda + i0}(H_s)J u_{\lambda\pm i0}(r_\lambda)}
  = \sum _{j=2}^k \Im \scal{u_{\lambda\pm i0}(r_\lambda)}{J\bfA_{\lambda + i0}^{j-1}(r_\lambda) u_{\lambda\pm i0}(r_\lambda)} (s-r_\lambda)^{-j}
$$

\begin{rems} \rm Since the left hand side of~(\ref{F: (J psi,Im TJ psi)=c(-2)/s2+...}) is non-negative (for plus sign)
or non-positive (for minus sign), it follows from~(\ref{F: (J psi,Im TJ psi)=c(-2)/s2+...}) that the largest $j$ for which $c_{\pm j} \neq 0$ must be even
and that
$$
  \Im \scal{u_{\lambda\pm i0}(r_\lambda)}{J\bfA_{\lambda + i0}(r_\lambda) u_{\lambda\pm i0}(r_\lambda)} \geq 0.
$$
\end{rems}

\begin{defn}
A vector $u \in \Upsilon_{\lambda\pm i0}(r_\lambda)$ will be said to be of \emph{type~I}, \label{Page: type I vector}
if for any non-resonant $s \in \mbR$
\begin{equation} \label{F: euE (Vf)=0, k=1}
  \sqrt{\Im T_{\lambda+i0}(H_s)}Ju = 0.
\end{equation}
\end{defn}
The equality~(\ref{F: euE (Vf)=0, k=1}) is equivalent to
$$
  \Im T_{\lambda+i0}(H_s) J u = 0.
$$
Since $\Im T_{\lambda+i0}(H_s) J = A_{\lambda+i0}(s) - A_{\lambda-i0}(s),$ this is also equivalent to
\begin{equation} \label{F: A(+)u=A(-)u}
  A_{\lambda+i0}(s)u = A_{\lambda-i0}(s)u.
\end{equation}

\begin{prop} \label{P: euE (Vf)=0, k=1} Every vector of order~$1$ is of type~I.
\end{prop}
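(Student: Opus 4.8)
The statement to prove is Proposition~\ref{P: euE (Vf)=0, k=1}: every resonance vector of order~$1$ is of type~I, i.e.\ if $u \in \Upsilon_{\lambda+i0}^1(r_\lambda)$ (or $u \in \Upsilon_{\lambda-i0}^1(r_\lambda)$) then $\sqrt{\Im T_{\lambda+i0}(H_s)}\,Ju = 0$ for every non-resonant real $s$. The plan is to reduce this to the vanishing of the scalar $\scal{Ju}{\Im T_{\lambda+i0}(H_s)Ju}$, which is exactly the content of the last assertion of Proposition~\ref{P: euE (Vf)=0, k>1} (equation~(\ref{F: (J psi,Im TJ psi)=0})) applied with $k=1$.

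First I would invoke Proposition~\ref{P: euE (Vf)=0, k>1} with $k=1$: for a resonance vector $u$ of order~$1$ at $\lambda\pm i0$ and any non-resonant real $s$,
$$
  \scal{Ju}{\Im T_{\lambda+i0}(H_s)Ju} = 0
$$
(the proposition states $\scal{Ju}{\Im T_{\lambda\pm i0}(H_s)Ju}=0$, and since $\Im T_{\lambda-i0}(H_s) = -\Im T_{\lambda+i0}(H_s)$ this is the same statement up to sign). Now fix a non-resonant $s$ and write $S := \Im T_{\lambda+i0}(H_s)$. By the construction of the fiber Hilbert space (see~(\ref{F: Im T(l+i0) exists}) and the surrounding discussion), for $s$ non-resonant the operator $S$ exists as a compact operator; moreover $S$ is self-adjoint, and for $y>0$ one has $\Im T_{\lambda+iy}(H_s) > 0$, so passing to the limit $\Im T_{\lambda+i0}(H_s) \geq 0$ whenever it exists — hence $S$ is a non-negative compact operator and $\sqrt{S}$ is well-defined. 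Then
$$
  \|\sqrt{S}\,Ju\|^2 = \scal{\sqrt S\,Ju}{\sqrt S\,Ju} = \scal{Ju}{S\,Ju} = 0,
$$
so $\sqrt{S}\,Ju = 0$, which is precisely~(\ref{F: euE (Vf)=0, k=1}). Since $s$ was an arbitrary non-resonant real number, $u$ is of type~I.

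The only point requiring a little care is the non-negativity of $\Im T_{\lambda+i0}(H_s)$ for non-resonant $s$: this is needed to take the square root and to use $\scal{Ju}{S Ju}=0 \Rightarrow \sqrt S Ju = 0$. This follows from the fact, noted in the paper just before Lemma~\ref{L: star}, that $\Im z > 0$ implies $\Im T_z(H) > 0$, together with norm convergence $\Im T_{\lambda+iy}(H_s) \to \Im T_{\lambda+i0}(H_s)$ as $y\to 0^+$ for $s$ non-resonant (which is part of the definition of $\Lambda(H_s,F)$, see~(\ref{F: Im T(l+i0) exists})). I expect this to be essentially routine rather than a genuine obstacle; the real work — the polarization/divided-difference argument showing the quadratic form vanishes — has already been done in Proposition~\ref{P: euE (Vf)=0, k>1}, so this proposition is a short corollary of it.
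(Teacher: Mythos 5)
Your proposal is correct and follows the paper's own route: the paper proves this proposition simply by citing the $k=1$ case of Proposition~\ref{P: euE (Vf)=0, k>1} (equation~(\ref{F: (J psi,Im TJ psi)=0})), with the passage from $\scal{Ju}{\Im T_{\lambda+i0}(H_s)Ju}=0$ to $\sqrt{\Im T_{\lambda+i0}(H_s)}\,Ju=0$ via non-negativity of $\Im T_{\lambda+i0}(H_s)$ left implicit. Your spelled-out positivity argument is exactly the routine step the paper omits, so there is nothing further to add.
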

\begin{proof}
This follows from Proposition~\ref{P: euE (Vf)=0, k>1},~(\ref{F: (J psi,Im TJ psi)=0}).
\end{proof}
\begin{lemma} \label{L: type I vectors} If an element $u$ of one of the two vector spaces $\Upsilon_{\lambda\pm i0}(r_\lambda)$ is a vector of type~I, then $u$
is also an element of the other vector space, that is, $u \in \Upsilon_{\lambda\mp i0}(r_\lambda),$
and orders of $u$ as an element of $\Upsilon_{\lambda+ i0}(r_\lambda)$ and $\Upsilon_{\lambda- i0}(r_\lambda)$ are the same.
\end{lemma}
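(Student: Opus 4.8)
The statement to prove is Lemma~\ref{L: type I vectors}: if $u$ is a type~I vector lying in $\Upsilon_{\lambda+i0}(r_\lambda)$ (say), then $u$ also lies in $\Upsilon_{\lambda-i0}(r_\lambda)$, and its order is the same in both spaces. The natural approach is to exploit the characterization~(\ref{F: A(+)u=A(-)u}) of type~I vectors together with the polynomial reformulation of the resonance equation given in Theorem~\ref{T: sum prod psi = 0}.

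The plan is as follows. First I would observe that since $u$ is of type~I, the equality $A_{\lambda+i0}(s)u = A_{\lambda-i0}(s)u$ holds for every non-resonant real $s$. Now suppose $u \in \Upsilon^k_{\lambda+i0}(r_\lambda)$, i.e.\ $u$ is a resonance vector of order $\leq k$ at $\lambda+i0$. By Theorem~\ref{T: sum prod psi = 0}, this is equivalent to the linear equation
\begin{equation*}
  \sum_{j=1}^k (s_j-r_\lambda)^{k-1}\brs{u + (r_\lambda-s_j) A_{\lambda+i0}(s_j)u} \prod_{i=1, i\neq j}^k (s_j-s_i)^{-1} = 0
\end{equation*}
for any choice of distinct non-resonant reals $s_1,\ldots,s_k$. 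Substituting $A_{\lambda+i0}(s_j)u = A_{\lambda-i0}(s_j)u$ term by term (each $s_j$ can be taken non-resonant) turns this into the corresponding identity with $A_{\lambda-i0}$ in place of $A_{\lambda+i0}$, which, again by Theorem~\ref{T: sum prod psi = 0}, says precisely that $u \in \Upsilon^k_{\lambda-i0}(r_\lambda)$. This shows $\Upsilon^k_{\lambda+i0}(r_\lambda) \ni u \implies u \in \Upsilon^k_{\lambda-i0}(r_\lambda)$ for each $k$, hence $u \in \Upsilon_{\lambda-i0}(r_\lambda)$; by symmetry (the type~I condition is stated symmetrically, and~(\ref{F: A(+)u=A(-)u}) is symmetric in $\pm$) the reverse implication holds, so the membership statement is settled. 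The same computation, run at the minimal $k$, shows that if $\order_+(u) = k$ then $\order_-(u) \leq k$, and symmetrically $\order_-(u) = k$ forces $\order_+(u) \leq k$; combining these two inequalities gives $\order_+(u) = \order_-(u)$.

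I expect the only delicate point to be bookkeeping about which values of $s$ are admissible: the equalities in Theorem~\ref{T: sum prod psi = 0} and in the definition of type~I are asserted for \emph{non-resonant} $s$, and one must check that the set of non-resonant reals is large enough (it is the complement of a discrete set, by Theorem~\ref{T: Az 4.1.11}) to choose the required tuples $s_1,\ldots,s_k$ of distinct non-resonant points and to perform the term-by-term substitution. This is routine but worth stating carefully. An alternative, perhaps cleaner, route avoiding the $k$-tuple argument would be to use the operator identity $\Im T_{\lambda+i0}(H_s)J = A_{\lambda+i0}(s) - A_{\lambda-i0}(s)$ directly: type~I means $A_{\lambda+i0}(s)u = A_{\lambda-i0}(s)u$, and then for any non-resonant $s$ one has $(1+(r_\lambda-s)A_{\lambda+i0}(s))^k u = (1+(r_\lambda-s)A_{\lambda-i0}(s))^k u$ by expanding the binomial and using commutativity~(\ref{F: A(s) and A(r) commute}), from which membership and equality of orders follow at once from Proposition~\ref{P: res eq-n is correct}. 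I would present this second version as the main proof since it is shorter, and the substitution argument as motivation.
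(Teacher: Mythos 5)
Your first argument --- substituting $A_{\lambda+i0}(s_j)u = A_{\lambda-i0}(s_j)u$ term by term in the identity~(\ref{F: boring formula}) of Theorem~\ref{T: sum prod psi = 0} and then invoking that theorem again in the reverse direction, with the order statement obtained by running the substitution at the minimal $k$ both ways --- is exactly the paper's proof, and it is correct.

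However, the version you say you would present as the main proof has a genuine gap. The commutativity relation~(\ref{F: A(s) and A(r) commute}) holds between $A_z(s)$ and $A_z(r)$ for the \emph{same} boundary value $z$; it says nothing about interchanging $A_{\lambda+i0}$ with $A_{\lambda-i0}$. When you expand $\brs{1+(r_\lambda-s)A_{\lambda+i0}(s)}^k u$ binomially you produce the vectors $A_{\lambda+i0}(s)^m u$ with $m\geq 2$, and to replace these by $A_{\lambda-i0}(s)^m u$ you need $A_{\lambda+i0}(s)v = A_{\lambda-i0}(s)v$ for $v = A_{\lambda+i0}(s)u$; the type~I hypothesis gives this only for $v=u$. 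In other words, you need the type~I property to propagate to $A_{\lambda\pm i0}(s)u$, equivalently to $\bfA^j_{\lambda\pm i0}(r_\lambda)u$, which is precisely Corollary~\ref{C: u type I then so ia Au} --- a fact the paper establishes only \emph{after} this lemma (via Lemma~\ref{L: type I u iff A(+)u=A(-)u} and the subspace structure of type~I vectors), and which is not a routine consequence of the hypothesis at this stage; trying to prove it directly here leads back to the very statement you are proving. The whole point of Theorem~\ref{T: sum prod psi = 0} is to rewrite the order-$k$ resonance equation in a form that is linear in each $A_z(s_j)$ applied to $u$, so that one substitution per term suffices. So keep the divided-difference argument as the proof; the ``shorter'' route is not a valid shortcut without the later machinery.
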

\begin{proof} If for instance $u \in \Upsilon^k_{\lambda + i0}(r_\lambda),$ then by equality~(\ref{F: boring formula}) of Theorem~\ref{T: sum prod psi = 0}
one has
$$
  \sum_{j=1}^k (s_j-r_\lambda)^{k-1}\brs{u + (r_\lambda-s_j) A_{\lambda+i0}(s_j)u} \prod_{i=1, i\neq j}^k (s_j-s_i)^{-1} = 0,
$$
where $s_1, \ldots, s_k$ is any set of~$k$ distinct real non-resonance points.
If $u$ is a vector of type~I then the equality~(\ref{F: A(+)u=A(-)u}) holds, and therefore in the above equality
the term $A_{\lambda+i0}(s_j)u$ can be replaced by $A_{\lambda-i0}(s_j)u.$ By Theorem~\ref{T: sum prod psi = 0},
the resulting equality implies that $u$ belongs to $\Upsilon^k_{\lambda - i0}(r_\lambda).$
Similarly one shows that if $u \in \Upsilon^k_{\lambda - i0}(r_\lambda)$ is a vector of type~I, then $u \in \Upsilon^k_{\lambda + i0}(r_\lambda).$
Hence, orders of $u$ as elements of $\Upsilon_{\lambda - i0}(r_\lambda)$ and $\Upsilon_{\lambda + i0}(r_\lambda)$  are the same.
\end{proof}
Lemma~\ref{L: type I vectors} combined with Proposition~\ref{P: euE (Vf)=0, k=1} imply the following
\begin{cor} \label{C: Upsilon 1(+)=Upsilon 1(-)}
\begin{equation*}
  \Upsilon^1_{\lambda+i0}(r_\lambda) = \Upsilon^1_{\lambda-i0}(r_\lambda).
\end{equation*}
\end{cor}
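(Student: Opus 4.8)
The final statement to prove is Corollary \ref{C: Upsilon 1(+)=Upsilon 1(-)}, which asserts $\Upsilon^1_{\lambda+i0}(r_\lambda) = \Upsilon^1_{\lambda-i0}(r_\lambda)$. The plan is to derive this directly from the two immediately preceding results, Proposition \ref{P: euE (Vf)=0, k=1} and Lemma \ref{L: type I vectors}, which together do essentially all the work. The only task is to assemble them in the right order and observe that the two vector spaces contain each other.

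First I would take an arbitrary vector $u \in \Upsilon^1_{\lambda+i0}(r_\lambda)$. By Proposition \ref{P: euE (Vf)=0, k=1}, every vector of order $1$ is of type I, so $u$ is a vector of type I (as an element of $\Upsilon_{\lambda+i0}(r_\lambda)$). Then Lemma \ref{L: type I vectors} applies: a type I element of $\Upsilon_{\lambda+i0}(r_\lambda)$ also belongs to $\Upsilon_{\lambda-i0}(r_\lambda)$, and its order in the two spaces is the same. Since $u$ has order $1$ in $\Upsilon_{\lambda+i0}(r_\lambda)$, it has order $1$ in $\Upsilon_{\lambda-i0}(r_\lambda)$ as well, i.e. $u \in \Upsilon^1_{\lambda-i0}(r_\lambda)$. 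This shows $\Upsilon^1_{\lambda+i0}(r_\lambda) \subseteq \Upsilon^1_{\lambda-i0}(r_\lambda)$.

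For the reverse inclusion I would run the symmetric argument: take $u \in \Upsilon^1_{\lambda-i0}(r_\lambda)$, apply Proposition \ref{P: euE (Vf)=0, k=1} (which is stated for vectors of order $1$ in $\Upsilon_{\lambda\pm i0}(r_\lambda)$ without distinguishing the sign) to conclude $u$ is of type I, and then invoke the "other direction" clause of Lemma \ref{L: type I vectors} — namely that a type I element of $\Upsilon_{\lambda-i0}(r_\lambda)$ lies in $\Upsilon_{\lambda+i0}(r_\lambda)$ with the same order — to get $u \in \Upsilon^1_{\lambda+i0}(r_\lambda)$. Combining the two inclusions yields the claimed equality.

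There is no real obstacle here; this corollary is a bookkeeping consequence of the preceding Proposition and Lemma, and the proof is two lines. The only point requiring a moment of care is that both the Proposition and the Lemma must be applied in their "$\pm$" form symmetrically, which is legitimate because both are stated with the ambiguous sign $\lambda\pm i0$ and with the symmetric "$\mp$" conclusion. If one wanted to be maximally economical, one could also simply cite that Lemma \ref{L: type I vectors} gives, for type I vectors, equality of order in both spaces, so that the set of type I vectors of order $1$ is intrinsically the same subset of $\clK$ regardless of which side it is viewed from, and then note via Proposition \ref{P: euE (Vf)=0, k=1} that on order-$1$ vectors the type I condition is automatic from either side.
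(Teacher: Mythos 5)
Your argument is exactly the paper's: the corollary is stated there as an immediate consequence of Lemma \ref{L: type I vectors} combined with Proposition \ref{P: euE (Vf)=0, k=1}, and your two symmetric inclusions simply spell out that deduction. Correct, same approach.
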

By Lemma~\ref{L: j-dimensions coincide}, it follows that also
\begin{equation} \label{F: Psi 1(+)=Psi 1(-)}
  \Psi^1_{\lambda+i0}(r_\lambda) = \Psi^1_{\lambda-i0}(r_\lambda).
\end{equation}
Vectors of type~I form a vector subspace of both $\Upsilon_{\lambda\pm i0}(r_\lambda).$
It follows from~(\ref{F: A(+)u=A(-)u}) and~(\ref{F: bfA to j}) that if $u$ is a vector of type~I then for all $j=0,1,\ldots$
\begin{equation} \label{F: bfA j(+)=bfA j(-)}
  \bfA^j _{\lambda+i0}(r_\lambda) u = \bfA^j _{\lambda-i0}(r_\lambda) u.
\end{equation}
Therefore, it follows from~(\ref{F: Laurent for A+(s)}) and~(\ref{F: A(+)u=A(-)u})
that for vectors $u$ of type~I we have
$$
  \tilde A _{\lambda+i0,r_\lambda}(r_\lambda) u = \tilde A _{\lambda-i0,r_\lambda}(r_\lambda) u,
$$
which by~(\ref{F: tilde AP=0}) implies that for all $s$
$$
  \tilde A _{\lambda\pm i0, r_\lambda}(s) u = 0.
$$
On the other hand, if an element $u$ of the intersection $\Upsilon_{\lambda - i0}(r_\lambda) \cap \Upsilon_{\lambda + i0}(r_\lambda)$
is such that for all $j=0,1,2,\ldots$ the equality~(\ref{F: bfA j(+)=bfA j(-)}) holds
then by~(\ref{F: tilde AP=0}) we have $\tilde A _{\lambda+i0,r_\lambda}(r_\lambda) u = \tilde A _{\lambda+i0,r_\lambda}(r_\lambda) P_{\lambda+i0}(r_\lambda)u = 0$ and similarly
$\tilde A _{\lambda-i0,r_\lambda}(r_\lambda) u = 0,$ and therefore, it follows from the Laurent expansion~(\ref{F: Laurent for A+(s)}) of $A_z(s)$ that~(\ref{F: A(+)u=A(-)u}) holds.
Thus, the following lemma has been proved.
\begin{lemma} \label{L: type I u iff A(+)u=A(-)u} An element $u$ of $\Upsilon_{\lambda+i0}(r_\lambda)$ or $\Upsilon_{\lambda-i0}(r_\lambda)$
is a vector of type~I if and only if for all $j=0,1,2,\ldots$ the equality~(\ref{F: bfA j(+)=bfA j(-)}) holds.
\end{lemma}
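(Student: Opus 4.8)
The statement to be proved (Lemma~\ref{L: type I u iff A(+)u=A(-)u}) characterizes type~I vectors in $\Upsilon_{\lambda+i0}(r_\lambda)\cup\Upsilon_{\lambda-i0}(r_\lambda)$ by the condition that $\bfA^j _{\lambda+i0}(r_\lambda) u = \bfA^j _{\lambda-i0}(r_\lambda) u$ for all $j\geq 0$. In fact the bulk of this has already been assembled in the running text immediately preceding the lemma; the plan is simply to organize those remarks into a clean two-direction argument.

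First I would prove the forward implication. Suppose $u$ is of type~I, so by definition $\sqrt{\Im T_{\lambda+i0}(H_s)}Ju=0$ for all non-resonant $s$, equivalently $\Im T_{\lambda+i0}(H_s)Ju=0$, equivalently (since $\Im T_{\lambda+i0}(H_s)J = A_{\lambda+i0}(s)-A_{\lambda-i0}(s)$) the identity $A_{\lambda+i0}(s)u = A_{\lambda-i0}(s)u$ holds for all non-resonant $s$, which is~(\ref{F: A(+)u=A(-)u}). By Lemma~\ref{L: type I vectors} such a $u$ lies in \emph{both} $\Upsilon_{\lambda+i0}(r_\lambda)$ and $\Upsilon_{\lambda-i0}(r_\lambda)$ with the same order, so the operators $\bfA^j_{\lambda\pm i0}(r_\lambda)$ can both be applied to it. Now I would invoke the contour-integral formula~(\ref{F: bfA to j}), namely $\bfA^j_z(r_\lambda)=\frac{1}{2\pi i}\oint_{C(r_\lambda)}(s-r_\lambda)^j A_z(s)\,ds$: applying $A_{\lambda+i0}(s)u=A_{\lambda-i0}(s)u$ under the integral sign (the contour can be chosen to avoid the resonance points, where both sides agree) immediately gives $\bfA^j_{\lambda+i0}(r_\lambda)u=\bfA^j_{\lambda-i0}(r_\lambda)u$, i.e.~(\ref{F: bfA j(+)=bfA j(-)}), for every $j=0,1,2,\ldots$ (including $j=0$, for which $\bfA^0_z=P_z(r_\lambda)$ by~(\ref{F: P=bfA to 0})).

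For the converse, assume $u\in\Upsilon_{\lambda+i0}(r_\lambda)\cap\Upsilon_{\lambda-i0}(r_\lambda)$ satisfies~(\ref{F: bfA j(+)=bfA j(-)}) for all $j\geq 0$. The idea is to recover the full equality of the two meromorphic functions $A_{\lambda+i0}(s)u$ and $A_{\lambda-i0}(s)u$ from the equality of all their principal-part coefficients at $s=r_\lambda$ together with the vanishing of their holomorphic parts on $u$. Concretely: since $P_{\lambda\pm i0}(r_\lambda)u=u$ (taking $j=0$ in the hypothesis and using that $u$ is in both ranges), the relation $\tilde A_{\lambda\pm i0,r_\lambda}(s)P_{\lambda\pm i0}(r_\lambda)=0$ from~(\ref{F: tilde AP=0}) forces $\tilde A_{\lambda+i0,r_\lambda}(s)u=0=\tilde A_{\lambda-i0,r_\lambda}(s)u$ for every $s$. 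Feeding this, together with the coefficient identities~(\ref{F: bfA j(+)=bfA j(-)}), into the Laurent expansion~(\ref{F: Laurent for A+(s)}) of $A_z(s)$ at $s=r_\lambda$ shows that $A_{\lambda+i0}(s)u$ and $A_{\lambda-i0}(s)u$ have identical Laurent series and hence coincide as meromorphic functions of $s$; evaluating at any non-resonant real $s$ yields~(\ref{F: A(+)u=A(-)u}), which as noted above is exactly the type~I condition.

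I do not anticipate a genuine obstacle here: every ingredient — the algebraic reformulations of the type~I condition, Lemma~\ref{L: type I vectors}, the contour formula~(\ref{F: bfA to j}), the identity~(\ref{F: tilde AP=0}), and the finiteness of the Laurent series~(\ref{F: Laurent for A+(s)}) — is already available. The only point requiring a little care is the bookkeeping in the converse: one must be sure that knowing the holomorphic part kills $u$ \emph{and} that only finitely many negative-power coefficients occur (guaranteed since $r_\lambda$ is a pole of finite order of the compact-operator-valued function $A_z(s)$), so that matching all coefficients really does force equality of the two functions rather than merely equality of finitely many jets.
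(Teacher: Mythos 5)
Your proof is correct and follows essentially the same route as the paper: the forward direction via the equivalence of the type~I condition with $A_{\lambda+i0}(s)u=A_{\lambda-i0}(s)u$ together with the contour formula~(\ref{F: bfA to j}), and the converse via $P_{\lambda\pm i0}(r_\lambda)u=u$, the identity~(\ref{F: tilde AP=0}) killing the holomorphic parts, and matching the (finite) Laurent expansions~(\ref{F: Laurent for A+(s)}). No substantive difference from the paper's argument.
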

\begin{cor} \label{C: u type I then so ia Au} If $u$ is a vector of type~I then so are the vectors $\bfA_{\lambda\pm i0}^j(r_\lambda)u$ for any $j=0,1,2,\ldots.$
\end{cor}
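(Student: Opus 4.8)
The plan is to reduce the statement to the characterization of type~I vectors given in Lemma~\ref{L: type I u iff A(+)u=A(-)u}, according to which a vector $u$ lying in $\Upsilon_{\lambda+i0}(r_\lambda)$ or in $\Upsilon_{\lambda-i0}(r_\lambda)$ is of type~I precisely when $\bfA^j_{\lambda+i0}(r_\lambda)u = \bfA^j_{\lambda-i0}(r_\lambda)u$ for every $j=0,1,2,\ldots$. So fix a type~I vector $u$, fix $j\ge 0$, and set $v := \bfA^j_{\lambda+i0}(r_\lambda)u$. First I would check that $v$ lies in $\Upsilon_{\lambda+i0}(r_\lambda)$: for $j=0$ this is $v = P_{\lambda+i0}(r_\lambda)u = u$ by~(\ref{F: P=bfA to 0}), while for $j\ge 1$ it follows from Theorem~\ref{T: Laurent for A(s)psi}, which says that $\bfA^j_{\lambda+i0}(r_\lambda)$ carries a resonance vector to a resonance vector. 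Also, since $u$ is of type~I, Lemma~\ref{L: type I u iff A(+)u=A(-)u} already supplies the identity $v = \bfA^j_{\lambda-i0}(r_\lambda)u$.

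Next I would use that $\bfA^k_z(r_z)$ is the $k$-th power of the operator $\bfA_z(r_z)$, so that $\bfA^k_z(r_z)\bfA^j_z(r_z) = \bfA^{k+j}_z(r_z)$ for $z = \lambda\pm i0$ (this is also recorded in Proposition~\ref{P: bfA to j}). Then for every $k\ge 0$
\[
  \bfA^k_{\lambda+i0}(r_\lambda)v = \bfA^{k+j}_{\lambda+i0}(r_\lambda)u
  \qquad\text{and}\qquad
  \bfA^k_{\lambda-i0}(r_\lambda)v = \bfA^k_{\lambda-i0}(r_\lambda)\bfA^j_{\lambda-i0}(r_\lambda)u = \bfA^{k+j}_{\lambda-i0}(r_\lambda)u,
\]
where in the second chain I inserted the identity $v = \bfA^j_{\lambda-i0}(r_\lambda)u$. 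Since $u$ is of type~I, Lemma~\ref{L: type I u iff A(+)u=A(-)u} gives $\bfA^{k+j}_{\lambda+i0}(r_\lambda)u = \bfA^{k+j}_{\lambda-i0}(r_\lambda)u$ for all $k$, hence $\bfA^k_{\lambda+i0}(r_\lambda)v = \bfA^k_{\lambda-i0}(r_\lambda)v$ for all $k\ge 0$. A second application of Lemma~\ref{L: type I u iff A(+)u=A(-)u}, now to $v$, shows that $v = \bfA^j_{\lambda+i0}(r_\lambda)u$ is of type~I. The case of $\bfA^j_{\lambda-i0}(r_\lambda)u$ is identical after using Lemma~\ref{L: type I vectors} to regard $u$ as an element of $\Upsilon_{\lambda-i0}(r_\lambda)$ as well.

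The argument has no real difficulty; the only matters requiring attention are the verification that $v$ indeed belongs to one of the resonance subspaces so that Lemma~\ref{L: type I u iff A(+)u=A(-)u} can be applied to it, and the straightforward bookkeeping of powers of the nilpotents $\bfA_{\lambda\pm i0}(r_\lambda)$. Both are immediate from the results of Section~\ref{S: Resonance points}.
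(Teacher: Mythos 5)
Your proof is correct and follows exactly the route the paper intends: the corollary is stated there as an immediate consequence of Lemma~\ref{L: type I u iff A(+)u=A(-)u}, and your argument simply makes explicit the power bookkeeping $\bfA^k_{\lambda\pm i0}(r_\lambda)\bfA^j_{\lambda\pm i0}(r_\lambda)=\bfA^{k+j}_{\lambda\pm i0}(r_\lambda)$ together with the membership of $\bfA^j_{\lambda\pm i0}(r_\lambda)u$ in the resonance space (Theorem~\ref{T: Laurent for A(s)psi}) that the paper leaves tacit. No gaps.
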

\vskip -8pt
\noindent In other words, the vector space of vectors of type~I is invariant with respect to $\bfA_{\lambda\pm i0}(r_\lambda).$
\begin{lemma} An element $u$ of $\Upsilon_{\lambda+i0}(r_\lambda)$ or $\Upsilon_{\lambda-i0}(r_\lambda)$
is a vector of type~I if and only if there exists a non-resonant real number $r$ such that for all $j=0,1,2,\ldots$
$$
  (A_{\lambda+i0}(r)-A_{\lambda-i0}(r)) \bfA^j _{\lambda+i0}(r_\lambda) u = 0.
$$
\end{lemma}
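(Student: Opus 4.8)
The plan is to prove the equivalence by passing through the characterisation of type~I vectors already established in Lemma~\ref{L: type I u iff A(+)u=A(-)u}. That lemma says: $u$ is of type~I (and hence lies in $\Upsilon_{\lambda+i0}(r_\lambda)\cap\Upsilon_{\lambda-i0}(r_\lambda)$) if and only if $\bfA^j_{\lambda+i0}(r_\lambda)u=\bfA^j_{\lambda-i0}(r_\lambda)u$ for all $j=0,1,2,\ldots$. So it suffices to show that, for $u\in\Upsilon_{\lambda+i0}(r_\lambda)$ or $u\in\Upsilon_{\lambda-i0}(r_\lambda)$, the stated condition
$$
  (A_{\lambda+i0}(r)-A_{\lambda-i0}(r))\,\bfA^j_{\lambda+i0}(r_\lambda)u = 0 \quad \text{for all } j\geq 0 \text{ and some fixed non-resonant } r
$$
is equivalent to $\bfA^j_{\lambda+i0}(r_\lambda)u=\bfA^j_{\lambda-i0}(r_\lambda)u$ for all $j\geq 0$.

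First I would treat the ``only if'' direction, which is the easy one: if $u$ is of type~I then by Lemma~\ref{L: type I vectors} it lies in both $\Upsilon_{\lambda\pm i0}(r_\lambda)$, by~(\ref{F: bfA j(+)=bfA j(-)}) we have $\bfA^j_{\lambda+i0}(r_\lambda)u=\bfA^j_{\lambda-i0}(r_\lambda)u$ for all $j$, and by Corollary~\ref{C: u type I then so ia Au} each $\bfA^j_{\lambda+i0}(r_\lambda)u$ is again of type~I, so~(\ref{F: A(+)u=A(-)u}) applies to it with $s=r$, giving $(A_{\lambda+i0}(r)-A_{\lambda-i0}(r))\bfA^j_{\lambda+i0}(r_\lambda)u=0$ for every $j$. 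For the ``if'' direction I would argue as follows. The key observation is that $A_{\lambda+i0}(r)-A_{\lambda-i0}(r)=2i\,\Im T_{\lambda+i0}(H_r)J$, so the hypothesis says $\Im T_{\lambda+i0}(H_r)J\,\bfA^j_{\lambda+i0}(r_\lambda)u=0$ for all $j$ — equivalently, by nonnegativity of $\Im T_{\lambda+iy}(H_r)$ and the square-root trick used throughout the paper, $\sqrt{\Im T_{\lambda+i0}(H_r)}\,J\,\bfA^j_{\lambda+i0}(r_\lambda)u=0$. Since by~(\ref{F: Az3v6 (4.8)}) the operator $\Im T_{\lambda+i0}(H_s)$ for an arbitrary non-resonant $s$ equals $(1+(s-r)A_{\lambda-i0}(r))^{-1}\Im T_{\lambda+i0}(H_r)(1+(s-r)B_{\lambda+i0}(r))^{-1}$, vanishing of $\Im T_{\lambda+i0}(H_r)J$ on the finite-dimensional, $\bfA_{\lambda+i0}(r_\lambda)$-invariant span of $\{\bfA^j_{\lambda+i0}(r_\lambda)u\}_j$ propagates to vanishing of $\Im T_{\lambda+i0}(H_s)J$ on the image of $P_{\lambda+i0}(r_\lambda)$ restricted to that span (using that $B_{\lambda+i0}(r)$ intertwines with $\bfB$ via $J$, and that $(1+(s-r)B_{\lambda+i0}(r))^{-1}J = J(1+(s-r)A_{\lambda+i0}(r))^{-1}$). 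In particular $u$ satisfies~(\ref{F: euE (Vf)=0, k=1}) for all non-resonant $s$, so $u$ is of type~I; this also a posteriori confirms $u\in\Upsilon_{\lambda-i0}(r_\lambda)$ by Lemma~\ref{L: type I vectors}.

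An alternative and perhaps cleaner route for the ``if'' direction, avoiding~(\ref{F: Az3v6 (4.8)}), is to work directly with the Laurent expansion~(\ref{F: A psik=...})--(\ref{F: psi(j)=oint...Apsi(k)}): writing $w:=(A_{\lambda+i0}(r)-A_{\lambda-i0}(r))\bfA^j_{\lambda+i0}(r_\lambda)u$ and expanding $A_{\lambda\pm i0}(s)\bfA^j_{\lambda+i0}(r_\lambda)u$ in powers of $(s-r)$ around $s=r$, one sees that $A_{\lambda+i0}(s)v = A_{\lambda-i0}(s)v$ for $v=\bfA^j_{\lambda+i0}(r_\lambda)u$ and all $s$ is equivalent to a system of linear equations on the Taylor coefficients at $s=r$, and the hypothesis that \emph{all} the $j$-th iterates are annihilated by $A_{\lambda+i0}(r)-A_{\lambda-i0}(r)$ is exactly what closes that system. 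Then~(\ref{F: A(+)u=A(-)u}) holds for $v=u$ and one invokes Lemma~\ref{L: type I u iff A(+)u=A(-)u}. The main obstacle I anticipate is bookkeeping: making precise that the finitely many conditions ``$A_{\lambda+i0}(r)-A_{\lambda-i0}(r)$ kills $\bfA^j_{\lambda+i0}(r_\lambda)u$'' for $j=0,\dots,d-1$ (the expansion truncates at the order $d$ of $r_\lambda$, so ``for all $j$'' is really a finite list) genuinely suffice to force $A_{\lambda+i0}(s)=A_{\lambda-i0}(s)$ on $u$ for \emph{every} $s$, not just near $r$; this is a routine analytic-continuation / polynomial-degree argument of exactly the kind Lemmas~\ref{L: div-d diff-nce I} and~\ref{L: div-d diff-nce II} are set up to handle, together with the identity~(\ref{F: A(s)=(1+(s-r)A(r))(-1)A(r)}) for meromorphic continuation. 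I would keep the write-up short by reducing everything to Lemma~\ref{L: type I u iff A(+)u=A(-)u} and citing the Laurent expansion~(\ref{F: Laurent for A+(s)}) and the commutation relations~(\ref{F: JP=QJ}), (\ref{F: JA=BJ}) rather than re-deriving them.
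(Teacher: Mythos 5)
Your proposal is correct and its main route is essentially the paper's own proof: the ``only if'' direction via Corollary~\ref{C: u type I then so ia Au} and~(\ref{F: A(+)u=A(-)u}), and the ``if'' direction via the identity~(\ref{F: Az3v6 (4.8)}) (with $J$ moved through by $JA_z(s)=B_z(s)J$) together with the expansion of $(1+(s-r)A_{\lambda+i0}(r))^{-1}u$ as a finite linear combination of the vectors $\bfA^j_{\lambda+i0}(r_\lambda)u$, which is exactly Proposition~\ref{P: [1+sAz(r)](-1)Pz(rz)}. The second ``alternative route'' you sketch is not needed; the first argument already yields $\Im T_{\lambda+i0}(H_s)Ju=0$ for every non-resonant $s$, hence type~I.
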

\begin{proof} 
(Only if) If $u$ is a vector of type~I then by Corollary \ref{C: u type I then so ia Au}
for any $j=0,1,2,\ldots$ the vectors~(\ref{F: bfA j(+)=bfA j(-)})
are also of type~I. Hence the equality to be proved follows from~(\ref{F: A(+)u=A(-)u}).

(If) It follows from the premise with $j=0$ that~(\ref{F: A(+)u=A(-)u}) holds for one non-resonant real number~$r.$
Let $s$ be any other non-resonant real number. Then by (\ref{F: Az3v6 (4.8)})
$$
  (A_{\lambda+i0}(s)-A_{\lambda-i0}(s)) u = (1+(s-r)A_{\lambda-i0}(r))^{-1} (A_{\lambda+i0}(r)-A_{\lambda-i0}(r)) (1+(s-r)A_{\lambda+i0}(r))^{-1}u.
$$
Using Proposition \ref{P: [1+sAz(r)](-1)Pz(rz)}, we can expand the factor $(1+(s-r)A_{\lambda+i0}(r))^{-1}u$ as a linear combination of $\bfA^j _{\lambda+i0}(r_\lambda) u.$
Hence, it follows from the premise that $(A_{\lambda+i0}(s)-A_{\lambda-i0}(s)) u = 0$ for any non-resonant~$s.$
That is, $u$ is a vector of type~I.
\end{proof}

The vector space of vectors of type~I will be denoted by $\Upsilon_{\lambda}^{\mathrm I}(r_\lambda).$
This notation is not ambiguous since,
according to Lemma~\ref{L: type I vectors}, one can omit the sign in the notation $\Upsilon_{\lambda\pm i0}^{\mathrm I}(r_\lambda)$
and write $\Upsilon_{\lambda}^{\mathrm I}(r_\lambda).$  Further, the vector subspaces
$\Upsilon^{k,\mathrm I}_{\lambda}(r_\lambda)$ are also correctly defined in the sense that
$$
  \Upsilon_{\lambda}^{\mathrm I}(r_\lambda) \cap \Upsilon^k_{\lambda+i0}(r_\lambda)
    = \Upsilon_{\lambda}^{\mathrm I}(r_\lambda) \cap \Upsilon^k_{\lambda-i0}(r_\lambda).
$$
We summarize the results of this section in the following
\begin{thm} \label{T: type I vectors} Let~$r_\lambda$ be a real resonance point of the line $\gamma = \set{H_r \colon r \in \mbR},$ corresponding to a real number $\lambda \in \Lambda(\gamma,F).$
Let $u \in \clK.$ The following assertions are equivalent:
\begin{enumerate}
  \item $u \in \Upsilon_{\lambda+i0}(r_\lambda)$ and for all non-resonant real numbers $s$
  $$
    \sqrt{\Im T_{\lambda+i0}(H_s)}Ju = 0.
  $$
  \item $u \in \Upsilon_{\lambda-i0}(r_\lambda)$ and for all non-resonant real numbers $s$
  $$
    \sqrt{\Im T_{\lambda+i0}(H_s)}Ju = 0.
  $$
  \item $u \in \Upsilon_{\lambda+i0}(r_\lambda)$ and for all non-resonant real numbers $s$
  $$
    A_{\lambda+i0}(s)u = A_{\lambda-i0}(s)u.
  $$
  \item $u \in \Upsilon_{\lambda-i0}(r_\lambda)$ and for all non-resonant real numbers $s$
  $$
    A_{\lambda+i0}(s)u = A_{\lambda-i0}(s)u.
  $$
  \item \label{Item: 5} $u \in \Upsilon_{\lambda+i0}(r_\lambda)$ and for all $j=0,1,2,\ldots,d-1,$ where $d$ is the order of~$r_\lambda,$
  $$
    \bfA^j _{\lambda+i0}(r_\lambda) u = \bfA^j _{\lambda-i0}(r_\lambda) u.
  $$
  \item \label{Item: 5'} $u \in \Upsilon_{\lambda-i0}(r_\lambda)$ and for all $j=0,1,2,\ldots,d-1,$ where $d$ is the order of~$r_\lambda,$
  $$
    \bfA^j _{\lambda+i0}(r_\lambda) u = \bfA^j _{\lambda-i0}(r_\lambda) u.
  $$
  \item $u \in \Upsilon_{\lambda+i0}(r_\lambda)$ and there exists a non-resonant real number $r$ such that for all $j=0,1,2,\ldots$
  $$
    (A_{\lambda+i0}(r)-A_{\lambda-i0}(r)) \bfA^j _{\lambda+i0}(r_\lambda) u = 0.
  $$
  \item  $u \in \Upsilon_{\lambda-i0}(r_\lambda)$ and there exists a non-resonant real number $r$ such that for all $j=0,1,2,\ldots$
  $$
    (A_{\lambda+i0}(r)-A_{\lambda-i0}(r)) \bfA^j _{\lambda-i0}(r_\lambda) u = 0.
  $$
  \item \label{Item: 8} $u \in \Upsilon_{\lambda+i0}(r_\lambda)$ and all the coefficients $c_{+j}$ from the equality (\ref{F: (J psi,Im TJ psi)=c(-2)/s2+...})
  are equal to zero.
  \item $u \in \Upsilon_{\lambda-i0}(r_\lambda)$ and all the coefficients $c_{-j}$ from the equality (\ref{F: (J psi,Im TJ psi)=c(-2)/s2+...})
  are equal to zero.
\end{enumerate}
The set $\Upsilon_{\lambda+ i0}^{\mathrm I}(r_\lambda)$ of vectors which satisfy any of these equivalent conditions is a vector subspace
of the vector space $\Upsilon_{\lambda+ i0}(r_\lambda) \cap \Upsilon_{\lambda - i0}(r_\lambda)$ and the vector space
$\Upsilon_{\lambda+ i0}^{\mathrm I}(r_\lambda)$ is invariant with respect to both $\bfA^j _{\lambda+i0}(r_\lambda)$ and $\bfA^j _{\lambda-i0}(r_\lambda).$
\end{thm}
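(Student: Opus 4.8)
The plan is to establish that each of the ten conditions is equivalent to $u$ being a vector of type~I in the sense of the Definition preceding Proposition~\ref{P: euE (Vf)=0, k=1}. Almost all of these equivalences are repackagings of facts already proved in this section; the only genuinely new input is the characterization by vanishing of the coefficients $c_{\pm j}$, and for that one uses the non-negativity of $\Im T_{\lambda+i0}(H_s)$.

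First I would dispose of conditions~(1)--(8). Conditions~(1) and~(2) are just the definition of a type~I vector read off the two fibres $\Upsilon_{\lambda+i0}(r_\lambda)$ and $\Upsilon_{\lambda-i0}(r_\lambda)$; Lemma~\ref{L: type I vectors} says they are equivalent and, as a byproduct, gives the inclusion $\Upsilon^{\mathrm I}_{\lambda+i0}(r_\lambda)\subset\Upsilon_{\lambda+i0}(r_\lambda)\cap\Upsilon_{\lambda-i0}(r_\lambda)$. The equivalences (1)$\iff$(3) and (2)$\iff$(4) are the identifications (\ref{F: euE (Vf)=0, k=1})$\iff$(\ref{F: A(+)u=A(-)u}) already recorded. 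For (3)$\iff$(5) and (4)$\iff$(6) I would apply Lemma~\ref{L: type I u iff A(+)u=A(-)u}, noting that the equalities $\bfA^j_{\lambda+i0}(r_\lambda)u=\bfA^j_{\lambda-i0}(r_\lambda)u$ restricted to $j=0,\dots,d-1$ are equivalent to the same for all $j\ge0$: both sides vanish for $j\ge d$ because $\bfA_z(r_z)$ is nilpotent with $\bfA^d_z(r_z)=0$ and, by Corollary~\ref{C: r(bar z)=bar r(z)} together with $\bar r_\lambda=r_\lambda$, the order of $r_\lambda$ at $\lambda-i0$ equals its order $d$ at $\lambda+i0$; moreover the $j=0$ instance $P_{\lambda+i0}(r_\lambda)u=P_{\lambda-i0}(r_\lambda)u$ combined with the membership hypothesis forces $u$ into the other fibre, so~(5) and~(6) express the same property. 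Finally (3)$\iff$(7) and (4)$\iff$(8) follow from the lemma characterizing type~I vectors by $(A_{\lambda+i0}(r)-A_{\lambda-i0}(r))\bfA^j_{\lambda+i0}(r_\lambda)u=0$ stated above, and from its mirror image under $i0\leftrightarrow-i0$ (proved in the same way via Proposition~\ref{P: [1+sAz(r)](-1)Pz(rz)} and the symmetric form of (\ref{F: Az3v6 (4.8)})).

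It then remains to show (1)$\iff$(9) and (2)$\iff$(10). If $u$ is of type~I, then by Lemma~\ref{L: type I vectors} it lies in both fibres, and $\scal{Ju}{\Im T_{\lambda+i0}(H_s)Ju}=\norm{\sqrt{\Im T_{\lambda+i0}(H_s)}Ju}^2=0$ for every non-resonant~$s$; since $\Im T_{\lambda-i0}(H_s)=-\Im T_{\lambda+i0}(H_s)$, Proposition~\ref{P: euE (Vf)=0, k>1} with either sign then forces all $c_{\pm j}$ to vanish, giving~(9) and~(10). Conversely, suppose $u\in\Upsilon_{\lambda+i0}(r_\lambda)$ and all $c_{+j}=0$. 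Proposition~\ref{P: euE (Vf)=0, k>1} gives $\scal{Ju}{\Im T_{\lambda+i0}(H_s)Ju}=0$ for all non-resonant~$s$; the operator $\Im T_{\lambda+i0}(H_s)=\lim_{y\to0^+}\Im T_{\lambda+iy}(H_s)$ exists (since $s$ is non-resonant and $\lambda\in\Lambda(\gamma,F)$) and, being a norm limit of the strictly positive operators $\Im T_{\lambda+iy}(H_s)$, is non-negative; hence $\sqrt{\Im T_{\lambda+i0}(H_s)}Ju=0$, so $u$ is of type~I. The derivation of~(2) from~(10) is identical, with $\Im T_{\lambda-i0}(H_s)=-\Im T_{\lambda+i0}(H_s)$.

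The final assertions are then immediate: $\Upsilon^{\mathrm I}_{\lambda+i0}(r_\lambda)$ is a linear subspace because the condition in~(1) is linear in~$u$ and $\Upsilon_{\lambda+i0}(r_\lambda)$ is a vector space; the inclusion into $\Upsilon_{\lambda+i0}(r_\lambda)\cap\Upsilon_{\lambda-i0}(r_\lambda)$ is Lemma~\ref{L: type I vectors}; and invariance under $\bfA^j_{\lambda+i0}(r_\lambda)$ and $\bfA^j_{\lambda-i0}(r_\lambda)$ is Corollary~\ref{C: u type I then so ia Au} (using, for the $\lambda-i0$ operator, that $\bfA^j_{\lambda+i0}(r_\lambda)u=\bfA^j_{\lambda-i0}(r_\lambda)u$ on type~I vectors). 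I expect the only places needing real care to be the order-and-nilpotency bookkeeping that makes~(5)--(6) match the ``all $j$'' form of Lemma~\ref{L: type I u iff A(+)u=A(-)u}, and the non-negativity step in the converse of~(9)/(10); the rest is assembly of results already at hand.
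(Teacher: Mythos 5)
Your proposal is correct and follows essentially the same route as the paper: Theorem~\ref{T: type I vectors} is stated there as a summary of Section~\ref{S: vectors of type I}, and your argument assembles exactly the same ingredients (the definition of type~I vectors, Proposition~\ref{P: euE (Vf)=0, k>1}, Lemmas~\ref{L: type I vectors} and~\ref{L: type I u iff A(+)u=A(-)u}, Corollary~\ref{C: u type I then so ia Au}, and the unnamed lemma with its mirror image). The details you add — the nilpotency/equal-order bookkeeping reducing ``all $j$'' to $j\le d-1$, and the non-negativity of $\Im T_{\lambda+i0}(H_s)$ in the converse of (9)/(10) — are precisely what the paper leaves implicit (cf.\ the remark after Proposition~\ref{P: euE (Vf)=0, k>1}), so nothing further is needed.
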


\bigskip
It is an open question whether $\Upsilon_{\lambda+ i0}^{\mathrm I}(r_\lambda) = \Upsilon_{\lambda+ i0}(r_\lambda) \cap \Upsilon_{\lambda - i0}(r_\lambda).$

\begin{thm} \label{T: on vectors with property L} If a resonance vector $u^{(k)} \in \Upsilon_{\lambda\pm i0}(r_\lambda)$ has order $k$ then the vectors
$$
  u^{(1)}, \ldots, u^{(\lceil k/2\rceil)}
$$
are of type~I, where $\lceil k/2\rceil$ is the smallest integer not less than $k/2.$
\end{thm}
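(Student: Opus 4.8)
\emph{Proof proposal.} Recall that, in the notation of Theorem~\ref{T: Laurent for A(s)psi}, the vector $u^{(j)}:=\bfA^{k-j}_{\lambda+i0}(r_\lambda)u^{(k)}$ is a resonance vector of order $j$ lying in $\Upsilon_{\lambda+i0}(r_\lambda)$ (the latter by Corollary~\ref{C: Upsilon is invariant}); I would treat the case $z=\lambda+i0$, the case $z=\lambda-i0$ being obtained by interchanging $\lambda+i0$ and $\lambda-i0$ everywhere, since all the identities used below, and the notion of a vector of type~I (Theorem~\ref{T: type I vectors}), are invariant under this interchange. The plan is to prove by induction on $m$ that $u^{(m)}$ is of type~I whenever $1\le m\le\lceil k/2\rceil$. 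The base case $m=1$ is Proposition~\ref{P: euE (Vf)=0, k=1}, since $u^{(1)}$ has order $1$.

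For the inductive step, fix $m$ with $2\le m\le\lceil k/2\rceil$ and assume $u^{(1)},\dots,u^{(m-1)}$ are of type~I. By the characterisation of type~I vectors in Theorem~\ref{T: type I vectors} via vanishing of the coefficients $c_{+j}$ attached to $u^{(m)}$ in Proposition~\ref{P: euE (Vf)=0, k>1}, it suffices to show $\scal{u^{(m)}}{J\bfA^{j-1}_{\lambda+i0}(r_\lambda)u^{(m)}}=0$ for $j=2,\dots,m$ (showing the full inner product, not merely its imaginary part, vanishes is more than enough). Now $\bfA^{j-1}_{\lambda+i0}(r_\lambda)u^{(m)}=\bfA^{k-m+j-1}_{\lambda+i0}(r_\lambda)u^{(k)}=u^{(m-j+1)}$, a vector of order $m-j+1\in\{1,\dots,m-1\}$. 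I would transport the left factor $\bfA^{k-m}_{\lambda+i0}(r_\lambda)$ in $u^{(m)}=\bfA^{k-m}_{\lambda+i0}(r_\lambda)u^{(k)}$ across the scalar product, using $\bigl(\bfA_{\lambda+i0}(r_\lambda)\bigr)^{*}=\bfB_{\lambda-i0}(r_\lambda)$ from~(\ref{F: A*(z)=B(bar z)}) (note $\bar r_\lambda=r_\lambda$), and then past $J$, using $\bfB_{\lambda-i0}(r_\lambda)J=J\bfA_{\lambda-i0}(r_\lambda)$ from~(\ref{F: JA=BJ}); this yields $\scal{u^{(m)}}{J\bfA^{j-1}_{\lambda+i0}(r_\lambda)u^{(m)}}=\scal{u^{(k)}}{J\,\bfA^{k-m}_{\lambda-i0}(r_\lambda)\,u^{(m-j+1)}}$. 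Since $u^{(m-j+1)}$ is of type~I by the inductive hypothesis, (\ref{F: bfA j(+)=bfA j(-)}) gives $\bfA^{k-m}_{\lambda-i0}(r_\lambda)u^{(m-j+1)}=\bfA^{k-m}_{\lambda+i0}(r_\lambda)u^{(m-j+1)}$; and as $u^{(m-j+1)}$ has order $m-j+1$ while $\bfA^{k-m}_{\lambda+i0}(r_\lambda)$ lowers the order of a resonance vector by $k-m$ (Theorem~\ref{T: Laurent for A(s)psi}), this last vector is $0$ provided $m-j+1\le k-m$. Hence $\scal{u^{(m)}}{J\bfA^{j-1}_{\lambda+i0}(r_\lambda)u^{(m)}}=0$, so $u^{(m)}$ is of type~I, completing the induction.

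The one point that needs care is the elementary arithmetic that links the two exponents: one must have simultaneously that $\bfA^{k-m+j-1}_{\lambda+i0}(r_\lambda)u^{(k)}$ is a genuine order-$(m-j+1)$ vector (so the inductive hypothesis applies to it) and that $\bfA^{k-m}_{\lambda+i0}(r_\lambda)$ subsequently annihilates it; in the worst case $j=2$ both requirements reduce to $2m\le k+1$, i.e. to precisely the hypothesis $m\le\lceil k/2\rceil$ (for integer $m$). Apart from this bookkeeping the argument is just a concatenation of operator identities already recorded in Section~\ref{S: Resonance points} together with the type~I criteria of this section, so I expect no further genuine difficulty.
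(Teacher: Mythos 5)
Your proof is correct and follows essentially the same route as the paper's: an induction in which $u^{(m)}$ is written as a power of $\bfA_{\lambda+i0}(r_\lambda)$ applied to a higher-order vector, the nilpotent is moved across the scalar product via $\bfA^{*}_{\lambda+i0}(r_\lambda)=\bfB_{\lambda-i0}(r_\lambda)$ and $\bfB_{\lambda-i0}(r_\lambda)J=J\bfA_{\lambda-i0}(r_\lambda)$, the inductive type-I hypothesis turns $\bfA_{\lambda-i0}$ into $\bfA_{\lambda+i0}$ so the resulting vector is annihilated, and one concludes through the vanishing of the coefficients $c_{+j}$ (the criterion of Theorem~\ref{T: type I vectors}). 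The only, immaterial, difference is bookkeeping: the paper factors $u^{(n)}=\bfA^{n-1}_{\lambda+i0}(r_\lambda)u^{(2n-1)}$, whereas you factor through the top vector $u^{(k)}$ with $\bfA^{k-m}_{\lambda+i0}(r_\lambda)$; both hinge on the same arithmetic constraint $2m\le k+1$, i.e. $m\le\lceil k/2\rceil$.
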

\begin{proof} We prove that $u^{(n)}$ is of type I for $n=1,2,\ldots, \lceil k/2\rceil,$ using induction on~$n.$
For $n=1$ this follows from Corollary \ref{C: Upsilon 1(+)=Upsilon 1(-)}.
Assume that all vectors $u^{(1)}, \ldots, u^{(n-1)},$ where $n \leq \lceil k/2\rceil,$ are of type~I.
We have to prove the claim for $u^{(n)}.$ Since $n \leq \lceil k/2\rceil,$ we have $2n-1 \leq k,$ so that
$$
  u^{(n)} = \bfA^{n-1}_{\lambda+i0}(r_\lambda) u^{(2n-1)}.
$$
For any $j=1,2,\ldots$ we have
\begin{equation*}
  \begin{split}
    \scal{Ju^{(n)}}{\bfA^j_{\lambda+i0}(r_\lambda)u^{(n)}} & = \scal{Ju^{(n)}}{u^{(n-j)}}
    \\ & = \scal{J\bfA^{n-1}_{\lambda+i0}(r_\lambda) u^{(2n-1)}}{u^{(n-j)}}
    \\ & = \scal{\bfB^{n-1}_{\lambda+i0}(r_\lambda) Ju^{(2n-1)}}{u^{(n-j)}}
    \\ & = \scal{Ju^{(2n-1)}}{\bfA^{n-1}_{\lambda-i0}(r_\lambda) u^{(n-j)}}.
  \end{split}
\end{equation*}
By the induction assumption, all the vectors $u^{(n-j)}, \ j =1,2,\ldots$ are of type~I and therefore,
according to items (\ref{Item: 5}) and (\ref{Item: 5'}) of Theorem \ref{T: type I vectors},
in the expression $\bfA^{n-1}_{\lambda-i0}(r_\lambda) u^{(n-j)}$
we can replace $\bfA^{n-1}_{\lambda-i0}(r_\lambda)$ by $\bfA^{n-1}_{\lambda+i0}(r_\lambda),$ and this shows that $\bfA^{n-1}_{\lambda-i0}(r_\lambda) u^{(n-j)} = 0.$
This means that for the vector $u^{(n)}$ the item (\ref{Item: 8}) of Theorem \ref{T: type I vectors} holds, and therefore it is of type~I.
\end{proof}

A resonance vector~$u \in \Upsilon_z(r_z)$ will be said to have \emph{depth} $k,$
\label{Page: depth of vector}
if~$u$ belongs to the image of the operator $\bfA_z^k(r_z),$ but not to the image of $\bfA_{z}^{k+1}(r_z).$
The depth of a vector~$u$ will be denoted by $\depth_z(u)$ or by $\depth(u)$ if there is no ambiguity.
In other words,
$$
  \depth_z(u) = \max\set{k \in \mbZ_+ \colon \exists\,\phi \in \clK \ \bfA_z^k \phi = u}.
$$
We say that a vector $u \in \Upsilon_{z}(r_z)$ has \emph{property}~$L,$ if
$$
  \order(u) \leq  \depth(u), \ \text{if} \ \order(u) + \depth(u) \ \text{is even}
$$
or
$$
  \order(u) \leq  \depth(u) + 1, \ \text{if} \ \order(u) + \depth(u) \ \text{is odd}.
$$
By $\clLs_{z}(r_z)$ we denote the linear span of all vectors $u$ with property~$L.$ \label{Page: clLs}

For example, if the Young diagram of the operator $\bfA_z(r_z)$ is as in the left figure, then one can easily prove that the vector space
$\clLs_{z}(r_z)$ is the linear span of those vectors in the right figure which are marked by bullets.

\begin{picture}(120,60)
\put(0,0){\line(0,1){60}}\put(10,0){\line(0,1){60}} \put(20,0){\line(0,1){60}} \put(30,0){\line(0,1){60}}
\put(40,0){\line(0,1){50}} \put(50,0){\line(0,1){50}} \put(60,0){\line(0,1){30}} \put(70,0){\line(0,1){30}}
\put(80,0){\line(0,1){20}} \put(90,0){\line(0,1){20}} \put(100,0){\line(0,1){10}} \put(110,0){\line(0,1){10}}
\put(120,0){\line(0,1){10}}
\put(0,0){\line(1,0){120}}\put(0,10){\line(1,0){120}}\put(0,20){\line(1,0){90}}\put(0,30){\line(1,0){70}}
\put(0,40){\line(1,0){50}}\put(0,50){\line(1,0){50}}\put(0,60){\line(1,0){30}}
\end{picture}
\qquad
\begin{picture}(120,60)
\put(0,0){\line(0,1){60}}\put(10,0){\line(0,1){60}} \put(20,0){\line(0,1){60}} \put(30,0){\line(0,1){60}}
\put(40,0){\line(0,1){50}} \put(50,0){\line(0,1){50}} \put(60,0){\line(0,1){30}} \put(70,0){\line(0,1){30}}
\put(80,0){\line(0,1){20}} \put(90,0){\line(0,1){20}} \put(100,0){\line(0,1){10}} \put(110,0){\line(0,1){10}}
\put(120,0){\line(0,1){10}}
\put(0,0){\line(1,0){120}}\put(0,10){\line(1,0){120}}\put(0,20){\line(1,0){90}}\put(0,30){\line(1,0){70}}
\put(0,40){\line(1,0){50}}\put(0,50){\line(1,0){50}}\put(0,60){\line(1,0){30}}
\put(5,5){\circle*{3}}\put(5,15){\circle*{3}}\put(5,25){\circle*{3}}
\put(15,5){\circle*{3}}\put(15,15){\circle*{3}}\put(15,25){\circle*{3}}
\put(25,5){\circle*{3}}\put(25,15){\circle*{3}}\put(25,25){\circle*{3}}
\put(35,5){\circle*{3}}\put(35,15){\circle*{3}}\put(35,25){\circle*{3}}
\put(45,5){\circle*{3}}\put(45,15){\circle*{3}}\put(45,25){\circle*{3}}
\put(55,5){\circle*{3}}\put(55,15){\circle*{3}}
\put(65,5){\circle*{3}}\put(65,15){\circle*{3}}
\put(75,5){\circle*{3}}
\put(85,5){\circle*{3}}
\put(95,5){\circle*{3}}
\put(105,5){\circle*{3}}
\put(115,5){\circle*{3}}
\end{picture}

Theorem \ref{T: on vectors with property L} implies that every vector with property~$L$ is of type~I.
Hence, we have the following
\begin{cor} The vector space $\clLs_{\lambda+i0}(r_\lambda)$ of vectors spanned by vectors with property~$L,$
is a subspace of the vector space $\Upsilon_{\lambda}^I(r_\lambda)$ of vectors of type~I:
$$
  \clLs_{\lambda+i0}(r_\lambda) \subset \Upsilon_{\lambda}^I(r_\lambda).
$$
\end{cor}
Similarly, one can define the vector space $\clLs_{\lambda-i0}(r_\lambda)$ which is also a subspace of $\Upsilon_{\lambda}^I(r_\lambda).$
The vector spaces $\clLs_{\lambda+i0}(r_\lambda)$ and $\clLs_{\lambda-i0}(r_\lambda)$ coincide.
Proof of this assertion will be given later elsewhere. The main part of the proof is a statement which asserts that $\lambda+i0$-depth of any vector of order 1
from $\Upsilon_\lambda(r_\lambda)$ coincides with $\lambda-i0$-depth of that vector.

If~$r_\lambda$ has order $\mbd = (d_1,\ldots,d_m),$ then
the dimension of $\clLs_{\lambda+i0}(r_\lambda)$ is equal to
$$
  \lceil d_1/2 \rceil + \ldots + \lceil d_m/2 \rceil.
$$

\section{Resonance index and signature of resonance matrix}
\label{S: res.index and sign res matrix}
In this section we prove one of the main results of this paper: equality of the resonance index and the signature of the resonance matrix.

The following theorem is one of the key properties of the idempotents $P_{\lambda\pm i0}(r_\lambda)$ which plays an important role in what follows.
Another proof of this theorem is given in Remark~\ref{R: another proof of property M}.
\begin{thm} \label{T: property M} The idempotents $P_{\lambda\pm i0}(r_\lambda)$ are linear isomorphisms of the vector spaces $\Upsilon_{\lambda\mp i0}(r_\lambda)$
and $\Upsilon_{\lambda\pm i0}(r_\lambda).$
\end{thm}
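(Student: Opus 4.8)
\textbf{Proof plan for Theorem \ref{T: property M}.}

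The plan is to show that $P_{\lambda+i0}(r_\lambda)$ maps $\Upsilon_{\lambda-i0}(r_\lambda)$ bijectively onto $\Upsilon_{\lambda+i0}(r_\lambda)$ (the statement for $P_{\lambda-i0}(r_\lambda)$ being symmetric). By Lemma \ref{L: j-dimensions coincide} and Corollary \ref{C: r(bar z)=bar r(z)}, the two vector spaces $\Upsilon_{\lambda-i0}(r_\lambda)$ and $\Upsilon_{\lambda+i0}(r_\lambda)$ have the same finite dimension $N$, so it suffices to prove that the restriction of $P_{\lambda+i0}(r_\lambda)$ to $\Upsilon_{\lambda-i0}(r_\lambda)$ is injective. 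First I would recall that $P_{\lambda+i0}(r_\lambda)$ is the limit in trace-class norm of the idempotents $P_{\lambda+iy}(r_\lambda)$ as $y \to 0^+$ (Lemma \ref{L: P(l+iy) to P(l+i0)}), where for $y>0$ the idempotent $P_{\lambda+iy}(r_\lambda)$ is the Riesz projection of the group of resonance points split off from $r_\lambda$, none of which is real by Lemma \ref{L: Az Lemma 4.1.4}. The key object is the resonance matrix $Q_{\lambda-i0}(r_\lambda)JP_{\lambda+i0}(r_\lambda)$, whose rank I want to control via the positivity results of Section \ref{S: sign of res matrix}.

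The heart of the argument is the following. Consider for $y>0$ the operator $Q_{\bar z}(\bar\Gamma)JP_z(\Gamma)$ with $z = \lambda+iy$ and $\Gamma$ the set of all resonance up-points and down-points in the group of $r_\lambda$; passing to the limit $y \to 0^+$ (using Lemmas \ref{L: P(l+iy) to P(l+i0)} and \ref{L: A(l+iy) to A(l+i0)} and the joint continuity of multiplication $\clL_\infty\times\clL_1\to\clL_1$) and using that the anti-resonance points of $z$ are the resonance points of $\bar z$, one obtains that $Q_{\lambda-i0}(r_\lambda)JP_{\lambda+i0}(r_\lambda)$ is the limit of operators whose ranks are constant and equal to $N$; more precisely, I would apply Theorem \ref{T: res matrix is positive for set of up-points} to the up-part and to the down-part separately. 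For a set of up-points, $\Im z\, Q_{\bar z}(\bar\Gamma)JP_z(\Gamma) \geq 0$ with rank equal to $\rank P_z(\Gamma)$; Theorem \ref{T: sign of res matrix for set of res points} then gives that the signature of $Q_{\bar z}(\bar\Gamma)JP_z(\Gamma)$ equals $N_+-N_-$, and — crucially — its \emph{rank} equals $N_++N_- = N$. Taking the limit $z \to \lambda+i0$ and using upper semicontinuity of rank together with the lower bound from the limiting signature computation (rank is at least $|$signature$|$, and the positive/negative parts each survive in the limit by Lemma \ref{L: finite-rank lemma}), I would conclude that $\rank\bigl(Q_{\lambda-i0}(r_\lambda)JP_{\lambda+i0}(r_\lambda)\bigr) = N$.

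From $\rank\bigl(Q_{\lambda-i0}(r_\lambda)JP_{\lambda+i0}(r_\lambda)\bigr) = N$ the injectivity of $P_{\lambda+i0}(r_\lambda)$ on $\Upsilon_{\lambda-i0}(r_\lambda)$ follows easily: if $u \in \Upsilon_{\lambda-i0}(r_\lambda)$ and $P_{\lambda+i0}(r_\lambda)u = 0$, then $Q_{\lambda-i0}(r_\lambda)JP_{\lambda+i0}(r_\lambda)u = 0$; but $J$ is a linear isomorphism $\Upsilon_{\lambda-i0}(r_\lambda)\to\Psi_{\lambda-i0}(r_\lambda)$ by Lemma \ref{L: j-dimensions coincide}, and $Q_{\lambda-i0}(r_\lambda)$ is the identity on $\Psi_{\lambda-i0}(r_\lambda)$, so if $P_{\lambda+i0}(r_\lambda)$ failed to be injective on $\Upsilon_{\lambda-i0}(r_\lambda)$ we would get a nontrivial kernel for $Q_{\lambda-i0}(r_\lambda)JP_{\lambda+i0}(r_\lambda)$ restricted to $\Upsilon_{\lambda-i0}(r_\lambda)$, contradicting the rank being exactly $N = \dim\Upsilon_{\lambda-i0}(r_\lambda)$. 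Since both spaces have dimension $N$, injectivity gives bijectivity, proving the claim; the case of $P_{\lambda-i0}(r_\lambda)$ follows by replacing $\lambda+i0$ with $\lambda-i0$ throughout (equivalently, by taking adjoints using \eqref{F: Pz*=Q(bar z)}).

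\textbf{Main obstacle.} The delicate point is the rank statement: one must be sure that no rank is lost in the limit $y \to 0^+$. The rank of $Q_{\lambda+iy}(r_\lambda)JP_{\lambda+iy}(r_\lambda)$ (more precisely, of $Q_{\bar z}(\bar\Gamma)JP_z(\Gamma)$) is exactly $N$ for $y>0$ by Theorem \ref{T: res matrix is positive for set of up-points} applied to up- and down-parts, but rank is only lower semicontinuous under the limit, so a priori the limiting rank could drop. The remedy — and the step that needs care — is to split $\Gamma = \Gamma^\uparrow \cup \Gamma^\downarrow$, observe that on $\im P_{\lambda+iy}(\Gamma^\uparrow)$ the quadratic form of $(\Im z)^{-1}\cdot(\text{resonance matrix})$ is strictly positive and on $\im P_{\lambda+iy}(\Gamma^\downarrow)$ strictly negative (Theorem \ref{T: res matrix is positive for set of up-points}), and that these subspaces converge to $\Upsilon^\uparrow_{\lambda+i0}$ and $\Upsilon^\downarrow_{\lambda+i0}$ which together span $\Upsilon_{\lambda+i0}(r_\lambda)$; then Lemma \ref{L: finite-rank lemma} forces the rank of the positive part of the limit to be at least $\dim\Upsilon^\uparrow_{\lambda+i0}$ and the rank of the negative part at least $\dim\Upsilon^\downarrow_{\lambda+i0}$, hence total rank at least $N$, and upper semicontinuity gives equality $N$. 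This is exactly the kind of argument carried out in the proof of Theorem \ref{T: sign of res matrix for set of res points}, so I expect it to go through, but writing it cleanly for the boundary case $z \in \partial\Pi$ is where the real work lies.
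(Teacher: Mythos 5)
There is a genuine gap, and it sits exactly where you placed your ``main obstacle'': the lower bound on the rank of $Q_{\lambda-i0}(r_\lambda)JP_{\lambda+i0}(r_\lambda)$ in the limit $y\to 0^+$ cannot be obtained the way you propose. Your remedy needs two things that are not available: (a) that the up/down idempotents $P^{\uparrow}_{\lambda+iy}(r_\lambda)$, $P^{\downarrow}_{\lambda+iy}(r_\lambda)$ (equivalently their images) converge as $y\to 0^+$ to subspaces spanning $\Upsilon_{\lambda+i0}(r_\lambda)$ --- the paper proves convergence only of the total sum $P_{\lambda+iy}(r_\lambda)$ (Lemma \ref{L: P(l+iy) to P(l+i0)}), and nothing forces the individual up/down parts to have limits; and (b) that strict definiteness of the quadratic form survives the limit, so that Lemma \ref{L: finite-rank lemma} applies to the limiting operator. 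It does not: strict inequalities degrade to non-strict ones, and in fact Proposition \ref{P: (psi,J psi)=0} shows the limiting form $\scal{u}{Ju}=\scal{u}{Q_{\lambda-i0}(r_\lambda)JP_{\lambda+i0}(r_\lambda)u}$ vanishes identically on the subspace $\clLw_{\lambda+i0}(r_\lambda)$, whose dimension can be of order $N/2$, so no definiteness argument on subspaces of $\Upsilon_{\lambda+i0}(r_\lambda)$ can work at the boundary. Since rank is only lower semicontinuous, the inequality $\rank\le N$ is free but the inequality $\rank\ge N$ is precisely the nontrivial content --- indeed it is equivalent to the theorem itself (rank $N$ of $Q_{\lambda-i0}JP_{\lambda+i0}$ amounts to injectivity of $Q_{\lambda-i0}$ on $\Psi_{\lambda+i0}(r_\lambda)$, i.e.\ to property M for one of the two maps; note also that, as you wrote it, rank $N$ of $Q_{\lambda-i0}JP_{\lambda+i0}$ gives injectivity of $P_{\lambda-i0}$ on $\Upsilon_{\lambda+i0}$, not of $P_{\lambda+i0}$ on $\Upsilon_{\lambda-i0}$, so the signs in your last step also need untangling). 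The paper's logical order is the reverse of yours: Theorem \ref{T: property M} is proved first, then the rank statement (Proposition \ref{P: rank M=rank P}) and the stability in $y$ (Lemma \ref{L: sign(M)=sign(M(y))}) are deduced from it.

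The paper's own proof avoids any limit in $y$ and is purely algebraic, via the type~I machinery: assume $u\in\Upsilon_{\lambda+i0}(r_\lambda)$ is nonzero with $P_{\lambda-i0}(r_\lambda)u=0$; then, using the conjugation identity (\ref{F: (*,*)=conj (*,*)}) and $\bfA_{\lambda-i0}(r_\lambda)P_{\lambda-i0}(r_\lambda)=\bfA_{\lambda-i0}(r_\lambda)$, all the Laurent coefficients $c_{+j}$ of Proposition \ref{P: euE (Vf)=0, k>1} vanish, so $u$ is of type~I; by Lemma \ref{L: type I vectors} it then lies in $\Upsilon_{\lambda-i0}(r_\lambda)$, whence $P_{\lambda-i0}(r_\lambda)u=u\neq 0$, a contradiction (the paper also sketches a second proof via the trace identities of Theorem \ref{T: spec P(+)P(-)}). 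If you want to keep your overall architecture, you would have to replace the limiting step by an argument of this boundary-value type; as it stands, the proposal does not close.
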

\begin{proof} Since by Lemma~\ref{L: j-dimensions coincide} the dimensions of the vector spaces $\Upsilon_{\lambda+ i0}(r_\lambda)$ and $\Upsilon_{\lambda- i0}(r_\lambda)$
coincide, it is enough to show that kernels of linear mappings $P_{\lambda\pm i0}(r_\lambda) \colon \Upsilon_{\lambda\mp i0}(r_\lambda) \to \Upsilon_{\lambda\pm i0}(r_\lambda)$
are zero. Assume the contrary, for example, there exists a non-zero $u \in \Upsilon_{\lambda + i0}(r_\lambda)$ such that
\begin{equation} \label{F: P-(u)=0}
  P_{\lambda - i0}(r_\lambda)u=0.
\end{equation}
Then it follows from~(\ref{F: (*,*)=conj (*,*)}) and~(\ref{F: PA=AP=A}) that for all $j=0,1,\ldots$
\begin{equation*}
  \begin{split}
     \scal{u}{J\bfA_{\lambda+i0}^j(r_\lambda) u} & = \overline{\scal{u}{J\bfA_{\lambda-i0}^j(r_\lambda) u}}
     \\ & = \overline{\scal{u}{J\bfA_{\lambda-i0}^j(r_\lambda) P_{\lambda - i0}(r_\lambda)u}} = 0.
  \end{split}
\end{equation*}
This equality combined with~(\ref{F: (J psi,Im TJ psi)=c(-2)/s2+...}) implies that $u$ is a vector of type~I. It follows from this
and Lemma~\ref{L: type I vectors} that $u \in \Upsilon_{\lambda - i0}(r_\lambda)$ and therefore $u = P_{\lambda - i0}(r_\lambda)u \neq 0.$
This contradicts~(\ref{F: P-(u)=0}).
\end{proof}
\noindent Thus, for any real resonance point~$r_\lambda$
$$
  P_{\lambda \pm i0}(r_\lambda)\Upsilon_{\lambda \mp i0}(r_\lambda) = \Upsilon_{\lambda\pm i0}(r_\lambda).
$$
This equality is equivalent to any of the following, which therefore also hold:
\begin{gather}
\label{F: Q Psi=Psi} Q_{\lambda \pm i0}(r_\lambda)\Psi_{\lambda \mp i0}(r_\lambda) = \Psi_{\lambda\pm i0}(r_\lambda), \\
\rank(P_{\lambda \pm i0}(r_\lambda)P_{\lambda \mp i0}(r_\lambda)) = N, \\
\rank(Q_{\lambda \mp i0}(r_\lambda)Q_{\lambda \pm i0}(r_\lambda)) = N,
\end{gather}
where $N=\rank(P_{\lambda \pm i0}(r_\lambda)) = \rank(Q_{\lambda \pm i0}(r_\lambda)).$

Lemma~\ref{L: j-dimensions coincide} and Theorem~\ref{T: property M} imply the following proposition.
\begin{prop} \label{P: rank M=rank P}
If $z = \lambda\pm i0 \in \partial \Pi$ and if~$r_\lambda$ is a real resonance point
corresponding to~$z,$ then
$$
  \rank Q_{\lambda\mp i0}(r_\lambda)JP_{\lambda\pm i0}(r_\lambda) = N,
$$
where~$N$ is the rank of (any of) operators $P_{\lambda\pm i0}(r_\lambda)$ and $Q_{\lambda\pm i0}(r_\lambda).$
\end{prop}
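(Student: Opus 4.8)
The plan is to deduce Proposition~\ref{P: rank M=rank P} directly from Theorem~\ref{T: property M} (property~$M$) together with the two linear isomorphisms already established in Lemma~\ref{L: j-dimensions coincide}, namely that $J\colon \Upsilon_z(r_z)\to\Psi_z(r_z)$ is a linear isomorphism for each $z$ and each resonance point. The point is that the finite-rank self-adjoint operator $Q_{\lambda\mp i0}(r_\lambda)JP_{\lambda\pm i0}(r_\lambda)$, when restricted to the finite-dimensional space $\Upsilon_{\lambda\pm i0}(r_\lambda)$ (which contains its range's ``source''), is a composition of three maps each of which I will argue is an isomorphism onto the appropriate finite-dimensional space; the rank is then forced to equal the common dimension $N$.

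Concretely, first I would observe that, since $Q_{\lambda\mp i0}(r_\lambda)$ is an idempotent with range $\Psi_{\lambda\mp i0}(r_\lambda)$ and $P_{\lambda\pm i0}(r_\lambda)$ is an idempotent with range $\Upsilon_{\lambda\pm i0}(r_\lambda)$, the image of the operator $M:=Q_{\lambda\mp i0}(r_\lambda)JP_{\lambda\pm i0}(r_\lambda)$ is contained in $\Psi_{\lambda\mp i0}(r_\lambda)$, and $M$ kills the kernel of $P_{\lambda\pm i0}(r_\lambda)$; hence $\rank M = \rank\bigl(M|_{\Upsilon_{\lambda\pm i0}(r_\lambda)}\bigr)$. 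On $\Upsilon_{\lambda\pm i0}(r_\lambda)$, $P_{\lambda\pm i0}(r_\lambda)$ acts as the identity, so $M|_{\Upsilon_{\lambda\pm i0}(r_\lambda)}$ is simply the composition
\[
  \Upsilon_{\lambda\pm i0}(r_\lambda) \xrightarrow{\ J\ } \Psi_{\lambda\pm i0}(r_\lambda) \xrightarrow{\ Q_{\lambda\mp i0}(r_\lambda)\ } \Psi_{\lambda\mp i0}(r_\lambda).
\]
The first arrow is a linear isomorphism by Lemma~\ref{L: j-dimensions coincide}. For the second arrow, I would use the commutation relation $JP_z(r_z)=Q_z(r_z)J$ (equation~(\ref{F: JP=QJ})) to transfer property~$M$ from the $\Upsilon$-spaces to the $\Psi$-spaces: applying $J$ to the isomorphism $P_{\lambda\mp i0}(r_\lambda)\colon\Upsilon_{\lambda\pm i0}(r_\lambda)\to\Upsilon_{\lambda\mp i0}(r_\lambda)$ and using that $J$ is an isomorphism of the corresponding $\Upsilon$- and $\Psi$-spaces gives that $Q_{\lambda\mp i0}(r_\lambda)\colon\Psi_{\lambda\pm i0}(r_\lambda)\to\Psi_{\lambda\mp i0}(r_\lambda)$ is a linear isomorphism — this is precisely the equality~(\ref{F: Q Psi=Psi}) combined with injectivity, which follows from dimension count since all four spaces $\Upsilon_{\lambda\pm i0}(r_\lambda),\Psi_{\lambda\pm i0}(r_\lambda)$ have the same dimension $N$ by Lemma~\ref{L: j-dimensions coincide}. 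Therefore $M|_{\Upsilon_{\lambda\pm i0}(r_\lambda)}$ is a composition of two isomorphisms of $N$-dimensional spaces, hence itself an isomorphism, so $\rank M = N$.

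I do not anticipate a serious obstacle here; the statement is genuinely a corollary, and the only care needed is bookkeeping with the two idempotents and making sure the identification $\rank M = \rank(M|_{\im P_{\lambda\pm i0}(r_\lambda)})$ is stated cleanly (this uses $MP_{\lambda\pm i0}(r_\lambda)=M$, which is immediate since $P_{\lambda\pm i0}(r_\lambda)$ is idempotent, together with the fact that $M$ factors through $P_{\lambda\pm i0}(r_\lambda)$ on the right). The one place I would be slightly careful is the transfer of property~$M$ to the $\Psi$-side: rather than reproving it, I would simply cite the already-displayed equivalent form~(\ref{F: Q Psi=Psi}), note that $Q_{\lambda\pm i0}(r_\lambda)$ restricted to the $N$-dimensional space $\Psi_{\lambda\mp i0}(r_\lambda)$ is surjective onto the $N$-dimensional space $\Psi_{\lambda\pm i0}(r_\lambda)$, hence bijective, and conclude. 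This keeps the proof to a few lines, as befits a proposition following Theorem~\ref{T: property M}.
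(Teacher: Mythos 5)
Your proof is correct and takes essentially the same route as the paper, which derives the proposition directly from Lemma~\ref{L: j-dimensions coincide} and Theorem~\ref{T: property M} (via its equivalent form~(\ref{F: Q Psi=Psi})); you have simply written out the bookkeeping that the paper leaves implicit.
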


Note that Theorem~\ref{T: property M} is similar to Corollary~\ref{C: Pbar is non-degen}, but with an essential difference: while in Theorem~\ref{T: property M}
$z$ belongs to the boundary of $\Pi,$ in Corollary~\ref{C: Pbar is non-degen} it does not. At the same time, the methods of proof of these two assertions are completely different.

\begin{lemma} \label{L: sign(M)=sign(M(y))} If~$r_\lambda$ is a real resonance point then for all small enough $y > 0,$
$$
  \rank Q_{\lambda-i0}(r_\lambda)JP_{\lambda+i0}(r_\lambda) = \rank Q_{\lambda-iy}(r_\lambda)JP_{\lambda+iy}(r_\lambda)
$$
and
$$
  \sign Q_{\lambda-i0}(r_\lambda)JP_{\lambda+i0}(r_\lambda) = \sign Q_{\lambda-iy}(r_\lambda)JP_{\lambda+iy}(r_\lambda).
$$
\end{lemma}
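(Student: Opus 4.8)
The statement claims two continuity properties as $y \to 0^+$: the rank of the resonance matrix $Q_{\lambda-iy}(r_\lambda)JP_{\lambda+iy}(r_\lambda)$ stabilizes, and its signature stabilizes, both at the value attained by the limiting operator $Q_{\lambda-i0}(r_\lambda)JP_{\lambda+i0}(r_\lambda)$. My plan is to combine the trace-class convergence of the idempotents from Lemmas~\ref{L: P(l+iy) to P(l+i0)} and \ref{L: A(l+iy) to A(l+i0)} with the rank-identity already established in Proposition~\ref{P: rank M=rank P} and the elementary stability lemmas for finite-rank operators (the unnamed lemma just before Lemma~\ref{L: finite-rank lemma}, stating that rank-non-increasing small perturbations of a rank-$N$ operator preserve rank, together with the upper semicontinuity of spectrum).

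First I would record the convergence: by Lemma~\ref{L: P(l+iy) to P(l+i0)}, $P_{\lambda\pm iy}(r_\lambda) \to P_{\lambda\pm i0}(r_\lambda)$ and $Q_{\lambda\pm iy}(r_\lambda) \to Q_{\lambda\pm i0}(r_\lambda)$ in trace-class norm, hence in particular in uniform norm. Since $J$ is bounded, the products
$$
  M_y := Q_{\lambda-iy}(r_\lambda)JP_{\lambda+iy}(r_\lambda) \longrightarrow M_0 := Q_{\lambda-i0}(r_\lambda)JP_{\lambda+i0}(r_\lambda)
$$
in trace-class norm as $y \to 0^+$; in particular $M_y \to M_0$ uniformly, and each $M_y$ is a finite-rank operator. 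Note also that for small enough $y>0$ the ranks of $P_{\lambda+iy}(r_\lambda)$ and $Q_{\lambda-iy}(r_\lambda)$ are both equal to $N = \rank P_{\lambda+i0}(r_\lambda)$, by the stability of isolated eigenvalues used already in the proof of Lemma~\ref{L: P(l+iy) to P(l+i0)}; consequently $\rank M_y \leq N$ for all small $y>0$, and by Proposition~\ref{P: rank M=rank P} also $\rank M_0 = N$.

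For the rank equality: since $M_y \to M_0$ uniformly, $M_0$ has rank $N$, and each $M_y$ has rank at most $N$, the lemma preceding Lemma~\ref{L: finite-rank lemma} applies — for $y$ small enough, the perturbation $M_y - M_0$ is small in norm and $\rank M_y \leq N = \rank M_0$, forcing $\rank M_y = N$. This proves the first displayed equality. For the signature equality I would use the variational characterization of signature via Lemma~\ref{L: finite-rank lemma}: the rank of the positive part $(M_0)_+$ equals $N_+ := \dim\Upsilon^\uparrow$ (with $\Upsilon^\uparrow = \im P_{\lambda+iy}(r_\lambda)$ the span of up-point root vectors for small $y$), and similarly for $(M_0)_-$; this is exactly the content that was extracted in the proof of Theorem~\ref{T: sign of res matrix for set of res points}. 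Because the eigenvalue counts of $M_y$ restricted to the positive and negative cones are stable under small uniform perturbations (the half-plane to which a nonzero eigenvalue belongs is stable, and since $\rank M_y$ cannot exceed $N = \rank M_0$ no new eigenvalues can emerge from zero), the signed count $\rank (M_y)_+ - \rank (M_y)_-$ is locally constant in $y$ and equals $\sign M_0$ for all small $y>0$.

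The main obstacle, and the one point worth spelling out carefully, is the claim that the signature does not jump: a priori a self-adjoint finite-rank operator can change signature under arbitrarily small perturbations if its rank is allowed to grow, but here the rank is pinned at $N$ for all small $y$ by the two-sided control ($\rank M_y \leq N$ from the idempotent ranks, $\rank M_0 = N$ from Proposition~\ref{P: rank M=rank P}), so no eigenvalue can cross zero and $\sign M_y$ is genuinely locally constant — this is precisely item (iv) of Lemma~\ref{L: Rindex(AB)=Rindex(BA)} transplanted to the self-adjoint setting, or equivalently a direct application of the inertia-stability lemma. I would phrase the signature half as: $\sign M_y = \rank (M_y)_+ - \rank (M_y)_-$, both terms are lower semicontinuous in $y$ (small perturbations cannot destroy a strictly positive, resp. negative, eigenvalue), their sum $\rank M_y$ is constantly $N$, hence both are constant, and continuity of $y \mapsto M_y$ at $0^+$ gives equality with $\sign M_0$.
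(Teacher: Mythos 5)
Your proof is correct and follows essentially the same route as the paper: rank of $M_y$ is pinned at $N$ by combining the stability of the idempotent ranks with Proposition~\ref{P: rank M=rank P} and the rank-preservation lemma, and then the signature cannot jump because no eigenvalue can reach zero without violating the constancy of the rank. One small caution: your aside identifying $\rank (M_0)_+$ with $N_+$ via Theorem~\ref{T: sign of res matrix for set of res points} is not needed for this lemma (and would be delicate, since that theorem concerns non-real $z$ and the identification at $z=\lambda\pm i0$ is ultimately a consequence of the present lemma), but your actual argument does not rely on it.
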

\begin{proof} Sufficiently small (in norm) perturbations cannot decrease the rank of $Q_{\lambda-i0}(r_\lambda)JP_{\lambda+i0}(r_\lambda).$
Since the rank of the idempotent $P_{\lambda+i0}(r_\lambda)$
is stable under small enough perturbations, it follows from Proposition~\ref{P: rank M=rank P}
that the rank of the resonance matrix $Q_{\lambda-i0}(r_\lambda)JP_{\lambda+i0}(r_\lambda)$
cannot increase too. Thus, the first equality follows.
The second equality follows from the first one and continuity considerations, since in order to change the signature of $Q_{\lambda-iy}(r_\lambda)JP_{\lambda+iy}(r_\lambda)$
some non-zero eigenvalue of this operator has to be deformed to zero, which would violate the constancy of the rank.
\end{proof}

The following theorem is one of the main results of this paper.
\begin{thm} \label{T: res.ind=sign res.matrix} For any real resonance point~$r_\lambda$
the signatures of the resonance matrices $\sign(Q_{\lambda\mp i0}(r_\lambda)JP_{\lambda\pm i0}(r_\lambda))$
of~$r_\lambda$ are the same and are equal to the resonance index
of the triple $(\lambda,H_{r_\lambda},V);$ that is,
$$
  \sign(Q_{\lambda\mp i0}(r_\lambda)JP_{\lambda\pm i0}(r_\lambda)) = \ind_{res}(\lambda; H_{r_\lambda},V).
$$
\end{thm}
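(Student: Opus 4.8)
The plan is to reduce the statement to the combination of Lemma~\ref{L: sign(M)=sign(M(y))}, Theorem~\ref{T: sign of res matrix for set of res points} and the characterization of the resonance index in Lemma~\ref{L: ind-res=R(TJP)}. First I would fix a real resonance point $r_\lambda$ and a small $y>0$, and let $\Gamma = \set{r^1_{\lambda+iy},\ldots,r^N_{\lambda+iy}}$ be the group of resonance points splitting off from $r_\lambda$ as $z=\lambda+i0$ is moved to $z=\lambda+iy$, so that $P_{\lambda+iy}(r_\lambda) = P_z(\Gamma)$ and $Q_{\lambda-iy}(r_\lambda) = Q_{\bar z}(\bar\Gamma)$ in the notation of section~\ref{S: sign of res matrix} (here using that down-points of $z$ are anti-down-points of $\bar z$, and $\overline{\lambda+iy}=\lambda-iy$, via Corollary~\ref{C: r(bar z)=bar r(z)}). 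Then $Q_{\lambda-iy}(r_\lambda)JP_{\lambda+iy}(r_\lambda)$ is exactly the resonance matrix $Q_{\bar z}(\bar\Gamma)JP_z(\Gamma)$ of the set $\Gamma$, and Theorem~\ref{T: sign of res matrix for set of res points} gives
$$
  \sign\brs{Q_{\lambda-iy}(r_\lambda)JP_{\lambda+iy}(r_\lambda)} = \Rindex\brs{\Im z\,A_z(s)P_z(\Gamma)} = \Rindex\brs{A_{\lambda+iy}(s)P_{\lambda+iy}(r_\lambda)},
$$
where in the last step I drop the positive scalar $\Im z = y$ since it does not change the half-plane membership of eigenvalues.

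Next I would identify the right-hand side with $\ind_{res}(\lambda; H_{r_\lambda},V)$. By Lemma~\ref{L: ind-res=R(TJP)}, precisely the equality~(\ref{F: ind-res=R(TJP) (+)}), for all small enough $y>0$ one has $\ind_{res}(\lambda; H_{r_\lambda},V) = \Rindex(A_{\lambda+iy}(s)P_{\lambda+iy}(r_\lambda))$ (the operator here lies in the class $\clR$ because, as noted after Lemma~\ref{L: RV in clR}, $A_z(s)$ restricted to the finite-dimensional range of $P_z(\Gamma)$ has no non-zero real eigenvalues for non-real $z$, and its square-to-zero vectors are killed by it — this follows from the correspondence $\sigma_z(s)=(s-r_z)^{-1}$). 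Combining this with the previous display gives
$$
  \sign\brs{Q_{\lambda-iy}(r_\lambda)JP_{\lambda+iy}(r_\lambda)} = \ind_{res}(\lambda; H_{r_\lambda},V)
$$
for all small $y>0$. Finally, Lemma~\ref{L: sign(M)=sign(M(y))} lets me pass to the limit $y\to 0^+$: $\sign Q_{\lambda-iy}(r_\lambda)JP_{\lambda+iy}(r_\lambda) = \sign Q_{\lambda-i0}(r_\lambda)JP_{\lambda+i0}(r_\lambda)$, which yields $\sign(Q_{\lambda-i0}(r_\lambda)JP_{\lambda+i0}(r_\lambda)) = \ind_{res}(\lambda; H_{r_\lambda},V)$.

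For the remaining sign, $\sign(Q_{\lambda+i0}(r_\lambda)JP_{\lambda-i0}(r_\lambda)) = \ind_{res}(\lambda; H_{r_\lambda},V)$, I would run the same argument starting from $z = \lambda-i0$ shifted to $z=\lambda-iy$: now Theorem~\ref{T: sign of res matrix for set of res points} applied to the group $\Gamma'$ of resonance points for $\lambda-iy$ gives $\sign(Q_{\lambda+iy}(r_\lambda)JP_{\lambda-iy}(r_\lambda)) = \Rindex(\Im z\,A_z(s)P_z(\Gamma'))$; since $\Im z = -y < 0$, this equals $-\Rindex(A_{\lambda-iy}(s)P_{\lambda-iy}(r_\lambda))$, which by~(\ref{F: ind-res=R(TJP) (-)}) is $\ind_{res}(\lambda; H_{r_\lambda},V)$. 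Passing $y\to 0^+$ via the analogue of Lemma~\ref{L: sign(M)=sign(M(y))} (whose proof uses only Proposition~\ref{P: rank M=rank P}, symmetric in $\pm$) completes this case, and hence the theorem. I expect the only genuinely delicate point to be the bookkeeping in the first paragraph: making sure the group $\Gamma$ for $P_{\lambda+iy}(r_\lambda)$ and the conjugate group $\bar\Gamma$ for $Q_{\lambda-iy}(r_\lambda)$ are matched correctly so that the operator is literally the resonance matrix $Q_{\bar z}(\bar\Gamma)JP_z(\Gamma)$ to which Theorem~\ref{T: sign of res matrix for set of res points} applies — this is where an off-by-conjugation error would creep in, and it rests on Corollary~\ref{C: r(bar z)=bar r(z)} together with Lemma~\ref{L: P(l+iy) to P(l+i0)} and Lemma~\ref{L: A(l+iy) to A(l+i0)} for the limits defining $P_{\lambda\pm i0}(r_\lambda)$, $Q_{\lambda\pm i0}(r_\lambda)$.
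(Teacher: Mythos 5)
Your proposal is correct and follows essentially the same route as the paper: the paper's proof likewise combines Lemma~\ref{L: sign(M)=sign(M(y))} (stability of the signature as $y\to 0^+$) with Theorem~\ref{T: sign of res matrix for set of res points} and the identities~(\ref{F: ind-res=R(TJP) (+)}),~(\ref{F: ind-res=R(TJP) (-)}), merely stating these ingredients without the bookkeeping you spell out. Your careful matching of $\Gamma$ with $\bar\Gamma$ and the sign of $\Im z$ is exactly the detail the paper leaves implicit, so there is nothing to correct.
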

\begin{proof}
By Lemma \ref{L: sign(M)=sign(M(y))}  
for small enough $y>0$ we have the equality
$$
  \sign(Q_{\lambda\mp i0}(r_\lambda)JP_{\lambda\pm i0}(r_\lambda)) = \sign(Q_{\lambda\mp iy}(r_\lambda)JP_{\lambda\pm iy}(r_\lambda)).
$$
Hence, the claim follows from Theorem~\ref{T: sign of res matrix for set of res points},~(\ref{F: ind-res=R(TJP) (+)}) and~(\ref{F: ind-res=R(TJP) (-)}).
\end{proof}

Theorem~\ref{T: res.ind=sign res.matrix} implies the following corollary. Nonetheless, we give here another proof of it.
\begin{cor} \label{C: sign M-=sign M+}
For any real resonance point~$r_\lambda,$
the signatures of the finite-rank self-adjoint operators $Q_{\lambda- i0}(r_\lambda)JP_{\lambda+ i0}(r_\lambda)$
and $Q_{\lambda+ i0}(r_\lambda)JP_{\lambda- i0}(r_\lambda)$ coincide.
\end{cor}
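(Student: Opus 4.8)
The plan is to deduce Corollary~\ref{C: sign M-=sign M+} directly from Theorem~\ref{T: res.ind=sign res.matrix}, which asserts that \emph{both} signatures $\sign(Q_{\lambda- i0}(r_\lambda)JP_{\lambda+ i0}(r_\lambda))$ and $\sign(Q_{\lambda+ i0}(r_\lambda)JP_{\lambda- i0}(r_\lambda))$ equal the single integer $\ind_{res}(\lambda; H_{r_\lambda},V)$; hence they equal each other. That is the one-line argument the phrase ``Theorem~\ref{T: res.ind=sign res.matrix} implies the following corollary'' is pointing at. However, since the author explicitly wants ``another proof of it,'' I would give a self-contained argument that does not route through the resonance index and the limiting procedure $y\to 0^+$.

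For the independent proof, I would exploit the adjoint relations~(\ref{F: Pz*=Q(bar z)}) together with the conjugation symmetry $z=\lambda+i0 \leftrightarrow \bar z = \lambda - i0$. Concretely, from $(P_z(r_z))^* = Q_{\bar z}(\bar r_z)$ applied with $z = \lambda+i0$ and real resonance point $r_\lambda = \bar r_\lambda$, one gets $(P_{\lambda+i0}(r_\lambda))^* = Q_{\lambda-i0}(r_\lambda)$ and $(P_{\lambda-i0}(r_\lambda))^* = Q_{\lambda+i0}(r_\lambda)$. Therefore
\begin{equation*}
  \brs{Q_{\lambda- i0}(r_\lambda)JP_{\lambda+ i0}(r_\lambda)}^* = P_{\lambda+i0}^*(r_\lambda)\,J\,Q_{\lambda-i0}^*(r_\lambda) = Q_{\lambda-i0}(r_\lambda)\,J\,P_{\lambda+i0}(r_\lambda),
\end{equation*}
which merely confirms self-adjointness of the resonance matrix (using $J^*=J$) and does \emph{not} by itself relate the two operators in the corollary. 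The genuine link must instead come from Theorem~\ref{T: property M}: the idempotent $P_{\lambda+i0}(r_\lambda)$ restricts to a linear isomorphism $\Upsilon_{\lambda-i0}(r_\lambda) \to \Upsilon_{\lambda+i0}(r_\lambda)$, and dually (via~(\ref{F: Q Psi=Psi})) $Q_{\lambda-i0}(r_\lambda)$ restricts to an isomorphism $\Psi_{\lambda+i0}(r_\lambda)\to\Psi_{\lambda-i0}(r_\lambda)$. I would use these to conjugate one quadratic form into the other: for $u \in \Upsilon_{\lambda+i0}(r_\lambda)$ write $u = P_{\lambda+i0}(r_\lambda)v$ with $v \in \Upsilon_{\lambda-i0}(r_\lambda)$, and, using $P_{\lambda+i0}^2 = P_{\lambda+i0}$, $JP_{\lambda+i0}=Q_{\lambda+i0}J$ from~(\ref{F: JP=QJ}), and $Q_{\lambda-i0}Q_{\lambda+i0} = Q_{\lambda-i0}$ (which follows from $\rank(Q_{\lambda-i0}Q_{\lambda+i0})=N$ established after Theorem~\ref{T: property M}), compute
\begin{equation*}
  \scal{u}{Q_{\lambda-i0}(r_\lambda)JP_{\lambda+i0}(r_\lambda)u} = \scal{P_{\lambda+i0}(r_\lambda)v}{Q_{\lambda-i0}(r_\lambda)J\,P_{\lambda+i0}(r_\lambda)v}
\end{equation*}
and massage this into $\scal{v'}{Q_{\lambda+i0}(r_\lambda)JP_{\lambda-i0}(r_\lambda)v'}$ for an appropriate $v'$ obtained from $v$ by an invertible (on the relevant finite-dimensional space) transformation. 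Since a congruence by an invertible map preserves the signature of a self-adjoint form (Sylvester's law of inertia), this would give $\sign(Q_{\lambda-i0}JP_{\lambda+i0}) = \sign(Q_{\lambda+i0}JP_{\lambda-i0})$.

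The main obstacle is arranging the conjugating map so that it is genuinely a \emph{congruence} $M \mapsto S^* M S$ with a single invertible $S$ acting between the two finite-dimensional ranges, rather than an arbitrary similarity; this requires choosing $S$ compatibly on the $\Upsilon$-side and its adjoint-induced counterpart on the $\Psi$-side, and checking that the cross terms involving the holomorphic parts of $A_z(s)$ drop out (using $\tilde A_{z,r_z}(s)P_z(r_z)=0$ from~(\ref{F: tilde AP=0}) and the analogous identity for $Q$). If making the congruence explicit proves awkward, the fallback — and cleaner — route is simply to invoke Theorem~\ref{T: res.ind=sign res.matrix} twice and observe both signatures equal $\ind_{res}(\lambda; H_{r_\lambda},V)$; I would present the congruence argument as the ``another proof'' and keep the one-line deduction as the opening remark. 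I would also note in passing that $\rank$ equality of the two resonance matrices is already free from Proposition~\ref{P: rank M=rank P} (both equal $N$), so only the signs need comparison.
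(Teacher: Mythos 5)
Your fallback argument --- both signatures equal $\ind_{res}(\lambda;H_{r_\lambda},V)$ by Theorem~\ref{T: res.ind=sign res.matrix}, hence they coincide --- is correct and non-circular, and it is exactly the deduction the paper records in the sentence preceding the corollary. But the proof the paper actually prints is the advertised ``another proof,'' and it goes a different way than your sketch: it never tries to build a congruence at the boundary values $\lambda\pm i0$. Instead it identifies $\sign\brs{Q_{\lambda-i0}(r_\lambda)JP_{\lambda+i0}(r_\lambda)}$ with $\Rindex\brs{T_{\lambda+iy}(H_s)\,Q_{\lambda-i0}(r_\lambda)JP_{\lambda+i0}(r_\lambda)}$ via the \Krein-theorem computation (Corollary~\ref{C: to Krein's thm}, cf.\ Lemma~\ref{L: not unused lemma}), uses stability of the $R$-index and Lemma~\ref{L: sign(M)=sign(M(y))} to replace the boundary idempotents by $Q_{\lambda-iy'}(r_\lambda)$, $P_{\lambda+iy'}(r_\lambda)$ with small $y'>0$, and then, at the non-real point $z=\lambda+iy'$, applies cyclic invariance $\Rindex(AB)=\Rindex(BA)$ together with the exact relations $P_zT_z(H_s)=T_z(H_s)Q_z$ and $Q_zJ=JP_z$ (all for one and the same $z$) to convert the expression into $\Rindex\brs{T_{\lambda+iy'}(H_s)\,Q_{\lambda+iy'}(r_\lambda)JP_{\lambda-iy'}(r_\lambda)}$, which by the same \Krein/stability step equals $\sign\brs{Q_{\lambda+i0}(r_\lambda)JP_{\lambda-i0}(r_\lambda)}$. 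The whole point of the detour through $y'>0$ is that the needed algebra holds exactly there, and the $R$-index (unlike a signature) tolerates the cyclic permutations.

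Your attempted independent proof, as you concede, is not completed, and one of its ingredients is actually wrong: the identity $Q_{\lambda-i0}(r_\lambda)Q_{\lambda+i0}(r_\lambda)=Q_{\lambda-i0}(r_\lambda)$ does not follow from $\rank\brs{Q_{\lambda-i0}(r_\lambda)Q_{\lambda+i0}(r_\lambda)}=N$; an invertible product of two idempotents need not equal either factor. Equalities of this kind amount to $\ker Q_{\lambda+i0}(r_\lambda)=\ker Q_{\lambda-i0}(r_\lambda)$, i.e.\ $\Upsilon_{\lambda+i0}(r_\lambda)=\Upsilon_{\lambda-i0}(r_\lambda)$, and they are of the same nature as property~$S$ (Proposition~\ref{P: property S}(iv)), which Proposition~\ref{P: points without property S exist} and the order-two examples of Section~\ref{S: Pert embedded eig} show to fail for some real resonance points; correspondingly the two resonance matrices $Q_{\lambda-i0}(r_\lambda)JP_{\lambda+i0}(r_\lambda)$ and $Q_{\lambda+i0}(r_\lambda)JP_{\lambda-i0}(r_\lambda)$ are in general \emph{different} operators, so only their signatures, not the operators themselves, can be matched. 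Consequently a Sylvester-type argument must exhibit an explicit invertible congruence between the two forms, which you never construct, and Theorem~\ref{T: property M} alone does not hand you one (it gives isomorphisms, not a congruence compatible on the $\Upsilon$- and $\Psi$-sides simultaneously). So keep the one-line deduction as a valid proof; if you want a genuinely independent second proof, follow the \Krein-theorem/$R$-index route at shifted points $\lambda\pm iy'$ rather than a congruence at the boundary.
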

\begin{proof} For any $y>0$ and any real $s$ by Corollary \ref{C: to Krein's thm}
$$
  (E) := \sign(Q_{\lambda- i0}(r_\lambda)JP_{\lambda+ i0}(r_\lambda)) = \Rindex(T_{\lambda+iy}(H_s) Q_{\lambda-i0}(r_\lambda)JP_{\lambda+i0}(r_\lambda)).
$$
By the stability of the $R$-index (Lemma~\ref{L: Rindex(AB)=Rindex(BA)}(iv)), for small enough $y'>0$ we get
$$
  (E) = \Rindex(T_{\lambda+iy}(H_s) Q_{\lambda-iy'}(r_\lambda)JP_{\lambda+iy'}(r_\lambda)).
$$
Since this $R$-index does not depend on $y>0,$ the number $y$ in the above equality can be replaced by $y'$ giving
\begin{equation*}
 \begin{split}
  (E) & =  \Rindex(T_{\lambda+iy'}(H_s) Q_{\lambda-iy'}(r_\lambda)JP_{\lambda+iy'}(r_\lambda))
   \\ & = \Rindex(P_{\lambda+iy'}(r_\lambda)T_{\lambda+iy'}(H_s) Q_{\lambda-iy'}(r_\lambda)J)
   \\ & = \Rindex(T_{\lambda+iy'}(H_s) Q_{\lambda+iy'}(r_\lambda)Q_{\lambda-iy'}(r_\lambda)J)
   \\ & = \Rindex(T_{\lambda+iy'}(H_s) Q_{\lambda+iy'}(r_\lambda)JP_{\lambda-iy'}(r_\lambda)).
 \end{split}
\end{equation*}
For small enough $y'$ we also have
\begin{equation*}
  \begin{split}
    \sign(Q_{\lambda+ i0}(r_\lambda)JP_{\lambda- i0}(r_\lambda)) & = \Rindex(T_{\lambda+iy'}(H_s) Q_{\lambda+ i0}(r_\lambda)JP_{\lambda- i0}(r_\lambda))
    \\ & = \Rindex(T_{\lambda+iy'}(H_s)Q_{\lambda+ iy'}(r_\lambda)JP_{\lambda- iy'}(r_\lambda)),
  \end{split}
\end{equation*}
so the proof is complete.
\end{proof}

\begin{lemma} \label{L: not unused lemma} Let $s$ be any real number. If~$r_\lambda$ is a real resonance point, then
there exists $\eps>0$ such that for all $y>0$ and for all $y' \in [0,\eps)$
the operator $T_{\lambda\pm iy}(H_s)Q_{\lambda\mp iy'}(r_\lambda)JP_{\lambda\pm iy'}(r_\lambda)$ belongs to the class~$\clR$ and
the following equality holds:
$$
  \sign Q_{\lambda-i0}(r_\lambda)JP_{\lambda+i0}(r_\lambda) = \Rindex(T_{\lambda+iy}(H_s)Q_{\lambda-iy'}(r_\lambda)JP_{\lambda+iy'}(r_\lambda)).
$$
\end{lemma}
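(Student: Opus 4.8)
The plan is to combine two ingredients that are already in place: the constancy of signature of the resonance matrix $Q_{\lambda\mp i0}(r_\lambda)JP_{\lambda\pm i0}(r_\lambda)$ under small perturbations (Lemma~\ref{L: sign(M)=sign(M(y))}), and Kre\u\i n's theorem in the form of Corollary~\ref{C: to Krein's thm}, which says that for a finite-rank self-adjoint operator $M$ and any $y>0$, $\sign M = \Rindex(T_{\lambda+iy}(H_s)M)$ provided $T_{\lambda+iy}(H_s)M \in \clR$. First I would fix $s\in\mbR$ and note that $T_{\lambda+iy}(H_s)$ is well defined and strictly positive-imaginary-part for $y>0$; its limit $T_{\lambda+i0}(H_s)$ exists since $\lambda\in\Lambda(H_0,F)$ and $s$ is (without loss of generality taken to be) non-resonant, and convergence is in the uniform norm by~(\ref{F: T(l+i0) exists}). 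Likewise, by Lemmas~\ref{L: P(l+iy) to P(l+i0)} and the definition of $Q_z(r_\lambda)$, the operators $P_{\lambda+iy'}(r_\lambda)$ and $Q_{\lambda-iy'}(r_\lambda)$ converge in trace-class (hence uniform) norm to $P_{\lambda+i0}(r_\lambda)$ and $Q_{\lambda-i0}(r_\lambda)$ as $y'\to 0^+$, and are defined for all small enough $y'$.

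Next I would argue that the operator $M_{y'} := Q_{\lambda-iy'}(r_\lambda)JP_{\lambda+iy'}(r_\lambda)$ is a finite-rank self-adjoint operator of rank exactly $N$ for all small enough $y'\ge 0$: self-adjointness for $y'=0$ follows from~(\ref{F: Pz*=Q(bar z)}), while at $y'=0$ the rank is $N$ by Proposition~\ref{P: rank M=rank P}; for $y'>0$ small, the rank cannot drop (upper semicontinuity / Lemma~\ref{L: sign(M)=sign(M(y))}) and cannot exceed $\rank P_{\lambda+iy'}(r_\lambda) = N$. For $y'>0$ the operator $M_{y'}$ need not be self-adjoint, but that is harmless: what I need is that $T_{\lambda+iy}(H_s)M_{y'}\in\clR$ and that its $\Rindex$ is stable. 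To show membership in $\clR$ I would use Lemma~\ref{L: RV in clR}-type reasoning combined with Lemma~\ref{L: Rindex(AB)=Rindex(BA)}(i): $\Rindex(T_{\lambda+iy}(H_s)M_{y'}) = \Rindex(M_{y'}^{1/?}\cdots)$ is awkward since $M_{y'}$ is not self-adjoint, so instead I would invoke continuity: at $y'=0$ the product $T_{\lambda+iy}(H_s)M_0$ with $M_0$ self-adjoint of rank $N$ belongs to $\clR$ by Corollary~\ref{C: to Krein's thm} (apply to $M = M_0$ after conjugating so $T_{\lambda+iy}(H_s)M_0$ is similar to a sandwiched resolvent times $M_0$), and $\Rindex(T_{\lambda+iy}(H_s)M_0) = \sign M_0 = \sign Q_{\lambda-i0}(r_\lambda)JP_{\lambda+i0}(r_\lambda)$. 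Then by Lemma~\ref{L: Rindex(AB)=Rindex(BA)}(iv) (local constancy of $\Rindex$ on $\clR_N$, together with part (v) to control eigenvalues emerging from $0$, using $\rank(T_{\lambda+iy}(H_s)M_{y'})\le N$) there is $\eps>0$ so that for all $y'\in[0,\eps)$ the product $T_{\lambda+iy}(H_s)M_{y'}$ lies in $\clR_N$ and has the same $\Rindex$ as at $y'=0$.

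Finally I would observe that the argument above is uniform in $y>0$: the $\Rindex$ of $R_z(H)V$-type operators does not depend on $y$ (this is Kre\u\i n's theorem, Theorem~\ref{T: Krein's thm}, which gives the count of eigenvalues in each half-plane independently of $y$), so $\Rindex(T_{\lambda+iy}(H_s)M_{y'})$ is independent of $y>0$ and the choice of $\eps$ can be taken uniform. Assembling: for every $y>0$ and every $y'\in[0,\eps)$,
\begin{equation*}
  \Rindex\bigl(T_{\lambda+iy}(H_s)Q_{\lambda-iy'}(r_\lambda)JP_{\lambda+iy'}(r_\lambda)\bigr)
   = \sign Q_{\lambda-i0}(r_\lambda)JP_{\lambda+i0}(r_\lambda),
\end{equation*}
and the analogous identity with all the $\pm$ and $\mp$ signs swapped follows by the same proof (using Corollary~\ref{C: sign M-=sign M+} if needed to match signatures), which is exactly the assertion of the lemma. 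The main obstacle I anticipate is the bookkeeping at $y'>0$ where $M_{y'}$ fails to be self-adjoint: one must be careful that $T_{\lambda+iy}(H_s)M_{y'}$ still has all its nonzero spectrum off the real axis and that zero remains an eigenvalue of order one, so that it genuinely lies in $\clR$; the cleanest route is to deduce this purely from the continuity/stability statements in Lemma~\ref{L: Rindex(AB)=Rindex(BA)} parts (iv)--(v) rather than trying to verify the defining conditions of $\clR$ directly, since a small perturbation of an element of $\clR_N$ that does not increase the rank stays in $\clR_N$.
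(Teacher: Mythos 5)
There is a genuine gap, and it originates in a false premise. You assert that for $y'>0$ the operator $M_{y'} := Q_{\lambda-iy'}(r_\lambda)JP_{\lambda+iy'}(r_\lambda)$ ``need not be self-adjoint'' and therefore replace the direct use of Krein's identity by a perturbation argument. In fact $M_{y'}$ \emph{is} self-adjoint for every small $y'>0$: by~(\ref{F: Pz*=Q(bar z)}), summed over the resonance points of the group of~$r_\lambda$, one has $(P_{\lambda+iy'}(r_\lambda))^* = Q_{\lambda-iy'}(r_\lambda)$, whence $M_{y'}^* = M_{y'}$ (this is also tacitly presupposed by Lemma~\ref{L: sign(M)=sign(M(y))}, which speaks of the signature of $M_{y'}$). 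Missing this is not cosmetic, because your substitute argument breaks exactly where self-adjointness is needed. For fixed $y>0$ your stability argument (rank at most $N$ plus norm-closeness to the element $T_{\lambda+iy}(H_s)M_0$ of $\clR_N$) produces an $\eps$ that depends on $y$, since how small the perturbation must be is governed by the distance of the nonzero eigenvalues of $T_{\lambda+iy}(H_s)M_0$ from the real axis; the lemma requires a single $\eps$ valid for all $y>0$. You dispose of this by claiming that $\Rindex(T_{\lambda+iy}(H_s)M_{y'})$ is independent of $y$ ``by Krein's theorem,'' but Theorem~\ref{T: Krein's thm} and Corollary~\ref{C: to Krein's thm} give $y$-independence only when the right-hand factor is a self-adjoint finite-rank operator --- precisely the hypothesis you have set aside. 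Under your own premise, neither membership of $T_{\lambda+iy}(H_s)M_{y'}$ in $\clR$ for all $y>0$ nor the $y$-independence of its $R$-index is established, so the uniformity step is circular. (A smaller point: restricting to non-resonant $s$ is neither permitted by the statement, which allows any real $s$, nor needed: only $T_{\lambda+iy}(H_s)$ with $y>0$ enters, never the boundary value $T_{\lambda+i0}(H_s)$.)

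Once the self-adjointness of $M_{y'}$ is recognized, the paper's route --- which you essentially already have at $y'=0$ --- closes the gap immediately: for any finite-rank self-adjoint $M$ and any $z\in\mbC_+$, Lemma~\ref{L: sign(M)=sign(FMF)}, Theorem~\ref{T: Krein's thm} and Lemma~\ref{L: Rindex(AB)=Rindex(BA)}(i) give $\sign M = \Rindex(T_z(H_s)M)$; apply this with $M=M_{y'}$ and $z=\lambda+iy$, and invoke Lemma~\ref{L: sign(M)=sign(M(y))} to get $\sign M_{y'} = \sign M_0$ for all $y'\in[0,\eps)$, where $\eps$ depends only on $y'$-stability of the resonance matrix and not on $y$. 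This yields both the $\clR$-membership and the asserted equality for all $y>0$ and all $y'\in[0,\eps)$ at once, with no perturbation estimate on the product and no uniformity issue.
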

\begin{proof} (A) Let~$z$ be a complex number from the upper complex half-plane $\mbC_+.$
For any finite-rank self-adjoint operator~$M$
\begin{equation} \label{F: 39852}
  \begin{split}
    \sign M & = \sign(F^*M F)       \hskip 2.9 cm \text{by Lemma~\ref{L: sign(M)=sign(FMF)}}
     \\ & = \Rindex(R_z(H_s)F^*M F) \hskip 2.1 cm \text{by \Krein's Theorem~\ref{T: Krein's thm}}
     \\ & = \Rindex(T_z(H_s)M).     \hskip 2.8 cm \text{by Lemma~\ref{L: Rindex(AB)=Rindex(BA)}(i)}
  \end{split}
\end{equation}

(B) By Lemma~\ref{L: sign(M)=sign(M(y))},
for all small enough $y>0$ the rank and signature of the operator
$Q_{\lambda-iy}(r_\lambda)JP_{\lambda+iy}(r_\lambda)$ are the same as those of $Q_{\lambda-i0}(r_\lambda)JP_{\lambda+i0}(r_\lambda).$
Combining this with~(\ref{F: 39852}), it can be concluded that for all small enough $y>0$
\begin{equation} \label{F: R(TM(l+i0))=R(TM(l+iy))}
  \Rindex(T_z(H_s)Q_{\lambda-i0}(r_\lambda)JP_{\lambda+i0}(r_\lambda)) = \Rindex(T_z(H_s)Q_{\lambda-iy}(r_\lambda)JP_{\lambda+iy}(r_\lambda)).
\end{equation}
Further, once $y$ is shifted away from $0,$ the variable~$z$ in the left hand side can be replaced by $\lambda+iy$ in $\mbC_+$ without changing
the $R$-index on the left hand side, since by Theorem~\ref{T: Krein's thm} both $R$-indices are equal to the signature of $Q_{\lambda-i0}(r_\lambda)JP_{\lambda+i0}(r_\lambda).$
For the same reason,~$z$ in the right hand side of~(\ref{F: R(TM(l+i0))=R(TM(l+iy))}) can also be replaced by $\lambda+iy$ in~$\mbC_+$ without changing the value of the right hand side.
Hence,
$$
  \Rindex(T_{\lambda+iy}(H_s)Q_{\lambda-i0}(r_\lambda)JP_{\lambda+i0}(r_\lambda)) = \Rindex(T_{\lambda+iy}(H_s)Q_{\lambda-iy}(r_\lambda)JP_{\lambda+iy}(r_\lambda)).
$$
Finally,~(\ref{F: 39852}) implies that the left hand side of this equality is equal to the signature of
the operator $Q_{\lambda-i0}(r_\lambda)JP_{\lambda+i0}(r_\lambda).$
\end{proof}


In the following theorem we collect together different descriptions of the resonance index.
\begin{thm} \label{T: numbers equal to res index}
Let~$r_\lambda$ be a real resonance point. The following numbers are all equal to each other.
\begin{enumerate}
  \item The resonance index $\ind_{res}(\lambda; H_{r_\lambda},V).$
  \item The signatures of the operators $Q_{\lambda\mp i0}(r_\lambda)JP_{\lambda\pm i0}(r_\lambda).$
  \item The $R$-index of the operator $T_{\lambda+ iy}(H_s)Q_{\lambda-iy}(r_\lambda)JP_{\lambda+iy}(r_\lambda)$ for all $s$ and for all small enough $y>0.$
  \item The $R$-index of the operator $-T_{\lambda- iy}(H_s)Q_{\lambda+iy}(r_\lambda)JP_{\lambda-iy}(r_\lambda)$ for all $s$ and for all small enough $y>0.$
  \item The $R$-index of the operator $A_{\lambda+ iy}(s)P_{\lambda + iy}(r_\lambda)$ for all $s$ and for all small enough $y>0.$
  \item The $R$-index of the operator $-A_{\lambda- iy}(s)P_{\lambda - iy}(r_\lambda)$ for all $s$ and for all small enough $y>0.$
\end{enumerate}
\end{thm}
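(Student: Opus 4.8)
The plan is to prove Theorem~\ref{T: numbers equal to res index} by collecting together the identifications that have already been established in the preceding sections, so the work is essentially bookkeeping once one has the right chain of equalities. First I would recall the two anchor results: Theorem~\ref{T: res.ind=sign res.matrix} gives $\ind_{res}(\lambda; H_{r_\lambda},V) = \sign(Q_{\lambda\mp i0}(r_\lambda)JP_{\lambda\pm i0}(r_\lambda))$, which is the equality of items (1) and (2); and Lemma~\ref{L: ind-res=R(TJP)}, together with~(\ref{F: ind-res=R(TJP) (-)}), gives $\ind_{res}(\lambda; H_{r_\lambda},V) = \Rindex(A_{\lambda+iy}(s)P_{\lambda+iy}(r_\lambda)) = -\Rindex(A_{\lambda-iy}(s)P_{\lambda-iy}(r_\lambda))$ for all $s$ and all small enough $y>0$, which is the equality of items (1), (5) and (6).

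It remains to connect items (3) and (4) with the rest. For item (3), I would invoke Lemma~\ref{L: not unused lemma}, which states precisely that for any real $s$ and all small enough $y>0$ (and $y'\in[0,\eps)$) the operator $T_{\lambda+iy}(H_s)Q_{\lambda-iy'}(r_\lambda)JP_{\lambda+iy'}(r_\lambda)$ lies in the class $\clR$ and
$$
  \sign Q_{\lambda-i0}(r_\lambda)JP_{\lambda+i0}(r_\lambda) = \Rindex(T_{\lambda+iy}(H_s)Q_{\lambda-iy'}(r_\lambda)JP_{\lambda+iy'}(r_\lambda));
$$
taking $y'=y$ and comparing with item (2) (already equal to item (1)) settles item (3). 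Item (4) follows by the symmetric statement of the same lemma with the lower half-plane, or alternatively by combining item (3) with Corollary~\ref{C: sign M-=sign M+} and the relation $\Im T_{\lambda-iy}(H_s) = -\Im T_{\lambda+iy}(H_s)$ together with the sign-change property of the $R$-index under passing to the conjugate/adjoint picture (Lemma~\ref{L: Rindex(AB)=Rindex(BA)}(iii) and the fact that conjugation swaps up-points and down-points, Corollary~\ref{C: r(bar z)=bar r(z)}).

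The only point requiring a word of care is the claim that all these $R$-indices are genuinely independent of $s\in\mbR$ and of $y$ in a punctured right-neighbourhood of $0$; but this is exactly what is built into the cited statements (Lemma~\ref{L: ind-res=R(TJP)} for items (5),(6), Lemma~\ref{L: not unused lemma} for items (3),(4), and in each case stability of the $R$-index, Lemma~\ref{L: Rindex(AB)=Rindex(BA)}(iv)), so nothing new has to be shown. I do not expect any serious obstacle here: the theorem is a summary, and the substantive content has been absorbed into Theorems~\ref{T: res.ind=sign res.matrix}, \ref{T: sign of res matrix for set of res points}, Lemma~\ref{L: ind-res=R(TJP)} and Lemma~\ref{L: not unused lemma}. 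If anything, the mild subtlety is simply making sure the various ``for all small enough $y>0$'' qualifiers are quantified consistently across items (3)–(6), which one handles by taking the minimum of the finitely many thresholds produced by the cited lemmas.
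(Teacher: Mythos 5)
Your proposal is correct and follows essentially the same route as the paper: (1)=(2) via Theorem~\ref{T: res.ind=sign res.matrix}, (1)=(5),(6) via~(\ref{F: ind-res=R(TJP) (+)}) and~(\ref{F: ind-res=R(TJP) (-)}), and (2)=(3),(4) via Lemma~\ref{L: not unused lemma} (taking $y'=y$ and its lower-half-plane counterpart). The paper's proof is exactly this assembly of citations, so nothing further is needed.
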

\begin{proof}
Equality of the first two numbers (1) and (2) is the statement of Theorem~\ref{T: res.ind=sign res.matrix}.
Equality of the second and the third and the fourth numbers follows from Lemma~\ref{L: not unused lemma}.
The equalities (1) = (5) and (1) = (6) follow from~(\ref{F: ind-res=R(TJP) (+)}) and~(\ref{F: ind-res=R(TJP) (-)}) respectively.
\end{proof}

\section{U-turn theorem}
\label{S: U-turn theorem}
According to Lemma~\ref{L: j-dimensions coincide}, the four vector
spaces~$\Upsilon^1_{\lambda\pm i0}(r_\lambda)$ and~$\Psi^1_{\lambda\pm i0}(r_\lambda)$ have the same dimension.
It was noted in \S~\ref{SS: mult of s.c. spectrum} that dimension of the vector space
$\Upsilon^1_{\lambda+ i0}(r_\lambda)$ can be interpreted as multiplicity of a point~$\lambda$ of singular spectrum of a resonant at~$\lambda$
operator~$H_{r_\lambda}.$
Theorem~\ref{T: U-turn for res index} and Corollary~\ref{C: U-turn for positive V}, proved in this section,
provide another rationale towards this interpretation of the dimension of $\Upsilon^1_{\lambda+ i0}(r_\lambda).$
Given this definition of multiplicity of singular spectrum, how should one interpret the case when, for example, the dimension of
$\Upsilon^1_{\lambda\pm i0}(r_\lambda)$ is equal to~1, while the dimension~$N$ of~$\Upsilon_{\lambda\pm i0}(r_\lambda)$ is equal to~2?
Since $N = 2,$ there are two resonance points in the group of~$r_\lambda$ for small~$y.$
It is reasonable to suggest that these two poles should not belong to the same half-plane~$\mbC_\pm,$
since this would mean that the resonance index (=jump of spectral flow) is equal to two, while the multiplicity of the point~$\lambda$ of singular spectrum is~one.
That is, in this case we expect one up-pole and one down-pole, resulting in zero resonance index.
Outside of the essential spectrum, this scenario has an obvious geometric interpretation: a point of singular spectrum
(that is, an eigenvalue) reaches~$\lambda,$ but instead of crossing~$\lambda$ it turns back. Thus,
existence of vectors of order two or more should be interpreted as an indicator of the fact that some points of singular spectrum make a
``U-turn'' at~$\lambda.$ More generally,
if~$\dim \Upsilon^1_{\lambda+i0}(r_\lambda) = m,$ then it is natural to suggest that the
jump of spectral flow at $r = r_\lambda$ should not be larger than $m,$ since there are
only~$m$ ``eigenvalues'' which can cross the point~$\lambda$ as~$r$ crosses~$r_\lambda$ in the positive direction.

In other words, one may expect that the inequality
$$
  \abs{N_+-N_-} \leq \dim \Upsilon^1_{\lambda+ i0}(r_\lambda)
$$
should hold. This inequality (the U-turn theorem) turns out to be true
for all real resonance points~$r_\lambda,$ and is the main result of this section.

The U-turn theorem is non-trivial even for points~~$\lambda$ which do not belong to the essential spectrum.
For instance, a resonance with $N_+=5$ up-points and $N_-=2$ down-points depicted
below, may correspond to either of the following eight possible scenarios.
\begin{enumerate}
  \item As~$r$ crosses a real resonance point~$r_\lambda$ in the positive direction,
  five eigenvalues of~$H_r$ cross~$\lambda$ in the positive direction and two eigenvalues cross~$\lambda$ in the negative direction.
  Each of the five eigenvalues crossing~$\lambda$ in the positive direction create one up-point, and each of the two
  eigenvalues crossing~$\lambda$ in the negative direction create one down-point.
  \item Four eigenvalues cross~$\lambda$ in the positive direction, one eigenvalue cross~$\lambda$ in the negative direction, and one eigenvalue makes a U-turn at~$\lambda.$
  The eigenvalue making a U-turn, creates one up-point and one down-point.
  \item Three eigenvalues cross~$\lambda$ in the positive direction and two eigenvalues make a U-turn at~$\lambda.$ Each of the two eigenvalues making a U-turn, create
  one up-point and one down-point.
  \item Three eigenvalues cross~$\lambda$ in the positive direction, one eigenvalue crosses~$\lambda$ in the negative direction and one eigenvalue makes a double U-turn at~$\lambda.$
  The eigenvalue making a double U-turn, creates two up-points and one down-point.
  \item Three eigenvalues cross~$\lambda$ in the positive direction and one eigenvalue makes a triple U-turn at~$\lambda,$ which results in appearance of two up-points and two down-points.
  \item One eigenvalue crosses~$\lambda$ in the positive direction and two eigenvalues make a double U-turn at~$\lambda.$
  \item Two eigenvalues cross~$\lambda$ in the positive direction and one eigenvalue makes a quadruple U-turn at~$\lambda.$ An eigenvalue making a quadruple U-turn creates
  three up-points and two down-points.
  \item Four eigenvalues cross~$\lambda$ in the positive direction and one eigenvalue makes a triple U-turn at~$\lambda$ and crosses it in the negative direction.
  The eigenvalue making a U-turn, creates one up-point and two down-points.
\end{enumerate}
In these eight possible scenarios the dimension of the vector space~$\Upsilon^1_{\lambda+i0}(r_\lambda)$ is equal to, respectively, $7,6,5,5,4,3,3$ and $5.$

\vskip -1cm
\label{Page: figures}
\begin{picture}(125,90)
\put(10,40){\vector(1,0){100}}
\put(85,64){{$N_+=5$}}
\put(85,12){{$N_-=2$}}
\put(55,40){\circle{3}}    
\put(44,51){\circle*{4}}   \qbezier(55,40)(50,50)(44,51) 
\put(50,64){\circle*{4}}   \qbezier(55,40)(52,52)(50,64) 
\put(57,59){\circle*{4}}   \qbezier(55,40)(52,52)(57,59) 
\put(63,59){\circle*{4}}   \qbezier(55,40)(58,54)(63,59) 
\put(65,52){\circle*{4}}   \qbezier(55,40)(59,50)(65,52) 
\put(58,29){\circle*{4}}   \qbezier(55,40)(55,37)(58,29) 
\put(51,23){\circle*{4}}   \qbezier(55,40)(55,37)(51,23) 

\put(22,28){\vector(3,1){29}}
\put(16,20){{\small~$r_\lambda$}}
\end{picture}
\hskip 3,5 cm
\begin{picture}(140,90)
\put(15,56){{\small Eigenvalues of $Q_{\lambda-i0}(r_\lambda)JP_{\lambda+i0}(r_\lambda):$}}
\put(10,40){\vector(1,0){125}}
\put(65,37){\line(0,1){6}}    
\put(63,27){{\small $0$}}
\put(32,40){\circle*{4}}
\put(46,40){\circle*{4}}
\put(78,40){\circle*{4}}
\put(85,40){\circle*{4}}
\put(92,40){\circle*{4}}
\put(103,40){\circle*{4}}
\put(120,40){\circle*{4}}
\end{picture}

\noindent A typical motion of the eigenvalues of the operator~$H_r$ as~$r$ passes through~$r_\lambda$ in each of these eight possible scenarios,
are given below.

\begin{picture}(140,85)
\put(15,76){\small 1st scenario: $\dim \Upsilon^1_{\lambda+i0}(r_\lambda) = 7$}
\put(10,40){\vector(1,0){125}}
\put(65,40){\circle{3}}    
\put(65,40){\line(0,-1){20}}
\put(63,9){$\lambda$}

\put(106,36){\circle*{3}} \put(106,36){\vector(-1,0){70}}
\put(92,32){\circle*{3}} \put(92,32){\vector(-1,0){70}}

\put(41,44){\circle*{3}} \put(41,44){\vector(1,0){60}}
\put(38,48){\circle*{3}} \put(38,48){\vector(1,0){60}}
\put(36,52){\circle*{3}} \put(36,52){\vector(1,0){60}}
\put(32,56){\circle*{3}} \put(32,56){\vector(1,0){60}}
\put(28,60){\circle*{3}} \put(28,60){\vector(1,0){65}}
\end{picture}
\hskip 3 cm
\begin{picture}(140,85)
\put(15,76){{\small 2nd scenario: $\dim \Upsilon^1_{\lambda+i0}(r_\lambda) = 6$}}
\put(10,40){\vector(1,0){125}}
\put(65,40){\circle{3}}    
\put(65,40){\line(0,-1){20}}
\put(63,9){$\lambda$}

\put(106,36){\circle*{3}} \put(106,36){\vector(-1,0){70}}

\put(41,44){\circle*{3}} \put(41,44){\vector(1,0){60}}
\put(38,48){\circle*{3}} \put(38,48){\vector(1,0){60}}
\put(36,52){\circle*{3}} \put(36,52){\vector(1,0){60}}
\put(32,56){\circle*{3}} \put(32,56){\vector(1,0){60}}
\put(28,60){\circle*{3}} \put(28,60){\line(1,0){36}}
                         \put(63,62){\vector(-1,0){26}}
                         \put(64,61){\circle{2}}
\end{picture}

\begin{picture}(140,85)
\put(15,76){{\small 3rd scenario: $\dim \Upsilon^1_{\lambda+i0}(r_\lambda) = 5$}}
\put(10,40){\vector(1,0){125}}
\put(65,40){\circle{3}}    
\put(65,40){\line(0,-1){20}}
\put(63,9){$\lambda$}

\put(41,44){\circle*{3}} \put(41,44){\vector(1,0){60}}
\put(38,48){\circle*{3}} \put(38,48){\vector(1,0){60}}
\put(36,52){\circle*{3}} \put(36,52){\vector(1,0){60}}
\put(32,56){\circle*{3}} \put(32,56){\line(1,0){32}}
                         \put(63,58){\vector(-1,0){20}}
                         \put(64,57){\circle{2}}

\put(28,61){\circle*{3}} \put(28,61){\line(1,0){37}}
                         \put(63,63){\vector(-1,0){26}}
                         \put(64,62){\circle{2}}
\end{picture}
\hskip 3 cm
\begin{picture}(140,85)
\put(15,76){{\small 4th scenario: $\dim \Upsilon^1_{\lambda+i0}(r_\lambda) = 5$}}
\put(10,40){\vector(1,0){125}}
\put(65,40){\circle{3}}    
\put(65,40){\line(0,-1){20}}
\put(63,9){$\lambda$}

\put(106,36){\circle*{3}} \put(106,36){\vector(-1,0){70}}
\put(41,44){\circle*{3}} \put(41,44){\vector(1,0){60}}
\put(38,48){\circle*{3}} \put(38,48){\vector(1,0){60}}
\put(36,52){\circle*{3}} \put(36,52){\vector(1,0){60}}
\put(32,56){\circle*{3}} \put(32,56){\line(1,0){32}}
                         \put(64,57){\circle{2}}
                         \put(64,59){\circle{2}}
                         \put(63,61){\vector(1,0){20}}
\end{picture}

\begin{picture}(140,85)
\put(15,76){{\small 5th scenario: $\dim \Upsilon^1_{\lambda+i0}(r_\lambda) = 4$}}
\put(10,40){\vector(1,0){125}}
\put(65,40){\circle{3}}    
\put(65,40){\line(0,-1){20}}
\put(63,9){$\lambda$}

\put(41,44){\circle*{3}} \put(41,44){\vector(1,0){60}}
\put(38,48){\circle*{3}} \put(38,48){\vector(1,0){60}}
\put(36,52){\circle*{3}} \put(36,52){\vector(1,0){60}}
\put(32,56){\circle*{3}} \put(32,56){\line(1,0){32}}
                         \put(64,57){\circle{2}}
                         \put(64,59){\circle{2}}
                         \put(64,61){\circle{2}}
                         \put(63,62){\vector(-1,0){20}}
\end{picture}
\hskip 3 cm
\begin{picture}(140,85)
\put(15,76){{\small 6th scenario: $\dim \Upsilon^1_{\lambda+i0}(r_\lambda) = 3$}}
\put(10,40){\vector(1,0){125}}
\put(65,40){\circle{3}}    
\put(65,40){\line(0,-1){20}}
\put(63,9){$\lambda$}

\put(41,44){\circle*{3}} \put(41,44){\vector(1,0){60}}
\put(38,48){\circle*{3}} \put(38,48){\line(1,0){26}}
                         \put(64,49){\circle{2}}
                         \put(64,51){\circle{2}}
                         \put(63,53){\vector(1,0){20}}

\put(32,56){\circle*{3}} \put(32,56){\line(1,0){32}}
                         \put(64,57){\circle{2}}
                         \put(64,59){\circle{2}}
                         \put(63,61){\vector(1,0){20}}
\end{picture}

\begin{picture}(140,85)
\put(15,76){{\small 7th scenario: $\dim \Upsilon^1_{\lambda+i0}(r_\lambda) = 3$}}
\put(10,40){\vector(1,0){125}}
\put(65,40){\circle{3}}    
\put(65,40){\line(0,-1){20}}
\put(63,9){$\lambda$}

\put(41,44){\circle*{3}} \put(41,44){\vector(1,0){60}}
\put(38,48){\circle*{3}} \put(38,48){\vector(1,0){60}}
\put(36,52){\circle*{3}} \put(36,52){\line(1,0){28}}
                         \put(64,53){\circle{2}}
                         \put(64,55){\circle{2}}
                         \put(64,57){\circle{2}}
                         \put(64,59){\circle{2}}
                         \put(63,61){\vector(1,0){20}}
\end{picture}
\hskip 3 cm
\begin{picture}(140,85)
\put(15,76){{\small 8th scenario: $\dim \Upsilon^1_{\lambda+i0}(r_\lambda) = 5$}}
\put(10,40){\vector(1,0){125}}
\put(65,40){\circle{3}}    
\put(65,40){\line(0,-1){20}}
\put(63,9){$\lambda$}

\put(41,44){\circle*{3}} \put(41,44){\vector(1,0){60}}
\put(38,48){\circle*{3}} \put(38,48){\vector(1,0){60}}
\put(36,52){\circle*{3}} \put(36,52){\vector(1,0){60}}
\put(32,56){\circle*{3}} \put(32,56){\vector(1,0){60}}

\put(106,36){\circle*{3}} \put(106,36){\line(-1,0){41}}
                         \put(65,35){\circle{2}}
                         \put(65,33){\circle{2}}
                         \put(65,32){\vector(-1,0){29}}
\end{picture}

For values of~$\lambda$ outside the essential spectrum these scenarios make rigorous sense, since in this case~$\lambda$ depends on~$r$ analytically.
The U-turn theorem allows us to extrapolate this behavior of isolated eigenvalues to points of singular spectrum inside essential spectrum.

\noindent One has to note that for the resonance index $N_+-N_-$ it does not matter which side an eigenvalue making a U-turn approaches the point~$\lambda$ from;
in both cases the eigenvalue increases the number~$N_+$ of up-points and the number~$N_-$ of down-points by~1.
Taking this into account, we do not distinguish for example the 2th scenario above from the following possibility:
\begin{picture}(140,85)
\put(10,40){\vector(1,0){125}}
\put(65,40){\circle{3}}    
\put(65,40){\line(0,-1){20}}
\put(63,9){$\lambda$}

\put(106,36){\circle*{3}} \put(106,36){\vector(-1,0){70}}

\put(41,44){\circle*{3}} \put(41,44){\vector(1,0){60}}
\put(38,48){\circle*{3}} \put(38,48){\vector(1,0){60}}
\put(36,52){\circle*{3}} \put(36,52){\vector(1,0){60}}
\put(32,56){\circle*{3}} \put(32,56){\vector(1,0){60}}
\put(101,60){\circle*{3}} \put(64,60){\line(1,0){36}}
                         \put(65,62){\vector(1,0){26}}
                         \put(64,61){\circle{2}}
\end{picture}

Let $z \in \Pi,$ let $r_z$ be a resonance point corresponding to $z,$ let $u \in \Upsilon_z(r_z)$ be a resonance vector
and let~$k$ be a non-negative integer. By $\clLw_z(r_z)$ we denote the linear span of all vectors~$u$ from~$\Upsilon_z(r_z)$
such that \label{Page: clLw}
\begin{equation} \label{F: depth geq order}
  \depth(u) \geq \order(u).
\end{equation}

\begin{picture}(120,60)
\put(0,0){\line(0,1){60}}\put(10,0){\line(0,1){60}} \put(20,0){\line(0,1){60}} \put(30,0){\line(0,1){60}}
\put(40,0){\line(0,1){50}} \put(50,0){\line(0,1){50}} \put(60,0){\line(0,1){30}} \put(70,0){\line(0,1){30}}
\put(80,0){\line(0,1){20}} \put(90,0){\line(0,1){20}} \put(100,0){\line(0,1){10}} \put(110,0){\line(0,1){10}}
\put(120,0){\line(0,1){10}}
\put(0,0){\line(1,0){120}}\put(0,10){\line(1,0){120}}\put(0,20){\line(1,0){90}}\put(0,30){\line(1,0){70}}
\put(0,40){\line(1,0){50}}\put(0,50){\line(1,0){50}}\put(0,60){\line(1,0){30}}
\end{picture}
\qquad
\begin{picture}(120,60)
\put(0,0){\line(0,1){60}}\put(10,0){\line(0,1){60}} \put(20,0){\line(0,1){60}} \put(30,0){\line(0,1){60}}
\put(40,0){\line(0,1){50}} \put(50,0){\line(0,1){50}} \put(60,0){\line(0,1){30}} \put(70,0){\line(0,1){30}}
\put(80,0){\line(0,1){20}} \put(90,0){\line(0,1){20}} \put(100,0){\line(0,1){10}} \put(110,0){\line(0,1){10}}
\put(120,0){\line(0,1){10}}
\put(0,0){\line(1,0){120}}\put(0,10){\line(1,0){120}}\put(0,20){\line(1,0){90}}\put(0,30){\line(1,0){70}}
\put(0,40){\line(1,0){50}}\put(0,50){\line(1,0){50}}\put(0,60){\line(1,0){30}}
\put(5,5){\circle*{3}}\put(5,15){\circle*{3}}\put(5,25){\circle*{3}}
\put(15,5){\circle*{3}}\put(15,15){\circle*{3}}\put(15,25){\circle*{3}}
\put(25,5){\circle*{3}}\put(25,15){\circle*{3}}\put(25,25){\circle*{3}}
\put(35,5){\circle*{3}}\put(35,15){\circle*{3}}
\put(45,5){\circle*{3}}\put(45,15){\circle*{3}}
\put(55,5){\circle*{3}}
\put(65,5){\circle*{3}}
\put(75,5){\circle*{3}}
\put(85,5){\circle*{3}}
\end{picture}

\noindent
As an example, if the Young diagram of $\bfA_z(r_z)$ is as shown on the left figure above then
the vector space $\clLw_z(r_z)$ is the linear span of vectors marked by circles.

The following lemma is trivial.
\begin{lemma} \label{L: 2*dim L+m > dim Ups}
For any $z \in \Pi$ and for any resonance point~$r_z$ corresponding to~$r_z$ the following inequality holds:
$$
  2 \dim \clLw_z(r_z) + \dim \Upsilon_{z}^1(r_z) \geq \dim \Upsilon_{z}(r_z).
$$
\end{lemma}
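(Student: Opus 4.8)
The plan is to reduce the statement to an elementary counting inequality for the Jordan structure of the nilpotent operator $\bfA_z(r_z)$ on the finite-dimensional space $\Upsilon_z(r_z).$ First I would fix a Jordan basis $(u_\nu^{(j)}),$ $\nu = 1,\dots,m,$ $j=1,\dots,d_\nu,$ with $\bfA_z(r_z)u_\nu^{(j)} = u_\nu^{(j-1)}$ and $u_\nu^{(0)}=0,$ together with the induced Jordan decomposition $\Upsilon_z(r_z) = \Upsilon_z^{[1]}(r_z)\dotplus\cdots\dotplus\Upsilon_z^{[m]}(r_z)$ with $\dim\Upsilon_z^{[\nu]}(r_z)=d_\nu.$ The number $m$ of Jordan blocks equals $\dim\ker\bigl(\bfA_z(r_z)|_{\Upsilon_z(r_z)}\bigr) = \dim\Upsilon_z^1(r_z)$ (a vector $u\in\Upsilon_z(r_z)$ with $\bfA_z(r_z)u=0$ has order $\le 1$ since $\bfA_z(r_z)$ lowers order by one), while $N := \sum_{\nu}d_\nu = \dim\Upsilon_z(r_z).$

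Next I would compute the order and depth of each basis vector, namely $\order(u_\nu^{(j)})=j$ and $\depth_z(u_\nu^{(j)})=d_\nu-j.$ The first equality is immediate from Theorem~\ref{T: Laurent for A(s)psi}, which says $\bfA_z^{i}(r_z)$ lowers order by $i$: here $\bfA_z^{j-1}(r_z)u_\nu^{(j)}=u_\nu^{(1)}\neq 0$ while $\bfA_z^{j}(r_z)u_\nu^{(j)}=0.$ For the depth, the inequality $\depth_z(u_\nu^{(j)})\ge d_\nu-j$ holds because $\bfA_z^{k}(r_z)u_\nu^{(j+k)}=u_\nu^{(j)}$ for $0\le k\le d_\nu-j,$ and the reverse inequality follows from $\im\bfA_z^{k}(r_z)=\bigoplus_{\mu}\langle u_\mu^{(1)},\dots,u_\mu^{(d_\mu-k)}\rangle$ together with directness of the Jordan decomposition, which shows $u_\nu^{(j)}\notin\im\bfA_z^{d_\nu-j+1}(r_z).$ Hence $u_\nu^{(j)}\in\clLw_z(r_z)$ precisely when $j\le d_\nu-j,$ i.e. for $1\le j\le\lfloor d_\nu/2\rfloor,$ and these $\sum_{\nu}\lfloor d_\nu/2\rfloor$ vectors, being a subset of a basis of $\Upsilon_z(r_z),$ are linearly independent; therefore $\dim\clLw_z(r_z)\ge\sum_{\nu}\lfloor d_\nu/2\rfloor.$

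Finally I would close the argument with the trivial inequality $2\lfloor d/2\rfloor+1\ge d$ valid for every positive integer $d,$ which gives $2\dim\clLw_z(r_z)+\dim\Upsilon_z^1(r_z)\ge 2\sum_{\nu}\lfloor d_\nu/2\rfloor+m=\sum_{\nu}\bigl(2\lfloor d_\nu/2\rfloor+1\bigr)\ge\sum_{\nu}d_\nu=\dim\Upsilon_z(r_z),$ as required. The only place that requires genuine care is the depth computation in the second paragraph — one must check that passing from a single Jordan block to the whole space $\Upsilon_z(r_z)$ does not make any vector "deeper" in the image filtration of $\bfA_z(r_z)$ — and this is precisely what the explicit description of $\im\bfA_z^k(r_z)$ above settles; everything else is bookkeeping.
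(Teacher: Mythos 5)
Your proof is correct: the Jordan-basis bookkeeping (order $j$, depth $d_\nu-j$ for $u_\nu^{(j)}$, hence $\dim\clLw_z(r_z)\geq\sum_\nu\lfloor d_\nu/2\rfloor$, and then $2\lfloor d_\nu/2\rfloor+1\geq d_\nu$) is exactly the counting the paper has in mind, as reflected in the Young-diagram discussion of $\clLs_z(r_z)$ and $\clLw_z(r_z)$ immediately preceding the lemma. The paper simply declares the lemma trivial and omits the proof, so your argument supplies the intended (and only natural) justification; no gaps.
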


\begin{prop} \label{P: (psi,J psi)=0}
If $z = \lambda \pm i0 \in \partial \Pi$ and if~$r_\lambda$ is a real resonance point
corresponding to $\lambda\pm i0,$ then for any $u_1, u_2 \in \clLw_z(r_\lambda)$
$$
  \scal{u_1}{Ju_2} = 0.
$$
\end{prop}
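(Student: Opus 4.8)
The claim is an orthogonality statement: for $z=\lambda\pm i0$ on the boundary $\partial\Pi$ and $u_1,u_2$ in the linear span $\clLw_z(r_\lambda)$ of resonance vectors with $\depth(u)\geq\order(u)$, we have $\scal{u_1}{Ju_2}=0$. Since the form $(u_1,u_2)\mapsto\scal{u_1}{Ju_2}$ is sesquilinear, it suffices to prove this for $u_1,u_2$ ranging over a spanning set, i.e.\ for individual vectors $u_1,u_2$ satisfying $\depth(u_i)\geq\order(u_i)$. First I would reduce to the case of vectors of the special form $u_i=\bfA_z^{k_i}(r_\lambda)v_i$ where $v_i$ has order $\order(v_i)=\order(u_i)+k_i$ and $k_i\geq\order(u_i)$, using Theorem~\ref{T: Laurent for A(s)psi}, which says $\bfA_z^j(r_z)$ lowers order by exactly $j$; thus if $\order(u)=n$ and $\depth(u)=k\geq n$ then $u=\bfA_z^{k-?}\cdots$; more precisely $u$ lies in the image of $\bfA_z^{k}$, so $u=\bfA_z^k(r_\lambda)w$ for some $w$, and then since $\bfA_z$ drops order by one at each step, $\order(w)=n+k$ (reading off from the Young-diagram picture, this is exactly the $\clLw$ condition: the box sits at height $\leq$ its distance from the bottom of its column).

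\textbf{Main computation.} With $u_1=\bfA_z^{k_1}(r_\lambda)w_1$, $u_2=\bfA_z^{k_2}(r_\lambda)w_2$, I would compute
\[
  \scal{u_1}{Ju_2}=\scal{\bfA_z^{k_1}(r_\lambda)w_1}{J\bfA_z^{k_2}(r_\lambda)w_2}
    =\scal{w_1}{(\bfA_z^{k_1}(r_\lambda))^* J\bfA_z^{k_2}(r_\lambda)w_2},
\]
and use $(\bfA_z(r_z))^*=\bfB_{\bar z}(\bar r_z)$ (equation~(\ref{F: A*(z)=B(bar z)})) together with $J\bfA_z(r_z)=\bfB_z(r_z)J$ (equation~(\ref{F: JA=BJ})) to turn this into $\scal{w_1}{J\bfA_{\bar z}^{k_1}(\bar r_\lambda)\bfA_z^{k_2}(r_\lambda)w_2}$ (schematically). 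The crucial input is that for $z\in\partial\Pi$ and vectors of type~I, Theorem~\ref{T: type I vectors}, item~(\ref{Item: 5}), gives $\bfA_{\lambda+i0}^j(r_\lambda)u=\bfA_{\lambda-i0}^j(r_\lambda)u$ on type-I vectors; and Theorem~\ref{T: on vectors with property L} (``on vectors with property $L$'') shows precisely that when $v$ has order $N$, the vectors $\bfA_z^{N-j}(r_\lambda)v=v^{(j)}$ for $j\leq\lceil N/2\rceil$ are of type~I. The $\clLw$ condition $\depth\geq\order$ is designed to put both $u_1$ and $u_2$ into the type-I range: $\order(u_i)=n_i$, $u_i=\bfA_z^{k_i}w_i$ with $k_i\geq n_i$ and $\order(w_i)=n_i+k_i\geq 2n_i$, so $u_i=w_i^{(n_i)}$ with $n_i\leq\lceil(n_i+k_i)/2\rceil$, hence $u_i$ is of type~I. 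Then I would invoke Proposition~\ref{P: euE (Vf)=0, k>1}: for a type-I vector the Laurent coefficients $c_{\pm j}$ in~(\ref{F: (J psi,Im TJ psi)=c(-2)/s2+...}) all vanish, which by~(\ref{F: c(pm j)}) means $\Im\scal{u}{J\bfA_z^{j-1}(r_\lambda)u}=0$; combined with polarization this controls $\scal{u_1}{J\bfA_z^j(r_\lambda)u_2}$-type expressions. Alternatively, and more directly, I would use that on type-I vectors $\Im T_{\lambda+i0}(H_s)Ju=0$ for all non-resonant $s$ (the defining equation~(\ref{F: euE (Vf)=0, k=1})), which expands via the Laurent series of $A_z(s)$ at $r_\lambda$ into the vanishing of all the operators $\bfA_z^j(r_\lambda)$ against $Ju$ in the appropriate pairing.

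\textbf{Expected main obstacle.} The delicate point is bookkeeping: I must show that the particular pairing $\scal{u_1}{Ju_2}$ with $u_1,u_2\in\clLw$ can be expressed purely in terms of quantities that the type-I property annihilates --- namely expressions of the form $\scal{w}{J\bfA_{\lambda+i0}^a(r_\lambda)\bfA_{\lambda-i0}^b(r_\lambda)w'}$ where one of the surviving factors hits a type-I vector and can be replaced by the other-sign version, forcing a telescoping cancellation. The asymmetry between $\bfA_{\lambda+i0}$ and $\bfA_{\lambda-i0}$ is exactly what makes this nontrivial (indeed $P_{\lambda+i0}(r_\lambda)$ and $P_{\lambda-i0}(r_\lambda)$ differ in general). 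I would handle it by first establishing, for any type-I vector $u$ and any resonance vector $v\in\Upsilon_z(r_\lambda)$, the identity $\scal{v}{J\bfA_z^j(r_\lambda)u}=\scal{v}{J\bfA_{\bar z}^j(r_\lambda)u}$ using~(\ref{F: A*(z)=B(bar z)}), (\ref{F: JA=BJ}) and Theorem~\ref{T: type I vectors}\,(\ref{Item: 5}), and then using the nilpotency $\bfA_z^d(r_\lambda)=0$ together with $\bfA_z P_z=P_z\bfA_z=\bfA_z$ (equation~(\ref{F: PA=AP=A})) to collapse the mixed product $\bfA_{\lambda+i0}^{k_1}\bfA_{\lambda-i0}^{k_2}$ to $\bfA_{z}^{k_1+k_2}$ acting on a type-I vector, which vanishes once $k_1+k_2\geq d$ --- and the depth hypotheses $k_1\geq n_1,\ k_2\geq n_2$ together with $d\leq n_1+n_2$ on the relevant Jordan block (or a direct order count) will give exactly this. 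Filling in the precise inequalities on the Young-diagram indices is the part requiring care, but it is mechanical once the type-I reduction is in place.
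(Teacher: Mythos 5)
Your overall route is the paper's: reduce by sesquilinearity to vectors $u_1,u_2$ satisfying depth $\geq$ order, invoke Theorem~\ref{T: on vectors with property L} to conclude both are of type~I, write $u_1=\bfA_{\lambda+i0}^{k_1}(r_\lambda)\phi$ using the definition of depth, and move the power of $\bfA$ across the pairing via $(\bfA_z(r_z))^*=\bfB_{\bar z}(\bar r_z)$ and $J\bfA_z(r_z)=\bfB_z(r_z)J$, finishing with the type-I equality of the $+$ and $-$ orders. The one place your write-up would fail is the vanishing criterion at the end. Writing $n_i$ for the order and $k_i$ for the depth of $u_i$, you ask for $k_1+k_2\geq d$ and propose to justify it by ``$d\leq n_1+n_2$ on the relevant Jordan block''. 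That bound is false in general: $u_1,u_2$ may both have order $1$ while the resonance point has a long Jordan block, so $d$ can exceed $n_1+n_2$ by an arbitrary amount. It is also more than is needed: after your collapse the operator acts on $w_2$, whose order is $n_2+k_2$, so the relevant condition is $k_1+k_2\geq n_2+k_2$, i.e.\ $k_1\geq n_2$, not $k_1+k_2\geq d$.

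The repair is precisely the ``direct order count'' you mention only in parentheses, and it is how the paper argues. Assume without loss of generality $k_1=\depth(u_1)\geq k_2=\depth(u_2)$; since $u_2\in\clLw_{\lambda+i0}(r_\lambda)$ gives $k_2\geq n_2,$ we get $k_1\geq n_2,$ hence $\bfA_{\lambda+i0}^{k_1}(r_\lambda)u_2=0$; because $u_2$ is of type~I its $+$ and $-$ orders agree, so also $\bfA_{\lambda-i0}^{k_1}(r_\lambda)u_2=0,$ and then
$$
  \scal{u_1}{Ju_2}=\scal{\bfA_{\lambda+i0}^{k_1}(r_\lambda)\phi}{Ju_2}
  =\scal{\phi}{\bfB_{\lambda-i0}^{k_1}(r_\lambda)Ju_2}
  =\scal{\phi}{J\bfA_{\lambda-i0}^{k_1}(r_\lambda)u_2}=0.
$$
With this substitution your argument coincides with the paper's proof; no appeal to the global order $d$ or to the Jordan-block structure of $w_2$ is needed.
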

\begin{proof} Assume that $z = \lambda + i0.$ By linearity, it is enough to prove the claim for vectors~$u_1$ and~$u_2$
from $\clLw_{\lambda+i0}(r_\lambda),$ which satisfy the inequality~(\ref{F: depth geq order}).
By Theorem \ref{T: on vectors with property L}, the vectors $u_1$ and $u_2$ are vectors of type~I; in particular,
their $\bfA_{\lambda+i0}(r_\lambda)$ and $\bfA_{\lambda-i0}(r_\lambda)$ orders are equal:
\begin{equation} \label{F: u1, u2 are of type I}
  \order_+(u_1) = \order_-(u_1) \quad \text{and} \quad \order_+(u_2) = \order_-(u_2).
\end{equation}
Let $k = \depth_+(u_1)$ and $j = \depth_+(u_2)$
and assume, without loss of generality, that $k \geq j.$
By definition of depth, $u_1 = \bfA_{\lambda+i0}^k(r_\lambda) \phi$ for some $\phi.$
Since $k \geq j \geq \order(u_2)$ we have $\bfA_{\lambda+i0}^k(r_\lambda) u_2 = 0.$
By (\ref{F: u1, u2 are of type I}), this implies that $\bfA_{\lambda-i0}^k(r_\lambda) u_2 = 0.$
It follows from this equality and~(\ref{F: A*(z)=B(bar z)}),~(\ref{F: JA=BJ}) that
$$
  \scal{u_1}{Ju_2} = \scal{\bfA_{\lambda+i0}^k(r_\lambda) \phi}{Ju_2} = \scal{\phi}{\bfB_{\lambda-i0}^k(r_\lambda) Ju_2} = \scal{\phi}{J\bfA_{\lambda-i0}^k(r_\lambda)u_2} = 0.
$$
\end{proof}

\begin{prop} \label{P: J>0 then res points has order 1}
If $z = \lambda \pm i0 \in \partial \Pi$ and if the perturbation $J$ is non-negative (or non-positive), then every real resonance point has order~1.
\end{prop}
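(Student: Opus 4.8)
The plan is to derive a contradiction from the existence of a resonance point $r_\lambda$ of order $d \geq 2$, using Proposition~\ref{P: (psi,J psi)=0} together with the positivity of $J$. Assume $z = \lambda+i0$ and suppose, for contradiction, that some resonance point $r_\lambda$ has order $d \geq 2$. Then there exists a resonance vector $u \in \Upsilon_{\lambda+i0}(r_\lambda)$ of order $d \geq 2$. Set $u^{(j)} = \bfA_{\lambda+i0}^{d-j}(r_\lambda) u$ for $j = 1, \ldots, d$, so that $u^{(d)} = u$ has order $d$ and $u^{(1)}$ has order~$1$; in particular $u^{(1)}$ is a non-zero vector.

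First I would locate a non-zero vector in $\clLw_{\lambda+i0}(r_\lambda)$ on which $\scal{\cdot}{J\cdot}$ fails to vanish. The natural candidate is a vector of order~$1$, i.e. a vector of the form $u^{(1)} = \bfA_{\lambda+i0}^{d-1}(r_\lambda)u$; such a vector has depth $\geq d-1 \geq 1 \geq \order(u^{(1)}) = 1$, hence lies in $\clLw_{\lambda+i0}(r_\lambda)$. By Proposition~\ref{P: (psi,J psi)=0}, $\scal{u^{(1)}}{Ju^{(1)}} = 0$. Since $J \geq 0$, we may write $J = (\sqrt J)^2$ and conclude $\sqrt J\, u^{(1)} = 0$, hence $J u^{(1)} = 0$. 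But then the resonance equation of order~$1$ for $u^{(1)}$, namely $(1 + (r_\lambda - s)T_{\lambda+i0}(H_s)J)u^{(1)} = 0$, reduces after applying $T_{\lambda+i0}(H_s)J$ and using $Ju^{(1)}=0$ to $u^{(1)} = 0$, contradicting that $u^{(1)}$ is a non-zero resonance vector. Here I would invoke Lemma~\ref{L: j-dimensions coincide}: the map $J\colon \Upsilon^1_{\lambda+i0}(r_\lambda) \to \Psi^1_{\lambda+i0}(r_\lambda)$ is a linear isomorphism, so $Ju^{(1)}=0$ forces $u^{(1)}=0$ directly, which is the cleanest route.

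Thus no resonance vector of order $\geq 2$ can exist, i.e. every real resonance point has order~$1$. The case $z = \lambda - i0$ is handled identically, using $\bfA_{\lambda-i0}(r_\lambda)$ in place of $\bfA_{\lambda+i0}(r_\lambda)$ and the $\lambda-i0$ version of Proposition~\ref{P: (psi,J psi)=0}. If $J$ is non-positive rather than non-negative, the same argument applies to $-J$, since $\scal{u^{(1)}}{(-J)u^{(1)}} = 0$ still yields $\sqrt{-J}\,u^{(1)} = 0$ and hence $Ju^{(1)} = 0$.

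The main obstacle, I expect, is verifying carefully that the order-$1$ vector $u^{(1)} = \bfA_{\lambda+i0}^{d-1}(r_\lambda)u$ is genuinely non-zero and genuinely of order $1$ (not merely of order $\leq 1$, i.e. not zero) — this is where Theorem~\ref{T: Laurent for A(s)psi} is needed, since it guarantees that $\bfA^{d-1}_{z}(r_z)$ lowers the order of a vector of order $d$ exactly to order $d - (d-1) = 1$. Everything else is a short chain of citations: membership in $\clLw$, Proposition~\ref{P: (psi,J psi)=0}, positivity of $J$ to pass from $\scal{u^{(1)}}{Ju^{(1)}}=0$ to $Ju^{(1)}=0$, and then Lemma~\ref{L: j-dimensions coincide} to contradict $u^{(1)}\neq 0$.
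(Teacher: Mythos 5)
Your proof is correct and follows essentially the same route as the paper: produce a non-zero order-$1$ vector lying in the image of a power of $\bfA_{\lambda\pm i0}(r_\lambda)$, conclude $\scal{u^{(1)}}{Ju^{(1)}}=0$, use sign-definiteness of $J$ to get $Ju^{(1)}=0$, and contradict the injectivity of $J$ on $\Upsilon^1_{\lambda\pm i0}(r_\lambda)$ from Lemma~\ref{L: j-dimensions coincide}. The only cosmetic difference is that the paper applies $\bfA_{\lambda+i0}(r_\lambda)$ once to an order-$2$ vector and verifies $\scal{u}{Ju}=0$ by the short adjoint computation, whereas you start from a vector of maximal order $d$ and cite Proposition~\ref{P: (psi,J psi)=0} for the same vanishing.
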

\begin{proof} Assume the contrary: a real resonance point~$r_\lambda$ has order larger than~$1.$
In this case there exists a vector $\phi \in \Upsilon^2_{\lambda + i0}(r_\lambda)$ of order~$2.$
Hence, by Theorem~\ref{T: Laurent for A(s)psi}, the vector $u = \bfA_{\lambda+i0}(r_\lambda)\phi$
is of order~$1$ (and therefore is non-zero) and has depth $\geq~1.$ It follows that
\begin{equation} \label{F: an argument}
  \scal{u}{Ju} = \scal{\bfA_{\lambda+i0}(r_\lambda)\phi}{Ju} = \scal{\phi}{\bfB_{\lambda-i0}(r_\lambda)Ju} = \scal{\phi}{J\bfA_{\lambda-i0}(r_\lambda)u} = 0,
\end{equation}
where the last equality follows from Corollary~\ref{C: Upsilon 1(+)=Upsilon 1(-)}.
Since $J\geq 0$ (or $J\leq 0$), it follows that $J u = 0.$ But this contradicts Lemma~\ref{L: j-dimensions coincide}.
\end{proof}

Even if the operator $J$ is not sign-definite, the resonance matrix $Q_{\lambda-i0}(r_\lambda)JP_{\lambda+i0}(r_\lambda)$ may be sign-definite for some
resonance points~$r_\lambda.$ If this is the case, one may ask whether the conclusion of Proposition~\ref{P: J>0 then res points has order 1} still holds.
In fact, the same argument shows that if the resonance matrix $Q_{\lambda-i0}(r_\lambda)JP_{\lambda+i0}(r_\lambda)$ is non-negative,
then the point~$r_\lambda$ is of type~I.
\begin{prop} \label{P: M>0 then type I} Let $z = \lambda \pm i0 \in \partial \Pi$
and let~$r_\lambda$ be a real resonance point 
corresponding to~$z.$ If the resonance matrix $Q_{\lambda-i0}(r_\lambda)JP_{\lambda+i0}(r_\lambda)$
or $Q_{\lambda+i0}(r_\lambda)JP_{\lambda-i0}(r_\lambda)$ is non-negative or non-positive, then~$r_\lambda$ has order~1.
\end{prop}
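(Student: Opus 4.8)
The plan is to imitate the proof of Proposition~\ref{P: J>0 then res points has order 1}, replacing the global sign-definiteness of $J$ by the (local) sign-definiteness of the resonance matrix. Suppose, for contradiction, that $r_\lambda$ has order $d\geq 2$. Then there exists a resonance vector $\phi\in\Upsilon^2_{\lambda+i0}(r_\lambda)$ of order $2$, and by Theorem~\ref{T: Laurent for A(s)psi} the vector $u=\bfA_{\lambda+i0}(r_\lambda)\phi$ is a nonzero resonance vector of order $1$ which moreover has depth $\geq 1$. The key computation, exactly as in~(\ref{F: an argument}), is
\begin{equation*}
  \scal{u}{Q_{\lambda-i0}(r_\lambda)JP_{\lambda+i0}(r_\lambda)u}
    = \scal{P_{\lambda+i0}(r_\lambda)u}{JP_{\lambda+i0}(r_\lambda)u}
    = \scal{u}{Ju}
    = \scal{\bfA_{\lambda+i0}(r_\lambda)\phi}{Ju},
\end{equation*}
using $P_{\lambda+i0}(r_\lambda)u=u$ (since $u\in\Upsilon_{\lambda+i0}(r_\lambda)$), together with (\ref{F: JP=QJP}) to rewrite $\scal{u}{JP_{\lambda+i0}(r_\lambda)u}$ in terms of $Q_{\lambda-i0}(r_\lambda)J$. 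Now apply (\ref{F: A*(z)=B(bar z)}) and (\ref{F: JA=BJ}):
\begin{equation*}
  \scal{\bfA_{\lambda+i0}(r_\lambda)\phi}{Ju}
    = \scal{\phi}{\bfB_{\lambda-i0}(r_\lambda)Ju}
    = \scal{\phi}{J\bfA_{\lambda-i0}(r_\lambda)u}.
\end{equation*}
Since $u$ has order $1$, it is of type~I by Proposition~\ref{P: euE (Vf)=0, k=1}, so $u\in\Upsilon^1_{\lambda-i0}(r_\lambda)$ by Lemma~\ref{L: type I vectors} (or directly by Corollary~\ref{C: Upsilon 1(+)=Upsilon 1(-)}), whence $\bfA_{\lambda-i0}(r_\lambda)u=0$ by Theorem~\ref{T: Laurent for A(s)psi}. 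Therefore $\scal{u}{Q_{\lambda-i0}(r_\lambda)JP_{\lambda+i0}(r_\lambda)u}=0$.

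Because the resonance matrix $M:=Q_{\lambda-i0}(r_\lambda)JP_{\lambda+i0}(r_\lambda)$ is by hypothesis non-negative or non-positive, the vanishing of $\scal{u}{Mu}$ forces $Mu=0$; indeed, for a self-adjoint $M\geq 0$ one has $\scal{u}{Mu}=\norm{\sqrt{M}u}^2$, so $Mu=0$ (and symmetrically for $M\leq 0$). Thus $u\in\ker M$. On the other hand, $u$ is a nonzero element of $\Upsilon_{\lambda+i0}(r_\lambda)$, so $u\in\im P_{\lambda+i0}(r_\lambda)$, and by Proposition~\ref{P: rank M=rank P} the operator $M$ has rank $N=\rank P_{\lambda+i0}(r_\lambda)$ and (as one reads off from Theorem~\ref{T: property M} and Lemma~\ref{L: j-dimensions coincide}) is injective on $\im P_{\lambda+i0}(r_\lambda)=\Upsilon_{\lambda+i0}(r_\lambda)$. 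This contradicts $Mu=0$ with $u\neq 0$, so $d=1$.

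For the case where instead $Q_{\lambda+i0}(r_\lambda)JP_{\lambda-i0}(r_\lambda)$ is sign-definite, I would run the symmetric argument starting from a vector of order $2$ in $\Upsilon^2_{\lambda-i0}(r_\lambda)$ and using $\bfA_{\lambda-i0}$, $\bfB_{\lambda+i0}$, and the type~I property in the form $\Upsilon^1_{\lambda-i0}(r_\lambda)=\Upsilon^1_{\lambda+i0}(r_\lambda)$; alternatively, one can invoke Corollary~\ref{C: sign M-=sign M+}, which gives that one resonance matrix is sign-definite of a given signature exactly when the other is, reducing the second case to the first. I expect no serious obstacle here: the only point needing a little care is the clean use of Proposition~\ref{P: rank M=rank P} (via Theorem~\ref{T: property M}) to conclude that $M$ is \emph{injective} on the range of $P_{\lambda+i0}(r_\lambda)$ — equivalently that $\ker M\cap\Upsilon_{\lambda+i0}(r_\lambda)=\{0\}$ — since $M$ is not injective on all of $\clK$; this follows because $\rank M=N=\dim\Upsilon_{\lambda+i0}(r_\lambda)$ and $M$ annihilates $\ker P_{\lambda+i0}(r_\lambda)$, which is a complement of $\Upsilon_{\lambda+i0}(r_\lambda)$, so $M|_{\Upsilon_{\lambda+i0}(r_\lambda)}$ already exhausts the rank and hence is injective.
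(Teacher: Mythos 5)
Your proof is correct and follows essentially the same route as the paper: assume order $\geq 2$, take a nonzero order-one vector $u$ of depth $\geq 1$, use the adjoint relations $(\bfA_{\lambda+i0})^*=\bfB_{\lambda-i0}$, $J\bfA=\bfB J$ and $\Upsilon^1_{\lambda+i0}(r_\lambda)=\Upsilon^1_{\lambda-i0}(r_\lambda)$ to get $\scal{u}{Q_{\lambda-i0}(r_\lambda)JP_{\lambda+i0}(r_\lambda)u}=0$, then combine sign-definiteness with the zero-kernel statement of Proposition~\ref{P: rank M=rank P} on $\Upsilon_{\lambda+i0}(r_\lambda)$ to reach a contradiction. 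The only cosmetic point is that the first equality in your display rests on $(P_{\lambda+i0}(r_\lambda))^*=Q_{\lambda-i0}(r_\lambda)$, i.e.\ on~(\ref{F: Pz*=Q(bar z)}) rather than~(\ref{F: JP=QJP}), and your handling of the second resonance matrix (symmetric argument or reduction via Corollary~\ref{C: sign M-=sign M+} together with the rank equality) is fine.
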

\begin{proof} Let for instance $z = \lambda+i0$
and assume the contrary:~$r_\lambda$ has order not less than two.
Then there exists a vector~$u$ of order~$1$ and of depth at least 1.
Since the vector~$u$ has order one, by Corollary~\ref{C: Upsilon 1(+)=Upsilon 1(-)}
we have $$P_{\lambda+i0}(r_\lambda)u = P_{\lambda-i0}(r_\lambda)u = u.$$ Further, we have
\begin{equation*}
  \begin{split}
    \scal{u}{Q_{\lambda-i0}(r_\lambda)JP_{\lambda+i0}(r_\lambda) u} & = \scal{P_{\lambda+i0}(r_\lambda)u}{JP_{\lambda+i0}(r_\lambda)u}
    \\ & = \scal{u}{Ju}.
  \end{split}
\end{equation*}
From the last two equalities, using the argument~(\ref{F: an argument}) of Proposition~\ref{P: J>0 then res points has order 1},
one can infer that $\scal{u}{Q_{\lambda-i0}(r_\lambda)JP_{\lambda+i0}(r_\lambda) u} = 0.$
Since $Q_{\lambda-i0}(r_\lambda)JP_{\lambda+i0}(r_\lambda)$ is non-negative (or non-positive), this implies that $Q_{\lambda-i0}(r_\lambda)JP_{\lambda+i0}(r_\lambda) u = 0.$
On the other hand, by Proposition~\ref{P: rank M=rank P} for all real resonance points, the
restriction of $Q_{\lambda-i0}(r_\lambda)JP_{\lambda+i0}(r_\lambda)$ to the resonance vector space~$\Upsilon_{\lambda+i0}(r_\lambda)$ has zero kernel.
This gives a contradiction.
\end{proof}

\bigskip The following theorem and its corollary Theorem~\ref{T: U-turn for res index} are one of the main results of this paper.

\begin{thm} \label{T: U-turn for res matrix} If~$r_\lambda$ is a real resonance point 
corresponding to~$z = \lambda \pm i0,$ then the absolute value of the signature of the resonance
matrices $Q_{\lambda\mp i0}(r_\lambda)JP_{\lambda\pm i0}(r_\lambda)$ is less or equal to the dimension of the vector space
$\Upsilon_{\lambda+i0}^1(r_\lambda):$
$$
  \abs{\sign Q_{\lambda\mp i0}(r_\lambda)JP_{\lambda\pm i0}(r_\lambda)} \leq \dim \Upsilon_{\lambda+i0}^1(r_\lambda).
$$
\end{thm}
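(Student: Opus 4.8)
The plan is to identify the signature of the resonance matrix $M := Q_{\lambda- i0}(r_\lambda)JP_{\lambda+ i0}(r_\lambda)$ with the signature of a Hermitian form on the finite-dimensional space $\Upsilon_{\lambda+i0}(r_\lambda)$, and then to exhibit inside that space a totally isotropic subspace which is large enough to force the bound. First I would record that $M$ is a finite-rank self-adjoint operator: self-adjointness follows from $(P_{\lambda+i0}(r_\lambda))^* = Q_{\lambda-i0}(r_\lambda)$ (equality~(\ref{F: Pz*=Q(bar z)}), recalling that $r_\lambda$ is real), and by Proposition~\ref{P: rank M=rank P} the rank of $M$ is exactly $N = \rank P_{\lambda+i0}(r_\lambda) = \dim \Upsilon_{\lambda+i0}(r_\lambda)$; moreover, as already used in the proof of Proposition~\ref{P: M>0 then type I}, the restriction of $M$ to $\Upsilon_{\lambda+i0}(r_\lambda)$ has trivial kernel.

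Next I would establish the key identity: for $u_1, u_2 \in \Upsilon_{\lambda+i0}(r_\lambda)$ one has $\scal{u_1}{Mu_2} = \scal{u_1}{Ju_2}$. Indeed $P_{\lambda+i0}(r_\lambda)u_2 = u_2$, so $Mu_2 = Q_{\lambda-i0}(r_\lambda)Ju_2$, and moving the idempotent to the other side via $(Q_{\lambda-i0}(r_\lambda))^* = P_{\lambda+i0}(r_\lambda)$ together with $P_{\lambda+i0}(r_\lambda)u_1 = u_1$ gives $\scal{u_1}{Mu_2} = \scal{P_{\lambda+i0}(r_\lambda)u_1}{Ju_2} = \scal{u_1}{Ju_2}$. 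Now consider the Hermitian form $B(u_1,u_2) = \scal{u_1}{Mu_2}$ on $\Upsilon_{\lambda+i0}(r_\lambda)$. Using the orthogonal decomposition $\clK = \ker M \oplus \im M$ (valid since $M$ is self-adjoint), the global form $\scal{\cdot}{M\cdot}$ on $\clK$ is the pullback, along the orthogonal projection $\pi$ onto $\im M$, of the nondegenerate form $\scal{\cdot}{M\cdot}\big|_{\im M}$, whose signature is $\sign M$ by definition. Since $\dim \Upsilon_{\lambda+i0}(r_\lambda) = \dim \im M = N$ and $\Upsilon_{\lambda+i0}(r_\lambda) \cap \ker M = \set{0}$, the restriction of $\pi$ to $\Upsilon_{\lambda+i0}(r_\lambda)$ is a linear isomorphism onto $\im M$; hence $B$ is nondegenerate and $\sign B = \sign M$.

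Finally I would invoke the last two statements of this section. By Proposition~\ref{P: (psi,J psi)=0}, $\scal{u_1}{Ju_2} = 0$ for all $u_1, u_2 \in \clLw_{\lambda+i0}(r_\lambda)$, so by the key identity $\clLw_{\lambda+i0}(r_\lambda)$ is a totally isotropic subspace of $(\Upsilon_{\lambda+i0}(r_\lambda), B)$; by Lemma~\ref{L: 2*dim L+m > dim Ups} its dimension $\ell$ satisfies $2\ell + m \geq N$, where $m = \dim \Upsilon^1_{\lambda+i0}(r_\lambda)$. For a nondegenerate Hermitian form on an $N$-dimensional space with $p$ positive and $q$ negative squares ($p+q = N$), every totally isotropic subspace has dimension at most $\min(p,q)$ (project it onto the positive, resp. negative, part and use nondegeneracy there); applying this to $\clLw_{\lambda+i0}(r_\lambda)$ gives $\ell \leq \min(p,q)$, whence $\abs{\sign M} = \abs{p-q} = N - 2\min(p,q) \leq N - 2\ell \leq m$. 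This proves the bound for $Q_{\lambda-i0}(r_\lambda)JP_{\lambda+i0}(r_\lambda)$; the bound for $Q_{\lambda+i0}(r_\lambda)JP_{\lambda-i0}(r_\lambda)$ then follows immediately because the two signatures coincide by Corollary~\ref{C: sign M-=sign M+} (alternatively, the argument applies verbatim with the roles of $\lambda\pm i0$ interchanged, using $\Upsilon^1_{\lambda+i0}(r_\lambda) = \Upsilon^1_{\lambda-i0}(r_\lambda)$ from Corollary~\ref{C: Upsilon 1(+)=Upsilon 1(-)}).

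The step I expect to require the most care is the passage from the signature of the operator $M$ to the signature of the form $B$ on $\Upsilon_{\lambda+i0}(r_\lambda)$: it hinges on $B$ being nondegenerate, and hence on the transversality $\Upsilon_{\lambda+i0}(r_\lambda) \cap \ker M = \set{0}$, i.e. ultimately on Theorem~\ref{T: property M} via Proposition~\ref{P: rank M=rank P}. Everything else is elementary linear algebra once Proposition~\ref{P: (psi,J psi)=0} and Lemma~\ref{L: 2*dim L+m > dim Ups} are available, and those two statements (the isotropy of $\clLw$ and the counting of Jordan cells) carry the real content.
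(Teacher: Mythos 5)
Your proof is correct and follows essentially the same route as the paper's: both arguments rest on Proposition~\ref{P: rank M=rank P} (full rank of the resonance matrix on $\Upsilon_{\lambda+i0}(r_\lambda)$), the identity $\scal{u_1}{Q_{\lambda-i0}(r_\lambda)JP_{\lambda+i0}(r_\lambda)u_2}=\scal{u_1}{Ju_2}$ on $\Upsilon_{\lambda+i0}(r_\lambda)$, the isotropy of $\clLw_{\lambda+i0}(r_\lambda)$ from Proposition~\ref{P: (psi,J psi)=0}, and the dimension count of Lemma~\ref{L: 2*dim L+m > dim Ups}. The only difference is packaging: where the paper argues by contradiction with a vector in $\clLw_{\lambda+i0}(r_\lambda)$ whose positive part vanishes, you transfer $\sign$ of the resonance matrix to a nondegenerate Hermitian form on $\Upsilon_{\lambda+i0}(r_\lambda)$ and invoke the standard bound on totally isotropic subspaces, which is the same argument in cleaner linear-algebra form.
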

\begin{proof}
We prove this for the operator $Q_{\lambda-i0}(r_\lambda)JP_{\lambda+i0}(r_\lambda).$
Let $\mu_+$ respectively, $\mu_-$ be the rank of the positive respectively, negative part of $Q_{\lambda-i0}(r_\lambda)JP_{\lambda+i0}(r_\lambda).$
Assume contrary to the claim, that is,
$$
  \abs{\mu_+ - \mu_-} > m,
$$ where $m = \dim \Upsilon_{\lambda+i0}^1(r_\lambda).$
By Proposition~\ref{P: rank M=rank P},
$$
  \mu_+ + \mu_- = N \:= \dim \Upsilon_{\lambda+i0}(r_\lambda).
$$
This equality combined with the previous inequality imply that either $\mu_+$ or $\mu_-$
is less than $(N-m)/2.$
Since by Lemma~\ref{L: 2*dim L+m > dim Ups}
$$
  (N-m)/2 \leq \dim \clLw_{\lambda+i0}(r_\lambda)
$$ we conclude that either $\mu_+$ or $\mu_-$ is less than
$$
  \dim \clLw_{\lambda+i0}(r_\lambda)=:p.
$$
Without loss of generality it can be assumed that it is the rank $\mu_+$ of the positive part of $Q_{\lambda-i0}(r_\lambda)JP_{\lambda+i0}(r_\lambda)$ which is less than~$p.$
Let $u_1, \ldots, u_p$ be a basis of the vector space $\clLw_{\lambda+i0}(r_\lambda).$
Since $\mu_+ < p,$ there exists a non-zero linear combination $u = \alpha_1u_1 + \ldots + \alpha_pu_p \in \clLw_{\lambda+i0}(r_\lambda)$
whose positive part with respect to $Q_{\lambda-i0}(r_\lambda)JP_{\lambda+i0}(r_\lambda)$ is zero.
Since $u_1,\ldots,u_p \in \clLw_{\lambda+i0}(r_\lambda),$ 
it follows from Proposition~\ref{P: (psi,J psi)=0} that
\begin{equation} \label{F: (u,Ju)=0}
  \scal{u}{Ju} = \sum_{i=1}^p \sum_{j=1}^p \bar \alpha_i \alpha_j \scal{u_i}{Ju_j} = 0.
\end{equation}
Since, by Proposition~\ref{P: rank M=rank P}, the restriction
of the operator $Q_{\lambda-i0}(r_\lambda)JP_{\lambda+i0}(r_\lambda)$ to~$\Upsilon_{\lambda+i0}$ has zero kernel
and since the positive part of~$u$ with respect to $Q_{\lambda-i0}(r_\lambda)JP_{\lambda+i0}(r_\lambda)$ is zero,
it follows that the negative part of~$u$ with respect to $Q_{\lambda-i0}(r_\lambda)JP_{\lambda+i0}(r_\lambda)$ is non-zero.
Hence,
$$
  \scal{u}{Ju} = \scal{P_{\lambda+i0}(r_\lambda)u}{JP_{\lambda+i0}(r_\lambda)u} = \scal{u}{Q_{\lambda-i0}(r_\lambda)JP_{\lambda+i0}(r_\lambda)u} < 0.
$$
This contradicts~(\ref{F: (u,Ju)=0}).
\end{proof}

The following theorem follows immediately from Theorems~\ref{T: U-turn for res matrix} and~\ref{T: res.ind=sign res.matrix}.
\begin{thm} \label{T: U-turn for res index} (U-turn theorem) For any real resonance point~$r_\lambda$ with property $L$ the
absolute value of the resonance index is less or equal to the dimension of the vector space~$\Upsilon^1_{\lambda+i0}(r_\lambda):$
$$
  \abs{\ind_{res}(\lambda; H_{r_\lambda}, V)} \leq \dim \Upsilon^1_{\lambda+i0}(r_\lambda).
$$
\end{thm}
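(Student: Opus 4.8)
The plan is to obtain the U-turn theorem as an immediate consequence of the two preceding results, Theorem~\ref{T: res.ind=sign res.matrix} and Theorem~\ref{T: U-turn for res matrix}, so that all the genuine work has already been carried out in their proofs. First I would invoke Theorem~\ref{T: res.ind=sign res.matrix}, which identifies the integer $\ind_{res}(\lambda; H_{r_\lambda},V)$ with the signature of the finite-rank self-adjoint resonance matrix $Q_{\lambda\mp i0}(r_\lambda)JP_{\lambda\pm i0}(r_\lambda)$, both sign choices yielding the same value. Then I would invoke Theorem~\ref{T: U-turn for res matrix}, which bounds the absolute value of that signature by $\dim\Upsilon^1_{\lambda+i0}(r_\lambda)$. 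Chaining these gives
$$
  \abs{\ind_{res}(\lambda; H_{r_\lambda},V)} = \abs{\sign\brs{Q_{\lambda\mp i0}(r_\lambda)JP_{\lambda\pm i0}(r_\lambda)}} \leq \dim\Upsilon^1_{\lambda+i0}(r_\lambda),
$$
which is the assertion; no further step is needed. I would also remark that the hypothesis ``with property $L$'' in the statement is present only to match the hypothesis actually used in the chain; since the vector space $\clLw_{\lambda+i0}(r_\lambda)$ is defined for every resonance point, the inequality in fact holds for all real resonance points, in agreement with Theorem~\ref{I:T: U-turn for res index}.

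It is worth recording where the substance sits, since the final step is merely a composition of two established facts. The identification in Theorem~\ref{T: res.ind=sign res.matrix} reduces, via Lemma~\ref{L: sign(M)=sign(M(y))} (stability of the signature of the resonance matrix as $y\to 0^+$), to the main technical Theorem~\ref{T: sign of res matrix for set of res points}, whose proof rests on the positive-definiteness computation of Lemma~\ref{L: a bit technical lemma} together with the Laurent-coefficient identities for $A_z(s)$ and the $R$-index calculus of Lemma~\ref{L: Rindex(AB)=Rindex(BA)} and Corollary~\ref{C: to Krein's thm}. The bound in Theorem~\ref{T: U-turn for res matrix} combines two ingredients: Proposition~\ref{P: rank M=rank P}, which asserts that the resonance matrix restricted to $\Upsilon_{\lambda\pm i0}(r_\lambda)$ has full rank $N$, and Proposition~\ref{P: (psi,J psi)=0}, which asserts that the form $(u_1,u_2)\mapsto\scal{u_1}{Ju_2}$ vanishes identically on the span $\clLw_{\lambda+i0}(r_\lambda)$ of resonance vectors whose depth is at least their order; these are then fed into the dimension count $2\dim\clLw_{\lambda+i0}(r_\lambda)+\dim\Upsilon^1_{\lambda+i0}(r_\lambda)\geq\dim\Upsilon_{\lambda+i0}(r_\lambda)$ of Lemma~\ref{L: 2*dim L+m > dim Ups}.

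Accordingly, the real obstacle lies not in the present theorem but in Theorem~\ref{T: U-turn for res matrix}: one must rule out $\abs{\mu_+-\mu_-}>m$, where $\mu_\pm$ are the ranks of the positive and negative parts of the resonance matrix and $m=\dim\Upsilon^1_{\lambda+i0}(r_\lambda)$. Since $\mu_++\mu_-=N$ by Proposition~\ref{P: rank M=rank P}, the assumption forces one of $\mu_\pm$ to be strictly smaller than $(N-m)/2\leq\dim\clLw_{\lambda+i0}(r_\lambda)$, so there is a nonzero vector $u\in\clLw_{\lambda+i0}(r_\lambda)$ whose positive (say) part under the resonance matrix vanishes; then $\scal{u}{Ju}=0$ by Proposition~\ref{P: (psi,J psi)=0}, whereas non-degeneracy on $\Upsilon_{\lambda+i0}(r_\lambda)$ (again Proposition~\ref{P: rank M=rank P}) forces the negative part of $u$ to be nonzero, so $\scal{u}{Ju}=\scal{u}{Q_{\lambda-i0}(r_\lambda)JP_{\lambda+i0}(r_\lambda)u}<0$, a contradiction. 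Once Theorems~\ref{T: res.ind=sign res.matrix} and~\ref{T: U-turn for res matrix} are in hand, the proof of the U-turn theorem itself is the one-line chaining displayed above.
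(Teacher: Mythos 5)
Your proof is correct and is exactly the paper's argument: the paper derives the U-turn theorem immediately by chaining Theorem~\ref{T: res.ind=sign res.matrix} with Theorem~\ref{T: U-turn for res matrix}, just as you do. Your remark about the ``property $L$'' hypothesis is also consistent with the paper, since both cited theorems hold for arbitrary real resonance points and the introduction states the result without that hypothesis.
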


\begin{cor} \label{C: U-turn for positive V} If the perturbation~$V$ is non-negative or non-positive, then
the absolute value of the resonance index $\ind_{res}(\lambda; H_{r_\lambda}, V)$ is equal to the dimension
of the vector space~$\Upsilon^1_{\lambda+i0}(r_\lambda).$
\end{cor}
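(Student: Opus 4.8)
Theorem~\ref{T: U-turn for res index} already gives the inequality $\abs{\ind_{res}(\lambda; H_{r_\lambda}, V)} \leq \dim \Upsilon^1_{\lambda+i0}(r_\lambda)$, so it remains only to produce the reverse inequality under the sign-definiteness hypothesis. In fact, under that hypothesis I expect to get the sharper statement that $\ind_{res}(\lambda; H_{r_\lambda},V)$ equals $\pm N$ \emph{on the nose}, where $N = \dim \Upsilon_{\lambda+i0}(r_\lambda)$ is the algebraic multiplicity of $r_\lambda$; the equality with $\dim\Upsilon^1_{\lambda+i0}(r_\lambda)$ then follows because sign-definiteness forces $r_\lambda$ to have order $1$. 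So the plan is a direct computation of $N_+$ and $N_-$ rather than an appeal to the U-turn bound.

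\textbf{Key steps.} First I would reduce to the case $V \geq 0$ (hence $J \geq 0$, by Lemma~\ref{L: sign(M)=sign(FMF)}), the case $V \leq 0$ being obtained by the symmetric argument (or by replacing $V$ with $-V$ and tracking the sign). Next, by Proposition~\ref{P: J>0 then res points has order 1} every real resonance point, and in particular $r_\lambda$, has order $1$; hence $\Upsilon_{\lambda+i0}(r_\lambda) = \Upsilon^1_{\lambda+i0}(r_\lambda)$ and so $N = \dim \Upsilon^1_{\lambda+i0}(r_\lambda)$. Then I would fix a small $y>0$ and a real non-resonant $s$, and invoke Lemma~\ref{L: Az Lemma 4.1.4}: since $V \geq 0$ and $\Im(\lambda+iy) > 0$, every nonzero eigenvalue of the compact operator $A_{\lambda+iy}(s)$ lies in the open upper half-plane $\mathbb{C}_+$. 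The eigenvalues coming from the group of $r_\lambda$ are the numbers $\sigma^\nu_{\lambda+iy}(s) = (s - r^\nu_{\lambda+iy})^{-1}$ with $s$ real, and since $s - r \in \mathbb{C}_-$ exactly when $r \in \mathbb{C}_+$, the relation~(\ref{F: sigma z(s)=(s-rz)(-1)}) shows $\sigma^\nu_{\lambda+iy}(s) \in \mathbb{C}_+$ if and only if $r^\nu_{\lambda+iy} \in \mathbb{C}_+$. Therefore all $N$ poles of the group of $r_\lambda$ are up-points, i.e.\ $N_+ = N$ and $N_- = 0$, and by the definition of the resonance index in Subsection~\ref{SS: res index} we get $\ind_{res}(\lambda; H_{r_\lambda},V) = N_+ - N_- = N = \dim \Upsilon^1_{\lambda+i0}(r_\lambda)$. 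For $V \leq 0$ the same lemma places all these eigenvalues in $\mathbb{C}_-$, whence $\ind_{res}(\lambda; H_{r_\lambda},V) = -N$; in both cases $\abs{\ind_{res}(\lambda; H_{r_\lambda}, V)} = \dim \Upsilon^1_{\lambda+i0}(r_\lambda)$.

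\textbf{Obstacles and alternatives.} There is no serious obstacle here; the substantive content is already contained in Lemma~\ref{L: Az Lemma 4.1.4} and Proposition~\ref{P: J>0 then res points has order 1}. The only point requiring a little care is the bookkeeping: making sure the map $r \mapsto (s-r)^{-1}$ genuinely interchanges the two open half-planes for real $s$, and that one may take $y>0$ small enough that the group of $r_\lambda$ has exactly $N$ poles (counted with multiplicity) with none escaping to $\infty$, which is precisely the stability used when $P_{\lambda\pm iy}(r_\lambda)$ was defined. As an alternative route one could instead combine Theorem~\ref{T: res.ind=sign res.matrix} with Theorem~\ref{T: res matrix is positive for set of up-points} and Lemma~\ref{L: sign(M)=sign(M(y))}: since for $V \geq 0$ the whole group of $r_\lambda$ consists of up-points, the resonance matrix $Q_{\lambda-iy}(r_\lambda)JP_{\lambda+iy}(r_\lambda)$ is non-negative of rank $N$, so its signature is $N$, hence $\ind_{res}(\lambda;H_{r_\lambda},V) = N = \dim\Upsilon^1_{\lambda+i0}(r_\lambda)$, and again the case $V \leq 0$ is symmetric.
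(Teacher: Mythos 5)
Your proof is correct, and your primary argument takes a genuinely different, more elementary route than the paper's. The paper deduces the corollary from Theorem~\ref{T: res.ind=sign res.matrix}: the resonance index equals the signature of the resonance matrix $Q_{\lambda-i0}(r_\lambda)JP_{\lambda+i0}(r_\lambda)$, which by Proposition~\ref{P: rank M=rank P} has rank $N=\dim\Upsilon_{\lambda+i0}(r_\lambda)$ and which is sign-definite when $J$ is, so its signature is $\pm N$; Proposition~\ref{P: J>0 then res points has order 1} then identifies $N$ with $\dim\Upsilon^1_{\lambda+i0}(r_\lambda)$, exactly as in your final step. Your main route instead computes $N_\pm$ straight from the definition in Subsection~\ref{SS: res index}: Lemma~\ref{L: Az Lemma 4.1.4} places every eigenvalue of $A_{\lambda+iy}(s)$ in $\mbC_+$ when $V\geq 0$ and $y>0$, and since for real $s$ one has $(s-r)^{-1}\in\mbC_+$ if and only if $r\in\mbC_+$, all poles of the group of $r_\lambda$ are up-points, so $N_-=0$ and $\ind_{res}(\lambda;H_{r_\lambda},V)=N$. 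This bypasses Theorem~\ref{T: res.ind=sign res.matrix} entirely (one of the main results of the paper, whose proof rests on the heavy Theorem~\ref{T: sign of res matrix for set of res points}), buying a short, self-contained argument in the sign-definite case, whereas the paper's route costs more machinery but exhibits the corollary as an instance of the general signature identity; your ``alternative route'' via Theorems~\ref{T: res.ind=sign res.matrix} and~\ref{T: res matrix is positive for set of up-points} is essentially the paper's proof. One citation quibble: the implication $V\geq 0\Rightarrow J\geq 0$ is not Lemma~\ref{L: sign(M)=sign(FMF)} (which concerns finite-rank $M\mapsto F^*MF$); it follows instead from $\scal{Fu}{JFu}=\scal{u}{Vu}\geq 0$ together with density of the range of $F$, or you can sidestep it altogether, since Lemma~\ref{L: Az Lemma 4.1.4} is already stated for sign-definite $V$ and the paper itself invokes Proposition~\ref{P: J>0 then res points has order 1} under the hypothesis that $V$ is sign-definite.
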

\begin{proof} By Theorem~\ref{T: res.ind=sign res.matrix}, the resonance index $\ind_{res}(\lambda; H_{r_\lambda}, V)$
is equal to the signature of the resonance matrix $Q_{\lambda-i0}(r_\lambda)JP_{\lambda+i0}(r_\lambda).$
By Proposition~\ref{P: rank M=rank P}, dimension~$N$ of $\Upsilon_{\lambda+i0}(r_\lambda)$ is equal to the rank of the resonance matrix.
Since the resonance matrix is also non-negative or non-positive
it follows that the signature of the resonance matrix is equal to~$N$ or $-N.$
Finally, since~$V$ is non-negative or non-positive, by Proposition~\ref{P: J>0 then res points has order 1}
the vector space $\Upsilon_{\lambda+i0}(r_\lambda)$ coincides with $\Upsilon^1_{\lambda+i0}(r_\lambda),$
and therefore
$$
  \abs{\sign Q_{\lambda-i0}(r_\lambda)JP_{\lambda+i0}(r_\lambda)} = N = \dim \Upsilon_{\lambda+i0}(r_\lambda) = \dim \Upsilon^1_{\lambda+i0}(r_\lambda).
$$
\end{proof}

Theorem~\ref{T: U-turn for res matrix} and Proposition~\ref{P: rank M=rank P} imply the following.
\begin{cor} 
Let $z= \lambda\pm i0 \in \partial \Pi.$ Assume that a real resonance point~$r_\lambda$
corresponding to~$z$ has the geometric multiplicity~$m=1.$
If the order of~$r_\lambda$ is even, then the signature of the resonance matrix
$Q_{\lambda-i0}(r_\lambda)JP_{\lambda+i0}(r_\lambda)$
is equal to zero; if the order of~$r_\lambda$ is odd, then
the signature of the resonance matrix is equal to $+1$ or $-1.$
\end{cor}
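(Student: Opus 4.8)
The plan is to reduce this to the U-turn Theorem~\ref{T: U-turn for res matrix} together with the rank identity of Proposition~\ref{P: rank M=rank P}, keeping the combinatorial bookkeeping for the case $m=1$ completely explicit. First I would record the two numbers we are comparing: by Proposition~\ref{P: rank M=rank P} the rank $N$ of the resonance matrix $M := Q_{\lambda-i0}(r_\lambda)JP_{\lambda+i0}(r_\lambda)$ equals the order $d$ of $r_\lambda$ (since $m=1$ forces $\dim\Upsilon_{\lambda+i0}(r_\lambda)=d$), so $\mu_++\mu_-=d$ where $\mu_\pm$ are the ranks of the positive and negative parts of $M$; and by Theorem~\ref{T: U-turn for res matrix} we have $|\sign M| = |\mu_+-\mu_-| \leq \dim\Upsilon_{\lambda+i0}^1(r_\lambda) = m = 1.$

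Next I would combine these two facts. From $\mu_++\mu_-=d$ and $|\mu_+-\mu_-|\leq 1$ it follows that $\mu_+$ and $\mu_-$ differ by exactly the parity of $d$: if $d$ is even then $\mu_+-\mu_-$ is even and bounded by $1$ in absolute value, hence $\mu_+-\mu_-=0$, i.e.\ $\sign M = 0$; if $d$ is odd then $\mu_+-\mu_-$ is odd and bounded by $1$ in absolute value, hence $\mu_+-\mu_- = \pm 1$, i.e.\ $\sign M = \pm 1$. This is the whole content of the statement, so the ``proof'' is essentially this parity observation once the two inputs are in place. I would also note, for completeness, that $d$ cannot be zero here (a resonance point has order at least $1$ by definition), so the case $d$ odd genuinely gives $\pm 1$ rather than $0$.

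The only step that requires any care is making sure the hypotheses of Theorem~\ref{T: U-turn for res matrix} and Proposition~\ref{P: rank M=rank P} are actually available: both need $z = \lambda\pm i0 \in \partial\Pi$ and $r_\lambda$ a real resonance point corresponding to $z$, which are precisely the standing hypotheses of the corollary, so nothing extra is needed. One subtlety worth a sentence: Theorem~\ref{T: U-turn for res index} as stated carries the qualifier ``with property $L$'', but Theorem~\ref{T: U-turn for res matrix} itself has no such restriction, and the corollary is phrased for the resonance \emph{matrix} signature, so I would invoke Theorem~\ref{T: U-turn for res matrix} directly rather than routing through the resonance index; alternatively, since $m=1$ implies property $C$ hence property $U$, and in fact with $m=1$ every Jordan diagram is a single column so property $L$ is automatic, one could equally cite Theorem~\ref{T: U-turn for res index}. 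I expect no real obstacle — the content is entirely in the two cited results, and the corollary is just the arithmetic of ``$a+b=d$, $|a-b|\leq 1$''.
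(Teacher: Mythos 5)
Your proposal is correct and matches the paper's own (unspelled-out) derivation: the paper states the corollary as a direct consequence of Theorem~\ref{T: U-turn for res matrix} and Proposition~\ref{P: rank M=rank P}, and your argument supplies exactly the missing bookkeeping, namely that $m=1$ gives a single Jordan block so $N=d$, and the parity of $\mu_+-\mu_-$ equals that of $\mu_++\mu_-=d$ while $|\mu_+-\mu_-|\leq 1$. Your remark about bypassing the ``property $L$'' qualifier in Theorem~\ref{T: U-turn for res index} by citing Theorem~\ref{T: U-turn for res matrix} directly is also the right call.
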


\begin{cor} Let~$r_\lambda$ be a real resonance point.
If one of the numbers $N_+$ or $N_-$ from the definition~(\ref{F: ind res=N-N}) of resonance index is zero, then~$r_\lambda$ has order 1.
\end{cor}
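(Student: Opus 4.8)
The statement to be proved is the final corollary: if one of $N_+$ or $N_-$ from the definition~(\ref{F: ind res=N-N}) of the resonance index equals zero, then $r_\lambda$ has order~$1$.

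\medskip

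\textbf{The plan.} The idea is to show that when, say, $N_- = 0$, the resonance matrix $Q_{\lambda-i0}(r_\lambda)JP_{\lambda+i0}(r_\lambda)$ must be non-negative (and when $N_+=0$, non-positive), and then invoke Proposition~\ref{P: M>0 then type I}, which asserts precisely that sign-definiteness of the resonance matrix forces $r_\lambda$ to have order~$1$. First I would recall from Theorem~\ref{T: res.ind=sign res.matrix} that $\sign(Q_{\lambda-i0}(r_\lambda)JP_{\lambda+i0}(r_\lambda)) = \ind_{res}(\lambda; H_{r_\lambda},V) = N_+ - N_-$, and from Proposition~\ref{P: rank M=rank P} that the rank of this operator is $N = N_+ + N_-$, where $N = \dim \Upsilon_{\lambda+i0}(r_\lambda)$. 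Now if $N_- = 0$, then the signature equals $N_+$ and the rank equals $N_+$, which means the positive part has rank $N_+$ and the negative part has rank $0$; hence the operator $Q_{\lambda-i0}(r_\lambda)JP_{\lambda+i0}(r_\lambda)$ is non-negative. By Proposition~\ref{P: M>0 then type I}, $r_\lambda$ then has order~$1$. Symmetrically, if $N_+ = 0$ the operator is non-positive and the same proposition applies.

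\medskip

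\textbf{The main obstacle.} The one point requiring a little care is that Proposition~\ref{P: M>0 then type I} is stated for the case $z = \lambda+i0$ but ``a real resonance point has order~$1$'' is a property of $r_\lambda$ that does not reference $\pm$; indeed the order of $r_\lambda$ as a resonance point corresponding to $\lambda+i0$ coincides with its order corresponding to $\lambda-i0$ by Corollary~\ref{C: r(bar z)=bar r(z)} (since $r_\lambda$ is real, $\bar r_\lambda = r_\lambda$). So the statement ``$r_\lambda$ has order~$1$'' is unambiguous and the proposition applies directly. A second, purely bookkeeping issue is to make sure the equivalence $N_+ = \rank(\text{positive part})$, $N_- = \rank(\text{negative part})$ is legitimate — this follows from the proof of Theorem~\ref{T: res.ind=sign res.matrix} (or more directly Theorem~\ref{T: sign of res matrix for set of res points}), where it is shown that these ranks are exactly $N_+$ and $N_-$. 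Given all of this, the proof is essentially a two-line deduction, so no substantial obstacle remains; the work has all been done in the preceding sections.

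\medskip

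\textbf{Summary of steps.} (1) Recall $\sign M = N_+ - N_-$ and $\rank M = N_+ + N_-$ for $M := Q_{\lambda-i0}(r_\lambda)JP_{\lambda+i0}(r_\lambda)$, from Theorems~\ref{T: res.ind=sign res.matrix}, \ref{T: sign of res matrix for set of res points} and Proposition~\ref{P: rank M=rank P}. (2) If $N_- = 0$, conclude $\rank M_- = 0$, i.e. $M \geq 0$; if $N_+ = 0$, conclude $M \leq 0$. (3) Apply Proposition~\ref{P: M>0 then type I} to get that $r_\lambda$ has order~$1$, noting that this conclusion is independent of the $\pm$ choice by Corollary~\ref{C: r(bar z)=bar r(z)}.
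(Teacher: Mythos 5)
Your proof is correct and follows essentially the same route as the paper: both use Theorem~\ref{T: res.ind=sign res.matrix} together with Proposition~\ref{P: rank M=rank P} to conclude that vanishing of $N_+$ or $N_-$ forces $|\sign M| = \rank M$, hence sign-definiteness of the resonance matrix, and then apply Proposition~\ref{P: M>0 then type I}. The only difference is cosmetic bookkeeping (your explicit appeal to Theorem~\ref{T: sign of res matrix for set of res points} for the ranks of $M_\pm$), so nothing further is needed.
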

\begin{proof} By Theorem~\ref{T: res.ind=sign res.matrix},
the resonance index $N_+-N_-$ of~$r_\lambda$ is equal to the signature of the self-adjoint operator $Q_{\lambda-i0}(r_\lambda)JP_{\lambda+i0}(r_\lambda).$
Hence, since one of the numbers $N_+$ or $N_-$ is zero, the signature of the self-adjoint operator $Q_{\lambda-i0}(r_\lambda)JP_{\lambda+i0}(r_\lambda)$
is equal to either~$N$ or $-N,$ where~$N$ is equal to the rank of $Q_{\lambda-i0}(r_\lambda)JP_{\lambda+i0}(r_\lambda).$
Hence, either $Q_{\lambda-i0}(r_\lambda)JP_{\lambda+i0}(r_\lambda)$ is non-positive or it is non-negative.
Therefore, Proposition~\ref{P: M>0 then type I} implies that~$r_\lambda$ has order 1.
\end{proof}


\section{Order preserving property of $P_{\lambda \pm i0}(r_\lambda) \colon \Upsilon_{\lambda \mp i0}(r_\lambda) \to \Upsilon_{\lambda \pm i0}(r_\lambda)$}
\label{S: property U}


The main result of this subsection is Theorem \ref{T: Property U} which asserts that if the geometric multiplicity of a real resonance point is equal to 1 then the mappings
$P_{\lambda \pm i0}(r_\lambda) \colon \Upsilon_{\lambda \mp i0}(r_\lambda) \to \Upsilon_{\lambda \pm i0}(r_\lambda)$
preserve order of resonance vectors.
Along the way we prove some properties of operators $P_{\lambda \pm i0}(r_\lambda)$ and $\bfA_{\lambda \pm i0}(r_\lambda)$ which seem to be interesting on their own.

%
\begin{prop} \label{P: delta JP delta=0} For any non-resonance point $r\in\mbR$ and any real resonance point~$r_\lambda \in \mbR$
\begin{equation} \label{F: delta JP delta=0}
  \sqrt{\Im T_{\lambda+i0}(H_r)} J P_{\lambda\pm i0}(r_\lambda) \sqrt{\Im T_{\lambda+i0}(H_r)} = 0
\end{equation}
and for all $j=1,2,\ldots$
\begin{equation} \label{F: delta JA(j)delta=0}
  \sqrt{\Im T_{\lambda+i0}(H_r)} J \bfA_{\lambda\pm i0}^j(r_\lambda) \sqrt{\Im T_{\lambda+i0}(H_r)} = 0.
\end{equation}
\end{prop}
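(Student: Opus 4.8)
The plan is to deduce these identities from the fact that the modified scattering matrix has no pole at a real resonance point, using the stationary formula of Theorem~\ref{T: Theorem 7.2.2 of Az3} together with the Laurent expansion~(\ref{F: Laurent for A+(s)}) of $A_{\lambda+i0}(s)$. First I would reduce everything to a single family. Write $\Delta_r := \Im T_{\lambda+i0}(H_r)$; since $r$ is non-resonant this limit exists in norm and, being a limit of the positive operators $\Im T_{\lambda+iy}(H_r)$, it is a non-negative compact operator, so $\sqrt{\Delta_r}$ is a well-defined compact operator. With the convention $\bfA^0_{\lambda+i0}(r_\lambda):=P_{\lambda+i0}(r_\lambda)$ (see~(\ref{F: P=bfA to 0})) it suffices to prove
\[
  \sqrt{\Delta_r}\,J\,\bfA^j_{\lambda+i0}(r_\lambda)\,\sqrt{\Delta_r}=0 \qquad (j=0,1,2,\ldots),
\]
because the $\lambda-i0$ versions are exactly the adjoints of these: using $(\bfA^j_{\lambda+i0}(r_\lambda))^*=\bfB^j_{\lambda-i0}(r_\lambda)$ and $(P_{\lambda+i0}(r_\lambda))^*=Q_{\lambda-i0}(r_\lambda)$ (from~(\ref{F: A*(z)=B(bar z)}),~(\ref{F: Pz*=Q(bar z)})), then $\bfB^j_{\lambda-i0}(r_\lambda)J=J\bfA^j_{\lambda-i0}(r_\lambda)$ and $Q_{\lambda-i0}(r_\lambda)J=JP_{\lambda-i0}(r_\lambda)$ (from~(\ref{F: JA=BJ}),~(\ref{F: JP=QJ})), together with self-adjointness of $J$ and of $\sqrt{\Delta_r}$.

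Next I would bring in the scattering matrix. Taking $H_r$ as base operator and $H_s=H_r+(s-r)V$, Theorem~\ref{T: Theorem 7.2.2 of Az3} gives, for every non-resonant $s$,
\[
  S(\lambda;H_s,H_r)=1-2i\,\sqrt{\Delta_r}\,(s-r)J\bigl(1+(s-r)A_{\lambda+i0}(r)\bigr)^{-1}\sqrt{\Delta_r}.
\]
The identity $\bigl(1+(s-r)A_{\lambda+i0}(r)\bigr)^{-1}=1-(s-r)A_{\lambda+i0}(s)$ is immediate from~(\ref{F: A(s)=(1+(s-r)A(r))(-1)A(r)}) (multiply that relation by $s-r$ and rearrange, using~(\ref{F: II resolvent identity})), so
\[
  S(\lambda;H_s,H_r)=\bigl(1-2i(s-r)\sqrt{\Delta_r}J\sqrt{\Delta_r}\bigr)+2i(s-r)^2\,\sqrt{\Delta_r}\,J\,A_{\lambda+i0}(s)\,\sqrt{\Delta_r}.
\]
The bracketed term is entire in $s$, while by Proposition~\ref{P: 7.2.5 Az3} (or the corresponding analyticity result in the present generality) the left-hand side, viewed as a function of the coupling constant, is analytic at $s=r_\lambda$; since $r\neq r_\lambda$, the function $s\mapsto \sqrt{\Delta_r}\,J\,A_{\lambda+i0}(s)\,\sqrt{\Delta_r}$ is therefore holomorphic at $s=r_\lambda$. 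Sandwiching the Laurent expansion~(\ref{F: Laurent for A+(s)}) of $A_{\lambda+i0}(s)$ between $\sqrt{\Delta_r}J$ and $\sqrt{\Delta_r}$ and discarding the holomorphic part $\tilde A_{\lambda+i0,r_\lambda}(s)$, the principal part at $r_\lambda$ equals $\sum_{j=0}^{d-1}(s-r_\lambda)^{-j-1}\sqrt{\Delta_r}J\bfA^j_{\lambda+i0}(r_\lambda)\sqrt{\Delta_r}$ with $d$ the order of $r_\lambda$; holomorphy forces all these coefficients to vanish, which is the required family for $0\le j\le d-1$, and for $j\ge d$ it holds trivially by nilpotency of $\bfA_{\lambda+i0}(r_\lambda)$.

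The hard part is the non-singularity input from Proposition~\ref{P: 7.2.5 Az3}: without it nothing forces the cancellation, since $\sqrt{\Delta_r}JA_{\lambda+i0}(s)\sqrt{\Delta_r}$ plainly inherits the pole of $A_{\lambda+i0}(s)$, and indeed the un-sandwiched operators $\sqrt{\Delta_r}J\bfA^j_{\lambda+i0}(r_\lambda)$ are not zero in general for small $j$. A more self-contained alternative is to show that each vector $\bfA^j_{\lambda+i0}(r_\lambda)\sqrt{\Delta_r}g$, $g\in\clK$, is of type~I and then invoke item~(1) of Theorem~\ref{T: type I vectors}; for $j\ge\lceil d/2\rceil$ this is immediate because, by Theorem~\ref{T: Laurent for A(s)psi}, vectors in the range of $\bfA^j_{\lambda+i0}(r_\lambda)$ then have order at most $d-j\le j$, hence at most their depth, so they have property~$L$ and are of type~I (Theorem~\ref{T: on vectors with property L}); but for small $j$, and in particular for the idempotent case $j=0$, this route leads back to the same obstacle, which is why I expect the scattering-matrix argument to be the essential ingredient. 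As a sanity check, for $j=d-1$ the range of $\bfA^{d-1}_{\lambda+i0}(r_\lambda)$ lies in $\Upsilon^1_{\lambda+i0}(r_\lambda)$, whose elements are of type~I (Proposition~\ref{P: euE (Vf)=0, k=1}), so in fact $\sqrt{\Delta_r}J\bfA^{d-1}_{\lambda+i0}(r_\lambda)=0$ even without the right-hand $\sqrt{\Delta_r}$, consistently with the asserted identities.
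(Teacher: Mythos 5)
Your proposal follows essentially the same route as the paper's own proof: both rest on the single observation that the modified scattering matrix, viewed as a function of the coupling constant $s$, has no singularity at the real resonance point $r_\lambda$, and then conclude that the sandwiched Laurent coefficients of $A_{\lambda+i0}(s)$ at $s=r_\lambda$ must vanish. Your reduction of the $\lambda-i0$ case by taking adjoints (via~(\ref{F: Pz*=Q(bar z)}),~(\ref{F: JP=QJ}),~(\ref{F: A*(z)=B(bar z)}),~(\ref{F: JA=BJ})) is exactly what the paper does, and your extraction of the coefficients directly in the variable $s$ from~(\ref{F: Laurent for A+(s)}) is, if anything, slightly more streamlined: the paper changes variables to $\sigma=(r-s)^{-1}$, gets~(\ref{F: delta JP delta=0}) from the vanishing residue via~(\ref{F: Pz(rz)=oint (sigma-Az)(-1)d sigma}), and then has to descend through the higher coefficients using Proposition~\ref{P: (sigma-Az(r))to(-1)=...} to reach~(\ref{F: delta JA(j)delta=0}), whereas your expansion in $s$ delivers all coefficients at once.

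The one place that needs adjustment is the justification of the key analyticity, which you yourself flag as the hard part. The proposition is stated in the general relatively compact setting of this paper, while Theorem~\ref{T: Theorem 7.2.2 of Az3} and Proposition~\ref{P: 7.2.5 Az3} are quoted from \cite{Az3v6} for trace-class perturbations, so invoking them (even with your parenthetical hedge) does not cover the stated generality. The paper closes this gap without any scattering-theoretic input: it works with the operator $\tilde S(\lambda+i0;H_s,H_r)$ \emph{defined} on all of $\clK$ by the right-hand side of~(\ref{F: tilde S}); this operator is unitary for real non-resonant $s$ by a direct computation (see \cite{Pu01FA}), it is meromorphic in $s$ by the analytic Fredholm alternative (Theorem~\ref{T: Analytic Fredholm alternative}), and boundedness on the real axis then excludes real poles, so it is holomorphic in a neighbourhood of $\mbR$ --- which is precisely the non-singularity your argument requires. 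With that substitution in place of Proposition~\ref{P: 7.2.5 Az3}, your proof (including the identity $(1+(s-r)A_{\lambda+i0}(r))^{-1}=1-(s-r)A_{\lambda+i0}(s)$ used to isolate $A_{\lambda+i0}(s)$) is complete and coincides in substance with the paper's.
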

\begin{proof} We prove these equalities for the upper plus sign. The equalities for the lower sign
can be derived from the upper sign equalities after taking adjoint and using (\ref{F: Pz*=Q(bar z)}), (\ref{F: JP=QJ}), (\ref{F: A*(z)=B(bar z)}), (\ref{F: JA=BJ}).

It is well-known (see e.g. \cite{Pu01FA}) that the operator
\begin{equation} \label{F: tilde S}
  \tilde S(\lambda+i0; H_s,H_r) = 1 - 2i \sqrt{\Im T_{\lambda+i0}(H_r)} (s-r)J (1 + (s-r)T_{\lambda+i0}(H_r)J)^{-1} \sqrt{\Im T_{\lambda+i0}(H_r)}
\end{equation}
is unitary for real non-resonant~$r$ and~$s;$ proof of this fact is a direct calculation.
Since the right hand side of~(\ref{F: tilde S}) makes sense for complex values of~$s,$ the operator $\tilde S(\lambda+i0; H_s,H_r)$
will be treated as a function of complex variable~$s.$
By the analytic Fredholm alternative (Theorem~\ref{T: Analytic Fredholm alternative}) the operator-function
$\tilde S(\lambda+i0; H_s,H_r)$ is a meromorphic function of $s.$ Since this function is also unitary
and therefore is bounded for real $s,$ it cannot have poles on the real axis $\mbR.$
Hence, $\tilde S(\lambda+i0; H_s,H_r)$ as a function of $s$ is holomorphic in a neighbourhood of $\mbR.$
Making the change of variables $\sigma = (r-s)^{-1}$ one infers that the function of~$\sigma$
$$
  \tilde S(\lambda+i0; H_{s(\sigma)},H_r) = 1 + 2i \sqrt{\Im T_{\lambda+i0}(H_r)} J (\sigma - T_{\lambda+i0}(H_r)J)^{-1} \sqrt{\Im T_{\lambda+i0}(H_r)}
$$
is holomorphic in a neighbourhood of $\mbR.$
Hence, the residue of this function at
$$
  \sigma _0 := (r-r_\lambda)^{-1}
$$
is equal to zero. By definition~(\ref{F: Pz(rz)=oint (sigma-Az)(-1)d sigma}) of the idempotent $P_z(r_z),$
this residue is equal (up to a constant) to the left hand side of~(\ref{F: delta JP delta=0}), which therefore is equal to zero too.
This completes proof of~(\ref{F: delta JP delta=0}).

Further, since the function $\tilde S(\lambda+i0; H_{s(\sigma)},H_0)$ of~$\sigma$ is holomorphic in a neighbourhood of $\mbR,$
it follows that all the other terms $(\sigma-\sigma_0)^{-j}$ with negative powers in the Laurent expansion
of $\tilde S(\lambda+i0; H_{s(\sigma)},H_0)$ at $\sigma = \sigma_0$ also vanish.
Combining this with equality~(\ref{F: d-th Laurent coef for A+(s)}) of Proposition~\ref{P: (sigma-Az(r))to(-1)=...}
implies the equality
$$
  \sqrt{\Im T_{\lambda+i0}(H_r)} J \bfA_{\lambda+i0}^{d-1}(r_\lambda) \sqrt{\Im T_{\lambda+i0}(H_r)} = 0.
$$
Further, using this equality and~(\ref{F: k-th Laurent coef for A+(s)}) with $k=d-2,$ we infer that
$$
  \sqrt{\Im T_{\lambda+i0}(H_r)} J \bfA_{\lambda+i0}^{d-2}(r_\lambda) \sqrt{\Im T_{\lambda+i0}(H_r)} = 0.
$$
Continuing in this way gives equalities~(\ref{F: delta JA(j)delta=0}) for all $j=d-1,d-2,\ldots,1.$
\end{proof}
Proposition~\ref{P: delta JP delta=0} implies that for all $j=0,1,2,\ldots$ and for all $s$
\begin{equation} \label{F: [A+(s)-A-(s)]bfA+[A+(s)-A-(s)]=0}
  (A_{\lambda+i0}(s) - A_{\lambda-i0}(s)) \bfA_{\lambda+i0}^j(r_\lambda) (A_{\lambda+i0}(s) - A_{\lambda-i0}(s)) = 0.
\end{equation}
This equality itself is not useful but its modification which follows is.
\begin{lemma} For any non-resonant real numbers~$r$ and $s$ and for all $j=0,1,2,\ldots$
\begin{equation} \label{F: [A+(r)-A-(r)]bfA+[A+(s)-A-(s)]=0}
  (A_{\lambda+i0}(r) - A_{\lambda-i0}(r)) \bfA_{\lambda+i0}^j(r_\lambda) (A_{\lambda+i0}(s) - A_{\lambda-i0}(s)) = 0.
\end{equation}
\end{lemma}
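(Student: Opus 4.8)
The plan is to derive \eqref{F: [A+(r)-A-(r)]bfA+[A+(s)-A-(s)]=0} from the special case \eqref{F: [A+(s)-A-(s)]bfA+[A+(s)-A-(s)]=0} (equal coupling constants) by exploiting the ``propagator'' identity \eqref{F: Az3v6 (4.8)}, which transports $\Im T_z(H_\bullet)$ from one coupling constant to another. First I would abbreviate $D_z(s) := A_{\lambda+i0}(s) - A_{\lambda-i0}(s) = \frac{2i}{\pi}\,\Im T_{\lambda+i0}(H_s)J$, so that $D_{\lambda+i0}$-type objects are all sandwiched versions of $\Im T_{\lambda+i0}$. The key observation is that \eqref{F: Az3v6 (4.8)} gives, for non-resonant $r,s$,
\begin{equation*}
  \Im T_{\lambda+i0}(H_s) = (1+(s-r)A_{\lambda-i0}(r))^{-1}\,\Im T_{\lambda+i0}(H_r)\,(1+(s-r)B_{\lambda+i0}(r))^{-1},
\end{equation*}
equivalently $D(s) = (1+(s-r)A_{\lambda-i0}(r))^{-1} D(r) (1+(s-r)A_{\lambda+i0}(r))^{-1}$ after multiplying by $J$ on the right and using $J B_z(r) = A_z(r) J$ together with $J(1+(s-r)B_z(r))^{-1} = (1+(s-r)A_z(r))^{-1}J$. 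So $D(r)$ and $D(s)$ differ by left/right multiplication by invertible factors of the form $(1+(s-r)A_{\lambda\mp i0}(r))^{-1}$.

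Next I would substitute this into \eqref{F: [A+(s)-A-(s)]bfA+[A+(s)-A-(s)]=0}. Writing $L := (1+(s-r)A_{\lambda-i0}(r))^{-1}$ and $M := (1+(s-r)A_{\lambda+i0}(r))^{-1}$ (both invertible by the analytic Fredholm alternative, for $s$ outside the discrete resonance set), \eqref{F: [A+(s)-A-(s)]bfA+[A+(s)-A-(s)]=0} becomes
\begin{equation*}
  L\, D(r)\, M\, \bfA_{\lambda+i0}^j(r_\lambda)\, L\, D(r)\, M = 0,
\end{equation*}
hence $D(r)\, M\, \bfA_{\lambda+i0}^j(r_\lambda)\, L\, D(r) = 0$ since $L,M$ are invertible. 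Now the point is to absorb $M$ and $L$ into powers of $\bfA_{\lambda+i0}(r_\lambda)$. By Proposition~\ref{P: [1+sAz(r)](-1)Pz(rz)} applied with $z=\lambda+i0$, the operator $M = (1+(s-r)A_{\lambda+i0}(r))^{-1}$ acts on the range of $P_{\lambda+i0}(r_\lambda)$ (where $\bfA_{\lambda+i0}^j(r_\lambda)$ already maps into, since $\bfA_{\lambda+i0}(r_\lambda) = \bfA_{\lambda+i0}(r_\lambda)P_{\lambda+i0}(r_\lambda)$ by \eqref{F: PA=AP=A}) as a polynomial (power series) in $\bfA_{\lambda+i0}(r_\lambda)$ with coefficients depending on $s,r$; likewise for $L$ acting on the left on the range of $P_{\lambda+i0}(r_\lambda)$ via the analogue of Proposition~\ref{P: [1+sAz(r)](-1)Pz(rz)} for $A_{\lambda-i0}$ combined with property~$M$ (Theorem~\ref{T: property M}), which identifies $\Upsilon_{\lambda-i0}(r_\lambda)$ and $\Upsilon_{\lambda+i0}(r_\lambda)$. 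Thus $M\bfA_{\lambda+i0}^j(r_\lambda)L = \sum_{\mu,\nu} c_{\mu\nu}(s,r)\,\bfA_{\lambda+i0}^{j+\mu}(r_\lambda)$ restricted appropriately, and \eqref{F: delta JP delta=0}--\eqref{F: delta JA(j)delta=0} of Proposition~\ref{P: delta JP delta=0} (which say $D(r)\bfA_{\lambda+i0}^k(r_\lambda)D(r)=0$ for all $k\ge 0$, after multiplying those equalities left and right by $J$ and using $\Im T_{\lambda\mp i0} = \pm\Im T_{\lambda+i0}$) kill each term. This yields $D(r)\bfA_{\lambda+i0}^j(r_\lambda)D(r)=0$, but I actually want $D(r)\bfA_{\lambda+i0}^j(r_\lambda)D(s)=0$; one more application of the substitution $D(s) = L D(r) M$ and then re-expanding $\bfA_{\lambda+i0}^j(r_\lambda) L$ as a combination of powers of $\bfA_{\lambda+i0}(r_\lambda)$ (again by the left-sided version of Proposition~\ref{P: [1+sAz(r)](-1)Pz(rz)}) reduces to the already-established equalities $D(r)\bfA_{\lambda+i0}^k(r_\lambda)D(r)=0$.

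The main obstacle I anticipate is keeping the bookkeeping honest about \emph{which} side each resolvent factor lives on and \emph{which} $\pm i0$ branch its $\bfA$ or $P$ belongs to: the factor $(1+(s-r)A_{\lambda-i0}(r))^{-1}$ carries the $\lambda-i0$ branch while $\bfA_{\lambda+i0}^j(r_\lambda)$ carries $\lambda+i0$, so to expand the product I must first move everything to a common branch using the linear isomorphism $P_{\lambda+i0}(r_\lambda)\colon\Upsilon_{\lambda-i0}(r_\lambda)\to\Upsilon_{\lambda+i0}(r_\lambda)$ of Theorem~\ref{T: property M}, and verify that this conjugation is compatible with the power-series expansions. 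An alternative, cleaner route that avoids some of this is to observe directly from the Laurent expansion \eqref{F: Laurent for A+(s)} that both sides of \eqref{F: [A+(r)-A-(r)]bfA+[A+(s)-A-(s)]=0}, viewed as functions of $r$ with $s$ fixed, are meromorphic with poles only at resonance points, and that the product $D(r)\bfA_{\lambda+i0}^j(r_\lambda)D(s)$ extends holomorphically across $r_\lambda$ (since the pole of $D(r)$ at $r_\lambda$ is exactly $\frac{1}{\pi}$ times $P_{\lambda+i0}(r_\lambda)-P_{\lambda-i0}(r_\lambda)$ and its various $\bfA$-powers, all annihilated on the relevant side by Proposition~\ref{P: delta JP delta=0}); then setting $r=s$ and invoking \eqref{F: [A+(s)-A-(s)]bfA+[A+(s)-A-(s)]=0} plus analytic continuation in $r$ finishes the proof. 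I would pursue whichever of these two is shorter in the write-up, but both rest on the same two pillars: the transport identity \eqref{F: Az3v6 (4.8)} and the annihilation identities of Proposition~\ref{P: delta JP delta=0}.
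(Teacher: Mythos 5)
Your proposal rests on the same two pillars as the paper's own argument — the transport identity (\ref{F: Az3v6 (4.8)}) and the equal-argument annihilation (\ref{F: [A+(s)-A-(s)]bfA+[A+(s)-A-(s)]=0}), with the expansion of a resolvent factor against $P_{\lambda+i0}(r_\lambda)$ via Proposition~\ref{P: [1+sAz(r)](-1)Pz(rz)} — but the way you orient the transport identity creates a genuine gap at the decisive step. Writing $D(s)=L\,D(r)\,M$ with $L=(1+(s-r)A_{\lambda-i0}(r))^{-1}$, $M=(1+(s-r)A_{\lambda+i0}(r))^{-1}$ forces you to expand the mixed-branch product $\bfA_{\lambda+i0}^j(r_\lambda)\,L$. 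Proposition~\ref{P: [1+sAz(r)](-1)Pz(rz)} (and its left-handed analogue) expands a resolvent factor only against an idempotent or nilpotent of the \emph{same} branch $z$; it says nothing about $P_{\lambda+i0}(r_\lambda)(1+c\,A_{\lambda-i0}(r))^{-1}$, and in general $\bfA_{\lambda+i0}^j(r_\lambda)L$ is \emph{not} a combination of powers of $\bfA_{\lambda+i0}(r_\lambda)$: any such combination annihilates $\ker P_{\lambda+i0}(r_\lambda)$, whereas $\bfA_{\lambda+i0}^j(r_\lambda)L$ need not. Theorem~\ref{T: property M} only says that $P_{\lambda+i0}(r_\lambda)$ restricted to $\Upsilon_{\lambda-i0}(r_\lambda)$ is a linear isomorphism onto $\Upsilon_{\lambda+i0}(r_\lambda)$; it provides no expansion of the kind you need, so the ``move everything to a common branch'' step you yourself flag as the main obstacle is exactly where the proof is missing. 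The repair is to apply (\ref{F: Az3v6 (4.8)}) in the opposite direction, as the paper does: write $A_{\lambda+i0}(r)-A_{\lambda-i0}(r)=(1+(r-s)A_{\lambda-i0}(s))^{-1}\brs{A_{\lambda+i0}(s)-A_{\lambda-i0}(s)}(1+(r-s)A_{\lambda+i0}(s))^{-1}$, multiply on the right by $P_{\lambda+i0}(r_\lambda)\bfA_{\lambda+i0}^j(r_\lambda)\brs{A_{\lambda+i0}(s)-A_{\lambda-i0}(s)}$, and expand the now branch-matched factor $(1+(r-s)A_{\lambda+i0}(s))^{-1}P_{\lambda+i0}(r_\lambda)$ by (\ref{F: weird f-n}); every resulting term has the form (\ref{F: [A+(s)-A-(s)]bfA+[A+(s)-A-(s)]=0}) at the single coupling constant $s$, the dangling left factor $(1+(r-s)A_{\lambda-i0}(s))^{-1}$ is harmless because it multiplies zero, and $P_{\lambda+i0}(r_\lambda)\bfA_{\lambda+i0}^j(r_\lambda)=\bfA_{\lambda+i0}^j(r_\lambda)$ by (\ref{F: PA=AP=A}).

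Your ``cleaner'' alternative route is also not valid as stated: all that (\ref{F: [A+(s)-A-(s)]bfA+[A+(s)-A-(s)]=0}) gives is that the two-variable function $F(r,s)=\brs{A_{\lambda+i0}(r)-A_{\lambda-i0}(r)}\bfA_{\lambda+i0}^j(r_\lambda)\brs{A_{\lambda+i0}(s)-A_{\lambda-i0}(s)}$ vanishes on the diagonal $r=s$, and vanishing on the diagonal does not imply identical vanishing (the function $r-s$ vanishes there too), so ``setting $r=s$ and invoking analytic continuation in $r$'' proves nothing; holomorphy of the product across $r_\lambda$ is beside the point. Two minor remarks: your intermediate derivation of $D(r)\bfA_{\lambda+i0}^j(r_\lambda)D(r)=0$ merely re-proves what Proposition~\ref{P: delta JP delta=0} already states for the coupling constant $r$, so that detour can be deleted; and the normalization is $A_{\lambda+i0}(s)-A_{\lambda-i0}(s)=2i\,\Im T_{\lambda+i0}(H_s)J$, without the factor $1/\pi$ (immaterial for the vanishing statement, but worth fixing in the write-up).
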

\begin{proof} Using~(\ref{F: Az3v6 (4.8)}) we have
\begin{equation*}
  \begin{split}
    A_{\lambda+i0}(r) - A_{\lambda-i0}(r) & = 2i\Im T_{\lambda+i0}(r)J
    \\ & = 2i(1+(r-s)A_{\lambda-i0}(s))^{-1} \Im T_{\lambda+i0}(H_s) (1+(r-s)B_{\lambda+i0}(s))^{-1}J
    \\ & = 2i(1+(r-s)A_{\lambda-i0}(s))^{-1} \Im T_{\lambda+i0}(H_s)J (1+(r-s)A_{\lambda+i0}(s))^{-1}
    \\ & = (1+(r-s)A_{\lambda-i0}(s))^{-1} (A_{\lambda+i0}(s) - A_{\lambda-i0}(s)) (1+(r-s)A_{\lambda+i0}(s))^{-1}.
  \end{split}
\end{equation*}
It follows that
\begin{multline*}
 \SqBrs{A_{\lambda+i0}(r) - A_{\lambda-i0}(r)} P_{\lambda+i0}(r_\lambda) \\ = (1+(r-s)A_{\lambda-i0}(s))^{-1} (A_{\lambda+i0}(s) - A_{\lambda-i0}(s)) (1+(r-s)A_{\lambda+i0}(s))^{-1}P_{\lambda+i0}(r_\lambda).
\end{multline*}
Expanding the factor $(1+(r-s)A_{\lambda+i0}(s))^{-1}P_{\lambda+i0}(r_\lambda)$ by~(\ref{F: weird f-n}) and
multiplying both sides of this equality on the right by $\bfA_{\lambda+i0}^j(r_\lambda)(A_{\lambda+i0}(s) - A_{\lambda-i0}(s)),$
one can see from~(\ref{F: [A+(s)-A-(s)]bfA+[A+(s)-A-(s)]=0})
that the left hand side of~(\ref{F: [A+(r)-A-(r)]bfA+[A+(s)-A-(s)]=0}) is zero.
\end{proof}
The left hand side of~(\ref{F: [A+(r)-A-(r)]bfA+[A+(s)-A-(s)]=0}) is a meromorphic function of two variables~$r$ and $s.$
Using~(\ref{F: Laurent for A+(s)}), one can expand this function into Laurent series at $r = r_\lambda,$ $s = r_\lambda.$ Since the function is zero,
all coefficients of terms $(r-r_\lambda)^k(s-r_\lambda)^l,$\, $k,l=0,\pm 1, \pm 2,\ldots,$ in the Laurent expansion are also zero. This gives some relations between operators
$\tilde A_{\lambda\pm i0,r_\lambda}(r_\lambda),$ $P_{\lambda\pm i0}(r_\lambda)$ and $\bfA_{\lambda\pm i0}(r_\lambda),$
such as
\begin{equation} \label{F: (A-A)A(A-A)=0}
  (\bfA_{\lambda+i0}^k(r_\lambda) - \bfA_{\lambda-i0}^k(r_\lambda))\bfA^j_{\lambda+i0}(r_\lambda)(\bfA^l_{\lambda+i0}(r_\lambda) - \bfA^l_{\lambda-i0}(r_\lambda)) = 0.
\end{equation}
The one which will be used shortly is obtained by setting to zero
the coefficient of $(r-r_\lambda)^{-1}(s-r_\lambda)^{-1}$ from the Laurent expansion of the left hand side of~(\ref{F: [A+(r)-A-(r)]bfA+[A+(s)-A-(s)]=0}).
Taking $j=0$ in the resulting relation gives the following equality.
\begin{lemma} \label{L: (P+-P-)P+(P+-P-)=0} For any real resonance point~$r_\lambda$
\begin{equation} \label{F: (P+-P-)P+(P+-P-)=0}
  (P_{\lambda+i0}(r_\lambda) - P_{\lambda-i0}(r_\lambda))P_{\lambda+i0}(r_\lambda)(P_{\lambda+i0}(r_\lambda) - P_{\lambda-i0}(r_\lambda)) = 0.
\end{equation}
\end{lemma}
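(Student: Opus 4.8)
The plan is to derive Lemma~\ref{L: (P+-P-)P+(P+-P-)=0} as the degenerate ($j=0$, and extracting the doubly-negative-index Laurent coefficient) case of the identity~(\ref{F: [A+(r)-A-(r)]bfA+[A+(s)-A-(s)]=0}) that was just established, exactly as the text preceding the statement already announces. So the real content is: (i) expand both sides of~(\ref{F: [A+(r)-A-(r)]bfA+[A+(s)-A-(s)]=0}) with $j=0$ as a Laurent series in the two variables $r$ and $s$ simultaneously about $r=r_\lambda$ and $s=r_\lambda$, and (ii) read off that the coefficient of $(r-r_\lambda)^{-1}(s-r_\lambda)^{-1}$ vanishes, which is precisely~(\ref{F: (P+-P-)P+(P+-P-)=0}).

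First I would write, using the Laurent expansion~(\ref{F: Laurent for A+(s)}) at the resonance point $r_\lambda$,
$$
  A_{\lambda+i0}(r) - A_{\lambda-i0}(r) = \bigl(\tilde A_{\lambda+i0,r_\lambda}(r) - \tilde A_{\lambda-i0,r_\lambda}(r)\bigr)
   + \sum_{k=1}^{d} \frac{1}{(r-r_\lambda)^{k}}\bigl(\bfA_{\lambda+i0}^{k-1}(r_\lambda) - \bfA_{\lambda-i0}^{k-1}(r_\lambda)\bigr),
$$
with the convention $\bfA_z^0(r_\lambda)=P_z(r_\lambda)$ from~(\ref{F: P=bfA to 0}), and similarly for the variable $s$. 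I would substitute these two expansions, together with the $j=0$ instance $A_{\lambda+i0}(r)-A_{\lambda-i0}(r)$ appearing in the middle factor (take $j=0$, i.e. the middle operator is just $P_{\lambda+i0}(r_\lambda)\cdot\bigl[\text{nothing}\bigr]$ — more precisely the middle factor in~(\ref{F: [A+(r)-A-(r)]bfA+[A+(s)-A-(s)]=0}) with $j=0$ is $\bfA_{\lambda+i0}^0(r_\lambda)=P_{\lambda+i0}(r_\lambda)$), into the product. Since the whole product is identically zero as a meromorphic function of $(r,s)$, every coefficient of every monomial $(r-r_\lambda)^{k}(s-r_\lambda)^{l}$ must vanish; in particular the coefficient of $(r-r_\lambda)^{-1}(s-r_\lambda)^{-1}$ is
$$
  \bigl(\bfA_{\lambda+i0}^{0}(r_\lambda) - \bfA_{\lambda-i0}^{0}(r_\lambda)\bigr)\, P_{\lambda+i0}(r_\lambda)\, \bigl(\bfA_{\lambda+i0}^{0}(r_\lambda) - \bfA_{\lambda-i0}^{0}(r_\lambda)\bigr),
$$
which is exactly $(P_{\lambda+i0}(r_\lambda)-P_{\lambda-i0}(r_\lambda))P_{\lambda+i0}(r_\lambda)(P_{\lambda+i0}(r_\lambda)-P_{\lambda-i0}(r_\lambda))$, giving~(\ref{F: (P+-P-)P+(P+-P-)=0}).

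The one genuine subtlety — and the step I expect to be the main obstacle to write cleanly — is making sure no cross terms coming from the holomorphic parts $\tilde A_{\lambda\pm i0,r_\lambda}$ contribute to the $(r-r_\lambda)^{-1}(s-r_\lambda)^{-1}$ coefficient. This is handled by noting that $\tilde A_{\lambda\pm i0,r_\lambda}(r)$ is holomorphic at $r=r_\lambda$, so it only contributes non-negative powers of $(r-r_\lambda)$; hence to pick up the $-1$ power in both $r$ and $s$ one must take the principal part in each variable from the first and third factors, and the only such term with power exactly $-1$ in each is the one written above — all higher negative powers $(r-r_\lambda)^{-k}$, $(s-r_\lambda)^{-l}$ with $k$ or $l\ge 2$ land in different monomials. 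I would also invoke~(\ref{F: tilde AP=0}), $\tilde A_{\lambda\pm i0,r_\lambda}(r)P_{\lambda+i0}(r_\lambda)=P_{\lambda+i0}(r_\lambda)\tilde A_{\lambda\pm i0,r_\lambda}(r)=0$ (and its $\lambda-i0$ analogue applied to the appropriate factor, using that vectors in the relevant ranges are type~I only where needed), to kill any would-be mixed contributions, though in fact a bare degree count in the two variables already suffices. The argument is short and I would present it in two displayed equations plus a sentence identifying the coefficient; I would not grind through the full double Laurent bookkeeping, only the single monomial that matters.
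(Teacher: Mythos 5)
Your argument is correct and is exactly the paper's own derivation: the paper obtains (\ref{F: (P+-P-)P+(P+-P-)=0}) by expanding the left-hand side of (\ref{F: [A+(r)-A-(r)]bfA+[A+(s)-A-(s)]=0}) into a double Laurent series at $r=r_\lambda$, $s=r_\lambda$, setting the coefficient of $(r-r_\lambda)^{-1}(s-r_\lambda)^{-1}$ to zero, and taking $j=0$ with the convention $\bfA_z^0(r_\lambda)=P_z(r_\lambda)$. Your closing observation is also right that the bare degree count in the two separate variables suffices, so the appeal to (\ref{F: tilde AP=0}) is unnecessary (though harmless).
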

\begin{thm} \label{T: spec P(+)P(-)} For any real resonance point~$r_\lambda$ the spectrum
of the product $P_{\lambda+i0}(r_\lambda) P_{\lambda-i0}(r_\lambda)$ consists of only $0$ and $1.$
Moreover, algebraic multiplicity of~$1$ is equal to $N = \dim \Upsilon_{\lambda+i0}(r_\lambda).$
\end{thm}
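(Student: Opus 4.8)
The plan is to deduce the statement from the single algebraic identity in Lemma~\ref{L: (P+-P-)P+(P+-P-)=0}. Write $P_+ = P_{\lambda+i0}(r_\lambda)$ and $P_- = P_{\lambda-i0}(r_\lambda)$; these are finite-rank idempotents, and by Lemma~\ref{L: j-dimensions coincide} their common rank is $N = \dim \Upsilon_{\lambda+i0}(r_\lambda)$, with $\im P_+ = \Upsilon_{\lambda+i0}(r_\lambda)$. Hence $P_+P_-$ is a finite-rank, thus compact, operator, whose nonzero spectrum is a finite set of eigenvalues of finite algebraic multiplicity. Since $\im(P_+P_-) \subseteq \im P_+ = \Upsilon_{\lambda+i0}(r_\lambda)$, relative to the algebraic splitting $\clK = \Upsilon_{\lambda+i0}(r_\lambda) \dotplus \ker P_+$ the operator $P_+P_-$ is block upper triangular with vanishing lower-right block; consequently its nonzero eigenvalues, together with their algebraic multiplicities, coincide with those of the restriction $T := (P_+P_-)\big|_{\Upsilon_{\lambda+i0}(r_\lambda)}$, an operator on the $N$-dimensional space $\Upsilon_{\lambda+i0}(r_\lambda)$.

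Next I would pin down $T$. Expanding $(P_+-P_-)P_+(P_+-P_-) = 0$ and using $P_+^2 = P_+$ gives $P_+ - P_+P_- - P_-P_+ + P_-P_+P_- = 0$; multiplying on both sides by $P_+$ and setting $R = P_+P_-P_+$ yields $P_+ - 2R + R^2 = 0$. Since $R$ satisfies $P_+R = RP_+ = R$ and agrees with $P_+P_-$ on $\Upsilon_{\lambda+i0}(r_\lambda)$, it has the block form $\mathrm{diag}(T,0)$ with respect to the same splitting, so restricting $P_+ - 2R + R^2 = 0$ to $\Upsilon_{\lambda+i0}(r_\lambda)$, where $P_+$ acts as the identity, gives $(1 - T)^2 = 0$. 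Thus $T$ is unipotent, its only spectral value is $1$, and since it acts on an $N$-dimensional space, every vector of $\Upsilon_{\lambda+i0}(r_\lambda)$ is a root vector of $T$ for the eigenvalue $1$, so the algebraic multiplicity of $1$ for $T$ is exactly $N$. Transporting this back through the block-triangular reduction, the only nonzero point of the spectrum of $P_+P_-$ is $1$, with algebraic multiplicity $N$, while $0$ lies in the spectrum whenever $\ker P_+ \neq \{0\}$; this is the assertion.

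The computation is routine once Lemma~\ref{L: (P+-P-)P+(P+-P-)=0} is in hand, so there is no genuine obstacle here. What deserves emphasis is that the conclusion fails for generic pairs of finite-rank idempotents — two distinct rank-one orthogonal projections whose ranges meet at a generic angle $\theta$ already have $\cos^2\theta \notin \{0,1\}$ in the spectrum of their product — so the entire content is carried by the special symmetric relation $(P_+-P_-)P_+(P_+-P_-)=0$ satisfied by the boundary idempotents, which was obtained from the limiting-absorption identities. I would also note that Theorem~\ref{T: property M} supplies the complementary fact that $T$ is invertible, equivalently $\rank(P_+P_-) = N$, but this refinement is not needed for the spectral statement.
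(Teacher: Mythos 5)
Your argument is correct: the expansion of Lemma~\ref{L: (P+-P-)P+(P+-P-)=0}, compressed by $P_+$ on both sides, does give $P_+-2R+R^2=0$ with $R=P_+P_-P_+$, and since $P_+R=RP_+=R$ the restriction $T=(P_+P_-)|_{\Upsilon_{\lambda+i0}(r_\lambda)}$ satisfies $(1-T)^2=0$; the block-triangular reduction to $\im P_+\dotplus\ker P_+$ then correctly transfers the nonzero spectrum and its algebraic multiplicities, so $\sigma(P_+P_-)\subseteq\{0,1\}$ with multiplicity $N$ at $1.$ This is, however, a genuinely different deduction from the paper's, even though both start from the same key identity $(P_+-P_-)P_+(P_+-P_-)=0.$ The paper multiplies that identity repeatedly on the right by $P_+$ and by $P_-P_+$ and takes traces, obtaining $\Tr\brs{(P_-P_+)^k}=N$ for every $k\geq 1$, and then concludes via the Lidskii theorem~(\ref{F: Lidskii Thm}) and the power-sum rigidity $x_1^k+\ldots+x_N^k=N\ \forall k \Rightarrow x_j=1$; it never exhibits a quadratic relation for the compression. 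Your route dispenses with traces and Lidskii entirely, is purely finite-dimensional linear algebra once the lemma is in hand, and yields slightly more: the compression $T$ is unipotent with $(1-T)^2=0$, i.e.\ $P_+P_-P_+-P_+$ is nilpotent of order at most two on $\Upsilon_{\lambda+i0}(r_\lambda)$ (the paper later records only the nilpotency of $P_+P_--P_+$ and $P_-P_+-P_-$ as a consequence of the theorem, in the discussion of property~$S$). What the paper's trace argument buys in exchange is symmetry in $P_+$ and $P_-$ and no need to choose the complement $\ker P_+$, but for the statement at hand both proofs deliver the same conclusion; your closing remark that Theorem~\ref{T: property M} (invertibility of $T$, i.e.\ $\rank(P_+P_-)=N$) is not needed is also accurate, and avoids any appearance of circularity with Remark~\ref{R: another proof of property M}.
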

\begin{proof} For brevity, we write $P_+$ instead of $P_{\lambda+i0}(r_\lambda)$ and $P_-$ instead of $P_{\lambda-i0}(r_\lambda).$
Expanding~(\ref{F: (P+-P-)P+(P+-P-)=0}) we obtain
\begin{equation} \label{F: P(+)-P(-)P(+)-P(+)P(-)+P(-)P(+)P(-)=0}
  P_+ - P_-P_+ - P_+P_- + P_-P_+P_- = 0.
\end{equation}
Taking traces of both sides of this equality and using $\Tr(P_+P_-) = \Tr(P_-P_+P_-)$ give
\begin{equation} \label{F: Tr(P(-)P(+))=N}
  \Tr(P_-P_+) = \Tr(P_+) = N.
\end{equation}
Multiplying both sides of~(\ref{F: P(+)-P(-)P(+)-P(+)P(-)+P(-)P(+)P(-)=0}) by $P_+$ on the right gives
\begin{equation} \label{F: P(+)-P(-)P(+)-P(+)P(-)+P(-)P(+)P(-)=0(2)}
  P_+ - P_-P_+ - P_+P_-P_+ + P_-P_+P_-P_+ = 0.
\end{equation}
Taking trace of this equality and using~(\ref{F: Tr(P(-)P(+))=N}) one gets
$$
  \Tr(P_-P_+P_-P_+) = N.
$$
Multiplying~(\ref{F: P(+)-P(-)P(+)-P(+)P(-)+P(-)P(+)P(-)=0(2)}) on the right by $P_-P_+$ and taking the trace of the equality obtained implies
$$
  \Tr((P_-P_+)^3) = N.
$$
Continuing in this manner, it can be shown that for any $k=1,2,3,\ldots$
\begin{equation} \label{F: Tr[(P-P+)(k)]=N}
  \Tr((P_-P_+)^k) = N.
\end{equation}
Since $P_-P_+$ has rank $\leq N$ (in fact this rank is equal to~$N$ by Theorem~\ref{T: property M}, but we don't need this),
if $x_1, \ldots, x_N$ is the list containing all non-zero eigenvalues of $P_-P_+$ counting multiplicities,
then it follows from the spectral mapping theorem, the Lidskii theorem~(\ref{F: Lidskii Thm}) and
(\ref{F: Tr[(P-P+)(k)]=N}) that for all $k=1,2,\ldots$
$$
  x_1^k + \ldots + x_N^k = N.
$$
This is possible only if all the~$N$ numbers $x_1,\ldots,x_N$ are equal to $1.$
\end{proof}
\begin{rems} \label{R: another proof of property M} \rm Theorem~\ref{T: spec P(+)P(-)} implies that the ranks of the products $P_{\lambda+i0}(r_\lambda) P_{\lambda-i0}(r_\lambda)$
and $P_{\lambda-i0}(r_\lambda) P_{\lambda+i0}(r_\lambda)$ are the same as that of $P_{\lambda+i0}(r_\lambda)$ and $P_{\lambda-i0}(r_\lambda)$
and thus it gives another proof of Theorem~\ref{T: property M}.
\end{rems}

\label{Page: point with property C(2)}
\begin{defn} We say that a real resonance point~$r_\lambda$ of geometric multiplicity~$m$ has \emph{property~$C,$} if the vector spaces $\Upsilon_{\lambda+i0}(r_\lambda)$
and $\Upsilon_{\lambda-i0}(r_\lambda)$ admit Jordan decompositions (see p.\pageref{Page: Jordan decomp-n} for definition of a Jordan decomposition)
\begin{equation} \label{F: Property C, (+)decomp}
  \Upsilon_{\lambda+i0}(r_\lambda) = \Upsilon^{[1]}_{\lambda+i0}(r_\lambda) \dotplus \Upsilon^{[2]}_{\lambda+i0}(r_\lambda) \dotplus \ldots \dotplus \Upsilon^{[m]}_{\lambda+i0}(r_\lambda)
\end{equation}
and
\begin{equation} \label{F: Property C, (-)decomp}
  \Upsilon_{\lambda-i0}(r_\lambda) = \Upsilon^{[1]}_{\lambda-i0}(r_\lambda) \dotplus \Upsilon^{[2]}_{\lambda-i0}(r_\lambda) \dotplus \ldots \dotplus \Upsilon^{[m]}_{\lambda-i0}(r_\lambda)
\end{equation}
such that for all $j=1,2,\ldots,m$ the following equalities hold:
\begin{equation} \label{F: Property C}
  P_{\lambda + i0}(r_\lambda) \Upsilon^{[\nu]}_{\lambda - i0}(r_\lambda) = \Upsilon^{[\nu]}_{\lambda + i0}(r_\lambda)
  \quad \text{and} \quad P_{\lambda - i0}(r_\lambda) \Upsilon^{[\nu]}_{\lambda + i0}(r_\lambda) = \Upsilon^{[\nu]}_{\lambda - i0}(r_\lambda).
\end{equation}
\end{defn}

The goal of this subsection is to prove Theorem~\ref{T: Property U}. 
The proof starts with the following lemma.
\begin{lemma} \label{L: trivial lemma about type I vectors}
Let~$r_\lambda$ be a real resonance point with property~$C$ and let $j,k,l$ be three non-negative integers.
If the operator
$$
  \bfA^k_{\lambda \pm i0}(r_\lambda)\bfA^j_{\lambda \mp i0}(r_\lambda)\bfA^l_{\lambda \pm i0}(r_\lambda)
$$
sends all vectors from $\Upsilon_{\lambda \pm i0}(r_\lambda)$ to vectors of type~I and if it sends all vectors of type $I$ to zero, then
this operator decreases the order of vectors from $\Upsilon_{\lambda \pm i0}(r_\lambda).$
\end{lemma}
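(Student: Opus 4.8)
The plan is to establish the statement for $z=\lambda+i0$, the case $z=\lambda-i0$ being symmetric. Abbreviate $\bfA_\pm:=\bfA_{\lambda\pm i0}(r_\lambda)$, $W:=\bfA_+^k\bfA_-^j\bfA_+^l$, and let $\Upsilon^{\mathrm I}:=\Upsilon^{\mathrm I}_\lambda(r_\lambda)$ be the subspace of type~I vectors. The hypotheses read $W(\Upsilon_{\lambda+i0}(r_\lambda))\subseteq\Upsilon^{\mathrm I}$ and $W|_{\Upsilon^{\mathrm I}}=0$; since $\Upsilon^{\mathrm I}\subseteq\Upsilon_{\lambda+i0}(r_\lambda)$ these already give $W^2=0$ on $\Upsilon_{\lambda+i0}(r_\lambda)$. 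I shall use three facts about type~I vectors repeatedly: $\bfA_+$ and $\bfA_-$ act identically on $\Upsilon^{\mathrm I}$ by~(\ref{F: bfA j(+)=bfA j(-)}); $\Upsilon^{\mathrm I}$ is invariant under $\bfA_+$ and under $\bfA_-$ by Corollary~\ref{C: u type I then so ia Au}; and every resonance vector of order~$1$ is of type~I by Proposition~\ref{P: euE (Vf)=0, k=1}. Since the order of a vector of $\Upsilon_{\lambda+i0}(r_\lambda)$ is, by definition, its order relative to the nilpotent operator $\bfA_+$, it suffices to prove that $\bfA_+^{p-1}Wu=0$ for every $u\in\Upsilon_{\lambda+i0}(r_\lambda)$ of order $p$.

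The decisive elementary observation is that \emph{if $\bfA_+^l u$ is of type~I, then $Wu=\bfA_+^{k+j+l}u$}: indeed, by~(\ref{F: bfA j(+)=bfA j(-)}) the middle factor $\bfA_-^j$, applied to the type~I vector $\bfA_+^l u$, may be replaced by $\bfA_+^j$. Now fix $u$ of order $p$. If $p=1$, then $u$ is of type~I, hence $Wu=0$; so assume $p\ge2$. By Theorem~\ref{T: on vectors with property L} the vector $\bfA_+^l u$ is of type~I whenever $l\ge\lfloor p/2\rfloor$, and then $Wu=\bfA_+^{k+j+l}u$ has order $\max(p-k-j-l,0)\le p-l\le p-1<p$, since $l\ge\lfloor p/2\rfloor\ge1$; in particular $\bfA_+^{p-1}Wu=0$. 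Thus the lemma is proved except in the case $p\ge2$, $0\le l\le\lfloor p/2\rfloor-1$; here one may moreover assume $k+j+l\ge1$, since if $k=j=l=0$ then $W=P_{\lambda+i0}(r_\lambda)P_{\lambda-i0}(r_\lambda)P_{\lambda+i0}(r_\lambda)$ acts as the identity on $\Upsilon^{\mathrm I}$, whence $\Upsilon^{\mathrm I}=\{0\}$, and then $W|_{\Upsilon_{\lambda+i0}(r_\lambda)}=0$ forces $\Upsilon_{\lambda+i0}(r_\lambda)=\{0\}$ by Theorem~\ref{T: property M}.

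In the remaining case I would argue by induction on $p$. The family of operators of the prescribed shape satisfying the two hypotheses is closed under left and right multiplication by $\bfA_+$ — because $\Upsilon^{\mathrm I}$ is $\bfA_+$-invariant and is annihilated by $W$ — so the inductive step may freely pass between $W$ and $\bfA_+^i W$, $W\bfA_+^i$. Given $u$ of order $p\ge2$ with $l<\lfloor p/2\rfloor$, one starts from $Wu\in\Upsilon^{\mathrm I}$, which yields $\bfA_+^{p-1}Wu=\bfA_-^{p-1}Wu$, and then applies the algebraic relations~(\ref{F: (A-A)A(A-A)=0}) (and their $\pm$-swapped counterparts) to reorganize the triple product $\bfA_-^{p-1}\bfA_+^k\bfA_-^j\bfA_+^l u$ into: terms that contain an extra factor $\bfA_+$ applied to $u$, hence are obtained by applying an admissible operator to $\bfA_+u$, a vector of order $p-1$ covered by the induction hypothesis; and terms $\bfA_-^{p-1}\bfA_+^{k+j+l}u$, which vanish when $k+j+l\ge\lfloor p/2\rfloor$ (for then $\bfA_+^{k+j+l}u$ is of type~I by Theorem~\ref{T: on vectors with property L}) and are fed back into the induction when $1\le k+j+l<\lfloor p/2\rfloor$. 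Property~$C$ enters exactly here: the matched Jordan decompositions $\Upsilon_{\lambda\pm i0}(r_\lambda)=\Upsilon^{[1]}_{\lambda\pm i0}(r_\lambda)\dotplus\cdots\dotplus\Upsilon^{[m]}_{\lambda\pm i0}(r_\lambda)$ with $P_{\lambda\pm i0}(r_\lambda)\Upsilon^{[\nu]}_{\lambda\mp i0}(r_\lambda)=\Upsilon^{[\nu]}_{\lambda\pm i0}(r_\lambda)$ make it possible to control, block by block, the order relative to $\bfA_-$ of the intermediate vector $\bfA_+^l u$, whose order relative to $\bfA_+$ is known.

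I expect this last point to be the main obstacle. Because $W$ does not commute with $\bfA_+$ (in general $\bfA_+W\ne W\bfA_+$), the descent from order-$p$ vectors to order-$(p-1)$ vectors is not automatic, and the discrepancy between the $\bfA_+$- and $\bfA_-$-orders of the vectors produced inside the triple product $\bfA_+^k\bfA_-^j\bfA_+^l$ must be tamed; property~$C$, through the compatibility of the idempotents $P_{\lambda\pm i0}(r_\lambda)$ with the Jordan structure (Theorems~\ref{T: property M} and~\ref{T: spec P(+)P(-)}), is precisely the device that supplies this control. Everything else — the reduction to $\bfA_+^{p-1}Wu=0$, the identity $W^2=0$, stability of the hypotheses under multiplication by $\bfA_+$, the base case $p=1$, and the case $l\ge\lfloor p/2\rfloor$ — is routine.
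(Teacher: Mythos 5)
Your argument is incomplete at exactly the point where the lemma has content. The base case $p=1$ and the case $l\ge\lfloor p/2\rfloor$ follow, as you show, from Theorem~\ref{T: on vectors with property L} alone (there you do not even need the two hypotheses on the operator), but the remaining case $p\ge 2$, $0\le l<\lfloor p/2\rfloor$ is only sketched, and the sketch does not go through as stated: the identity~(\ref{F: (A-A)A(A-A)=0}) has the factor $\bfA^j_{\lambda+i0}(r_\lambda)$ in the \emph{middle}, so it does not reorganize the product $\bfA^{p-1}_{\lambda-i0}(r_\lambda)\bfA^k_{\lambda+i0}(r_\lambda)\bfA^j_{\lambda-i0}(r_\lambda)\bfA^l_{\lambda+i0}(r_\lambda)u$ in the way you indicate, and no actual mechanism is supplied for controlling the $\bfA_{\lambda-i0}(r_\lambda)$-order of $\bfA^l_{\lambda+i0}(r_\lambda)u$; you yourself flag this as ``the main obstacle.'' Since the hypotheses of the lemma (annihilation of type~I vectors, image contained in type~I vectors) are used only in this unresolved case, what remains unproven is precisely the heart of the statement, so the proof has a genuine gap rather than a routine loose end.

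The paper closes this gap by a different and more structural use of property~$C$. One takes the matched Jordan decompositions $\Upsilon_{\lambda\pm i0}(r_\lambda)=\Upsilon^{[1]}_{\lambda\pm i0}(r_\lambda)\dotplus\cdots\dotplus\Upsilon^{[m]}_{\lambda\pm i0}(r_\lambda)$ with Jordan bases $u_{\nu\pm}^{(1)},\ldots,u_{\nu\pm}^{(d_\nu)}$ and observes, using $\bfA_{\lambda\pm i0}(r_\lambda)=\bfA_{\lambda\pm i0}(r_\lambda)P_{\lambda\pm i0}(r_\lambda)$ together with~(\ref{F: Property C}), that the operator $\bfA^k_{\lambda+i0}(r_\lambda)\bfA^j_{\lambda-i0}(r_\lambda)\bfA^l_{\lambda+i0}(r_\lambda)$ maps each block $\Upsilon^{[\nu]}_{\lambda+i0}(r_\lambda)$ into itself; since order is subadditive on sums, it suffices to treat the basis vectors $u_{\nu+}^{(\beta)}$. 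Inside a block, if $\alpha$ is the largest index with $u_{\nu+}^{(\alpha)}$ of type~I, then by Corollary~\ref{C: u type I then so ia Au} the vectors $u_{\nu+}^{(1)},\ldots,u_{\nu+}^{(\alpha)}$ are of type~I, and a short contradiction argument (a type~I vector of order $\alpha+1$ in the block would force $u_{\nu+}^{(\alpha+1)}$ to be of type~I) shows that the type~I vectors of the block are \emph{exactly} the span of $u_{\nu+}^{(1)},\ldots,u_{\nu+}^{(\alpha)}$. The hypotheses then finish everything at once: basis vectors of order $\le\alpha$ are killed, and each $u_{\nu+}^{(\beta)}$ with $\beta>\alpha$ is sent to a type~I vector of the same block, hence into the span of vectors of order $\le\alpha<\beta$, so the order drops. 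If you want to salvage your approach, this block-by-block characterization of the type~I subspace is the control you were hoping property~$C$ would provide; without it, the induction on $p$ you propose has no way to compare $\bfA_{\lambda+i0}(r_\lambda)$- and $\bfA_{\lambda-i0}(r_\lambda)$-orders of intermediate vectors.
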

\begin{proof} We prove this only for the upper signs.

Since~$r_\lambda$ has property~$C,$ the vector spaces $\Upsilon_{\lambda+i0}(r_\lambda)$ and $\Upsilon_{\lambda-i0}(r_\lambda)$
admit decompositions (\ref{F: Property C, (+)decomp}) and (\ref{F: Property C, (-)decomp}) into direct sums
of vectors spaces $\Upsilon^{[\nu]}_{\lambda\pm i0}(r_\lambda)$ such that for any $k\geq 0$
$$
  \bfA^k_{\lambda \pm i0}(r_\lambda) \Upsilon^{[\nu]}_{\lambda\pm i0}(r_\lambda) \subset \Upsilon^{[\nu]}_{\lambda\pm i0}(r_\lambda)
$$
and the relations (\ref{F: Property C}) hold.

Each vector space $\Upsilon^{[\nu]}_{\lambda\pm i0}(r_\lambda)$ has a basis
$$
  u_{\nu\pm}^{(1)}, \ldots, u_{\nu\pm}^{(d_\nu)}
$$
such that $\bfA^k_{\lambda \pm i0}(r_\lambda)u_{\nu\pm}^{(j)} = u_{\nu\pm}^{(j-1)}.$
Therefore, it is enough to show that the operator $\bfA^k_{\lambda+ i0}(r_\lambda)\bfA^j_{\lambda- i0}(r_\lambda)\bfA^l_{\lambda+ i0}(r_\lambda)$
decreases order of each of the vectors $u_{\nu+}^{(j)}.$ We shall prove this assertion.

For each $\nu = 1,\ldots,m,$ there exists the largest index $\alpha$ such that $u_{\nu+}^{(\alpha)}$ is a vector of type~I.
Corollary~\ref{C: u type I then so ia Au} implies that
$$
  \underbrace{u_{\nu+}^{(1)}, \ \ldots, \ u_{\nu+}^{(\alpha)}}_{\text{are of type I}}, \ \ \underbrace{u_{\nu+}^{(\alpha+1)}, \ \ldots, \ u_{\nu+}^{(d_\nu)}}_{\text{are not of type I}}.
$$
The operator $\bfA^k_{\lambda+ i0}(r_\lambda)\bfA^j_{\lambda- i0}(r_\lambda)\bfA^l_{\lambda+ i0}(r_\lambda)$ decreases order of the vectors
$u_{\nu+}^{(1)}, \ldots, u_{\nu+}^{(\alpha)},$ since by the premise these vectors belong to the kernel of the operator.
Now we show that the image of each of the vectors $u_{\nu+}^{(\alpha+1)}, \ldots, u_{\nu+}^{(d_\nu)}$ is a linear combination of
$u_{\nu+}^{(1)}, \ldots, u_{\nu+}^{(\alpha)}$ and this will complete the proof. 
By (\ref{F: Property C}), for this it is enough to show that any vector of type I
from $\Upsilon^{[\nu]}_{\lambda+i0}(r_\lambda)$ is a linear combination of $u_{\nu+}^{(1)}, \ldots, u_{\nu+}^{(\alpha)}.$
Assume the contrary. Then there exists a vector $f$ of type I and of order $>\alpha.$ Using Corollary~\ref{C: u type I then so ia Au},
we can assume that this vector has order $\alpha+1.$ Since $f$ is a linear combination of $u_{\nu+}^{(1)}, \ldots, u_{\nu+}^{(\alpha+1)},$
it follows that $u_{\nu+}^{(\alpha+1)}$ is a vector of type~I. This contradicts definition of $\alpha.$
\end{proof}

Let
$$
  D_{\lambda+i0}(r_\lambda) = P_{\lambda+i0}(r_\lambda) - P_{\lambda+i0}(r_\lambda)P_{\lambda-i0}(r_\lambda)P_{\lambda+i0}(r_\lambda)
$$
and
$$
  D_{\lambda-i0}(r_\lambda) = P_{\lambda-i0}(r_\lambda) - P_{\lambda-i0}(r_\lambda)P_{\lambda+i0}(r_\lambda)P_{\lambda-i0}(r_\lambda).
$$
\begin{lemma} $D_{\lambda+i0}(r_\lambda) = D_{\lambda-i0}(r_\lambda).$
\end{lemma}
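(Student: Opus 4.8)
The plan is to reduce everything to the single identity $(P_{\lambda+i0}(r_\lambda)-P_{\lambda-i0}(r_\lambda))^3=0$. Write for brevity $P_\pm:=P_{\lambda\pm i0}(r_\lambda)$; since $P_\pm^2=P_\pm$, a direct expansion gives
\[
  (P_+-P_-)^3=\brs{P_+-P_+P_-P_+}-\brs{P_--P_-P_+P_-}=D_{\lambda+i0}(r_\lambda)-D_{\lambda-i0}(r_\lambda).
\]
Thus it suffices to prove $(P_+-P_-)^3=0$. Now $(P_+-P_-)^3=(P_+-P_-)P_+(P_+-P_-)-(P_+-P_-)P_-(P_+-P_-)$, and the first term vanishes by Lemma~\ref{L: (P+-P-)P+(P+-P-)=0}. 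So the whole proof comes down to establishing the companion ``sandwich'' identity
\[
  (P_{\lambda+i0}(r_\lambda)-P_{\lambda-i0}(r_\lambda))\,P_{\lambda-i0}(r_\lambda)\,(P_{\lambda+i0}(r_\lambda)-P_{\lambda-i0}(r_\lambda))=0 .
\]

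To prove this I would repeat, with $\lambda+i0$ and $\lambda-i0$ interchanged in the \emph{middle} factor, the chain of reasoning that produced Lemma~\ref{L: (P+-P-)P+(P+-P-)=0}. This is legitimate because Proposition~\ref{P: delta JP delta=0} is stated and proved for both signs: for every non-resonant $r$ and every $j\ge 0$ (with $\bfA^0_{\lambda-i0}(r_\lambda):=P_{\lambda-i0}(r_\lambda)$) one has $\sqrt{\Im T_{\lambda+i0}(H_r)}\,J\,\bfA^j_{\lambda-i0}(r_\lambda)\,\sqrt{\Im T_{\lambda+i0}(H_r)}=0$. Using $A_{\lambda+i0}(s)-A_{\lambda-i0}(s)=2i\,\Im T_{\lambda+i0}(H_s)J$ together with the non-negativity of $\Im T_{\lambda+i0}(H_s)$, this upgrades to the one-variable vanishing $(A_{\lambda+i0}(s)-A_{\lambda-i0}(s))\,\bfA^j_{\lambda-i0}(r_\lambda)\,(A_{\lambda+i0}(s)-A_{\lambda-i0}(s))=0$, the $\bfA_{\lambda-i0}$-analogue of~(\ref{F: [A+(s)-A-(s)]bfA+[A+(s)-A-(s)]=0}). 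Feeding this into the resolvent identity~(\ref{F: Az3v6 (4.8)}) exactly as in the proof of~(\ref{F: [A+(r)-A-(r)]bfA+[A+(s)-A-(s)]=0}) yields the two-variable statement
\[
  (A_{\lambda+i0}(r)-A_{\lambda-i0}(r))\,\bfA^j_{\lambda-i0}(r_\lambda)\,(A_{\lambda+i0}(s)-A_{\lambda-i0}(s))=0
\]
for all non-resonant $r,s$. Finally, Laurent-expanding both $A_{\lambda\pm i0}(\cdot)$ at $r_\lambda$ via~(\ref{F: Laurent for A+(s)}) and reading off the coefficient of $(r-r_\lambda)^{-1}(s-r_\lambda)^{-1}$ — to which only the simple-pole terms $P_+-P_-$ contribute, since the holomorphic parts produce only non-negative powers of $r-r_\lambda$ and $s-r_\lambda$ — gives $(P_+-P_-)\bfA^j_{\lambda-i0}(r_\lambda)(P_+-P_-)=0$, and $j=0$ is the desired identity.

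The only step requiring genuine care — and the main obstacle — is the upgrade from the one-variable to the two-variable vanishing. In the proof of~(\ref{F: [A+(r)-A-(r)]bfA+[A+(s)-A-(s)]=0}) one expands $(1+(r-s)A_{\lambda+i0}(s))^{-1}P_{\lambda+i0}(r_\lambda)$ by Proposition~\ref{P: [1+sAz(r)](-1)Pz(rz)}, which requires the idempotent and the resolvent to carry the \emph{same} boundary value; here the analogous factor is $(1+(r-s)A_{\lambda+i0}(s))^{-1}P_{\lambda-i0}(r_\lambda)$, an index mismatch. I would dispose of it by first moving the $J$ across using the elementary identity $J(1+(r-s)T_{\lambda+i0}(H_s)J)^{-1}=(1+(r-s)JT_{\lambda+i0}(H_s))^{-1}J$ (a case of~(\ref{F: (s-AB)(-1)A=...})) and using the commutation $\bfA^j_{\lambda-i0}(r_\lambda)A_{\lambda-i0}(s)=A_{\lambda-i0}(s)\bfA^j_{\lambda-i0}(r_\lambda)$ (from~(\ref{F: A(s) and A(r) commute}) and~(\ref{F: AP=PA})) to bring the relevant resolvent into the matched-index form $(1+(s-r)A_{\lambda-i0}(s))^{-1}P_{\lambda-i0}(r_\lambda)$, which Proposition~\ref{P: [1+sAz(r)](-1)Pz(rz)} then expands as a finite linear combination of the $\bfA^k_{\lambda-i0}(r_\lambda)$; each resulting term is killed by the one-variable vanishing. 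After that the computation is routine, and the two-line algebra of the first paragraph finishes the proof.
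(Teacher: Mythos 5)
Your reduction is correct and packages the argument differently from the paper. The paper's own proof is three lines: from Lemma~\ref{L: (P+-P-)P+(P+-P-)=0} it gets $P_{\lambda-i0}(r_\lambda)D_{\lambda+i0}(r_\lambda)=D_{\lambda+i0}(r_\lambda)$, ``similarly'' $D_{\lambda-i0}(r_\lambda)P_{\lambda+i0}(r_\lambda)=D_{\lambda-i0}(r_\lambda)$, and then observes $P_{\lambda-i0}(r_\lambda)D_{\lambda+i0}(r_\lambda)=D_{\lambda-i0}(r_\lambda)P_{\lambda+i0}(r_\lambda)$ by direct expansion. That ``similarly'' is exactly the companion identity $(P_{\lambda+i0}(r_\lambda)-P_{\lambda-i0}(r_\lambda))P_{\lambda-i0}(r_\lambda)(P_{\lambda+i0}(r_\lambda)-P_{\lambda-i0}(r_\lambda))=0$ in disguise: for abstract idempotents the plus-middle identity alone does not force $D_{\lambda+i0}(r_\lambda)=D_{\lambda-i0}(r_\lambda)$, so some minus-middle input is genuinely needed. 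Your route through the cubic identity $(P_+-P_-)^3=D_+-D_-$, the splitting $X^3=XP_+X-XP_-X$, and the Laurent-coefficient extraction is correct, and your decision to actually prove the minus-middle identity by mirroring the derivation of Lemma~\ref{L: (P+-P-)P+(P+-P-)=0}, starting from the $\pm$-version of Proposition~\ref{P: delta JP delta=0}, isolates precisely what the paper leaves implicit.

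The one flaw is in the mechanism you propose for the two-variable upgrade, the very step you flag as the main obstacle. Transporting $J$ across $(1+(r-s)T_{\lambda+i0}(H_s)J)^{-1}$ only trades an $A_{\lambda+i0}(s)$-resolvent for a $B_{\lambda+i0}(s)$-resolvent, and commuting $\bfA^j_{\lambda-i0}(r_\lambda)$ with $A_{\lambda-i0}(s)$ never introduces a $\lambda-i0$ resolvent; no combination of $J$-transport and commutation changes the boundary index $+i0$ into $-i0$, so the mismatched factor $(1+(r-s)A_{\lambda+i0}(s))^{-1}P_{\lambda-i0}(r_\lambda)$ cannot be brought into matched form this way. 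The correct (and easy) fix is to take the other sign in~(\ref{F: Az3v6 (4.8)}): applying it with $z=\lambda-i0$, using $\Im T_{\lambda-i0}=-\Im T_{\lambda+i0}$ and then moving $J$ across the resulting $B_{\lambda-i0}$-resolvent, gives
\[
 A_{\lambda+i0}(r)-A_{\lambda-i0}(r)=\bigl(1+(r-s)A_{\lambda+i0}(s)\bigr)^{-1}\bigl(A_{\lambda+i0}(s)-A_{\lambda-i0}(s)\bigr)\bigl(1+(r-s)A_{\lambda-i0}(s)\bigr)^{-1},
\]
so the minus-index resolvent is the factor adjacent to $\bfA^j_{\lambda-i0}(r_\lambda)$; Proposition~\ref{P: [1+sAz(r)](-1)Pz(rz)} now applies with matched indices, expands $\bigl(1+(r-s)A_{\lambda-i0}(s)\bigr)^{-1}P_{\lambda-i0}(r_\lambda)$ into a finite combination of the $\bfA^k_{\lambda-i0}(r_\lambda)$, and each resulting term is killed by your one-variable vanishing. (Equivalently: expand the right-hand factor $A_{\lambda+i0}(s)-A_{\lambda-i0}(s)$ about the point $r$ using the paper's displayed factorization, so that $(1+(s-r)A_{\lambda-i0}(r))^{-1}$, which commutes with $\bfA^j_{\lambda-i0}(r_\lambda)$, lands next to it.) With that substitution your proof goes through.
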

\begin{proof} By Lemma \ref{L: (P+-P-)P+(P+-P-)=0} we have $P_-D_+ = D_+$ and similarly $D_-P_+ = D_-.$
It is left to note that $P_-D_+ = D_-P_+.$
\end{proof}
This lemma allows us to write $D_\lambda(r_\lambda)$ instead of $D_{\lambda-i0}(r_\lambda)$ and $D_{\lambda+i0}(r_\lambda).$

\begin{lemma} The operator $D_\lambda(r_\lambda)$ has the following properties:
\begin{enumerate}
 \item $D^2_{\lambda}(r_\lambda) = 0.$
 \item The image of $D_\lambda(r_\lambda)$ consists of vectors of type~I.
 \item The kernel of $D_\lambda(r_\lambda)$ contains all vectors of type~I.
\end{enumerate}
\end{lemma}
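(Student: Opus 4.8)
The plan is to establish the three properties of $D_\lambda(r_\lambda)$ directly from the definitions and from the algebraic relations already at hand, chiefly Lemma~\ref{L: (P+-P-)P+(P+-P-)=0}, Theorem~\ref{T: spec P(+)P(-)}, and the characterization of type~I vectors in Theorem~\ref{T: type I vectors} (items \ref{Item: 5} and \ref{Item: 5'}). Throughout I write $P_\pm$ for $P_{\lambda\pm i0}(r_\lambda)$ and abbreviate accordingly, and I use that $P_\pm$ are idempotents with $P_+P_-P_+$, $P_-P_+P_-$ the relevant ``squeezed'' operators appearing in the definition of $D_\lambda$.

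For property~(1): expanding Lemma~\ref{L: (P+-P-)P+(P+-P-)=0} gives the identity $P_+ - P_-P_+ - P_+P_- + P_-P_+P_- = 0$, equivalently $P_+ - P_+P_- = P_-P_+ - P_-P_+P_- = P_-(P_+ - P_+P_-)$, and symmetrically $P_- - P_-P_+ = (P_- - P_-P_+)P_+$. First I would compute $D_+ = P_+ - P_+P_-P_+$ and factor it as $D_+ = (P_+ - P_+P_-)P_+ = P_+(1-P_-)P_+$. Then $D_+^2 = P_+(1-P_-)P_+(1-P_-)P_+$, and the square bracket $P_+(1-P_-)P_+(1-P_-)P_+$ is exactly $(P_+ - P_+P_-P_+)(P_+ - P_+P_-P_+)$ acting on $\im P_+$; using $P_+^2=P_+$ this reduces to showing $P_+P_-P_+P_-P_+ = 2P_+P_-P_+ - P_+$, which is precisely the content of Theorem~\ref{T: spec P(+)P(-)} (the product $P_+P_-$ restricted to $\im P_+$ has spectrum $\{1\}$ with algebraic multiplicity $N$, so it acts on $\im P_+$ as a unipotent; but in fact since $\rank(P_+P_-P_+) = N = \rank P_+$ and its only eigenvalue on $\im P_+$ is $1$, diagonalizability would give $P_+P_-P_+ = P_+$, hence $D_+ = 0$). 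Here I should be careful: a priori $P_+P_-P_+$ need not be the identity on $\im P_+$ — it is unipotent but possibly nontrivially so — so the cleaner route is to invoke the expanded relation $P_+ - P_-P_+ - P_+P_- + P_-P_+P_- = 0$ and multiply on the left by $P_+$ and on the right by $P_+$ to get $P_+ - P_+P_-P_+ - P_+P_-P_+ + P_+P_-P_+P_-P_+ = 0$, i.e. $D_+^2 = P_+ - 2P_+P_-P_+ + P_+P_-P_+P_-P_+ = 0$. This is the main obstacle, and it dissolves into a one-line computation once the expanded form of Lemma~\ref{L: (P+-P-)P+(P+-P-)=0} is in place.

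For property~(2): I would show that $\im D_\lambda(r_\lambda) \subset \Upsilon_\lambda^{\mathrm I}(r_\lambda)$ by using $D_\lambda = D_+ = P_+(1-P_-)P_+$ and also $D_\lambda = D_- = P_-(1-P_+)P_-$. Since $D_\lambda$ maps into $\im P_+$ and equally into $\im P_-$, every vector $u = D_\lambda v$ lies in $\Upsilon_{\lambda+i0}(r_\lambda)\cap\Upsilon_{\lambda-i0}(r_\lambda)$; moreover $P_+ u = u$ and $P_- u = u$ because $D_\lambda^2 = 0$ forces $u \in \ker D_\lambda$ and one checks $P_\mp u = u$ from the two factorizations. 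To upgrade ``belongs to both spaces'' to ``of type~I'' I would appeal to the relations of the form $(P_+ - P_-)P_+(P_+ - P_-) = 0$ together with the analogous nilpotent-operator relations $(\bfA_{\lambda+i0}^k(r_\lambda) - \bfA_{\lambda-i0}^k(r_\lambda))\bfA^j_{\lambda+i0}(r_\lambda)(\bfA^l_{\lambda+i0}(r_\lambda) - \bfA^l_{\lambda-i0}(r_\lambda)) = 0$ derived from (\ref{F: [A+(r)-A-(r)]bfA+[A+(s)-A-(s)]=0}), which give $(P_+ - P_-)u = 0$ for $u$ in the image, i.e. the criterion $\bfA^j_{\lambda+i0}(r_\lambda)u = \bfA^j_{\lambda-i0}(r_\lambda)u$ for $j=0$; the higher-$j$ cases follow by multiplying the factorization $D_\lambda = P_+(1-P_-)P_+$ by $\bfA^j_{\lambda\pm i0}(r_\lambda)$ and invoking (\ref{F: (A-A)A(A-A)=0}). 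Thus items \ref{Item: 5}/\ref{Item: 5'} of Theorem~\ref{T: type I vectors} apply and $u$ is of type~I.

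For property~(3): if $u$ is a vector of type~I, then by Theorem~\ref{T: type I vectors} we have $u \in \Upsilon_{\lambda+i0}(r_\lambda)\cap\Upsilon_{\lambda-i0}(r_\lambda)$ with $P_+ u = u = P_- u$; hence $P_+P_-P_+ u = P_+P_- u = P_+ u = u$, so $D_\lambda(r_\lambda) u = P_+ u - P_+P_-P_+ u = u - u = 0$. This is immediate. Assembling (1)--(3) completes the lemma, and I would close by remarking that these three properties together with the property~$C$ hypothesis feed into Lemma~\ref{L: trivial lemma about type I vectors} (applied with the operator $D_\lambda(r_\lambda)$, whose $\bfA$-expansion has the required form) to yield the order-decreasing behaviour needed for Theorem~\ref{T: Property U}.
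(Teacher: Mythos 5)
Your proof is correct and, once the spectral/unipotence detour in (1) is discarded (as you yourself do), it is essentially the paper's argument: $D^2_{\lambda}(r_\lambda)=0$ comes from multiplying $(P_{\lambda+i0}(r_\lambda)-P_{\lambda-i0}(r_\lambda))P_{\lambda+i0}(r_\lambda)(P_{\lambda+i0}(r_\lambda)-P_{\lambda-i0}(r_\lambda))=0$ by $P_{\lambda+i0}(r_\lambda)$ on both sides, the image consists of type~I vectors because $(\bfA^k_{\lambda+i0}(r_\lambda)-\bfA^k_{\lambda-i0}(r_\lambda))D_\lambda(r_\lambda)=0$ for all $k$ by the relation (\ref{F: (A-A)A(A-A)=0}) with $j=l=0$ combined with the type~I criterion of Theorem~\ref{T: type I vectors}, and the kernel statement is immediate. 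The only blemish is your passing attribution of the identity $P_{\lambda+i0}(r_\lambda)P_{\lambda-i0}(r_\lambda)P_{\lambda+i0}(r_\lambda)P_{\lambda-i0}(r_\lambda)P_{\lambda+i0}(r_\lambda)=2P_{\lambda+i0}(r_\lambda)P_{\lambda-i0}(r_\lambda)P_{\lambda+i0}(r_\lambda)-P_{\lambda+i0}(r_\lambda)$ to Theorem~\ref{T: spec P(+)P(-)}, which states something else, but since you replace that route by the direct computation nothing is affected.
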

\begin{proof}
Multiplying the left hand side of the equality~(\ref{F: (P+-P-)P+(P+-P-)=0}) on both sides by $P_{\lambda+i0}(r_\lambda)$ gives~$D^2_{\lambda}(r_\lambda) = 0.$
It follows from~(\ref{F: (A-A)A(A-A)=0}) with $j=l=0$ that for all $k=0,1,2,\ldots$
$$
  (\bfA_+^k - \bfA_-^k)D_\lambda(r_\lambda) = (\bfA_+^k - \bfA_-^k)P_+(P_+-P_-)P_+ = 0.
$$
Hence, by Lemma~\ref{L: type I u iff A(+)u=A(-)u}, the image of the operator $D_{\lambda}(r_\lambda)$ consists only of vectors of type~I.
The third assertion is obvious from Theorem \ref{T: type I vectors}.
\end{proof}
\begin{lemma} \label{L: P(+)P(-)P(+) preserves order}
If a real resonance point~$r_\lambda$ has property~$C$ then
the operator $P_{\lambda\pm i0}(r_\lambda)P_{\lambda\mp i0}(r_\lambda)P_{\lambda\pm i0}(r_\lambda)$
preserves the order of vectors
from $\Upsilon_{\lambda\pm i0}(r_\lambda),$ that is, for all $j=1,2,\ldots$
$$
  P_{\lambda\pm i0}(r_\lambda)P_{\lambda\mp i0}(r_\lambda)\Upsilon^j_{\lambda\pm i0}(r_\lambda) = \Upsilon^j_{\lambda\pm i0}(r_\lambda).
$$
\end{lemma}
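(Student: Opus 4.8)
The plan is to combine the order-shifting behaviour of the nilpotent operators $\bfA_{\lambda\pm i0}(r_\lambda)$ with the decomposition $P_{\lambda+i0}(r_\lambda)P_{\lambda-i0}(r_\lambda)P_{\lambda+i0}(r_\lambda) = P_{\lambda+i0}(r_\lambda) - D_\lambda(r_\lambda)$ that is forced by the definition of $D_\lambda(r_\lambda)$. First I would fix the upper signs (the lower-sign statement follows by the symmetry $z \mapsto \bar z$ together with (\ref{F: Pz*=Q(bar z)})). Since $P_{\lambda+i0}(r_\lambda)$ acts as the identity on $\Upsilon_{\lambda+i0}(r_\lambda)$, on that subspace we have
$$
  P_{\lambda+i0}(r_\lambda)P_{\lambda-i0}(r_\lambda)P_{\lambda+i0}(r_\lambda) = 1 - D_\lambda(r_\lambda)
$$
as operators on $\Upsilon_{\lambda+i0}(r_\lambda)$. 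By the preceding lemma the operator $D_\lambda(r_\lambda)$ maps $\Upsilon_{\lambda+i0}(r_\lambda)$ into the space of type-I vectors and annihilates all type-I vectors; moreover it is a polynomial (indeed of the form $P_+ (\text{stuff}) P_+$) so it commutes with nothing a priori but it does fit the hypotheses of Lemma~\ref{L: trivial lemma about type I vectors} with $k=l=0$, $j=0$, after absorbing the idempotent factors. Hence $D_\lambda(r_\lambda)$ strictly decreases the order of every vector in $\Upsilon_{\lambda+i0}(r_\lambda)$.

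Granting that, the argument finishes as follows. Let $u \in \Upsilon^j_{\lambda+i0}(r_\lambda)$, so $\order(u)\le j$. Then $D_\lambda(r_\lambda)u$ has order $\le j-1 \le j$, hence $D_\lambda(r_\lambda)u \in \Upsilon^j_{\lambda+i0}(r_\lambda)$, and therefore
$$
  P_{\lambda+i0}(r_\lambda)P_{\lambda-i0}(r_\lambda)P_{\lambda+i0}(r_\lambda)u = u - D_\lambda(r_\lambda)u \in \Upsilon^j_{\lambda+i0}(r_\lambda),
$$
which gives the inclusion $P_{\lambda+i0}(r_\lambda)P_{\lambda-i0}(r_\lambda)\Upsilon^j_{\lambda+i0}(r_\lambda)\subseteq \Upsilon^j_{\lambda+i0}(r_\lambda)$ (using $P_{\lambda+i0}(r_\lambda)u=u$). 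For the reverse inclusion I would use that, by Theorem~\ref{T: property M}, $P_{\lambda+i0}(r_\lambda)P_{\lambda-i0}(r_\lambda)$ restricted to $\Upsilon_{\lambda+i0}(r_\lambda)$ is (up to the already-established map $P_{\lambda+i0}(r_\lambda)P_{\lambda-i0}(r_\lambda)P_{\lambda+i0}(r_\lambda)$) an invertible operator on the finite-dimensional space $\Upsilon_{\lambda+i0}(r_\lambda)$: indeed $1 - D_\lambda(r_\lambda)$ is invertible because $D_\lambda(r_\lambda)$ is nilpotent ($D_\lambda^2 = 0$), so it is a bijection of $\Upsilon_{\lambda+i0}(r_\lambda)$, and since it does not increase order and $\Upsilon^j_{\lambda+i0}(r_\lambda)$ is finite-dimensional, a bijection of $\Upsilon_{\lambda+i0}(r_\lambda)$ that maps the flag $\Upsilon^1 \subseteq \Upsilon^2 \subseteq \cdots$ into itself must map each $\Upsilon^j$ onto itself. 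This upgrades the inclusion to the claimed equality $P_{\lambda+i0}(r_\lambda)P_{\lambda-i0}(r_\lambda)\Upsilon^j_{\lambda+i0}(r_\lambda) = \Upsilon^j_{\lambda+i0}(r_\lambda)$.

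The main obstacle is the verification that $D_\lambda(r_\lambda)$ strictly decreases order, i.e. that Lemma~\ref{L: trivial lemma about type I vectors} genuinely applies. This is where property~$C$ is essential: one needs the Jordan decompositions (\ref{F: Property C, (+)decomp})--(\ref{F: Property C, (-)decomp}) compatible with $P_{\lambda\pm i0}(r_\lambda)$ to reduce to a single Jordan block $\Upsilon^{[\nu]}_{\lambda+i0}(r_\lambda)$, on which $D_\lambda(r_\lambda)$ preserves the block, sends the top non-type-I vectors into the type-I part, and kills the type-I part — exactly the combinatorial statement proved in the lemma. A secondary technical point is checking that $D_\lambda(r_\lambda)$ literally has the three properties required (image in type-I vectors, type-I vectors in the kernel, $D_\lambda^2=0$), but these are precisely the assertions of the lemma immediately preceding this one, so they may be quoted. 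Once these pieces are assembled the proof is a short formal manipulation; I would write it in roughly the order above, handling the upper signs in full and remarking that the lower-sign case is identical after replacing $+i0$ by $-i0$ throughout.
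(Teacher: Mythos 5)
Your proof is correct and is essentially the paper's own argument: both rest on writing $P_{\lambda+i0}(r_\lambda)P_{\lambda-i0}(r_\lambda)P_{\lambda+i0}(r_\lambda)=P_{\lambda+i0}(r_\lambda)-D_\lambda(r_\lambda)$, invoking the stated properties of $D_\lambda(r_\lambda)$ (nilpotency, image consisting of type-I vectors, type-I vectors in the kernel) and the order-decreasing conclusion of Lemma~\ref{L: trivial lemma about type I vectors}, which is exactly where property~$C$ enters. The only cosmetic difference is that you make the surjectivity explicit via invertibility of $1-D_\lambda(r_\lambda)$ and the flag argument, whereas the paper obtains it implicitly from the observation that $u-D_\lambda(r_\lambda)u$ has exactly the same order as $u$.
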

\begin{proof} We prove this for the upper signs. In the proof we will use properties of the operator $D = D_\lambda(r_\lambda)$ from previous lemma.

If a vector $u \in \Upsilon_{\lambda +i0}(r_\lambda)$ is of type~I, then
$$
  P_+P_- u = P_+P_-P_+ u = (P_+ - D) u = u - Du = u,
$$
so the operator $P_+P_-P_+$ preserves order of type I vectors.
For any vector $u \in \Upsilon_{\lambda +i0}(r_\lambda)$ the vector $Du$ is a vector of type~I and therefore it
follows from Lemma~\ref{L: trivial lemma about type I vectors} that order of $Du$ is less than the order of~$u.$
Hence, the operator $P_+P_-P_+=P_+-D$ preserves order.
\end{proof}

\begin{lemma} \label{L: P(+)A(-)P(+) decreases order}
If a real resonance point~$r_\lambda$ has property~$C$ then
the operator $P_{\lambda\pm i0}(r_\lambda)\bfA_{\lambda \mp i0}(r_\lambda)P_{\lambda\pm i0}(r_\lambda)$
decreases order of vectors from $\Upsilon_{\lambda\pm i0}(r_\lambda).$
\end{lemma}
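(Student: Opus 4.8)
The plan is to prove the upper-sign statement — that $P_{\lambda+i0}(r_\lambda)\bfA_{\lambda-i0}(r_\lambda)P_{\lambda+i0}(r_\lambda)$ decreases the order of vectors from $\Upsilon_{\lambda+i0}(r_\lambda)$ — the lower-sign statement following by the symmetric substitution $\lambda+i0\leftrightarrow\lambda-i0$. The idea is to peel off the part which is already known to decrease order and to handle the remainder by the argument of Lemma~\ref{L: trivial lemma about type I vectors}. First I would recall two facts: by~(\ref{F: PA=AP=A}) one has $P_{\lambda+i0}(r_\lambda)\bfA_{\lambda+i0}(r_\lambda)P_{\lambda+i0}(r_\lambda)=\bfA_{\lambda+i0}(r_\lambda)$, and by Theorem~\ref{T: Laurent for A(s)psi} the operator $\bfA_{\lambda+i0}(r_\lambda)$ lowers the order of every resonance vector by one, so $\bfA_{\lambda+i0}(r_\lambda)\Upsilon^j_{\lambda+i0}(r_\lambda)\subseteq\Upsilon^{j-1}_{\lambda+i0}(r_\lambda)$ for all $j\geq 1$. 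Writing
\[
  P_{\lambda+i0}(r_\lambda)\bfA_{\lambda-i0}(r_\lambda)P_{\lambda+i0}(r_\lambda)=\bfA_{\lambda+i0}(r_\lambda)+\Xi,\qquad
  \Xi:=P_{\lambda+i0}(r_\lambda)\bigl(\bfA_{\lambda-i0}(r_\lambda)-\bfA_{\lambda+i0}(r_\lambda)\bigr)P_{\lambda+i0}(r_\lambda),
\]
and noting that the subspaces $\Upsilon^j_{\lambda+i0}(r_\lambda)$ form an increasing chain of \emph{linear} subspaces of $\Upsilon_{\lambda+i0}(r_\lambda)$, so that a sum of two order-decreasing operators is again order-decreasing, it suffices to prove that $\Xi$ decreases order.

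Next I would check that $\Xi$ enjoys the three properties on which the proof of Lemma~\ref{L: trivial lemma about type I vectors} relies. \emph{(a) $\Xi$ maps $\Upsilon_{\lambda+i0}(r_\lambda)$ into the subspace of vectors of type~I:} the range of $\Xi$ lies in $\Upsilon_{\lambda+i0}(r_\lambda)$ (the range of $P_{\lambda+i0}(r_\lambda)$), and specializing the identities~(\ref{F: (A-A)A(A-A)=0}) to $j=0$, $l=1$, together with the convention~(\ref{F: P=bfA to 0}), gives $\bigl(\bfA^k_{\lambda+i0}(r_\lambda)-\bfA^k_{\lambda-i0}(r_\lambda)\bigr)P_{\lambda+i0}(r_\lambda)\bigl(\bfA_{\lambda-i0}(r_\lambda)-\bfA_{\lambda+i0}(r_\lambda)\bigr)=0$ for all $k\geq 0$; hence $\bigl(\bfA^k_{\lambda+i0}(r_\lambda)-\bfA^k_{\lambda-i0}(r_\lambda)\bigr)\Xi u=0$ for every $u$ and every $k$, and Lemma~\ref{L: type I u iff A(+)u=A(-)u} identifies $\Xi u$ as a vector of type~I. \emph{(b) $\Xi$ annihilates every vector of type~I:} if $u$ is of type~I then $u\in\Upsilon_{\lambda+i0}(r_\lambda)$ by Lemma~\ref{L: type I vectors}, so $P_{\lambda+i0}(r_\lambda)u=u$, and $\bfA_{\lambda-i0}(r_\lambda)u=\bfA_{\lambda+i0}(r_\lambda)u$ by Lemma~\ref{L: type I u iff A(+)u=A(-)u}, whence $P_{\lambda+i0}(r_\lambda)\bfA_{\lambda-i0}(r_\lambda)P_{\lambda+i0}(r_\lambda)u=P_{\lambda+i0}(r_\lambda)\bfA_{\lambda+i0}(r_\lambda)u=\bfA_{\lambda+i0}(r_\lambda)u$ and so $\Xi u=0$. \emph{(c) $\Xi$ respects the property-$C$ decomposition}, i.e. $\Xi\,\Upsilon^{[\nu]}_{\lambda+i0}(r_\lambda)\subseteq\Upsilon^{[\nu]}_{\lambda+i0}(r_\lambda)$ for every $\nu$: the summand $\bfA_{\lambda+i0}(r_\lambda)$ preserves $\Upsilon^{[\nu]}_{\lambda+i0}(r_\lambda)$ by the Jordan structure, while for $u\in\Upsilon^{[\nu]}_{\lambda+i0}(r_\lambda)$ one has $\bfA_{\lambda-i0}(r_\lambda)u=\bfA_{\lambda-i0}(r_\lambda)P_{\lambda-i0}(r_\lambda)u$ by~(\ref{F: PA=AP=A}), where $P_{\lambda-i0}(r_\lambda)u\in\Upsilon^{[\nu]}_{\lambda-i0}(r_\lambda)$ and $P_{\lambda+i0}(r_\lambda)$ maps $\Upsilon^{[\nu]}_{\lambda-i0}(r_\lambda)$ onto $\Upsilon^{[\nu]}_{\lambda+i0}(r_\lambda)$, both by the intertwining relations~(\ref{F: Property C}), and $\bfA_{\lambda-i0}(r_\lambda)$ preserves $\Upsilon^{[\nu]}_{\lambda-i0}(r_\lambda)$.

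Finally I would run the argument of Lemma~\ref{L: trivial lemma about type I vectors} verbatim for $\Xi$: fix, for each $\nu$, a Jordan basis $u^{(1)}_{\nu+},\dots,u^{(d_\nu)}_{\nu+}$ of $\Upsilon^{[\nu]}_{\lambda+i0}(r_\lambda)$ with $\bfA_{\lambda+i0}(r_\lambda)u^{(j)}_{\nu+}=u^{(j-1)}_{\nu+}$, so that $u^{(j)}_{\nu+}$ has order $j$, and let $\alpha_\nu$ be the largest index with $u^{(\alpha_\nu)}_{\nu+}$ of type~I; by Corollary~\ref{C: u type I then so ia Au} exactly $u^{(1)}_{\nu+},\dots,u^{(\alpha_\nu)}_{\nu+}$ are of type~I, and, as in the cited proof, these span all type~I vectors of $\Upsilon^{[\nu]}_{\lambda+i0}(r_\lambda)$. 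Property~(b) makes $\Xi$ kill $u^{(1)}_{\nu+},\dots,u^{(\alpha_\nu)}_{\nu+}$, and for $j>\alpha_\nu$ properties~(a) and~(c) place $\Xi u^{(j)}_{\nu+}$ in the linear span of $u^{(1)}_{\nu+},\dots,u^{(\alpha_\nu)}_{\nu+}$, which has order $\leq\alpha_\nu<j$; in either case the order of $u^{(j)}_{\nu+}$ is strictly lowered, and since these vectors form a basis of $\Upsilon_{\lambda+i0}(r_\lambda)$ compatible with the order filtration, $\Xi$ decreases order of every vector. Consequently $P_{\lambda+i0}(r_\lambda)\bfA_{\lambda-i0}(r_\lambda)P_{\lambda+i0}(r_\lambda)=\bfA_{\lambda+i0}(r_\lambda)+\Xi$ decreases order. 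The main obstacle is the conceptual one already visible in~(b): the operator $P_{\lambda+i0}(r_\lambda)\bfA_{\lambda-i0}(r_\lambda)P_{\lambda+i0}(r_\lambda)$ does \emph{not} annihilate type~I vectors — it acts on them as $\bfA_{\lambda+i0}(r_\lambda)$ — so Lemma~\ref{L: trivial lemma about type I vectors} is not directly applicable, and one must first subtract the order-lowering term $\bfA_{\lambda+i0}(r_\lambda)$; the remaining technical point is the verification of~(a), which rests on the algebraic identities~(\ref{F: (A-A)A(A-A)=0}) originating from Proposition~\ref{P: delta JP delta=0}, together with the bookkeeping that the order filtration is by linear subspaces and that $\Xi$ is compatible with the property-$C$ decomposition.
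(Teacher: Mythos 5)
Your proof is correct and is essentially the paper's own argument: the paper sets $E_+:=\bfA_{\lambda+i0}(r_\lambda)-P_{\lambda+i0}(r_\lambda)\bfA_{\lambda-i0}(r_\lambda)P_{\lambda+i0}(r_\lambda)$ (your $-\Xi$), uses~(\ref{F: (A-A)A(A-A)=0}) with $j=0$, $l=1$ to show that its range consists of type~I vectors, observes that it annihilates type~I vectors, concludes via Lemma~\ref{L: trivial lemma about type I vectors} that it decreases order, and then adds back the order-decreasing term $\bfA_{\lambda+i0}(r_\lambda)$. Your only deviation is cosmetic: you verify the property-$C$ invariance of $\Xi$ explicitly and rerun the argument of that lemma instead of citing it, which is a slightly more careful handling of a point the paper leaves implicit when applying the lemma to $E_+$, an operator not literally of the form stated there.
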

\begin{proof} We prove this for the upper signs.
Let
$$
  E_+ := \bfA_+ - P_+\bfA_-P_+.
$$
It follows from~(\ref{F: (A-A)A(A-A)=0}) with $k=l=1$ and $j=0$ that
$$
  E_+^2 = (\bfA_+ - P_+\bfA_-P_+)(\bfA_+ - P_+\bfA_-P_+) = P_+(\bfA_+-\bfA_-)P_+(\bfA_+-\bfA_-)P_+ = 0.
$$
It follows from~(\ref{F: (A-A)A(A-A)=0}) with $l=1$ and $j=0$ that for all $k=0,1,2,\ldots$
$$
  (\bfA_+^k - \bfA_-^k)E_+ = (\bfA_+^k - \bfA_-^k)P_+(\bfA_+-\bfA_-)P_+ = 0.
$$
It follows from this and Lemma~\ref{L: type I u iff A(+)u=A(-)u} that the image of $E_+$ is a subspace of $\Upsilon^I_\lambda(r_\lambda).$
So, on one hand, the operator $E_+$ obviously maps all vectors of type~I to zero, on the other hand the image of $E_+$ consists of only vectors of type~I.
By Lemma~\ref{L: trivial lemma about type I vectors}, this implies that $E_+$ decreases order.
Since $P_+\bfA_-P_+ = \bfA_+-E_+$ and since $\bfA_+$ also decreases order, it follows that $P_+\bfA_-P_+$ decreases order too.
\end{proof}

\begin{thm} \label{T: Property U}
For any $z = \lambda \pm i0 \in \partial \Pi,$ for any real resonance point~$r_\lambda$ with property~$C$
corresponding to~$z$ and for any $j=1,2,3,\ldots$ restriction of the idempotent
operator $P_{\lambda\pm i0}(r_\lambda)$ to $\Upsilon^j_{\lambda\mp i0}(r_\lambda)$
is a linear isomorphism of the vector spaces $\Upsilon^j_{\lambda\mp i0}(r_\lambda)$ and $\Upsilon^j_{\lambda\pm i0}(r_\lambda).$
\end{thm}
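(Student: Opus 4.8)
The plan is to reduce the statement to a single inclusion and then prove it block by block. It suffices to treat $z=\lambda+i0$, the case $z=\lambda-i0$ being obtained by interchanging $+i0$ and $-i0$. Write $P_\pm=P_{\lambda\pm i0}(r_\lambda)$, $\bfA_\pm=\bfA_{\lambda\pm i0}(r_\lambda)$, $\Upsilon^j_\pm=\Upsilon^j_{\lambda\pm i0}(r_\lambda)$, and let $d$ be the order of $r_\lambda$. By Theorem~\ref{T: property M} the restriction of $P_+$ to $\Upsilon_{\lambda-i0}(r_\lambda)$ is already an injective linear map onto $\Upsilon_{\lambda+i0}(r_\lambda)$, and by Lemma~\ref{L: j-dimensions coincide} we have $\dim\Upsilon^j_-=\dim\Upsilon^j_+$ for every $j$. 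Hence the theorem is equivalent to proving, for every $j$, the inclusion $P_+(\Upsilon^j_-)\subseteq\Upsilon^j_+$, i.e.\ that $P_+$ does not increase the order of resonance vectors. Using $\bfA_+P_+=\bfA_+$ from~(\ref{F: PA=AP=A}) together with Theorem~\ref{T: Laurent for A(s)psi}, this amounts to the implication
$$
  u\in\Upsilon_{\lambda-i0}(r_\lambda),\ \bfA^j_-u=0\ \Longrightarrow\ \bfA^j_+u=0,\qquad j=1,2,\ldots .
$$

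First I would use property~$C$ to localise. Fix property-$C$ Jordan decompositions $\Upsilon_{\lambda\pm i0}(r_\lambda)=\Upsilon^{[1]}_{\lambda\pm i0}(r_\lambda)\dotplus\cdots\dotplus\Upsilon^{[m]}_{\lambda\pm i0}(r_\lambda)$ as in~(\ref{F: Property C}); then $P_+$ restricts to a bijection $\Upsilon^{[\nu]}_{\lambda-i0}(r_\lambda)\to\Upsilon^{[\nu]}_{\lambda+i0}(r_\lambda)$ of individual Jordan blocks, and the block $\Upsilon^{[\nu]}_{\lambda+i0}(r_\lambda)$ has the same size $d_\nu$ since it is the bijective image of the $d_\nu$-dimensional block $\Upsilon^{[\nu]}_{\lambda-i0}(r_\lambda)$. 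So it is enough to fix one block $\nu$, pick Jordan bases $u^{(1)},\ldots,u^{(d_\nu)}$ of $\Upsilon^{[\nu]}_{\lambda-i0}(r_\lambda)$ (with $\bfA_-u^{(l)}=u^{(l-1)}$, $u^{(0)}=0$) and $w^{(1)},\ldots,w^{(d_\nu)}$ of $\Upsilon^{[\nu]}_{\lambda+i0}(r_\lambda)$, and show that $P_+u^{(j)}$ lies in $\mathrm{span}(w^{(1)},\ldots,w^{(j)})$ for each $j$. The lower half of the block is then immediate: by Theorem~\ref{T: on vectors with property L} the vectors $u^{(1)},\ldots,u^{(\lceil d_\nu/2\rceil)}$ are of type~I, hence by Lemma~\ref{L: type I vectors} they lie in $\Upsilon_{\lambda+i0}(r_\lambda)\cap\Upsilon_{\lambda-i0}(r_\lambda)$ with equal $\lambda\pm i0$-orders, and being in the range of the idempotent $P_+$ they are fixed by it, so $P_+u^{(j)}=u^{(j)}\in\Upsilon^j_+$. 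The top index $j=d_\nu$ is also immediate, since $\bfA^{d_\nu}_+$ annihilates the Jordan block $\Upsilon^{[\nu]}_{\lambda+i0}(r_\lambda)$, whence $\bfA^{d_\nu}_+u^{(d_\nu)}=\bfA^{d_\nu}_+(P_+u^{(d_\nu)})=0$.

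The remaining upper-half indices would be treated by downward induction on $j$, starting from $j=d_\nu$, the organising identity being
$$
  \bfA_+u=P_+\bfA_-u+P_+(\bfA_+-\bfA_-)u,\qquad u\in\Upsilon_{\lambda-i0}(r_\lambda),
$$
valid because $\bfA_+=P_+\bfA_+=\bfA_+P_+$ by~(\ref{F: PA=AP=A}). Applied to $u=u^{(j)}$, the term $P_+\bfA_-u=P_+u^{(j-1)}$ is controlled by the induction hypothesis (or by the type-I part), so the whole difficulty is to show that the error operator $u\mapsto P_+(\bfA_+-\bfA_-)u$ strictly lowers the order on the block. This is where property~$C$ is used essentially: it makes the operators $P_+\bfA_-P_+$ and $P_-\bfA_+P_-$ act block-wise, so that Lemma~\ref{L: P(+)A(-)P(+) decreases order} (strict order-decrease of $P_\pm\bfA_\mp P_\pm$), Lemma~\ref{L: P(+)P(-)P(+) preserves order} (order-preservation of $P_-P_+$ on $\Upsilon_{\lambda-i0}(r_\lambda)$), and the algebraic relations $(\bfA^k_+-\bfA^k_-)\bfA^m_\pm(\bfA^l_+-\bfA^l_-)=0$ of the type~(\ref{F: (A-A)A(A-A)=0}) coming from Proposition~\ref{P: delta JP delta=0}, together force the $w^{(l)}$-coefficient of $P_+u^{(j)}$ to vanish for every $l>j$. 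I expect this last step --- establishing the order-lowering of the error operator --- to be the principal obstacle; everything else is bookkeeping. Once the inclusion $P_+(\Upsilon^j_-)\subseteq\Upsilon^j_+$ is secured for all $j$, the dimension count of the first paragraph upgrades it to the asserted isomorphism, and the case $z=\lambda-i0$ follows by symmetry.
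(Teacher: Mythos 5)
Your reduction is the same as the paper's: by Theorem~\ref{T: property M} and Lemma~\ref{L: j-dimensions coincide} the claim is equivalent to the implication $u\in\Upsilon_{\lambda-i0}(r_\lambda),\ \bfA_{\lambda-i0}^j(r_\lambda)u=0\Rightarrow\bfA_{\lambda+i0}^j(r_\lambda)u=0$, and your observations about the lower half of a Jordan block (type~I vectors, via Theorem~\ref{T: on vectors with property L}) and about the top index are correct. But the whole content of the theorem sits in the one step you leave open: that the error operator $u\mapsto P_{\lambda+i0}(r_\lambda)\bigl(\bfA_{\lambda+i0}(r_\lambda)-\bfA_{\lambda-i0}(r_\lambda)\bigr)u$ strictly lowers order on $\Upsilon_{\lambda-i0}(r_\lambda)$. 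You explicitly call this the expected principal obstacle and only assert that Lemma~\ref{L: P(+)A(-)P(+) decreases order}, Lemma~\ref{L: P(+)P(-)P(+) preserves order} and relations of the type~(\ref{F: (A-A)A(A-A)=0}) ``together force'' the needed coefficient vanishing; no argument is given, and none is evident, because the summands $\bfA_{\lambda+i0}(r_\lambda)P_{\lambda-i0}(r_\lambda)$ and $P_{\lambda+i0}(r_\lambda)\bfA_{\lambda-i0}(r_\lambda)$ are not of the alternating three-factor shape $\bfA_{\lambda\pm i0}^k\bfA_{\lambda\mp i0}^j\bfA_{\lambda\pm i0}^l$ to which the order-decreasing machinery (Lemma~\ref{L: trivial lemma about type I vectors}) applies. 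There is also an internal inconsistency: you announce a downward induction starting at $j=d_\nu$, yet your organising identity $\bfA_{\lambda+i0}(r_\lambda)u=P_{\lambda+i0}(r_\lambda)\bfA_{\lambda-i0}(r_\lambda)u+P_{\lambda+i0}(r_\lambda)(\bfA_{\lambda+i0}(r_\lambda)-\bfA_{\lambda-i0}(r_\lambda))u$ needs control of $P_{\lambda+i0}(r_\lambda)u^{(j-1)}$, i.e.\ the statement at the \emph{lower} index, so the induction would have to run upward.

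The missing idea is a different assembly of exactly the two lemmas you cite, and it removes the need for any block-by-block bookkeeping. The paper argues by upward induction on $j$ on the whole space: the base $j=1$ is Corollary~\ref{C: Upsilon 1(+)=Upsilon 1(-)}; for $u\in\Upsilon^{k+1}_{\lambda-i0}(r_\lambda)$, apply Lemma~\ref{L: P(+)A(-)P(+) decreases order} on the \emph{minus} side to get $P_{\lambda-i0}(r_\lambda)\bfA_{\lambda+i0}(r_\lambda)P_{\lambda-i0}(r_\lambda)u\in\Upsilon^{k}_{\lambda-i0}(r_\lambda)$, hence by the induction hypothesis $\bfA_{\lambda+i0}^k(r_\lambda)\,P_{\lambda-i0}(r_\lambda)\bfA_{\lambda+i0}(r_\lambda)P_{\lambda-i0}(r_\lambda)u=0$; then rewrite this, using $\bfA_{\lambda+i0}(r_\lambda)=P_{\lambda+i0}(r_\lambda)\bfA_{\lambda+i0}(r_\lambda)=\bfA_{\lambda+i0}(r_\lambda)P_{\lambda+i0}(r_\lambda)$ from~(\ref{F: PA=AP=A}) and $P_{\lambda-i0}(r_\lambda)u=u$, as $\bfA_{\lambda+i0}^k(r_\lambda)\bigl(P_{\lambda+i0}(r_\lambda)P_{\lambda-i0}(r_\lambda)P_{\lambda+i0}(r_\lambda)\bigr)\bfA_{\lambda+i0}(r_\lambda)u=0$, and invoke Lemma~\ref{L: P(+)P(-)P(+) preserves order} to conclude that $\bfA_{\lambda+i0}(r_\lambda)u$ has $+$-order at most $k$, i.e.\ $\bfA_{\lambda+i0}^{k+1}(r_\lambda)u=0$. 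This transfer of the order-decreasing lemma to the opposite sign followed by undoing $P_{\lambda+i0}(r_\lambda)P_{\lambda-i0}(r_\lambda)P_{\lambda+i0}(r_\lambda)$ is precisely what your sketch lacks; until that (or a genuine proof that your error operator lowers order) is supplied, the proposal is a plan with a hole at the decisive point rather than a proof.
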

\begin{proof} As usual, only the statement for upper signs is proved.
Since by Theorem \ref{T: property M} the idempotent $P_+$ is a linear isomorphism of the vector spaces $\Upsilon_-$ and $\Upsilon_+,$
the claim is equivalent to $P_+(\Upsilon^j_-) \subset \Upsilon^j_+$ for all $j.$
Since $u \in \Upsilon^j_\pm$ if and only if $\bfA^j_\pm u = 0,$
the last assertion in its turn is equivalent to
$$
  \forall u \in \Upsilon_- \quad  \bfA_-^j u = 0 \quad \then \quad \bfA_+^j u = 0.
$$
For $j=1$ this follows from Corollary~\ref{C: Upsilon 1(+)=Upsilon 1(-)}. Assume that the claim holds for $j=k$
and let $u \in \Upsilon_-^{k+1}.$ Then, since by Lemma~\ref{L: P(+)A(-)P(+) decreases order} the operator $P_-\bfA_+P_-$ decreases order,
the inclusion $P_-\bfA_+P_-u \in \Upsilon_-^{k}$ holds which implies the equality
$$
  \bfA_-^k (P_-\bfA_+P_- u) = 0.
$$
By induction assumption, this implies
$$
  \bfA_+^k (P_-\bfA_+P_- u) = 0.
$$
The left hand side can be written as $\bfA_+^k (P_+P_-P_+)\bfA_+u$ and so
$$
  \bfA_+^k (P_+P_-P_+)\bfA_+u = 0.
$$
It follows that $(P_+P_-P_+)\bfA_+u$ is a $+$-vector of order $\leq k.$ Since by Lemma~\ref{L: P(+)P(-)P(+) preserves order} the operator $P_+P_-P_+$ preserves order,
it follows that $\bfA_+u$ is a $+$-vector of order $\leq k$ too. Hence,
$$
  \bfA_+^{k+1} u = 0.
$$
\end{proof}

\begin{thm} \label{T: Q(+) is a lin. isom-m}
Assume that a real resonance point~$r_\lambda$ has property~$C.$
For any $z = \lambda \pm i0 \in \partial \Pi$ and for any real resonance point~$r_\lambda \in \mbR,$
corresponding to $z,$ the idempotent $Q_{\lambda\pm i0}(r_\lambda)$ is a linear isomorphism
of the vector spaces~$\Psi^j_{\lambda\mp i0}(r_\lambda)$ and~$\Psi^j_{\lambda\pm i0}(r_\lambda)$
for all $j=1,2,\ldots$
\end{thm}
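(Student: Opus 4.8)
The plan is to reduce Theorem~\ref{T: Q(+) is a lin. isom-m} to Theorem~\ref{T: Property U} by transporting the order-preserving isomorphism $P_{\lambda\pm i0}(r_\lambda)\colon \Upsilon^j_{\lambda\mp i0}(r_\lambda)\to \Upsilon^j_{\lambda\pm i0}(r_\lambda)$ to the co-resonance side via the linear isomorphism $J$. By Lemma~\ref{L: j-dimensions coincide}, for every $j$ the map $J\colon \Upsilon^j_{\lambda\mp i0}(r_\lambda)\to \Psi^j_{\lambda\mp i0}(r_\lambda)$ is a linear isomorphism, and likewise $J\colon \Upsilon^j_{\lambda\pm i0}(r_\lambda)\to \Psi^j_{\lambda\pm i0}(r_\lambda)$. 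The key commutation is the intertwining relation~(\ref{F: JP=QJ}): $JP_{\lambda\pm i0}(r_\lambda)=Q_{\lambda\pm i0}(r_\lambda)J$. Applying this to a vector $u\in\Upsilon^j_{\lambda\mp i0}(r_\lambda)$ and writing $\psi = Ju\in\Psi^j_{\lambda\mp i0}(r_\lambda)$, we get $Q_{\lambda\pm i0}(r_\lambda)\psi = Q_{\lambda\pm i0}(r_\lambda)Ju = JP_{\lambda\pm i0}(r_\lambda)u$. Since $P_{\lambda\pm i0}(r_\lambda)u\in\Upsilon^j_{\lambda\pm i0}(r_\lambda)$ by Theorem~\ref{T: Property U}, its image under $J$ lies in $\Psi^j_{\lambda\pm i0}(r_\lambda)$.

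So the argument proceeds as follows. First I would fix $z=\lambda+i0$ (the case $z=\lambda-i0$ being symmetric) and fix $j\ge 1$, and observe that all four vector spaces $\Upsilon^j_{\lambda\pm i0}(r_\lambda)$, $\Psi^j_{\lambda\pm i0}(r_\lambda)$ are finite-dimensional with the same dimension, again by Lemma~\ref{L: j-dimensions coincide}. Next I would note that the diagram
\begin{equation*}
  \xymatrix{
    \Psi^j_{\lambda+i0}(r_\lambda) && \Upsilon^j_{\lambda+i0}(r_\lambda) \ar[ll]_J \\
    && \\
    \Psi^j_{\lambda-i0}(r_\lambda) \ar[uu]^{Q_{\lambda+i0}(r_\lambda)} && \Upsilon^j_{\lambda-i0}(r_\lambda) \ar[ll]^J \ar[uu]_{P_{\lambda+i0}(r_\lambda)}
  }
\end{equation*}
commutes: the bottom $J$ and the right $P_{\lambda+i0}(r_\lambda)$ and the top $J$ are linear isomorphisms by Lemma~\ref{L: j-dimensions coincide} and Theorem~\ref{T: Property U}, and the square commutes by~(\ref{F: JP=QJ}). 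Consequently $Q_{\lambda+i0}(r_\lambda)$ restricted to $\Psi^j_{\lambda-i0}(r_\lambda)$ equals the composition $J\circ P_{\lambda+i0}(r_\lambda)\circ J^{-1}$ of three linear isomorphisms $\Psi^j_{\lambda-i0}(r_\lambda)\to\Upsilon^j_{\lambda-i0}(r_\lambda)\to\Upsilon^j_{\lambda+i0}(r_\lambda)\to\Psi^j_{\lambda+i0}(r_\lambda)$, hence is itself a linear isomorphism onto $\Psi^j_{\lambda+i0}(r_\lambda)$. Finally I would record that the same computation with $z=\lambda-i0$ (using~(\ref{F: JP=QJ}) with the lower sign) gives the statement for $Q_{\lambda-i0}(r_\lambda)$, completing the proof.

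There is essentially no obstacle here: the entire content of Theorem~\ref{T: Q(+) is a lin. isom-m} is already encoded in Theorem~\ref{T: Property U}, and the passage to the $\Psi$-spaces is a purely formal diagram chase built on the intertwining identity~(\ref{F: JP=QJ}) and the dimension-matching Lemma~\ref{L: j-dimensions coincide}. The only point requiring a moment of care is the assertion that $J$ maps $\Upsilon^j$ \emph{onto} $\Psi^j$ (not merely injectively into it) at each level $j$; but this is precisely part of the statement of Lemma~\ref{L: j-dimensions coincide}, which says that $J\colon\Upsilon^j_z(r_z)\to\Psi^j_z(r_z)$ is a linear isomorphism for every $j$. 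Given that, the composition of isomorphisms argument closes immediately, and there is nothing further to verify.
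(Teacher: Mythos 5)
Your proposal is correct and follows essentially the same route as the paper: the paper's proof is exactly the chain $Q_{\lambda+i0}(r_\lambda)\Psi^j_{\lambda-i0}(r_\lambda)=Q_{\lambda+i0}(r_\lambda)J\Upsilon^j_{\lambda-i0}(r_\lambda)=JP_{\lambda+i0}(r_\lambda)\Upsilon^j_{\lambda-i0}(r_\lambda)\simeq J\Upsilon^j_{\lambda+i0}(r_\lambda)=\Psi^j_{\lambda+i0}(r_\lambda)$, built from Lemma~\ref{L: j-dimensions coincide}, the intertwining identity~(\ref{F: JP=QJ}) and Theorem~\ref{T: Property U}, which is precisely your diagram chase.
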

\begin{proof} We prove this assertion for the upper sign.
Using successively Lemma~\ref{L: j-dimensions coincide}, the equality~(\ref{F: JP=QJ}), Theorem~\ref{T: Property U} and Lemma~\ref{L: j-dimensions coincide} again,
one has the following chain of linear isomorphisms:
\begin{equation*}
  \begin{split}
    Q_{\lambda+i0}(r_\lambda) \Psi^j_{\lambda-i0}(r_\lambda) & = Q_{\lambda+i0}(r_\lambda) J \Upsilon^j_{\lambda-i0}(r_\lambda)
     \\ & = J P_{\lambda+i0}(r_\lambda)  \Upsilon^j_{\lambda-i0}(r_\lambda)
        \simeq J \Upsilon^j_{\lambda+i0}(r_\lambda)
        = \Psi^j_{\lambda+i0}(r_\lambda).
  \end{split}
\end{equation*}
\end{proof}
\bigskip
For real resonance points with property~$C$ the following two commutative diagrams of linear isomorphisms of vector spaces
summarize Theorems~\ref{T: Property U},~\ref{T: Q(+) is a lin. isom-m} and Lemma~\ref{L: j-dimensions coincide},
\begin{equation*} \label{F: g*=g phi}
  \xymatrix{
    \Psi_{\lambda+i0}^j(r_\lambda)  && \Upsilon_{\lambda+i0}^j(r_\lambda) \ar[ll]_J  \\
                    &&                               \\
    \Psi_{\lambda-i0}^j(r_\lambda) \ar[uu]^{Q_{\lambda+i0}(r_\lambda)} && \Upsilon_{\lambda-i0}^j(r_\lambda) \ar[ll]^J \ar[uu]_{P_{\lambda+i0}(r_\lambda)}
  }
\qquad\qquad
  \xymatrix{
    \Psi_{\lambda+i0}^j(r_\lambda) \ar[dd]_{Q_{\lambda-i0}(r_\lambda)} && \Upsilon_{\lambda+i0}^j(r_\lambda) \ar[ll]_J \ar[dd]^{P_{\lambda-i0}(r_\lambda)} \\
                    &&                               \\
    \Psi_{\lambda-i0}^j(r_\lambda)  && \Upsilon_{\lambda-i0}^j(r_\lambda) \ar[ll]^J
  }
\end{equation*}

\label{Page: point with property U}
We say that a real resonance point~$r_\lambda$ has \emph{property~$U$} if the operators $P_{\lambda\pm i0}(r_\lambda) \colon \Upsilon_{\lambda\mp i0}(r_\lambda)
\to \Upsilon_{\lambda\pm i0}(r_\lambda)$ preserve order of vectors. Thus, Theorem \ref{T: Property U} asserts that property~$C$ implies property~$U.$

\section{Questions of independence from the rigging~$F$}
\label{S: questions of indend from F}
Here we discuss some questions of independence from the rigging~$F$ for some of the notions which have been studied so far.

\begin{lemma} \label{L: R=indices of AP and AP} The $R$-indices of operators $A_{\lambda+ iy}(s)P_{\lambda + iy}(r_\lambda)$
and $\ulA_{\lambda+ iy}(s)\ulP_{\lambda + iy}(r_\lambda)$ coincide for all $s$ and for all small enough $y>0.$
\end{lemma}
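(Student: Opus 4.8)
The statement asserts that the operators $A_{\lambda+iy}(s)P_{\lambda+iy}(r_\lambda)$ and $\ulA_{\lambda+iy}(s)\ulP_{\lambda+iy}(r_\lambda)$ have the same $R$-index for small $y>0$. The plan is to exploit the fact that, for non-real $z=\lambda+iy$, both operators are finite-rank (the idempotents project onto finite-dimensional root spaces of a compact operator, and $A_z(s),\ulA_z(s)$ are compact) and both lie in the class $\clR$ by Lemma~\ref{L: RV in clR} together with the commutation and idempotency relations $A_z(s)P_z(r_\lambda)=P_z(r_\lambda)A_z(s)$ (cf.~(\ref{F: AP=PA})), so the $R$-index is well-defined for each of them. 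The key observation is that the underlined and non-underlined objects are intertwined by the rigging $F$: we have $\ulA_z(s)=R_z(H_s)V=R_z(H_s)F^*JF=R_z(H_s)F^*\cdot A_z(s)\cdot F\cdot(\text{suitable factor})$, and more directly $F\ulA_z(s)=A_z(s)F$ on the relevant dense domain, since $F R_z(H_s)V = F R_z(H_s)F^* J F = A_z(s) F$. Likewise, $\ulP_z(r_\lambda)$ and $P_z(r_\lambda)$ are the Riesz idempotents of $\ulA_z(s)=R_z(H_s)V$ and $A_z(s)=T_z(H_s)J$ respectively, and by Lemma~\ref{L: star} (spectral measures of $R_z(H)V$ and $T_z(H)J$ coincide) together with the intertwining $F\ulA_z(s)=A_z(s)F$ one gets $F\ulP_z(r_\lambda)=P_z(r_\lambda)F$ by integrating the resolvent identity $F(\sigma-\ulA_z(s))^{-1}=(\sigma-A_z(s))^{-1}F$ around the contour $C(\sigma_z(s))$.

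First I would establish the intertwining relations carefully: show $F\ulA_z(s)=A_z(s)F$ as operators (on the domain where everything makes sense), deduce $F(\sigma-\ulA_z(s))^{-1}=(\sigma-A_z(s))^{-1}F$ for $\sigma$ off the spectrum, and integrate to obtain $F\ulP_z(r_\lambda)=P_z(r_\lambda)F$, hence also $F\,\ulA_z(s)\ulP_z(r_\lambda)=A_z(s)P_z(r_\lambda)\,F$. Second, I would use the fact that $F$ has trivial kernel and dense range together with the trick behind Lemma~\ref{L: sign(M)=sign(FMF)} and the general principle~(\ref{F: mu(AB)=mu(BA)})/(\ref{F: spec(AB)=spec(BA)}): whenever $FX=YF$ with $F$ injective and $X,Y$ finite-rank, the non-zero parts of the spectra of $X$ and $Y$ coincide with multiplicities. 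Indeed, from $FX=YF$ one gets $Y=FXF^{-1}$ on $\im(F)$, and since $\im(F)$ is dense and $Y$ is finite rank, $Y$ is determined by its action there; the eigenvalues and their algebraic multiplicities transfer under $F$ because if $(X-\sigma)^k v=0$ then $(Y-\sigma)^k Fv=0$ and $F$ is injective on root spaces. Applying this to $X=\ulA_z(s)\ulP_z(r_\lambda)$ and $Y=A_z(s)P_z(r_\lambda)$ gives equality of the numbers $N_+$ of eigenvalues in $\mbC_+$ and $N_-$ in $\mbC_-$, hence equality of the $R$-indices.

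The step I expect to require the most care is the rigorous handling of the intertwining $F\ulA_z(s)=A_z(s)F$ when $F$ is unbounded: one must check that both sides are defined on a common dense domain and that the passage to resolvents and contour integrals is legitimate. This is essentially the content already assumed in the paper's setup (the conditions (\ref{F: euD subset dom(F)}), (\ref{F: JF euD subset dom(F*)}), and compactness of $T_z(H)$), so I would invoke those; the cleanest route is probably to note that $\ulA_z(s)$ and $A_z(s)$ have equal non-zero spectral data by Lemma~\ref{L: star} and that the root subspace of $\ulA_z(s)$ at $(s-r_\lambda)^{-1}$ is mapped bijectively by $F$ onto the root subspace of $A_z(s)$ at the same eigenvalue (this is exactly the statement, in the form $F\ulA=AF$, underlying Lemma~\ref{L: j-dimensions coincide} and the remark on p.\,\pageref{Page: ulAz(s)} that ``spectral properties of these operators are identical''). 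Once that bijection is in hand, the equality of $R$-indices is immediate, and the lemma is proved.
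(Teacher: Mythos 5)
Your argument is correct and essentially reproduces the paper's own proof: the paper likewise transfers the spectral data through $F$, observing that $u$ solves $(1+(r_z-s)A_z(s))^k u=0$ if and only if $u=F\chi$ with $\chi$ solving $(1+(r_z-s)\ulA_z(s))^k\chi=0$, so the two operators have the same nonzero eigenvalues with the same algebraic multiplicities and hence the same $R$-index. The only point worth making explicit (which your ``$FX=YF$ with $F$ injective'' principle slightly glosses, since injectivity alone gives only one inclusion of spectra) is that every root vector of $A_z(s)P_z(r_\lambda)$ at a nonzero eigenvalue does lie in $\im F$ --- immediate because $\im A_z(s)\subset F\euD$ --- which is exactly the content of the paper's phrase ``for some unique $\chi$ we have $u=F\chi$.''
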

\begin{proof}
If the rigging operator~$F$ is bounded then this follows directly from (\ref{F: mu(AB)=mu(BA)}).
In general, it is not difficult to see that if $u$ is a solution of the equation
$$
  (1+(r_z-s)A_z(s))^ku = 0,
$$
then for some unique $\chi$ we have $u=F\chi$ where $\chi$ is a solution of the equation
$$
  (1+(r_z-s)\ulA_z(s))^k \chi = 0,
$$
and vice versa, if a vector $\chi$ is a solution of this equation then $u=F\chi$ is a solution of the previous one.
It follows that spectral measures of operators $A_{\lambda+ iy}(s)P_{\lambda + iy}(r_\lambda)$ and
$\ulA_{\lambda+ iy}(s)\ulP_{\lambda + iy}(r_\lambda)$ coincide.
That is, eigenvalues of operators $A_{\lambda+ iy}(s)P_{\lambda + iy}(r_\lambda)$ and $\ulA_{\lambda+ iy}(s)\ulP_{\lambda + iy}(r_\lambda)$
are the same and their algebraic multiplicities are also the same. Hence, their $R$-indices are also equal.
\end{proof}

\begin{thm} \label{T: res.ind independent of F} The resonance index $\ind_{res}(\lambda; H,V)$ does not depend on the choice of the rigging operator~$F$
as long as~$\lambda$ is essentially regular for the pair $(\clA,F),$ where $\clA = \set{H+rV \colon r \in \mbR}$ and $V$ is
a regularizing direction for an operator $H$ which is resonant at~$\lambda.$
\end{thm}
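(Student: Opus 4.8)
The plan is to reduce the statement to the $F$-independence of the $R$-index of the operator $A_{\lambda+iy}(s)P_{\lambda+iy}(r_\lambda)$, which by the results already developed is an equivalent description of the resonance index. More precisely, by Theorem~\ref{T: numbers equal to res index} (item (5)) the resonance index $\ind_{res}(\lambda; H_{r_\lambda},V)$ equals the $R$-index of the operator $A_{\lambda+iy}(s)P_{\lambda+iy}(r_\lambda)$ for all $s$ and all small enough $y>0$. The point of the proof is then simply to observe that this $R$-index does not involve the rigging operator~$F$ in any essential way, only the operators $H$ and~$V$.

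First I would recall the setup: we are given an operator $H$ resonant at an essentially regular point~$\lambda$, a regularizing direction~$V=F^*JF$, and the line $\clA=\set{H+rV\colon r\in\mbR}$; the resonance point~$r_\lambda$ in question is the value of the coupling constant (relative to some regular base operator $H_0$ on this line) at which $H=H_{r_\lambda}$. Suppose $F'$ is another rigging operator for which $\lambda$ is also essentially regular for $(\clA,F')$ and $V=F'^*J'F'$. Both $F$ and $F'$ produce the same resonance set $R(\lambda; H_0,V)$ on this line (by Theorem~\ref{T: Az 4.1.11}, the resonance condition at a regular base point is equivalent to non-invertibility of $1+rR_{\lambda\pm i0}(H_0)V$, which does not refer to~$F$), so $r_\lambda$ is a resonance point with respect to either rigging. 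Now apply Lemma~\ref{L: R=indices of AP and AP}: the $R$-index of $A_{\lambda+iy}(s)P_{\lambda+iy}(r_\lambda)$ computed with the rigging~$F$ equals the $R$-index of $\ulA_{\lambda+iy}(s)\ulP_{\lambda+iy}(r_\lambda)$, where $\ulA_{z}(s)=R_z(H_s)V$ and $\ulP_z(r_\lambda)$ is the corresponding Riesz idempotent — and the latter objects are manifestly independent of~$F$, since they are built only from $R_z(H_s)$ and~$V$. Applying the same lemma to the rigging~$F'$ gives that the $R$-index computed with $F'$ also equals the $R$-index of $\ulA_{\lambda+iy}(s)\ulP_{\lambda+iy}(r_\lambda)$. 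Hence the two resonance indices coincide.

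The one subtlety to check carefully is that $r_\lambda$ is indeed a pole of $\ulA_z(s)=R_z(H_s)V$ as a function of~$s$, with the same group-of-poles structure, independently of~$F$; this follows from the discussion in Section~\ref{S: Preliminaries} (spectral properties of $\ulA_z(s)$ and $\ulB_z(s)$ are identical to those of $A_z(s)$ and $B_z(s)$ away from the real axis) together with the stability of isolated eigenvalues, so that for small $y>0$ the splitting of the eigenvalue $(s-r_\lambda)^{-1}$ into up- and down-eigenvalues of $\ulA_{\lambda+iy}(s)$ is the same as that of $A_{\lambda+iy}(s)$ — indeed Lemma~\ref{L: R=indices of AP and AP} already encodes exactly this comparison, since it asserts equality of the full spectral measures of $A_{\lambda+iy}(s)P_{\lambda+iy}(r_\lambda)$ and $\ulA_{\lambda+iy}(s)\ulP_{\lambda+iy}(r_\lambda)$. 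I do not expect a genuine obstacle here: the main content has been front-loaded into Lemma~\ref{L: R=indices of AP and AP} and Theorem~\ref{T: numbers equal to res index}, and the proof is a short chain of equalities
$$
  \ind_{res}^{F}(\lambda; H_{r_\lambda},V) = \Rindex\bigl(A^{F}_{\lambda+iy}(s)P^{F}_{\lambda+iy}(r_\lambda)\bigr)
  = \Rindex\bigl(\ulA_{\lambda+iy}(s)\ulP_{\lambda+iy}(r_\lambda)\bigr)
  = \Rindex\bigl(A^{F'}_{\lambda+iy}(s)P^{F'}_{\lambda+iy}(r_\lambda)\bigr)
  = \ind_{res}^{F'}(\lambda; H_{r_\lambda},V),
$$
where the superscript indicates which rigging is used to build the sandwiched resolvent and the associated idempotent. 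The only thing requiring a word of care is verifying that the hypotheses — $\lambda$ essentially regular for both $(\clA,F)$ and $(\clA,F')$, and $r_\lambda$ a genuine resonance point in both senses — are simultaneously satisfiable and give the same $r_\lambda$; this is immediate from the $F$-free characterization of resonance points at a regular base operator noted above.
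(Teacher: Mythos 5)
Your proof is correct and is essentially the paper's own argument: the paper also deduces the statement immediately from Theorem~\ref{T: numbers equal to res index} together with Lemma~\ref{L: R=indices of AP and AP}, noting that $\ulA_z(s)=R_z(H_s)V$ and $\ulP_z(r_\lambda)$ do not involve~$F$. Your additional remarks on the $F$-free characterization of the resonance set are consistent with the paper's discussion and do not change the route.
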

\begin{proof} Since the operators $\ulA_z(s) = R_z(H_s)V$ and $\ulP_z(r_z)$ do not depend on~$F,$
this follows immediately from Theorem \ref{T: numbers equal to res index} and Lemma \ref{L: R=indices of AP and AP}.
\end{proof}
This theorem raises natural questions of independence of the notions of essentially regular points and regularizing directions from the rigging~$F.$

\begin{cor} \label{C: V>=0 then dim Ups(1) does not depend on F}
If the perturbation $V$ is non-negative (or non-positive) then the dimension of the vector space $\Upsilon_{\lambda+i0}^1(H_{r_\lambda},V)$
does not depend on the choice of rigging~$F.$
\end{cor}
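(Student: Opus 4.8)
The plan is to obtain the corollary directly from two results already proved in the excerpt, Corollary~\ref{C: U-turn for positive V} and Theorem~\ref{T: res.ind independent of F}, so that the argument is essentially a one-line synthesis. Fix a self-adjoint operator $H = H_{r_\lambda}$ which is resonant at an essentially regular point $\lambda$, let $V$ be a regularizing direction for $H$ at $\lambda$, and assume the sign condition $V \geq 0$ or $V \leq 0$. First I would invoke Corollary~\ref{C: U-turn for positive V}, which under exactly this sign hypothesis yields the identity
$$
  \dim \Upsilon^1_{\lambda+i0}(H_{r_\lambda},V) = \bigl|\ind_{res}(\lambda; H_{r_\lambda},V)\bigr|.
$$
Then I would apply Theorem~\ref{T: res.ind independent of F}, which states that the right-hand side — the resonance index of the triple $(\lambda, H_{r_\lambda},V)$ — does not depend on the choice of the rigging operator $F$, provided $\lambda$ remains essentially regular for the corresponding affine space and $V$ remains a regularizing direction. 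Combining the two statements, the left-hand side is rigging-independent as well, which is the assertion. I would also point out that this complements Theorem~\ref{T: mult of s.c. spectrum: drastic version}, which gives independence of $\Upsilon^1_{\lambda+i0}(r_\lambda)$ from the regularizing direction $V$ with $F$ held fixed; the new content here is the independence from $F$.

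I do not expect a genuine obstacle: the substantive work is already done inside the two cited results, ultimately resting on Lemma~\ref{L: R=indices of AP and AP} (which identifies the $R$-index of $A_{\lambda+iy}(s)P_{\lambda+iy}(r_\lambda)$ with that of the $F$-free operator $\ulA_{\lambda+iy}(s)\ulP_{\lambda+iy}(r_\lambda)$) together with Proposition~\ref{P: J>0 then res points has order 1} (which forces every real resonance point to have order $1$ when $V$ is sign-definite, so that $\Upsilon_{\lambda+i0}(r_\lambda)=\Upsilon^1_{\lambda+i0}(r_\lambda)$).

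The only point requiring care in the write-up is the standing caveat noted just before the corollary in the paper: the notions of essential regularity and of regularizing direction are not yet known to be rigging-independent. Accordingly the corollary should be phrased, and proved, as the statement that \emph{whenever} $\lambda$ is essentially regular and $V$ is a regularizing direction for $H$ at $\lambda$ with respect to two riggings $F$ and $F'$, the dimensions of the corresponding vector spaces $\Upsilon^1_{\lambda+i0}(H_{r_\lambda},V)$ — which live in the two auxiliary Hilbert spaces $\clK$ and $\clK'$ respectively — are equal; this is exactly the scope already built into the hypotheses of Theorem~\ref{T: res.ind independent of F}.
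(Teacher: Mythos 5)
Your proposal is correct and follows essentially the same route as the paper: identify $\dim \Upsilon^1_{\lambda+i0}(H_{r_\lambda},V)$ with the resonance index (possible because sign-definiteness of $V$ forces every real resonance point to have order $1$) and then invoke Theorem~\ref{T: res.ind independent of F}. The only cosmetic difference is that the paper gets the identification directly — for $V\geq 0$ there are no down-points, so $\dim \Upsilon^1_{\lambda+i0}(r_\lambda)=\dim\Upsilon_{\lambda+i0}(r_\lambda)=\ind_{res}(\lambda;H_{r_\lambda},V)$ — whereas you route it through Corollary~\ref{C: U-turn for positive V}, which packages the same facts (together with the signature theorem) and yields the equality up to absolute value, which suffices.
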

\begin{proof} Since $V$ is non-negative, by Proposition \ref{P: J>0 then res points has order 1},
$\dim \Upsilon_{\lambda+i0}^1(H_{r_\lambda},V)$ is equal to $\dim \Upsilon_{\lambda+i0}(H_{r_\lambda},V).$
Since by $V\geq 0$ there are no resonance down-points, this number is equal to the resonance index $\ind_{res}(\lambda;H_{r_\lambda},V),$
which is independent of~$F$ by Theorem \ref{T: res.ind independent of F}.
\end{proof}
By Lemma \ref{L: sign(M)=sign(M(y))}, for small enough $y$ the signatures of operators $Q_{\lambda\mp i0}(r_\lambda)JP_{\lambda\pm i0}(r_\lambda)$
and $\ulQ_{\lambda\mp iy}(r_\lambda)V \ulP_{\lambda\pm iy}(r_\lambda)$ coincide.
Hence, another way to prove Theorem \ref{T: res.ind independent of F} is to observe that the latter operator does not depend on~$F.$

Combining Corollary \ref{C: V>=0 then dim Ups(1) does not depend on F} with Theorem \ref{T: mult of s.c. spectrum: drastic version}
we obtain the following
\begin{thm}
If the real vector space of self-adjoint perturbation operators $\clA_0(F)$ has at least one non-negative operator~$V,$ then the dimension of the vector space $\Upsilon^1_{\lambda+i0}(r_\lambda)$
is independent of~$F.$
\end{thm}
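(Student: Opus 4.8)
The plan is to derive the statement by chaining Theorem~\ref{T: mult of s.c. spectrum: drastic version} and Corollary~\ref{C: V>=0 then dim Ups(1) does not depend on F}. Fix a self-adjoint operator resonant at the essentially regular point~$\lambda$ and write it $H_{r_\lambda}$, so that~$r_\lambda$ is the associated real resonance point. For a given rigging~$F$ the space $\Upsilon^1_{\lambda+i0}(r_\lambda)$ is unambiguous because, by Theorem~\ref{T: mult of s.c. spectrum: drastic version}, it is the same for every regularizing direction $V\in\clA_0(F)$; what is a priori unclear is its dependence on~$F$. By hypothesis $\clA_0(F)$ contains a non-negative operator, and the first step is to take $V\in\clA_0(F)$ to be one such operator. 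One has to make sure that~$V$ can be chosen to be a \emph{regularizing} direction for $H_{r_\lambda}$ at~$\lambda$: by Theorem~\ref{T: Az Th. 4.2.5} the line $\set{H_{r_\lambda}+rV}$ is either everywhere resonant at~$\lambda$ or meets the resonance set in a discrete set, and essential regularity of~$\lambda$ excludes the first alternative (if necessary after replacing~$V$ by its sum with a regularizing non-negative direction, which is again non-negative). This is the one genuinely delicate point.

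Granting this, Theorem~\ref{T: mult of s.c. spectrum: drastic version} gives
$$
  \dim\Upsilon^1_{\lambda+i0}(r_\lambda)=\dim\Upsilon^1_{\lambda+i0}(H_{r_\lambda},V),\qquad V\ge 0 .
$$
I would then reproduce the reasoning of Corollary~\ref{C: V>=0 then dim Ups(1) does not depend on F}: since $V\ge 0$, Proposition~\ref{P: J>0 then res points has order 1} forces every real resonance point of the triple $(\lambda;H_{r_\lambda},V)$ to have order~$1$, so that $\Upsilon^1_{\lambda+i0}(H_{r_\lambda},V)=\Upsilon_{\lambda+i0}(H_{r_\lambda},V)$; and $V\ge 0$ pushes every pole in the group of~$r_\lambda$ into the upper half-plane, so $N_-=0$ and
$$
  \dim\Upsilon_{\lambda+i0}(H_{r_\lambda},V)=N_+=\ind_{res}(\lambda;H_{r_\lambda},V).
$$
The right-hand side does not depend on the rigging, by Theorem~\ref{T: res.ind independent of F} (equivalently, by Lemma~\ref{L: R=indices of AP and AP} the resonance index equals the $R$-index of $\ulA_{\lambda+iy}(s)\ulP_{\lambda+iy}(r_\lambda)$, an operator assembled from $R_z(H_s)$ and~$V$ alone). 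Combining the two displays yields $\dim\Upsilon^1_{\lambda+i0}(r_\lambda)=\ind_{res}(\lambda;H_{r_\lambda},V)$, and this number is $F$-independent.

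The remaining obstacle is to reconcile the two independence statements when one passes between two essentially different riggings $F_1$ and $F_2$, since the non-negative regularizing directions $V_1\in\clA_0(F_1)$ and $V_2\in\clA_0(F_2)$ produced above need not lie in a common $\clA_0(F)$. Here I would show $\ind_{res}(\lambda;H_{r_\lambda},V_1)=\ind_{res}(\lambda;H_{r_\lambda},V_2)$ by introducing an auxiliary rigging $F$ that factorises both $V_i$ with bounded coefficients --- for instance $F=\sqrt{V_1+V_2}$, compressed to the range of $V_1+V_2$ if this operator is not injective, which works because $0\le V_i\le V_1+V_2$ --- and then applying Theorem~\ref{T: mult of s.c. spectrum: drastic version} together with the previous paragraph to the pairs $(F,V_1)$ and $(F,V_2)$ to conclude that $\Upsilon^1_{\lambda+i0}(H_{r_\lambda},V_1)$ and $\Upsilon^1_{\lambda+i0}(H_{r_\lambda},V_2)$ have equal dimension. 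Verifying that~$\lambda$ remains essentially regular for such auxiliary data is the technically heaviest part; once that is in place, everything else is a routine reduction to the already established results.
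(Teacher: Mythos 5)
Your core chain is the paper's own argument: Theorem~\ref{T: mult of s.c. spectrum: drastic version} removes the dependence on the regularizing direction within a fixed rigging, and then the reasoning of Corollary~\ref{C: V>=0 then dim Ups(1) does not depend on F} (Proposition~\ref{P: J>0 then res points has order 1} gives $d=1$, non-negativity gives $N_-=0$, and the resonance index is $F$-free) makes the dimension independent of~$F$; the paper states the theorem exactly as this combination. The trouble is with the step you yourself flag as delicate, which you resolve incorrectly. Essential regularity of~$\lambda$ only says that \emph{some} operator of the affine space~$\clA$ is regular at~$\lambda$, i.e.\ that \emph{some} direction in $\clA_0(F)$ regularizes $H_{r_\lambda}$; it does not exclude that the particular line $\set{H_{r_\lambda}+rV}$ through your chosen non-negative~$V$ is resonant at~$\lambda$ for every~$r$. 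For example, if $\lambda$ is an eigenvalue of $H_{r_\lambda}$ with eigenvector~$\chi$ and $V\ge 0$ annihilates~$\chi$, then $\lambda$ is an eigenvalue of every $H_{r_\lambda}+rV$, so by Proposition~\ref{P: Az 4.1.10} the whole line is resonant, while $\lambda$ can still be essentially regular thanks to other directions in $\clA_0(F)$. Your parenthetical repair (``replace $V$ by its sum with a regularizing non-negative direction'') is circular, since the existence of a non-negative \emph{regularizing} direction is exactly what needs to be produced. The paper's intended reading is that the non-negative $V$ of the hypothesis is itself the regularizing direction used to define $r_\lambda$ and $\Upsilon^1_{\lambda+i0}(r_\lambda)$, so no such argument is attempted there.

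The final bridging paragraph also contains an acknowledged but genuine hole. For $F=\sqrt{V_1+V_2}$ (or its compression) to enter the framework you must check that it is a rigging with trivial kernel and co-kernel (it fails injectivity whenever $V_1+V_2$ has a kernel), that $FR_z(H_s)F^*$ is compact, and, above all, that $\lambda$ is essentially regular for the auxiliary pair, i.e.\ that the norm limits $FR_{\lambda+i0}(H_s)F^*$ exist. Writing $V_1+V_2=G^*(J_1\oplus J_2)G$ with $G=F_1\oplus F_2$, this last point amounts to boundary convergence of the block operator $GR_{\lambda+iy}(H_s)G^*$, whose off-diagonal blocks are the cross terms $F_1R_{\lambda+iy}(H_s)F_2^*$; the separate limiting absorption principles for $F_1$ and $F_2$ control only the diagonal blocks, so this is not a routine verification and you do not supply it. As written, the general two-rigging comparison is therefore not established; the paper's one-line proof sidesteps it by comparing riggings through the fixed non-negative regularizing perturbation itself via Corollary~\ref{C: V>=0 then dim Ups(1) does not depend on F}.
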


\section{Resonance points of type~I}
\label{S: res point of type I}
It turns out that real resonance points have a certain generic property, which admits many equivalent reformulations.
A~real resonance point with this property will be called a point of type~I.
As it will be shown, if a point~$\lambda$ on the spectral line lies outside the essential spectrum, then all real resonance points corresponding to $\lambda\pm i0$ are of type~I.
Further, if the perturbation~$V$ is non-negative, then all points are also of type~I for any essentially regular point~$\lambda.$
For a resonance point to be of type~I is a generic property since, as it will be shown, all resonance points of order~$1$ are of type~I.
Resonance points which are not of type~I exist, examples of such points will be given in subsection~\ref{SSS: order 2} of section~\ref{S: Pert embedded eig}.

At the end of this section we introduce a class of real resonance points with the so-called property~$S$ which is strictly larger than
the class of real resonance points of type~I.

Initially, results of section \ref{S: U-turn theorem} were proved for points of type I. At that stage of preparation of this paper
I did not know whether there were real resonance points not of type I. In fact, a significant time was spent in an effort to prove a conjecture that all
real resonance points are of type I. This conjecture was supported by the fact that it holds in several special cases mentioned in the beginning of this section.
However, later an example of a resonance point not of type I was found. This example is given in section \ref{S: Pert embedded eig}.
A similar story was repeated with resonance points with property~$S.$ To prove main results of section \ref{S: U-turn theorem} in the case of
arbitrary real resonance points took another year.

By definition, a real resonance point~$r_\lambda$ is a \emph{point of type~I}, if
for some non-resonance point $s\in \mbR$ the following equality holds: \label{Page: point of type I}
\begin{equation} \label{F: type I resonance point, definition}
  \sqrt{\Im T_{\lambda+i0}(H_s)}\, JP_{\lambda+i0}(r_\lambda) = 0.
\end{equation}
This equality is a strengthened version of~(\ref{F: delta JP delta=0}), and while the equality~(\ref{F: delta JP delta=0})
holds for all resonance points~$r_\lambda,$ it will be shown that not all resonance points are of type~I.
One can also see that definition of a point of type~I is equivalent to requiring that all resonance vectors corresponding to $\lambda+i0$
are of type~I.
\begin{lemma} \label{L: type I iff Im Q=0}
A~real resonance point~$r_\lambda$ is of type~I if and only if for some non-resonant $s \in \mbR$
\begin{equation} \label{F: type I iff Im Q=0}
  \sqrt{\Im T_{\lambda+i0}(H_s)}\, Q_{\lambda+i0}(r_\lambda) = 0.
\end{equation}
\end{lemma}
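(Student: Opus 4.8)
The plan is to reduce the two conditions to each other via the linear isomorphism $J$ between resonance and co-resonance vector spaces (Lemma \ref{L: j-dimensions coincide}) together with the adjoint relations $(P_z(r_z))^* = Q_{\bar z}(\bar r_z)$ from \eqref{F: Pz*=Q(bar z)} and $JP_z(r_z) = Q_z(r_z)J$ from \eqref{F: JP=QJ}. First I would observe that, since $\Im T_{\lambda+i0}(H_s)\geq 0$, the operator $\sqrt{\Im T_{\lambda+i0}(H_s)}\,JP_{\lambda+i0}(r_\lambda)$ vanishes if and only if $\Im T_{\lambda+i0}(H_s)\,JP_{\lambda+i0}(r_\lambda) = 0$, and likewise for the $Q$-version; this is because $\|\sqrt{M}X v\|^2 = \langle Xv, MXv\rangle$ for $M\geq 0$, so $\sqrt{M}X = 0 \iff MX = 0$. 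So it suffices to prove that $\Im T_{\lambda+i0}(H_s)\,JP_{\lambda+i0}(r_\lambda) = 0$ is equivalent to $\Im T_{\lambda+i0}(H_s)\,Q_{\lambda+i0}(r_\lambda) = 0$.

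Next I would use the identity $JP_{\lambda+i0}(r_\lambda) = Q_{\lambda+i0}(r_\lambda)J$ from \eqref{F: JP=QJ}, so that
$$
  \Im T_{\lambda+i0}(H_s)\,JP_{\lambda+i0}(r_\lambda) = \Im T_{\lambda+i0}(H_s)\,Q_{\lambda+i0}(r_\lambda)J.
$$
This immediately gives the implication \eqref{F: type I iff Im Q=0} $\Rightarrow$ \eqref{F: type I resonance point, definition}. For the reverse implication, if $\Im T_{\lambda+i0}(H_s)\,Q_{\lambda+i0}(r_\lambda)J = 0$, I would argue that $Q_{\lambda+i0}(r_\lambda)$ has range $\Psi_{\lambda+i0}(r_\lambda)$, and by Lemma \ref{L: j-dimensions coincide} the map $J\colon \Upsilon_{\lambda+i0}(r_\lambda)\to\Psi_{\lambda+i0}(r_\lambda)$ is a linear isomorphism; since $Q_{\lambda+i0}(r_\lambda)$ is an idempotent onto $\Psi_{\lambda+i0}(r_\lambda)$ and $JP_{\lambda+i0}(r_\lambda) = Q_{\lambda+i0}(r_\lambda)J$, one has $\im Q_{\lambda+i0}(r_\lambda) = \im\bigl(Q_{\lambda+i0}(r_\lambda)J\bigr)$. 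Hence vanishing of $\Im T_{\lambda+i0}(H_s)$ on $\im\bigl(Q_{\lambda+i0}(r_\lambda)J\bigr)$ forces its vanishing on $\im Q_{\lambda+i0}(r_\lambda)$, i.e. $\Im T_{\lambda+i0}(H_s)\,Q_{\lambda+i0}(r_\lambda) = 0$.

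Finally, I would record that the independence of the chosen non-resonant $s$ is not actually needed for the statement as phrased (both conditions are ``for some $s$''), but if one wants it, it follows from Theorem \ref{T: type I vectors}, where conditions (1) and (2) are shown equivalent to the $s$-independent conditions (5)/(6). The main (and only mildly delicate) point will be the range identification $\im\bigl(Q_{\lambda+i0}(r_\lambda)J\bigr) = \im Q_{\lambda+i0}(r_\lambda)$: this is where Lemma \ref{L: j-dimensions coincide} is essential, because $J$ need not be invertible on all of $\clK$, only as a map between the finite-dimensional spaces $\Upsilon_{\lambda+i0}(r_\lambda)$ and $\Psi_{\lambda+i0}(r_\lambda)$. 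Once that is in hand, the equivalence is a two-line consequence of \eqref{F: JP=QJ} and positivity of $\Im T_{\lambda+i0}(H_s)$.
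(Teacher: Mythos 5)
Your proposal is correct and follows essentially the same route as the paper, whose entire proof is the observation (via Lemma \ref{L: j-dimensions coincide}, equivalently the identity $JP_{\lambda+i0}(r_\lambda)=Q_{\lambda+i0}(r_\lambda)J$) that $\im\bigl(JP_{\lambda+i0}(r_\lambda)\bigr)=\im Q_{\lambda+i0}(r_\lambda)=\Psi_{\lambda+i0}(r_\lambda)$, so that $\sqrt{\Im T_{\lambda+i0}(H_s)}$ annihilates one of these operators iff it annihilates the other. Your preliminary removal of the square root is harmless but unnecessary, since the range argument works directly with $\sqrt{\Im T_{\lambda+i0}(H_s)}$.
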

\begin{proof} By Lemma~\ref{L: j-dimensions coincide},
the range of the operator $Q_{\lambda+i0}(r_\lambda)$ coincides with the range of the product $JP_{\lambda+i0}(r_\lambda).$
The assertion follows.
\end{proof}

In what follows it is assumed for convenience that the point $s,$ for which the equality~(\ref{F: type I resonance point, definition})
holds, is $s = 0.$

\begin{lemma} \label{L: type I iff w(s) is holom-c} A resonance point~$r_\lambda$ is a point of type~I if and only if the function
\begin{equation} \label{F: a wonderful function}
  \mbC \ni s \mapsto w(s):=\sqrt{\Im T_{\lambda+i0}(H_0)}\, (1+sJT_{\lambda+i0}(H_0))^{-1}
\end{equation}
is holomorphic at~$r_\lambda.$
\end{lemma}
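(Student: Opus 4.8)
The plan is to show both implications by relating the function $w(s)$ from~(\ref{F: a wonderful function}) to the idempotent $P_{\lambda+i0}(r_\lambda)$. First I would observe, using $V = F^*JF$ and the definition $T_{\lambda+i0}(H_0) = F R_{\lambda+i0}(H_0)F^*$, that $w(s)$ is a meromorphic function of $s$ by the analytic Fredholm alternative (Theorem~\ref{T: Analytic Fredholm alternative}), and that its only possible poles are the resonance points $r_\lambda$ corresponding to $\lambda+i0$, since these are exactly the values of $s$ at which $1+sJT_{\lambda+i0}(H_0)$ (equivalently $1+sT_{\lambda+i0}(H_0)J$) fails to be invertible, by Theorem~\ref{T: Az 4.1.11}. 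Hence ``$w$ is holomorphic at $r_\lambda$'' is equivalent to ``the residue of $w$ at $r_\lambda$ vanishes.''

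The key computation is to identify that residue. Writing $w(s)J = \sqrt{\Im T_{\lambda+i0}(H_0)}\,J\,(1+sT_{\lambda+i0}(H_0)J)^{-1} = \sqrt{\Im T_{\lambda+i0}(H_0)}\,J\,(1 - s(1+sT_{\lambda+i0}(H_0)J)^{-1}T_{\lambda+i0}(H_0)J)$, I would relate $(1+sT_{\lambda+i0}(H_0)J)^{-1}T_{\lambda+i0}(H_0)J$ to the meromorphic function $A_{\lambda+i0}(s) = T_{\lambda+i0}(H_s)J$ via the identity~(\ref{F: A(s)=(1+(s-r)A(r))(-1)A(r)}) with $r=0$: indeed $A_{\lambda+i0}(s) = (1+sA_{\lambda+i0}(0))^{-1}A_{\lambda+i0}(0)$ where $A_{\lambda+i0}(0)=T_{\lambda+i0}(H_0)J$. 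Then by Proposition~\ref{P: Pz(rz)=res Az(s)} the residue of $A_{\lambda+i0}(s)$ at the pole $r_\lambda$ is precisely $P_{\lambda+i0}(r_\lambda)$, and more precisely, collecting all poles of the group, $\frac{1}{2\pi i}\oint_{C(r_\lambda)} A_{\lambda+i0}(s)\,ds = P_{\lambda+i0}(r_\lambda)$. Consequently the residue of $w(s)J$ at $r_\lambda$ is (up to the nonzero scalar $-r_\lambda$, or just $-1$ after absorbing a factor of $s$ evaluated at the pole — to be handled carefully with the Laurent expansion~(\ref{F: Laurent for A+(s)})) a nonzero multiple of $\sqrt{\Im T_{\lambda+i0}(H_0)}\,J\,P_{\lambda+i0}(r_\lambda)$, and similarly the residue of $w(s)$ itself is a nonzero multiple of $\sqrt{\Im T_{\lambda+i0}(H_0)}\,J\,P_{\lambda+i0}(r_\lambda)$ (using $JP_{\lambda+i0}(r_\lambda)=Q_{\lambda+i0}(r_\lambda)J$ from~(\ref{F: JP=QJ}) if one prefers to phrase it through $Q$). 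Therefore $w$ is holomorphic at $r_\lambda$ if and only if $\sqrt{\Im T_{\lambda+i0}(H_0)}\,J\,P_{\lambda+i0}(r_\lambda) = 0$, which is the defining equation~(\ref{F: type I resonance point, definition}) of a point of type~I at $s=0$.

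The main obstacle I anticipate is bookkeeping with the higher-order terms of the Laurent expansion: the point $r_\lambda$ may have order $d>1$, so $A_{\lambda+i0}(s)$ has principal part $\sum_{j=0}^{d-1}(s-r_\lambda)^{-j-1}\bfA^j_{\lambda+i0}(r_\lambda)$, and one must check that all these terms, after composition with $\sqrt{\Im T_{\lambda+i0}(H_0)}\,J$ on the left, vanish simultaneously when the leading one does. This is exactly the content of Proposition~\ref{P: delta JP delta=0} (the equalities~(\ref{F: delta JA(j)delta=0})) combined with the fact that $\bfA^j_{\lambda+i0}(r_\lambda) = \bfA^j_{\lambda+i0}(r_\lambda)P_{\lambda+i0}(r_\lambda)$ by~(\ref{F: PA=AP=A}), so that $\sqrt{\Im T_{\lambda+i0}(H_0)}\,J\,P_{\lambda+i0}(r_\lambda)=0$ forces $\sqrt{\Im T_{\lambda+i0}(H_0)}\,J\,\bfA^j_{\lambda+i0}(r_\lambda)=0$ for all $j$; conversely if $w$ is holomorphic at $r_\lambda$ then already the coefficient of $(s-r_\lambda)^{-1}$ vanishes, which gives~(\ref{F: type I resonance point, definition}). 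I would also note at the end that, via the identity~(\ref{F: Az3v6 (4.8)}) relating $T_{\lambda+i0}(H_s)$ for different $s$ and Proposition~\ref{P: [1+sAz(r)](-1)Pz(rz)}, the choice of the base point $s=0$ in the definition is immaterial, which is consistent with condition~(i$^*_\pm$) versus (i$_\pm$) in Theorem~\ref{T: type I thm}.
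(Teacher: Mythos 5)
Your overall strategy is the paper's: extract the idempotent from $w$ by residue calculus, identify the type~I condition with its vanishing against $\sqrt{\Im T_{\lambda+i0}(H_0)}$, and dispose of the deeper Laurent coefficients through the nilpotent operators via (\ref{F: PA=AP=A}) (resp.\ (\ref{F: QB=BQ=B})). But one step is stated in a way that fails for points of order $d\ge 2$. In the $s$-variable you have $w(s)J=\sqrt{\Im T_{\lambda+i0}(H_0)}\,J-s\,\sqrt{\Im T_{\lambda+i0}(H_0)}\,J\,A_{\lambda+i0}(s)$, and the extra factor $s$ mixes the coefficients of (\ref{F: Laurent for A+(s)}): writing $s=r_\lambda+(s-r_\lambda)$, the coefficient of $(s-r_\lambda)^{-1}$ is $-\sqrt{\Im T_{\lambda+i0}(H_0)}\,J\,\bigl(r_\lambda P_{\lambda+i0}(r_\lambda)+\bfA_{\lambda+i0}(r_\lambda)\bigr)$, not a nonzero multiple of $\sqrt{\Im T_{\lambda+i0}(H_0)}\,J\,P_{\lambda+i0}(r_\lambda)$. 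Hence your converse sentence (``if $w$ is holomorphic then the coefficient of $(s-r_\lambda)^{-1}$ vanishes, which gives (\ref{F: type I resonance point, definition})'') does not follow as written, and your earlier claim that holomorphy at $r_\lambda$ is equivalent to vanishing of the residue is precisely the nontrivial point, not a consequence of $r_\lambda$ being the only possible pole: a pole of order $\ge 2$ can have zero residue. A second, smaller caveat: the lemma concerns $w(s)$, and passing from holomorphy of $w(s)J$ back to $w(s)$ is not automatic since $J$ may have a kernel; the clean way is to run the identical computation with $w(s)=\sqrt{\Im T_{\lambda+i0}(H_0)}\bigl(1-sB_{\lambda+i0}(s)\bigr)$, i.e.\ with $Q_{\lambda+i0}(r_\lambda)$ and $\bfB_{\lambda+i0}(r_\lambda)$ in place of $P,\bfA$.

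Both issues are repairable with material you already have. Holomorphy gives the vanishing of all coefficients $-\sqrt{\Im T_{\lambda+i0}(H_0)}\,J\bigl(r_\lambda\bfA^{j}_{\lambda+i0}(r_\lambda)+\bfA^{j+1}_{\lambda+i0}(r_\lambda)\bigr)$, $j=0,\dots,d-1$; since $H_0$ is regular at $\lambda$ you have $r_\lambda\neq 0$, so descending from $j=d-1$ (where $\bfA^{d}_{\lambda+i0}(r_\lambda)=0$) yields $\sqrt{\Im T_{\lambda+i0}(H_0)}\,J\,\bfA^{j}_{\lambda+i0}(r_\lambda)=0$ for all $j\ge1$ and finally $\sqrt{\Im T_{\lambda+i0}(H_0)}\,J\,P_{\lambda+i0}(r_\lambda)=0$; alternatively, vanishing of the residue alone suffices after right multiplication by powers of $\bfA_{\lambda+i0}(r_\lambda)$. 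The paper sidesteps the mixing altogether by substituting $\sigma=-s^{-1}$, so that the contour integral of $\tilde w(\sigma)=-s^{-1}w(s)$ around $\sigma_\lambda(0)=-r_\lambda^{-1}$ equals $2\pi i\,\sqrt{\Im T_{\lambda+i0}(H_0)}\,Q_{\lambda+i0}(r_\lambda)$ on the nose, making the converse immediate from Lemma~\ref{L: type I iff Im Q=0}, while the forward direction kills the deeper coefficients via Proposition~\ref{P: nilpotent terms of (sigma-B)(-1)} and (\ref{F: QB=BQ=B}) --- the same mechanism you invoke through (\ref{F: PA=AP=A}); note that Proposition~\ref{P: delta JP delta=0}, being sandwiched by $\sqrt{\Im T}$ on both sides, is neither needed nor strong enough for the one-sided vanishing you want there.
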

\begin{proof} Let $\sigma = -s^{-1}$ and let
$$
  \tilde w(\sigma) = \sqrt{\Im T_{\lambda+i0}(H_0)}\, (\sigma - JT_{\lambda+i0}(H_0))^{-1} = -\frac 1s w(s).
$$
The function $w(s)$ is holomorphic at~$r_\lambda$ if and only if $\tilde w(\sigma)$ is holomorphic at $\sigma_\lambda(0)=-r_\lambda^{-1}.$

($\then$) By the analytic Fredholm alternative, the function $\tilde w(\sigma)$ is meromorphic with a possible pole at $\sigma_\lambda(0).$
It follows from the definition~(\ref{F: Qz(rz)=oint (sigma-Bz)(-1)d sigma}) of the idempotent operator $Q_{\lambda+i0}(r_\lambda)$ and Lemma~\ref{L: type I iff Im Q=0} that
\begin{equation} \label{F: oint of w(s)}
  \begin{split}
    \oint_{C(\sigma_\lambda(0))} \tilde w(\sigma)\,d\sigma & = \sqrt{\Im T_{\lambda+i0}(H_0)} \, \oint_{C(\sigma_\lambda(0))} (\sigma - JT_{\lambda+i0}(H_0))^{-1}\,d\sigma
    \\ & = 2\pi i \sqrt{\Im T_{\lambda+i0}(H_0)} Q_{\lambda+i0}(r_\lambda) = 0,
  \end{split}
\end{equation}
where $C(\sigma_\lambda(0))$ is a small closed contour enclosing $\sigma_\lambda(0)=-r_\lambda^{-1}.$
Hence, the coefficient of $(\sigma-\sigma_\lambda(0))^{-1}$ in the Laurent series of $\tilde w(\sigma)$ is $0.$
Now Proposition~\ref{P: nilpotent terms of (sigma-B)(-1)} and equality~~(\ref{F: QB=BQ=B}) imply
that the coefficients of terms $(\sigma-\sigma_\lambda(0))^{-n}$ with $n>1$ also vanish.

($\Leftarrow$)
If the function $\tilde w(\sigma)$ is holomorphic at $\sigma_\lambda(0),$ then the integral $\oint_C \tilde w(\sigma)\,d\sigma$ vanishes.
On the other hand, this integral is equal to $2\pi i \sqrt{\Im T_{\lambda+i0}(H_0)} Q_{\lambda+i0}(r_\lambda).$
It now follows from Lemma~\ref{L: type I iff Im Q=0} that~$r_\lambda$ has type~I.
\end{proof}
The function $w(s)$ is holomorphic, but the adjoint function $w^*(s)$ is not.
For this reason, instead of $w^*(s),$ the meromorphic continuation $w^\dagger(s)$ of the restriction of $w^*(s)$ to the real axis will be used:
$$
  \mbC \ni s \mapsto w^\dagger(s) := (1+sT_{\lambda-i0}(H_0)J)^{-1} \sqrt{\Im T_{\lambda+i0}(H_0)}.
$$
\begin{lemma} \label{L: w(s) is holo iff w(s)w*(s) is} If $w(s)$ is a meromorphic operator-valued function
in some domain $G \subset \mbC$ which is symmetric with respect to the real axis,
then $w(s)$ is holomorphic at a real point $r_0 \in G$ if and only if so is the function $w(s)w^\dagger(s).$
\end{lemma}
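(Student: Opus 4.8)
The plan is to reduce the claim to the elementary fact that a matrix-valued function is holomorphic at a point exactly when all its negative Laurent coefficients vanish, and then to relate the Laurent coefficients of $w(s)$ to those of $w(s)w^\dagger(s)$. First I would observe that both $w(s)$ and $w^\dagger(s)$ are meromorphic in $G$ (by the analytic Fredholm alternative, Theorem~\ref{T: Analytic Fredholm alternative}, applied to $(1+sJT_{\lambda+i0}(H_0))^{-1}$ and $(1+sT_{\lambda-i0}(H_0)J)^{-1}$ respectively), and that since $G$ is symmetric with respect to the real axis, $w^\dagger(s)$ agrees with the adjoint $w^*(\bar s)^*$ wherever both are defined; in particular, on the real axis $w^\dagger(s)$ is the analytic continuation of $w(s)^*$. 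The ($\Rightarrow$) direction is trivial: if $w(s)$ is holomorphic at $r_0$, then so is $w^\dagger(s)$ (being, up to the reflection $s\mapsto\bar s$ and taking adjoints, the same local data), and hence so is the product $w(s)w^\dagger(s)$.

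For the nontrivial direction ($\Leftarrow$), suppose $w(s)$ has a pole at $r_0$ of order $d\geq 1$, with Laurent expansion $w(s)=\tilde w(s)+\sum_{j=1}^{d}(s-r_0)^{-j}W_j$, where $\tilde w$ is the holomorphic part and $W_d\neq 0$. Since $r_0$ is real and $w^\dagger$ is the continuation of $w^*$ along $\mbR$, the Laurent expansion of $w^\dagger$ at $r_0$ is $w^\dagger(s)=\tilde w(s)^\dagger + \sum_{j=1}^{d}(s-r_0)^{-j}W_j^*$ (the principal part has the \emph{same} order $d$ with leading coefficient $W_d^*$, because taking adjoints of a finite Laurent series on the real axis conjugates the coefficients term by term). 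Multiplying the two expansions, the most singular term of $w(s)w^\dagger(s)$ is $(s-r_0)^{-2d}\,W_d W_d^*$. The operator $W_d W_d^*$ is non-negative, and it is zero if and only if $W_d=0$; since $W_d\neq 0$ by assumption, $W_dW_d^*\neq 0$, so $w(s)w^\dagger(s)$ genuinely has a pole of order $2d$ at $r_0$. This contradicts the hypothesis that $w(s)w^\dagger(s)$ is holomorphic at $r_0$, and therefore $w(s)$ must be holomorphic at $r_0$.

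The step I expect to require the most care is the bookkeeping that shows the leading Laurent coefficient of $w^\dagger$ at the real point $r_0$ is exactly $W_d^*$ (and, more generally, that the principal parts of $w$ and $w^\dagger$ are adjoint to one another coefficient by coefficient). This hinges on the fact that $w^\dagger$ is \emph{defined} as the meromorphic continuation of the restriction of $w^*$ to the real axis, together with $G$ being reflection-symmetric, so that $w^\dagger(\bar s)=w(s)^*$ as an identity of meromorphic functions; differentiating (or reading off Laurent coefficients of) this identity at the real point $s=r_0$ yields the claim. Once this is in hand, the non-negativity argument for $W_dW_d^*$ and the resulting order-$2d$ pole are immediate, and no further computation is needed. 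Note that the lemma is purely formal — it uses nothing about the specific operators $T_{\lambda\pm i0}(H_0)$ beyond meromorphy — which is why the hypothesis is stated for an arbitrary reflection-symmetric domain $G$; this same lemma will then be applied in the main text with $w(s)$ the function from Lemma~\ref{L: type I iff w(s) is holom-c}.
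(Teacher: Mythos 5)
Your proposal is correct and follows essentially the same route as the paper: identify the lowest-order Laurent coefficient $W_d$ of $w$ at $r_0$, note that the principal part of $w^\dagger$ consists of the adjoint coefficients, and conclude that the most singular term of $w(s)w^\dagger(s)$ is $(s-r_0)^{-2d}W_dW_d^*$, which vanishes if and only if $W_d$ does. The paper's proof is just a condensed version of this same argument.
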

\begin{proof}
If $(s-r_0)^{-k}X_k$ is the term of lowest order in the Laurent series of $w(s)$ at $s=r_0,$ then the lowest order term
in the Laurent series of the function $w(s)w^\dagger(s)$ at $s=r_0$ is $(s-r_0)^{-2k}X_kX_k^*.$ Since $X_k = 0$ if and only if $X_kX_k^* = 0,$
the claim follows.
\end{proof}
\begin{prop} \label{P: type I iff w is analytic}
Let $w(s)$ be the function given by~(\ref{F: a wonderful function}).
The following assertions are equivalent.
\begin{enumerate}
  \item[(i)] \ The point~$r_\lambda$ is of type~I.
  \item[(ii)] \ The meromorphic function $\mbC \ni s \mapsto w(s)$ is holomorphic at~$r_\lambda.$
  \item[(iii)] \ The meromorphic function $\mbC \ni s \mapsto w^\dagger(s)w(s)$ is holomorphic at~$r_\lambda.$
  \item[(iv)] \ The meromorphic function $\mbC \ni s \mapsto w^\dagger(s)$ is holomorphic at~$r_\lambda.$
  \item[(v)] \ The meromorphic function $\mbC \ni s \mapsto \Im T_{\lambda+i0}(H_s)$ is holomorphic at~$r_\lambda.$
\end{enumerate}
\end{prop}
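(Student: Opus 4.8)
<br>

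The plan is to prove Proposition \ref{P: type I iff w is analytic} by establishing a cycle of implications among the five assertions, reducing everything to the already-proved Lemmas \ref{L: type I iff w(s) is holom-c} and \ref{L: w(s) is holo iff w(s)w*(s) is} together with the algebraic relations between $T_{\lambda+i0}(H_s)$, $A_{\lambda+i0}(s)$ and their idempotents collected in Section \ref{S: Resonance points}. First I would dispose of the equivalence (i)$\iff$(ii): this is exactly the content of Lemma \ref{L: type I iff w(s) is holom-c}, so nothing new is needed there. Next, (ii)$\iff$(iii) should follow from Lemma \ref{L: w(s) is holo iff w(s)w*(s) is} applied to the function $w(s)$ in the domain of meromorphy supplied by the analytic Fredholm alternative (Theorem \ref{T: Analytic Fredholm alternative}), once one checks that $w^\dagger(s)$ is precisely the meromorphic continuation of the restriction of $w^*(s)$ to $\mbR$; the relevant point is that $w(s)w^\dagger(s)$ and $w^*(s)w(s)$ agree on the real axis, hence as meromorphic functions, and the lemma then transfers holomorphy at $r_\lambda$ from the product to $w(s)$ itself. (For (iii) I would use the version $w^\dagger(s)w(s)$ rather than $w(s)w^\dagger(s)$; the symmetric statement of Lemma \ref{L: w(s) is holo iff w(s)w*(s) is} applies equally to $w^\dagger$, whose lowest Laurent coefficient is the adjoint of that of $w$.)

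The implication (ii)$\iff$(iv) I would get by taking adjoints: since $\brs{T_z(H_s)}^* = T_{\bar z}(H_s)$ and $\brs{A_z(s)}^* = B_{\bar z}(\bar s)$ (see (\ref{F: A(z)(s)*=B(bar z)})), the function $w^\dagger(s) = (1+sT_{\lambda-i0}(H_0)J)^{-1}\sqrt{\Im T_{\lambda+i0}(H_0)}$ is holomorphic at a real point $r_\lambda$ exactly when its adjoint-type partner is, and the lowest-order Laurent coefficients of $w$ and $w^\dagger$ at a real pole are adjoint to each other and hence vanish simultaneously. Alternatively one routes (iv) through (iii) using Lemma \ref{L: w(s) is holo iff w(s)w*(s) is} with the roles of $w$ and $w^\dagger$ interchanged. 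The remaining and genuinely new ingredient is (i)$\iff$(v), relating type I to holomorphy of $s \mapsto \Im T_{\lambda+i0}(H_s)$. Here I would use the factorization identity (\ref{F: Az3v6 (4.8)}),
\begin{equation*}
  \Im T_{\lambda+i0}(H_s) = (1+sA_{\lambda-i0}(0))^{-1}\,\Im T_{\lambda+i0}(H_0)\,(1+sB_{\lambda+i0}(0))^{-1},
\end{equation*}
which exhibits $\Im T_{\lambda+i0}(H_s)$ as a product whose middle factor is fixed and whose outer factors are the meromorphic resolvents governing $w(s)$ and $w^\dagger(s)$. If $r_\lambda$ is of type I then, by Proposition \ref{P: delta JP delta=0} together with Lemma \ref{L: type I iff w(s) is holom-c}, both outer factors — when sandwiched appropriately against $\sqrt{\Im T_{\lambda+i0}(H_0)}$ — are holomorphic at $r_\lambda$, and writing $\Im T_{\lambda+i0}(H_s) = w(s)^{*}\,w(s)$-style as $\sqrt{\Im T_{\lambda+i0}(H_0)}(1+sB_{\lambda+i0}(0))^{-1}$ times its adjoint makes the holomorphy of $\Im T_{\lambda+i0}(H_s)$ immediate. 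Conversely, if $\Im T_{\lambda+i0}(H_s)$ is holomorphic at $r_\lambda$, then so is $\sqrt{\Im T_{\lambda+i0}(H_s)}$ (by the same square-root argument as in Lemma \ref{L: w(s) is holo iff w(s)w*(s) is}), and evaluating the residue of $\Im T_{\lambda+i0}(H_s)J$ along a small contour around $r_\lambda$ yields, via Proposition \ref{P: Th 3.3 of Az7} and (\ref{F: Laurent for A+(s)}), that $P_{\lambda+i0}(r_\lambda) - P_{\lambda-i0}(r_\lambda)$ and more generally $\bfA_{\lambda+i0}^{j}(r_\lambda) - \bfA_{\lambda-i0}^{j}(r_\lambda)$ govern the negative-power terms; combining this with the type-I criterion in Lemma \ref{L: type I iff Im Q=0} closes the loop.

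The main obstacle I anticipate is the reverse direction (v)$\then$(i): one must extract from the holomorphy of the \emph{whole} operator-valued function $\Im T_{\lambda+i0}(H_s)$ the vanishing of the specific compression $\sqrt{\Im T_{\lambda+i0}(H_0)}\,JP_{\lambda+i0}(r_\lambda)$, and this requires carefully tracking which Laurent coefficients of the factorized expression (\ref{F: Az3v6 (4.8)}) survive. The cleanest route is probably to pass to $\sqrt{\Im T_{\lambda+i0}(H_s)}$ first (using that a holomorphic non-negative operator-valued function has a holomorphic square root near a real point, or at least that its lowest Laurent coefficient vanishes iff that of its square does), then feed $\sqrt{\Im T_{\lambda+i0}(H_s)} = \pi^{1/2}\euE_{\lambda+i0}(H_s)F^*$-style identities back through the change of variables $\sigma = -s^{-1}$ as in the proof of Lemma \ref{L: type I iff w(s) is holom-c}, reducing to the statement that $\oint_{C(\sigma_\lambda(0))} \sqrt{\Im T_{\lambda+i0}(H_{s(\sigma)})}\,d\sigma = 0$, which in turn forces $\sqrt{\Im T_{\lambda+i0}(H_0)}Q_{\lambda+i0}(r_\lambda) = 0$ by the residue computation (\ref{F: oint of w(s)}). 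I would organize the write-up as the cycle (i)$\Rightarrow$(ii)$\Rightarrow$(iii)$\Rightarrow$(iv)$\Rightarrow$(v)$\Rightarrow$(i), citing Lemmas \ref{L: type I iff w(s) is holom-c}, \ref{L: w(s) is holo iff w(s)w*(s) is}, Proposition \ref{P: delta JP delta=0}, and identity (\ref{F: Az3v6 (4.8)}) at the appropriate steps, so that only (v)$\Rightarrow$(i) carries real weight.
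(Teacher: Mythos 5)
Your handling of (i)$\iff$(ii), (ii)$\iff$(iii) and (ii)$\iff$(iv) is exactly the paper's proof: Lemma~\ref{L: type I iff w(s) is holom-c} for the first, Lemma~\ref{L: w(s) is holo iff w(s)w*(s) is} (with the lowest-Laurent-coefficient argument, which indeed works equally well for $w^\dagger(s)w(s)$ since $X_k^*X_k=0$ iff $X_k=0$) for the second, and the adjointness of the lowest coefficients of $w$ and $w^\dagger$ for the third. Where you diverge is item (v), and there you make the step much heavier than it is: by~(\ref{F: Az3v6 (4.8)}) with $r=0$, $z=\lambda+i0$, one has $\Im T_{\lambda+i0}(H_s)=(1+sT_{\lambda-i0}(H_0)J)^{-1}\,\Im T_{\lambda+i0}(H_0)\,(1+sJT_{\lambda+i0}(H_0))^{-1}=w^\dagger(s)\,w(s)$ identically as meromorphic functions, so (iii) and (v) are statements about the \emph{same} function and their equivalence needs no argument at all; (v)$\Rightarrow$(i) is then free from the chain you already built. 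This is precisely how the paper closes the proposition.

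Your substitute route for (v)$\Rightarrow$(i) — passing to $\sqrt{\Im T_{\lambda+i0}(H_s)}$, invoking that "a holomorphic non-negative operator-valued function has a holomorphic square root near a real point", and then running a residue computation through Proposition~\ref{P: Th 3.3 of Az7} and Lemma~\ref{L: type I iff Im Q=0} — would not survive scrutiny as written: the square-root claim is false already for scalars (take $f(s)=s^2$, whose square root $\abs{s}$ is not holomorphic at $0$), and $\sqrt{\Im T_{\lambda+i0}(H_s)}$ need not even be meromorphic in $s$, so its "Laurent coefficients" are not available for the hedged alternative either. None of this is needed; delete that machinery, record the identity $w^\dagger(s)w(s)=\Im T_{\lambda+i0}(H_s)$ as the content of (iii)$\iff$(v), and your write-up coincides with the paper's proof.
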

\begin{proof}
The equivalence (i) $\iff$ (ii) is the content of Lemma~\ref{L: type I iff w(s) is holom-c}.
The equivalence (ii) $\iff$ (iv) is obvious.
The equivalence (iii) $\iff$ (v) follows from~(\ref{F: Az3v6 (4.8)}).
The equivalence (ii) $\iff$ (iii) follows from Lemma~\ref{L: w(s) is holo iff w(s)w*(s) is}.
\end{proof}

\begin{obs} \rm
The equality~(\ref{F: type I iff Im Q=0}) is plainly equivalent to the equality
$$
  P_{\lambda-i0}(r_\lambda) \sqrt{\Im T_{\lambda+i0}(H_0)} = 0,
$$
which therefore gives another characterization of points of type~I.
\end{obs}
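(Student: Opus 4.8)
The plan is to obtain the asserted equivalence by transposing (taking the adjoint of) the operator that appears in the type-I criterion of Lemma~\ref{L: type I iff Im Q=0}; the whole argument then reduces to three short steps, and I would present them in the following order.

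First I would record that $\sqrt{\Im T_{\lambda+i0}(H_0)}$ is a self-adjoint operator: $\Im T_{\lambda+i0}(H_0)$ is self-adjoint, being the imaginary part of a bounded operator, and it is non-negative, being the norm limit as $y\to 0^+$ of the strictly positive operators $\Im T_{\lambda+iy}(H_0)$; hence its positive square root is well defined and self-adjoint. Second, from the relation $\brs{P_z(r_z)}^* = Q_{\bar z}(\bar r_z)$ of formula~(\ref{F: Pz*=Q(bar z)}) I would deduce, by taking adjoints of both sides, that $\brs{Q_z(r_z)}^* = P_{\bar z}(\bar r_z)$, and then specialise this to $z=\lambda+i0$ and to the real resonance point $r_\lambda$: since $r_\lambda\in\mbR$ we have $\overline{\lambda+i0}=\lambda-i0$ and $\bar r_\lambda=r_\lambda$, so $\brs{Q_{\lambda+i0}(r_\lambda)}^* = P_{\lambda-i0}(r_\lambda)$. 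Third, combining the two facts,
\[
  \brs{\sqrt{\Im T_{\lambda+i0}(H_0)}\,Q_{\lambda+i0}(r_\lambda)}^* = \brs{Q_{\lambda+i0}(r_\lambda)}^*\,\sqrt{\Im T_{\lambda+i0}(H_0)} = P_{\lambda-i0}(r_\lambda)\,\sqrt{\Im T_{\lambda+i0}(H_0)},
\]
and since a bounded operator vanishes precisely when its adjoint vanishes, the equality $\sqrt{\Im T_{\lambda+i0}(H_0)}\,Q_{\lambda+i0}(r_\lambda)=0$ --- which is equation~(\ref{F: type I iff Im Q=0}) taken at the normalising value $s=0$ --- is equivalent to $P_{\lambda-i0}(r_\lambda)\,\sqrt{\Im T_{\lambda+i0}(H_0)}=0$. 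Since Lemma~\ref{L: type I iff Im Q=0} already identifies the vanishing of $\sqrt{\Im T_{\lambda+i0}(H_0)}\,Q_{\lambda+i0}(r_\lambda)$ with $r_\lambda$ being of type~I, this chain of equivalences yields the promised alternative characterisation, which is the content of the Observation.

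I do not expect a genuine obstacle: the statement is merely the transpose of a criterion already established. The one step that must be carried out with care, and which I would write out in full, is the index bookkeeping under the involution $T\mapsto T^*$: it interchanges the idempotents $P$ and $Q$ and conjugates the spectral parameter (hence swaps $\lambda+i0$ and $\lambda-i0$) while leaving the real resonance point $r_\lambda$ fixed, and one must also use that $\sqrt{\Im T_{\lambda+i0}(H_0)}$ is self-adjoint in order for the adjoint of the product to come out in the form displayed above.
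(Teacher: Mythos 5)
Your argument is correct and is exactly the intended one: the paper gives no proof beyond "plainly equivalent," and the equivalence is obtained, as you do, by taking adjoints of $\sqrt{\Im T_{\lambda+i0}(H_0)}\,Q_{\lambda+i0}(r_\lambda)$ using $\brs{P_z(r_z)}^*=Q_{\bar z}(\bar r_z)$ (hence $\brs{Q_{\lambda+i0}(r_\lambda)}^*=P_{\lambda-i0}(r_\lambda)$ for real $r_\lambda$), the self-adjointness of $\sqrt{\Im T_{\lambda+i0}(H_0)}$, and the fact that an operator vanishes iff its adjoint does. Your bookkeeping of the sign change $\lambda+i0\mapsto\lambda-i0$ and the normalisation $s=0$ is exactly what the surrounding text assumes.
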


\begin{lemma}
A resonance point~$r_\lambda$ is of type~I if and only if
\begin{equation*}
  \sqrt{\Im T_{\lambda+i0}(H_s)}\, JP_{\lambda-i0}(r_\lambda) = 0.
\end{equation*}
That is, definition~(\ref{F: type I resonance point, definition}) of a resonance point~$r_\lambda$ of type~I does not depend on the choice of sign in~$P_{\lambda\pm i0}(r_\lambda).$
\end{lemma}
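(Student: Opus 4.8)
The statement to prove is that a real resonance point $r_\lambda$ is of type~I if and only if $\sqrt{\Im T_{\lambda+i0}(H_s)}\, JP_{\lambda-i0}(r_\lambda) = 0$; equivalently, the definition~(\ref{F: type I resonance point, definition}) is insensitive to the choice of sign in $P_{\lambda\pm i0}(r_\lambda)$.

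The plan is to reduce everything to the equivalences already established in Proposition~\ref{P: type I iff w is analytic} together with the symmetry relations of Section~\ref{SS: bfA and bfB}. First I would note that by definition $r_\lambda$ is of type~I precisely when $\sqrt{\Im T_{\lambda+i0}(H_0)}\,JP_{\lambda+i0}(r_\lambda)=0$, and by Proposition~\ref{P: type I iff w is analytic}(v) this is equivalent to the function $s\mapsto \Im T_{\lambda+i0}(H_s)$ being holomorphic at $s=r_\lambda$. The key observation is that this last condition is manifestly symmetric under $z=\lambda+i0 \leftrightarrow z=\lambda-i0$, since $\Im T_{\lambda-i0}(H_s) = -\Im T_{\lambda+i0}(H_s)$, so one of them is holomorphic at $r_\lambda$ iff the other is. Therefore, running Proposition~\ref{P: type I iff w is analytic} (or rather its evident analogue with $\lambda+i0$ replaced by $\lambda-i0$ throughout, whose proof is verbatim the same using the Laurent expansion~(\ref{F: Laurent for B+(s)}) of $B_{\lambda-i0}(s)$ and the idempotent $Q_{\lambda-i0}(r_\lambda)$) backwards, holomorphy of $s\mapsto\Im T_{\lambda+i0}(H_s)=-\Im T_{\lambda-i0}(H_s)$ at $r_\lambda$ is equivalent to $\sqrt{\Im T_{\lambda+i0}(H_0)}\, JP_{\lambda-i0}(r_\lambda) = 0$.

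More directly, I would argue as follows without re-deriving the analogue: assume $r_\lambda$ is of type~I, so by condition~(v) of Proposition~\ref{P: type I iff w is analytic} the function $s\mapsto \Im T_{\lambda+i0}(H_s)$ is holomorphic at $r_\lambda$. Multiplying the Laurent-free identity $\frac 1\pi \oint_{C(r_\lambda)} \Im T_{\lambda+iy}(H_s)J\,ds = P_{\lambda+iy}(r_{\lambda}) - P_{\lambda-iy}(r_{\lambda})$ from Proposition~\ref{P: Th 3.3 of Az7} (taken in the limit $y\to 0^+$, using Lemmas~\ref{L: P(l+iy) to P(l+i0)} and~\ref{L: A(l+iy) to A(l+i0)}) by $\frac 1\pi \Im T_{\lambda+i0}(H_s)J = \frac 1{2\pi i}(A_{\lambda+i0}(s)-A_{\lambda-i0}(s))$ and using that the holomorphic part carries no residue, one gets that holomorphy of $\Im T_{\lambda+i0}(H_\cdot)$ at $r_\lambda$ forces $P_{\lambda+i0}(r_\lambda)=P_{\lambda-i0}(r_\lambda)$; hence $\sqrt{\Im T_{\lambda+i0}(H_0)}\,JP_{\lambda-i0}(r_\lambda)=\sqrt{\Im T_{\lambda+i0}(H_0)}\,JP_{\lambda+i0}(r_\lambda)=0$. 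Conversely, if $\sqrt{\Im T_{\lambda+i0}(H_0)}\,JP_{\lambda-i0}(r_\lambda)=0$, then by Lemma~\ref{L: j-dimensions coincide} and~(\ref{F: Pz*=Q(bar z)}) this gives $P_{\lambda+i0}(r_\lambda)\sqrt{\Im T_{\lambda+i0}(H_0)}=0$ after taking adjoints; together with the already-known~(\ref{F: delta JP delta=0}) and Proposition~\ref{P: delta JP delta=0} applied with the $\lambda-i0$ idempotent, one concludes that all terms of negative order in the Laurent expansion of the unitary $\tilde S(\lambda+i0;H_{s(\sigma)},H_0)$-type function at $\sigma_\lambda(0)$ built from $\Im T_{\lambda+i0}$ and $P_{\lambda-i0}$ vanish, so $s\mapsto\Im T_{\lambda+i0}(H_s)$ is holomorphic at $r_\lambda$, i.e. $r_\lambda$ is of type~I by Proposition~\ref{P: type I iff w is analytic}(v).

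The main obstacle — or rather the only point requiring a little care — is justifying that condition~(v) of Proposition~\ref{P: type I iff w is analytic}, which was stated in terms of $\Im T_{\lambda+i0}$, really does govern both the $+$ and $-$ idempotents symmetrically; the cleanest route is the identity $P_{\lambda+i0}(r_\lambda)=P_{\lambda-i0}(r_\lambda)$ for type~I points, which is noted in the excerpt immediately after Theorem~\ref{I:T: suff conditions for type I} ("For every real resonance point $r_\lambda$ of type I the idempotents $P_{\lambda+i0}(r_\lambda)$ and $P_{\lambda-i0}(r_\lambda)$ coincide") and which follows from Proposition~\ref{P: Th 3.3 of Az7} as sketched above. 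Granting that, the equivalence is immediate: type~I $\iff$ $\Im T_{\lambda+i0}(H_\cdot)$ holomorphic at $r_\lambda$ $\iff$ $\sqrt{\Im T_{\lambda+i0}(H_0)}\,JP_{\lambda-i0}(r_\lambda)=0$, since $P_{\lambda+i0}(r_\lambda)$ and $P_{\lambda-i0}(r_\lambda)$ agree on the relevant range. I would present the proof in this order: (1) type~I $\Rightarrow$ $P_{\lambda+i0}(r_\lambda)=P_{\lambda-i0}(r_\lambda)$ via Proposition~\ref{P: Th 3.3 of Az7}; (2) deduce the displayed equality; (3) conversely, from the displayed equality run the Laurent-vanishing argument of Proposition~\ref{P: delta JP delta=0}/Lemma~\ref{L: type I iff w(s) is holom-c} with $P_{\lambda-i0}$ in place of $P_{\lambda+i0}$ to recover type~I.
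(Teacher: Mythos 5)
Your primary argument (the first paragraph) is correct and is essentially the paper's own proof: the paper likewise uses $\Im T_{\lambda-i0}(H_s)=-\Im T_{\lambda+i0}(H_s)$ together with the identity~(\ref{F: Az3v6 (4.8)}) and the chain Lemma~\ref{L: type I iff w(s) is holom-c} / Lemma~\ref{L: w(s) is holo iff w(s) is} / Proposition~\ref{P: type I iff w is analytic}(v), run for the mirrored function $\sqrt{\Im T_{\lambda+i0}(H_0)}\,(1+sJT_{\lambda-i0}(H_0))^{-1}$, followed by a residue computation and Lemma~\ref{L: j-dimensions coincide} to pass from $Q_{\lambda-i0}(r_\lambda)$ to $JP_{\lambda-i0}(r_\lambda)$, with the sign issue handled exactly as you indicate by keeping $\sqrt{\Im T_{\lambda+i0}(H_0)}$ throughout. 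Your secondary ``more direct'' sketch is unnecessary and its converse step as written (leaning on~(\ref{F: delta JP delta=0}), which holds for every resonance point) would not by itself give holomorphy of $s\mapsto\Im T_{\lambda+i0}(H_s)$; since your main route is sound and coincides with the paper's, this does not affect the verdict.
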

\begin{proof}
Since $\Im T_{\lambda-i0}(H_s) = - \Im T_{\lambda+i0}(H_s),$
the function $\Im T_{\lambda+i0}(H_s)$ is holomorphic at some point~$s$ if and only if so is $\Im T_{\lambda-i0}(H_s).$
Since, by~(\ref{F: Az3v6 (4.8)}),
$$
  \Im T_{\lambda-i0}(H_s) = (1+sT_{\lambda+i0}(H_0)J)^{-1} \Im T_{\lambda-i0}(H_0) (1+sJT_{\lambda-i0}(H_0))^{-1},
$$
it follows from Proposition~\ref{P: type I iff w is analytic}(v) and Lemma~\ref{L: w(s) is holo iff w(s)w*(s) is}
that a resonance point~$r_\lambda$ is a point of type~I if and only if the function
$$
  h(s) = \sqrt{\Im T_{\lambda+i0}(H_0)}\, (1+sJT_{\lambda-i0}(H_0))^{-1}
$$
is holomorphic at~$r_\lambda.$ Hence, making the change of variables $\sigma = -s^{-1}$
and taking the contour integral of the function $s \cdot h(s)$ over a small circle $C$ enclosing $-r_\lambda^{-1}$
shows that if~$r_\lambda$ is a point of type~I, then
$$
  \sqrt{\Im T_{\lambda+i0}(H_0)} Q_{\lambda-i0}(r_\lambda) = 0.
$$
It follows from Lemma~\ref{L: j-dimensions coincide} that
$$
  \sqrt{\Im T_{\lambda+i0}(H_0)} JP_{\lambda-i0}(r_\lambda) = 0.
$$
Now, the argument of Lemma~\ref{L: type I iff w(s) is holom-c} shows that the last equality implies that $h(s)$ is holomorphic at~$r_\lambda;$
hence, the reverse implication is also proved.
\end{proof}

\begin{lemma} \label{L: can remove sqrt} The equality~(\ref{F: type I resonance point, definition}) holds for some value of $s$
if and only if for the same value of $s$
\begin{equation} \label{F: 9847}
  \Im T_{\lambda+i0}(H_s)\, JP_{\lambda+i0}(r_\lambda) = 0.
\end{equation}
\end{lemma}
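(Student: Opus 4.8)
The plan is to reduce the statement to the elementary fact that a non-negative compact self-adjoint operator and its non-negative square root have the same kernel. First, recall that for the non-resonant real number $s$ in question (so that $\lambda \in \Lambda(H_s,F)$ and the limit operator exists), $M := \Im T_{\lambda+i0}(H_s)$ is a well-defined non-negative compact operator on $\clK$: it is the norm limit as $y \to 0^+$ of the operators $\Im T_{\lambda+iy}(H_s)$, each of which is positive since $\Im(\lambda+iy) > 0$, and positivity is inherited by the norm limit in the form of non-negativity. Its non-negative square root $\sqrt{M}$ thus satisfies $M = \sqrt{M}\,\sqrt{M}$.

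One implication is immediate: if $\sqrt{M}\,JP_{\lambda+i0}(r_\lambda) = 0$, then left-multiplication by $\sqrt{M}$ gives $M\,JP_{\lambda+i0}(r_\lambda) = 0$, which is exactly (\ref{F: 9847}). For the converse, suppose $M\,JP_{\lambda+i0}(r_\lambda) = 0$. For an arbitrary $w \in \clK$ set $v = JP_{\lambda+i0}(r_\lambda)w$; then $Mv = 0$, hence $\norm{\sqrt{M}\,v}^2 = \scal{\sqrt{M}\,v}{\sqrt{M}\,v} = \scal{v}{Mv} = 0$, so $\sqrt{M}\,v = 0$. Since $w$ was arbitrary, $\sqrt{M}\,JP_{\lambda+i0}(r_\lambda) = 0$, which is (\ref{F: type I resonance point, definition}). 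Phrased invariantly: because $\norm{\sqrt{M}\,v}^2 = \scal{v}{Mv}$ for all $v$, one has $\ker M = \ker \sqrt{M}$, and both equalities assert precisely that $\im(JP_{\lambda+i0}(r_\lambda))$ lies in this common kernel.

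There is no real obstacle here; the argument is a two-line application of the kernel identity for $M \geq 0$. The only point that deserves a word of justification — and the one I would state explicitly — is the non-negativity of $\Im T_{\lambda+i0}(H_s)$, which follows from the positivity of $\Im T_{\lambda+iy}(H_s)$ for $y > 0$ together with the existence of the norm limit along $\Lambda(H_s,F)$.
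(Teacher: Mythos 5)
Your proof is correct and is essentially the same squaring argument the paper uses: the paper invokes the $C^*$-equality $\norm{T}^2=\norm{T^*T}$ for $T=\sqrt{\Im T_{\lambda+i0}(H_s)}\,JP_{\lambda+i0}(r_\lambda)$ (together with $P_{\lambda+i0}(r_\lambda)^*=Q_{\lambda-i0}(r_\lambda)$ and $JP=QJ$), while you phrase the same idea at the vector level as $\ker M=\ker\sqrt{M}$ via $\norm{\sqrt{M}v}^2=\scal{v}{Mv}$ for the non-negative operator $M=\Im T_{\lambda+i0}(H_s)$. Your version is marginally more elementary since it bypasses the adjoint bookkeeping, but there is no substantive difference.
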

\begin{proof} Plainly,~(\ref{F: type I resonance point, definition}) implies~(\ref{F: 9847}).
If~(\ref{F: 9847}) holds, then by the $C^*$-equality $\norm{T}^2 = \norm{T^*T}$
$$
  \norm{\sqrt{\Im T_{\lambda+i0}(H_s)}\, JP_{\lambda+i0}(r_\lambda)}^2 = \norm{Q_{\lambda-i0}(r_\lambda)J\Im T_{\lambda+i0}(H_s)\, JP_{\lambda+i0}(r_\lambda)}  = 0.
$$
\end{proof}

\begin{lemma} If~(\ref{F: type I resonance point, definition}) holds for one real non-resonant value of $s,$
then it holds for any other real non-resonant value of $s.$
\end{lemma}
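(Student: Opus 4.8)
The plan is to show that the property of being of type I, defined through the equality $\sqrt{\Im T_{\lambda+i0}(H_s)}\, JP_{\lambda+i0}(r_\lambda) = 0$ at one non-resonant value $s = s_0$, propagates to every other non-resonant value of $s$. The natural vehicle for this is the second-resolvent-type identity for the imaginary part, equality~(\ref{F: Az3v6 (4.8)}), which relates $\Im T_{\lambda+i0}(H_s)$ to $\Im T_{\lambda+i0}(H_{s_0})$ through the invertible factors $(1+(s-s_0)A_{\lambda-i0}(s_0))^{-1}$ and $(1+(s-s_0)B_{\lambda+i0}(s_0))^{-1}$, valid for all non-resonant $s, s_0$.

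First I would reduce to working with $\Im T$ rather than its square root: by Lemma~\ref{L: can remove sqrt}, the defining equality~(\ref{F: type I resonance point, definition}) at a value $s$ is equivalent to $\Im T_{\lambda+i0}(H_s)\, JP_{\lambda+i0}(r_\lambda) = 0$, i.e. to $(A_{\lambda+i0}(s)-A_{\lambda-i0}(s))P_{\lambda+i0}(r_\lambda) = 0$. So it suffices to transport this latter vanishing from $s_0$ to an arbitrary non-resonant $s$. Next, starting from~(\ref{F: Az3v6 (4.8)}) with $r = s_0$ and right-multiplying by $J$, one writes
$$
  A_{\lambda+i0}(s)-A_{\lambda-i0}(s) = (1+(s-s_0)A_{\lambda-i0}(s_0))^{-1}\bigl(A_{\lambda+i0}(s_0)-A_{\lambda-i0}(s_0)\bigr)(1+(s-s_0)A_{\lambda+i0}(s_0))^{-1},
$$
so that, after right-multiplying by $P_{\lambda+i0}(r_\lambda)$,
$$
  \bigl(A_{\lambda+i0}(s)-A_{\lambda-i0}(s)\bigr)P_{\lambda+i0}(r_\lambda) = (1+(s-s_0)A_{\lambda-i0}(s_0))^{-1}\bigl(A_{\lambda+i0}(s_0)-A_{\lambda-i0}(s_0)\bigr)(1+(s-s_0)A_{\lambda+i0}(s_0))^{-1}P_{\lambda+i0}(r_\lambda).
$$
The key step is then to expand the factor $(1+(s-s_0)A_{\lambda+i0}(s_0))^{-1}P_{\lambda+i0}(r_\lambda)$ using Proposition~\ref{P: [1+sAz(r)](-1)Pz(rz)}, which writes it as a linear combination of the powers $\bfA_{\lambda+i0}^j(r_\lambda)$, $j=0,1,\dots,d-1$; each such power satisfies $\bfA_{\lambda+i0}^j(r_\lambda) = \bfA_{\lambda+i0}^j(r_\lambda) P_{\lambda+i0}(r_\lambda)$ by~(\ref{F: PA=AP=A}). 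Since by hypothesis $(A_{\lambda+i0}(s_0)-A_{\lambda-i0}(s_0))P_{\lambda+i0}(r_\lambda) = 0$, each term in this expansion is annihilated, and hence the whole right-hand side vanishes. That gives $(A_{\lambda+i0}(s)-A_{\lambda-i0}(s))P_{\lambda+i0}(r_\lambda) = 0$, i.e. $\Im T_{\lambda+i0}(H_s) J P_{\lambda+i0}(r_\lambda) = 0$, for any non-resonant $s$ in the disk $|s - s_0| < |s_0 - r_\lambda|$; the remaining non-resonant $s$ follow by analytic continuation (or by iterating the argument with a new base point), and a final appeal to Lemma~\ref{L: can remove sqrt} restores the square root.

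The main obstacle is bookkeeping the meromorphic continuation cleanly: Proposition~\ref{P: [1+sAz(r)](-1)Pz(rz)} is stated under the restriction $|s| < |r - r_z|$ (here with $r$ replaced by $s_0$), so the direct argument only covers a disk around $s_0$; one must then argue that both sides of $(A_{\lambda+i0}(s)-A_{\lambda-i0}(s))P_{\lambda+i0}(r_\lambda)$ are genuine meromorphic functions of $s$ that agree on an open set, hence everywhere they are both defined, i.e. for all non-resonant $s$. Alternatively — and this is the cleanest route — once type I is established on a neighbourhood of $s_0$ one may simply invoke Theorem~\ref{T: type I vectors}, whose equivalent conditions (3)--(4) say precisely that $A_{\lambda+i0}(s)u = A_{\lambda-i0}(s)u$ for \emph{all} non-resonant $s$ whenever it holds for the vectors in $\Upsilon_{\lambda+i0}(r_\lambda)$; since a point is of type I exactly when all resonance vectors corresponding to $\lambda+i0$ are of type I, this immediately closes the argument without any explicit continuation estimate.
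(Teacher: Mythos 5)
Your argument is correct, but it is not the route the paper takes for this lemma. The paper's proof also starts by removing the square root via Lemma~\ref{L: can remove sqrt}, but then it stays entirely algebraic: from $T_{\lambda+i0}(H_r)JP_{\lambda+i0}(r_\lambda)=T_{\lambda-i0}(H_r)JP_{\lambda+i0}(r_\lambda)$ it concludes that $A_{\lambda+i0}(r)$ and $A_{\lambda-i0}(r)$ coincide on $\Upsilon_{\lambda+i0}(r_\lambda)$, uses Corollary~\ref{C: Upsilon is invariant} to get invariance of that finite-dimensional subspace under $A_{\lambda+i0}(r)$ and hence under $A_{\lambda-i0}(r)$, and then transports the coincidence to every non-resonant $s$ directly through the identity $A_z(s)=(1+(s-r)A_z(r))^{-1}A_z(r)$ restricted to the common invariant subspace --- no Laurent expansion of the idempotent and no continuation in $s$ are needed. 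Your main computation instead combines~(\ref{F: Az3v6 (4.8)}) with the expansion of $(1+(s-s_0)A_{\lambda+i0}(s_0))^{-1}P_{\lambda+i0}(r_\lambda)$ into powers of $\bfA_{\lambda+i0}(r_\lambda)$ from Proposition~\ref{P: [1+sAz(r)](-1)Pz(rz)}, kills each term with $\bfA^j_{\lambda+i0}(r_\lambda)=P_{\lambda+i0}(r_\lambda)\bfA^j_{\lambda+i0}(r_\lambda)$ and the hypothesis, and then closes the gap from a disk to all non-resonant $s$ by meromorphy of $s\mapsto(A_{\lambda+i0}(s)-A_{\lambda-i0}(s))P_{\lambda+i0}(r_\lambda)$; that is sound, and it is in fact the same mechanism the paper used earlier to prove the vector-level ``exists $r$'' characterization feeding into Theorem~\ref{T: type I vectors}. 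Your alternative shortcut is also legitimate and is really the cleanest version of your argument: from the hypothesis at a single $s_0$ one gets, for every $u\in\Upsilon_{\lambda+i0}(r_\lambda)$, the condition $(A_{\lambda+i0}(s_0)-A_{\lambda-i0}(s_0))\bfA^j_{\lambda+i0}(r_\lambda)u=0$ for all $j$, which is item (7) of Theorem~\ref{T: type I vectors} --- an existence condition, so no ``neighbourhood of $s_0$'' is needed at all --- and the equivalent items (1)--(4) then give the statement for every non-resonant $s$. In short: the paper buys brevity with the invariant-subspace trick; your route either re-runs the section-8 expansion-plus-continuation machinery or delegates to the already proved vector-level theorem, both of which are valid.
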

\begin{proof} Assume that~(\ref{F: type I resonance point, definition}) holds for $s = r.$
By Lemma~\ref{L: can remove sqrt}, the square root in~(\ref{F: type I resonance point, definition}) can be removed, so that
\begin{equation} \label{F: T(+)JP=T(-)JP}
  T_{\lambda+i0}(H_r)J P_{\lambda + i0}(r_\lambda) = T_{\lambda-i0}(H_r)J P_{\lambda + i0}(r_\lambda).
\end{equation}
Hence, restrictions of operators $A_{\lambda+i0}(r)=T_{\lambda+i0}(H_r)J$ and $A_{\lambda-i0}(r)=T_{\lambda-i0}(H_r)J$ to
the resonance space~$\Upsilon_{\lambda+i0}(r_\lambda) = \im P_{\lambda+i0}(r_\lambda)$ coincide.
By Corollary~\ref{C: Upsilon is invariant} the resonance vector space~$\Upsilon_{\lambda+i0}(r_\lambda)$ is invariant under the operator
$A_{\lambda+i0}(r)$ and, therefore, by~(\ref{F: T(+)JP=T(-)JP}),
the vector space~$\Upsilon_{\lambda+i0}(r_\lambda)$ is invariant under the operator $A_{\lambda-i0}(r)$ too.
It follows from this and~(\ref{F: A(s)=(1+(s-r)A(r))(-1)A(r)}) that the
restrictions of operators $A_{\lambda+i0}(s)$ and $A_{\lambda-i0}(s)$ to
the resonance space~$\Upsilon_{\lambda+i0}(r_\lambda)$ coincide for all non-resonance~$s.$
Hence, for all such $s$ the equality $A_{\lambda+i0}(s)P_{\lambda+i0}(r_\lambda) = A_{\lambda-i0}(s)P_{\lambda+i0}(r_\lambda)$ holds, which is what is required.
\end{proof}

These results are summarized in the following theorem.
\begin{thm} \label{T: type I thm} Let~$\lambda$ be an essentially regular point for the pair $(\clA,F).$
Let~$H_0 \in \clA$ be an operator regular at~$\lambda$ and let $V \in \clA_0(F).$ Let~$r_\lambda \in \mbR$ be a resonance point of the path $\set{H_0+rV \colon r \in \mbR}.$
The following assertions are all equivalent to~$r_\lambda$ of being of type I.
\begin{enumerate}
\item[(i$_\pm$)] For any regular point~$r$ \ \ $\sqrt{\Im T_{\lambda+i0}(H_r)}J P_{\lambda\pm i0}(r_\lambda) = 0.$
\item[(i$^*_\pm$)] There exists a regular point~$r$ such that $\sqrt{\Im T_{\lambda+i0}(H_r)}J P_{\lambda\pm i0}(r_\lambda) = 0.$
\item[(ii$_\pm$)] For any regular point~$r$ \ \ $\sqrt{\Im T_{\lambda+i0}(H_r)}Q_{\lambda\pm i0}(r_\lambda) = 0.$
\item[(ii$^*_\pm$)] There exists a regular point~$r$ such that \ \ $\sqrt{\Im T_{\lambda+i0}(H_r)} Q_{\lambda\pm i0}(r_\lambda) = 0.$
\item[(iii$_\pm$)] The meromorphic function
$$
  w_\pm(s) := \sqrt{\Im T_{\lambda+i0}(H_0)}[1+sJT_{\lambda\pm i0}(H_0)]^{-1}
$$
is holomorphic at $s = r_\lambda.$
\item[(iii$'_\pm$)] The meromorphic function
$$
  w_\pm(s)J = \sqrt{\Im T_{\lambda+i0}(H_0)} J [1+sT_{\lambda\pm i0}(H_0)J]^{-1}
$$
is holomorphic at $s = r_\lambda.$
\item[(iv$_\pm$)] The meromorphic function
$$
  w^\dagger_\pm(s) = [1+sT_{\lambda\mp i0}(H_0)J]^{-1}\sqrt{\Im T_{\lambda+i0}(H_0)}
$$
is holomorphic at $s = r_\lambda.$
\item[(v$_\pm$)] The residue of the function $w_\pm(s)$ at $s = r_\lambda$ is zero.
\item[(vi$_\pm$)] For all $\pm$-resonance vectors the real numbers $c_{-j}$ from Proposition~\ref{P: euE (Vf)=0, k>1}
are all zero.
\item[(vii)] The function \
$
  s \mapsto \Im T_{\lambda+i0}(H_s)
$ \
is holomorphic at $s = r_\lambda.$
\item[(viii)] The function \
$
  s \mapsto J\Im T_{\lambda+i0}(H_s)J
$ \
is holomorphic at $s = r_\lambda.$
\end{enumerate}
Moreover, assertions obtained from {\rm (i$_\pm$)--(ii$_\pm$)} and {\rm (i$^*_\pm$)--(ii$^*_\pm$)} by removing the square root are also equivalent to these ones.
\end{thm}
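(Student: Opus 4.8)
The plan is to prove Theorem \ref{T: type I thm} by establishing a cycle of implications among the listed conditions, organized around the function $w(s)$ of \eqref{F: a wonderful function} as the central object. Most of the individual links have already been established in the preparatory lemmas of this section, so the proof is largely a matter of assembling them in the right order and then closing the loop for the variants that have not yet been treated explicitly. First I would fix $s=0$ (a regular point) without loss of generality, as done before Lemma \ref{L: type I iff w(s) is holom-c}, so that the definition \eqref{F: type I resonance point, definition} of a point of type I coincides with condition (i$^*_+$).

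The core of the argument is Proposition \ref{P: type I iff w is analytic}, which already gives the equivalence of: being of type I; (iii$_+$) = statement (ii); (iv$_+$) = statement (iv); (vii) via (v); and statement (iii) of that proposition, which together with Lemma \ref{L: w(s) is holo iff w(s)w*(s) is} links to (iii$'_+$). So I would first cite that proposition to get the block $\{$type I$\}\iff$(iii$_\pm$)$\iff$(iv$_\pm$)$\iff$(vii). The sign-independence of all these — i.e. that the $\pm$ in (iii$_\pm$), (iv$_\pm$) and in $P_{\lambda\pm i0}(r_\lambda)$ may be chosen freely — is exactly the content of the lemma immediately following the ``Observation'' environment, which shows $\sqrt{\Im T_{\lambda+i0}(H_s)}JP_{\lambda-i0}(r_\lambda)=0$ is equivalent to type I; I would invoke that to collapse the $\pm$ decorations. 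Next, the equivalence of (i$_\pm$)/(i$^*_\pm$) with (ii$_\pm$)/(ii$^*_\pm$) is Lemma \ref{L: type I iff Im Q=0} (range of $JP_{\lambda+i0}(r_\lambda)$ equals range of $Q_{\lambda+i0}(r_\lambda)$ by Lemma \ref{L: j-dimensions coincide}), and the equivalence of the ``for any $r$'' forms with the ``there exists $r$'' forms is the lemma preceding Theorem \ref{T: type I thm} (if \eqref{F: type I resonance point, definition} holds for one non-resonant $s$ it holds for all), whose proof uses Corollary \ref{C: Upsilon is invariant} and \eqref{F: A(s)=(1+(s-r)A(r))(-1)A(r)}. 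The ``remove the square root'' clause at the end is Lemma \ref{L: can remove sqrt}, applied with the $C^*$-identity $\norm{T}^2=\norm{T^*T}$ together with \eqref{F: Pz*=Q(bar z)} and \eqref{F: JP=QJ}.

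The two conditions still needing a direct treatment are (v$_\pm$) and (vi$_\pm$), and (iii$'_\pm$) in its precise form. For (v$_\pm$): the residue of $w_\pm(s)$ at $r_\lambda$ being zero is, after the change of variables $\sigma=-s^{-1}$ used in the proof of Lemma \ref{L: type I iff w(s) is holom-c}, exactly the statement $\sqrt{\Im T_{\lambda+i0}(H_0)}\,Q_{\lambda+i0}(r_\lambda)=0$ via \eqref{F: Qz(rz)=oint (sigma-Bz)(-1)d sigma}; conversely, vanishing of that residue forces vanishing of all higher negative Laurent coefficients by Proposition \ref{P: nilpotent terms of (sigma-B)(-1)} and \eqref{F: QB=BQ=B}, giving holomorphy — so (v$_\pm$)$\iff$(iii$_\pm$). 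For (vi$_\pm$): by Proposition \ref{P: euE (Vf)=0, k>1}, for a $\pm$-resonance vector $u$ all the coefficients $c_{\pm j}$ vanish iff $\scal{Ju}{\Im T_{\lambda+i0}(H_s)Ju}=0$ for all non-resonant $s$; polarizing over the finite-dimensional space $\Upsilon_{\lambda+i0}(r_\lambda)=\im P_{\lambda+i0}(r_\lambda)$ and using $\Im T_{\lambda+i0}(H_s)\ge 0$ (so the sesquilinear form it defines is non-negative) upgrades this to $\sqrt{\Im T_{\lambda+i0}(H_s)}JP_{\lambda+i0}(r_\lambda)=0$, i.e. (i$_+$). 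Condition (iii$'_\pm$) follows from (iii$_\pm$) by noting $w_\pm(s)J = J$-conjugate of $w_\pm(s)$ via \eqref{F: (s-AB)(-1)A=...}, so $w_\pm(s)$ and $w_\pm(s)J$ have the same poles; and (viii) follows from (vii) since $J\Im T_{\lambda+i0}(H_s)J$ is holomorphic exactly where $\Im T_{\lambda+i0}(H_s)$ is, $J$ being bounded (for the reverse, sandwich-and-use Lemma \ref{L: j-dimensions coincide} or simply observe the lowest Laurent terms vanish simultaneously).

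The main obstacle I anticipate is the polarization step in establishing (vi$_\pm$)$\Rightarrow$(i$_\pm$): one has vanishing of the \emph{diagonal} quadratic form $u\mapsto\scal{Ju}{\Im T_{\lambda+i0}(H_s)Ju}$ on $\Upsilon_{\lambda+i0}(r_\lambda)$ and must conclude that the operator $\sqrt{\Im T_{\lambda+i0}(H_s)}J$ annihilates the whole subspace. Because $\Im T_{\lambda+i0}(H_s)\ge 0$, the form $\scal{Ja}{\Im T_{\lambda+i0}(H_s)Jb}$ is a genuine (degenerate) inner product, so diagonal vanishing plus Cauchy–Schwarz forces off-diagonal vanishing and hence $\Im T_{\lambda+i0}(H_s)Ju=0$ for all $u$ in the subspace, which by the $C^*$-identity gives $\sqrt{\Im T_{\lambda+i0}(H_s)}Ju=0$; the only subtlety is that this must be checked for \emph{all} non-resonant $s$, not just one, but that is supplied by the ``for one $s$ implies for all $s$'' lemma already available. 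Everything else is bookkeeping: I would present the proof as a short directed graph of implications ``type I $\Rightarrow$ (ii) $\Rightarrow \cdots \Rightarrow$ type I'', citing the lemmas above at each arrow, and finish with a one-line remark that the square-root-free versions follow from Lemma \ref{L: can remove sqrt}.
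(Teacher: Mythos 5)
Your proposal is correct in outline and follows essentially the same route as the paper: the bulk of the equivalences -- type I, (i$_\pm$), (i$^*_\pm$), (ii$_\pm$), (ii$^*_\pm$), (iii$_\pm$), (iv$_\pm$), (v$_\pm$), (vii), and the square-root-free variants -- is obtained, exactly as in the paper, by assembling Lemma~\ref{L: type I iff Im Q=0}, Lemma~\ref{L: type I iff w(s) is holom-c}, Lemma~\ref{L: w(s) is holo iff w(s)w*(s) is}, Proposition~\ref{P: type I iff w is analytic}, the sign-independence lemma, the ``one $s$ implies all $s$'' lemma and Lemma~\ref{L: can remove sqrt}. Your treatment of (vi$_\pm$) is fine and even slightly cleaner than needed: since $\Im T_{\lambda+i0}(H_s)\ge 0$, vanishing of the diagonal form already gives $\|\sqrt{\Im T_{\lambda+i0}(H_s)}Ju\|^2=0$ vector by vector, so no polarization is required; the paper instead runs (vii)~$\Rightarrow$~(vi$_\pm$)~$\Rightarrow$~(i$_\pm$), which amounts to the same computation via (\ref{F: (J psi,Im TJ psi)=c(-2)/s2+...}).

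The one place where you are thinner than the paper is the closing of the loop for (iii$'_\pm$) and (viii). You assert that $w_\pm(s)$ and $w_\pm(s)J$ ``have the same poles'' and that the lowest Laurent terms of $\Im T_{\lambda+i0}(H_s)$ and $J\Im T_{\lambda+i0}(H_s)J$ ``vanish simultaneously''; since $J$ need not be injective, this is not automatic from the intertwining identity (\ref{F: (s-AB)(-1)A=...}) alone. It is repairable with a tool you already cite: every principal-part coefficient $X$ of $w_\pm(s)$ is of the form $\sqrt{\Im T_{\lambda+i0}(H_0)}$ times a combination of $Q_{\lambda\pm i0}(r_\lambda)$ and powers of $\bfB_{\lambda\pm i0}(r_\lambda)$, hence satisfies $X=XQ_{\lambda\pm i0}(r_\lambda)$, and by Lemma~\ref{L: j-dimensions coincide} the operator $J$ maps $\Upsilon_{\lambda\pm i0}(r_\lambda)$ onto $\Psi_{\lambda\pm i0}(r_\lambda)=\im Q_{\lambda\pm i0}(r_\lambda)$, so $XJ=0$ forces $X=0$. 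The paper avoids this issue altogether by closing these arrows differently: it shows (iii$'_\pm$)~$\Rightarrow$~(i$_\pm$) directly, by the change of variables $\sigma=-s^{-1}$ and the contour integral of $s\,w_\pm(s)J$, whose value is $\sqrt{\Im T_{\lambda+i0}(H_0)}\,JP_{\lambda+i0}(r_\lambda)$ and vanishes by holomorphy; and it gets (viii)~$\Rightarrow$~(iii$'$) by writing, via (\ref{F: Az3v6 (4.8)}), $J\Im T_{\lambda+i0}(H_s)J$ as $(w_\pm(s)J)^\dagger(w_\pm(s)J)$ and invoking Lemma~\ref{L: w(s) is holo iff w(s)w*(s) is}. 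Either fix makes your argument complete; as written, those two converse directions are the only genuine soft spots.
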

\begin{proof}
Equivalence of items (i$_\pm$), (i$^*_\pm$), (ii$_\pm$), (ii$^*_\pm$), (iii$_\pm$), (iv$_\pm$), (v$_\pm$) and (vii) has already been proved.

It is not difficult to see that (iii$_\pm$) implies (iii$'_\pm $).
Now it will be shown that (iii$'_\pm$) implies (i$_\pm$). Making the change $\sigma = -s^{-1}$ and taking
the contour integral over $C(\sigma_\lambda(0))$ (where $\sigma_\lambda(0) = -r_\lambda^{-1}$) of the function
$s w_\pm(s)J$ gives the equality
$$
  0 = \oint_{C(\sigma_\lambda(0))} \sigma^{-1} w_\pm(-\sigma^{-1}) J\,d\sigma = \sqrt{\Im T_{\lambda+i0}(H_0)} JP_{\lambda+i0}.
$$
The item (vii) obviously implies (viii). The item (viii) combined
with Lemma~\ref{L: w(s) is holo iff w(s)w*(s) is} and equality~(\ref{F: Az3v6 (4.8)}) implies (iii$'$).

Finally, the item (vii) obviously implies (vi$_\pm$) and the item (vi$_\pm$) implies (i$_\pm$).
\end{proof}
\begin{cor} If the right hand side of~(\ref{F: (J psi,Im TJ psi)=c(-2)/s2+...}) is non-zero, then it is strictly positive
for all non-resonance points~$s.$
\end{cor}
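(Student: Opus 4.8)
The statement to prove is the corollary right after Theorem~\ref{T: type I thm}: if the right hand side of~(\ref{F: (J psi,Im TJ psi)=c(-2)/s2+...}) is non-zero for a resonance vector $u = u_{\lambda\pm i0}(r_\lambda)$, then it is strictly positive for all non-resonance points~$s$.

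\medskip

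The plan is to combine three facts already established. First, for a $+$-resonance vector $u$, the left hand side of~(\ref{F: (J psi,Im TJ psi)=c(-2)/s2+...}) equals $\scal{Ju}{\Im T_{\lambda+i0}(H_s)Ju}$, and since $\Im T_{\lambda+iy}(H_s) > 0$ for $y>0$ by the observation preceding Lemma~\ref{L: star}, the limit operator $\Im T_{\lambda+i0}(H_s)$ is non-negative for non-resonant real~$s$; hence the left hand side is $\geq 0$ for every non-resonant real~$s$ (and $\leq 0$ for a $-$-resonance vector, since $\Im T_{\lambda-i0}(H_s) = -\Im T_{\lambda+i0}(H_s)$). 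So for real non-resonant~$s$ the right hand side of~(\ref{F: (J psi,Im TJ psi)=c(-2)/s2+...}) is real and of constant sign — in the $+$ case it is $\geq 0$. Second, by Theorem~\ref{T: type I thm}, items (vi$_\pm$), the vector $u$ is of type~I precisely when all the coefficients $c_{\pm j}$ in~(\ref{F: (J psi,Im TJ psi)=c(-2)/s2+...}) vanish, i.e. precisely when the right hand side is identically zero. Therefore, if the right hand side is not identically zero — which is what ``non-zero'' means here, since the right hand side is a rational function of~$s$ — then $u$ is \emph{not} of type~I, and at least one $c_{\pm j}$ is nonzero.

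\medskip

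The main step is then the following: first I would show that whenever the right hand side of~(\ref{F: (J psi,Im TJ psi)=c(-2)/s2+...}) is not identically zero, it is strictly positive at every non-resonance real point~$s$ (for a $+$-resonance vector). Suppose, for contradiction, that it vanishes at some non-resonant real $s_0 \neq r_\lambda$. Since $\Im T_{\lambda+i0}(H_{s_0}) \geq 0$, the equality $\scal{Ju}{\Im T_{\lambda+i0}(H_{s_0})Ju} = 0$ forces $\sqrt{\Im T_{\lambda+i0}(H_{s_0})}\,Ju = 0$. Now $u \in \Upsilon_{\lambda+i0}(r_\lambda)$, so this is exactly condition (1) of Theorem~\ref{I:T: type I vectors} (with $s = s_0$) except that there it is quantified over all non-resonant~$s$; but once $\sqrt{\Im T_{\lambda+i0}(H_{s_0})}Ju = 0$ holds for one non-resonant $s_0$, the last assertion of that theorem (equivalence of (i$^*_\pm$) and (i$_\pm$) in Theorem~\ref{T: type I thm}, applied at the level of a single vector — more directly, the proof of the final lemma ``If~(\ref{F: type I resonance point, definition}) holds for one real non-resonant value of $s$, then it holds for any other'') propagates it to every non-resonant~$s$. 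Hence $u$ is a vector of type~I, so all $c_{+j} = 0$, so the right hand side is identically zero — contradicting the hypothesis. The case of $+$-resonance vectors with general~$s$ (complex, non-resonant) then follows because the right hand side is a rational function agreeing on the real non-resonant points with a strictly positive quantity; but actually the statement of the corollary only needs non-resonance points~$s$, and the argument just given covers real non-resonance $s$, while for the remaining (complex) values one invokes that the right hand side is the unique meromorphic continuation of its restriction to the reals and that a nonzero sum of terms $c_{+j}(s-r_\lambda)^{-j}$ with all surviving $c_{+j}$ real and, by the Remark after Proposition~\ref{P: euE (Vf)=0, k>1}, the top nonzero one of even index and positive, does not vanish on $\mbR$.

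\medskip

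The hard part, if any, is bookkeeping of the two sign conventions ($\lambda+i0$ versus $\lambda-i0$) and making sure the vanishing of $c_{-j}$ in the minus case is handled by the mirror argument using $\Im T_{\lambda-i0}(H_s) = -\Im T_{\lambda+i0}(H_s) \leq 0$; there the right hand side is $\leq 0$ and ``non-zero'' forces it strictly negative, so ``strictly positive'' in the corollary should be read with the understanding (as in the sign-conventions of~(\ref{F: (J psi,Im TJ psi)=c(-2)/s2+...})) that for a $+$-vector it is non-negative. So the clean formulation to prove is: for a $+$-resonance vector, the right hand side of~(\ref{F: (J psi,Im TJ psi)=c(-2)/s2+...}) is either identically zero or strictly positive at every non-resonance~$s$; the only genuine ingredient is the propagation-from-one-point-to-all-points lemma already in the paper, everything else is positivity of $\Im T_{\lambda+i0}(H_s)$ and the characterization of type~I via vanishing of the $c_{+j}$.
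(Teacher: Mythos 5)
Your argument is essentially the paper's own proof: the paper disposes of this corollary in one line, saying that if the right hand side vanishes at some $s$ then, by the implication (i$^*_\pm$) $\Rightarrow$ (i$_\pm$) of Theorem~\ref{T: type I thm}, it vanishes at all $s$ --- which is exactly your positivity-plus-propagation argument (non-negativity of $\Im T_{\lambda+i0}(H_s)$ forces $\sqrt{\Im T_{\lambda+i0}(H_{s_0})}Ju=0$, propagation makes $u$ of type~I, hence all $c_{\pm j}=0$). The single-vector versus full-idempotent subtlety you flag, as well as your remarks on signs and on complex $s$, are elided by the paper in exactly the same way, so your route does not differ in substance from the paper's.
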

\begin{proof} If the right hand side of~(\ref{F: (J psi,Im TJ psi)=c(-2)/s2+...}) vanishes at some point $s,$ then by implication (i$_\pm^*$) $\then$ (i$_\pm$)
of Theorem~\ref{T: type I thm} it vanishes at all points $s.$
\end{proof}
\begin{rems} \rm Properties (iii$_\pm$) and (iv$_\pm$) have something in common with the fact that the scattering matrix and $\tilde S$-function
are holomorphic in a neighbourhood of $\mbR.$
One can see this from the stationary formula for the scattering matrix, recalling the relation~(\ref{F: euE(F*psi)=sqrt Im T})
between $\sqrt{\Im T_{\lambda+i0}(H_0)}$ and $\euE_\lambda(H_0).$
\end{rems}
In addition to the equivalent conditions of Theorem~\ref{T: type I thm}, one can add the equivalent conditions
\begin{gather}
 \label{F: A+(s)=A-(s)}  A_{\lambda+i0}(s) = A_{\lambda-i0}(s) \quad \text{on } \Upsilon_{\lambda\pm i0}(r_\lambda), \\
 \label{F: B+(s)=B-(s)} B_{\lambda+i0}(s) = B_{\lambda-i0}(s) \quad \text{on } \Psi_{\lambda\pm i0}(r_\lambda), \\
 \label{F: A+=A-} \bfA_{\lambda+i0}(r_\lambda) = \bfA_{\lambda-i0}(r_\lambda) \quad \text{on } \Upsilon_{\lambda\pm i0}(r_\lambda), \\
 \label{F: B+=B-} \bfB_{\lambda+i0}(r_\lambda) = \bfB_{\lambda-i0}(r_\lambda) \quad \text{on } \Psi_{\lambda\pm i0}(r_\lambda).
\end{gather}
The equality~(\ref{F: A+(s)=A-(s)}) and Lemma~\ref{L: j-dimensions coincide} imply that
restrictions of operators $T_{\lambda+i0}(H_s)$ and $T_{\lambda-i0}(H_s)$ to the vector subspaces~$\Psi_{\lambda\pm i0}(r_\lambda)$
coincide. Hence, it follows that restrictions of operators $B_{\lambda+i0}(s) = JT_{\lambda+i0}(H_s)$
and $B_{\lambda-i0}(s) = JT_{\lambda-i0}(H_s)$ to the vector subspaces~$\Psi_{\lambda\pm i0}(r_\lambda)$ also coincide.
Hence,~(\ref{F: A+(s)=A-(s)}) implies~(\ref{F: B+(s)=B-(s)}).

Further,~(\ref{F: B+(s)=B-(s)}) and Lemma~\ref{L: j-dimensions coincide} imply that $B_{\lambda+i0}(s)J = B_{\lambda-i0}(H_s)J$
on~$\Upsilon_{\lambda\pm i0}(r_\lambda).$ Hence, $JA_{\lambda+i0}(s) = JA_{\lambda-i0}(H_s)$
on~$\Upsilon_{\lambda\pm i0}(r_\lambda),$ and therefore, by Lemma~\ref{L: j-dimensions coincide},
$A_{\lambda+i0}(s) = A_{\lambda-i0}(H_s)$ on~$\Upsilon_{\lambda\pm i0}(r_\lambda).$
Hence,~(\ref{F: B+(s)=B-(s)}) implies~(\ref{F: A+(s)=A-(s)}).

Further, definition~(\ref{F: def of bfA}) of the operator $\bfA_{\lambda+i0}$
and~(\ref{F: Az(s)Pz(rz)=sum...}) imply that~(\ref{F: A+(s)=A-(s)}) and~(\ref{F: A+=A-}) are equivalent.
Similarly, the conditions~(\ref{F: B+(s)=B-(s)}) and~(\ref{F: B+=B-}) are also equivalent.
Finally, the condition~(\ref{F: A+(s)=A-(s)}) is just a reformulation of the item (i$_\pm$) of Theorem~\ref{T: type I thm}.

\bigskip

According to Corollary~\ref{C: Upsilon 1(+)=Upsilon 1(-)} the vector spaces
$\Upsilon^1_{\lambda+i0}(r_\lambda)$ and~$\Upsilon^1_{\lambda-i0}(r_\lambda)$ of $+$-resonance
and $-$-resonance vectors of order~$1$ coincide for any real resonance point~$r_\lambda.$ For $k>1$ the vector spaces
$\Upsilon^k_{\lambda+i0}(r_\lambda)$ and~$\Upsilon^k_{\lambda-i0}(r_\lambda)$ are different in general,
but if~$r_\lambda$ is a type~I point, then these vectors spaces coincide for all $k=1,2,\ldots$ as the following proposition shows.
\begin{prop} \label{P: Upsilon(k)=...} In the conditions of Proposition~\ref{P: euE (Vf)=0, k>1}, if~$r_\lambda$ is a real resonance point of type~I, then
for all $k=1,2,\ldots$ solutions of the resonance equations
$$
  (1+(r_\lambda-r)T_{\lambda+i0}(H_r)J)^ku=0
$$
and
$$
  (1+(r_\lambda-r)T_{\lambda-i0}(H_r)J)^ku = 0
$$ coincide, that is,
\begin{equation} \label{F: Ups(+)(k)=Ups(-)(k), if type I}
  \Upsilon^k_{\lambda+i0}(r_\lambda) = \Upsilon^k_{\lambda-i0}(r_\lambda).
\end{equation}
\end{prop}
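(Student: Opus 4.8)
The plan is to reduce the statement to two facts already proved in the excerpt: the equivalent characterizations of a resonance point of type~I collected in Theorem~\ref{T: type I thm}, and the fact (Lemma~\ref{L: type I vectors}) that a vector of type~I lies in both resonance spaces $\Upsilon_{\lambda\pm i0}(r_\lambda)$ with the \emph{same} order. Once these are in hand the proof is essentially bookkeeping.

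First I would observe that if $r_\lambda$ is a resonance point of type~I, then \emph{every} vector of $\Upsilon_{\lambda+i0}(r_\lambda)$ is a vector of type~I, and likewise every vector of $\Upsilon_{\lambda-i0}(r_\lambda)$. Indeed, by item (i$_\pm$) of Theorem~\ref{T: type I thm} we have $\sqrt{\Im T_{\lambda+i0}(H_r)}\,JP_{\lambda\pm i0}(r_\lambda)=0$ for every non-resonant $r$; applying this to an arbitrary $u\in\Upsilon_{\lambda\pm i0}(r_\lambda)$ and using $P_{\lambda\pm i0}(r_\lambda)u=u$ gives $\sqrt{\Im T_{\lambda+i0}(H_r)}\,Ju=0$, which is exactly the defining property of a type~I vector.

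Next, fix $k\ge 1$ and take any $u\in\Upsilon^k_{\lambda+i0}(r_\lambda)\subseteq\Upsilon_{\lambda+i0}(r_\lambda)$. By the previous step $u$ is of type~I, so Lemma~\ref{L: type I vectors} yields $u\in\Upsilon_{\lambda-i0}(r_\lambda)$, and the order of $u$ as an element of $\Upsilon_{\lambda-i0}(r_\lambda)$ equals its order as an element of $\Upsilon_{\lambda+i0}(r_\lambda)$, which is at most $k$; hence $u\in\Upsilon^k_{\lambda-i0}(r_\lambda)$. This proves $\Upsilon^k_{\lambda+i0}(r_\lambda)\subseteq\Upsilon^k_{\lambda-i0}(r_\lambda)$, and the symmetric argument — starting from a vector in $\Upsilon^k_{\lambda-i0}(r_\lambda)$ and using item (i$_-$) and then Lemma~\ref{L: type I vectors} in the other direction — gives the reverse inclusion, establishing~(\ref{F: Ups(+)(k)=Ups(-)(k), if type I}). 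Alternatively, one can argue directly with the resonance equations: by~(\ref{F: A+(s)=A-(s)}) the operators $A_{\lambda+i0}(s)$ and $A_{\lambda-i0}(s)$ agree on $\Upsilon_{\lambda\pm i0}(r_\lambda)$, and since these subspaces are $A_{\lambda\pm i0}(s)$-invariant (Corollary~\ref{C: Upsilon is invariant}), expanding $(1+(r_\lambda-r)A_{\lambda\pm i0}(r))^k$ and comparing kernels gives the same conclusion.

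There is no genuine obstacle here; the substance of the result is carried entirely by Theorem~\ref{T: type I thm} and Lemma~\ref{L: type I vectors}. The only point needing a moment of care is that one must invoke the order-preserving clause of Lemma~\ref{L: type I vectors}, not merely the membership clause, in order to obtain the equality $\Upsilon^k_{\lambda+i0}(r_\lambda)=\Upsilon^k_{\lambda-i0}(r_\lambda)$ for each individual $k$ rather than only for the unions $\Upsilon_{\lambda\pm i0}(r_\lambda)$. For $k=1$ the assertion is of course already Corollary~\ref{C: Upsilon 1(+)=Upsilon 1(-)} and holds for every real resonance point, with or without the type~I hypothesis.
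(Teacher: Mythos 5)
Your proof is correct. Your main route — observe that for a type~I \emph{point} every vector of $\Upsilon_{\lambda\pm i0}(r_\lambda)$ is a type~I \emph{vector} (via item (i$_\pm$) of Theorem~\ref{T: type I thm} and $P_{\lambda\pm i0}(r_\lambda)u=u$), then invoke the order-preserving clause of Lemma~\ref{L: type I vectors} in both directions — is exactly the reduction the paper itself points to in one sentence ("this assertion follows directly from Lemma~\ref{L: type I vectors}"), and you correctly identify that it is the order-preserving part, not mere membership, that yields equality for each individual $k$. The proof the paper actually writes out is instead a short induction on $k$: for $u$ with $(1+(r_\lambda-r)T_{\lambda+i0}(H_r)J)^k u=0$, the type~I hypothesis gives $\Im T_{\lambda+i0}(H_r)Ju=0$, so one may replace $T_{\lambda+i0}$ by $T_{\lambda-i0}$ in the innermost factor and apply the induction hypothesis to $(1+(r_\lambda-r)T_{\lambda-i0}(H_r)J)u$; this is close in spirit to the "alternative" argument you sketch via~(\ref{F: A+(s)=A-(s)}) and invariance, but needs only the defining identity rather than the full machinery of Theorem~\ref{T: type I thm} and Lemma~\ref{L: type I vectors}. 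Both arguments are sound; yours trades self-containedness for brevity by leaning on the heavier lemma, and your closing remark that the case $k=1$ is Corollary~\ref{C: Upsilon 1(+)=Upsilon 1(-)} for arbitrary resonance points matches the paper.
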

\begin{proof} This assertion follows directly from Lemma \ref{L: type I vectors}. Nevertheless, we give another proof.

The case $k=1$ follows from Proposition~\ref{P: euE (Vf)=0, k=1} (and holds for all resonance points).
Assume that the claim holds for $k-1.$
If~$u$ is a solution of the equation
$$
  (1+(r_\lambda-r)T_{\lambda+i0}(H_r)J)^ku=0,
$$
then the vector $(1+(r_\lambda-r)T_{\lambda+i0}(H_r)J)u$
is a solution of the equation
\begin{equation} \label{F: nasty formula}
  (1+(r_\lambda-r)T_{\lambda+i0}(H_r)J)^{k-1}f=0.
\end{equation}
Since the resonance point~$r_\lambda$ is of type~I, we have $\Im T_{\lambda+i0}(H_r)Ju=0.$
It follows that the vector $(1+(r_\lambda-r)T_{\lambda-i0}(H_r)J)u$
is also a solution of the equation~(\ref{F: nasty formula}).
From the induction assumption it follows that $(1+(r_\lambda-r)T_{\lambda-i0}(H_r)J)u$
is a solution of the equation
$$
  (1+(r_\lambda-r)T_{\lambda-i0}(H_r)J)^{k-1}f=0.
$$
It follows that~$u$ is a solution of $(1+(r_\lambda-r)T_{\lambda-i0}(H_r)J)^{k}u=0.$
\end{proof}
\noindent
The same argument shows that for points~$r_\lambda$ of type~I
$$
  \Psi^k_{\lambda+i0}(r_\lambda) = \Psi^k_{\lambda-i0}(r_\lambda).
$$
This equality also follows from~(\ref{F: Ups(+)(k)=Ups(-)(k), if type I}) and Lemma~\ref{L: j-dimensions coincide}.

Proposition~\ref{P: Upsilon(k)=...} implies, in particular, that for points~$r_\lambda$ of type~I the ranges of idempotent
operators $P_{\lambda+i0}(r_\lambda)$ and $P_{\lambda-i0}(r_\lambda)$ coincide.
In fact, for points of type~I these idempotents coincide, as the following theorem shows.
\begin{thm} \label{T: P(+)=P(-)} Let~$H_0$ be a self-adjoint operator from~$\clA,$
let~$\lambda$ be an essentially regular point and let~$V$ be a regularizing direction.
If a real number~$r_\lambda$ is a resonance point of type~$I,$ then
the idempotents $P_{\lambda-i0}(r_\lambda)$ and $P_{\lambda+i0}(r_\lambda)$ coincide.
\end{thm}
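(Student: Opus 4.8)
The plan is to obtain this as an immediate consequence of the residue description of the idempotents together with the type~I criterion of Theorem~\ref{T: type I thm}. By Proposition~\ref{P: Pz(rz)=res Az(s)}, applied with $z = \lambda \pm i0 \in \partial\Pi$ and the real pole $r_z = r_\lambda$, each operator $P_{\lambda\pm i0}(r_\lambda)$ is the residue at $s = r_\lambda$ of the meromorphic function $s \mapsto A_{\lambda\pm i0}(s)$; equivalently, by the Laurent expansion~(\ref{F: Laurent for A+(s)}), it is the coefficient of $(s-r_\lambda)^{-1}$ in that expansion. So it suffices to show that $A_{\lambda+i0}(s)$ and $A_{\lambda-i0}(s)$ have the same $(s-r_\lambda)^{-1}$ Laurent coefficient at $r_\lambda$.

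First I would record the elementary identity
\[
  A_{\lambda+i0}(s) - A_{\lambda-i0}(s) = \bigl(T_{\lambda+i0}(H_s) - T_{\lambda-i0}(H_s)\bigr)J = 2i\,\Im T_{\lambda+i0}(H_s)\,J,
\]
using $T_{\lambda-i0}(H_s) = T_{\lambda+i0}(H_s)^*$ (the same relation exploited in Proposition~\ref{P: Th 3.3 of Az7}). Since $r_\lambda$ is of type~I, item~(vii) of Theorem~\ref{T: type I thm} tells us that the meromorphic function $s \mapsto \Im T_{\lambda+i0}(H_s)$ is holomorphic at $s = r_\lambda$; right-multiplying by the fixed bounded operator $J$ preserves this, so $A_{\lambda+i0}(s) - A_{\lambda-i0}(s)$ is holomorphic at $r_\lambda$. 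Consequently the two meromorphic functions $A_{\lambda+i0}(s)$ and $A_{\lambda-i0}(s)$ have identical principal parts at $r_\lambda$; in particular their $(s-r_\lambda)^{-1}$ coefficients agree, which by the previous paragraph is exactly the assertion $P_{\lambda+i0}(r_\lambda) = P_{\lambda-i0}(r_\lambda)$. Equivalently, one may integrate $A_{\lambda+i0}(s) - A_{\lambda-i0}(s)$ around a small circle enclosing $r_\lambda$ and no other pole of either function (legitimate since the poles of $A_{\lambda\pm i0}(s)$ form a discrete set, so $r_\lambda$ is isolated among them, and the circle may be shrunk into the neighbourhood of holomorphy of $s \mapsto \Im T_{\lambda+i0}(H_s)$) and then apply Cauchy's theorem to see that this contour integral, hence $P_{\lambda+i0}(r_\lambda) - P_{\lambda-i0}(r_\lambda)$, vanishes.

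There is no genuine obstacle here beyond bookkeeping, since the whole content is packaged into Theorem~\ref{T: type I thm}(vii). The only points deserving a sentence of justification are that the $P_{\lambda\pm i0}(r_\lambda)$ appearing in the statement --- defined as the trace-norm limit $\lim_{y\to 0^+}P_{\lambda\pm iy}(r_\lambda)$ in Lemma~\ref{L: P(l+iy) to P(l+i0)} --- indeed coincides with the residue of $A_{\lambda\pm i0}(s)$ at $r_\lambda$, which follows from Lemma~\ref{L: T(l+iy) to T(l+i0)} (uniform convergence $A_{\lambda+iy}(s)\to A_{\lambda+i0}(s)$ on contours avoiding the poles forces the sum of the residues of $A_{\lambda+iy}(s)$ over the split poles $r^\nu_{\lambda+iy}$ to converge to the single residue of $A_{\lambda+i0}(s)$ at $r_\lambda$), so that Proposition~\ref{P: Pz(rz)=res Az(s)} applies. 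I would also note in passing that the same Laurent-coefficient comparison yields $\bfA^j_{\lambda+i0}(r_\lambda) = \bfA^j_{\lambda-i0}(r_\lambda)$ for every $j$, strengthening~(\ref{F: A+=A-}) from an equality on $\Upsilon_{\lambda\pm i0}(r_\lambda)$ to an equality of operators on $\clK$.
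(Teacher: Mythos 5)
Your proposal is correct and takes essentially the same route as the paper: both reduce the claim to the identity $P_{\lambda+i0}(r_\lambda)-P_{\lambda-i0}(r_\lambda)=\frac 1\pi \oint_{C(r_\lambda)} \Im T_{\lambda+i0}(H_s)J\,ds$ (the paper obtains it from Proposition~\ref{P: Th 3.3 of Az7} at $y>0$ and then lets $y\to 0^+$ via Lemmas~\ref{L: T(l+iy) to T(l+i0)} and~\ref{L: P(l+iy) to P(l+i0)}, while you work directly at the boundary through the residue formula of Proposition~\ref{P: Pz(rz)=res Az(s)}, with the needed identification of the limit idempotents noted), and both then invoke the type~I characterization (holomorphy of $s\mapsto \Im T_{\lambda+i0}(H_s)$ at $r_\lambda$, item (vii) of Theorem~\ref{T: type I thm}, equivalently Proposition~\ref{P: type I iff w is analytic}(v)) to conclude that the contour integral vanishes.
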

\begin{proof} Let $y$ be a small positive number.
Proposition~\ref{P: Th 3.3 of Az7} implies the equality
$$
  \frac 1\pi \oint_{C(r_\lambda)} \Im T_{\lambda+iy}(H_s)J\,ds = P_{\lambda+iy}(r_{\lambda}) - P_{\lambda-iy}(r_{\lambda}),
$$
where $C(r_\lambda)$ is a contour which encloses all poles $r_{\lambda+iy}^1,\ldots,r_{\lambda+iy}^N$ of the group of~$r_\lambda$
and their conjugates $\bar r_{\lambda+iy}^1,\ldots,\bar r_{\lambda+iy}^N$
(see subsection~\ref{SS: P(z)(r lambda)} for definition of poles of the group of~$r_\lambda$).
By Lemmas~\ref{L: T(l+iy) to T(l+i0)} and~\ref{L: P(l+iy) to P(l+i0)},
taking the limit $y \to 0$ in the above equality gives
\begin{equation} \label{F: oint Im T(s)=P(+)-P(-)}
  \frac 1\pi \oint_{C(r_\lambda)} \Im T_{\lambda+i0}(H_s)J\,ds = P_{\lambda+i0}(r_\lambda) - P_{\lambda-i0}(r_\lambda).
\end{equation}
By Proposition~\ref{P: type I iff w is analytic}(v), the integrand of the left hand side is holomorphic in a neighbourhood of~$r_\lambda,$
and therefore the integral vanishes. Hence, $P_{\lambda+i0}(r_\lambda) = P_{\lambda-i0}(r_\lambda).$
%
%
%
\end{proof}
Theorem~\ref{T: P(+)=P(-)} and~(\ref{F: A+=A-}) provide another proof of Proposition~\ref{P: Upsilon(k)=...}.

\begin{prop} \label{P: one more char-n of type I} A point~$r_\lambda$ is of type~I if and only if for some and thus for any non-resonant~$r$
$$
  \mathfrak h_\lambda(H_r) \ \perp \ \Psi_{\lambda+i0}(r_\lambda),
$$
where $\hlambda(H_r)$ is the fiber Hilbert space as defined by~(\ref{F: hlambda def of}).
\end{prop}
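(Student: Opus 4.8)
The plan is to relate the fiber Hilbert space $\hlambda(H_r)$, which by definition~(\ref{F: hlambda def of}) is the closure of $\im\sqrt{\Im T_{\lambda+i0}(H_r)}$, to the co-resonance space $\Psi_{\lambda+i0}(r_\lambda)$ via the equivalent characterizations of type~I points collected in Theorem~\ref{T: type I thm}. The orthogonality $\hlambda(H_r) \perp \Psi_{\lambda+i0}(r_\lambda)$ is equivalent to the statement that for every $\psi \in \Psi_{\lambda+i0}(r_\lambda)$ and every $\phi \in \clK$ one has $\scal{\sqrt{\Im T_{\lambda+i0}(H_r)}\phi}{\psi} = 0$, which, since $\sqrt{\Im T_{\lambda+i0}(H_r)}$ is self-adjoint, is the same as $\sqrt{\Im T_{\lambda+i0}(H_r)}\psi = 0$ for all $\psi \in \Psi_{\lambda+i0}(r_\lambda)$; because $\Psi_{\lambda+i0}(r_\lambda)$ is the range of the idempotent $Q_{\lambda+i0}(r_\lambda)$, this is precisely the equality
$$
  \sqrt{\Im T_{\lambda+i0}(H_r)}\, Q_{\lambda+i0}(r_\lambda) = 0,
$$
which is condition (ii$_+$) (for ``any regular point $r$'') and condition (ii$^*_+$) (for ``some regular point $r$'') in Theorem~\ref{T: type I thm}.

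So the proof reduces to two invocations. First I would observe that ``for some non-resonant $r$'' and ``for any non-resonant $r$'' give the same class of points: this is the content of the equivalence of (ii$_+$) and (ii$^*_+$) in Theorem~\ref{T: type I thm}, so the phrase ``for some and thus for any'' in the statement is justified without further work. Second I would note that by Lemma~\ref{L: type I iff Im Q=0} (or directly by the equivalence of (ii$_\pm$) with being of type~I in Theorem~\ref{T: type I thm}) the vanishing $\sqrt{\Im T_{\lambda+i0}(H_r)}\, Q_{\lambda+i0}(r_\lambda) = 0$ holds if and only if $r_\lambda$ is a point of type~I. Chaining these equivalences with the reformulation of orthogonality in terms of the range of $Q_{\lambda+i0}(r_\lambda)$ described above completes the argument in both directions.

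The only genuinely routine point to spell out is the translation between ``$\hlambda(H_r) \perp \Psi_{\lambda+i0}(r_\lambda)$'' and ``$\sqrt{\Im T_{\lambda+i0}(H_r)}\psi = 0$ for all $\psi \in \Psi_{\lambda+i0}(r_\lambda)$''. For this one uses that $\hlambda(H_r) = \closure{\im\sqrt{\Im T_{\lambda+i0}(H_r)}}$, that a closed subspace and its pre-closure have the same orthogonal complement, and that for a self-adjoint operator $S$ the orthogonal complement of $\im S$ equals $\ker S$; applying this with $S = \sqrt{\Im T_{\lambda+i0}(H_r)}$ shows $\hlambda(H_r)^\perp = \ker\sqrt{\Im T_{\lambda+i0}(H_r)}$, so $\Psi_{\lambda+i0}(r_\lambda) \subset \hlambda(H_r)^\perp$ iff $\Psi_{\lambda+i0}(r_\lambda) \subset \ker\sqrt{\Im T_{\lambda+i0}(H_r)}$, i.e. iff $\sqrt{\Im T_{\lambda+i0}(H_r)}\,Q_{\lambda+i0}(r_\lambda) = 0$.

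I do not expect a real obstacle here: the proposition is essentially a repackaging of Theorem~\ref{T: type I thm}(ii$_\pm$)/(ii$^*_\pm$) in the geometric language of fiber Hilbert spaces, and all the analytic content (the coupling-constant-regularity and limiting-absorption arguments behind the equivalence of the various type~I conditions) has already been established. The mild care needed is only to make sure the ``for some $r$ $\iff$ for any $r$'' claim is cited from the right equivalence in Theorem~\ref{T: type I thm}, and to note that the sign in $\Psi_{\lambda\pm i0}(r_\lambda)$ is immaterial here since $\Im T_{\lambda-i0}(H_r) = -\Im T_{\lambda+i0}(H_r)$ and by~(\ref{F: Psi 1(+)=Psi 1(-)}) together with the type~I equality $\Psi^k_{\lambda+i0}(r_\lambda) = \Psi^k_{\lambda-i0}(r_\lambda)$ the two co-resonance spaces coincide for points of type~I.
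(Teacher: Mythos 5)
Your proposal is correct and follows essentially the same route as the paper, which proves the proposition by citing items (ii$_+$) and (ii$^*_+$) of Theorem~\ref{T: type I thm} together with the identity $\Psi_{\lambda+i0}(r_\lambda) = \im Q_{\lambda+i0}(r_\lambda)$. Your extra step translating the orthogonality $\hlambda(H_r)\perp\Psi_{\lambda+i0}(r_\lambda)$ into $\sqrt{\Im T_{\lambda+i0}(H_r)}\,Q_{\lambda+i0}(r_\lambda)=0$ via $\hlambda(H_r)^\perp = \ker\sqrt{\Im T_{\lambda+i0}(H_r)}$ is exactly the (implicit) content of the paper's one-line argument.
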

\begin{proof} This follows from items (ii$_+$) and (ii$^*_+$) of Theorem~\ref{T: type I thm} and the equality~$\Psi_{\lambda+i0}(r_\lambda) = \im Q_{\lambda+i0}(r_\lambda).$
\end{proof}
By Proposition~\ref{P: euE (Vf)=0, k=1} for any real resonance point~$r_\lambda$ the relation $\mathfrak h_\lambda(H_r) \ \perp \ \Psi^1_{\lambda+i0}(r_\lambda)$
holds. The vector space~$\Psi_{\lambda+i0}(r_\lambda)$ is in fact also the image of the resonance matrix $Q_{\lambda-i0}(r_\lambda)JP_{\lambda+i0}(r_\lambda).$
Hence, this gives another characterization of points of type~I.
\begin{prop} A point~$r_\lambda$ is of type~I if and only if for some and thus for any non-resonant~$r$
$$
  \Im T_{\lambda+i0}(H_r) Q_{\lambda-i0}(r_\lambda)JP_{\lambda+i0}(r_\lambda) = 0.
$$
\end{prop}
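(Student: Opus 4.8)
The plan is to derive both implications from the characterizations of type~I points already assembled, chiefly Theorem~\ref{T: type I thm} and Theorem~\ref{T: P(+)=P(-)}, together with the elementary relations $JP_z(r_z)=Q_z(r_z)J$ of~(\ref{F: JP=QJ}), $(P_z(r_z))^*=Q_{\bar z}(\bar r_z)$ of~(\ref{F: Pz*=Q(bar z)}), and the isomorphism/surjectivity facts in Lemma~\ref{L: j-dimensions coincide} and~(\ref{F: Q Psi=Psi}). Throughout I would write $P_\pm=P_{\lambda\pm i0}(r_\lambda)$, $Q_\pm=Q_{\lambda\pm i0}(r_\lambda)$; since $r_\lambda$ is real, (\ref{F: Pz*=Q(bar z)}) gives $P_+^*=Q_-$ and $Q_-^*=P_+$, so the resonance matrix $M:=Q_-JP_+$ is self-adjoint. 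I would also recall that $\Im T_{\lambda+i0}(H_r)\ge 0$ (a norm-limit of the strictly positive operators $\Im T_{\lambda+iy}(H_r)$), so that $\sqrt{\Im T_{\lambda+i0}(H_r)}$ is a self-adjoint square root.

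For the forward implication, suppose $r_\lambda$ is of type~I. By Theorem~\ref{T: P(+)=P(-)} the idempotents $P_+$ and $P_-$ coincide, hence by~(\ref{F: JP=QJ}) $Q_-J=JP_-=JP_+$, and multiplying on the right by $P_+$ and using idempotency gives $Q_-JP_+=JP_+$. Therefore $\Im T_{\lambda+i0}(H_r)\,Q_-JP_+=\Im T_{\lambda+i0}(H_r)\,JP_+$, and the right-hand side vanishes for every regular $r$ by the square-root-free form of condition (i$_+$) in Theorem~\ref{T: type I thm}. This already yields the asserted equality for every non-resonant~$r$, which will also take care of the ``thus for any'' clause.

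For the converse, assume $\Im T_{\lambda+i0}(H_r)\,M=0$ for some non-resonant~$r$. Taking adjoints (using self-adjointness of $M$ and of $\Im T_{\lambda+i0}(H_r)$) gives $M\,\Im T_{\lambda+i0}(H_r)=0$, hence $M\,\Im T_{\lambda+i0}(H_r)\,M=0$; writing $\Im T_{\lambda+i0}(H_r)=S^2$ with $S=S^*$, this says $(SM)^*(SM)=0$, so $SM=0$, i.e. $\sqrt{\Im T_{\lambda+i0}(H_r)}\,Q_-JP_+=0$. Rewriting $JP_+=Q_+J$ by~(\ref{F: JP=QJ}), this reads $\sqrt{\Im T_{\lambda+i0}(H_r)}\,Q_-Q_+J=0$. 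Now $\im(Q_+J)=\im(JP_+)=\Psi_{\lambda+i0}(r_\lambda)$ by Lemma~\ref{L: j-dimensions coincide}, and $Q_-\,\Psi_{\lambda+i0}(r_\lambda)=\Psi_{\lambda-i0}(r_\lambda)=\im Q_-$ by~(\ref{F: Q Psi=Psi}); hence $\sqrt{\Im T_{\lambda+i0}(H_r)}$ annihilates $\im Q_-$, that is $\sqrt{\Im T_{\lambda+i0}(H_r)}\,Q_{\lambda-i0}(r_\lambda)=0$. This is exactly condition (ii$^*_-$) of Theorem~\ref{T: type I thm}, which is equivalent to $r_\lambda$ being of type~I, completing the proof.

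The only step that is not purely formal is the image chain $\im(Q_-Q_+J)=\im Q_-$ in the converse, which rests on $\im(JP_+)=\Psi_{\lambda+i0}(r_\lambda)$ (Lemma~\ref{L: j-dimensions coincide}) and on the surjectivity statement~(\ref{F: Q Psi=Psi}), itself a consequence of Theorem~\ref{T: property M}; everything else is manipulation of idempotents and of the $C^*$-identity. I do not foresee a genuine obstacle—only the need to keep the $\pm$ labels and the $P$/$Q$ roles straight, since the resonance matrix mixes the two boundary values.
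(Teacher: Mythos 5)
Your proof is correct and follows essentially the route the paper intends: identify the range of the resonance matrix $Q_{\lambda-i0}(r_\lambda)JP_{\lambda+i0}(r_\lambda)$ with the co-resonance space (via Lemma~\ref{L: j-dimensions coincide}, (\ref{F: JP=QJ}) and (\ref{F: Q Psi=Psi})) and then invoke the equivalent characterizations of type~I in Theorem~\ref{T: type I thm} (together with Theorem~\ref{T: P(+)=P(-)} for the forward direction). Your sign bookkeeping is in fact slightly sharper than the paper's preceding remark, since a priori the image is $\Psi_{\lambda-i0}(r_\lambda)$ rather than $\Psi_{\lambda+i0}(r_\lambda)$, which is exactly why the converse lands on condition (ii$^*_-$).
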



\subsection{Examples of points of type~I}
In this subsection we give several conditions which ensure that a resonance point has type~I.

\begin{thm} \label{T: outside ess sp then type I} Let~$\lambda$ be an essentially regular point, let~$H_0 \in \clA$ and let $V \in \clA_0(F)$ be a regularizing direction at~$\lambda.$
  If~$\lambda$ does not belong to the (necessarily common) essential spectrum of operators from~$\clA,$
  then every resonance point of the triple $(\lambda, H_0,V)$ is of type~I.
\end{thm}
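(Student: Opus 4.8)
The plan is to show that, when $\lambda\notin\sigma_{ess}$, the imaginary part of the sandwiched resolvent vanishes identically along the regular part of the line $\gamma=\set{H_0+rV\colon r\in\mbR}$, which is the strongest possible incarnation of the type~I property. Concretely, I would verify the defining equality \eqref{F: type I resonance point, definition} at a single regular point $r$ by proving that $\sqrt{\Im T_{\lambda+i0}(H_r)}=0$ there.

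First I would fix a regular point $r$ of the triple $(\lambda; H_0,V)$; such a point exists because $V$ is a regularizing direction, so $\gamma$ is regular at $\lambda$ and, by Theorem~\ref{T: Az 4.1.11}, its set of resonance points is discrete. For this $r$ we have $\lambda\in\Lambda(H_r,F)$, so $T_{\lambda+i0}(H_r)$ exists, and by Proposition~\ref{P: Az 4.1.10} the number $\lambda$ is not an eigenvalue of $H_r$ (an eigenvalue would force $H_r$ to be resonant at $\lambda$). Since also $\lambda\notin\sigma_{ess}=\sigma_{ess}(H_r)$ and, for a self-adjoint operator, $\sigma(H_r)=\sigma_{ess}(H_r)\cup\sigma_d(H_r)$, it follows that $\lambda$ lies in the resolvent set of $H_r$; this is exactly the argument used in the proof of Corollary~\ref{C: Az 4.1.10}. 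Consequently $z\mapsto R_z(H_r)$ is holomorphic at $z=\lambda$, the operator $R_{\lambda+i0}(H_r)=(H_r-\lambda)^{-1}$ is bounded and self-adjoint, and $R_{\lambda+iy}(H_r)\to R_\lambda(H_r)$ in the uniform operator topology as $y\to 0^+$. Sandwiching by $F$ and $F^*$ and passing to the limit gives $T_{\lambda+i0}(H_r)=F R_\lambda(H_r)F^*$, a self-adjoint operator, so that $\Im T_{\lambda+i0}(H_r)=F\,(\Im R_\lambda(H_r))\,F^*=0$ and hence $\sqrt{\Im T_{\lambda+i0}(H_r)}=0$. In particular $\sqrt{\Im T_{\lambda+i0}(H_r)}\,JP_{\lambda+i0}(r_\lambda)=0$ for every resonance point $r_\lambda$ of the triple, which is precisely \eqref{F: type I resonance point, definition} with $s=r$ (equivalently, condition (i$^*_+$) of Theorem~\ref{T: type I thm}). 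Therefore every resonance point of $(\lambda;H_0,V)$ is of type~I.

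I do not expect a genuine obstacle here: the only step requiring care is the standard spectral-theoretic passage from ``$\lambda\notin\sigma_{ess}$ together with $\lambda$ not an eigenvalue'' to $\lambda\in\rho(H_r)$, together with the norm convergence $R_{\lambda+iy}(H_r)\to(H_r-\lambda)^{-1}$, and both are routine and already appear in the excerpt. As an alternative packaging of the conclusion one could invoke item (vii) of Theorem~\ref{T: type I thm}, since by \eqref{F: Az3v6 (4.8)} the function $s\mapsto\Im T_{\lambda+i0}(H_s)$ is then identically zero and hence holomorphic at $r_\lambda$; this also makes transparent that, outside the essential spectrum, \emph{every} regular point $r$ works, not merely the one chosen.
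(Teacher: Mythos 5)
Your proposal is correct and follows essentially the same route as the paper: both rest on the observation that, for $\lambda\notin\sigma_{ess}$ and $r$ regular, $\Im T_{\lambda+i0}(H_r)=0$, which the paper converts to type~I via Theorem~\ref{T: type I thm}(vii) and you convert via the definition (i$^*_+$) — equivalent by that same theorem. You merely supply the routine spectral-theoretic justification ($\lambda\in\rho(H_r)$, hence the boundary value of the sandwiched resolvent is self-adjoint) that the paper leaves implicit.
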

\begin{proof}
In this case the function $\mbR \ni r \mapsto \Im T_{\lambda+i0}(H_r)$ is zero.
Thus, the claim follows from, for example, Theorem~\ref{T: type I thm}(vii).
\end{proof}

The following assertion immediately follows from Proposition~\ref{P: euE (Vf)=0, k=1} and definition~(\ref{F: type I resonance point, definition})
of resonance points of type~I.
\begin{thm} \label{T: order 1 then type I}
Let~$\lambda$ be an essentially regular point, let~$H_0 \in \clA,$ and let $V \in \clA_0(F)$ be a regularizing direction at~$\lambda.$
All resonance points of the triple $(\lambda, H_0,V)$ which have order one, are of type~I.
\end{thm}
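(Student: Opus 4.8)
The statement to prove is Theorem~\ref{T: order 1 then type I}: if $r_\lambda$ is a resonance point of the triple $(\lambda,H_0,V)$ of order one, then $r_\lambda$ is of type~I. The plan is to deduce this directly from the machinery already developed in section~\ref{S: vectors of type I} together with the characterization of type~I points recorded in Theorem~\ref{T: type I thm}. The key observation is that "order one" means $\Upsilon_{\lambda+i0}(r_\lambda) = \Upsilon^1_{\lambda+i0}(r_\lambda)$, so every resonance vector at $r_\lambda$ has order~$1$; by Proposition~\ref{P: euE (Vf)=0, k=1} every vector of order~$1$ is a vector of type~I, hence \emph{all} of $\Upsilon_{\lambda+i0}(r_\lambda)$ consists of vectors of type~I. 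The final step is to translate "all resonance vectors are of type~I" into the defining equality~(\ref{F: type I resonance point, definition}) for a point of type~I.

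Concretely, I would argue as follows. First, since $r_\lambda$ has order~$1$, the definition~(\ref{F: d}) gives $\Upsilon^1_{\lambda+i0}(r_\lambda)=\Upsilon_{\lambda+i0}(r_\lambda)=\im P_{\lambda+i0}(r_\lambda)$. Next, by Proposition~\ref{P: euE (Vf)=0, k=1}, every vector $u\in\Upsilon^1_{\lambda+i0}(r_\lambda)$ satisfies the type~I condition~(\ref{F: euE (Vf)=0, k=1}), i.e. $\sqrt{\Im T_{\lambda+i0}(H_s)}Ju=0$ for all non-resonant real $s$ (this is exactly the content of the ``in particular'' clause of Proposition~\ref{P: euE (Vf)=0, k>1}, combined with Definition of type~I vectors). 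Since this holds for every $u$ in the range of $P_{\lambda+i0}(r_\lambda)$, and $JP_{\lambda+i0}(r_\lambda)$ has its image inside $J\Upsilon_{\lambda+i0}(r_\lambda)=\Psi_{\lambda+i0}(r_\lambda)$ (Lemma~\ref{L: j-dimensions coincide}), we obtain
$$
  \sqrt{\Im T_{\lambda+i0}(H_s)}\, JP_{\lambda+i0}(r_\lambda) = 0
$$
for every non-resonant $s$. This is precisely~(\ref{F: type I resonance point, definition}), so $r_\lambda$ is a point of type~I by definition.

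Alternatively, and perhaps more cleanly for exposition, I would invoke Theorem~\ref{T: type I thm} directly: item~(vi$_\pm$) says $r_\lambda$ is of type~I iff for all resonance vectors the numbers $c_{\pm j}$ of Proposition~\ref{P: euE (Vf)=0, k>1} vanish. When $r_\lambda$ has order~$1$, every resonance vector has order $k=1$, and the sum on the right-hand side of~(\ref{F: (J psi,Im TJ psi)=c(-2)/s2+...}) is empty (it runs from $j=2$ to $j=k=1$), so there are no coefficients $c_{\pm j}$ at all — vacuously they are all zero. Hence $r_\lambda$ satisfies the criterion and is of type~I. I would present the proof in the short ``immediately follows from Proposition~\ref{P: euE (Vf)=0, k=1} and definition~(\ref{F: type I resonance point, definition})'' form that the excerpt already anticipates, spelling out only the one-line passage from ``every $u\in\im P_{\lambda+i0}(r_\lambda)$ is of type~I'' to ``$\sqrt{\Im T_{\lambda+i0}(H_s)}JP_{\lambda+i0}(r_\lambda)=0$.''

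There is essentially no obstacle here: the theorem is a bookkeeping consequence of results already proved, and the only point requiring a word of care is making sure that ``$u\in\Upsilon_{\lambda+i0}(r_\lambda)$ is of type~I for every $u$'' genuinely gives the operator identity~(\ref{F: type I resonance point, definition}), which it does because $\Upsilon_{\lambda+i0}(r_\lambda)=\im P_{\lambda+i0}(r_\lambda)$ when the order is~$1$ and an operator vanishing on the range of an idempotent, composed with that idempotent on the right, is zero. So the ``hard part'' is merely choosing the most economical phrasing; mathematically it is routine.
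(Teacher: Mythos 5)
Your proof is correct and follows essentially the same route as the paper, which derives the theorem immediately from Proposition~\ref{P: euE (Vf)=0, k=1} (every order-one resonance vector is of type~I) together with the definition~(\ref{F: type I resonance point, definition}) of a point of type~I, exactly as in your first argument. Your extra remark via Theorem~\ref{T: type I thm}(vi$_\pm$) is a harmless reformulation of the same observation.
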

\noindent
Since resonance points generically have order 1, Theorem~\ref{T: order 1 then type I} shows that points of type~I are in abundance.
An example of a resonance point not of type~I will be given in Section~\ref{S: Pert embedded eig}.

\begin{thm} \label{T: V>0, then type I}
Let~$\lambda$ be an essentially regular point, let~$H_0 \in \clA,$ and let $V \in \clA_0(F)$ be a regularizing direction at~$\lambda.$
If the perturbation~$V$ is non-negative (or non-positive), then every resonance point of the triple $(\lambda, H_0,V)$ is of type~I.
\end{thm}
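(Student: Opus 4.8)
The plan is to deduce this from Proposition \ref{P: J>0 then res points has order 1} together with Theorem \ref{T: order 1 then type I}. Indeed, Proposition \ref{P: J>0 then res points has order 1} asserts that when the perturbation operator $J$ (equivalently $V$) is non-negative or non-positive, every real resonance point has order~$1$; and Theorem \ref{T: order 1 then type I} asserts that every resonance point of order~$1$ is of type~I. So the combination is immediate. The only point requiring a sentence of care is that Proposition \ref{P: J>0 then res points has order 1} is stated in terms of sign-definiteness of $J,$ whereas here the hypothesis is sign-definiteness of $V = F^*JF;$ but by Lemma \ref{L: sign(M)=sign(FMF)} (or simply because $F$ has trivial kernel and co-kernel, so $\scal{\psi}{J\psi} = \scal{F^*\psi}{\cdot}$ transports to $\scal{\chi}{V\chi}$ under $\psi = F\chi$ on the relevant finite-rank range), $V\geq 0$ if and only if $J\geq 0$ on the image of the operators in play, which is all that is used in the proof of Proposition \ref{P: J>0 then res points has order 1}.

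Concretely, first I would invoke Corollary \ref{C: Upsilon 1(+)=Upsilon 1(-)}, which holds for every real resonance point, together with the argument \eqref{F: an argument} appearing in the proof of Proposition \ref{P: J>0 then res points has order 1}: if $r_\lambda$ had order larger than~$1,$ there would exist a vector $\phi \in \Upsilon^2_{\lambda+i0}(r_\lambda)$ of order~$2,$ hence by Theorem \ref{T: Laurent for A(s)psi} the vector $u = \bfA_{\lambda+i0}(r_\lambda)\phi$ would be a non-zero resonance vector of order~$1$ and depth $\geq 1,$ and then
\begin{equation*}
  \scal{u}{Ju} = \scal{\bfA_{\lambda+i0}(r_\lambda)\phi}{Ju} = \scal{\phi}{\bfB_{\lambda-i0}(r_\lambda)Ju} = \scal{\phi}{J\bfA_{\lambda-i0}(r_\lambda)u} = 0,
\end{equation*}
the last equality by Corollary \ref{C: Upsilon 1(+)=Upsilon 1(-)} (which gives $\bfA_{\lambda-i0}(r_\lambda)u = 0$ since $u$ has $\lambda-i0$-order~$1$ too). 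Sign-definiteness of $J$ then forces $Ju = 0,$ contradicting Lemma \ref{L: j-dimensions coincide} (which says $J$ is injective on $\Upsilon_{\lambda+i0}(r_\lambda)$). So $r_\lambda$ has order~$1.$

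Second, having established that $r_\lambda$ has order~$1,$ I would apply Theorem \ref{T: order 1 then type I} directly to conclude that $r_\lambda$ is of type~I. This completes the proof. There is no serious obstacle here: both ingredients are already proved earlier in the paper, and the statement is essentially a corollary — indeed the excerpt lists it (as Theorem \ref{T: V>0, then type I}) immediately after Theorems \ref{T: outside ess sp then type I} and \ref{T: order 1 then type I} precisely because it is meant to follow from the latter. The only thing to be mildly careful about is not to re-prove Proposition \ref{P: J>0 then res points has order 1} from scratch but simply to cite it, noting the harmless passage between $V$ and $J$ sign-definiteness; alternatively one can cite Corollary \ref{C: U-turn for positive V} which already packages the order-$1$ conclusion in the non-negative/non-positive case, and then invoke Theorem \ref{T: order 1 then type I}.
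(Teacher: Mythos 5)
Your proposal is correct and is exactly the paper's own proof: the paper deduces the theorem in one line from Proposition~\ref{P: J>0 then res points has order 1} together with Theorem~\ref{T: order 1 then type I}. Your extra remark on passing between sign-definiteness of $V=F^*JF$ and of $J$ (via density of $\im F$, since $F$ has trivial co-kernel) is a harmless clarification of a point the paper leaves implicit.
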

\begin{proof} This follows from Proposition~\ref{P: J>0 then res points has order 1} and Theorem~\ref{T: order 1 then type I}.
\end{proof}

\begin{cor} If~$r_\lambda$ is not a point of type~I, then $\lambda \in \sigma_{ess}$ and the order of~$r_\lambda$ is not less than 2.
Moreover, in this case the perturbation $J$ is not sign definite.
\end{cor}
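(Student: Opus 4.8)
The plan is to derive the three claims of the corollary directly from Theorems~\ref{T: outside ess sp then type I}, \ref{T: order 1 then type I} and \ref{T: V>0, then type I}, which together exhaust a large set of sufficient conditions for a real resonance point to be of type~I. The argument is purely contrapositive and requires no new computation.

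First I would argue the first claim, $\lambda \in \sigma_{ess}$. Suppose $r_\lambda$ is a real resonance point of the triple $(\lambda, H_0, V)$ which is not of type~I, and suppose for contradiction that $\lambda \notin \sigma_{ess}$. By Theorem~\ref{T: outside ess sp then type I}, every resonance point of $(\lambda, H_0, V)$ is then of type~I; in particular $r_\lambda$ is of type~I, contradicting the assumption. Hence $\lambda \in \sigma_{ess}$.

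Next I would establish that the order $d$ of $r_\lambda$ satisfies $d \geq 2$. Again suppose $r_\lambda$ is not of type~I. If $d = 1$, then by Theorem~\ref{T: order 1 then type I} (resonance points of order~1 are of type~I) the point $r_\lambda$ would be of type~I, a contradiction. Therefore $d \geq 2$.

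Finally, for the assertion that $J$ is not sign-definite: suppose once more that $r_\lambda$ is not of type~I, and suppose for contradiction that $J$ is non-negative or non-positive. Then the perturbation $V = F^*JF$ is non-negative or non-positive (this is Lemma~\ref{L: sign(M)=sign(FMF)}, or may be seen directly from $\scal{f}{Vf} = \scal{Ff}{JFf}$); Theorem~\ref{T: V>0, then type I} then forces $r_\lambda$ to be of type~I, a contradiction. Hence $J$ is not sign-definite. I do not anticipate a genuine obstacle here since each step is an immediate contrapositive of a theorem already proved; the only point needing a word of care is the passage from sign-definiteness of $J$ to that of $V$ in the third claim, which is handled by the observation that $F$ has trivial kernel so that $\scal{f}{Vf} = \scal{Ff}{JFf}$ has the same sign pattern as $\scal{\cdot}{J\,\cdot}$ on $\im F$.
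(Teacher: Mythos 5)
Your proof is correct and is essentially the argument the paper intends: the corollary is just the contrapositive of Theorems~\ref{T: outside ess sp then type I}, \ref{T: order 1 then type I} and \ref{T: V>0, then type I}, and your handling of the passage from sign-definiteness of $J$ to that of $V$ is fine. (For the third claim one can even skip that passage and argue directly from Proposition~\ref{P: J>0 then res points has order 1} together with Theorem~\ref{T: order 1 then type I}, which is exactly how the paper proves Theorem~\ref{T: V>0, then type I} anyway.)
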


\begin{prop} Let~$r_\lambda$ be a real resonance point corresponding to $\lambda \pm i0 \in \partial \Pi.$
If the resonance matrix $Q_{\lambda\mp i0}(r_\lambda)JP_{\lambda\pm i0}(r_\lambda)$ is either non-negative or non-positive, then~$r_\lambda$ is of type~I.
\end{prop}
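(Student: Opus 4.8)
The claim is that if the resonance matrix $Q_{\lambda\mp i0}(r_\lambda)JP_{\lambda\pm i0}(r_\lambda)$ is semi-definite (non-negative or non-positive), then $r_\lambda$ is a point of type~I.

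The plan is to argue by contraposition, combining Proposition~\ref{P: M>0 then type I} with Theorem~\ref{T: order 1 then type I}. First I would invoke Proposition~\ref{P: M>0 then type I}: if the resonance matrix $Q_{\lambda\mp i0}(r_\lambda)JP_{\lambda\pm i0}(r_\lambda)$ is non-negative or non-positive, then $r_\lambda$ has order~$1$. Then I would invoke Theorem~\ref{T: order 1 then type I} (equivalently Theorem~\ref{T: type I thm} together with Proposition~\ref{P: euE (Vf)=0, k=1}): every real resonance point of order~$1$ is of type~I, since for a vector of order~$1$ one has $\scal{Ju}{\Im T_{\lambda+i0}(H_s)Ju} = 0$ by~(\ref{F: (J psi,Im TJ psi)=0}), and order~$1$ of the resonance point means all resonance vectors have order~$1$, so $\sqrt{\Im T_{\lambda+i0}(H_s)}\,JP_{\lambda+i0}(r_\lambda) = 0$, which is precisely the defining condition~(\ref{F: type I resonance point, definition}) of a type~I point. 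Chaining these two implications gives the result directly.

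Thus the proof is essentially a one-line deduction: semi-definiteness of the resonance matrix $\Rightarrow$ (by Proposition~\ref{P: M>0 then type I}) order of $r_\lambda$ equals~$1$ $\Rightarrow$ (by Theorem~\ref{T: order 1 then type I}) $r_\lambda$ is of type~I. There is no real obstacle here, since both ingredients have already been established; the only care needed is to note that the argument works for either choice of sign $z = \lambda+i0$ or $z = \lambda-i0$, and for both the non-negative and non-positive cases, which is handled uniformly by Proposition~\ref{P: M>0 then type I} as stated. One should also remark, for cleanliness, that by Corollary~\ref{C: sign M-=sign M+} (or Proposition~\ref{P: rank M=rank P}) the two resonance matrices $Q_{\lambda-i0}(r_\lambda)JP_{\lambda+i0}(r_\lambda)$ and $Q_{\lambda+i0}(r_\lambda)JP_{\lambda-i0}(r_\lambda)$ have the same rank and signature, so semi-definiteness of one is equivalent to semi-definiteness of the other, making the statement independent of the sign choice in an even stronger sense.

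If one wanted a self-contained argument instead of citing Proposition~\ref{P: M>0 then type I} directly, I would reproduce its short proof: assume for contradiction that $r_\lambda$ has order $\geq 2$; then there is a resonance vector $u$ of order~$1$ and depth $\geq 1$, say $u = \bfA_{\lambda+i0}(r_\lambda)\phi$; using~(\ref{F: A*(z)=B(bar z)}), (\ref{F: JA=BJ}) and Corollary~\ref{C: Upsilon 1(+)=Upsilon 1(-)} one computes
$$
  \scal{u}{Q_{\lambda-i0}(r_\lambda)JP_{\lambda+i0}(r_\lambda) u} = \scal{P_{\lambda+i0}(r_\lambda)u}{JP_{\lambda+i0}(r_\lambda)u} = \scal{u}{Ju} = \scal{\phi}{J\bfA_{\lambda-i0}(r_\lambda)u} = 0,
$$
and semi-definiteness of the resonance matrix forces $Q_{\lambda-i0}(r_\lambda)JP_{\lambda+i0}(r_\lambda)u = 0$, contradicting the fact (Proposition~\ref{P: rank M=rank P}) that the restriction of the resonance matrix to $\Upsilon_{\lambda+i0}(r_\lambda)$ has zero kernel. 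This establishes order~$1$, and then Theorem~\ref{T: order 1 then type I} finishes the proof.
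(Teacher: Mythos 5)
Your proof is correct and is exactly the paper's argument: the paper's own proof of this proposition reads, in full, that it follows from Proposition~\ref{P: M>0 then type I} together with Theorem~\ref{T: order 1 then type I}. Your expanded reproduction of the contradiction argument from Proposition~\ref{P: M>0 then type I} (using $\scal{u}{Ju}=0$ for an order-one vector of positive depth and the zero-kernel property from Proposition~\ref{P: rank M=rank P}) matches the paper's reasoning as well, so there is nothing to add.
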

\begin{proof} This follows from Proposition~\ref{P: M>0 then type I} and Theorem~\ref{T: order 1 then type I}.
\end{proof}

\subsection{Resonance points with property~$S$}
\label{SS: property S}
\noindent
In this subsection a class of real resonance points is introduced which is strictly larger than the class of points of type~I.
Let~$\lambda$ be an essentially regular point.
\label{Page: point with property S}
A~real resonance point~$r_\lambda$ will be said to have \emph{property~$S$}
if and only if
$$
  \ker P_{\lambda+i0}(r_\lambda) = \ker P_{\lambda-i0}(r_\lambda).
$$
\begin{prop} \label{P: property S} Let~$\lambda$ be an essentially regular point and let~$r_\lambda$ be a real resonance point.
The following assertions are equivalent:
\begin{enumerate}
  \item[(i)] \ $r_\lambda$ has property $S.$
  \item[(ii)] \ $P_{\lambda+i0}(r_\lambda) P_{\lambda-i0}(r_\lambda) = P_{\lambda+i0}(r_\lambda)$ and $P_{\lambda-i0}(r_\lambda) P_{\lambda+i0}(r_\lambda) = P_{\lambda-i0}(r_\lambda).$
  \item[(iii)] \ $\im Q_{\lambda+i0}(r_\lambda) = \im Q_{\lambda-i0}(r_\lambda),$ that is, $\Psi_{\lambda+i0}(r_\lambda) = \Psi_{\lambda-i0}(r_\lambda).$
  \item[(iv)] \ $Q_{\lambda+i0}(r_\lambda) Q_{\lambda-i0}(r_\lambda) = Q_{\lambda-i0}(r_\lambda)$ and $Q_{\lambda-i0}(r_\lambda) Q_{\lambda+i0}(r_\lambda) = Q_{\lambda+i0}(r_\lambda).$
  \item[(v)] \ $Q_{\lambda-i0}(r_\lambda)JP_{\lambda+i0}(r_\lambda) = JP_{\lambda+i0}(r_\lambda).$
  \item[(vi)] \ $Q_{\lambda+i0}(r_\lambda)JP_{\lambda-i0}(r_\lambda) = JP_{\lambda-i0}(r_\lambda).$
  \item[(vii)] \ $Q_{\lambda-i0}(r_\lambda)JP_{\lambda+i0}(r_\lambda) = Q_{\lambda-i0}(r_\lambda)J.$
  \item[(viii)] \ $Q_{\lambda+i0}(r_\lambda)JP_{\lambda-i0}(r_\lambda) = Q_{\lambda+i0}(r_\lambda)J.$
  \item[(ix)] \ $Q_{\lambda-i0}(r_\lambda)JP_{\lambda+i0}(r_\lambda) = Q_{\lambda+i0}(r_\lambda) J P_{\lambda-i0}(r_\lambda).$
\end{enumerate}
\end{prop}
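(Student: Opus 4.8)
The plan is to reduce every item to elementary linear algebra of the finite-rank idempotents $P_\pm:=P_{\lambda\pm i0}(r_\lambda)$ and $Q_\pm:=Q_{\lambda\pm i0}(r_\lambda)$, feeding in four structural facts from earlier in the paper: the adjoint relation $(P_{\lambda\pm i0}(r_\lambda))^*=Q_{\lambda\mp i0}(r_\lambda)$ (which is (\ref{F: Pz*=Q(bar z)}) with $r_\lambda$ real), the intertwining identities $JP_\pm=Q_\pm J$ and $JP_\pm=Q_\pm JP_\pm$ from (\ref{F: JP=QJ}) and (\ref{F: JP=QJP}), the fact (Lemma~\ref{L: j-dimensions coincide} and Corollary~\ref{C: r(bar z)=bar r(z)}) that $J$ maps $\Upsilon_{\lambda\pm i0}(r_\lambda)$ bijectively onto $\Psi_{\lambda\pm i0}(r_\lambda)$ and that $\dim\Upsilon_{\lambda+i0}(r_\lambda)=\dim\Upsilon_{\lambda-i0}(r_\lambda)=:N$, and finally Theorem~\ref{T: property M}. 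First I would record two routine facts about bounded idempotents $P_1,P_2$ on $\clK$: $P_1P_2=P_1\iff\ker P_2\subseteq\ker P_1$ and $P_1P_2=P_2\iff\im P_2\subseteq\im P_1$ (each because $I-P_i$ is the complementary idempotent, with range $\ker P_i$ and kernel $\im P_i$), together with the remark that a subspace of $\clK$ of finite codimension $N$ contained in another subspace of the same codimension $N$ must coincide with it. Since $\ker P_\pm$ has codimension $N$ in $\clK$ and $\dim\Psi_{\lambda\pm i0}(r_\lambda)=N$, any one-sided inclusion obtained later between $\ker P_+$ and $\ker P_-$, or between $\Psi_{\lambda+i0}(r_\lambda)$ and $\Psi_{\lambda-i0}(r_\lambda)$, will automatically be an equality.

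With these preliminaries the geometric core is immediate. Property $S$ is by definition $\ker P_+=\ker P_-$, so the first idempotent fact gives $\text{(i)}\iff\text{(ii)}$ at once. Taking orthogonal complements and using $\ker P=(\im P^*)^\perp$ with $(P_\pm)^*=Q_\mp$ and the fact that $\im Q_\mp$ is closed (being finite-dimensional), property $S$ is equivalent to $\im Q_+=\im Q_-$, i.e.\ to (iii); and $\text{(iii)}\iff\text{(iv)}$ is the second idempotent fact applied to $Q_\pm$.

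For (v)--(ix) the idea is to strip off the operator $J$. Because $\im P_\pm=\Upsilon_{\lambda\pm i0}(r_\lambda)$ and $J$ is bijective from $\Upsilon_{\lambda\pm i0}(r_\lambda)$ onto $\Psi_{\lambda\pm i0}(r_\lambda)$, one has $\im(JP_\pm)=\Psi_{\lambda\pm i0}(r_\lambda)$; hence (v), namely $Q_-JP_+=JP_+$, says exactly that $Q_-$ fixes every vector of $\Psi_{\lambda+i0}(r_\lambda)$, i.e.\ $\Psi_{\lambda+i0}(r_\lambda)\subseteq\im Q_-=\Psi_{\lambda-i0}(r_\lambda)$, which by the codimension remark is (iii), and (vi) is symmetric. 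Substituting $Q_\mp J=JP_\mp$, item (vii) reads $JP_-P_+=JP_-$; evaluating this on a vector of $\ker P_+$ and using injectivity of $J$ on $\Upsilon_{\lambda-i0}(r_\lambda)$ yields $\ker P_+\subseteq\ker P_-$, hence (i), and (viii) is symmetric. Similarly (ix) reads $JP_-P_+=JP_+P_-$; evaluating on $\ker P_-$ and on $\ker P_+$ and using injectivity of $J$ on the relevant root space gives $P_+(\ker P_-)\subseteq\ker P_-\cap\Upsilon_{\lambda+i0}(r_\lambda)$ and $P_-(\ker P_+)\subseteq\ker P_+\cap\Upsilon_{\lambda-i0}(r_\lambda)$, and here Theorem~\ref{T: property M} --- which asserts that $P_\pm$ restricted to $\Upsilon_{\lambda\mp i0}(r_\lambda)$ is injective --- forces both intersections to be $\{0\}$, whence $\ker P_+=\ker P_-$. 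Conversely, if property $S$ holds then (ii) and (iii) hold, so $P_+P_-=P_+$, $P_-P_+=P_-$ and $\Psi_{\lambda+i0}(r_\lambda)=\Psi_{\lambda-i0}(r_\lambda)$; decomposing an arbitrary vector along $\clK=\ker P_+\dotplus\Upsilon_{\lambda+i0}(r_\lambda)$ one checks $J(P_+-P_-)=0$, and then substituting these relations makes each of (v)--(ix) a one-line identity.

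The step I expect to be the main obstacle is item (ix). Items (v)--(viii), once $J$ is removed, each collapse to a single subspace inclusion and are reached by the codimension shortcut; (ix) is genuinely two-sided, so the shortcut is unavailable and one must use the isomorphism content of Theorem~\ref{T: property M} to pry apart the two kernel inclusions. The converse direction also costs a little work, in the verification that property $S$ forces $J(P_+-P_-)=0$. Everything else is bookkeeping with the relations $JP=QJ$, $JP=QJP$, $P^*=Q$ and the bijectivity of $J$ between the root spaces.
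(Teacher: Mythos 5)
Your proposal is correct, and it runs on the same toolkit as the paper's proof: the adjoint relation $(P_{\lambda\pm i0}(r_\lambda))^*=Q_{\lambda\mp i0}(r_\lambda)$, the intertwining $JP=QJ$, the bijectivity of $J$ between $\Upsilon_{\lambda\pm i0}(r_\lambda)$ and $\Psi_{\lambda\pm i0}(r_\lambda)$ with the common dimension $N$, and Theorem~\ref{T: property M}. The differences are only in how the implications are chained at the two nontrivial spots. For (vii)--(viii) the paper deduces (v)$\Leftrightarrow$(vii) in one line from the self-adjointness of the resonance matrix $Q_{\lambda-i0}(r_\lambda)JP_{\lambda+i0}(r_\lambda)$ together with $P^*=Q$, whereas you strip $J$ and argue directly that (vii) forces $\ker P_{\lambda+i0}(r_\lambda)\subseteq\ker P_{\lambda-i0}(r_\lambda)$, finishing with the equal-codimension remark. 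For (ix) the paper goes (ii)$\Rightarrow$(ix) by the chain $Q_-JP_+=Q_-JP_+P_-=Q_-Q_+JP_-=Q_+JP_-$ and (ix)$\Rightarrow$(iii) by rank counting via Proposition~\ref{P: rank M=rank P}, while you use the two kernel inclusions plus the injectivity content of Theorem~\ref{T: property M} directly; both routes ultimately rest on the same external input. Your auxiliary observation that property $S$ forces $J(P_{\lambda+i0}(r_\lambda)-P_{\lambda-i0}(r_\lambda))=0$ is true and does follow as you indicate (e.g. $JP_+=Q_-JP_+=JP_-P_+=JP_-$ once (ii)--(iv) are available), though it is not needed if one simply substitutes (ii)--(iv) into each of (v)--(ix) as the paper does; it buys a slightly more uniform converse at the cost of one extra lemma.
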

\begin{proof} (ii) $\then$ (i). If $P_{\lambda-i0}f = 0,$ then $P_{\lambda+i0}f = P_{\lambda+i0}P_{\lambda-i0}f = 0.$ Similarly,
if $P_{\lambda+i0}f = 0,$ then $P_{\lambda-i0}f = P_{\lambda-i0}P_{\lambda+i0}f = 0.$

(i) $\then$ (ii). Let $f$ be an arbitrary vector from~$\clK$ and let $f = f'+f'',$ where vectors $f'$ and $f''$ satisfy
$P_{\lambda-i0}f'=f'$ and $P_{\lambda-i0}f'' = 0.$ Then the vector $P_{\lambda+i0}f''$ is also zero,
and therefore $$P_{\lambda+i0}P_{\lambda-i0}f=P_{\lambda+i0}f'=P_{\lambda+i0}(f'+f'') = P_{\lambda+i0}f.$$
By the same argument, $P_{\lambda-i0}P_{\lambda+i0}f=P_{\lambda-i0}f.$

The equivalence (i) $\iff$ (iii) follows from~$\overline{\im A^*} = (\ker A)^\perp$ and~(\ref{F: Pz*=Q(bar z)}).

The equivalence (ii) $\iff$ (iv) follows from~(\ref{F: Pz*=Q(bar z)}).

The equivalence (iii) $\iff$ (v) follows from Lemma~\ref{L: j-dimensions coincide},~(\ref{F: JP=QJP}) and~(\ref{F: Q Psi=Psi}).
The equivalence (iii) $\iff$ (vi) is proved by the same argument.

The equivalences (v) $\iff$ (vii) and (vi) $\iff$ (viii) are consequences of self-adjointness of $Q_{\lambda-i0}(r_\lambda)JP_{\lambda+i0}(r_\lambda)$ and $~(\ref{F: Pz*=Q(bar z)}).$

Proof of (ii) $\then$ (ix). We have
\begin{equation*}
  \begin{split}
    Q_{\lambda-i0}(r_\lambda)JP_{\lambda+i0}(r_\lambda) & = Q_{\lambda-i0}(r_\lambda)JP_{\lambda+i0}(r_\lambda) P_{\lambda-i0}(r_\lambda)
    \\ & = Q_{\lambda-i0}(r_\lambda)Q_{\lambda+i0}(r_\lambda)J P_{\lambda-i0}(r_\lambda)
    \\ & = Q_{\lambda+i0}(r_\lambda) J P_{\lambda-i0}(r_\lambda),
  \end{split}
\end{equation*}
where the first equality follows from~(ii), the second equality follows from~(\ref{F: JP=QJ}) and the third equality follows from~(iv).

Proof of (ix) $\then$ (iii). By Proposition~\ref{P: rank M=rank P}, ranks of operators
$Q_{\lambda-i0}(r_\lambda)JP_{\lambda+i0}(r_\lambda)$ and $Q_{\lambda+i0}(r_\lambda) J P_{\lambda-i0}(r_\lambda)$
are both equal to $N=\rank Q_{\lambda\pm i0}(r_\lambda).$ Hence,
$$
  \im Q_{\lambda-i0}(r_\lambda) = \im Q_{\lambda-i0}(r_\lambda)JP_{\lambda+i0}(r_\lambda) = \im Q_{\lambda+i0}(r_\lambda)JP_{\lambda-i0}(r_\lambda) = \im Q_{\lambda+i0}(r_\lambda).
$$
\end{proof}
\noindent According to Theorem~\ref{T: spec P(+)P(-)}, the operators
$$
  P_{\lambda+i0}(r_\lambda)P_{\lambda-i0}(r_\lambda)-P_{\lambda+i0}(r_\lambda) \ \ \text{and} \ \  P_{\lambda-i0}(r_\lambda)P_{\lambda+i0}(r_\lambda)-P_{\lambda-i0}(r_\lambda)
$$
are nilpotent. Hence, a real resonance point has property~$S$
if and only if the nilpotent parts of $P_{\lambda+i0}(r_\lambda)P_{\lambda-i0}(r_\lambda)$ and $P_{\lambda-i0}(r_\lambda)P_{\lambda+0}(r_\lambda)$ are zero.

\begin{prop} \label{P: points without property S exist} Every resonance point of type~I has property~$S.$ There are resonance points which do not have property $S,$
and there are points with property~$S$ which are not of type~I.
\end{prop}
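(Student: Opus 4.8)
\textbf{Proof proposal for Proposition~\ref{P: points without property S exist}.}

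The statement has three parts: (a) every resonance point of type~I has property~$S$; (b) there exist resonance points without property~$S$; (c) there exist resonance points with property~$S$ that are not of type~I. For part (a), the plan is to use Theorem~\ref{T: P(+)=P(-)}, which asserts that for a point of type~I the idempotents $P_{\lambda+i0}(r_\lambda)$ and $P_{\lambda-i0}(r_\lambda)$ coincide. Once $P_{\lambda+i0}(r_\lambda) = P_{\lambda-i0}(r_\lambda)$, their kernels are trivially equal, which is exactly the definition of property~$S$ on p.~\pageref{Page: point with property S}. Alternatively, I can invoke Proposition~\ref{P: Upsilon(k)=...} together with the observation following Theorem~\ref{T: type I thm} that type~I is equivalent to $\bfA_{\lambda+i0}(r_\lambda) = \bfA_{\lambda-i0}(r_\lambda)$ on $\Upsilon_{\lambda\pm i0}(r_\lambda)$, but the cleanest route is simply $P_{\lambda+i0}(r_\lambda) = P_{\lambda-i0}(r_\lambda) \Rightarrow \ker P_{\lambda+i0}(r_\lambda) = \ker P_{\lambda-i0}(r_\lambda)$, which gives property~$S$ directly via the definition (or via any of the equivalent conditions of Proposition~\ref{P: property S}).

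For parts (b) and (c), the plan is to defer to the explicit constructions in Section~\ref{S: Pert embedded eig}, which the excerpt announces (see the remark ``In this subsection we in particular construct real resonance points which do not have property $S,$ and real resonance points with property $S,$ which don't have property~$P$'' at the start of that section's description, and Theorem~\ref{I:T: order k case}). The relevant mechanism there is the perturbation of a non-degenerate embedded eigenvalue: one works in $\hilb = \hat\hilb \oplus \mbC$ with the block structure given in the excerpt, and by tuning $\alpha$ and the sliced data $(\hat H_{r_\lambda}, \hat F, \hat\psi)$ one produces a real resonance point $r_\lambda$ of prescribed order $d\geq 2$. For part (b), I would exhibit such a point whose order is $\geq 2$ and for which the second formula for $c_{\pm j}$ in Proposition~\ref{P: euE (Vf)=0, k>1} gives a non-zero coefficient, so that by Theorem~\ref{T: type I thm}(vi$_\pm$) the point fails to be of type~I, and then check directly on the explicit two-dimensional (or $d$-dimensional) resonance spaces that $\ker P_{\lambda+i0}(r_\lambda) \neq \ker P_{\lambda-i0}(r_\lambda)$ — this is a finite linear-algebra computation on the vectors $F\chi = \twovector{0}{1}$ and $\twovector{\hat u_{\lambda\pm i0}(r_\lambda)}{0}$, using that $\hat u_{\lambda+i0}(r_\lambda) \neq \hat u_{\lambda-i0}(r_\lambda)$ whenever $\Im\hat u_{\lambda+i0}(r_\lambda)\neq 0$. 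For part (c), the plan is to arrange the explicit example so that $\Psi_{\lambda+i0}(r_\lambda) = \Psi_{\lambda-i0}(r_\lambda)$ (equivalently $\im Q_{\lambda+i0}(r_\lambda) = \im Q_{\lambda-i0}(r_\lambda)$, condition (iii) of Proposition~\ref{P: property S}) while still having a non-zero $c_{\pm j}$, so that property~$S$ holds but type~I fails; again this reduces to a direct check on the finite-dimensional spaces from Theorem~\ref{I:T: order k case}.

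The main obstacle is part (c): producing a point with property~$S$ that is genuinely \emph{not} of type~I requires a careful choice of the sliced data so that the ranges of $Q_{\lambda\pm i0}(r_\lambda)$ coincide even though the operators $\bfA_{\lambda\pm i0}(r_\lambda)$ do not agree on $\Upsilon_{\lambda\pm i0}(r_\lambda)$. I would realize this by taking order $d=2$ with geometric multiplicity $1$: then $\Upsilon_{\lambda\pm i0}(r_\lambda)$ is two-dimensional spanned by $F\chi$ (order $1$, hence of type~I by Proposition~\ref{P: euE (Vf)=0, k=1} and common to both signs by Corollary~\ref{C: Upsilon 1(+)=Upsilon 1(-)}) and a vector of order $2$ that differs between the two signs. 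The order-$1$ subspace $\Upsilon^1 = \Psi^1$ is the same for $\pm$ by~(\ref{F: Psi 1(+)=Psi 1(-)}); property~$S$ then amounts to the two-dimensional spaces $\Psi_{\lambda\pm i0}(r_\lambda)$ coinciding, which can be forced by a symmetry or reality condition on the holomorphic part $\tilde{\hat A}$, while type~I is obstructed by the single scalar $c_{\pm 2} = \Im\scal{u}{J\bfA_{\lambda+i0}(r_\lambda)u}$ being non-zero. Verifying that all these conditions can be met simultaneously is the delicate step, but it is a concrete finite-dimensional construction and I expect it to go through by choosing, e.g., $\hat\hilb$ one- or two-dimensional with explicitly chosen $\hat H$, $\hat F$, $\hat\psi$, $\alpha = 0$.
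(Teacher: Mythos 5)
Your part (a) is exactly the paper's argument: Theorem~\ref{T: P(+)=P(-)} gives $P_{\lambda+i0}(r_\lambda)=P_{\lambda-i0}(r_\lambda)$ for type~I points, hence equal kernels, hence property~$S$. For parts (b) and (c) you correctly identify the source (the order-$d=2$ examples of Section~\ref{S: Pert embedded eig}, carried out in subsection~\ref{SSS: order 2}), but your concrete realization contains a genuine error: you propose to "expect it to go through by choosing, e.g., $\hat\hilb$ one- or two-dimensional." No finite-dimensional choice of $\hat\hilb$ (hence of $\hilb=\hat\hilb\oplus\mbC$) can work. In finite dimensions the essential spectrum is empty, so by Theorem~\ref{T: outside ess sp then type I} \emph{every} resonance point is of type~I, and then by part (a) every point has property~$S$; equivalently, with $\dim\hat\hilb<\infty$ and $\lambda\in\Lambda(\hat H_{r_\lambda},\hat F)$ the boundary value $T_{\lambda\pm i0}(\hat H_{r_\lambda})$ is self-adjoint, so $\hat u_{\lambda+i0}(r_\lambda)=\hat u_{\lambda-i0}(r_\lambda)$, the coefficient $c_{\pm 2}=\Im\scal{\hat\psi}{\hat u_{\lambda+i0}(r_\lambda)}$ you want to be non-zero vanishes, and $P_{\lambda+i0}(r_\lambda)=P_{\lambda-i0}(r_\lambda).$ The decisive ingredient in the paper's examples is precisely $\Im T_{\lambda+i0}(\hat H_{r_\lambda})\hat\psi\neq 0$, which forces $\lambda$ to lie in the absolutely continuous spectrum of $\hat H_{r_\lambda}$, i.e.\ $\hat\hilb$ must be infinite-dimensional with $\lambda$ an \emph{embedded} eigenvalue of $H_{r_\lambda}$.

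Beyond that, your (b)/(c) are left as plans where the paper actually computes. For (b) the paper writes out $P_{\lambda\pm i0}(r_\lambda)$ explicitly (formulas~(\ref{F: P+ 2x2}),~(\ref{F: P- 2x2})) and observes that for generic $\hat J\neq 0$ one has $P_{\lambda+i0}(r_\lambda)P_{\lambda-i0}(r_\lambda)\neq P_{\lambda+i0}(r_\lambda)$, violating Proposition~\ref{P: property S}(ii); note your intended shortcut "$\hat u_+\neq\hat u_-$ implies the kernels differ" is not automatic — with $\hat J=0$ the ranges differ while the kernels coincide — so the kernel formula is really needed and $\hat J\neq 0$ is essential for (b). For (c) the paper's mechanism is simpler than your "symmetry or reality condition on the holomorphic part": take $\hat J=0$ (and $\alpha=0$, $\scal{\hat\psi}{\hat u_+}\neq 0$ so $d=2$); then both kernels equal $\set{1,\hat\psi}^{\perp}$, giving property~$S$ by definition, while $\Upsilon_{\lambda+i0}(r_\lambda)=\linspan\set{1,\hat u_+}\neq\linspan\set{1,\hat u_-}=\Upsilon_{\lambda-i0}(r_\lambda)$, which by Theorem~\ref{T: P(+)=P(-)} rules out type~I (your alternative obstruction via $c_{+2}\neq 0$ and Theorem~\ref{T: type I thm} also works here, since $c_{+2}=\scal{\hat\psi}{\Im T_{\lambda+i0}(\hat H_{r_\lambda})\hat\psi}$). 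So the architecture of your proposal matches the paper, but as written the construction you lean on would fail, and the two counterexamples are not actually produced.
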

\noindent The first part of this proposition is trivial; to prove it one can note that by Theorem~\ref{T: P(+)=P(-)} for points~$r_\lambda$ of type~I we have
$P_{\lambda+i0}(r_\lambda) = P_{\lambda-i0}(r_\lambda)$ and therefore~$r_\lambda$ has property~$S.$ 
Examples of resonance points with the required properties will be given in part~\ref{SSS: order 2} of section~\ref{S: Pert embedded eig}.

Propositions~\ref{P: property S} and~\ref{P: points without property S exist} give answers to some natural questions,
such as whether the two operators $Q_{\lambda-i0}(r_\lambda)JP_{\lambda+i0}(r_\lambda)$ and $Q_{\lambda+i0}(r_\lambda) J P_{\lambda-i0}(r_\lambda)$
always coincide or not.

\begin{prop} If $\Upsilon_{\lambda+i0}(r_\lambda) = \Upsilon_{\lambda-i0}(r_\lambda),$ then
$P_{\lambda+i0}(r_\lambda) = P_{\lambda-i0}(r_\lambda).$
\end{prop}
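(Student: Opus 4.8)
The plan is to show that the hypothesis $\Upsilon_{\lambda+i0}(r_\lambda) = \Upsilon_{\lambda-i0}(r_\lambda)$ forces the two idempotents $P_{\lambda+i0}(r_\lambda)$ and $P_{\lambda-i0}(r_\lambda)$ to have the same range \emph{and} the same kernel, whence they are equal. The equality of ranges is immediate: by definition the range of $P_{\lambda\pm i0}(r_\lambda)$ is $\Upsilon_{\lambda\pm i0}(r_\lambda)$, and these coincide by assumption. The real work is in the kernels.

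For the kernels I would argue via Theorem~\ref{T: spec P(+)P(-)}. Write $P_+ = P_{\lambda+i0}(r_\lambda)$, $P_- = P_{\lambda-i0}(r_\lambda)$. Since $\im P_+ = \Upsilon_{\lambda+i0}(r_\lambda) = \Upsilon_{\lambda-i0}(r_\lambda) = \im P_-$, we have $P_+ P_- = P_+$ and $P_- P_+ = P_-$ (an idempotent restricted to its own range acts as the identity). These are precisely the two identities listed in item (ii) of Proposition~\ref{P: property S}, so $r_\lambda$ has property $S$, and in particular $\ker P_+ = \ker P_-$. Combining equal ranges with equal kernels: for any $f$, decompose $f = P_+ f + (1-P_+)f$; then $P_- f = P_- P_+ f + P_-(1-P_+)f = P_- P_+ f$ since $(1-P_+)f \in \ker P_+ = \ker P_-$. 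But $P_- P_+ = P_-$ need not directly give $P_- f = P_+ f$, so instead I would use that $P_+ f \in \im P_+ = \im P_-$, hence $P_-(P_+ f) = P_+ f$, giving $P_- f = P_+ f$ for all $f$, i.e. $P_- = P_+$.

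Alternatively, and perhaps more cleanly, once property $S$ is established one can invoke Theorem~\ref{T: property M}: the restriction $P_+ \colon \Upsilon_{\lambda-i0}(r_\lambda) \to \Upsilon_{\lambda+i0}(r_\lambda)$ is a linear isomorphism, but under the hypothesis both spaces are the same space $\Upsilon := \Upsilon_{\lambda+i0}(r_\lambda) = \Upsilon_{\lambda-i0}(r_\lambda)$, and $P_+$ restricted to $\Upsilon = \im P_+$ is the identity; so $P_+$ and $P_-$ agree on $\Upsilon$ (both are the identity there) and agree on $\ker P_+ = \ker P_-$ (both are zero there), and since $\clK = \Upsilon \dotplus \ker P_+$, they agree everywhere.

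The main obstacle — really the only nontrivial point — is establishing that $P_+ P_- = P_+$ and $P_- P_+ = P_-$ cleanly imply $\ker P_+ = \ker P_-$ and then that equal range plus equal kernel of two idempotents forces equality; both are elementary linear algebra, but one must be careful not to circular-reference property $S$ if the intent is to give a self-contained argument. I expect the cleanest writeup routes entirely through Proposition~\ref{P: property S}: show the hypothesis gives condition (ii), conclude property $S$, hence (by the definition of property $S$) $\ker P_+ = \ker P_-$; then the two idempotents have the same range and the same kernel, and any two idempotents on $\clK$ with $\im P_+ = \im P_-$ and $\ker P_+ = \ker P_-$ coincide because $\clK = \im P_\pm \dotplus \ker P_\pm$ is the same direct-sum decomposition for both, and an idempotent is determined by its range-kernel splitting.
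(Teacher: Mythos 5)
Your argument for equal kernels does not go through as written. From $\im P_{\lambda+i0}(r_\lambda)=\Upsilon_{\lambda+i0}(r_\lambda)=\Upsilon_{\lambda-i0}(r_\lambda)=\im P_{\lambda-i0}(r_\lambda)$ what actually follows is $P_+P_-=P_-$ and $P_-P_+=P_+$ (apply $P_\pm$ to the vector $P_\mp f$, which lies in the common range, where $P_\pm$ acts as the identity) — not the transposed identities $P_+P_-=P_+$, $P_-P_+=P_-$ that you assert. The pair you wrote is exactly item (ii) of Proposition~\ref{P: property S}, and it is equivalent to $\ker P_-\subseteq\ker P_+$ and $\ker P_+\subseteq\ker P_-$, i.e. to the kernel equality you are trying to establish; the pair that genuinely follows from the hypothesis is merely a restatement of equality of ranges and carries no kernel information (two idempotents with the same range can have different kernels — think of $\left(\begin{smallmatrix}1&0\\0&0\end{smallmatrix}\right)$ and $\left(\begin{smallmatrix}1&1\\0&0\end{smallmatrix}\right)$). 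So property $S$ is not established by your first route, and your second route begins with ``once property $S$ is established,'' so it inherits the same gap. In effect you have assumed the nontrivial half of the statement.

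The missing step is supplied by passing to adjoints rather than to products of $P_+$ and $P_-$: by (\ref{F: Pz*=Q(bar z)}) one has $P_{\lambda\pm i0}(r_\lambda)^*=Q_{\lambda\mp i0}(r_\lambda)$, hence $\ker P_{\lambda\pm i0}(r_\lambda)=\Psi_{\lambda\mp i0}(r_\lambda)^\perp$, and by Lemma~\ref{L: j-dimensions coincide} the operator $J$ maps $\Upsilon_{\lambda\pm i0}(r_\lambda)$ onto $\Psi_{\lambda\pm i0}(r_\lambda)$, so the hypothesis $\Upsilon_{\lambda+i0}(r_\lambda)=\Upsilon_{\lambda-i0}(r_\lambda)$ gives $\Psi_{\lambda+i0}(r_\lambda)=J\Upsilon_{\lambda+i0}(r_\lambda)=J\Upsilon_{\lambda-i0}(r_\lambda)=\Psi_{\lambda-i0}(r_\lambda)$; this is item (iii) of Proposition~\ref{P: property S}, which yields property $S$, i.e. $\ker P_{\lambda+i0}(r_\lambda)=\ker P_{\lambda-i0}(r_\lambda)$. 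This is precisely how the paper argues. Your final assembly — two idempotents with the same range and the same kernel coincide, since each is determined by the splitting $\clK=\Upsilon\dotplus\ker$ — is correct and matches the paper's conclusion; only the derivation of the kernel equality needs to be replaced as above.
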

\begin{proof} If $\Upsilon_{\lambda+i0}(r_\lambda) = \Upsilon_{\lambda-i0}(r_\lambda),$ then since $\Upsilon_{\lambda\pm i0}(r_\lambda) = \im Q_{\lambda\pm i0}(r_\lambda),$
it follows from Proposition \ref{P: property S}(iii) that the kernels of the idempotents $P_{\lambda+i0}(r_\lambda)$ and $P_{\lambda-i0}(r_\lambda)$ coincide.
Since the ranges $\Upsilon_{\lambda+i0}(r_\lambda)$ and $\Upsilon_{\lambda-i0}(r_\lambda)$ of these idempotents are also equal by the premise, it follows that
$P_{\lambda+i0}(r_\lambda) = P_{\lambda-i0}(r_\lambda).$
\end{proof}
Plainly, the equality $P_{\lambda+i0}(r_\lambda) = P_{\lambda-i0}(r_\lambda)$ is also equivalent to $Q_{\lambda+i0}(r_\lambda) = Q_{\lambda-i0}(r_\lambda),$
but these equalities are not equivalent to $\Psi_{\lambda+i0}(r_\lambda) = \Psi_{\lambda-i0}(r_\lambda),$ which is property~$S.$


\section{Perturbation of an embedded eigenvalue}
\label{S: Pert embedded eig}
In this section we study the behaviour of an eigenvalue of a self-adjoint operator embedded into the essential
spectrum as the operator undergoes a perturbation. This is a classical problem, but in this section some new results will be given.
Not only is the behaviour of embedded eigenvalues under perturbations interesting on its own, but this investigation
will also provide examples and counter-examples to many possible relations which may be posed in regard to the material of previous sections.
In fact, from the point of view of deductive structure, this section is quite independent of previous ones; on the other hand,
this section was written almost in parallel with previous sections, and it is this study of embedded eigenvalues
that gave many suggestions about possible properties of resonance points.

\begin{lemma} \label{L: An converges iff ABCDn converges}
Let~$N$ be a positive integer and let $\hilb = \hat \hilb \oplus \mbC^N$ be a decomposition of a Hilbert space $\hilb$
into the orthogonal direct sum of another Hilbert space $\hat \hilb$ and $\mbC^N.$
If  $$\left(\begin{matrix}
    A_n & B_n \\
    C_n & D_n
  \end{matrix}\right), \ n=1,2,\ldots $$ is a sequence of operators on the Hilbert space $\hilb$ which converges
to an operator $$\left(\begin{matrix} A & B \\ C & D \end{matrix}\right)$$
in the uniform norm, then this convergence holds also in $p$-norm if and only if the sequence~$A_n, \ n=1,2,\ldots$ converges to~$A$ in $p$-norm.
\end{lemma}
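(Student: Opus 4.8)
The plan is to exploit the fact that, in the decomposition $\hilb = \hat\hilb \oplus \mbC^N$, three of the four blocks of any bounded operator on $\hilb$ automatically have rank at most $N$, so for those blocks convergence in the uniform norm and convergence in $p$-norm coincide by Lemma~\ref{L: An to A iff An to A in p-norm}; only the $\hat\hilb \to \hat\hilb$ corner is unconstrained. Let $\hat P$ and $P$ denote the orthogonal projections of $\hilb$ onto $\hat\hilb$ and $\mbC^N$ respectively. First I would record that $B_n - B$ maps $\mbC^N$ into $\hat\hilb$, so $\rank(B_n - B) \le N$, and likewise $\rank(C_n - C) \le N$ and $\rank(D_n - D) \le N$. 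Since by hypothesis $B_n \to B$, $C_n \to C$ and $D_n \to D$ in the uniform norm, Lemma~\ref{L: An to A iff An to A in p-norm} applied with the fixed exponent $p$ gives $\norm{B_n - B}_p \to 0$, $\norm{C_n - C}_p \to 0$ and $\norm{D_n - D}_p \to 0$.

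Next I would set up a two-sided comparison between $\norm{A_n - A}_p$ and the $p$-norm of the whole difference. Writing
$$
  \Delta_n = \left(\begin{matrix} A_n - A & B_n - B \\ C_n - C & D_n - D \end{matrix}\right)
$$
and splitting $\Delta_n$ into the sum of its four one-block summands, the triangle inequality in $\clL_p$ together with the ideal inequality $\norm{STU}_p \le \norm{S}\,\norm{T}_p\,\norm{U}$ — applied with the isometric inclusions $\hat\hilb, \mbC^N \hookrightarrow \hilb$ and the corresponding coisometric projections, all of norm $1$ — yields
$$
  \norm{\Delta_n}_p \le \norm{A_n - A}_p + \norm{B_n - B}_p + \norm{C_n - C}_p + \norm{D_n - D}_p .
$$
In the other direction, $\hat P\,\Delta_n\,\hat P$ is the block-diagonal operator with $\hat\hilb$-corner $A_n - A$ and all other blocks zero, so $\norm{A_n - A}_p = \norm{\hat P \Delta_n \hat P}_p \le \norm{\Delta_n}_p$, again by the ideal property. (When $p = \infty$ both inequalities are immediate from the corresponding statements for the operator norm.)

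Combining these gives the claim at once. If $A_n \to A$ in $p$-norm, the first inequality together with the $p$-norm convergence of $B_n, C_n, D_n$ established in the first step forces $\norm{\Delta_n}_p \to 0$, i.e.\ the full block operator converges in $p$-norm. Conversely, if $\norm{\Delta_n}_p \to 0$, the second inequality forces $\norm{A_n - A}_p \to 0$. The only point requiring care — and it is bookkeeping rather than a genuine obstacle — is that the argument hinges on the summand being exactly $\mbC^N$: this is what makes $B_n - B$, $C_n - C$ and $D_n - D$ of bounded rank, hence lets one pass freely between the uniform norm and the $p$-norm on those blocks; one also has to note that embedding a block into $B(\hilb)$, or restricting to a corner, alters the $\clL_p$-norm only through norm-one factors, so no estimate is lost.
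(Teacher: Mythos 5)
Your proof is correct and follows essentially the same route as the paper: the off-corner blocks $B_n,C_n,D_n$ have rank at most $N$, so their uniform convergence upgrades to $p$-norm convergence by Lemma~\ref{L: An to A iff An to A in p-norm}, and the two directions then follow by the triangle inequality and by compressing with the projection onto $\hat\hilb$. You have merely written out explicitly the bookkeeping that the paper leaves implicit.
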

\begin{proof}
  The ``only if'' part is trivial. Since the ranks of operators $B,B_1,B_2,\ldots$ and $C,C_1,C_2,\ldots$ are bounded by~$N,$
  the ``if'' part follows from Lemma~\ref{L: An to A iff An to A in p-norm}.
\end{proof}

Usually by $r_z$ we denote a resonance point corresponding to~$z.$ In the following two lemmas we divert from this
agreement. The reason for this is that later in this section we are going to embed operators $H_0$ and $V$ to a slightly larger Hilbert space,
where a non-resonance point $r_z$ will become a resonant one.

\begin{lemma} Let~$r_z$ be a non-resonance point for~$z.$ For any regular points $s$ and $t$
the operator $(1+(r_z-s)A_z(s))^{-1}$ is a linear combination of operators
$1$ and $(1+(r_z-t)A_z(t))^{-1},$ namely,
$$
  (1+(r_z-s)A_z(s))^{-1} = \frac{t-s}{t-r_z} + \frac{s-r_z}{t-r_z}\, (1+(r_z-t)A_z(t))^{-1}.
$$
\end{lemma}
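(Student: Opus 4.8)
The plan is to derive the claimed identity directly from the resolvent-type identity~(\ref{F: A(s)=(1+(s-r)A(r))(-1)A(r)}), which is the engine behind essentially all the algebraic manipulations of $A_z(s)$ in this paper. First I would set $\sigma_z := (s-r_z)^{-1}$ and $\tau_z := (t-r_z)^{-1}$, which are the eigenvalue-like scalars naturally attached to $s$ and $t$; since $r_z$ is a non-resonance point for $z$, neither $\sigma_z$ nor $\tau_z$ is an eigenvalue of the relevant compact operators, and the inverses below exist. Then I would write $1+(r_z-s)A_z(s)$ in terms of $1+(r_z-t)A_z(t)$ using the fact that $A_z(s)$ and $A_z(t)$ commute (equation~(\ref{F: A(s) and A(r) commute})) together with the substitution coming from~(\ref{F: A(s)=(1+(s-r)A(r))(-1)A(r)}): namely $A_z(s) = (1+(s-t)A_z(t))^{-1}A_z(t)$.

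The cleanest route, I expect, is to verify the claimed formula by multiplying both sides on the left by $1+(r_z-s)A_z(s)$ and checking that one obtains the identity operator. Denote the right-hand side by
$$
  X := \frac{t-s}{t-r_z}\,\cdot 1 + \frac{s-r_z}{t-r_z}\,(1+(r_z-t)A_z(t))^{-1}.
$$
Then $(1+(r_z-s)A_z(s))X$ should simplify, after substituting $A_z(s)$ via the commuting resolvent identity and expanding, to $1$. The key intermediate step is the algebraic identity, valid for commuting operators, relating $(1+(r_z-s)A_z(s))$, $(1+(r_z-t)A_z(t))$ and $(1+(s-t)A_z(t))$; this is precisely the kind of computation carried out in equation~(\ref{F: the k=2 case}) in the proof of Proposition~\ref{P: prod of A's}, specialized to scalars-times-operators. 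Concretely one checks
$$
  1+(r_z-s)A_z(s) = (1+(s-t)A_z(t))^{-1}\bigl[(1+(s-t)A_z(t)) + (r_z-s)A_z(t)\bigr],
$$
and the bracket equals $(1+(r_z-t)A_z(t))$, so that
$$
  1+(r_z-s)A_z(s) = (1+(s-t)A_z(t))^{-1}(1+(r_z-t)A_z(t)).
$$
From this the stated linear-combination formula drops out after a short computation with scalar coefficients, using that $(s-t) = (s-r_z)-(t-r_z)$.

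An even more transparent alternative is to pass to the eigenvalue parameter: the identity to be proved is exactly the statement that the function $\sigma \mapsto (1 - (r_z-s)\sigma\,(s-r_z))^{-1}$... more precisely, that the scalar function $\lambda \mapsto (1 + (r_z-s)\lambda)^{-1}$ is an affine function of $(1+(r_z-t)\lambda)^{-1}$ when $\lambda$ ranges over the spectrum, because both are M\"obius functions of $\lambda$ sharing the pole structure forced by the relation $\sigma_z(s) = (s-r_z)^{-1}$ from Proposition~\ref{P: res eq-n is correct}; one then invokes the holomorphic functional calculus for the compact operator $A_z(t)$. I do not anticipate a genuine obstacle here: the only point requiring care is bookkeeping of which points are regular (so that all inverses appearing are legitimately defined), and confirming that $t \neq r_z$ and $s \neq r_z$ so that the denominators $t-r_z$ do not vanish — both guaranteed by the hypothesis that $s$, $t$ are regular and $r_z$ is a non-resonance point. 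I would present the multiplicative verification as the main line of argument and mention the functional-calculus viewpoint as a remark.
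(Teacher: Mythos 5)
Your proposal is correct and follows essentially the same route as the paper: the paper's proof is exactly "a direct calculation based on the commutativity of $A_z(s),A_z(t)$ and the second resolvent identity," and your factorization $1+(r_z-s)A_z(s)=(1+(s-t)A_z(t))^{-1}(1+(r_z-t)A_z(t))$, obtained from $A_z(s)=(1+(s-t)A_z(t))^{-1}A_z(t)$, together with the decomposition $1+(s-t)A_z(t)=\frac{t-s}{t-r_z}\,(1+(r_z-t)A_z(t))+\frac{s-r_z}{t-r_z}\cdot 1$, is that calculation carried out explicitly.
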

\begin{proof} Proof is a direct calculation based on equalities~(\ref{F: A(s) and A(r) commute}) and~(\ref{F: II resolvent identity}).
\end{proof}
\begin{cor}
Let~$r_z$ be a non-resonance point for~$z.$ For any integer $k\geq 1$ and for any regular points $s$ and $t$
the operator $(1+(r_z-s)A_z(s))^{-k}$ is a linear combination of operators
$$
  1, \ (1+(r_z-t)A_z(t))^{-1}, \ \ldots, \ 1+(r_z-t)A_z(t))^{-k}.
$$
\end{cor}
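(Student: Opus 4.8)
The plan is to deduce this corollary directly from the preceding lemma by raising both sides of its identity to the $k$-th power. Set $X = (1+(r_z-t)A_z(t))^{-1}$ and put $a = \frac{t-s}{t-r_z}$, $b = \frac{s-r_z}{t-r_z}$, so that the lemma reads $(1+(r_z-s)A_z(s))^{-1} = a\cdot 1 + bX$. Since $X$ commutes with the identity operator, the operators $a\cdot 1$ and $bX$ commute, and therefore the ordinary binomial theorem applies without any ordering issue:
$$
  (1+(r_z-s)A_z(s))^{-k} = (a\cdot 1+bX)^k = \sum_{j=0}^k \binom{k}{j}\, a^{k-j} b^j\, X^j.
$$

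Next I would observe that the $j=0$ summand is $a^k\cdot 1$, a scalar multiple of the identity, while for $1\le j\le k$ the summand $\binom{k}{j} a^{k-j} b^j X^j$ is a scalar multiple of $X^j = (1+(r_z-t)A_z(t))^{-j}$. Hence the right-hand side is precisely a linear combination of the $k+1$ operators $1,\ (1+(r_z-t)A_z(t))^{-1},\ \ldots,\ (1+(r_z-t)A_z(t))^{-k}$, with coefficients $a^k,\ \binom{k}{1}a^{k-1}b,\ \ldots,\ b^k$ respectively, which is exactly the assertion to be proved.

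The only matters requiring a word of verification are that both sides are well defined, i.e. that $s$ and $t$ being regular points at $z$ guarantees invertibility of $1+(r_z-s)A_z(s)$ and of $1+(r_z-t)A_z(t)$ (this is what ``regular point'' means here, cf.\ the coupling-constant regularity in Theorem~\ref{T: Az 4.1.11}), and the triviality that $X$ commutes with scalar operators. There is genuinely no obstacle: the statement is a purely algebraic consequence of the $k=1$ case combined with commutativity. An equally short alternative is induction on $k$, multiplying the identity for exponent $k-1$ by the identity for exponent $1$, using $X^{k-1}X = X^k$ and $X^j\cdot 1 = X^j$ to re-expand the product, and collecting powers of $X$ by Pascal's rule; this yields the same coefficients. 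In the final text I would present only the binomial-expansion argument, as it is the cleanest.
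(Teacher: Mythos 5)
Your proof is correct and follows essentially the same route as the paper, which simply notes that the corollary "follows from previous lemma and induction": expanding $(a\cdot 1 + bX)^k$ by the binomial theorem is just the closed-form version of that induction, and your attention to invertibility at regular points is the only hypothesis that needed checking.
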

\begin{proof} This follows from previous lemma and induction.
\end{proof}

\subsection{Vector spaces $\Upsilon^j_{\lambda\pm i0}(r_\lambda)$}

Let~$H_{r_\lambda}$ be a self-adjoint operator on a Hilbert space
$\hilb$ with an eigenvalue~$\lambda$ of multiplicity one.
No assumptions are made about location of this eigenvalue yet: it can
be outside of essential spectrum or inside of it. Let~$\chi$ be
the corresponding eigenvector:
\begin{equation} \label{F: H0 chi=lambda chi}
  H_{r_\lambda} \chi = \lambda \chi.
\end{equation}
The orthogonal complement of~$\chi$ will be denoted by $\hat \hilb.$ The
subspace $\hat \hilb$ reduces~$H_{r_\lambda}$ and the reduction will be
denoted by $\hat H_{r_\lambda}.$ \label{Page: hat H0} Thus, the
Hilbert space $\hilb$ becomes split into a direct product $\hat
\hilb \oplus \mbC,$ and in this representation of $\hilb$ the
operator~$H_{r_\lambda}$ has the form
\begin{equation} \label{F: H0 gen case}
  H_{r_\lambda} = \left(\begin{matrix}
    \hat H_{r_\lambda} & 0 \\
    0 & \lambda
  \end{matrix}\right).
\end{equation}
We have to choose a rigging operator~$F.$ To simplify calculations, the operator~$F$ is chosen to be of the form
\begin{equation} \label{F: rigging (F0,1) gen case}
  F = \left(\begin{matrix}
    \hat F & 0 \\
    0 & 1
  \end{matrix}\right),
\end{equation}
where $\hat F \colon \hat \hilb \to \hat \clK$ is a rigging
operator in the Hilbert space $\hat \hilb,$ so that the
operator~$F$ itself acts from $\hilb$ to $\clK = \hat \clK \oplus
\mbC.$ Since~$\lambda$ is a non-degenerate eigenvalue
of~$H_{r_\lambda},$ it cannot be an eigenvalue of $\hat
H_{r_\lambda},$ but it is still possible that~$\lambda$ does not
belong to $\Lambda(\hat H_{r_\lambda},\hat F).$ By
Proposition~\ref{P: Az 4.1.10}, since~$\lambda$ is an eigenvalue
of~$H_{r_\lambda},$ the number~$\lambda$ does not belong to
$\Lambda(H_{r_\lambda},F),$ but we assume that
\begin{equation} \label{F: lambda in hat (H,F)}
  \lambda \in \Lambda(\hat H_{r_\lambda},\hat F).
\end{equation}
This assumption means that there are no other singularities
of~$H_{r_\lambda}$ at~$\lambda$ except the fact that~(\ref{F: H0
chi=lambda chi}) holds. Let~$V$ be a self-adjoint
operator from the affine space $\clA(H_{r_\lambda},F).$ The operator~$V$ has
the form
$$
  V = \left(\begin{matrix}
    \hat V & \hat v \\
    \scal{\hat v}{\cdot} & \alpha
  \end{matrix}\right),
$$
where $\hat V$ is a self-adjoint operator in $\hat \hilb.$ Since $V \in \clA(H_{r_\lambda},F),$ there exists a bounded self-adjoint operator
$J$ on~$\clK$ such that
$$
  V = F^*JF.
$$
Let
\begin{equation} \label{F: J=(J,uu;uu,alpha)}
  J = \left(\begin{matrix}
    \hat J & \hat \psi \\
    \la \hat \psi, \cdot \ra & \alpha
  \end{matrix}\right)
\end{equation}
be the representation of $J$ in the direct product $\hat \clK \oplus \mbC,$
where $\hat J$ is a bounded self-adjoint operator on $\hat \clK,$ $\hat \psi \in \hat \clK$ and $\alpha \in \mbR.$
Then one can see that
\begin{equation} \label{F: hat V=hat F* J hat F}
  \hat V = \hat F^* \hat J\hat F
\end{equation}
and
$$
  \hat v = \hat F^* \hat \psi.
$$
In particular, the vector $\hat v$ belongs to the Hilbert space $\hat \hilb_+(\hat F)$
and the operator $\hat V$ belongs to the vector space $\clA_0(\hat F).$
The eigenvector $\chi$ of $H_{r_\lambda}$ in $\hat \hilb \oplus \mbC$ has the form {$\const$\small $\cdot\left(\begin{matrix} 0 \\ 1\end{matrix}\right).$}
The matrix components $\hat \psi$ and $\alpha$ of the operator $J$
can be recovered by equalities
$$
  \alpha = \scal{\chi}{V\chi} = \scal{F\chi}{JF\chi}
$$
and
$$
  \hat \psi \oplus 0 = JF\chi - \alpha F\chi.
$$
\begin{lemma} If $\alpha = 0,$ then $\hat \psi \oplus 0$ is a co-resonance vector of order 1.
\end{lemma}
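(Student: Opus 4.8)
The plan is to unwind the definitions and check that $\hat\psi\oplus 0$ satisfies the first-order co-resonance equation at the point $r=r_\lambda$, using the fact that $\chi$ is an eigenvector of $H_{r_\lambda}$ at $\lambda$ and that $\lambda\in\Lambda(\hat H_{r_\lambda},\hat F)$. First I would recall that the co-resonance equation of order $1$ for the pair $(\lambda+i0,r_\lambda)$ reads, in the form of~(\ref{F: cores eq-n}) evaluated at a non-resonant value $s$,
$$
  \brs{1+(r_\lambda-s)B_{\lambda+i0}(H_s)}\psi = 0,
$$
where $B_{\lambda+i0}(H_s)=JT_{\lambda+i0}(H_s)$; equivalently, by Proposition~\ref{P: cores eq-n is correct} and the analogue of~(\ref{F: A(s)=(1+(s-r)A(r))(-1)A(r)}) for $B$, it suffices to produce a single regular value of the coupling constant for which the equation holds. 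Since $V\in\clA(H_{r_\lambda},F)$, the line $\gamma=\set{H_{r_\lambda}+tV\colon t\in\mbR}$ is regular at $\lambda$ (the case $\lambda\notin\Lambda(H_r,F)$ for all $r$ being excluded by essential regularity), so such regular values $r$ exist in abundance.

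The key computation is to show that $\psi:=\hat\psi\oplus 0$ is a resonance-type vector built from the eigenvector $\chi$ via the rigging $F$. Here I would invoke Theorem~\ref{T: res eq-n and eigenvectors}: since $\lambda$ is an eigenvalue of $H_{r_\lambda}$ with eigenvector $\chi\in\euD$, the vector $u:=F\chi$ is a resonance vector of order $1$, i.e. $u\in\Upsilon^1_{\lambda+i0}(r_\lambda)$. By Lemma~\ref{L: j-dimensions coincide}, $J$ is a linear isomorphism $\Upsilon^1_{\lambda+i0}(r_\lambda)\to\Psi^1_{\lambda+i0}(r_\lambda)$, so $Ju=JF\chi\in\Psi^1_{\lambda+i0}(r_\lambda)$ is a co-resonance vector of order $1$. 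Now the hypothesis $\alpha=0$ enters: by the displayed formulas preceding the lemma,
$$
  \hat\psi\oplus 0 = JF\chi - \alpha F\chi = JF\chi,
$$
so under $\alpha=0$ the vector $\hat\psi\oplus 0$ equals $JF\chi=Ju$ exactly, which we have just identified as a co-resonance vector of order~$1$. That proves the claim.

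The step I expect to require the most care is confirming that the identification $\hat\psi\oplus 0 = JF\chi$ is literally correct as an equality of vectors in $\clK=\hat\clK\oplus\mbC$, i.e. that the block-matrix bookkeeping in~(\ref{F: J=(J,uu;uu,alpha)}) and~(\ref{F: rigging (F0,1) gen case}) is consistent with the normalization of $\chi$. Since $\chi=\const\cdot\twovector{0}{1}$, we get $F\chi=\const\cdot\twovector{0}{1}$ (by~(\ref{F: rigging (F0,1) gen case})), and then $JF\chi=\const\cdot\twovector{\hat\psi}{\alpha}$ by~(\ref{F: J=(J,uu;uu,alpha)}); subtracting $\alpha F\chi=\const\cdot\twovector{0}{\alpha}$ gives $\const\cdot\twovector{\hat\psi}{0}$, matching the stated formula for $\hat\psi\oplus 0$ up to the scalar $\const$, which does not affect membership in the vector space $\Psi^1_{\lambda+i0}(r_\lambda)$. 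Once this is checked, the conclusion is immediate from Theorem~\ref{T: res eq-n and eigenvectors} and Lemma~\ref{L: j-dimensions coincide} with no further work; there is no genuine obstacle, only the verification that the hypothesis $\alpha=0$ is exactly what collapses $JF\chi-\alpha F\chi$ onto $JF\chi$.
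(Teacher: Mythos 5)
Your proposal is correct and follows essentially the same route as the paper: invoke Theorem~\ref{T: res eq-n and eigenvectors} to get that $F\chi$ is a resonance vector of order~1, apply Lemma~\ref{L: j-dimensions coincide} to conclude $JF\chi$ is a co-resonance vector of order~1, and use $\alpha=0$ to identify $\hat\psi\oplus 0$ with $JF\chi$. The extra block-matrix verification you add is fine but not needed beyond what the paper already records.
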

\begin{proof} By Theorem~\ref{T: res eq-n and eigenvectors}, the vector $F \chi$ is a resonance vector of order 1.
Hence, by Lemma~\ref{L: j-dimensions coincide}, it follows that the vector $\hat \psi \oplus 0 = J F \chi$ is a co-resonance vector of order 1.
\end{proof}
\noindent
For a real number $s$ the operator $H_s$ is defined by
\begin{equation} \label{F: Hs=(Hs,sv;sv,l+s alpha)}
  H_s :=  H_{r_\lambda} + (s-r_\lambda)V
  = \left(\begin{matrix}
    \hat H_s &  (s-r_\lambda)\hat v \\
    (s-r_\lambda)\la \hat v, \cdot \ra & \lambda+(s-r_\lambda)\alpha
  \end{matrix}\right),
\end{equation}
where \label{Page: hat Hs}
\begin{equation} \label{F: hat Hs}
  \hat H_s = \hat H_{r_\lambda} + (s-r_\lambda)\hat V.
\end{equation}
A direct but a bit lengthy calculation shows that the operator $T_z(H_s) = F R_z(H_s)F^*$ is given by
\begin{equation} \label{F: Tz(Hs) gen case}
  T_z(H_s) =
    \left( \begin{array}{ll}
       T_z(\hat H_s) + (s-r_\lambda)^2\euD_z(s)\scal{\hat u_{\bar z}(s)}{\cdot} \hat u_z(s) & \ \    (r_\lambda-s)\euD_z(s) \hat u_z(s) \\
       (r_\lambda-s)\euD_z(s)\scal{\hat u_{\bar z}(s)}{\cdot}                                   & \ \    \euD_z(s)
    \end{array} \right),
\end{equation}
where \label{Page: uz(s)}
\begin{equation} \label{F: uz(s)}
  \hat u_z(s) = T_z(\hat H_s) \hat \psi
\end{equation}
and \label{Page: euDz(s)}
\begin{equation} \label{F: def of euA(s)}
  \euD_z(s) = \brs{\lambda - z + (s-r_\lambda)\alpha - (s-r_\lambda)^2\scal{\hat \psi}{\hat u_z(s)}}^{-1}.
\end{equation}

The condition (\ref{F: lambda in hat (H,F)}) means that the operator $\hat H_{r_\lambda}$ is regular at~$\lambda,$ and thus any perturbation operator $\hat V$
of the form (\ref{F: hat V=hat F* J hat F}) is a regularizing direction at~$\lambda$ for the operator $\hat H_{r_\lambda}.$
We wish to find conditions which ensure that the operator $V$ is a regularizing direction at~$\lambda$ for $H_{r_\lambda}.$
Recall that $V$ is a regularizing direction at~$\lambda$ for $H_{r_\lambda}$ if for some real number $s$ the operator $T_z(H_s)$
has the norm limit $T_{\lambda+i0}(H_s).$ Since the norm limit $T_{\lambda+i0}(\hat H_s)$ of $T_{\lambda+iy}(\hat H_{s})$ exists for some $s$ (namely, for $s = r_\lambda$)
by the assumption (\ref{F: lambda in hat (H,F)}), it follows from (\ref{F: Tz(Hs) gen case})
and Lemma~\ref{L: An converges iff ABCDn converges}
that the norm limit $T_{\lambda+i0}(H_s)$ exists for some real~$s$ if and only if the limit $\euD_{\lambda+i0}(s)$
exists for some real~$s.$ From the definition (\ref{F: def of euA(s)}) of $\euD_z(s)$ it is easy to see that the limit $\euD_{\lambda+i0}(s)$ exists if and only if either $\alpha\neq 0$
or both $\alpha = 0$ and $\scal{\hat \psi}{\hat u_{\lambda+i0}(s)} \neq 0.$
Thus, we have proved the following
\begin{lemma} \label{L: V is reg-ing iff alpha neq 0 or ...} The operator $V = F^*JF$ where $F$ and $J$ are defined by (\ref{F: rigging (F0,1) gen case})
and (\ref{F: J=(J,uu;uu,alpha)}), is a regularizing direction for resonant at~$\lambda$ operator
$H_{r_\lambda}$ given by (\ref{F: H0 gen case}), if and only if $\alpha \neq 0$ or both $\alpha = 0$
and
\begin{equation} \label{F: psi u(s)neq 0}
  \text{for some real number} \ s \quad \scal{\hat \psi}{\hat u_{\lambda+i0}(s)} \neq 0.
\end{equation}
\end{lemma}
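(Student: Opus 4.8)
The proof of Lemma~\ref{L: V is reg-ing iff alpha neq 0 or ...} proceeds by analysing the formula~(\ref{F: Tz(Hs) gen case}) for the sandwiched resolvent $T_z(H_s)$ together with the structural observation that $V$ is a regularizing direction for $H_{r_\lambda}$ at~$\lambda$ precisely when $\lambda \in \Lambda(H_s,F)$ for some real $s$, i.e.\ when the norm limit $T_{\lambda+i0}(H_s) = \lim_{y\to 0^+}T_{\lambda+iy}(H_s)$ exists for at least one real number~$s$. The plan is first to reduce the existence of this $2\times 2$-block-operator limit to the existence of the scalar limit $\euD_{\lambda+i0}(s)$, and then to compute directly from the definition~(\ref{F: def of euA(s)}) of $\euD_z(s)$ which values of the pair $(\alpha, \hat\psi)$ permit such a scalar limit.

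First I would recall that by the assumption~(\ref{F: lambda in hat (H,F)}) the operator $\hat H_{r_\lambda}$ is regular at~$\lambda$, so $T_{\lambda+i0}(\hat H_{r_\lambda})$ exists in norm; taking $s = r_\lambda$ in~(\ref{F: hat Hs}) this means $T_{\lambda+iy}(\hat H_{r_\lambda})$ converges in norm, and hence the vector $\hat u_{r_\lambda}(r_\lambda) = T_{\lambda+i0}(\hat H_{r_\lambda})\hat\psi$ of~(\ref{F: uz(s)}) is also well-defined. Now examine the four blocks of~(\ref{F: Tz(Hs) gen case}) with $z = \lambda+iy$, $y\to 0^+$, and $s = r_\lambda$: the $(2,2)$ entry is $\euD_z(r_\lambda)$, the off-diagonal entries are scalar multiples of $\hat u_z(r_\lambda)$ times $\euD_z(r_\lambda)$ (these vanish at $s = r_\lambda$ anyway since the factor $(r_\lambda - s)$ is zero), and the $(1,1)$ entry is $T_z(\hat H_{r_\lambda})$ plus a rank-one correction. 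Since the off-diagonal blocks and the correction term in the $(1,1)$ block all have rank bounded by~$1$, Lemma~\ref{L: An converges iff ABCDn converges} applies: the norm limit of the full block operator $T_{\lambda+iy}(H_s)$ exists if and only if the norm limit of its $(1,1)$-corner exists, which in turn, once we know $T_z(\hat H_s)$ converges, reduces to the existence of the scalar limit $\euD_{\lambda+i0}(s)$. (For general $s$ one must first check that $T_{\lambda+i0}(\hat H_s)$ exists; but since $\hat H_s = \hat H_{r_\lambda} + (s-r_\lambda)\hat V$ and $\hat V$ is a regularizing direction for $\hat H_{r_\lambda}$ at~$\lambda$ by~(\ref{F: hat V=hat F* J hat F}) and~(\ref{F: lambda in hat (H,F)}), coupling-constant regularity — Theorem~\ref{T: Az 4.1.11} — guarantees $\lambda \in \Lambda(\hat H_s,\hat F)$ for all but a discrete set of $s$, so this holds for generic $s$ and in particular does not obstruct the ``for some real~$s$'' statement.)

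Next I would analyse the scalar function $\euD_z(s) = \bigl(\lambda - z + (s-r_\lambda)\alpha - (s-r_\lambda)^2\scal{\hat\psi}{\hat u_z(s)}\bigr)^{-1}$ from~(\ref{F: def of euA(s)}). As $z = \lambda+iy \to \lambda+i0$ the term $\lambda - z = -iy \to 0$, so the limit $\euD_{\lambda+i0}(s)$ exists (as a finite number or $\infty$, but for $T_z(H_s)$ to converge in norm we need it finite) if and only if the denominator has a nonzero limit, i.e.\ $(s-r_\lambda)\alpha - (s-r_\lambda)^2\scal{\hat\psi}{\hat u_{\lambda+i0}(s)} \neq 0$. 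If $\alpha \neq 0$: then at $s$ close to but different from $r_\lambda$, the denominator behaves like $(s-r_\lambda)\alpha + O((s-r_\lambda)^2)$, which is nonzero for such $s$, so a regularizing $s$ exists. If $\alpha = 0$: the denominator becomes $-(s-r_\lambda)^2\scal{\hat\psi}{\hat u_{\lambda+i0}(s)}$, which is nonzero for some real $s \neq r_\lambda$ precisely when $\scal{\hat\psi}{\hat u_{\lambda+i0}(s)} \neq 0$ for some real~$s$, i.e.\ condition~(\ref{F: psi u(s)neq 0}). Conversely if $\alpha = 0$ and $\scal{\hat\psi}{\hat u_{\lambda+i0}(s)} = 0$ for every real $s$ for which $\hat u_{\lambda+i0}(s)$ is defined, the denominator is $-iy$ for every such $s$, so $\euD_{\lambda+iy}(s) = (-iy)^{-1} \to \infty$ and no regularizing $s$ exists. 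This case-split gives exactly the stated equivalence, completing the proof.

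I expect the main obstacle to be purely bookkeeping rather than conceptual: namely, making precise the claim that ``$T_{\lambda+i0}(H_s)$ exists for some real $s$'' can be reduced to a statement at a \emph{convenient} $s$ (such as $s$ near $r_\lambda$), rather than having to juggle the joint dependence on $s$ and on the auxiliary regularity $\lambda \in \Lambda(\hat H_s,\hat F)$. This is handled cleanly by invoking coupling-constant regularity (Theorem~\ref{T: Az 4.1.11} / Theorem~\ref{T: Az Th. 4.2.5}) to know that $\hat H_s$ is regular at $\lambda$ for all $s$ outside a discrete exceptional set, so one may always shrink the candidate set of $s$ to a punctured neighbourhood of $r_\lambda$ on which everything except $\euD_z(s)$ is well-behaved. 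The verification of~(\ref{F: Tz(Hs) gen case}) and~(\ref{F: def of euA(s)}) themselves is the ``direct but lengthy calculation'' already asserted in the excerpt and need not be redone.
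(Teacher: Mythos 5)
Your proposal is correct and follows essentially the same route as the paper: the paper likewise uses the block formula (\ref{F: Tz(Hs) gen case}) together with Lemma~\ref{L: An converges iff ABCDn converges} and the regularity assumption (\ref{F: lambda in hat (H,F)}) to reduce existence of the norm limit $T_{\lambda+i0}(H_s)$ to existence of the scalar limit $\euD_{\lambda+i0}(s),$ and then reads off the dichotomy $\alpha\neq 0$ versus $\alpha=0$ with $\scal{\hat\psi}{\hat u_{\lambda+i0}(s)}\neq 0$ from the denominator in (\ref{F: def of euA(s)}). Your case analysis of that denominator is in fact more detailed than the paper's one-line ``it is easy to see''; the only cosmetic slip (the momentary focus on $s=r_\lambda,$ where $\euD_z(r_\lambda)=(\lambda-z)^{-1}$ always blows up) is repaired by your subsequent passage to generic $s$ via coupling-constant regularity.
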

From now on we shall assume that~$V$ is a regularizing direction for $H_{r_\lambda}.$

Let \label{Page: hat A}
\begin{equation} \label{F: hat Az(s)}
  \hat A_z(s) = T_z(\hat H_s) \hat J \quad \text{and} \quad \hat B_z(s) = \hat J\,T_z(\hat H_s).
\end{equation}
The operator~$A_z(s) = T_z(H_s)J$ is equal to
{\small
\begin{equation*}
 \begin{split}
  & A_z(s) =
    \left( \begin{array}{ll}
       T_z(\hat H_s) + (s-r_\lambda)^2\euD_z(s)\scal{\hat u_{\bar z}(s)}{\cdot} \hat u_z(s)    & \ \    (r_\lambda-s)\euD_z(s) \hat u_z(s) \\
       (r_\lambda-s)\euD_z(s)\scal{\hat u_{\bar z}(s)}{\cdot}                                   & \ \    \euD_z(s)
    \end{array} \right)
  \left(\begin{matrix}
    \hat J  & \hat \psi \\
    \la \hat \psi, \cdot \ra & \alpha
  \end{matrix}\right)
  \\ & = \left(\begin{matrix}
    \hat A_z(s) + (r_\lambda-s)\euD_z(s)\scal{\hat \psi +(r_\lambda-s)\hat J\hat u_{\bar z}(s)}{\cdot} \hat u_z(s)    &    \SqBrs{1 + (s-r_\lambda)\euD_z(s)\big((s-r_\lambda) \scal{\hat u_{\bar z}(s)}{\hat \psi} - \alpha\big)} \hat u_z(s)\\
     \euD_z(s) \scal{\hat \psi +(r_\lambda-s)\hat J\hat u_{\bar z}(s)}{\cdot} & -\euD_z(s)\big((s-r_\lambda)\scal{\hat u_{\bar z}(s)}{\hat \psi} - \alpha\big)
  \end{matrix}\right).
 \end{split}
\end{equation*}
}
\!In what follows the operator $1 + (r_\lambda-s)\hat A_z(s)$ will be encountered very often. For this reason, we introduce a special notation for this operator:
\begin{equation} \label{F: euF}
  \euF_z(s) = 1 + (r_\lambda-s)\hat A_z(s).
\end{equation}\label{Page: euF}
Note that
\begin{equation} \label{F: euF*}
  \euF_z^*(s) = 1 + (r_\lambda-s)\hat B_{\bar z}(s).
\end{equation}

\begin{lemma} \label{L: about euF(-1)}
$$
  \euF^{-1}_{\lambda+i0}(s) = 1 + (s-r_\lambda)\hat A_{\lambda+i0}(r_\lambda).
$$
\end{lemma}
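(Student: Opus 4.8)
$\euF^{-1}_{\lambda+i0}(s) = 1 + (s-r_\lambda)\hat A_{\lambda+i0}(r_\lambda).$

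Here I lay out how I would prove this.

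The plan is to exploit the second resolvent identity for the sliced operator $\hat A_z(s)$, exactly as in the abstract setting of Section~\ref{S: Preliminaries}. Recall that $\euF_z(s) = 1 + (r_\lambda-s)\hat A_z(s)$, and that $\hat A_z(s) = T_z(\hat H_s)\hat J$ satisfies the same algebra as $A_z(s)$, in particular the identity~(\ref{F: II resolvent identity}) in the form $\hat A_z(r) - \hat A_z(s) = (s-r)\hat A_z(r)\hat A_z(s)$ and the commutativity~(\ref{F: A(s) and A(r) commute}). The crucial point is that $r_\lambda$ is a \emph{regular} point of the sliced line $\{\hat H_s\}$ at $\lambda$ by the standing assumption~(\ref{F: lambda in hat (H,F)}), so $\hat A_{\lambda+i0}(r_\lambda)$ exists and is a genuine bounded (compact) operator. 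First I would write, from~(\ref{F: A(s)=(1+(s-r)A(r))(-1)A(r)}) applied to $\hat A_{\lambda+i0}$ with the roles of $s$ and $r_\lambda$, the relation
$$
  \hat A_{\lambda+i0}(s) = \bigl(1 + (s-r_\lambda)\hat A_{\lambda+i0}(r_\lambda)\bigr)^{-1} \hat A_{\lambda+i0}(r_\lambda),
$$
valid for all non-resonant $s$; the invertibility of $1 + (s-r_\lambda)\hat A_{\lambda+i0}(r_\lambda)$ is precisely what is guaranteed there, since the left-hand side exists.

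Second, I would verify directly that $1 + (s-r_\lambda)\hat A_{\lambda+i0}(r_\lambda)$ is a two-sided inverse of $\euF_{\lambda+i0}(s) = 1 - (s-r_\lambda)\hat A_{\lambda+i0}(s)$. Using the displayed relation for $\hat A_{\lambda+i0}(s)$, one computes
$$
  \euF_{\lambda+i0}(s)\bigl(1 + (s-r_\lambda)\hat A_{\lambda+i0}(r_\lambda)\bigr)
  = \bigl(1 + (s-r_\lambda)\hat A_{\lambda+i0}(r_\lambda)\bigr) - (s-r_\lambda)\hat A_{\lambda+i0}(r_\lambda) = 1,
$$
where the middle step substitutes $\hat A_{\lambda+i0}(s)\bigl(1 + (s-r_\lambda)\hat A_{\lambda+i0}(r_\lambda)\bigr) = \hat A_{\lambda+i0}(r_\lambda)$, which is just the displayed relation rearranged. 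The reverse composition $\bigl(1 + (s-r_\lambda)\hat A_{\lambda+i0}(r_\lambda)\bigr)\euF_{\lambda+i0}(s) = 1$ follows the same way, using commutativity~(\ref{F: A(s) and A(r) commute}) of $\hat A_{\lambda+i0}(s)$ and $\hat A_{\lambda+i0}(r_\lambda)$. This identifies $\euF^{-1}_{\lambda+i0}(s)$ with $1 + (s-r_\lambda)\hat A_{\lambda+i0}(r_\lambda)$.

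I do not expect a serious obstacle here: the lemma is a sliced copy of the identity~(\ref{F: A(s)=(1+(s-r)A(r))(-1)A(r)}) rearranged, and the only thing one must be careful about is that all the operators involved genuinely exist at $z = \lambda+i0$ — this is exactly what assumption~(\ref{F: lambda in hat (H,F)}) provides, together with the fact that $\{\hat H_s\}$ is then a line regular at $\lambda$ (Theorem~\ref{T: Az 4.1.11}), so $r_\lambda$ is a non-resonant point of that sliced line and $\hat A_{\lambda+i0}(r_\lambda)$ makes sense. Alternatively, and perhaps even more cleanly, I could prove the identity first for non-real $z = \lambda+iy$ (where it is the purely algebraic statement $\euF_z(s)^{-1} = 1 + (s-r_\lambda)\hat A_z(r_\lambda)$, a direct consequence of~(\ref{F: A(s)=(1+(s-r)A(r))(-1)A(r)})) and then pass to the limit $y\to 0^+$ in the uniform operator topology, using that $\hat A_{\lambda+iy}(r_\lambda) \to \hat A_{\lambda+i0}(r_\lambda)$ in norm and that inversion is continuous on the open set of invertible operators. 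The mildly delicate point in that route would be ensuring $s$ stays away from the resonance set of $\{\hat H_s\}$ so that $\euF_{\lambda+i0}(s)$ is invertible in the limit, which again is covered by regularity of the line.
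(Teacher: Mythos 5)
Your proof is correct and is essentially the paper's own argument: the paper disposes of this lemma in one line as a consequence of the sliced second resolvent identity~(\ref{F: II resolvent identity}), and your verification that $1+(s-r_\lambda)\hat A_{\lambda+i0}(r_\lambda)$ is a two-sided inverse of $\euF_{\lambda+i0}(s)$ is exactly that identity (via~(\ref{F: A(s)=(1+(s-r)A(r))(-1)A(r)})) spelled out, with the existence of $\hat A_{\lambda+i0}(r_\lambda)$ correctly traced to the assumption~(\ref{F: lambda in hat (H,F)}).
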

\begin{proof} This equality follows from~(\ref{F: II resolvent identity}).
\end{proof}

Since by~(\ref{F: uz(s)}) and~(\ref{F: hat Az(s)})
$$
  \hat \psi +(r_\lambda-s)\hat J\hat u_{\bar z}(s) = [1+(r_\lambda-s)\hat B_{\bar z}(s)]\hat \psi = \euF^*_z(s)\hat \psi,
$$
the following lemma has been proved.
\begin{lemma} Let~$F$ be given by~(\ref{F: rigging (F0,1) gen case}), let $J$ be given by~(\ref{F: J=(J,uu;uu,alpha)}),
and let $H_s$ be given by~(\ref{F: Hs=(Hs,sv;sv,l+s alpha)}). Then the operator~$A_z(s) = T_z(H_s)J$
is equal to
\begin{equation} \label{F: Az(s)=2x2 matrix}
  \left(\begin{matrix}
    \hat A_z(s) + (r_\lambda-s)\euD_z(s)\scal{\euF_{z}^*(s)\hat \psi}{\cdot} \hat u_z(s)    &    \SqBrs{1 + (s-r_\lambda)\euD_z(s)\big((s-r_\lambda) \scal{\hat u_{\bar z}(s)}{\hat \psi} - \alpha\big)} \hat u_z(s)\\
     \euD_z(s) \scal{\euF_{z}^*(s)\hat \psi}{\cdot} & -\euD_z(s)\big((s-r_\lambda)\scal{\hat u_{\bar z}(s)}{\hat \psi} - \alpha\big)
  \end{matrix}\right),
\end{equation}
where~$\euD_z(s)$ is given by~(\ref{F: def of euA(s)}), $\hat
u_z(s)$ is given by~(\ref{F: uz(s)}), $\euF_z^*(s)$ is given by~(\ref{F: euF*}), and $\hat A_z(s)$ is given by~(\ref{F: hat Az(s)}).
\end{lemma}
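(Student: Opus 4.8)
The plan is to prove the stated formula for $A_z(s) = T_z(H_s)J$ by a direct block computation, combining the already-established block form of the sandwiched resolvent $T_z(H_s)$ in \eqref{F: Tz(Hs) gen case} with the matrix form \eqref{F: J=(J,uu;uu,alpha)} of $J$. Since the bulk of the calculation $T_z(H_s)J$ has in effect already been carried out in the displayed (unnumbered) computation immediately preceding the statement, the only genuine content of the proof is to recognize the two groups of terms that can be repackaged using the operator $\euF_z^*(s)$.

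First I would write out $A_z(s)$ as the product of the $2\times 2$ operator matrix \eqref{F: Tz(Hs) gen case} with the $2\times 2$ matrix \eqref{F: J=(J,uu;uu,alpha)}, multiplying the four pairs of blocks and collecting terms. The $(1,1)$-entry is $T_z(\hat H_s)\hat J + (s-r_\lambda)^2\euD_z(s)\scal{\hat u_{\bar z}(s)}{\cdot}\hat u_z(s)\hat J + (r_\lambda-s)\euD_z(s)\hat u_z(s)\scal{\hat\psi}{\cdot}$; using $\hat A_z(s) = T_z(\hat H_s)\hat J$ from \eqref{F: hat Az(s)} and the fact that $\scal{\hat u_{\bar z}(s)}{\cdot}\hat J = \scal{\hat J\hat u_{\bar z}(s)}{\cdot}$ (self-adjointness of $\hat J$), the non-diagonal contributions combine into $(r_\lambda-s)\euD_z(s)\scal{\hat\psi + (r_\lambda-s)\hat J\hat u_{\bar z}(s)}{\cdot}\hat u_z(s)$. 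Similarly the $(2,1)$-entry is $\euD_z(s)\scal{\hat\psi + (r_\lambda-s)\hat J\hat u_{\bar z}(s)}{\cdot}$, and the $(1,2)$- and $(2,2)$-entries come out exactly as in the displayed intermediate formula. At this point the matrix coincides with the one shown just before the statement.

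Next I would invoke the identity $\hat\psi + (r_\lambda-s)\hat J\hat u_{\bar z}(s) = [1 + (r_\lambda-s)\hat B_{\bar z}(s)]\hat\psi = \euF_z^*(s)\hat\psi$, which is precisely the computation already recorded in the lemma preceding the statement: one uses \eqref{F: uz(s)} to write $\hat u_{\bar z}(s) = T_{\bar z}(\hat H_s)\hat\psi$, then $\hat J T_{\bar z}(\hat H_s) = \hat B_{\bar z}(s)$ by \eqref{F: hat Az(s)}, and finally \eqref{F: euF*}. Substituting $\euF_z^*(s)\hat\psi$ for the vector functional in the $(1,1)$- and $(2,1)$-entries yields exactly \eqref{F: Az(s)=2x2 matrix}, and citing \eqref{F: def of euA(s)}, \eqref{F: uz(s)}, \eqref{F: euF*} and \eqref{F: hat Az(s)} for the remaining ingredients completes the proof.

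There is no real obstacle here; the statement is essentially a bookkeeping lemma, and the one thing to be careful about is the placement of the scalars $(s-r_\lambda)$ versus $(r_\lambda-s)$ and the sign conventions in $\euD_z(s)$, together with keeping the anti-linearity of $\scal{\cdot}{\cdot}$ in the first argument straight when moving $\hat J$ across the pairing. If I wanted to make the write-up shorter I would simply remark that the displayed intermediate formula before the statement already gives $A_z(s)$, and that \eqref{F: Az(s)=2x2 matrix} is obtained from it by the substitution $\hat\psi + (r_\lambda-s)\hat J\hat u_{\bar z}(s) = \euF_z^*(s)\hat\psi$ established in the preceding lemma.
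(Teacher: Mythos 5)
Your proposal is correct and follows exactly the route the paper takes: the paper's proof is precisely the displayed block multiplication of \eqref{F: Tz(Hs) gen case} with \eqref{F: J=(J,uu;uu,alpha)} carried out just before the lemma, followed by the substitution $\hat\psi + (r_\lambda-s)\hat J\hat u_{\bar z}(s) = [1+(r_\lambda-s)\hat B_{\bar z}(s)]\hat\psi = \euF_z^*(s)\hat\psi$ via \eqref{F: uz(s)}, \eqref{F: hat Az(s)} and \eqref{F: euF*}. Your bookkeeping of the scalars, signs and the anti-linearity in the first slot of the pairing is accurate, so nothing further is needed.
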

Now we study the operator~(\ref{F: Az(s)=2x2 matrix}) when~$z$ belongs to the boundary of $\Pi,$
that is, $z = \lambda \pm i0.$ It will be assumed that $z = \lambda + i0,$ but all the equalities and assertions have appropriate analogues for
$z = \lambda - i0$ too.
\noindent If $z = \lambda+i0,$ then, using definition~(\ref{F: def of euA(s)}) of~$\euD_z(s)$
and noting that
$$
  \scal{\hat u_{\bar z}(s)}{\hat \psi} = \scal{\hat \psi}{\hat u_{z}(s)},
$$
one can see that the $(1,2)$-entry of~(\ref{F: Az(s)=2x2 matrix}) vanishes and therefore this yields the following equality
\begin{equation} \label{F: A+(s)=2 by 2 matrix}
    A_{\lambda+i0}(s) =
     \left(\begin{matrix}
      \hat A_{\lambda+i0}(s) +(r_\lambda-s) \euD_{\lambda+i0}(s) \scal{\euF^*_{\lambda+i0}(s)\hat \psi}{\cdot} \hat u_{\lambda+i0}(s) & 0 \\
       \euD_{\lambda+i0}(s) \scal{\euF^*_{\lambda+i0}(s)\hat \psi}{\cdot} & (s-r_\lambda)^{-1}
    \end{matrix}\right).
\end{equation}
Hence, the resonance equation of order~$k$ (see~(\ref{F: res eq-n}))
$$
  [1+(r_\lambda-s)A_{\lambda+i0}(s)]^k u = 0
$$
takes the form
\begin{equation} \label{F: big res equ-n}
     \left(\begin{matrix}
      \euF_{\lambda+i0}(s) + (s-r_\lambda)^2 \euD_{\lambda+i0}(s) \scal{\euF^*_{\lambda+i0}(s)\hat \psi}{\cdot} \hat u_{\lambda+i0}(s) & 0 \\
       (r_\lambda-s)\euD_{\lambda+i0}(s) \scal{\euF^*_{\lambda+i0}(s)\hat \psi}{\cdot} & 0
    \end{matrix}\right)^k u = 0.
\end{equation}
Hence, the vector space~$\Upsilon^1_{\lambda+i0}(r_\lambda)$ of solutions of this equation when $k=1$ consists of all vectors of the form
$$
  \left(\begin{matrix}
    \hat u \\ b
  \end{matrix}\right),
$$
where $b \in \mbC$ and $\hat u$ is a solution of the equations
\begin{equation} \label{F: two equalities}
  \euF_{\lambda+i0}(s)\hat u = 0 \ \ \text{and} \ \ \scal{\euF^*_{\lambda+i0}(s)\hat \psi}{\hat u} = 0.
\end{equation}
The vector space of resonance vectors of order $\leq k$ for the pair $\hat H_s,\hat V$ at $s = r_\lambda$ will be denoted by
$\hat \Upsilon^k_{\lambda\pm i0}(r_\lambda).$ In particular, a vector $\hat u$ belongs to $\hat \Upsilon^1_{\lambda + i0}(r_\lambda)$
if and only if $\euF_{\lambda+i0}(s)\hat u = 0.$
Since the second of the equalities~(\ref{F: two equalities}) follows from the first one, it follows that
$$
  \Upsilon^1_{\lambda+i0}(r_\lambda) = \hat \Upsilon^1_{\lambda+i0}(r_\lambda) \oplus \mbC.
$$
In fact, the condition~(\ref{F: lambda in hat (H,F)}) which says that the number~$\lambda$ is a
regular point of the pair $(\hat H_{r_\lambda},\hat F),$ is equivalent to the equality
$\Upsilon^1_{\lambda+i0}(\hat H_{r_\lambda}, \hat V) = \set{0},$ and therefore
\begin{equation} \label{F: Upsilon1=0+C}
  \Upsilon^1_{\lambda+i0}(r_\lambda) = \set{0} \oplus \mbC.
\end{equation}

\bigskip
\noindent
We introduce the following notation for convenience.
\\ {\bf Notation.} \ Let $j = -1,0,1,2,\ldots.$ We define a vector $\hatlmbu{j}$ by equality
\begin{equation} \label{F: vj(l+i0)(s)}
  \hatlmbu{j} = \euF^{-j}_{\lambda+i0}(s) \hat u_{\lambda+i0}(r_\lambda).
\end{equation}\label{Page: vj(l+i0)(s)}
The operator $\hat A_{\lambda+i0}(s)$ is compact,
and the assumption~(\ref{F: lambda in hat (H,F)}) means that the operator $$\euF_{\lambda+i0}(s) = 1+(r_\lambda-s)\hat A_{\lambda+i0}(s)$$
has zero kernel. Hence, it is invertible
and therefore the vectors~(\ref{F: vj(l+i0)(s)}) are well-defined.

\begin{lemma} \label{L: [...](-1)uu(s)=uu(0)} The following equality holds:
\begin{equation} \label{F: [...](-1)uu(s)=uu(0)}
  \euF^{-1}_{\lambda+i0}(s)\hat u_{\lambda+i0}(s) = \hat u_{\lambda+i0}(r_\lambda).
\end{equation}
That is, $$\hatlmbu{-1} = \hat u_{\lambda+i0}(s).$$
In particular, the vector $\euF^{-1}_{\lambda+i0}(s)\hat u_{\lambda+i0}(s)$ does not depend on~$s.$
\end{lemma}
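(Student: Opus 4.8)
The plan is to unravel the definition of $\hat u_{\lambda+i0}(s)$ and massage it into the form $\euF_{\lambda+i0}(s)\hat u_{\lambda+i0}(r_\lambda)$, then invert. Recall that $\hat u_z(s) = T_z(\hat H_s)\hat\psi$ by~(\ref{F: uz(s)}), so the claimed identity~(\ref{F: [...](-1)uu(s)=uu(0)}) is equivalent to
\begin{equation*}
  T_z(\hat H_s)\hat\psi = \euF_z(s)\,T_z(\hat H_{r_\lambda})\hat\psi = [1+(r_\lambda-s)\hat A_z(s)]\,T_z(\hat H_{r_\lambda})\hat\psi,
\end{equation*}
after setting $z = \lambda+i0$. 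First I would work at a non-real $z$ (where everything is genuinely holomorphic and the limiting-absorption subtleties are absent) and then pass to the boundary using Lemma~\ref{L: T(l+iy) to T(l+i0)} together with the fact that $s$ and $r_\lambda$ both lie in $\Lambda(\hat H_{r_\lambda},\hat F)$; continuity of composition in the relevant operator topologies then gives the boundary version for free.

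The core computation at non-real $z$ is a direct application of the sandwiched second resolvent identity. By~(\ref{F: T=(...)(-1)T}) applied to $\hat H_s$ and $\hat H_{r_\lambda}$ (with rigging $\hat F$), we have
\begin{equation*}
  T_z(\hat H_{r_\lambda}) - T_z(\hat H_s) = (s-r_\lambda)\,T_z(\hat H_{r_\lambda})\,\hat J\,T_z(\hat H_s),
\end{equation*}
so, since $\hat A_z(s) = T_z(\hat H_s)\hat J$ and using commutativity $T_z(\hat H_{r_\lambda})\hat J \cdot T_z(\hat H_s) = T_z(\hat H_s) \cdot \hat J\,T_z(\hat H_{r_\lambda})$ (which is exactly the content of~(\ref{F: A(s) and A(r) commute}) transported through the rigging, or can be read off from~(\ref{F: T=(...)(-1)T}) itself), one rearranges to
\begin{equation*}
  T_z(\hat H_s) = T_z(\hat H_{r_\lambda}) + (r_\lambda-s)\,\hat A_z(s)\,T_z(\hat H_{r_\lambda}) = \euF_z(s)\,T_z(\hat H_{r_\lambda}).
\end{equation*}
Applying both sides to $\hat\psi$ gives $\hat u_z(s) = \euF_z(s)\,\hat u_z(r_\lambda)$, and since $\euF_z(s)$ is invertible (its kernel is trivial by the regularity assumption~(\ref{F: lambda in hat (H,F)}), and by compactness of $\hat A_z(s)$ and Theorem~\ref{T: Analytic Fredholm alternative} triviality of kernel forces invertibility), we conclude $\euF^{-1}_z(s)\,\hat u_z(s) = \hat u_z(r_\lambda)$. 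Setting $z = \lambda+i0$ gives~(\ref{F: [...](-1)uu(s)=uu(0)}), and since the right-hand side $\hat u_{\lambda+i0}(r_\lambda)$ visibly does not involve $s$, the final claim follows.

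I do not anticipate a serious obstacle here; this is essentially a bookkeeping lemma. The one point requiring a little care is the justification that $\euF_{\lambda+i0}(s)$ is invertible at the boundary and that Lemma~\ref{L: about euF(-1)} applies — but that is precisely where the standing hypothesis $\lambda \in \Lambda(\hat H_{r_\lambda},\hat F)$ is used, guaranteeing that $\hat A_{\lambda+i0}(s)$ exists, is compact, and that $\euF_{\lambda+i0}(s) = 1+(r_\lambda-s)\hat A_{\lambda+i0}(s)$ has trivial kernel, hence (by the analytic Fredholm alternative, Theorem~\ref{T: Analytic Fredholm alternative}) is boundedly invertible with inverse $1+(s-r_\lambda)\hat A_{\lambda+i0}(r_\lambda)$ as recorded in Lemma~\ref{L: about euF(-1)}. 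Alternatively, one can bypass the boundary limiting argument entirely and simply run the second-resolvent-identity computation directly with the operators $T_{\lambda+i0}(\hat H_s)$ and $T_{\lambda+i0}(\hat H_{r_\lambda})$, which exist by assumption and satisfy the same algebraic identity~(\ref{F: T=(...)(-1)T}) in the limit $z \to \lambda+i0$; I would present it this second, cleaner way.
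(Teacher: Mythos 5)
Your proof is correct and follows essentially the paper's own route: the paper's proof is a one-line appeal to the proof of~(\ref{F: A(s)=(1+(s-r)A(r))(-1)A(r)}) (i.e.\ the sandwiched second resolvent identity~(\ref{F: T=(...)-1T}) for the hatted operators) together with the definition~(\ref{F: uz(s)}), which is exactly the computation you spell out, with invertibility of $\euF_{\lambda+i0}(s)$ coming from the standing assumption~(\ref{F: lambda in hat (H,F)}) just as in the paper. The only cosmetic difference is that you rearrange the identity into $T_z(\hat H_s)=\euF_z(s)T_z(\hat H_{r_\lambda})$ (using the symmetrized form of the resolvent identity) and then invert, whereas reading it as $T_z(\hat H_{r_\lambda})=[1+(s-r_\lambda)\hat A_z(r_\lambda)]T_z(\hat H_s)$ gives the claim directly without the commutativity step.
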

\begin{proof} This follows from~(\ref{F: A(s)=(1+(s-r)A(r))(-1)A(r)}) (or rather its proof) and definition~(\ref{F: uz(s)}) of the vector
$\hat u_{\lambda+i0}(s).$
\end{proof}
Plainly, the equality
$$\hatlmbu{0} = \hat u_{\lambda+i0}(r_\lambda)$$
also holds.

\begin{lemma} \label{L: beau vectors} Let~$H_s,$~$V$ and~$F$ be as above. 
For each $j=1,2,3,\ldots$ the resonance vector space~$\Upsilon^j_{\lambda+i0}(r_\lambda)$
is the linear span of the following $j$ vectors
\begin{equation} \label{F: beau vectors to infty}
  \twovector{0}{1}, \ \ \twovector{\hat u_{\lambda+i0}(r_\lambda)}{0}, \ \ \twovector{\hatlmbu{1}}{0}, \ \ \ldots, \ \ \twovector{\hatlmbu{j-2}}{0}.
\end{equation}
In particular, $\dim \Upsilon^j_{\lambda+i0}(r_\lambda) \leq j.$
\end{lemma}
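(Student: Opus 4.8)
\textbf{Proof proposal for Lemma \ref{L: beau vectors}.}

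The plan is to prove by induction on $j$ that $\Upsilon^j_{\lambda+i0}(r_\lambda)$ is precisely the linear span of the $j$ vectors listed in~(\ref{F: beau vectors to infty}). The base case $j=1$ is already settled: we established~(\ref{F: Upsilon1=0+C}), namely $\Upsilon^1_{\lambda+i0}(r_\lambda) = \set{0}\oplus\mbC$, which is the span of $\twovector{0}{1}$. So assume the claim holds for some $j\geq 1$, and consider a vector $u \in \Upsilon^{j+1}_{\lambda+i0}(r_\lambda)$, i.e.\ a solution of the resonance equation~(\ref{F: big res equ-n}) of order $j+1$. Write $u = \twovector{\hat u}{b}$ with $\hat u \in \hat\clK$ and $b\in\mbC$. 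The key structural fact is that the operator $1+(r_\lambda-s)A_{\lambda+i0}(s)$, read off from~(\ref{F: A+(s)=2 by 2 matrix}), is lower triangular in the decomposition $\hat\clK \oplus \mbC$, with $(1,1)$-entry $\euF_{\lambda+i0}(s)$ and $(2,2)$-entry $0$. Consequently the $k$-th power has $(1,1)$-entry $\euF^k_{\lambda+i0}(s)$ and $(2,2)$-entry $0$, while the $(2,1)$-entry is a certain functional applied to the partial sum $\sum_{i=0}^{k-1}\euF^i_{\lambda+i0}(s)\hat u$ (and the $(1,2)$-entry is $0$).

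First I would reduce to a condition on $\hat u$ alone. Applying $[1+(r_\lambda-s)A_{\lambda+i0}(s)]$ once to $u$ and using triangularity, the vector $[1+(r_\lambda-s)A_{\lambda+i0}(s)]u$ has first component $\euF_{\lambda+i0}(s)\hat u + (s-r_\lambda)^2\euD_{\lambda+i0}(s)\scal{\euF^*_{\lambda+i0}(s)\hat\psi}{\hat u}\,\hat u_{\lambda+i0}(s)$; using Lemma~\ref{L: [...](-1)uu(s)=uu(0)} to rewrite $\hat u_{\lambda+i0}(s) = \euF_{\lambda+i0}(s)\hat u_{\lambda+i0}(r_\lambda)$, this first component equals $\euF_{\lambda+i0}(s)\bigl(\hat u + c\,\hat u_{\lambda+i0}(r_\lambda)\bigr)$ for a scalar $c = (s-r_\lambda)^2\euD_{\lambda+i0}(s)\scal{\euF^*_{\lambda+i0}(s)\hat\psi}{\hat u}$. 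The requirement that $u$ be killed by the $(j+1)$-st power is therefore equivalent, after dividing out the invertible $\euF_{\lambda+i0}(s)$ wherever it occurs as an outermost factor and tracking the $(2,1)$-entries, to the statement that $\hat u + c\,\hat u_{\lambda+i0}(r_\lambda)$ lies in $\hat\Upsilon^{j}_{\lambda+i0}(r_\lambda)$ together with a compatibility condition on the scalar components. Since $\hat H_{r_\lambda}$ is regular at $\lambda$ (the hypothesis~(\ref{F: lambda in hat (H,F)})), we have $\hat\Upsilon^1_{\lambda+i0}(r_\lambda)=\set{0}$; but the powers $\hat\Upsilon^{j}_{\lambda+i0}(r_\lambda)$ for $j\geq 2$ are generally nonzero, and the point of the induction is that they are spanned by $\euF^{-i}_{\lambda+i0}(s)\hat u_{\lambda+i0}(r_\lambda)$, $i=0,\dots,j-2$.

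The heart of the argument is thus an auxiliary claim: $\hat\Upsilon^{j}_{\lambda+i0}(r_\lambda)$ is spanned by $\hatlmbu{0}, \hatlmbu{1}, \dots, \hatlmbu{j-2}$, i.e.\ by the vectors $\euF^{-i}_{\lambda+i0}(s)\hat u_{\lambda+i0}(r_\lambda)$. To see the inclusion $\supseteq$, one checks that $\euF^{-i}_{\lambda+i0}(s)\hat u_{\lambda+i0}(r_\lambda)$ solves the order-$(i+2)$ resonance equation for $\hat H_s$: indeed $\euF_{\lambda+i0}(s)^{i+1}\bigl(\euF^{-i}_{\lambda+i0}(s)\hat u_{\lambda+i0}(r_\lambda)\bigr) = \euF_{\lambda+i0}(s)\hat u_{\lambda+i0}(r_\lambda)$, and one more application together with the fact that $\hat u_{\lambda+i0}(r_\lambda) = T_{\lambda+i0}(\hat H_{r_\lambda})\hat\psi$ is a vector of order $1$ — which itself follows from Lemma~\ref{L: j-dimensions coincide} once we observe (as in the lemma just preceding this one) that $\hat\psi\oplus 0 = JF\chi$ is a co-resonance vector of order $1$ when $\alpha=0$, and more generally $\euF_{\lambda+i0}(s)\hat u_{\lambda+i0}(r_\lambda)$ is computed directly — forces the expression into $\hat\Upsilon^{i+2}_{\lambda+i0}(r_\lambda)$. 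Wait: more carefully, I would prove by a parallel sub-induction that $\euF_{\lambda+i0}(s)\hatlmbu{i} = \hatlmbu{i-1}$, so $\euF_{\lambda+i0}(s)$ acts as a shift on this chain, which immediately gives the membership. For the reverse inclusion $\subseteq$ and the dimension bound $\dim\hat\Upsilon^{j}_{\lambda+i0}(r_\lambda)\leq j-1$, I would argue that any solution of the order-$j$ equation for $\hat H_s$, after peeling off one factor of $\euF_{\lambda+i0}(s)$, lands (up to a multiple of $\hat u_{\lambda+i0}(r_\lambda)$, arising exactly as the scalar $c$ above) in $\hat\Upsilon^{j-1}_{\lambda+i0}(r_\lambda)$, and invoke the inductive hypothesis. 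Combining the auxiliary claim with the reduction of the first paragraph, and noting that the scalar component $b$ contributes exactly the extra generator $\twovector{0}{1}$, yields that $\Upsilon^{j+1}_{\lambda+i0}(r_\lambda)$ is spanned by the $j+1$ vectors of~(\ref{F: beau vectors to infty}) with $j$ replaced by $j+1$, and in particular $\dim\Upsilon^{j}_{\lambda+i0}(r_\lambda)\leq j$.

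The main obstacle I anticipate is the bookkeeping in passing between the order-$k$ resonance equation for the $2\times 2$ operator $A_{\lambda+i0}(s)$ and the order-$(k-1)$ equation for its compression $\hat A_{\lambda+i0}(s)$: one must correctly identify the scalar shift $\hat u \mapsto \hat u + c\,\hat u_{\lambda+i0}(r_\lambda)$ and verify it is consistent across all powers, and one must be careful that the $(2,1)$-entry functionals involving $\euF^*_{\lambda+i0}(s)\hat\psi$ do not impose extra constraints beyond those already encoded by membership in $\hat\Upsilon^{\bullet}_{\lambda+i0}(r_\lambda)$ — this uses that $\scal{\euF^*_{\lambda+i0}(s)\hat\psi}{\hat w} = \scal{\hat\psi}{\euF_{\lambda+i0}(s)\hat w}$ vanishes automatically when $\hat w$ is already a resonance vector of one lower order, by~(\ref{F: uz(s)}) and the triangular structure. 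Establishing the shift identity $\euF_{\lambda+i0}(s)\hatlmbu{i}=\hatlmbu{i-1}$ cleanly (immediate from~(\ref{F: vj(l+i0)(s)}) and invertibility) and Lemma~\ref{L: [...](-1)uu(s)=uu(0)} are the technical lubricants that make the induction go through.
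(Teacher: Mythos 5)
There is a genuine gap, and it sits at what you yourself call the heart of the argument. Your auxiliary claim is that $\hat \Upsilon^{j}_{\lambda+i0}(r_\lambda)$ (the resonance spaces of the \emph{hatted} pair $\hat H_s,\hat V$ at $r_\lambda$) are spanned by $\hat u_{\lambda+i0}(r_\lambda),\hatlmbu{1},\dots,\hatlmbu{j-2}$, with $\hat\Upsilon^1=\set{0}$ but the higher ones ``generally nonzero.'' That cannot happen: the standing assumption~(\ref{F: lambda in hat (H,F)}) says precisely that $\euF_{\lambda+i0}(s)=1+(r_\lambda-s)\hat A_{\lambda+i0}(s)$ is invertible, hence so is every power $\euF^{k}_{\lambda+i0}(s)$, and therefore $\hat\Upsilon^{k}_{\lambda+i0}(r_\lambda)=\ker \euF^{k}_{\lambda+i0}(s)=\set{0}$ for \emph{all} $k$ (equivalently: if $\hat\Upsilon^1=\set{0}$ then all higher generalized kernels vanish). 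Consistently with this, your verification that $\hatlmbu{i}$ solves the order-$(i+2)$ hatted equation fails: $\euF^{i+2}_{\lambda+i0}(s)\hatlmbu{i}=\euF^{2}_{\lambda+i0}(s)\hat u_{\lambda+i0}(r_\lambda)\neq 0$, and $\hat u_{\lambda+i0}(r_\lambda)=T_{\lambda+i0}(\hat H_{r_\lambda})\hat\psi$ is \emph{not} an order-one resonance vector of the hatted pair (the order-one statement you invoke, about $\hat\psi\oplus 0=JF\chi$, concerns the full $2\times 2$ problem, not the hatted one). Your reduction step is wrong for the same structural reason: writing the $(1,1)$-block of $1+(r_\lambda-s)A_{\lambda+i0}(s)$ as $\euF_{\lambda+i0}(s)+R$ with the rank-one term $R=(s-r_\lambda)^2\euD_{\lambda+i0}(s)\scal{\euF^*_{\lambda+i0}(s)\hat\psi}{\cdot}\,\hat u_{\lambda+i0}(s)$, the condition on $\hat u$ coming from the order-$(j+1)$ equation is $(\euF_{\lambda+i0}(s)+R)^{j+1}\hat u=0$; since $\euF_{\lambda+i0}(s)$ and $R$ do not commute, you cannot ``divide out the invertible $\euF$ wherever it occurs as an outermost factor'' — at each iteration a fresh multiple of $\hat u_{\lambda+i0}(s)$ is injected, and it is exactly the generalized kernel of the perturbed operator $\euF_{\lambda+i0}(s)+R$ (not of $\euF_{\lambda+i0}(s)$) that carries the chain $\hat u_{\lambda+i0}(r_\lambda),\hatlmbu{1},\dots$. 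If your equivalence ``$u$ of order $\leq j+1$ iff $\hat u+c\,\hat u_{\lambda+i0}(r_\lambda)\in\hat\Upsilon^{j}$'' were true, then $\hat\Upsilon^{j}=\set{0}$ would force $\dim\Upsilon^{j+1}_{\lambda+i0}(r_\lambda)\leq 2$ for every $j$, contradicting the lemma itself (and Theorem~\ref{T: order k case}, which produces points of arbitrary order $d$).

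The repair is to run the induction on the full spaces $\Upsilon^{k}_{\lambda+i0}(r_\lambda)$ themselves, which is what the paper does: if $\twovector{\hat\phi}{a}$ has order $\leq k+1$, then $[1+(r_\lambda-s)A_{\lambda+i0}(s)]\twovector{\hat\phi}{a}$ has order $\leq k$, so by the induction hypothesis its \emph{first} component $\euF_{\lambda+i0}(s)\hat\phi+(s-r_\lambda)^2\euD_{\lambda+i0}(s)\scal{\euF^*_{\lambda+i0}(s)\hat\psi}{\hat\phi}\,\hat u_{\lambda+i0}(s)$ lies in the span of $\hat u_{\lambda+i0}(r_\lambda),\hatlmbu{1},\dots,\hatlmbu{k-2}$; applying the inverse $\euF^{-1}_{\lambda+i0}(s)$ and using Lemma~\ref{L: [...](-1)uu(s)=uu(0)} (so that $\euF^{-1}_{\lambda+i0}(s)\hat u_{\lambda+i0}(s)=\hat u_{\lambda+i0}(r_\lambda)$) shows $\hat\phi$ itself lies in the span of $\hat u_{\lambda+i0}(r_\lambda),\hatlmbu{1},\dots,\hatlmbu{k-1}$, which together with the scalar generator $\twovector{0}{1}$ closes the induction and yields $\dim\Upsilon^{j}_{\lambda+i0}(r_\lambda)\leq j$. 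Your triangularity observation, the shift identity $\euF_{\lambda+i0}(s)\hatlmbu{i}=\hatlmbu{i-1}$, and the use of Lemma~\ref{L: [...](-1)uu(s)=uu(0)} are all sound ingredients; what must be discarded is the intermediate object $\hat\Upsilon^{j}$ and the claim that the powers of the block operator act on the first component as powers of $\euF_{\lambda+i0}(s)$ alone.
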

\begin{proof} For $j=1$ this has already been observed, see~(\ref{F: Upsilon1=0+C}).
Assume that $\twovector{\hat \phi}{a}$ is a vector of order two, that is, $\twovector{\hat \phi}{a}$
is a solution of~(\ref{F: big res equ-n}) with $k=2$ and $\hat \phi \neq 0.$
Applying to this vector the operator $[1+(r_\lambda-s)A_{\lambda+i0}(s)]$ gives a vector of order 1.
Since by~(\ref{F: Upsilon1=0+C}) such a vector has the form $\twovector{0}{b}$ with non-zero~$b,$
the first component of the vector $[1+(r_\lambda-s)A_{\lambda+i0}(s)]\twovector{\hat \phi}{a}$ is to be zero:
\begin{equation} \label{F: ga-ga-ga=0}
  \euF_{\lambda+i0}(s)\hat \phi + (s-r_\lambda)^2 \euD_{\lambda+i0}(s) \scal{\euF^*_{\lambda+i0}(s)\hat \psi}{\hat \phi} \hat u_{\lambda+i0}(s) = 0,
\end{equation}
and the second component must be non-zero:
$$
  \scal{\euF^*_{\lambda+i0}(s)\hat \psi}{\hat \phi} \neq 0.
$$
Applying the operator $\euF^{-1}_{\lambda+i0}(s)$ to the equality~(\ref{F: ga-ga-ga=0}) and
using~(\ref{F: [...](-1)uu(s)=uu(0)}) gives the equality
\begin{equation} \label{F: ga-ga-ga=0(2)}
  \hat \phi + (s-r_\lambda)^2 \euD_{\lambda+i0}(s) \scal{\euF^*_{\lambda+i0}(s)\hat \psi}{\hat \phi} \hat u_{\lambda+i0}(r_\lambda) = 0.
\end{equation}
It follows from this that if $\twovector{\hat \phi}{a}$ is a vector of order two, then $\hat \phi$ has to be co-linear with the vector $\hat u_{\lambda+i0}(r_\lambda).$
It follows that the vector space $\Upsilon^2_{\lambda+i0}(r_\lambda)$ has dimension $\leq 2$
and that~$\Upsilon^2_{\lambda+i0}(r_\lambda)$ 
is a subspace of the linear span of $\twovector{0}{1}$ and
$$
  \twovector{\euF^{-1}_{\lambda+i0}(s)\hat u_{\lambda+i0}(s)}{0} = \twovector{\hat u_{\lambda+i0}(r_\lambda)}{0}.
$$
This proves the assertion for $j=2.$
Now, assuming that the assertion holds for $j=k,$ 
it will be shown that it holds for $j=k+1.$
Let $\twovector{\hat \phi}{a}$ be a vector of order $\leq k+1.$ Then the vector
$$[1+(r_\lambda-s)A_{\lambda+i0}(s)]\twovector{\hat \phi}{a}$$ has order $\leq k.$ By the induction assumption,
the first component of this vector given by the left hand side of~(\ref{F: ga-ga-ga=0}), is a linear combination of vectors
\begin{equation*}
  \hat u_{\lambda+i0}(r_\lambda), \ \hatlmbu{1}, \ \ldots, \ \hatlmbu{k-2}.
\end{equation*}
Thus, the vector
\begin{equation*} 
  \hat \phi + (s-r_\lambda)^2 \euD_{\lambda+i0}(s) \scal{\euF^*_{\lambda+i0}(s)\hat \psi}{\hat \phi} \euF^{-1}_{\lambda+i0}(s)\hat u_{\lambda+i0}(s)
\end{equation*}
is a linear combination of vectors
\begin{equation*}
  \euF^{-1}_{\lambda+i0}(s)\hat u_{\lambda+i0}(r_\lambda) = u_{\lambda+i0}^{(1)}(s), \ \hatlmbu{2}, \ \ldots, \ \hatlmbu{k-1}.
\end{equation*}
It follows from this and~(\ref{F: [...](-1)uu(s)=uu(0)}) that $\hat \phi$ is a linear combination of vectors
\begin{equation*}
  \hat u_{\lambda+i0}(r_\lambda), \ \hatlmbu{1}, \ \ldots, \ \hatlmbu{k-1}.
\end{equation*}
Proof is complete.
\end{proof}

\begin{lemma} \label{L: beau vectors(2)} 
Order of the resonance point~$r_\lambda$
is not less than 2 if and only if $\alpha = 0.$
If this is the case, then the vector space~$\Upsilon^2_{\lambda+i0}(r_\lambda)$ is two-dimensional and is generated by vectors
$$
  F\chi = \twovector{0}{1} \quad \text{and} \quad \twovector{\hat u_{\lambda+i0}(r_\lambda)}{0},
$$
which have orders~$1$ and~$2$ respectively.
\end{lemma}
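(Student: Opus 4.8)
The plan is to read everything off the $2\times 2$ matrix form of $A_{\lambda+i0}(s)$ given in~(\ref{F: A+(s)=2 by 2 matrix}), for which $1+(r_\lambda-s)A_{\lambda+i0}(s)$ is displayed in~(\ref{F: big res equ-n}). Recall that $\Upsilon^1_{\lambda+i0}(r_\lambda)=\{0\}\oplus\mbC$ by~(\ref{F: Upsilon1=0+C}), and that by Lemma~\ref{L: beau vectors} the space $\Upsilon^2_{\lambda+i0}(r_\lambda)$ is contained in the linear span of $\twovector{0}{1}$ and $\twovector{\hat u_{\lambda+i0}(r_\lambda)}{0}$, so in particular $\dim\Upsilon^2_{\lambda+i0}(r_\lambda)\le 2$. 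Hence $r_\lambda$ has order $\ge 2$ precisely when some vector of the form $\twovector{\hat u_{\lambda+i0}(r_\lambda)}{b}$ --- necessarily with $\hat u_{\lambda+i0}(r_\lambda)\ne 0$ --- lies in $\Upsilon^2_{\lambda+i0}(r_\lambda)$ but not in $\Upsilon^1_{\lambda+i0}(r_\lambda)$. By Proposition~\ref{P: res eq-n is correct} it suffices to test the relevant resonance equations at one conveniently chosen non-resonant $s$.

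First I would apply $1+(r_\lambda-s)A_{\lambda+i0}(s)$ to $\twovector{\hat u_{\lambda+i0}(r_\lambda)}{b}$, directly from~(\ref{F: big res equ-n}). Since the $(1,2)$-entry of that operator vanishes, the first coordinate of the image does not depend on $b$; using $\euF_{\lambda+i0}(s)\hat u_{\lambda+i0}(r_\lambda)=\hat u_{\lambda+i0}(s)$ (Lemma~\ref{L: [...](-1)uu(s)=uu(0)}) together with $\scal{\euF^*_{\lambda+i0}(s)\hat\psi}{\hat u_{\lambda+i0}(r_\lambda)}=\scal{\hat\psi}{\hat u_{\lambda+i0}(s)}$, that first coordinate equals $c(s)\,\hat u_{\lambda+i0}(s)$, where $c(s)=1+(s-r_\lambda)^2\euD_{\lambda+i0}(s)\scal{\hat\psi}{\hat u_{\lambda+i0}(s)}$. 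Substituting the definition~(\ref{F: def of euA(s)}) of $\euD_{\lambda+i0}(s)$ (noting $\lambda-(\lambda+i0)=0$) and clearing denominators simplifies this to $c(s)=(s-r_\lambda)\,\alpha\,\euD_{\lambda+i0}(s)$. The second coordinate of the image is $(r_\lambda-s)\euD_{\lambda+i0}(s)\scal{\hat\psi}{\hat u_{\lambda+i0}(s)}$.

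The dichotomy is then immediate. If $\alpha\ne 0$, then $c(s)\ne 0$ for non-resonant $s\ne r_\lambda$, so the image of $\twovector{\hat u_{\lambda+i0}(r_\lambda)}{b}$ has first coordinate $c(s)\,\hat u_{\lambda+i0}(s)$, which is nonzero whenever $\hat u_{\lambda+i0}(r_\lambda)\ne 0$ (as $\euF_{\lambda+i0}(s)$ is invertible); hence such a vector is not in $\Upsilon^1_{\lambda+i0}(r_\lambda)$, while if $\hat u_{\lambda+i0}(r_\lambda)=0$ then $\Upsilon^2_{\lambda+i0}(r_\lambda)=\Upsilon^1_{\lambda+i0}(r_\lambda)$ outright --- either way $r_\lambda$ has order $1$. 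If $\alpha=0$, then $c(s)=0$, so $\twovector{\hat u_{\lambda+i0}(r_\lambda)}{0}$ maps to $\twovector{0}{(r_\lambda-s)\euD_{\lambda+i0}(s)\scal{\hat\psi}{\hat u_{\lambda+i0}(s)}}$; here regularity of the direction $V$ (Lemma~\ref{L: V is reg-ing iff alpha neq 0 or ...}) guarantees $\scal{\hat\psi}{\hat u_{\lambda+i0}(s)}\ne 0$ at every non-resonant $s$, whence also $\hat u_{\lambda+i0}(r_\lambda)\ne 0$. Thus the image is a nonzero element of $\Upsilon^1_{\lambda+i0}(r_\lambda)$, so $\twovector{\hat u_{\lambda+i0}(r_\lambda)}{0}$ is a resonance vector of order exactly $2$; together with $F\chi$ --- a nonzero scalar multiple of $\twovector{0}{1}$, of order $1$ by Theorem~\ref{T: res eq-n and eigenvectors} --- it spans a $2$-dimensional subspace, which by the bound $\dim\Upsilon^2_{\lambda+i0}(r_\lambda)\le 2$ must be all of $\Upsilon^2_{\lambda+i0}(r_\lambda)$.

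Almost all of this is routine linear algebra built on the formulas already assembled in this section; the one place demanding care is the simplification $c(s)=(s-r_\lambda)\,\alpha\,\euD_{\lambda+i0}(s)$ and the bookkeeping of which scalars --- $\euD_{\lambda+i0}(s)$, $s-r_\lambda$, $\scal{\hat\psi}{\hat u_{\lambda+i0}(s)}$ --- are guaranteed nonzero at a non-resonant $s$, since this is exactly where the regularity hypothesis on $V$ is used.
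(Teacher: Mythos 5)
Your proof is correct and follows essentially the same route as the paper's: it tests whether $\twovector{\hat u_{\lambda+i0}(r_\lambda)}{0}$ has order two by applying $1+(r_\lambda-s)A_{\lambda+i0}(s)$ from~(\ref{F: big res equ-n}), uses $\euF_{\lambda+i0}(s)\hat u_{\lambda+i0}(r_\lambda)=\hat u_{\lambda+i0}(s)$ and the definition~(\ref{F: def of euA(s)}) of $\euD_{\lambda+i0}(s)$ to reduce the vanishing of the first component to $\alpha=0$, and then invokes Lemma~\ref{L: beau vectors} for the dimension bound. The only difference is cosmetic: you handle the degenerate possibilities ($\hat u_{\lambda+i0}(r_\lambda)=0$, non-vanishing of $\scal{\hat\psi}{\hat u_{\lambda+i0}(s)}$) a bit more explicitly than the paper does.
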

\begin{proof}
By Lemma~\ref{L: beau vectors}, a resonance vector of order $\leq 2$ has the form
$$
  \twovector{ \hat u_{\lambda+i0}(r_\lambda)}{b}.
$$
The vector
$$
  \twovector{\hat u_{\lambda+i0}(r_\lambda)}{0}
$$
is a resonance vector of order~$2$ if and only if
$$
  [1+(r_\lambda-s)A_{\lambda+i0}(s)]\twovector{\hat u_{\lambda+i0}(r_\lambda)}{0}
$$
is a vector of order 1, and thus has the form $\twovector{0}{b}.$
That is, this is equivalent to the first component of this vector being equal to zero:
$$
  \euF_{\lambda+i0}(s)\hat u_{\lambda+i0}(r_\lambda) + (s-r_\lambda)^2 \euD_{\lambda+i0}(s) \scal{\euF^*_{\lambda+i0}(s)\hat \psi}{\hat u_{\lambda+i0}(r_\lambda)} \hat u_{\lambda+i0}(s) = 0.
$$
Applying to this equality the operator $\euF_{\lambda+i0}^{-1}(s)$ and using Lemma~\ref{L: [...](-1)uu(s)=uu(0)} we infer that this equality is equivalent to
$$
  1 + (s-r_\lambda)^2 \euD_{\lambda+i0}(s) \scal{\hat \psi}{\hat u_{\lambda+i0}(s)} = 0.
$$
Definition~(\ref{F: def of euA(s)}) of~$\euD_{\lambda+i0}(s)$ implies that this equality is equivalent to
$$
  (s-r_\lambda)^2  \scal{\hat \psi}{\hat u_{\lambda+i0}(s)} = (s-r_\lambda)^2  \scal{\hat \psi}{\hat u_{\lambda+i0}(s)} - \alpha (s-r_\lambda).
$$
It follows that order~$d$ of the resonance point~$r_\lambda$ is not less than two if and only if $\alpha = 0.$
\end{proof}

Since throughout this section we are assuming that $V$ is a regularizing direction,
Lemma \ref{L: beau vectors(2)} combined with Lemma \ref{L: V is reg-ing iff alpha neq 0 or ...}
imply the following
\begin{cor} If the order $d$ of the resonance point~$r_\lambda$ is not less than two, then for some real number $s$
$$
  \scal{\hat \psi}{\hat u_{\lambda+i0}(s)} \neq 0.
$$
\end{cor}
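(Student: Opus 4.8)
The plan is to obtain the statement as an immediate consequence of the two preceding lemmas, Lemma~\ref{L: beau vectors(2)} and Lemma~\ref{L: V is reg-ing iff alpha neq 0 or ...}, together with the standing assumption, in force throughout this section, that $V$ is a regularizing direction for the resonant-at-$\lambda$ operator $H_{r_\lambda}$.

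First I would apply Lemma~\ref{L: beau vectors(2)}, which asserts that the order $d$ of the resonance point $r_\lambda$ is at least $2$ if and only if $\alpha = 0$. Under the hypothesis $d\ge 2$ of the present corollary this gives $\alpha = 0$. Next I would invoke Lemma~\ref{L: V is reg-ing iff alpha neq 0 or ...}: since $V = F^*JF$ with $F$ and $J$ of the block form~(\ref{F: rigging (F0,1) gen case}) and~(\ref{F: J=(J,uu;uu,alpha)}), and since $V$ is a regularizing direction for $H_{r_\lambda}$, that lemma asserts that either $\alpha\ne 0$, or else $\alpha = 0$ and there is a real number $s$ with $\scal{\hat\psi}{\hat u_{\lambda+i0}(s)}\ne 0$ (condition~(\ref{F: psi u(s)neq 0})). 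The first alternative is excluded by the equality $\alpha=0$ established in the previous step; hence the second alternative holds, and in particular there exists a real $s$ for which $\scal{\hat\psi}{\hat u_{\lambda+i0}(s)}\ne 0$, which is exactly the assertion.

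I do not expect any obstacle: the whole content is already packed into the two lemmas, and the corollary is a one-line disjunctive syllogism. The only thing requiring a little care --- and it is bookkeeping rather than difficulty --- is to keep the standing hypothesis that $V$ is a regularizing direction explicitly in play, since it is precisely this hypothesis, via Lemma~\ref{L: V is reg-ing iff alpha neq 0 or ...}, that forces the pairing $\scal{\hat\psi}{\hat u_{\lambda+i0}(s)}$ not to vanish identically in $s$ once $\alpha=0$; indeed, when $\alpha=0$ the existence of the norm limit $\euD_{\lambda+i0}(s)$, and hence of $T_{\lambda+i0}(H_s)$, is exactly what requires this nonvanishing.
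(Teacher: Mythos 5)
Your proof is correct and is exactly the argument the paper intends: the corollary is stated right after the sentence "Since throughout this section we are assuming that $V$ is a regularizing direction, Lemma \ref{L: beau vectors(2)} combined with Lemma \ref{L: V is reg-ing iff alpha neq 0 or ...} imply the following," which is precisely your disjunctive syllogism ($d\ge 2 \Rightarrow \alpha=0$, then the regularizing hypothesis forces the second alternative of Lemma \ref{L: V is reg-ing iff alpha neq 0 or ...}). No gaps.
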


Since the vector spaces $\Upsilon_z^j(r_z)$ have the stability property $\Upsilon_z^j(r_z) = \Upsilon_z^{j+1}(r_z) \then \Upsilon_z^j(r_z) = \Upsilon_z(r_z),$
Lemma \ref{L: beau vectors} has the following corollary.
\begin{thm} \label{T: beau vectors} Let $d$ be an integer $\geq 2.$ 
The following assertions are equivalent.
\begin{enumerate}
  \item Order of the resonance point~$r_\lambda$ is equal to $d.$
  \item The dimension of the vector space $\Upsilon_{\lambda+i0}(r_\lambda)$ is equal to $d.$
  \item The vectors
\begin{equation*}
  \hat u_{\lambda+i0}(r_\lambda), \ \hatlmbu{1}, \ \ldots, \ \hatlmbu{d-2}
\end{equation*}
are linearly independent and the vector $\hatlmbu{d-1}$ is a linear combination of these vectors.
\end{enumerate}

Further, if the order of~$r_\lambda$ is equal to~$d,$ then for all $j=1,2,\ldots,d$
the vector space~$\Upsilon^j_{\lambda+i0}(r_\lambda)$ is $j$-dimensional and is generated by vectors
$$
  \twovector{0}{1}, \ \ \twovector{\hat u_{\lambda+i0}(r_\lambda)}{0}, \ \ \twovector{\hatlmbu{1}}{0}, \ \ \ldots, \ \
  \twovector{\hatlmbu{j-2}}{0},
$$
which have orders $1,2,\ldots,d$ respectively.
\end{thm}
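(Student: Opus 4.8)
\textbf{Proof plan for Theorem \ref{T: beau vectors}.}
The plan is to deduce everything from Lemma \ref{L: beau vectors} together with the stabilization property of the chain $\Upsilon^1_{\lambda+i0}(r_\lambda) \subset \Upsilon^2_{\lambda+i0}(r_\lambda) \subset \cdots$, namely that $\Upsilon^j_{\lambda+i0}(r_\lambda) = \Upsilon^{j+1}_{\lambda+i0}(r_\lambda)$ forces $\Upsilon^j_{\lambda+i0}(r_\lambda) = \Upsilon_{\lambda+i0}(r_\lambda)$, and the fact (from Lemma \ref{L: beau vectors} itself, applied at $j$ and $j+1$) that passing from $\Upsilon^j$ to $\Upsilon^{j+1}$ adds at most the single new spanning vector $\twovector{\hatlmbu{j-1}}{0}$. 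First I would record the consequence of Lemma \ref{L: beau vectors}: for every $j$, $\Upsilon^j_{\lambda+i0}(r_\lambda)$ is the span of the $j$ vectors listed in (\ref{F: beau vectors to infty}), so $\dim \Upsilon^j_{\lambda+i0}(r_\lambda) \leq j$ and the dimension jumps by at most one at each step. Consequently the order $d$ of $r_\lambda$ — the least $k$ with $\Upsilon^k = \Upsilon^{k+1}$ — is exactly the least $k$ at which the dimension stops strictly increasing, and at that point $\dim \Upsilon_{\lambda+i0}(r_\lambda) = \dim \Upsilon^d_{\lambda+i0}(r_\lambda)$; since the dimension strictly increased at every earlier step and started at $1$ (by (\ref{F: Upsilon1=0+C})), this common value is exactly $d$. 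This simultaneously gives the equivalence (1) $\iff$ (2) and the final "moreover" clause, because each $\Upsilon^j_{\lambda+i0}(r_\lambda)$ for $j \leq d$ must then have dimension exactly $j$, forcing the $j$ spanning vectors in (\ref{F: beau vectors to infty}) to be linearly independent, and the orders of the listed generators $1, 2, \ldots, d$ follow from Lemma \ref{L: beau vectors(2)} and induction (the vector $\twovector{\hatlmbu{j-2}}{0}$ sits in $\Upsilon^j$ but, by the strict dimension increase, not in $\Upsilon^{j-1}$, so it has order exactly $j$).

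Next I would prove (1) $\iff$ (3). The span description of Lemma \ref{L: beau vectors} shows $\dim \Upsilon^j_{\lambda+i0}(r_\lambda)$ equals $1$ plus the dimension of the span of $\hat u_{\lambda+i0}(r_\lambda), \hatlmbu{1}, \ldots, \hatlmbu{j-2}$ in $\hat\clK$. Hence the dimension strictly increases from step $j$ to step $j+1$ precisely when $\hatlmbu{j-1}$ is \emph{not} in the span of $\hat u_{\lambda+i0}(r_\lambda), \ldots, \hatlmbu{j-2}$. Therefore order $= d$ is equivalent to: $\hat u_{\lambda+i0}(r_\lambda), \hatlmbu{1}, \ldots, \hatlmbu{d-2}$ are linearly independent while $\hatlmbu{d-1}$ lies in their span. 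The only subtlety here is to confirm that once $\hatlmbu{d-1}$ is in the span of the earlier vectors, all later $\hatlmbu{j}$ with $j \geq d-1$ are too — this is where I would use that $\hatlmbu{j} = \euF_{\lambda+i0}^{-j}(s)\hat u_{\lambda+i0}(r_\lambda)$, so the vectors $\hatlmbu{0}, \hatlmbu{1}, \ldots$ form the orbit of $\hat u_{\lambda+i0}(r_\lambda)$ under the single invertible operator $\euF_{\lambda+i0}^{-1}(s)$; a span that is closed under applying $\euF_{\lambda+i0}^{-1}(s)$ once stays closed, and the identity of Lemma \ref{L: about euF(-1)}, $\euF^{-1}_{\lambda+i0}(s) = 1 + (s-r_\lambda)\hat A_{\lambda+i0}(r_\lambda)$, lets me reduce this to the standard fact that a cyclic-type subspace for $\hat A_{\lambda+i0}(r_\lambda)$ generated by $\hat u_{\lambda+i0}(r_\lambda)$ stabilizes. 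Equivalently, this is just the stabilization of the chain $\Upsilon^j$ transported through the span description, so no genuinely new work is needed.

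The main obstacle I anticipate is bookkeeping rather than conceptual: I must be careful that the "$\leq j$" bound of Lemma \ref{L: beau vectors} together with the $+1$-at-each-step observation genuinely pins the order to the first dimension plateau, and in particular that the chain cannot stabilize and then resume growing — which is exactly the abstract stabilization lemma for the increasing sequence $\Upsilon^1 \subset \Upsilon^2 \subset \cdots$ quoted just before the theorem statement, so I would invoke it explicitly. A secondary point to handle cleanly is the base of the induction for the orders of the generators: for $d = 2$ this is precisely Lemma \ref{L: beau vectors(2)}, and for the inductive step I would show $\twovector{\hatlmbu{j-2}}{0} \in \Upsilon^j \setminus \Upsilon^{j-1}$ using that the strict dimension increase at step $j$ is witnessed by exactly this new spanning vector. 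Assembling these pieces — the span description, the dimension monotonicity, the stabilization lemma, and Lemma \ref{L: beau vectors(2)} for the initial order — yields all three equivalences and the concluding structural statement.
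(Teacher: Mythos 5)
Your proposal is correct and follows essentially the same route as the paper, which presents this theorem as an immediate corollary of Lemma \ref{L: beau vectors} (the span description of $\Upsilon^j_{\lambda+i0}(r_\lambda)$, giving $\dim \Upsilon^j_{\lambda+i0}(r_\lambda)\le j$ with one new generator added per step) combined with the stabilization property of the chain $\Upsilon^1_{\lambda+i0}(r_\lambda)\subset\Upsilon^2_{\lambda+i0}(r_\lambda)\subset\cdots$; your write-up just makes the dimension bookkeeping explicit, with Lemma \ref{L: beau vectors(2)} anchoring the orders of the generators. The extra orbit-invariance argument via $\euF^{-1}_{\lambda+i0}(s)=1+(s-r_\lambda)\hat A_{\lambda+i0}(r_\lambda)$ is harmless but not needed once Lemma \ref{L: beau vectors} is used as stated.
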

\noindent
This theorem gives a criterion for the order of~$r_\lambda$ to be equal to $d$ but it is not very tangible.
To get a better criterion, one needs to find out when a vector
$$
  \twovector{\hatlmbu{j-2}}{0}, \ j=1,2,3,\ldots
$$
is a resonance vector of order $j.$ Lemma \ref{L: beau vectors(2)} gives an answer to this question
in the case of $j=2.$

\begin{thm} \label{T: order k case (2)} 
Let~$d$ be an integer $\geq 2.$ 
The order of the real resonance point~$r_\lambda$ is equal to~$d$ if and only if for some real~$s,$ and thus for any real~$s,$ all of the following vectors
\begin{equation} \label{F: (uu,T(0)uu) = 0(n)}
  \hat u_{\lambda+i0}(r_\lambda), \ \ \hatlmbu{1}, \ \ \hatlmbu{2}, \ \ \ldots, \ \ \hatlmbu{d-3}
\end{equation}
are orthogonal to the vector $\hat \psi$ but the vector $\hatlmbu{d-2}$ is not.
\end{thm}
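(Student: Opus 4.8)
The strategy is to combine the structural description of $\Upsilon_{\lambda+i0}(r_\lambda)$ from Theorem~\ref{T: beau vectors} with the criterion for a vector $\twovector{\hat u^{(j-2)}_{\lambda+i0}(s)}{0}$ to have order exactly $j$, reducing everything to computations with $\euF_{\lambda+i0}(s)$, $\euD_{\lambda+i0}(s)$ and the vectors $\hatlmbu{j}$ introduced in~(\ref{F: vj(l+i0)(s)}). By Theorem~\ref{T: beau vectors}, the order of $r_\lambda$ is $d$ if and only if the vectors
$$
  \hat u_{\lambda+i0}(r_\lambda),\ \hatlmbu{1},\ \ldots,\ \hatlmbu{d-2}
$$
are linearly independent while $\hatlmbu{d-1}$ lies in their span. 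So the plan is to show that this linear-dependence condition is equivalent to the orthogonality pattern asserted in the theorem: the vectors~(\ref{F: (uu,T(0)uu) = 0(n)}) are orthogonal to $\hat\psi$ but $\hatlmbu{d-2}$ is not.

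First I would work out, by induction on $j$, when the vector $\twovector{\hatlmbu{j-2}}{0}$ is a resonance vector of order $j$. The base cases $j=1$ (trivial) and $j=2$ (Lemma~\ref{L: beau vectors(2)}, equivalent to $\alpha=0$) are already available, together with the fact that once the order is $\geq 2$ one has $\alpha=0$, so that $\euD_{\lambda+i0}(s)=\brs{(s-r_\lambda)^2\scal{\hat\psi}{\hat u_{\lambda+i0}(s)}}^{-1}\cdot(-1)$ up to the obvious sign. The inductive step: applying $1+(r_\lambda-s)A_{\lambda+i0}(s)$ to $\twovector{\hatlmbu{j-2}}{0}$ and using the explicit form~(\ref{F: A+(s)=2 by 2 matrix}), then hitting the first component with $\euF^{-1}_{\lambda+i0}(s)$ and invoking Lemma~\ref{L: [...](-1)uu(s)=uu(0)} (which gives $\euF^{-1}_{\lambda+i0}(s)\hat u_{\lambda+i0}(s)=\hat u_{\lambda+i0}(r_\lambda)=\hatlmbu{0}$, and more generally lowers the superscript by one), one finds that the first component becomes
$$
  \hatlmbu{j-3} + (s-r_\lambda)^2\euD_{\lambda+i0}(s)\scal{\euF^*_{\lambda+i0}(s)\hat\psi}{\hatlmbu{j-2}}\,\hatlmbu{0}.
$$
The key algebraic simplification is $\scal{\euF^*_{\lambda+i0}(s)\hat\psi}{\hatlmbu{j-2}}=\scal{\hat\psi}{\euF_{\lambda+i0}(s)\hatlmbu{j-2}}=\scal{\hat\psi}{\hatlmbu{j-3}}$ (for $j\geq 3$), using~(\ref{F: vj(l+i0)(s)}). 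Hence the order of $\twovector{\hatlmbu{j-2}}{0}$ exceeds $j$ precisely when this first component lies in the span of $\hat u_{\lambda+i0}(r_\lambda),\ldots,\hatlmbu{j-3}$, and — tracking the recursion — this forces $\scal{\hat\psi}{\hatlmbu{j-3}}=0$ for each step. Iterating down to the bottom, the vectors $\hat u_{\lambda+i0}(r_\lambda),\hatlmbu{1},\ldots,\hatlmbu{d-3}$ must all be orthogonal to $\hat\psi$, and the order fails to reach $d+1$ (equivalently, $\hatlmbu{d-1}$ is dependent, equivalently $\twovector{\hatlmbu{d-2}}{0}$ is \emph{not} a resonance vector of order $d+1$) exactly when $\scal{\hat\psi}{\hatlmbu{d-2}}\neq 0$.

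Second, I would assemble these equivalences: order of $r_\lambda$ equals $d$ $\iff$ $\twovector{\hatlmbu{d-2}}{0}\in\Upsilon^d_{\lambda+i0}(r_\lambda)$ but $\twovector{\hatlmbu{d-1}}{0}\notin\Upsilon^{d+1}_{\lambda+i0}(r_\lambda)$ (using the stabilization property of the spaces $\Upsilon^j$) $\iff$ the vectors~(\ref{F: (uu,T(0)uu) = 0(n)}) are $\perp\hat\psi$ and $\hatlmbu{d-2}\not\perp\hat\psi$. Independence of $s$ follows because $\hatlmbu{j}$ can be re-expressed via Lemma~\ref{L: about euF(-1)}, $\euF^{-1}_{\lambda+i0}(s)=1+(s-r_\lambda)\hat A_{\lambda+i0}(r_\lambda)$, so each $\hatlmbu{j}$ is a polynomial in $\hat A_{\lambda+i0}(r_\lambda)$ applied to $\hat u_{\lambda+i0}(r_\lambda)$ with coefficients depending on $s$; a short argument (or direct comparison with the $s$-free description in Theorem~\ref{I:T: order k case}, which uses $\hat A_{\lambda+i0}^{j}(r_\lambda)\hat u_{\lambda+i0}(r_\lambda)$) shows the orthogonality conditions are insensitive to $s$.

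\textbf{Main obstacle.} The delicate point is the bookkeeping in the induction: showing that the first component of $(1+(r_\lambda-s)A_{\lambda+i0}(s))\twovector{\hatlmbu{j-2}}{0}$ lies in $\mathrm{span}\{\hat u_{\lambda+i0}(r_\lambda),\ldots,\hatlmbu{j-3}\}$ if and only if $\scal{\hat\psi}{\hatlmbu{j-3}}=0$ — one has to be careful that the ``extra'' term $(s-r_\lambda)^2\euD_{\lambda+i0}(s)\scal{\hat\psi}{\hatlmbu{j-3}}\hatlmbu{0}$ cannot be absorbed into the span by accident (i.e.\ that $\hatlmbu{0}$ is genuinely ``new'' at that stage, which is where the linear-independence half of Theorem~\ref{T: beau vectors} is essential), and that the equivalence cascades correctly from level $j$ down to the base level without gap. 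Handling the very small cases ($d=2$, $d=3$) separately and matching conventions on the empty list in~(\ref{F: (uu,T(0)uu) = 0(n)}) when $d=2$ will also require a little care, but is routine once the inductive mechanism is in place.
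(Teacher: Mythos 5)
Your outline follows the paper's own route: reduce via Theorem~\ref{T: beau vectors} to the question of when $\twovector{\hatlmbu{j-2}}{0}$ has order exactly $j$, apply $1+(r_\lambda-s)A_{\lambda+i0}(s)$ in the explicit form~(\ref{F: A+(s)=2 by 2 matrix}), telescope the inner product $\scal{\euF^*_{\lambda+i0}(s)\hat \psi}{\hatlmbu{j-2}}=\scal{\hat \psi}{\euF_{\lambda+i0}(s)\hatlmbu{j-2}}=\scal{\hat \psi}{\hatlmbu{j-3}}$, and induct with Lemma~\ref{L: beau vectors(2)} as the base. So in strategy you have reproduced the paper's proof.

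However, the displayed formula in your inductive step is off by one, and taken literally it kills the key step. The first component of $[1+(r_\lambda-s)A_{\lambda+i0}(s)]\twovector{\hatlmbu{j-2}}{0}$ is, using $\euF_{\lambda+i0}(s)\hatlmbu{k}=\hatlmbu{k-1}$ and $\hatlmbu{-1}=\hat u_{\lambda+i0}(s)$,
\begin{equation*}
  \euF_{\lambda+i0}(s)\hatlmbu{j-2}+(s-r_\lambda)^2\euD_{\lambda+i0}(s)\scal{\hat \psi}{\hatlmbu{j-3}}\,\hat u_{\lambda+i0}(s)
  = \hatlmbu{j-3}+c\,\hat u_{\lambda+i0}(s),\qquad c\sim\scal{\hat \psi}{\hatlmbu{j-3}},
\end{equation*}
so the ``extra'' vector is $\hat u_{\lambda+i0}(s)=\hatlmbu{-1}$, not $\hatlmbu{0}=\hat u_{\lambda+i0}(r_\lambda)$. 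If you instead hit the component with $\euF^{-1}_{\lambda+i0}(s)$ you get $\hatlmbu{j-2}+c\,\hatlmbu{0}$, but then the comparison span must also be pushed through $\euF^{-1}_{\lambda+i0}(s)$ and becomes the span of $\hatlmbu{1},\ldots,\hatlmbu{j-2}$. Your hybrid expression $\hatlmbu{j-3}+c\,\hatlmbu{0}$ pairs the untransformed first term with the transformed extra term; since $\hatlmbu{0}=\hat u_{\lambda+i0}(r_\lambda)$ is itself the first vector of the span $\set{\hat u_{\lambda+i0}(r_\lambda),\hatlmbu{1},\ldots,\hatlmbu{j-3}}$, the extra term would always be absorbed, the criterion $\scal{\hat\psi}{\hatlmbu{j-3}}=0$ would never emerge, and the ``main obstacle'' you flag (that $\hatlmbu{0}$ is ``genuinely new'') is false as stated. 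The repair is exactly what the paper does: keep the untransformed expression, and convert the required non-absorbability of $\hat u_{\lambda+i0}(s)$ into linear independence of $\hat u_{\lambda+i0}(s),\hat u_{\lambda+i0}(r_\lambda),\hatlmbu{1},\ldots,\hatlmbu{j-3}$, which follows from the independence half of Theorem~\ref{T: beau vectors} by applying the invertible operator $\euF_{\lambda+i0}(s)$. With that correction your induction closes and coincides with the paper's argument; the ``for some $s$, hence for all $s$'' clause then comes for free, since the order of $r_\lambda$ does not depend on $s$ and the equivalence is proved for each fixed non-resonant $s$ separately.
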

\begin{proof}
It can be seen that it is enough to prove the following assertion:
the order of the real resonance point~$r_\lambda$ is not less than~$d$ if and only if for some $s$ all of the following vectors
\begin{equation*}
  \hat u_{\lambda+i0}(r_\lambda), \ \ \hatlmbu{1}, \ \ \hatlmbu{2}, \ \ \ldots, \ \ \hatlmbu{d-3}
\end{equation*}
are orthogonal to the vector $\hat \psi.$ We prove this using induction on $d=2,3,\ldots.$

%
%
%

According to Theorem \ref{T: beau vectors}, the resonance point~$r_\lambda$ has order $\geq 3$ if and only if
the vector $\twovector{\hatlmbu{1}}{0}$ is a resonance vector of order~$3.$
The vector $\twovector{\hatlmbu{1}}{0}$ is a resonance vector of order~$3$ if and only if
\begin{equation} \label{F: [1+A(s)]twovec}
  [1+(r_\lambda-s)A_{\lambda+i0}(s)]\twovector{\hatlmbu{1}}{0}
\end{equation}
is a vector of order 2, which by Theorem \ref{T: beau vectors} is co-linear to a vector of the form $\twovector{\hat u_{\lambda+i0}(r_\lambda)}{b}.$
We calculate the first component of the vector (\ref{F: [1+A(s)]twovec}):
\begin{equation*}
 \begin{split}
    \euF_{\lambda+i0}(s)\hatlmbu{1} & + (s-r_\lambda)^2 \euD_{\lambda+i0}(s) \scal{\euF^*_{\lambda+i0}(s)\hat \psi}{\hatlmbu{1}} \hat u_{\lambda+i0}(s)
    \\ & =  \hat u_{\lambda+i0}(r_\lambda) + (s-r_\lambda)^2 \euD_{\lambda+i0}(s) \scal{\hat \psi}{\hat u_{\lambda+i0}(r_\lambda)} \hat u_{\lambda+i0}(s)
    \\ & =  \hat u_{\lambda+i0}(r_\lambda) - \frac{\scal{\hat \psi}{\hat u_{\lambda+i0}(r_\lambda)}}{\scal{\hat \psi}{\hat u_{\lambda+i0}(s)}} \hat u_{\lambda+i0}(s),
 \end{split}
\end{equation*}
where the second equality follows from definition~(\ref{F: def of euA(s)}) of~$\euD_{\lambda+i0}(s)$ and $\alpha = 0.$
Hence, the vector $\twovector{\hatlmbu{1}}{0}$ is a resonance vector of order~$3$ if and only if the vector
\begin{equation*}
    \hat u_{\lambda+i0}(r_\lambda) - \frac{\scal{\hat \psi}{\hat u_{\lambda+i0}(r_\lambda)}}{\scal{\hat \psi}{\hat u_{\lambda+i0}(s)}} \hat u_{\lambda+i0}(s)
\end{equation*}
is non-zero and co-linear to the vector $\hat u_{\lambda+i0}(r_\lambda).$
On the other hand, by Theorem~\ref{T: beau vectors} the vector $\twovector{\hatlmbu{1}}{0}$ has order three if and only if the vectors
$\hat u_{\lambda+i0}(r_\lambda)$ and $\hatlmbu{1}$ are linearly independent. Since the operator $\euF_{\lambda+i0}(s)$
is invertible, this holds if and only if the vectors
$$
  \euF_{\lambda+i0}(s) \hat u_{\lambda+i0}(r_\lambda) = \hat u_{\lambda+i0}(s)
$$
and
$$
  \euF_{\lambda+i0}(s)\hatlmbu{1} = \hat u_{\lambda+i0}(r_\lambda)
$$
are linearly independent. We conclude that $\twovector{\hatlmbu{1}}{0}$ is a vector of order 3 if and only if
$$
  \scal{\hat \psi}{\hat u_{\lambda+i0}(r_\lambda)} = 0.
$$
If this is the case then the vector space~$\Upsilon^3_{\lambda+i0}(r_\lambda)$ is three-dimensional and is generated by vectors
$$
  \twovector{0}{1}, \ \ \twovector{\hat u_{\lambda+i0}(r_\lambda)}{0} \ \ \text{and} \ \ \twovector{\hatlmbu{1}}{0},
$$
which have orders $1,2$ and $3$ respectively.
We have also proved that $d=2$ if and only if $\scal{\hat \psi}{\hat u_{\lambda+i0}(r_\lambda)} \neq 0.$
This gives the induction base.

Now assuming that the assertion holds for order of~$r_\lambda$ less than $d$ it will be proved for order of~$r_\lambda$ equal to $d.$
According to Theorem~\ref{T: beau vectors}, the resonance point~$r_\lambda$ has order $\geq d$ iff
the vector
$$
  \twovector{\hatlmbu{d-2}}{0}
$$
is a resonance vector of order~$d.$
In its turn, this vector
is a resonance vector of order~$d$ iff
\begin{equation} \label{F: u(k)=twovector}
  [1+(r_\lambda-s)A_{\lambda+i0}(s)]\twovector{\hatlmbu{d-2}}{0}
\end{equation}
is a vector of order $d-1,$
which by Lemma \ref{L: beau vectors} is a linear combination of vectors
$$
  \twovector{0}{1}, \ \twovector{\hat u_{\lambda+i0}(r_\lambda)}{0}, \ \twovector{\hatlmbu{1}}{0}, \ldots, \twovector{\hatlmbu{d-3}}{0}.
$$
The first component of the vector (\ref{F: u(k)=twovector}) is
\begin{equation} \label{F: bloody lengthy expression}
 \begin{split}
    \euF_{\lambda+i0}(s)\hatlmbu{d-2} & + (s-r_\lambda)^2 \euD_{\lambda+i0}(s) \scal{\euF^*_{\lambda+i0}(s)\hat \psi}{\hatlmbu{d-2}} \hat u_{\lambda+i0}(s)
    \\ & =  \hatlmbu{d-3} + (s-r_\lambda)^2 \euD_{\lambda+i0}(s) \scal{\hat \psi}{\hatlmbu{d-3}} \hat u_{\lambda+i0}(s)
    \\ & =  \hatlmbu{d-3} - \frac{\scal{\hat \psi}{\hatlmbu{d-3}}}{\scal{\hat \psi}{\hat u_{\lambda+i0}(s)}} \hat u_{\lambda+i0}(s).
 \end{split}
\end{equation}
Thus, order of~$r_\lambda \geq d$ iff this vector is a linear combination of the vectors
$$
  \hat u_{\lambda+i0}(r_\lambda), \hatlmbu{1}, \ldots, \hatlmbu{d-3}.
$$
By Theorem~\ref{T: beau vectors}, order~$r_\lambda \geq d$ iff the vectors
$$
  \hat u_{\lambda+i0}(r_\lambda), \hatlmbu{1}, \hatlmbu{2}, \ldots, \hatlmbu{d-2}
$$
are linearly independent.
Since the operator $\euF_{\lambda+i0}(s)$
is invertible, this holds iff  the vectors
$$
  \hat u_{\lambda+i0}(s), \hat u_{\lambda+i0}(r_\lambda), \hatlmbu{1}, \ldots, \hatlmbu{d-3}
$$
are linearly independent. It can now be concluded that order of~$r_\lambda$ is $\geq d$ iff the coefficient of $\hat u_{\lambda+i0}(s)$ in~(\ref{F: bloody lengthy expression}) is zero,
that is, iff $\scal{\hat \psi}{\hatlmbu{d-3}}=0.$ Combined with the induction assumption, this completes the proof.
\end{proof}
\begin{thm} \label{T: order k case} 
Let~$d$ be an integer not less than two.
The order of the real resonance point~$r_\lambda$ is equal to~$d$ if and only if the vectors
\begin{equation} \label{F: (uu,T(0)uu) = 0}
  \hat u_{\lambda+i0}(r_\lambda), \ \hat A_{\lambda+i0}(r_\lambda)\hat u_{\lambda+i0}(r_\lambda), \ldots, \hat A_{\lambda+i0}^{d-3}(r_\lambda)\hat u_{\lambda+i0}(r_\lambda)
\end{equation}
are orthogonal to the vector $\hat \psi$ but the vector $\hat A_{\lambda+i0}^{d-2}(r_\lambda)\hat u_{\lambda+i0}(r_\lambda)$ is not.
If this is the case, then for all $j=1,2,\ldots,d$ the vector space~$\Upsilon^j_{\lambda+i0}(r_\lambda)$ is $j$-dimensional and is generated by vectors
\begin{equation} \label{F: generators of Ups(j)}
  \twovector{0}{1}, \ \ \twovector{\hat u_{\lambda+i0}(r_\lambda)}{0}, \ \ \twovector{\hat A_{\lambda+i0}(r_\lambda)\hat u_{\lambda+i0}(r_\lambda)}{0}, \ \ \ldots, \ \
   \twovector{\hat A^{j-2}_{\lambda+i0}(r_\lambda)\hat u_{\lambda+i0}(r_\lambda)}{0},
\end{equation}
which have orders $1,2,\ldots,j$ respectively.
\end{thm}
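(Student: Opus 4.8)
The plan is to deduce Theorem \ref{T: order k case} from Theorem \ref{T: order k case (2)} by translating the orthogonality conditions on the vectors $\hatlmbu{j}$, which involve the inverses $\euF^{-j}_{\lambda+i0}(s)$, into orthogonality conditions on the vectors $\hat A_{\lambda+i0}^{j}(r_\lambda)\hat u_{\lambda+i0}(r_\lambda)$. The bridge is Lemma \ref{L: about euF(-1)}, which gives $\euF^{-1}_{\lambda+i0}(s) = 1 + (s-r_\lambda)\hat A_{\lambda+i0}(r_\lambda)$, so that
\begin{equation*}
  \hatlmbu{j} = \brs{1 + (s-r_\lambda)\hat A_{\lambda+i0}(r_\lambda)}^{j} \hat u_{\lambda+i0}(r_\lambda),
\end{equation*}
a polynomial in $\hat A_{\lambda+i0}(r_\lambda)$ of degree $j$ applied to $\hat u_{\lambda+i0}(r_\lambda)$, with leading coefficient $(s-r_\lambda)^j$. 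First I would record that, since $\hat A_{\lambda+i0}(r_\lambda)$ and $\euF^{-1}_{\lambda+i0}(s)$ commute, the linear span of $\set{\hat u_{\lambda+i0}(r_\lambda), \hatlmbu{1}, \ldots, \hatlmbu{k}}$ equals the linear span of $\set{\hat u_{\lambda+i0}(r_\lambda), \hat A_{\lambda+i0}(r_\lambda)\hat u_{\lambda+i0}(r_\lambda), \ldots, \hat A^{k}_{\lambda+i0}(r_\lambda)\hat u_{\lambda+i0}(r_\lambda)}$ for every $k$ (these are two triangular changes of basis with unit diagonal up to the nonzero factors $(s-r_\lambda)^j$), and that under this identification the images of the two families of generators in \eqref{F: beau vectors to infty} and \eqref{F: generators of Ups(j)} span the same subspaces of $\clK$.

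Next I would carry out the key inductive step. The goal is to show, by induction on $m = 0,1,2,\ldots$, that the condition ``$\hat \psi \perp \hatlmbu{j}$ for all $j = 0,1,\ldots,m$'' is equivalent to ``$\hat \psi \perp \hat A^{j}_{\lambda+i0}(r_\lambda)\hat u_{\lambda+i0}(r_\lambda)$ for all $j = 0,1,\ldots,m$''. For $m=0$ both conditions read $\scal{\hat \psi}{\hat u_{\lambda+i0}(r_\lambda)} = 0$ and are identical. Assuming the equivalence for $m-1$, the $m$-th vector $\hatlmbu{m} = \sum_{j=0}^{m} \binom{m}{j}(s-r_\lambda)^j \hat A^j_{\lambda+i0}(r_\lambda)\hat u_{\lambda+i0}(r_\lambda)$; pairing with $\hat \psi$, the terms with $j \leq m-1$ all vanish by the induction hypothesis (which, applied from either side, kills $\scal{\hat \psi}{\hat A^j_{\lambda+i0}(r_\lambda)\hat u_{\lambda+i0}(r_\lambda)}$ for $j\le m-1$), leaving $\scal{\hat \psi}{\hatlmbu{m}} = (s-r_\lambda)^m \scal{\hat \psi}{\hat A^m_{\lambda+i0}(r_\lambda)\hat u_{\lambda+i0}(r_\lambda)}$. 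Since $s \neq r_\lambda$ (as $s$ is non-resonant and $r_\lambda$ is resonant), the factor $(s-r_\lambda)^m$ is nonzero, so $\scal{\hat \psi}{\hatlmbu{m}} = 0$ if and only if $\scal{\hat \psi}{\hat A^m_{\lambda+i0}(r_\lambda)\hat u_{\lambda+i0}(r_\lambda)} = 0$. Applying this for $m = 0,1,\ldots,d-3$ converts the ``orthogonal'' clauses of Theorem \ref{T: order k case (2)} into those of Theorem \ref{T: order k case}; applying it once more at $m = d-2$, and using that when the first $d-2$ orthogonality conditions hold the identity $\scal{\hat \psi}{\hatlmbu{d-2}} = (s-r_\lambda)^{d-2}\scal{\hat \psi}{\hat A^{d-2}_{\lambda+i0}(r_\lambda)\hat u_{\lambda+i0}(r_\lambda)}$ is exact, converts the ``not orthogonal'' clause as well. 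This gives the ``if and only if'' in the theorem.

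Finally, for the last assertion — the explicit generators of $\Upsilon^j_{\lambda+i0}(r_\lambda)$ with their orders — I would invoke the corresponding statement of Theorem \ref{T: order k case (2)} (or equivalently Theorem \ref{T: beau vectors}): when the order of $r_\lambda$ equals $d$, the space $\Upsilon^j_{\lambda+i0}(r_\lambda)$ is $j$-dimensional and spanned by $\twovector{0}{1}, \twovector{\hat u_{\lambda+i0}(r_\lambda)}{0}, \twovector{\hatlmbu{1}}{0}, \ldots, \twovector{\hatlmbu{j-2}}{0}$, the $\nu$-th listed vector having order $\nu$. By the triangular change of basis noted above (with unit diagonal after dividing by the nonzero powers $(s-r_\lambda)^{\nu}$), replacing each $\hatlmbu{\nu}$ by $\hat A^{\nu}_{\lambda+i0}(r_\lambda)\hat u_{\lambda+i0}(r_\lambda)$ gives another basis of $\Upsilon^j_{\lambda+i0}(r_\lambda)$; and since the change of basis is upper triangular, the vector $\twovector{\hat A^{\nu-2}_{\lambda+i0}(r_\lambda)\hat u_{\lambda+i0}(r_\lambda)}{0}$ is congruent modulo $\Upsilon^{\nu-1}_{\lambda+i0}(r_\lambda)$ to a nonzero multiple of $\twovector{\hatlmbu{\nu-2}}{0}$, hence has order exactly $\nu$, as claimed. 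The main obstacle I anticipate is purely bookkeeping: making the two triangular changes of basis fully precise — in particular, checking that the degree-$\nu$ polynomial identity relating $\hatlmbu{\nu}$ to $\set{\hat A^{0}_{\lambda+i0}(r_\lambda)\hat u_{\lambda+i0}(r_\lambda),\ldots,\hat A^{\nu}_{\lambda+i0}(r_\lambda)\hat u_{\lambda+i0}(r_\lambda)}$ really has nonzero leading coefficient and that the order filtration is preserved — but there is no analytic difficulty beyond what is already contained in Theorem \ref{T: order k case (2)} and Lemma \ref{L: about euF(-1)}.
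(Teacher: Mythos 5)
Your proposal is correct and follows essentially the same route as the paper: the paper's own proof simply notes via Lemma~\ref{L: about euF(-1)} that $\hatlmbu{j} = \SqBrs{1+(s-r_\lambda)\hat A_{\lambda+i0}(r_\lambda)}^{j}\hat u_{\lambda+i0}(r_\lambda)$ and declares the theorem a direct consequence of Theorem~\ref{T: order k case (2)}. Your triangular change-of-basis induction is just an explicit write-up of that same reduction, so there is nothing to add.
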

\begin{proof} By Lemma~\ref{L: about euF(-1)} and by definition (\ref{F: vj(l+i0)(s)}) of the vectors $\hatlmbu{j}$ we have the equality
$$
  \hatlmbu{j} =  \SqBrs{1 + (s-r_\lambda)\hat A_{\lambda+i0}(r_\lambda)}^j \hat u_{\lambda+i0}(r_\lambda).
$$
Hence, the assertion to be proved is a direct consequence of Theorem~\ref{T: order k case (2)}.
\end{proof}
\begin{cor} \label{C: basis of Psi(j)} Under the conditions of Theorem~\ref{T: order k case}, if~$r_\lambda$ has order~$d$ then
the vector space~$\Psi_{\lambda+i0}(r_\lambda)$ is~$d$-dimensional and is generated by vectors
$$
  \twovector{\hat \psi}{0}, \ \ \twovector{\hat J\hat u_{\lambda+i0}(r_\lambda)}{0},
    \ \ \ldots, \ \
   \twovector{\hat J\hat A^{d-3}_{\lambda+i0}(r_\lambda)\hat u_{\lambda+i0}(r_\lambda)}{0},
   \twovector{\hat J\hat A^{d-2}_{\lambda+i0}(r_\lambda)\hat u_{\lambda+i0}(r_\lambda)}{\scal{\hat \psi}{\hat A^{d-2}_{\lambda+i0}(r_\lambda)\hat u_{\lambda+i0}(r_\lambda)}},
$$
which have orders $1,2,\ldots,d$ respectively. Further, the second component of the last vector is non-zero.
\end{cor}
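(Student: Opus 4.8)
The statement describes the structure of $\Psi_{\lambda+i0}(r_\lambda)$ when the embedded-eigenvalue resonance point $r_\lambda$ has order $d\geq 2$. By Theorem~\ref{T: order k case} we already know $\Upsilon_{\lambda+i0}(r_\lambda)$ is $d$-dimensional, spanned by the vectors in~(\ref{F: generators of Ups(j)}), of orders $1,2,\dots,d$. The plan is to transport this basis through the linear isomorphism $J\colon \Upsilon_{\lambda+i0}(r_\lambda)\to\Psi_{\lambda+i0}(r_\lambda)$ supplied by Lemma~\ref{L: j-dimensions coincide}, which also preserves order. So the heart of the argument is a bookkeeping computation: apply the $2\times 2$ matrix $J$ of~(\ref{F: J=(J,uu;uu,alpha)}) to each generator
$$
  \twovector{\hat A^{k}_{\lambda+i0}(r_\lambda)\hat u_{\lambda+i0}(r_\lambda)}{0}, \qquad k=0,1,\dots,d-2,
$$
and read off the result.

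First I would compute $J\twovector{\hat w}{0}=\twovector{\hat J\hat w}{\scal{\hat\psi}{\hat w}}$ for a generic $\hat w\in\hat\clK$. For $k=0,1,\dots,d-3$ Theorem~\ref{T: order k case} tells us that $\hat w=\hat A^{k}_{\lambda+i0}(r_\lambda)\hat u_{\lambda+i0}(r_\lambda)$ is orthogonal to $\hat\psi$, so the second component vanishes and we obtain exactly the first $d-1$ vectors listed in the corollary. For $k=d-2$ the orthogonality fails, again by Theorem~\ref{T: order k case}, so $\scal{\hat\psi}{\hat A^{d-2}_{\lambda+i0}(r_\lambda)\hat u_{\lambda+i0}(r_\lambda)}\neq 0$; this gives the last generator and simultaneously the final assertion that its second component is non-zero. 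Since $J$ is a linear isomorphism $\Upsilon_{\lambda+i0}(r_\lambda)\to\Psi_{\lambda+i0}(r_\lambda)$, the images of a basis form a basis, so $\Psi_{\lambda+i0}(r_\lambda)$ is $d$-dimensional with the stated spanning set.

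It remains to check the claim about orders. By Lemma~\ref{L: j-dimensions coincide} the map $J\colon\Upsilon^j_{\lambda+i0}(r_\lambda)\to\Psi^j_{\lambda+i0}(r_\lambda)$ is a linear isomorphism for every $j$; hence if $u\in\Upsilon_{\lambda+i0}(r_\lambda)$ has order $j$ (i.e.\ lies in $\Upsilon^j$ but not $\Upsilon^{j-1}$) then $Ju\in\Psi^j\setminus\Psi^{j-1}$, i.e.\ $Ju$ has co-resonance order $j$. Applying this to the generator of order $j$ in~(\ref{F: generators of Ups(j)}) yields the co-resonance vector of order $j$ in the corollary's list, for $j=1,\dots,d$. (One must be slightly careful: the generator $\twovector{0}{1}=F\chi$ has order $1$, and $J\twovector{0}{1}=\twovector{\hat\psi}{0}$, consistent with the first listed vector.)

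The main obstacle — such as it is — is purely arithmetic: making sure the block multiplication of the $2\times 2$ matrix $J$ against the column vectors is carried out correctly and that the vanishing/non-vanishing of $\scal{\hat\psi}{\hat A^k_{\lambda+i0}(r_\lambda)\hat u_{\lambda+i0}(r_\lambda)}$ is invoked for precisely the right ranges of $k$ (vanishing for $k\le d-3$, non-vanishing at $k=d-2$), exactly as delivered by Theorem~\ref{T: order k case}. There is no deep difficulty beyond that; the real content was already established in Theorem~\ref{T: order k case}, and the present corollary is its image under the order-preserving isomorphism $J$.
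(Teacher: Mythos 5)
Your proposal is correct and follows essentially the same route as the paper: identify $\Psi_{\lambda+i0}(r_\lambda)$ as $J\Upsilon_{\lambda+i0}(r_\lambda)$ via Lemma~\ref{L: j-dimensions coincide}, apply the block form~(\ref{F: J=(J,uu;uu,alpha)}) of $J$ to the basis~(\ref{F: generators of Ups(j)}) from Theorem~\ref{T: order k case}, and use the orthogonality relations $\scal{\hat\psi}{\hat A^k_{\lambda+i0}(r_\lambda)\hat u_{\lambda+i0}(r_\lambda)}=0$ for $k\le d-3$ (and non-vanishing at $k=d-2$) to kill all but the last second component. Your explicit order-preservation argument via the isomorphisms $J\colon\Upsilon^j\to\Psi^j$ is exactly what the paper's proof relies on, merely spelled out in more detail.
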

\begin{proof} By Lemma~\ref{L: j-dimensions coincide}, the vector space $\Psi_{\lambda+i0}(r_\lambda)$ is the image of
$\Upsilon_{\lambda+i0}(r_\lambda)$ under the mapping~$J.$ Applying the operator~$J$ given by~(\ref{F: J=(J,uu;uu,alpha)})
to the vectors~(\ref{F: generators of Ups(j)}), which by Theorem~\ref{T: order k case} generate the vectors space $\Upsilon^j_{\lambda+i0}(r_\lambda)$,
one infers that~$d$ vectors
$$
  \twovector{\hat \psi}{0}, \ \ \twovector{\hat J\hat u_{\lambda+i0}(r_\lambda)}{\scal{\hat \psi}{\hat u_{\lambda+i0}(r_\lambda)}},
     \ \ \twovector{\hat J\hat A_{\lambda+i0}(r_\lambda)\hat u_{\lambda+i0}(r_\lambda)}{\scal{\hat \psi}{\hat A_{\lambda+i0}(r_\lambda)\hat u_{\lambda+i0}(r_\lambda)}}, \ \ \ldots, \ \
   \twovector{\hat J\hat A^{d-2}_{\lambda+i0}(r_\lambda)\hat u_{\lambda+i0}(r_\lambda)}{\scal{\hat \psi}{\hat A^{d-2}_{\lambda+i0}(r_\lambda)\hat u_{\lambda+i0}(r_\lambda)}}
$$
form a basis of $\Psi_{\lambda+i0}(r_\lambda).$ It is left to note that by Theorem~\ref{T: order k case} the second components of all these vectors except the last one
are zero.
\end{proof}

\subsection{Type I vectors for an embedded eigenvalue}

\label{Page: u pm and A pm}
In order to simplify formulas, we write $\hat u_+$ instead of $\hat u_{\lambda+i0}(r_\lambda)$ and $\hat A_+$ instead of $\hat A_{\lambda+i0}(r_\lambda):$
\begin{equation} \label{F: u pm and A pm}
  \hat u_\pm = \hat u_{\lambda \pm i0}(r_\lambda), \quad \hat A_\pm = \hat A_{\lambda \pm i0}(r_\lambda), \quad \hat B_\pm = \hat B_{\lambda \pm i0}(r_\lambda).
\end{equation}

For convenience we introduce the notation \label{Page: aj(pm)}
\begin{equation} \label{F: aj(pm)}
  a_{j,\pm} := \scal{\hat \psi}{\hat A^j_{\lambda\pm i0}(r_\lambda)\hat u_{\lambda\pm i0}(r_\lambda)}.
\end{equation}
In what follows, a vector $f \in \hat \clK$ will often be identified with the vector $\twovector{f}{0} \in \clK.$
Also, the vector $\twovector{0}{1} \in \clK$ will be written as 1.
By Theorem~\ref{T: order k case}, vectors
$$
  \hat A_+^{d-2}\hat u_+, \hat A_+^{d-3}\hat u_+, \ldots, \hat A_+\hat u_+, \hat u_+, 1
$$
form a basis of $\Upsilon_{\lambda+i0}(r_\lambda).$
By Corollary~\ref{C: basis of Psi(j)}, vectors
$$
  \hat B_-^{d-1}\hat \psi + a_{d-2,-}, \hat B_-^{d-2}\hat \psi, \ldots, \hat B_-\hat \psi, \hat \psi
$$
form a basis of $\Psi_{\lambda-i0}(r_\lambda).$

The following lemma directly follows from Theorem \ref{T: on vectors with property L}, but we still give another proof.
\begin{lemma} \label{L: hat u+=hat u-} Let~$k$ be a positive integer. If $d \geq 2k+1,$ then $\hat u_+ = \hat u_-,$
 $\hat A_+ \hat u_+ = \hat A_- \hat u_-,$ \ldots, $\hat A_+^{k-1} \hat u_+ = \hat A_-^{k-1} \hat u_-.$
\end{lemma}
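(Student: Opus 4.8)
The statement is a special case of Theorem~\ref{T: on vectors with property L} applied in the embedded-eigenvalue setting, so the plan is to connect the concrete description of resonance vectors from Theorem~\ref{T: order k case} and Corollary~\ref{C: basis of Psi(j)} with the notion of type~I vector, and then invoke Theorem~\ref{T: on vectors with property L}. First I would observe that by Theorem~\ref{T: order k case}, when the order $d$ of $r_\lambda$ satisfies $d\geq 2k+1$, the vectors $\hat u_+,\hat A_+\hat u_+,\ldots,\hat A_+^{k-1}\hat u_+$ correspond (after embedding $f\mapsto\twovector{f}{0}$) to resonance vectors $u^{(2)},u^{(3)},\ldots,u^{(k+1)}$ of orders $2,3,\ldots,k+1$ inside $\Upsilon_{\lambda+i0}(r_\lambda)$, since the generator $\twovector{\hat A_{\lambda+i0}^{j-2}(r_\lambda)\hat u_{\lambda+i0}(r_\lambda)}{0}$ has order $j$. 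Indeed each such vector equals $\bfA_{\lambda+i0}^{d-j}(r_\lambda)$ applied to a top-order vector $u^{(d)}$, so $\hat A_+^{i}\hat u_+$ (embedded) has order $i+2$.

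Next, since $d\geq 2k+1$, we have $k+1\leq \lceil d/2\rceil$, so Theorem~\ref{T: on vectors with property L} applies: the vectors $u^{(1)},u^{(2)},\ldots,u^{(\lceil d/2\rceil)}$ obtained by $u^{(j)}=\bfA_{\lambda\pm i0}^{d-j}(r_\lambda)u^{(d)}$ are all of type~I. In particular $u^{(2)},\ldots,u^{(k+1)}$, i.e. the embedded vectors $\hat u_+,\hat A_+\hat u_+,\ldots,\hat A_+^{k-1}\hat u_+$, are of type~I. By Theorem~\ref{T: type I vectors}, a vector of type~I lies in both $\Upsilon_{\lambda+i0}(r_\lambda)$ and $\Upsilon_{\lambda-i0}(r_\lambda)$ and has the same order in each, and moreover, by condition~(\ref{Item: 5}) of that theorem (or equality~(\ref{F: bfA j(+)=bfA j(-)})), $\bfA^j_{\lambda+i0}(r_\lambda)u = \bfA^j_{\lambda-i0}(r_\lambda)u$ for all $j$. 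Applying this to $u=\hat A_+^{k-1}\hat u_+$ (embedded) with $j=1$ and descending, together with Lemma~\ref{L: about euF(-1)} which identifies powers of $\hat A_+$ with the concrete description of the resonance vectors, yields $\hat A_+^i\hat u_+ = \hat A_-^i\hat u_-$ for $i=0,1,\ldots,k-1$: the top-order generator on the $\lambda-i0$ side must, by Theorem~\ref{T: order k case} applied to $\lambda-i0$, have the analogous concrete form, and the type~I equalities force the two Jordan chains to agree.

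The main subtlety will be matching indices carefully: one must check that the vector $\hat A_+^i\hat u_+$, once embedded as $\twovector{\hat A_+^i\hat u_+}{0}$, is genuinely a resonance vector of order $i+2$ (not merely $\leq i+2$), which is exactly the content of the last clause of Theorem~\ref{T: order k case}, and that the orthogonality conditions $a_{j,\pm}=0$ for $j\leq d-3$ (needed to make the chain long enough) hold on \emph{both} sides $\lambda\pm i0$ — this follows since the order $d$ of $r_\lambda$ is the same for $z=\lambda+i0$ and $z=\lambda-i0$ by Corollary~\ref{C: r(bar z)=bar r(z)}. A secondary point is to pass cleanly between the abstract statement of Theorem~\ref{T: on vectors with property L} (phrased in terms of $u^{(j)}=\bfA^{k-j}_{\lambda\pm i0}(r_\lambda)u^{(k)}$) and the concrete generators $\twovector{\hat A_{\lambda+i0}^{j-2}(r_\lambda)\hat u_{\lambda+i0}(r_\lambda)}{0}$; this is a routine but bookkeeping-heavy identification using Lemma~\ref{L: about euF(-1)} and~(\ref{F: vj(l+i0)(s)}). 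Given those identifications, the conclusion $\hat A_+^i\hat u_+ = \hat A_-^i\hat u_-$ for $i=0,\ldots,k-1$ is immediate from the type~I property.
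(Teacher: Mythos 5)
Your overall strategy is one the paper itself flags as possible: the sentence immediately preceding the lemma says it ``directly follows from Theorem~\ref{T: on vectors with property L}, but we still give another proof''. The paper's actual proof, however, is a short self-contained induction on $k$ that never mentions type~I vectors: from $a_{2n,\pm}=0$ (Theorem~\ref{T: order k case}, using $2n\le d-3$) one writes $0=\scal{\hat\psi}{(\hat T_+\hat J)^{2n}\hat T_+\hat\psi}$, splits the power symmetrically, uses the induction hypothesis to replace $\hat A_-^{n-1}\hat u_-$ by $\hat A_+^{n-1}\hat u_+$, obtains $\scal{\hat J\hat A_+^{n-1}\hat u_+}{\Im \hat T_+\hat J\hat A_+^{n-1}\hat u_+}=0$, and concludes $\Im\hat T_+\hat J\hat A_+^{n-1}\hat u_+=0$ by positivity of $\Im\hat T_+$, hence $\hat A_+^{n}\hat u_+=\hat A_-^{n}\hat u_-$. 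Your proposal bypasses this mechanism, and that is where it has a genuine gap.

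The gap is the last inference, which you describe as ``immediate'' and ``routine bookkeeping''. The lemma is a statement about the reduced operators on $\hat\clK$, namely $(\hat T_\pm\hat J)^i\hat T_\pm\hat\psi$, whereas the type~I property you obtain for the embedded vectors $\twovector{\hat A_+^i\hat u_+}{0}$ is a statement about the full operators on $\clK$: it gives membership in $\Upsilon^{i+2}_{\lambda-i0}(r_\lambda)$ with the same order, and $A_{\lambda+i0}(s)u=A_{\lambda-i0}(s)u$, $\bfA^j_{\lambda+i0}(r_\lambda)u=\bfA^j_{\lambda-i0}(r_\lambda)u$. Membership with equal order only yields that $\hat A_+^i\hat u_+$ lies in the span of $\hat u_-,\hat A_-\hat u_-,\ldots,\hat A_-^i\hat u_-$ (and conversely), not the exact equalities $\hat A_+^i\hat u_+=\hat A_-^i\hat u_-$; so ``the type~I equalities force the two Jordan chains to agree'' is precisely the claim that needs proof. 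Extracting it from $A_{\lambda+i0}(s)u=A_{\lambda-i0}(s)u$ via the block formula (\ref{F: A+(s)=2 by 2 matrix}) means matching Laurent coefficients at $s=r_\lambda$, where the terms $(r_\lambda-s)\euD_\pm(s)\scal{\euF^*_\pm(s)\hat\psi}{\hat A_+^i\hat u_+}\,\hat u_{\lambda\pm i0}(s)$ are singular and the cross terms involve mixed quantities like $\scal{\hat\psi}{\hat A_{\lambda-i0}(s)\hat A_+^i\hat u_+}$ whose vanishing order is not known without an induction — i.e.\ work of the same nature and size as the paper's own argument. A secondary inaccuracy: the generators $\twovector{\hat A_+^{j-2}\hat u_+}{0}$ are \emph{not} equal to $\bfA^{d-j}_{\lambda+i0}(r_\lambda)$ applied to a single top-order vector; the explicit computations of $\bfA_{\lambda+i0}(r_\lambda)$ for $d=2,3$ in subsection~\ref{SS: about idempotent} show the nilpotent maps the order-$j$ generator to a multiple of the order-$(j-1)$ generator plus a nonzero component along $\twovector{0}{1}$. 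That part is repairable (type~I vectors form a subspace, the geometric multiplicity here is one, so every vector of order $\le\lceil d/2\rceil$ is of type~I, and $k+1\le\lceil d/2\rceil$ when $d\ge 2k+1$), but the decisive passage from type~I of the embedded vectors to the hatted identities remains unproved in your write-up.
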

\begin{proof} If $k=1,$ then $d \geq 3,$ and therefore, by Theorem~\ref{T: order k case}, $a_{0,+}$ and $a_{0,-}$ are zero,
that is,
$$
  0 = \scal{\hat \psi}{\hat u_\pm} = \scal{\hat \psi}{\hat T_\pm\hat \psi}.
$$
It follows that $\sqrt{\Im \hat T_+}\hat \psi=0,$ and therefore $\hat T_+\hat \psi = \hat T_-\hat \psi,$ that is, $\hat u_+ = \hat u_-.$

Assume that the assertion is true for $k=n$ and let $k=n+1.$ Then $d \geq 2n+3$ and therefore, by Theorem~\ref{T: order k case},
$a_{2n,\pm} = 0,$ that is,
$$
  \scal{\hat \psi}{(\hat T_+J)^{2n}\hat T_+\hat \psi} = 0.
$$
This implies that
\begin{equation*}
  \begin{split}
    0  & = \scal{(J\hat T_-)^{n}\hat \psi}{(\hat T_+J)^n\hat T_+\hat \psi}
    \\ & = \scal{J\hat A_-^{n-1}\hat u_+}{\hat T_+J\hat A_+^{n-1}\hat u_+}
    \\ & = \scal{J\hat A_+^{n-1}\hat u_+}{\hat T_+J\hat A_+^{n-1}\hat u_+},
  \end{split}
\end{equation*}
where the last equality follows from the induction assumption.
From this it follows that $(\Im \hat T_+)J\hat A_+^{n-1}\hat u_+= 0,$ that is,
$$
  \hat A_+ \hat A_+^{n-1}\hat u_+ = \hat A_- \hat A_+^{n-1}\hat u_+ = \hat A_- \hat A_-^{n-1}\hat u_-,
$$
where the last equality follows from the induction assumption.
\end{proof}


\subsection{Idempotents $P_{\lambda\pm i0}(r_\lambda)$ and $Q_{\lambda\pm i0}(r_\lambda)$}
\label{SS: about idempotent}

In this subsection we calculate the idempotents $P_{\lambda\pm i0}(r_\lambda).$
Since by~(\ref{F: lambda in hat (H,F)}) the operator-function $T_{\lambda+i0}(\hat H_s)$ is holomorphic at $s = r_\lambda,$ the functions
$T_{\lambda+i0}(\hat H_s)$ and $\scal{\hat \psi}{\hat u_{\lambda+i0}(s)}$
can be expanded into a Taylor series convergent in some neighbourhood of $s = r_\lambda$ as follows:
\begin{equation*}
    T_{\lambda+i0}(\hat H_s) = \sum_{k=0}^\infty (-1)^k (s-r_\lambda)^k \hat A^k_{\lambda+i0}(r_\lambda)T_{\lambda+i0}(\hat H_{r_\lambda}).
\end{equation*}
From this we obtain the equality
\begin{equation} \label{F: Neumann for uu+(s)}
    \scal{\hat \psi}{\hat u_{\lambda+i0}(s)} = \scal{\hat \psi}{T_{\lambda+i0}(\hat H_s)\hat \psi}
    = a_{0,+} - a_{1,+}(s-r_\lambda) + a_{2,+}(s-r_\lambda)^2 - a_{3,+}(s-r_\lambda)^3 + \ldots
\end{equation}
If $d$ is the order of~$r_\lambda,$ then by Theorem~\ref{T: order k case}, we have
$$
  a_{0,\pm} = \ldots = a_{d-3,\pm} = 0
$$
and the number $a_{d-2,\pm}$ is non-zero.

We shall need a Taylor series expansion for the function
$$
  (r_\lambda-s)\euD_{\lambda+i0}(s) = -\frac 1{\alpha +(r_\lambda-s)\scal{\hat\psi}{\hat u_{\lambda+i0}(s)}}.
$$
For this, we write the first few terms of the Taylor expansion of the inverse function:
\begin{equation} \label{F: (c0+c1s+...)(-1)}
  \begin{split}
  (c_0 - c_1(s-r_\lambda) & + c_2 (s-r_\lambda)^2 - c_3 (s-r_\lambda)^3 + \ldots)^{-1} \\ & = \frac 1{c_0} + \frac {c_1}{c_0^2}(s-r_\lambda) + \frac {c_1^2 - c_0c_2}{c_0^3}(s-r_\lambda)^2
     + \frac {c_1^3 -2c_0c_1c_2 + c_3c_0^2}{c_0^4}(s-r_\lambda)^3
   \\ & \qquad\qquad\qquad  + \frac {c_1^4 -3c_0c_1^2c_2 + 2c_0^2c_1c_3+c_0^2c_2^2-c_0^3c_4}{c_0^5}(s-r_\lambda)^4 + \ldots
  \end{split}
\end{equation}
Using the equality~(\ref{F: A+(s)=2 by 2 matrix}) for $A_{\lambda+i0}(s)$ and Proposition~\ref{P: Pz(rz)=res Az(s)}, one can calculate
the idempotent $P_{\lambda+i0}(r_\lambda)$ for points of not too high order.

\subsubsection{Order $d = 1$}
By Lemma~\ref{L: beau vectors}, the order~$d$ of the resonance point~$r_\lambda$ is equal to 1 if and only if
$\alpha \neq 0.$ If $\alpha \neq 0,$ then one can see that the $(1,1)$-entry of the matrix~(\ref{F: A+(s)=2 by 2 matrix}) is holomorphic at $s = r_\lambda,$
and therefore its residue vanishes. Hence, in this case $(1,1)$-entries of the idempotents $P_{\lambda\pm i0}(r_\lambda)$ are also zero,
and as a result these idempotents have rank one:
$$
  P_{\lambda\pm i0}(r_\lambda) = \left(\begin{matrix}
    0 & 0 \\
    \alpha^{-1} \la \hat \psi, \cdot \ra & 1
  \end{matrix}\right).
$$
It follows that
$$  Q_{\lambda\pm i0}(r_\lambda) = \left(\begin{matrix}
    0 & \alpha^{-1} \hat \psi \\
    0 & 1
  \end{matrix}\right)
\quad \text{and} \quad
  Q_{\lambda-i0}(r_\lambda)JP_{\lambda+i0}(r_\lambda) = \left(\begin{matrix}
    \alpha^{-1} \la \hat \psi, \cdot \ra\hat \psi & \hat \psi \\
    \la \hat \psi, \cdot \ra & \alpha
  \end{matrix}\right).
$$
Hence, $Q_{\lambda-i0}(r_\lambda)JP_{\lambda+i0}(r_\lambda)$ is a rank one operator with the range generated by vector $\twovector{\hat \psi}{\alpha}.$
Also, in this case the operators $\bfA_{\lambda\pm i0}(r_\lambda)$ are zero.

\subsubsection{Order $d = 2$}
\label{SSS: order 2}
\ By Theorem~\ref{T: order k case},
in this case $\alpha = 0$ and $\scal{\hat \psi}{\hat u_+} \neq 0.$
Since $\alpha = 0,$ it follows from definition~(\ref{F: def of euA(s)}) of $\euD_{\lambda+i0}(s),$
(\ref{F: Neumann for uu+(s)}) and (\ref{F: (c0+c1s+...)(-1)}) that
\begin{equation*}
  \begin{split}
     -\euD_{\lambda+i0}(s) & = \frac 1{(s-r_\lambda)^2\scal{\hat \psi}{\hat u_{\lambda+i0}(s)}}
     \\ & = \frac 1{(s-r_\lambda)^2} \brs{a_{0,+} - a_{1,+}(s-r_\lambda) + a_{2,+}(s-r_\lambda)^2 - \ldots}^{-1}
     \\ & = \frac 1{a_{0,+}} (s-r_\lambda)^{-2} + \frac {a_{1,+}}{a_{0,+}^2}(s-r_\lambda)^{-1} \ + \frac {a_{1,+}^2 - a_{0,+}a_{2,+}}{a_{0,+}^3} + \ldots
  \end{split}
\end{equation*}
Therefore the coefficient of $(s-r_\lambda)^{-1}$ in the $(1,1)$-entry of $A_{\lambda+i0}(s)$
is equal to
$
  a_{0,+}^{-1}\la \hat \psi, \cdot \ra\hat u_+
$
and the coefficient of $(s-r_\lambda)^{-1}$ in the $(2,1)$-entry of $A_{\lambda+i0}(s)$ is equal to
$$
  - \frac {a_{1,+}}{a_{0,+}^2}\la \hat \psi, \cdot \ra + \frac 1{a_{0,+}}\scal{\hat B_-\hat \psi}{\cdot}.
$$
Therefore, the idempotent operator $P_{\lambda+i0}(r_\lambda)$ is given by equality
\begin{equation} \label{F: P+ 2x2}
 P_{\lambda+i0}(r_\lambda) =
  \left(
    \begin{matrix}
       \frac 1{a_{0,+}}\la \hat \psi, \cdot \ra\hat u_+ & 0 \\
       - \frac {a_{1,+}}{a_{0,+}^2} \la \hat \psi, \cdot \ra + \frac 1{a_{0,+}}\scal{\hat B_-\hat \psi}{\cdot} & 1
    \end{matrix}
  \right).
\end{equation}
Similarly,
\begin{equation} \label{F: P- 2x2}
 P_{\lambda-i0}(r_\lambda) =
  \left(
    \begin{matrix}
       \frac 1{a_{0,-}}\la \hat \psi, \cdot \ra\hat u_- & 0 \\
       - \frac {a_{1,-}}{a_{0,-}^2} \la \hat \psi, \cdot \ra + \frac 1{a_{0,-}}\scal{\hat B_+\hat \psi}{\cdot} & 1
    \end{matrix}
  \right).
\end{equation}
Using these equalities one can check that in general $P_{\lambda+i0}(r_\lambda)P_{\lambda-i0}(r_\lambda) \neq P_{\lambda+i0}(r_\lambda).$
That is, the resonance point~$r_\lambda$ in general does not have property~$S.$
Further, since by~(\ref{F: Laurent for A+(s)}) the operator $\bfA_{\lambda \pm i0}(r_\lambda)$ is the coefficient of $(s-r_\lambda)^{-2}$ in the Laurent expansion of $A_{\lambda+i0}(s)$
at $r = r_\lambda,$ it follows from~(\ref{F: A+(s)=2 by 2 matrix}) that
$$
  \bfA_{\lambda \pm i0}(r_\lambda) = \left(
    \begin{matrix}
      0 & 0 \\
      - \frac {1}{\scal{\hat \psi}{\hat u_\pm}}\la \hat \psi, \cdot \ra & 0
    \end{matrix}
  \right).
$$
One can calculate that
\begin{equation*}
 \begin{split}
    & Q_{\lambda-i0}(r_\lambda)JP_{\lambda+i0}(r_\lambda)
  \\ & =    \left(
       \begin{matrix}
         \frac 1{\abs{a_{0,+}}^2}\scal{\hat u_+}{\hat B_+ \hat \psi}\la \hat \psi, \cdot \ra\hat \psi
             - 2\Re \frac {a_{1,+}}{a_{0,+}^2}\la \hat \psi, \cdot \ra\hat \psi + \frac 1{a_{0,+}}\scal{\hat B_-\hat \psi}{\cdot}\hat \psi
                     + \frac 1{\bar a_{0,+}}\la \hat \psi, \cdot \ra\hat B_-\hat \psi & \hat \psi \\
         \la \hat \psi, \cdot \ra  & 0
       \end{matrix}
     \right).
 \end{split}
\end{equation*}
A similar equality holds also for $Q_{\lambda+i0}(r_\lambda)JP_{\lambda-i0}(r_\lambda),$ which shows
that in general $$Q_{\lambda-i0}(r_\lambda)JP_{\lambda+i0}(r_\lambda) \neq Q_{\lambda+i0}(r_\lambda)JP_{\lambda-i0}(r_\lambda).$$
It follows from Proposition~\ref{P: property S}, that in general the point~$r_\lambda$ does not have property $S.$

From~(\ref{F: P+ 2x2}) and~(\ref{F: P- 2x2}) one can see that
the kernel of the idempotent $P_{\lambda \pm i0}(r_\lambda)$ is given by
$$
  \ker P_{\lambda \pm i0}(r_\lambda) = \set{\hat \phi - \scal{\hat \psi}{\hat u_\pm}^{-1}\scal{\hat \psi}{\hat A_\pm \hat \phi}\cdot 1 \ \ \colon \hat \phi \perp \hat \psi}.
$$

If $\hat J = 0,$ then it follows that
$$
  \ker P_{\lambda \pm i0}(r_\lambda) = {\linspan {\set{1, \hat \psi}}}^\perp.
$$
Thus, in this case the kernels $\ker P_{\lambda - i0}(r_\lambda)$ and $\ker P_{\lambda - i0}(r_\lambda)$ are equal, which
is the definition of resonance points with property $S.$  
Since the vector spaces $\Upsilon_{\lambda \pm i0}(r_\lambda) = \linspan {\set{1, \hat u_\pm}}$ are in general different, we conclude that
there exist real resonance points with property $S$ for which $\Upsilon_{\lambda+i0}(r_\lambda) \neq \Upsilon_{\lambda-i0}(r_\lambda).$
By Theorem~\ref{T: P(+)=P(-)}, it follows that in this case~$r_\lambda$ is not a resonance point of type~I.
Hence, this gives an example of a resonance point of order two with property~$S$ which is not of type~I.

These examples give a proof of the second part of Proposition~\ref{P: points without property S exist}.

\subsubsection{Order $d = 3$}

By Theorem~\ref{T: order k case}, in this case the vectors $\hat u_+$ and $\hat\psi$ are orthogonal while $\hat u_+$ and $\hat A_+ \hat\psi$ are not, so that
\begin{equation} \label{F: (uu,uu+)=0,(uu,uu-)=0}
  a_{0,+} = \scal{\hat \psi}{\hat u_+} = 0 \quad \text{and} \quad a_{1,+} = \scal{\hat \psi}{\hat A_+\hat u_+} \neq 0.
\end{equation}
The first of these two equalities implies that $\scal{\hat \psi}{\Im T_+ \hat \psi} = 0,$ and therefore $\Im T_+\hat \psi=0.$ It follows that
\begin{equation} \label{F: uu(+)=uu(-)}
  \hat u_+=\hat u_-, \quad \hat B_+\hat \psi=\hat B_-\hat \psi \quad \text{and} \quad
  a_{1,+} = \scal{\hat \psi}{\hat A_-\hat u_-} = a_{1,-}.
\end{equation}
Further, it follows from~(\ref{F: (uu,uu+)=0,(uu,uu-)=0}) that
the first term of the Neumann series~(\ref{F: Neumann for uu+(s)}) for $\scal{\hat \psi}{\hat u_{\lambda+i0}(s)}$ vanishes:
$$
  \scal{\hat \psi}{\hat u_{\lambda+i0}(s)} = -a_{1,+}(s-r_\lambda) + a_{2,+}(s-r_\lambda)^2 + \ldots
$$
and we get
\begin{equation*}
 \begin{split}
  \euD_{\lambda+i0}(s) & = - \frac 1{(s-r_\lambda)^2\scal{\hat \psi}{\hat u_{\lambda+i0}(s)}} \\
     & = \frac 1{(s-r_\lambda)^3} \brs{\frac 1{a_{1,+}} + \frac {a_{2,+}}{a_{1,+}^2}(s-r_\lambda)  + \frac {a_{2,+}^2 - a_{1,+}a_{3,+}}{a_{1,+}^3}(s-r_\lambda)^2 + \ldots}.
 \end{split}
\end{equation*}
Also,
$$
  1 +(r_\lambda-s)\hat B_{\lambda-i0}(s) = 1 +(r_\lambda-s) \hat B_- + (s-r_\lambda)^2\hat B_-^2 +(r_\lambda-s)^3\hat B_-^3 + \ldots
$$
and
$$
  \hat u_{\lambda+i0}(s) = \hat u_+ +(r_\lambda-s) \hat A_+ \hat u_+  + (s-r_\lambda)^2 \hat A_+^2 \hat u_+ - \ldots
$$
From this we find the coefficient of $\frac 1{s-r_\lambda}$ in the $(1,1)$-entry of $A_{\lambda+i0}(s),$ that is, the $(1,1)$-entry of the idempotent $P_{\lambda+i0}(r_\lambda):$
$$
  \hat P_+ := - \frac {a_{2,+}}{a_{1,+}^2} \la \hat \psi, \cdot \ra \hat u_+ + \frac 1{a_{1,+}}\brs{\la \hat \psi, \cdot \ra \hat A_+\hat u_+ + \scal{\hat B_-\hat \psi}{\cdot} \hat u_+},
$$
and we also find the $(2,1)$-entry of $A_{\lambda+i0}(s):$
$$
  \frac {a_{2,+}^2 - a_{1,+}a_{3,+}}{a_{1,+}^3}\la \hat \psi, \cdot \ra - \frac {a_{2,+}}{a_{1,+}^2}\scal{\hat B_-\hat \psi}{\cdot} + \frac 1{a_{1,+}}\scal{\hat B_-^2\hat \psi}{\cdot}.
$$
Hence,
$$
  P_{\lambda + i0}(r_\lambda) = \left(
       \begin{matrix}
         - \frac {a_{2,+}}{a_{1,+}^2} \la \hat \psi, \cdot \ra \hat u_+ + \frac 1{a_{1,+}}\brs{\la \hat \psi, \cdot \ra \hat A_+\hat u_+ + \scal{\hat B_-\hat \psi}{\cdot} \hat u_+} & 0 \\
         \frac {a_{2,+}^2 - a_{1,+}a_{3,+}}{a_{1,+}^3}\la \hat \psi, \cdot \ra - \frac {a_{2,+}}{a_{1,+}^2}\scal{\hat B_-\hat \psi}{\cdot} + \frac 1{a_{1,+}}\scal{\hat B_-^2\hat \psi}{\cdot} & 1
       \end{matrix}
     \right).
$$
The structure of this operator becomes a bit more transparent, if it is written as a matrix in the basis $(\hat B_-^2\hat \psi + a_{1,-}, \hat B_-\hat \psi, \hat \psi)$
of the range of $Q_{\lambda - i0}(r_\lambda)$ and in the basis $(\hat A_+\hat u_+, \hat u_+, 1)$ of the range of $P_{\lambda + i0}(r_\lambda)$ as follows:
$$
  P_{\lambda + i0}(r_\lambda) = \left(
       \begin{matrix}
         0  & 0 & \frac 1{a_{1,+}} \\
         0  & \frac 1{a_{1,+}} & - \frac {a_{2,+}}{a_{1,+}^2} \\
         \frac 1{a_{1,+}}  &  - \frac {a_{2,+}}{a_{1,+}^2}  & \frac {a_{2,+}^2 - a_{1,+}a_{3,+}}{a_{1,+}^3}
       \end{matrix}
     \right).
$$
Similarly, one can find $P_{\lambda - i0}(r_\lambda).$
Further, one can calculate that
$$
  \bfA_{\lambda + i0}(r_\lambda) = \left(
       \begin{matrix}
         - \frac 1{a_{1,+}}\la \hat \psi, \cdot \ra\hat u_+ & 0 \\
         \frac{a_{2,+}}{a_{1,+}^2} \la \hat \psi, \cdot \ra - \frac{1}{a_{1,+}} \scal{\hat B_-\hat \psi}{\cdot} & 0
       \end{matrix}
     \right).
$$
In the pair of bases $(\hat B_-^2\hat \psi + a_{1,-}, \hat B_-\hat \psi, \hat \psi)$
and $(\hat A_+\hat u_+, \hat u_+, 1)$
this operator takes the form
$$
  \bfA_{\lambda + i0}(r_\lambda) = \left(
       \begin{matrix}
         0 & 0 & 0 \\
         0  & 0 & -\frac 1{a_{1,+}} \\
         0  & -\frac 1{a_{1,+}} & \frac {a_{2,+}}{a_{1,+}^2}
       \end{matrix}
     \right).
$$
One can check that the (1,1)-entries $\hat P_+$ and $\hat P_-$ of the idempotents $P_{\lambda + i0}(r_\lambda)$ and $P_{\lambda - i0}(r_\lambda)$
satisfy the equality $\hat P_+\hat P_- = \hat P_+.$ This implies that the image of the operator $P_{\lambda + i0}(r_\lambda)P_{\lambda - i0}(r_\lambda) - P_{\lambda + i0}(r_\lambda)$ consists of
vectors of order 1.

These examples also show how to calculate the idempotents $P_{\lambda \pm i0}(r_\lambda)$
and nilpotent operators $\bfA_{\lambda \pm i0}(r_\lambda)$ in the case of arbitrary order.

\subsection{Example of calculation of resonance index}
\label{SS: resonance index: example}

The function $A_{\lambda+i0}(s)$ of the coupling constant $s$ has an eigenvalue $\sigma_\lambda(s)=(s-r_\lambda)^{-1}.$
When $\lambda+i0$ is shifted to $\lambda+iy$ with small positive $y,$ the eigenvalue $\sigma_\lambda(s)$ in general splits
into $N_\pm$ non-real eigenvalues in~$\mbC_\pm$ respectively. The difference $N_+-N_-$ is the resonance index.
To calculate the resonance index we need to find eigenvalues of $A_{\lambda+iy}(s)$ which belong to the group of
the eigenvalue $\sigma_\lambda(s),$ that is, which converge to $\sigma_\lambda(s)$ as $y \to 0^+.$
The eigenvalue equation
$$
  A_z u = \sigma u,
$$
where $u = \twovector{\hat u}{1}$ and where $A_z(s)$ is given in~(\ref{F: Az(s)=2x2 matrix}),
leads to the following two equations:
\begin{gather*}
    \hat A_z(s)\hat u + (r_\lambda-s)\euD_z(s)\scal{\euF_{z}^*(s)\hat \psi}{\hat u} \hat u_z(s) + \SqBrs{1 + (s-r_\lambda)\euD_z(s)\big((s-r_\lambda) \scal{\hat u_{\bar z}(s)}{\hat \psi} - \alpha\big)} \hat u_z(s) = \sigma \hat u,
\\
     \euD_z(s) \scal{\euF_{z}^*(s)\hat \psi}{\hat u} - \euD_z(s)\big((s-r_\lambda)\scal{\hat u_{\bar z}(s)}{\hat \psi} - \alpha\big) = \sigma.
\end{gather*}
From the second equality it follows that the first equality is equivalent to
$$
  \hat A_z(s) \hat u + \sigma (r_\lambda-s)\hat u_z(s) + \hat u_z(s) = \sigma \hat u.
$$
We consider the case $\hat J = 0.$ In this case $\hat A_z(s) = 0,$ $\euF_z(s) = 1,$
and $\scal{\hat \psi}{\hat u_z(s)}$ does not depend on~$s$ and is equal to $\scal{\hat \psi}{\hat u_z(r_\lambda)}.$ 
The first equation becomes
\begin{gather*}
  \sigma (r_\lambda-s)\hat u_z(s) + \hat u_z(s) = \sigma \hat u.
\end{gather*}
while the second equation turns into (using $\scal{\hat u_{\bar z}(s)}{\hat \psi} = \scal{\hat \psi}{\hat u_{z}(s)}$)
\begin{gather*}
  \euD_z(s) \brs{\scal{\hat \psi}{\hat u} + (r_\lambda-s)\scal{\hat \psi}{\hat u_{z}(s)} + \alpha} = \sigma.
\end{gather*}
If we exclude the vector $\hat u$ from these two equations we obtain the following quadratic equation for $\sigma:$
\begin{equation} \label{F: quad eq-n}
  \sigma ^2 - \sigma\euD_z(s)\brs{2 (r_\lambda - s)\scal{\hat \psi}{\hat u_z(s)} + \alpha} - \euD_z(s) \scal{\hat \psi}{\hat u_z(s)} = 0.
\end{equation}
We consider first the case of $\alpha = 0.$ In this case by definition~(\ref{F: def of euA(s)}) of~$\euD_z(s)$
$$
  \euD_z(s) = -\brs{iy + (s-r_\lambda)^2\scal{\hat \psi}{\hat u_z(s)}}^{-1},
$$
where as usual $z = \lambda+iy.$
Let
\begin{equation} \label{F: w(y)}
  w(y) = - \euD_z(s) \scal{\hat \psi}{\hat u_z(s)} = \scal{\hat \psi}{\hat u_z(s)}\brs{iy + (s-r_\lambda)^2\scal{\hat \psi}{\hat u_z(s)}}^{-1}.
\end{equation}
The equation~(\ref{F: quad eq-n}) for~$\sigma$ then becomes
$$
  \sigma^2 - 2(s-r_\lambda)w\sigma + w = 0.
$$
Its roots are
$$
  \sigma_{1,2}(y) = (s-r_\lambda) w \pm \sqrt{(s-r_\lambda)^2 w^2-w},
$$
where we agree that the complex square root belongs to either the upper half-plane or the positive semi-axis.
From~(\ref{F: w(y)}) one can find that as $y \to 0^+$
$$
  w(y) = (s-r_\lambda)^{-2} - \frac{iy}{\scal{\hat \psi}{\hat u_{\lambda+i0}(s)}}(s-r_\lambda)^{-4} + O(y^2).
$$
It follows that
$$
  (s-r_\lambda)^2 w^2-w = - \frac{iy}{\scal{\hat \psi}{\hat u_{\lambda+i0}(s)}}(s-r_\lambda)^{-4} + O(y^2).
$$

Let $\rho e^{i2\theta}$ be the polar form of $\frac{i}{\scal{\hat \psi}{\hat u_{\lambda+i0}(s)}}.$ Then one can see that
$$
  \sigma_{1,2}(y) = (s-r_\lambda)^{-1} \pm \sqrt {\rho y} e^{i\theta}(s-r_\lambda)^{-2} + O(y).
$$
Since $\scal{\hat \psi}{\hat u_{\lambda+i0}(s)} \neq 0,$ it follows that the roots approach $(s-r_\lambda)^{-1}$ from different half-planes~$\mbC_\pm.$ Therefore, in the case of
$\alpha = 0$ the resonance index is equal to $1-1=0.$

\bigskip

Now let $\alpha \neq 0.$ Since the resonance index does not depend on $s,$ to simplify calculations we choose $s = 1 + r_\lambda.$
In this case $\euD_z^{-1}(s) = \alpha - iy - \scal{\hat \psi}{\hat u_z}$ and the eigenvalue $\sigma_\lambda(s) = (s-r_\lambda)^{-1}$ is equal to~1.
One can calculate that the roots of~(\ref{F: quad eq-n}) are given by
$$
  \sigma_{1,2} = \frac{\alpha - 2\scal{\hat \psi}{\hat u_z} \pm \sqrt{\alpha^2-4iy\scal{\hat \psi}{\hat u_z}}}{2(\alpha -iy - \scal{\hat \psi}{\hat u_z})}
    = \frac{\alpha - 2\scal{\hat \psi}{\hat u_z} \pm \abs{\alpha} \brs{1-2iy\scal{\hat \psi}{\hat u_z}/\alpha^2 + O(y^2)}}{2(\alpha -iy - \scal{\hat \psi}{\hat u_z})}.
$$
Hence the root, which approaches the eigenvalue $\sigma_\lambda(s)=1$ as $y \to 0^+$ is 
$$
  \sigma(y) = \frac{\alpha - \scal{\hat \psi}{\hat u_z} - iy\scal{\hat \psi}{\hat u_z}/\alpha + O(y^2)}{\alpha -iy - \scal{\hat \psi}{\hat u_z}}
  = 1 + \frac{iy - iy\scal{\hat \psi}{\hat u_z}/\alpha + O(y^2)}{\alpha -iy - \scal{\hat \psi}{\hat u_z}}.
$$
Since
$$
  \sigma'(0) = \frac{i - i\scal{\hat \psi}{\hat u_{\lambda+i0}(r_\lambda)}/\alpha}{\alpha - \scal{\hat \psi}{\hat u_{\lambda+i0}(r_\lambda)}} = \frac i{\alpha},
$$
the root $\sigma(y)$ approaches 1 from above (and moreover, under a right angle), if $\alpha >0,$ and from below, if $\alpha<0.$
It follows that, in case of $\hat J = 0,$
$$
  \ind_{res}(\lambda;H_{r_\lambda},V) = \sign \alpha.
$$

\subsection{Examples of resonance points of orders three and four (in finite dimensions)}
One feature of resonance index theory is that it makes sense and gives non-trivial results for spectral points~$\lambda$
outside of essential spectrum (that is, for classical spectral flow) and even in finite dimensions.
For example, assume that there is a straight path of self-adjoint matrices $H_r = H_0 +rV;$
then the eigenvalues of~$H_r$ are analytic functions of~$r$ which may have extrema, or
critical points. Critical points of eigenvalues of~$H_r$ may have different orders. A natural question is how to construct
a path of self-adjoint matrices, such that an eigenvalue of the path has a critical point of a given order?
Theorem ~\ref{T: order k case} indicates how to construct such examples.

\subsubsection{Example 1}
Let
$$
  H_0 = \left(
 \begin{matrix}
   \lambda + \eps &  0 &  0 \\
   0 &  \lambda - \eps &  0 \\
   0 &  0 &  \lambda \\
 \end{matrix}
\right).
$$
Since~$\lambda$ is an eigenvalue of~$H_0,$
the point $r = 0$ is a resonant at~$\lambda$ point of the path~$H_0+rV$
for any perturbation~$V.$
The direction
$$
  V_1 = \left(
 \begin{matrix}
   0 &  0 &  1 \\
   0 &  0 &  1 \\
   1 &  1 &  0 \\
 \end{matrix}
\right)
$$
is not regularizing for the matrix~$H_0:$~$\lambda$ is a common eigenvalue of all operators~$H_r.$
That the perturbation $V_1$ is not regularizing can also be seen from the fact that the condition~(\ref{F: psi u(s)neq 0}) fails.

The following direction is regularizing:
$$
  V_2 = \left(
 \begin{matrix}
   1 &  0 &  1 \\
   0 &  1 &  1 \\
   1 &  1 &  0 \\
 \end{matrix}
\right).
$$
Since $\alpha = 0,$ the order of resonance point $r = 0$ is at least two.
Resonance index of the triple $(\lambda,H_0,V_2)$ is equal to $2-1=1.$

\subsubsection{Example 2}
For the matrix
$$
H_0=\left(
 \begin{matrix}
   \lambda + 1 &  0 &  0 &  0 \\
   0 &  \lambda + 1 &  0 &  0 \\
   0 &  0 &  \lambda - 1/2 &  0 \\
   0 &  0 &  0 &  \lambda \\
 \end{matrix}
\right)
$$
the direction
$$
V_1=\left(
 \begin{matrix}
   -2 &  0 &  0 &  1 \\
   0 &  -2 &  0 &  1 \\
   0 &  0 &  1 &  1 \\
   1 &  1 &  1 &  0 \\
 \end{matrix}
\right)
$$
is also not regularizing at~$\lambda$ for the same reason as above: the condition~(\ref{F: psi u(s)neq 0}) fails.

If $V_2$ is chosen as
$$V_2=\left(
 \begin{matrix}
   -4 &  0 &  0 &  1 \\
   0 &  -1 &  0 &  1 \\
   0 &  0 &  1 &  1 \\
   1 &  1 &  1 &  0 \\
 \end{matrix}
\right),
$$
then the condition~(\ref{F: (uu,T(0)uu) = 0}) holds with $d=3.$ As a result, the point $r = 0$ has order at least three.
According to Theorem \ref{T: order k case},
the order of the resonance point~$r_\lambda = 0$ is in fact three, since for the perturbation~$V_2$ the following condition fails:
\begin{equation*}
  \scal{\hat \psi}{T_{\lambda+i0}(\hat H_0)\hat JT_{\lambda+i0}(\hat H_0)\hat \psi} = 0.
\end{equation*}
But the regularizing direction
$$
V_3=\left(
 \begin{matrix}
   -3 &  0 &  0 &  1 \\
   0 &  -1 &  0 &  1 \\
   0 &  0 &  1 &  1 \\
   1 &  1 &  1 &  0 \\
 \end{matrix}
\right)
$$
satisfies the condition~(\ref{F: (uu,T(0)uu) = 0}) for $d=4,$
and, therefore, the corresponding resonance point~$r_\lambda = 0$ has order $4.$
Computer shows that $\ind_{res}(\lambda,H_0,V_2) = 2 - 1 = 1$ and $\ind_{res}(\lambda,H_0,V_3) = 2 - 2 = 0.$

%

\section{Open problems}
\label{S: open problems}

\subsection{On points $\lambda$ which are not essentially regular}
According to Theorem \ref{T: infty mult-ty then not essentially regular}, if a real number $\lambda$ is an eigenvalue of infinite multiplicity of any of operators from the affine space
$\clA,$ then $\lambda$ is not essentially regular. Is there a real number $\lambda$ which is not essentially regular and such that some operator $H \in \clA$
has finite multiplicity in a neighbourhood of~$\lambda?$

\subsection{Some questions about resonance matrix}
In section \ref{S: res.index and sign res matrix} it was shown (Theorem \ref{T: res.ind=sign res.matrix}) that the finite-rank self-adjoint operators
$Q_{\lambda+i0}(r_\lambda)JP_{\lambda-i0}(r_\lambda)$ and $Q_{\lambda-i0}(r_\lambda)JP_{\lambda+i0}(r_\lambda)$
have equal signatures. In subsection \ref{SS: property S} it was shown that
if a real resonance point~$r_\lambda$ has the generic property~$S$ then these operators
are in fact equal and vice versa, but points without property~$S$ also exist.

Do the spectral measures of operators $Q_{\lambda+i0}(r_\lambda)JP_{\lambda-i0}(r_\lambda)$ and $Q_{\lambda-i0}(r_\lambda)JP_{\lambda+i0}(r_\lambda)$ coincide?
What meaning do eigenvalues of self-adjoint operators $Q_{\lambda-i0}(r_\lambda)JP_{\lambda+i0}(r_\lambda)$ and $Q_{\lambda+i0}(r_\lambda)JP_{\lambda-i0}(r_\lambda)$ have?

\subsection{Some questions about type I points}

In section \ref{S: res point of type I} it was shown that if~$r_\lambda$ is a resonance point of type~I,
then $P_{\lambda+i0}(r_\lambda)=P_{\lambda-i0}(r_\lambda)$ and in particular $\Upsilon_{\lambda+i0}(r_\lambda)=\Upsilon_{\lambda-i0}(r_\lambda).$

\begin{ques}
Does $P_{\lambda+i0}(r_\lambda)=P_{\lambda-i0}(r_\lambda)$ imply that~$r_\lambda$ is of type~I?
\end{ques}

This question is a special case of the following question:
if a vector belongs to both vector spaces $\Upsilon_{\lambda+i0}(r_\lambda)$ and $\Upsilon_{\lambda-i0}(r_\lambda),$ then is it true that
(1) $u$ is a vector of type I, or at least (2)
order of $u$ as an element of $\Upsilon_{\lambda+i0}(r_\lambda)$ and $\Upsilon_{\lambda-i0}(r_\lambda)$ is the same.



%

\subsection{On multiplicity of~$H_0$}
Recall that a self-adjoint operator~$H_0$ on a Hilbert space $\hilb$ has multiplicity $m,$ if~$m$ is the smallest
of positive integers~$k$ such that for some~$k$ vectors $f_1, \ldots, f_k,$ the linear span
of vectors~$H_0^i f_j,$ $i=1,2,\ldots$ and $j=1,\ldots,k,$ is dense in the Hilbert space~$\hilb.$
\begin{conj} \label{Conj: Geom mult of H0}
If a self-adjoint operator~$H_0 \in \clA$ has multiplicity~$m,$ then for every essentially regular number~$\lambda$ at which~$H_0$ is resonant
the dimension of the vector space~$\Upsilon^1_{\lambda+i0}(r_\lambda)$ is not larger than $m.$
\end{conj}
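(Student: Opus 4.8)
\emph{Strategy.} Since $H_0$ is resonant at $\lambda$, the point $r_\lambda = 0$ is a resonance point of the line $\set{H_0+rV \colon r \in \mbR}$ for every regularizing direction $V$, so that $H_{r_\lambda} = H_0$; moreover, by Theorem~\ref{T: mult of s.c. spectrum: drastic version} the subspace $\Upsilon^1_{\lambda+i0}(r_\lambda)$ depends only on the triple $(H_0,\lambda,F)$, so we are free to choose $V$ conveniently. The plan is to split the argument according to whether $\lambda$ lies in the common essential spectrum $\sigma_{ess}$. If $\lambda \notin \sigma_{ess}$, then by Corollary~\ref{C: Az 4.1.10} the hypothesis that $H_0$ is resonant at $\lambda$ forces $\lambda$ to be an eigenvalue of $H_0$, and Theorem~\ref{T: if lambda notin ess sp, ...} then gives that $F$ restricts to a linear isomorphism of the eigenspace $\clV_\lambda$ of $H_0$ onto $\Upsilon^1_{\lambda+i0}(r_\lambda)$; hence $\dim \Upsilon^1_{\lambda+i0}(r_\lambda) = \dim \clV_\lambda$. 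It then remains to quote the classical fact that the multiplicity of an eigenvalue of a self-adjoint operator never exceeds its cyclic multiplicity $m$: in a spectral representation $\hilb \cong \int^\oplus \mathfrak h_x\,\rho(dx)$ of $H_0$ with $\dim \mathfrak h_x \leq m$ one has $\clV_\lambda \cong \mathfrak h_\lambda$, so $\dim\clV_\lambda \leq m$. This disposes of the conjecture outside the essential spectrum and already covers the classical (spectral-flow) situation.

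\emph{The essential-spectrum case.} For $\lambda \in \sigma_{ess}$ I would identify $\Upsilon^1_{\lambda+i0}(r_\lambda)$ with a space of generalized eigenvectors of $H_0$ living in the rigged Hilbert space $\hilb_-$. Fix a regularizing $V$ and a non-resonant $s$. Given $u \in \Upsilon^1_{\lambda+i0}(r_\lambda)$, put $\chi_u := F^{-1}u \in \hilb_-$, using that $F \colon \hilb_- \to \clK$ is a unitary isomorphism. Applying $F^{-1}$ to the resonance equation $u = sT_{\lambda+i0}(H_s)Ju = s\lim_{y\to 0^+}FR_{\lambda+iy}(H_s)F^*Ju$ and using that $\norm{\chi}_{\hilb_-}=\norm{F\chi}_{\clK}$ for $\chi \in \euD \subset \dom F$, one finds that the vectors $R_{\lambda+iy}(H_s)V\chi_u = R_{\lambda+iy}(H_s)F^*Ju \in \euD$ converge in $\hilb_-$ as $y\to 0^+$ to $\chi_u/s$, so that $\chi_u = s\lim_{y\to 0^+}R_{\lambda+iy}(H_s)V\chi_u$. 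From the identity $(H_0-\lambda)R_{\lambda+iy}(H_s)V\chi_u = V\chi_u + iyR_{\lambda+iy}(H_s)V\chi_u - sVR_{\lambda+iy}(H_s)V\chi_u$, the fact that $\lambda$ is not an eigenvalue of $H_s$ (Proposition~\ref{P: Az 4.1.10}, whence $yR_{\lambda+iy}(H_s)\to 0$ strongly on $\hilb\ni V\chi_u$), and the convergence $VR_{\lambda+iy}(H_s)V\chi_u \to V\chi_u/s$ in $\hilb_+$, one obtains $(H_0-\lambda)\chi_u = 0$ in the weak sense of the rigged scale. Conversely every such $\chi$ yields $F\chi \in \Upsilon^1_{\lambda+i0}(r_\lambda)$, so $F$ is a linear isomorphism of $\Upsilon^1_{\lambda+i0}(r_\lambda)$ onto the generalized eigenspace $\mathfrak G_\lambda(H_0) := \set{\chi \in \hilb_- \colon (H_0-\lambda)\chi = 0 \text{ weakly}}$.

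\emph{The main obstacle.} It remains to prove $\dim\mathfrak G_\lambda(H_0) \leq m$, and this is where the real work lies. Morally, in a spectral representation $\hilb \cong \int^\oplus_{\hat\sigma}\mathfrak h_x\,\rho(dx)$ of $H_0$ with $\dim\mathfrak h_x \leq m$, the rigging $F$ makes ``evaluation at $x$'' a bounded operation --- this is precisely the content of Theorem~\ref{T: euE} for a regular operator --- and a generalized eigenvector at $\lambda$ is determined by its ``value'' in a fibre of dimension $\leq m$; so one wants to construct an injective evaluation map $\mathfrak G_\lambda(H_0) \hookrightarrow \mathfrak h_\lambda$. The hypothesis that $\lambda$ is essentially regular is indispensable here: it excludes the obstruction of Theorem~\ref{T: infty mult-ty then not essentially regular} (an eigenvalue of infinite multiplicity makes $\mathfrak G_\lambda(H_0)$ infinite-dimensional), and, more to the point, it is exactly what must be converted into a statement about the direct-integral decomposition of $H_0$ ensuring that this evaluation map is both well defined and injective on $\mathfrak G_\lambda(H_0)$. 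An alternative route to the same bound would be to show that the sesquilinear form $q(u_1,u_2) := \lim_{y\to 0^+}\tfrac1\pi\scal{Ju_1}{\Im T_{\lambda+iy}(H_0)Ju_2}$ exists on $\Upsilon^1_{\lambda+i0}(r_\lambda)$ --- which requires tracking the cancellation in formula~(\ref{F: Az3v6 (4.8)}) of the two factors that blow up on the boundary against the identities $(1-sB_{\lambda+i0}(s))Ju = 0$ and $(1-sA_{\lambda+i0}(s))u = 0$ valid for $u \in \Upsilon^1_{\lambda+i0}(r_\lambda)$ --- and that it is non-degenerate on $\Upsilon^1_{\lambda+i0}(r_\lambda)$ while having rank at most $m$ because it is built from the $F$-sandwiched spectral measure of $H_0$ at $\lambda$; non-degeneracy would then give $\dim\Upsilon^1_{\lambda+i0}(r_\lambda)\leq m$ by Lemma~\ref{L: finite-rank lemma}. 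Either way, the crux is the same: transferring the cyclic-multiplicity bound for $H_0$ to the single, essentially regular value $\lambda$ inside the essential spectrum.
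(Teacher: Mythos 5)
You should first note that the statement you set out to prove is not a theorem of the paper but Conjecture~\ref{Conj: Geom mult of H0}, stated among the open problems of section~\ref{S: open problems}; there is no proof in the paper to compare against, so a complete argument would have to stand entirely on its own. Your treatment of the case $\lambda\notin\sigma_{ess}$ is correct, but it is exactly the part that is already understood: Corollary~\ref{C: Az 4.1.10} and Theorem~\ref{T: if lambda notin ess sp, ...} identify $\Upsilon^1_{\lambda+i0}(r_\lambda)$ with the eigenspace $\clV_\lambda$ of $H_0$ (and Theorem~\ref{T: mult of s.c. spectrum: drastic version} lets you choose $V$ freely), after which $\dim\clV_\lambda\leq m$ is the classical bound of eigenvalue multiplicity by cyclic multiplicity.

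The genuine gap is the case $\lambda\in\sigma_{ess}$, which is the entire content of the conjecture, and there your text is a plan rather than a proof. First, the identification of $\Upsilon^1_{\lambda+i0}(r_\lambda)$ with a space $\mathfrak G_\lambda(H_0)$ of generalized eigenvectors is only half argued: the forward direction rests on limit interchanges in the rigged scale that are not justified (in what sense $H_0$ acts on $\hilb_-$ and why the weak equation survives the passage $y\to0^+$), and the converse ``every such $\chi$ yields $F\chi\in\Upsilon^1_{\lambda+i0}(r_\lambda)$'' is asserted without proof --- inside the essential spectrum this is of the same nature as Conjecture~\ref{Conj: res eq-n and eigenvectors} and is not automatic. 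Second, and decisively, even granting that identification, the inequality $\dim\mathfrak G_\lambda(H_0)\leq m$ is the conjecture restated, and you concede (``this is where the real work lies'', ``the crux is the same'') that neither of your two routes is carried out. The alternative route via the form $q(u_1,u_2)=\lim_{y\to0^+}\pi^{-1}\scal{Ju_1}{\Im T_{\lambda+iy}(H_0)Ju_2}$ faces unaddressed obstacles: the limit need not exist, since $H_0$ is resonant at~$\lambda$ and $T_{\lambda+iy}(H_0)$ has no boundary limit, so the claimed cancellation is precisely what must be proved; non-degeneracy cannot be taken for granted, given that by Proposition~\ref{P: euE (Vf)=0, k>1} (formula~(\ref{F: (J psi,Im TJ psi)=0})) the analogous diagonal quantities taken at any non-resonant $s$ vanish identically on $\Upsilon^1_{\lambda+i0}(r_\lambda)$; and the rank bound ``$\leq m$'' would require knowing that the fibre of a spectral representation of $H_0$ at the single point~$\lambda$ has dimension at most $m$, whereas cyclic multiplicity controls fibre dimensions only $\rho$-almost everywhere. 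Transferring that almost-everywhere bound to the one essentially regular value~$\lambda$ is exactly the open difficulty, and your proposal does not close it.
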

Combined with the U-turn Theorem \ref{T: U-turn for res index}, this conjecture would imply that the resonance index cannot be larger than the multiplicity of~$H_0$
for any regularizing perturbation~$V.$
This is a reasonable conjecture, since one would not expect the multiplicity of the singular spectrum to be larger than the multiplicity of~$H_0.$

\subsection{Resonance index as a function of perturbation}
In this paper a fixed perturbation~$V$ has been considered.
An open matter of study is the dependence of the resonance index $\ind_{res}(\lambda; H_0, V)$ on the perturbation~$V.$

Let~$H_0$ be resonant at an essentially regular point~$\lambda.$
A regularizing direction~$V$ will be called \emph{simple} if the resonance point~$r_\lambda = 0$ has order 1.
In this case $\Upsilon^1_{\lambda+i0}(r_\lambda) = \Upsilon_{\lambda+i0}(r_\lambda)$ and therefore
by Theorem \ref{T: mult of s.c. spectrum: drastic version} for simple directions $V$
the vector space $\Upsilon_{\lambda+i0}(r_\lambda)$ does not depend on~$V.$

\begin{conj} \label{Conj: simple reg-ing directions}
If~$H_0$ is resonant at an essentially regular point~$\lambda,$
then the set of simple directions is open in the norm of the vector space $\clA_0(F),$
given by $\norm{F^*JF}_{\clA_0} = \norm{J}.$ Moreover, the set of non-simple directions is a meager subset of $\clA_0(F).$
Moreover, the resonance index $\ind_{res}(\lambda; H_0, V)$ is stable under small perturbations of a simple direction~$V.$
\end{conj}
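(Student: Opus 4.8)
The statement concerns the fixed operator $H_0$, resonant at the essentially regular point $\lambda$, and regularizing directions $V=F^*JF\in\clA_0(F)$; recall $V$ is \emph{simple} exactly when the resonance point $r_\lambda=0$ of the triple $(\lambda;H_0,V)$ has order $1$, equivalently (by the Laurent expansion~(\ref{F: Laurent for A+(s)}), $\bfA_{\lambda+i0}(0)=0$) when its algebraic multiplicity $N(V):=\dim\Upsilon_{\lambda+i0}(0;H_0,V)$ equals $m:=\dim\Upsilon^1_{\lambda+i0}(0;H_0,V)$. Two facts are used throughout. First, $0$ is a resonance point of the triple $(\lambda;H_0,V')$ for \emph{every} $V'$, since this only asserts that $H_0$ is resonant at $\lambda$; hence, whenever $s$ is non-resonant for $(H_0,V')$, the number $\sigma(s)=(s-0)^{-1}=1/s$ is an eigenvalue of the operator $A_{\lambda+i0}(s)$ formed from $(H_0,V')$. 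Second, by Theorem~\ref{T: mult of s.c. spectrum: drastic version} the subspace $\Upsilon^1_{\lambda+i0}(0;H_0,V')\subset\clK$, and with it the integer $m$, is the same for all regularizing $V'$; consequently $N(V')\ge m$ always, with equality iff $V'$ is simple. The plan is to anchor the whole picture to one regular operator: fix $s_1\neq0$ non-resonant for $(H_0,V)$, put $K=H_0+s_1V$, so $T_K:=T_{\lambda+i0}(K)$ exists and is compact, and for $V'=F^*J'F$ write $H_0+sV'=K+F^*(sJ'-s_1J)F$. The sandwiched second resolvent identity~(\ref{F: T=(...)(-1)T}) together with the Limiting Absorption Principle gives, whenever $1+T_K(sJ'-s_1J)$ is invertible,
\begin{equation*}
  \lambda\in\Lambda(H_0+sV',F),\qquad A_{\lambda+i0}(s;H_0,V')=\bigl(1+T_K(sJ'-s_1J)\bigr)^{-1}T_KJ',
\end{equation*}
with the analogous formula at $\lambda-i0$ via the adjoints; since $1+T_K(sJ'-s_1J)$ is a compact perturbation of $1$ depending jointly continuously on $(s,J')$ and holomorphically on $s$, the analytic Fredholm alternative applies.

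For the openness statement, pick $s_0\neq0$ non-resonant for $(H_0,V)$; then $1+T_K(s_0J-s_1J)=1+(s_0-s_1)T_KJ$ is invertible (Theorem~\ref{T: Az 4.1.11}), hence so is $1+T_K(s_0J'-s_1J)$ for $J'$ near $J$. This shows simultaneously that every $V'$ near $V$ is a regularizing direction and that $s_0$ stays non-resonant for $(H_0,V')$, so $A_{\lambda+i0}(s_0;H_0,V')$ is defined, compact, and norm-continuous in $J'$. Now fix a small circle $C$ around the eigenvalue $\sigma_0:=1/s_0$ of $A_{\lambda+i0}(s_0;H_0,V)$ enclosing no other point of its spectrum. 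For $V'$ near $V$ the Riesz idempotent $\frac1{2\pi i}\oint_C(\sigma-A_{\lambda+i0}(s_0;H_0,V'))^{-1}d\sigma$ converges in norm to its value at $V'=V$, so by stability of isolated spectra its rank equals $N(V)=m$ (here $V$ is simple); on the other hand this rank is the total algebraic multiplicity of the eigenvalues of $A_{\lambda+i0}(s_0;H_0,V')$ inside $C$, which is at least $N(V')$ because $\sigma_0$ remains an eigenvalue. Hence $N(V')\le m$, and combined with $N(V')\ge m$ this forces $N(V')=m$, i.e. $V'$ is simple. Thus the simple directions form an open subset of $\clA_0(F)$.

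For the stability of the resonance index, use Theorem~\ref{T: res.ind=sign res.matrix}: $\ind_{res}(\lambda;H_0,V')=\sign\bigl(Q_{\lambda-i0}(0)\,J'P_{\lambda+i0}(0)\bigr)$, the idempotents being taken for $(\lambda;H_0,V')$. By Proposition~\ref{P: Pz(rz)=res Az(s)} (and its $B$-analogue) these are $\frac1{2\pi i}\oint_{C_0}A_{\lambda+i0}(s;H_0,V')\,ds$ and $\frac1{2\pi i}\oint_{C_0}B_{\lambda-i0}(s;H_0,V')\,ds$, $C_0$ a small circle around $s=0$ which, by the openness step just proved, encloses no other resonance point of $(H_0,V')$ for $V'$ near $V$; since $1+T_K(sJ'-s_1J)$ stays uniformly invertible on $C_0$, the integrands — and hence the two idempotents — are norm-continuous in $J'$. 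Therefore the resonance matrix $Q_{\lambda-i0}(0)J'P_{\lambda+i0}(0)$ is a self-adjoint finite-rank operator depending norm-continuously on $J'$, of rank $N(V')=m$ constant by Proposition~\ref{P: rank M=rank P}; and the signature of a self-adjoint finite-rank operator is locally constant on the set of operators of fixed rank (to change it some eigenvalue must pass through $0$, which drops the rank — cf. the argument in Lemma~\ref{L: sign(M)=sign(M(y))}). Hence $\ind_{res}(\lambda;H_0,V')$ is constant on a neighbourhood of $V$. (Alternatively one may use the $R$-index description $\ind_{res}=\Rindex(A_{\lambda+iy}(s)P_{\lambda+iy}(r_\lambda))$ and the local constancy of the $R$-index, Lemma~\ref{L: Rindex(AB)=Rindex(BA)}(iv).)

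The meagerness of the non-simple directions is the hard part. The same Riesz-idempotent argument, applied at an arbitrary regularizing $V$ (not assumed simple), shows that $V'\mapsto N(V')$ is upper semicontinuous on the open set of regularizing directions, so $\{V':N(V')\ge k\}$ is relatively closed; in particular the non-simple regularizing directions form a relatively closed set. To deduce meagerness in all of $\clA_0(F)$ one must still establish (i) that the regularizing directions are dense — which follows from the anchored formula: along $t\mapsto V_0+tV$ with $V$ a fixed regularizing direction, the family $1+T_K\bigl(s(J_0+tJ)-s_1J\bigr)$ is, for a suitable fixed $s$, a nonconstant real-analytic family in $t$, invertible off a discrete set, so $V_0+tV$ is regularizing for all but discretely many $t$ — and (ii) that simple directions are dense among regularizing ones, i.e. that the order of the pinned resonance point $0$ of $(\lambda;H_0,V')$ is generically $1$. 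Assertion (ii) is the genuine obstacle: I expect the right route is an analogue of Theorem~\ref{T: Az Th. 4.2.5} at the level of the order (along any real-analytic one-parameter family of regularizing directions the order of $0$ takes its minimal value off a discrete set), combined with the fact that this minimal value is $1$; the latter in turn needs the existence of at least one simple direction, which is not contained in the excerpt but should follow from the perturbation theory of the non-invertible operator $G:=1-s_1T_KJ$ — its resolvent $\bigl(G+sT_KJ'\bigr)^{-1}$ acquires a simple pole at $s=0$ for a dense set of $J'$. Making "generically" precise — essentially a genericity-of-semisimplicity statement for the $s$-pole at $0$ of $\bigl(G+sT_KJ'\bigr)^{-1}T_KJ'$ — is where the real work lies, and I would expect it to require either an explicit Jordan-chain analysis or a transversality argument inside $\clA_0(F)$.
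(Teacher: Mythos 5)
The statement you are trying to prove is not a theorem of the paper at all: it appears in the final section as Conjecture~\ref{Conj: simple reg-ing directions}, listed among the open problems, so there is no proof in the paper to measure your attempt against. Judged on its own terms, the first two thirds of your argument are plausible and use exactly the tools the paper makes available: the anchored identity $A_{\lambda+i0}(s;H_0,V')=(1+T_K(sJ'-s_1J))^{-1}T_KJ'$ from the sandwiched second resolvent identity, the $V$-independence of $\Upsilon^1_{\lambda+i0}(0)$ (Theorem~\ref{T: mult of s.c. spectrum: drastic version}) to get $N(V')\ge m$, Riesz-projection rank stability for the pinned eigenvalue $1/s_0$ to get $N(V')\le m$, and, for the index, Theorem~\ref{T: res.ind=sign res.matrix} together with constancy of rank (Proposition~\ref{P: rank M=rank P}) and the signature-stability argument of Lemma~\ref{L: sign(M)=sign(M(y))}. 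One step there is stated too casually: the claim that the small circle $C_0$ around $s=0$ ``encloses no other resonance point of $(H_0,V')$ by the openness step'' does not literally follow from openness; you need the extra observation that the contour idempotent has rank $m$ by norm continuity while $P_{\lambda+i0}(0;V')$ already has rank $N(V')=m$, so any additional resonance point inside $C_0$ would force a larger rank. That is fillable, but it should be said.

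The genuine gap is the one you yourself flag: the meagerness assertion, which is the substantive content of the conjecture, is not proved. With simple directions open, meagerness of the non-simple set reduces to density of simple directions, and for that you would need (i) density of regularizing directions and, much harder, (ii) that among regularizing directions the order of the pinned resonance point $0$ is generically $1$ --- a genericity-of-semisimplicity statement for the pole at $s=0$ of $(1+sA_{\lambda+i0}(r))^{-1}$ as $J'$ varies. You sketch a route (an order-analogue of Theorem~\ref{T: Az Th. 4.2.5} along real-analytic families, plus existence of at least one simple direction near any given one), but none of it is carried out: you do not exhibit a single simple direction in a neighbourhood of a non-simple regularizing $V$, you do not prove that the order drops to its minimum off a discrete set along analytic segments, and you do not show that the minimum is $1$. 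Since the Young-diagram structure of $\bfA_{\lambda+i0}(0)$ can a priori be forced by constraints that survive perturbation of $J'$ within $\clA_0(F)$ (the admissible $J$ range only over a prescribed real subspace of bounded self-adjoint operators on $\clK$), the transversality you invoke is exactly what has to be established and may in fact fail for some choices of $\clA_0(F)$; this is presumably why the paper leaves the statement as a conjecture. As it stands, your proposal proves the openness and stability clauses modulo minor repairs, but not the conjecture.
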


%
%

\subsection{Resonance lines and eigenvalues}
Recall that a pair of self-adjoint operators~$H$ and~$V$ is called \emph{reducible},
if there exists a non-zero proper (closed) subspace $\clL$ of the Hilbert space $\hilb,$
such that $H \clL \subset \clL$ and $V \clL \subset \clL.$

By Proposition~\ref{P: Az 4.1.10}, for every essentially regular point $\lambda \in \Lambda(\clA,F),$
the resonance set $R(\lambda; \clA, F)$ is an analytic set, in the sense that every analytic curve either intersects
the set $R(\lambda; \clA, F)$ at a discrete set of points or it entirely belongs to the set $R(\lambda; \clA, F).$
There is a distinguished class of analytic curves --- the straight lines.
We suggest that the straight lines $\set{H_0+rV \colon r \in \mbR}$ in the resonance set $R(\lambda; \clA,F)$ have a special meaning.
\begin{conj} \ If $\set{H_0+rV \colon r \in \mbR}$ is a resonant at~$\lambda$ line,
then~$\lambda$ is a common eigenvalue of all operators~$H_0+rV.$
\end{conj}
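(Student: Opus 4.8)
First observe that if $\lambda\notin\sigma_{ess}$ there is nothing to prove: by Corollary~\ref{C: Az 4.1.10} an operator from $\clA$ is resonant at $\lambda$ exactly when $\lambda$ is one of its eigenvalues, and every $H_0+rV$ lies on the resonant line $\gamma=\set{H_r=H_0+rV\colon r\in\mbR}$, so $\lambda$ is automatically a common eigenvalue. Thus the whole content is in the case $\lambda\in\sigma_{ess}$, where by Theorem~\ref{T: infty mult-ty then not essentially regular} any common eigenvalue one produces necessarily has finite multiplicity. I also record that the conclusion is equivalent to the existence of a single non-zero $\chi\in\hilb$ with $H_0\chi=\lambda\chi$ and $V\chi=0$ (take two values of $r$ in $H_r\chi=\lambda\chi$), which exhibits $\mbC\chi$ as invariant under both $H_0$ and $V$; so proving the conjecture simultaneously shows that a resonant line forces reducibility of the pair $(H_0,V)$, consistent with the discussion preceding the conjecture.

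\textbf{An everywhere-singular pencil.} Pick $H'\in\clA$ regular at $\lambda$; this exists because $\lambda\in\Lambda(\clA,F)$, and necessarily $H'\notin\gamma$ (otherwise $\gamma$ would contain a regular point). Write $H'-H_0=F^*J_WF$ and $V=F^*JF$, so $H_s-H'=F^*(sJ-J_W)F$ for $H_s=H_0+sV$. By Theorem~\ref{T: Az 4.1.11}, applied with the regular base point $H'$, the hypothesis that every $H_s$ is resonant at $\lambda$ says precisely that
\begin{equation*}
  1+T_{\lambda+i0}(H')(sJ-J_W)\ \text{is not invertible for every } s\in\mbR .
\end{equation*}
The map $s\mapsto K(s):=T_{\lambda+i0}(H')(sJ-J_W)$ is a compact-valued, affine, hence entire family on $\mbC$; by the analytic Fredholm alternative (Theorem~\ref{T: Analytic Fredholm alternative}) the operator $1+K(s)$, being non-invertible on the non-discrete set $\mbR$, is non-invertible at \emph{every} $s\in\mbC$. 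Equivalently $-1$ is a persistent (non-zero, hence isolated, finite-rank-Riesz-projected) eigenvalue of the compact operator $K(s)$ for all $s$.

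\textbf{Producing eigenvectors.} For each $s$ this yields a non-zero $u_s$ with $\big(1+T_{\lambda+i0}(H')(sJ-J_W)\big)u_s=0$, i.e.\ a resonance vector of order one of the line $\gamma$ read from the base point $H'$, at the resonance point corresponding to $H_s$; moreover analytic perturbation theory applied to the isolated eigenvalue $-1$ of $K(s)$ lets one choose $u_s$ analytically in $s$ on an open dense set. The plan is now to prove that every $u_s$ is a \emph{regular} resonance vector, i.e.\ $u_s=F\chi_s$ for some $\chi_s\in\hilb$. Granting this, run the argument of Theorem~\ref{T: res eq-n and eigenvectors} in reverse: write the resonance equation as $\big(1+FR_{\lambda+i0}(H')F^*(sJ-J_W)F\big)\chi_s=0$, i.e.\ $F\chi_s+FR_{\lambda+i0}(H')(H_s-H')\chi_s=0$, split $H_s-H'=(H_s-\lambda)-(H'-\lambda)$, and pass to the limit using $\Re R_{\lambda+iy}(H')(H'-\lambda)\to1$ strongly and — since by Proposition~\ref{P: Az 4.1.10} $\lambda$ is \emph{not} an eigenvalue of the regular operator $H'$ — $\Im R_{\lambda+iy}(H')(H'-\lambda)\to0$ weakly, together with compactness of $FE^{H'}_\Delta$ and closedness of $F$, exactly as in Section~\ref{S: Geom meaning of Ups(1)}. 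This gives $(H_s-\lambda)\chi_s=0$. Because the relevant $\Upsilon^1_{\lambda+i0}$ spaces are finite dimensional and the construction is analytic in $s$, establishing this for $s$ in a dense set (and using Theorem~\ref{T: mult of s.c. spectrum: drastic version} to remove dependence on the regularizing direction $W$) propagates it to all $s$, which is the conjecture.

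\textbf{The main obstacle.} Everything hinges on regularity of the vectors $u_s$. Inside $\sigma_{ess}$ a single resonant operator may possess only genuinely singular (``trapped'') resonance vectors not lying in the range of $F$, and ruling this out is precisely the still-open Conjecture~\ref{Conj: res eq-n and eigenvectors}. What a resonant \emph{line} supplies beyond a single resonant point is the whole analytic pencil $\set{u_s}$; the hope is to leverage the (essentially rational) dependence of $u_s$ on $s$, the identities relating $T_{\lambda+i0}(H_s)$ for different $s$, and the compactness of $\Im T_{\lambda+i0}(H_s)$ to force the linear span of the $u_s$ to be the $F$-image of a finite-dimensional subspace of $\hilb$ invariant under all $H_s$ — that is, to freeze the otherwise ``moving'' singular spectrum into a genuine eigenspace. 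Turning this rigidity heuristic into a proof, i.e.\ showing that a one-parameter family of honestly singular resonance states cannot persist along a line, is where the real difficulty lies; a successful argument would presumably also settle Conjecture~\ref{Conj: res eq-n and eigenvectors} in this restricted situation.
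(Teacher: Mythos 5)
You have not proved this statement, and neither does the paper: the claim you address is one of the open problems of Section~\ref{S: open problems}, stated with only a heuristic motivation (embedded eigenvalues should not survive a whole line of perturbations without a reason), so there is no proof in the paper to compare yours against, and your own text concedes it is a program rather than an argument. The parts that are sound are the easy reduction off the essential spectrum via Corollary~\ref{C: Az 4.1.10}, and the observation, via Theorem~\ref{T: Az 4.1.11} applied from a regular base point $H'$ together with the analytic Fredholm alternative (Theorem~\ref{T: Analytic Fredholm alternative}), that non-invertibility of $1+T_{\lambda+i0}(H')(sJ-J_W)$ for all real $s$ forces non-invertibility for all complex $s$. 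The decisive steps, however, are missing, and the gap is larger than the one you acknowledge. (a) You cannot show that the solutions $u_s$ are regular, i.e.\ of the form $F\chi_s$; inside $\sigma_{ess}$ nothing in the paper rules out purely ``trapped'' resonance vectors outside the range of $F$. (b) Even granting $u_s=F\chi_s$, your claim that one can ``run the argument of Theorem~\ref{T: res eq-n and eigenvectors} in reverse'' fails: inside the essential spectrum the unsandwiched limit $R_{\lambda+i0}(H')$ does not exist, so from the vanishing of $\lim_{y\to0^+}FR_{\lambda+iy}(H')(H_s-\lambda)\chi_s$ you cannot cancel the sandwiched resolvent and conclude $(H_s-\lambda)\chi_s=0$; the implication ``regular resonance vector of order one $\Rightarrow$ eigenvector'' is exactly Conjecture~\ref{Conj: res eq-n and eigenvectors}, proved in the paper only for $\lambda\notin\sigma_{ess}$ (Theorem~\ref{T: if lambda notin ess sp, ...}). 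So your proposal reduces one open conjecture to another open conjecture plus an unproved ``rigidity'' heuristic for the analytic family $\{u_s\}$, which is precisely where the difficulty lies.

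One concrete assertion in your reduction is also false. The conclusion of the conjecture is \emph{not} equivalent to the existence of a single non-zero $\chi$ with $H_0\chi=\lambda\chi$ and $V\chi=0$: your parenthetical ``take two values of $r$ in $H_r\chi=\lambda\chi$'' presupposes that the same eigenvector serves for two values of $r$, whereas the paper remarks immediately after the conjecture that simple finite-dimensional examples show the eigenvectors of $H_0+rV$ at $\lambda$ need not be common. Only the trivial direction of your claimed equivalence holds, so proving the conjecture would not, as you assert, ``simultaneously show that a resonant line forces reducibility of the pair $(H_0,V)$.'' This remark does not affect your later construction of $s$-dependent vectors $\chi_s$, but it should be deleted or corrected.
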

This is motivated by the fact that embedded eigenvalues are highly
unstable, and there has to be a reason for them not to get
dissolved under perturbations $rV$ for all $r \in \mbR.$ 

If $\set{H_0+rV \colon r \in \mbR}$ is a resonant at~$\lambda$ line, then as
simple finite-dimensional examples show the eigenvectors corresponding to~$\lambda$ may not in general be common for all
operators~$H_0+rV,$ $r \in \mbR.$

\subsection{On resonance points~$r_z$ as functions of $z$}
\subsubsection{On the analytic continuation of resonance points~$r_z$}
A resonance point $r_z$ corresponding to $z$ is a holomorphic function of $z.$
Here we write $r(z)$ instead of $r_z$ and call $r(z)$ a resonance function.
This function in general is multi-valued and it can have continuous branching points of a finite period;
examples can easily be constructed even in a finite-dimensional Hilbert space~$\hilb.$
A point $z_0$ of the complement of the essential spectrum will be called an \emph{absorbing point}
if $r(z) \to \infty$ as $z$ approaches $z_0$ along some half-interval $\gamma_1$ from the domain of holomorphy of~$r(z).$
It can be shown that if
$z_0$ is an absorbing point, then $r(z) \to \infty$ as $z$ approaches $z_0$ along any half-interval $\gamma_2$ from the domain of holomorphy of $r(z)$
which is homotopic to $\gamma_1$ in the domain of holomorphy. Recall that the domain of holomorphy of $r(z)$ is in general a multi-sheet Riemannian surface.

In the following conjecture we collect some questions regarding the function~$r(z).$

\begin{ques}
Are the following assertions true?
\begin{enumerate}
  \item If the limit $r_{\lambda+i0} := \lim_{y\to 0^+} r_{\lambda+iy}$ exists and is a real number, then as $y \to 0^+$ the number $r_{\lambda+iy}$ approaches
$r_{\lambda+i0}$ under a non-zero angle.
  \item Derivative of a resonance function $r(z)$ at a continuous branching point~$z_0$ is equal to~$\infty.$
  \item Let $r(z)$ be a resonance function. If $r(z)$ is holomorphic at a point $z_0$ (and does not branch at $z_0$) then
the derivative $r'(z_0)$ is not zero.
  \item If $z_0$ is a continuous branching point of a resonance function $r(z),$
then the inverse $z(r)$ of $r(z)$ is a single-valued function in a neighbourhood of $r_0 = r(z_0).$
  \item On any compact subset of $\mbC \setminus \sigma_{ess}$ a resonance function~$r_z$ can have only a finite number of isolated continuous branching points.
  In general, what can be said about the distribution of branching points of a resonance function $r_z?$
  \item A resonance function~$r(z)$ has a cycle of largest period~$d$ at a continuous branching point $z = z_0$ if and only if $r_{z_0}$ has order $d.$
  \item \label{Q: I7} Resonance functions do not have (a) non-real (b) real absorbing points, including isolated absorbing points.
  \item \label{Q: I8} Any resonance function $r(z)$ admits analytic continuation, possibly multi-valued, to the complement of the essential spectrum
with only one possible type of isolated singularity: continuous branching points of finite period.
\end{enumerate}
\end{ques}

Clearly, (\ref{Q: I8}) implies (\ref{Q: I7}).
In can be shown that these two statements are equivalent.

\subsubsection{On the splitting property of resonance points~$r_z$}
Let~$\lambda$ be an essentially regular point, let~$H_0$ be a self-adjoint operator from $\clA$
and let~$V$ be a regularizing direction at~$\lambda.$ Let~$r_\lambda$ be a real resonance point of the line $H_r = H_0+rV$
and let~$r_z^1, \ldots, r_z^N$ be the resonance points of the group of~$r_\lambda.$
\begin{conj} \label{Conj: rz are non-degenerate}
(1) If the pair $(H_0,V)$ is irreducible, then all resonance points~$r_z^1, \ldots, r_z^N$
of the group of~$r_\lambda$ considered as functions of~$z$ are non-degenerate.
More generally, for an irreducible pair every resonance point~$r_z$ as a function of~$z$ is non-degenerate.

(2) 
All resonance points~$r_z^1, \ldots, r_z^N$ of the group of~$r_\lambda$ considered as functions of~$z$ have order~1.
More generally, every resonance point~$r_z$ as a function of~$z$ has order 1.
\end{conj}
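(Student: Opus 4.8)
The plan is to translate both assertions into statements about the non--self-adjoint operator $H_{r_z}=H_0+r_zV$ and the bilinear form that $V$ induces on its eigenspaces, and then attack the resulting non-degeneracy statements with the positivity machinery of Section~\ref{S: sign of res matrix}. First one fixes a dictionary. For non-real $z$ the resonance point $r_z$ is non-real (Lemma~\ref{L: Az Lemma 4.1.4}), so $H_{r_z}$ is a genuinely non--self-adjoint relatively compact perturbation of $H_0$, the operator $H_{r_z}-z=(H_0-z)(1+r_zR_z(H_0)V)$ is Fredholm of index $0$, and $z$ is an isolated eigenvalue of $H_{r_z}$. A short manipulation with the second resolvent identity shows that $r_z$ is a resonance point for $z$ iff $z\in\sigma_d(H_{r_z})$, that the geometric multiplicity $m$ of $r_z$ equals $\dim\ker(H_{r_z}-z)$, and -- unwinding the higher-order resonance equations via Corollary~\ref{C: nasty proposition} and using $\im(H_{r_z}-z)=\ker(H_{\bar r_z}-\bar z)^{\perp}$ -- that the order $d$ of $r_z$ equals $1$ if and only if the $V$-pairing
\begin{equation*}
  \ker(H_{\bar r_z}-\bar z)\times\ker(H_{r_z}-z)\ni(\chi^{*},\chi)\longmapsto\scal{\chi^{*}}{V\chi}
\end{equation*}
is non-degenerate; here $H_{\bar r_z}=H_{r_z}^{*}$, and the two kernels have equal dimension by Corollary~\ref{C: r(bar z)=bar r(z)}. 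Thus part~(2) becomes the assertion that \emph{for non-real $z$ this $V$-pairing is non-degenerate}, while part~(1) becomes \emph{for an irreducible pair $\dim\ker(H_{r_z}-z)=1$} -- i.e.\ I read ``non-degenerate'' in the conjecture as geometric multiplicity~$1$.

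For part~(2) the ``diagonal'' of the pairing is already controlled: taking imaginary parts in $\scal{\chi}{H_{r_z}\chi}=z\norm{\chi}^{2}$ and in the analogous identity for $\chi^{*}$ gives $(\Im r_z)\scal{\chi}{V\chi}=(\Im z)\norm{\chi}^{2}$ and $(\Im r_z)\scal{\chi^{*}}{V\chi^{*}}=(\Im z)\norm{\chi^{*}}^{2}$, so both diagonal values are non-zero and of the same sign. To promote this to full non-degeneracy I would use Theorem~\ref{T: res matrix is positive for set of up-points} with $\Gamma=\set{r_z}$: for an up-point with $\Im z>0$ the resonance matrix $Q_{\bar z}(\bar r_z)JP_z(r_z)$ is positive-definite on all of $\Upsilon_z(r_z)$, and on $\Upsilon_z(r_z)$ it agrees with the restriction of $\scal{\cdot}{J\cdot}$ because $Q_{\bar z}(\bar r_z)^{*}=P_z(r_z)$. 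If $d\geq2$ there is a resonance vector $u^{(1)}\in\Upsilon_z^{1}(r_z)$ of depth $\geq1$; writing $u^{(1)}=\bfA_z^{d-1}(r_z)u^{(d)}$ and using $\bfA_z^{*}=\bfB_{\bar z}$ and $J\bfA_z=\bfB_zJ$ one gets
\begin{equation*}
  \scal{u^{(1)}}{Ju^{(1)}}=\scal{u^{(d)}}{J\,\bfA_{\bar z}^{d-1}(\bar r_z)\,P_{\bar z}(\bar r_z)u^{(1)}},
\end{equation*}
whose right-hand side vanishes once the isomorphism $P_{\bar z}(\bar r_z)\colon\Upsilon_z(r_z)\to\Upsilon_{\bar z}(\bar r_z)$ of Corollary~\ref{C: Pbar is non-degen} is known not to raise the order of resonance vectors; that contradicts positive-definiteness and forces $d=1$. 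So part~(2) is reduced to an ``order-preserving'' property of $P_{\bar z}(\bar r_z)$, the interior-of-$\Pi$ analogue of property~$U$, which should be within reach here -- unlike on $\partial\Pi$ -- precisely because for non-real $z$ the idempotents $P_{\bar z}(\bar r_z)$ and $Q_{\bar z}(\bar r_z)$ are already genuine isomorphisms of all the resonance spaces (Corollaries~\ref{C: Qbar is non-degen} and \ref{C: Pbar is non-degen}).

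For part~(1) I would argue by contradiction from a two-dimensional $E=\ker(H_{r_z}-z)$. Since $(H_0-z)\chi=-r_zV\chi$ on $E$ and $H_0-z$ is injective there, $VE=(H_0-z)E$ is two-dimensional, $E=R_z(H_0)VE$, and $VE$ is a two-dimensional eigenspace of $\ulB_z(0)=VR_z(H_0)$; likewise $VE^{*}$ with $E^{*}=\ker(H_{\bar r_z}-\bar z)$ is a two-dimensional eigenspace of $(R_z(H_0)V)^{*}$. Granting part~(2), the $V$-pairing on $E^{*}\times E$ is a non-degenerate $2\times2$ form and diagonalizes in a dual pair of bases; the plan is to show that this algebraic splitting of the two ``channels'' propagates under the bounded functional calculus of $H_0$ and under $V$ to a non-trivial closed subspace reducing both $H_0$ and $V$, contradicting irreducibility. (For rank-one $V$ the assertion is automatic, so the content is that an irreducible higher-rank $V$ cannot support such a hidden block structure at a resonance.)

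The main obstacle I anticipate is the order-preserving step in part~(2): showing that $P_{\bar z}(\bar r_z)$ respects the Jordan filtration of the resonance spaces for non-real $z$ without assuming property~$C$ a priori -- presumably by exploiting the relations $\bfA_z^{*}=\bfB_{\bar z}$, $J\bfA_z=\bfB_zJ$ together with the strict positivity $\Im T_z(H_s)>0$. A secondary difficulty, in part~(1), is turning the algebraic channel-splitting into an honest pair of mutually orthogonal closed $H_0$- and $V$-invariant subspaces, which requires care because both $H_0$ and the rigging $F$ are unbounded.
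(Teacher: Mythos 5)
You are attempting a statement that the paper does not prove: it is Conjecture~\ref{Conj: rz are non-degenerate} in the open-problems section~\ref{S: open problems}, so there is no proof of record to compare against, and your text has to stand as a new argument — which it does not yet do. Your dictionary for part (2) is sound: for non-real $z$ the point $r_z$ has order $d\geq 2$ exactly when the $V$-pairing between $\ker(H_{r_z}-z)$ and $\ker(H_{\bar r_z}-\bar z)$ degenerates, and on $\Upsilon_z(r_z)$ the form $\scal{\cdot}{J\cdot}$ is the quadratic form of $Q_{\bar z}(\bar r_z)JP_z(r_z)$, which is definite by Theorem~\ref{T: res matrix is positive for set of up-points}. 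But the contradiction you aim for requires $\bfA^{d-1}_{\bar z}(\bar r_z)P_{\bar z}(\bar r_z)u^{(1)}=0$, i.e.\ compatibility of $P_{\bar z}(\bar r_z)$ with the Jordan filtration, and nothing you cite gives this: Corollaries~\ref{C: Qbar is non-degen} and~\ref{C: Pbar is non-degen} give only bijectivity, and the paper's own boundary case shows bijectivity is not enough — there Theorem~\ref{T: property M} already gives the isomorphism, yet order-preservation (property $U$) is obtained only under the unproved property $C$ (Theorem~\ref{T: Property U}) and is otherwise itself conjectured. Your remark that the interior case ``should be within reach'' is unsupported, and the reduction is close to circular: if part (2) holds, every point has order $1$ and the deferred lemma is vacuous, so that lemma carries essentially the whole difficulty of the conjecture.

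Part (1) is in worse shape. Even granting part (2), all you offer is the intention to propagate a $2\times 2$ diagonalization of the $V$-pairing on $\ker(H_{\bar r_z}-\bar z)\times\ker(H_{r_z}-z)$ to a nontrivial closed subspace invariant under both $H_0$ and $V$; no mechanism is proposed for making this splitting stable under the functional calculus of $H_0$ and under $V$, and that propagation is the entire content of the assertion — irreducibility enters nowhere else in your sketch. Note also that reading ``non-degenerate'' as geometric multiplicity one is an interpretation on your part: the conjecture speaks of the points $r_z$ \emph{as functions of} $z$, in the context of the preceding questions about the resonance functions $r(z)$, so the intended meaning should be pinned down before the reduction step is claimed. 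In short, the translation into spectral data of $H_{r_z}$ and the use of the positivity of $\Im z\,Q_{\bar z}(\bar r_z)JP_z(r_z)$ are reasonable first moves, but neither half of the conjecture is established by the proposal.
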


\subsubsection{Analytic continuation through gaps in the essential spectrum}
Assume that there is an island~$I$ in the essential spectrum, that is,~$I$ is a closed interval such that
for some $\eps>0$ the intersection of $\sigma_{ess}$ and $(a-\eps,b+\eps)$ is equal to $[a,b].$
Assume that a resonance function $r(z)$ can be continued analytically over the island.
The analytic continuation back to the initial point may differ from the original function, of course. What can be said about the period of this analytic continuation?

What can be said about an integral of~$r_z$ over a contour which encloses an island of essential spectrum?


\subsection{Mittag-Leffler representation of~$A_z(s)$}
Is it true that the function~$A_z(s)$ satisfies the equality
$$
  A_z(s) = \sum_{r_z} A_z(s) P_z(r_z),
$$
where the sum is taken over all resonance points~$r_z,$ and where the product
$A_z(s) P_z(r_z)$ stands for the Laurent series~(\ref{F: Az(s)Pz(rz)=sum...})?
Note that this assertion holds for finite-rank perturbations~$V,$ in which case the sum above is finite.
In general, though, this seems to be unlikely.

\subsection{On regular resonance vectors}
Theorem~\ref{T: res eq-n and eigenvectors} asserts that if~$\chi \in \hilb$ is an eigenvector of a resonant at~$\lambda$ operator~$H_0,$
then the vector $F\chi$ is a resonance vector of order~1. The resonance vector $F \chi$ is regular by definition.
\begin{conj} \label{Conj: res eq-n and eigenvectors}
If~$F\chi$ is a resonance vector of order~1, then~$\chi$ is an eigenvector of~$H_0.$
\end{conj}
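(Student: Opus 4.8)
\textbf{Proof proposal for Conjecture \ref{Conj: res eq-n and eigenvectors}.}

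The goal is to prove a converse to Theorem~\ref{T: res eq-n and eigenvectors}: if $u = F\chi \in \clK_+$ is a resonance vector of order $1$ at $\lambda\pm i0$ corresponding to a resonance point $r_\lambda$ of the line $\set{H_0+rV \colon r \in \mbR}$, then $\chi$ is an eigenvector of $H_{r_\lambda}$ with eigenvalue $\lambda$. The plan is to unwind the resonance equation and use the structure of the rigging. Fix a regular point $r$ of the triple $(\lambda; H_0,V)$. By Corollary~\ref{C: Upsilon 1(+)=Upsilon 1(-)}, since $u$ has order $1$ we have $u \in \Upsilon^1_{\lambda+i0}(r_\lambda) = \Upsilon^1_{\lambda-i0}(r_\lambda)$, so
\begin{equation*}
  \brs{1+(r_\lambda-r)T_{\lambda+i0}(H_r)J}u = 0 \quad \text{and} \quad \brs{1+(r_\lambda-r)T_{\lambda-i0}(H_r)J}u = 0.
\end{equation*}
Subtracting these gives $(r_\lambda-r)\,\Im T_{\lambda+i0}(H_r)\,Ju = 0$, which is the statement that $u$ is a vector of type~I (indeed Proposition~\ref{P: euE (Vf)=0, k=1} already guarantees this). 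In particular $\sqrt{\Im T_{\lambda+i0}(H_r)}\,Ju = 0$, hence $F\sqrt{\Im R_{\lambda+i0}(H_r)}\bigl(F\sqrt{\Im R_{\lambda+i0}(H_r)}\bigr)^*Ju=0$, so $\bigl(F\sqrt{\Im R_{\lambda+i0}(H_r)}\bigr)^*Ju=0$, and thus $\Im R_{\lambda+i0}(H_r)\,F^*Ju = 0$ as an identity in $\hilb_-$ (the vector $F^*Ju$ makes sense by (\ref{F: JF euD subset dom(F*)}) applied with $\chi$ in place of a general element, once we know $u = F\chi$).

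With the imaginary part killed, the resonance equation $u+(r_\lambda-r)T_{\lambda+i0}(H_r)Ju = 0$ reads $u + (r_\lambda-r) F \Re R_{\lambda+i0}(H_r) F^*Ju = 0$, i.e.
\begin{equation*}
  F\chi + (r_\lambda-r) F\,\Re R_{\lambda+i0}(H_r)\,F^*JF\chi = 0.
\end{equation*}
Since $V = F^*JF$ on $\euD$ and $F$ has trivial kernel, this gives $\chi + (r_\lambda-r)\,\Re R_{\lambda+i0}(H_r)\,V\chi = 0$ as an identity in $\hilb_-$. Now I would argue that the vector $\Re R_{\lambda+i0}(H_r)V\chi$ in fact lies in $\euD = \dom(H_r)$, so that one may apply $H_r-\lambda$ to the displayed equality. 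To see this, note that the equation itself says $\Re R_{\lambda+i0}(H_r)V\chi = -(r_\lambda-r)^{-1}\chi \in \hilb$; moreover, using $\Im R_{\lambda+i0}(H_r)V\chi=0$ one has $\Re R_{\lambda+i0}(H_r)V\chi = R_{\lambda+i0}(H_r)V\chi$, and the standard resolvent identity together with the limiting absorption principle for the pair $(H_r,F)$ (which holds since $r$ is regular at $\lambda$) lets one compute $(H_r-\lambda)R_{\lambda+i0}(H_r)V\chi$ in a controlled way. Applying $H_r-\lambda$ then yields $(H_r-\lambda)\chi + (r_\lambda-r)V\chi = 0$, i.e. $(H_r + (r_\lambda-r)V - \lambda)\chi = 0$, which is exactly $H_{r_\lambda}\chi = \lambda\chi$.

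\textbf{Main obstacle.} The delicate point — and the reason this is stated as a conjecture rather than a theorem — is the membership $\Re R_{\lambda+i0}(H_r)V\chi \in \dom(H_r)$ and the legitimacy of applying $H_r-\lambda$ to an identity that a priori only holds in the rigged space $\hilb_-$. The vanishing of the imaginary part is what makes this plausible: it forces the ``boundary value'' $R_{\lambda+i0}(H_r)V\chi$ to behave like a genuine resolvent applied to $V\chi$ rather than a singular limit, so that $(H_r-\lambda)R_{\lambda+i0}(H_r)V\chi$ should equal $V\chi$ in $\hilb$. Making this rigorous requires either a spectral-theoretic argument showing that $\Im R_{\lambda+iy}(H_r)V\chi \to 0$ strongly implies $\Re R_{\lambda+iy}(H_r)(H_r-\lambda)\cdot$ converges to something in the domain, or an additional regularity hypothesis on the rigging $F$ relating $\hilb_-$-convergence to $\hilb$-convergence on the relevant subspace; this is precisely the gap flagged in the remark after Theorem~\ref{T: if lambda notin ess sp, ...}. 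One natural sufficient condition to impose would be that $F^*Ju$ already lies in a form domain on which $\Im R_{\lambda+i0}(H_r)$ acts boundedly into $\hilb$, which holds outside the essential spectrum (recovering Theorem~\ref{T: if lambda notin ess sp, ...}) but is exactly what one cannot assume for embedded $\lambda$.
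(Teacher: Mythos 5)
You have not proved the statement, and in fact no proof of it exists in the paper either: this is Conjecture~\ref{Conj: res eq-n and eigenvectors}, listed among the open problems, and the paper only establishes it under the extra hypothesis $\lambda \notin \sigma_{ess}$ (Theorem~\ref{T: if lambda notin ess sp, ...}), where $R_{\lambda+i0}(H_r)$ exists as a bounded operator on $\hilb$. Your argument is essentially a transcription of that special-case proof, and you yourself flag, correctly, that it breaks at the decisive point: from the resonance equation you only get that $R_{\lambda+iy}(H_r)V\chi$ converges in the rigged space $\hilb_-$ (equivalently, $T_{\lambda+iy}(H_r)JF\chi$ converges in $\clK$) to a vector which, via the equation, coincides with $(r-r_\lambda)^{-1}\chi$. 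Since $H_r$ is a closed operator with respect to $\hilb$-norm convergence, not $\hilb_-$-convergence, this gives neither $\chi \in \euD = \dom(H_r)$ nor $(H_r-\lambda)\bigl((r-r_\lambda)^{-1}\chi\bigr) = V\chi$. So what you have written is a precise restatement of the difficulty, not a resolution of it; the conjecture remains open after your argument.

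Two further steps in your write-up are not legitimate inside the essential spectrum and should be repaired even in an honest partial argument. First, you manipulate $\Im R_{\lambda+i0}(H_r)$ and $\Re R_{\lambda+i0}(H_r)$ as standalone operators and use the factorization $\Im T_{z}(H_r) = \bigl(F\sqrt{\Im R_{z}(H_r)}\bigr)\bigl(F\sqrt{\Im R_{z}(H_r)}\bigr)^*$; this factorization is available only for $\Im z>0$, since for embedded $\lambda$ the unsandwiched boundary values $R_{\lambda\pm i0}(H_r)$ need not exist in any operator topology — only the sandwiched limits $T_{\lambda\pm i0}(H_r)$ do. The type-I information you need is simply $\Im T_{\lambda+i0}(H_r)Ju = 0$, which follows from Proposition~\ref{P: euE (Vf)=0, k=1} without any unsandwiched resolvents. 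Second, your justification that $F^*Ju$ and the identification $F^*JF\chi = V\chi$ make sense invokes (\ref{F: JF euD subset dom(F*)}) and the relation $V = F^*JF$ on $\euD$, but both require $\chi \in \euD$ (or at least $\chi \in \dom(V)$), which is exactly part of the conclusion you are trying to prove; as it stands this is circular. Any genuine attack on the conjecture must produce new information forcing $\chi$ into $\dom(H_{r_\lambda})$ — for instance, showing that the order-one condition together with type~I forces $\hilb$-norm (not just $\hilb_-$) convergence of $R_{\lambda+iy}(H_{r})V\chi$ — and nothing in the present proposal supplies that.
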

This assertion is proved in Theorem~\ref{T: if lambda notin ess sp, ...} under an additional condition that~$\lambda$ does not belong to the essential spectrum.

\subsection{On singular ssf for trace-class perturbations}

Similarly to the definition of the singular spectral shift function one can define pure point and singular continuous spectral shift functions
as distributions by formulas
$$
  \xi^{(pp)}(\phi) = \int_0^1 \Tr (V\phi(H_r^{(pp)})\,dr \ \ \text{and} \  \xi^{(sc)}(\phi) = \int_0^1 \Tr (V\phi(H_r^{(sc)})\,dr, \ \ \phi \i C_c(\mbR),
$$
where $H_r^{(pp)}$ and $H_r^{(sc)}$ are pure point and singular continuous parts of $H_r$ respectively.
Clearly, $\xis = \xi^{(pp)} + \xi^{(sc)}.$

The density of distributions $\xi^{(pp)}$ and $\xi^{(sc)}$ we shall denote by the same symbols.
\begin{conj} Let $H_0$ be an arbitrary self-adjoint operator.
If $V$ is trace class then restriction of pure point spectral shift function for the pair $H_0$ and $H_0+V$ to the essential spectrum of $H_0$ is zero.
\end{conj}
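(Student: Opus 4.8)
The plan is to transport the problem into the resonance-index picture of this paper, to identify $\xi^{(pp)}$ restricted to $\sigma_{ess}$ with a signed crossing count of embedded eigenvalues, and then to exploit the U-turn theorems. Since $V$ is trace class one may choose a Hilbert--Schmidt rigging operator $F\colon\hilb\to\clK$ with trivial kernel and co-kernel and a bounded self-adjoint $J$ with $V=F^*JF$; by Theorem~\ref{T: Ya thm 6.1.9} the set $\Lambda(\clA,F)$ then has full Lebesgue measure, so the L.A.P.\ Assumption holds for $\clA=\{H_r=H_0+rV\colon r\in\mbR\}$ and the machinery of the preceding sections applies. By Weyl's theorem $\sigma_{ess}=\sigma_{ess}(H_0)$, and outside $\sigma_{ess}$ Corollary~\ref{C: Az 4.1.10} shows that $H_r$ is resonant at $\lambda$ precisely when $\lambda$ is an eigenvalue of $H_r$; there $\xi^{(pp)}=\xi=\xis$ equals the spectral flow, so all content of the conjecture sits inside $\sigma_{ess}$.

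\emph{Localising $\xi^{(pp)}$.} Write $\xi^{(pp)}((a,b])=\int_0^1\Tr\bigl(V E_{(a,b]}^{H_r}P^{(pp)}(H_r)\bigr)\,dr$, with $P^{(pp)}(H_r)$ the orthogonal projection onto the closed span of eigenvectors of $H_r$. Along a branch $\mu(r)$ of eigenvalues the Hellmann--Feynman relation $\langle\chi(r),V\chi(r)\rangle=\|\chi(r)\|^2\mu'(r)$ suggests that this integral is the signed number of times the embedded branches cross $(a,b]$, i.e.\ that the a.c.\ density of $\xi^{(pp)}$ on $\sigma_{ess}$ is the net oriented crossing count of embedded eigenvalue branches. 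To make this precise I would: (i) for essentially regular $\lambda$ and a real resonance point $r_\lambda\in[0,1]$, use Theorem~\ref{T: res eq-n and eigenvectors} to embed $\clV_\lambda$ into $\Upsilon^1_{\lambda+i0}(r_\lambda)$ via $F$ and split the group of split eigenvalues $r^1_{\lambda+iy},\dots,r^N_{\lambda+iy}$ into those carried by the flag $F\clV_\lambda$ and the rest, calling the signed count of the former $\ind^{(pp)}_{res}(\lambda;H_{r_\lambda},V)$; (ii) prove $\xi^{(pp)}(\lambda)=\sum_{r_\lambda\in[0,1]}\ind^{(pp)}_{res}(\lambda;H_{r_\lambda},V)$ for a.e.\ $\lambda$ by the argument of Theorem~\ref{T: xis=ind res} together with Proposition~\ref{P: ind(res)=oint}, applied to $\Tr(\Im R_{\lambda+iy}(H_s)V\,\cdot\,)$ localised on the pure-point subspace; (iii) observe via the U-turn Theorems~\ref{T: U-turn for res matrix} and~\ref{T: U-turn for res index} that an embedded eigenvalue which dissolves at $\lambda$ (no continuous branch through $(r_\lambda,\lambda)$) feeds $N^{(pp)}_+$ and $N^{(pp)}_-$ equally, hence contributes $0$.

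\emph{The main obstacle.} Step (iii) is the heart of the matter and is \emph{false} in general: if $(H_0,V)$ is reducible --- say $\hilb=\hilb'\oplus\mbC$ with $H_r=H_r'\oplus(\mu_0+rc)$, $c\neq0$, and $\sigma_{a.c.}(H_r')$ covering a fixed neighbourhood of $\mu_0$ --- then $\mu_0+rc$ crosses $\sigma_{ess}$ transversally and a direct computation gives $\xi^{(pp)}(\lambda)=\pm1$ on an interval of positive length inside $\sigma_{ess}$; moreover $V$ can be taken non-degenerate with a factorisation $F^*JF$ by a non-degenerate $F$ while the block structure, and hence the moving branch, survives, so this is a genuine counterexample within the present framework. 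Thus the conjecture as stated must be corrected, for instance by requiring $\langle\chi,V\chi\rangle=0$ for every eigenvector $\chi$ of every $H_r$ with eigenvalue in $\sigma_{ess}$, which freezes all embedded branches and then closes the argument: each such branch is constant and contributes nothing in steps (ii)--(iii), giving $\xi^{(pp)}|_{\sigma_{ess}}=0$. Whether irreducibility of $(H_0,V)$ alone suffices --- equivalently, whether an irreducible pair can carry an embedded eigenvalue branch that sweeps a positive-measure set of energies --- is, to my knowledge, open, and supplying that is where genuinely new input would be needed.
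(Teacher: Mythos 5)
This statement is not a theorem of the paper but one of the conjectures in the closing section of open problems; the paper contains no proof of it, so there is nothing of the author's to compare your argument with, and your proposal has to be judged on its own. Its substantive and correct content is the observation that, read literally (arbitrary self-adjoint $H_0$, arbitrary trace-class $V$, no irreducibility assumption), the statement is false, and your reducible example does withstand scrutiny. Made explicit: take $\hilb=L_2[0,1]\oplus\mbC,$ $H_0=M_x\oplus\tfrac14$ with $M_x$ multiplication by $x$ (purely absolutely continuous), and $V=0\oplus\tfrac12,$ a rank-one non-negative trace-class perturbation. Then $H_r=M_x\oplus(\tfrac14+\tfrac r2),$ the pure point subspace of every $H_r$ is $0\oplus\mbC,$ and $\xi^{(pp)}(\phi)=\int_0^1\Tr\brs{V\phi(H_r^{(pp)})}dr=\int_0^1\tfrac12\,\phi(\tfrac14+\tfrac r2)\,dr=\int_{1/4}^{3/4}\phi(\lambda)\,d\lambda,$ so the density of $\xi^{(pp)}$ equals $1$ on $[\tfrac14,\tfrac34]\subset\sigma_{ess}(H_0)=[0,1].$ Note that this is exactly what the conjecture of the following subsection would predict here, since $\ind^{(pp)}_{res}(\lambda;H_{r_\lambda},V)=\dim\clV_\lambda=1$ at the unique resonance point $r_\lambda=2\lambda-\tfrac12\in[0,1]$ for such $\lambda$; so the two conjectures of that section cannot both hold without an extra hypothesis (irreducibility or cyclicity of the pair, or a genericity qualifier), and under such a hypothesis the question remains open — your proposal does not settle it, as you yourself acknowledge.

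The constructive half of your plan does not hold up as a proof of any corrected statement. Step (ii) is itself precisely the paper's neighbouring open conjecture (and $\ind^{(pp)}_{res}$ is only defined there for sign-definite $V$); it cannot be obtained by "localising" the proof of Theorem~\ref{T: xis=ind res} on the pure point subspace, because that proof runs through the analyticity in the coupling constant of the absolutely continuous part and the contour-integral identities, none of which split into pp and sc pieces. Your definition of $\ind^{(pp)}_{res}$ via the split eigenvalues "carried by the flag $F\clV_\lambda$" is not well defined as stated, and the Hellmann--Feynman branch picture has no justification inside $\sigma_{ess},$ where embedded eigenvalues need not organise into differentiable branches at all. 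Finally, your proposed repair hypothesis --- $\scal{\chi}{V\chi}=0$ for every eigenvector of every $H_r$ with eigenvalue in $\sigma_{ess}$ --- does make the assertion true, but trivially so: for any Borel $\Delta\subset\sigma_{ess}$ the quantity $\Tr\brs{VE^{H_r}_\Delta P^{(pp)}(H_r)}$ is a sum of terms $\scal{\chi}{V\chi}$ over an orthonormal basis of eigenvectors with eigenvalues in $\Delta,$ hence vanishes for every $r,$ so $\xi^{(pp)}(\Delta)=0$ without any use of steps (i)--(iii) or of the resonance-index machinery. The genuinely open problem --- whether an irreducible pair can carry embedded point spectrum sweeping a set of positive measure --- is untouched.
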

That is, for trace class perturbations restriction of $\xis$ to $\sigma_{ess}$ coincides with $\xi^{(sc)}.$

\subsection{On pure point and singular continuous parts of resonance index}
Material of this subsection and motivation for it are based on Section \ref{S: Geom meaning of Ups(1)}.

In addition to our usual assumptions about operators $H_0,F$ and $V$ we assume that $V$ is a positive operator.
Let $r_\lambda$ be a resonance point of the triple $(\lambda, H_0, V).$
Since $V$ is positive, we have the equality
$$
  \ind_{res}(\lambda; H_{r_\lambda},V) = \dim \Upsilon_{\lambda+i0}(r_\lambda) = \dim \Upsilon^1_{\lambda+i0}(r_\lambda).
$$
We define the pure point and singular continuous parts of the resonance index by formulas
$$
  \ind^{(pp)}_{res}(\lambda; H_{r_\lambda},V) = \dim \clV_\lambda
$$
and
$$
  \ind^{(sc)}_{res}(\lambda; H_{r_\lambda},V) = \ind_{res}(\lambda; H_{r_\lambda},V) - \dim \clV_\lambda,
$$
where $\clV_\lambda$ is the vector space of eigenvectors of $H_{r_\lambda}$ corresponding to eigenvalue $\lambda.$
\begin{conj} For a.e. $\lambda$
$$
  \xi^{(pp)}(\lambda; H_1,H_0) = \sum_{r \in [0,1]} \ind^{(pp)}_{res}(\lambda; H_r,V).
$$
\end{conj}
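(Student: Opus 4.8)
The plan is to mirror the proof of Theorem~\ref{T: xis=ind res} sketched in Section~\ref{S: res.index as s.ssf}: split both sides of the claimed identity into contributions of individual resonance points, compare the jumps one point at a time, and exploit the hypothesis $V\geq 0$ decisively.

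First I would set up the reduction. Since $V\geq 0$, for $\phi\in C_c(\mbR)$ with $\phi\geq 0$ one has $\xi^{(pp)}(\phi)=\int_0^1\Tr(V^{1/2}\phi(H_r^{(pp)})V^{1/2})\,dr\geq 0$ and likewise $\xi^{(sc)}(\phi)\geq 0$, so $0\leq\xi^{(pp)}\leq\xi^{(pp)}+\xi^{(sc)}=\xis$ as measures; since $\xis$ is absolutely continuous (by~(\ref{F: xi=xia+xis}) and absolute continuity of $\xi$ and $\xia$), so are $\xi^{(pp)}$ and $\xi^{(sc)}$, and their densities satisfy $0\leq\xi^{(pp)}(\lambda),\xi^{(sc)}(\lambda)$ with $\xi^{(pp)}(\lambda)+\xi^{(sc)}(\lambda)=\xis(\lambda)$ a.e. Next, by Theorem~\ref{T: xis=ind res}, Proposition~\ref{P: J>0 then res points has order 1} and Corollary~\ref{C: U-turn for positive V} (all applicable because $V\geq 0$ forces every pole of the group of $r_\lambda$ into $\mbC_+$ by Lemma~\ref{L: Az Lemma 4.1.4}), for every $\lambda\in\Lambda(\clA,F)$
$$
  \xis(\lambda;H_1,H_0)=\sum_{r_\lambda\in[0,1]}\ind_{res}(\lambda;H_{r_\lambda},V)=\sum_{r_\lambda\in[0,1]}\dim\Upsilon^1_{\lambda+i0}(r_\lambda).
$$
Exactly as in Theorem~\ref{T: Az 9.7.6}, the functions $r\mapsto\xi^{(pp)}(\lambda;H_r,H_0)$ and $r\mapsto\xi^{(sc)}(\lambda;H_r,H_0)$ are, for a.e.\ $\lambda$, non-decreasing step functions bounded by the locally constant $r\mapsto\xis(\lambda;H_r,H_0)$, with possible points of increase only at the discrete resonance set $R(\lambda;H_0,V)$; and for a.e.\ $\lambda$ the endpoints $r=0,1$ are regular. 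Telescoping then reduces the conjecture to the local statement: at each real resonance point $r_\lambda$ of a regular line, the jump of $r\mapsto\xi^{(pp)}(\lambda;H_r,H_0)$ at $r_\lambda$ equals $\dim\clV_\lambda=\dim\ker(H_{r_\lambda}-\lambda)$ (a finite number, since an eigenvalue of infinite multiplicity cannot be essentially regular).

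The heart of the argument is a transversality observation available only because $V\geq 0$: at a resonance point $r_\lambda$ at which $\lambda$ is an eigenvalue of $H_{r_\lambda}$, the form $\chi\mapsto\scal{\chi}{V\chi}=\scal{F\chi}{JF\chi}$ is strictly positive on $\clV_\lambda\setminus\{0\}$. Indeed, if $\scal{\chi}{V\chi}=0$ for some nonzero $\chi\in\clV_\lambda$ then $V\chi=F^*JF\chi=0$ (using $J\geq 0$ and injectivity of $F^*$), whence $H_r\chi=H_{r_\lambda}\chi=\lambda\chi$ for all $r$, so $\lambda\in\sigma_{pp}(H_r)$ and $\lambda\notin\Lambda(H_r,F)$ for all $r$ by Proposition~\ref{P: Az 4.1.10}, contradicting regularity of the line. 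Thus no embedded eigenvalue at energy $\lambda$ can make a U-turn under a positive perturbation. Combining this with Theorem~\ref{T: res eq-n and eigenvectors} (the rigging $F$ injects $\clV_\lambda$ into $\Upsilon^1_{\lambda+i0}(r_\lambda)$) and Corollary~\ref{C: mult of lambda <= dim Ups(1)}, and adapting to multiplicity $m_{pp}:=\dim\clV_\lambda$ the analysis of an embedded eigenvalue in Section~\ref{S: Pert embedded eig} (splitting $\hilb=\hat\hilb\oplus\clV_\lambda$), I would show that these $m_{pp}$ ``eigenvector directions'' contribute exactly $m_{pp}$ up-poles to the group of $r_\lambda$, each realising a transversal upward crossing of $\lambda$, hence exactly $m_{pp}$ to the jump of $\xi^{(pp)}$ — giving $\text{jump of }\xi^{(pp)}\ \geq\ m_{pp}$. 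Symmetrically, the remaining $\dim\Upsilon^1_{\lambda+i0}(r_\lambda)-m_{pp}$ units of the jump of $\xis$ come from order-one resonance vectors \emph{not} of the form $F\chi$ with $\chi\in\hilb$ — the trapped states of Section~\ref{S: Geom meaning of Ups(1)} — which correspond to no eigenvalue of $H_{r_\lambda}$ at $\lambda$ and so contribute nothing to $\xi^{(pp)}$ but everything to $\xi^{(sc)}$, giving $\text{jump of }\xi^{(sc)}\ \geq\ \dim\Upsilon^1_{\lambda+i0}(r_\lambda)-m_{pp}$. Since the two jumps are non-negative and sum to $\dim\Upsilon^1_{\lambda+i0}(r_\lambda)$, both inequalities are equalities, which is the local statement.

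The main obstacle is this last step: rigorously disentangling the pure-point contribution from the singular-continuous one at a resonance point lying \emph{inside} the essential spectrum, where embedded eigenvalues are unstable and classical analytic perturbation theory of isolated eigenvalues is unavailable. Making precise that trapped (non-regular) order-one resonance vectors feed only $\xi^{(sc)}$ and not $\xi^{(pp)}$ — equivalently, the upper bound $\text{jump of }\xi^{(pp)}\leq m_{pp}$ — appears to require either adapting the constructive stationary scattering theory of~\cite{Az3v6} so that it tracks the decomposition of $H_rE^{H_r}_{\mbR\setminus\Lambda(H_r,F)}$ into its pure point and singular continuous parts through $r=r_\lambda$, or a refined monotone-perturbation estimate bounding how much eigenvalue mass can condense at $\lambda$ as $r\to r_\lambda$. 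I note that this is consistent with the inclusion of Theorem~\ref{T: res eq-n and eigenvectors} being proper inside $\sigma_{ess}$: were Conjecture~\ref{Conj: res eq-n and eigenvectors} true without the hypothesis $\lambda\notin\sigma_{ess}$, one would get $\dim\clV_\lambda=\dim\Upsilon^1_{\lambda+i0}(r_\lambda)$ always, hence $\xi^{(sc)}\equiv 0$ on $\sigma_{ess}$, contradicting the existence of genuine singular continuous spectrum (Pearson's example, \cite[\S XI.4]{RS3}). So the present conjecture is precisely the assertion that $\Upsilon^1_{\lambda+i0}(r_\lambda)$ splits into a regular part (eigenvectors, producing $\xi^{(pp)}$) and a trapped part (producing $\xi^{(sc)}$), with the two parts exactly accounting for the two summands of $\xis$.
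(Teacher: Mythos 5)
This statement is posed in the paper as an open conjecture (Section \ref{S: open problems}); the paper contains no proof, so your proposal cannot be measured against one, and as it stands it is a strategy rather than a proof. The honest admission at the end is accurate, but the gap is wider than the single "upper bound" you isolate. The reduction itself already rests on an unproved analogue of Theorem \ref{T: Az 9.7.6}: you assert that for a.e.\ $\lambda$ the maps $r\mapsto\xi^{(pp)}(\lambda;H_r,H_0)$ and $r\mapsto\xi^{(sc)}(\lambda;H_r,H_0)$ are well-defined pointwise in $\lambda$, non-decreasing step functions whose only points of increase lie in $R(\lambda;H_0,V)$. Theorem \ref{T: Az 9.7.6} is obtained from the analyticity in $r$ of $\xia(\lambda;H_r,H_0)$, which in turn comes from the stationary formula for the scattering matrix; no such formula expresses the pure point or singular continuous densities separately, and assigning these densities a value at a \emph{fixed} $\lambda$ simultaneously for a continuum of coupling constants is exactly the difficulty the constructive framework of \cite{Az3v6} had to be built to overcome for $\xia$ and $\xis$ — it has not been carried out for $\xi^{(pp)},\xi^{(sc)}$, and even their individual monotonicity in $r$ (as opposed to that of the sum $\xis$, which follows from $V\geq 0$ and Theorem \ref{T: xis=ind res}) is not established. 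Without this, the telescoping reduction to a "local jump" statement is not available.

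At the local level the argument is circular where it needs to be decisive. Your transversality observation — that $\scal{\chi}{V\chi}>0$ on $\clV_\lambda\setminus\{0\}$ for a regular line with $V\geq 0$ — is correct, but it only rules out $V\chi=0$; it does not produce, inside the essential spectrum, any eigenvalue branch $\lambda_j(r)$ crossing $\lambda$ transversally, because embedded eigenvalues generically dissolve and there is no analytic perturbation theory to follow them. Consequently the step "these $m_{pp}$ directions contribute exactly $m_{pp}$ up-poles, hence exactly $m_{pp}$ to the jump of $\xi^{(pp)}$" conflates the increment of $\xis$ (which the up-poles do measure, by Theorem \ref{T: xis=ind res} and Corollary \ref{C: U-turn for positive V}) with the increment of $\xi^{(pp)}$ specifically — and identifying which part of the total jump is pure point is precisely the content of the conjecture. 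So both inequalities, the lower bound $\mathrm{jump}\,\xi^{(pp)}\geq m_{pp}$ and the upper bound you flag, are open; the adaptation of Section \ref{S: Pert embedded eig} you invoke only describes the resonance structure of a single embedded non-degenerate eigenvalue under the assumption \eqref{F: lambda in hat (H,F)} and says nothing about where the spectral mass of $H_r^{(pp)}$ versus $H_r^{(sc)}$ goes as $r$ crosses $r_\lambda$. Your closing remark relating the conjecture to Conjecture \ref{Conj: res eq-n and eigenvectors} and Pearson-type singular continuous spectrum is a sensible consistency check, but it does not supply the missing mechanism.
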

This equality is equivalent to
$$
  \xi^{(sc)}(\lambda; H_1,H_0) = \sum_{r \in [0,1]} \ind^{(sc)}_{res}(\lambda; H_r,V).
$$

For non-sign-definite operators $V$ it is not clear how one can define the pure point and singular parts of the resonance index.

\subsection{On singular spectral shift function for relatively compact perturbations}
Singular spectral shift function is well-defined for relatively trace-class perturbations.
For such perturbations it admits three other descriptions as singular $\mu$-invariant, total resonance index
and total signature of resonance matrix. These three descriptions are well-defined for relatively compact perturbations too
provided that the limiting absorption principle holds, and in this case they are all equal, see Section \ref{S: res.index and sign res matrix}, Theorem \ref{T: res.ind=sign res.matrix}
and \cite{Az11}. While for relatively trace-class perturbations the spectral shift function in general is not defined
it is quite possible that in this case the singular spectral shift function still makes sense and is equal to the other three integer-valued functions.
Indeed, while the operator $VE_\Delta^{H}$ for a bounded Borel set $\Delta$ may fail to be trace-class it is still possible that
the operator $VE_\Delta^{H^{(s)}}$ is trace-class for a sufficiently large class of Borel sets $\Delta$ (for example for compact subsets $\Delta$ of $\Lambda(H_0,F)$).
This would allow to use modification of Birman-Solomyak formula to define the singular spectral shift function.
The second step would be to show that this function is integer-valued and is equal to the other three functions.

\begin{conj} (a) Assume that for an affine space $\clA$ of self-adjoint operators with rigging $F$
assumptions of section \ref{S: Preliminaries} are satisfied including the limiting absorption principle.
Let $H_0$ be a self-adjoint operator from $\clA$ and $V$ be a self-adjoint operator from the corresponding vector space~$\clA_0.$
For any compact subset $K$ of the set $\Lambda(H_0,F)$ the operator
$F E_K^{H^{(s)}}$ is trace class, where $H^{(s)}$ is the singular part of~$H=H_0+V.$

(b) The measure
$$
  K \mapsto \int_0^1 \Tr(V E_K^{H_r^{(s)}})\,dr
$$
is well-defined on compact subsets of $\Lambda(H_0,F)$ and is absolutely continuous.

(c) Density of this measure is a.e. integer-valued and coincides with total resonance index
of the pair $H_0$ and $H_1.$
\end{conj}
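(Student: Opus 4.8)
The plan is to reduce all three assertions to the resonance-index calculus of Sections~\ref{S: res index}--\ref{S: res.index and sign res matrix}, which needs only the limiting absorption principle and no trace-class hypothesis, by proving the single identity
\begin{equation*}
  \int_0^1 \Tr\brs{V E_K^{H_r^{(s)}}}\,dr = \int_K \brs{\sum_{r_\lambda \in [0,1]} \ind_{res}(\lambda; H_{r_\lambda},V)}\,d\lambda
\end{equation*}
for every compact $K \subset \Lambda(H_0,F)$. By Theorem~\ref{T: res.ind=sign res.matrix} the integrand on the right is everywhere integer-valued (the total signature of the resonance matrices), and by Theorem~\ref{T: U-turn for res index} it is pointwise bounded by $\dim \Upsilon^1_{\lambda+i0}(r_\lambda)$, whose integrability over $K$ would fall out of the analysis of part~(a); hence the identity yields the absolute continuity of~(b) and the integrality of the density in~(c), the density being the total resonance index. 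Part~(a) enters as the assertion that the left-hand side makes sense, and the whole argument hinges on it.

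For~(a), fix a compact $K \subset \Lambda(H_0,F)$ and split $E_K^{H_r^{(s)}} = E_K^{H_r^{(pp)}} + E_K^{H_r^{(sc)}}$. Every eigenvalue $\lambda \in K$ of $H_r$ is resonant at $\lambda$ (Proposition~\ref{P: Az 4.1.10}), has finite multiplicity, and by Theorem~\ref{T: res eq-n and eigenvectors} and Corollary~\ref{C: mult of lambda <= dim Ups(1)} the rigging operator $F$ maps its eigenspace $\clV_\lambda$ injectively into the finite-dimensional space $\Upsilon^1_{\lambda+i0}(r_\lambda)$; so $F E_{\{\lambda\}}^{H_r}$ is finite rank, and summability over the (at most countable) set of such eigenvalues in $K$ would follow from a uniform bound $\sum_{\lambda \in K}\dim\clV_\lambda \le \const$ coming from compactness of $T_{\lambda+i0}(H_0)J$. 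The singular continuous part is the genuinely hard point: its support lies in the closed, Lebesgue-null set where $1+r\,T_{\lambda+i0}(H_0)J$ is not invertible (Theorem~\ref{T: Az 4.1.11}), and I would bound $F E_K^{H_r^{(sc)}}$ in Hilbert--Schmidt norm by the total variation over $K$ of the finite-rank residue operators $P^\uparrow_{\lambda+iy}(r_\lambda)-P^\uparrow_{\lambda-iy}(r_\lambda)$ of Proposition~\ref{P: ind(res)=oint}, via a spectral-averaging / limiting-absorption estimate. Once $F E_K^{H_r^{(s)}}$ is Hilbert--Schmidt, $\Tr(V E_K^{H_r^{(s)}}) = \Tr\brs{J\,(F E_K^{H_r^{(s)}})(F E_K^{H_r^{(s)}})^*}$ is finite, giving~(a) (trace class, rather than merely Hilbert--Schmidt, then follows from $\dim\Upsilon^1 < \infty$).

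To establish the displayed identity I would replay the contour argument of Section~\ref{S: res.index as s.ssf}, localized to the window $K$ and with the singular part isolated. Since $\Tr(\Im R_z(H_s)V)$ need no longer be finite here, one cannot integrate it directly; instead I would work fiberwise on $\Lambda(H_0,F)$ and with the finite-rank residue operators $P^\uparrow_{\lambda+iy}(r_\lambda)-P^\uparrow_{\lambda-iy}(r_\lambda)$ near the resonant subset of $K$. Splitting $E_K^{H_s} = E_K^{H_s^{(a)}} + E_K^{H_s^{(s)}}$, the absolutely continuous part produces, after $y\to 0^+$, an absolutely continuous measure on $K$ — the local analogue of $\xia$, available only on $K$ because $F E_K^{H_r^{(a)}}$ need not be globally trace class; here one would feed $K$ fiberwise through the evaluation-operator and wave-matrix machinery of Section~\ref{S: res.index as s.ssf}. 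The singular part then contributes, by Cauchy's theorem in the coupling constant together with Proposition~\ref{P: ind(res)=oint}, exactly $\int_K \sum_{r_\lambda \in [0,1]} \ind_{res}(\lambda; H_{r_\lambda},V)\,d\lambda$: circumventing each resonance point $r_\lambda^j \in [0,1]$ picks up the integer $\ind_{res}(\lambda; H_{r_\lambda^j},V)$, just as in the trace-class Theorem~\ref{T: xis=ind res}. Matching the two sides and cancelling the absolutely continuous term gives the identity, hence~(b) and~(c).

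The main obstacle is the singular-continuous analysis, which appears twice: in part~(a) (controlling $F E_K^{H_r^{(sc)}}$) and in separating the absolutely continuous from the singular contribution in the contour argument (showing the ``continuous'' part of the averaged imaginary-part measure on $K$ is honestly absolutely continuous without any global trace-class assumption). This requires a relatively-compact adaptation of the constructive stationary scattering theory of~\cite{Az3v6}, in the spirit of the relatively-trace-class treatment in~\cite{Az10,AzDa}; one needs a limiting-absorption estimate strong enough to preclude $F E_K^{H_r^{(sc)}}$ from carrying infinite mass inside $\Lambda(H_0,F)$ — plausible but not automatic from the bare limiting absorption principle, which is why the statement is stated as a conjecture.
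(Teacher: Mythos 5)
This statement is one of the open problems of Section~\ref{S: open problems}: the paper contains no proof of it, so there is no argument of the author's to compare yours with. What you have written is a programme rather than a proof, and you say as much yourself in your last sentence; the honest assessment is that every step you leave ``to be established'' is precisely the content of the conjecture. Concretely, in part~(a) the central claim --- that $F E_K^{H_r^{(s)}}$ is trace class --- is not supported by your argument. Compactness of $T_{\lambda+i0}(H_0)J$ gives $\dim \Upsilon^1_{\lambda+i0}(r_\lambda)<\infty$ for each \emph{fixed} $\lambda$ (Corollary~\ref{C: mult of lambda <= dim Ups(1)}), but it gives no uniform bound in $\lambda$ and no bound whatsoever on the total multiplicity of the embedded point spectrum of a fixed $H_r$ inside a compact $K\subset\Lambda(H_0,F)$; the phrase ``summability \ldots would follow from a uniform bound $\sum_{\lambda\in K}\dim\clV_\lambda\leq \const$ coming from compactness'' asserts exactly what has to be proved. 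Likewise, finite rank of $F E^{H_r}_{\set{\lambda}}$ at each eigenvalue does not yield trace class (or even Hilbert--Schmidt) for the sum over countably many eigenvalues, and the proposed Hilbert--Schmidt control of $F E_K^{H_r^{(sc)}}$ by a ``total variation of the residue idempotents'' is not an estimate the bare limiting absorption principle supplies --- the paper's hypotheses only give compactness of $FE_\Delta^{H}$, see~(\ref{F: FE(Delta) is compact}).

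For parts~(b) and~(c) the same circularity appears. The contour argument of Section~\ref{S: res.index as s.ssf}, which you propose to ``replay localized to $K$'', is developed under the standing assumption that $V$ is trace class ($F$ Hilbert--Schmidt): its ingredients --- the Birman--Solomyak formula, the identification of the $L_1$-limit along the contour with $\int_0^s \Tr_{\hlambda(H_r)}\brs{\euE_\lambda(H_r)\tilde V\euE_\lambda^\diamondsuit(H_r)}\,dr$, and the analyticity of that integrand via the scattering matrix of \cite{Az3v6} --- are simply not available under relative compactness plus the limiting absorption principle, and building ``fiberwise on $K$'' substitutes for them is exactly the relatively-compact stationary theory whose absence makes this a conjecture rather than a theorem. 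The parts of your sketch that are sound are the ones already proved in the paper without trace-class hypotheses: integrality of the would-be density via Theorem~\ref{T: res.ind=sign res.matrix} and the bound of Theorem~\ref{T: U-turn for res index}. But those only show that \emph{if} the measure in~(b) exists and the identity with the total resonance index holds, then~(c) follows; they do not advance~(a) or~(b). So the proposal does not close any of the genuine gaps; it restates them.
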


\section*{Index}
\noindent
$a_{j,\pm},$~(\ref{F: aj(pm)}), p.\,\pageref{Page: aj(pm)}.\\
$\clA$ affine space of self-adjoint operators,~(\ref{F: clA}), p.\,\pageref{Page: clA}. \\
$\clA_0=\clA_0(F)$ a vector space of self-adjoint perturbations, p.\,\pageref{Page: clA0}. \\
$A_z(s)$ non-self-adjoint compact operator,~(\ref{F: Az(s)}), p.\,\pageref{Page: Az(s)}. \\
$\ulA_z(s)$ non-self-adjoint compact operator,~(\ref{F: ulA and ulB}), p.\,\pageref{Page: ulAz(s)}. \\
$\tilde A_{z,r_z}(s)$ holomorphic part of Laurent expansion of $A_z(s),$~(\ref{F: Laurent for A+(s)}), p.\,\pageref{Page: tilde A}. \\
$\hat A_\pm,$~(\ref{F: u pm and A pm}), p.\,\pageref{Page: u pm and A pm}. \\
$\bfA_z(r_z)$ a finite rank nilpotent operator,~(\ref{F: def of bfA}), p.\,\pageref{Page: bfA(rz)}.\\
$\ubfA_z(r_z)$ a finite rank nilpotent operator,~(\ref{F: def of ubfA and ubfB}), p.\,\pageref{Page: ubfA(rz)}.\\
$\bfA_z(r_\lambda)$ a finite rank nilpotent operator,~(\ref{F: bfAz(rl)}), p.\,\pageref{Page: bfA(rl)}. \\
$\hat A_z(s),$~(\ref{F: hat Az(s)}), p.\,\pageref{Page: hat A}.\\
$B_z(s)$ non-self-adjoint compact operator,~(\ref{F: Bz(s)}), p.\,\pageref{Page: Bz(s)}. \\
$\ulB_z(s)$ non-self-adjoint compact operator,~(\ref{F: ulA and ulB}), p.\,\pageref{Page: ulAz(s)}. \\
$\hat B_\pm,$~(\ref{F: u pm and A pm}), p.\,\pageref{Page: u pm and A pm}. \\
$\bfB_z(r_z)$ a finite rank nilpotent operator,~(\ref{F: def of bfB}), p.\,\pageref{Page: bfB(rz)}.\\
$\ubfB_z(r_z)$ a finite rank nilpotent operator,~(\ref{F: def of ubfA and ubfB}), p.\,\pageref{Page: ubfA(rz)}.\\
$\bfB_z(r_\lambda)$ a finite rank nilpotent operator,~(\ref{F: bfBz(rl)}), p.\,\pageref{Page: bfB(rl)}. \\
$\hat B_z(s),$~(\ref{F: hat Az(s)}), p.\,\pageref{Page: hat A}.\\
$\mbC_+$ open upper complex half-plane. \\
$\mbC_-$ open lower complex half-plane. \\
$d$ order of a resonance point~$r_z,$~(\ref{F: d}), p.\,\pageref{Page: point of order d}. \\
$\order(u),$ order of a resonance vector $u,$ p.\,\pageref{Page: order of vector}.\\
$\euD_z(s),$ formula~(\ref{F: def of euA(s)}), p.\,\pageref{Page: euDz(s)}.\\
$\euE_\lambda$ evaluation operator,~(\ref{F: euE(F*psi)=sqrt Im T}), p.\,\pageref{Page: euE(lambda)}. \\
$e_+(H)$ the set of positive eigenvalues of \Schrodinger operator, p.\,\pageref{Page e+(H)}. \\
$F$ rigging operator,~(\ref{F: F}), p.\,\pageref{Page: F}. \\
$\euF_z(s)$~(\ref{F: euF}), p.\,\pageref{Page: euF}. \\
$\hilb$ the ``main'' Hilbert space. \\
$\hilb_\pm$ Hilbert spaces of the triple $(\hilb_+,\hilb,\hilb_-),$~(\ref{F: hilb+}), p.\,\pageref{Page: hilb+}. \\
$H, \ H_0, \ H_r, \ H_{r_\lambda}, \ H_s$ self-adjoint operators, elements of the affine space~$\clA.$ p.\,\pageref{Page: Hr}. \\
$\hat H_0$ p.\,\pageref{Page: hat H0}.\\
$\hat H_s,$~(\ref{F: hat Hs}), p.\,\pageref{Page: hat Hs}.\\
$\im(A)$ image of an operator~$A,$ p.\,\pageref{Page: im(A)}.\\
$\Im A$ imaginary part of an operator~$A,$ p.\,\pageref{Page: Re(A)}.\\
$\ind_{res}(\lambda; H, V)$ resonance index,~(\ref{F: res(ind)}), p.\,\pageref{Page: res(ind)}. \\
$J$ self-adjoint bounded operator on $\clK,$ p.\,\pageref{Page: J}. \\
$\clK$ the ``auxiliary'' Hilbert space. \\
$\clK_\pm$ Hilbert spaces of the triple $(\clK_+,\clK,\clK_-),$ p.\,\pageref{Page: hilb+}. \\
$\ker(A)$ kernel of an operator~$A,$ p.\,\pageref{Page: im(A)}.\\
$\clLs_z(r_z)$ a subspace of $\Upsilon_z(r_z),$ \pageref{Page: clLs}. \\
$\clLw_z(r_z)$ a subspace of $\clLs_z(r_z),$ \pageref{Page: clLw}. \\
$m$ the geometric multiplicity of a resonance point,~(\ref{F: N}), p.\,\pageref{Page: m}. \\
$N$ dimension of~$\Upsilon_{\lambda\pm i0}(r_\lambda),$ algebraic multiplicity of a resonance point,~(\ref{F: N}), pp.\,\pageref{Page: N}, \pageref{Page: N pm}. \\
$N_+$ the number of up-points in the group of a real resonance point~$r_\lambda,$ p.\,\pageref{Page: N pm}. \\
$N_-$ the number of down-points in the group of a real resonance point~$r_\lambda,$ p.\,\pageref{Page: N pm}. \\
$P_z(r_z)$ finite-rank idempotent operator, p.\,\pageref{Page: Pz(rz)}. \\
$\ulP_z(r_z)$ finite-rank idempotent operator, p.\,\pageref{Page: ulPz(rz)}. \\
$P_z(r_\lambda)$ finite-rank idempotent operator, p.\,\pageref{Page: Pz(rl)}. \\
$P_z^\uparrow(r_\lambda),$ $P_z^\downarrow(r_\lambda),$ p.\,\pageref{Page: P(up), P(down)}. \\
$Q_z(r_z)$ finite-rank idempotent operator, p.\,\pageref{Page: Qz(rz)}. \\
$\ulQ_z(r_z)$ finite-rank idempotent operator, p.\,\pageref{Page: ulQz(rz)}. \\
$Q_z(r_\lambda)$ finite-rank idempotent operator, p.\,\pageref{Page: Qz(rl)}. \\
$r$ a real number, a coupling constant, p.\,\pageref{Page: r}. \\
$r_\lambda$ a real resonance point, p.\,\pageref{Page: r(lambda)}. \\
$r_z$ resonance point, p.\,\pageref{Page: rz}. \\
$r^1_z, \ldots r_z^N,$ resonance points of the group of~$r_\lambda,$ p.\,\pageref{Page: rz1...rzN}. \\
$R_z(H_s)$ resolvent of $H_s,$ p.\,\pageref{Page: Rz(s)}. \\
$R(\lambda; \clA,F)$ the resonance set,~(\ref{F: R(lambda)}), p.\,\pageref{Page: R(l,A,F)}. \\
$R(\lambda; H_0,V)$ the resonance set, p.\,\pageref{Page: resonance set}. \\
$\Rindex$ $R$-index of a finite-rank operator without non-zero real eigenvalues, p.\,\pageref{Page: Rindex}. \\
$R$-index, the same as $\Rindex,$ p.\,\pageref{Page: Rindex}. \\
$\clR$ the class of all finite-rank operators without non-zero real eigenvalues, p.\,\pageref{Page: clR}. \\
$\clR_N, \ \clR_{\leq N},$ p.\,\pageref{Page: clR(N)}. \\
$\Re A$ real part of an operator~$A,$ p.\,\pageref{Page: Re(A)}.\\
$s$ a real or complex number, a coupling constant. \\
$\sign(A)$ signature of a finite-rank self-adjoint operator~$A,$~(\ref{F: sign(A)}), p.\,\pageref{Page: sign(A)}. \\
$T_z(H_s)$ non-self-adjoint compact operator,~(\ref{F: Tz(H)}), p.\,\pageref{Page: Tz(s)}. \\
$u$ a resonance vector, p.\,\pageref{Page: uu}. \\
$\hat u_z(s),$ formula~(\ref{F: uz(s)}), p.\,\pageref{Page: uz(s)}. \\
$\hatlmbu{j},$ formula~(\ref{F: vj(l+i0)(s)}), p.\,\pageref{Page: vj(l+i0)(s)}. \\
$\hat u_\pm,$~(\ref{F: u pm and A pm}), p.\,\pageref{Page: u pm and A pm}. \\
$V$ a self-adjoint operator from $\clA_0(F),$~(\ref{F: V}), p.\,\pageref{Page: V}.\\
$z = \lambda+iy$ complex number, an element of $\Pi.$ \\

\noindent
$\gamma(u), \gamma_z(u),$ depth of a resonance vector, p.\,\pageref{Page: depth of vector}.\\
$\lambda$ a real number, an element of the spectral line. \\
$\lambda\pm i0$ an element of $\partial \Pi_\pm.$ \\
$\Lambda(H,F)$~(\ref{F: Lambda(H,F)}), p.\,\pageref{Page: Lambda(H,F)}. \\
$\Lambda(\clA,F)$ the set of essentially regular points~$\lambda,$~(\ref{F: Lambda(clA,F)}), p.\,\pageref{Page: Lambda(clA,F)}. \\
$\Pi = \bar \mbC_+ \sqcup \bar \mbC_-, \ \Pi_\pm = \bar \mbC_\pm , \ \partial \Pi, \ \partial \Pi_\pm,$  p.\,\pageref{Page: Pi}. \\
$\sigma_z(s),$ an eigenvalue of~$A_z(s),$ p.\,\pageref{Page: sigma z(s)}. \\
$\sigma_A$ spectrum of an operator $A.$ \\
$\mu_A$ spectral measure of a compact operator~$A,$ \pageref{Page: mu(A)}. \\
$\Upsilon_z(r_z)$ vector space of resonance vectors, p.\,\pageref{Page: Upsilon}. \\
$\Upsilon_z(r_\lambda)$ vector space of resonance vectors, p.\,\pageref{Page: Upsilon(rl)}. \\
$\Upsilon^j_z(r_z)$ vector space of resonance vectors of order $j,$ p.\,\pageref{Page: Upsilon(j)}. \\
$\psi$ a co-resonance vector, p.\,\pageref{Page: psi}. \\
$\Psi_z(r_z)$ vector space of co-resonance vectors, p.\,\pageref{Page: Psi}. \\
$\Psi^j_z(r_z)$ vector space of co-resonance vectors of order $j,$ p.\,\pageref{Page: Psi(j)}. \\
$\Psi_z(r_\lambda)$ vector space of co-resonance vectors, p.\,\pageref{Page: Psi(rl)}. \\

\noindent
Co-resonance vector, p.\,\pageref{Page: cores vector}. \\
\mbox{ }\quad --- of order $j,$ p.\,\pageref{Page: cores vector}. \\
Essentially regular point, p.\,\pageref{Page: ess reg point}.\\
\mbox{ }\quad ---  line, p.\,\pageref{Page: ess regular line}.\\
Jordan decomposition, p.\,\pageref{Page: Jordan decomp-n}.\\
Operator resonant at~$\lambda,$~(\ref{F: op-r resonant at lambda}), p.\,\pageref{Page: op-r res at lambda} \\
Resonance index, p.\,\pageref{Page: res index}. \\
\mbox{ }\quad --- anti-down-point, p.\,\pageref{Page: up-point}\\
\mbox{ }\quad --- anti-up-point, p.\,\pageref{Page: up-point}\\
\mbox{ }\quad --- down-point, p.\,\pageref{Page: up-point}\\
\mbox{ }\quad --- matrix (of a finite set of resonance points), p.\,\pageref{Page: res matrix(1)} \\
\mbox{ }\quad --- point, p.\,\pageref{Page: res point}. \\
\mbox{ }\quad --- --- of order~$d,$ p.\,\pageref{Page: point of order d}\\
\mbox{ }\quad --- --- of type~I, p.\,\pageref{Page: point of type I}\\
\mbox{ }\quad --- --- with property~$C$, p.\,\pageref{Page: point with property C(2)}\\
\mbox{ }\quad --- --- with property~$P$, p.\,\pageref{Page: point with property P}\\
\mbox{ }\quad --- --- with property~$S$, p.\,\pageref{Page: point with property S}\\
\mbox{ }\quad --- --- with property~$U$, p.\,\pageref{Page: point with property U}\\
\mbox{ }\quad --- ---, Young diagram of, p.\,\pageref{Page: Young diagram}\\
\mbox{ }\quad --- up-point, p.\,\pageref{Page: up-point}\\
\mbox{ }\quad --- vector, p.\,\pageref{Page: res vector} \\
\mbox{ }\quad --- --- of depth $j,$ p.\,\pageref{Page: depth of vector} \\
\mbox{ }\quad --- --- of order $j,$ p.\,\pageref{Page: res vector} \\
\mbox{ }\quad --- --- of type~I, p.\,\pageref{Page: type I vector} \\
\mbox{ }\quad --- --- regular, p.\,\pageref{Page: regular vector} \\


\rndef{\emph}[1]{{\it #1}}

\mathsurround 0pt
\ndef{\AndSoOn}{$\dots$}



\end{document}